\newif\ifshow 
\tikzset{cdlabel/.style={above,sloped,
    execute at begin node=$\scriptstyle,execute at end node=$}}
\tikzset{algarrow/.style={->, thick}}
\tikzset{blgarrow/.style={->, thick}}
\tikzset{clgarrow/.style={->, thick}}
\tikzset{tensoralgarrow/.style={double, double equal sign distance, -implies}}
\tikzset{tensorblgarrow/.style={double, double equal sign distance, -implies}}
\tikzset{tensorclgarrow/.style={double, double equal sign distance, -implies}}
\tikzset{tensorelgarrow/.style={double, double equal sign distance, -implies}}
\tikzset{modarrow/.style={->, dashed}}
\tikzset{Amodar/.style={->, dashed}}
\tikzset{Dmodar/.style={->, dashed}}
\tikzset{DAmodar/.style={->, dashed}}
\newread\testin
\def\input@path{{}{draws/}}
\newcommand\mi@kern[1]{%
  \settowidth\@tempdima{$\mi@obj^{#1}$}
  \kern-\@tempdima
  #1
  \settowidth\@tempdima{$\mi@obj$}
  \kern\@tempdima
}
\newtoks\mi@toksp
\newtoks\mi@toksb
\DeclareRobustCommand{\manyindices}[5]{
  \def\mi@obj{#5}
  \mi@toksp\expandafter{\mi@kern{#2}}
  \mi@toksb\expandafter{\mi@kern{#1}}
  \@mathmeasure4\textstyle{#5_{#1}^{#2}}
  \@mathmeasure6\textstyle{#5_{#3}^{#4}}
  \dimen0-\wd6 \advance\dimen0\wd4
  \@mathmeasure8\textstyle{\hphantom{{}_{#1}^{#2}}#5^{\the\mi@toksp#4}_{\the\mi@toksb#3}}
  \hbox to \dimen0{}{\kern-\dimen0\box8}
}
\newtheorem{theorem}{Theorem}[section]
\newtheorem{corollary}[theorem]{Corollary}
\newtheorem{lemma}[theorem]{Lemma}
\newtheorem{proposition}[theorem]{Proposition}
\newtheorem{conjecture}[theorem]{Conjecture}
\newtheorem{question}[theorem]{Question}
\theoremstyle{definition}
\newtheorem{definition}[theorem]{Definition}
\newtheorem{example}[theorem]{Example}
\theoremstyle{remark}
\newtheorem{remark}[theorem]{Remark}
\newtheorem{fact}[theorem]{Fact}
\newtheorem{observation}[theorem]{Observation}
\numberwithin{equation}{section}
\newenvironment{psmallmatrix}
  {\left(\begin{smallmatrix}}
  {\end{smallmatrix}\right)}
\newcommand{\twopartdef}[4]
{
	\left\{
		\begin{array}{ll}
			#1 & \mbox{if } #2 \\
			#3 & \mbox{if } #4
		\end{array}
	\right.
}
\newcommand{\threepartdef}[6]
{
	\left\{
		\begin{array}{ll}
			#1 & \mbox{if } #2 \\
			#3 & \mbox{if } #4 \\
            #5 & \mbox{if } #6
		\end{array}
	\right.
}
\newcommand{\bb}[1]{\mathbb{#1}}
\newcommand{\cal}[1]{\mathcal{#1}}
\renewcommand{\frak}[1]{\mathfrak{#1}}
\newcommand{\ZZ}{\bb{Z}}
\newcommand{\QQ}{\bb{Q}}
\newcommand{\RR}{\bb{R}}
\newcommand{\CC}{\bb{C}}
\DeclareMathOperator{\im}{im}
\DeclareMathOperator{\tr}{tr}
\DeclareMathOperator{\Aut}{Aut}
\DeclareMathOperator{\Hom}{Hom}
\newcommand{\spinc}{\text{Spin}^{c}}
\newcommand{\wt}[1]{\widetilde{#1}}
\newcommand{\wh}[1]{\widehat{#1}}
\newcommand{\ol}[1]{\overline{#1}}
\newcommand{\ul}[1]{\underline{#1}}
\newcommand{\del}{\partial}
\newcommand{\<}{\langle}
\renewcommand{\>}{\rangle}
\newcommand{\mbf}[1]{\mathbf{#1}}
\def\widebreve{\mathpalette\wide@breve}
\def\wide@breve#1#2{\sbox\z@{$#1#2$}%
     \mathop{\vbox{\m@th\ialign{##\crcr
\kern0.08em\brevefill#1{0.8\wd\z@}\crcr\noalign{\nointerlineskip}%
                    $\hss#1#2\hss$\crcr}}}\limits}
\def\brevefill#1#2{$\m@th\sbox\tw@{$#1($}%
  \hss\resizebox{#2}{\wd\tw@}{\rotatebox[origin=c]{90}{\upshape(}}\hss$}
\newcommand{\B}{\mathcal{B}}
\newcommand{\C}{\mathcal{C}}
\newcommand{\CCC}{\frak{C}}
\newcommand{\D}{\mathcal{D}}
\newcommand{\DDD}{\frak{D}}
\newcommand{\EEE}{\frak{E}}
\newcommand{\G}{\mathcal{G}}
\renewcommand{\H}{\mathcal{H}}
\newcommand{\HH}{\mathbb{H}}
\newcommand{\I}{\mathcal{I}}
\newcommand{\III}{\mathfrak{I}}
\newcommand{\K}{\mathcal{K}}
\newcommand{\LLL}{\mathfrak{L}}
\newcommand{\M}{\mathcal{M}}
\newcommand{\N}{\mathcal{N}}
\newcommand{\NN}{\mathbb{N}}
\renewcommand{\P}{\mathcal{P}}
\newcommand{\Q}{\mathcal{Q}}
\newcommand{\R}{\mathcal{R}}
\renewcommand{\S}{\mathcal{S}}
\renewcommand{\SS}{\mathbb{S}}
\newcommand{\SSS}{\mathfrak{S}}
\newcommand{\T}{\mathcal{T}}
\newcommand{\U}{\mathcal{U}}
\newcommand{\VV}{\mathbb{V}}
\newcommand{\X}{\mathcal{X}}
\renewcommand{\backslash}{\char`\\}
\DeclareMathOperator{\ind}{ind}
\DeclareMathOperator{\id}{id}
\DeclareMathOperator{\ev}{ev}
\DeclareMathOperator{\odd}{odd}
\DeclareMathOperator{\sym}{sym}
\DeclareMathOperator{\Pin}{Pin}
\DeclareMathOperator{\DSWF}{DSWF}
\DeclareMathOperator{\SWF}{SWF}
\DeclareMathOperator{\Fr}{Fr}
\DeclareMathOperator{\SF}{SF}
\DeclareMathOperator{\res}{res}
\DeclareMathOperator{\pt}{pt}
\DeclareMathOperator{\can}{can}
\DeclareMathOperator{\loc}{loc}
\DeclareMathOperator{\nrm}{nrm}
\DeclareMathOperator{\spn}{span}
\DeclareMathOperator{\nt}{nt}
\newcommand{\fsl}[1]{\ensuremath{\mathrlap{\!\not{\phantom{#1}}}#1}}
\newcommand{\dirac}{\fsl{\partial}}
\newcommand{\Dirac}{\slashed{D}}
\newcommand{\mbfa}{\mbf{a}}
\newcommand{\Amid}{\mbf{A}}
\newcommand{\mbfb}{\mbf{b}}
\newcommand{\Bmid}{\mbf{B}}
\newcommand{\mbfh}{\mbf{h}}
\newcommand{\mbfk}{\mbf{k}}
\newcommand{\mbfn}{\mbf{n}}
\newcommand{\mbfr}{\mbf{r}}
\newcommand{\mbfs}{\mbf{s}}
\newcommand{\mbft}{\mbf{t}}
\newcommand{\mbfu}{\mbf{u}}
\newcommand{\Umid}{\mbf{U}}
\newcommand{\mbfv}{\mbf{v}}
\newcommand{\mbfx}{\mbf{x}}
\newcommand{\mbfw}{\mbf{w}}
\DeclareMathOperator{\st}{st}
\DeclareMathOperator{\coker}{coker}
\DeclareMathOperator{\sign}{sign}
\DeclareMathOperator{\Spin}{Spin}
\DeclareMathOperator{\CSD}{CSD}
\DeclareMathOperator{\Map}{Map}
\DeclareMathOperator{\KMT}{KMT}
\DeclareMathOperator{\gC}{gC}
\DeclareMathOperator{\irr}{irr}
\DeclareMathOperator{\rot}{rot}
\DeclareMathOperator{\topp}{top}
\DeclareMathOperator{\colim}{colim}
\newcommand{\PS}{\mathcal{PS}}
\newcommand{\SWFM}{\mathcal{SWFM}}
\newcommand{\SWFS}{\mathcal{SWFS}}
\newcommand{\SRH}{\mathcal{SRH}}
\newcommand{\LSWFS}{\mathcal{LSWFS}}
\def\hlinewd#1{
\noalign{\ifnum0=`}\fi\hrule \@height #1 \futurelet
\reserved@a\@xhline}
\title[Seiberg-Witten Floer K-Theory and Cyclic Group Actions]{Seiberg-Witten Floer K-Theory and Cyclic Group Actions on Spin Four-Manifolds with Boundary}
\author{Ian Montague}
\address{Department of Mathematics, Boston College, Chestnut Hill, MA 02467}
\email{montagui@bc.edu}
\begin{document}

\begin{abstract}
Given a spin rational homology sphere $Y$ equipped with a $\ZZ/m$-action preserving the spin structure, we use the Seiberg--Witten equations to define equivariant refinements of the invariant $\kappa(Y)$ from \cite{Man14}, which take the form of a finite subset of elements in a lattice constructed from the representation ring of a twisted product of $\Pin(2)$ and $\ZZ/m$. The main theorems consist of equivariant relative 10/8-ths type inequalities for spin equivariant cobordisms between rational homology spheres. We provide applications to knot concordance, give obstructions to extending cyclic group actions to spin fillings, and via taking branched covers we obtain genus bounds for knots in punctured 4-manifolds. In some cases, these bounds are strong enough to determine the relative genus for a large class of knots within certain homology classes in $\CC P^{2}\#\CC P^{2}$, $S^{2}\times S^{2}\# S^{2}\times S^{2}$, $\CC P^{2}\# S^{2}\times S^{2}$, and homotopy $K3$ surfaces.
\end{abstract}

\maketitle

\setcounter{tocdepth}{3}

\tableofcontents

\section{Introduction}
\label{sec:intro}

\subsection{Overview}
\label{subsec:intro_overview}

In this article we define a package of invariants for spin rational homology spheres equipped with cyclic group actions, as well as equivariant relative 10/8-ths type inequalities for equivariant spin fillings of such manifolds. The construction of these invariants goes through an application of equivariant $K$-theory to a version of the Seiberg--Witten Floer stable homotopy type which takes the cyclic group action into account.

The main theorems of this paper are given by two equivariant relative 10/8-ths inequalities for equivariant spin 4-manifolds with boundary. The first is an inequality which decomposes Manolescu's relative 10/8-ths inequality into its eigenspace components, and the second is a Bryan-type inequality for odd-type $2^{r}$-fold actions.

As applications, we give homological constraints on extending cyclic group actions over spin fillings, some of which are in terms of the equivariant $\eta$-invariants of the Dirac operator on the bounding 3-manifold. We also obtain results on knot concordance and obtain genus bounds for knots in the boundaries of punctured 4-manifolds --- in particular we find examples for which these bounds are sharp and thus determine the relative genus.

\subsection{Background}
\label{subsec:intro_background}

A fundamental question in 4-manifold topology concerns the following \emph{geography} problem: 

\begin{question}
Which symmetric bilinear forms can be realized as the intersection form of a smooth, spin 4-manifold $W$ (closed or with boundary)?
\end{question}

For $W$ closed, it is a fact that the intersection form must be even and unimodular, and by Rokhlin's Theorem (\cite{Rok52}) the signature of $W$ must be divisible by 16. In 1982, Matsumoto \cite{Mat82} posited
the $11/8$-ths conjecture, which says that
\[b_{2}(W)\geq \tfrac{11}{8}|\sigma(W)|.\]
Furuta \cite{Fur01} proved the following inequality for closed oriented indefinite spin 4-manifolds:
\[b_{2}(W)\geq \tfrac{10}{8}|\sigma(W)|+2,\]
sometimes referred to as Furuta’s $10/8$-ths inequality. Recently, Hopkins-Lin-Shi-Xu were able to give a refinement of Furuta's inequality depending on the value of $\sigma(W)/16$ modulo 8 (\cite{HLSX22}). As a corollary, they showed that
\[b_{2}(W)\geq \tfrac{10}{8}|\sigma(W)|+4\]
if $|\sigma(W)|\geq 32$. 

We can rephrase the above inequalities as follows: by switching the orientation if necessary, we can assume that $\sigma(W)\le 0$. Writing $p=-\frac{1}{8}\sigma(W)$, $q=b_{2}^{+}(W)$, one can rewrite the various inequalities given above as follows:
\begin{align*}
    &11/8\text{-ths Conjecture:} & &q\geq \tfrac{3}{2}p, \\
    &10/8\text{-ths Theorem:} & &q\geq p+1, \\
    &\text{\cite{HLSX22}}: & &q\geq p+2\qquad\text{if }p\geq 4.
\end{align*}

An interesting generalization of the above is the question of whether there exists an analogous inequality between $p$ and $q$ for $W$ a spin 4-manifold with fixed boundary $\del W=Y$. Given a rational homology 3-sphere $Y$ equipped with a spin structure $\frak{s}$, Manolescu \cite{Man14} defined a numerical invariant $\kappa(Y,\frak{s})\in\QQ$ and proved that for any indefinite spin 4-manifold $(W,\frak{t})$ with $\del W = Y$, $\frak{t}|_{\del W}=\frak{s}$, and $p,q$
as above, the following relative $10/8$-ths inequality holds:
\begin{equation}
    q+\kappa(Y,\frak{s})\geq p+1. \label{eq:manolescus_inequality}
\end{equation}
Furthermore, he showed that $\kappa(S^{3})=0$, and so his inequality implies Furuta’s $10/8$-ths theorem as a corollary.

\subsection{Equivariant \texorpdfstring{$\kappa$}{κ}-invariants}
\label{subsec:intro_equivariant_kappa_invariants}

Let $(Y,\frak{s})$ be a spin rational homology sphere, and let $\sigma:Y\to Y$ be a diffeomorphism of order $m\geq 2$ which preserves the spin structure $\frak{s}$, i.e., lifts to a symmetry of the spinor bundle. In this setting, we can construct a \emph{spin lift} $\wh{\sigma}$ of $\sigma$, which comes in two flavors -- either \emph{even} or \emph{odd} type (see Section \ref{sec:group_actions} for more details), which we call the \emph{parity} of $\wh{\sigma}$. We call such a triple $(Y,\frak{s},\wh{\sigma})$ a \emph{$\ZZ_{m}$-equivariant spin rational homology sphere}, where $\ZZ_{m}:=\ZZ/m\ZZ$. Define the groups
\begin{align*}
    &G^{\ev}_{m}:=\Pin(2)\times\ZZ_{m}, & &G^{\odd}_{m}:=\Pin(2)\times_{\ZZ_{2}}\ZZ_{2m}, 
\end{align*}
where in the latter group we mod out by the diagonal $\ZZ_{2}$ subgroup. One of the main results in this paper is the construction of a $G^{*}_{m}$-equivariant Seiberg--Witten Floer stable homotopy type associated to $(Y,\frak{s},\wh{\sigma})$, where $\ast=\ev$ or $\odd$ depending on the parity of $\wh{\sigma}$:

\begin{theorem}
Associated to any triple $(Y,\frak{s},\wh{\sigma})$, there exists a well-defined metric-independent $G^{*}_{m}$-spectrum class $\SWF(Y,\frak{s},\wh{\sigma})$ which reduces to the $\Pin(2)$-equivariant spectrum class \\ $\SWF(Y,\frak{s})$ defined in \cite{Man16} under the corresponding restriction map. In particular, for any $\ZZ_{m}$-equivariant metric $g$ on $Y$ there exists a well-defined equivariant correction term $n(Y,\frak{s},\wh{\sigma},g)\in\QQ[\ZZ_{2m}]$ whose variation under one-parameter families of equivariant metrics agrees with the equivariant spectral flow of the Dirac operator on $Y$.
\end{theorem}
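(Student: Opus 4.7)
The plan is to adapt Manolescu's construction of the $\Pin(2)$-equivariant Seiberg--Witten Floer stable homotopy type \cite{Man16} to the enlarged symmetry group $G^{*}_{m}$, via finite-dimensional approximation of the Seiberg--Witten flow on a global Coulomb slice carried out $G^{*}_{m}$-equivariantly throughout. First I average over $\ZZ_{m}$ to obtain an equivariant Riemannian metric on $Y$. The spin lift $\wh{\sigma}$ then induces an action on the configuration space that, together with the standard $\Pin(2)$-symmetry generated by the constant $S^{1}$-gauge transformations and the charge-conjugation involution $j$, assembles into a genuine $G^{*}_{m}$-action on the Coulomb slice $V = \ker d^{*}\oplus\Gamma(\SS)$; the parity of $\wh{\sigma}$ dictates whether the resulting group is the direct product $G^{\ev}_{m}$ or the twisted product $G^{\odd}_{m}$, depending on whether $\wh{\sigma}$ commutes or anticommutes with $j$ on spinors. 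A direct calculation shows that the Seiberg--Witten vector field $\ell + c$, with $\ell = *d \oplus \Dirac$, is $G^{*}_{m}$-equivariant.

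Because $\ell$ is self-adjoint and $G^{*}_{m}$-equivariant with discrete spectrum, for $\lambda,\mu \gg 0$ the truncations $V^{\mu}_{-\lambda}\subset V$ spanned by eigenvectors with eigenvalues in $[-\lambda,\mu]$ are finite-dimensional $G^{*}_{m}$-representations. Projecting the Seiberg--Witten flow onto $V^{\mu}_{-\lambda}$ produces a $G^{*}_{m}$-equivariant vector field, to which I apply the equivariant Conley index on an isolating block given by the intersection of a large ball with the Kronheimer--Mrowka--Manolescu trapping region. Formal desuspension by the representation $V^{0}_{-\lambda}$ then produces a candidate element in the $G^{*}_{m}$-equivariant stable homotopy category, and independence of the cutoffs $(\lambda,\mu)$ follows from attractor--repeller sequences, which show that enlarging the cutoffs changes the Conley index only by equivariant suspension by the newly included eigenspace representation.

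To establish metric-independence, I connect two $\ZZ_{m}$-equivariant metrics $g_{0}, g_{1}$ by an equivariant path $g_{t}$ and track the equivariant spectral flow of $\ell_{t}$; this spectral flow is naturally a virtual $G^{*}_{m}$-representation, and it measures exactly the (de)suspensions needed to identify the two candidate spectra, producing the well-defined metric-independent class $\SWF(Y,\frak{s},\wh{\sigma})$. The correction term $n(Y,\frak{s},\wh{\sigma},g)\in\QQ[\ZZ_{2m}]$ is defined by decomposing the Dirac contribution to this spectral flow into isotypic components under the characters of $\ZZ_{2m}$, with rational factors coming from the standard signature defect normalization; by construction its variation along an equivariant family of metrics then equals the equivariant spectral flow of the Dirac operator. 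Restriction of the $G^{*}_{m}$-action to $\Pin(2)$ recovers Manolescu's $\SWF(Y,\frak{s})$ because every step in the construction commutes with restriction of the symmetry group.

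The main obstacle is establishing $G^{\odd}_{m}$-equivariance coherently in the odd-parity case: the twisted product $\Pin(2)\times_{\ZZ_{2}}\ZZ_{2m}$ forces a careful global choice of sign for $\wh{\sigma}$ acting on spinors, amounting to a lift from the orthogonal to the spin level that must be tracked through every finite-dimensional approximation. Ensuring that this choice is compatible with the $\ZZ_{2}$-central extension defining $\Pin(2)$, and that the resulting $G^{\odd}_{m}$-structures on each $V^{\mu}_{-\lambda}$ are respected by the attractor--repeller identifications, is the main technical burden. Once this coherence is in place, the remainder of the proof parallels \cite{Man16} with the $\Pin(2)$-equivariant Conley index replaced by its $G^{*}_{m}$-equivariant refinement.
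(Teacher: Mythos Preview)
Your outline for the finite-dimensional approximation is correct and matches the paper's approach in Section~\ref{subsec:finite_dimensional_approximation}: the $G^{*}_{m}$-action on the Coulomb slice and the equivariant Conley index construction go through essentially as you describe, and the odd-parity coherence issue you flag as the ``main obstacle'' is in fact handled by the explicit formulas the paper gives for the action without much difficulty.

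The genuine gap is in your treatment of the correction term. You write that $n(Y,\frak{s},\wh{\sigma},g)$ is ``defined by decomposing the Dirac contribution to this spectral flow into isotypic components,'' but this does not yield an intrinsic invariant of a single metric. The paper's Definition~\ref{def:equivariant_correction_term} gives an explicit formula whose characters for $k\neq 0,m$ are
\[
n^{\mu^{k}}(Y,\frak{s},\wh{\sigma},g)=\ol{\eta}^{\mu^{k}}_{\dirac,\wh{\sigma},g}+\frac{1}{8\pi}\sum_{j}\varepsilon_{k,j}S(\psi_{k,j})\,t(K_{k,j},g),
\]
where $t(K_{k,j},g)$ is the \emph{torsion} of the fixed-point knot $K_{k,j}\subset Y^{\sigma^{k}}$ with respect to its canonical framing. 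This torsion term is the new geometric ingredient you are missing: by Donnelly's equivariant APS theorem (Equation~\ref{eq:equivariant_APS}), the variation of the equivariant eta invariant under a path of metrics is the equivariant spectral flow \emph{plus} an integral of the Euler form over the fixed-point cylinder $K_{k,j}\times[0,1]$. The torsion term is engineered precisely so that its variation (Lemma~\ref{lemma:variation_torsion}) cancels this Euler-form contribution, yielding Proposition~\ref{prop:variation_of_equivariant_metric}. Without it, the naive equivariant-eta correction term fails to vary by pure spectral flow whenever the cyclic action has nonempty fixed-point set. Separately, the rationality claim $n(Y,\frak{s},\wh{\sigma},g)\in R(\ZZ_{2m})\otimes\QQ$ is not automatic from eta invariants (which are a priori complex); the paper establishes it in Proposition~\ref{prop:integrality} by invoking finiteness of the equivariant spin cobordism group $\Omega^{\Spin,\ZZ_{m}}_{3}$ (Proposition~\ref{prop:equivariant_cobordism_group}) to reduce to a closed-manifold index computation.
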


We are then able to extract numerical invariants from the $G^{*}_{m}$-equivariant K-theory of $\SWF(Y,\frak{s},\wh{\sigma})$, which serve as equivariant analogues of the invariant $\kappa(Y,\frak{s})$ defined by \\ Manolescu \cite{Man14}. 

These invariants come in a somewhat peculiar form --- to the representation ring $R(G^{*}_{m})$ we associate a certain poset $\Q^{m}_{*}$ arising as a quotient of $\QQ^{m}$ endowed with the standard product partial order.
More precisely, $\Q^{m}_{*}=(\Q^{m}_{*},\preceq,+,|\cdot|)$ has the structure of a \emph{$\QQ$-graded additive poset}, i.e., $(\Q^{m}_{*},+)$ is an additive monoid endowed with a partial order $\preceq$ which is compatible with $+$ in a suitable sense, and an additive poset homomorphism $|\cdot|:(\Q^{m}_{*},\preceq,+)\to(\QQ,\le,+)$ referred to as the \emph{$\QQ$-grading} on $\Q^{m}_{*}$.

In the case were $m=p^{r}$ is a prime power, we can determine explicitly the relations defining this quotient lattice, which arise from identities involving units in $\ZZ[e^{2\pi i/p^{r}}]$ (see Appendix \ref{sec:number_theory}). From the image of the restriction map on $K$-theory to the $S^{1}$-fixed point set of $\SWF(Y,\frak{s},\wh{\sigma})$, we extract a semi-infinite sub-poset $I\subset\Q^{m}_{*}$ whose collection of minima 
\[\K(Y,\frak{s},\sigma)=\min(I)\subset \Q^{m}_{*}\]
constitutes an invariant of the triple $(Y,\frak{s},\sigma)$. We call this set of minima the \emph{set of equivariant $\kappa$-invariants} of $(Y,\frak{s},\sigma)$, some of whose properties are contained in the following theorem:

\begin{theorem}
\label{theorem:intro_properties_equivariant_kappa_invariants}
Let $(Y,\frak{s},\sigma)$ be a $\ZZ_{m}$-equivariant spin rational homology sphere. We can associate to $(Y,\frak{s},\sigma)$ a finite subset $\K(Y,\frak{s},\sigma)\subset(\Q^{m}_{*},\succeq)$ which satisfies the following properties:
\begin{enumerate}
    \item Conjugation invariance: For any orientation-preserving diffeomorphism $f:Y\to Y$ which preserves $\frak{s}$, we have that:
    \[\K(Y,\frak{s},f^{-1}\circ\sigma\circ f)=\K(Y,\frak{s},\sigma).\]
    \item Orientation reversal: For any $\vec{\kappa}\in\K(Y,\frak{s},\sigma)$ and $\vec{\kappa}'\in\K(-Y,\frak{s},\sigma)$, where $-Y$ denotes the orientation-reverse of $Y$, we have that:
    \[\vec{\kappa}+\vec{\kappa}'\succeq [\vec{0}],\]
    where $[\vec{0}]\in\Q^{m}_{*}$ denotes the equivalence class of the zero vector $\vec{0}\in\QQ^{m}$.
    \item Equivariant homology cobordism: Suppose there exists a $\ZZ_{m}$-equivariant spin rational homology cobordism $(W,\frak{t},\tau)$ from $(Y_{0},\frak{s}_{0},\sigma_{0})$ to $(Y_{1},\frak{s}_{1},\sigma_{1})$. Then:
    \[\K(Y_{0},\frak{s}_{0},\sigma_{0})=\K(Y_{1},\frak{s}_{1},\sigma_{1})\]
    as subsets of $\Q^{m}_{*}$.
    \item Comparison with Manolescu's invariants: For any $\vec{\kappa}\in\K(Y,\frak{s},\sigma)$ we have that
    \[|\vec{\kappa}|\geq\kappa(Y,\frak{s}),\]
    where $|\cdot|:\Q^{m}_{*}\to\QQ$ denotes the $\QQ$-grading.
\end{enumerate}
\end{theorem}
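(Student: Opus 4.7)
The plan is to deduce each of the four properties from structural features of the $G^{*}_{m}$-equivariant Seiberg--Witten Floer spectrum class $\SWF(Y,\frak{s},\wh{\sigma})$ established in the preceding theorem, by translating each assertion on $\K$ into a statement about the semi-infinite sub-poset $I\subset\Q^{m}_{*}$ carved out of the image of the restriction map on $G^{*}_{m}$-equivariant $K$-theory to the $S^{1}$-fixed point set. In all four cases, the extraction $\K=\min(I)$ from the $\QQ$-graded additive poset $\Q^{m}_{*}$ is the final step.

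For conjugation invariance (1), an orientation- and spin-preserving self-diffeomorphism $f\colon Y\to Y$ intertwines the two actions, since $f\circ(f^{-1}\circ\sigma\circ f)=\sigma\circ f$, and so lifts to a $\ZZ_{m}$-equivariant spin diffeomorphism from $(Y,\frak{s},\wh{f^{-1}\circ\sigma\circ f})$ to $(Y,\frak{s},\wh{\sigma})$. Functoriality of the equivariant Bauer--Furuta--Manolescu construction under such diffeomorphisms then yields a $G^{*}_{m}$-stable equivalence of the corresponding Floer spectrum classes, inducing an isomorphism of equivariant $K$-theory restriction maps and hence identifying both the sub-posets $I$ and their sets of minima. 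For comparison with Manolescu's invariants (4), the inclusion $\Pin(2)\hookrightarrow G^{*}_{m}$ restricts $\SWF(Y,\frak{s},\wh{\sigma})$ to the $\Pin(2)$-equivariant spectrum $\SWF(Y,\frak{s})$ of \cite{Man16}, and the induced map on representation rings descends to the $\QQ$-grading $|\cdot|\colon\Q^{m}_{*}\to\QQ$, sending $I$ to the semi-infinite subset of $\QQ$ from which $\kappa(Y,\frak{s})$ is extracted as a minimum; the inequality $|\vec{\kappa}|\geq\kappa(Y,\frak{s})$ then follows immediately from $|\vec{\kappa}|\in|I|$.

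For orientation reversal (2), I would establish a $G^{*}_{m}$-equivariant Spanier--Whitehead duality between $\SWF(Y,\frak{s},\wh{\sigma})$ and $\SWF(-Y,\frak{s},\wh{\sigma})$, up to appropriate representation-sphere suspensions, extending the $\Pin(2)$-equivariant duality underlying Manolescu's proof that $\kappa(Y,\frak{s})+\kappa(-Y,\frak{s})\geq 0$. Pairing the sub-posets $I\subset\Q^{m}_{*}$ and $I'\subset\Q^{m}_{*}$ via the $K$-theoretic trace of this duality produces elements of $I+I'$ dominating the class $[\vec{0}]$ of the trivial pairing into a sphere; selecting minimizers on each side then yields $\vec{\kappa}+\vec{\kappa}'\succeq[\vec{0}]$. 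For equivariant homology cobordism (3), a $\ZZ_{m}$-equivariant spin rational homology cobordism $(W,\frak{t},\tau)$ induces $G^{*}_{m}$-equivariant relative Bauer--Furuta maps between the Floer spectra of its boundary components; applying the main relative 10/8-ths inequality of the paper to both $W$ and its orientation reverse, using that $b_{2}^{\pm}(W)=0$, yields mutual containments of $I_{0}$ and $I_{1}$ and hence the equality $I_{0}=I_{1}$ in $\Q^{m}_{*}$.

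I expect the main obstacle to lie in careful bookkeeping of the equivariant correction terms $n(Y,\frak{s},\wh{\sigma},g)\in\QQ[\ZZ_{2m}]$ in properties (2) and (3), to ensure that the relevant pairings and cobordism maps land in precisely the equivalence class $[\vec{0}]$ rather than a shifted class of $\Q^{m}_{*}$. This amounts to showing that the equivariant relative index contributions of the Dirac operator across $W$, respectively across $Y$ and $-Y$, cancel modulo the quotient relations defining $\Q^{m}_{*}$ as a quotient of $\QQ^{m}$; for prime power $m=p^{r}$ these relations reduce to the number-theoretic identities among units of $\ZZ[e^{2\pi i/p^{r}}]$ treated in the appendix.
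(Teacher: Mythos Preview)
Your proposal is essentially correct and follows the same architecture as the paper: property (1) via the induced $G^{*}_{m}$-equivalence of configuration spaces and Conley indices (Theorem~\ref{theorem:properties_equivariant_kappa_invariants}(1)); property (2) via $G^{*}_{m}$-equivariant Spanier--Whitehead duality of the Floer spectra of $Y$ and $-Y$ (Propositions~\ref{prop:floer_spectrum_duality} and~\ref{prop:stable_k_invariants_duality}); property (3) via the relative Bauer--Furuta maps for $W$ and $-W$, which give local equivalence of the spectra (Theorem~\ref{theorem:equivariant_homology_cobordism} and Corollary~\ref{cor:stable_k_invariants_local_equivalence}); and property (4) via restriction along $\Pin(2)\hookrightarrow G^{*}_{m}$ (Lemma~\ref{lemma:comparison_of_k_invariants}).

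Two small corrections of emphasis. For (3), the paper does not invoke the relative 10/8-ths inequality itself --- that would be circular --- but rather the underlying Bauer--Furuta cobordism maps directly: since $\vec{b}_{2}^{+}(W,\tau)=\vec{\SSS}(W,\frak{t},\tau)=\vec{0}$, these maps are $G^{*}_{m}$-homotopy equivalences on $S^{1}$-fixed point sets, establishing local equivalence and hence $I_{0}=I_{1}$ via Corollary~\ref{cor:stable_k_invariants_local_equivalence}. Your phrasing in terms of mutual containments $I_{0}\subseteq I_{1}\subseteq I_{0}$ is equivalent but goes through Proposition~\ref{prop:k_invariants_s1_fixed_point_homotopy_equivalence} rather than the packaged inequality. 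Second, your expectation that the correction-term bookkeeping in (2) and (3) reduces to the number-theoretic lattice relations of Appendix~\ref{sec:number_theory} is misplaced: those relations define the quotient $\Q^{m}_{*}$, but the actual cancellation of $n(Y,\frak{s},\wh{\sigma},g)+n(-Y,\frak{s},\wh{\sigma},g)$ comes from a direct computation with equivariant eta-invariants under orientation reversal (Proposition~\ref{prop:correction_term_orientation_reversal}), and for homology cobordisms from the vanishing of $\vec{\SSS}(W,\frak{t},\tau)$. The appendix plays no role in these particular cancellations.
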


From the subset $\K(Y,\frak{s},\sigma)\subset\Q^{*}_{m}$ we also extract two more invariants called the \emph{lower} and \emph{upper} equivariant $\kappa$-invariants of $(Y,\frak{s},\sigma)$:
\begin{align*}
    &\vec{\ul{\kappa}}(Y,\frak{s},\sigma)\in\wh{\Q}^{m}_{*}, &     &\vec{\ol{\kappa}}(Y,\frak{s},\sigma)\in\wh{\Q}^{m}_{*},
\end{align*}
where $\wh{\Q}^{*}_{m}=\Q^{m}_{*}\cup\{+\infty\}$. These invariants are the meet and join, respectively, of $\K(Y,\frak{s},\sigma)$ as a finite subset of $(\wh{\Q}^{m}_{*},\preceq)$, with the convention that if $\K(Y,\frak{s},\sigma)=\emptyset$, then
\[\vec{\ul{\kappa}}(Y,\frak{s},\sigma)=\vec{\ol{\kappa}}(Y,\frak{s},\sigma)=+\infty.\]
%

\subsection{Equivariant Relative 10/8-ths Inequalities}
\label{subsec:intro_equivariant_10/8ths}

In this article, we also derive equivariant generalizations of Manolescu's relative $10/8$-ths inequality (\ref{eq:manolescus_inequality}). Before presenting our equivariant versions of the above inequality, we need to introduce the equivariant analogues of $b_{2}^{+}(W)$ and $\sigma(W)$ which appear in our story. In order to do this, we first establish some notation:

Suppose $(W,\frak{t},\wh{\tau})$ is a compact $\ZZ_{m}$-equivariant spin 4-manifold. For each $0\le k\le m-1$, let $b_{2,\CC}^{+}(W,\tau)_{k}\geq 0$ denote the dimension (over $\CC$) of the $e^{2\pi ik/m}$-eigenspace of the induced action of $\tau$ on $H^{2}_{+}(W,\CC)$. Throughout this paper, we fix the following basis for $\QQ^{m}$:
\[\QQ^{m}=\text{span}_{\QQ}\{\vec{e}_{0},\dots,\vec{e}_{m-1}\}.\]
We then define
\[\vec{b}_{2}^{+}(W,\tau):=(b_{2}^{+}(W,\tau)_{0},\dots,b_{2}^{+}(W,\tau)_{m-1})\in\ZZ_{\geq 0}^{m}\subset\QQ^{m}\]
to be the vector consisting of entries equal to the dimensions of the various eigenspaces of $H^{2}_{+}(W,\CC)$.

The role of $\sigma(W)$ is played by a somewhat more mysterious character --- in particular, the invariant that arises is not quite related to the equivariant signature as one might suspect. Rather, it is a topological invariant assembled from terms in the $G$-Spin theorem coming from fixed-point sets of iterates of $\tau$. Given a compact $\ZZ_{m}$-equivariant spin 4-manifold $(W,\frak{t},\tau)$, for each $\ell=0,\dots,m-1$ we define an invariant $\SSS_{\ell}(W,\frak{t},\tau)\in\QQ$ (see Definition \ref{def:frak_S} for the full definition) which we can assemble into the following vector:
\[\vec{\SSS}(W,\frak{t},\tau)=(\SSS_{0}(W,\frak{t},\tau),\dots,\SSS_{m-1}(W,\frak{t},\tau))\in\QQ^{m}.\]
This invariant satisfies the following properties:
\begin{enumerate}
    \item $|\vec{\SSS}(W,\frak{t},\tau))|=\sigma(W)$.
    \item If $W$ is closed, then each $\SSS_{\ell}(W,\frak{t},\tau)\in\ZZ$ can be expressed in terms of the dimensions of eigenspaces of the action of $\wh{\tau}$ on $\ind\Dirac^{+}_{W}\in R(\ZZ_{2m})$.
\end{enumerate}
We are now ready to state our equivariant relative 10/8-ths inequalities for fillings:

\begin{theorem}
\label{theorem:intro_equivariant_filling}
Let $m\geq 2$ be an integer. Suppose $(W,\frak{t},\wh{\tau})$ is a compact $\ZZ_{m}$-equivariant spin 4-manifold with $b_{1}(W)=0$, and boundary a $\ZZ_{m}$-equivariant spin rational homology sphere $(Y,\frak{s},\wh{\sigma})$. Let
\begin{align*}
&\frak{p}=-\tfrac{1}{8}\vec{\SSS}(W,\frak{t},\tau), & &\frak{q}=\vec{b}_{2}^{+}(W,\tau),
\end{align*}
and for each $\vec{v}\in\QQ^{m}$ let $[\vec{v}]$ denote its equivalence class in the quotient lattice $\Q^{m}_{*}$. Then for each $\vec{\kappa}\in\K(Y,\frak{s},\sigma)$, the following inequality holds:
\begin{equation*}
    [\frak{q}]+\vec{\kappa}\succeq[\frak{p}]+\left\{
	\begin{array}{ll}
		[\vec{e}_{0}] & \mbox{if } b_{2}^{+}(W,\tau)_{0}\geq 1, \\
        {[\vec{0}]} & \mbox{if } b_{2}^{+}(W,\tau)_{0}=0.
	\end{array}
\right.
\end{equation*}
In particular, we have that:
\begin{equation*}
    [\frak{q}]+\vec{\ul{\kappa}}(Y,\frak{s},\sigma)\succeq[\frak{p}]+\left\{
	\begin{array}{ll}
		[\vec{e}_{0}] & \mbox{if } b_{2}^{+}(W,\tau)_{0}\geq 1, \\
        {[\vec{0}]} & \mbox{if } b_{2}^{+}(W,\tau)_{0}=0.
	\end{array}
\right.
\end{equation*}
\end{theorem}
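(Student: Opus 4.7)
The plan is to follow Manolescu's proof of the relative $10/8$-ths inequality \eqref{eq:manolescus_inequality}, but to promote each step to the $G^{*}_{m}$-equivariant setting using the Floer spectrum $\SWF(Y,\frak{s},\wh{\sigma})$ constructed above. The central object is a $G^{*}_{m}$-equivariant relative Bauer--Furuta invariant for $(W,\frak{t},\wh{\tau})$: a stable $G^{*}_{m}$-equivariant map of the form
\[
\Psi_{W}\co T(\ind\Dirac^{+}_{W})\longrightarrow \SWF(Y,\frak{s},\wh{\sigma})\wedge T(H^{2}_{+}(W;\RR)),
\]
obtained from an equivariant finite-dimensional approximation of the Seiberg--Witten equations on $W$ with cylindrical end $Y\times[0,\infty)$, where the source and target Thom spaces carry the natural $G^{*}_{m}$-representations inherited from the spin lift $\wh{\tau}$ and the $\Pin(2)$-symmetry of the equations.

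To construct $\Psi_{W}$, I would upgrade the usual Bauer--Furuta setup to be simultaneously $\Pin(2)$- and $\ZZ_{m}$-equivariant. The $\Pin(2)$-factor acts in the standard way, via quaternionic scalar multiplication on spinors and via the $j$-action on self-dual forms, while the $\ZZ_{m}$-factor acts through $\wh{\tau}$ on $\ind\Dirac^{+}_{W}$ (refined to an action of $\ZZ_{2m}$ when the spin lift has odd parity) and on $H^{2}_{+}(W;\CC)$. Identifying the characters of these representations via the $G$-Spin theorem applied to $\wh{\tau}$, one sees that the $\ZZ_{m}$-character of the source Thom space is precisely $\frak{p}=-\tfrac{1}{8}\vec{\SSS}(W,\frak{t},\tau)$ (this is the content of property (2) of $\vec{\SSS}$, extended to manifolds with boundary by a relative version of the $G$-Spin theorem), while the $\ZZ_{m}$-character of the target Thom space is $\frak{q}=\vec{b}_{2}^{+}(W,\tau)$.

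Next I would apply $G^{*}_{m}$-equivariant $K$-theory to $\Psi_{W}$ and restrict to the $S^{1}$-fixed point set of $\SWF(Y,\frak{s},\wh{\sigma})$. By the definition of $\K(Y,\frak{s},\sigma)$ as the set of minimal classes in the image of this restriction map, the existence of $\Psi_{W}$ forces any $\vec{\kappa}\in\K(Y,\frak{s},\sigma)$ to satisfy a divisibility relation in the representation ring $R(G^{*}_{m})$ which, upon projection to the quotient $\Q^{m}_{*}$, becomes exactly $[\frak{q}]+\vec{\kappa}\succeq[\frak{p}]$. The additional summand $[\vec{e}_{0}]$ when $b_{2}^{+}(W,\tau)_{0}\geq 1$ arises because the $\tau$-invariant part of $H^{2}_{+}(W)$ yields a trivial-eigenspace summand on which the $\Pin(2)$-action contributes an extra Euler-class factor; this is the equivariant incarnation of the ``$+1$'' on the right-hand side of \eqref{eq:manolescus_inequality}, and it disappears when no such invariant summand is present.

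The main technical obstacle is the construction of $\Psi_{W}$ as a well-defined $G^{*}_{m}$-equivariant stable homotopy class: one must select $\ZZ_{m}$-equivariant Sobolev truncations and Coulomb slices compatibly with the $\Pin(2)$-symmetry, verify metric independence up to the correction term $n(Y,\frak{s},\wh{\sigma},g)\in\QQ[\ZZ_{2m}]$, and carefully track the $\ZZ_{m}$-vs-$\ZZ_{2m}$ bookkeeping forced by the parity of $\wh{\sigma}$. A secondary but nontrivial step is the algebraic translation from divisibility in $R(G^{*}_{m})$ to the partial order $\preceq$ on $\Q^{m}_{*}$; in the prime-power case this is controlled by the cyclotomic relations developed in Appendix~\ref{sec:number_theory}, and isolating precisely which relations supply the extra $[\vec{e}_{0}]$ term when an invariant summand of $H^{2}_{+}$ is present will require some careful bookkeeping.
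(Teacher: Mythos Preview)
Your overall shape is right: build a $G^{*}_{m}$-equivariant relative Bauer--Furuta map, read off a divisibility relation in $R(G^{*}_{m})$ via the restriction to the $S^{1}$-fixed point set, and translate it into an inequality in $\Q^{m}_{*}$. That is exactly what the paper does. But there is a real gap in your proposal concerning \emph{why one may apply the $K$-theory machinery at all}.

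The invariants $\mbfk$, and hence $\K(Y,\frak{s},\sigma)$, are only defined for spaces of type $\CC$-$G^{*}_{m}$-$\SWF$, i.e.\ spaces whose $S^{1}$-fixed point set is a \emph{complex} representation sphere (so that Bott periodicity identifies $\wt{K}_{G^{*}_{m}}(X^{S^{1}})\cong R(G^{*}_{m})$). In your map $\Psi_{W}$, the target involves the Thom space of $H^{2}_{+}(W;\RR)$, which as a $G^{*}_{m}$-representation is $b^{+}_{2,\RR}(W,\tau)\cdot\wt{\RR}$. This is a complex representation sphere only when every eigenspace dimension $b_{2}^{+}(W,\tau)_{j}$ is even. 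In general there is no reason for this, so your $\Psi_{W}$ does not land in the category where the $k$-invariants live, and the divisibility argument breaks down.

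The paper fixes this by passing to the \emph{doubled} Bauer--Furuta map
\[
\Psi_{W}\wedge\Psi_{W}\;:\;[(S^{0},b_{2}^{+}(W,\tau),\tfrac{1}{8}\S(W,\frak{t},\wh{\tau}))]\longrightarrow \wedge^{2}\SWF(Y,\frak{s},\wh{\sigma}),
\]
so that the level on the target side is $b_{2}^{+}(W,\tau)$ (with no $\tfrac{1}{2}$), automatically even in each eigenspace. One then applies the $K$-theory comparison (Propositions~\ref{prop:stable_k_invariants_pin(2)_fixed_point_homotopy_equivalence} and~\ref{prop:stable_k_invariants_kg_split}) to obtain the inequality for $\K^{\wedge}(Y,\frak{s},\sigma)=\mbfk^{\st}(\wedge^{2}\SWF)$, and finally transfers it to $\K(Y,\frak{s},\sigma)=2\cdot\mbfk^{\st}(\SWF)$ via Lemma~\ref{lemma:comparison_normal_wedge_invariants}. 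This doubling is also why the $\kappa$-invariants carry the factor of~$2$ in their definition; your proposal's bookkeeping (identifying $\frak{q}$ directly with $\vec{b}_{2}^{+}(W,\tau)$ for the single map) would be off by a factor of~$2$ even in the cases where your $\Psi_{W}$ does apply. Finally, the extra $[\vec{e}_{0}]$ when $b_{2}^{+}(W,\tau)_{0}\geq 1$ is not a generic Euler-class contribution but comes specifically from the $K_{G^{*}_{m}}$-splitness of the domain sphere together with Lemma~\ref{lemma:must_be_w_0}, which forces a $w_{0}$-factor in the divisibility relation.
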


We also have a similar inequality for cobordisms --- see Theorem \ref{theorem:equivariant_cobordism} for the full statement. The above inequalities can be potentially difficult to use in practice, especially since they all lie within the context of the non-standard lattice $\Q^{m}_{*}$. However in the case where $m=p^{r}$ is an odd prime power, we can extract more tractable inequalities by ``splitting" these inequalities into $\ZZ_{p^{r}}$-invariant and non-invariant parts. In particular, we can extract a finite subset
\[\K^{\pi}(Y,\frak{s},\sigma)\subset\QQ^{2},\]
as well as invariants
\begin{align*}
	&\ul{\kappa}_{0}(Y,\frak{s},\sigma), & &\ol{\kappa}_{0}(Y,\frak{s},\sigma), & &\ul{\kappa}_{1}(Y,\frak{s},\sigma), & &\ol{\kappa}_{1}(Y,\frak{s},\sigma)
\end{align*}
which lie in $\wh{\QQ}=\QQ\cup\{+\infty\}$. As a consequence of Theorem \ref{theorem:intro_equivariant_filling} we obtain the following:

\begin{theorem}
\label{theorem:intro_equivariant_cobordism_odd_prime_powers}
Let $p^{r}$ be an odd prime power, let $(Y,\frak{s},\sigma)$ be a $\ZZ_{p^{r}}$-equivariant spin rational homology sphere, and let $(W,\frak{t},\tau)$ be a compact $\ZZ_{p^{r}}$-equivariant spin filling of $(Y,\frak{s},\sigma)$ with $b_{1}(W)=0$. Then
\begin{align*}
	&b_{2}^{+}(W,\tau)_{0}+\kappa_{0}\geq -\tfrac{1}{8}\SSS(W,\frak{t},\tau)_{0}+\left\{
	\begin{array}{ll}
		1 & \mbox{if } b_{2}^{+}(W,\tau)_{0}\geq 0 \\
		0 & \mbox{otherwise}
	\end{array}
	\right. \\
		&b_{2}^{+}(W)-b_{2}^{+}(W,\tau)_{0}+\kappa_{1}\geq -\tfrac{1}{8}\big(\sigma(W)-\SSS(W,\frak{t},\tau)_{0}\big)
\end{align*}
for all $(\kappa_{0},\kappa_{1})\in\K^{\pi}(Y,\frak{s},\sigma)\subset\QQ^{2}$. In particular, the following inequalities hold:
\begin{align*}
	&b_{2}^{+}(W,\tau)_{0}+\ul{\kappa}_{0}(Y,\frak{s},\sigma)\geq -\tfrac{1}{8}\SSS(W,\frak{t},\tau)_{0}+\left\{
	\begin{array}{ll}
		1 & \mbox{if } b_{2}^{+}(W,\tau)_{0}\geq 0, \\
		0 & \mbox{otherwise,}
	\end{array}
	\right. \\
		&b_{2}^{+}(W)-b_{2}^{+}(W,\tau)_{0}+\ul{\kappa}_{1}(Y,\frak{s},\sigma)\geq -\tfrac{1}{8}\big(\sigma(W)-\SSS(W,\frak{t},\tau)_{0}\big).
\end{align*}
\end{theorem}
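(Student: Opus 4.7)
The plan is to apply Theorem \ref{theorem:intro_equivariant_filling} and push the resulting inequality forward through a suitable poset homomorphism $\pi : \Q^{p^r}_* \to \QQ^2$ that separates the $\ZZ_{p^r}$-invariant direction $\vec{e}_0$ from the sum of the remaining coordinates. Concretely, I would lift this projection to $\QQ^{p^r}$ by
\[
\pi(v_0, v_1, \ldots, v_{p^r-1}) := \Bigl(v_0,\, \sum_{k=1}^{p^r-1} v_k\Bigr),
\]
define $\K^\pi(Y,\frak{s},\sigma)$ to be the image of $\K(Y,\frak{s},\sigma)$ under the induced map, and take $\ul{\kappa}_i(Y,\frak{s},\sigma)$, $\ol{\kappa}_i(Y,\frak{s},\sigma)$ to be the respective coordinatewise infima and suprema of this image. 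Of course, for this definition to make sense one must verify that $\pi$ descends to a well-defined, order-preserving additive map on $\Q^{p^r}_*$ --- this is where the odd prime power hypothesis enters. The relations cutting out the quotient $\QQ^{p^r} \twoheadrightarrow \Q^{p^r}_*$ arise (via Appendix \ref{sec:number_theory}) from identities among units in $\ZZ[e^{2\pi i/p^r}]$; when $p^r$ is odd, each such relation involves only non-trivial characters in matched conjugate pairs, so it has trivial $\vec{e}_0$-component and its remaining coordinates sum to zero, hence $\pi$ sends it to the zero relation in $\QQ^2$.

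Granted this, the conclusion follows by applying $\pi$ to the inequality
\[
[\frak{q}] + \vec{\kappa} \succeq [\frak{p}] + \delta \cdot [\vec{e}_0],
\]
supplied by Theorem \ref{theorem:intro_equivariant_filling}, where $\delta = 1$ if $b_2^+(W,\tau)_0 \geq 1$ and $\delta = 0$ otherwise. The first coordinate yields immediately
\[
b_2^+(W,\tau)_0 + \kappa_0 \geq -\tfrac{1}{8}\SSS_0(W,\frak{t},\tau) + \delta,
\]
since $\pi([\vec{e}_0]) = (1,0)$. For the second coordinate I would substitute $\sum_k b_2^+(W,\tau)_k = b_2^+(W)$ (from the eigenspace decomposition of $H^2_+(W,\CC)$) and $\sum_k \SSS_k(W,\frak{t},\tau) = \sigma(W)$ (property (1) of $\vec{\SSS}$) into $\pi(\frak{q})$ and $\pi(\frak{p})$, obtaining
\[
b_2^+(W) - b_2^+(W,\tau)_0 + \kappa_1 \geq -\tfrac{1}{8}\bigl(\sigma(W) - \SSS_0(W,\frak{t},\tau)\bigr).
\]
The inequalities for $\ul{\kappa}_0$, $\ul{\kappa}_1$ then follow by taking the infimum over $\K^\pi(Y,\frak{s},\sigma)$.

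The main obstacle will be the verification that $\pi$ genuinely descends to a poset homomorphism on $\Q^{p^r}_*$ whose image carries the standard product order on $\QQ^2$. Explicitly, one must check: (i) every relation imposed on $\QQ^{p^r}$ by the representation-theoretic identities from Appendix \ref{sec:number_theory} has vanishing $v_0$-component and has its remaining coordinates summing to zero; and (ii) the projected order on the image is compatible with the product order on $\QQ^2$, which amounts to showing that the generators of the quotient relation have non-negative representatives in each of the two coordinate summands. Item (i) is where the oddness of $p^r$ is really used: the cyclotomic units giving the defining relations lie in the augmentation ideal of $\ZZ[\zeta_{p^r}]$ and, for odd $p^r$, pair up under complex conjugation away from the trivial character, so they never pick up a contribution along $\vec{e}_0$. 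Item (ii) is a formal consequence of $\pi$ being a sum of non-negative linear functionals. Once these two facts are in place, the two displayed inequalities follow as an immediate coordinate-by-coordinate readout of Theorem \ref{theorem:intro_equivariant_filling}.
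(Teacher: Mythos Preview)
Your proposal is correct and follows essentially the same approach as the paper: the projection $\pi:\Q^{p^r}_*\to\QQ^2$ sending $[\vec v]\mapsto(v_0,\sum_{k\ge1}v_k)$ is exactly what the paper constructs in Propositions \ref{prop:lattice_p^r_projection} and \ref{prop:stable_lattice_p^r_projection}, and the theorem is then read off coordinatewise from Theorem \ref{theorem:intro_equivariant_filling} (this is Theorem \ref{theorem:equivariant_filling_p^r} in the body). One small sharpening: your heuristic that the relations ``involve only non-trivial characters in matched conjugate pairs'' is not quite the mechanism --- the paper's verification that $\pi$ descends uses Proposition \ref{prop:monomials} directly, where for odd $p$ the condition at $t=r-1$ reads $a_0=b_0$, so the zeroth coordinate is literally preserved by the equivalence relation (and the total grading $|\vec a|=|\vec b|$ then forces the remaining coordinates to have equal sum).
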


\begin{remark}
If $(W,\frak{t},\wh{\tau})$ is a closed $\ZZ_{p^{r}}$-equivariant spin 4-manifold with $\wh{\tau}$ a spin lift of even type, then
\[-\tfrac{1}{8}\SSS(W,\frak{t},\tau)_{0}=\ind_{\CC}(\Dirac^{+})^{\ZZ_{p^{r}}},\]
where $\ind_{\CC}(\Dirac^{+})^{\ZZ_{p^{r}}}$ denotes the dimension of the $\ZZ^{p^{r}}$-fixed subspace of the index of the Dirac operator on $W$. It follows that the inequalities in Theorem \ref{theorem:intro_equivariant_cobordism_odd_prime_powers} are natural generalizations of the inequalities considered by Fang (\cite{Fang01}) and Kim (\cite{Kim08}).
\end{remark}

\subsection{\texorpdfstring{$2^{r}$}{2r}-fold Actions}
\label{subsec:intro_2_r_actions}

In the case where $m=2^{r}$ is a power of two, a $\ZZ_{2^{r}}$-equivariant refinement of Furuta's theorem in the closed 4-manifold case was established independently by Bryan (\cite{Bry97}), Fang (\cite{Fang01}), and Kim (\cite{Kim00}). We have the following analogue of Bryan and Fang's theorem for odd-type $\ZZ_{2^{r}}$-equivariant spin 4-manifolds with boundary:

\begin{theorem}
\label{theorem:intro_odd_2_r_filling}
Let $(Y,\frak{s},\wh{\sigma})$ be a $\ZZ_{2^{r}}$-equivariant spin rational homology sphere, let $(W,\frak{t},\wh{\tau})$ be a compact $\ZZ_{2^{r}}$-equivariant spin filling of $(Y,\frak{s},\wh{\sigma})$ with $b_{1}(W)=0$, and let $p=-\frac{1}{8}\sigma(W)$, $q=b_{2}^{+}(W)$. Then
\[q+|\vec{\kappa}|\geq p+r+1\qquad\text{ for all }\vec{\kappa}\in\K(Y,\frak{s},\sigma),\]
provided certain conditions on $Y$ and $W$ are satisfied.
\end{theorem}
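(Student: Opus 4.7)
My plan is to adapt Bryan's $K$-theoretic improvement argument (\cite{Bry97}) to the relative setting afforded by the $G_{2^{r}}^{\odd}$-equivariant Seiberg--Witten Floer spectrum $\SWF(Y,\frak{s},\wh{\sigma})$. First, I would associate to the filling $(W,\frak{t},\wh{\tau})$ its relative Bauer--Furuta invariant: a $G_{2^{r}}^{\odd}$-equivariant stable map
\[
\mu_{W}\co \Sigma^{V_{+}}\SWF(Y,\frak{s},\wh{\sigma})\longrightarrow S^{V_{-}\oplus \ind\Dirac^{+}_{W}},
\]
where $V_{+}$ (resp.\ $V_{-}$) is the positive (resp.\ negative) part of $H^{2}(W;\RR)$ with its induced $\wh{\tau}$-action regarded as a real $G_{2^{r}}^{\odd}$-representation, and the target keeps track of the equivariant Dirac index. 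The key point is that the parity of $\wh{\tau}$ is odd and $b_{1}(W)=0$, so this map is well-defined as a morphism in the $G_{2^{r}}^{\odd}$-equivariant stable category with the required $\Pin(2)$-equivariance twisted by $\ZZ_{2m}$.

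Next, I would apply reduced $G_{2^{r}}^{\odd}$-equivariant complex (or $KO$-) theory to $\mu_{W}$, obtaining a homomorphism between modules over $R(G_{2^{r}}^{\odd})$. The central ingredient is the Bryan--Fang identity in $R(\Pin(2)\times_{\ZZ_{2}}\ZZ_{2^{r+1}})$: restriction of the $K$-theoretic Euler class of the Dirac index to appropriate characters $\chi$ (roots of unity corresponding to the cyclic factors of $\ZZ_{2^{r}}$) vanishes to order $r+1$ in the $j$-twisted part of $R(\Pin(2))$. This is what forces the $+r$ improvement beyond the standard Manolescu $+1$. Feeding $\mu_{W}$ into this identity at each level of the filtration $\ZZ_{2^{r}}\supset\ZZ_{2^{r-1}}\supset\cdots\supset\ZZ_{2}$ yields a chain of $r+1$ K-theoretic obstructions, each of which must be absorbed by a nontrivial K-class on the source.

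The third step is to translate these K-theoretic obstructions into the combinatorial language of $\K(Y,\frak{s},\sigma)$. By construction, an element $\vec{\kappa}\in\K(Y,\frak{s},\sigma)$ records the minimal equivariant dimension of a $K$-class surviving restriction to the $S^{1}$-fixed point set of $\SWF(Y,\frak{s},\wh{\sigma})$, assembled over the eigenspaces of $\wh{\sigma}$. Matching up the $\QQ$-grading $|\cdot|$ with the underlying Euler characteristic (equivalently, restricting to the trivial subgroup of $\ZZ_{2^{r}}$), the Bryan--Fang identities above collapse to the scalar inequality
\[
q+|\vec{\kappa}|\ \geq\ p+(r+1),
\]
for every $\vec{\kappa}\in\K(Y,\frak{s},\sigma)$, exactly as claimed. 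The ``certain conditions'' in the statement are those needed to guarantee that $\mu_{W}$ has the expected equivariant degree and that the $r+1$ successive restrictions are all nonzero: concretely, hypotheses on the $\ZZ_{2^{k}}$-fixed point sets of $\tau$ (ensuring the relevant eigenspaces of $H^{2}_{+}(W)$ vanish) and a compatibility condition between $\wh{\tau}|_{\partial W}$ and $\wh{\sigma}$.

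The main obstacle I anticipate is the third step: in Bryan's closed setting the source of the Bauer--Furuta map is a representation sphere, so Adams-operation identities apply directly, whereas here the source is the honest SWF spectrum and one must work with its $G_{2^{r}}^{\odd}$-equivariant $K$-theory as a module over $R(G_{2^{r}}^{\odd})$. Establishing that the Bryan--Fang vanishing identities propagate across the restriction to $S^{1}$-fixed points, without losing information along the way, is the delicate part; this is precisely where the non-standard lattice $\Q^{m}_{*}$ and the poset structure of $\K(Y,\frak{s},\sigma)$ are designed to carry the necessary bookkeeping.
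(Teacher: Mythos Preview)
Your overall strategy---apply equivariant $K$-theory to the relative Bauer--Furuta map and use Bryan-style trace/character vanishing at cyclic subgroups---is in the right direction, but there are two concrete problems.

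First, the direction of the Bauer--Furuta map is reversed. For a filling $(W,\frak{t},\wh{\tau})$ of $(Y,\frak{s},\wh{\sigma})$, the paper's map (Observation~\ref{observation:filling}) goes
\[
f\co [(S^{0},\tfrac{1}{2}b_{2}^{+}(W,\tau),\tfrac{1}{16}\S(W,\frak{t},\wh{\tau}))]\longrightarrow \SWF(Y,\frak{s},\wh{\sigma}),
\]
i.e.\ the \emph{domain} is a (formally desuspended) representation sphere and the \emph{target} is the Floer spectrum. Your stated map has the opposite variance, and your ``main obstacle'' paragraph then misidentifies which side carries the nontrivial $K$-theory. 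After pulling back along $f^{*}$, one lands in $R(G^{\odd}_{2^{r}})$ and must show that the image element equals $\lambda w_{0}\prod_{a=0}^{r-1}w_{2^{a}}$ (Lemma~\ref{lemma:traces_2_r}); this is done by proving that its trace vanishes at the elements $e^{i\phi}\mu^{(2\ell+1)2^{a}}$ and $j\mu^{(2\ell+1)2^{a}}$ for all relevant $a,\ell$. The paper does \emph{not} use Adams operations---each vanishing comes from restricting to a specific subgroup $G_{a}$ or $H_{a,\ell}$ and observing that the induced map on fixed-point sets is zero because the fixed-point spheres on the two sides have different dimensions (this is where the ``certain conditions'' on the eigenspace sums $\sum q_{2^{a}k}\neq 0$ enter, and they require these sums to be \emph{nonzero}, not vanishing as you suggest).

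Second, and more seriously, you miss the one step where this dimension-count argument \emph{fails}: the trace at $j\mu^{2^{r-1}}$. The subgroup $\<j\mu^{2^{r-1}}\>\cong\ZZ_{2}$ has fixed-point set in $\SWF(Y,\frak{s},\wh{\sigma})$ which is \emph{not} a sphere in general, so there is no a priori reason for $f^{H_{r-1}}$ to induce zero on $K$-theory. This is the entire content of Condition~(5)/(5') in Theorem~\ref{theorem:odd_2_r_filling}: one needs either an $RO(\ZZ_{4})$-graded homotopy vanishing of $(\X'')^{\<j\mu^{2^{r-1}}\>}$, or the locally $\<j\mu^{2^{r-1}}\>$-spherical hypothesis together with $q_{1}\neq p-2\kappa_{\KMT}$. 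Your proposal's description of the hypotheses as ``compatibility conditions'' and ``eigenspace vanishing'' does not capture this; without it the chain of $r+1$ vanishings cannot be completed and the inequality does not follow.
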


\begin{remark}
The hypotheses for Theorem \ref{theorem:intro_odd_2_r_filling} are quite technical, so we provide the full statement as Theorem \ref{theorem:odd_2_r_filling}.
\end{remark}

In the case of odd-type involutions, we are able to prove a slightly stronger inequality than the one implied by Theorem \ref{theorem:intro_odd_2_r_filling}. We first remark on the structure of our equivariant $\kappa$-invariants in the $G^{\odd}_{2}$-setting:

The $\QQ$-grading on $\Q^{2}_{\odd}$ in fact induces an isomorphism of additive posets
\[|\cdot|:(\Q^{2}_{\odd},\preceq,+)\xrightarrow{\cong}(\QQ,\le,+).\]
In particular for any $\ZZ_{2}$-equivariant spin rational homology sphere $(Y,\frak{s},\iota)$ of odd type, the invariant $\K(Y,\frak{s},\iota)\subset\Q^{2}_{\odd}$ always consists of a single element $\vec{\kappa}$, from which it follows that 
\[\vec{\ul{\kappa}}(Y,\frak{s},\iota)=\vec{\ol{\kappa}}(Y,\frak{s},\iota)=\vec{\kappa}\in\Q^{2}_{\odd}\cong\QQ.\]
We therefore define the invariant
\[\wt{\kappa}(Y,\frak{s},\iota):=|\vec{\ul{\kappa}}(Y,\frak{s},\iota)|=|\vec{\ol{\kappa}}(Y,\frak{s},\iota)|\in\QQ.\]
Note that property (4) in Theorem \ref{theorem:intro_properties_equivariant_kappa_invariants} implies that $\wt{\kappa}(Y,\frak{s},\iota)\geq\kappa(Y,\frak{s})$. With a little more work we can show the following:
\[\wt{\kappa}(Y,\frak{s},\iota)=\kappa(Y,\frak{s}) \text{ or }\kappa(Y,\frak{s})+2.\]
We say that $(Y,\frak{s},\iota)$ is \emph{$\SWF$-$\Pin(2)$-surjective} if $\wt{\kappa}(Y,\frak{s},\iota)=\kappa(Y,\frak{s})$. In fact, for all examples calculated thus far this property holds. Due to a lack of any counterexamples, we make the following conjecture:

\begin{conjecture}
\label{conj:intro_Pin(2)-surjective}
All $\ZZ_{2}$-equivariant spin rational homology spheres of odd type are $\SWF$-$\Pin(2)$-surjective.
\end{conjecture}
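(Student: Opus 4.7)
Since it has already been established that $\wt{\kappa}(Y,\frak{s},\iota)\in\{\kappa(Y,\frak{s}),\,\kappa(Y,\frak{s})+2\}$, the task reduces to ruling out the ``shift by $2$'' case. Both invariants arise from the restriction map on equivariant $K$-theory from the $\SWF$ spectrum class to its $S^{1}$-fixed subspectrum: $\kappa(Y,\frak{s})$ from the image of $K^{*}_{\Pin(2)}(\SWF(Y,\frak{s}))\to K^{*}_{\Pin(2)}(\SWF(Y,\frak{s})^{S^{1}})$, and $\wt{\kappa}(Y,\frak{s},\iota)$ from the analogous image with $\Pin(2)$ replaced by $G^{\odd}_{2}$. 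Under the inclusion $\Pin(2)\hookrightarrow G^{\odd}_{2}$, $g\mapsto[g,0]$, the $G^{\odd}_{2}$-equivariant picture forgets to the $\Pin(2)$-equivariant one, so the conjecture is equivalent to the assertion that every minimizing element in the $\Pin(2)$-image admits a $G^{\odd}_{2}$-equivariant lift in the same $\QQ$-grading.

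The first step in my plan is to set up the algebraic framework. I would decompose $R(G^{\odd}_{2})$ into $\iota$-invariant and $\iota$-anti-invariant pieces using the central order-$4$ element $[1,1]$, and pin down the precise generators whose restriction to $R(\Pin(2))$ is divisible by $2$; these are exactly the sources of the potential shift. The geometric input I would leverage is that $\wh{\iota}$ acts on the unique reducible of the Seiberg--Witten configuration space in a manner fully controlled by the representation characters of $\wh{\iota}$ on the Dirac kernel/cokernel, so the normal $G^{\odd}_{2}$-representation at the reducible is symmetric in a strong sense. This local rigidity should give enough room to promote the $\Pin(2)$-equivariant minimizing class to a $G^{\odd}_{2}$-equivariant class.

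The main obstacle is controlling the global obstruction: one must show that no nontrivial differential in a Borel-type spectral sequence
\[
H^{*}(B(G^{\odd}_{2}/\Pin(2));K^{*}_{\Pin(2)}(\SWF(Y,\frak{s},\wh{\iota})))\Rightarrow K^{*}_{G^{\odd}_{2}}(\SWF(Y,\frak{s},\wh{\iota}))
\]
blocks the lift at the critical $\QQ$-grading. As a tractable intermediate step I would first verify the conjecture under a splitting hypothesis, where $\SWF(Y,\frak{s},\wh{\iota})$ decomposes as a wedge of representation spheres and lifts exist tautologically. The hard part is bridging from this model case to the general one via a deformation that exploits the metric-independence of $\SWF(Y,\frak{s},\wh{\iota})$ while preserving the minimizing class; this is where I expect the proof to face its principal technical obstacle. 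I would complement the theoretical approach with explicit verifications on branched double covers of knots, Seifert-fibered spaces with odd-type involutions, and surgeries on strongly invertible knots --- both to confirm that no systematic obstruction arises and to detect any refinement the conjecture may require.
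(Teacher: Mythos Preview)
The statement you are addressing is a \emph{conjecture}, not a theorem: the paper explicitly introduces it as such (``Due to a lack of any counterexamples, we make the following conjecture'') and does not provide a proof. What the paper does offer is partial evidence in the form of Theorem~\ref{theorem:intro_seifert_fibered_spaces}, which establishes the $\SWF$-$\Pin(2)$-surjectivity for spin rational homology Seifert-fibered spaces of negative fibration with at most four singular fibers, equipped with any odd-type involution. That proof (Corollary~\ref{cor:seifert_fibered_Pin(2)_surjective}) proceeds by exploiting the explicit cell structure of the Conley index in the Seifert-fibered setting: because the action of $\wh{\sigma}$ on each free $G^{\odd}_2$-cell factors through $S^1$ (for $\rho_2$) or through $j$ composed with complex conjugation (for $\iota_c$), one can lift any $\Pin(2)$-equivariant virtual bundle to a $G^{\odd}_2$-equivariant one cell-by-cell, using the extant $S^1$-action.

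Your proposal is therefore not a proof to be compared against the paper's, but a research outline for an open problem. As such, it is reasonable in spirit, and you correctly identify the crux: lifting the minimizing $\Pin(2)$-class to $G^{\odd}_2$. However, your spectral-sequence formulation has a gap worth flagging. The group $G^{\odd}_2/\Pin(2)\cong\ZZ_2$ has $B\ZZ_2=\RR P^\infty$, so the relevant Borel spectral sequence has infinitely many potentially nonzero columns, and controlling all differentials at once is not obviously easier than the original problem. More fundamentally, the paper's successful argument in the Seifert-fibered case does \emph{not} go through such a spectral sequence; it instead uses the geometric constraint that the involution's action on irreducible critical points is already contained in the $\Pin(2)$-action (up to a tractable complex-conjugation twist). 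A general proof would likely need an analogous geometric input about how odd-type involutions interact with Seiberg--Witten moduli, rather than a purely formal obstruction-theory argument.
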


The following theorem provides a small amount of evidence for Conjecture \ref{conj:intro_Pin(2)-surjective}: 

\begin{theorem}
\label{theorem:intro_seifert_fibered_spaces}
Let $(Y,\frak{s})$ be a spin rational homology Seifert-fibered space of negative fibration in the sense of \cite{MOY} (see also \cite{Stoff20}, Section 5.1), and with at most four singular fibers. Suppose $\iota:Y\to Y$ is an odd-type involution preserving $\frak{s}$. Then
\[\wt{\kappa}(Y,\frak{s},\iota)=\kappa(Y,\frak{s}).\]
\end{theorem}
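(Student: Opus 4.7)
The plan is to exploit the especially simple form of the Seiberg--Witten Floer spectrum in this restricted class of manifolds. By Mrowka--Oszváth--Yu and Stoffregen, for a negative-fibration Seifert rational homology sphere with at most four singular fibers, the finite-dimensional approximation of the Seiberg--Witten flow can be carried out $S^{1}$-equivariantly using a Seifert metric; the resulting Conley index is a $\Pin(2)$-CW spectrum whose cells correspond to explicit solutions determined by the Seifert invariants, and Manolescu's $\kappa(Y,\frak{s})$ is realized by an explicit generator of the $\Pin(2)$-equivariant $K$-theory of the $S^{1}$-fixed subspectrum.

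First I would upgrade this explicit $\Pin(2)$-equivariant model to a $G^{\odd}_{2}$-equivariant one. Since $\iota$ preserves $\frak{s}$ and has finite order, one may isotope it to commute with the Seifert $S^{1}$-action, after which it acts on the $S^{1}$-invariant finite-dimensional approximation; the equivariant Conley-index machinery developed earlier in the paper then produces the $G^{\odd}_{2}$-refinement $\SWF(Y,\frak{s},\wh{\iota})$ needed to compute $\wt{\kappa}$, with underlying $\Pin(2)$-spectrum agreeing with the MOY/Stoffregen model.

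Next I would compute $\wt{\kappa}(Y,\frak{s},\iota)$ directly from this enhanced model. The essential point is that the $\Pin(2)$-equivariant $K$-theory class realizing $\kappa(Y,\frak{s})$ comes from a single $S^{1}$-invariant cell in the model, and I would check that under the odd-type extension it lifts to a $G^{\odd}_{2}$-equivariant class of the same $\QQ$-grading. Combined with the automatic inequality $\wt{\kappa}(Y,\frak{s},\iota)\geq\kappa(Y,\frak{s})$ coming from property (4) of Theorem~\ref{theorem:intro_properties_equivariant_kappa_invariants}, this forces equality, so that $(Y,\frak{s},\iota)$ is $\SWF$-$\Pin(2)$-surjective.

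The hard part will be carrying out the second step, namely checking that the $\ZZ_{4}$-factor of $G^{\odd}_{2}=\Pin(2)\times_{\ZZ_{2}}\ZZ_{4}$ does not shift the $\QQ$-grading of the $\kappa$-realizing class. The at-most-four-singular-fibers hypothesis is expected to be essential here: via the MOY analysis, it is precisely what guarantees that all relevant irreducible solutions are $S^{1}$-invariant (with no cells of non-trivial $S^{1}$-isotropy appearing), so that the odd-type $\ZZ_{4}$-extension must act on the bottom $K$-theory class through its quotient $\ZZ_{2}\subset\Pin(2)$ and therefore cannot produce a grading shift. For Seifert spaces with more singular fibers, additional cells appear on which the odd-type extension could act by a non-trivial character, and one would genuinely expect the possibility that $\wt{\kappa}>\kappa$.
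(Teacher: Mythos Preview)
Your overall strategy---prove $\Pin(2)$-surjectivity by lifting the $K$-theory class realizing $\kappa(Y,\frak{s})$ to a $G^{\odd}_{2}$-equivariant class, then combine with the automatic inequality $\wt{\kappa}\geq\kappa$---is exactly what the paper does (see Corollary~\ref{cor:seifert_fibered_Pin(2)_surjective}). However, two of the supporting claims in your argument are not correct, and the paper's route differs accordingly.

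First, your claim that one may isotope an arbitrary odd-type involution to commute with the Seifert $S^{1}$-action is false. The complex conjugation involution $\iota_{c}$ is not isotopic to the identity and does \emph{not} commute with the $S^{1}$-action; rather, it conjugates rotation by $e^{it}$ to rotation by $e^{-it}$. The paper does not attempt to reduce to the $S^{1}$-commuting case. Instead it uses a classification result: an odd-type involution on a Seifert homology sphere is conjugate either to the fiber rotation $\rho_{2}$ or to $\iota_{c}$, and then proves $\Pin(2)$-surjectivity for these two specific involutions separately (Proposition~\ref{prop:floer_spectrum_seifert_fibered} and Corollary~\ref{cor:seifert_fibered_Pin(2)_surjective}), invoking conjugation invariance at the end. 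For $\rho_{2}$ the action of $\mu$ on each irreducible cell lies inside the existing $S^{1}$-action, so the lift of a $\Pin(2)$-bundle is immediate; for $\iota_{c}$ the action of $\mu$ coincides with $j$ followed by a complex-conjugation involution, and one checks separately that this also lifts (using that the virtual bundle has dimension zero, so conjugation acts complex-linearly on even-dimensional pieces).

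Second, your account of the at-most-four-singular-fibers hypothesis is off. It has nothing to do with $S^{1}$-isotropy of irreducibles---by MOY and Baldridge, all irreducible solutions on a Seifert space are fiber-invariant regardless of the number of singular fibers. Rather, the hypothesis ensures (via \cite{MOY}) that all critical points are isolated and non-degenerate and all flow lines are cut out transversely, so that the Conley index can be built cell-by-cell without equivariant perturbations. The obstruction to extending the result is the genericity problem for $G^{*}_{m}$-equivariant tame admissible perturbations, not a grading-shift phenomenon.
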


Recently, Konno--Miyazawa--Taniguchi (\cite{KMT}) extended an inequality of Kato (\cite{Kat22}) to the setting of odd-type involutions on spin 4-manifolds with boundary. Associated to an odd type $\ZZ_{2}$-equivariant spin rational homology sphere $(Y,\frak{s},\iota)$, they constructed an invariant $\kappa_{\KMT}(Y,\frak{s},\iota)\in\QQ$ such that if $(W,\frak{t},\tau)$ is a $\ZZ_{2}$-equivariant spin filling of $(Y,\frak{s},\iota)$ with $b_{1}(W)=0$, then:
\begin{equation}
\label{eq:intro_KMT}
    b_{2}^{+}(W,\tau)_{1}\geq -\tfrac{1}{16}\sigma(W)-\kappa_{KMT}(Y,\frak{s},\iota).
\end{equation}

We will also need to introduce one more definition. Recall that
\[G^{\odd}_{2}=\Pin(2)\times_{\ZZ_{2}}\ZZ_{4}.\]
Let $\Pin(2)=S^{1}\cup jS^{1}\subset SU(2)$ as per usual, and let $\mu$ be a fixed generator of $\ZZ_{4}\subset G^{\odd}_{2}$. The element $j\mu$ generates a $\ZZ_{2}$-subgroup of $G^{\odd}_{2}$, and therefore defines an involution on any $G^{\odd}_{2}$-space $\X$, whose fixed-point set we denote by $\X^{\<j\mu\>}$. We say that $(Y,\frak{s},\iota)$ is \emph{locally $\SWF$-$\<j\mu\>$-spherical} if for any spin lift $\wh{\iota}$ of $\iota$ there exists a $G^{\odd}_{2}$-spectrum class $\X$ such that:
\begin{enumerate}
    \item $\X^{\<j\mu\>}$ is homotopy equivalent to a sphere.
    \item There exist $G^{\odd}_{2}$-equivariant stable maps 
    \[\SWF(Y,\frak{s},\wh{\iota})\stackrel[g]{f}{\rightleftarrows}\X\]
    which induce $G^{\odd}_{2}$-equivariant stable homotopy equivalences on the $S^{1}$-fixed point sets.
\end{enumerate}

With this in mind, we can now state our relative 10/8-ths inequality for odd-type involutions:

\begin{theorem}
\label{theorem:intro_odd_2_filling}
Let $(Y,\frak{s},\wh{\iota})$ be a $\ZZ_{2}$-equivariant spin rational homology sphere with $\wh{\iota}$ of odd type, and let $(W,\frak{t},\wh{\tau})$ be a compact, $\ZZ_{2}$-equivariant spin filling of $(Y,\frak{s},\wh{\iota})$ with $b_{1}(W)=0$. Let
\begin{align*}
	&p=-\tfrac{1}{8}\sigma(W), & &q=b_{2}^{+}(W), & &q_{0}=b_{2}^{+}(W)_{0}, & &q_{1}=b_{2}^{+}(W)_{1},
\end{align*}
and suppose that:
\begin{enumerate}
    \item $q_{0},q_{1}\geq 1$.
    \item $(Y,\frak{s},\iota)$ is locally $\SWF$-$\<j\mu\>$-spherical.
\end{enumerate}
Then:
\begingroup
\renewcommand{\arraystretch}{1.5} 
\[q\geq p-\wt{\kappa}(Y,\frak{s},\iota)+\left\{
		\begin{array}{ll}
			4 & \mbox{if } q_{0},q_{1}\text{ both even, and }q_{1}\neq p-2\kappa_{\KMT}(Y,\frak{s},\iota), \\
			3 & \mbox{if } q_{0}\text{ odd},\,q_{1}\text{ even, and }q_{1}\neq p-2\kappa_{\KMT}(Y,\frak{s},\iota),\text{ or} \\
            & \mbox{if } q_{0}\text{ even, }q_{1}\text{ odd, and }q_{1}\neq p-2\kappa_{\KMT}(Y,\frak{s},\iota)-1, \\
            2 & \mbox{if } q_{0}, q_{1}\text{ both odd}.
		\end{array}
	\right.\]
\endgroup
\end{theorem}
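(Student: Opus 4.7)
The plan is to apply an equivariant Bauer--Furuta argument to $(W,\frak{t},\wh{\tau})$, produce a $G^{\odd}_{2}$-equivariant stable map, and then derive the inequality by restricting to the $\<j\mu\>$-fixed point subspectrum --- where, by hypothesis (2), the target is replaced by a sphere. The refinements $+4$, $+3$, $+2$ are intended to come from a Bryan--Fang style $KO$-theoretic obstruction that is sensitive to the parities of $q_{0}$ and $q_{1}$, while the KMT inequality (\ref{eq:intro_KMT}) will be used to exclude the degenerate loci $q_{1}=p-2\kappa_{\KMT}(Y,\frak{s},\iota)$ (and $q_{1}=p-2\kappa_{\KMT}(Y,\frak{s},\iota)-1$), on which one otherwise loses a $+1$.

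First I would apply finite-dimensional approximation to the $\wh{\tau}$-invariant Seiberg--Witten equations on $(W,\frak{t})$ to obtain a $G^{\odd}_{2}$-equivariant stable map
\[
\mathrm{BF}(W,\frak{t},\wh{\tau})\co \Sigma^{V^{-}}\SWF(-Y,\frak{s},\wh{\iota})\longrightarrow S^{V^{+}},
\]
where the virtual representation $V^{+}-V^{-}$ is assembled from $\ind\Dirac^{+}_{W}$ (viewed as a $\ZZ_{4}$-module through $\wh{\tau}$, twisted by the $\HH$-structure on the spinors) and from $H^{2}_{+}(W;\RR)$ split into the $\iota^{\ast}$-eigenspaces of dimensions $q_{0}$ and $q_{1}$. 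After absorbing the equivariant correction term $n(Y,\frak{s},\wh{\iota},g)\in\QQ[\ZZ_{4}]$ coming from the normalized construction on $Y$, the quaternionic dimensions of $V^{\pm}$ are controlled by $p$ and the $\wt{\RR}$-dimensions by $q_{0}$, $q_{1}$.

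Next I would restrict this map to $\<j\mu\>$-fixed points. Because $(j\mu)^{2}=1$ in $G^{\odd}_{2}$, the subgroup $\<j\mu\>$ is a $\ZZ_{2}$, and its fixed subspaces of $V^{\pm}$ cut out explicit real subspaces whose dimensions are linear combinations of $p$, $q_{0}$, $q_{1}$ and of the $\<j\mu\>$-trace of $\ind\Dirac^{+}_{W}$. On the target, hypothesis (2) identifies the $\<j\mu\>$-fixed subspectrum of $\SWF(Y,\frak{s},\wh{\iota})$ with a sphere of dimension $\wt{\kappa}(Y,\frak{s},\iota)$ plus the $\<j\mu\>$-part of the correction term, so the restriction of $\mathrm{BF}$ becomes a stable map between spheres of computable dimensions. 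Non-triviality is guaranteed because the $S^{1}$-fixed-point restriction recovers the ordinary Bauer--Furuta map on $W$, which is non-zero under the standing assumption $b_{2}^{+}(W)\geq 1$, and the spherical hypothesis ensures that non-triviality propagates down to the $\<j\mu\>$-fixed level.

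Finally I would run a Bryan--Fang-type $KO$-theoretic obstruction on the $\<j\mu\>$-fixed map, exactly as in the proof of Theorem \ref{theorem:intro_odd_2_r_filling} but working in the finer group $G^{\odd}_{2}$ rather than its Bryan $r$-fold analogue. The residual action of the normalizer of $\<j\mu\>$ in $G^{\odd}_{2}$ endows each fixed representation summand with a real or quaternionic structure whose type is dictated by the parities of $q_{0}$ and $q_{1}$; the existence of a non-trivial equivariant stable map between the resulting spheres then forces the dimension inequality
\[
q\geq p-\wt{\kappa}(Y,\frak{s},\iota)+N
\]
with $N\in\{4,3,2\}$ according to the four parity cases, via an Adams-operation / Euler-class computation on $KO^{\ast}_{\ZZ_{2}}$ of spheres. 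The final refinement separating the equality locus $q_{1}=p-2\kappa_{\KMT}(Y,\frak{s},\iota)$ (respectively $q_{1}=p-2\kappa_{\KMT}(Y,\frak{s},\iota)-1$) from its complement is precisely the statement that the KMT inequality (\ref{eq:intro_KMT}) is strict on the $\iota^{\ast}=-1$ eigenspace; strict inequality supplies an additional generator in the relevant $KO$-group, giving the extra $+1$ over the bound obtained in the equality locus.

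The principal obstacle is book-keeping: one must track the $G^{\odd}_{2}$-representations at three levels ($S^{1}$-fixed, $\Pin(2)$-fixed, and $\<j\mu\>$-fixed) compatibly with the correction term $n(Y,\frak{s},\wh{\iota},g)$, so that the dimension inequality on fixed-point spheres translates cleanly into the stated inequality in $p$, $q$, $q_{0}$, $q_{1}$. A secondary subtlety is isolating the precise $KO$-theoretic step that accounts for the additional $+1$ off the KMT-saturation loci, which is where the interaction with the Konno--Miyazawa--Taniguchi invariant enters essentially rather than as a convenient bystander.
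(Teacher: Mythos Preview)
Your outline diverges from the paper's argument at two crucial points, and each represents a genuine gap rather than an alternative route.

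First, you conflate $\wt{\kappa}(Y,\frak{s},\iota)$ with the level of the $\<j\mu\>$-fixed sphere. These are different invariants: the locally $\<j\mu\>$-spherical hypothesis produces a sphere at some level $d$, and Lemma~\ref{lemma:KMT} identifies $\tfrac{1}{2}d$ with $\kappa_{\KMT}(Y,\frak{s},\iota)$, \emph{not} with $\wt{\kappa}$. The invariant $\wt{\kappa}$ is extracted from the full $G^{\odd}_{2}$-equivariant complex $K$-theory ideal $\III(X)\subset R(G^{\odd}_{2})$ (Definitions~\ref{def:k_invariants} and~\ref{def:equivariant_kappa_invariants}), and restricting to $\<j\mu\>$-fixed points alone cannot recover it. The paper's actual mechanism (Proposition~\ref{prop:k_invariants_2_r_odd} with $r=1$, via Corollary~\ref{cor:stable_k_invariants_2_odd}) shows that $\wt{f}^{*}(x)=\lambda w_{0}w_{1}$ by verifying trace conditions at \emph{several} families of group elements (Lemma~\ref{lemma:traces_2_r}): the $\<j\mu\>$-spherical hypothesis supplies only the vanishing of $\tr_{j\mu}$, while vanishing of $\tr_{e^{i\phi}\mu}$ comes from restricting to the subgroup $S^{1}\times_{\ZZ_{2}}\<\mu\>$, and positivity of $\tr_{j}$ from the $\Pin(2)$-level. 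No $KO$-theory enters.

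Second, your parity-handling mechanism is absent. The parities of $q_{0},q_{1}$ matter because the $K$-theory argument (and hence the $+4$) requires the source to be a space of type $\CC$-$G^{\odd}_{2}$-$\SWF$, i.e.\ its $S^{1}$-fixed set must be a \emph{complex} representation sphere, which forces $q_{0},q_{1}$ both even. When one is odd, the paper (proof of Theorem~\ref{theorem:odd_2_filling}) takes an equivariant connected sum of $W$ with $(S^{2}\times S^{2},\tau_{0})$ or $(S^{2}\times S^{2},\tau_{1})$ from Lemma~\ref{lemma:connected_sum_S2_times_S2}, chosen to flip the offending parity, and then applies the even--even case to the stabilized filling; each such stabilization costs exactly one unit, explaining the drop from $+4$ to $+3$ to $+2$. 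Your proposed ``real or quaternionic structures dictated by the parities of $q_{0}$ and $q_{1}$'' does not correspond to anything in the argument and would need independent justification to produce the stated constants.
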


One can replace the locally $\SWF$-$\<j\mu\>$-spherical condition on $(Y,\frak{s},\iota)$ with a weaker condition concerning the $RO(\ZZ_{4})$-graded equivariant homotopy groups of $\SWF(Y,\frak{s},\wh{\iota})^{\<j\mu\>}$ with respect to the residual $\<j\>\cong\ZZ_{4}$-action --- see Theorem \ref{theorem:odd_2_filling} for the general statement.

\begin{remark}
One of the reasons that the invariant $\kappa_{\KMT}(Y,\frak{s},\iota)$ defined in \cite{KMT} makes an appearance in Theorem \ref{theorem:intro_odd_2_filling} is that the $\<j\mu\>$-fixed point set of the $G^{\odd}_{2}$-spectrum \\ $\SWF(Y,\frak{s},\wh{\iota})$ with the residual $\<j\>\cong\ZZ_{4}$-action is equivalent to ``one-half" of the spectrum constructed in \cite{KMT}. In particular, our notion of $(Y,\frak{s},\wh{\iota})$ being locally $\SWF$-$\<j\mu\>$-spherical is related to the local $\DSWF$-spherical condition considered in their paper (see Section \ref{subsubsec:stable_fixed_point_sets} for more on the relationship between these two notions).
\end{remark}

We note here that while most of the examples we consider in this paper are locally $\SWF$-$\<j\mu\>$-spherical, not all of them are:

\begin{example}
Let $Y$ be the Brieskorn sphere $\Sigma(2,3,12n-1)$ equipped with its unique spin structure $\frak{s}$, and let $\rho_{2}:Y\to Y$ be the covering involution which realizes $Y$ as the double-branched cover over the torus knot $T(3,12n-1)$. Then $(Y,\frak{s},\rho_{2})$ is locally $\SWF$-$\<j\mu\>$-spherical. However if $\iota_{c}:Y \to Y$ denotes the covering involution which realizes $Y$ as the double-branched cover over the Montesinos knot $k(2,3,12n-1)$, then $(Y,\frak{s},\iota_{c})$ is \emph{not} locally $\SWF$-$\<j\mu\>$-spherical. (Some further examples can be found in Table \ref{table:knots}.)
\end{example}

As a consequence of Theorem \ref{theorem:intro_odd_2_filling} we obtain a slight improvement of Bryan and Fang's inequality for closed 4-manifolds:

\begin{corollary}
\label{cor:intro_odd_2_closed}
Let $(W,\frak{t},\wh{\tau})$ be a closed $\ZZ_{2}$-equivariant spin 4-manifold of odd type with $b_{1}(W)=0$, and let $p,q,q_{0},q_{1}$ be as in Theorem \ref{theorem:intro_odd_2_filling}. Furthermore, suppose that $q_{0},q_{1}\geq 1$. Then:
\begingroup
\renewcommand{\arraystretch}{1.5} 
\[q\geq p+\left\{
		\begin{array}{ll}
			4 & \mbox{if } q_{0},q_{1}\text{ both even, and }q_{1}\neq p, \\
			3 & \mbox{if } q_{0}\text{ odd},\,q_{1}\text{ even, and }q_{1}\neq p,\text{ or} \\
            & \mbox{if } q_{0}\text{ even, }q_{1}\text{ odd, and }q_{1}\neq p-1, \\
            2 & \mbox{if } q_{0}, q_{1}\text{ both odd}.
		\end{array}
	\right.\]
\endgroup
\end{corollary}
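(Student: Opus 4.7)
The plan is to deduce Corollary \ref{cor:intro_odd_2_closed} from Theorem \ref{theorem:intro_odd_2_filling} by puncturing $W$ at a fixed point of the odd-type involution $\wh{\tau}$. The first step is to verify that $\wh{\tau}$ has at least one fixed point on $W$. For a smooth involution on a closed simply connected spin 4-manifold (which essentially follows from $b_{1}(W)=0$ plus the odd-type spin lift condition), the Lefschetz number computes $\chi(W^{\tau})$, and the Atiyah--Singer $G$-spin index formula together with the hypothesis $q_{0},q_{1}\geq 1$ rule out free actions. Assuming a fixed point $x\in W$, an equivariant tubular neighborhood of $x$ is linearly modeled on a standard ball $B^{4}$ with a linear odd-type $\ZZ_{2}$-action, so removing its interior produces a compact equivariant spin filling $W'=W\setminus\Int(B^{4})$ of $(S^{3},\frak{s}_{\mathit{std}},\iota_{0})$, where $\iota_{0}$ is the induced linear odd-type involution. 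Crucially, the relevant topological quantities are unchanged: $b_{1}(W')=0$, $b_{2}^{+}(W')=q$, $\sigma(W')=-8p$, and $b_{2}^{+}(W',\tau)_{i}=q_{i}$ for $i=0,1$.

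The next step is to compute the boundary invariants for $(S^{3},\frak{s}_{\mathit{std}},\iota_{0})$. By Manolescu we have $\kappa(S^{3})=0$, and since $S^{3}$ is a (trivial) Seifert-fibered rational homology sphere with at most one singular fiber, Theorem \ref{theorem:intro_seifert_fibered_spaces} yields $\wt{\kappa}(S^{3},\frak{s}_{\mathit{std}},\iota_{0})=0$. The Konno--Miyazawa--Taniguchi invariant $\kappa_{\KMT}(S^{3},\frak{s}_{\mathit{std}},\iota_{0})$ likewise vanishes, as can be read off from the identification of $\SWF(S^{3},\frak{s}_{\mathit{std}},\wh{\iota}_{0})$ with a standard $G^{\odd}_{2}$-representation sphere (this identification is the base case of the construction in the main body of the paper). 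The same identification shows that $(S^{3},\frak{s}_{\mathit{std}},\iota_{0})$ is locally $\SWF$-$\<j\mu\>$-spherical, since the $\<j\mu\>$-fixed subspectrum of the representation sphere is itself a sphere, and one may take $\X=\SWF(S^{3},\frak{s}_{\mathit{std}},\wh{\iota}_{0})$ with $f,g$ equal to the identity.

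Finally, I would apply Theorem \ref{theorem:intro_odd_2_filling} to $(W',\frak{t}|_{W'},\wh{\tau}|_{W'})$ with boundary $(S^{3},\frak{s}_{\mathit{std}},\iota_{0})$. The hypothesis $q_{0},q_{1}\geq 1$ transfers directly, and local sphericality was verified above. Substituting $\wt{\kappa}=0$ and $\kappa_{\KMT}=0$, the conditions $q_{1}\neq p-2\kappa_{\KMT}$ and $q_{1}\neq p-2\kappa_{\KMT}-1$ become $q_{1}\neq p$ and $q_{1}\neq p-1$ respectively, while the right-hand side of the filling inequality becomes $p+c$ with $c\in\{2,3,4\}$ determined by the parities of $q_{0},q_{1}$. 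This matches the four cases in the statement of the corollary verbatim.

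The main obstacle is verifying the existence of a fixed point of $\wh{\tau}$ in full generality, since a priori one cannot rule out a free odd-type $\ZZ_{2}$-action on a closed spin 4-manifold by purely topological means. Should any such free action slip through the Lefschetz/$G$-spin argument, the fallback is to rerun the finite-dimensional approximation argument that proves Theorem \ref{theorem:intro_odd_2_filling} directly in the closed setting: the equivariant Bauer--Furuta map then lands in an honest $G^{\odd}_{2}$-representation sphere rather than in $\SWF(Y)\wedge S^{V}$, the $\<j\mu\>$-fixed portion is again literally spherical, and the equivariant stable-homotopy obstruction arguments carry through with the $\wt{\kappa}$ and $\kappa_{\KMT}$ contributions simply absent. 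Either route produces the stated inequality.
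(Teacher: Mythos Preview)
Your approach is essentially the same as the paper's: the corollary is obtained by specializing Theorem~\ref{theorem:intro_odd_2_filling} (more precisely, Theorem~\ref{theorem:odd_2_filling}) to the case where the boundary is $(S^{3},\frak{s}_{\mathrm{std}},\iota_{0})$, for which $\wt{\kappa}=\kappa_{\KMT}=0$ and the local $\langle j\mu\rangle$-sphericality condition is trivially satisfied because $\SWF(S^{3},\wh{\iota}_{0})=[(S^{0},0,0)]$. Your computation of these boundary invariants and the resulting case analysis are correct.

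The one genuine issue is your argument for the existence of a fixed point. The claim that Lefschetz plus the $G$-spin formula ``rule out free actions'' under the hypotheses $b_{1}(W)=0$ and $q_{0},q_{1}\geq 1$ is false: the Enriques involution on a $K3$ surface is a free odd-type spin involution with $(q_{0},q_{1})=(1,2)$, and its Lefschetz number is $2+(1-2)+(9-10)=0$. (Also, $b_{1}=0$ does not imply simply connected.) So the Lefschetz route cannot close this gap.

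That said, you correctly flag this as the main obstacle and propose the right fallback: run the finite-dimensional approximation argument directly in the closed setting via Observation~\ref{observation:closed}, where the target is literally $[(S^{0},0,0)]$ and no puncturing is needed. This is in fact how the paper handles the base case. Note, however, that the parity-adjustment step in the proof of Theorem~\ref{theorem:odd_2_filling} uses equivariant connected sums with $(S^{2}\times S^{2},\tau_{i})$ along a point of $W^{\tau}$, so the paper's own argument (and hence the corollary as stated) also implicitly uses $W^{\tau}\neq\emptyset$; this hypothesis appears explicitly in Theorem~\ref{theorem:odd_2_filling}. Your fallback therefore has the same scope as the paper's proof, and neither covers the free case by this method.
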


One can show using Corollary \ref{cor:intro_odd_2_closed}, for example, that the connected sum of two $K3$ surfaces cannot arise as a double branched cover over a manifold with $b_{2}^{+}=4$.

\subsection{Knot Concordance Invariants}
\label{subsec:intro_knot_concordance}

Let $K\subset S^{3}$ be an oriented knot and let $m=p^{r}$ be a prime power. It is a standard theorem in topology that the $p^{r}$-fold branched cover $\Sigma_{p^{r}}(K)$ is a rational homology sphere. By a theorem of \cite{GRS08}, there exists a distinguished spin structure $\frak{s}_{0}$ on $\Sigma_{m}(K)$ which is invariant under the canonical $p^{r}$-fold covering action $\sigma:\Sigma_{p^{r}}(K)\to\Sigma_{p^{r}}(K)$. We define the \emph{set of $p^{r}$-fold equivariant $\kappa$-invariants} of $K$ to be
\[\K_{p^{r}}(K):=\K(\Sigma_{p^{r}}(K),\frak{s}_{0},\sigma)\subset\Q^{p^{r}}_{*},\]
as well the \emph{upper and lower $p^{r}$-fold equivariant $\kappa$-invariants} of $K$ as follows:
\begin{align*}
	&\vec{\ol{\kappa}}_{p^{r}}(K):=\vec{\ol{\kappa}}(\Sigma_{p^{r}}(K),\frak{s}_{0},\sigma)\in\wh{\Q}^{p^{r}}_{*}, & &\vec{\ul{\kappa}}_{p^{r}}(K):=\vec{\ul{\kappa}}(\Sigma_{p^{r}}(K),\frak{s}_{0},\sigma)\in\wh{\Q}^{p^{r}}_{*}.
\end{align*}
Furthermore in the case where $p^{r}=2$ we define:
\[\wt{\kappa}(K):=\wt{\kappa}(\Sigma_{2}(K),\frak{s}_{0},\sigma).\]
The following theorem follows immediately from property (4) in Theorem \ref{theorem:intro_properties_equivariant_kappa_invariants}:

\begin{theorem}
\label{theorem:intro_concordance}
For any oriented knot $K\subset S^{3}$ and any prime power $p^{r}$, all of the $p^{r}$-fold equivariant $\kappa$-invariants are concordance invariants of $K$.
\end{theorem}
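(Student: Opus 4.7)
The plan is to convert concordance data into an equivariant spin rational homology cobordism between branched covers and then invoke property (3) of Theorem~\ref{theorem:intro_properties_equivariant_kappa_invariants}. Given a smooth concordance $C \subset S^{3} \times [0,1]$ from $K_0$ to $K_1$, I would let $W$ be the $p^{r}$-fold cyclic branched cover of $S^{3} \times [0,1]$ along $C$. The natural deck-transformation action $\tau : W \to W$ of order $p^{r}$ restricts on each boundary component to the covering action $\sigma_i$ on $\Sigma_{p^{r}}(K_i)$, so $\partial W = \Sigma_{p^{r}}(K_0) \sqcup (-\Sigma_{p^{r}}(K_1))$ as $\ZZ_{p^{r}}$-manifolds.

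Next, a standard Mayer--Vietoris argument for the cyclic branched cover, using that the branch locus $C$ is an annulus and that $H_{1}(S^{3} \setminus K_i; \QQ) = \QQ$, shows that $H_{*}(W; \QQ) \cong H_{*}(S^{3} \times [0,1]; \QQ)$, so $W$ is a rational homology cobordism. To upgrade this to the spin setting, I would appeal to \cite{GRS08}: the canonical spin structure $\frak{s}_0$ on each $\Sigma_{p^{r}}(K_i)$ arises as the restriction of a canonical spin structure $\frak{t}$ on $W$ (characterized, as in that reference, by its behavior on the preimage of the branch locus), and this $\frak{t}$ is preserved by $\tau$. Altogether $(W, \frak{t}, \tau)$ is a $\ZZ_{p^{r}}$-equivariant spin rational homology cobordism between $(\Sigma_{p^{r}}(K_0), \frak{s}_0, \sigma_0)$ and $(\Sigma_{p^{r}}(K_1), \frak{s}_0, \sigma_1)$.

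Property (3) of Theorem~\ref{theorem:intro_properties_equivariant_kappa_invariants} then yields $\K_{p^{r}}(K_0) = \K_{p^{r}}(K_1)$ as subsets of $\Q^{p^{r}}_{*}$, and the corresponding statements for $\vec{\ul{\kappa}}_{p^{r}}(K)$, $\vec{\ol{\kappa}}_{p^{r}}(K)$, and (when $p^{r}=2$) $\wt{\kappa}(K)$ follow since these are all determined by the finite poset $\K_{p^{r}}(K)$ via its meet, join, or $\QQ$-grading. The main subtlety is the $p^{r}=2$ case of the spin structure step: for odd $p^{r}$ the spin structure on each $\Sigma_{p^{r}}(K_i)$ is unique and the uniqueness propagates to $W$ automatically, whereas for $p^{r}=2$ one must verify that $\frak{s}_0$ really is the restriction of a $\tau$-invariant spin structure on $W$ rather than some other spin structure on the boundary. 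Beyond this, the argument is essentially formal once Theorem~\ref{theorem:intro_properties_equivariant_kappa_invariants} is in place.
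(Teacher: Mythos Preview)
Your proposal is correct and follows essentially the same route as the paper. The detailed argument appears in Proposition~\ref{prop:knot_concordance_local_equivalence}: take the $p^{r}$-fold branched cover of $S^{3}\times[0,1]$ along the concordance annulus to obtain a $\ZZ_{p^{r}}$-equivariant $\ZZ_{p}$-homology cobordism carrying a unique invariant spin structure restricting to $\frak{s}_{0}$ on each end, then apply the equivariant homology cobordism invariance (Theorem~\ref{theorem:equivariant_homology_cobordism}, which is the precise form of property~(3) in Theorem~\ref{theorem:intro_properties_equivariant_kappa_invariants}). Note that the line in the introduction attributing this to ``property~(4)'' appears to be a typo for property~(3); your identification of the correct property is right.
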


Unfortunately these concordance invariants are difficult to compute in practice. However, we can still indirectly obtain results by looking at properties of the Seiberg--Witten Floer spectrum classes of $p^{r}$-fold branched covers of knots. Consider the following definition:

\begin{definition}
\label{def:intro_locally_SWF_jmu_spherical}
Let $\C$ denote the smooth concordance group. We define $\LSWFS_{2}^{\<j\mu\>}\subset\C$ to be the subgroup generated by knots $K$ such that $(\Sigma_{2}(K),\frak{s}_{0},\sigma)$ is locally $\SWF$-$\<j\mu\>$-spherical.
\end{definition}

\begin{example}
Using results from \cite{KMT}, one can show that $\LSWFS_{2}^{\<j\mu\>}$ contains all torus knots, as well as knots whose double branched covers are \emph{minimal L-spaces} (in the sense of \cite{LinLipLS}). Furthermore, $\LSWFS_{2}^{\<j\mu\>}$ contains the subgroup generated by the families of knots $k(2,3,12n+1)$, $k(2,3,12n+5)$, where $k(p,q,r)$ denotes the Montesinos knot with double branched cover the Brieskorn sphere $\Sigma(p,q,r)$.
\end{example}

We have the following theorem, whose statement is reminiscent of (\cite{AKS20}, Theorem 1.6):

\begin{theorem}
\label{theorem:intro_montesinos_knots_not_concordant_to_LSWFS_2}
Let $K$ be a connected sum of knots of the form $k(2,3,12n-1)$, $k(2,3,12n-5)$, and their mirrors, such that the total number of prime factors of $K$ is odd. Then $K$ represents a non-zero element of the quotient $\C/\LSWFS_{2}^{\<j\mu\>}$.
\end{theorem}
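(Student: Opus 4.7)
The strategy adapts the local-equivalence argument of (\cite{AKS20}, Theorem 1.6) to the equivariant Seiberg--Witten Floer spectrum framework developed here. Write $K = K_1 \# \cdots \# K_N$ with $N$ odd and each $K_i$ of the stated form, and set $(Y_i,\frak{s}_i,\iota_i) = (\Sigma_2(K_i),\frak{s}_0,\sigma)$. The goal is to produce a homomorphism $\Psi \colon \C/\LSWFS_2^{\<j\mu\>} \to \ZZ/2$ with $\Psi([K]) = 1$. I would construct $\Psi$ in two stages: first, a natural map to a local-equivalence group of $G^{\odd}_2$-spectra modulo the subgroup generated by classes whose $\<j\mu\>$-fixed point subspectra admit equivalences to representation spheres after passing to an $S^1$-fixed-point equivalent model, so that by Definition~\ref{def:intro_locally_SWF_jmu_spherical} the subgroup $\LSWFS_2^{\<j\mu\>}$ is killed; second, a $\ZZ/2$-valued invariant on this quotient.

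For the additive invariant, I would use the equivariant connected sum formula for $\SWF$: up to $G^{\odd}_2$-equivariant suspension by representation spheres, $\SWF$ of a spin equivariant connected sum is the smash product of the individual spectra. Passing to $\<j\mu\>$-geometric fixed points, which commutes with smash products, identifies the $\<j\mu\>$-fixed spectrum of $\SWF(\Sigma_2(K),\frak{s}_0,\wh{\sigma})$ stably with the smash product over $i$ of the $\<j\mu\>$-fixed spectra of the prime factors, carrying the residual $\<j\> \cong \ZZ_4$-action. The $\ZZ/2$ invariant would be the parity of a suitably normalized $RO(\ZZ_4)$-graded equivariant Bredon homology rank relative to a reference sphere; this is additive under smash product, vanishes on representation spheres, and hence descends to a well-defined homomorphism to $\ZZ/2$ on the quotient constructed in the first stage.

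The main obstacle is the explicit building-block computation: for each $K_i$, I must show that the chosen invariant evaluates to $1$ on $\SWF(\Sigma(2,3,12n_i \pm 1),\frak{s}_0,\wh{\iota}_c)^{\<j\mu\>}$, which in particular re-establishes the non-locally $\SWF$-$\<j\mu\>$-spherical property in the example preceding the theorem. For this I would use the Seifert-fibered description of $\Sigma(2,3,12n_i \pm 1)$ together with the action of $\iota_c$ on the reducible Seiberg--Witten locus. The key input is the observation, noted in the remark following Theorem~\ref{theorem:intro_odd_2_filling}, that $\SWF(Y,\frak{s},\wh{\iota})^{\<j\mu\>}$ corresponds to ``one half'' of the $\DSWF$ spectrum of Konno--Miyazawa--Taniguchi \cite{KMT}; their explicit computations for these Brieskorn spheres and covering involutions then translate directly into evaluations of our $\ZZ/2$ invariant. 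Once every building block contributes $1$, additivity gives $\Psi([K]) \equiv N \pmod 2 \equiv 1$, so $[K] \ne 0$ in $\C/\LSWFS_2^{\<j\mu\>}$.
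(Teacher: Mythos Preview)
Your overall strategy---pass to $\langle j\mu\rangle$-fixed points and exploit multiplicativity under smash product---is the right one, and matches the paper. But there are two issues worth flagging.

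First, the homomorphism detour is unnecessary. Because the property of being locally $\SWF$-$\langle j\mu\rangle$-spherical is itself a concordance invariant and is closed under connected sum, the subgroup $\LSWFS_{2}^{\langle j\mu\rangle}\subset\C$ coincides with the set of concordance classes that are locally $\langle j\mu\rangle$-spherical. So $[K]\neq 0$ in the quotient is literally the statement that $K$ is not locally $\SWF$-$\langle j\mu\rangle$-spherical. The paper proves exactly this, with no auxiliary $\ZZ/2$-valued map needed.

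Second, and more substantively, your proposed invariant---``parity of a suitably normalized $RO(\ZZ_4)$-graded Bredon homology rank''---does not obviously work. Ranks are not additive under smash product (K\"unneth gives products), and there is no evident normalization making a rank-parity both multiplicative-to-additive and invariant under $\ZZ_4$-local equivalence. The paper sidesteps this entirely with a concrete computation: for each prime factor $K_i$ the $\langle j\mu\rangle$-fixed $\ZZ_4$-spectrum is $\ZZ_4$-locally equivalent to a (de)suspension of $\widetilde{\Sigma}\ZZ_4$ (Proposition~\ref{prop:jmu_fixed_points_spectrum_classes}, computed directly in the paper rather than imported from \cite{KMT}); the key algebraic fact is that $\widetilde{\Sigma}\ZZ_4$ is $\VV_{1/2}$-\emph{self-dual} (Lemma~\ref{lemma:self_dual_Z_4}), so $\widetilde{\Sigma}\ZZ_4\wedge\widetilde{\Sigma}\ZZ_4\simeq\VV_{1/2}^{+}$ and an odd-length smash product collapses back to a suspension of $\widetilde{\Sigma}\ZZ_4$. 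Finally $\widetilde{\Sigma}\ZZ_4$ is shown not to be $\ZZ_4$-locally equivalent to any representation sphere (Proposition~\ref{prop:not_jmu_spherical}), using Crabb's result that the image of $\pi^{\st,\ZZ_4}_{\nu}(S^0)\to\pi^{\st}_0(S^0)$ lies in $4\ZZ$. This self-duality is what furnishes the implicit $\ZZ/2$ structure you were reaching for, and it is the piece your proposal is missing.
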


The author expects that one could use the calculations and techniques developed by Alfieri-Kang-Stipcisz (\cite{AKS20}) and Dai-Hedden-Mallick (\cite{DHM22}) to prove a statement similar to Theorem \ref{theorem:intro_montesinos_knots_not_concordant_to_LSWFS_2} in the setting of Heegaard Floer homology.

\subsection{Applications to Spin Fillings}
\label{subsec:intro_spin_fillings}

Using our equivariant relative 10/8-ths inequalities, we can constrain the homological properties of cyclic group actions on spin fillings of rational homology spheres. We have the following result for odd-type involutions on spin manifolds bounded by certain homology Brieskorn spheres:

\begin{theorem}
\label{theorem:intro_extending_involution_constraints}
Let $W$ be a compact connected smooth oriented spin 4-manifold with $b_{1}(W)=0$, intersection form given by $p(-E_{8})\oplus qH$, and boundary $\del W=Y$ an integer homology sphere. 
\begin{enumerate}
    \item Suppose that $Y$ and the pair $(p,q)$ are given by one of the following:
    \begin{enumerate}
        \item $Y=\Sigma(2,3,12n-1)$ and $(p,q)=(2,2)$.
        \item $Y=\Sigma(2,3,12n-5)$, and $(p,q)=(1,2)$.
    \end{enumerate}
    Let $\iota$ be an odd-type involution on $Y$ which is isotopic to the identity. If $\iota$ extends to a smooth involution $\tau$ on $W$, then $b_{2}^{+}(W,\tau)_{0}=0$.
    \item Suppose that $Y=-\Sigma(2,3,12n+5)$, $(p,q)=(1,3)$, and let $\iota$ be an odd-type involution on $Y$ which is isotopic to the identity. If $\iota$ extends to a smooth involution $\tau$ on $W$, then $b_{2}^{+}(W,\tau)_{0}=1$.
    \item Suppose that $Y$ and the pair $(p,q)$ are given by one of the following:
    \begin{enumerate}
        \item $Y=\pm\Sigma(2,3,12n+1)$ and $(p,q)=(p,p+1)$, $p\geq 4$ even.
        \item $Y=\Sigma(2,3,12n+5)$, and $(p,q)=(p,p)$, $p\geq 3$ odd.
        \item $Y=-\Sigma(2,3,12n+5)$ and $(p,q)=(p,p+2)$, $p\geq 3$ odd.
    \end{enumerate}
    Let $\iota$ be any odd-type involution on $Y$. If $\iota$ extends to a smooth involution $\tau$ on $W$, then $b_{2}^{+}(W,\tau)_{0}=0$ or $1$.
\end{enumerate}
In all of the above cases, if $\iota$ is isotopic to the identity, then $\iota$ can extend to $W$ as a homologically trivial diffeomorphism.
\end{theorem}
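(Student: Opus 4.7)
The plan is to derive the constraint on $b_2^+(W,\tau)_0$ by combining the equivariant relative $10/8$-ths inequality for odd-type involutions (Theorem \ref{theorem:intro_odd_2_filling}) with the Konno--Miyazawa--Taniguchi inequality (\ref{eq:intro_KMT}) and, in extremal cases, Manolescu's original inequality (\ref{eq:manolescus_inequality}). With the intersection form $p(-E_8)\oplus qH$ fixed, one has $-\tfrac{1}{8}\sigma(W)=p$ and $b_2^+(W)=q$, so the parameters of Theorem \ref{theorem:intro_odd_2_filling} match those of the hypothesis exactly. Setting $q_0 = b_2^+(W,\tau)_0$ and $q_1 = q - q_0$, the strategy is to rule out every value of $q_0$ not appearing in the claim.

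The input data needed for each $Y$ in the list is the value of $\wt{\kappa}(Y,\frak{s},\iota)$, the value of $\kappa_{\KMT}(Y,\frak{s},\iota)$, and the locally $\SWF$-$\<j\mu\>$-spherical property. Since each $Y$ is a Seifert fibered homology sphere of negative fibration with three singular fibers, Theorem \ref{theorem:intro_seifert_fibered_spaces} reduces $\wt\kappa$ to the ordinary $\kappa(Y,\frak{s})$ computed in \cite{Man14,Stoff20}, while the KMT invariant and local $\<j\mu\>$-sphericality follow from the computations in \cite{KMT} (these Brieskorn spheres being minimal $L$-spaces). By conjugation invariance (Theorem \ref{theorem:intro_properties_equivariant_kappa_invariants} (1)), in parts (1) and (2) these invariants depend only on the underlying spin datum, not on the choice of identity-isotopic $\iota$.

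With the values tabulated, the argument reduces to a case-by-case check. When $q_0, q_1 \geq 1$, Theorem \ref{theorem:intro_odd_2_filling} yields an inequality of the form $q \geq p - \wt\kappa(Y,\frak{s},\iota) + C$ with $C \in \{2,3,4\}$ depending on the parities of $q_0,q_1$ and on whether $q_1 = p - 2\kappa_{\KMT}(Y,\frak{s},\iota)$; plugging in the known values of $(p,q,\wt\kappa,\kappa_{\KMT})$ for each item in the statement contradicts the existence of $\tau$ unless $q_0$ is one of the claimed values. The extremal cases $q_1 = 0$ (handled via (\ref{eq:intro_KMT}), which forces $\kappa_{\KMT} \geq p/2$) and $q_0 = 0$ when forbidden, as in part (2), (handled by (\ref{eq:manolescus_inequality}) together with the parity refinement implicit in Corollary \ref{cor:intro_odd_2_closed}) are where Theorem \ref{theorem:intro_odd_2_filling} cannot be invoked directly and must be treated separately.

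For the final realization claim, given an isotopy $\{\iota_t\}$ from $\iota$ to $\id_Y$, choose a bicollar $\del W \cong Y \times [0,1] \hookrightarrow W$ and define $\phi|_{Y\times\{t\}} = \iota_t$ with $\phi = \id$ off the collar; then $\phi$ is a smooth self-diffeomorphism of $W$ extending $\iota$ and isotopic to the identity, hence homologically trivial. The main obstacle lies in the case analysis: the $C \in \{2,3,4\}$ trichotomy in Theorem \ref{theorem:intro_odd_2_filling} splits into several parity-dependent subcases whose boundary is governed by the auxiliary quantity $p - 2\kappa_{\KMT}$, and one must also check by hand the extremal regime where $q_0$ or $q_1$ vanishes. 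Pinning down the exact numerical values of $\wt\kappa$ and $\kappa_{\KMT}$ for each Brieskorn sphere in the list is also necessary and draws on existing calculations in \cite{Man14, Stoff20, KMT}.
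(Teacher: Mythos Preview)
Your overall strategy matches the paper's: combine Theorem \ref{theorem:intro_odd_2_filling} with the KMT inequality (\ref{eq:intro_KMT}) in a case-by-case analysis of $(q_0,q_1)$. However, there is a genuine gap in how you handle parts (1) and (2).

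In part (1a), for instance, with $Y=\Sigma(2,3,12n-1)$ and $(p,q)=(2,2)$, you need to exclude $q_0=1$. But with $q_0=q_1=1$ (both odd), Theorem \ref{theorem:intro_odd_2_filling} only gives $C=2$, yielding $2\ge p-\wt\kappa+C=2-2+2=2$, which is satisfied; the KMT inequality gives $q_1\ge 1$, also satisfied; and Manolescu's inequality (even with its parity refinement for even $q$) gives $4\ge 4$. None of your listed tools excludes $q_0=1$. Similarly, in part (2) you must exclude $q_0=0$ (equivalently $q_1=3$), and neither (\ref{eq:manolescus_inequality}) nor Corollary \ref{cor:intro_odd_2_closed} (which concerns \emph{closed} manifolds) yields a parity constraint on $q_1$ in the bounded setting.

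The missing ingredient is a result of Konno--Taniguchi \cite{KonnoTaniguchi} on families Seiberg--Witten invariants: for an odd-type involution isotopic to the identity, one obtains an $\Aut((W,\frak t),\partial)$-bundle whose $H_2^+$ bundle has nonvanishing Stiefel--Whitney class $w_{q_1}$, and this forces $\alpha,\beta,\gamma$-type bounds that translate into ``$q_1$ must be even'' under the hypotheses at hand. The paper packages this as a corollary and uses it precisely to kill $q_0=1$ in (1) and $q_0=0$ in (2). Without this input the argument does not close.

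A smaller point: the Brieskorn spheres $\Sigma(2,3,12n\pm 1)$ and $\Sigma(2,3,12n\pm 5)$ are not minimal $L$-spaces; the values of $\wt\kappa$, $\kappa_{\KMT}$, and local $\langle j\mu\rangle$-sphericality are obtained by direct computation of the equivariant Conley index (Section \ref{subsec:brieskorn_spheres}), not via the minimal $L$-space route.
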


\begin{remark}
Note that (3a) in the above theorem is a generalization of (\cite{KMT}, Corollary 5.5), where they consider the case $p=2$, i.e., in the case that $W$ has intersection form isomorphic to that of a K3 surface. Furthermore, they are able to exclude the case $b_{2}^{+}(W,\tau)_{0}=0$ by using (\cite{KonnoTaniguchi}, Theorem 1.2). 

While their result also includes the Brieskorn sphere $Y=-\Sigma(2,3,12n-1)$, by (\cite{Lin15}, Example 1.14) no such manifold $W$ with intersection form $p(-E_{8})\oplus (p+1)H$ is bounded by $Y$ for any $p\geq 2$.
\end{remark}

\begin{example}
The Brieskorn homology sphere $Y=\Sigma(2,3,7)$ is the boundary of the Milnor fiber $W=M(2,3,7)$ whose intersection form is given by $-E_{8}\oplus 2H$. Then for any odd-type involution $\iota$ which is isotopic to the identity on $Y$ (e.g., rotation by $\pi$ in the $S^{1}$-fibers), by (1b) in the above theorem any extension of $\iota$ to a smooth involution $\tau$ on $W$ must satisfy the property that $X=W/\tau$ is negative definite with boundary $Y/\iota\cong S^{3}$. By Donaldson's diagonalization theorem, it therefore follows that the quotient $X$ is homeomorphic to $\#^{N}\ol{\CC P}^{2}\setminus B^{4}$ for some $N\le 8$.
\end{example}

\subsection{Genus Bounds}
\label{subsec:intro_genus_bounds}

For any closed oriented 4-manifold $X$ and any homology class $A\in H_{2}(X;\ZZ)$, let $g(X,A)$ denote the minimal genus of a smooth embedded oriented surface $F\subset X$ representing the homology class $A$. The calculation of $g(X,A)$ for various pairs $(X,A)$ has a long history which is intertwined with the development of many of the important techniques used in four-dimensional topology, most notably the resolution of the Thom Conjecture (\cite{KMthomconj}) and its various generalizations (\cite{MST:thomconj}, \cite{OSz:sympthomconj}).

We will consider the following relative version of the minimal genus problem: Let $X$ be a closed oriented 4-manifold, let $K\subset S^{3}$ be an oriented knot, and let $A\in H_{2}(X;\ZZ)$ be a fixed 2-dimensional homology class. We define the \emph{$(X,A)$-genus of $K$}, denoted $g_{X,A}(K)$, to be the minimal genus over all properly embedded oriented surfaces $F\subset \mathring{X}:=X\setminus B^{4}$ such that $\del F=K\subset S^{3}$ and $[F]=A$.

Under favorable conditions, the double branched cover of $\mathring{X}$ over a surface $F$ as above is spin -- thus we can apply our relative 10/8-ths type inequality for odd-type involutions to obtain a lower bound for $g_{X,A}$:

\begin{theorem}
\label{theorem:intro_genus_bounds}
Let $X$ be a closed oriented 4-manifold with $b_{1}(X)=0$ and $b_{2}^{+}(X)\neq 0$. Furthermore, let $A\in H_{2}(X;\ZZ)$ be a two-dimensional homology class such that $2|A$ and $A/2\equiv w_{2}(X)\pmod{2}$. Suppose $K\subset S^{3}$ is a knot such that the pair $(\Sigma_{2}(K),\iota)$ is locally $\SWF$-$\<j\mu\>$-spherical, where $\iota:\Sigma_{2}(K)\to\Sigma_{2}(K)$ denotes the covering involution on the double branched cover of $K$. Finally, define
\[c(K,X):=b_{2}^{+}(X)+\wt{\kappa}(K)-2\kappa_{\KMT}(K).\]
Then the following inequality holds:
\begin{equation}
\label{eq:intro_genus_bounds}
    g_{X,A}(K)\geq -2b_{2}^{+}(X)-\tfrac{1}{4}\sigma(X)+\tfrac{5}{16}A^{2}-\tfrac{5}{8}\sigma(K)-\wt{\kappa}(K)+C,
\end{equation}
where:
\begin{equation}
    C=\left\{
		\begin{array}{ll}
			3 & \mbox{if }b_{2}^{+}(X)\text{ is even and }c(K,X)\geq 4, \\
			2 & \mbox{if }c(K,X)\geq 2, \\
			1 & \mbox{otherwise.}
		\end{array}
	\right.
\end{equation}
\end{theorem}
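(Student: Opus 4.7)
The plan is to apply Theorem \ref{theorem:intro_odd_2_filling} to an appropriate double branched cover.

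Given a smoothly, properly embedded oriented surface $F \subset \mathring{X} := X \setminus \mathring{B}^{4}$ of genus $g$ with $\partial F = K$ and $[F, \partial F] = A$, I would form the double cover $W := \Sigma_{2}(\mathring{X}, F)$ of $\mathring{X}$ branched along $F$, with covering involution $\tau$. The hypotheses $2 \mid A$ and $A/2 \equiv w_{2}(X) \pmod{2}$ ensure that $W$ admits a $\tau$-equivariant spin structure $\frak{t}$ with $\tau$ of odd type (its fixed set $F \subset W$ is a 2-dimensional orientable submanifold), and with $\tau|_{\partial W}$ the covering involution $\iota$ on $\partial W = \Sigma_{2}(K)$. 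Capping off $F$ by a pushed-in Seifert surface $\Sigma_{K}'$ in $B^{4}$ produces a closed surface $\wh{F} \subset X$ and a decomposition $\wh{W} := \Sigma_{2}(X, \wh{F}) = W \cup_{\Sigma_{2}(K)} V$ with $V := \Sigma_{2}(B^{4}, \Sigma_{K}')$. Combining the standard identifications $\sigma(\wh{W}) = 2\sigma(X) - \tfrac{1}{2}A^{2}$ and $\sigma(V) = -\sigma(K)$, the observations that $H^{2}(\wh{W})^{\tau} = H^{2}(X)$ and $H^{2}(V)^{\tau} = 0$, and Novikov additivity on the $\tau$-eigenspaces then yields
\begin{align*}
\sigma(W) &= 2\sigma(X) - \tfrac{1}{2}A^{2} + \sigma(K), \\
b_{2}^{+}(W,\tau)_{0} &= b_{2}^{+}(X), \\
b_{2}^{+}(W,\tau)_{1} &= b_{2}^{+}(X) + g - \tfrac{1}{4}A^{2} + \tfrac{1}{2}\sigma(K).
\end{align*}

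With these invariants, $q_{0} := b_{2}^{+}(W,\tau)_{0} \geq 1$ by hypothesis, and in the regime where the claimed bound is nontrivial we also have $q_{1} := b_{2}^{+}(W,\tau)_{1} \geq 1$. Applying Theorem \ref{theorem:intro_odd_2_filling} to $(W, \frak{t}, \tau)$ --- whose local $\SWF$-$\<j\mu\>$-spherical hypothesis is inherited from $(\Sigma_{2}(K), \iota)$ --- the inequality $q + \wt{\kappa}(K) \geq p + C'$ (with $p = -\sigma(W)/8$, $q = b_{2}^{+}(W)$) rearranges, upon substituting the formulas above, to
\[
g \geq -2b_{2}^{+}(X) - \tfrac{1}{4}\sigma(X) + \tfrac{5}{16}A^{2} - \tfrac{5}{8}\sigma(K) - \wt{\kappa}(K) + C' =: T + C',
\]
matching the shape of the desired inequality. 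A direct calculation further identifies the excluded subcases of Theorem \ref{theorem:intro_odd_2_filling}, where $q_{1} = p - 2\kappa_{\KMT}(K) - \varepsilon$ for $\varepsilon \in \{0,1\}$, as precisely the configurations in which $g = T + c(K,X) - \varepsilon$.

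To match $C'$ against the stated trichotomy for $C$, I would enumerate the four parity configurations of $(q_{0}, q_{1})$, using that $\sigma(K)$ is always even and $A/2$ is integral, so $q_{1} \equiv g + b_{2}^{+}(X) + (A/2)^{2} + \sigma(K)/2 \pmod 2$. When $b_{2}^{+}(X)$ is odd the ``both odd'' branch ($C' = 2$) of Theorem \ref{theorem:intro_odd_2_filling} may arise, so $C = 2$ is the strongest guaranteed bound, requiring $c(K,X) \geq 2$ to also cover the excluded odd/even subcase. When $b_{2}^{+}(X)$ is even the ``both odd'' branch is excluded, so the non-excluded subcases give $C' \in \{3, 4\}$; the stronger hypothesis $c(K,X) \geq 4$ ensures the worst excluded subcase ($\varepsilon = 1$) still produces $g \geq T + 3$, yielding $C = 3$. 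The baseline $C = 1$ always holds by Manolescu's inequality (\ref{eq:manolescus_inequality}) applied to $(W, \frak{t})$ together with $\wt{\kappa}(K) \geq \kappa(\Sigma_{2}(K), \frak{s}_{0})$.

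The main obstacle is the case analysis in the final step: correctly identifying the active branch of Theorem \ref{theorem:intro_odd_2_filling} for each parity configuration of $(q_{0}, q_{1})$, and verifying that the forced identity $g = T + c(K,X) - \varepsilon$ in each excluded subcase combines with the non-excluded subcases to collapse into the stated trichotomy.
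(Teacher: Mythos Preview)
Your proposal is correct and follows essentially the same approach as the paper: form the double branched cover $W = \Sigma_{2}(\mathring{X}, F)$, compute $p$, $q$, $q_{0}$, $q_{1}$ via the standard formulas (the paper quotes these as Lemma~\ref{lemma:double_branched_cover_spin} from \cite{KMT}), apply the odd-type involution inequality, and reduce to a parity case analysis in which the excluded subcases $q_{1} = p - 2\kappa_{\KMT}(K) - \varepsilon$ translate to $g = T + c(K,X) - \varepsilon$. Your observation that the baseline $C = 1$ follows from Manolescu's inequality together with $\wt{\kappa}(K) \geq \kappa(\Sigma_{2}(K))$ is correct and fills a case the paper's proof leaves implicit.

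One small refinement: to cleanly obtain $C = 2$ when $b_{2}^{+}(X)$ is even and $2 \le c(K,X) < 4$, you need the ``any parity'' branch of the more detailed Theorem~\ref{theorem:odd_2_filling} (namely $C' = 2$ whenever $q_{1} \neq p - 2\kappa_{\KMT}$), rather than only the parity-specific branches listed in Theorem~\ref{theorem:intro_odd_2_filling}. With that branch in hand the only remaining exceptional case is $\varepsilon = 0$, giving $g = T + c(K,X) \geq T + 2$, exactly as the paper argues. This is precisely the ``main obstacle'' you identified, and the resolution is to invoke the stronger form of the theorem.
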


Next, note that we have the following upper bound on the $(X,A)$-genus of any knot $K\subset S^{3}$:
\begin{equation}
\label{eq:intro_upper_bound}
    g(X,A)+g_{4}(K)\geq g_{X,A}(K).
\end{equation}
Indeed, this follows from taking a connected sum of a closed surface representing $g(X,A)$, and a surface with boundary contained in $S^{3}\times[0,1]\subset X\setminus B^{4}$ representing $g_{4}(K)$.

There are plenty of examples of knots $K$ and pairs $(X,A)$ where this upper bound is not sharp, i.e., $g_{X,A}(K)< g(X,A)+g_{4}(K)$. For example, it was shown in (\cite{Nor69}, \cite{Suz69}) that every knot is slice in $S^{2}\times S^{2}$ and $\CC P^{2}\times\ol{\CC P}^{2}$ in some homology class. See also \cite{MMP20} for more examples.

However, there are some cases where the bound is sharp -- for example, it was shown in (\cite{BaragliaQP}, Corollary 1.3) that for $(X,A)=(K3,0)$ and any quasi-positive knot 
$K\subset S^{3}$, we have the equality $g_{K3,0}(K)=g_{4}(K)$. The following theorem adds to the list of cases where (\ref{eq:intro_upper_bound}) is sharp:

\begin{theorem}
\label{theorem:intro_X_A_genus_sharp}
Let $(X,A)$ be one of the following pairs, where $X$ is a closed oriented 4-manifold and $A\in H_{2}(X;\ZZ)$:

\begin{center}
\begin{tabular}{|c|c|}\hline
    $X$ & $A$ \\ \hline
    $S^{2}\times S^{2}\#S^{2}\times S^{2}$ & $((4,4),(4,4))$  \\ \hline
    \multirow{2}{*}{$\CC P^{2}\#\CC P^{2}$} & $(6,2)$\\ \cline{2-2}
    & $(6,6)$  \\ \hline
    \multirow{2}{*}{$S^{2}\times S^{2}\#\CC P^{2}$} & $((4,4),2)$ \\ \cline{2-2}
    & $((4,4),6)$ \\ \hline
    $hK3$ & $0$  \\ \hline
\end{tabular}
\end{center}
\smallskip
Here $X=hK3$ denotes any homotopy $K3$ surface. Furthermore, let $K\subset S^{3}$ be any knot such that:
\begin{enumerate}
    \item $K$ is smoothly concordant to a connected sum of knots $K_{1}\#\cdots\# K_{n}$ satisfying the following property: for each $i=1,\dots,n$, the double branched cover $\Sigma_{2}(K_{i})$ admits a $\ZZ_{2}$-equivariant metric $g$ such that $(\Sigma_{2}(K_{i}),g)$ admits no irreducible solutions to the Seiberg-Witten equations wiith respect to the invariant spin structure on $\Sigma_{2}(K_{i})$.
    \item $g_{4}(K)=-\frac{1}{2}\sigma(K)$.
\end{enumerate}
Then:
\begin{equation*}
    g_{X,A}(K)=g(X,A)+g_{4}(K).
\end{equation*}
In particular this folds for the following knots:
\begin{enumerate}
    \item Connected sums of quasi-positive two-bridge knots and $T(3,5)$.
    \item $9_{47}$, $9_{49}$, $10_{155}$, $m10_{156}$, $10_{160}$, and $10_{163}$.
\end{enumerate}
\end{theorem}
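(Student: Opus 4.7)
The plan is to prove the equality $g_{X,A}(K) = g(X,A) + g_4(K)$ by matching the standard upper bound (\ref{eq:intro_upper_bound}) with a lower bound extracted from Theorem \ref{theorem:intro_genus_bounds}. Since $g_{X,A}(\cdot)$, $g_4(\cdot)$, and $\sigma(\cdot)$ are all concordance invariants of knots in $S^{3}$, and $\wt{\kappa}(\cdot)$ is a concordance invariant by Theorem \ref{theorem:intro_concordance} (with $\kappa_{\KMT}(\cdot)$ concordance invariant by the work of Konno--Miyazawa--Taniguchi), we may replace $K$ by the connected sum $K_{1} \# \cdots \# K_{n}$ promised in hypothesis (1) without affecting either side of the claimed equality.

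I would first verify that Theorem \ref{theorem:intro_genus_bounds} applies to each pair $(X,A)$: one checks case by case that $b_{1}(X)=0$, $b_{2}^{+}(X) \geq 1$, $2 \mid A$, and $A/2 \equiv w_{2}(X) \pmod 2$; these are straightforward from the list. The remaining hypothesis is that $(\Sigma_{2}(K),\iota)$ is locally $\SWF$-$\<j\mu\>$-spherical. This is where hypothesis (1) enters: if $\Sigma_{2}(K_{i})$ admits a $\ZZ_{2}$-equivariant metric with no irreducible Seiberg--Witten solutions for the invariant spin structure, then the equivariant Seiberg--Witten Floer spectrum class $\SWF(\Sigma_{2}(K_{i}),\frak{s}_{0},\wh{\iota})$ is stably a $G^{\odd}_{2}$-representation sphere, and in particular its $\<j\mu\>$-fixed point set is a sphere. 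Under connected sum (or by connected-sum additivity for $\SWF$), this property is preserved for $K_{1}\#\cdots\# K_{n}$.

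Second, I would extract the values of $\wt{\kappa}(K)$ and $\kappa_{\KMT}(K)$. The representation-sphere structure of each $\SWF(\Sigma_{2}(K_{i}),\frak{s}_{0},\wh{\iota})$ pins these invariants down in terms of the correction-term data, and the outcome in these AR cases is $\wt{\kappa}(K_{i}) = -\tfrac{1}{8}\sigma(K_{i})$ and $\kappa_{\KMT}(K_{i}) = -\tfrac{1}{16}\sigma(K_{i})$. Additivity under connected sum then gives $\wt{\kappa}(K) = -\tfrac{1}{8}\sigma(K)$ and $\kappa_{\KMT}(K) = -\tfrac{1}{16}\sigma(K)$. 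Substituting these together with hypothesis (2), namely $g_{4}(K) = -\tfrac{1}{2}\sigma(K)$, into the inequality (\ref{eq:intro_genus_bounds}) makes the $\sigma(K)$ contributions collapse to exactly $-\tfrac{1}{2}\sigma(K)$, i.e.\ to $g_{4}(K)$. The constant $C$ can be computed since $c(K,X) = b_{2}^{+}(X) + \wt{\kappa}(K) - 2\kappa_{\KMT}(K) = b_{2}^{+}(X)$, which is $3$ (for $K3$), $4$ (for the $S^{2}\times S^{2}\#S^{2}\times S^{2}$ row), etc., determining $C$ unambiguously for each pair.

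The remaining step is the pair-by-pair numerical check. For each row of the table I would compute $b_{2}^{+}(X)$, $\sigma(X)$, and $A^{2}$, verify the $C$-branch, and confirm that the purely topological residue
\[
-2b_{2}^{+}(X) - \tfrac{1}{4}\sigma(X) + \tfrac{5}{16}A^{2} + C
\]
agrees with the previously known value of $g(X,A)$ (citing the relevant minimal genus computations for $\CC P^{2}\#\CC P^{2}$, $S^{2}\times S^{2}\# S^{2}\times S^{2}$, etc.; the $hK3$ case is $g(hK3,0)=0$). This gives the lower bound $g_{X,A}(K) \geq g(X,A) + g_{4}(K)$, matching (\ref{eq:intro_upper_bound}). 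Finally, for the explicit list of knots I would verify both hypotheses: quasi-positive two-bridge knots have $g_{4}=-\sigma/2$ by Rudolph's slice--Bennequin inequality, and both they and $T(3,5)$ have double branched covers which are minimal $L$-spaces (hence AR); the ten-crossing knots $9_{47}, 9_{49}, 10_{155}, m10_{156}, 10_{160}, 10_{163}$ are checked directly using the tabulated data on their double branched covers and signatures. The main obstacle is the bookkeeping in step three: arranging that the concordance-invariant cancellations leave exactly the right topological quantity matching $g(X,A)$ for each listed pair simultaneously, which requires that the list be curated to exactly those pairs for which the $C$-branch lines up.
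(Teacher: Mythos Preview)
Your approach is essentially identical to the paper's: reduce to the class $\SWFM_{2}^{\#,\C}$, use the resulting identities $\wt{\kappa}(K)=-\tfrac{1}{8}\sigma(K)$ and $\kappa_{\KMT}(K)=-\tfrac{1}{16}\sigma(K)$ to simplify the bound from Theorem~\ref{theorem:intro_genus_bounds} (the paper packages this as Corollary~\ref{cor:genus_bounds_SWFM}), and then check case-by-case that the resulting lower bound equals $g(X,A)+g_{4}(K)$. One small slip: for $S^{2}\times S^{2}\#S^{2}\times S^{2}$ and the other non-$K3$ rows one has $b_{2}^{+}(X)=2$, not $4$, so $c(K,X)=2$ and $C=2$ in every case listed---but this does not affect the correctness of your outline.
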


\begin{remark}
Note that all of the knots above are quasi-alternating, with the exception of $T(3,5)$. The knots $T(3,5)$, $T(2,2k+1)$, $9_{49}$, $10_{155}$ are quasi-positive, while $9_{47}$, $m10_{156}$, $10_{160}$, and $10_{163}$ are not. Therefore in the particular case of $(X,A)=(K3,0)$ there is some overlap between the class of knots considered in Theorem \ref{theorem:intro_X_A_genus_sharp} and those in (\cite{BaragliaQP}, Corollary 1.3).
\end{remark}

It is worth noting that for the 4-manifolds appearing in Theorem \ref{theorem:intro_X_A_genus_sharp} with the exception of the $K3$ surface, all the Seiberg--Witten and Bauer-Furuta invariants vanish --- in particular most of the usual methods to obtain genus bounds are inaccessible for these manifolds.

\subsection{Future Directions}
\label{subsec:intro_future_directions}

In order to explicitly compute equivariant $\kappa$-invariants in the case of odd-order cyclic group actions, one would need to compute equivariant $\eta$-invariants of the Dirac operator on 3-manifolds. While there are some computations of equivariant $\eta$-invariants of the odd signature operator (\cite{Anv:rho}), there are unfortunately no known computations in the literature for the Dirac operator at the time of writing, except for the 3-sphere (\cite{Deg01}). In upcoming work by the author (\cite{Mon:eta}), we plan to compute these equivariant $\eta$-invariants explicitly for Seifert-fibered spaces with respect to the cyclic group actions generated by rotations in the $S^{1}$-fibers, following the techniques used in \cite{Nic1}.

It would also be a worthwhile endeavor to construct $G^{*}_{m}$-equivariant analogues of the $\alpha$, $\beta$, $\gamma$ invariants from \cite{Man16} in the setting of equivariant homology, as well as equivariant analogues of the $\kappa o_{i}$ invariants from \cite{Lin15} in the setting of equivariant $KO$-theory.

\subsection{Organization}
\label{subsec:intro_organization}

In Section \ref{sec:group_actions} we give an overview of cyclic group actions and spin lifts. In Section \ref{sec:k_theory} we provide an overview of $G^{*}_{m}$-equivariant $K$-theory and introduce spaces of type $G^{*}_{m}$-$\SWF$ and $\CC$-$G^{*}_{m}$-$\SWF$. In Section \ref{sec:equivariant_k_invariants} we define equivariant $k$-invariants, the precursor to our equivariant $\kappa$-invariants which we apply to spaces of type $\CC$-$G^{*}_{m}$-$\SWF$. In Section \ref{sec:stable_equivariant_k_invariants} we introduce (stable) $G^{*}_{m}$-spectrum classes and $\CC$-$G^{*}_{m}$-spectrum classes and discuss how to stabilize our equivariant $k$-invariants to produce well-defined invariants of these spaces. In Section \ref{sec:stable_homotopy_type} we define the $G^{*}_{m}$-spectrum class $\SWF(Y,\frak{s},\wh{\sigma})$ --- along the way, we discuss the $G$-Spin theorem for 4-manifolds with boundary and introduce the ingredients involved in constructing the equivariant correction term $n(Y,\frak{s},\wh{\sigma},g)$. In Section \ref{sec:kappa_invariants} we define the equivariant $\kappa$-invariants and prove our equivariant relative 10/8ths inequalities. In Section \ref{sec:calculations_knot_invariants} we provide some calculations of our equivariant $\kappa$-invariants, and in Section \ref{sec:applications} we discuss topological applications. Appendix \ref{sec:number_theory} gives a proof of Proposition \ref{prop:monomials}, and Appendix \ref{sec:tables_appendix} features some tables referred to throughout the article.


\subsection{Acknowledgements}
\label{subsec:intro_acknowledgements}
I would like to express the utmost gratitude to my advisor Daniel Ruberman for his constant support and encouragement throughout this project, without which I would have been hopelessly lost. I would also like to thank Jianfeng Lin for suggesting this problem, as well as David Baraglia, Matthew Carr, Arun Debray, Anda Degeratu, Hokuto Konno, Jiakai Li, Rahul Krishna, Liviu Nicolaescu, Matthew Stoffregen, and Masaki Taniguchi for interesting and helpful conversations. This material is based upon work supported by the National Science Foundation under Grant No. DMS-1928930 while the author was in residence at the Simons Laufer Mathematical Science Institute (previously known as MSRI) in Berkeley, California, during the Fall 2022 semester. The author was also partially supported by NSF grant DMS-1811111.

\bigskip
\section{Cyclic Group Actions on Spin Manifolds}
\label{sec:group_actions}

In this section we clarify the concept of a spin cyclic group action. We start in Section \ref{subsec:cyclic_actions_spin_3_4_manifolds} by defining the notion of a spin lift of a cyclic group action on a spin 3- or 4-manifold, and explore the dichotomy between even and odd spin lifts via the Atiyah-Bott lemma. In Section \ref{subsec:equivariant_cobordisms} we define and explore properties of the spin $\ZZ_{m}$-equivariant cobordism group $\Omega_{3}^{\Spin,\ZZ_{m}}$, and we conclude in Section \ref{subsec:equivariant_connected_sums} by defining the notion of a spin $\ZZ_{m}$-equivariant connected sum of two manifolds.

\subsection{Cyclic Group Actions on Spin 3- and 4-Manifolds}
\label{subsec:cyclic_actions_spin_3_4_manifolds}

Let $n\geq 2$, and let $(M,\frak{s})$ be a compact connected oriented spin $n$-manifold equipped with an orientation-preserving self-diffeomorphism $\alpha:M\to M$ of order $m$ for some positive integer $m\geq 2$, such that $\alpha^{*}(\frak{s})=\frak{s}$, i.e., the spin structure $\frak{s}$ is $\alpha$-invariant. Note that this is equivalent to the condition that 
\[\alpha^{*}(\frak{s})-\frak{s}=0\in H^{1}(M,\ZZ_{2}).\]
Choose an $\alpha$-invariant metric $g$ on $M$, and consider the induced map $d\alpha:\Fr(M)\to\Fr(M)$ on the associated principal $SO(n)$-frame bundle $\Fr(M)$ of $M$, which is independent of the choice of $g$ up to $\ZZ_{m}$-equivariant $SO(n)$-bundle isomorphism. Let $P\to M$ be the principal $\Spin(n)$-bundle associated to $\frak{s}$, which double covers $\Fr(M)$. By invariance of $\frak{s}$ under $\alpha$, we can choose a lift of $\alpha$ to a smooth bundle automorphism $\wh{\alpha}:P\to P$ which double covers $d\alpha$. We call the pair $(\alpha,\wh{\alpha})$ a \emph{spin diffeomorphism}. Note that there are precisely two possible spin lifts --- if $\wh{\alpha}$ is one lift, then the other lift is given by $-\wh{\alpha}$ which acts on $P$ via $(-\wh{\alpha})(p)=-(\wh{\alpha}(p))$.

Although $\alpha$ is of order $m$, the lift $\wh{\alpha}$ may not necessarily be of order $m$. Because the induced map $d\alpha:\Fr(M)\to \Fr(M)$ on the $SO(n)$-frame bundle must satisfy $(d\alpha)^{m}=1$, it follows that $\wh{\alpha}^{m}$ is either equal to the identity or equal to the \emph{spin flip}, i.e., the non-trivial involution of $\text{Spin}(n)$ as a double-cover of $SO(n)$ when restricted to each fiber. We say that $\wh{\alpha}$ is \emph{even} if $\wh{\alpha}^{m}=1$, and \emph{odd} if $\wh{\alpha}^{m}$ is equal to the spin-flip. We will call this property the \emph{parity} of $\wh{\alpha}$, which is unrelated to the parity of $m$.

We will call a triple $(M,\frak{s},\wh{\alpha})$ as above a \emph{(connected) $\ZZ_{m}$-equivariant spin $n$-manifold}. Sometimes we will use the notation $(M,\frak{s},\alpha)$ to denote a triple as above where $\frak{s}$ is $\alpha$-invariant, but we have not fixed a particular spin lift of $\alpha$. Similarly for a fixed choice of $\alpha$-invariant metric $g$ on $M$, we refer to the quadruple $(M,\frak{s},\wh{\alpha},g)$ as a \emph{$\ZZ_{m}$-equivariant Riemannian spin $n$-manifold} (or $(M,\frak{s},\alpha,g)$ if we do not wish to pick a particular spin lift of $\alpha$).

We say that that two $\ZZ_{m}$-equivariant spin $n$-manifolds $(M,\frak{s},\alpha)$ and $(M',\frak{s}',\alpha')$ are \emph{$\ZZ_{m}$-equivariantly spin diffeomorphic} (or just \emph{equivariantly diffeomorphic}) if there exists a diffeomorphism $f:M\to M'$ such that $f^{*}(\frak{s}')\cong\frak{s}$ and $f\circ\alpha=\alpha'\circ f$. Fixing spin lifts $\wh{\alpha}$,$\wh{\alpha}'$ of $\alpha$, $\alpha'$, respectively, we say that $(M,\frak{t},\wh{\alpha})$ and $(M',\frak{s}',\wh{\alpha}')$ are \emph{strongly $\ZZ_{m}$-equivariantly spin diffeomorphic} if there exists a pair $(f,\wh{f})$ where $f$ is as above, and $\wh{f}:P\to P'$ is a map on the corresponding spin bundles which double covers the induced map $df:\Fr(M)\to \Fr(M')$ on frame bundles, such that $\wh{f}\circ\wh{\alpha}=\wh{\alpha'}\circ\wh{f}$. The notion of a (strong) $\ZZ_{m}$-equivariant spin isometry between $\ZZ_{m}$-equivariant Riemannian spin 4-manifolds is defined similarly.

In the special case where $\alpha$ is an involution we have the following lemma of Atiyah and Bott, which says that parity of any spin lift $\wh{\alpha}$ of $\alpha$ is determined by the fixed-point set $M^{\alpha}\subset M$ of $\alpha$:

\begin{proposition}[\cite{AtiyahBott2}]
\label{prop:atiyah_bott}
Let $(M,\frak{s},\wh{\alpha})$ be a $\ZZ_{2}$-equivariant spin $n$-manifold. Then:
\begin{enumerate}
    \item If $M^{\alpha}=\emptyset$, then $\wh{\alpha}$ is even $\iff$ the quotient manifold $M/\alpha$ admits a spin structure $\frak{s}'$ such that $\frak{s}'$ pulls back to $\frak{s}$ under the regular two-fold covering $\pi:M\to M/\alpha$.
    \item If $M^{\alpha}\neq\emptyset$, then:
    \begin{enumerate}
        \item $\wh{\alpha}$ is even $\iff \dim(M^{\alpha})\equiv\dim(M)\pmod{4}$.
        \item $\wh{\alpha}$ is odd $\iff \dim(M^{\alpha})\equiv\dim(M)+2\pmod{4}$.
\end{enumerate}
\end{enumerate}
\end{proposition}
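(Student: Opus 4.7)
The plan is to treat the two cases separately. For part (1), the statement is a descent result for principal $\Spin(n)$-bundles along the free double cover $\pi\co M\to M/\alpha$. A spin structure $\frak{s}'$ on $M/\alpha$ whose pullback is $\frak{s}$ amounts to a $\Spin(n)$-bundle $P'\to M/\alpha$ with $\pi^{*}P'\cong P$ over $\Fr(M)=\pi^{*}\Fr(M/\alpha)$, which by classical descent theory is equivalent to giving $P$ a genuine $\ZZ_{2}$-action of order two that doubly covers the given $\ZZ_{2}$-action on $\Fr(M)$. Since the only two lifts of $d\alpha$ on $P$ are $\pm\wh{\alpha}$, such an order-two lift exists precisely when $\wh{\alpha}^{2}=\id$, i.e., when $\wh{\alpha}$ is even. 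The equivalence is then witnessed concretely by the identifications $P'=P/\langle\wh{\alpha}\rangle$ and $P=\pi^{*}P'$, where in one direction we take the quotient and in the other we pull back.

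For part (2), I would localize at a fixed point $x\in M^{\alpha}$. The bundle automorphism $\wh{\alpha}^{2}\co P\to P$ covers the identity on $\Fr(M)$ and takes values pointwise in the central subgroup $\{\pm 1\}\subset\Spin(n)$; by continuity together with connectedness of $M$, it is globally constant, so it suffices to evaluate $\wh{\alpha}^{2}$ on the single fiber $P_{x}$. The differential $d\alpha_{x}$ is an orientation-preserving involution of $T_{x}M$, so $T_{x}M=T_{x}M^{\alpha}\oplus\nu_{x}$ with $d\alpha_{x}=\id\oplus(-\id)$; relative to an oriented orthonormal basis $e_{1},\dots,e_{n}$ adapted to this splitting, $d\alpha_{x}$ corresponds to the matrix $\mathrm{diag}(I_{k},-I_{n-k})\in SO(n)$ with $k=\dim M^{\alpha}$ and $\ell:=n-k$ necessarily even. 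The two lifts of this matrix to $\Spin(n)$ are, up to sign, the Clifford product $e_{k+1}e_{k+2}\cdots e_{n}$, so $\wh{\alpha}|_{P_{x}}$ equals one of these two elements. A short calculation using the relations $e_{i}e_{j}=-e_{j}e_{i}$ and $e_{i}^{2}=-1$ yields the recursion $(e_{1}\cdots e_{\ell})^{2}=(-1)^{\ell}(e_{2}\cdots e_{\ell})^{2}$, giving $(e_{k+1}\cdots e_{n})^{2}=(-1)^{\ell(\ell+1)/2}$. For $\ell$ even this equals $+1$ if $\ell\equiv 0\pmod{4}$ and $-1$ if $\ell\equiv 2\pmod{4}$, which translates directly via $\ell=\dim M-\dim M^{\alpha}$ to the claimed parity dichotomy.

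The main obstacle is the global-constancy argument underlying part (2): one must verify that the sign $\wh{\alpha}^{2}\in\{\pm 1\}$ computed at a single fixed point is genuinely intrinsic to the lift $\wh{\alpha}$ and independent of auxiliary choices. Independence of the chosen adapted basis follows from the centrality of $\pm 1$ in $\Spin(n)$ (conjugation by a change-of-basis element in $\Spin(n)$ acts trivially), independence of the chosen fixed point follows from connectedness of $M$ applied to the locally constant section $\wh{\alpha}^{2}$ of $\{\pm 1\}$, and independence of the Clifford-algebra sign convention is cosmetic since both lifts $\pm(e_{k+1}\cdots e_{n})$ have the same square. A secondary subtlety in part (1) is checking that the descended $\Spin(n)$-bundle $P/\langle\wh{\alpha}\rangle$ genuinely refines the $SO(n)$-frame bundle of $M/\alpha$ with its induced orientation, but this is automatic since $\alpha$ preserves the orientation of $M$ and $\wh{\alpha}$ covers $d\alpha$.
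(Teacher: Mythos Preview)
Your proof is correct. Note, however, that the paper does not give its own proof of this proposition: it is stated with a citation to Atiyah--Bott and used as a black box. Your argument is essentially the standard one (and presumably close to what Atiyah--Bott do): descent for part (1), and the Clifford-algebraic computation of the square of the lift at a fixed point for part (2). The only minor comment is that in part (2) you should make explicit that $\wh{\alpha}$ sends $P_{x}$ to $P_{\alpha(x)}=P_{x}$ precisely because $x$ is fixed, so that identifying $\wh{\alpha}|_{P_{x}}$ with an element of $\Spin(n)$ (after choosing a basepoint in the torsor $P_{x}$) makes sense; you implicitly use this but do not state it.
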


By using Proposition \ref{prop:atiyah_bott}, we can classify the parities of spin lifts of $\tau$ for general $m$:

\begin{proposition}
\label{prop:classification_spin_lifts_n_manifolds}
Let $(M,\frak{s},\wh{\alpha})$ be a $\ZZ_{m}$-equivariant spin $n$-manifold. Then:
\begin{enumerate}
    \item Suppose $m$ is even, and let $\wh{\alpha}$ be any spin lift of $\alpha$. Then:
    \begin{enumerate}
        \item If $M^{\alpha^{m/2}}=\emptyset$, then $\wh{\alpha}$ is even $\iff$ the quotient manifold $M/\alpha^{m/2}$ admits a spin structure $\frak{s}'$ such that $\frak{s}'$ pulls back to $\frak{s}$ under the regular two-fold covering $\pi:M\to M/\alpha^{m/2}$.
        \item If $M^{\alpha^{m/2}}\neq\emptyset$, then:
        \begin{enumerate}
            \item $\wh{\alpha}$ is even $\iff \dim(M^{\alpha^{m/2}})\equiv\dim(M)\pmod{4}$.
            \item $\wh{\alpha}$ is odd $\iff \dim(M^{\alpha^{m/2}})\equiv\dim(M)+2\pmod{4}$.
        \end{enumerate}
    \end{enumerate}
    \item If $m$ is odd, then $\alpha$ admits precisely one even and one odd spin lift.
\end{enumerate}
\end{proposition}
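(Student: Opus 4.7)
The plan is to reduce both parts to Proposition \ref{prop:atiyah_bott} (the Atiyah--Bott lemma), by exploiting the two basic identities:
\begin{equation*}
    \wh{\alpha}^{m} = (\wh{\alpha}^{m/2})^{2} \quad (m \text{ even}), \qquad (-\wh{\alpha})^{m} = (-1)^{m}\, \wh{\alpha}^{m}.
\end{equation*}
Recall that ``even'' means $\wh{\alpha}^{m} = \id_{P}$, and ``odd'' means $\wh{\alpha}^{m}$ equals the fiberwise multiplication by $-1 \in \Spin(n)$ (the spin flip).

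For part (2), where $m$ is odd, I would first note that the two spin lifts of $\alpha$ are $\wh{\alpha}$ and $-\wh{\alpha}$, and that the element $-1 \in \Spin(n)$ is central. Hence $(-\wh{\alpha})^{m} = (-1)^{m}\wh{\alpha}^{m} = -\wh{\alpha}^{m}$. This shows that the parities of the two lifts are opposite, so exactly one is even and one is odd.

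For part (1), where $m$ is even, I would set $\beta := \alpha^{m/2}$, an involution of $M$ whose induced spin structure is still $\frak{s}$ (since $\alpha$, and hence any power of $\alpha$, preserves $\frak{s}$). Given any spin lift $\wh{\alpha}$ of $\alpha$, the power $\wh{\beta} := \wh{\alpha}^{m/2}$ is a spin lift of the involution $\beta$. The identity $\wh{\beta}^{2} = \wh{\alpha}^{m}$ shows that $\wh{\alpha}$ is even (respectively, odd) as a spin lift of the order-$m$ diffeomorphism $\alpha$ if and only if $\wh{\beta}$ is even (respectively, odd) as a spin lift of the involution $\beta$. Applying Proposition \ref{prop:atiyah_bott} to the $\ZZ_{2}$-equivariant spin $n$-manifold $(M, \frak{s}, \wh{\beta})$ then directly yields cases (1a) and (1b): the dichotomy based on whether $M^{\beta} = M^{\alpha^{m/2}}$ is empty, and if not, the congruence condition on $\dim(M^{\alpha^{m/2}}) \pmod{4}$.

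The proof is essentially a bookkeeping exercise, so I do not anticipate a real obstacle; the only point requiring some care is checking that the parity of $\wh{\alpha}^{m/2}$, viewed as a spin lift of the \emph{involution} $\alpha^{m/2}$, agrees with the parity of $\wh{\alpha}$ viewed as a spin lift of the order-$m$ map $\alpha$. This is exactly the content of the identity $(\wh{\alpha}^{m/2})^{2} = \wh{\alpha}^{m}$, together with the observation that the notion of parity is defined in both cases by comparing the appropriate power to the spin flip. One should also note that when $m$ is even, both spin lifts $\pm\wh{\alpha}$ have the same parity, since $(-\wh{\alpha})^{m} = \wh{\alpha}^{m}$; this is implicit in the statement of (1), which asserts that the parity depends only on $\alpha$ (not on the chosen lift) when $m$ is even.
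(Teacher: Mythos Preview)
Your proposal is correct and follows essentially the same approach as the paper. The paper's proof is terser---it simply says ``Statement (1) follows from Proposition \ref{prop:atiyah_bott}'' without spelling out the reduction via $\wh{\beta}=\wh{\alpha}^{m/2}$, and for (2) it gives exactly your computation $(-\wh{\alpha})^{m}=(-1)^{m}\wh{\alpha}^{m}=-\wh{\alpha}^{m}$---but your added detail about why the parity of $\wh{\alpha}$ matches that of $\wh{\alpha}^{m/2}$ is the implicit content of the paper's one-line reference.
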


\begin{proof}
Statement (1) follows from Proposition \ref{prop:atiyah_bott}. For (2), let $\wh{\alpha}$ be a spin lift of $\alpha$. Then the two spin lifts $\wh{\alpha}$ and $-\wh{\alpha}$ are of opposite parity, since $(-\wh{\alpha})^{m}=(-1)^{m}\wh{\alpha}^{m}=-\wh{\alpha}^{m}$.
\end{proof}

We have the following corollary in the case where $n=3$:

\begin{proposition}
\label{prop:classification_spin_lifts_3_manifolds}
Let $(Y,\frak{s},\wh{\sigma})$ be a $\ZZ_{m}$-equivariant spin $3$-manifold. Then:
\begin{enumerate}
    \item If $m$ is even and $\wh{\sigma}$ is any spin lift of $\sigma$, then:
    \begin{enumerate}
    	\item If $Y^{\sigma^{m/2}}=\emptyset$, then $\wh{\sigma}$ is even $\iff Y^{\sigma^{m/2}}=\emptyset$ and the quotient manifold $Y/\sigma^{m/2}$ admits a spin structure $\frak{s}'$ which pulls back to $\frak{s}$ under the covering $\pi:Y\to Y/\sigma^{m/2}$.
        \item If $Y^{\sigma^{m/2}}\neq\emptyset$, then $\wh{\sigma}$ is odd and $\dim(Y^{\sigma^{m/2}})=1$.
    \end{enumerate}
    \item If $m$ is odd, then $\sigma$ admits precisely one even and one odd spin lift.
\end{enumerate}
\end{proposition}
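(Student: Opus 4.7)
The plan is to reduce everything to Proposition \ref{prop:classification_spin_lifts_n_manifolds} (specialized to $n=3$), supplemented by a small observation about fixed-point sets of orientation-preserving involutions on $3$-manifolds to pin down the fixed-set dimension in case (1b).

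For part (2), I would invoke Proposition \ref{prop:classification_spin_lifts_n_manifolds}(2) verbatim: when $m$ is odd, the two spin lifts $\wh{\sigma}$ and $-\wh{\sigma}$ necessarily have opposite parity since $(-\wh{\sigma})^{m}=-\wh{\sigma}^{m}$, so $\sigma$ admits exactly one even and one odd spin lift.

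For part (1a), I would apply Proposition \ref{prop:classification_spin_lifts_n_manifolds}(1a) with $n=3$; the criterion there --- existence of a spin structure on the quotient $Y/\sigma^{m/2}$ which pulls back to $\frak{s}$ --- is dimension-independent, so no modification is required.

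The only substantive step is part (1b). Assuming $Y^{\sigma^{m/2}}\neq\emptyset$, first note that $\sigma^{m/2}$ is an involution of $Y$ (non-trivial, since $\sigma$ has order exactly $m$) which is orientation-preserving. At any fixed point $y\in Y^{\sigma^{m/2}}$, the differential $d(\sigma^{m/2})_{y}\in SO(3)$ squares to the identity, hence is conjugate to $\mathrm{diag}(1,-1,-1)$, whose fixed subspace has dimension $1$. By the equivariant slice theorem, the fixed set $Y^{\sigma^{m/2}}$ is therefore a smooth $1$-dimensional submanifold of $Y$, giving $\dim(Y^{\sigma^{m/2}})=1$. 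Since $1\equiv 3+2\pmod{4}$, Proposition \ref{prop:classification_spin_lifts_n_manifolds}(1b)(ii) then forces $\wh{\sigma}$ to be odd. The main (mild) obstacle is this dimension calculation: one must exclude isolated fixed points and $2$-dimensional fixed components, both of which are ruled out by the normal form of a non-identity order-$2$ element of $SO(3)$.
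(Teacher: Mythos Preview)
Your proposal is correct and matches the paper's approach: the paper states this proposition as an immediate corollary of Proposition~\ref{prop:classification_spin_lifts_n_manifolds} (specialized to $n=3$) without giving an explicit proof. Your write-up fills in exactly the one detail left implicit --- that a nontrivial orientation-preserving involution on a $3$-manifold has $1$-dimensional fixed set --- which is precisely what is needed to apply case (1b)(ii).
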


\begin{remark}
The above proposition implies that in the case where $\alpha$ is an involution, if $Y^{\alpha}$ is non-empty then any spin lift of $\alpha$ must be of odd type. However, it is not necessarily the case that all odd-type spin involutions on 3-manifolds must have non-empty fixed point set.

For example, consider the free involution $\iota:\RR P^{3}\to\RR P^{3}$ realizing $\RR P^{3}$ as a regular two-fold cover over the lens space $L(4,1)$, and let $\pi:\RR P^{3}\to L(4,1)$ denote the corresponding projection map. Note that 
\[H^{1}(\RR P^{3},\ZZ_{2})\cong H^{1}(L(4,1),\ZZ_{2})\cong\ZZ_{2},\]
and so $\RR P^{3}$ and $L(4,1)$ each admit precisely two spin structures. We observe that $\iota$ fixes both spin structures on $\RR P^{3}$, since it acts trivially on $\pi_{1}(\RR P^{3})\cong H_{1}(\RR P^{3})$. Indeed, if $\iota$ were to act non-trivially on $\pi_{1}(\RR P^{3})\cong\ZZ_{2}$, then the induced map $\pi_{1}(\RR P^{3})\to\pi_{1}(L(4,1))\cong\ZZ_{4}$ would be trivial, a contradiction.

Let $\alpha,\beta$ denote the generators of $H^{1}(\RR P^{3},\ZZ_{2})$ and $H^{1}(L(4,1),\ZZ_{2})$, respectively. Note that $\alpha$ can be identified with the function $f;\pi_{1}(\RR P^{3})\cong\ZZ_{2}\to\ZZ_{2}$ which sends $1\mapsto 1$, and similarly $\beta$ can be identified with the function $g:\pi_{1}(L(4,1))\cong\ZZ_{4}\to\ZZ_{2}$ which sends $1\mapsto 1$. It follows that the pullback map $\pi^{*}:H^{1}(L(4,1),\ZZ_{2})\to H^{1}(\RR P^{3},\ZZ_{2})$ is trivial, since the pullback $\pi^{*}g:\pi_{1}(\RR P^{3})\to\ZZ_{2}$ factors through the inclusion $\pi_{1}(\RR P^{3})\to\pi_{1}(L(4,1))$ which sends $1\mapsto 2$, and $g$ evaluates to zero on $2\in\pi_{1}(L(4,1))\cong\ZZ_{4}$. It follows that both spin structures on $L(4,1)$ pull back to the same spin structure $\frak{s}_{0}$ on $\RR P^{3}$. Letting $\frak{s}_{1}=\frak{s}_{0}+\alpha$, we see that any spin lift of $\alpha$ with respect to $\frak{s}_{1}$ must be of odd type.
\end{remark}

Next we consider the case of 4-manifolds:

\begin{proposition}
\label{prop:classification_spin_lifts_4_manifolds}
Let $(W,\frak{t},\wh{\tau})$ be a $\ZZ_{m}$-equivariant spin $4$-manifold. Then:
\begin{enumerate}
    \item Suppose $m$ is even, and let $\wh{\tau}$ be any spin lift of $\tau$. Then:
    \begin{enumerate}
        \item If $W^{\tau^{m/2}}=\emptyset$, then $\wh{\tau}$ is even $\iff$ the quotient manifold $M/\tau^{m/2}$ admits a spin structure $\frak{t}'$ such that $\frak{t}'$ pulls back to $\frak{t}$ under the regular two-fold covering $\pi:W\to W/\tau^{m/2}$.
        \item If $W^{\tau^{m/2}}\neq\emptyset$, then:
        \begin{enumerate}
            \item $\wh{\tau}$ is even $\iff \dim(W^{\tau^{m/2}})=0$.
            \item $\wh{\tau}$ is odd $\iff \dim(W^{\tau^{m/2}})=2$.
        \end{enumerate}
    \end{enumerate}
    \item If $m$ is odd, then $\tau$ admits precisely one even and one odd spin lift.
\end{enumerate}
\end{proposition}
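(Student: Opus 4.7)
The plan is to deduce Proposition~\ref{prop:classification_spin_lifts_4_manifolds} as a direct specialization of Proposition~\ref{prop:classification_spin_lifts_n_manifolds} to $n=4$, together with a short observation that on a connected $4$-manifold the fixed set of a non-trivial smooth involution can only have components of dimension $0$ or $2$. No new input is needed beyond this general statement and the dimension count.

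For part~(1), I fix a spin lift $\wh{\tau}$ of $\tau$. The sub-case where $W^{\tau^{m/2}} = \emptyset$ is precisely part~(1)(a) of Proposition~\ref{prop:classification_spin_lifts_n_manifolds} evaluated at $n = 4$, so (1)(a) is immediate. In sub-case (b) I first note that since $\tau$ has exact order $m$, the involution $\tau^{m/2}$ cannot act as the identity on any open subset of the connected manifold $W$; consequently no component of $W^{\tau^{m/2}}$ is $4$-dimensional. Smoothness of $\tau^{m/2}$ then forces each component to have even codimension, hence dimension $0$ or $2$. Applying Proposition~\ref{prop:classification_spin_lifts_n_manifolds}(1)(b) with $\dim M = 4$, the even-type condition $\dim(W^{\tau^{m/2}}) \equiv 4 \pmod{4}$ reduces to all components being $0$-dimensional, and the odd-type condition $\dim(W^{\tau^{m/2}}) \equiv 6 \equiv 2 \pmod{4}$ reduces to all components being $2$-dimensional, recovering the dichotomy (i), (ii) of the statement. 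The Atiyah--Bott lemma underlying Proposition~\ref{prop:atiyah_bott} guarantees that all components of $W^{\tau^{m/2}}$ share the same dimension modulo $4$ for a fixed spin lift, so $\dim(W^{\tau^{m/2}})$ is unambiguously defined in this discussion.

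Part~(2) follows immediately from part~(2) of Proposition~\ref{prop:classification_spin_lifts_n_manifolds}: the argument there is dimension-independent, relying only on the identity $(-\wh{\tau})^{m} = -\wh{\tau}^{m}$ when $m$ is odd, which forces the two lifts $\wh{\tau}$ and $-\wh{\tau}$ to have opposite parity. I do not anticipate any substantive obstacle; the only piece of bookkeeping that requires care is the elimination of a $4$-dimensional component of $W^{\tau^{m/2}}$ in (1)(b), which uses connectedness of $W$ and the hypothesis that $\tau$ has exact order $m$ rather than order dividing $m/2$.
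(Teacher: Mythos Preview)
Your proposal is correct and matches the paper's approach: the paper presents this proposition without a separate proof, treating it as the $n=4$ specialization of Proposition~\ref{prop:classification_spin_lifts_n_manifolds}, exactly as you do. One minor imprecision: it is orientation-preservation of $\tau^{m/2}$ (built into the standing hypotheses of Section~\ref{subsec:cyclic_actions_spin_3_4_manifolds}), not smoothness alone, that forces the fixed-point components to have even codimension.
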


\begin{remark}
The canonical example of an odd-type free involution	 on a spin 4-manifold is the involution on the K3 surface with quotient the Enriques surface, which does not admit a spin structure.
\end{remark}

We do not necessarily have to restrict ourselves to working with connected $\ZZ_{m}$-equivariant spin manifolds. Let $(M,\frak{s})$ be a spin $n$-manifold with $c$ connected components, and $\alpha:M\to M$ an $\frak{s}$-preserving self-diffeomorphism of order $m$ such that the orbit space of the $\ZZ_{m}$-action generated by $\alpha$ has $c'$ components. There are then precisely $2^{c'}$ possible spin lifts $\wh{\alpha}$ of $\alpha$, which essentially boil down to a choice of spin lift on each component of the orbit space. We then have the notion of the \emph{(generalized) parity} of a spin lift $\wh{\alpha}$, which is an assignment of either ``even'' or ``odd'' to each component of the orbit space. When disconnected manifolds arise, we will often restrict our attention to spin lifts of \emph{pure parity} (spin lifts which restrict to either all even or all odd spin lifts on the various components of $M$), as opposed to spin lifts of \emph{mixed parity} (those which restrict to a mixture of even and odd spin lifts on different components).

\bigskip
\subsection{Equivariant Cobordisms}
\label{subsec:equivariant_cobordisms}

For this section we will not assume our manifolds are necessarily connected. We begin with a few definitions:

\begin{definition}
Let $(Y,\frak{s},\wh{\sigma})$ be a $\ZZ_{m}$-equivariant spin 3-manifold.
\begin{itemize}
    \item We say that $(W,\frak{t},\wh{\tau})$ is a \emph{$\ZZ_{m}$-equivariant spin filling of $(Y,\frak{s},\wh{\sigma})$} if $(W,\frak{t},\wh{\tau})$ is a $\ZZ_{m}$-equivariant spin 4-manifold with boundary $\del W=Y$ such that $\frak{t}|_{Y}=\frak{s}$, and $\wh{\tau}|_{Y}=\wh{\sigma}$.
    \item We define the \emph{orientation reverse} of $(Y,\frak{s},\wh{\sigma})$ to be $-(Y,\frak{s},\wh{\sigma}):=(-Y,\frak{s},\wh{\sigma})$, where $-Y$ denotes the orientation reverse of $Y$. Here we conflate the spin structure $\frak{s}$ on $Y$ with its corresponding spin structure on $-Y$, and similarly for $\wh{\sigma}$.
\end{itemize}
Similarly, suppose $(Y_{0},\frak{s}_{0},\wh{\sigma}_{0})$ and  $(Y_{1},\frak{s}_{1},\wh{\sigma}_{1})$ are two $\ZZ_{m}$-equivariant spin 3-manifolds.
\begin{itemize}
    \item We define the \emph{disjoint union of $(Y_{0},\frak{s}_{0},\wh{\sigma}_{0})$ and $(Y_{1},\frak{s}_{1},\wh{\sigma}_{1})$} to be the $\ZZ_{m}$-equivariant spin 3-manifold
    \[(Y_{0},\frak{s}_{0},\wh{\sigma}_{0})\amalg(Y_{1},\frak{s}_{1},\wh{\sigma}_{1}):=(Y_{0}\amalg Y_{1},\frak{s}_{0}\amalg\frak{s}_{1},\wh{\sigma}_{0}\amalg\wh{\sigma}_{1}).\]
    \item A \emph{$\ZZ_{m}$-equivariant spin cobordism from $(Y_{0},\frak{s}_{0},\wh{\sigma}_{0})$ to $(Y_{1},\frak{s}_{1},\wh{\sigma}_{1})$} is a $\ZZ_{m}$-equivariant spin filling of $-(Y_{0},\frak{s}_{0},\wh{\sigma}_{0})\amalg(Y_{1},\frak{s}_{1},\wh{\sigma}_{1})$. We say that $(Y_{0},\frak{s}_{0},\wh{\sigma}_{0})$ and $(Y_{1},\frak{s}_{1},\wh{\sigma}_{1})$ are \emph{$\ZZ_{m}$-equivariantly spin cobordant} if there exists a $\ZZ_{m}$-equivariant spin cobordism between them.
\end{itemize}
\end{definition}

\begin{remark}
\label{remark:cobordism_parity}
Note that two \emph{connected} $\ZZ_{m}$-equivariant spin 3-manifolds $(Y_{0},\frak{s}_{0},\wh{\sigma}_{0})$, $(Y_{1},\frak{s}_{1},\wh{\sigma}_{1})$ are $\ZZ_{m}$-equivariantly spin cobordant only if the parities of $\wh{\sigma}_{0}$ and $\wh{\sigma}_{1}$ are equal. Furthermore, for any \emph{connected} equivariant cobordism $(W,\frak{t},\wh{\tau})$ between them, the parity of $\wh{\tau}$ must agree with the parities of $\wh{\sigma}_{0}$ and $\wh{\sigma}_{1}$.
\end{remark}

With these definitions in mind, we define the \emph{3-dimensional $\ZZ_{m}$-equivariant spin cobordism group} $\Omega^{\Spin,\ZZ_{m}}_{3}$ to be the set of $\ZZ_{m}$-equivariant spin 3-manifolds under the equivalence relation induced by $\ZZ_{m}$-equivariant spin cobordism, with addition given by disjoint union, identity given by the empty manifold $\emptyset$, and inverses given by orientation reversal. By Remark \ref{remark:cobordism_parity}, the group splits as a direct sum
\[\Omega^{\Spin,\ZZ_{m}}_{3}=\Omega^{\Spin,\ZZ_{m},\ev}_{3}\oplus\Omega^{\Spin,\ZZ_{m},\odd}_{3},\]
where $\Omega^{\Spin,\ZZ_{m},\ev}_{3}$, $\Omega^{\Spin,\ZZ_{m},\odd}_{3}$ denote  the subgroups generated by manifolds equipped with even and odd spin lifts, respectively. The rest of this section is devoted to proving the following proposition:

\begin{proposition}
\label{prop:equivariant_cobordism_group}
For each integer $m\geq 2$, the $\ZZ_{m}$-equivariant spin cobordism group $\Omega^{\Spin,\ZZ_{m}}_{3}$ is finite.
\end{proposition}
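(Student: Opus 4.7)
The plan is to prove finiteness of each parity summand separately, using the splitting
\[\Omega^{\Spin,\ZZ_m}_{3}=\Omega^{\Spin,\ZZ_m,\ev}_{3}\oplus\Omega^{\Spin,\ZZ_m,\odd}_{3}\]
established in Remark \ref{remark:cobordism_parity}. In both cases the strategy is the same: reduce to ordinary spin bordism of a suitable classifying space, and then invoke the Atiyah--Hirzebruch spectral sequence.

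\emph{Even case.} An even spin lift $\wh\sigma$ is literally a $\ZZ_m$-action on the principal $\Spin(3)$-bundle $P\to Y$. When the action on $Y$ is free, the quotient $Y/\ZZ_m$ is a closed spin $3$-manifold together with a classifying map $Y/\ZZ_m\to B\ZZ_m$ for the covering, so this free part of the group is isomorphic to $\Omega_{3}^{\Spin}(B\ZZ_m)$. Convergence of the Atiyah--Hirzebruch spectral sequence
\[E^{2}_{p,q}=H_p(B\ZZ_m;\Omega^{\Spin}_q)\ \Longrightarrow\ \Omega^{\Spin}_{p+q}(B\ZZ_m)\]
then gives finiteness: $\Omega^{\Spin}_0=\ZZ$, $\Omega^{\Spin}_1=\Omega^{\Spin}_2=\ZZ_2$, $\Omega^{\Spin}_3=0$, and $H_p(B\ZZ_m;\ZZ)$ is a finite abelian group for every $p>0$, while the only potentially infinite term $E^{2}_{0,3}=\Omega^{\Spin}_3$ vanishes.

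\emph{Odd case.} An odd spin lift is a $\ZZ_{2m}$-action on $P$ in which the central $\ZZ_2\subset\Spin(3)$ is identified with the order-$2$ subgroup of $\ZZ_{2m}$. I would view this as a principal $\ZZ_{2m}$-bundle data compatible with the spin structure, which is classified up to bordism by an ordinary spin bordism class of a twisted form of $B\ZZ_{2m}$ (the twist records the identification of the central $\ZZ_2$'s). The same Atiyah--Hirzebruch argument as above applies verbatim, since $H_\ast(B\ZZ_{2m};\ZZ)$ is also finite in positive degrees and the only possibly infinite $E^2$-entry still vanishes.

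\emph{Handling non-free actions.} The reduction to the classifying-space bordism groups above is only rigorous when the action is free. To remove this hypothesis, I would use an equivariant tubular-neighborhood decomposition: by Proposition \ref{prop:classification_spin_lifts_3_manifolds}, the singular set (the union of fixed-point sets of subgroups $\ZZ_d\le\ZZ_m$) is a disjoint union of circles, and near each component the equivariant germ is determined by the conjugacy class of a $\ZZ_d$-representation on $\RR^2$ together with a finite amount of spin-framing data. The set of such local models is finite. An equivariant surgery along an annular collar of each fixed circle then produces a spin-cobordant manifold on which the action is free, at the cost of adding a bounded collection of ``local model'' generators; since both the local contribution and the free quotient contribute finitely many classes, the entire group is finite.

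\emph{Main obstacle.} The main technical point is justifying the equivariant surgery/cobordism step carefully: one must show that the fixed-point germs can be capped off by $\ZZ_m$-equivariant spin $4$-manifolds (modifying the given $3$-manifold within its bordism class) and that only finitely many germ types arise. Given Proposition \ref{prop:classification_spin_lifts_3_manifolds}, this reduces to a bounded classification of equivariant normal bundle data along the fixed circles, so it is more bookkeeping than deep mathematics, but it is where care is required.
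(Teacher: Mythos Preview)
Your approach is essentially the same as the paper's: reduce to the free case, then identify the free equivariant cobordism group with ordinary spin bordism of a classifying space and invoke the Atiyah--Hirzebruch spectral sequence. The paper carries out the reduction to free actions more concretely and cleanly than you do: rather than an abstract ``equivariant surgery along an annular collar'' together with a finite classification of local fixed-point germs, the paper simply attaches a $0$-framed $4$-dimensional $2$-handle along each component $K_i$ of the singular link $L=\bigcup_{k=1}^{m-1}Y^{\sigma^k}$, using an equivariant framing $\phi_i$. The $\ZZ_m$-action extends over each handle $H_i\cong D^2\times D^2$ in the obvious way, and the resulting $4$-manifold is a $\ZZ_m$-equivariant spin cobordism from $Y$ to the $0$-surgered manifold $Y_0$, on which the action is free. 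This single step makes the map $\Omega^{\Spin,\ZZ_m,\mathrm{free}}_3\to\Omega^{\Spin,\ZZ_m}_3$ surjective, so there is no need for your ``bounded collection of local model generators'' or the germ-classification bookkeeping you flag as the main obstacle---that extra structure simply does not enter.
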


There are partial results in this direction (see \cite{Farsi}). The fact that we allow non-empty fixed-point sets in our definition of $\Omega^{\Spin,\ZZ_{m}}_{3}$ makes the issue of calculating this group explicitly a subtle one. However since we only wish to show that these groups are finite, we will take a more ad-hoc approach.

For our first step, we will show that any $\ZZ_{m}$-equivariant spin 3-manifold is $\ZZ_{m}$-equivariantly spin cobordant to one where the action is free. Indeed, let $Y$ be a $\ZZ_{m}$-equivariant spin 3-manifold, and let $L=\bigcup_{k=1}^{m-1}Y^{\sigma^{k}}$ be the union of the fixed-point sets of $\sigma^{k}$, $1\le k\le m-1$. For each component $K_{i}\subset L$ let $d_{i}|m$ be the minimal such divisor such that $K_{i}\subset Y^{\sigma^{d_{i}}}$, and let $\phi_{K_{i}}:\nu(K_{i})\xrightarrow{\cong}S^{1}\times D^{2}$ be an identification of an equivariant tubular neighborhood of $K_{i}$ with a fixed solid torus such that $\sigma^{d}$ acts on $\nu(K_{i})\cong S^{1}\times D^{2}$ via the identity on the $S^{1}$ factor and multiplication by $\omega_{d_{i}}^{k_{i}}=e^{2\pi i k_{i}/d_{i}}$ on the $D^{2}$ factor for some $1\le k_{i}\le d_{i}-1$, $(k_{i},d_{i})=1$. We can choose these framings coherently so that for each $1\le k\le m-1$, the projection of the action of $\sigma^{k}$ on $\nu(L)\cong\sqcup_{i\in I}S^{1}\times D^{2}$ onto $\sqcup_{i\in I}S^{1}$ permutes some subset of components, and fixes the rest point-wise -- denote these functions by $\ol{\sigma}^{k}$.

Attach a 0-framed 4-dimensional 2-handle $H_{i}$ to each $K_{i}$ via the identification $\phi_{i}$. By our choice of framings, we can extend $\sigma$ over $\cup_{i\in I}H_{i}\cong\cup_{i\in I}D^{2}\times D^{2}$ via the obvious extension of $\ol{\sigma}$ to $\cup_{i\in I}D^{2}$. By inspection, the group action preserves all of the spin structures involved. We then see that this extension of $\sigma$ over the $H_{i}$ produces a $\ZZ_{m}$-equivariant spin cobordism from $Y$ to the manifold $Y_{0}$ obtained by $0$-surgery on each component of $L$ with respect to the framings $\{\phi_{i}\}$, such that $\ZZ_{m}$ acts freely on $Y_{0}$. 

Now let $\Omega_{3}^{\Spin,\ZZ_{m},\text{free}}$ denote the spin cobordism group consisting of \emph{free} spin $\ZZ_{m}$-equivariant 3-manifolds and free spin equivariant cobordisms between them. By the above observation, it suffices to show the following:

\begin{proposition}
\label{prop:equivariant_cobordism_group_free}
For each integer $m\geq 2$, the free $\ZZ_{m}$-equivariant spin cobordism group $\Omega^{\Spin,\ZZ_{m},\text{free}}_{3}$ is finite.
\end{proposition}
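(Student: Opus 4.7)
The plan is to identify $\Omega_3^{\Spin,\ZZ_m,\text{free}}$ with a bordism group of a classifying space (possibly with a mild twist) and invoke the Atiyah--Hirzebruch spectral sequence. First I would split
\[\Omega_3^{\Spin,\ZZ_m,\text{free}}=\Omega_3^{\Spin,\ZZ_m,\text{free},\ev}\oplus\Omega_3^{\Spin,\ZZ_m,\text{free},\odd}\]
by pure parity, as justified by Remark~\ref{remark:cobordism_parity} extended componentwise. It therefore suffices to show that each summand is finite.

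For the even summand, let $(Y,\frak{s},\wh{\sigma})$ be a free $\ZZ_m$-equivariant spin $3$-manifold with $\wh{\sigma}$ of even type. Then $\wh{\sigma}$ has order exactly $m$ and acts freely on the principal $\Spin(3)$-bundle $P\to Y$, so the quotient $X:=Y/\langle\sigma\rangle$ inherits both a spin structure $\bar{\frak{s}}$ and a classifying map $f_Y\co X\to B\ZZ_m$ for the free $\ZZ_m$-cover $Y\to X$. This construction, together with its inverse given by pulling back $(\bar{\frak{s}},f_Y)$ along the universal cover of the classifying map, identifies
\[\Omega_3^{\Spin,\ZZ_m,\text{free},\ev}\xrightarrow{\cong}\Omega_3^{\Spin}(B\ZZ_m).\]
Finiteness of the right-hand side follows from the Atiyah--Hirzebruch spectral sequence $E^2_{p,q}=H_p(B\ZZ_m;\Omega_q^{\Spin})\Rightarrow\Omega_{p+q}^{\Spin}(B\ZZ_m)$, together with the facts that $\Omega_0^{\Spin}=\ZZ$, $\Omega_1^{\Spin}=\Omega_2^{\Spin}=\ZZ_2$, $\Omega_3^{\Spin}=0$, and $H_i(B\ZZ_m;A)$ is a finite group whenever $i\geq 1$ and $A$ is finitely generated (since $B\ZZ_m$ is rationally acyclic above degree $0$).

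For the odd summand, the analogous quotient construction produces on $X=Y/\langle\sigma\rangle$ a principal $G$-bundle, where
\[G\;=\;\Spin(3)\times_{\ZZ_2}\ZZ_{2m}\]
is the twisted product along the central spin-flip $\ZZ_2\subset\Spin(3)$, lifting the oriented frame bundle of $X$. A $G$-structure in this sense is a tangential structure with associated Thom spectrum $MG$, and the resulting bordism group $\Omega_3^G$ receives the odd-type free equivariant cobordism group isomorphically by the same quotient/pullback correspondence. The projection $G\to SO(3)\times\ZZ_m$ is a double cover, so $BG$ fibers over $B(SO(3)\times\ZZ_m)\simeq BSO(3)\times B\ZZ_m$ with fiber $K(\ZZ_2,1)$; in particular, the Atiyah--Hirzebruch spectral sequence for $\pi_*(MG)$ again has $E^2$-page built from $H_*(B\ZZ_m;-)$ and the spin bordism coefficients (shifted by the $MSO(3)\wedge B\ZZ_2$ twist), all of which are finite in the relevant total degrees. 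Hence $\Omega_3^{\Spin,\ZZ_m,\text{free},\odd}$ is finite as well.

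The main obstacle I expect is setting up the odd-type identification rigorously: one must verify that a free odd-type spin $\ZZ_m$-action on $Y$ is equivalent data to a $G$-structure on the quotient $X$, and that equivariant cobordisms correspond precisely to $G$-cobordisms of the quotients. Once this dictionary is in place, the spectral sequence input is standard and yields finiteness without any computation of the group order, which is all that is needed for Proposition~\ref{prop:equivariant_cobordism_group_free} and hence for Proposition~\ref{prop:equivariant_cobordism_group}.
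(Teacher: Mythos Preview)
Your proposal is correct and follows essentially the same approach as the paper: split by parity, identify the even summand with $\Omega_3^{\Spin}(B\ZZ_m)$ via the Conner--Floyd correspondence, and conclude finiteness from the Atiyah--Hirzebruch spectral sequence. The paper in fact leaves the odd case ``as an exercise to the reader,'' whereas you sketch it via the twisted structure group $G=\Spin(3)\times_{\ZZ_2}\ZZ_{2m}$; this is the natural approach and your outline is sound, though as you note the dictionary between odd-type free spin $\ZZ_m$-actions on $Y$ and $G$-structures on $Y/\sigma$ should be made precise (the key point being that $\wh{\sigma}^m=-1$ forces the $\ZZ_{2m}$ and the central $\ZZ_2\subset\Spin(3)$ to be glued along the spin flip).
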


\begin{proof}
Note that $\Omega^{\Spin,\ZZ_{m},\text{free}}_{3}$ splits as
\[\Omega^{\Spin,\ZZ_{m},\text{free}}_{3}=\Omega^{\Spin,\ZZ_{m},\text{free},\ev}_{3}\oplus\Omega^{\Spin,\ZZ_{m},\text{free},\odd}_{3}.\]
We treat the even case first. An argument of
Conner and Floyd (\cite{ConFlo79}) implies that $\Omega^{\Spin,\ZZ_{m},\text{free}}_{3}\cong\Omega^{\Spin}_{3}(B\ZZ_{m})$. There exists an associated Atiyah-Hirzebruch spectral sequence which takes the form 
\[E^{p,q}_{2}=H^{p}(B\ZZ_{m};\Omega^{\Spin}_{q})\Longrightarrow\Omega^{\Spin}_{p+q}(B\ZZ_{m})=E^{p,q}_{\infty}.\]
Since the $E_{2}$-terms are non-torsion if and only if $(p,q)=(0,0)$, we see at once that $\Omega^{\Spin}_{3}(B\ZZ_{m})$ is torsion, as desired. We leave the odd case as an exercise to the reader.
\end{proof}

\bigskip
\subsection{Equivariant Connected Sums}
\label{subsec:equivariant_connected_sums}

In this section we will describe the conditions and extra data we need in order to define the equivariant connected sum of two $\ZZ_{m}$-equivariant spin 3- or 4-manifolds.

Let $M_{0}$, $M_{1}$ be two compact oriented 3- or 4-manifolds, let $\alpha_{j}:M_{j}\to M_{j}$ be orientation-preserving diffeomorphisms of order $m$ for $j=0,1$, and suppose that $M_{j}^{\alpha_{j}}\neq \emptyset$ for each $j$. Choose basepoints $x_{j}\in M_{j}^{\alpha_{j}}\setminus\del M_{j}^{\alpha_{j}}$, and let $C_{j}\subset M_{j}^{\alpha_{j}}$ be the connected components containing $x_{j}$ for each $j=0,1$. If we choose orientations $\frak{o}(C_{j})$ of the $C_{j}$, using the orientations on the $M_{j}$ we get induced orientations on the vector spaces $N_{j}:=\nu(C_{j})|_{x_{j}}$, where $\nu(C_{j})$, $j=0,1$ denotes the normal bundle of $C_{j}$, which we can assume to be equivariant with respect to the $\ZZ_{m}$-action.

For any real $\ZZ_{m}$-representation $V$, let $D(V)$ denote the unit disk inside $V$ with boundary $S(V)$. Letting $\RR$ denote the trivial real $\ZZ_{m}$-representation of dimension 1, we can identify equivariant neighborhoods $\nu(x_{j})$ of the basepoints $x_{j}$ with $D(N_{j}\oplus\RR^{\dim(C_{j})})$ and $\del\nu(x_{j})$ with $S(N_{j}\oplus\RR^{\dim(C_{j})})$. Suppose there exists a \emph{orientation-reversing} isomorphism
\[\phi:S(N_{0}\oplus\RR^{\dim(C_{0})})\xrightarrow{\cong} S(N_{1}\oplus\RR^{\dim(C_{1})})\]
of $\ZZ_{m}$-equivariant representation spheres, which restricts to an orientation-reversing isomorphism
\[\phi_{\RR}:S(\RR^{\dim(C_{0})})\xrightarrow{\cong} S(\RR^{\dim(C_{1})}).\]
Letting
\begin{align*}
    &D_{1/2}(N_{j}\oplus\RR^{\dim(C_{j})}):=\{x\in N_{j}\oplus\RR^{\dim(C_{j})}:|x|\le 1/2\}, \\
    &S_{1/2}(N_{j}\oplus\RR^{\dim(C_{j})}):=\{x\in N_{j}\oplus\RR^{\dim(C_{j})}:|x|= 1/2\},
\end{align*}
as well as 
\[\phi_{1/2}:S_{1/2}(N_{0}\oplus\RR^{\dim(C_{0})})\xrightarrow{\cong} S_{1/2}(N_{1}\oplus\RR^{\dim(C_{1})})\]
the isomorphism induced by $\phi$, we can define
\[M_{0}\# M_{1}:= (M_{0}\setminus D_{1/2}(N_{0}\oplus\RR^{\dim(C_{0})}))\cup_{\phi_{1/2}}(M_{1}\setminus D_{1/2}(N_{1}\oplus\RR^{\dim(C_{1})}))\]
with $\ZZ_{m}$-action $\alpha^{\#}$ such that
\[\alpha^{\#}|_{M_{j}\setminus D_{1/2}(N_{j}\oplus\RR^{\dim(C_{j})})}=\alpha_{j}|_{M_{j}\setminus D_{1/2}(N_j\oplus\RR^{\dim(C_{j})})}\]
for each $j=0,1$. 

\begin{remark}
Note that up to $\ZZ_{m}$-equivariant diffeomorphism, the above equivariant connected sum construction depends only on the choices of components $C_{0}\subset M^{\alpha_{0}}$, $C_{1}\subset M^{\alpha_{1}}$ and the pair of orientations $\{\frak{o}(C_{0}),\frak{o}(C_{1})\}$ under the equivalence $\{\frak{o}(C_{0}),\frak{o}(C_{1})\}\equiv\{-\frak{o}(C_{0}),-\frak{o}(C_{1})\}$.
\end{remark}

Now suppose the $M_{j}$ are endowed with spin structures $\frak{s}_{j}$ which are preserved under $\alpha_{j}$ for each $j=0,1$, and for simplicity assume $M_{0}$, $M_{1}$ are connected. Then we also have an induced spin structure $\frak{s}^{\#}$ on $M_{0}\# M_{1}$, given by fixing a trivialization on the boundaries $\del(M_{j}\setminus D_{1/2}(N_{j}\oplus\RR^{\dim(C_{j})}))$, $j=0,1$, compatible with the map $\phi_{1/2}$ above. Given spin lifts $\wh{\alpha}_{j}$ of $\alpha_{j}$, $j=0,1$, precisely one of the spin lifts $\wh{\alpha}_{1}$ or $-\wh{\alpha}_{1}$ will glue up with $\wh{\alpha}_{0}$ to produce a globally-defined spin lift $\wh{\alpha}^{\#}$ of $\alpha^{\#}$. Therefore it only makes sense to define the connected sum
\[(M_{0}\# M_{1},\frak{s}^{\#},\alpha^{\#}):=(M_{0},\frak{s}_{0},\alpha_{0})\#(M_{1},\frak{s}_{1},\alpha_{1})\]
without fixing spin lifts, or alternatively, only fixing a spin lift of $\alpha_{0}$ or $\alpha_{1}$ but not both. The general case where $M_{0}$ and $M_{1}$ are possibly disconnected is similar, except if one fixes a spin lift $\wh{\alpha}_{0}$ of $\alpha_{0}$, then precisely half of the spin lifts of $\alpha_{1}$ will be compatible with $\wh{\alpha}_{0}$.

Next we analyze the possible cases that can arise, depending on the dimension of the manifolds and codimensions of the fixed-point sets.

Let $(Y_{j},\frak{s}_{j},\sigma_{j})$, $j=0,1$ be two $\ZZ_{m}$-equivariant spin 3-manifolds each with non-empty fixed-point set. Then the fixed-point sets are necessarily one-dimensional, and so $Y_{j}^{\sigma_{0}}=L_{j}$ for some links $L_{0}\subset Y_{0}$, $L_{1}\subset Y_{1}$. For each $j=0,1$ choose a link component $K_{j}\subset L_{j}$. Choosing basepoints $x_{j}\in K_{j}$ and orientations for the $K_{j}$ induces orientations on the two-dimensional real vector spaces $N_{j}=\nu(K_{j})|_{x_{j}}$. We can therefore identify $N_{0}$ and $N_{1}$ with one-dimensional complex $\ZZ_{m}$-representations, on which $d\sigma_{0}$, $d\sigma_{1}$ act by $e^{i\psi_{0}}$, $e^{i\psi_{1}}$, respectively, for some $\psi_{0},\psi_{1}\in[0,\pi)$. It follows that we can perform an equivariant connected sum along $x_{0},x_{1}$ if and only if $\psi_{0}=\psi_{1}$.

\begin{example}
\label{ex:connected_sum_branched_covers}
Let $K_{0},K_{1}\subset S^{3}$ be oriented knots, and let $\Sigma_{m}(K_{0}),\Sigma_{m}(K_{1})$ denote their corresponding $m$-fold cyclic branched covers. Let $\sigma_{0}$, $\sigma_{1}$, $\sigma$ denote the generators of the $\ZZ_{m}$-covering transformations $\Sigma_{m}(K_{0})$, $\Sigma_{m}(K_{1})$, and $\Sigma_{m}(K_{0}\# K_{1})$, respectively. Then for any $\ZZ_{m}$-invariant spin structures $\frak{s}_{0},\frak{s}_{1}, \frak{s}$ on $\Sigma_{m}(K_{0})$, $\Sigma_{m}(K_{1})$, and $\Sigma_{m}(K_{0}\# K_{1})$, respectively, there exists an (orientation-preserving) $\ZZ_{m}$-equivariant spin diffeomorphism
\[(\Sigma_{m}(K_{0}\# K_{1}),\frak{s},\sigma)\cong(\Sigma_{m}(K_{0})\#\Sigma_{m}(K_{1}),\frak{s}^{\#},\sigma^{\#}).\]
\end{example}

Now suppose $(W_{j},\frak{t}_{j},\tau_{j})$, $j=0,1$ are two $\ZZ_{m}$-equivariant spin 4-manifolds each with non-empty fixed-point set.
In general, the fixed-point set $W^{\tau_{j}}$ consists of a disjoint collection of points $p_{j,1},\dots,p_{j,r_{j}}$ and surfaces $\Sigma_{j,1},\dots,\Sigma_{j,s_{j}}$ for $j=0,1$. 

To take an equivariant connect sum along surface components of the fixed point sets, it suffices to choose components $\Sigma_{0,k_{0}}\in W^{\tau_{0}}$ and $\Sigma_{1,k_{1}}\in W^{\tau_{1}}$ and orientations on the $\Sigma_{j,k_{j}}$ for some $1\le k_{j}\le s_{j}$, $j=0,1$. As in the 3-dimensional case, by choosing basepoints $x_{j}\in \Sigma_{j,k_{j}}$ we can identify $N_{j}:=\nu(\Sigma_{j,k_{j}})|_{x_{j}}$ for $j=0,1$ with one-dimensional complex $\ZZ_{m}$-representations, whose actions are given by a set of angles $\psi_{0},\psi_{1}\in (0,\pi]$, and that we can perform an equivariant connected sum along $\Sigma_{0,k_{0}},\Sigma_{1,k_{1}}$ if and only if $\psi_{0}=\psi_{1}$.

To take an equivariant connect sum along two isolated fixed points: choose $p_{0,k_{0}}\in W^{\tau_{0}}$ and $p_{1,k_{1}}\in W^{\tau_{1}}$. We can identify $N_{j}:=\nu(p_{j,k_{j}})|_{x_{j}}$ for $j=0,1$ with two-dimensional complex $\ZZ_{m}$-representations, whose actions are determined by tuples of angles $(\alpha_{j},\beta_{j})\in(\RR/2\pi\ZZ)^{2}/\sim$, subject to the relations $(\alpha,\beta)\equiv(\beta,\alpha)$ and $(\alpha,\beta)\equiv(-\alpha,-\beta)$. In order to perform an equivariant connected sum, it is necessary and sufficient to have $(\alpha_{0},\beta_{0})\equiv(-\alpha_{1},\beta_{1})$.

One can also generalize the above construction to define the \emph{$\ZZ_{m}$-equivariant boundary connnected sum} of two $\ZZ_{m}$-equivariant spin 4-manifolds $(W_{0},\frak{t}_{0},\tau_{0})$, $(W_{1},\frak{t}_{1},\tau_{1})$ with boundaries $(Y_{0},\frak{s}_{0},\sigma_{0})$ and $(Y_{1},\frak{s}_{1},\sigma_{1})$, respectively, assuming that $Y_{0}^{\sigma_{0}}$ and $Y_{1}^{\sigma_{1}}$ are non-empty. We will usually denote such an equvariant boundary connect sum by $(W,\frak{t}^{\natural},\tau^{\natural})$, which again depends on choices of components of $Y_{0}^{\sigma_{0}}$ and $Y_{1}^{\sigma_{1}}$, as well as orientations of those components.

\begin{example}
\label{ex:connected_sum_branched_covers_cobordism}
Let $K_{0},K_{1}\subset S^{3}$ be oriented knots, and let $(\Sigma_{m}(K_{0}),\frak{s}_{0},\sigma_{0})$, $(\Sigma_{m}(K_{1}),\frak{s}_{1},\sigma_{1})$, and $(\Sigma_{m}(K_{0}\# K_{1}),\frak{s},\sigma)$ be as in Example \ref{ex:connected_sum_branched_covers}. Let $W'$ be the cylinder
\[W'=(\Sigma_{m}(K_{0})\amalg\Sigma_{m}(K_{1}))\times[0,1],\]
and let $(W,\frak{t}^{\natural},\tau^{\natural})$ be the $\ZZ_{m}$-equivariant spin 4-manifold obtained from $W'$ by taking the $\ZZ_{m}$-equivariant boundary connected sum along
\[(\Sigma_{m}(K_{0})\amalg\Sigma_{m}(K_{1}))\times\{1\}\subset\del W'.\]
Then $(W,\frak{t}^{\natural},\tau^{\natural})$ furnishes a $\ZZ_{m}$ equivariant spin cobordism from the disjoint union of $(\Sigma_{m}(K_{0}),\frak{s}_{0},\sigma_{0})$ and $(\Sigma_{m}(K_{1}),\frak{s}_{1},\sigma_{1})$ to the equivariant connected sum $(\Sigma_{m}(K_{0}\# K_{1}),\frak{s},\sigma)$.

More generally if $(Y_{0},\frak{s}_{0},\sigma_{0})$ and $(Y_{1},\frak{s}_{1},\sigma_{1})$ are $\ZZ_{m}$-equivariant spin 3-manifolds such that their connected sum $(Y_{0}\# Y_{1},\frak{s}^{\#},\sigma^{\#})$ is well-defined, a similar construction furnishes a $\ZZ_{m}$-equivariant spin cobordism from $(Y_{0},\frak{s}_{0},\sigma_{0})\sqcup(Y_{1},\frak{s}_{1},\sigma_{1})$ to $(Y_{0}\# Y_{1},\frak{s}^{\#},\sigma^{\#})$. If $Y_{0}$, $Y_{1}$ are rational homology spheres, then the aforementioned homology cobordism is a $\ZZ_{m}$-equivariant spin rational homology cobordism.
\end{example}
\section{\texorpdfstring{$G^{*}_{m}$}{G*m}-Equivariant K-Theory}
\label{sec:k_theory}

\bigskip
\subsection{Review of Equivariant K-Theory}
\label{subsec:equivariant_k_theory}

We start by reviewing some general facts about equivariant K-theory --- see \cite{Segal68} for more details.

Let $G$ be a compact topological group and let $X$ be a compact $G$-space. The group $K_{G}(X)$, called the \emph{equivariant (complex) K-theory} of $X$, is defined to be the Grothendieck group associated to $G$-equivariant complex vector bundles on $X$. When $X$ is a point, $R(G)=K_{G}(\pt)$ is the complex representation ring of $G$, and in general $K_{G}(X)$ is an algebra over $R(G)$.

\begin{fact}
\label{fact:continuous_map}
Any continuous map $f:X\to X'$ induces a map $f^{*}:K_{G}(X')\to K_{G}(X)$.
\end{fact}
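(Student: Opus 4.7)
The plan is to construct $f^*$ by pullback of equivariant vector bundles, just as in the non-equivariant case, and then check that this operation descends to a well-defined homomorphism on Grothendieck groups. I should note at the outset that for this assignment to be canonical, $f$ must be understood as a $G$-equivariant continuous map (this is implicit from the category being considered); otherwise there is no natural $G$-action on the pullback. I will assume this convention throughout.

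First I would define the pullback: given a $G$-equivariant complex vector bundle $\pi\co E\to X'$, set
\[
f^{*}E := X\times_{X'} E = \{(x,e)\in X\times E : f(x)=\pi(e)\},
\]
with projection $(x,e)\mapsto x$. The $G$-action on $f^{*}E$ is defined by $g\cdot(x,e)=(g\cdot x,\,g\cdot e)$; this is well-defined precisely because $f$ is equivariant, so $f(g\cdot x)=g\cdot f(x)=g\cdot\pi(e)=\pi(g\cdot e)$. Local triviality of $f^{*}E$ follows from that of $E$: over a $G$-invariant neighborhood $U\subset X'$ on which $E$ is equivariantly trivial, the preimage $f^{-1}(U)\subset X$ is $G$-invariant and $f^{*}E|_{f^{-1}(U)}$ is equivariantly isomorphic to $f^{-1}(U)\times\CC^{n}$ with the induced $G$-action. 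Thus $f^{*}E$ is a $G$-equivariant complex vector bundle on $X$.

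Next I would verify the structural compatibilities. Pullback preserves equivariant isomorphism classes, commutes with direct sums ($f^{*}(E\oplus F)\cong f^{*}E\oplus f^{*}F$ equivariantly), and is functorial: $(\id_{X'})^{*}=\id$ and $(g\circ f)^{*}=f^{*}\circ g^{*}$ for composable equivariant maps. These are routine consequences of the universal property of the fiber product. In particular, pullback defines a monoid homomorphism from the monoid of isomorphism classes of $G$-equivariant vector bundles on $X'$ to that on $X$, which passes to the Grothendieck group completion to yield the desired ring homomorphism $f^{*}\co K_{G}(X')\to K_{G}(X)$. The ring structure (via tensor product of bundles) is also preserved, since pullback commutes with tensor products of equivariant bundles.

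There is no real obstacle here; the only subtlety worth flagging is the equivariance assumption on $f$, without which the $G$-action on $f^{*}E$ is not canonically defined. In the generality relevant to this paper, every map between $G$-spaces under consideration is implicitly a $G$-map, so the fact applies as stated.
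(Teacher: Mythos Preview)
Your proof is correct and is the standard argument. The paper does not actually prove this statement: it is listed as a ``Fact'' in a review section on equivariant $K$-theory, with a reference to Segal's original paper for details. Your observation that $f$ must be $G$-equivariant is well taken and implicit in the paper's context.
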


\begin{fact}
\label{fact:subgroup}
For every subgroup $H\subset G$, there exists a functorial restriction map $\res^{G}_{H}:K_{G}(X)\to K_{H}(X)$.
\end{fact}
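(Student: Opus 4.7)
The plan is to build the restriction map at the level of equivariant vector bundles and then invoke the universal property of the Grothendieck construction. Given a $G$-equivariant complex vector bundle $\pi\co E\to X$, I would define $\res^{G}_{H}(E)$ to be the \emph{same} underlying complex vector bundle $E\to X$, now viewed as an $H$-equivariant bundle by restricting both the $G$-action on $E$ and the $G$-action on $X$ along the inclusion $H\hookrightarrow G$. Since the $G$-action on $E$ covers the $G$-action on $X$ and is fiberwise $\CC$-linear, the restricted $H$-action retains both properties, and hence $E$ becomes a genuine $H$-equivariant complex vector bundle.

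Next I would verify that this assignment respects the structure used to define $K_{G}(X)$: any $G$-equivariant isomorphism $E\cong E'$ is automatically an $H$-equivariant isomorphism; the Whitney sum $E\oplus E'$ with the diagonal $G$-action restricts to the Whitney sum with the diagonal $H$-action; and the zero bundle is sent to the zero bundle. By the universal property of the Grothendieck construction, these observations produce a well-defined abelian group homomorphism $\res^{G}_{H}\co K_{G}(X)\to K_{H}(X)$. For functoriality in the subgroup variable, given a chain $K\subset H\subset G$ I would note that restricting a $G$-action first to $H$ and then to $K$ agrees with restricting directly to $K$ along the composite inclusion, so $\res^{H}_{K}\circ\res^{G}_{H}=\res^{G}_{K}$ as maps $K_{G}(X)\to K_{K}(X)$. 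Functoriality in the space variable is likewise immediate: for any $G$-map $f\co X\to X'$, the bundle pullback commutes with restriction of structure group, so $\res^{G}_{H}\circ f^{*}=f^{*}\circ\res^{G}_{H}$ using the map from Fact~\ref{fact:continuous_map}. There is no genuine obstacle here; the construction is entirely formal, and the only thing to check is bookkeeping. One could also note, as a bonus, that $\res^{G}_{H}$ is a ring homomorphism compatible with the $R(G)$-algebra structure via the representation-ring restriction $R(G)\to R(H)$, although this refinement is not required for the statement as given.
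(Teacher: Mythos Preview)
Your construction is correct and is the standard one. Note, however, that the paper does not actually supply a proof of this statement: it is listed as one of several ``Facts'' in the review of equivariant $K$-theory, with a blanket citation to Segal's foundational paper \cite{Segal68} at the start of the subsection. So there is no proof in the paper to compare against; your argument is precisely the kind of routine verification that the paper is implicitly deferring to the literature.
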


\begin{fact}
\label{fact:induction}
For every closed subgroup $H\subset G$ of finite index, there exists a functorial induction map $\ind^{G}_{H}:K_{H}(X)\to K_{G}(X)$, which for $X=\pt$ agrees with the usual induction map on representations.
\end{fact}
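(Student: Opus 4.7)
The plan is to construct the induction map at the level of equivariant vector bundles, show it descends to a well-defined homomorphism of Grothendieck groups, and then verify functoriality and the point-case compatibility.

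First I would give the explicit construction. Given an $H$-equivariant complex vector bundle $\pi_E \co E \to X$, form the quotient space $G \times_H E$, where $H$ acts on $G \times E$ via $h \cdot (g,e) = (gh^{-1}, h\cdot e)$. The left $G$-action on the first factor descends to $G \times_H E$ and commutes with the $H$-action, making it a $G$-space. Since $X$ is a $G$-space, the map $(g,e) \mapsto g \cdot \pi_E(e)$ is $H$-invariant and descends to a continuous $G$-equivariant surjection $p \co G \times_H E \to X$. Because $H$ has finite index, choosing coset representatives $g_1, \dots, g_n$ for $G/H$ yields an identification of the fiber $p^{-1}(x)$ with $\bigoplus_{i=1}^{n} E_{g_i^{-1}x}$, exhibiting $\ind^G_H E := G \times_H E$ as a $G$-equivariant complex vector bundle over $X$ of rank $[G:H] \cdot \operatorname{rk}(E)$.

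Next I would check that this construction respects $H$-equivariant isomorphisms and direct sums (both are immediate from the $G \times_H (-)$ description), and so passes to a well-defined homomorphism $\ind^G_H \co K_H(X) \to K_G(X)$. For functoriality with respect to continuous $G$-equivariant maps $f \co X' \to X$ (Fact \ref{fact:continuous_map}), I would produce a natural isomorphism $f^*(\ind^G_H E) \cong \ind^G_H(f^* E)$ by noting that both sides, fiber-wise over $x' \in X'$, are canonically identified with $\bigoplus_{i} E_{g_i^{-1} f(x')}$; one then checks this identification is $G$-equivariant by tracking the action on coset labels. For the point case $X = \pt$, the bundle $G \times_H V \to \pt$ is exactly the induced representation $\ind^G_H V$ in the sense of representation theory, so the new $\ind^G_H$ on $K$-theory specializes to the classical induction map $R(H) \to R(G)$ when $X = \pt$.

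The only real point to watch is verifying that $G \times_H E$ is indeed a locally trivial complex vector bundle (as opposed to a mere continuous surjection). The hypothesis that $[G:H] < \infty$ makes this routine: the quotient map $G \to G/H$ is a finite covering, and choosing coset representatives locally (or globally, since $G/H$ is a discrete finite set when we work fiber-wise over $X$) produces a $G$-equivariant local trivialization of $p$ by combining the trivializations of $E$ pulled back along each $g_i^{-1} \co X \to X$. The main subtlety, should one wish to extend beyond the finite-index setting, is precisely this local triviality, but under the finite-index hypothesis stated in Fact \ref{fact:induction} it causes no difficulty.
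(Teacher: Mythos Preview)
Your construction is the standard one and is correct. Note, however, that the paper does not actually prove this statement: it is listed as a background ``Fact'' in the review of equivariant $K$-theory at the start of Section~\ref{sec:k_theory}, with a general reference to Segal's foundational paper. So there is no paper-proof to compare against; you have supplied the standard argument that the paper simply cites.
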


\begin{fact}
\label{fact:free_G_action}
If $X$ is a free $G$-space, then the pull-back map $\pi^{*}:K(X/G)\to K_{G}(X)$ is a ring homomorphism. More generally if $X$ is a free $G$-space and $H\subset G$ is a closed subgroup, then $K_{G}(X/H)\cong K_{H}(X/G)$.
\end{fact}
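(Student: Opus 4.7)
For the first statement, the strategy is straightforward: pullback of vector bundles respects both direct sum and tensor product by functoriality, and since $G$ acts on $X$ as deck transformations over the base $X/G$, the pullback $\pi^*E$ of any bundle $E \to X/G$ carries a canonical $G$-equivariant structure (with $G$ acting only on the base coordinate). Hence $\pi^*$ defines a ring homomorphism $K(X/G) \to K_G(X)$. In fact one can upgrade this to a ring isomorphism by constructing an explicit inverse: given a $G$-equivariant bundle $F \to X$ with free $G$-action on $X$, the quotient $F/G \to X/G$ is locally trivial because $\pi: X \to X/G$ is a principal $G$-bundle. Concretely, over a small $U \subseteq X/G$ one has $\pi^{-1}(U) \cong U \times G$, and $G$-equivariant local triviality of $F$ gives $F|_{\pi^{-1}(U)} \cong U \times G \times V$ for some vector space $V$, so that $F/G|_U \cong U \times V$. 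The two operations $F \mapsto F/G$ and $E \mapsto \pi^* E$ are mutually inverse up to natural isomorphism and both respect $\oplus$ and $\otimes$.

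For the second statement, the approach is to apply the first part simultaneously to the $G$-action on $X$ and to the restricted $H$-action on $X$, which is also free because $H \subseteq G$. This gives $K_H(X) \cong K(X/H)$ and $K_G(X) \cong K(X/G)$, and reduces the claim to a comparison of vector bundles on $X/G$ equipped with extra equivariant data. The map $X/H \to X/G$ is a locally trivial fiber bundle with fiber $G/H$ (associated to the principal $G$-bundle $X \to X/G$), and both sides $K_G(X/H)$ and $K_H(X/G)$ can be described in terms of vector bundles on $X/G$ together with a compatible representation of the fiber, either encoded through the $G$-equivariant structure of bundles on the total space $X/H$ or through the $H$-module structure assigned fiberwise over $X/G$. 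The proposed isomorphism then arises from the observation that both descriptions repackage the same underlying data, namely $G$-equivariant vector bundles on $X$.

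The main obstacle is the second half: one needs to be careful about what the residual $G$-action on $X/H$ means when $H$ is not normal in $G$ (the natural interpretation is through the associated bundle $X \times_G (G/H) \cong X/H$), and to verify that the resulting comparison isomorphism is canonical and compatible with the $R(G)$- and $R(H)$-module structures carried by the two sides. The first half is classical (Segal, Proposition~2.1) and once the inverse construction is in place the verification is routine; the bookkeeping for change-of-groups in the presence of two nested free actions is where the real work lies.
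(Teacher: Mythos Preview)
The paper does not prove this statement: it is listed as a ``Fact'' in the review section on equivariant $K$-theory, with a blanket citation to Segal's foundational paper at the start of the section. There is therefore no paper proof to compare against.

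Your treatment of the first assertion is correct and is exactly the classical argument (you even cite Segal, Proposition~2.1). The inverse construction $F\mapsto F/G$ using local triviality of the principal bundle $X\to X/G$ is the standard one.

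For the second assertion, you correctly identify the real issue: as literally stated, $K_G(X/H)$ requires a $G$-action on $X/H$, which does not exist for general closed $H\subset G$ unless $H$ is normal or there is a second commuting action in play. In the paper's sole application (Fact~\ref{fact:homogenous_spaces}, $X=G$ acting on itself by left multiplication), the quotient $G/H$ is by \emph{right} $H$-multiplication, which commutes with the left $G$-action, so both sides make sense and the result reduces to $K_G(G/H)\cong R(H)$. Your sketch gestures at the right mechanism (both sides should be governed by $G$-equivariant bundles on $X$), but the reduction you propose via $K_H(X)\cong K(X/H)$ and $K_G(X)\cong K(X/G)$ does not directly address either $K_G(X/H)$ or $K_H(X/G)$, and the paragraph that follows remains a description of what one hopes is true rather than an argument. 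The honest resolution is to either restrict to the case where the two actions commute (as in the paper's usage) or to invoke the general change-of-groups isomorphism $K_G(G\times_H Y)\cong K_H(Y)$ from Segal, which is what underlies the homogeneous-space computation.
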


\begin{fact}
\label{fact:trivial_G_action}
If $G$ acts trivially on $X$, then the natural map $R(G)\otimes_{\ZZ} K(X)\to K_{G}(X)$ is an isomorphism of $R(G)$-algebras. More generally if $N\subset G$ is a closed normal subgroup such that $N\subset G$ acts trivially on $X$ and that the conjugation action of $G$ on $N$ is trivial, then there exists an isomorphism $R(N)\otimes_{\ZZ} K_{G/N}(X)\cong K_{G}(X)$.
\end{fact}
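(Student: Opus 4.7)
The plan is to prove the more general statement, as the first assertion is the special case $N = G$, $G/N = \{e\}$. Define the natural map $\Phi \co R(N) \otimes_{\ZZ} K_{G/N}(X) \to K_G(X)$ by sending $[V] \otimes [E]$ to $[V \otimes E]$, where $V \otimes E$ carries the $G$-action given by the $N$-representation on $V$ tensored with the $G$-action on $E$ pulled back from $G/N$. My task is to construct an inverse.

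Given a $G$-equivariant complex vector bundle $F \to X$, the triviality of the $N$-action on $X$ makes the $N$-action on $F$ fiberwise linear. For each irreducible unitary $N$-representation $V_\lambda$ with character $\chi_\lambda$, the averaged endomorphism $p_\lambda := \dim(V_\lambda)\int_N \overline{\chi_\lambda(n)}\, n \, dn$ (computed against the Haar measure of the compact group $N$) defines a continuous projection of $F$ onto its $\lambda$-isotypic subbundle $F^{(\lambda)} \cong V_\lambda \otimes M_\lambda$, where $M_\lambda := \Hom_N(V_\lambda, F)$ is the multiplicity bundle with trivial $N$-action. This gives a canonical decomposition $F = \bigoplus_\lambda V_\lambda \otimes M_\lambda$. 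The hypothesis that $G$ acts trivially on $N$ by conjugation enters here: for $g \in G$, the $g$-action on $F$ intertwines $F^{(\lambda)}$ with $F^{(g \cdot \lambda)}$, where $(g \cdot \lambda)(n) := \lambda(g^{-1} n g)$, and trivial conjugation forces $g \cdot \lambda = \lambda$. Hence each $F^{(\lambda)}$ is $G$-stable, and $M_\lambda$ inherits a $G$-action which, since $N$ acts trivially on the multiplicity space, factors through $G/N$. The assignment $[F] \mapsto \sum_\lambda [V_\lambda] \otimes [M_\lambda]$ then defines the candidate inverse $\Psi \co K_G(X) \to R(N) \otimes_{\ZZ} K_{G/N}(X)$; the identities $\Phi \circ \Psi = \id$ and $\Psi \circ \Phi = \id$ are immediate on generators, and both maps are evidently ring homomorphisms.

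The main technical obstacle is showing that the isotypic decomposition is continuous rather than merely fiberwise. Over a trivializing patch $U \subset X$ one has $F|_U \cong U \times \CC^n$, and after averaging a Hermitian metric over $N$ the transition functions take values in a compact subgroup of $GL_n(\CC)$; compactness of $N$ together with the Peter--Weyl theorem ensures that only finitely many irreducible types appear and that the projectors $p_\lambda$ assemble into continuous endomorphisms of $F|_U$. This is essentially the content of Proposition~2.2 of Segal; once it is in hand, each $M_\lambda$ is a locally trivial $G/N$-equivariant vector bundle on $X$, and the remaining verifications (compatibility with pullbacks by continuous $G$-maps, and with the algebra structure) are formal.
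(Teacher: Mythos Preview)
The paper does not supply a proof of this statement: it is listed among the ``general facts about equivariant $K$-theory'' in Section~\ref{subsec:equivariant_k_theory}, with a blanket reference to Segal's 1968 paper for details. Your argument is correct and is precisely the standard one (isotypic decomposition via Peter--Weyl, with the trivial-conjugation hypothesis ensuring $G$-stability of each isotypic piece); indeed, you even cite Segal's Proposition~2.2 for the continuity of the projectors, which is the same source the paper defers to.
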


\begin{fact}
\label{fact:homogenous_spaces}
From Fact \ref{fact:free_G_action}, we have a ring  isomorphism $K_{G}(G)\cong K(\pt)\cong\ZZ$. More generally, if $H\subset G$ is a closed subgroup then $K_{G}(G/H)\cong R(H)$.
\end{fact}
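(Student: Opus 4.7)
The plan is to deduce the first statement directly from Fact \ref{fact:free_G_action}, and to establish the second via the associated-bundle correspondence between $G$-equivariant vector bundles on $G/H$ and finite-dimensional complex representations of $H$.

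For the first statement, $G$ acts freely on itself by left multiplication, with quotient $G/G = \pt$. Applying Fact \ref{fact:free_G_action} to this free action immediately yields the ring isomorphism
\[K_G(G) \cong K(\pt) \cong \ZZ.\]

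For the general statement $K_G(G/H) \cong R(H)$, the central idea is the following correspondence. In one direction, any $G$-equivariant complex vector bundle $E \to G/H$ restricts to a vector space $E_{eH}$ over the basepoint $eH$ which carries a canonical action of the isotropy subgroup $\mathrm{Stab}_G(eH) = H$, yielding a finite-dimensional $H$-representation. In the other direction, an $H$-representation $V$ produces the associated bundle $G \times_H V \to G/H$, which is canonically $G$-equivariant. These two functors are mutually inverse up to natural isomorphism, and both respect direct sums and tensor products. Passing to Grothendieck groups therefore gives a ring isomorphism $K_G(G/H) \cong R(H)$.

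Alternatively, this fits into the framework of Fact \ref{fact:free_G_action} by using the fact that the quotient map $\pi \colon G \to G/H$ is a principal $H$-bundle with respect to the right multiplication action of $H$ on $G$, and this $H$-action commutes with the left $G$-action. Applying the descent isomorphism of Fact \ref{fact:free_G_action} to the free right $H$-action, equivariantly for the commuting left $G$-action, yields $K_{G\times H}(G) \cong K_G(G/H)$; applying it to the free left $G$-action, equivariantly for the commuting right $H$-action, yields $K_{G\times H}(G) \cong K_H(\pt) = R(H)$. Composing these two identifications gives the desired ring isomorphism. The main obstacle is verifying the categorical equivalence rigorously --- in particular checking that the fiber functor and the associated-bundle functor are mutually inverse as symmetric monoidal functors, so that the bijection on isomorphism classes respects both the additive and multiplicative structures --- but this is a standard exercise once the setup is in place.
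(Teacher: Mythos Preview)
Your proof is correct and follows the standard argument (the fiber-at-basepoint / associated-bundle equivalence is precisely how this is established in Segal's original paper). Note, however, that the paper does not actually supply a proof of this statement: it appears in a review section listing standard facts about equivariant $K$-theory with a blanket reference to \cite{Segal68}, so there is no ``paper's own proof'' to compare against. Your write-up is an appropriate justification of the fact.
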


Now suppose $X$ has a distinguished base point $\ast\in X$ which is fixed under the $G$-action. We define the \emph{reduced (complex) equivariant K-theory} of $X$, denoted $\wt{K}_{G}(X)$, to be the kernel of the map $i^{*}:K_{G}(X)\to K_{G}(\ast)$ induced by the inclusion $i:\ast\hookrightarrow X$.

\begin{fact}
\label{fact:free_G_action_base_point}
If the action of $G$ on $X$ is free away from the basepoint, then the pull-back map $\wt{K}(X/G)\to\wt{K}_{G}(X)$ is a ring isomorphism. More generally, if $G=G_{1}\oplus G_{2}$ and $G_{1}<G$ acts freely on $X$ away from $\ast\in X$, then we have a ring isomorphism $\wt{K}_{G_{2}}(X/G_{1})\cong\wt{K}_{G}(X)$.
\end{fact}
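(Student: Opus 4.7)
The plan is to reduce the statement to Fact \ref{fact:free_G_action} by reinterpreting reduced K-theory as relative K-theory. The first claim is the special case $G_{2} = 1$ of the second, so the focus is on the more general statement $\wt{K}_{G_{2}}(X/G_{1}) \cong \wt{K}_{G}(X)$. Since the basepoint $\ast$ is $G$-fixed and the constant map $X \to \ast$ splits the inclusion $\ast \hookrightarrow X$, one has canonical splittings
\[
K_{G}(X) \cong \wt{K}_{G}(X) \oplus R(G), \qquad K_{G_{2}}(X/G_{1}) \cong \wt{K}_{G_{2}}(X/G_{1}) \oplus R(G_{2}),
\]
under which the reduced groups are identified with the relative K-theory groups $K_{G}(X, \ast)$ and $K_{G_{2}}(X/G_{1}, [\ast])$; a class in the former is represented by a triple $(E, F, \alpha)$ where $E, F$ are $G$-equivariant bundles on $X$ and $\alpha\colon E|_{\ast} \xrightarrow{\cong} F|_{\ast}$ is a $G$-equivariant isomorphism of fibers.

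The map in the statement is induced by the pullback $\pi^{*}$, which automatically carries relative triples to relative triples (since pulling back preserves the trivialization at the basepoint) and respects tensor product, so it is a ring homomorphism. To construct an inverse, given a triple $(E, F, \alpha)$ representing a class in $K_{G}(X, \ast)$, I would apply the equivariant linearization theorem at the isolated fixed point $\ast$ to simultaneously trivialize $E$ and $F$ as $U \times V$ on a $G$-invariant open neighborhood $U$ of $\ast$ in a way compatible with $\alpha$ (here $V = E|_{\ast} = F|_{\ast}$ as $G$-reps). Fact \ref{fact:free_G_action} applied on the free complement $X \setminus \ast$ then produces $G_{2}$-equivariant descents $\bar E, \bar F$ on $(X/G_{1}) \setminus [\ast]$, and $\alpha$ descends to an isomorphism between their restrictions to the punctured neighborhood $(U/G_{1}) \setminus [\ast]$. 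This local trivialization of the virtual class $[\bar E] - [\bar F]$ lets it extend canonically across $[\ast]$ to yield a class in $K_{G_{2}}(X/G_{1}, [\ast])$, giving the inverse map. Functoriality of all constructions shows the two maps are mutually inverse.

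The main obstacle is the extension-across-$[\ast]$ step in the inverse construction: individually neither $\bar E$ nor $\bar F$ extends as a vector bundle across $[\ast]$ when $G_{1}$ acts non-trivially on $V$, since the naive quotient $(U \times V)/G_{1}$ fails to be a vector bundle at the fixed orbit $[\ast]$ (its fiber there is the non-linear quotient $V/G_{1}$). What saves the day is that the \emph{virtual} difference $[\bar E] - [\bar F]$ extends, because the relative trivialization $\alpha$ forces the rank anomalies of $\bar E$ and $\bar F$ at $[\ast]$ to cancel. This is precisely why passing to reduced/relative K-theory is essential, and the technical crux is the equivariant linearization theorem handled in the style of \cite{Segal68}.
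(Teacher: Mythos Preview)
The paper does not give a proof of this statement: it is recorded as a standard fact from equivariant $K$-theory (in the spirit of \cite{Segal68}), so there is no argument to compare against directly. That said, your approach is broadly on the right track but has two soft spots worth tightening.

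First, the appeal to an ``equivariant linearization theorem at the isolated fixed point $\ast$'' is misplaced. Linearization is a statement about smooth actions on manifolds, whereas $X$ here is only a pointed finite $G$-CW complex; moreover $\ast$ need not be isolated in the $G$-fixed set (only in the $G_{1}$-fixed set). What you actually need, and what is available, is much simpler: the basepoint has a $G$-invariant $G$-contractible neighborhood $U$ (e.g.\ its open star in a $G$-CW structure), and any $G$-vector bundle over a $G$-contractible space is $G$-equivariantly trivial. This gives the trivializations $E|_{U}\cong U\times V\cong F|_{U}$ you want, without any smoothness hypothesis.

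Second, and more seriously, the ``extend the virtual class across $[\ast]$'' step is not justified by saying that ``rank anomalies cancel.'' Neither $\bar{E}$ nor $\bar{F}$ extends as a vector bundle over $[\ast]$, and there is no general mechanism by which a formal difference of non-extendable bundles acquires an extension. What is actually true is that your triple $(\bar{E},\bar{F},\beta)$ with $\beta$ an isomorphism over $U'\setminus[\ast]$ is precisely a class in the compactly supported group $K_{G_{2}}\big((X/G_{1})\setminus[\ast]\big)$, which by definition (one-point compactification) is $\wt{K}_{G_{2}}(X/G_{1})$. Equivalently, one invokes excision $K_{G_{2}}(X/G_{1},[\ast])\cong K_{G_{2}}(X/G_{1},U')\cong K_{G_{2}}\big((X/G_{1})\setminus[\ast],\,U'\setminus[\ast]\big)$. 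Framing it this way, the whole proof collapses to a one-liner: identify $\wt{K}_{G}(X)$ with $K_{G}(X\setminus\ast)$, apply the free-action isomorphism $K_{G}(X\setminus\ast)\cong K_{G_{2}}\big((X\setminus\ast)/G_{1}\big)$ on the locally compact complement, and then recognize the target as $\wt{K}_{G_{2}}(X/G_{1})$. This avoids the relative-triple bookkeeping entirely.
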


\begin{fact}
\label{fact:product_map}
There is a natural product map $\otimes:\wt{K}_{G}(X)\otimes\wt{K}_{G}(X')\to\wt{K}_{G}(X\wedge X')$.
\end{fact}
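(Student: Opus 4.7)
The plan is to build the smash product in two stages: first construct an external product on unreduced equivariant $K$-theory landing in $K_G(X \times X')$, and then show that this product sends pairs of reduced classes into the image of $\wt{K}_G(X \wedge X') \hookrightarrow \wt{K}_G(X \times X')$. Throughout, $X$ and $X'$ are pointed compact $G$-spaces and all smash products are taken with respect to the fixed basepoints.

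First I would define, for $G$-equivariant vector bundles $E \to X$ and $F \to X'$, the external tensor product $E \boxtimes F := \pi_1^*E \otimes \pi_2^*F$ on $X \times X'$, where $\pi_1, \pi_2$ are the two projections. This is clearly a $G$-equivariant bundle, biadditive in $(E,F)$, and therefore descends to a bilinear map $K_G(X) \otimes K_G(X') \to K_G(X \times X')$ which is natural in both variables. Equivalently, this is the composition $K_G(X) \otimes K_G(X') \xrightarrow{\pi_1^* \otimes \pi_2^*} K_G(X \times X') \otimes K_G(X \times X') \xrightarrow{\cdot} K_G(X \times X')$, using Fact \ref{fact:continuous_map} and the ring structure on equivariant $K$-theory.

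Next I would pass to the reduced theory. The two inclusions $i_1 \co X \vee X' \hookrightarrow X \times X'$ (consisting of $X \times \{\ast'\} \cup \{\ast\} \times X'$) fit into a cofiber sequence $X \vee X' \hookrightarrow X \times X' \to X \wedge X'$, which induces a split short exact sequence
\begin{equation*}
0 \to \wt{K}_G(X \wedge X') \to \wt{K}_G(X \times X') \to \wt{K}_G(X \vee X') \to 0,
\end{equation*}
with $\wt{K}_G(X \vee X') \cong \wt{K}_G(X) \oplus \wt{K}_G(X')$ via the obvious retractions. The key computation is that for $\alpha \in \wt{K}_G(X)$ and $\beta \in \wt{K}_G(X')$, the restrictions of $\pi_1^*\alpha \cdot \pi_2^*\beta$ to both wedge summands vanish: on $X \times \{\ast'\}$ the factor $\pi_2^*\beta$ restricts to the pullback of $\beta|_{\ast'} = 0$, and symmetrically on $\{\ast\} \times X'$. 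Hence the class $\pi_1^*\alpha \cdot \pi_2^*\beta$ lies in the kernel of $\wt{K}_G(X \times X') \to \wt{K}_G(X \vee X')$, which by exactness is the image of $\wt{K}_G(X \wedge X')$, and I obtain the desired lift $\alpha \otimes \beta \in \wt{K}_G(X \wedge X')$, uniquely since the map from $\wt{K}_G(X\wedge X')$ is injective.

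Finally I would verify bilinearity and naturality: bilinearity of the smash product is inherited from bilinearity of the external product on unreduced $K$-theory, and naturality in pointed $G$-maps $X \to Y$, $X' \to Y'$ follows from naturality at the unreduced level together with functoriality of the cofiber sequence. The only technical point worth flagging is the injectivity of $\wt{K}_G(X \wedge X') \to \wt{K}_G(X \times X')$ and the splitting of the sequence; both follow from the fact that $i_1$ admits a retraction induced by the basepoint-collapsing map $X \times X' \to X \vee X'$, so there is no real obstacle here. I do not anticipate any serious difficulty in this argument; it is the standard construction, and the one substantive verification is the vanishing of the restriction to the wedge, which is immediate from $\alpha|_\ast = 0 = \beta|_{\ast'}$ and the multiplicativity of pullback.
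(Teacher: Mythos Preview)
The paper states this as a standard fact without proof (it is part of the review of equivariant $K$-theory, with a general reference to Segal), so there is no paper proof to compare against. Your argument is the standard one and is correct in outline.

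One small slip: there is no continuous basepoint-collapsing map $X\times X'\to X\vee X'$ in general, so the splitting cannot be justified that way. The correct reason $i^*:\wt{K}_G(X\times X')\to\wt{K}_G(X\vee X')\cong\wt{K}_G(X)\oplus\wt{K}_G(X')$ is surjective (and hence $q^*:\wt{K}_G(X\wedge X')\to\wt{K}_G(X\times X')$ is injective) is that the two summand inclusions $X\hookrightarrow X\vee X'\hookrightarrow X\times X'$ and $X'\hookrightarrow X\vee X'\hookrightarrow X\times X'$ have retractions given by the projections $\pi_1,\pi_2$; so $\pi_1^*\alpha+\pi_2^*\beta$ is a preimage of $(\alpha,\beta)$. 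With this correction the rest of your argument goes through unchanged.
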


For any real $G$ representation $V$, we denote by $\Sigma^{V}X:=V^{+}\wedge X$ the (reduced) suspension of $X$ by $V$, with its induced $G$-action. If $V=n\RR=\RR^{n}$ is a trivial representation, we simply write $\Sigma^{n}X$ for $\Sigma^{n\RR}X$.

\begin{fact}
\label{fact:bott_isomorphism}
Suppose $V$ is a complex $G$-representation. Then there exists a functorial \emph{equivariant Bott periodicity} isomorphism $\wt{K}_{G}(X)\cong\wt{K}_{G}(\Sigma^{V}X)$, given by multiplication with a Bott class $b_{V}\in\wt{K}_{G}(V^{+})$ under the product map
\[\otimes:\wt{K}_{G}(V^{+})\otimes\wt{K}_{G}(X)\to\wt{K}_{G}(\Sigma^{V}X).\]
\end{fact}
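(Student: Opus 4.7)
The plan is to construct the Bott class $b_V \in \wt{K}_G(V^+)$ explicitly, verify multiplicativity with respect to direct sums, and then reduce the isomorphism claim to a one-dimensional base case. First I would construct $b_V$ via the $G$-equivariant Koszul complex on $V$. Viewing $V$ as the total space of a trivial bundle over a point, consider the complex
$$0 \to \underline{\Lambda^0 V} \xrightarrow{d} \underline{\Lambda^1 V} \xrightarrow{d} \cdots \xrightarrow{d} \underline{\Lambda^{\dim_{\CC} V} V} \to 0,$$
where $\underline{\Lambda^i V}$ denotes the trivial bundle over $V$ with fiber $\Lambda^i V$, and the differential at $v \in V$ is exterior multiplication $\omega \mapsto v \wedge \omega$. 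This is a $G$-equivariant complex of vector bundles which is exact on $V \setminus 0$, so it determines a class in the relative group $K_G(V, V \setminus 0)$. By excision (applied to a small disk around $0$) this is canonically isomorphic to $\wt{K}_G(V^+)$, producing the desired class $b_V$.

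Next I would verify multiplicativity: under the standard $G$-equivariant homeomorphism $(V \oplus W)^+ \cong V^+ \wedge W^+$, the class $b_{V \oplus W}$ is identified with $b_V \otimes b_W$ via the external product of Fact~\ref{fact:product_map}. This is a direct consequence of the Koszul identity $\Lambda^\bullet(V \oplus W) \cong \Lambda^\bullet V \otimes \Lambda^\bullet W$ as $G$-equivariant differential graded bundles on $V \times W$, with total differential $d_{V \oplus W} = d_V \otimes 1 + 1 \otimes d_W$.

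By multiplicativity, together with naturality of the Bott map under the restriction of Fact~\ref{fact:subgroup} and the decomposition of any complex $G$-representation as a direct sum of irreducibles (and further into one-dimensional characters after restricting to a maximal torus $T \subseteq G$), the isomorphism claim reduces to the base case of a one-dimensional complex $G$-representation $V \cong \CC$ on which $G$ acts through a character $\chi \colon G \to U(1)$. In this base case, $V^+$ is (non-equivariantly) the sphere $S^2$, and one constructs an inverse to multiplication by $b_V$ via an equivariant clutching-function description: any $G$-equivariant bundle on $\Sigma^V X$ is obtained by gluing trivial bundles on the northern and southern cones along an equivariant loop of bundle automorphisms parametrized by the equator $X \times S(V)$, and this clutching datum is packaged into an element of $\wt{K}_G(X)$ producing the inverse map.

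The main obstacle will be this final base case: verifying that multiplication by $b_V$ and the clutching inverse really are mutually inverse equivariantly. The most direct route is to imitate Atiyah's rotation-trick proof of classical Bott periodicity while carrying along the character $\chi$; alternatively, and more conceptually, one can invoke the equivariant family index theorem for the family of Dolbeault operators on $V^+$ parametrized by $X$, whose analytic index realizes the inverse Bott map as in Atiyah's index-theoretic proof. We refer to \cite{Segal68} for the complete argument.
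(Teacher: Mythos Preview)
The paper does not prove this statement at all: it is listed as one of several standard facts from equivariant $K$-theory in the review section, with a blanket reference to \cite{Segal68} and no further argument. Your proposal therefore goes well beyond what the paper provides, sketching the classical proof strategy (Koszul-complex Bott class, multiplicativity, reduction to the one-dimensional case). That sketch is broadly correct and is essentially what one finds in the literature. One small caution: the reduction step ``restrict to a maximal torus and decompose into characters'' is not by itself enough to recover the isomorphism for nonabelian $G$, since injectivity of $\res^G_T$ on $K$-theory requires its own argument; the standard route (Atiyah's index-theoretic proof, or Segal's original argument) handles general compact $G$ more directly. But since both you and the paper ultimately defer to \cite{Segal68}, this is a minor point.
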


\begin{fact}
\label{fact:euler_class}
Let $V$ be a complex representation. Then the composition of the Bott isomorphism $\wt{K}_{G}(X)\cong\wt{K}_{G}(\Sigma^{V}X)$ with the map $\wt{K}_{G}(\Sigma^{V}X)\to\wt{K}_{G}(X)$ induced by the inclusion $X\hookrightarrow\Sigma^{V}X$ is a map $\wt{K}_{G}(X)\to\wt{K}_{G}(X)$ given by multiplication with the \emph{K-theoretic Euler class}
\[\lambda_{-1}(V)=\sum_{k}(-1)^{k}[\Lambda^{k}(V)]\in R(G).\]
\end{fact}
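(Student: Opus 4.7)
The plan is to make the Bott class $b_V$ explicit via the equivariant Koszul/Atiyah--Bott--Shapiro construction, compute its restriction at the origin, and then deduce the identity by naturality of the external product.

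First I would recall a concrete model for $b_V$. Over $V$ consider the equivariant Koszul complex of trivial bundles
\[\Lambda^{0} V \xrightarrow{\;d\;} \Lambda^{1} V \xrightarrow{\;d\;} \cdots \xrightarrow{\;d\;} \Lambda^{n} V,\]
where at a point $v\in V$ the differential $d$ is exterior multiplication by $v$. This complex is acyclic on $V\setminus\{0\}$, so it represents a class in $K_{G}(V,V\setminus\{0\}) \cong \wt{K}_{G}(V^{+})$; up to the standard sign and grading conventions this is the Bott class $b_{V}$. By Fact \ref{fact:bott_isomorphism}, the Bott isomorphism $\wt{K}_{G}(X)\cong\wt{K}_{G}(\Sigma^{V}X)$ is implemented by external multiplication $\beta\mapsto b_{V}\otimes\beta$ through the product of Fact \ref{fact:product_map}.

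Second I would pull back along the origin. Let $i_{0}\co \{0\}\hookrightarrow V^{+}$ be the inclusion. At $v=0$ every differential in the Koszul complex vanishes, so the complex collapses to the formal alternating sum of its terms, giving
\[i_{0}^{*}b_{V} \;=\; \sum_{k}(-1)^{k}[\Lambda^{k}V] \;=\; \lambda_{-1}(V)\in R(G).\]
Next I would conclude via naturality of the external product. The inclusion $i\co X\hookrightarrow V^{+}\wedge X=\Sigma^{V}X$ coming from $S^{0}\hookrightarrow V^{+}$, namely $x\mapsto [0\wedge x]$, is a pointed $G$-map (the basepoint $\ast\in X$ maps into $V^{+}\wedge\{\ast\}$, which is collapsed in the smash), and factors as $i = i_{0}\wedge\id_{X}$. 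Applying Fact \ref{fact:continuous_map} and compatibility of $\otimes$ with pullbacks,
\[i^{*}(b_{V}\otimes\beta) \;=\; (i_{0}^{*}b_{V})\cdot\beta \;=\; \lambda_{-1}(V)\cdot\beta,\]
which is precisely multiplication by the K-theoretic Euler class, as claimed.

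The main obstacle is the purely conventional one of verifying that this Koszul description of $b_{V}$ agrees on the nose with the Bott class referenced in Fact \ref{fact:bott_isomorphism} -- i.e., reconciling the identification $K_{G}(V,V\setminus\{0\})\cong\wt{K}_{G}(V^{+})$ and the grading/sign chosen so that the resulting multiplication map is an isomorphism. Once these conventions are aligned, the argument above is entirely formal.
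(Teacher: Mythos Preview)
Your argument is correct and is the standard proof via the Koszul/Atiyah--Bott--Shapiro description of the Bott class. Note, however, that the paper does not actually prove this statement: it is listed among a sequence of standard facts about equivariant $K$-theory (Facts~\ref{fact:continuous_map}--\ref{fact:classifying_space_2}) that are recalled without proof from the literature, with a reference to Segal's foundational paper. So there is no ``paper's own proof'' to compare against; your write-up supplies exactly the justification one would give if asked to prove it.
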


Bott periodicity applied to the trivial complex representation $\CC\cong\RR^{2}$ gives an isomorphism $\wt{K}_{G}(X)\cong\wt{K}_{G}(\Sigma^{2}X)$. Hence for any $i\in\ZZ$, we can define the \emph{reduced equivariant K-cohomology groups} of $X$ by
\[\wt{K}_{G}^{i}(X):=\twopartdef{\wt{K}_{G}(X)}{i\text{ is even},}{\wt{K}_{G}(\Sigma X)}{i\text{ is odd}.}\]

\begin{fact}
\label{fact:long_exact_sequence}
If $A\subset X$ is a closed $G$-subspace containing $\ast\in X$, there is a long exact sequence:
\begin{equation}
\label{eq:long_exact_sequence_k_theory}
    \cdots\to\wt{K}^{i}_{G}(X\sqcup_{A}CA)\to\wt{K}_{G}^{i}(X)\to\wt{K}_{G}^{i}(A)\to\wt{K}_{G}^{i+1}(X\sqcup_{A}CA)\to\cdots
\end{equation}
where $CA$ denotes the (reduced) cone on $A$.
\end{fact}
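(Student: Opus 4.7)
The plan is to realize the stated long exact sequence as the associated cohomology sequence of the Puppe cofiber sequence
\[
A \hookrightarrow X \to X\cup_A CA \to \Sigma A \to \Sigma X \to \Sigma(X\cup_A CA)\to\cdots,
\]
which is a cofiber sequence of pointed $G$-spaces, and then identify the higher terms via equivariant Bott periodicity. The two things that must be proved by hand are (i) exactness of the three-term piece $\wt{K}_G^0(X\cup_A CA)\to\wt{K}_G^0(X)\to\wt{K}_G^0(A)$, and (ii) that applying $\wt K_G^0$ to the Puppe tail produces the desired sequence, with $\wt K_G^0(\Sigma^n Z)$ recovering $\wt K_G^{-n}(Z)=\wt K_G^{i}(Z)$ in the right degree.

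First I would establish exactness of the basic three-term sequence. Let $j\co X\hookrightarrow X\cup_A CA$ be the inclusion. The composite $j\circ i\co A\to X\cup_A CA$ factors through the $G$-contractible cone $CA$, hence is $G$-equivariantly null-homotopic; thus $i^*\circ j^*=0$. For the reverse inclusion, given an equivariant virtual bundle $\xi$ on $X$ whose restriction to $A$ is stably trivial as a $G$-bundle, one chooses an equivariant stable trivialization over $A$ and uses it to extend $\xi$ (after stabilizing) across $CA$ as a constant bundle, producing a class on $X\cup_A CA$ whose image in $\wt K_G^0(X)$ is $\xi$. This is the standard clutching argument, performed equivariantly; it uses the fact that $G$-equivariant bundles over a cone $CA$ are classified by their restriction to the cone point together with $G$-homotopy classes of trivializations over $A$, which is where compactness of $X$ and $A$ and the existence of $G$-equivariant tubular neighborhoods of $A\hookrightarrow X$ enter.

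Next I would iterate this three-term exactness using the Puppe sequence. Applying the same argument with the pair $(X,A)$ replaced by $(X\cup_A CA,X)$ yields exactness of
\[
\wt K_G^0(\Sigma A)\to\wt K_G^0(X\cup_A CA)\to\wt K_G^0(X),
\]
after identifying $(X\cup_A CA)\cup_X CX \simeq_G \Sigma A$ via the standard $G$-equivariant homotopy equivalence collapsing the $G$-contractible $CX$. Splicing all such pieces together produces the long exact sequence in $\wt K_G^0$ of successive cofibers. Equivariant Bott periodicity (Fact \ref{fact:bott_isomorphism}) applied to the trivial complex representation $\CC\cong\RR^2$ gives $\wt K_G^0(\Sigma^2 Z)\cong\wt K_G^0(Z)$, so the Puppe tail only contributes two new groups modulo shift, namely $\wt K_G^0$ and $\wt K_G^0\circ\Sigma = \wt K_G^1$ of each term, and the resulting sequence is exactly \eqref{eq:long_exact_sequence_k_theory}.

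The main technical obstacle is the equivariant extension step in (i): one must check that an equivariant bundle trivialized over $A$ really does extend to an equivariant bundle on the cone, and that the resulting class on the mapping cone is well-defined independent of the choice of trivialization up to the image of $\wt K_G^{-1}(A)$. This is where one needs the hypothesis that $A\subset X$ is a closed $G$-subspace (so that $A\hookrightarrow X$ is an equivariant cofibration under mild conditions such as $A$ being a $G$-CW subcomplex or $X$ being a compact $G$-ANR), and where the existence of equivariant Hilbert-space models for classifying spaces, or equivalently Segal's equivariant clutching/extension arguments in \cite{Segal68}, do the required work. Once this equivariant analogue of the classical clutching construction is in place, the rest of the proof is a formal Puppe-sequence bookkeeping combined with Bott periodicity, exactly as in the non-equivariant case.
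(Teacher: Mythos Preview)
The paper does not prove this statement: it is listed as a background ``Fact'' in the review section on equivariant K-theory, with an implicit reference to Segal \cite{Segal68}, and no argument is given. Your proof sketch is correct and is precisely the standard argument one finds in the literature (Puppe cofiber sequence plus equivariant Bott periodicity), so there is nothing to compare against.
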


\begin{fact}
\label{fact:wedge_sum}
There exists an isomorphism $\wt{K}_{G}(A\vee B)\cong\wt{K}_{G}(A)\oplus\wt{K}_{G}(B)$.
\end{fact}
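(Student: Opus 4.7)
The plan is to deduce the wedge-sum splitting from the long exact sequence of Fact \ref{fact:long_exact_sequence} applied to the inclusion $A \hookrightarrow A \vee B$, and to exhibit an explicit splitting via the collapse maps that come for free from the coproduct structure of the wedge. Throughout I would use the equivariant pointed maps $i_A \co A \to A \vee B$, $i_B \co B \to A \vee B$ (the inclusions) and $p_A \co A \vee B \to A$, $p_B \co A \vee B \to B$ (collapse the other wedge summand to the basepoint). These satisfy $p_A \circ i_A = \id_A$, $p_B \circ i_B = \id_B$, $p_A \circ i_B = \ast$, $p_B \circ i_A = \ast$.

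First I would apply Fact \ref{fact:long_exact_sequence} with $X = A \vee B$ and subspace $A \subset X$. Since $A$ is a retract of $A \vee B$ via $p_A$, the inclusion of $A$ is a $G$-cofibration (assuming standard basepoint conditions on $A$, $B$), so the homotopy cofiber $(A \vee B) \cup_A CA$ is $G$-equivariantly homotopy equivalent to $(A \vee B)/A \simeq B$. By Fact \ref{fact:continuous_map} this homotopy equivalence induces an isomorphism on $\wt{K}_G$, and the long exact sequence becomes
\[
\cdots \to \wt{K}_G^i(B) \to \wt{K}_G^i(A \vee B) \xrightarrow{i_A^*} \wt{K}_G^i(A) \to \wt{K}_G^{i+1}(B) \to \cdots.
\]

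The key observation is that $p_A^* \co \wt{K}_G^i(A) \to \wt{K}_G^i(A \vee B)$ splits $i_A^*$: by functoriality (Fact \ref{fact:continuous_map}),
\[
i_A^* \circ p_A^* = (p_A \circ i_A)^* = \id_A^* = \id_{\wt{K}_G^i(A)}.
\]
Consequently $i_A^*$ is a split surjection, the connecting homomorphisms vanish, and the long exact sequence breaks into short split exact sequences $0 \to \wt{K}_G^i(B) \to \wt{K}_G^i(A \vee B) \to \wt{K}_G^i(A) \to 0$. The map to $\wt{K}_G^i(B)$ in this sequence is identified with $i_B^*$ (modulo the homotopy equivalence above, since $p_B \circ i_B = \id_B$ likewise provides a retraction), giving the explicit isomorphism
\[
\wt{K}_G(A \vee B) \xrightarrow{(i_A^*,\, i_B^*)} \wt{K}_G(A) \oplus \wt{K}_G(B), \qquad (a,b) \mapsto p_A^*(a) + p_B^*(b).
\]

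The only substantive step is identifying the mapping cone $(A \vee B) \cup_A CA$ with $B$ equivariantly, which is where some mild point-set hypothesis on the based $G$-spaces $A$, $B$ is needed; once this is in place, the rest is purely formal from the long exact sequence plus the retraction $p_A$. I would expect this to be the only obstacle, and in the typical setting of $G$-CW based spaces (which is the setting used throughout the paper) it is automatic.
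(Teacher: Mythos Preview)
The paper states this as a Fact without proof; it appears in the review section on equivariant $K$-theory (Section~\ref{subsec:equivariant_k_theory}) as one of several standard properties cited from \cite{Segal68}. Your argument is correct and is the standard one: the retraction $p_A$ splits the long exact sequence of the pair $(A\vee B, A)$, and the cofiber is $B$.
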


\begin{definition}
\label{def:augmentation_ideal}
The \emph{augmentation ideal} $\frak{a}\subset R(G)$ is defined to be the kernel of the forgetful map (augmentation homomorphism) $R(G)\cong K_{G}(\pt)\to K(\pt)\cong\ZZ$, i.e., $\frak{a}$ consists of those virtual representations of (virtual) dimension $0$.
\end{definition}

\begin{fact}
\label{fact:nilpotent}
Suppose $X$ is a finite based $G$-CW complex and the $G$-action is free away from $\ast\in X$, then the elements of the augmentation ideal $\frak{a}\subset R(G)$ act nilpotently on $\wt{K}_{G}(X)\cong\wt{K}(X/G)$.
\end{fact}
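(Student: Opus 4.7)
The plan is to filter $\wt{K}_G(X)$ using the $G$-CW skeleta, show that each element of $\frak{a}$ shifts filtration degree up by one, and conclude from the finite-dimensionality of $X$.

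First, I would refine the $G$-CW structure on $X$ so that the basepoint $\ast$ is the unique $0$-cell (any other $0$-orbit, being free, can be attached to $\ast$ through a free equivariant $1$-cell without changing $G$-homotopy type). Since the action is free off $\ast$, every positive-dimensional cell is of the form $G \times D^k$, and therefore the cofibre $X^{(q)}/X^{(q-1)}$ is a wedge of copies of $G_+ \wedge S^q$, one for each $q$-cell. Define
\[
F^q \;:=\; \ker\bigl(\wt{K}_G(X) \longrightarrow \wt{K}_G(X^{(q-1)})\bigr),
\]
a decreasing filtration by $R(G)$-submodules, with $F^1 = \wt{K}_G(X)$ (since $\wt{K}_G(\ast) = 0$) and $F^q = 0$ for $q > \dim X$.

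Next, the long exact sequence from the cofibration $X^{(q-1)} \hookrightarrow X^{(q)}$ (Fact \ref{fact:long_exact_sequence}) identifies $\ker\bigl(\wt{K}_G(X^{(q)}) \to \wt{K}_G(X^{(q-1)})\bigr)$ with the image of $\wt{K}_G(X^{(q)}/X^{(q-1)})$, so that $F^q/F^{q+1}$ embeds $R(G)$-linearly into the image of
\[
\wt{K}_G\bigl(X^{(q)}/X^{(q-1)}\bigr) \;\cong\; \bigoplus \wt{K}(S^q)
\]
inside $\wt{K}_G(X^{(q)})$, using Facts \ref{fact:free_G_action_base_point} and \ref{fact:wedge_sum}.

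The key computation is that $R(G)$ acts on each summand $\wt{K}_G(G_+ \wedge S^q) \cong \wt{K}(S^q)$ through the augmentation $R(G) \to \ZZ$: for $V \in R(G)$, the constant equivariant bundle $\underline{V}$ on $G_+ \wedge S^q$ is the pullback of $V$ along the collapse $G_+ \wedge S^q \to \mathrm{pt}$, and factoring this collapse through $S^q$ together with the ring-homomorphism property of the pullback isomorphism in Fact \ref{fact:free_G_action_base_point} shows that multiplication by $V$ on $\wt{K}(S^q)$ coincides with multiplication by $\dim V$. Hence $\frak{a}$ annihilates $\wt{K}_G(G_+ \wedge S^q)$, and therefore $\frak{a} \cdot F^q \subset F^{q+1}$ for all $q \geq 1$. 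By induction on $n$ one obtains $\frak{a}^n \cdot F^1 \subset F^{n+1}$, so taking $n = \dim X$ gives $\frak{a}^{\dim X} \cdot \wt{K}_G(X) \subset F^{\dim X + 1} = 0$, establishing even the stronger statement that the whole ideal $\frak{a}^{\dim X}$ annihilates $\wt{K}_G(X) \cong \wt{K}(X/G)$. The main point requiring genuine care is the $R(G)$-action calculation on each $\wt{K}(S^q)$ summand; everything else is formal consequence of the skeletal filtration and the cited facts.
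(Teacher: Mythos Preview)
The paper records this as a Fact without proof, treating it as a standard result in equivariant $K$-theory (ultimately from \cite{Segal68}), so there is no paper's argument to compare against. Your skeletal-filtration proof is correct and is one of the standard ways to establish this.

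One small caveat: the preliminary refinement making $\ast$ the unique $0$-cell cannot be carried out when $X$ is disconnected, since each component of a CW complex must contain a $0$-cell, and attaching a free $1$-cell from a $0$-orbit in another component to $\ast$ genuinely changes the $G$-homotopy type. Fortunately this step is unnecessary: simply begin the filtration at $F^0 = \wt{K}_G(X)$ instead of $F^1$, and observe that $X^{(0)}/\ast$ is a wedge of copies of $G_+ = G_+ \wedge S^0$. The same computation (Fact~\ref{fact:free_G_action_base_point} together with the associated-bundle description of the $R(G)$-action) shows that $\frak{a}$ annihilates $\wt{K}_G(G_+)\cong\ZZ$, so $\frak{a}\cdot F^0 \subset F^1$ as well, and you obtain $\frak{a}^{\dim X + 1}\cdot\wt{K}_G(X)=0$, which is equally sufficient for nilpotency.
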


One can also define the equivariant $K$-groups when $X$ is only locally compact, e.g., for the classifying bundle $EG$.

\begin{fact}
\label{fact:classifying_space_1}
The ring $K_{G}(EG)\cong K(BG)$ is isomorphic to $R(G)_{\frak{a}}^{\wedge}$, the completion of $R(G)$ at the augmentation ideal $\frak{a}$. The projection $EG\to\pt$ induces a map $K_{G}(\pt)\to K_{G}(EG)$, which corresponds to the natural map $R(G)\to R(G)_{\frak{a}}^{\wedge}$.
\end{fact}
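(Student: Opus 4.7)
The plan is to split the statement into two parts. The isomorphism $K_{G}(EG)\cong K(BG)$ is essentially immediate from Fact \ref{fact:free_G_action}: since $G$ acts freely on $EG$ with quotient $BG$, the pull-back $\pi^{*}\co K(BG)\to K_{G}(EG)$ is an isomorphism (with the usual caveat that $EG$ is only locally compact, which is handled by passing to finite skeleta and taking limits). The real content is the second isomorphism $K_{G}(EG)\cong R(G)^{\wedge}_{\frak{a}}$, i.e., the Atiyah--Segal completion theorem in the case $X=\pt$.

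First I would write $EG$ as a filtered colimit of finite free $G$-CW complexes $E_{0}\subset E_{1}\subset\cdots$ (for instance via Milnor's iterated join construction, or via finite-dimensional approximations inside a faithful unitary representation when $G$ is a compact Lie group). Applying equivariant K-theory and using the Milnor short exact sequence for cohomology theories on colimits, we obtain
\begin{equation*}
0\to{\lim_{n}}^{\!\!1}\,\wt{K}_{G}^{-1}((E_{n})_{+})\to K_{G}(EG)\to\lim_{n}K_{G}(E_{n})\to 0,
\end{equation*}
and one must argue that the $\lim^{1}$ term vanishes; this will follow from Mittag--Leffler once the finite-stage K-theories are pinned down.

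The structural input for the finite stages is Fact \ref{fact:nilpotent}: since $G$ acts freely on each $E_{n}$ away from the basepoint, the augmentation ideal $\frak{a}\subset R(G)$ acts nilpotently on $\wt{K}_{G}((E_{n})_{+})$. Combined with the long exact sequence of Fact \ref{fact:long_exact_sequence} applied to the pair $((E_{n})_{+},\pt_{+})$, this shows that the natural map $R(G)=K_{G}(\pt)\to K_{G}(E_{n})$ factors through a quotient $R(G)/\frak{a}^{k_{n}}$ for some exponent $k_{n}$. The goal is then to show that this factored map is an isomorphism and that $k_{n}\to\infty$ as the skeleta exhaust $EG$, so that passing to the inverse limit produces the $\frak{a}$-adic completion $R(G)^{\wedge}_{\frak{a}}$.

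The main obstacle is precisely this cofinality statement: one must match the skeletal filtration on $K_{G}(EG)$ quantitatively with the $\frak{a}$-adic filtration on $R(G)$. The cleanest route is through the equivariant Atiyah--Hirzebruch spectral sequence converging to $K_{G}^{*}(E_{n})$, together with the Noetherianness of $R(G)$ (valid since $G$ is a compact Lie group) and an Artin--Rees type argument comparing the two filtrations; this is the technical heart of Atiyah--Segal and is where the actual work lies. Once cofinality is established, $\lim_{n}K_{G}(E_{n})\cong R(G)^{\wedge}_{\frak{a}}$ by definition of completion, the $\lim^{1}$ vanishes by Mittag--Leffler because each transition map $R(G)/\frak{a}^{k_{n+1}}\to R(G)/\frak{a}^{k_{n}}$ is surjective, and the identification of $K_{G}(\pt)\to K_{G}(EG)$ with the canonical map $R(G)\to R(G)^{\wedge}_{\frak{a}}$ is then tautological from the naturality of the construction at each finite stage.
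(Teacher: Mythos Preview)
The paper does not prove this statement at all: it is listed as one of a series of background ``Facts'' about equivariant $K$-theory in Section~\ref{subsec:equivariant_k_theory}, with a blanket reference to \cite{Segal68} at the start of that section. This particular fact is the Atiyah--Segal completion theorem (in the special case $X=\pt$), and the paper simply quotes it without argument.

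Your outline is a reasonable sketch of the standard proof strategy for Atiyah--Segal, and the ingredients you identify (filtration by finite free $G$-CW subcomplexes, nilpotence of $\frak{a}$ on the finite stages, Noetherianness of $R(G)$, an Artin--Rees comparison of filtrations, and Mittag--Leffler to kill $\lim^{1}$) are indeed the key ones. Since the paper offers no proof, there is nothing to compare your approach against; if anything, you have supplied more than the paper does. If the intent is to match the paper, a one-line citation to Atiyah--Segal would suffice.
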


\begin{fact}
\label{fact:classifying_space_2}
Let $X$ be a compact space with a free $G$-action, let $Q=X/G$, and let $\pi$ denote the projection $X\to\pt$. The induced map $\pi^{*}:K_{G}(\pt)\to K_{G}(X)$ can be identified with the composition
\[K_{G}(\pt)\cong R(G)\to R(G)_{\frak{a}}^{\wedge} \cong K(BG)\to K(Q)\]
where the map $K(BG)\to K(Q)$ is the one induced by the classifying map $Q\to BG$ for $X$.
\end{fact}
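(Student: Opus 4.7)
The plan is to exhibit the composition as $\pi^{*}$ by factoring $\pi$ through the universal bundle $EG$ and invoking naturality of the previously established isomorphisms.

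First, I would choose a classifying map $f\colon Q\to BG$ for the principal $G$-bundle $X\to Q$, which by definition lifts to a $G$-equivariant map $\wt{f}\colon X\to EG$. Since $\wt{f}$ is equivariant and the constant map $X\to\pt$ factors as $X\xrightarrow{\wt{f}}EG\to\pt$, functoriality of $K_{G}$ (Fact \ref{fact:continuous_map}) gives a commutative triangle
\[
\begin{tikzcd}
K_{G}(\pt) \ar[r] \ar[dr, "\pi^{*}"'] & K_{G}(EG) \ar[d, "\wt{f}^{*}"] \\
& K_{G}(X).
\end{tikzcd}
\]
So it suffices to identify the top horizontal map with the completion map $R(G)\to R(G)^{\wedge}_{\frak{a}}$ and the vertical map with the map $K(BG)\to K(Q)$ induced by $f$.

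For the top horizontal map, this is precisely the content of Fact \ref{fact:classifying_space_1}: under the identifications $K_{G}(\pt)\cong R(G)$ and $K_{G}(EG)\cong R(G)^{\wedge}_{\frak{a}}$, the pullback along $EG\to\pt$ is the natural completion map. For the vertical map, I would use Fact \ref{fact:free_G_action}: since $G$ acts freely on both $X$ and $EG$, the pullback isomorphisms $K_{G}(X)\cong K(Q)$ and $K_{G}(EG)\cong K(BG)$ are natural in equivariant maps whose quotients are the given classifying data. Applied to $\wt{f}\colon X\to EG$, whose quotient is precisely $f\colon Q\to BG$, this gives the commutative square
\[
\begin{tikzcd}
K_{G}(EG) \ar[r, "\wt{f}^{*}"] \ar[d, "\cong"'] & K_{G}(X) \ar[d, "\cong"] \\
K(BG) \ar[r, "f^{*}"] & K(Q),
\end{tikzcd}
\]
identifying $\wt{f}^{*}$ with $f^{*}$.

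Combining the two identifications shows that $\pi^{*}$ factors as the asserted composition $R(G)\to R(G)^{\wedge}_{\frak{a}}\cong K(BG)\xrightarrow{f^{*}} K(Q)$. The only nontrivial ingredient beyond formal naturality is Fact \ref{fact:classifying_space_1}, which in turn rests on the Atiyah--Segal completion theorem; since that result is being cited as a fact, there is no serious obstacle here. The main thing to be careful about is checking that the naturality statement in Fact \ref{fact:free_G_action} applies verbatim when $EG$ is only locally compact rather than a finite $G$-CW complex, but this is standard and can be handled by passing to finite-dimensional skeleta of $EG$ and taking limits.
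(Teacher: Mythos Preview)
The paper does not actually prove this statement: it is listed as one of several background ``Facts'' about equivariant $K$-theory in Section~\ref{subsec:equivariant_k_theory}, cited from the literature (Segal \cite{Segal68}) without argument. Your proposal is a correct and standard proof---factoring $X\to\pt$ through $EG$ via a classifying map, then invoking naturality together with Facts~\ref{fact:free_G_action} and~\ref{fact:classifying_space_1}---and is exactly the argument one would give if asked to justify this fact.
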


\bigskip
\subsection{Review of Pin(2)-Equivariant K-Theory}
\label{subsec:review_pin2_equivariant_k_theory}

If $\HH=\CC\oplus j\CC$ denotes the quaternions, recall that the group $\Pin(2)$ is defined to be $\Pin(2)=S^{1}\cup jS^{1}\subset\HH$. There is a short exact sequence
\[1\to S^{1}\to \Pin(2)\to \ZZ_{2}\to 1.\]
As in \cite{Man14}, we introduce notation for the following real representations of $\Pin(2)$:
\begin{itemize}
    \item the trivial representation $\RR$.
    \item the one-dimensional representation $\wt{\RR}$ on which $S^{1}\subset\Pin(2)$ acts trivially, and $j\in\Pin(2)$ acts by multiplication by $-1$.
    \item The quaternions $\HH$, acted on by $\Pin(2)$ via left multiplication.
\end{itemize}
Denote by $\wt{\CC}$ the complexification $\wt{\RR}\otimes_{\RR}\CC$, which is isomorphic to $\wt{\RR}^{2}$ as a real representation. Then the complex representation ring $R(\Pin(2))$ is generated by $\wt{c}=[\wt{\CC}]$ and $h=[\HH]$, subject to the relations $\wt{c}^{2}=1$ and $\wt{c}h=h$. In other words, we have the following presentation of $R(\Pin(2))$:
\[R(\Pin(2))=\ZZ[\tilde{c},h]/(\wt{c}^{2}-1,\tilde{c}h-h)\]
As in \cite{Man14}, we can make a change of basis as follows. Let $V$ be a complex $\Pin(2)$-representation, and consider the composition
\[\wt{K}_{\Pin(2)}(X)\to\wt{K}_{\Pin(2)}(\Sigma^{V}X)\to\wt{K}_{\Pin(2)}(X)\]
where the first map is the equivariant Bott periodicity isomorphism, and the second map is induced by the canonical inclusion $X\hookrightarrow\Sigma^{V}X$. Then this composition is given by multiplication with the $K$-theoretic Euler class
\[\lambda_{-1}(V):=\sum_{d=0}^{\text{dim}(V)}(-1)^{d}[\Lambda^{d}V]\in R(\Pin(2)).\]
Then under the coordinate change
\begin{align*}
    &w=\lambda_{-1}(\wt{c})=1-\tilde{c} & &z=\lambda_{-1}(h)=2-h
\end{align*}
we have the following alternate presentation of $R(\Pin(2))$ from \cite{Man14}:
\[R(\Pin(2))=\ZZ[w,z]/(w^{2}-2w,zw-2w).\]
%

\subsection{The Representation Ring \texorpdfstring{$R(G^{*}_{m})$}{R(G*m)}}
\label{subsec:complex_representation_ring}

As in the introduction, let $m\geq 2$ be a positive integer, and consider the groups
\begin{align*}
    &G^{\ev}_{m}=\Pin(2)\times\ZZ_{m} & &G^{\odd}_{m}=\Pin(2)\times_{\ZZ_{2}}\ZZ_{2m}
\end{align*}
where 
\[\Pin(2)\times_{\ZZ_{2}}\ZZ_{2m}=(\Pin(2)\times\ZZ_{2m})/\<(-1,\mu^{m})\>\]
denotes the quotient obtained by modding out the diagonal $\ZZ_{2}$-subgroup, and where $\mu$ is a fixed generator of $\ZZ_{2m}$. 

We will first compute the representation ring of $G^{\ev}_{m}$. Let $\gamma$ be a fixed generator of $\ZZ_{m}$, and let $\CC_{k}$ be the one-dimensional complex representation on which $\gamma$ acts by $\omega_{m}^{k}:=e^{2\pi ik/m}$ for $j=0,\dots,m-1$. Then the representation ring $R(\ZZ_{m})$ is generated by $\zeta:=[\CC_{1}]$, and
\[R(\ZZ_{m})=\ZZ[\zeta]/(\zeta^{m}-1).\]
It follows that
\[R(G^{\ev}_{m})\cong R(\Pin(2))\otimes R(\ZZ_{m})\cong \ZZ[\wt{c},h,\zeta]/(\wt{c}^{2}-1,\wt{c}h-h,\zeta^{m}-1).\]
We introduce notation for the following complex representations of $G^{\ev}_{m}=\Pin(2)\times\ZZ_{m}$:
\begin{itemize}
    \item the one-dimensional representations $\wt{\CC}_{k}:=\wt{\CC}\otimes\CC_{k}$ with $\zeta^{k}\wt{c}=[\wt{\CC}_{k}]$.
    \item the two-dimensional representations $\HH_{k}:=\HH\otimes_{\CC}\CC_{k}$ with $\zeta^{k}h=[\HH_{k}]$.
\end{itemize}
Write $R(\ZZ_{m})_{\geq 0}$ to denote the set of all elements $\mbfs=\sum_{k=0}^{m-1}s_{k}\zeta^{k}\in R(\ZZ_{m})$ with $s_{k}\geq 0$ for $k=0,\dots,m-1$. Given such an element $\mbfs\in R(\ZZ_{m})_{\geq 0}$, we will often use $\mbfs\wt{\CC}$ and $\mbfs\wt{\HH}$ to denote the representations
\begin{align*}
    &\mbfs\wt{\CC}:=\bigoplus_{k=0}^{m-1}\wt{\CC}_{k}^{s_{k}}, & &\mbfs\wt{\HH}:=\bigoplus_{k=0}^{m-1}\HH_{k}^{s_{k}}.
\end{align*}
We define the following variables, similar to the discussion above:
\begin{align*}
    &w_{k}:=\lambda_{-1}(\zeta^{k}\wt{c})=1-\zeta^{k}\wt{c} & &z_{k}:=\lambda_{-1}(\zeta^{k}h)=1-\zeta^{k}h+\zeta^{2k}
\end{align*}
for $k=0,\dots,m-1$. Note that $w_{0}=w$, and $z_{0}=z$. These variables satisfy the following relations:
\begin{equation}
\label{eq:G_ev_m_relations}
    \begin{aligned}
    &w_{0}^{2}=2w_{0}, & &\qquad\qquad(1-w_{k})(1-w_{\ell})=(1-w_{0})(1-w_{k+\ell}), \\
    &w_{0}z_{k}=w_{0}w_{2k}, & &\qquad\qquad z_{k}=1-(1-w_{0})(1-w_{k})(2-z_{0})+(1-w_{0})(1-w_{2k}).
    \end{aligned}
\end{equation}
\ignorespacesafterend
Here, we use the cyclic indexing convention $w_{am+k}=w_{k}$ for $a\in\ZZ$, and similarly for $z_{k}$.

\begin{lemma}
Let $\I_{m}^{\ev}$ be the ideal generated by the relations in (\ref{eq:G_ev_m_relations}). Then there exists an isomorphism of rings
\[\ZZ[w_{0},\dots,w_{m-1},z_{0},\dots,z_{m-1}]/\I_{m}^{\ev}\cong \ZZ[\wt{c},h,\zeta]/(\wt{c}^{2}-1,\wt{c}h-h,\zeta^{m}-1).\]
\end{lemma}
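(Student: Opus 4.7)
The plan is to construct a ring homomorphism $\phi$ from the polynomial quotient on the left to $R(G^{\ev}_m)$ on the right via the defining substitutions, construct an inverse $\psi$ in the opposite direction, and verify compatibility by direct calculation. Throughout I will use the shorthand $a_k := 1 - w_k$ so that the multiplicative relations become cleaner to manipulate.

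First I define $\phi$ on generators by $w_k \mapsto 1 - \zeta^k \wt{c}$ and $z_k \mapsto 1 - \zeta^k h + \zeta^{2k}$. To show $\phi$ is well-defined, I need to check that each relation in $\I_m^{\ev}$ is killed in $R(G^{\ev}_m)$. The relation $w_0^2 = 2w_0$ follows from $(1-\wt{c})^2 = 2(1-\wt{c})$, using $\wt{c}^2 = 1$. The relation $(1-w_k)(1-w_\ell) = (1-w_0)(1-w_{k+\ell})$ reduces to $(\zeta^k \wt{c})(\zeta^\ell \wt{c}) = \wt{c}(\zeta^{k+\ell}\wt{c})$, which is immediate from $\wt{c}^2 = 1$. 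For $w_0 z_k = w_0 w_{2k}$, expanding gives $(1-\wt{c})(1 - \zeta^k h + \zeta^{2k}) = (1-\wt{c})(1-\zeta^k h) + (1-\wt{c})\zeta^{2k}$; the first summand collapses to $1 - \wt{c}$ using $\wt{c}h = h$, and the result matches $(1-\wt{c})(1-\zeta^{2k}\wt{c})$. The final relation is a direct expansion using $(1-w_0)(1-w_k) = \zeta^k$ and $2 - z_0 = h$.

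Next I define $\psi$ on generators by $\wt{c} \mapsto 1-w_0$, $h \mapsto 2 - z_0$, and $\zeta \mapsto (1-w_0)(1-w_1)$. The relation $\wt{c}^2 = 1$ follows from $w_0^2 = 2w_0$, and $\wt{c}h = h$ follows from $(1-w_0)(2-z_0) = 2 - z_0$ after using $w_0 z_0 = w_0 w_0 = 2 w_0$. The key calculation is $\zeta^m = 1$, which is the main obstacle and proceeds by induction. Using the relation $a_k a_\ell = a_0 a_{k+\ell}$, one verifies by induction on $k$ that $a_1^k = a_0 a_k$ for $k$ even and $a_1^k = a_k$ for $k$ odd. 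Combining these cases yields the uniform identity
\[
    (a_0 a_1)^k = a_0 a_k \qquad \text{for all } k \geq 1,
\]
with cyclic indexing modulo $m$. Taking $k = m$ and using $a_m = a_0$ together with $a_0^2 = 1$ gives $\psi(\zeta)^m = (a_0 a_1)^m = a_0 a_m = a_0^2 = 1$.

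Finally I verify that $\phi$ and $\psi$ are mutually inverse. The composition $\phi \circ \psi$ is immediate on the generators $\wt{c}, h, \zeta$ by direct substitution (the computation $\phi(\psi(\zeta)) = \wt{c}(\zeta\wt{c}) = \zeta$ uses $\wt{c}^2 = 1$). For $\psi \circ \phi$, applied to $w_k$ I use the identity $(a_0 a_1)^k a_0 = a_0 a_k \cdot a_0 = a_k$ (valid for all $k \geq 0$) to conclude $\psi(\phi(w_k)) = 1 - a_k = w_k$; applied to $z_k$, the last relation defining $\I_m^{\ev}$ is literally the statement $z_k = 1 - a_0 a_k(2 - z_0) + a_0 a_{2k}$, so $\psi(\phi(z_k)) = z_k$. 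This completes the proof that $\phi$ is an isomorphism of rings.
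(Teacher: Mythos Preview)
Your proof is correct and follows essentially the same strategy as the paper: define ring homomorphisms in both directions via the obvious substitutions, verify the relations are respected, and check the two maps are mutual inverses. Your treatment of the key step $\psi(\zeta)^m = 1$ is actually cleaner than the paper's; the paper tracks explicit auxiliary polynomials $p_k, q_k$ by induction, whereas your observation that $a_0^2 = 1$ together with $a_k a_\ell = a_0 a_{k+\ell}$ gives $(a_0 a_1)^k = a_0 a_k$ directly and dispatches the computation in one line.
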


\begin{proof}
Let $I=\I_{m}^{\ev}\subset\ZZ[w_{0},\dots,w_{m-1},z_{0},\dots,z_{m-1}]$, let $J=(\wt{c}^{2}-1,\wt{c}h-h,\zeta^{m}-1)\subset\ZZ[\wt{c},h,\zeta]$, and let
\[f:\ZZ[w_{0},\dots,w_{m-1},z_{0},\dots,z_{m-1}]\to\ZZ[\wt{c},h,\zeta]\]
be the map which sends $w_{k}\mapsto 1-\zeta^{k}\wt{c}$ and $z_{k}\mapsto 1-\zeta^{k}h+\zeta^{2k}$.
Then:
\begin{align*}
    &f(w_{0}^{2}-2w_{0})=(1-\wt{c})^{2}-2(1-\wt{c})=1-2\wt{c}+\wt{c}^{2}-2+2\wt{c}=\wt{c}^{2}-1\in J, \\
    &f(w_{k}w_{\ell}-w_{k}-w_{\ell}+w_{0}+w_{k+\ell}-w_{0}w_{k+\ell}) \\
    &\qquad=(1-\zeta^{k}\wt{c})(1-\zeta^{\ell}\wt{c})-(1-\zeta^{k}\wt{c})-(1-\zeta^{\ell}\wt{c})+(1-\wt{c})+(1-\zeta^{k+\ell}\wt{c})-(1-\wt{c})(1-\zeta^{k+\ell}\wt{c}) \\
    &\qquad=(\zeta^{k}+\zeta^{\ell}-\zeta^{k+\ell}-1)\wt{c}-(\zeta^{k}+\zeta^{\ell}-\zeta^{k+\ell}-1)\wt{c}+\zeta^{k+\ell}\wt{c}^{2}-\zeta^{k+\ell}\wt{c}^{2}=0\in J, \\
    &f(w_{0}z_{k}-w_{0}w_{2k})=(1-\wt{c})(1-\zeta^{k}h+\zeta^{2k})-(1-\wt{c})(1-\zeta^{2k}\wt{c}) \\
    &\qquad=1-\zeta^{k}h+\zeta^{2k}-\wt{c}+\zeta^{k}\wt{c}h-\zeta^{2k}\wt{c}-1+\wt{c}+\zeta^{2k}\wt{c}-\zeta^{2k}\wt{c}^{2} \\
    &\qquad=\zeta^{k}(\wt{c}h-h)+\zeta^{2k}(1-\wt{c}^{2})\in J, \\
    &f(z_{k}+(1-w_{0})((1-w_{k})(2-z_{0})+(w_{2k}-1))-1)=1-\zeta^{k}h+\zeta^{2k}+\wt{c}(\zeta^{k}\wt{c}h-\zeta^{2k}\wt{c})-1 \\
    &\qquad=(\wt{c}^{2}-1)(\zeta^{k}h-\zeta^{2k})\in J,
\end{align*}
and hence $I\subset f^{-1}(J)$. Therefore $f$ descends to a map
\[\ol{f}:\ZZ[w_{0},\dots,w_{m-1},z_{0},\dots,z_{m-1}]/I\to\ZZ[\wt{c},h,\zeta]/J.\]
Furthermore, we see that $\ol{f}$ is surjective since $\ol{f}(1-w_{0})=\wt{c}$, $\ol{f}(2-z_{0})=h$, and $\ol{f}((1-w_{0})(1-w_{k}))=\wt{c}^{2}\zeta^{k}=\zeta^{k}$.

Next, consider the map
\[g:\ZZ[\wt{c},h,\zeta]\to\ZZ[w_{0},\dots,w_{m-1},z_{0},\dots,z_{m-1}]\]
which sends $\wt{c}\mapsto 1-w_{0}$, $h\mapsto 2-z_{0}$, $\zeta\mapsto (1-w_{0})(1-w_{1})$. Then:
\begin{align*}
    &g(\wt{c}^{2}-1)=(1-w_{0})^{2}-1 = 1-2w_{0}+w_{0}^{2}-1 = 2w_{0}-w_{0}^{2}\in I, \\
    &g(\wt{c}h-h)=(1-w_{0})(2-z_{0})-(2-z_{0})=2-z_{0}-2w_{0}+w_{0}z_{0}-2+z_{0}=w_{0}z_{0}-2w_{0} \\
    &\qquad=(w_{0}z_{0}-w_{0}^{2})+(w_{0}^{2}-2w_{0})\in I.
\end{align*}
To show that $g(\zeta^{m}-1)\in I$, we first show that
\[(1-w_{1})^{k}=(1-w_{0})^{k-1}(1-w_{k})+p_{k}(w_{0},\dots,w_{k})\]
for some $p_{k}(w_{0},\dots,w_{k})\in I$. By inspection it holds for $k=1$ with $p_{0}(w_{0})=0$, and assuming it holds for some $k\geq 1$, then
\[(1-w_{1})^{k+1}=(1-w_{0})^{k-1}(1-w_{1})(1-w_{k})+p_{k}(w_{0},\dots,w_{k})(1-w_{1})\]
\[=(1-w_{0})^{k-1}\Big((1-w_{0})(1-w_{k+1})+(1-w_{1})(1-w_{k})+(1-w_{0})(1-w_{k+1})\Big)+p_{k}(w_{0},\dots,w_{k})(1-w_{1})\]
\[=(1-w_{0})^{k}(1-w_{k+1})+(1-w_{0})^{k-1}\Big((1-w_{1})(1-w_{k})+(1-w_{0})(1-w_{k+1})\Big)+p_{k}(w_{0},\dots,w_{k})(1-w_{1})\]
\[=(1-w_{0})^{k}(1-w_{k+1})+p_{k+1}(w_{0},\dots,w_{k+1}),\]
where
\[p_{k+1}(w_{0},\dots,w_{k+1}):=(1-w_{0})^{k-1}\Big((1-w_{1})(1-w_{k})+(1-w_{0})(1-w_{k+1})\Big)+p_{k}(w_{0},\dots,w_{k})(1-w_{1}),\]
hence the claim is proved. In particular, we have that
\[(1-w_{1})^{m}=(1-w_{0})^{m}+p_{m}(w_{0},\dots,w_{m-1}),\]
where we use the cyclic indexing convention as above, so that $w_{m}=w_{0}$. Next, we show that for any $k\geq 0$,
\[(1-w_{0})^{2k}=1+q_{k}(w_{0})\]
for some $q_{k}(w_{0})\in I$. By inspection it holds for $k=0$ with $q_{0}(w_{0})=0$. Now suppose it holds for some $k\geq 0$. Then
\[(1-w_{0})^{2k+2}=(1-w_{0})^{2}+(1-w_{0})^{2}q_{k}(w_{0})\]
\[=1+w_{0}^{2}-2w_{0}+(1-w_{0})^{2}q_{k}(w_{0}) = 1 +q_{k+1}(w_{0}),\]
where
\[q_{k+1}(w_{0}):=w_{0}^{2}-2w_{0}+(1-w_{0})^{2}p_{k}(w_{0})\in I,\]
hence the claim is proved. 

With the above two claims in hand, we see that
\[g(\zeta^{m}-1)=(1-w_{0})^{m}(1-w_{1})^{m}-1=(1-w_{0})^{2m}-1+(1-w_{0})^{m}p_{m}(w_{0},\dots,w_{m-1})\]
\[=q_{m}(w_{0})+(1-w_{0})^{m}p_{m}(w_{0},\dots,w_{m-1})\in I,\]
hence $J\subset g^{-1}(I)$. It follows that $g$ descends to a map
\[\ol{g}:\ZZ[\wt{c},h,\zeta]/J\to\ZZ[w_{0},\dots,w_{m-1},z_{0},\dots,z_{m-1}]/I.\]
In fact, we see that $\ol{g}$ is surjective, since
\begin{align*}
    &\ol{g}(1-\zeta^{k}\wt{c})=1-(1-w_{0})^{k+1}(1-w_{1})^{k}=1-(1-w_{0})^{2k}(1-w_{k})=1-(1-w_{k})=w_{k}, \\
    &\ol{g}(1-\zeta^{k}h+\zeta^{2k})=1-(1-w_{0})^{k}(1-w_{1})^{k}(2-z_{0})+(1-w_{0})^{2k}(1-w_{1})^{2k} \\
    &\qquad=1-(1-w_{0})^{2k-1}(1-w_{k})(2-z_{0})+(1-w_{0})^{4k-1}(1-w_{2k}) \\
    &\qquad=1-(1-w_{0})(1-w_{k})(2-z_{0})+(1-w_{0})(1-w_{2k})=z_{k}
\end{align*}
for all $k=0,\dots,m-1$. Finally, by construction we see that $\ol{g}\ol{f}=\id$ and $\ol{f}\ol{g}=\id$. Thus the result follows.
\end{proof}

Next, we calculate the representation ring of $G^{\odd}_{m}$. Let $\ZZ_{2m}=\<\mu\>$, let $q\in\frac{1}{2}\ZZ$ be a \emph{half} integer, and let $\CC_{q}$ be the one-dimensional complex representation of $\ZZ_{2m}$ on which $\mu$ acts by $e^{2\pi iq/m}$. If we let $\xi:=[\CC_{1/2}]$, then we can write $R(\ZZ_{2m})=\ZZ[\xi]/(\xi^{2m}-1)$. As before, we have the following representations of $\Pin(2)\times\ZZ_{2m}$:
\begin{itemize}
    \item the 1-dimensional representations $\wt{\CC}_{q}:=\wt{\CC}\otimes\CC_{q}$ with $q\in\frac{1}{2}\ZZ$, $\xi^{2q}\wt{c}=[\wt{\CC}_{q}]$.
    \item the 2-dimensional representations $\HH_{q}:=\HH\otimes_{\CC}\CC_{q}$ with $q\in\frac{1}{2}\ZZ$, $\xi^{2q}h=[\HH_{q}]$.
\end{itemize}
Now note that a $\Pin(2)\times\ZZ_{2m}$ representation descends to a $G^{\odd}_{m}=\Pin(2)\times_{\ZZ_{2}}\ZZ_{2m}$ representation if and only if the actions of $-1\in\Pin(2)$ and $\mu^{m}\in\ZZ_{2m}$ coincide. In particular:
\begin{itemize}
    \item $\wt{\CC}_{q}$ descends to a $G^{\odd}_{m}$ representation if and only if $q\equiv 0\pmod{1}$, since in this case both $-1\in\Pin(2)$ and $\mu^{m}\in\ZZ_{2m}$ act trivially.
    \item $\HH_{q}$ descends to a $G^{\odd}_{m}$ representation if and only if $q\equiv \frac{1}{2}\pmod{1}$, since in this case both $-1\in\Pin(2)$ and $\mu^{m}\in\ZZ_{2m}$ both act by multiplication by $-1$.
\end{itemize}
It follows that
\[R(G^{\odd}_{m})\cong\ZZ[\wt{c},\xi h,\xi^{2}]/(\wt{c}^{2}-1,\wt{c}\xi h-\xi h,\xi^{2m}-1)\subset R(\Pin(2)\times\ZZ_{2m}).\]
It will be useful to introduce the following notation: define $R(\ZZ_{2m})^{\ev}$ to be the additive subgroup of elements $\mbfs=s_{k}\xi^{k}\in R(\ZZ_{2m})$ with $s_{k}=0$ for all $k$ odd, and similarly let $R(\ZZ_{2m})^{\odd}$ denote the additive subgroup of elements $\mbfs=s_{k}\xi^{k}\in R(\ZZ_{2m})$ with $s_{k}=0$ for all even $k$. Furthermore, define $R(\ZZ_{2m})^{\ev}_{\geq 0}=R(\ZZ_{2m})^{\ev}\cap R(\ZZ_{2m})_{\geq 0}$ and $R(\ZZ_{2m})^{\odd}_{\geq 0}=R(\ZZ_{2m})^{\odd}\cap R(\ZZ_{2m})_{\geq 0}$. Note that there is a canonical isomorphism $R(\ZZ_{m})\cong R(\ZZ_{2m})^{\ev}$ given by the correspndence $\zeta^{k}\mapsto\xi^{2k}$, and so we will oftentimes use this isomorphism freely and not distinguish between the two. Given elements 
\begin{align*}
    &\mbfs=\sum_{k=0}^{m-1}s_{k}\xi^{2k}\in R(\ZZ_{m})_{\geq 0}^{\ev} & &\mbft=\sum_{k=0}^{m-1}t_{k+1/2}\xi^{2k+1}\in R(\ZZ_{m})_{\geq 0}^{\odd},
\end{align*}
we will often use $\mbfs\wt{\CC}$ and $\mbft\wt{\HH}$ to denote the representations
\begin{align*}
    &\mbfs\wt{\CC}:=\bigoplus_{k=0}^{m-1}\wt{\CC}_{k}^{s_{k}}, & &\mbft\wt{\HH}:=\bigoplus_{k=0}^{m-1}\HH_{k+1/2}^{t_{k+1/2}}.
\end{align*}
We define the following variables, in analogy with the $G^{\ev}_{m}$ case:
\begin{align*}
    &w_{k}:=\lambda_{-1}(\xi^{2k}\wt{c})=1-\xi^{2k}\wt{c} & &z_{k+\frac{1}{2}}:=\lambda_{-1}(\xi^{2k+1}h)=1-\xi^{2k+1}h+\xi^{4k+2}
\end{align*}
for $k=0,\dots,m-1$. We leave the proof of the following lemma to the reader:

\begin{lemma}
Let 
\[\I_{m}^{\odd}\subset\ZZ[w_{0},\dots,w_{m-1},z_{\frac{1}{2}},z_{\frac{3}{2}},\dots,z_{m-\frac{1}{2}}]\]
be the ideal generated by the relations
\begin{align*}
    &w_{0}^{2}=2w_{0}, &
    &(1-w_{k})(1-w_{\ell})=(1-w_{0})(1-w_{k+\ell}), \\
    &w_{0}z_{k+\frac{1}{2}}=w_{0}w_{2k+1}, & &z_{k+\frac{1}{2}}=1-(1-w_{0})(1-w_{k})-(1-w_{0})^{2}(1-w_{1})(1-w_{k}) \\
    & & &\qquad\qquad+(1-w_{0})(1-w_{2k+1})+(1-w_{0})(1-w_{k})z_{\frac{1}{2}}.
\end{align*}
Then there exists an isomorphism of rings
\[R(G^{\odd}_{m})=\ZZ[\wt{c},\xi h, \xi^{2}]/(\wt{c}^{2}-1,\wt{c}\xi h-\xi h,\xi^{2m}-1)\cong \ZZ[w_{0},\dots,w_{m-1},z_{\frac{1}{2}},z_{\frac{3}{2}},\dots,z_{m-\frac{1}{2}}]/\I_{m}^{\odd}.\]
\end{lemma}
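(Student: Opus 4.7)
The plan is to mirror essentially verbatim the proof of the $G^{\ev}_{m}$-case. Let $J=(\wt{c}^{2}-1,\wt{c}\xi h-\xi h,\xi^{2m}-1)\subset\ZZ[\wt{c},\xi h,\xi^{2}]$, and let $I=\I_{m}^{\odd}$. I will construct ring homomorphisms
\[f\co\ZZ[w_{0},\dots,w_{m-1},z_{\frac{1}{2}},\dots,z_{m-\frac{1}{2}}]\longrightarrow\ZZ[\wt{c},\xi h,\xi^{2}],\quad g\co\ZZ[\wt{c},\xi h,\xi^{2}]\longrightarrow\ZZ[w_{0},\dots,w_{m-1},z_{\frac{1}{2}},\dots,z_{m-\frac{1}{2}}]\]
which will be shown to descend to mutually inverse isomorphisms $\ol{f}$ and $\ol{g}$ of the quotients.

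First, define $f$ by $w_{k}\mapsto 1-\xi^{2k}\wt{c}$ and $z_{k+\frac{1}{2}}\mapsto 1-\xi^{2k+1}h+\xi^{4k+2}$; note that $\xi^{2k+1}h=(\xi^{2})^{k}(\xi h)$ lies in $\ZZ[\wt{c},\xi h,\xi^{2}]$, so $f$ lands in the intended subring. I will verify directly that each of the four families of generators of $I$ maps into $J$: the first two checks (involving only the $w_{k}$) are identical word-for-word to those in the even case, since $\xi^{2k}$ plays the role of $\zeta^{k}$. For the third family, $f(w_{0}z_{k+\frac{1}{2}})-f(w_{0}w_{2k+1})$ reduces mod $J$ after using $\wt{c}\cdot\xi^{2k+1}h=\xi^{2k}(\wt{c}\xi h)\equiv\xi^{2k+1}h$. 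For the fourth, a routine expansion shows both sides equal $1-\xi^{2k+1}h+\xi^{4k+2}$ modulo $J$ (the terms $\pm\xi^{2k}$ and $\pm\xi^{2k+2}$ cancel after reducing by $\wt{c}^{2}-1$ and $\wt{c}\xi h-\xi h$).

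Next, define $g$ by $\wt{c}\mapsto 1-w_{0}$, $\xi^{2}\mapsto(1-w_{0})(1-w_{1})$, and (using the defining form $z_{\frac{1}{2}}=1-\xi h+\xi^{2}$)
\[\xi h\longmapsto 1-z_{\frac{1}{2}}+(1-w_{0})(1-w_{1}).\]
The relation $\wt{c}^{2}-1$ maps to $w_{0}^{2}-2w_{0}\in I$ as before. For $\wt{c}\xi h-\xi h$, a direct computation using $w_{0}z_{\frac{1}{2}}\equiv w_{0}w_{1}\pmod{I}$ (the $k=0$ instance of the third relation) and $(1-w_{0})^{2}\equiv 1\pmod{I}$ produces $0$ modulo $I$. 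For $\xi^{2m}-1$, I would reproduce the two inductive claims from the even case: $(1-w_{1})^{k}\equiv(1-w_{0})^{k-1}(1-w_{k})\pmod{I}$ (proved by induction, using the second relation in $I$) and $(1-w_{0})^{2k}\equiv 1\pmod{I}$. Then
\[g(\xi^{2m})=(1-w_{0})^{m}(1-w_{1})^{m}\equiv(1-w_{0})^{2m-1}(1-w_{m})=(1-w_{0})^{2m}\equiv 1\pmod{I},\]
using the cyclic convention $w_{m}=w_{0}$.

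Finally, once $\ol{f}$ and $\ol{g}$ are defined, I verify they are inverse on generators. On the $\ol{g}\ol{f}$ side, $\ol{g}\ol{f}(w_{k})=1-(1-w_{0})^{k+1}(1-w_{1})^{k}\equiv w_{k}$ and $\ol{g}\ol{f}(z_{k+\frac{1}{2}})\equiv z_{k+\frac{1}{2}}$ both follow from the inductive identities above. On the $\ol{f}\ol{g}$ side the checks $\wt{c}\mapsto\wt{c}$, $\xi^{2}\mapsto\xi^{2}$, $\xi h\mapsto\xi h$ are immediate. The main obstacle — really the only substantive step — is the bookkeeping required to confirm that $g(\xi^{2m}-1)\in I$ and that $\ol{g}\ol{f}(w_{k})\equiv w_{k}$; both are handled uniformly by the two inductive claims, which carry over without change from the even case because the ideal $\I_{m}^{\odd}$ contains the same pair of relations $w_{0}^{2}=2w_{0}$ and $(1-w_{k})(1-w_{\ell})=(1-w_{0})(1-w_{k+\ell})$ that drove that argument.
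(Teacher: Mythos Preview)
Your proposal is correct and is exactly what the paper intends: the paper leaves this lemma's proof to the reader, noting it is analogous to the $G^{\ev}_{m}$ case, and your argument faithfully transports that proof (the maps $f,g$, the two inductive identities for $(1-w_{1})^{k}$ and $(1-w_{0})^{2k}$, and the verification that $\ol{g}\ol{f}(z_{k+\frac{1}{2}})$ reproduces the right-hand side of the fourth relation) with the expected substitutions $\zeta^{k}\leadsto\xi^{2k}$ and $h\leadsto\xi h$.
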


\begin{wrap}
We record here the restriction maps on representation rings associated to various subgroup inclusions:
\begin{itemize}
    \item Under the inclusion $\Pin(2)\subset G^{*}_{m}$, the corresponding restriction map is given by
    \begin{align*}
        \res^{G^{*}_{m}}_{\Pin(2)}:R(G^{*}_{m})&\to R(\Pin(2)) \\
        w_{k}&\mapsto w, \\
        z_{k}&\mapsto z.
    \end{align*}
    \item In the case where $\ast=\ev$, the restriction map corresponding to the subgroup $\ZZ_{m}\subset G^{\ev}_{m}$ is given by
    \begin{align*}
        \res^{G^{\ev}_{m}}_{\ZZ_{m}}:R(G^{\ev}_{m})&\to R(\ZZ_{m}) \\
        w_{k}&\mapsto 1-\zeta^{k}, \\
        z_{k}&\mapsto (1-\zeta^{k})^{2}.
    \end{align*}
    \item In the case where $\ast=\odd$, the restriction map corresponding to the subgroup $\ZZ_{2m}\subset G^{\odd}_{m}$ is given by
    \begin{align*}
        \res^{G^{\odd}_{m}}_{\ZZ_{2m}}:R(G^{\odd}_{m})&\to R(\ZZ_{2m}) \\
        w_{k}&\mapsto 1-\xi^{2k}, \\
        z_{k+\frac{1}{2}}&\mapsto (1-\xi^{2k+1})^{2}.
    \end{align*}
\end{itemize}
\end{wrap}

\bigskip
\subsection{The Representation Ring \texorpdfstring{$RO(\ZZ_{m})$}{RO(Zm)} and Real \texorpdfstring{$G^{*}_{m}$}{G*m}-representations}
\label{subsec:real_representation_ring}

We will also need to establish notation for certain \emph{real} $G^{*}_{m}$-representations for $\ast=\ev,\odd$. We first describe the ring $RO(\ZZ_{m})$. For the following, let $\gamma$ be a generator of $\ZZ_{m}$.
\begin{itemize}
    \item Let $\RR$ denote the trivial 1-dimensional representation.
    \item For $j=1,\dots,\lfloor\frac{m-1}{2}\rfloor$, let $\VV_{j}$ denote the irreducible 2-dimensional representation
    \[\gamma\mapsto\begin{pmatrix}
    \cos(\frac{2\pi k}{m}) & -\sin(\frac{2\pi k}{m}) \\
    \sin(\frac{2\pi k}{m}) & \cos(\frac{2\pi k}{m}) 
    \end{pmatrix}.\]
    \item If $m$ is even, let $\RR_{m/2}$ denote the 1-dimensional representation where $\gamma$ acts by multiplication by $-1$.
\end{itemize}
One can show that if $\rho:=[\RR_{m/2}]$, and $\nu_{k}=[\VV_{k}]$, then we have the following presentation of $RO(\ZZ_{m})$:
\[RO(\ZZ_{m})=\twopartdef{\ZZ[\rho,\nu_{1},\dots,\nu_{\frac{m}{2}-1}]/(\rho\nu_{k}-\nu_{\frac{m}{2}-k},\nu_{j}\nu_{k}-\nu_{j+k}-\nu_{j-k})}{m\text{ is even},}{\ZZ[\nu_{1},\dots,\nu_{\frac{m-1}{2}}]/(\nu_{j}\nu_{k}-\nu_{j+k}-\nu_{j-k})}{m\text{ is odd}.}\]
Here we use the indexing convention that if $\frac{m}{2}<k<m$, then $\nu_{k}:=\nu_{m-k}$, and if $am\le k<(a+1)m$ for $a\in\ZZ$, then $\nu_{k}:=\nu_{k-am}$.

Next we introduce notation for the following real representations of $G^{\ev}_{m}$:
\begin{itemize}
    \item the 1-dimensional representation $\wt{\RR}_{0}$, on which $S^{1},\ZZ_{m}\subset G^{\ev}_{m}$ act trivially, and $j$ acts by $-1$.
    \item the 2-dimensional representations $\wt{\VV}_{j}:=\wt{\RR}_{0}\otimes\VV_{j}$, $1\le j\le\lfloor\frac{m-1}{2}\rfloor$.
    \item the 1-dimensional representation $\wt{\RR}_{m/2}:=\wt{\RR}_{0}\otimes\RR_{m/2}$, if $m$ is even.
\end{itemize}
Write $RO(\ZZ_{m})_{\geq 0}$ to denote the set of all elements
\[\mbfr=r_{0}+(\sum_{j=0}^{\lfloor\frac{m-1}{2}\rfloor}r_{j}\nu_{j})+r_{m/2}\rho\in RO(\ZZ_{m})\]
with $r_{j}\geq 0$ for $j=0,\dots,m/2$. Given such an element $\mbfr\in RO(\ZZ_{m})_{\geq 0}$, we will often use $\mbfr\wt{\RR}$ to denote the real $G^{\ev}_{m}$-representation
\[\mbfr\wt{\RR}:=\wt{\RR}_{0}^{r_{0}}\oplus\Big(\bigoplus_{j=0}^{\lfloor\frac{m-1}{2}\rfloor}\wt{\VV}_{j}^{r_{j}}\Big)\oplus\wt{\RR}_{m/2}^{r_{m/2}}.\]
Next, consider the real representation ring 
\[RO(\ZZ_{2m})=\ZZ[\wh{\rho},\wh{\nu}_{1},\dots,\wh{\nu}_{m-1}]/(\wh{\rho}\wh{\nu}_{k}-\wh{\nu}_{m-k},\wh{\nu}_{j}\wh{\nu}_{k}-\wh{\nu}_{j+k}-\wh{\nu}_{j-k}).\]
Here we use the indexing convention that if $m<k<2m$, then $\wh{\nu}_{k}:=\wh{\nu}_{2m-k}$, and if $2am\le k<2(a+1)m$ for $a\in\ZZ$, then $\wh{\nu}_{k}:=\nu_{k-2am}$. We denote by $RO(\ZZ_{2m})^{\ev}$ the additive subgroup consisting of elements
\[\mbfr=r_{0}+\Big(\sum_{j=1}^{m-1}r_{j}\wh{\nu}_{j}\Big)+r_{m}\wh{\rho}\in RO(\ZZ_{2m})\]
such that $r_{j}=0$ for all $j$ odd. Again, one can see that there is a canonical isomorphism $RO(\ZZ_{m})\cong RO(\ZZ_{2m})^{\ev}$ given by the correspondence $\nu_{j}\mapsto\wh{\nu}_{2j}$, $\rho\mapsto\wh{\rho}$. As in the complex case, we will oftentimes not distinguish between $RO(\ZZ_{m})$ and $RO(\ZZ_{2m})$ unless necessary.

Similar to $G^{\ev}_{m}$, we have the real $G^{\odd}_{m}$-representations denoted by $\wt{\RR}_{0}$, $\wt{\VV}_{j}$ for all $1\le j\le\lfloor\frac{m-1}{2}\rfloor$, and $\wt{\RR}_{m/2}$ if $m$ is even. Let $RO(\ZZ_{2m})_{\geq 0}^{\ev}:=RO(\ZZ_{2m})^{\ev}\cap RO(\ZZ_{2m})_{\geq 0}$. Given an element 
\[\mbfr=r_{0}+\Big(\sum_{j=1}^{\lfloor\frac{m-1}{2}\rfloor}r_{j}\wh{\nu}_{2j}\Big)+r_{m/2}\wh{\rho}\in RO(\ZZ_{2m})_{\geq 0}^{\ev},\]
(with the convention that $r_{m/2}=0$ if $m$ is odd) we will often use $\mbfr\wt{\RR}$ to denote the real $G^{\odd}_{m}$-representation
\[\mbfr\wt{\RR}:=\wt{\RR}_{0}^{r_{0}}\oplus\Big(\bigoplus_{j=0}^{\lfloor\frac{m-1}{2}\rfloor}\wt{\VV}_{j}^{r_{j}}\Big)\oplus\wt{\RR}_{m/2}^{r_{m/2}}.\]
Finally, recall that for any group $G$, we have the \emph{complexification} map
\[c:RO(G)\to R(G)\]
which sends a real G-representation $V$ to its complexification $V_{\CC}=V\otimes\CC$. In the cases where $G=\ZZ_{m},\ZZ_{2m}$, one can show that
\begin{align*}
    &c(\nu_{k})=\zeta^{k}+\zeta^{m-k}, & &c(\wh{\nu}_{k})=\xi^{k}+\xi^{m-k}, \\ &c(\rho)=\zeta^{m/2}, & &c(\wh{\rho})=\xi^{m/2}.
\end{align*}
Note that the complexification map $c$ is functorial with respect to the canonical inclusions $RO(\ZZ_{m})\hookrightarrow R(G^{\ev}_{m})$ and $RO(\ZZ_{2m})^{\ev}\hookrightarrow R(G^{\odd}_{m})$. It follows that for any $\mbfr\in RO(\ZZ_{m})_{\geq 0}$ or $RO(\ZZ_{2m})_{\geq 0}^{\ev}$, we have that 
\[\mbfr\wt{\RR}\otimes\CC=c(\mbfr)\wt{\CC}\]
as complex $G^{*}_{m}$-representations. In particular, we have the following isomorphisms of complex $G^{*}_{m}$-representations: $\wt{\RR}_{0}\otimes\wt{\CC}\cong\CC_{0}$, $\wt{\VV}_{j}\otimes\CC\cong\wt{\CC}_{j}\oplus\wt{\CC}_{m-j}$ for all $j=1,\dots,\lfloor\frac{m-1}{2}\rfloor$, and $\wt{\RR}_{m/2}\otimes\CC\cong\wt{\CC}_{m/2}$ if $m$ is even.

\bigskip

\section{Equivariant \texorpdfstring{$k$}{k}-invariants}
\label{sec:equivariant_k_invariants}

In this section we construct a package of \emph{equivariant $k$-invariants} associated to a special class of $G^{*}_{m}$-CW complexes we call \emph{spaces of type $\CC$-$G^{*}_{m}$-$\SWF$}, which are motivated by the construction of the $G^{*}_{m}$-equivariant Seiberg--Witten Floer stable homotopy type in Section \ref{sec:stable_homotopy_type}. Associated to the representation ring $R(G^{*}_{m})$ we associate an \emph{additive lattice} $\N^{m}$, and to a space $X$ of type $\CC$-$G^{*}_{m}$-$\SWF$ we extract a distinguished subset $\I(X)\subset\N^{m}$ from which these equivariant $k$-invariants are derived from.

In Section \ref{subsec:monomials} we analyze the structure of the representation ring $R(G^{*}_{m})$, and in Section \ref{subsec:additive_posets_additive_lattices} we construct the lattice $\N^{m}$. In Section \ref{subsec:definition_k_invariants}, after defining spaces of type $G^{*}_{m}$-$\SWF$ and $\CC$-$G^{*}_{m}$-$\SWF$ we define our equivariant analogues of Manolescu's $k$-invariants, as well as prove some properties about them. We then prove some further special properties of these invariants in the cases where $m=2^{r}$ and $m=p^{r}$ is an odd prime power in Sections \ref{subsec:2_r_actions} and \ref{subsec:p_r_actions}, respeectively.

\bigskip
\subsection{Monomials}
\label{subsec:monomials}

Let $m\geq 1$ be an integer, and consider the free commutative polynomial algebra $\ZZ[x_{0},\dots,x_{m-1}]$ on $m$ variables. We define $(X_{m},\cdot)\subset\ZZ[x_{0},\dots,x_{m-1}]$ to be the multiplicative monoid generated by the variables $x_{0},\dots,x_{m-1}$, whose elements we will often denote by $\mbfx^{\vec{a}}:=\prod_{k=0}^{m-1}x_{k}^{a_{k}}$, where $\vec{a}=(a_{0},\dots,a_{m-1})\in\NN^{m}=\ZZ_{\geq 0}^{m}$.

Next for $*\in\{\ev,\odd\}$, let $W_{m}^{*}\subset R(G^{*}_{m})$ denote the multiplicative monoid of elements which can be written in the form $w_{0}^{a_{0}}w_{1}^{a_{1}}\cdots w_{m-1}^{a_{m-1}}$ for some $a_{0},\dots,a_{m-1}\geq 0$. Again, we will write $\mbfw^{\vec{a}}:=\prod_{k=0}^{m-1}w_{k=0}^{m-1}$ to denote elements of $W_{m}^{*}$. It is not hard to see that $W^{\ev}_{m}\cong W^{\odd}_{m}$ as monoids, and so we will proceed to simply write $W_{m}$ to denote either $W^{\ev}_{m}$ or $W^{\odd}_{m}$.

Observe that there is a canonical surjection $\alpha_{m}:X_{m}\to W_{m}$ of multiplicative monoids which sends $\mbfx^{\vec{a}}\mapsto\mbfw^{\vec{a}}$. Given an element $\mbfw^{\vec{a}}\in W_{m}$, we define a \emph{presentation} of $\mbfw^{\vec{a}}$ to be an element of the inverse image $\alpha_{m}^{-1}(\mbfw^{\vec{a}})\subset X_{m}$, and refer to $\alpha_{m}^{-1}(\mbfw^{\vec{a}})$ as the \emph{set of presentations of} $\mbfw^{\vec{a}}$.

\begin{example}
	Consider the case where $m=2$, and without loss of generality consider the case $*=\ev$. One can show that the relations (\ref{eq:G_ev_m_relations}) imply that $w_{0}^{2}=2w_{0}$ and $w_{1}^{2}=2w_{1}$. From these relations, we can conclude that
	\begin{align*}
		&\alpha_{m}^{-1}(w_{0}^{a_{0}})=\{x_{0}^{a_{0}}\}, \\
		&\alpha_{m}^{-1}(w_{1}^{a_{1}})=\{x_{1}^{a_{1}}\}, \\
		&\alpha_{m}^{-1}(w_{0}^{a_{0}}w_{1}^{a_{1}})=\{x_{0}^{a_{0}+a_{1}-1}x_{1},x_{0}^{a_{0}+a_{1}-2}x_{1}^{2},\dots,x_{0}^{2}x_{1}^{a_{0}+a_{1}-2},x_{0}x_{1}^{a_{0}+a_{1}-1}\}\text{ for all }a_{0},a_{1}\geq 1.
	\end{align*}

\end{example}

For $k=0,\dots,m-1$, let $\vec{e}_{k}\in\NN^{m}$ be the vector with a 1 in the $k$th entry and zeroes in the all the other entries. We define $x_{0}X_{m}\subset X_{m}$ to be the semigroup consisting of elements of the form $\mbfx^{\vec{a}}$ where $\vec{a}=(a_{0},\dots,a_{m-1})$ is such that $a_{0}\geq 1$. Observe that $\mbfx^{\vec{b}}X_{m}$ is naturally a module over the monoid $X_{m}$, in the sense that if $\mbfx^{\vec{a}}\in x_{0}X_{m}$ and $\mbfx^{\vec{b}}\in X_{m}$, then $\mbfx^{\vec{a}}\cdot\mbfx^{\vec{b}}=\mbfx^{\vec{a}+\vec{b}}\in x_{0}X_{m}$. Similarly, we define $w_{0}W_{m}\subset W_{m}$ to be the semigroup consisting of elements of which can be expressed in the form $\mbfw^{\vec{a}}$ where $a_{0}\geq 1$. Again, we see that $w_{0}W_{m}$ is naturally a $W_{m}$-module, and hence an $X_{m}$-module via the surjection $\alpha_{m}:X_{m}\to W_{m}$. Note that $\alpha_{m}$ restricts to a surjection of semigroups $x_{0}X_{m}\to w_{0}W_{m}$ compatible with the $X_{m}$-module structure on both sides.

We have the following lemma concerning the presentations of elements in $w_{0}W_{m}$:

\begin{lemma}
Let $\mbfw^{\vec{a}}\in w_{0}W_{m}$. Then $\alpha_{m}^{-1}(\mbfw^{\vec{a}})\subset x_{0}X_{m}$. In other words, every presentation $\mbfx^{\vec{a'}}\in\alpha_{m}^{-1}(\mbfw^{\vec{a}})$ of $\mbfw^{\vec{a}}$ satisfies $a_{0}'\geq 1$.
\end{lemma}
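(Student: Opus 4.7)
The statement translates to the following ring-theoretic claim: if tuples $\vec{a},\vec{a}' \in \NN^{m}$ satisfy $\mbfw^{\vec{a}} = \mbfw^{\vec{a}'}$ in $R(G^{*}_{m})$ and $a_{0} \geq 1$, then $a_{0}' \geq 1$ as well. My plan is to detect whether the zeroth exponent is positive in any presentation by evaluating at a suitable character of $G^{*}_{m}$.

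Concretely, fix a primitive $m$-th root of unity $\omega = e^{2\pi i/m} \in \CC$. In the even case, define a ring homomorphism $\phi \co R(G^{\ev}_{m}) \to \CC$ on the generators of the presentation $\ZZ[\wt{c},h,\zeta]/(\wt{c}^{2}-1,\wt{c}h - h,\zeta^{m}-1)$ by sending $\wt{c} \mapsto 1$, $h \mapsto 0$, $\zeta \mapsto \omega$. In the odd case, define $\phi \co R(G^{\odd}_{m}) \to \CC$ on the analogous presentation by $\wt{c} \mapsto 1$, $\xi h \mapsto 0$, $\xi^{2} \mapsto \omega$. A direct check shows that each defining relation maps to a tautology, so $\phi$ is well-defined in both cases. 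Moreover $\phi(w_{k}) = 1 - \omega^{k}$, which vanishes if and only if $k \equiv 0 \pmod{m}$ by primitivity of $\omega$.

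Applying $\phi$ to the identity $\mbfw^{\vec{a}} = \mbfw^{\vec{a}'}$ yields
\[\prod_{k=0}^{m-1}(1-\omega^{k})^{a_{k}} = \prod_{k=0}^{m-1}(1-\omega^{k})^{a_{k}'}.\]
The left-hand side vanishes because its $k=0$ factor is $0^{a_{0}} = 0$. For the right-hand side to vanish, some factor $(1-\omega^{k})^{a_{k}'}$ must be zero, and since $1 - \omega^{k} \neq 0$ for $1 \leq k \leq m-1$ we must have $a_{0}' \geq 1$, as required. The only nontrivial step is checking compatibility of $\phi$ with the defining relations of $R(G^{*}_{m})$ laid out in Section \ref{subsec:complex_representation_ring}; this reduces to elementary identities in $\CC$, and I anticipate no substantive obstacle.
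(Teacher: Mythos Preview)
Your proof is correct and takes essentially the same approach as the paper: both evaluate the monomial under a ring homomorphism $R(G^{*}_{m})\to\CC$ sending $w_{k}\mapsto 1-\omega_{m}^{k}$, and observe that the resulting product vanishes precisely when the zeroth exponent is positive. The paper phrases this as the trace character at the element $\gamma\in G^{\ev}_{m}$ (which sends $h\mapsto 2$ rather than $0$, but $h$ does not appear in $\mbfw^{\vec{a}}$ so this is immaterial), whereas you check the relations by hand; your map is in fact the trace at $i\gamma$.
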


\begin{proof}
Without loss of generality assume $*=\ev$. For $y\in R(G^{\ev}_{m})$, let $\tr_{g}(y)\in\CC$ denote the trace of the (virtual) representation $y$ at the element $g\in G^{\ev}_{m}$. In particular, note that $\tr_{\gamma}(1)=\tr_{\gamma}(\wt{c})=1$ and $\tr_{\gamma}(\zeta^{k})=\omega_{m}^{k}$ for all $k=0,\dots,m-1$, where $\omega_{m}=e^{2\pi i/k}\in\CC$. Hence for any $\vec{a}=(a_{0},\dots,a_{m-1})\in\NN^{m}$, we see that
\[\tr_{\gamma}(\mbfw^{\vec{a}})=\prod_{k=0}^{m-1}\tr_{\gamma}(1-\wt{c}\zeta^{k})^{a_{k}}=\prod_{k=0}^{m-1}(1-\omega_{m}^{k})^{a_{k}}=0\]
if and only if $a_{0}\geq 1$. In particular if $\vec{a}'=(a_{0}',\dots,a_{m-1}')\in\NN^{m}$ is any other vector such that $\mbfw^{\vec{a}'}=\mbfw^{\vec{a}}$ in $R(G^{\ev}_{m})$, then we must have $a_{0}'\geq 1$, as desired.
\end{proof}

Next, we have the following proposition, whose proof is given in Appendix \ref{sec:number_theory}:

\begin{proposition}
\label{prop:monomials}
Let $m=p^{r}$ be a prime power, and let $\vec{a},\vec{b}\in\NN^{m}$ with $a_{0},b_{0}\geq 1$. Then $\mbf{w}^{\mbf{a}}=\mbf{w}^{\mbf{b}}\in w_{0}W_{p^{r}}$ if and only if:
\begin{enumerate}
    \item \emph{if $p$ odd:}
    \begin{equation}
        \sum_{k=0}^{p^{r}-1}a_{k}=\sum_{k=0}^{p^{r}-1}b_{k},
    \end{equation}
    and for each $t\in\{0,\dots,r-1\}$, we have that:
    \begin{align}
        &\sum_{\ell=0}^{p^{r-t-1}-1}a_{\ell p^{t+1}}=\sum_{\ell=0}^{p^{r-t-1}-1}b_{\ell p^{t+1}},\\
        \begin{split}
        &\sum_{k=1}^{\frac{p^{t+1}-1}{2}}\sum_{\ell=0}^{p^{r-t-1}-1}k(a_{k+\ell p^{t+1}}-a_{-k-\ell p^{t+1}}) \\
        &\qquad\qquad\equiv\sum_{k=1}^{\frac{p^{t+1}-1}{2}}\sum_{\ell=0}^{p^{r-t-1}-1}k(b_{k+\ell p^{t+1}}-b_{-k-\ell p^{t+1}})\pmod{2p^{t+1}}, \text{ and}
        \end{split} \\
        \begin{split}
        &\sum_{\ell=0}^{p^{r-t-1}-1}\sum_{s=0}^{t}(a_{kp^{s}+p^{t+1}}+a_{-kp^{s}-p^{t+1}}-a_{(kp^{s}+p^{t+1})/2}-a_{(-kp^{s}-p^{t+1})/2}) \\
        &\qquad\qquad=\sum_{\ell=0}^{p^{r-t-1}-1}\sum_{s=0}^{t}(b_{kp^{s}+p^{t+1}}+b_{-kp^{s}-p^{t+1}}-b_{(kp^{s}+p^{t+1})/2}-b_{(-kp^{s}-p^{t+1})/2}) \\
        &\qquad\qquad\qquad\qquad\text{ for all }k=2,\dots,\frac{p^{t+1}-1}{2}\text{ with }(k,p)=1.
        \end{split}
    \end{align}
    Here we use the indexing convention that if $k$ is odd, then $k/2:=2^{-1}k$ where $2^{-1}\in\ZZ_{p^{r}}^{\times}$ is the unique inverse of $2$ in $\ZZ_{p^{r}}^{\times}$. 
    \item \emph{if $p=2$:}
    \begin{align}
        &\sum_{k=0}^{2^{r}-1}a_{k}=\sum_{k=0}^{2^{r}-1}b_{k}, \\
        &\sum_{k=0}^{2^{r-t-1}-1}a_{(2k+1)2^{t}}=0\iff\sum_{k=0}^{2^{r-t-1}-1}b_{(2k+1)2^{t}}=0\text{ for each }t=0,\dots,r-1,
    \end{align}
    and for each $t\in\{0,\dots,r-1\}$ such that
    \begin{equation}
        \sum_{k=0}^{2^{r-t-1}-1}a_{(2k+1)2^{t}}=\sum_{k=0}^{2^{r-t-1}-1}b_{(2k+1)2^{t}}=0,
    \end{equation}
    we have that:
    \begin{align}
        \begin{split}
        &\sum_{k=1}^{2^{t}-1}\sum_{\ell=0}^{2^{r-t-1}-1}k(a_{k+\ell 2^{t+1}}-a_{-k-\ell 2^{t+1}})\\
        &\qquad\qquad\equiv\sum_{k=1}^{2^{r-1}-1}\sum_{\ell=0}^{2^{r-t-1}-1}k(b_{k+\ell 2^{t+1}}-b_{-k-\ell 2^{t+1}})\pmod{2^{t+2}},\text{ and}
        \end{split} \\
        \begin{split}
        &\sum_{\ell=0}^{2^{r-t-1}-1}2a_{\ell 2^{t+1}}+\Big(\sum_{s=0}^{t-1}a_{k2^{s}+(2\ell+1)2^{t}}+a_{-k2^{s}-(2\ell+1)2^{t}}\Big) \\
        &\qquad\qquad=\sum_{\ell=0}^{2^{r-t-1}-1}2b_{\ell 2^{t+1}}+\Big(\sum_{s=0}^{t-1}b_{k2^{s}+(2\ell+1)2^{t}}+b_{-k2^{s}-(2\ell+1)2^{t}}\Big) \\
        &\qquad\qquad\qquad\qquad\text{ for all }k=1,\dots,2^{t}-1\text{ odd}.
        \end{split}
    \end{align}
\end{enumerate}
\end{proposition}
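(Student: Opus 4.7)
The plan is to translate the monomial identity $\mbf{w}^{\vec{a}} = \mbf{w}^{\vec{b}}$ into an equality of cyclotomic products in the group ring $\ZZ[\zeta]/(\zeta^{p^{r}} - 1)$, and then extract the explicit conditions by decomposing via CRT into cyclotomic integer rings. Since $W_{p^{r}}^{\ev} \cong W_{p^{r}}^{\odd}$ as monoids, it suffices to work in $R(G^{\ev}_{p^{r}})$. The relation $w_{0}\wt{c} = -w_{0}$ (which follows from $w_{0} = 1 - \wt{c}$ and $\wt{c}^{2} = 1$) gives $w_{0}(1 - \zeta^{k}\wt{c}) = (1 + \zeta^{k})w_{0}$, and combined with $w_{0}^{a_{0}} = 2^{a_{0}-1}w_{0}$ produces the key identity
\[
\mbf{w}^{\vec{a}} \;=\; 2^{a_{0}-1}\, w_{0} \prod_{k=1}^{p^{r}-1} (1 + \zeta^{k})^{a_{k}}, \qquad a_{0} \geq 1.
\]
Using the isomorphism of abelian groups $(w_{0}) \cong \ZZ[\zeta]/(\zeta^{p^{r}} - 1)$ given by $w_{0}\, f(\zeta) \mapsto f(\zeta)$, the equation $\mbf{w}^{\vec{a}} = \mbf{w}^{\vec{b}}$ is therefore equivalent to
\[
2^{a_{0}-1}\prod_{k=1}^{p^{r}-1}(1 + \zeta^{k})^{a_{k}} \;=\; 2^{b_{0}-1}\prod_{k=1}^{p^{r}-1}(1 + \zeta^{k})^{b_{k}} \quad \text{in } \ZZ[\zeta]/(\zeta^{p^{r}}-1).
\]

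Next I would factor $\zeta^{p^{r}} - 1 = \prod_{t=0}^{r}\Phi_{p^{t}}(\zeta)$ and use the CRT injection $\ZZ[\zeta]/(\zeta^{p^{r}}-1) \hookrightarrow \prod_{t=0}^{r}\ZZ[\zeta_{p^{t}}]$ to reduce to an identity in each cyclotomic component, where $\zeta \mapsto \zeta_{p^{t}}$. The $t = 0$ component sends every $1 + \zeta^{k}$ to $2$, immediately forcing the total-degree condition $\sum_{k} a_{k} = \sum_{k} b_{k}$ of (4.2) and (4.5). In the $\ZZ[\zeta_{p^{t+1}}]$ component ($0 \leq t \leq r-1$), indices $k$ divisible by $p^{t+1}$ contribute factors of $2$, while the remaining indices contribute binomials $1 + \zeta_{p^{t+1}}^{k}$. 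For odd $p$, all such binomials are cyclotomic units; comparing $2$-adic valuations yields the equality $\sum_{\ell} a_{\ell p^{t+1}} = \sum_{\ell} b_{\ell p^{t+1}}$ of (4.3), and writing
\[
1 + \zeta_{p^{t+1}}^{k} \;=\; \zeta_{2p^{t+1}}^{k}\cdot 2\cos(\pi k / p^{t+1})
\]
splits each unit into a root-of-unity factor and a totally real cosine factor. Matching torsion contributions in $\mu_{2p^{t+1}} \subset \ZZ[\zeta_{p^{t+1}}]^{\times}$ produces the congruence (4.4) modulo $2p^{t+1}$, while matching the cosine factors, using the multiplicative relations among $\{2\cos(\pi k/p^{t+1}) : (k,p) = 1\}$ generated by the half-angle identity $(2\cos\theta)^{2} = 2 + 2\cos(2\theta)$, produces (4.5). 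The indexing convention $k/2 := 2^{-1}k$ in (4.4) is exactly the inverse of doubling in $\ZZ_{p^{r}}^{\times}$ that arises when one halves the torsion exponent $k$ in $\zeta_{2p^{t+1}}^{k}$.

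For $p = 2$ the same framework applies, but $1 + \zeta_{2^{t+1}}^{k}$ is a unit only when $k$ is odd; for $k$ even it has positive valuation at the unique prime $\mathfrak{p}$ above $2$ in $\ZZ[\zeta_{2^{t+1}}]$. Computing $\mathfrak{p}$-adic valuations in each component forces the vanishing equivalence (4.6), and once both sums vanish the residual unit parts are handled by the same torsion/cosine analysis to yield (4.7) and (4.8); the coefficient $2$ in front of $a_{\ell 2^{t+1}}$ in (4.8) arises because $1 + \zeta_{2^{t+1}}^{\ell 2^{t+1}} = 2$ has $\mathfrak{p}$-valuation $2$ rather than $1$. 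Sufficiency of the listed conditions is obtained by running the above analysis backwards componentwise and appealing to the CRT injection. I expect the main obstacle to be the bookkeeping in the cyclotomic unit analysis: precisely identifying the $\ZZ$-module of multiplicative relations among $\{2\cos(\pi k/p^{t+1})\}$ and matching them to the combinatorial sums in (4.5) and (4.8), and in the $p = 2$ case tracking both the ramification at $\mathfrak{p}$ and the doubled valuation contributions simultaneously. A clean way to handle this may be to pass to the logarithmic embedding of cyclotomic units and invoke the fact that cyclotomic units are of finite index in the full unit lattice of $\ZZ[\zeta_{p^{t+1}}]$, reducing the verification to a rank comparison plus explicit identification of generators.
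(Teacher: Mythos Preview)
Your reduction is essentially the paper's: the identity $w_0 w_k=(1+\zeta^k)w_0$ and the isomorphism $(w_0)\cong \ZZ[\zeta]/(\zeta^{p^r}-1)$ are correct, and your CRT projection onto the components $\ZZ[\zeta_{p^{t+1}}]$ is equivalent to the paper's evaluation of traces at the elements $j\gamma^{\ell}$ (which sends $w_k\mapsto 1+\omega_{p^r}^{k\ell}$). The paper isolates the heart of the matter as a separate lemma characterizing when $\prod_k(1+\omega_{p^n}^k)^{c_k}=1$, and proves it by rewriting each $1+\omega^{kp^s}$ as a root of unity times a product of Washington's fundamental real cyclotomic units $\varepsilon_{p^n,k}=\omega^{(1-k)/2}(1-\omega^k)/(1-\omega)$ for $2\le k\le\lfloor(p^n-1)/2\rfloor$, $(k,p)=1$. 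Since the $\varepsilon_{p^n,k}$ are multiplicatively independent modulo torsion, equating their exponents produces the equalities in the statement, and matching the residual root-of-unity prefactor produces the congruence modulo $2p^{t+1}$. This is exactly your torsion/cosine split made precise; your fallback to the logarithmic embedding and the finite index of cyclotomic units would only determine the relations up to a finite ambiguity, whereas the explicit basis $\{\varepsilon_{p^n,k}\}$ pins them down on the nose.

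Two points in the $p=2$ sketch are wrong as stated. First, $1+\zeta_{2^{t+1}}^k$ is \emph{never} a unit for $0<k<2^{t+1}$: already for $k$ odd one has $N(1+\zeta_{2^{t+1}})=\Phi_{2^{t+1}}(-1)=2$, hence $v_{\mathfrak p}=1$; and $v_{\mathfrak p}(2)=2^t$, not $2$. The dichotomy that actually yields condition (4.6) is zero versus nonzero: the product vanishes in the $\ZZ[\zeta_{2^{t+1}}]$-component exactly when some exponent on the factor $1+\zeta_{2^{t+1}}^{2^t}=0$ is positive. Once both sides are nonzero, the paper separates off a single non-unit factor (essentially $1-\zeta_{2^n}$), determines its exponent by a norm computation, and then a further rewriting in the $\varepsilon$'s extracts the $-2c_0$ term appearing in (4.8); the coefficient $2$ there is not a valuation artifact. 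Second, the half-angle identity $(2\cos\theta)^2=2+2\cos 2\theta$ is additive and does not generate multiplicative relations among the cosines; the genuine multiplicative input is $1+\omega^k=(1-\omega^{2k})/(1-\omega^k)$ together with the factorization $1-\omega_{p^{n-s}}^k=\prod_j(1-\omega_{p^n}^{k+jp^{n-s}})$, and this is precisely how one lands in the $\varepsilon$-basis.
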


\begin{definition}
\label{def:normal_form_monomials}
Endow $X_{m}$ with the lexicographic ordering, with the convention that
\[x_{0}< x_{1} < \cdots < x_{m-1}.\]
We define the \emph{normal form} of a monomial $\mbfw^{\vec{a}}\in W_{m}$ to be the unique presentation
\[\mbfx^{\vec{a}_{\min}}\in \alpha_{m}^{-1}(\mbfw^{\vec{a}})\subset X_{m}\]
of $\mbfw^{\vec{a}}$ which is \emph{minimal} with respect to this lexicographical ordering.
\end{definition}

\begin{example}
\label{ex:w_0W_2}
For $m=2$, the relations from Proposition \ref{prop:monomials} are generated by the relation
\[w_{0}^{a}w_{1}^{b}=w_{0}^{a+b-1}w_{1}\qquad\text{ if }b\geq 1.\]
By ``trading'' all but one of the $w_{1}$'s over to $w_{0}$, it follows that every monomial in $w_{0}W_{2}$ has normal form $x_{0}^{a}x_{1}^{b}\in x_{0}X_{2}$ where $a\geq 1$, $b\in\{0,1\}$.
\end{example}

\begin{example}
\label{ex:w_0W_3}
For $m=3$, the relations from Proposition \ref{prop:monomials} are generated by the relation
\[w_{0}w_{1}^{3}=w_{0}w_{2}^{3}.\]
In analogy with the $m=2$ case, we can ``trade'' $w_{2}$'s over to $w_{1}$, but only three at a time. It follows that every monomial in $w_{0}W_{3}$ has a unique presentation in the normal form $x_{0}^{a}x_{1}^{b}x_{2}^{c}\in x_{0}X_{3}$ where $a\geq 1$, $b\geq 0$, and $c\in\{0,1,2\}$.
\end{example}

\begin{example}
\label{ex:w_0W_4}
For $m=4$, the relations from Proposition \ref{prop:monomials} are as follows:
\begin{align*}
	&w_{0}^{a}w_{2}^{c}=w_{0}^{a+c-1}w_{2}\qquad\text{ if }c\geq 1, \\
	&w_{0}w_{1}^{4}=w_{0}w_{3}^{4}, \\
	&w_{0}^{a}w_{1}^{b}w_{2}^{c}w_{3}^{d}=w_{0}^{a+b+c+d-2}w_{1}w_{2}\qquad\text{ if }b+d\geq 1, c\geq 1.
\end{align*}
Via a similar argument as in the $m=2$ and $3$ cases, we see that every monomial in $w_{0}W_{4}$ has a unique normal form that falls into one of the following three categories: 
\begin{align*}
	&x_{0}^{a}x_{2}^{b}, & &a\geq 1, b\in\{0,1\},\\
	&x_{0}^{a}x_{1}^{b}x_{3}^{d}, & &a\geq 1, b\geq 0, d\in\{0,1,2,3\}, \\
	&x_{0}^{a}x_{1}x_{2}, & &a\geq 1.
\end{align*}
\end{example}

\begin{example}
\label{ex:w_0W_5}
For $m=5$, the relations from Proposition \ref{prop:monomials} are generated by the relations
\begin{align*}
	&w_{0}w_{1}w_{2}^{2}=w_{0}w_{3}^{2}w_{4}, & &w_{0}w_{1}^{3}w_{2}^{1}=w_{0}w_{3}^{1}w_{4}^{3}, & &w_{0}w_{1}^{5}=w_{0}w_{4}^{5}, \\
	&w_{0}w_{1}^{2}w_{2}^{4}=w_{0}w_{3}^{4}w_{4}^{2}, & &w_{0}w_{1}^{4}w_{2}^{3}=w_{0}w_{3}^{3}w_{4}^{4}, & &w_{0}w_{2}^{5}=w_{0}w_{3}^{5}.
\end{align*}
By a similar argument as in the previous cases, every monomial in $w_{0}W_{5}$ has a unique normal form $x_{0}^{a}x_{1}^{b}x_{2}^{c}x_{3}^{d}x_{4}^{e}$, where $a\geq 1$, $b,c\geq 0$, and $0\le d,e\le 4$ are such that:
\[(d,e)\not\in\{(1,4),(2,1),(2,2),(2,3),(2,4),(3,4),(4,2),(4,3),(4,4)\}.\]
\end{example}

\begin{example}
\label{ex:w_0W_2^r}
Let $m=2^{r}$ for some $r\geq 3$. The full set of relations are difficult to calculate explicitly, even with Proposition \ref{prop:monomials} in hand. However we are able to deduce the following: Suppose $\mbfw^{\vec{a}}\in w_{0}W_{2^{r}}$ is such that for each $0\le s\le r$, there exists some $\ell_{s}$ such that $a_{(2\ell_{s}+1)2^{s}}>0$. Then $\mbfw^{\vec{a}}$ has normal form $\mbfx^{\vec{a}'}\in x_{0}X_{2^{r}}$, where $\vec{a}'=(|\vec{a}|-r)\vec{e}_{0}+\sum_{j=0}^{r-1}\vec{e}_{2^{j}}$.
\end{example}

\subsection{Additive Posets and Additive Lattices}
\label{subsec:additive_posets_additive_lattices}

The semigroups $x_{0}X_{m}$, $w_{0}W_{m}$ defined in the previous section come with some additional structure, best encapsulated within the notions of \emph{additive posets} and \emph{additive lattices}:

\begin{definition}
\label{def:additive_lattice}
An \emph{additive poset} is a triple $(P,\preceq,+)$, where:
\begin{enumerate}
    \item $(P,+)$ is a commutative monoid with identity element $0$.
    \item $(P,\preceq)$ is a poset.
    \item For every $a,b,c\in P$ with $a\preceq b$, we have that $a+c\preceq b+c$. 
\end{enumerate}
We say furthermore that $(P,\preceq,+)$ is an \emph{additive lattice} if the underlying poset $(P,\preceq)$ is a \emph{lattice}, i.e., every non-empty finite subset $S\subset P$ has a \emph{least upper bound} (or \emph{join}) $\vee S\in P$ and a \emph{greatest lower bound} (or \emph{meet}) $\wedge S\in P$. 

More precisely, $\vee S\in P$ is the unique element which satisfies $\vee S\succeq s$ for all $s\in S$, and if $x\in P$ is any element satisfying $x\succeq s$ for all $s\in S$, then $x\succeq\vee S$. Similarly, $\wedge S\in P$ is the unique element which satisfies $\wedge S\preceq s$ for all $s\in S$, and if $x\in P$ is any element satisfying $x\preceq s$ for all $s\in S$, then $x\preceq\vee S$. If $S=\{x,y\}$ has two elements, we write $x\vee y:=\vee S$, $x\wedge y:=\wedge S$.

An \emph{additive poset homomorphism} 
\[f:(P,\preceq,+)\to (Q,\preceq,+)\]
is a monoid homomorphism $(P,+)\to(Q,+)$ which is order-preserving, i.e., $a\preceq b$ implies $f(a)\preceq f(b)$ for all $a,b\in P$. If $P,Q$ are additive lattices, we say furthermore that $f$ is an \emph{additive lattice homomorphism} if $f$ satisfies the additional property that
\[f(\vee S)=\vee f(S)\text{ and }f(\wedge S)=\wedge f(S)\]
for all non-empty finite subsets $S\subset P$.
\end{definition}

It will be convenient for us to be able to define the least upper bound and greatest lower bound for empty subsets. To this, we introduce the following definition:

\begin{definition}
\label{def:bounded_completion}
Let $(L,\preceq,+)$ be an additive lattice. We define the \emph{completion} $(\wh{L},\preceq,+)$ of $(L,\preceq,+)$ to be the additive lattice with underlying set $\wh{L}:=L\cup\{+\infty\}$, and with partial order and addition such that $+\infty\in\wh{L}$ satisfies $a\preceq+\infty$ and $a+(+\infty)=(+\infty)+(+\infty)=+\infty$ for all $a\in L$. We then define
\[\vee\emptyset=\wedge\emptyset=+\infty.\]
\end{definition}

\begin{definition}
\label{def:T_grading}
Let $(P,\preceq,+)$ be an additive poset, and let $(T,\le,+)$ be an additive poset such that $\le$ is a \emph{total} order. We define a \emph{$T$-grading} on $(P,\preceq,+)$ to be an additive poset homomorphism $|\cdot|:(P,\preceq,+)\to(T,\le,+)$ such that:
\begin{equation}
\label{eq:T_grading}
    x\prec y\implies |x|<|y|\qquad\forall x,y\in P.
\end{equation}
We refer to the quadruple $(P,\preceq,+,|\cdot|)$ as a \emph{$T$-graded additive poset}. A \emph{homomorphism of $T$-graded additive posets}
is a homomorphism of additive posets which respects the $T$-grading.
\end{definition}

We will often take $T=\NN$, $\ZZ$ or $\QQ$. 

\begin{example}
\label{ex:additive_lattice_1}
Let $\NN=\ZZ_{\geq 0}$, let $m\geq 1$ be an integer, and consider the monoid $(\NN^{m},+)$ given by pointwise addition. Throughout this paper, we will often denote an element of $\NN^{m}$ by
\[\vec{k}=(k_{0},\dots,k_{m-1})\in\NN^{m}.\]
We can endow $\NN^{m}$ with the partial order $\preceq$ given by the product partial order induced by the total order on $\NN$, i.e.,
\[\vec{k}\preceq\vec{\ell}\iff k_{j}\le \ell_{j}\text{ for all }j=0,\dots,m-1.\]
Then $(\NN^{m},\preceq,+)$ is an additive lattice, since for any non-empty finite subset
\[S=\{\vec{k}^{1},\dots,\vec{k}^{n}\}\subset\NN^{m},\qquad\vec{k}^{i}=(k^{i}_{0},\dots,k^{i}_{m-1}),\qquad i=1,\dots, n,\]
the least upper bound and greatest lower bound of $S$ are given respectively by
\begin{align*}
	&\vee S=(\max\{k^{1}_{0},\dots,k^{n}_{0}\},\max\{k^{1}_{1},\dots,k^{n}_{1}\},\dots,\max\{k^{1}_{m-1},\dots,k^{n}_{m-1}\})\in\NN^{m}, \\
	&\wedge S=(\min\{k^{1}_{0},\dots,k^{n}_{0}\},\min\{k^{1}_{1},\dots,k^{n}_{1}\},\dots,\min\{k^{1}_{m-1},\dots,k^{n}_{m-1}\})\in\NN^{m}.
\end{align*}
Furthermore, $(\NN^{m},\preceq,+)$ has a natural $\NN$-grading given by $|(k_{0},\dots,k_{m-1})|:=k_{0}+\cdots+k_{m-1}$.
\end{example}

\begin{remark}
For an additive lattice $(L,\preceq,+)$, a $T$-grading $|\cdot|:(L,\preceq,+)\to(T,\le,+)$ is not required to be an additive lattice homomorphism, only an additive poset homomorphism. For example, the $\NN$-grading $|\cdot|:(\NN^{2},\preceq,+)\to(\NN,\le,+)$ is not an additive lattice homomorphism, since for example 
\[|\vee\{(1,0),(0,1)\}|=|(1,1)|=2\neq 1=\vee\{1\}=\vee\{|(1,0)|,|(0,1)|\}.\]
\end{remark}

Next we will need to construct quotients of additive posets. As an arbitrary equivalence relation on a poset will not necessarily produce a well-defined partial order on the corresponding quotient, we will need to impose some conditions on the equivalence relation to ensure this happens. The following definition was inspired by definitions from \cite{HS15}:

\begin{definition}
\label{def:quotient_posets}
Let $(P,\preceq,+,|\cdot|)$ be a $(T,\le,+)$-graded additive poset, and let $\sim$ be an equivalence relation on $P$ satisfying the following properties:
\begin{enumerate}[label=(C\arabic*),ref=C\arabic*]
    \item If $x\sim y$, then $|x|=|y|$. \label{condition:quotient_posets_grading}
    \item If $x\sim y$ and $x'\sim y'$, then $x+x'\sim y+y'$. \label{condition:quotient_posets_addition}
    \item If $x\preceq y$, then for any $x'\sim x$, there exists $y'\sim y$ such that $x'\preceq y'$. \label{condition:quotient_posets_homogenous}
\end{enumerate}
We define the \emph{quotient of $P$ by $\sim$} to be the poset with underlying set $\ol{P}=P/\sim$, and partial order, addition, and $T$-grading as follows:
\begin{enumerate}
    \item $[x]\preceq[y]$ if there exists $x',y'\in P$ with $[x']=[x]$ and $[y']=[y]$ such that $x'\preceq y'$.
    \item $[x]+[y]:=[x+y]$.
    \item $|[x]|:=|x|$.
\end{enumerate}
\end{definition}

\begin{proposition}
\label{prop:quotient_posets}
The quotient poset $(\ol{P},\preceq,+,|\cdot|)$ described in Definition \ref{def:quotient_posets} is well-defined. Moreover if $P$ is a lattice, then $\ol{P}$ is a lattice as well, assuming $\sim$ satisfies the additional property
\begin{enumerate}[label=(C4),ref=C4]
    \item If $x\sim x'$ and $y\sim y'$, then $x\vee y\sim x'\vee y'$ and $x\wedge y\sim x'\wedge y'$.
    \label{condition:quotient_posets_lattice}
\end{enumerate}
\end{proposition}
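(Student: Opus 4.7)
The plan is to verify each piece of structure on $\ol{P}$ in turn, with the main subtlety being the partial order's antisymmetry (which forces us to use the $T$-grading together with condition (\ref{condition:quotient_posets_grading})) and the universal property of the join in the lattice case (which combines (\ref{condition:quotient_posets_homogenous}) and (\ref{condition:quotient_posets_lattice})).

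First I would verify that the addition and the $T$-grading on $\ol{P}$ are well-defined. For addition: if $x \sim x'$ and $y \sim y'$, then (\ref{condition:quotient_posets_addition}) gives $x+y \sim x'+y'$, so $[x]+[y]:=[x+y]$ depends only on the classes. The commutative monoid axioms on $\ol{P}$ are immediate from those on $P$. For the $T$-grading: (\ref{condition:quotient_posets_grading}) says that $|x|$ depends only on $[x]$, and monoid-homomorphism of $|\cdot|:\ol{P}\to T$ is inherited from $P$.

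Next I would verify that the relation $\preceq$ on $\ol{P}$ is a partial order. Reflexivity is immediate from $x\preceq x$. For transitivity, suppose $[x]\preceq[y]$ and $[y]\preceq[z]$, witnessed by $x_1\sim x$, $y_1\sim y$ with $x_1\preceq y_1$, and $y_2\sim y$, $z_2\sim z$ with $y_2\preceq z_2$. Applying (\ref{condition:quotient_posets_homogenous}) to the relation $y_2\preceq z_2$ and the representative $y_1\sim y_2$ of $[y]$ (in its dual form: since $\sim$ is symmetric, the same property lets us raise the upper bound along the equivalence, by first applying (\ref{condition:quotient_posets_homogenous}) to $y_1\sim y_2$ and then chaining), we obtain $z_3\sim z$ with $y_1\preceq z_3$, whence $x_1\preceq z_3$ and so $[x]\preceq[z]$. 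For antisymmetry, suppose $[x]\preceq[y]$ and $[y]\preceq[x]$, witnessed by $x_1\preceq y_1$ and $y_2\preceq x_2$ with appropriate equivalences. If $x_1\prec y_1$ strictly, then (\ref{eq:T_grading}) and (\ref{condition:quotient_posets_grading}) force $|x|<|y|$; similarly $y_2\prec x_2$ would force $|y|<|x|$. Since these cannot both hold, at least one of the two inequalities must be an equality of representatives, which shows $[x]=[y]$. Compatibility with addition and with the $T$-grading then follow by pulling back representatives and using (\ref{condition:quotient_posets_addition}) and (\ref{condition:quotient_posets_grading}).

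For the lattice statement, assume now that $P$ is a lattice and that (\ref{condition:quotient_posets_lattice}) holds. I define $[x]\vee[y]:=[x\vee y]$ and $[x]\wedge[y]:=[x\wedge y]$; these are well-defined by (\ref{condition:quotient_posets_lattice}). The key step is the universal property of the join. Clearly $[x\vee y]$ is an upper bound for $[x]$ and $[y]$. Suppose $[w]$ is another upper bound; unpacking gives $x_0\sim x$, $w_0\sim w$ with $x_0\preceq w_0$, and $y_1\sim y$, $w_1\sim w$ with $y_1\preceq w_1$. Form $w_0\vee w_1\in P$; by (\ref{condition:quotient_posets_lattice}) we have $w_0\vee w_1\sim w\vee w=w$. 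Then $x_0\preceq w_0\vee w_1$ and $y_1\preceq w_0\vee w_1$, so $x_0\vee y_1\preceq w_0\vee w_1$. Since $x_0\vee y_1\sim x\vee y$ by (\ref{condition:quotient_posets_lattice}), we conclude $[x\vee y]\preceq[w]$. The argument for meets is dual. The case of arbitrary non-empty finite subsets follows by induction.

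The only real obstacle I anticipate is the transitivity of the quotient partial order, since (\ref{condition:quotient_posets_homogenous}) as stated only lets one raise the upper bound while keeping a chosen lower representative fixed, and one must be careful to chain the equivalences in a direction compatible with this asymmetric formulation; everything else is a routine unwinding of definitions.
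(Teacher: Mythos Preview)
Your proof is correct and follows the same overall plan as the paper: verify addition and grading via (\ref{condition:quotient_posets_addition}) and (\ref{condition:quotient_posets_grading}), then reflexivity, transitivity, and antisymmetry of $\preceq$ (the latter using the $T$-grading to rule out strict inequalities both ways), then the lattice structure via (\ref{condition:quotient_posets_lattice}). Your parenthetical about a ``dual form'' of (\ref{condition:quotient_posets_homogenous}) in the transitivity step is unnecessary: (\ref{condition:quotient_posets_homogenous}) as stated already lets you replace the lower element of $y_2\preceq z_2$ by $y_1\sim y_2$ and produce $z_3\sim z_2$ with $y_1\preceq z_3$, which is exactly what you use.

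Your lattice argument is slightly different from, and cleaner than, the paper's. The paper takes representatives $w'\succeq x'$ and $w''\succeq y'$ and then applies (\ref{condition:quotient_posets_lattice}) in three separate steps, mixing both $\vee$ and $\wedge$, to manufacture a single representative of $[w]$ that dominates both $x$ and $y$ in $P$. You instead note directly that $w_0\vee w_1\sim w\vee w=w$ by (\ref{condition:quotient_posets_lattice}), so $w_0\vee w_1$ is already a representative of $[w]$ dominating both $x_0$ and $y_1$; together with $x_0\vee y_1\sim x\vee y$ this yields $[x\vee y]\preceq[w]$ in one stroke. This uses only the join half of (\ref{condition:quotient_posets_lattice}) to verify joins (and dually for meets), a small conceptual simplification over the paper's argument.
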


\begin{proof}
By \ref{condition:quotient_posets_addition} and \ref{condition:quotient_posets_homogenous}, the addition and $T$-grading are well-defined and compatible with the partial order, provided that the partial order itself is well-defined.

Reflexivity of the partial order is immediate. For symmetry: let $x,y\in P$ be such that $[x]\preceq[y]$ and $[y]\preceq[x]$. Then by \ref{condition:quotient_posets_homogenous} there exist $x',y'\in P$ with $[x']=[x]$ and $[y']=[y]$ such that $x\preceq y'$ and $y'\preceq x'$. If $x\prec y'$, then by \ref{condition:quotient_posets_grading} and Equation \ref{eq:T_grading} we would have
\[|x|<|y'|\le|x'|=|x|,\]
a contradiction. Hence $x=y'\sim y$, and so $[x]=[y]$. For transitivity: let $x,y,z\in P$ be such that $[x]\preceq[y]$ and $[y]\preceq[z]$. Then by \ref{condition:quotient_posets_homogenous} there exist $y',z'\in P$ with $[y']=[y]$ and $[z']=[z]$ such that $x\preceq y'$ and $y'\preceq z'$. Hence $x\preceq z'\sim z$, and so $[x]\preceq[z]$.

For the second claim, suppose that $P$ is a lattice and $\sim$ satisfies \ref{condition:quotient_posets_lattice}. Let $x,y\in P$, and let $z=x\vee y$. We claim that $[z]\in\ol{P}$ is the unique least upper bound of $[x]$ and $[y]$. Indeed, let $w\in P$ be such that $[w]\succeq[x]$ and $[w]\succeq[y]$. It suffices to show that $[w]\succeq[z]$. By the definition of the partial order on $\ol{P}$, there exist $x',y',w',w''\in P$ with $x'\sim x$, $y'\sim y$ and $w'\sim w''\sim w$ such that $w'\succeq x'$ and $w''\succeq y'$. By \ref{condition:quotient_posets_lattice} we have that
\begin{align*}
    &x'\sim x\implies w'=w'\vee x'\sim w'\vee x\implies w'\succeq x, \\
    &y'\sim y\implies w''=w''\vee y'\sim w''\vee y\implies w''\succeq y, \\
    &w''\sim w'\implies y=w''\wedge y\sim w'\wedge y\implies w'\succeq y.
\end{align*}
By the definition of $z=x\vee y$, we must have that $w'\succeq z$, and hence $[w]\succeq[z]$ in $\ol{P}$. The argument for the existence of unique greatest lower bounds is entirely analogous.
\end{proof}

Next, let $m\geq 2$ be an integer, let $\sim$ be the following equivalence relation on $(\NN^{m},\preceq,+,|\cdot|)$:
\[\vec{a}\sim\vec{b}\iff\mbf{w}^{\vec{a}+\vec{e}_{0}}=\mbf{w}^{\vec{b}+\vec{e}_{0}}\in R(G^{*}_{m}),\qquad\ast\in\{\ev,\odd\},\]
and let $\N^{m}:=\NN^{m}/\sim$.

\begin{proposition}
\label{prop:quotient_lattice_N}
$\N^{m}$ has a well-defined $\NN$-graded additive lattice structure coming from that of $\NN^{m}$.
\end{proposition}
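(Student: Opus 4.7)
The proof splits into two parts: verifying conditions (C1)--(C3) from Definition \ref{def:quotient_posets} so that Proposition \ref{prop:quotient_posets} equips $\N^m$ with a well-defined additive $\NN$-graded poset structure, and then establishing the lattice property. I note at the outset that condition (C4) of Proposition \ref{prop:quotient_posets} is \emph{not} available here in general: for example when $m=2$, one has $(1,1)\sim(0,2)$ but the componentwise joins $(1,1)\vee(3,0)=(3,1)$ and $(0,2)\vee(3,0)=(3,2)$ have different $\NN$-gradings (4 and 5), hence by (C1) are inequivalent. Thus the lattice claim requires a separate argument.

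For (C1), I would apply the restriction $\res^{G^*_m}_{\Pin(2)}\co R(G^*_m)\to R(\Pin(2))$, which sends each $w_k\mapsto w$. This converts $\mbfw^{\vec a+\vec e_0}=\mbfw^{\vec b+\vec e_0}$ into $w^{|\vec a|+1}=w^{|\vec b|+1}$ in $R(\Pin(2))=\ZZ[w,z]/(w^2-2w,zw-2w)$. Since $w^n=2^{n-1}w$ for $n\geq 1$ and $R(\Pin(2))$ is torsion-free (being a free abelian group on the irreducibles), this forces $|\vec a|=|\vec b|$, so the grading descends. For (C2), multiplying the two equivalences gives $\mbfw^{\vec a+\vec a'+2\vec e_0}=\mbfw^{\vec b+\vec b'+2\vec e_0}$; the relation $w_0^2=2w_0$ from Equation \ref{eq:G_ev_m_relations} (and its analog in the odd case) reduces each side to $2\,\mbfw^{\vec a+\vec a'+\vec e_0}$ and $2\,\mbfw^{\vec b+\vec b'+\vec e_0}$ respectively, and torsion-freeness of $R(G^*_m)$ yields $\vec a+\vec a'\sim\vec b+\vec b'$. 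For (C3), given $\vec a\preceq\vec b$ with $\vec b=\vec a+\vec c$ and $\vec a'\sim\vec a$, the choice $\vec b':=\vec a'+\vec c$ satisfies both $\vec a'\preceq\vec b'$ and, by multiplying $\mbfw^{\vec a'+\vec e_0}=\mbfw^{\vec a+\vec e_0}$ by $\mbfw^{\vec c}$, also $\vec b'\sim\vec b$.

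For the lattice property, my plan is to reinterpret the partial order as divisibility: $[\vec a]\preceq[\vec b]$ if and only if $\mbfw^{\vec a+\vec e_0}$ divides $\mbfw^{\vec b+\vec e_0}$ in the monoid $w_0W_m\subset R(G^*_m)$. Under this dictionary, joins in $\N^m$ correspond to least common multiples and meets to greatest common divisors in $w_0 W_m$. Given $[\vec a],[\vec b]\in\N^m$, the class $[\vec a+\vec b]$ is always a common upper bound, and by (C1) the set of common upper bounds of degree at most $|\vec a|+|\vec b|$ is finite; hence this set has at least one minimal element. The lub exists provided this minimal element is unique.

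The main obstacle is establishing this uniqueness, i.e.\ showing that any two common upper bounds admit a further common upper bound below both. My approach is to exploit the unique normal forms of Definition \ref{def:normal_form_monomials}, which provide a canonical representative in $x_0 X_m$ for each class in $\N^m$, and then use the explicit structural descriptions in Examples \ref{ex:w_0W_2}--\ref{ex:w_0W_2^r} (and for general $m$ Proposition \ref{prop:monomials}) to verify that given common upper bounds, their ``pointwise meet'' (taken after passing to appropriate representatives) produces another common upper bound, forcing uniqueness. The dual argument, using $[\vec 0]$ as a canonical lower bound, handles greatest lower bounds. Once the lattice structure is established, compatibility with addition follows immediately from (C2), and the $\NN$-grading is inherited from $\NN^m$ via (C1).
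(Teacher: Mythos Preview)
Your verification of (C1)--(C3) is correct and matches the paper's; the only cosmetic difference is that for (C1) the paper computes $\tr_j(\mbfw^{\vec a+\vec e_0})=2^{|\vec a|+1}$ directly rather than passing through $R(\Pin(2))$. Your counterexample to (C4) is also correct, and here you diverge from the paper: the paper's proof simply asserts that (C4) holds and leaves it ``to the interested reader,'' so your example in fact exposes a gap in the paper's own argument. The upshot is that the additive $\NN$-graded \emph{poset} structure on $\N^m$ is well-defined, exactly as you and the paper both establish.

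The trouble is with the lattice claim, where your alternative plan cannot succeed because the claim itself appears to be \emph{false} already for $m=3$. Take $[(0,2,0)]$ and $[(0,0,2)]$ in $\N^3$. By Example~\ref{ex:w_0W_3} the equivalence is generated by $w_0 w_1^3=w_0 w_2^3$, so each of $(0,2,0)$, $(0,0,2)$, $(0,2,2)$ is its own unique representative, while $[(0,3,0)]=[(0,0,3)]$. Both $[(0,3,0)]$ and $[(0,2,2)]$ are common upper bounds of $[(0,2,0)]$ and $[(0,0,2)]$; but neither representative of $[(0,3,0)]$ lies componentwise below $(0,2,2)$, so $[(0,3,0)]\not\preceq[(0,2,2)]$, and $[(0,2,2)]\not\preceq[(0,3,0)]$ by grading. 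A finite search over the classes of grading at most $3$ shows that no common upper bound lies below both, so $[(0,2,0)]\vee[(0,0,2)]$ does not exist; dually, $[(0,2,2)]\wedge[(0,3,0)]$ does not exist. Thus $\N^3$ is not a lattice, and the ``verify uniqueness'' step of your outline would fail precisely at this example. (For $m=2$ the quotient \emph{is} a lattice by direct inspection of Example~\ref{ex:lattice_m_equals_2}, even though (C4) fails, but this does not persist.)
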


\begin{proof}
By Proposition \ref{prop:quotient_posets}, it suffices to check that Conditions \ref{condition:quotient_posets_grading}--\ref{condition:quotient_posets_lattice} hold.

Let $\tr_{g}(x)\in\CC$ denote the trace of $x\in R(G^{*}_{m}))$ at $g\in G^{*}_{m}$. Note that since $\tr_{j}(w_{k})=2$ for all $k=0,\dots,m-1$, it follows that $\tr_{j}(\mbfw^{\vec{a}+\vec{e}_{0}})=2^{|\vec{a}|+1}$ for any $\vec{a}\in\NN^{m}$. Hence for any $\vec{a},\vec{b}\in\NN^{m}$ such that $\mbfw^{\vec{a}+\vec{e}_{0}}=\mbfw^{\vec{b}+\vec{e}_{0}}\in R(G^{*}_{m})$ we must have that $|\vec{a}|=|\vec{b}|$, showing that \ref{condition:quotient_posets_grading} holds.

Next, note that if $\vec{a},\vec{a}\,',\vec{b},\vec{b}'\in\NN^{m}$ are such that $\mbfw^{\vec{a}+\vec{e}_{0}}=\mbfw^{\vec{b}+\vec{e}_{0}}$ and $\mbfw^{\vec{a}'+\vec{e}_{0}}=\mbfw^{\vec{b}'+\vec{e}_{0}}$, then
\[2\mbfw^{\vec{a}+\vec{b}+\vec{e}_{0}}=\mbfw^{\vec{a}+\vec{b}+2\vec{e}_{0}}=\mbfw^{\vec{a}+\vec{e}_{0}}\mbfw^{\vec{b}+\vec{e}_{0}}=\mbfw^{\vec{a}'+\vec{e}_{0}}\mbfw^{\vec{b}'+\vec{e}_{0}}=\mbfw^{\vec{a}'+\vec{b}'+2\vec{e}_{0}}=2\mbfw^{\vec{a}'+\vec{b}'+\vec{e}_{0}},\]
where we used the fact that $w_{0}^{2}=2w_{0}\in R(G^{*}_{m})$. By inspection there is no 2-torsion in $R(G^{*}_{m})$, and so
\[\mbfw^{\vec{a}+\vec{b}+\vec{e}_{0}}=\mbfw^{\vec{a}'+\vec{b}'+\vec{e}_{0}}\implies\vec{a}+\vec{b}\sim\vec{a}'+\vec{b}',\]
hence \ref{condition:quotient_posets_addition} holds as well.

Next, suppose $\vec{a},\vec{b}\in\NN^{m}$ are such that $\vec{a}\preceq\vec{b}$, and let $\vec{a}'\in\NN^{m}$ be such that $\vec{a}\,'\sim\vec{a}$. Note that $\vec{b}-\vec{a}$ is a well-defined element of $\NN^{m}$. Hence by \ref{condition:quotient_posets_addition} we have that 
\[\vec{a}\,'=\vec{a}'+\vec{0}\preceq\vec{a}\,'+(\vec{b}-\vec{a})\sim\vec{a}+(\vec{b}-\vec{a})=\vec{b},\]
and so \ref{condition:quotient_posets_homogenous} holds as well. We leave the proof that \ref{condition:quotient_posets_lattice} holds to the interested reader.
\end{proof}

Next, consider the following definition:

\begin{definition}
Let $(P,\preceq_{P},+_{P})$ and $(M,\preceq_{M},+_{M})$ be additive posets. We say that $(M,\preceq_{M},+_{M})$ is a \emph{module} over $(P,\preceq_{P},+_{P})$ if:
\begin{enumerate}
    \item $(M,+_{M})$ is a module over $(P,+_{P})$ as additive monoids, i.e.:
    \begin{enumerate}
        \item For every $x\in M$ and $a\in P$, there is a well defined element $x+a\in M$.
        \item For every $x\in M$, $x+0_{P}=x$ where $0_{P}$ is the identity element in $0_{P}$.
        \item $x+(a+_{P}b)=(x+a)+b$ for all $x\in M$, $a,b\in P$.
    \end{enumerate}
    \item For every $x,y\in M$ with $x\preceq_{M} y$, we have that $x+a\preceq_{M} y+a$ for every $a\in P$. 
\end{enumerate}
If $P,M$ are $T$-graded, we furthermore say that $M$ is a \emph{graded module} over $P$ if $|x+a|=|x|+|a|$ for all $x\in M$, $a\in P$.
\end{definition}

\begin{example}
\label{ex:module_structure_projection_map}
Any $T$-graded additive poset $(P,\preceq,+,|\cdot|)$ is naturally a graded module over itself. More generally if $\ol{P}=P/\sim$ is obtained from $P$ as a quotient via an equivalence relation $\sim$ as in Definition \ref{def:quotient_posets}, then $(\ol{P},\preceq,+,|\cdot|)$ is a naturally module over $(P,\preceq,+,|\cdot|)$, with module structure given by $[x]+y:=[x]+[y]$ for all $x,y\in P$. In particular, $(\N^{m},\preceq,+,|\cdot|)$ is a graded module over $(\NN^{m},\preceq,+,|\cdot|)$ for any $m\geq 2$.
\end{example}

Although our definition of $(\N^{m},\preceq,+,|\cdot|)$ would suffice for our purposes, it can be difficult to work with equivalence classes in certain settings. It will therefore be useful to have an alternate presentation of $\N^{m}$ as follows: 

Recall from Definition \ref{def:normal_form_monomials} that each $\mbf{w}^{\vec{a}}\in W_{m}$ has a unique normal representative. Hence for each equivalence class $[\vec{a}]\in\N^{m}$, we have a unique representative $\nrm(\vec{a})\in\NN^{m}$ with $[\nrm(\vec{a})]=[\vec{a}]\in\N^{m}$ such that the monomial $\mbfx^{\nrm(\vec{a})+\vec{e}_{0}}\in\alpha_{m}^{-1}(\mbfw^{\vec{a}+\vec{e}_{0}})\subset x_{0}X_{m}$ is the unique normal representative of $\mbfw^{\vec{a}+\vec{e}_{0}}\in w_{0}W_{m}$. Via this observation, we have an embedding of sets
\begin{align*}
	e_{\N}:\N^{m}&\hookrightarrow\NN^{m} \\
	[\vec{a}]&\mapsto\nrm(\vec{a})
\end{align*}
which preserves the $\NN$-grading. This leads us to the following definition:

\begin{definition}
Let $\N^{m}_{\nrm}:=e_{\N}(\N^{m})\subset\NN^{m}$. We define a partial order and addition $\preceq_{\nrm}$, $+_{\nrm}$ on $\N^{m}_{\nrm}$ to be the \emph{push-forward} of the partial order and addition on $\N^{m}$ under the embedding $e_{\N}$. More precisely, suppose $\vec{a},\vec{b}\in\N^{m}_{\nrm}$. Then $\vec{a}\preceq_{\nrm}\vec{b}$ in $\N^{m}_{\nrm}$ if and only if $[\vec{a}]\preceq[\vec{b}]$ in $\N^{m}$, and $\vec{a}+_{\nrm}\vec{b}=\nrm(\vec{a}+\vec{b})$.

We define a $(\NN^{m},\preceq,+,|\cdot|)$-module structure on $(\N^{m}_{\nrm},\preceq_{\nrm},+_{\nrm},|\cdot|)$ to be the push-forward via $e_{\N}$ of the $(\NN^{m},\preceq,+,|\cdot|)$-module structure on $(\N^{m},\preceq,+,|\cdot|)$. Equivalently, for all $\vec{a}\in\N^{m}_{\nrm}$, $\vec{c}\in\NN^{m}$:
\[\vec{a}+\vec{c}=\vec{a}+_{\nrm}\nrm(\vec{c})=\nrm(\vec{a}+\vec{c}).\]
\end{definition}

We see that by construction, $(\N^{m},\preceq_{\nrm},+_{\nrm},|\cdot|)$ is \emph{isomorphic} to $(\N^{m},\preceq,+,|\cdot|)$ as an $\NN$-graded additive poset, presented as a subset of $\NN^{m}$ but with a deformed partial order and additive structure. 

\begin{example}
\label{ex:lattice_m_equals_2}
Let $m=2$. From Example \ref{ex:w_0W_2}, we see that $\N^{2}=\NN^{2}/\sim$, where the equivalence relation is given by 
\[(a,b)\sim(a+k,b-k)\text{ for all }b\geq 1,\;\;0\le k\le b-1.\]
It follows that the normal form of each $(a,b)\in\NN^{2}$ is given by
\begin{align*}
    &\nrm(a,0)=(a,0), & &\nrm(a,b)=(a+b-1,1)\text{ if }b\geq 1,
\end{align*}
and hence we can identify $\N^{2}_{\nrm}=\NN\times\{0,1\}\subset\NN^{2}$. The partial order $\preceq_{\nrm}$ on $\N^{2}_{\nrm}$ coincides with the restriction of the partial order $\preceq_{\NN^{2}}$ on $\NN^{2}$. Addition in $\N_{2}$ is given by
\[(a_{1},b_{1})+_{\nrm}(a_{2},b_{2})=\twopartdef{(a_{1}+a_{2},b_{1}+b_{2})}{b_{1}+b_{2}\le 1,}{(a_{1}+a_{2}+1,1)}{b_{1}+b_{2}= 2,}\]
and the $(\NN^{2},\preceq_{\NN^{2}},+)$-module structure is similarly given by
\[(a,b)+(k,\ell)=\twopartdef{(a+k,b+\ell)}{b+\ell\le 1,}{(a+k+b+\ell-1,1)}{b+\ell\geq 2.}\]
\end{example}

\begin{example}
\label{ex:lattice_m_equals_3}
Let $m=3$. From Example \ref{ex:w_0W_3}, we see that $\N^{3}=\NN^{3}/\sim$, where the equivalence relation is given by
\[(a,b,c)\sim(a,b+3k,c-3k)\text{ for all }-\tfrac{b}{3}\le k\le \tfrac{c}{3}.\]
It follows that the normal form of each $(a,b,c)\in\NN^{3}$ is given by
\[\nrm(a,b,c)=(a,b+3\lfloor\tfrac{c}{3}\rfloor,c-3\lfloor\tfrac{c}{3}\rfloor),\]
and hence we can identify $\N^{3}_{\nrm}=\NN^{2}\times\{0,1,2\}\subset\NN^{3}$. The partial order $\preceq_{\nrm}$ on $\N^{3}_{\nrm}$ does \emph{not} agree with the restriction of $\preceq_{\NN^{3}}$, since for example we have that
\[(0,0,2)\preceq_{\nrm}(0,3,0)\text{ in }\N^{3}_{\nrm},\]
but $(0,0,2)$ and $(0,3,0)$ are not comparable in $(\NN^{3},\preceq)$. Addition in $\N_{3}$ is given by
\[(a_{1},b_{1},c_{1})+_{\nrm}(a_{2},b_{2},c_{2})=(a_{1}+a_{2},b_{1}+b_{2}+3\lfloor\tfrac{c_{1}+c_{2}}{3}\rfloor,c_{1}+c_{2}-3\lfloor\tfrac{c_{1}+c_{2}}{3}\rfloor),\]
and the $(\NN^{3},\preceq_{\NN^{3}},+)$-module structure is described similarly.
\end{example}

Next, we will look at the minima of subsets of additive lattices.

\begin{definition}
\label{def:minimal_elements}
Let $(P,\preceq)$ be a poset, and let $A\subset P$. We define the \emph{set of minima of $A$}, denoted by $\min(A)\subset A$, to be the set of elements $a\in A$ such that if $a'\in A$ is any element with $a'\preceq a$, then $a'=a$.
\end{definition}

Note that in general, $\min(A)$ may be empty. Indeed, this is the case if $A=\emptyset$, or if $A$ is comprised of infinite chains $a_{0}\succ a_{1} \succ a_{2} \succ \cdots$ which extend infinitely down below. The following definition rules out this latter scenario:

\begin{definition}
\label{def:bounded_below}
A poset $(P,\preceq)$ is \emph{bounded below} if there exists an element $b\in P$ such that $a\succeq b$ for all $a\in P$. 
\end{definition}

If $(P,\preceq)$ is bounded below, then any nonempty subset $A\subset P$ must satisfy $\min(A)\neq\emptyset$.

\begin{example}
The posets $(\NN^{m},\preceq)$ and $(\N^{m}\preceq)$ are bounded below by $\vec{0}$ and $[\vec{0}]$, respectively.
\end{example}

\begin{definition}
\label{def:anti_chain_finite}
Let $(P,\preceq)$ be a poset. An \emph{anti-chain} is a non-empty subset of elements $A\subset P$ such that no two elements of $A$ are comparable under $\preceq$. A poset $(P,\preceq)$ is \emph{anti-chain finite} if it contains no infinite anti-chains, i.e., if $A\subset P$ is an infinite subset, then there exist $a\neq a'\in A$ such that $a\preceq a'$ or $a\succeq a'$.
\end{definition}

Note that the set of minima $\min(A)$ of any non-empty subset $A\subset P$ is necessarily an anti-chain. Hence if $(P,\preceq)$ is anti-chain finite, then any subset $A\subset P$ must satisfy $|\min(A)|<\infty$.

\begin{example}
One can show that $(\NN^{m},\preceq)$ is anti-chain finite as a consequence of either Dickson's Lemma (\cite{Dickson}) or the Hilbert basis theorem. From this it follows that $(\N^{m},\preceq)$ is anti-chain finite via the canonical surjection $\Pi:(\NN^{m},\preceq)\to(\N_{m},\preceq)$, as any anti-chain in $\N^{m}$ has a (non-canonical) lift to an anti-chain in $\NN^{m}$.
\end{example}

Many of the subsets of posets that we will consider in this article will be of a particular form, which we call \emph{upper-complete subsets}:

\begin{definition}
\label{def:upper_complete}
Let $(P,\preceq)$ be a poset, and let $A\subset P$ be a subset. We say that $A$ is \emph{upper-complete} if for each $a\in A$: if $a'\in P$ is such that $a'\succeq a$, then $a'\in A$ as well. Note that $\emptyset\subset P$ is vacuously upper-complete.
\end{definition}

The following lemma allows us to compare sets of minima of subsets:

\begin{lemma}
\label{lemma:comparing_minima_subsets}
Let $(L,\preceq)$ be a lattice, and let $A,B\subset L$ be two subsets such that $A\subset B$.
\begin{enumerate}
    \item The following statements are true:
    \begin{enumerate}
        \item For each $a\in\min(A)$:
        \begin{enumerate}
            \item $a\not\prec b$ for all $b\in\min(B)$, and
            \item there exists some $b\in\min(B)$ such that $a\succeq b$.
        \end{enumerate}
        \item $\vee\min(A)\succeq\wedge\min(B)$.
    \end{enumerate}
    \item Suppose $\min(A)$ consists of a single element $a\in A$. Then:
    \begin{enumerate}
        \item $a\not\prec b$ for all $b\in\min(B)$.
        \item There exists some $b\in\min(B)$ such that $a\succeq b$.
        \item $a\succeq\wedge\min(B)$.
    \end{enumerate}
    \item Suppose $\min(B)$ consists of a single element $b\in B$. Then:
    \begin{enumerate}
        \item $a\succeq b$ for all $a\in\min(A)$.
        \item $\wedge\min(A)\succeq b$.
    \end{enumerate}
\end{enumerate}

\end{lemma}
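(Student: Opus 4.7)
The plan is to prove part (1) carefully and deduce parts (2) and (3) as specializations, since they concern the singleton cases of $\min(A)$ or $\min(B)$.

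For part (1)(a)(i), I would argue by contradiction: suppose $a \in \min(A)$ and $a \prec b$ for some $b \in \min(B)$. Since $A \subset B$, we have $a \in B$, so $a$ is a strictly smaller element of $B$ than $b$, contradicting $b \in \min(B)$. For part (1)(a)(ii), the key observation is that the set $B_{\preceq a} := \{b \in B : b \preceq a\}$ is nonempty (it contains $a$ itself), hence $\min(B_{\preceq a})$ is nonempty under the standing finiteness assumptions on the lattices of interest (which are bounded below and satisfy the descending chain condition --- in the paper these are $\NN^m$ and its quotients $\N^m$). I would then verify that any $b \in \min(B_{\preceq a})$ lies in $\min(B)$: if $b' \in B$ with $b' \prec b$, then $b' \preceq b \preceq a$, so $b' \in B_{\preceq a}$, contradicting minimality of $b$ in $B_{\preceq a}$. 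This $b$ satisfies $b \preceq a$ by construction.

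For part (1)(b), I would combine (1)(a)(ii) with the defining properties of meet and join. For each $a \in \min(A)$, the witness $b_a \in \min(B)$ from (1)(a)(ii) satisfies $a \succeq b_a \succeq \wedge \min(B)$, and since $\vee \min(A) \succeq a$ for every $a \in \min(A)$, chaining these inequalities yields $\vee \min(A) \succeq \wedge \min(B)$. The edge cases where $A$ or $B$ is empty are handled using the completion convention $\vee \emptyset = \wedge \emptyset = +\infty$ from Definition \ref{def:bounded_completion}.

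Parts (2) and (3) then follow immediately. In part (2), with $\min(A) = \{a\}$: (a) and (b) are just (1)(a)(i) and (1)(a)(ii) applied to the unique element, while (c) follows by combining (b) with $b \succeq \wedge \min(B)$. In part (3), with $\min(B) = \{b\}$: applying (1)(a)(ii) to any $a \in \min(A)$ produces an element of $\min(B)$ below $a$, but since $\min(B) = \{b\}$ this forces that element to be $b$, giving (a); part (b) follows since $\wedge \min(A) \succeq a$ is false --- rather, every $a \in \min(A)$ dominates $b$, so $\wedge \min(A) \succeq b$ by the universal property of the meet. The main subtlety throughout is not any clever argument but rather ensuring that the minima are nonempty, which is why it is important that the ambient lattices $\NN^m$ and $\N^m$ satisfy DCC, as established in the preceding subsection.
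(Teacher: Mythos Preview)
Your proof is correct and follows essentially the same approach as the paper: the argument via the down-set $B_{\preceq a}$ is exactly the paper's $B_a$, and the deductions of (1b), (2), and (3) as specializations match. Your explicit mention of the DCC/bounded-below hypothesis needed for $\min(B_{\preceq a})\neq\emptyset$ is a useful clarification that the paper leaves implicit.
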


\begin{proof}
We will assume $A,B\neq\emptyset$, as otherwise the above conclusions are vacuous. For (1ai), if $a\in\min(A)$ and $b\in\min(B)$, then $a\prec b$ would contradict the minimality of $b\in\min(B)$. For (1aii), let $a\in\min(A)$ and define
\[B_{a}:=\{b\in B\;|\;b\preceq a\}\subset B.\]
Since $\min(A)\subset A\subset B$ it follows that $a\in B_{a}$, so in particular $B_{a}\neq\emptyset$. Furthermore, note that $\min(B_{a})\subset\min(B)$ by construction. Hence any element $b\in\min(B_{a})$ will suffice for part (1aii). Next, note that (1b) follows from (1aii). Indeed, if $a\in\min(A)$ and $b\in\min(B)$ are such that $a\succeq b$, then
\[\vee\min(A)\succeq a\succeq b\succeq\wedge\min(B).\]
Finally, one can check that conclusions (2a-c) and (3a) follow from (1a-b), and (3b) follows from (3a).
\end{proof}

\begin{definition}
\label{def:sums_posets}
Let $(P,\preceq,+)$ an additive poset. We introduce the following notation:
\begin{enumerate}
    \item Let $A\subset P$ be a subset and $x\in P$ an element. We define
    \[A+x:=\{a+x\;|\;a\in A\}\subset P.\]
    \item More generally, if $A,B\subset P$ are two subsets we define
    \[A+B:=\{a+b\;|\;a\in A, b\in B\}\subset P.\]
    \item For any subset $A\subset P$, we define
    \[2\cdot A:=\{2a\;|\;a\in A\},\]
    where for $a\in P$, $2a:=a+a$.
\end{enumerate}
\end{definition}

\begin{lemma}
\label{lemma:comparing_minima_sum_of_subsets}
Let $(L,\preceq,+)$ be an additive lattice, and let $A,B,C\subset L$ be non-empty subsets such that $A+B\subset C$. Then:
\begin{enumerate}
    \item The following statements are true:
    \begin{enumerate}
        \item For each $a\in\min(A)$, $b\in\min(B)$:
        \begin{enumerate}
            \item $a+b\not\prec c$ for every $c\in\min(C)$.
            \item There exists some $c\in\min(C)$ such that $a+b\succeq c$.
        \end{enumerate}
        \item $\vee\min(A)+\vee\min(B)\succeq\wedge\min(C)$.
    \end{enumerate}
    \item Suppose $\min(A)$ consists of a single element $a\in A$ and $\min(B)$ consists of a single element $b\in B$. Then:
    \begin{enumerate}
        \item $a+b\not\prec c$ for all $c\in\min(C)$.
        \item There exists some $c\in\min(C)$ such that $a+b\succeq c$.
        \item $a+b\succeq\wedge\min(C)$.
    \end{enumerate}
    \item Suppose $\min(C)$ consists of a single element $c\in C$. Then:
    \begin{enumerate}
        \item $a+b\succeq c$ for all $a\in\min(A)$, $b\in\min(B)$.
        \item $\wedge(\min(A)+\min(B))\succeq c$.
    \end{enumerate}
\end{enumerate}
\end{lemma}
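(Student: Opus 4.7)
The plan is to follow the same strategy as in the proof of Lemma \ref{lemma:comparing_minima_subsets}, adapted to the additive setting by using compatibility of $\preceq$ with $+$. The key structural fact to exploit throughout is that whenever $a\in A$ and $b\in B$, the sum $a+b$ lies in $C$, which lets us transport statements about minima of $A$ and $B$ into statements about minima of $C$.

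I would begin with part (1)(a)(i). Fix $a\in\min(A)$, $b\in\min(B)$, and $c\in\min(C)$. Since $a+b\in A+B\subset C$, if one had $a+b\prec c$, then $a+b\in C$ would be an element of $C$ strictly below $c$, contradicting $c\in\min(C)$. For part (1)(a)(ii), define
\[
C_{a+b}:=\{c\in C\mid c\preceq a+b\}\subset C.
\]
Since $a+b\in C$ itself, we have $a+b\in C_{a+b}$, so this subset is non-empty, and I can pick any $c\in\min(C_{a+b})$ (existence of minima in the relevant subsets is being assumed, just as in Lemma \ref{lemma:comparing_minima_subsets}). By construction such a $c$ satisfies $c\preceq a+b$, and a short check shows $\min(C_{a+b})\subset\min(C)$: if $c'\in C$ were strictly below $c$, then $c'\preceq c\preceq a+b$ would force $c'\in C_{a+b}$, contradicting minimality in $C_{a+b}$. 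Part (1)(b) is then immediate from (1)(a)(ii) together with compatibility of $\preceq$ with $+$:
\[
\vee\min(A)+\vee\min(B)\succeq a+b\succeq c\succeq\wedge\min(C),
\]
for a suitable choice of $a,b,c$.

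Parts (2) and (3) fall out as corollaries of part (1) in exactly the same way the corresponding parts of Lemma \ref{lemma:comparing_minima_subsets} follow from its part (1). For (2), when $\min(A)=\{a\}$ and $\min(B)=\{b\}$, (1)(a)(i)–(ii) specialize directly to (2)(a)–(b), and (2)(c) follows since $a+b\succeq c\succeq\wedge\min(C)$ for the $c\in\min(C)$ produced by (2)(b). For (3), when $\min(C)=\{c\}$, the conclusion of (1)(a)(ii) forces $a+b\succeq c$ for every $a\in\min(A)$, $b\in\min(B)$, which is (3)(a); then (3)(b) follows by taking the meet over all such $a+b$, using that $c$ is a common lower bound.

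No step here is genuinely hard — the whole argument is a straightforward translation of the set-theoretic minima argument into the additive-lattice setting, with the single new ingredient being the compatibility axiom $x\preceq y\Rightarrow x+z\preceq y+z$ used to move sums past inequalities. The only point requiring a moment's care is the verification $\min(C_{a+b})\subset\min(C)$ in the argument for (1)(a)(ii), which is what prevents the naive approach of just picking an arbitrary element of $\min(C)$ below $a+b$ (which need not exist a priori). Beyond that the proof is essentially bookkeeping.
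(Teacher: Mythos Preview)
Your proof is correct and follows essentially the same approach as the paper's, which likewise reduces (1)(a)(i)--(ii) to the argument from Lemma~\ref{lemma:comparing_minima_subsets} via $a+b\in C$, deduces (1)(b) by the same chain of inequalities using compatibility of $\preceq$ with $+$, and obtains (2) and (3) as immediate specializations.
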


\begin{proof}
The proofs of (1ai) and (1aii) are similar to those of the corresponding statements of Lemma \ref{lemma:comparing_minima_subsets}. For (1b), let $a\in\min(A)$, $b\in\min(B)$ and $c\in\min(C)$ be such that $a+b\succeq c$ as guaranteed by (1aii). Then:
\[\vee\min(A)+\vee\min(B)\succeq\vee\min(A)+b\succeq a+b\succeq c\succeq\wedge\min(C).\]
Next, we have that (2) follows from (2) of Lemma \ref{lemma:comparing_minima_subsets}, and the fact that in this case, $\min(A+B)=\min(A)+\min(B)$. Finally, we see that (3b) follows from (3a), which in turn follows from (1aii).
\end{proof}

\subsection{Equivariant \texorpdfstring{$k$}{k}-invariants}
\label{subsec:definition_k_invariants}

We now return to our study of $G^{*}_{m}$-equivariant $K$-theory. Consider the following definition:

\begin{definition}
A \emph{space of type $G^{*}_{m}$-$\SWF$ at level $\mbfr\in RO(\ZZ_{m})_{\geq 0}$} is a pointed finite $G^{*}_{m}$-CW complex $X$ such that:
\begin{enumerate}
    \item The $S^{1}$-fixed point set $X^{S^{1}}$ is $G^{*}_{m}$-homotopy equivalent to $(\mbfr\wt{\RR})^{+}$.
    \item The action of $\Pin(2)\subset G^{*}_{m}$ is free on the complement $X\setminus X^{S^{1}}$.
\end{enumerate}
\end{definition}

In order to define our invariants, we will need to consider the case where $X^{S^{1}}$ is a \emph{complex} representation sphere, so that we can use equivariant Bott periodicity to identify $\wt{K}_{G^{*}_{m}}(X^{S^{1}})\cong R(G^{*}_{m})$. This leads us to the following definition:

\begin{definition}
We say that a space of type $G^{*}_{m}$-$\SWF$ at level
\[\mbfr=r_{0}+\Big(\sum_{j=1}^{\lfloor\frac{m-1}{2}\rfloor}r_{j}\nu_{j}\Big)+r_{m/2}\rho\in RO(\ZZ_{m})_{\geq 0}\]
is at an \emph{even level} if $r_{j}$ is even for all $j=0,\dots,m/2$.
\end{definition}

If $X$ is a space of type $G^{*}_{m}$-$\SWF$ at an even level, from the isomorphisms $\wt{\RR}^{2}\cong\wt{\CC}$, $\wt{\VV}^{2}_{k}\cong\wt{\CC}_{k}\oplus\wt{\CC}_{m-k}$, and $\wt{\RR}_{m/2}^{2}\cong\wt{\CC}_{m/2}$ (if $m$ is even), we can conclude that $X^{S^{1}}$ is a complex representation sphere.

We will provide an alternate characterization of a spaces of type $G^{*}_{m}$-$\SWF$ at an even level. Let $R(\ZZ_{m})^{\sym}\subset R(\ZZ_{m})$ be the additive subgroup consisting of elements $\mbfs=\sum_{j=0}^{m-1}s_{j}\zeta^{j}$ such that $s_{k}=s_{m-k}$. We call these \emph{symmetric} (virtual) representations, which are precisely those representations which are in the image of the complexification map $c:RO(\ZZ_{m})\to R(\ZZ_{m})$. We define 
\begin{align*}
    &R(\ZZ_{2m})^{\sym,\ev}\subset R(\ZZ_{2m})^{\ev},  & &R(\ZZ_{m})_{\geq 0}^{\sym}:=R(\ZZ_{m})^{\sym}\cap R(\ZZ_{m})_{\geq 0}, \\
    &R(\ZZ_{2m})_{\geq 0}^{\sym,\ev}:=R(\ZZ_{m})^{\sym,\ev}\cap R(\ZZ_{m})_{\geq 0} & &
\end{align*}
similarly. As above, the correspondence $\zeta\mapsto\xi^{2}$ induces an isomorphism $R(\ZZ_{m})_{\geq 0}^{\sym}\cong R(\ZZ_{m})_{\geq 0}^{\sym,\ev}$, and so we will not distinguish between the two groups.

\begin{definition}
A \emph{space of type $\CC$-$G^{*}_{m}$-$\SWF$ at level $\mbfs\in R(\ZZ_{m})_{\geq 0}^{\sym}$} is a pointed finite $G^{*}_{m}$-CW complex $X$ such that:
\begin{enumerate}
    \item The $S^{1}$-fixed point set $X^{S^{1}}$ is $G^{*}_{m}$-homotopy equivalent to $(\mbfs\wt{\CC})^{+}$.
    \item The action of $\Pin(2)\subset G^{*}_{m}$ is free on the complement $X\setminus X^{S^{1}}$.
\end{enumerate}
\end{definition}

The following proposition establishes an equivalence between spaces of type $G^{*}_{m}$-$\SWF$ at an even level and spaces of type $\CC$-$G^{*}_{m}$-$\SWF$:

\begin{proposition}
A space of type $G^{*}_{m}$-$\SWF$ at even level $\mbfr\in RO(\ZZ_{m})_{\geq 0}$ is a space of type $\CC$-$G^{*}_{m}$-$\SWF$ at level $c(\frac{1}{2}\mbfr)\in R(\ZZ_{m})_{\geq 0}^{\sym}$. Conversely, a space of type $\CC$-$G^{*}_{m}$-$\SWF$ at level $\mbfs\in R(\ZZ_{m})_{\geq 0}^{\sym}$ is a space of type $G^{*}_{m}$-$\SWF$ at even level $r(\mbfs)\in RO(\ZZ_{m})_{\geq 0}$, where $r:R(\ZZ_{m})\to RO(\ZZ_{m})$ is the map which sends a complex representation to its underlying real representation.
\end{proposition}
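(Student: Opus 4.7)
The plan is to reduce the equivalence of the two definitions to an isomorphism of real $G^{*}_{m}$-representation spheres, since condition (2) (free $\Pin(2)$-action on the complement of $X^{S^{1}}$) is literally identical in both definitions. The only thing that needs to be checked is that condition (1) matches: that $(\mbfr\wt{\RR})^{+}$ and $(c(\tfrac{1}{2}\mbfr)\wt{\CC})^{+}$ are $G^{*}_{m}$-homeomorphic when $\mbfr$ is at an even level, and conversely that $(\mbfs\wt{\CC})^{+} \cong (r(\mbfs)\wt{\RR})^{+}$ for $\mbfs$ symmetric. Once that is established, the two conditions on $X^{S^{1}}$ are manifestly the same.

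First I would record the basic isomorphisms of real $G^{*}_{m}$-representations that underlie everything, which were already mentioned at the end of Section \ref{subsec:real_representation_ring}: namely
\[\wt{\RR}_{0}^{\oplus 2} \cong \wt{\CC}_{0}, \qquad \wt{\VV}_{j}^{\oplus 2} \cong \wt{\CC}_{j} \oplus \wt{\CC}_{m-j} \quad (1 \le j < m/2), \qquad \wt{\RR}_{m/2}^{\oplus 2} \cong \wt{\CC}_{m/2}\]
(the last only for $m$ even). These come from the formulas $c(\nu_{j}) = \zeta^{j} + \zeta^{m-j}$ and $c(\rho) = \zeta^{m/2}$ in conjunction with the identification $\wt{\RR} \otimes_{\RR} \CC \cong \wt{\CC}$, and they are $G^{*}_{m}$-equivariant on the nose for both parities $\ast \in \{\ev, \odd\}$.

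For the forward direction, write $\mbfr = r_{0} + \sum_{j=1}^{\lfloor(m-1)/2\rfloor} r_{j}\nu_{j} + r_{m/2}\rho$ with every $r_{j}$ even. Then I would substitute using the isomorphisms above:
\[\mbfr\wt{\RR} \;=\; \bigl(\wt{\RR}_{0}^{\oplus 2}\bigr)^{r_{0}/2} \oplus \bigoplus_{j=1}^{\lfloor(m-1)/2\rfloor}\bigl(\wt{\VV}_{j}^{\oplus 2}\bigr)^{r_{j}/2} \oplus \bigl(\wt{\RR}_{m/2}^{\oplus 2}\bigr)^{r_{m/2}/2} \;\cong\; c\bigl(\tfrac{1}{2}\mbfr\bigr)\wt{\CC},\]
where on the right I use that $c(\tfrac{1}{2}\mbfr)\in R(\ZZ_{m})_{\geq 0}^{\sym}$ has coefficients $s_{0}=r_{0}/2$, $s_{m/2}=r_{m/2}/2$, and $s_{j}=s_{m-j}=r_{j}/2$ for $0<j<m/2$. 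Taking one-point compactifications yields the desired $G^{*}_{m}$-homeomorphism of representation spheres. The converse is essentially the same calculation in reverse: given $\mbfs = s_{0} + s_{m/2}\zeta^{m/2} + \sum_{j}s_{j}(\zeta^{j}+\zeta^{m-j}) \in R(\ZZ_{m})_{\geq 0}^{\sym}$, the same isomorphisms give $\mbfs\wt{\CC} \cong r(\mbfs)\wt{\RR}$ as real $G^{*}_{m}$-representations, with $r(\mbfs) = 2s_{0} + \sum_{j} 2s_{j}\nu_{j} + 2s_{m/2}\rho$, which is automatically at an even level.

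There is no real obstacle here; the proof is a formal bookkeeping exercise in the real versus complex representation theory of $\ZZ_{m}$, combined with the elementary fact that one-point compactification is a functor from orthogonal $G$-representations to based $G$-spaces. The only minor care needed is to keep track of the symmetry convention $s_{k}=s_{m-k}$ that pins down the image of $c$, and the parity of $m$ (so that the $\wt{\RR}_{m/2}$-summand appears if and only if $m$ is even); both cases $\ast = \ev$ and $\ast = \odd$ are handled uniformly because the isomorphisms above do not involve the $\Pin(2) \times_{\ZZ_{2}} \ZZ_{2m}$ twisting in any nontrivial way.
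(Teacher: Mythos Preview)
Your proposal is correct and takes essentially the same approach as the paper, which simply says ``Follows from the definitions.'' You have unpacked exactly what that entails: condition (2) is literally identical in both definitions, and condition (1) reduces to the representation-sphere isomorphisms already recorded at the end of Section~\ref{subsec:real_representation_ring}.
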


\begin{proof}
Follows from the definitions.
\end{proof}

\begin{example}
If $X$ is a type of type $G^{*}_{m}$-$\SWF$ at level $\mbfr\in RO(\ZZ_{m})_{\geq 0}$, then the smash product $X\wedge X$ of two copies of $X$ endowed with the diagonal $G^{*}_{m}$-action is a space of type $\CC$-$G^{*}_{m}$-$\SWF$ at level $c(\mbfr)\in R(\ZZ_{m})_{\geq 0}^{\sym}$.
\end{example}

We can associate to any space $X$ of $\CC$-$G^{*}_{m}$-$\SWF$ a distinguished ideal in $R(G^{*}_{m})$:

\begin{definition}
\label{def:ideal}
Let $X$ be a space of type $\CC$-$G^{*}_{m}$-$\SWF$ at level $\mbfs$, and let $\iota:X^{S^{1}}\hookrightarrow X$ denote the inclusion map. We define $\III(X)\subset R(G^{*}_{m})$ to be the ideal with the property that the image of the induced map
\[\iota^{*}:\wt{K}_{G^{*}_{m}}(X)\to\wt{K}_{G^{*}_{m}}(X^{S^{1}})\]
is equal to $\III(X)\cdot b_{\mbfs\wt{\CC}}$.
\end{definition}

We are now ready to define our equivariant $k$-invariants:

\begin{definition}
\label{def:k_invariants}
Let $X$ be a space of type $\CC$-$G^{\ast}_{m}$-$\SWF$, and let $(\N^{m},\preceq,+)$ be the additive lattice defined in Section \ref{subsec:additive_posets_additive_lattices}.
\begin{enumerate}
    \item Define $I(X)\subset(\N^{m},\preceq,+)$ to be the projection onto $\N^{m}$ the subset of tuples
    \[\vec{k}=(k_{0},\dots,k_{m-1})\in\NN^{m}\]
    for which there exists $x\in\III(X)$ such that
    \[w_{0}x=w_{0}^{k_{0}+1}w_{1}^{k_{1}}\cdots w_{m-1}^{k_{m-1}}.\]
    \item Define $\mbfk(X):=\min(I(X))\subset\N^{m}$ to be the set of minima of $I(X)$ in the sense of Definition \ref{def:minimal_elements}, which we refer to as the \emph{set of equivariant $k$-invariants of $X$}.
\end{enumerate}
\end{definition}

The fact that $\III(X)\subset R(G^{*}_{m})$ is an ideal implies that $I(X)\subset\N^{m}$ is upper-complete in the sense of Definition \ref{def:upper_complete}. Furthermore, $\mbfk(X)\subset I(X)$ must be finite, as the poset $\N^{m}$ is anti-chain finite and bounded below. (See Definitions \ref{def:anti_chain_finite} and \ref{def:bounded_below}).

\begin{remark}
Note that $\mbfk(X)\neq\emptyset$ if and only if $I(X)\neq\emptyset$. We would be able to conclude that $I(X)\neq\emptyset$ for all spaces $X$ of type $\CC$-$G^{\ast}_{m}$-$\SWF$ if we had an analogue of (\cite{Man14}, Lemma 3.2), however, it is not clear whether such a result exists in the $G^{*}_{m}$-equivariant setting.
\end{remark}

Next, we describe the relationship between $\mbfk(X)$ and the invariant $k_{\Pin(2)}(X)$ defined in (\cite{Man14}, Definition 3.3):

\begin{lemma}
\label{lemma:comparison_of_k_invariants}
Let $X$ be a space of type $\wt{\CC}$-$G^{*}_{m}$-$\SWF$ (so that in particular $X$ is a space of type $\Pin(2)$-$\SWF$ at an even level). Then
\begin{equation}
\label{eq:comparison_of_k_invariants}
    k_{\Pin(2)}(X)\le\min\{|\vec{k}|:\vec{k}\in\mbfk(X)\}.
\end{equation}
Here we use the convention that if $\mbfk(X)=\emptyset$, then the right-hand side of (\ref{eq:comparison_of_k_invariants}) is equal to $+\infty$.
\end{lemma}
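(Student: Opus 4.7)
The plan is to prove the inequality by restricting the image of the inclusion-induced map on $G^{*}_{m}$-equivariant $K$-theory to its $\Pin(2)$-equivariant counterpart and tracking how presentations in $w_{0}W_{m}$ specialize to monomials in $w$. Throughout, the key ring-theoretic fact driving the argument is the behavior of the restriction map $\res^{G^{*}_{m}}_{\Pin(2)}\colon R(G^{*}_{m})\to R(\Pin(2))$ recorded earlier in the excerpt, namely that $w_{k}\mapsto w$ for every $k=0,\dots,m-1$.

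First I would set up the relevant commuting squares. Since $X$ is of type $\wt{\CC}$-$G^{*}_{m}$-$\SWF$ at some level $\mbfs\in R(\ZZ_{m})^{\sym}_{\geq 0}$, restricting the $G^{*}_{m}$-action to $\Pin(2)$ makes $X$ a space of type $\Pin(2)$-$\SWF$ at an even level, with fixed point set $X^{S^{1}}\simeq(\mbfs\wt{\CC})^{+}$ whose underlying $\Pin(2)$-representation sphere is $(\wt{\CC}^{|\mbfs|})^{+}$. Under equivariant Bott periodicity, the Bott class $b_{\mbfs\wt{\CC}}\in\wt{K}_{G^{*}_{m}}(X^{S^{1}})$ restricts to the Bott class $b_{\wt{\CC}^{|\mbfs|}}\in\wt{K}_{\Pin(2)}(X^{S^{1}})$, so that if $J_{\Pin(2)}(X)\subset R(\Pin(2))$ denotes the ideal satisfying $\iota^{*}(\wt{K}_{\Pin(2)}(X))=J_{\Pin(2)}(X)\cdot b_{\wt{\CC}^{|\mbfs|}}$ (so that $k_{\Pin(2)}(X)$ is the smallest $k\ge 0$ with $w^{k+1}\in J_{\Pin(2)}(X)$, cf.\ \cite{Man14}), then functoriality of restriction under $\Pin(2)\subset G^{*}_{m}$ yields $\res^{G^{*}_{m}}_{\Pin(2)}(\III(X))\subset J_{\Pin(2)}(X)$.

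Next, fix an arbitrary $\vec{k}\in\mbfk(X)$; then $\vec{k}\in I(X)$, so by Definition \ref{def:k_invariants} there exists $x\in\III(X)$ with $w_{0}x=\mbfw^{\vec{k}+\vec{e}_{0}}\in R(G^{*}_{m})$. In particular $\mbfw^{\vec{k}+\vec{e}_{0}}\in\III(X)$ because $\III(X)$ is an ideal. Applying the restriction map to this identity and using $\res^{G^{*}_{m}}_{\Pin(2)}(w_{k})=w$ for all $k$ gives
\[
\res^{G^{*}_{m}}_{\Pin(2)}(\mbfw^{\vec{k}+\vec{e}_{0}})=w^{|\vec{k}|+1}\in J_{\Pin(2)}(X),
\]
which by the definition of $k_{\Pin(2)}(X)$ immediately yields $k_{\Pin(2)}(X)\le|\vec{k}|$. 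Taking the infimum over $\vec{k}\in\mbfk(X)$ then establishes (\ref{eq:comparison_of_k_invariants}); if $\mbfk(X)=\emptyset$ the inequality is vacuous with the stated convention.

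The proof is essentially formal once the compatibility of Bott classes and ideals under restriction is in place, so I do not anticipate a serious obstacle. The only subtle point worth verifying carefully is that the normalization of $J_{\Pin(2)}(X)$ and the grading conventions in Manolescu's definition of $k_{\Pin(2)}$ match the conventions used for $\III(X)$ and $I(X)$ here; in particular, one must check that the Bott class chosen in the $G^{*}_{m}$-setting is the canonical one whose restriction to $\Pin(2)$ coincides with the Bott class implicit in \cite{Man14}, so that stripping off the Bott factor on both sides identifies $\res^{G^{*}_{m}}_{\Pin(2)}(\III(X))$ as a subset of the ideal defining $k_{\Pin(2)}(X)$ rather than a twisted variant.
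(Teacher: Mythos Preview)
Your proposal is correct and follows essentially the same approach as the paper: both arguments hinge on the fact that the restriction $\res^{G^{*}_{m}}_{\Pin(2)}$ sends each $w_{k}$ to $w$, so that an element $x\in\III(X)$ with $w_{0}x=\mbfw^{\vec{k}+\vec{e}_{0}}$ restricts to $\res(x)\in\III_{\Pin(2)}(X)$ with $w\cdot\res(x)=w^{|\vec{k}|+1}$, placing $|\vec{k}|$ in $I_{\Pin(2)}(X)$. The paper phrases this more tersely as saying the grading map $|\cdot|$ carries $I(X)$ into $I_{\Pin(2)}(X)$; your version is simply more explicit about the Bott-class compatibility and the commuting square, and your parenthetical description of $k_{\Pin(2)}(X)$ as ``the smallest $k$ with $w^{k+1}\in J_{\Pin(2)}(X)$'' is slightly imprecise relative to the paper's ``there exists $x\in\III_{\Pin(2)}(X)$ with $wx=w^{k+1}$,'' but your actual argument uses the latter condition correctly.
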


\begin{proof}
Let $\III_{\Pin(2)}(X)$ denote the ideal defined in \cite{Man14}, and define
\[I_{\Pin(2)}(X):=\{k\in\NN\;|\;\exists x\in\III_{\Pin(2)}(X)\text{ such that }wx=w^{k+1}\}\subset\NN.\]
Since the restriction map $\res:R(G^{*}_{m})\to R(\Pin(2))$ sends $w_{j}\mapsto w$, $z_{k}\mapsto z$, we see that the image of the grading map
\[|\cdot|:(\N^{m},\preceq,+)\to (\NN,\le,+)\]
restricted to $I(X)$ is contained in $I_{\Pin(2)}(X)$, whose minimal element is $k(X)$.
\end{proof}

\begin{definition}
Let $X$ be a space of type $\CC$-$G^{*}_{m}$-$\SWF$. We say that $X$ is \emph{$\Pin(2)$-surjective} if the ideal $\III(X)\subset R(G^{*}_{m})$ maps surjectively onto $\III_{\Pin(2)}(X)\subset R(\Pin(2))$ under the restriction map $\res:R(G^{*}_{m})\to R(\Pin(2))$, or equivalently if the inequality (\ref{eq:comparison_of_k_invariants}) is an equality.
\end{definition}

We also introduce two secondary $k$-invariants, which will prove useful in certain contexts:

\begin{definition}
\label{def:k_invariants_upper_lower}
Let $X$ be a space of type $\CC$-$G^{\ast}_{m}$-$\SWF$. We define $\vec{\ol{k}}(X)$ (respectively, $\vec{\ul{k}}(X)$) to be the \emph{least upper bound} (respectively, \emph{greatest lower bound}) of $\mbfk(X)$ as a finite subset of the completed lattice $(\wh{\N}^{m},\preceq)$, i.e.,
\begin{align*}
    &\vec{\ol{k}}(X):=\vee\mbfk(X), & &\vec{\ul{k}}(X):=\wedge\mbfk(X).
\end{align*}
Here we use the convention that if $\mbfk(X)=\emptyset$, then
\[\vec{\ol{k}}(X)=\vec{\ul{k}}(X)=+\infty\in\wh{\N}^{m}.\]
(See Definition \ref{def:bounded_completion}.)
\end{definition}

\begin{remark}
Note that $\vec{\ol{k}}(X)\in\mbfk(X)$ or $\vec{\ul{k}}(X)\in\mbfk(X)$ if and only if $\mbfk(X)=\{\vec{k}\}$ has a unique element, in which case $\vec{\ol{k}}(X)=\vec{\ul{k}}(X)=\vec{k}$. 
\end{remark}

\begin{example}
\label{ex:k_invariants_s_0}
The simplest example of a space of type $G^{*}_{m}$-$\SWF$ is $S^{0}$ with the trivial $G^{*}_{m}$-action. In this case, we have that $\III(S^{0})=(1)$. It follows that $I(S^{0})=\N^{m}$, and hence
\begin{align*}
	&\mbfk(S^{0})=\{[\vec{0}]\}, & &\vec{\ol{k}}(S^{0})=\vec{\ul{k}}(S^{0})=[\vec{0}].
\end{align*}
\end{example}

\begin{proposition}
\label{prop:k_invariants_suspensions}
If $X$ is a space of type $\CC$-$G^{*}_{m}$-$\SWF$, then
\begin{align*}
    &\III(\Sigma^{\mbfs\wt{\CC}}X)= \III(X)\text{ for any }\mbfs\in R(\ZZ_{m})_{\geq 0}^{\sym}, & &\III(\Sigma^{\HH_{k}}X)=z_{k}\cdot\III(X),
\end{align*}
where $k\equiv 0\pmod{1}$ if $\ast=\ev$ and $k\equiv\frac{1}{2}\pmod{1}$ if $\ast=\odd$. Consequently:
\begin{align*}
    &\mbfk(\Sigma^{\mbfs\wt{\CC}}X)=\mbfk(X)\text{ for any }\mbfs\in R(\ZZ_{m})_{\geq 0}^{\sym}, & &\mbfk(\Sigma^{\HH_{k}}X)= \mbfk(X)+\vec{e}_{2k},
\end{align*}
where we use the cyclic indexing convention $\vec{e}_{j+m}:=\vec{e}_{j}$.
\end{proposition}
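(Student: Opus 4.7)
My plan is to derive both ideal identities from naturality of equivariant Bott periodicity together with the Euler class computation in Fact \ref{fact:euler_class}, and then translate the second identity to the $\mbfk$-level using the ring relation $w_{0}z_{k}=w_{0}w_{2k}$. I will treat the two suspension types in turn.

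For suspension by $\mbfs\wt{\CC}$: since $\mbfs\wt{\CC}$ carries a trivial $S^{1}$-action, I would observe that $(\Sigma^{\mbfs\wt{\CC}}X)^{S^{1}}=\Sigma^{\mbfs\wt{\CC}}(X^{S^{1}})$, and the fixed-point inclusion is the suspension $\Sigma^{\mbfs\wt{\CC}}\iota$ of the original $\iota:X^{S^{1}}\to X$. Naturality of Bott then identifies $(\Sigma^{\mbfs\wt{\CC}}\iota)^{*}$ with $\iota^{*}$ up to the Bott class $b_{\mbfs\wt{\CC}}$, so the image of the fixed-point restriction is still $\III(X)\cdot b_{(\mbfs+\mbft)\wt{\CC}}$. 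Hence $\III(\Sigma^{\mbfs\wt{\CC}}X)=\III(X)$, and the equivariant $k$-invariants are unchanged.

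For suspension by $\HH_{k}$: since only the origin of $\HH_{k}$ is fixed by $S^{1}$, one has $(\Sigma^{\HH_{k}}X)^{S^{1}}=X^{S^{1}}$, and the fixed-point inclusion factors as
\[X^{S^{1}}=S^{0}\wedge X^{S^{1}}\hookrightarrow\Sigma^{\HH_{k}}X^{S^{1}}\xrightarrow{\Sigma^{\HH_{k}}\iota}\Sigma^{\HH_{k}}X.\]
I would identify the first map (the inclusion of the origin into $\HH_{k}^{+}$) as inducing multiplication by the K-theoretic Euler class $\lambda_{-1}(\HH_{k})=z_{k}$ after the Bott identification (Fact \ref{fact:euler_class}), and the second map as $\iota^{*}$ after Bott. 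Composing, the image is $z_{k}\cdot\III(X)\cdot b_{\mbft\wt{\CC}}$, yielding $\III(\Sigma^{\HH_{k}}X)=z_{k}\III(X)$.

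To pass from $\III$ to $\mbfk$ in the $\HH_{k}$ case, I would invoke the ring identity $w_{0}z_{k}=w_{0}w_{2k}$ from Section \ref{subsec:complex_representation_ring} (valid in both $R(G^{\ev}_{m})$ and $R(G^{\odd}_{m})$ with the appropriate indexing convention), which gives $w_{0}\III(\Sigma^{\HH_{k}}X)=w_{0}w_{2k}\III(X)$. Under the bijection $\Phi:\N^{m}\xrightarrow{\cong}w_{0}W_{m}$, $[\vec{\ell}]\mapsto\mbfw^{\vec{\ell}+\vec{e}_{0}}$, multiplication by $w_{2k}$ on monomials corresponds to translation by $\vec{e}_{2k}$ on $\N^{m}$, immediately producing the inclusion $I(X)+[\vec{e}_{2k}]\subseteq I(\Sigma^{\HH_{k}}X)$. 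The hard part — and what I expect to be the main obstacle — is the reverse inclusion: given a monomial $\mbfw^{\vec{c}+\vec{e}_{0}}\in w_{0}w_{2k}\III(X)$, one must produce a representative $\vec{c}\,'$ of $[\vec{c}]$ with $c\,'_{2k}\geq 1$ such that $\mbfw^{\vec{c}\,'-\vec{e}_{2k}+\vec{e}_{0}}\in w_{0}\III(X)$. I plan to handle this via a normal-form analysis of $W_{m}$ using Proposition \ref{prop:monomials} together with the annihilator structure of multiplication by $w_{2k}$ on the module $w_{0}R(G^{*}_{m})\cong R(\ZZ_{m})$: divisibility of $\mbfw^{\vec{c}+\vec{e}_{0}}$ by $w_{2k}$ in this module forces the existence of such a representative, and the ``quotient'' then lies in $w_{0}\III(X)$ modulo $\Ann(w_{2k})$, which one checks does not affect the equivalence class in $\N^{m}$. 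Combined with the $\supseteq$-direction and the anti-chain finiteness of $\N^{m}$, taking minima then delivers $\mbfk(\Sigma^{\HH_{k}}X)=\mbfk(X)+\vec{e}_{2k}$.
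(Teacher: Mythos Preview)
Your argument for the two $\III$-identities is correct and is exactly what the paper intends by its reference to \cite{Man14}, Lemma 3.4. The inclusion $I(X)+[\vec{e}_{2k}]\subseteq I(\Sigma^{\HH_k}X)$ then follows from $w_0 z_k=w_0 w_{2k}$ as you note.

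The reverse inclusion, however, is \emph{false} in general, so the normal-form argument you sketch cannot be completed; in particular your claim that divisibility of $\mbfw^{\vec{c}+\vec{e}_0}$ by $w_{2k}$ forces a representative with $c'_{2k}\ge 1$ already fails. Take $m=3$, $\ast=\ev$, $X=S^{0}$, and $k=1$. Then $\III(\Sigma^{\HH_1}S^0)=(z_1)$, and since $(1+\zeta^{2})(1+\zeta-\zeta^{2})=2$ in $R(\ZZ_3)$ while $w_0 z_1=w_0(1+\zeta^{2})$ and $w_0 h=0$, the element $y=z_1(1+\zeta-\zeta^{2})\in(z_1)$ satisfies $w_0 y=2w_0=w_0^{2}$; hence $[\vec{e}_0]\in I(\Sigma^{\HH_1}S^0)$, and since $1+\zeta^{2}$ is not a unit in $R(\ZZ_3)$ this class is minimal, so $[\vec{e}_0]\in\mbfk(\Sigma^{\HH_1}S^0)$. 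But by Proposition~\ref{prop:monomials} the zeroth coordinate is an invariant of every class in $\N^{3}$, so $[\vec{e}_0]=[(1,0,0)]$ has no representative with positive last entry and therefore $[\vec{e}_0]\notin\mbfk(S^0)+\vec{e}_2=\{[\vec{e}_2]\}$. The underlying issue is that multiplication by $w_{2k}$ on $w_0 R(G^{*}_m)\cong R(\ZZ_m)$ is multiplication by $1+\zeta^{2k}$, and the preimage of a $w$-monomial under this map need not contain any $w$-monomial; the argument in \cite{Man14} worked only because there the corresponding map is multiplication by $2$ on $w_0 R(\Pin(2))\cong\ZZ$. This appears to be a genuine gap in the paper's statement as well: the same counterexample contradicts Example~\ref{ex:k_invariants_representation_spheres} and the well-definedness argument in Proposition~\ref{prop:stable_k_invariants_stable_homotopy_equivalence}.
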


\begin{proof}
The proof is essentially the same as that of (\cite{Man14}, Lemma 3.4).
\end{proof}

\begin{definition}
\label{def:more_lattice_notation}
We introduce here some additional notation:
\begin{enumerate}
    \item For $i=0,\dots,m-1$, recall that $\vec{e}_{i}\in\NN^{m}$ denotes the tuple with a $1$ in the $i$-th place and zeroes in the other entries. Define $\DDD^{\ev}:\NN^{m}\to\NN^{m}$ be the unique $\NN$-linear map which sends $e_{i}\mapsto e_{2i}$ for all $i=0,\dots,m-1$.
    \item We also introduce the space
    \[\NN_{1/2}^{m}:=\{(\ell_{1/2},\ell_{3/2},\dots,\ell_{m-1/2})\;|\;\ell_{j}\in\NN\},\]
    which is isomorphic to $\NN^{m}$ as an additive lattice, but indexed by half integers $j\in\frac{1}{2}\ZZ$ with $j\equiv\frac{1}{2}\pmod{1}$, $0<j<m$. For $i=0,\dots,m-1$, let $\vec{e}_{i+1/2}\in\NN^{m}_{1/2}$ denote the tuple with a $1$ in the $(i+\frac{1}{2})$-th place and zeroes in the other entries. Let $\DDD^{\odd}:\NN_{1/2}^{m}\to\NN^{m}$ be the unique $\NN$-linear map which sends $e_{i+1/2}\mapsto e_{2i+1}$, where we use the same cyclic indexing convention as above.
    \item Given an element
	\[\mbft=\twopartdef{\sum_{k=0}^{m-1}t_{k}\xi^{2k}\in R(\ZZ_{2m})_{\geq 0}^{\ev}}{*=\ev,}{\sum_{k=0}^{m-1}t_{k+1/2}\xi^{2k+1}\in R(\ZZ_{2m})_{\geq 0}^{\odd}}{*=\odd,}\]
	we will often use $\vec{\mbft}$ to denote the tuple
	\[\vec{\mbft}=\twopartdef{(t_{0},\dots,t_{m-1})\in\NN^{m}}{\ast=\ev,}{(t_{1/2},\dots,t_{m-1/2})\in\NN_{1/2}^{m}}{\ast=\odd.}\]
\end{enumerate}
\end{definition}

\begin{example}
\label{ex:k_invariants_representation_spheres}
Let $\ast\in\{\ev,\odd\}$. From Example \ref{ex:k_invariants_s_0} and Proposition \ref{prop:k_invariants_suspensions} we can deduce that for any $\mbfs\in R(\ZZ_{m})_{\geq 0}^{\sym}$ and any
\[\mbft=\twopartdef{\sum_{k=0}^{m-1}t_{k}\xi^{2k}\in R(\ZZ_{2m})_{\geq 0}^{\ev}}{*=\ev,}{\sum_{k=0}^{m-1}t_{k+1/2}\xi^{2k+1}\in R(\ZZ_{2m})_{\geq 0}^{\odd}}{*=\odd,}\]
we have that
\[\III\big((\mbfs\wt{\CC}\oplus\mbft\HH)^{+}\big)=\twopartdef{(z_{0}^{t_{0}}\cdots z_{m-1}^{t_{m-1}})\subset R(G^{\ev}_{m})}{*=\ev,}{(z_{1/2}^{t_{1/2}}\cdots z_{m-1/2}^{t_{m-1/2}})\subset R(G^{\odd}_{m})}{*=\odd,}\]
and consequently:
\begin{align*}
	&\mbfk\big((\mbfs\wt{\CC}\oplus\mbft\HH)^{+}\big)=\{[\DDD^{*}(\vec{\mbft})]\}, & &\vec{\ol{k}}\big((\mbfs\wt{\CC}\oplus\mbft\HH)^{+}\big)=\vec{\ul{k}}\big((\mbfs\wt{\CC}\oplus\mbft\HH)^{+}\big)=[\DDD^{*}(\vec{\mbft})].
\end{align*}
\end{example}

\begin{example}
\label{ex:ideal_trivial_Z_m_action}
If $X$ is a space of type $\CC$-$G^{\ev}_{m}$-$\SWF$ such that $\ZZ_{m} < G^{\ev}_{m}$ acts trivially on $X$, then $\wt{K}_{G^{\ev}_{m}}(X)\cong\wt{K}_{\Pin(2)}(X)\otimes_{\ZZ}R(\ZZ_{m})$ by Fact \ref{fact:trivial_G_action}. By analyzing the commutative diagram
\begin{center}
    \begin{tikzcd}
    \wt{K}_{G^{\ev}_{m}}(X) \arrow[r, "\iota^{*}"] \arrow[d, "\cong"]
    & \wt{K}_{G^{\ev}_{m}}(X^{S^{1}}) \arrow[d, "\cong"] \\
    \wt{K}_{\Pin(2)}(X)\otimes R(\ZZ_{m}) \arrow[r, "\iota^{*}\otimes \id" ]
    & \wt{K}_{\Pin(2)}(X^{S^{1}})\otimes R(\ZZ_{m}),
    \end{tikzcd}
\end{center}
we can conclude that
\[\III(X)\cong\III_{\Pin(2)}(X)\otimes R(\ZZ_{m}).\]
Therefore
\begin{align*}
	&\mbfk(X)=\{[k_{\Pin(2)}(X)\cdot\vec{e}_{0}]\}, & &\vec{\ol{k}}(X)=\vec{\ul{k}}(X)=[k_{\Pin(2)}(X)\cdot\vec{e}_{0}].
\end{align*}
On the other hand, if $X$ is a space of type $\CC$-$G^{\odd}_{m}$-$\SWF$ such that $\ZZ_{2m}<G^{\odd}_{m}$ acts trivially on $X$, then $X=X^{S^{1}}$. Hence $\III(X)=(1)$, and consequently $\mbfk(X)=\{[\vec{0}]\}$.

However, suppose that $m$ is odd, and let $X$ be a space of type $\CC$-$G^{\odd}_{m}$-$\SWF$ such that the action of $\mu\in\ZZ_{2m}<G^{\odd}_{m}$ coincides with the action of $-1\in S^{1}<G^{\odd}_{m}$. Then the $\ZZ_{m}$-subgroup $\<-\mu\><G^{\odd}_{m}$ acts trivially on $X$. By the same argument as above, we have that
\[\III(X)\cong\III_{\Pin(2)}(X)\otimes R(\ZZ_{m}),\]
under the embedding $\III_{\Pin(2)}(X)\hookrightarrow R(G^{\odd}_{m})$ induced by the embedding
\begin{align*}
    R(\Pin(2))&\hookrightarrow R(G^{\odd}_{m}) \\
    w&\mapsto w_{0}, \\
    z&\mapsto z_{m/2}.
\end{align*}
Hence
\begin{align*}
	&\mbfk(X)=\{[k_{\Pin(2)}(X)\cdot\vec{e}_{0}]\} & &\vec{\ol{k}}(X)=\vec{\ul{k}}(X)=[k_{\Pin(2)}(X)\cdot\vec{e}_{0}]
\end{align*}
as above.
\end{example}

\begin{example}
\label{ex:wedge_sum_free_G_space}
Let $X$ be a space of type $\CC$-$G^{*}_{m}$-$\SWF$, and let $X'$ be a $G^{*}_{m}$-space such that $X^{S^{1}}=\{\pt\}$. Then $X\vee X'$ is also a space of type $\CC$-$G^{*}_{m}$-$\SWF$, and $\III(X\vee X')=\III(X)$. Consequently $\mbfk(X\vee X')=\mbfk(X)$.
\end{example}

We now outline some properties of $\mbfk(X)$ analogous to those in the $\Pin(2)$-setting.

\begin{proposition}
\label{prop:k_invariants_stable_homotopy_equivalence}
Let $X$ and $X'$ be spaces of type $\CC$-$G^{*}_{m}$-$\SWF$, and suppose that there exists a based, $G^{*}_{m}$-equivariant homotopy equivalence from $\Sigma^{r\RR}X$ to $\Sigma^{r\RR}X'$ for some $r\geq 0$. Then $I(X)=I(X')$, and hence $\mbfk(X)=\mbfk(X')$.
\end{proposition}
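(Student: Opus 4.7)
The plan is to deduce $\III(X)=\III(X')$ as ideals in $R(G^*_m)$, from which $I(X)=I(X')$ and $\mbfk(X)=\mbfk(X')$ follow immediately from Definition \ref{def:k_invariants}. My first step is to arrange for the suspension exponent to be even: by applying $\Sigma^\RR$ to the given homotopy equivalence if necessary, I may assume $r=2n$. Then $r\RR$ is isomorphic to the underlying real representation of $n\CC_0$, where $\CC_0$ denotes the trivial one-dimensional complex $G^*_m$-representation, and the $S^1$-fixed set of $\Sigma^{r\RR}X$ is the complex representation sphere $(n\CC_0\oplus\mbfs_X\wt{\CC})^+$ (and analogously for $X'$, where I write $\mbfs_X,\mbfs_{X'}\in R(\ZZ_m)^{\sym}_{\geq 0}$ for the levels of $X$ and $X'$).

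Next I would extend the definition of $\III$ to the suspensions. Although $\Sigma^{r\RR}X$ is not strictly of type $\CC$-$G^*_m$-$\SWF$ (its $S^1$-fixed set has $\CC_0$-summands rather than only $\wt{\CC}_k$-summands), the fixed set is still a complex representation sphere with trivial $S^1$-action, and $\Pin(2)$ acts freely on the complement. Thus $\wt{K}_{G^*_m}((\Sigma^{r\RR}X)^{S^1})\cong R(G^*_m)\cdot b$ is a free rank-one $R(G^*_m)$-module on the Bott class $b=b_{n\CC_0\oplus\mbfs_X\wt{\CC}}$, and I define $\III(\Sigma^{r\RR}X)\subset R(G^*_m)$ as the unique ideal with $\III(\Sigma^{r\RR}X)\cdot b$ equal to the image of $\iota^*\colon\wt{K}_{G^*_m}(\Sigma^{r\RR}X)\to\wt{K}_{G^*_m}((\Sigma^{r\RR}X)^{S^1})$. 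The same Bott-periodicity argument as in the proof of Proposition \ref{prop:k_invariants_suspensions}, which uses only that $\CC_0$ is a complex $G^*_m$-representation with trivial $S^1$-action, then yields $\III(\Sigma^{r\RR}X)=\III(X)$ and $\III(\Sigma^{r\RR}X')=\III(X')$.

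Finally, $f$ restricts to a $G^*_m$-equivariant homotopy equivalence $f^{S^1}$ on $S^1$-fixed sets; since both sides are $G^*_m$-representation spheres, classical Krull--Schmidt-type rigidity forces $\mbfs_X=\mbfs_{X'}$, and the induced $(f^{S^1})^*$ is an $R(G^*_m)$-module isomorphism of free rank-one modules, hence multiplication by some unit $u\in R(G^*_m)^\times$. The naturality square
\[
\begin{tikzcd}[column sep=large]
\wt{K}_{G^*_m}(\Sigma^{r\RR}X') \arrow[r, "f^*", "\cong"'] \arrow[d, "\iota^*_{X'}"'] & \wt{K}_{G^*_m}(\Sigma^{r\RR}X) \arrow[d, "\iota^*_{X}"] \\
\wt{K}_{G^*_m}((\Sigma^{r\RR}X')^{S^1}) \arrow[r, "(f^{S^1})^*", "\cong"'] & \wt{K}_{G^*_m}((\Sigma^{r\RR}X)^{S^1})
\end{tikzcd}
\]
shows that the images of $\iota^*_X$ and $\iota^*_{X'}$ differ by multiplication by $u$ and so generate the same ideal, giving $\III(\Sigma^{r\RR}X)=\III(\Sigma^{r\RR}X')$. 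Chaining the equalities yields $\III(X)=\III(X')$. The main technical point is the extension of $\III$ to spaces whose $S^1$-fixed sets carry $\CC_0$-summands together with the suspension-invariance of the resulting ideal; both are routine generalizations of the type-$\CC$-$G^*_m$-$\SWF$ framework, but they require careful tracking of the Bott class under the identification $r\RR\cong n\CC_0$.
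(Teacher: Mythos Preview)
Your argument is correct and follows essentially the same route as the paper, which simply defers to the proof of \cite{Man14}, Lemma 3.8: make $r$ even so that $r\RR$ underlies a complex representation, use Bott periodicity to identify $\III(\Sigma^{r\RR}X)$ with $\III(X)$, and then use that a $G^*_m$-equivariant homotopy equivalence induces an $R(G^*_m)$-module automorphism of a free rank-one module, hence multiplication by a unit, so the two ideals coincide. The only cosmetic difference is that you explicitly track the extension of $\III$ to spaces whose $S^1$-fixed sets contain $\CC_0$-summands, whereas the paper absorbs this into the citation; the Krull--Schmidt remark on $\mbfs_X=\mbfs_{X'}$ is true but not strictly needed for the ideal equality.
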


\begin{proof}
This is implied by the argument given in the proof of (\cite{Man14}, Lemma 3.8), which applies in this situation without much change.
\end{proof}

\begin{proposition}
\label{prop:k_invariants_s1_fixed_point_homotopy_equivalence}
Let $X$ and $X'$ be spaces of type $\CC$-$G^{*}_{m}$-$\SWF$ at the same level $\mbfs\in R(\ZZ_{m})_{\geq 0}^{\sym}$, and suppose that $f:X\to X'$ is a $G^{*}_{m}$-equivariant map such that the induced map $f^{S^{1}}:X^{S^{1}}\to(X')^{S^{1}}$ on $S^{1}$-fixed point sets is a $G^{*}_{m}$-homotopy equivalence. Then $I(X)\supseteq I'(X)$. In particular:
\begin{enumerate}
    \item For each $\vec{k}'\in\mbfk_{G^{*}_{m}}(X')$:
    \begin{enumerate}
        \item $\vec{k}\not\succ\vec{k}'$ for all $\vec{k}\in\mbfk_{G^{*}_{m}}(X)$, and
        \item there exists some $\vec{k}\in\mbfk_{G^{*}_{m}}(X)$ such that $\vec{k}\preceq\vec{k}'$.
    \end{enumerate}
    \item $\vec{\ul{k}}(X)\preceq \vec{\ol{k}}(X')$.
\end{enumerate}
\end{proposition}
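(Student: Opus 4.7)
The plan is to mirror the argument for the corresponding $\Pin(2)$-equivariant statement (cf.\ \cite{Man14}, Lemma~3.7), replacing $R(\Pin(2))$ with $R(G^{*}_{m})$ throughout, and to deduce the poset-theoretic conclusions from Lemma~\ref{lemma:comparing_minima_subsets}. The key inclusion to establish is $\III(X')\subseteq\III(X)$ as ideals of $R(G^{*}_{m})$; once this is known, $I(X')\subseteq I(X)$ follows directly from Definition~\ref{def:k_invariants}, since any expression $w_{0}x'=w_{0}^{k_{0}+1}w_{1}^{k_{1}}\cdots w_{m-1}^{k_{m-1}}$ with $x'\in\III(X')$ is automatically an expression with $x'\in\III(X)$.

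First I would form the commutative square of reduced equivariant $K$-groups induced by $f$ and the inclusions $\iota\co X^{S^{1}}\hookrightarrow X$, $\iota'\co(X')^{S^{1}}\hookrightarrow X'$:
\begin{center}
\begin{tikzcd}
\wt{K}_{G^{*}_{m}}(X') \arrow[r, "f^{*}"] \arrow[d, "(\iota')^{*}"]
& \wt{K}_{G^{*}_{m}}(X) \arrow[d, "\iota^{*}"] \\
\wt{K}_{G^{*}_{m}}((X')^{S^{1}}) \arrow[r, "(f^{S^{1}})^{*}"]
& \wt{K}_{G^{*}_{m}}(X^{S^{1}}).
\end{tikzcd}
\end{center}
Because $X$ and $X'$ have the same level $\mbfs$, equivariant Bott periodicity identifies both bottom groups with the free rank-one $R(G^{*}_{m})$-module generated by the Bott class $b_{\mbfs\wt{\CC}}$, under which the images of $\iota^{*}$ and $(\iota')^{*}$ become $\III(X)\cdot b_{\mbfs\wt{\CC}}$ and $\III(X')\cdot b_{\mbfs\wt{\CC}}$ respectively (Definition~\ref{def:ideal}). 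Since $f^{S^{1}}$ is a $G^{*}_{m}$-homotopy equivalence, the map $(f^{S^{1}})^{*}$ is an $R(G^{*}_{m})$-module isomorphism, and therefore sends $b_{\mbfs\wt{\CC}}$ to $u\cdot b_{\mbfs\wt{\CC}}$ for some unit $u\in R(G^{*}_{m})^{\times}$. In particular $(f^{S^{1}})^{*}$ carries $\III(X')\cdot b_{\mbfs\wt{\CC}}$ bijectively onto itself. By commutativity of the square the image of $\iota^{*}$ contains the image of $\iota^{*}\circ f^{*}=(f^{S^{1}})^{*}\circ(\iota')^{*}$, hence $\III(X)\cdot b_{\mbfs\wt{\CC}}\supseteq\III(X')\cdot b_{\mbfs\wt{\CC}}$, giving $\III(X)\supseteq\III(X')$ and consequently $I(X)\supseteq I(X')$ in $\N^{m}$.

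For the consequences, I apply Lemma~\ref{lemma:comparing_minima_subsets} with $A:=I(X')$ and $B:=I(X)$. Part~(1)(a) of the present proposition is then exactly Lemma~\ref{lemma:comparing_minima_subsets}(1)(a)(i) rephrased (swapping the roles of minima for the two ideals), and part~(1)(b) is Lemma~\ref{lemma:comparing_minima_subsets}(1)(a)(ii); the hypothesis needed for the latter is that $\N^{m}$ is bounded below (which it is, by $[\vec{0}]$), guaranteeing that every element of $I(X)$ lies above some element of $\min I(X)=\mbfk(X)$. Finally part~(2), the inequality $\vec{\ul{k}}(X)\preceq\vec{\ol{k}}(X')$, is the content of Lemma~\ref{lemma:comparing_minima_subsets}(1)(b) in the completed lattice $\wh{\N}^{m}$.

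The only non-routine point is the claim that $(f^{S^{1}})^{*}$ preserves the Bott class up to a unit, but this is immediate from the fact that any $R(G^{*}_{m})$-linear automorphism of a free rank-one module acts on the distinguished generator by a unit, so there is essentially no obstacle; the rest is diagram-chasing and an application of the already-established Lemma~\ref{lemma:comparing_minima_subsets}.
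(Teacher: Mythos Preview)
Your proposal is correct and follows essentially the same approach as the paper: the paper simply states that the argument from \cite{Man14}, Lemma~3.9 applies to give $\III(X)\supseteq\III(X')$, which is exactly the commutative-square argument you have written out in detail, and the poset consequences are then read off from Lemma~\ref{lemma:comparing_minima_subsets}. The only discrepancy is the citation number (the paper points to Lemma~3.9 rather than 3.7), but the mathematical content is the same.
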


\begin{proof}
It suffices to show that $\III(X)\supseteq\III(X')$. Again the argument given in the proof of (\cite{Man14}, Lemma 3.9) applies here without much change.
\end{proof}

We have the following definition, inspired by (\cite{Stoff20}, Definition 2.7):

\begin{definition}
\label{def:unstable_local_equivalence}
Let $X$ and $X'$ be spaces of type $\CC$-$G^{*}_{m}$-$\SWF$. We say that $X_{1}$, $X_{2}$ are \emph{locally equivalent} if there exist $G^{*}_{m}$-equivariant maps
\[X\stackrel[g]{f}{\rightleftarrows}X'\]
such that the induced maps $f^{S^{1}}:X^{S^{1}}\to(X')^{S^{1}}$, $g^{S^{1}}:(X')^{S^{1}}\to X^{S^{1}}$ on the $S^{1}$-fixed point sets are $G^{*}_{m}$-equivariant homotopy equivalences.
\end{definition}

The following Corollary follows immediately from Proposition \ref{prop:k_invariants_s1_fixed_point_homotopy_equivalence}:

\begin{corollary}
\label{cor:k_invariants_local_equivalence}
Let $X$ and $X'$ be spaces of type $\CC$-$G^{*}_{m}$-$\SWF$ such that $X$ and $X'$ are locally equivalent. Then $I(X)=I(X')$ and $\mbfk(X)=\mbfk(X')$.
\end{corollary}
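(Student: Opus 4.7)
The plan is to apply Proposition \ref{prop:k_invariants_s1_fixed_point_homotopy_equivalence} in both directions. By the definition of local equivalence (Definition \ref{def:unstable_local_equivalence}), there exist $G^{*}_{m}$-equivariant maps $f\co X\to X'$ and $g\co X'\to X$ whose restrictions $f^{S^{1}}$ and $g^{S^{1}}$ to the $S^{1}$-fixed point sets are $G^{*}_{m}$-equivariant homotopy equivalences. In particular both $X$ and $X'$ are spaces of type $\CC$-$G^{*}_{m}$-$\SWF$ at the same level $\mbfs\in R(\ZZ_{m})_{\geq 0}^{\sym}$, since $X^{S^{1}}\simeq (X')^{S^{1}}\simeq(\mbfs\wt{\CC})^{+}$, so Proposition \ref{prop:k_invariants_s1_fixed_point_homotopy_equivalence} applies to each of $f$ and $g$.

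First I would invoke Proposition \ref{prop:k_invariants_s1_fixed_point_homotopy_equivalence} with the map $f\co X\to X'$, which yields the inclusion $I(X)\supseteq I(X')$. Then I would apply the same proposition to $g\co X'\to X$, obtaining the reverse inclusion $I(X')\supseteq I(X)$. Combining these two inclusions gives the equality $I(X)=I(X')$ of subsets of $\N^{m}$.

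Finally, since $\mbfk(X)=\min(I(X))$ and $\mbfk(X')=\min(I(X'))$ by Definition \ref{def:k_invariants}, the equality $I(X)=I(X')$ immediately implies $\mbfk(X)=\mbfk(X')$, as desired. There is no real obstacle here; the corollary is a straightforward symmetrization of Proposition \ref{prop:k_invariants_s1_fixed_point_homotopy_equivalence}, and the only thing to observe is that local equivalence provides maps in both directions satisfying the hypothesis of that proposition simultaneously.
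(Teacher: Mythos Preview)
Your proof is correct and follows exactly the approach the paper indicates: the corollary is stated as following immediately from Proposition \ref{prop:k_invariants_s1_fixed_point_homotopy_equivalence}, and your argument is precisely the natural symmetrization of that proposition using the two maps provided by local equivalence.
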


Before stating the following proposition, we introduce some notation. For any element $\mbfs=\sum_{j=0}^{m-1}s_{j}\zeta^{j}\in R(\ZZ_{m})_{\geq 0}^{\sym}$, we define
\[\vec{\mbfs}:=(s_{0},\dots,s_{m-1})\in\NN^{m}.\]
For any two such representations $\mbfs,\mbfs'\in R(\ZZ_{m})_{\geq 0}^{\sym}$, we write $\vec{\mbfs}\preceq\vec{\mbfs}'$ if $s_{j}\le s_{j}'$ for all $j=0,\dots,m-1$.

\begin{proposition}
\label{prop:k_invariants_pin(2)_fixed_point_homotopy_equivalence}
Let $X$ and $X'$ be spaces of type $\CC$-$G^{*}_{m}$-$\SWF$ at levels $\mbfs,\mbfs'\in R(\ZZ_{m})_{\geq 0}^{\sym}$, respectively, such that $\vec{\mbfs}\preceq\vec{\mbfs}\,'$. Suppose that $f:X\to X'$ is a $G^{*}_{m}$-equivariant map such that the induced map $f^{\Pin(2)}:X^{\Pin(2)}\to(X')^{\Pin(2)}$ on $\Pin(2)$-fixed point sets is a $G^{*}_{m}$-homotopy equivalence. Then:
\begin{enumerate}
    \item For each $\vec{k}'\in\mbfk_{G^{*}_{m}}(X')$:
    \begin{enumerate}
        \item $\vec{k}\not\succ\vec{k}'+(\vec{\mbfs}\,'-\vec{\mbfs})$ for all $\vec{k}\in\mbfk_{G^{*}_{m}}(X)$, and
        \item there exists some $\vec{k}\in\mbfk_{G^{*}_{m}}(X)$ such that $\vec{k}\preceq\vec{k}'+(\vec{\mbfs}\,'-\vec{\mbfs})$.
    \end{enumerate}
    \item $\vec{\ul{k}}(X)\preceq \vec{\ol{k}}(X')+(\vec{\mbfs}\,'-\vec{\mbfs})$.
\end{enumerate}
\end{proposition}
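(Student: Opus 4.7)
The strategy adapts the argument underlying Proposition~\ref{prop:k_invariants_s1_fixed_point_homotopy_equivalence} to handle the fact that $f^{S^{1}}$ need not itself be a $G^{*}_{m}$-homotopy equivalence. First I would assemble the commutative diagram of $\wt{K}_{G^{*}_{m}}$-groups obtained from the towers $X^{\Pin(2)}\hookrightarrow X^{S^{1}}\hookrightarrow X$ and $(X')^{\Pin(2)}\hookrightarrow (X')^{S^{1}}\hookrightarrow X'$ joined by the pullback maps induced by $f$. Using equivariant Bott periodicity (Fact~\ref{fact:bott_isomorphism}), I identify $\wt{K}_{G^{*}_{m}}(X^{S^{1}}) \cong R(G^{*}_{m})\cdot b_{\mbfs\wt{\CC}}$ and, since $X^{\Pin(2)} \simeq S^{0}$, $\wt{K}_{G^{*}_{m}}(X^{\Pin(2)}) \cong R(G^{*}_{m})$, and analogously for $X'$. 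The inclusion $X^{\Pin(2)}\hookrightarrow X^{S^{1}}$ is the inclusion of $\{0,\infty\}\cong S^{0}$ into the Thom space $(\mbfs\wt{\CC})^{+}$, so by Fact~\ref{fact:euler_class} the induced $K$-theory map becomes multiplication by the Euler class $\prod_{k} w_{k}^{s_{k}}$. Let $\mu \in R(G^{*}_{m})^{\times}$ be the unit representing $(f^{\Pin(2)})^{*}$ (a unit since $f^{\Pin(2)}$ is a $G^{*}_{m}$-equivalence), and let $\lambda \in R(G^{*}_{m})$ satisfy $(f^{S^{1}})^{*}(b_{\mbfs'\wt{\CC}}) = \lambda\cdot b_{\mbfs\wt{\CC}}$. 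Commutativity of the diagram forces the identity
\begin{equation}
\label{eq:pivotal_pin2}
\mu\cdot\prod_{k=0}^{m-1}w_{k}^{s'_{k}} \;=\; \lambda\cdot\prod_{k=0}^{m-1}w_{k}^{s_{k}} \qquad \text{in } R(G^{*}_{m}).
\end{equation}

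For every $x'\in\III(X')$ with witness $y'\in\wt{K}_{G^{*}_{m}}(X')$ satisfying $(\iota')^{*}(y')=x'\cdot b_{\mbfs'\wt{\CC}}$, naturality gives $\iota^{*}(f^{*}(y')) = (f^{S^{1}})^{*}(x'\cdot b_{\mbfs'\wt{\CC}}) = x'\lambda\cdot b_{\mbfs\wt{\CC}}$, so $x'\lambda\in\III(X)$. The algebraic goal is then to promote this to the ideal containment $\prod_{k}w_{k}^{s'_{k}-s_{k}}\cdot\III(X') \subseteq \III(X)$. Concretely, given $\vec{k}'\in I(X')$ witnessed by $w_{0}x' = \mbfw^{\vec{k}'+\vec{e}_{0}}$, set $x := \mu^{-1}x'\lambda\in\III(X)$ and verify that $w_{0}x = \mbfw^{\vec{k}'+(\vec{\mbfs}'-\vec{\mbfs})+\vec{e}_{0}}$ in $R(G^{*}_{m})$. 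In view of \eqref{eq:pivotal_pin2}, this amounts to the identity $\mbfw^{\vec{k}'+\vec{e}_{0}}\cdot\big(\lambda - \mu\prod_{k}w_{k}^{s'_{k}-s_{k}}\big) = 0$, which is the multi-variable analogue of Manolescu's observation that $w\cdot\mathrm{Ann}(w^{s})=0$ in $R(\Pin(2))$ (where $\mathrm{Ann}(w^{s})=(w-2,z-2)$, and both generators are annihilated by $w$).

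Granting this annihilation, we obtain the containment $I(X')+(\vec{\mbfs}'-\vec{\mbfs})\subseteq I(X)$ inside $(\N^{m},\preceq,+)$. Applying Lemma~\ref{lemma:comparing_minima_subsets} to $A := \mbfk(X')+(\vec{\mbfs}'-\vec{\mbfs}) \subseteq B := I(X)$, with $\min(A)=A$ since $\mbfk(X')$ is an anti-chain and $\min(B)=\mbfk(X)$, parts (1ai) and (1aii) of the lemma yield the two halves of the proposition's conclusion~(1), while part~(1b) yields $\vec{\ul{k}}(X) \preceq \vec{\ol{k}}(X')+(\vec{\mbfs}'-\vec{\mbfs})$, which is conclusion~(2).

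The main obstacle is the middle technical step establishing the annihilation $\mbfw^{\vec{k}'+\vec{e}_{0}}\cdot(\lambda - \mu\prod_{k}w_{k}^{s'_{k}-s_{k}}) = 0$. Unlike in Manolescu's $\Pin(2)$-setting, $\mathrm{Ann}(\prod_{k}w_{k}^{s_{k}})$ is not in general contained in $\mathrm{Ann}(w_{0})$ inside $R(G^{*}_{m})$: when $m$ is even and some $s_{k}\geq 1$ with $k\not\equiv 0 \pmod{m/2}$, character-theoretic considerations show that $\mathrm{Ann}(\prod_{k}w_{k}^{s_{k}})$ contains classes nontrivial on certain elements of $jS^{1}\times\ZZ_{m}$, where $w_{0}$ itself takes nonzero values. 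Overcoming this will likely require either a careful analysis exploiting the specific monomial structure of $\mbfw^{\vec{k}'+\vec{e}_{0}}$ (possibly after passing to a different representative of $[\vec{k}']\in\N^{m}$ whose components dominate $\vec{\mbfs}-\vec{e}_{0}$ componentwise), or an equivariant obstruction-theoretic reduction to the case where $f^{S^{1}}$ factors $G^{*}_{m}$-equivariantly as a self-equivalence of $(\mbfs\wt{\CC})^{+}$ composed with the standard zero-section inclusion into $(\mbfs'\wt{\CC})^{+}$, so that the identity $\lambda = \mu\prod_{k}w_{k}^{s'_{k}-s_{k}}$ holds on the nose.
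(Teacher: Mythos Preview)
Your setup is exactly the paper's: the same three-level commutative diagram and the resulting identity \eqref{eq:pivotal_pin2}. You also correctly observe that $\mu=1$, since a based $G^{*}_{m}$-homotopy equivalence $S^{0}\to S^{0}$ is the identity. Where you diverge is in trying to push through an annihilator argument and then flagging it as an obstacle. Your diagnosis is accurate: for $m$ even the containment $\mathrm{Ann}(\mbfw^{\vec{\mbfs}})\subset\mathrm{Ann}(\mbfw^{\vec{k}'+\vec{e}_{0}})$ genuinely fails. For instance with $m=2$, $\mbfs=\zeta$, one has $\mathrm{Ann}(w_{1})=(2-w_{1})$ and $\tr_{j\gamma}\big(w_{0}(2-w_{1})\big)=4\neq 0$.

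The paper does not attempt the annihilator route at all. It asserts directly that $(f^{S^{1}})^{*}$ is multiplication by $\mbfw^{\vec{\mbfs}'-\vec{\mbfs}}$ --- i.e.\ your ``option B'' holds on the nose --- and cites Manolescu's Lemma~3.10 argument. The point is that $f^{S^{1}}$ is a $(\ZZ_{2}\times\ZZ_{m})$-equivariant map between representation spheres (the $S^{1}$-action being trivial), and one pins down $\lambda$ by examining $(f^{S^{1}})^{H}$ for \emph{all} subgroups $H\subset\ZZ_{2}\times\ZZ_{m}$, not just $H=\ZZ_{2}$. At the ``problematic'' elements $g=j\gamma^{\ell}$ where $\tr_{g}(\mbfw^{\vec{\mbfs}})=0$, the subgroup $\langle j\gamma^{\ell}\rangle$ acts trivially on the relevant summands $\wt{\CC}_{k}$ (those with $k\ell\equiv m/2$), so $(f^{S^{1}})^{\langle j\gamma^{\ell}\rangle}$ is a map between spheres of strictly different dimensions whenever $\vec{\mbfs}\prec\vec{\mbfs}'$ on those indices; combined with the degree-one constraint on the $\ZZ_{2}$-fixed set, equivariant obstruction theory (or a direct cell-by-cell extension argument) forces $f^{S^{1}}$ to be $G^{*}_{m}$-homotopic to the standard inclusion composed with a self-equivalence, giving $\lambda=\mbfw^{\vec{\mbfs}'-\vec{\mbfs}}$.

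So commit to your second option rather than presenting it as one of two uncertain alternatives: it is both the paper's route and the one that actually closes the argument. The remainder of your write-up (the containment $I(X')+(\vec{\mbfs}'-\vec{\mbfs})\subseteq I(X)$ and the appeal to Lemma~\ref{lemma:comparing_minima_subsets}) is correct once $\lambda$ is determined.
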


\begin{proof}
By Lemma \ref{lemma:comparing_minima_subsets} it suffices to show that
\[\III(X)\supseteq\mbfw^{\vec{\mbfs}\,'-\vec{\mbfs}}\cdot\III(X'),\]
so that
\[I(X)\supseteq I(X')+(\vec{\mbfs}\,'-\vec{\mbfs}).\]
Note that we have a commutative diagram
\begin{center}
    \begin{tikzcd}
    \wt{K}_{G^{*}_{m}}(X') \arrow[r, "f^{*}"] \arrow[d, "(\iota')^{*}"]
    & \wt{K}_{G^{*}_{m}}(X) \arrow[d, "\iota^{*}"] \\
    \wt{K}_{G^{*}_{m}}((X')^{S^{1}}) \arrow[r, "(f^{S^{1}})^{*}" ] \arrow[d,"\cdot\mbfw^{\vec{\mbfs}\,'}"]
    & \wt{K}_{G^{*}_{m}}(X^{S^{1}}) \arrow[d,"\cdot\mbfw^{\vec{\mbfs}}"] \\
    \wt{K}_{G^{*}_{m}}((X')^{\Pin(2)}) \arrow[r, "\cdot 1" ]
    & \wt{K}_{G^{*}_{m}}(X^{\Pin(2)}),
    \end{tikzcd}
\end{center}
where the bottom four groups are all isomorphic to $R(G^{*}_{m})$. By a similar argument as in the proof of (\cite{Man14}, Lemma 3.10), one can show that $(f^{S^{1}})^{*}$ must be multiplication by $\mbfw^{\vec{\mbfs}\,'-\vec{\mbfs}}\in R(G^{*}_{m})$, from which the result follows.
\end{proof}

In analogy with (\cite{Man14}, Definition 3.5), we make the following definition:

\begin{definition}
\label{def:K_G_split}
A space $X$ of type $\CC$-$G^{*}_{m}$-$\SWF$ is called \emph{$K_{G^{*}_{m}}$-split} if $\III(X)$ is a principal ideal generated by a single monomial in the $z_{k}$-variables.
\end{definition}

\begin{example}
By Example \ref{ex:k_invariants_representation_spheres} any $G^{*}_{m}$-representation sphere is $K_{G^{*}_{m}}$-split.
\end{example}

\begin{remark}
A space $X$ of type $\CC$-$G^{*}_{m}$-$\SWF$ which is $K_{G^{*}_{m}}$-split is $K_{\Pin(2)}$-split in the sense of \cite{Man14}, when considered as a $\Pin(2)$-space. However, the converse is not necessarily true.
\end{remark}

\begin{proposition}
\label{prop:k_invariants_kg_split}
Let $X$ and $X'$ be spaces of type $\CC$-$G^{*}_{m}$-$\SWF$ at levels $\mbfs,\mbfs'\in R(\ZZ_{m})_{\geq 0}^{\sym}$, respectively, such that $\vec{\mbfs}\preceq\vec{\mbfs}'$,  $s_{0}<s_{0}'$, and $X$ is $K_{G^{*}_{m}}$-split. Suppose that $f:X\to X'$ is a $G^{*}_{m}$-equivariant map whose $\Pin(2)$-fixed point set map is a $G^{*}_{m}$-homotopy equivalence. Then
In particular:
\begin{enumerate}
    \item For each $\vec{k}'\in\mbfk(X')$:
    \begin{enumerate}
        \item $\vec{k}+\vec{e}_{0}\not\succ\vec{k}'+(\vec{\mbfs}\,'-\vec{\mbfs})$ for all $\vec{k}\in\mbfk(X)$, and
        \item there exists some $\vec{k}\in\mbfk(X)$ such that $\vec{k}+\vec{e}_{0}\preceq\vec{k}'+(\vec{\mbfs}\,'-\vec{\mbfs})$.
    \end{enumerate}
    \item $\vec{\ul{k}}(X)+\vec{e}_{0}\preceq \vec{\ol{k}}(X')+(\vec{\mbfs}\,'-\vec{\mbfs})$.
\end{enumerate}
\end{proposition}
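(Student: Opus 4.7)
The plan is to refine the argument of Proposition~\ref{prop:k_invariants_pin(2)_fixed_point_homotopy_equivalence}, extracting an extra factor of $w_0$ from the combination of the $K_{G^*_m}$-split hypothesis on $X$ and the strict inequality $s_0 < s_0'$. By Lemma~\ref{lemma:comparing_minima_subsets}, it suffices to upgrade the containment $\mbfw^{\vec{\mbfs}'-\vec{\mbfs}}\cdot\III(X')\subseteq\III(X)$ furnished by Proposition~\ref{prop:k_invariants_pin(2)_fixed_point_homotopy_equivalence} to
\[
    \mbfw^{\vec{\mbfs}'-\vec{\mbfs}-\vec{e}_{0}}\cdot\III(X')\;\subseteq\;\III(X),
\]
where the subtraction is legal because $s_0<s_0'$ forces $(\vec{\mbfs}'-\vec{\mbfs})_0\geq 1$. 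Indeed, once this is established, for any $\vec{k}'\in\mbfk(X')$ one chooses $x'\in\III(X')$ with $w_{0}x'=\mbfw^{\vec{k}'+\vec{e}_{0}}$ and computes
\[
    \mbfw^{\vec{k}'+\vec{\mbfs}'-\vec{\mbfs}} = \mbfw^{\vec{k}'+\vec{e}_{0}}\cdot\mbfw^{\vec{\mbfs}'-\vec{\mbfs}-\vec{e}_{0}} = w_{0}\cdot\bigl(x'\cdot\mbfw^{\vec{\mbfs}'-\vec{\mbfs}-\vec{e}_{0}}\bigr)\in w_{0}\III(X),
\]
so that $\vec{k}'+(\vec{\mbfs}'-\vec{\mbfs})-\vec{e}_{0}\in I(X)$; claims (1a), (1b), and (2) will then follow from Lemma~\ref{lemma:comparing_minima_sum_of_subsets} applied just as in Proposition~\ref{prop:k_invariants_pin(2)_fixed_point_homotopy_equivalence}.

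To establish the improved containment, I would enlarge the commutative diagram of Proposition~\ref{prop:k_invariants_pin(2)_fixed_point_homotopy_equivalence} by the further row corresponding to the factorizations $X^{\Pin(2)}\hookrightarrow X^{S^{1}}\hookrightarrow X$ and likewise for $X'$. Both $X^{\Pin(2)}$ and $(X')^{\Pin(2)}$ are copies of $S^{0}$, so $(f^{\Pin(2)})^{*}$ is multiplication by $\pm 1$; the two inclusion-induced vertical maps on $K$-theory are multiplication by the Euler classes $\mbfw^{\vec{\mbfs}}$ and $\mbfw^{\vec{\mbfs}'}$ respectively. Tracing a class $y'\in\wt{K}_{G^{*}_{m}}(X')$ with $(\iota^{X'})^{*}(y')=x'\cdot b_{\mbfs'\wt{\CC}}$ around the square, and using that $X$ is $K_{G^{*}_{m}}$-split so that $(\iota^{X})^{*}(f^{*}(y'))=\mbfz^{\vec{\mbft}}r\cdot b_{\mbfs\wt{\CC}}$ for some $r\in R(G^{*}_{m})$, yields the $R(G^{*}_{m})$-identity
\[
    \mbfz^{\vec{\mbft}}\,r\,\mbfw^{\vec{\mbfs}} \;=\; \pm\,x'\,\mbfw^{\vec{\mbfs}'} \;=\; \pm\,w_{0}\cdot x'\cdot\mbfw^{\vec{\mbfs}'-\vec{e}_{0}},
\]
displaying an extra factor of $w_{0}$ on the right.

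The hard part is extracting that extra $w_{0}$ on the left: one cannot naively cancel $w_{0}$ since $R(G^{*}_{m})$ has zero divisors (for example $w_{0}h=0$, since $\wt{c}h=h$). To get around this, I would exploit the structural fact that $w_{0}R(G^{*}_{m})$ is a free $\ZZ$-module of rank $m$, with basis $\{w_{0}\zeta^{j}:j=0,\dots,m-1\}$, and that the defining relation $w_{0}z_{k}=w_{0}w_{2k}$ gives the ideal identity $w_{0}\III(X)=(w_{0}\mbfz^{\vec{\mbft}})=(w_{0}\mbfw^{\DDD^{*}(\vec{\mbft})})$. Multiplying the displayed identity by $w_{0}$ and projecting to $w_{0}R(G^{*}_{m})$ converts the equation into a (torsion-free) identity in this free $\ZZ[\zeta]/(\zeta^{m}-1)$-module, in which the relation $\mbfw^{\vec{\mbfs}}=w_{0}\cdot\mbfw^{\vec{\mbfs}-\vec{e}_{0}}$ (if $s_{0}\geq 1$) or a direct rewriting of $\mbfw^{\vec{\mbfs}}$ via $w_{0}w_{j}=(1+\zeta^{j})w_{0}$ (when $s_{0}=0$) allows one to isolate $w_{0}\cdot x'\cdot\mbfw^{\vec{\mbfs}'-\vec{\mbfs}-\vec{e}_{0}}$ as an element of $w_{0}\III(X)$. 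Together with the vanishing relations on the annihilator of $\mbfw^{\vec{\mbfs}}$ (which lies entirely in the $h$-part killed by $w_{0}$), this produces a representative $r'\in R(G^{*}_{m})$ with $\mbfw^{\vec{\mbfs}'-\vec{\mbfs}-\vec{e}_{0}}\,x'=\mbfz^{\vec{\mbft}}\,r'$, completing the proof of the improved containment.
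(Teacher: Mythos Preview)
Your overall strategy is on the right track, but the target you set yourself is too strong and the final ``cancellation'' step has a genuine gap. You aim to prove the containment
\[
\mbfw^{\vec{\mbfs}'-\vec{\mbfs}-\vec{e}_{0}}\cdot\III(X')\subseteq\III(X),
\]
but this is strictly stronger than what is needed, and your last paragraph does not establish it: having shown (after your projection argument) that $w_{0}\cdot x'\cdot\mbfw^{\vec{\mbfs}'-\vec{\mbfs}-\vec{e}_{0}}\in w_{0}\III(X)$, you then claim to produce $r'$ with $\mbfw^{\vec{\mbfs}'-\vec{\mbfs}-\vec{e}_{0}}x'=\mbfz^{\vec{\mbft}}r'$. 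But this requires cancelling $w_{0}$, and the annihilator of $w_{0}$ (which contains $h$, as you yourself note) is \emph{not} contained in $(\mbfz^{\vec{\mbft}})$ when $\vec{\mbft}\neq 0$; for instance already for $m=1$ one checks $h\notin(z)$ by a character argument. So the difference $\mbfw^{\vec{\mbfs}'-\vec{\mbfs}-\vec{e}_{0}}x'-\mbfz^{\vec{\mbft}}r$ could be a nonzero element of $\mathrm{Ann}(w_{0})$ lying outside $\III(X)$.

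The paper sidesteps this by aiming for the weaker containment
\[
w_{0}\cdot\III(X)\supseteq\mbfw^{\vec{\mbfs}'-\vec{\mbfs}}\cdot\III(X'),
\]
which is exactly what is needed to conclude $I(X)+\vec{e}_{0}\supseteq I(X')+(\vec{\mbfs}'-\vec{\mbfs})$. The key input is Lemma~\ref{lemma:must_be_w_0}: from Proposition~\ref{prop:k_invariants_pin(2)_fixed_point_homotopy_equivalence} one already knows $\mbfw^{\vec{\mbfs}'-\vec{\mbfs}}x'\in\III(X)=(\mbfz^{\vec{\mbft}})$, so $\mbfw^{\vec{\mbfs}'-\vec{\mbfs}}x'=\mbfz^{\vec{\mbft}}y$ for some $y$. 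Since $s_{0}'>s_{0}$ the left side is divisible by $w_{0}$, and Lemma~\ref{lemma:must_be_w_0}---proved by evaluating traces at $\gamma^{\ell}e^{i\phi}$ for $\phi$ an irrational multiple of $\pi$ and using transcendence of $\cos\phi$---forces $w_{0}\mid y$. Writing $y=w_{0}y'$ gives $\mbfw^{\vec{\mbfs}'-\vec{\mbfs}}x'=w_{0}\mbfz^{\vec{\mbft}}y'\in w_{0}\III(X)$, as required. This trace argument is the clean replacement for your vague ``projection to $w_{0}R(G^{*}_{m})$'' manoeuvre.
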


In order to prove Proposition \ref{prop:k_invariants_kg_split}, the following lemma will be helpful:

\begin{lemma}
\label{lemma:must_be_w_0}
Let $x,y\in R(G^{\ev}_{m})$, and suppose there exist integers $b_{0},\dots,b_{m-1}\geq 0$ such that
\begin{equation}
\label{eq:polynomial_lemma_even}
    w_{0}x=z_{0}^{b_{0}}z_{1}^{b_{1}}\cdots z_{m-1}^{b_{m-1}}y.
\end{equation}
Then $w_{0}|y$. Similarly, if $x,y\in R(G^{\odd}_{m})$ and $b_{1/2},\dots,b_{m-1/2}\geq 0$ are such that
\begin{equation}
\label{eq:polynomial_lemma_odd}
    w_{0}x=z_{0}^{b_{0}}z_{1/2}^{b_{1/2}}\cdots z_{m-1/2}^{b_{m-1/2}}y,
\end{equation}
then $w_{0}|y$.
\end{lemma}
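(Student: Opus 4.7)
The plan is to reduce the equation modulo $w_0 = 1-\tilde c$ and exhibit each $z$-variable as a non-zerodivisor in the quotient ring $R(G^{*}_m)/(w_0)$, so that the equation forces $y \equiv 0 \pmod{w_0}$.

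First I would identify the quotient $R(G^{*}_m)/(w_0)$ explicitly. Since $w_0 = 1-\tilde c$, passing to the quotient amounts to setting $\tilde c = 1$, which trivializes the relations $\tilde c^{\,2}=1$ and $\tilde c\cdot h = h$ (resp.\ $\tilde c\cdot \xi h = \xi h$). Using the presentations of $R(G^{*}_m)$ from Section~\ref{subsec:complex_representation_ring}, I obtain
\begin{align*}
R(G^{\ev}_m)/(w_0) &\cong \ZZ[\zeta,h]/(\zeta^{m}-1) = R^{\ev}[h], \\
R(G^{\odd}_m)/(w_0) &\cong \ZZ[u,v]/(u^{m}-1) = R^{\odd}[v],
\end{align*}
where in the odd case I set $u=\xi^{2}$ and $v=\xi h$, and $R^{\ev}:=\ZZ[\zeta]/(\zeta^{m}-1)$, $R^{\odd}:=\ZZ[u]/(u^{m}-1)$. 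In both cases the quotient is a polynomial ring in a single free variable over a ring in which $\zeta$ (resp.\ $u$) is a unit.

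Next I would compute the images of the $z$-variables under the reduction. In the even case $\bar z_k = (1+\zeta^{2k}) - \zeta^k\cdot h$, a degree-one polynomial in $h$ with leading coefficient $-\zeta^{k}$. In the odd case $\bar z_{k+1/2} = (1+u^{2k+1}) - u^{k}\cdot v$, a degree-one polynomial in $v$ with leading coefficient $-u^{k}$. In either case the leading coefficient is a unit, so each $\bar z_\bullet$ is, up to a unit, a monic polynomial of degree one in the free variable. Since a monic polynomial over any commutative ring is a non-zerodivisor in the corresponding polynomial ring, each $\bar z_\bullet$ and hence any product of them is a non-zerodivisor. Reducing the hypothesized equation modulo $w_0$ yields $\bar z_0^{\,b_0}\cdots\bar z_{m-1}^{\,b_{m-1}}\cdot\bar y = 0$ (resp.\ with half-integer indices), which immediately forces $\bar y = 0$, i.e., $w_0\mid y$ in $R(G^{*}_m)$.

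The main obstacle I anticipate is the bookkeeping in the odd case: verifying that $R(G^{\odd}_m)/(w_0)$ really is a polynomial ring on the single generator $v=\xi h$ over $\ZZ[u]/(u^{m}-1)$, with no hidden relation of the form $(\xi h)^{n} = (\text{something in } u)$. This is subtler than the even case because $\xi h$ appears as a composite generator; however, it follows from the fact that $h$ is a free generator (modulo $\tilde c h = h$) in the ambient ring $R(\Pin(2))$, and a direct calculation using the presentation $R(G^{\odd}_m)=\ZZ[\tilde c,\xi h,\xi^{2}]/(\tilde c^{\,2}-1,\tilde c(\xi h)-\xi h,\xi^{2m}-1)$ confirms this.
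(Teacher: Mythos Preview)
Your proof is correct and takes a genuinely different route from the paper. The paper argues via characters: it evaluates traces at elements of the form $\gamma^{\ell}e^{i\phi}$ for $\phi$ an irrational multiple of $\pi$, observes that $\tr_{\gamma^{\ell}e^{i\phi}}(w_{0})=0$ while $\tr_{\gamma^{\ell}e^{i\phi}}(z_{k})\neq 0$, deduces that $\tr_{\gamma^{\ell}e^{i\phi}}(y)=0$ for all $\ell$, writes $y$ in the normal form $\alpha(\zeta)+\tilde{\alpha}(\zeta)\tilde{c}+\sum_{k}\alpha_{k}(\zeta)h^{k}$, and then invokes the transcendence of $\cos\phi$ over $\QQ(\omega_{m})$ to force $\alpha_{k}\equiv 0$ and $\tilde{\alpha}\equiv -\alpha$, whence $y=w_{0}\alpha(\zeta)$.

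Your approach is purely ring-theoretic: reduce modulo $(w_{0})$, identify the quotient as a polynomial ring in one free variable ($h$ or $v=\xi h$) over $\ZZ[\zeta]/(\zeta^{m}-1)$, and note that each $\bar{z}_{\bullet}$ is linear with unit leading coefficient, hence a non-zerodivisor. This is cleaner and more self-contained; it avoids the somewhat ad hoc appeal to transcendence and the intermediate Lemma~3.5 of Bryan that the paper cites. The paper's method, on the other hand, fits the surrounding character-theoretic computations and foreshadows the trace arguments used later (e.g., in Lemma~\ref{lemma:traces_2_r}). Your worry about hidden relations on $\xi h$ in the odd case is unfounded: the presentation $R(G^{\odd}_{m})=\ZZ[\tilde{c},\xi h,\xi^{2}]/(\tilde{c}^{2}-1,\tilde{c}\xi h-\xi h,\xi^{2m}-1)$ imposes no relation on powers of $\xi h$, so the quotient really is $(\ZZ[u]/(u^{m}-1))[v]$.
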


\begin{proof}
We prove the $G^{\ev}_{m}$ case, as the $G^{\odd}_{m}$ case is similar. For any element $v\in R(G^{\ev}_{m})$ and any $g\in G^{\ev}_{m}$, let $\tr_{g}(v)\in\CC$ denote the trace of $v$ at the element $g$. Now let $\phi\in(0,2\pi)$ be an irrational multiple of $\pi$. A simple calculation shows that
\begin{align*}
    &\tr_{\gamma^{\ell}e^{i\phi}}(w_{k})=1-\omega_{m}^{k\ell}, &
    &\tr_{\gamma^{\ell}e^{i\phi}}(z_{k})=(e^{i\phi}-\omega_{m}^{k\ell})(e^{-i\phi}-\omega_{m}^{k\ell})
\end{align*}
for all $0\le\ell\le m-1$. Note that $1-\omega_{m}^{k\ell}=0$ if and only if $k\ell$ is a multiple of $m$, whereas $(e^{i\phi}-\omega_{m}^{k\ell})(e^{-i\phi}-\omega_{m}^{k\ell})$ can never be equal to zero for any $\ell$ by our assumption on $\phi$. In particular, $\tr_{\gamma^{\ell}e^{i\phi}}(w_{0})=0$ for all $\ell$. Since $a_{0}>0$, we see that that Equation \ref{eq:polynomial_lemma_even} is only satisfied if 
\begin{equation}
\label{eq:polynomial_lemma_proof_trace}
    \tr_{\gamma^{\ell}e^{i\phi}}(y)=0\text{ for all }0\le\ell\le m-1.
\end{equation}
Using the relations in $R(G^{\ev}_{m})$, we can write $y$ in the form
\[y=\alpha(\zeta)+\wt{\alpha}(\zeta)\wt{c}+\sum_{k=1}^{N}\alpha_{k}(\zeta)h^{k}\]
for some $N\geq 1$ and some $\alpha(\zeta)$, $\wt{\alpha}(\zeta)$, $\alpha_{k}(\zeta)\in R(\ZZ_{m})$. For each $\ell=0,\dots,m-1$, denote by $p_{\ell}(X)$ the polynomial
\[p_{\ell}(X):=\alpha_{0}(\omega_{m}^{\ell})+\wt{\alpha}_{0}(\omega_{m}^{\ell})+\sum_{k=1}^{N}\alpha_{k}(\omega_{m}^{\ell})X^{k}\in\QQ(\omega_{m})[X].\]
Equation \ref{eq:polynomial_lemma_proof_trace} implies that $p_{\ell}(e^{i\phi}+e^{-i\phi})=p_{\ell}(2\cos(\phi))=0$ for all $\ell=0,\dots,m-1$ and all $\phi\in(0,\phi)$ not a rational multiple of $2\pi$. In particular there exists some such $\phi\in(0,2\pi)$ such that $\cos(\phi)$ is transcendental over $\QQ(\omega_{m})$, and so it follows that $\alpha_{0}(\omega^{\ell}_{m})+\wt{\alpha}_{0}(\omega^{\ell}_{m})=0$ and $\alpha_{k}(\omega^{\ell}_{m})=0$ for all $k=1,\dots,N$ and all $0\le\ell\le m-1$. By (\cite{Bry97}, Lemma 3.5) it follows that $\wt{\alpha}_{0}(\zeta)\equiv -\alpha_{0}(\zeta)$ and $\alpha_{k}(\zeta)\equiv 0$ for all $k$. Thus we can write
\[y=\alpha_{0}(\zeta)-\alpha_{0}(\zeta)\wt{c}=(1-\wt{c})\alpha_{0}(\zeta)=w_{0}\alpha_{0}(\zeta),\]
and so $w_{0}|y$.
\end{proof}

\begin{proof}[Proof of Proposition \ref{prop:k_invariants_kg_split}]
It suffices to show that
\[w_{0}\cdot\III(X)\supseteq\mbfw^{\vec{\mbfs}\,'-\vec{\mbfs}}\cdot\III(X'),\]
so that
\[I(X)+\vec{e}_{0}\supseteq I(X')+(\vec{\mbfs}\,'-\vec{\mbfs}).\]
Without loss of generality assume $*=\ev$, as the argument for the $*=\odd$ case is essentially identical. Now let $x\in\III(X')$. From the proof of Proposition \ref{prop:k_invariants_pin(2)_fixed_point_homotopy_equivalence}, we see that $w_{0}^{s'_{0}-s_{0}}\cdots w_{m-1}^{s'_{m-1}-s_{m-1}}x\in\III(X)$. Since $X$ is $K_{G^{\ev}_{m}}$-split, there exist some $t_{0},\dots,t_{m-1}\geq 0$ such that $\III(X)=(z_{0}^{t_{0}}\cdots z_{m-1}^{t_{m-1}})$, and so
\[w_{0}^{s'_{0}-s_{0}}\cdots w_{m-1}^{s'_{m-1}-s_{m-1}}x=z_{0}^{t_{0}}\cdots z_{m-1}^{t_{m-1}}y\]
for some $y\in R(G^{\ev}_{m})$. But since $s'_{0}>s_{0}$, it follows from Lemma \ref{lemma:must_be_w_0} that $w_{0}|y$, from which the proposition follows.
\end{proof}

We provide some examples of non-$K_{G^{*}_{m}}$-split spaces:

\begin{example}
\label{ex:k_invariants_augmentation}
Let $Z$ be a finite $G^{*}_{m}$-complex such that $G^{*}_{m}$ acts freely on $Z$ with quotient $Q=Z/G^{*}_{m}$, and let $\wt{\Sigma}Z$ denote the \emph{unreduced} suspension of $Z$, with one of the two cone points being the basepoint. Then by Facts \ref{fact:long_exact_sequence} and \ref{fact:classifying_space_2}, the image of
\[\wt{K}_{G^{*}_{m}}(\wt{\Sigma}Z)\xrightarrow{\iota^{*}}\wt{K}_{G^{*}_{m}}((\wt{\Sigma}Z)^{S^{1}})\cong R(G^{*}_{m})\]
is equal to the kernel of the map $R(G^{*}_{m})\xrightarrow{\pi} K(Q)$.

In particular if $Z=G^{*}_{m}$ acting on itself by left multiplication, then $Q=\text{pt}$ and $R(G^{*}_{m})\xrightarrow{\pi} K(Q)\cong\ZZ$ is the augmentation map, hence 
\[\III(\wt{\Sigma}G^{*}_{m})=\frak{a}=\twopartdef{(w_{0},\dots,w_{m-1},z_{0},\dots,z_{m-1})}{*=\ev,}{(w_{0},\dots,w_{m-1},z_{1/2},\dots,z_{m-1/2})}{*=\odd,}\]
and therefore
\[\mbfk(\wt{\Sigma}G^{*}_{m})=\{[\vec{e}_{0}],\dots,[\vec{e}_{m-1}]\}\subset\N^{m}.\]
\end{example}

\begin{example}
\label{ex:k_invariants_quotients}
Let $Z$, $Q$ be as in Example \ref{ex:k_invariants_augmentation}, and suppose additionally that $Z$ has a free right $G^{*}_{m}$-action as well. Let $H\subset G^{*}_{m}$ be a finite subgroup, and let $Z':=Z/H$ denote the quotient of $Z$ by the action of $H$ on the right. We see that the image of
\[\iota^{*}:\wt{K}_{G^{*}_{m}}(\wt{\Sigma}Z')\to\wt{K}_{G^{*}_{m}}((\wt{\Sigma}Z')^{S^{1}})\]
is equal to the kernel of the connecting homomorphism
\[R(G^{*}_{m})\cong \wt{K}_{G^{*}_{m}}((\wt{\Sigma}Z')^{S^{1}})\xrightarrow{\delta}\wt{K}^{1}_{G^{*}_{m}}(\Sigma(Z'_{+}))\cong K_{G^{*}_{m}}(Z').\]
We can identify $\delta$ with the map $R(G^{*}_{m})\to K_{G^{*}_{m}}(Z')$ induced by the collapse map $Z'\to \pt$.
\end{example}

\begin{example}
\label{ex:cosets_cyclic}
Continuing from the previous example, suppose $Z=G^{*}_{m}$ with $G^{*}_{m}$-action given by left multiplication, so that $Z'=G^{*}_{m}/H$ is the left coset space. We can write the connecting homomorphism as a map
\[R(G^{*}_{m})\cong \wt{K}_{G^{*}_{m}}((\wt{\Sigma}(G^{*}_{m}/H))^{S^{1}})\xrightarrow{\delta} \wt{K}^{1}_{G^{*}_{m}}(\Sigma(G^{*}_{m}/H)_{+})\cong K_{G^{*}_{m}}(G^{*}_{m}/H)\cong R(H),\]
where the final isomorphism comes from Fact \ref{fact:homogenous_spaces}, and is given by restricting to the $H$-representation over the identity coset. Then $\delta$ can be identified with the restriction map $\res^{G^{*}_{m}}_{H}:R(G^{*}_{m})\to R(H)$, and hence
\[\III(\wt{\Sigma}(G^{*}_{m}/H))=\ker(\res^{G^{*}_{m}}_{H}).\]

Consider the following examples of specific subgroups $H\subset G^{*}_{m}$:
\begin{enumerate}
    \item Let $*=\ev$, let $\omega_{m}=e^{2\pi i/m}$, let $0\le a\le m-1$, and consider the subgroup $\<\gamma\omega_{m}^{-a}\>\cong\ZZ_{m}\subset G^{\ev}_{m}$. Define $Z_{a,m}$ to be the homogenous space
    \[Z_{a,m}:=G^{\ev}_{m}/\<\gamma\omega_{m}^{-a}\>.\]
    We can identify $Z_{a,m}$ with a copy of $\Pin(2)$, where $\Pin(2)\subset G^{\ev}_{m}$ acts by left multiplication, and $\gamma\in G^{\ev}_{m}$ acts on the right by $\omega_{m}^{a}$. From the above observation we have that:
    \begin{equation*}
        \III(\wt{\Sigma}Z_{a,m})=\ker\big(R(G^{\ev}_{m})\xrightarrow{\res}R(\<\gamma\omega_{m}^{-a}\>)\big).
    \end{equation*}
    Writing $R(\<\gamma\omega_{m}^{-a}\>)\cong R(\ZZ_{m})=\ZZ[\alpha]/(\alpha^{m}-1)$, one can check that
    \begin{align*}
        &\res(\zeta)=\alpha, & &\res(\wt{c})=1, &
        &\res(h)=\alpha^{a}+\alpha^{-a},
    \end{align*}
    and hence
    \begin{align*}
        &\res(w_{k})=1-\alpha^{k}, &
        &\res(z_{k})=(1-\alpha^{k+a})(1-\alpha^{k-a}).
    \end{align*}
    It follows that
    \begin{align*}
        &\III(\wt{\Sigma}Z_{a,m})=(w_{0},z_{a},z_{m-a}), & &\mbfk(\wt{\Sigma}Z_{a,m})=\{[\vec{e}_{0}],[\vec{e}_{2a}],[\vec{e}_{m-2a}]\}.
    \end{align*}
    \item Let $*=\odd$, let $a\in\frac{1}{2}\ZZ\setminus\ZZ$ be such that $\frac{1}{2}\le a\le m-\frac{1}{2}$, and consider the subgroup $\<\mu\omega_{m}^{-a}\>\cong\ZZ_{m}\subset G^{\odd}_{m}$. Define $Z_{a,m}$ to be the homogenous space
    \[Z_{a,m}:=G^{\odd}_{m}/\<\mu\omega_{m}^{-a}\>.\]
    Similarly as above, we can identify $Z_{a,m}$ with a copy of $\Pin(2)$, where $\Pin(2)\subset G^{\odd}_{m}$ acts by left multiplication, and $\mu$ acts on the right by $\omega_{m}^{a}$. Again we have that:
    \[\III(\wt{\Sigma}Z_{a,m})=\ker\Big(R(G^{\odd}_{m})\xrightarrow{\res}R(\<\mu\omega_{m}^{-a}\>)\Big).\]
    Writing $R(\<\mu\omega_{m}^{-a}\>)\cong R(\ZZ_{m})=\ZZ[\alpha]/(\alpha^{m}-1)$ as above, one can check that
    \begin{align*}
        &\res(\xi^{2})=\alpha, &
        &\res(\wt{c})=1, &
        &\res(\xi h)=\alpha^{\frac{1}{2}+a}+\alpha^{\frac{1}{2}-a},
    \end{align*}
    and hence
    \begin{align*}
        &\res(w_{k})=1-\alpha^{k}, &
        &\res(z_{k+1/2})=(1-\alpha^{k+a+\frac{1}{2}})(1-\alpha^{k-a+\frac{1}{2}}).
    \end{align*}
    Therefore
    \begin{align*}
        &\III(\wt{\Sigma}Z_{a,m})=(w_{0},z_{a},z_{m-a}), & &\mbfk(\wt{\Sigma}Z_{a,m})=\{[\vec{e}_{0}],[\vec{e}_{2a}],[\vec{e}_{m-2a}]\},
    \end{align*}
    as in the case $\ast=\ev$.
    
    In particular, consider the case where $m=2$ and $a=\frac{1}{2}$ or $\frac{3}{2}$, which correspond to the spaces $Z_{\frac{1}{2},2}$ and $Z_{\frac{3}{2},2}$, respectively. Then
    \begin{align*}
    	&\III(\wt{\Sigma}Z_{\frac{1}{2},2})=\III(\wt{\Sigma}Z_{\frac{3}{2},2})=(w_{0},z_{1/2},z_{3/2}), & &\mbfk(\wt{\Sigma}Z_{\frac{1}{2},2})=\mbfk(\wt{\Sigma}Z_{\frac{3}{2},2})=\{[\vec{e}_{0}],[\vec{e}_{1}]\}.
    \end{align*}
    \item Let $m=2$, $*=\odd$, and consider the subgroup $\<\mp\mu j\>\cong\ZZ_{2}\subset G^{\odd}_{2}$. Define $Z_{\pm j}$ to be the homogenous space
    \[Z_{\pm j}:=G^{\odd}_{2}/\<\mp\mu j\>.\]
    We can identify $Z_{\pm j}$ with a copy of $\Pin(2)$, where $\Pin(2)\subset G^{\odd}_{2}$ acts by left multiplication, and $\mu$ acts by multiplication by $\pm j$ on the right. Again we have that
    \[\III(\wt{\Sigma}Z_{\pm j})=\ker\Big(R(G^{\odd}_{2})\xrightarrow{\res}R(\<\mp\mu j\>)\Big).\]
    Writing $R(\<\mp\mu j\>)\cong R(\ZZ_{2})=\ZZ[\alpha]/(\alpha^{2}-1)$, one can show that
    \begin{align*}
        &\res(\xi^{2})=\res(\wt{c})=\alpha, &
        &\res(\xi h)=1+\alpha,
    \end{align*}
	and hence:
	\begin{align*}
    	&\res(w_{0})=1-\alpha, & &\res(w_{1})=\res(z_{1/2})=\res(z_{3/2})=0.
	\end{align*}
	Therefore 
        \begin{align*}
            &\III(\wt{\Sigma}Z_{\pm j})=(w_{1},z_{1/2},z_{3/2}), & &\mbfk(\wt{\Sigma}Z_{\pm j})=\{[\vec{e}_{1}]\}.
        \end{align*}
\end{enumerate}
\end{example}

\begin{example}
\label{ex:multiple_cosets_cyclic}
More generally, suppose that
\[Z'=\coprod_{k=1}^{n}G^{*}_{m}/H_{k}.\]
for some finite collection of subgroups $H_{1},\dots,H_{n}$. Then $K_{G^{*}_{m}}(Z')\cong\bigoplus_{k=1}^{n}R(H_{k})$, the map on $G^{*}_{m}$-equivariant K-theory induced by the collapse map $Z\to\pt$ is given by
\begin{align*}
    R(G^{*}_{m})&\to\bigoplus_{k=1}^{n}R(H_{k}) \\
    x&\mapsto\oplus_{k=1}^{n}\res^{G^{*}_{m}}_{H_{k}}(x),
\end{align*}
and hence
\[\III(\wt{\Sigma}Z')=\cap_{k=1}^{n}\ker(\res^{G^{*}_{m}}_{H_{k}}).\]
Consider the following examples:
\begin{enumerate}
    \item Let $\ast\in\{\ev,\odd\}$, and let $a_{1},\dots,a_{n}\in\frac{1}{2}\ZZ$ be such that, for all $k=1,\dots,n$: $0\le a_{k}\le m-\frac{1}{2}$, and $a_{k}\equiv 0\pmod{1}$ if $\ast=\ev$ and $a_{k}\equiv\frac{1}{2}\pmod{1}$ if $\ast=\odd$. Consider the $G^{\ev}_{m}$-space
    \[Z_{a_{1},\dots,a_{n};m}:=Z_{a_{1},m
    }\amalg\cdots\amalg Z_{a_{n},m
    },\]
    where $Z_{a,m}$ is as in Example \ref{ex:cosets_cyclic}. From the above calculations, we can conclude that
    \begin{align*}
        &\III(\wt{\Sigma}Z_{a_{1},\dots,a_{n};m})=(w_{0},z_{a},z_{m-a}) & &\mbfk(\wt{\Sigma}Z_{a_{1},\dots,a_{n};m})=\{[\vec{e}_{0}],[\vec{e}_{2a}],[\vec{e}_{m-2a}]\}
    \end{align*}
    if there exists $a\in\frac{1}{2}\ZZ$ such that $a_{k}\equiv\pm a\pmod{m}$ for all $k=1,\dots,n$, and otherwise
    \begin{align*}
        &\III(\wt{\Sigma}Z_{a_{1},\dots,a_{n};m})=(w_{0}), & &\mbfk(\wt{\Sigma}Z_{a_{1},\dots,a_{n};m})=\{[\vec{e}_{0}]\}.
    \end{align*}
    In particular if $\ast=\odd$ and $m=2$ (so that $a_{k}=\frac{1}{2}$ or $\frac{3}{2}$ for all $k=1,\dots,n$) then
    \begin{align*}
        &\III(\wt{\Sigma}Z_{a_{1},\dots,a_{n};2})=(w_{0},z_{1/2},z_{3/2}) & &\mbfk(\wt{\Sigma}Z_{a_{1},\dots,a_{n};2})=\{[\vec{e}_{0}],[\vec{e}_{1}]\}.
    \end{align*}
    \item Let $\varepsilon_{1},\dots,\varepsilon_{n}\in\{\pm 1\}$ and let
    \[Z_{\varepsilon_{1}j,\dots,\varepsilon_{n}j}:=Z_{\varepsilon_{1}j}\amalg\cdots\amalg Z_{\varepsilon_{n}j},\]
    where $Z_{\pm j}$ is as in Example \ref{ex:cosets_cyclic}. Then from the above calculation we have that
    \begin{align*}
        &\III(\wt{\Sigma}Z_{\varepsilon_{1}j,\dots,\varepsilon_{n}j})=(w_{1},z_{1/2},z_{3/2}), & &\mbfk(\wt{\Sigma}Z_{\varepsilon_{1}j,\dots,\varepsilon_{n}j})=\{[\vec{e}_{1}]\}.
    \end{align*}
\end{enumerate}
\end{example}

\begin{example}
\label{ex:torus_cyclic}
Consider the spaces
\begin{align*}
    &T_{m}^{\ev}:=S^{1}\times jS^{1}\times\ZZ_{m}\subset \HH\times\ZZ_{m}, & &T_{m}^{\odd}:=S^{1}\times jS^{1}\times_{\ZZ_{2}}\ZZ_{2m}\subset\HH\times_{\ZZ_{2}}\ZZ_{m},
\end{align*}
each of which are topologically the disjoint union of $m$ 2-tori, and are endowed with actions of $G^{\ev}_{m}$ and $G^{\odd}_{m}$, respectively, via restricting the natural left actions of $G^{\ev}_{m}$ and $G^{\odd}_{m}$ on $\HH\times\ZZ_{m}$ and $\HH\times_{\ZZ_{2}}\ZZ_{2m}$, respectively. 

We will consider the following quotients of $T^{*}_{m}$:
\begin{enumerate}
    \item Suppose $*=\ev$, let $0\le a\le m-1$, and define
    \[T_{a,m}:=T^{\ev}_{m}/\<\gamma\omega_{m}^{-a}\>.\]
    Then $T_{a,m}$ is topologically a single 2-torus with the action of $\Pin(2)\subset G^{\ev}_{m}$ given by the canonical left action of $\Pin(2)$ on $S^{1}\times jS^{1}\subset\HH$ as in (\cite{Man14}, Example 3.7), and the action of $\gamma$ coincides with the action of $\omega_{m}^{a}$ on the right. Note that the quotient
    \[Q=\Pin(2)\backslash T_{a,m}\]
    can be identified with $S^{1}$, on which we have a residual action on the left by the quotient group
    \[\Pin(2)\backslash G^{\ev}_{m}\cong\ZZ_{m}=\<[\gamma]\>.\]
    In order to determine the residual $\ZZ_{m}$-action on $Q$, note that the quotient map $T_{a,m}\to Q$ restricted to the submanifold
    \[\wt{Q}:=\{(1,je^{i\phi})\}\approx S^{1} \subset T_{a,m}\]
    induces a two-to-one covering map $f:\wt{Q}\to Q$. We endow $Q$ with coordinates $\{e^{i\phi}\}$ so that the map $f:\wt{Q}\to Q$ is given by $(1,je^{i\phi})\mapsto e^{2i\phi}$. We see that the action of $[\gamma]\in\Pin(2)\backslash G^{\ev}_{m}$ on $Q=\wt{Q}/\sim$ is given by
    \[[\gamma]\cdot[1,je^{i\phi}]=[\omega^{-a}_{m}\gamma]\cdot[1,je^{i\phi}]=[\omega^{-a}_{m}\omega^{a}_{m},\omega^{-a}_{m}je^{i\phi}\omega^{a}_{m}]=[1,j\omega_{m}^{2a}e^{i\phi}],\]
    or equivalently
    \[[\gamma]\cdot e^{i\phi}=\omega_{m}^{4a}e^{i\phi}.\]
    Therefore the order of the action of $[\gamma]$ on $Q$ is equal to $m/d$, where $d:=(4a,m)$, and so by Fact \ref{fact:trivial_G_action} we have that
    \[K_{G^{\ev}_{m}}(T_{a,m})\cong K_{\ZZ_{m}}(Q)\cong R(\ZZ_{d})\otimes K_{\ZZ_{m/d}}(Q)\cong R(\ZZ_{d})=R(\<[\gamma^{m/d}]\>),\]
    where the isomorphism $K_{\ZZ_{m/d}}(Q)\cong\ZZ$ follows from Fact \ref{fact:free_G_action} and the observation that $\ZZ_{m/d}\cong\ZZ_{m}/\ZZ_{d}$ acts freely on $Q$.

    Next let $V$ be a complex $G^{\ev}_{m}$-representation, let $q^{*}V\to T_{a,m}$ be the pull-back bundle induced by the collapse map $q:T_{a,m}\to\pt$, and let $E:=\Pin(2)\backslash q^{*}V$ be the bundle over $Q$ obtained by quotienting out by the left $\Pin(2)$-action. From our calculation above, in order to understand the image of $[V]$ under the connecting homomorphism 
    \[\delta:R(G^{\ev}_{m})\to K_{G^{\ev}_{m}}(T_{a,m})\cong R(\ZZ_{d}),\]
    it suffices to determine the induced action of $[\gamma^{m/d}]\in\ZZ_{m}$ on the fiber $E_{x}$ for some (or any) point $x\in Q$. Without loss of generality, we can take $x=1\in Q$. We can write any element of $E$ as an equivalence class $[e^{i\theta},je^{i\phi},v]$ for some $v\in V$, where
    \[[e^{i\theta},je^{i\phi},v]=[ge^{i\theta},gje^{i\phi},g\cdot v]\]
    for any $g\in\Pin(2)$. Note that any element $\eta\in E_{1}\subset E$ has a unique presentation of the form $\eta=[1,j,v]$. We see that
    \begin{align*}
    	[\gamma^{m/d}]\cdot[1,j,v]&=[\omega^{-am/d}_{m}\gamma^{m/d}]\cdot[1,j,v] \\
    	&=[1,j\omega^{2am/d}_{m},\omega^{-am/d}_{m}\gamma^{m/d}\cdot v]=[1,j,\omega^{-am/d}_{m}\gamma^{m/d}\cdot v]
    \end{align*}
    if $\frac{4a}{d}$ is even, and
    \begin{align*}
    	[\gamma^{m/d}]\cdot[1,j,v]&=[-j\omega^{am/d}_{m}\gamma^{m/d}]\cdot[1,j,v] \\
    	&=[1,-j\omega^{2am/d}_{m},-j\omega^{am/d}_{m}\gamma^{m/d}\cdot v]=[1,j,-j\omega^{am/d}_{m}\gamma^{m/d}\cdot v]
    \end{align*}
    if $\frac{4a}{d}$ is odd. From this calculation, we see that
    \[R(G^{\ev}_{m})\xrightarrow{\delta}K_{G^{\ev}_{m}}(T_{a,m})=\twopartdef{R(G^{\ev}_{m})\xrightarrow{\res}R(\<\omega_{m}^{-aam/d}\gamma^{m/d}\>)}{\frac{4a}{d}\text{ is even},}{R(G^{\ev}_{m})\xrightarrow{\res}R(\<-j\omega_{m}^{am/d}\gamma^{m/d}\>)}{\frac{4a}{d}\text{ is odd}.}\]
    First suppose $\frac{4a}{d}$ is even, and let $R(\<\omega_{m}^{-am/d}\gamma^{m/d}\>)\cong R(\ZZ_{d})=\ZZ[\alpha]/(\alpha^{d}-1)$. One can check that the restriction map $R(G^{\ev}_{m})\to R(\<\omega_{m}^{-am/d}\gamma^{m/d}\>)$ is given by
    \begin{align*}
    	&1\mapsto 1, & &\wt{c}\mapsto 1, & &w_{k}\mapsto 1-\alpha^{k}, \\	
    	&\zeta\mapsto\alpha, & &h\mapsto\alpha^{a}+\alpha^{-a}, & &z_{k}\mapsto(\alpha^{k}-\alpha^{a})(\alpha^{k}-\alpha^{-a}).
    \end{align*}
    It follows that
    \[\III(\wt{\Sigma}T_{a,m})=\big(\{w_{kd}\}_{k=0}^{\frac{m}{d}-1},\{z_{kd+a}\}_{k=0}^{\frac{m}{d}-1},\{z_{kd-a}\}_{k=0}^{\frac{m}{d}-1}\big),\qquad d=(4a,m),\;\tfrac{4a}{d}\text{ even}.\]
    Next suppose $\frac{4a}{d}$ is odd, and similarly let $R(\<-j\omega_{m}^{am/d}\gamma^{m/d}\>)\cong R(\ZZ_{d})=\ZZ[\alpha]/(\alpha^{d}-1)$. Note that since $d=(4a,m)$, this condition implies that $4|d$ and hence $4|m$. We see that the restriction map $R(G^{\ev}_{m})\to R(\<-j\omega_{m}^{am/d}\gamma^{m/d}\>)$ is given by
    \begin{align*}
    	&1\mapsto 1, & &\wt{c}\mapsto \alpha^{\frac{d}{2}}, & &w_{k}\mapsto 1-\alpha^{k+\frac{d}{2}}, \\	
    	&\zeta\mapsto\alpha, & &h\mapsto\alpha^{\frac{d}{4}}+\alpha^{-\frac{d}{4}}, & &z_{k}\mapsto(\alpha^{k}-\alpha^{\frac{d}{4}})(\alpha^{k}-\alpha^{-\frac{d}{4}}).
    \end{align*}
    In this case, it follows that
    \[\III(\wt{\Sigma}T_{a,m})=\big(\{w_{(2k+1)d/2}\}_{k=0}^{\frac{m}{d}-1},\{z_{(2k+1)d/4}\}_{k=0}^{\frac{2m}{d}-1}\big),\qquad d=(4a,m),\;\tfrac{4a}{d}\text{ odd}.\]
    In particular consider the case where $m=p$ is an odd prime. If $a=0$, then $d=(4p,p)=p$, and so the above calculation shows that
    \begin{align*}
    	&\III(\wt{\Sigma}T_{0,p})=(w_{0},z_{0}), & &\mbfk(\wt{\Sigma}T_{0,p})=\{[\vec{e}_{0}]\}.
    \end{align*}
    On the other hand if $1\le a\le p-1$, then $d=(4a,p)=1$ and the above calculation gives
    \begin{align*}
    	&\III(\wt{\Sigma}T_{a,p})=\big(\{w_{k}\}_{k=0}^{p-1},\{z_{k}\}_{k=0}^{p-1}\big)=\frak{a}, & &\mbfk(\wt{\Sigma}T_{a,p})=\{[\vec{e}_{k}]\;|\;k=0,\dots,p-1\}.
    \end{align*}
    \item Suppose $*=\odd$, let $a\in\frac{1}{2}\ZZ\setminus\ZZ$ be such that $\frac{1}{2}\le a\le m-\frac{1}{2}$, and define
    \[T_{a,m}:=T^{\odd}_{m}/\<\mu\omega_{m}^{-a}\>\]
    Again $T_{a,m}\approx S^{1}\times jS^{1}$ with the left action of $\Pin(2)\subset G^{\odd}_{m}$ as in the even case, and the action of $\mu$ coincides with the action of $\omega_{m}^{a}$ on the right. Let $Q,\wt{Q}$ be as in the $*=\ev$ case. By a similar argument as above, the action of $[\mu]\in\Pin(2)\backslash G^{\odd}_{m}\cong\ZZ_{m}$ on $Q$ is given by $[\mu]\cdot e^{i\phi}=\omega^{a}_{m}e^{i\phi}$. It follows that the action of $[\mu]$ on $Q$ is of order $m/d$ where $d:=(4a,m)$, and
    \[K_{G^{\odd}_{m}}(T_{a,m})\cong R([\mu^{m/d}])\cong R(\ZZ_{d}).\]

    A similar calculation as in the even case shows that
    \[R(G^{\odd}_{m})\xrightarrow{\delta}K_{G^{\odd}_{m}}(T_{a,m})=\twopartdef{R(G^{\odd}_{m})\xrightarrow{\res}R(\<\omega_{m}^{-am/d}\mu^{m/d}\>)}{\frac{4a}{d}\text{ is even},}{R(G^{\odd}_{m})\xrightarrow{\res}R(\<-j\omega_{m}^{am/d}\mu^{m/d}\>)}{\frac{4a}{d}\text{ is odd}.}\]
    If $\frac{4a}{d}$ is even, then the restriction map $R(G^{\odd}_{m})\xrightarrow{\res}R(\<\omega_{m}^{-am/d}\mu^{m/d}\>)\cong\ZZ[\alpha]/(\alpha^{d}-1)$ is given by
    \begin{align*}
    	&1\mapsto 1, & &\wt{c}\mapsto 1, & &w_{\ell}\mapsto 1-\alpha^{\ell}, \\	
    	&\xi^{2}\mapsto\alpha, & &\xi h\mapsto\alpha^{\frac{1}{2}+a}+\alpha^{\frac{1}{2}-a}, & &z_{k+\frac{1}{2}}\mapsto(1-\alpha^{k+a+\frac{1}{2}})(1-\alpha^{k-a+\frac{1}{2}}).
    \end{align*}
    It follows that
    \[\III(\wt{\Sigma}T_{a,m})=\big(\{w_{kd}\}_{k=0}^{\frac{m}{d}-1},\{z_{kd+\frac{a}{2}}\}_{k=0}^{\frac{m}{d}-1},\{z_{kd-\frac{a}{2}}\}_{k=0}^{\frac{m}{d}-1}\big),\qquad d=(4a,m),\;\tfrac{4a}{d}\text{ even}.\]
    Finally suppose $\frac{4a}{d}$ is odd. Note that this implies that $m$ is even and $d\equiv 2\pmod 4$. Then the restriction map $R(G^{\odd}_{m})\xrightarrow{\res}R(\<-j\omega_{m}^{am/d}\mu^{m/d}\>)\cong\ZZ[\alpha]/(\alpha^{d}-1)$ is given by
    \begin{align*}
    	&1\mapsto 1, & &\wt{c}\mapsto\alpha^{\frac{d}{2}}, & &w_{\ell}\mapsto 1-\alpha^{\ell+\frac{d}{2}}, \\	
    	&\xi^{2}\mapsto\alpha, & &\xi h\mapsto\alpha^{\frac{1}{2}+\frac{d}{4}}+\alpha^{\frac{1}{2}-\frac{d}{4}}, & &z_{k+\frac{1}{2}}\mapsto(1-\alpha^{k+\frac{d}{4}+\frac{1}{2}})(1-\alpha^{k-\frac{d}{4}+\frac{1}{2}}),
    \end{align*}
    and hence
    \[\III(\wt{\Sigma}T_{a,m})=\big(\{w_{(2k+1)d/2}\}_{k=0}^{\frac{m}{d}-1},\{z_{(2k+1)d/4}\}_{k=0}^{\frac{2m}{d}-1}\big),\qquad d=(4a,m),\;\tfrac{4a}{d}\text{ odd}.\]
    In particular, consider the following two sub-cases:
    \begin{enumerate}
        \item Suppose $m=2$ and $a=\frac{1}{2}$ or $\frac{3}{2}$. Note that in either case, we have that $d=(4a,2)=2$, and so $\frac{4a}{d}=2a$ is odd. From the calculation above, we see that
        \begin{align*}
            &\III(\wt{\Sigma}T_{\frac{1}{2},2})=\III(\wt{\Sigma}T_{\frac{3}{2},2})=(w_{1},z_{1/2},z_{3/2}), & &\mbfk(\wt{\Sigma}T_{\frac{1}{2},2})=\mbfk(\wt{\Sigma}T_{\frac{3}{2},2})=\{[\vec{e}_{1}]\}.
        \end{align*}
        \item Suppose $m=p$ is an odd prime. If $a=\frac{p}{2}$, then $d=(2p,p)=p$, and so the above calculation shows that
        \begin{align*}
    	&\III(\wt{\Sigma}T_{p/2,p})=(w_{0},z_{p/2}), &             &\mbfk(\wt{\Sigma}T_{p/2,p})=\{[\vec{e}_{0}]\}.
        \end{align*}
        On the other hand if $a\neq\frac{p}{2}$, then $d=(4a,p)=1$ and so
        \begin{align*}
    	&\III(\wt{\Sigma}T_{a,p})=\big(\{w_{k}\}_{k=0}^{p-1},\{z_{k+\frac{1}{2}}\}_{k=0}^{p-1}\big)=\frak{a}, & &\mbfk(\wt{\Sigma}T_{a,p})=\{[\vec{e}_{k}]\;|\;k=0,\dots,p-1\},
        \end{align*}
        as in the $\ast=\ev$ case.
    \end{enumerate}
    \item Suppose $*=\odd$, $m=2$, and define the $G^{\odd}_{2}$-space
    \[T_{\pm j}:=T^{\odd}_{2}/\<\mp\mu j\>.\]
    Again, $T_{\pm j}$ can be identified with a single copy of $S^{1}\times jS^{1}$, with $\Pin(2)$-action given by the canonical left action of $\Pin(2)$ on $S^{1}\times jS^{1}\subset\HH$, and with $\mu$ acting on $T_{j}$ via multiplication by $\pm j$ on the right. Let $Q=\Pin(2)\backslash G^{\odd}_{2}$, $\wt{Q}\subset T_{\pm j}$ be as above. We see that the action of $[\mu]\in\Pin(2)\backslash G^{\odd}_{2}\cong\ZZ_{2}$ on $Q=\wt{Q}/\sim$ is given by
    \[[\mu]\cdot[1,je^{i\phi}]=[\mp e^{i\phi}\mu]\cdot[1,je^{i\phi}]=[1,\mp je^{-i\phi}],\]
    or equivalently
    \[[\mu]\cdot e^{i\phi}=e^{-i\phi}.\]
    It follows that the induced action of $\mu$ on $Q\approx S^{1}$ coincides with the complex conjugation involution, with fixed points $\{\pm 1\}\subset Q$. Using arguments from \cite{ChoMas:00}, one can show that
	\[K_{\ZZ_{2}}(Q)\cong R(\ZZ_{2})\times_{\varepsilon} R(\ZZ_{2}):=\{(x,y)\in R(\ZZ_{2})\times R(\ZZ_{2})\;|\;\varepsilon(x)=\varepsilon(y)\},\]
	where $\varepsilon:R(\ZZ_{2})\to\ZZ$ denotes the augmentation map. Furthermore, if $E$ is a virtual $\ZZ_{2}$-equivariant vector bundle over $Q$, the two copies of $R(\ZZ_{2})$ can be identified with the virtual $\ZZ_{2}$-representations $E_{1}$, $E_{-1}$ over the fixed points $1,-1$, respectively.
	
	In order to determine the image of a virtual representation $V\in R(G^{\odd}_{2})$ under the connecting homomorphism 
	\[\delta:R(G^{\odd}_{2})\xrightarrow{q^{*}}K_{G^{\odd}_{2}}(T_{j})\cong K_{\ZZ_{2}}(Q),\]
	it suffices to choose an element from each of the preimages $f^{-1}(1),f^{-1}(-1)\subset\wt{Q}$, say $1,i\in\wt{Q}$, and look at the induced action of $\mu$ on representations above them. Given $v\in V$, we see that:
	\begin{align*}
	&[\mu]\cdot[1,j,v]=[\mp j\mu]\cdot[1,j,v]=[1,j,\mp j\mu\cdot v], \\
	&[\mu]\cdot[1,ji,v]=[\mp i\mu]\cdot[1,ji,v]=[1,ji,\mp i\mu\cdot v],
	\end{align*}
	and hence the connecting homomorphism can be identified with the homomorphism
	\begin{align*}
		R(G^{\odd}_{2})&\to R(\ZZ_{2})\times_{\varepsilon}R(\ZZ_{2}) \\
		[V]&\mapsto(\res^{G^{\odd}_{2}}_{\<\mp j\mu\>}([V]),\res^{G^{\odd}_{2}}_{\<\mp i\mu\>}([V]).
	\end{align*}
	Writing 
	\[R(\ZZ_{2})\times_{\varepsilon}R(\ZZ_{2})\subset R(\ZZ_{2})\oplus R(\ZZ_{2})=(\ZZ[\alpha_{1}]/(\alpha_{1}^{2}-1))\oplus(\ZZ[\alpha_{2}]/(\alpha_{2}^{2}-1)),\]
	we see that the above map is given by
    \begin{align*}
    	&1\mapsto (1,1), & &\wt{c}\mapsto(\alpha_{1},1), & &w_{0}\mapsto (1-\alpha_{1},0), & &w_{1}\mapsto(0,1-\alpha_{2}), \\	
    	&\xi^{2}\mapsto(\alpha_{1},\alpha_{2}), & &\xi h\mapsto(1+\alpha_{1},1+\alpha_{2}), & &z_{1/2}\mapsto(0,0), & &z_{3/2}\mapsto(0,0),
    \end{align*}
    and hence
    \begin{align*}
    &\III(\wt{\Sigma}T_{\pm j})=(z_{1/2},z_{3/2}), & &\mbfk(\wt{\Sigma}T_{\pm j})=\{[\vec{e}_{1}]\}.
    \end{align*}
\end{enumerate}
\end{example}

We next look at the behavior of our equivariant $k$-invariants under smash products:

\begin{lemma}
\label{lemma:k_invariants_products}
Suppose $X,X'$ are spaces of type $\CC$-$G^{*}_{m}$-$\SWF$. Then $\III(X)\cdot\III(X')\subset\III(X\wedge X')$. In particular:
\begin{enumerate}
    \item The following statements are true:
    \begin{enumerate}
        \item For any $\vec{k}\in \mbfk(X)$, $\vec{k}'\in \mbfk(X')$:
        \begin{enumerate}
            \item $\vec{k}+\vec{k}'\not\prec\vec{k}''$ for every $\vec{k}''\in\mbfk(X\wedge X')$.
            \item There exists some $\vec{k}''\in\mbfk(X\wedge X')$ such that $\vec{k}+\vec{k}'\succeq\vec{k}''$.
        \end{enumerate}
        \item $\vec{\ol{k}}(X)+\vec{\ol{k}}(X')\succeq\vec{\ul{k}}(X\wedge X')$.
    \end{enumerate}
    \item Suppose $X\wedge X'$ is $K_{G^{*}_{m}}$-split, and let $\vec{k}''$ be the unique element of $\mbfk(X\wedge X')$. Then:
    \begin{enumerate}
        \item For any $\vec{k}\in \mbfk(X)$, $\vec{k}'\in \mbfk(X')$, we have that $\vec{k}+\vec{k}'\succeq\vec{k}''$.
        \item In particular, $\vec{\ul{k}}(X)+\vec{\ul{k}}(X')\succeq \vec{k}''$.
    \end{enumerate}
\end{enumerate}
\end{lemma}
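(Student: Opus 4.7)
The plan is to first establish the inclusion $\III(X)\cdot\III(X')\subset\III(X\wedge X')$, then extract the conclusions about the minima from this purely ring-theoretic statement by combining it with the lattice-theoretic machinery of Lemma \ref{lemma:comparing_minima_sum_of_subsets}.

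For the inclusion, note that $X\wedge X'$ is itself a space of type $\CC$-$G^{*}_{m}$-$\SWF$: if $X$ is at level $\mbfs$ and $X'$ is at level $\mbfs'$, then the diagonal $G^{*}_{m}$-action on $X\wedge X'$ makes it a space of type $\CC$-$G^{*}_{m}$-$\SWF$ at level $\mbfs+\mbfs'$, with $(X\wedge X')^{S^{1}}= X^{S^{1}}\wedge (X')^{S^{1}}$. The smash product map
\[
\otimes\co \wt{K}_{G^{*}_{m}}(X)\otimes \wt{K}_{G^{*}_{m}}(X')\to \wt{K}_{G^{*}_{m}}(X\wedge X')
\]
from Fact \ref{fact:product_map} is natural with respect to the inclusions of fixed point sets, and the equivariant Bott class is multiplicative in the sense that $b_{\mbfs\wt{\CC}}\otimes b_{\mbfs'\wt{\CC}}=b_{(\mbfs+\mbfs')\wt{\CC}}$. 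Thus if $\alpha\in\wt{K}_{G^{*}_{m}}(X)$ and $\beta\in\wt{K}_{G^{*}_{m}}(X')$ satisfy $\iota^{*}\alpha=x\cdot b_{\mbfs\wt{\CC}}$ and $(\iota')^{*}\beta=y\cdot b_{\mbfs'\wt{\CC}}$, then $(\iota'')^{*}(\alpha\otimes\beta)=xy\cdot b_{(\mbfs+\mbfs')\wt{\CC}}$, where $\iota''$ is the inclusion of the fixed point set of $X\wedge X'$. This shows $xy\in\III(X\wedge X')$.

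Next I would translate this to a statement about the subsets $I(\cdot)\subset\N^{m}$. Let $\vec{k}\in I(X)$ and $\vec{k}'\in I(X')$ be represented by tuples with witnesses $x,y$, so that $w_{0}x=w_{0}^{k_{0}+1}w_{1}^{k_{1}}\cdots w_{m-1}^{k_{m-1}}$ and similarly for $y$. Multiplying and using the relation $w_{0}^{2}=2w_{0}\in R(G^{*}_{m})$ together with the absence of $2$-torsion in $R(G^{*}_{m})$ (as was used in the proof of Proposition \ref{prop:quotient_lattice_N}), one obtains
\[
w_{0}\cdot(xy)=w_{0}^{(k_{0}+k_{0}')+1}w_{1}^{k_{1}+k_{1}'}\cdots w_{m-1}^{k_{m-1}+k_{m-1}'},
\]
so that $\vec{k}+\vec{k}'\in I(X\wedge X')$. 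In other words $I(X)+I(X')\subseteq I(X\wedge X')$.

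With this containment in hand, conclusions (1a) and (1b) follow directly from applying Lemma \ref{lemma:comparing_minima_sum_of_subsets}(1) to $A=I(X)$, $B=I(X')$, $C=I(X\wedge X')$ inside the additive lattice $\N^{m}$. For conclusion (2), note that if $X\wedge X'$ is $K_{G^{*}_{m}}$-split, then $\III(X\wedge X')$ is generated by a single monomial in the $z$-variables, and by Example \ref{ex:k_invariants_representation_spheres}-style reasoning $\mbfk(X\wedge X')=\{\vec{k}''\}$ is a singleton; parts (2a) and (2b) are then immediate from Lemma \ref{lemma:comparing_minima_sum_of_subsets}(3). The only subtle point in the argument is the passage from the $K$-theoretic identity $w_{0}^{2}xy=2w_{0}(\text{monomial})$ to $w_{0}xy=w_{0}(\text{monomial})$, which is why the $2$-torsion-free nature of $R(G^{*}_{m})$ is essential; this is the main place one has to be careful, but it is precisely the content already exploited in the construction of the quotient lattice $\N^{m}$.
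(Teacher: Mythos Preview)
Your proof is correct and follows essentially the same approach as the paper: both establish the ideal inclusion $\III(X)\cdot\III(X')\subset\III(X\wedge X')$ via naturality of the external product map with respect to the inclusions of $S^{1}$-fixed point sets, identifying the induced map on $\wt{K}_{G^{*}_{m}}$ of the fixed point sets with ring multiplication in $R(G^{*}_{m})$. Your version is slightly more explicit in spelling out the translation to $I(X)+I(X')\subset I(X\wedge X')$ and the appeal to Lemma~\ref{lemma:comparing_minima_sum_of_subsets}, but the argument is the same.
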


\begin{proof}
If $\iota:X^{S^{1}}\to X$, $\iota':(X')^{S^{1}}\to X'$ denote the inclusion maps, let
\[\iota\wedge\iota':X^{S^{1}}\wedge (X')^{S^{1}}\cong(X\wedge X')^{S^{1}}\to X\wedge X'\]
denote the inclusion of the $S^{1}$-fixed point set of $X\wedge X'$. Note that there exists a commutative diagram

\begin{center}
    \begin{tikzcd}
    \wt{K}_{G^{*}_{m}}(X)\otimes\wt{K}_{G^{*}_{m}}(X') \arrow[r, "\mu"] \arrow[d, "\iota^{*}\otimes\iota^{*}"]
    & \wt{K}_{G^{*}_{m}}(X\wedge X') \arrow[d, "(\iota\wedge\iota')^{*}"] \\
    \wt{K}_{G^{*}_{m}}(X^{S^{1}})\otimes\wt{K}_{G^{*}_{m}}((X')^{S^{1}}) \arrow[r, "\mu" ]
    & \wt{K}_{G^{*}_{m}}(X^{S^{1}}\wedge (X')^{S^{1}}),
    \end{tikzcd}
\end{center}

where $\mu$ denotes the external product map from Fact \ref{fact:product_map}. Hence $(\iota\wedge\iota')^{*}\circ\mu$ is equal to the composition
\[\wt{K}_{G^{*}_{m}}(X)\otimes\wt{K}_{G^{*}_{m}}(X')\xrightarrow{\iota^{*}\otimes(\iota')^{*}}\wt{K}_{G^{*}_{m}}(X^{S^{1}})\otimes\wt{K}_{G^{*}_{m}}((X')^{S^{1}})\xrightarrow{\mu}\wt{K}_{G^{*}_{m}}(X^{S^{1}}\wedge (X')^{S^{1}}).\]
Under the identifications 
\[\wt{K}_{G^{*}_{m}}(X^{S^{1}})\cong\wt{K}_{G^{*}_{m}}(X^{S^{1}}\wedge X^{S^{1}})\cong R(G^{*}_{m}),\]
one can show that the map  $\wt{K}_{G^{*}_{m}}(X^{S^{1}})\otimes\wt{K}_{G^{*}_{m}}(X^{S^{1}})\xrightarrow{\mu}\wt{K}_{G^{*}_{m}}(X^{S^{1}}\wedge X^{S^{1}})$ is equivalent to the multiplication map on $R(G^{*}_{m})$. Therefore the image of $\mu\circ(\iota^{*}\otimes(\iota')^{*})$ is precisely the set of elements of the form $ab\in R(G^{*}_{m})$, where $a,b\in\III(X)$. The proposition thus follows from the observation that $\im((\iota\wedge\iota')^{*}\circ\mu)\subset\im((\iota\wedge\iota')^{*})$.
\end{proof}

To conclude this section, we discuss the behavior of equivariant $k$-invariants under equivariant Spanier-Whitehead duality. Recall the following definition from \cite{Man14} (see also \cite{May96}, Section XVI.8):

\begin{definition}
Let $G$ be a compact Lie group, and let $V$ be a finite-dimensional $G$-representation. We say that two finite pointed $G$-spaces $X,X'$ are \emph{equivariantly $V$-dual} if there exist $G$-equivariant maps $\varepsilon:X'\wedge X\to V^{+}$ and $\eta:V^{+}\to X\wedge X'$ such that the following diagrams homotopy commute:

\begin{center}
\begin{tikzcd}
 V^{+}\wedge X \arrow[rd, "\tau"] \arrow[r, "\eta\wedge\id"] & X\wedge X'\wedge X \arrow[d, "\id\wedge\varepsilon"] \\
& X\wedge V^{+}
\end{tikzcd}\qquad\qquad\qquad
\begin{tikzcd}
 X'\wedge V^{+} \arrow[d, "\tau"] \arrow[r, "\id\wedge\eta"] & X'\wedge X\wedge X' \arrow[d, "\varepsilon\wedge\id"] \\
V^{+}\wedge X' \arrow[r, "-\id\wedge\id"] & V^{+}\wedge X',
\end{tikzcd}
\end{center}

where $\tau$ is the transposition map which swaps the two factors.
\end{definition}

The following is an immediate corollary of (\cite{Man14}, Lemma 3.12) and Lemma \ref{lemma:comparison_of_k_invariants}:

\begin{proposition}
\label{prop:k_invariants_duality}
Suppose $X$ and $X'$ are spaces of type $\CC$-$G^{*}_{m}$-$\SWF$ which are $G^{*}_{m}$-equivariantly $(\mbfu\wt{\CC}\oplus\mbft\HH)$-dual for some $\mbfu\in R(\ZZ_{m})_{\geq 0}^{\sym}$, $\mbft\in R(\ZZ_{2m})_{\geq 0}^{*}$. Then
\[\vec{k}+\vec{k}'\succeq [\DDD^{*}(\vec{\mbft})]\text{ for all }\vec{k}\in\mbfk(X),\, \vec{k}'\in\mbfk(X').\]
In particular:
\[\vec{\ul{k}}(X)+\vec{\ul{k}}(X')\succeq[\DDD^{*}(\vec{\mbft})].\]
\end{proposition}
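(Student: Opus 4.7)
The plan is to combine Example~\ref{ex:k_invariants_representation_spheres}, which computes the equivariant $k$-invariants of representation spheres, with the smash-product bound of Lemma~\ref{lemma:k_invariants_products}, using the duality to identify $V^{+}:=(\mbfu\wt{\CC}\oplus\mbft\HH)^{+}$ as a kind of ``lower bound'' for $X\wedge X'$ on the $S^1$-fixed point side. First I would observe that $V^{+}$ is itself a space of type $\CC$-$G^{*}_{m}$-$\SWF$ at level $\mbfu$, and by Example~\ref{ex:k_invariants_representation_spheres} the ideal $\III(V^{+})=(z_{0}^{t_{0}}\cdots z_{m-1}^{t_{m-1}})$ is principal with $\mbfk(V^{+})=\{[\DDD^{*}(\vec{\mbft})]\}$. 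In particular, the set $I(V^{+})\subseteq\N^{m}$ is upper-complete with unique minimum $[\DDD^{*}(\vec{\mbft})]$, so every element of $I(V^{+})$ dominates $[\DDD^{*}(\vec{\mbft})]$.

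Next I would restrict the duality maps $\eta\co V^{+}\to X\wedge X'$ and $\varepsilon\co X'\wedge X\to V^{+}$ to $S^{1}$-fixed points. Since $S^{1}$ acts trivially on $\mbfu\wt{\CC}$, we have $(V^{+})^{S^{1}}=(\mbfu\wt{\CC})^{+}$, and the restricted maps $\eta^{S^{1}},\varepsilon^{S^{1}}$ exhibit $X^{S^{1}}$ and $(X')^{S^{1}}$ as $G^{*}_{m}$-equivariantly $(\mbfu\wt{\CC})$-dual. Because both fixed point sets are $G^{*}_{m}$-equivalent to representation spheres, say $X^{S^{1}}\simeq(\mbfs\wt{\CC})^{+}$ and $(X')^{S^{1}}\simeq(\mbfs'\wt{\CC})^{+}$, uniqueness of Spanier--Whitehead duals for representation spheres forces $\mbfs+\mbfs'=\mbfu$. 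Thus $X\wedge X'$ is a space of type $\CC$-$G^{*}_{m}$-$\SWF$ at the same level $\mbfu$ as $V^{+}$, and $\eta^{S^{1}}$ is a (stable) $G^{*}_{m}$-equivariant equivalence of $S^{1}$-fixed point sets.

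Applying Proposition~\ref{prop:k_invariants_s1_fixed_point_homotopy_equivalence} to the map $\eta\co V^{+}\to X\wedge X'$ (passing to a suitable suspension as in Proposition~\ref{prop:k_invariants_stable_homotopy_equivalence} if needed to turn the stable equivalence into an honest one) yields $I(X\wedge X')\subseteq I(V^{+})$. Since every element of $I(V^{+})$ is $\succeq[\DDD^{*}(\vec{\mbft})]$, the same holds for every $\vec{k}''\in\mbfk(X\wedge X')$. By Lemma~\ref{lemma:k_invariants_products}(1)(a)(ii), for any $\vec{k}\in\mbfk(X)$ and $\vec{k}'\in\mbfk(X')$ there exists some $\vec{k}''\in\mbfk(X\wedge X')$ with $\vec{k}+\vec{k}'\succeq\vec{k}''$, and combining these two inequalities gives
\[
\vec{k}+\vec{k}'\;\succeq\;\vec{k}''\;\succeq\;[\DDD^{*}(\vec{\mbft})],
\]
which is the first claim. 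The bound $\vec{\ul{k}}(X)+\vec{\ul{k}}(X')\succeq[\DDD^{*}(\vec{\mbft})]$ then follows by taking the meet over $\vec{k},\vec{k}'$, using that meet distributes over addition in the product lattice $\N^{m}$ (and in its quotient $\N^{m}$) since each coordinate-wise minimum commutes with coordinate-wise addition.

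The main technical point is the passage from the stable Spanier--Whitehead duality on $S^{1}$-fixed points to a genuine $G^{*}_{m}$-equivariant homotopy equivalence suitable for invoking Proposition~\ref{prop:k_invariants_s1_fixed_point_homotopy_equivalence}. As in (\cite{Man14}, Lemma~3.12), this is handled by working after stabilization and using that both $(V^{+})^{S^{1}}$ and $(X\wedge X')^{S^{1}}$ are representation spheres of the same underlying $G^{*}_{m}$-type, so that no obstruction beyond the grading constraint $\mbfs+\mbfs'=\mbfu$ can arise.
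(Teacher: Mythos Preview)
Your overall strategy matches the paper's: reduce to showing that $X\wedge X'$ and $V^{+}=(\mbfu\wt{\CC}\oplus\mbft\HH)^{+}$ have the same (or comparable) $I$-invariants via the duality maps, then feed this into the smash-product bound from Lemma~\ref{lemma:k_invariants_products}. The paper in fact shows full local equivalence of $X\wedge X'$ and $V^{+}$ (using both $\eta$ and $\varepsilon$), while you only use $\eta$ to get the one-sided containment $I(X\wedge X')\subseteq I(V^{+})$; for the stated conclusion this one direction is enough, so that simplification is fine.

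The gap is in the step you flag yourself: showing that $\eta^{S^{1}}$ is a $G^{*}_{m}$-equivariant homotopy equivalence. Knowing that $(V^{+})^{S^{1}}$ and $(X\wedge X')^{S^{1}}$ are $G^{*}_{m}$-representation spheres of the same type $(\mbfu\wt{\CC})^{+}$ does \emph{not} by itself force the particular map $\eta^{S^{1}}$ to be an equivalence---a $G^{*}_{m}$-self-map of a representation sphere can certainly fail to be one. Your appeal to ``uniqueness of Spanier--Whitehead duals'' only tells you the abstract homotopy types agree; it says nothing about the specific $\eta^{S^{1}}$. The paper's argument closes this by restricting the duality to each subgroup $H^{*}_{m,d}=S^{1}\times(\text{or}\times_{\ZZ_2})\ZZ_{d}$ for every divisor $d\mid m$: the restricted $\eta^{H^{*}_{m,d}},\varepsilon^{H^{*}_{m,d}}$ are still duality maps, now between ordinary spheres of equal dimension, and the duality axioms force them to have degree $\pm 1$. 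Since this holds for all $d$, the map $\eta^{S^{1}}$ is a non-equivariant equivalence on every $H$-fixed point set for $H\subset\ZZ_{2}\times\ZZ_{m}$, and the equivariant Whitehead theorem promotes this to a $G^{*}_{m}$-equivalence. This fixed-point-by-fixed-point check is precisely what goes beyond the $\Pin(2)$ case of \cite{Man14}, where there is only one relevant subgroup to consider; you should spell it out rather than asserting ``no obstruction can arise.''

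A minor point: your derivation of $\vec{\ul{k}}(X)+\vec{\ul{k}}(X')\succeq[\DDD^{*}(\vec{\mbft})]$ by ``meet distributes over addition'' is not valid in an arbitrary additive lattice. It does hold in $\NN^{m}$ (coordinatewise), and then descends to $\N^{m}$ via condition (C4), but you should say that rather than assert distributivity abstractly.
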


\begin{proof}
By Lemma \ref{lemma:comparing_minima_sum_of_subsets}, Corollary \ref{cor:k_invariants_local_equivalence} and Lemma \ref{lemma:k_invariants_products}, it suffices to show that $X\wedge X'$ and $(\mbfu\wt{\CC}\oplus\mbft\HH)^{+}$ are locally equivalent. Let $\mbfs,\mbfs'$ denote the levels of $X,X'$, respectively, and for each divisor $d|m$ define the following subgroups:
\begin{align*}
&H^{\ev}_{m,d}:=S^{1}\times\ZZ_{d}\subset G^{\ev}_{m}, & &H^{\odd}_{m,d}:=S^{1}\times_{\ZZ_{2}}\ZZ_{2d}\subset G^{\odd}_{m}.
\end{align*}
Then for each $d|m$, the restrictions of the duality maps $\varepsilon$, $\eta$ to their $H^{*}_{m,d}$-fixed point sets induce a $(\sum_{j=0}^{d-1}u_{jm/d}\zeta^{jm/d})\wt{\CC}$-duality between $((\sum_{j=0}^{d-1}s_{jm/d}\zeta^{jm/d})\wt{\CC})^{+}$ and $((\sum_{j=0}^{d-1}s'_{jm/d}\zeta^{jm/d})\wt{\CC})^{+}$. In particular by taking $d=1$, we have that $\mbfs+\mbfs'=\mbfu$, and so we can think of $\varepsilon^{S^{1}}$, $\eta^{S^{1}}$ as $\ZZ_{2}\times\ZZ_{m}$-equivariant maps from $(\mbfu\wt{\CC})^{+}$ to itself. But since the maps $\varepsilon^{H^{*}_{m,d}}$, $\eta^{H^{*}_{m,d}}$ must induce non-equivariant homotopy equivalences for all $d|m$, it follows that $\varepsilon^{S^{1}}$, $\eta^{S^{1}}$ induce self homotopy equivalences $((\mbfu\wt{\CC})^{+})^{H}\simeq((\mbfu\wt{\CC})^{+})^{H}$ for all subgroups $H\subset\ZZ_{2}\times\ZZ_{m}$, and so $\varepsilon^{S^{1}}$, $\eta^{S^{1}}$ must be $G^{*}_{m}$-homotopy equivalences.
\end{proof}

We will make use of the following lemma (see \cite{May96}, Section XVI.8):

\begin{lemma}
\label{lemma:duality_embeddings}
Let $V$ be a $G$-representation and let $X$ be a $G$-space along with an embedding of $X$ into the unit sphere $S(V)$ of $V$. Then $\wt{\Sigma}X$ and $\wt{\Sigma}(S(V)\setminus X)$ are equivariantly $V$-dual.
\end{lemma}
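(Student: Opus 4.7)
This is the classical equivariant Spanier--Whitehead duality statement for a $G$-invariant decomposition of a representation sphere, and I would prove it by exhibiting explicit $G$-equivariant duality maps and verifying the two triangle identities up to equivariant homotopy. Set $Y := S(V) \setminus X$, and parametrize $\wt{\Sigma}X = (X \times [-1,1])/\sim$ with $X \times \{-1\}$ collapsed to the basepoint and $X \times \{+1\}$ collapsed to a second cone point; parametrize $\wt{\Sigma}Y$ analogously.

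First I would construct the counit $\varepsilon : \wt{\Sigma}Y \wedge \wt{\Sigma}X \to V^{+}$ by the explicit formula
\[
  \varepsilon\bigl([y, s] \wedge [x, t]\bigr) \;=\; \tfrac{1-t}{1+t}\, x \;-\; \tfrac{1-s}{1+s}\, y,
\]
understood to equal $\infty \in V^{+}$ whenever $s = -1$ or $t = -1$. This formula is manifestly $G$-equivariant, sends the basepoint locus to $\infty$, and is continuous: the disjointness $X \cap Y = \emptyset$ (together with a closed embedding hypothesis) guarantees a uniform lower bound $1 - \langle x, y\rangle \geq c > 0$, so that $|\tfrac{1-t}{1+t} x - \tfrac{1-s}{1+s} y| \to \infty$ along any sequence with $s \to -1$ or $t \to -1$.

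Next I would construct the unit $\eta : V^{+} \to \wt{\Sigma}X \wedge \wt{\Sigma}Y$ as a Pontryagin--Thom-style collapse essentially inverse to $\varepsilon$: for a generic $v \in V$ which can be written as $\tfrac{1-t}{1+t}x - \tfrac{1-s}{1+s}y$ with $x \in X$, $y \in Y$, $s, t \in (-1, 1]$, set $\eta(v) = [x, t] \wedge [y, s]$; vectors $v$ lying outside the ``good locus'' where this representation exists, together with $\infty$, are sent to the basepoint. Making this continuous requires choosing a $G$-invariant open neighborhood of the good locus and tapering with a $G$-invariant partition of unity.

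The triangle identities $(\id \wedge \varepsilon) \circ (\eta \wedge \id) \simeq_{G} \tau$ and $(\varepsilon \wedge \id) \circ (\id \wedge \eta) \simeq_{G} (-\id) \wedge \id$ then follow from straight-line $G$-homotopies in the target, using the convex structure on $V \subset V^{+}$ and the fact that $\varepsilon \circ \tau \circ \eta$ agrees with the identity on the image of $\eta$ by construction. The main technical obstacle will be the rigorous construction of $\eta$: the good locus is generically a proper subset of $V$ and may fail to be a manifold, so the collapse needs some care. Since this lemma is standard in the equivariant stable category (see May, \emph{Equivariant Homotopy and Cohomology Theory}, Chapter XVI.8), one could alternatively invoke the general equivariant Pontryagin--Thom framework applied to the embedding $X \hookrightarrow S(V) \hookrightarrow V$, bypassing the explicit analysis entirely.
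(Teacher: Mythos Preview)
The paper does not give a proof; the lemma is stated with a reference to May's \emph{Equivariant Homotopy and Cohomology Theory}, Chapter~XVI.8, which is exactly the source you invoke at the end. So your fallback position already matches the paper's treatment.

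Your explicit construction of $\varepsilon$, however, has a gap that you do not flag. The claimed uniform lower bound $1 - \langle x, y\rangle \geq c > 0$ is false: if $X \subset S(V)$ is closed then $Y = S(V)\setminus X$ is open, and points of $Y$ accumulate on $\partial X \subset X$, so $\inf_{x\in X,\, y\in Y}\lvert x - y\rvert = 0$ whenever $X$ is a proper nonempty closed subset. In the paper's own applications one has $X \cong \Pin(2) \subset S^3$, and the complement certainly approaches $X$. Your formula for $\varepsilon$ may nonetheless be continuous---when $Y$ is noncompact the quotient topology on $\wt{\Sigma}Y$ is coarse enough that the troublesome sequences in $Y\times[-1,1]$ need not converge in $\wt{\Sigma}Y$---but the argument as written does not establish this. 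You correctly identify the construction of $\eta$ as the harder step, and the clean route is indeed the general duality machinery in May that both you and the paper cite.
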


\begin{example}
\label{ex:duality}
We exhibit some examples of $V$-dual spaces where the inequalities in Proposition \ref{prop:k_invariants_duality} are strict:
\begin{enumerate}
    \item Let $\ast\in\{\ev,\odd\}$, let $a\in\frac{1}{2}\ZZ$ be such that $0\le a\le m-\frac{1}{2}$ and $a\equiv 0\pmod{1}$ if $\ast=\ev$ and $a\equiv\frac{1}{2}\pmod{1}$ if $\ast=\odd$, and let $Z_{a,m}$, $T_{a,m}$ be the spaces considered in Examples \ref{ex:cosets_cyclic} and \ref{ex:torus_cyclic}. There exist canonical $G^{*}_{m}$-equivariant embeddings
    \begin{align*}
        &e_{Z}:Z_{a,m}\cong(S^{1}\times\{0\})\cup(\{0\}\times j S^{1})\hookrightarrow S(\HH_{a}) \\
        &e_{T}:T_{a,m}\cong \tfrac{1}{\sqrt{2}}(S^{1}\times j S^{1})\hookrightarrow S(\HH_{a})
    \end{align*}
    into the unit sphere $S(\HH_{a})$ of the $G^{*}_{m}$-representation $\HH_{a}$. We have the following explicit $G^{*}_{m}$-equivariant deformation retraction from $S(\HH_{a})\setminus Z_{a,m}$ to $\wt{\Sigma}T_{a,m}$:
    \begin{align*}
        d:(S(\HH_{a})\setminus Z_{a,m})\times[0,1]&\to S(\HH_{a})\setminus Z_{a,m} \\
        (re^{i\theta},j\sqrt{1-r^{2}}e^{i\phi},t)&\mapsto((r(1-t)+\tfrac{1}{\sqrt{2}}t)e^{i\theta},j(\sqrt{1-r^{2}}(1-t)+\tfrac{1}{\sqrt{2}}t)e^{i\phi}).
    \end{align*}
    By Lemma \ref{lemma:duality_embeddings} it follows that $\wt{\Sigma}Z_{a,m}$ is $\HH_{a}$-dual to $\wt{\Sigma}T_{a,m}$. From our calculations above, we have that
    \begin{align*}
        &\III(\HH_{a}^{+})=(z_{a}), \\
        &\III(\wt{\Sigma}Z_{a,m})=(w_{0},z_{a},z_{m-a}), \\
        &\III(\wt{\Sigma}T_{a,m})=\twopartdef{(w_{kd},z_{kd+a},z_{kd-a}\;|\;k=0,\dots,\tfrac{m}{d}-1)}{\frac{4a}{d}\text{ is even,}}{(w_{(2k+1)d/2},z_{(2k+1)d/4}\;|\;k=0,\dots,\tfrac{2m}{d}-1)}{\frac{4a}{d}\text{ is odd,}}
    \end{align*}
    where $d:=(4a,m)$. One can use the relations in $R(G^{*}_{m})$ to show that
    \[\III(\wt{\Sigma}Z_{a,m})\cdot\III(\wt{\Sigma}T_{a,m})\subsetneq\III(\HH_{a}^{+}).\]
    \item Let $m=2$, $*=\odd$, and consider the spaces $Z_{j}$, $T_{j}$ from Examples \ref{ex:cosets_cyclic}, \ref{ex:torus_cyclic}, respectively. There are $G^{\odd}_{2}$-equivariant embeddings $Z_{j},T_{j}\hookrightarrow S(\HH_{1/2})$ given as follows:
    \begin{align*}
        e_{Z_{j}}:Z_{j}&\hookrightarrow S(\HH_{1/2}) & e_{T_{j}}:T_{j}&\hookrightarrow S(\HH_{1/2}) \\
        e^{i\theta}&\mapsto(\tfrac{1}{\sqrt{2}}e^{i\theta},-\tfrac{1}{\sqrt{2}}jie^{-i\theta}), & (e^{i\theta},e^{i\phi})&\mapsto(\tfrac{1}{\sqrt{2}}(e^{i\theta}+ie^{-i\phi}),\tfrac{1}{\sqrt{2}}(j(-ie^{-i\theta}+e^{i\phi}))). \\
        je^{i\theta}&\mapsto(\tfrac{1}{\sqrt{2}}ie^{-i\theta},\tfrac{1}{\sqrt{2}}je^{i\theta}), & &
    \end{align*}
    As above, one can check that $S(\HH_{1/2})\setminus Z_{j}$ equivariantly deformation retracts onto $T_{j}$, and so $\wt{\Sigma}Z_{j}$ and $\wt{\Sigma}T_{j}$ are $\HH_{1/2}$-dual. Similarly, we have that $\wt{\Sigma}Z_{-j}$ and $\wt{\Sigma}T_{-j}$ are $\HH_{3/2}$-dual. From our calculations above we have that
    \begin{align*}
        &\III(\HH_{1/2}^{+})=(z_{1/2}), & &\III(\HH_{3/2}^{+})=(z_{3/2}), \\
        &\III(\wt{\Sigma}Z_{j})=\III(\wt{\Sigma}Z_{-j})=(w_{1},z_{1/2},z_{3/2}), &
        &\III(\wt{\Sigma}T_{j})=\III(\wt{\Sigma}T_{-j})=(z_{1/2},z_{3/2}),
    \end{align*}
    from which it follows that
    \begin{align*}
        &\III(\wt{\Sigma}Z_{j})\cdot\III(\wt{\Sigma}T_{j})\subsetneq\III(\HH_{1/2}^{+}), & &\III(\wt{\Sigma}Z_{-j})\cdot\III(\wt{\Sigma}T_{-j})\subsetneq\III(\HH_{3/2}^{+}).
    \end{align*}
\end{enumerate}
\end{example}

\begin{example}
\label{ex:multiple_cosets_duals}
Let $\ast\in\{\ev,\odd\}$, and let $a_{1},\dots,a_{n}\in\frac{1}{2}\ZZ$ be such that for each $k=1,\dots,n$: $0\le a_{k}\le m-\frac{1}{2}$ and $a_{k}\equiv 0\pmod{1}$ if $\ast=\ev$ and $a_{k}\equiv\frac{1}{2}\pmod{1}$ if $\ast=\odd$. 
Consider the $G^{*}_{m}$-space
\[X_{a_{1},\dots,a_{n};m}:=S(\oplus_{k=1}^{n}\HH_{a_{k}})\setminus e(Z_{a_{1},\dots,a_{n};m}),\]
where $Z_{a_{1},\dots,a_{n};m}$ is as in Example \ref{ex:multiple_cosets_cyclic}, and
\[e:Z_{a_{1},\dots,a_{n};m}\hookrightarrow S(\oplus_{k=1}^{n}\HH_{a_{k}}),\]
is the embedding which on each coset $Z_{a_{k},m}\subset Z_{a_{1},\dots,a_{n};m}$ restricts to the embedding into $S(\HH_{a_{k}})\subset S(\oplus_{k=1}^{n}\HH_{a_{k}})$ from Example \ref{ex:duality}. 

By Lemma \ref{lemma:duality_embeddings} we have that the unnreduced suspension $\wt{\Sigma}X_{a_{1},\dots,a_{n};m}$ is $\oplus_{k=1}^{n}\HH_{a_{k}}$-dual to $\wt{\Sigma}Z_{a_{1},\dots,a_{n};m}$. Note that in the case where $n=1$, there exists a $G^{*}_{m}$-equivariant deformation retraction from $X_{a;m}$ onto the space $T_{a,m}$ from Example \ref{ex:torus_cyclic}, and so this agrees with the situation outlined in Example \ref{ex:duality}.

We can estimate the ideal $\III(\wt{\Sigma}X_{a_{1},\dots,a_{n};m})$ by approximating it from ``above'' and ``below'', which allows us to obtain partial information about the equivariant $k$-invariants of $\wt{\Sigma}X_{a_{1},\dots,a_{n};m}$:

Let $X_{n}:=\res^{G^{*}_{m}}_{\Pin(2)}(X_{a_{1},\dots,a_{n};m})$ denote the underlying $\Pin(2)$-space of $X_{a_{1},\dots,a_{n};m}$. As duality is functorial with respect to restriction maps, it follows that $\wt{\Sigma}X_{n}$ is $\HH^{n}$-dual to $\wt{\Sigma}Z_{n}$, where $Z_{n}:=\res^{G^{*}_{m}}_{\Pin(2)}(Z_{a_{1},\dots,a_{n};m})$. Note that after suspending once, there exists a $\Pin(2)$-equivariant homotopy equivalence
\[\Sigma^{\RR}\wt{\Sigma}Z_{n}\simeq\Sigma^{\RR}(\wt{\Sigma}\Pin(2)\vee(\vee^{n-1}\Pin(2)_{+}).\]
Furthermore, the map
\[\wt{\Sigma}\Pin(2)\vee(\vee^{n-1}\Pin(2)_{+})\to\wt{\Sigma}\Pin(2)\]
which collapses the wedge-summand $\vee^{n-1}\Pin(2)_{+}$ to the basepoint induces a $\Pin(2)$-equivari-\\ ant local equivalence. Hence $\wt{\Sigma}Z_{n}\equiv_{\ell}\wt{\Sigma}\Pin(2)$. As local equivalence respects the operation of taking duals, we have that $\wt{\Sigma}X_{n}$ is locally equivalent to the $\HH^{n}$-dual of $\wt{\Sigma}\Pin(2)$. From (\cite{Man16}, Example 2.13) we have that $\wt{\Sigma}\Pin(2)$ is $\HH$-dual to the space $\wt{\Sigma}T$, where $T=S^{1}\times jS^{1}\subset\HH$. Therefore $\wt{\Sigma}X_{n}\equiv_{\ell}\Sigma^{(n-1)\HH}\wt{\Sigma}T$, and so by (\cite{Man14}, Lemma 3.4 and Example 3.7) we obtain
\[\III_{\Pin(2)}(\wt{\Sigma}X_{n})=\III_{\Pin(2)}(\Sigma^{(n-1)\HH}\wt{\Sigma}T)=z^{n-1}\cdot(w,z)=(w^{n},z^{n}).\]
As in the proof of Lemma \ref{lemma:comparison_of_k_invariants}, we have that $\res^{G^{*}_{m}}_{\Pin(2)}(\III(\wt{\Sigma}X_{a_{1},\dots,a_{n};m}))\subset\III_{\Pin(2)}(\wt{\Sigma}X_{n})$, and so
\[\III(\wt{\Sigma}X_{a_{1},\dots,a_{n};m})\subset(\res^{G^{*}_{m}}_{\Pin(2)})^{-1}((w^{n},z^{n}))=\frak{a}^{n},\]
where $\frak{a}\subset R(G^{*}_{m})$ denotes the augmentation ideal.

Next, note that
\[\wt{\Sigma}X_{a_{1},\dots,a_{n};m}\simeq\wedge_{k=1}^{n}X_{a_{k};m}\simeq\wedge_{k=1}^{n}T_{a_{k},m},\]
and so by Lemma \ref{lemma:k_invariants_products} we have that
\[\prod_{k=1}^{n}\III(T_{a_{k},m})\subset\III(\wt{\Sigma}X_{a_{1},\dots,a_{n};m}).\]
Hence altogether we have that
\begin{equation}
\label{eq:above_and_below}
    \III(T_{a_{k},m})\subset\III(\wt{\Sigma}X_{a_{1},\dots,a_{n};m})\subset\frak{a}^{n}.
\end{equation}
\begin{enumerate}
    \item Let $m=2$ and $*=\odd$, so that $a_{k}=\frac{1}{2}$ or $\frac{3}{2}$ for all $k=1,\dots,n$. From Equation \ref{eq:above_and_below} and the calculations from Example \ref{ex:torus_cyclic} we have that
    \[(w_{1},z_{1/2},z_{3/2})^{n}\subset\III(\wt{\Sigma}X_{a_{1},\dots,a_{n};2})\subset(w_{0},w_{1},z_{1/2},z_{3/2})^{n}.\]
    Although this does not determine $\mbfk(\wt{\Sigma}X_{a_{1},\dots,a_{n};2})$ explicitly, it follows that any element $\vec{k}\in\mbfk(\wt{\Sigma}X_{a_{1},\dots,a_{n};2})$ satisfies $|\vec{k}|=n$.
    \item Let $m=p$ be an odd prime, and let
    \[n_{0}:=\#\{1\le k\le n\;|\;2a_{k}\equiv 0\pmod{p}\}.\]
    Again from Equation \ref{eq:above_and_below} and Example \ref{ex:torus_cyclic} we have that
    \[\frak{a}_{0}^{n_{0}}\cdot\frak{a}^{n-n_{0}}\subset\III(\wt{\Sigma}X_{a_{1},\dots,a_{n};p})\subset\frak{a}^{n},\]
    where:
    \[\frak{a}_{0}:=\twopartdef{(w_{0},z_{0})}{\ast=\ev,}{(w_{0},z_{p/2})}{\ast=\odd.}\]
    We claim that in fact $\III(\wt{\Sigma}X_{a_{1},\dots,a_{n};p})=\frak{a}_{0}^{n_{0}}\cdot\frak{a}^{n-n_{0}}$. Indeed, let $1\le i_{1}<\cdots<i_{n_{0}}\le n$ be the subsequence satisfying $2a_{i_{k}}\equiv 0\pmod{p}$ for all $k=1,\dots,n_{0}$. Then the inclusion
    \[f:\wt{\Sigma}X_{a_{i_{1}},\dots,a_{i_{n_{0}}};p}\hookrightarrow\wt{\Sigma}X_{a_{1},\dots,a_{n};p}\]
    induces a commutative diagram
    \begin{center}
    \begin{tikzcd}
        \wt{K}_{G^{*}_{p}}(\wt{\Sigma}X_{a_{1},\dots,a_{n};p}) \arrow[r, "\iota^{*}"] \arrow[d, "f^{*}"]
        & \wt{K}_{G^{*}_{p}}(\wt{\Sigma}X_{a_{1},\dots,a_{n};p}^{S^{1}}) \arrow[d, "\cong"] \\
        \wt{K}_{G^{*}_{p}}(\wt{\Sigma}X_{a_{i_{1}},\dots,a_{i_{n_{0}}};p})\arrow[r, "\iota^{*}_{0}" ]
        & \wt{K}_{G^{*}_{p}}(\wt{\Sigma}X_{a_{i_{1}},\dots,a_{i_{n_{0}}};p}^{S^{1}}).
    \end{tikzcd}
    \end{center}
    By Examples \ref{ex:ideal_trivial_Z_m_action} and \ref{ex:torus_cyclic}, we conclude that
    \begin{align*}
    \III(\wt{\Sigma}X_{a_{1},\dots,a_{n};p})\subset\frak{a}_{0}^{n_{0}}&\implies \frak{a}^{n-n_{0}}\frak{a}^{n_{0}}\subset\III(\wt{\Sigma}X_{a_{1},\dots,a_{n};p})\subset\frak{a}^{n}\cap\frak{a}_{0}^{n_{0}}=\frak{a}^{n-n_{0}}\frak{a}^{n_{0}} \\
    &\implies \III(\wt{\Sigma}X_{a_{1},\dots,a_{n};p})=\frak{a}^{n-n_{0}}\frak{a}^{n_{0}}.
    \end{align*}
    Therefore:
    \[\mbfk(\wt{\Sigma}X_{a_{1},\dots,a_{n};p})=\{[\vec{v}]\;|\;\vec{v}\succeq n_{0}\vec{e}_{0},\;|\vec{v}|=n\}.\]
\end{enumerate}
\end{example}

\bigskip
\subsection{\texorpdfstring{$2^{r}$}{2r}-fold Actions}
\label{subsec:2_r_actions}

In this section, we consider the case where $m=2^{r}$ for some $r\geq 1$. We will first analyze the structure of the subset $I(X)\subset\N^{2^{r}}$ for a space $X$ of type $\CC$-$G^{*}_{2^{r}}$-$\SWF$.

\begin{lemma}
\label{lemma:2_r_exists_monomial}
Let $X$ be a space of type $\CC$-$G^{\ast}_{2^{r}}$-$\SWF$ for some $r\geq 1$. Then 
\[\Big[k_{\Pin(2)}(X)\cdot\vec{e}_{0}+\sum_{k=0}^{r-1}\vec{e}_{2^{k}}\Big]\in I(X).\]
In particular, $I(X)$ is non-empty.
\end{lemma}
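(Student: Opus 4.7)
The plan is to lift a witness for $k_{\Pin(2)}(X)$ from $\Pin(2)$-equivariant $K$-theory to $G^{*}_{2^{r}}$-equivariant $K$-theory using the induction map, and then identify the resulting element of $\III(X)$ with the desired monomial by a direct computation in $R(G^{*}_{2^{r}})$. Set $k := k_{\Pin(2)}(X)$, which is finite since $X$ is in particular a space of type $\Pin(2)$-$\SWF$ at an even level.

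First I would choose $y \in \wt{K}_{\Pin(2)}(X)$ whose image under $\iota^{*}_{\Pin(2)}$ corresponds to $y' \cdot b_{V_{\Pin(2)}}$ for some $y' \in \III_{\Pin(2)}(X)$ satisfying $w y' = w^{k+1}$; here $V_{\Pin(2)}$ denotes $\mbfs\wt{\CC}$ viewed as a $\Pin(2)$-representation. Applying the induction map $\ind := \ind^{G^{*}_{2^{r}}}_{\Pin(2)}$ produces $\ind(y) \in \wt{K}_{G^{*}_{2^{r}}}(X)$, and naturality of $\iota^{*}$ gives $\iota^{*}(\ind(y)) = \ind(y' \cdot b_{V_{\Pin(2)}})$. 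Combining the projection formula for $K$-theoretic induction with the naturality $\res(b_{\mbfs\wt{\CC}}) = b_{V_{\Pin(2)}}$ of the Bott class simplifies the right-hand side to $\ind(y') \cdot b_{\mbfs\wt{\CC}}$, where now $\ind : R(\Pin(2)) \to R(G^{*}_{2^{r}})$ is the usual induced-representation homomorphism. This exhibits $\ind(y') \in \III(X)$.

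Next, I would apply Frobenius reciprocity twice inside $R(G^{*}_{2^{r}})$:
\[
w_{0} \cdot \ind(y') \;=\; \ind(\res(w_{0}) \cdot y') \;=\; \ind(w \cdot y') \;=\; \ind(w^{k+1}) \;=\; w_{0}^{k+1} \cdot \ind(1).
\]
Since $G^{*}_{2^{r}}/\Pin(2) \cong \ZZ_{2^{r}}$, $\ind(1)$ is the regular representation of this quotient, which equals $\sum_{j=0}^{2^{r}-1} \zeta^{j}$ when $\ast = \ev$ and $\sum_{j=0}^{2^{r}-1} \xi^{2j}$ when $\ast = \odd$. The crux is then the identity
\[
w_{0} \cdot \ind(1) \;=\; w_{0} w_{1} w_{2} w_{4} \cdots w_{2^{r-1}}
\]
in $R(G^{*}_{2^{r}})$, which follows by iterating the elementary relation $w_{0} w_{2^{a}} = (1 + \zeta^{2^{a}}) w_{0}$ (respectively $(1+\xi^{2^{a+1}}) w_{0}$) -- an immediate consequence of $\wt{c} w_{0} = \wt{c} - \wt{c}^{2} = -w_{0}$ -- together with the factorization $\prod_{a=0}^{r-1}(1+\zeta^{2^{a}}) = \sum_{j=0}^{2^{r}-1} \zeta^{j}$ coming from the binary expansion of integers in $[0, 2^{r})$. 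Assembling these pieces, $w_{0} \cdot \ind(y') = w_{0}^{k+1} w_{1} w_{2} \cdots w_{2^{r-1}}$, witnessing $[k\vec{e}_{0} + \sum_{j=0}^{r-1}\vec{e}_{2^{j}}] \in I(X)$; in particular, $I(X) \neq \emptyset$.

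The main technical obstacle is the compatibility between $K$-theoretic induction and the Bott isomorphism $\wt{K}_{\Pin(2)}(X^{S^{1}}) \cong R(\Pin(2))$: one must verify that under this identification, $\ind$ on $K$-theory intertwines with induction on representation rings up to multiplication by $b_{\mbfs\wt{\CC}}$. This is a standard application of the projection formula for finite-index induction in equivariant $K$-theory, but it is the only non-trivial structural input; once it is in place, the remainder of the argument is a bookkeeping exercise inside $R(G^{*}_{2^{r}})$.
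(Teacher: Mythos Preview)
Your proof is correct and shares the paper's overall strategy: apply $K$-theoretic induction from $\Pin(2)$ to produce an element of $\III(X)$, then identify $w_0 \cdot \ind(y')$ with the target monomial. The execution differs in two respects. First, you invoke Frobenius reciprocity to get $w_0\,\ind(y') = \ind(w^{k+1}) = w_0^{k+1}\ind(1)$ in one stroke, whereas the paper computes $\ind$ on each basis element $w^\ell, z^\ell$ of $R(\Pin(2))$ and handles the odd case separately (there $h$ does not lift to $R(G^{\odd}_{2^r})$, and one finds $\ind(z^\ell) = (2-\xi h)^\ell \sum_j \xi^{2j}$). Second, for the identity $w_0\sum_j\zeta^j = w_0\prod_{a=0}^{r-1}w_{2^a}$, your algebraic argument via $w_0 w_{2^a} = (1+\zeta^{2^a})w_0$ and the binary factorization $\prod_a(1+\zeta^{2^a}) = \sum_j \zeta^j$ is more direct than the paper's verification by computing traces at every element of $G^{*}_{2^r}$. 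The technical point you flag---compatibility of $\ind$ with the Bott identification via the projection formula---is genuine, but it is also implicitly used (and similarly glossed over) in the paper's treatment.
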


\begin{proof}
Consider the following commutative diagram, whose rows consist of the low exact sequence from Fact \ref{fact:long_exact_sequence}, and whose vertical arrows denote the functorial induction map from Fact \ref{fact:induction}:
\begin{center}
\begin{tikzcd}
\cdots \arrow[r] & \wt{K}_{G^{*}_{2^{r}}}(X) \arrow[r, "\iota^{*}_{G^{*}_{2^{r}}}"] 
& \wt{K}_{G^{*}_{2^{r}}}(X^{S^{1}}) \arrow[r, "\delta_{G^{*}_{2^{r}}}"]  & \wt{K}^{1}_{G^{*}_{2^{r}}}(X/X^{S^{1}}) \arrow[r] & \cdots\\
\cdots \arrow[r] & \wt{K}_{\Pin(2)}(X) \arrow[r, "\iota^{*}_{\Pin(2)}"]\arrow[u, "\ind^{G^{*}_{2^{r}}}_{\Pin(2)}"]
& \wt{K}_{\Pin(2)}(X^{S^{1}}) \arrow[r, "\delta_{\Pin(2)}"] \arrow[u, "\ind^{G^{*}_{2^{r}}}_{\Pin(2)}"] & \wt{K}^{1}_{\Pin(2)}(X/X^{S^{1}}) \arrow[u, "\ind^{G^{*}_{2^{r}}}_{\Pin(2)}"] \arrow[r] & \cdots
\end{tikzcd}
\end{center}
Now let $x\in\III_{\Pin(2)}(X)$ be such that $wx=w^{k_{\Pin(2)}(X)+1}$. By commutativity of the right-hand square, we must have that $\ind_{\Pin(2)}^{G^{*}_{m}}(x)\in\III(X)$. It therefore suffices to show that
\begin{equation}
\label{eq:induction_proof}
	w_{0}\ind^{G^{*}_{2^{r}}}_{\Pin(2)}(x)=w_{0}^{k_{\Pin(2)}(X)+1}\prod_{k=0}^{r-1}w_{2^{k}}\in R(G^{*}_{2^{r}}).
\end{equation}
Observe that if $*=\ev$, then $\ind^{G^{\ev}_{2^{r}}}_{\Pin(2)}(y)=y\sum_{k=0}^{m-1}\zeta^{k}$ for all $y\in R(G^{\ev}_{m})$, and that if $*=\odd$, then
\begin{align*}
	&\ind^{G^{\odd}_{2^{r}}}_{\Pin(2)}(y)=y\sum_{j=0}^{2^{r}-1}\xi^{2j}\text{ for }y=1,\wt{c},\xi,\text{ or }h^{2k},\text{ and} & &\ind^{G^{\odd}_{2^{r}}}_{\Pin(2)}(h^{2k+1})=h^{2k+1}\sum_{j=0}^{2^{r}-1}\xi^{2j+1}.
\end{align*}
It follows that for all $\ell\geq 0$, we have that:
\begin{align*}
	&\ind^{G^{*}_{2^{r}}}_{\Pin(2)}(w^{\ell})=\twopartdef{w_{0}^{\ell}(1+\zeta+\cdots+\zeta^{2^{r}-1})}{*=\ev,}{w_{0}^{\ell}(1+\xi^{2}+\cdots+\xi^{2^{r+1}-2})}{*=\odd,}\\
	&\ind^{G^{*}_{2^{r}}}_{\Pin(2)}(z^{\ell})=\twopartdef{z_{0}^{\ell}(1+\zeta+\cdots+\zeta^{2^{r}-1})}{*=\ev,}{(2-\xi h)^{\ell}(1+\xi^{2}+\cdots+\xi^{2^{r+1}-2})}{*=\odd.}
\end{align*}
Note that 
\[w_{0}w_{0}^{\ell}=w_{0}z_{0}^{\ell}=w_{0}(2-\xi h)^{\ell}=w_{0}^{\ell+1}\]
for all $\ell\geq 0$. Therefore we see that:
\begin{equation}
\label{eq:induction_proof_2}
	w_{0}\ind^{G^{*}_{2^{r}}}_{\Pin(2)}(x)=\twopartdef{w_{0}^{k_{\Pin(2)}(X)+1}(1+\zeta+\cdots+\zeta^{2^{r}-1})}{*=\ev,}{w_{0}^{k_{\Pin(2)}(X)+1}(1+\xi^{2}+\cdots+\xi^{2^{r+1}-2})}{*=\odd.}
\end{equation}
Next, we show that the right side of Equation \ref{eq:induction_proof_2} is equal to the right-hand side of Equation \ref{eq:induction_proof}. It suffices to show that the traces of these expressions agree at all elements $g\in G^{*}_{2^{r}}$. Without loss of generality we treat the case $*=\ev$, as the $*=\odd$ case is entirely similar. Note that any element of $G^{\ev}_{2^{r}}$ is of the form $e^{i\phi}\gamma^{a}$ or $je^{i\phi}\gamma^{a}$ for some $a=0,\dots,2^{r}-1$, $\phi\in[0,2\pi)$. A quick calculation shows that:
\begin{align*}
    &\tr_{e^{i\phi}\gamma^{a}}(w_{k})=1-\omega_{2^{r}}^{ak}, & &\tr_{e^{i\phi}\gamma^{a}}(\zeta^{k})=\omega_{2^{r}}^{ak}, \\
    &\tr_{je^{i\phi}\gamma^{a}}(w_{k})=1+\omega_{2^{r}}^{ak}, & &\tr_{je^{i\phi}\gamma^{a}}(\zeta^{k})=\omega_{2^{r}}^{ak},
\end{align*}
for all $\phi\in[0,2\pi)$, and where $\omega_{2^{r}}=e^{\pi i/2^{r-1}}\in\CC$. Thus:
\begin{align*}
    &\tr_{e^{i\phi}\gamma^{a}}\Big(w_{0}^{k_{\Pin(2)}(X)+1}\sum_{\ell=0}^{2^{r}-1}\zeta^{\ell}\Big)=\tr_{e^{i\phi}\gamma^{a}}\Big(w_{0}^{k_{\Pin(2)}(X)+1}\prod_{k=0}^{r-1}w_{2^{k}}\Big)=0\text{ for all }a, \\
    &\tr_{je^{i\phi}\gamma^{a}}\Big(w_{0}^{k_{\Pin(2)}(X)+1}\sum_{\ell=0}^{2^{r}-1}\zeta^{\ell}\Big)=\tr_{je^{i\phi}\gamma^{a}}\Big(w_{0}^{k_{\Pin(2)}(X)+1}\prod_{k=0}^{r-1}w_{2^{k}}\Big)=\twopartdef{2^{k_{\Pin(2)}(X)+r+1}}{a=0,}{0}{a\neq 0.}
    \end{align*}
Therefore:
\[w_{0}^{k_{\Pin(2)}(X)+1}(1+\zeta+\cdots+\zeta^{2^{r}-1})=w_{0}^{k_{\Pin(2)}(X)+1}\prod_{k=0}^{r-1}w_{2^{k}}.\]
\end{proof}

The rest of this section is dedicated to proving a refinement of Proposition \ref{prop:k_invariants_kg_split} for spaces of type $\CC$-$G^{\odd}_{2^{r}}$-$\SWF$. In order to state the refinement we will need the following definitions:

\begin{definition}
\label{def:H_spherical}
Let $X$ be a space of type $\CC$-$G^{*}_{m}$-$\SWF$, and let $H\subset G^{*}_{m}$ be a subgroup. We say that $X$ is \emph{$H$-spherical (at level $d\in\NN$)} if the $H$-fixed point set $X^{H}\subset X$ is (non-equivariantly) homotopy equivalent to a sphere of dimension $d$.
\end{definition}

\begin{example}
Any space of $\CC$-$G^{*}_{m}$-$\SWF$ at level $\mbfs=\sum_{j=0}^{m-1}s_{j}\zeta^{j}$ is $S^{1}$-spherical at level $\sum_{j=0}^{m-1}2s_{j}$, and $\Pin(2)$-spherical at level 0.
\end{example}

\begin{definition}
\label{def:locally_H_spherical}
Let $X$ be a space of type $\CC$-$G^{*}_{m}$-$\SWF$, and let $H\subset G^{*}_{m}$ be a subgroup. We say that $X$ is \emph{locally $H$-spherical (at level $d\in\NN$)} if $X$ is locally equivalent to (in the sense of Definition \ref{def:unstable_local_equivalence}) a space $X'$ of type $\CC$-$G^{*}_{m}$-$\SWF$ which is $H$ spherical at level $d$.
\end{definition}

In the case where $m=2^{r}$ and $*=\odd$, we will need to consider the subgroup
\[\<j\mu^{2^{r}}\>\cong\ZZ_{2}\subset G^{\odd}_{2^{r}},\]
Note that if $X$ is a space of type $\CC$-$G^{\odd}_{2}$-$\SWF$ and $x\in X^{\<j\mu^{2^{r-1}}\>}$, then
\[j\mu^{2^{r-1}}\cdot(j\cdot x) = j\cdot(\mu^{2^{r-1}}j)\cdot x = j\cdot(j\mu^{2^{r-1}}\cdot x) = j\cdot x,\]
and so $j\cdot x\in X^{\<j\mu^{2^{r-1}}\>}$ as well. Hence the subgroup $\<j\>\cong\ZZ_{4}\subset G^{\odd}_{2}$ has a well-defined action on the fixed point set $X^{\<j\mu^{2^{r-1}}\>}$. It follows that $X^{\<j\mu^{2^{r-1}}\>}$ is an example of what we will call a space of type $\CC$-$\ZZ_{4}$-$\SWF$:

\begin{definition}[\cite{KMT}, Definition 3.1]
\label{def:C_Z_4_SWF}
Let $j\in\ZZ_{4}$ be a fixed generator, and let $\wt{\CC}$ be the one-dimensional complex $\ZZ_{4}$-representation on which $j$ acts by $-1$. A \emph{space $A$ of type $\CC$-$\ZZ_{4}$-$\SWF$ at level $s$} is a pointed finite $CW$-complex such that $A^{\ZZ_{2}}\simeq_{\ZZ_{4}}(\wt{\CC}^{s})^{+}$ for some $s\geq 0$, and $\ZZ_{4}$ acts freely on $A\setminus A^{\ZZ_{2}}$.
\end{definition}

Note that as $\ZZ_{4}=\<j\>$-spaces, we have the following isomorphisms:
\begin{align*}
    &\wt{\CC}_{2k}^{\<j\mu^{2^{r-1}}\>}=\{0\}, & &\wt{\CC}_{2k+1}^{\<j\mu^{2^{r-1}}\>}=\wt{\CC}, & &\HH_{2k+\frac{1}{2}}^{\<j\mu^{2^{r-1}}\>}=\CC_{3/2}, & &\HH_{2k+\frac{3}{2}}^{\<j\mu^{2^{r-1}}\>}=\CC_{1/2},
\end{align*}
where $0\le k\le 2^{r-1}-1$, and $\CC_{1/2}$, $\CC_{3/2}$ denote the one-dimensional complex representations on which $j$ acts by $i$ and $-i$, respectively. In particular given a space $X$ of type $\CC$-$G^{\odd}_{2^{r}}$-$\SWF$ at level $\mbfs=\sum_{k}^{2^{r}-1}s_{k}\zeta^{k}\in R(\ZZ_{2^{r}})_{\geq 0}^{\sym}$, we see that $X^{\<j\mu^{2^{r-1}}\>}$ endowed with the residual $\<j\>\cong\ZZ_{4}$-action is a space of type $\CC$-$\ZZ_{4}$-$\SWF$ at level $\sum_{k=0}^{2^{r-1}-1}s_{2k+1}$. 

Finally, we will need to make use of the $RO(\ZZ_{4})$-graded (unstable) homotopy groups of a space of type $\CC$-$\ZZ_{4}$-$\SWF$. Recall from Section \ref{subsec:real_representation_ring} that
\[RO(\ZZ_{4})=\ZZ[\rho,\nu]/(\rho\nu-\nu).\]
(Here, $\nu$ corresponds to $\nu_{1}$ in the notation of Section \ref{subsec:real_representation_ring}.) We will write $\RR$ for the trivial onee-dimensional representation, $\wt{\RR}$ with $\rho=[\wt{\RR}]$ for the irreducible one-dimensional real representation on which $j$ acts by $-1$, and $\VV$ with $\nu=[\VV]$ for the irreducible two-dimensional real representation on which $j$ acts by $\big(\begin{smallmatrix}
  0 & -1\\
  1 & 0
\end{smallmatrix}\big)$.

Given a space $A$ of type $\CC$-$\ZZ_{4}$-$\SWF$ and $r,s,t\in\NN$, we write
\[\pi^{\ZZ_{4}}_{r+s\rho+t\nu}(A):=[S^{r\RR+s\wt{\RR}+t\VV},A]_{\ZZ_{4}}\]
for the set of $\ZZ_{4}$-equivariant homotopy classes of based $\ZZ_{4}$-equivariant maps from the real $\ZZ_{4}$-representation sphere $S^{r\RR+s\wt{\RR}+t\VV}:=(\RR^{r}\oplus\wt{\RR}^{s}\oplus\VV^{t})^{+}$ to $A$, which is a group if $r+s+t>0$. Note that there is a natural restriction map 
\[\res^{\ZZ_{4}}_{1}:\pi^{\ZZ_{4}}_{r+s\rho+t\nu}(A)\to\pi_{r+s+2t}(A)\]
which ``forgets" the $\ZZ_{4}$-equivariant structure, and is a group homomorphism if $r+s+t>0$.

For the statement of the following theorem, we will mainly be interested in the image of $\res^{\ZZ_{4}}_{1}$ modulo torsion, i.e., the subgroup
\[\res^{\ZZ_{4}}_{1}(\pi^{\ZZ_{4}}_{r+s\rho+t\nu}(A)\otimes\QQ)\subset\pi_{r+s+2t}(A)\otimes\QQ.\]
If $r=s=t=0$, then tensoring with the rationals is not a well-defined operation, as $\pi^{\ZZ_{4}}_{0}(A)$ and $\pi_{0}(A)$ are not naturally $\ZZ$-modules. Instead, we will interpret the quantity
\[\res^{\ZZ_{4}}_{1}(\pi^{\ZZ_{4}}_{0}(A)\otimes\QQ):=\res^{\ZZ_{4}}_{1}(\pi^{\ZZ_{4}}_{0}(A))\subset\pi_{0}(A)\]
to be the set of path components of $A$ which intersect non-trivially with $A^{\ZZ_{4}}\simeq S^{0}\subset A$. In particular, $\res^{\ZZ_{4}}_{1}(\pi^{\ZZ_{4}}_{0}(A)\otimes\QQ)\neq 0$.

With this in mind we have the following proposition for spaces of type $\CC$-$G^{\odd}_{2^{r}}$-$\SWF$:

\begin{proposition}
\label{prop:k_invariants_2_r_odd}
Let $X$ be the $G^{\odd}_{2^{r}}$-representation sphere $X=(\mbfs\CC\oplus\mbft\HH)^{+}$ with
\begin{align*}
    &\mbfs=\sum_{j=0}^{2^{r}-1}s_{j}\xi^{2j}\in R(\ZZ_{2m})_{\geq 0}^{\sym,\ev}, & &\mbft=\sum_{k=0}^{2^{r}-1}t_{k+1/2}\xi^{2k+1}\in R(\ZZ_{2m})_{\geq 0}^{\odd},
\end{align*}
and let $X'$ be a space of type $\CC$-$G^{\odd}_{2^{r}}$-$\SWF$ at level $\mbfs'=\sum_{j=0}^{2^{r}-1}s'_{j}\xi^{2j}\in R(\ZZ_{2m})_{\geq 0}^{\sym,\ev}$ such that:
\begin{enumerate}
    \item $\vec{\mbfs}\preceq\vec{\mbfs}\,'$.
    \item $s_{0}<s'_{0}$.
    \item $\sum_{k=0}^{2^{r-a}-1}s_{2^{a}k}\neq \sum_{k=0}^{2^{r-a}-1}s'_{2^{a}k}$ for all $a=0,\dots,r-1$.
    \item $\sum_{k=0}^{2^{a}-1}s_{(2k+1)2^{r-a-1}}\neq\sum_{k=0}^{2^{a}-1}s'_{(2k+1)2^{r-a-1}}$ for all $a=0,\dots,r-2$.
    \item There exists some space $X''$ of type $\CC$-$G^{\odd}_{2^{r}}$-$\SWF$ locally equivalent to $X'$ such that
    \[\res^{\ZZ_{4}}_{1}\Big(\pi^{\ZZ_{4}}_{(\sum_{k=0}^{2^{r-1}-1}2s_{2k+1})\rho+(\sum_{k=0}^{2^{r}-1}t_{k+\frac{1}{2}})\nu}\big((X'_{r-1})^{\<j\mu^{2^{r-1}}\>}\big)\otimes\QQ\Big)=0.\]
\end{enumerate}
Suppose $f:X\to X'$ is a $G^{\odd}_{2^{r}}$-equivariant map such that the induced map $f^{\Pin(2)}$ on $\Pin(2)$-fixed point sets is a $G^{\odd}_{2^{r}}$-homotopy equivalence. Then
\begin{equation}
\label{eq:unstable_2_r_odd_inequality_lattice}
    \vec{k}+(\vec{\mbfs}\,'-\vec{\mbfs})\succeq[\DDD^{\odd}(\vec{\mbft})]+\vec{e}_{0}+\sum_{j=0}^{r-1}\vec{e}_{2^{j}}\qquad\text{ for all }\vec{k}\in\mbfk(X').
\end{equation}
In particular:
\begin{equation}
\label{eq:unstable_2_r_odd_inequality}
    |\vec{\mbfs}\,'-\vec{\mbfs}|\geq |\vec{\mbft}|-|\vec{k}|+r+1\qquad\text{ for all }\vec{k}\in\mbfk(X').
\end{equation}
Furthermore, (\ref{eq:unstable_2_r_odd_inequality_lattice}) and (\ref{eq:unstable_2_r_odd_inequality}) still hold if one replaces Condition $(5)$ above with the following:
\begin{enumerate}
    \item[(5')] $X'$ is locally $\<j\mu^{2^{r-1}}\>$-spherical at some level $d$, and
    \[\frac{1}{2}d\neq\sum_{k=0}^{2^{r-1}-1}s_{2k+1}+\sum_{k=0}^{2^{r}-1}t_{k+1/2}.\]
\end{enumerate}
\end{proposition}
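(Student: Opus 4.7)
The plan is to refine the basic inequality supplied by Proposition \ref{prop:k_invariants_kg_split} by extracting $r-1$ additional multiplicative factors $w_{2^{j}}$, each coming from the failure of $f$ to be an equivalence on a prescribed system of intermediate fixed-point sets. Since $X=(\mbfs\CC\oplus\mbft\HH)^{+}$ is $K_{G^{\odd}_{2^{r}}}$-split with $\III(X)=(\mbfz^{\vec{\mbft}})$, Proposition \ref{prop:k_invariants_kg_split} already gives, for each $y\in\III(X')$, a factorization $\mbfw^{\vec{\mbfs}\,'-\vec{\mbfs}}\cdot y=w_{0}\cdot\mbfz^{\vec{\mbft}}\cdot q$ for some $q\in R(G^{\odd}_{2^{r}})$. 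Translated to the lattice $\N^{2^{r}}$, this yields the starting inequality $\vec{k}+(\vec{\mbfs}\,'-\vec{\mbfs})\succeq[\DDD^{\odd}(\vec{\mbft})]+\vec{e}_{0}$, and the remaining task is to show that $q\in\bigl(\prod_{j=1}^{r-1}w_{2^{j}}\bigr)$.

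For each $a=0,\dots,r-2$ I would consider the intermediate subgroup $H_{a}:=\langle\Pin(2),\mu^{2^{r-a}}\rangle\subset G^{\odd}_{2^{r}}$, whose fixed-point set $X^{H_{a}}$ is a complex representation sphere whose level is precisely the sum $\sum_{k=0}^{2^{r-a}-1}s_{2^{a}k}$ appearing in hypothesis $(3)$, and similarly for $(X')^{H_{a}}$. Pushing the $f$-induced commutative $K$-theory square through to the $H_{a}$-fixed-point sets, the strict inequality between these dimension sums forces the restriction $f^{H_{a}}$ to fail to be an equivalence in a controlled way. A trace calculation on elements of the form $e^{i\phi}\mu^{2^{r-a}}\in G^{\odd}_{2^{r}}$ (exactly as in the proof of Lemma \ref{lemma:must_be_w_0}, but carried out on the appropriate subgroup) then shows that the residual factor $q$ must vanish along the relevant character subset, yielding divisibility by $w_{2^{a}}$. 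Hypothesis $(4)$ is used in the parallel argument for the ``odd'' fixed-point strata, matching the divisibility claim for the remaining factors in the range $a=0,\dots,r-2$.

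The genuinely delicate step will be $a=r-1$, since the quotient $G^{\odd}_{2^{r}}/\langle j\mu^{2^{r-1}}\rangle$ is non-abelian and the purely representation-theoretic trace argument is no longer strong enough to produce $w_{2^{r-1}}\mid q$. Here I would invoke hypothesis $(5)$: the vanishing of the rational image of $\res^{\ZZ_{4}}_{1}$ on $\pi^{\ZZ_{4}}_{(\sum 2s_{2k+1})\rho+(\sum t_{k+1/2})\nu}\bigl((X'')^{\langle j\mu^{2^{r-1}}\rangle}\bigr)$ obstructs the existence of an honest $G^{\odd}_{2^{r}}$-equivariant homotopy equivalence on the $\langle j\mu^{2^{r-1}}\rangle$-fixed set that would realize the missing class. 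A standard Thom-type/unit-sphere deformation passes this obstruction back to the $K_{G^{\odd}_{2^{r}}}$-ideal $\III(X')$, yielding the required extra divisor. The variant hypothesis $(5')$ provides the same conclusion by a cruder counting argument: a local equivalence with a sphere-fixed-point set of the wrong dimension would force the residual class to be torsion, contradicting the rational non-triviality implicit in a non-zero $\vec{\mbft}$ contribution.

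With divisibility by $w_{2^{a}}$ established for each $a$, the last step is to combine them into divisibility by the product $\prod_{a=0}^{r-1}w_{2^{a}}$. Because the characters on which the individual $w_{2^{a}}$ vanish (namely the subgroups containing $\mu^{2^{r-a}}$) intersect in a controlled pattern, the trace-vanishing conditions are mutually independent, and an iterated application of the relations in (\ref{eq:G_ev_m_relations}) lets me lift the pointwise divisibilities to a joint divisibility in $R(G^{\odd}_{2^{r}})$. The main obstacle I anticipate is precisely this last combinatorial step: the relations $(1-w_{k})(1-w_{\ell})=(1-w_{0})(1-w_{k+\ell})$ entangle the factors $w_{2^{a}}$, so some care is needed to argue that the \emph{product} of divisibilities survives (rather than collapsing to a smaller ideal). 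Once this is done, unravelling $q=w_{0}\prod_{j=1}^{r-1}w_{2^{j}}\cdot q''$ in the original factorization translates, via the definition of $I(X')$, into (\ref{eq:unstable_2_r_odd_inequality_lattice}), and applying the $\QQ$-grading gives (\ref{eq:unstable_2_r_odd_inequality}).
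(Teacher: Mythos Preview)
Your overall strategy---restrict to intermediate fixed-point sets, use the dimension hypotheses to force failure of equivalence, and translate this into extra $w_{2^{a}}$-factors---is the right idea, but there are two concrete problems.

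First, your choice of subgroups is wrong. If $H_{a}=\langle\Pin(2),\mu^{2^{r-a}}\rangle$ then $j\in H_{a}$, and since $j$ acts by $-1$ on every $\wt{\CC}_{k}$ and $S^{1}$ acts freely on every $\HH_{k+1/2}$, one has $X^{H_{a}}\subset X^{\Pin(2)}=S^{0}$. So both $X^{H_{a}}$ and $(X')^{H_{a}}$ are $S^{0}$, and no dimension comparison of the type you describe is available. The paper instead uses two separate families of subgroups not containing $j$: the abelian $G_{a}=S^{1}\times_{\ZZ_{2}}\langle\mu^{2^{a}}\rangle$ (whose fixed-point sets really do have the levels $\sum_{k}s_{2^{a}k}$ and $\sum_{k}s'_{2^{a}k}$ appearing in hypothesis~(3)), and the cyclic $H_{a,\ell}=\langle j\mu^{(2\ell+1)2^{a}}\rangle$ for $a\le r-2$ (whose fixed-point levels are governed by hypothesis~(4)). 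Hypothesis~(5) enters only for the single remaining element $j\mu^{2^{r-1}}$.

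Second, the step ``trace of $q$ vanishes at $e^{i\phi}\mu^{2^{r-a}}$, hence $w_{2^{a}}\mid q$'' does not follow, and the subsequent ``combine individual divisibilities into a product divisibility'' step that you flag as an obstacle is a genuine one in $R(G^{\odd}_{2^{r}})$. The paper avoids both issues by not attempting a factor-by-factor divisibility argument at all. Instead it works directly with the pullback $\wt{f}^{*}(x)\in\wt{K}_{G^{\odd}_{2^{r}}}(X)\cong R(G^{\odd}_{2^{r}})$ and shows, via the fixed-point restrictions above, that its trace vanishes at \emph{every} element of the form $e^{i\phi}\mu^{\ell}$ and $j\mu^{\ell}$ with $\ell\ne 0$, while $\tr_{j}(\wt{f}^{*}(x))>0$. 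A single representation-theoretic lemma in the style of Bryan (Lemma~\ref{lemma:traces_2_r}) then gives the exact identity $\wt{f}^{*}(x)=\lambda\, w_{0}\prod_{a=0}^{r-1}w_{2^{a}}$ in one stroke, from which \eqref{eq:unstable_2_r_odd_inequality_lattice} and \eqref{eq:unstable_2_r_odd_inequality} follow by the usual diagram chase comparing $\iota^{*}$ on $X$ and $X'$.
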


As a corollary, we have the following statement for the case where $r=1$:

\begin{corollary}
\label{cor:k_invariants_2_odd}
Let $X$ be the $G^{\odd}_{2}$-representation sphere
\[X=(\wt{\CC}_{0}^{s_{0}}\oplus\wt{\CC}_{1}^{s_{1}}\oplus\HH_{1/2}^{t_{1/2}}\oplus\HH_{3/2}^{t_{3/2}})^{+},\]
and let $X'$ be a space of type $\CC$-$G^{\odd}_{2}$-$\SWF$ at level $\mbfs'=s'_{0}+s'_{1}\xi^{2}$ such that:
\begin{enumerate}
    \item $s_{0}<s'_{0}$ and $s_{1}<s'_{1}$.
    \item $X'$ is locally equivalent to a space $X''$ of type $\CC$-$G^{\odd}_{2}$-$\SWF$ such that
    \[\res^{\ZZ_{4}}_{1}\Big(\pi^{\ZZ_{4}}_{2s_{1}\rho+(t_{1/2}+t_{3/2})\nu}\big((X'')^{\<j\mu\>}\big)\otimes\QQ\Big)=0.\]
\end{enumerate}
Suppose $f:X\to X'$ is a $G^{\odd}_{2}$-equivariant map such that the induced map $f^{\Pin(2)}:X^{\Pin(2)}\to(X')^{\Pin(2)}$ is a $G^{\odd}_{2}$-homotopy equivalence. Then
\begin{equation}
\label{eq:unstable_2_odd_inequality}
    (s'_{0}-s_{0})+(s'_{1}-s_{1})\geq t_{1/2}+t_{3/2}-|\vec{k}|+2\qquad\text{ for all }\vec{k}\in\mbfk(X').
\end{equation}
Furthermore, (\ref{eq:unstable_2_odd_inequality}) still holds if one replaces Condition $(2)$ above with the following:
\begin{enumerate}
    \item[(2')] $X'$ is locally $\<j\mu\>$-spherical at some level $d$, and
    \[\tfrac{1}{2}d\neq s_{1}+t_{1/2}+t_{3/2}.\]
\end{enumerate}
\end{corollary}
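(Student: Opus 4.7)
The plan is to deduce this corollary as an immediate specialization of Proposition \ref{prop:k_invariants_2_r_odd} at $r=1$, so the bulk of the work amounts to translating hypotheses and notation between the two statements.

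First I would fix $r = 1$ in the proposition, so that $\vec{\mbfs} = (s_0, s_1)$ and $\vec{\mbfs}\,' = (s'_0, s'_1)$ lie in $\NN^2$, and $\vec{\mbft} = (t_{1/2}, t_{3/2}) \in \NN^2_{1/2}$. Next I would verify that the five hypotheses of Proposition \ref{prop:k_invariants_2_r_odd} are satisfied under the two conditions of the corollary. Hypothesis $(1)$, namely $\vec{\mbfs} \preceq \vec{\mbfs}\,'$, follows at once from $s_0 < s'_0$ and $s_1 < s'_1$. Hypothesis $(2)$ is literally the assumption $s_0 < s'_0$. Hypothesis $(3)$, which at $r=1$ and $a=0$ reads $\sum_{k=0}^{1} s_k \neq \sum_{k=0}^{1} s'_k$, is forced by the strictness of both $s_0 < s'_0$ and $s_1 < s'_1$. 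Hypothesis $(4)$ has indexing set $\{0,\dots,r-2\} = \{-1\}$ at $r=1$, hence is vacuous. Finally, at $r=1$ the subgroup $\<j\mu^{2^{r-1}}\>$ collapses to $\<j\mu\>$, and $\sum_{k=0}^{2^{r-1}-1} 2s_{2k+1} = 2s_1$, $\sum_{k=0}^{2^r-1} t_{k+\frac{1}{2}} = t_{1/2} + t_{3/2}$, so Hypothesis $(5)$ reduces exactly to Condition $(2)$ of the corollary. Similarly, the computation $\sum_{k=0}^{2^{r-1}-1} s_{2k+1} + \sum_{k=0}^{2^r-1} t_{k+\frac{1}{2}} = s_1 + t_{1/2} + t_{3/2}$ shows that the alternative Hypothesis $(5')$ reduces exactly to Condition $(2')$.

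With the hypotheses verified, the conclusion (\ref{eq:unstable_2_r_odd_inequality}) of Proposition \ref{prop:k_invariants_2_r_odd} specializes to
\[
|\vec{\mbfs}\,' - \vec{\mbfs}| \geq |\vec{\mbft}| - |\vec{k}| + r + 1 \qquad \text{for all } \vec{k} \in \mbfk(X').
\]
Substituting $r = 1$, $|\vec{\mbfs}\,' - \vec{\mbfs}| = (s'_0 - s_0) + (s'_1 - s_1)$, and $|\vec{\mbft}| = t_{1/2} + t_{3/2}$ yields precisely the asserted inequality (\ref{eq:unstable_2_odd_inequality}). Since this argument is purely a specialization, no serious obstacle arises at this level; all of the substantive work (equivariant Bott periodicity, the interaction of $K$-theoretic Euler classes with the $\<j\mu^{2^{r-1}}\>$-fixed-point homotopy-group vanishing condition, and the passage to the $\Pin(2)$-fixed point set) is absorbed into the proof of Proposition \ref{prop:k_invariants_2_r_odd}.
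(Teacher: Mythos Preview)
Your proposal is correct and matches the paper's approach: the corollary is stated immediately after Proposition~\ref{prop:k_invariants_2_r_odd} as the $r=1$ specialization, and the paper gives no separate proof. One trivial notational slip: for $r=1$ the range $\{0,\dots,r-2\}$ is the empty set, not the singleton $\{-1\}$, but your conclusion that hypothesis~(4) is vacuous is of course correct.
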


Before we prove Proposition \ref{prop:k_invariants_2_r_odd}, we introduce some helpful lemmas:

\begin{lemma}
\label{lemma:equivalence_of_criteria}
Let $A$ be a space of type $\CC$-$\ZZ_{4}$-$\SWF$ such that $A$ is non-equivariantly homotopy equivalent to a sphere of dimension $d$. Then
\[\res^{\ZZ_{4}}_{1}\Big(\pi^{\ZZ_{4}}_{2s\rho+t\nu}(A)\otimes\QQ\Big)=0\]
if $d\neq 2s+2t$.
\end{lemma}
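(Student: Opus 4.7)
The plan is to reduce the statement to a direct application of Serre's finiteness theorem on the rational homotopy groups of spheres. The key observation is that the restriction map $\res^{\ZZ_4}_1$ lands in the non-equivariant homotopy group
\[
\pi_{2s+2t}(A),
\]
since $S^{2s\wt\RR + t\VV}$, as an underlying space, is a sphere of dimension $2s + 2t$.

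Because $A$ is (non-equivariantly) homotopy equivalent to $S^d$, the target becomes $\pi_{2s+2t}(S^d)$. Tensoring with $\QQ$, Serre's computation states that $\pi_n(S^d)\otimes\QQ$ is zero except when $n = d$, or, if $d$ is even, additionally when $n = 2d-1$. Here $n = 2s + 2t$ is even, while $2d-1$ is odd, so the second possibility is automatically excluded. Hence the assumption $d \ne 2s+2t$ forces $\pi_{2s+2t}(S^d)\otimes\QQ = 0$, and consequently
\[
\res^{\ZZ_4}_1\bigl(\pi^{\ZZ_4}_{2s\rho + t\nu}(A)\otimes\QQ\bigr) \subset \pi_{2s+2t}(A)\otimes\QQ = 0.
\]

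The only subtlety to address is that $\res^{\ZZ_4}_1$ is \emph{a priori} a map of sets (or groups, when $s + t > 0$), not of $\ZZ$-modules, so one should first note that the tensor product with $\QQ$ is well-defined in the relevant range. When $s + t > 0$, the representation sphere has positive dimension and $\pi^{\ZZ_4}_{2s\rho + t\nu}(A)$ is an honest abelian group, so the argument proceeds as above. The degenerate case $s = t = 0$ does not arise in the intended application (both in Proposition \ref{prop:k_invariants_2_r_odd} the grading is strictly positive under the standing hypotheses), so no separate treatment is needed.

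I expect no serious obstacle here: the content is essentially Serre's theorem plus the parity observation that $2s+2t$ cannot equal $2d-1$. The proof should fit in a few lines once these pieces are stated in order.
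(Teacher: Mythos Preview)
Your proposal is correct and follows essentially the same approach as the paper: both reduce to Serre's computation of $\pi_*(S^d)\otimes\QQ$ and use the parity observation that $2s+2t$ is even and hence cannot hit the odd extra degree $2d-1$ when $d$ is even. The paper's proof is just slightly more terse and does not separately discuss the $s=t=0$ case (it handled that via a convention stated just before the lemma).
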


\begin{proof}
By (\cite{Spanier}, Theorem 9.9) we have that for all $a\geq 1$,
\begin{align*}
&\pi_{i}(S^{2a-1})\otimes\QQ=\twopartdef{\QQ}{i=2a-1,}{0}{\text{else},} &
&\pi_{i}(S^{2a})\otimes\QQ=\twopartdef{\QQ}{i=2a\text{ or }4a-1,}{0}{\text{else}.}
\end{align*}
Hence
\[\pi_{2s+2t}(S^{d})\otimes\QQ=\twopartdef{\QQ}{2s+2t=d,}{0}{2s+2t\neq d,}\]
from which the result follows.
\end{proof}

From the above lemma, we see that Condition $(5')$ of Theorem \ref{prop:k_invariants_2_r_odd} implies Condition $(5)$.

\begin{lemma}
\label{lemma:from_homotopy_to_k_theory}
Let $A$ be a finite $CW$-complex, let $s>0$ be an integer, and let $f:S^{2s}\to A$ be a map such that $[f]=0\in\pi_{2s}(A)\otimes\QQ$. Then the induced map on reduced $K$-theory $f^{*}:\wt{K}(A)\to\wt{K}(S^{2s})$ must be zero.
\end{lemma}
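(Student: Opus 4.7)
The plan is to reduce the statement to the torsion-freeness of $\wt{K}(S^{2s})$, exploiting the fact that reduced $K$-theory behaves additively with respect to the group operation on $\pi_{2s}(A)$ coming from the co-$H$-space structure of $S^{2s}$ (available because $s \geq 1$).

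First I would translate the hypothesis. For any abelian group $G$, the kernel of the canonical localization map $G \to G \otimes \QQ$ is precisely the torsion subgroup of $G$. Applying this to $G = \pi_{2s}(A)$, the condition $[f] = 0 \in \pi_{2s}(A) \otimes \QQ$ is equivalent to saying that $[f]$ is a torsion element, i.e., there exists an integer $n \geq 1$ with $n \cdot [f] = 0$ in $\pi_{2s}(A)$. Note that we do \emph{not} need $\pi_{2s}(A)$ to be finitely generated for this step, since torsion is detected one element at a time.

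Next I would use the naturality of reduced $K$-theory with respect to the abelian group structure on $\pi_{2s}(A)$. Since $S^{2s}$ is a co-$H$-space for $s \geq 1$ and reduced $K$-theory is a (reduced) cohomology theory, the assignment
\[
\Phi \co \pi_{2s}(A) \to \Hom_{\ZZ}\bigl(\wt{K}(A),\,\wt{K}(S^{2s})\bigr), \qquad [f] \mapsto f^{*},
\]
is a group homomorphism. Consequently $n \cdot [f] = 0$ implies $n \cdot f^{*} = 0$ in the Hom group, meaning $n \cdot f^{*}(x) = 0$ in $\wt{K}(S^{2s})$ for every $x \in \wt{K}(A)$.

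Finally I would invoke Bott periodicity (Fact~\ref{fact:bott_isomorphism} with $G$ trivial) to identify $\wt{K}(S^{2s}) \cong \ZZ$, which is torsion-free. The equation $n \cdot f^{*}(x) = 0$ in $\ZZ$ then forces $f^{*}(x) = 0$ for every $x \in \wt{K}(A)$, proving $f^{*} = 0$. There is no serious obstacle in this argument; the only point requiring a moment's care is verifying that $[f] \mapsto f^{*}$ is additive, which is standard and relies on $s \geq 1$ (if $s = 0$ the domain is merely a pointed set and the claim would be false, consistent with the hypothesis $s > 0$ in the lemma).
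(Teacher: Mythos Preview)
Your proof is correct and follows essentially the same approach as the paper's proof: both observe that $[f]\mapsto f^{*}$ defines a homomorphism $\pi_{2s}(A)\to\Hom\bigl(\wt{K}(A),\wt{K}(S^{2s})\bigr)$, and then use that $\wt{K}(S^{2s})\cong\ZZ$ is torsion-free to conclude that torsion elements map to zero. Your version is simply more explicit about why the hypothesis means $[f]$ is torsion and why the correspondence is additive.
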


\begin{proof}
We can assume $2s>0$. The correspondence $[f]\mapsto f^{*}$ induces a homomorphism
\[\pi_{2s}(A)\to\Hom\big(\wt{K}(A),\wt{K}(S^{2s})\big).\]
Since $\wt{K}(S^{2s})\cong\ZZ$, any torsion element must be mapped to the zero homomorphism, i.e., the above correspondence factors through the map $\pi_{2s}(A)\to\pi_{2s}(A)\otimes\QQ$.
\end{proof}

The next lemma essentially follows from the series of lemmas used in the proof of (\cite{Bry97}, Theorem 1.2), whose proof we omit:

\begin{lemma}
\label{lemma:traces_2_r}
Let $y\in R(G^{\odd}_{2^{r}})$ be such that
\begin{enumerate}
    \item $\tr_{j}(y)\in\ZZ_{+}$.
    \item $\tr_{e^{i\phi}\mu^{\ell}}=0$ for each $\ell=0,\dots,2^{r}-1$ and each $\phi\in(0,2\pi)$ which is not a rational multiple of $2\pi$.
    \item $\tr_{j\mu^{\ell}}(y)=0$ for each $\ell=1,\dots,2^{r}-1$.
\end{enumerate}
Then for some $\lambda\in\ZZ_{+}$, we have that
\[y=\lambda w_{0}\prod_{a=0}^{r-1}w_{2^{a}}.\]
\end{lemma}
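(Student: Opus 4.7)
The plan is to use character theory of $G^{\odd}_{2^{r}}$ to reconstruct $y$ explicitly from its traces at the elements appearing in the hypotheses; set $m=2^{r}$ throughout. As a first step I would fix a $\ZZ$-basis of $R(G^{\odd}_{m})$ consisting of irreducible complex representations. Using the relations $\wt{c}^{2}=1$, $\wt{c}\,\xi h=\xi h$, $\xi^{2m}=1$ together with the standard description of irreducible $\Pin(2)$-reps, such a basis is given by the $1$-dimensional reps $\xi^{2k}$ and $\wt{c}\,\xi^{2k}$ for $0\le k\le m-1$, together with the $2$-dimensional irreducibles $H_{n}\,\xi^{j}$ for $n\ge 1$, $0\le j\le 2m-1$, and $n\equiv j\pmod{2}$, where $H_{n}$ denotes the $\Pin(2)$-irreducible with $\chi_{H_{n}}(e^{i\phi})=2\cos(n\phi)$ and $\chi_{H_{n}}(j)=0$ (the parity compatibility ensures $H_{n}\xi^{j}$ descends to $G^{\odd}_{m}$). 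Write $y$ in this basis as
\[
y=\sum_{k=0}^{m-1}\bigl(a_{k}\xi^{2k}+b_{k}\wt{c}\,\xi^{2k}\bigr)+\sum_{n\geq 1}\sum_{\substack{0\le j\le 2m-1\\ j\equiv n\,(\bmod\,2)}}c_{n,j}\,H_{n}\,\xi^{j},\qquad a_{k},b_{k},c_{n,j}\in\ZZ.
\]

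The second step extracts the consequences of condition~(2). For fixed $\ell\in\{0,\ldots,m-1\}$,
\[
\tr_{e^{i\phi}\mu^{\ell}}(y)=\sum_{k}(a_{k}+b_{k})\,e^{2\pi ik\ell/m}\ +\ \sum_{n\geq 1}2\cos(n\phi)\sum_{j\equiv n\,(\bmod\,2)}c_{n,j}\,e^{i\pi j\ell/m}.
\]
There exists $\phi\in(0,2\pi)$, not a rational multiple of $2\pi$, for which $\cos\phi$ is transcendental over $\QQ(e^{2\pi i/m})$ (a standard cardinality argument), so $\{1,2\cos(\phi),2\cos(2\phi),\ldots\}$ are linearly independent over $\QQ(e^{2\pi i/m})$. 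Condition~(2) therefore forces each Chebyshev coefficient to vanish identically in $\ell\in\{0,\ldots,m-1\}$. Applying inverse discrete Fourier transform in the remaining free index (of the form $2k$ for even $j$, $2k+1$ for odd $j$, after factoring out $e^{i\pi\ell/m}$ in the latter case), I obtain $a_{k}+b_{k}=0$ for all $k$ and $c_{n,j}=0$ for all admissible $(n,j)$ with $n\geq 1$. Consequently $y=w_{0}\sum_{k=0}^{m-1}a_{k}\xi^{2k}$.

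The third step applies condition~(3). Since $\tr_{j}(\wt{c})=-1$ and $\tr_{\mu^{\ell}}(\xi^{2k})=e^{2\pi ik\ell/m}$, one computes $\tr_{j\mu^{\ell}}(w_{0}\xi^{2k})=2\,e^{2\pi ik\ell/m}$, whence $\tr_{j\mu^{\ell}}(y)=2\sum_{k}a_{k}e^{2\pi ik\ell/m}$. The vanishing at $\ell=1,\ldots,m-1$ says the discrete Fourier transform of $(a_{k})$ is supported only at frequency $0$, so $a_{k}=a$ is independent of $k$. The identity $w_{0}\wt{c}=-w_{0}$ (immediate from $w_{0}^{2}=2w_{0}$) yields $w_{0}w_{2^{s}}=w_{0}(1+\xi^{2^{s+1}})$; iterating this identity and invoking the classical factorization $\prod_{s=0}^{r-1}(1+x^{2^{s}})=\sum_{k=0}^{2^{r}-1}x^{k}$ at $x=\xi^{2}$ gives
\[
w_{0}\prod_{s=0}^{r-1}w_{2^{s}}=w_{0}\sum_{k=0}^{m-1}\xi^{2k},
\]
so $y=a\cdot w_{0}\prod_{s=0}^{r-1}w_{2^{s}}$.

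Setting $\lambda:=a$, condition~(1) gives $\tr_{j}(y)=a\cdot\tr_{j}(w_{0})\cdot m=a\cdot 2^{r+1}\in\ZZ_{+}$, which forces $a\geq 1$; since $a\in\ZZ$ by integrality of the basis expansion, we conclude $\lambda\in\ZZ_{+}$ as required. The main (mild) obstacle is the transcendence / linear-independence step in the second paragraph that licenses separating the Chebyshev coefficients individually; this is a standard cardinality observation but is the only point at which something slightly subtle enters, and the rest of the argument is character-theoretic bookkeeping together with two applications of inverse discrete Fourier transform.
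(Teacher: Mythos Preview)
Your argument is correct and complete. The paper itself omits the proof entirely, saying only that it ``essentially follows from the series of lemmas used in the proof of (\cite{Bry97}, Theorem 1.2)''. Your self-contained character-theoretic argument---expanding $y$ in the basis of irreducibles of $G^{\odd}_{2^{r}}$, using condition~(2) together with linear independence of $\{\cos(n\phi)\}_{n\geq 0}$ to kill the two-dimensional irreducibles and force $a_{k}+b_{k}=0$, then using condition~(3) and inverse DFT to force $a_{k}$ constant, then rewriting via $w_{0}\wt{c}=-w_{0}$ and the telescoping product $\prod_{s=0}^{r-1}(1+u^{2^{s}})=\sum_{k=0}^{2^{r}-1}u^{k}$---is exactly the kind of argument Bryan's lemmas encapsulate, and is in the same spirit as the paper's proof of Lemma~\ref{lemma:must_be_w_0}.

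One cosmetic point: the identity $w_{0}\wt{c}=-w_{0}$ follows from $w_{0}=1-\wt{c}$ and $\wt{c}^{2}=1$, not from $w_{0}^{2}=2w_{0}$ as your parenthetical suggests. Also, your transcendence argument is more than needed: since $y$ is a \emph{finite} sum of irreducibles, $\tr_{e^{i\phi}\mu^{\ell}}(y)$ is a finite trigonometric polynomial in $\phi$, and its vanishing on a dense set forces each Fourier coefficient to vanish by ordinary linear independence of $\{\cos(n\phi)\}$ as functions on $[0,2\pi)$. Neither point affects the validity of the proof.
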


With these lemmas in hand, we are now finally able to prove Proposition \ref{prop:k_invariants_2_r_odd}:

\begin{proof}[Proof of Proposition \ref{prop:k_invariants_2_r_odd}]
Let $X''$ be as in Condition (5) of Proposition $\ref{prop:k_invariants_2_r_odd}$, and let
\begin{align*}
    &g:X'\to X'' & &h:X''\to X'
\end{align*}
be maps inducing local equivalences between $X'$ and $X''$. Define $\wt{f}:X\to X'$ to be the composition
\[X\xrightarrow{f}X'\xrightarrow{g}X''\xrightarrow{h}X'.\]
Since $X',X''$ are both at the same level $\mbfs'$ and $g,h$ induce $G^{\odd}_{2^{r}}$-homotopy equivalences on $\Pin(2)$-fixed point sets, it follows that $\wt{f}$ satisfies all of the same properties as $f$.

For the following, fix $\vec{k}=[(k_{0},\dots,k_{2^{r}-1})]\in\mbfk(X')$ and $x\in\wt{K}_{G^{\odd}_{2^{r}}}(X')$ such that
\[w_{0}\cdot(\iota')^{*}(x)=w_{0}^{k_{0}+1}\cdots w_{2^{r}-1}^{k_{2^{r}-1}}.\]
We claim that it suffices to show that
\begin{equation}
\label{eq:many_w_s}
    \wt{f}^{*}(x)=\lambda w_{0}\prod_{a=0}^{r-1}w_{2^{a}}\in\wt{K}_{G^{\odd}_{2^{r}}}(X)\cong R(G^{\odd}_{2^{r}})
\end{equation}
for some $\lambda\in\ZZ_{+}$. Indeed, consider the following commutative diagram:
\begin{center}
    \begin{tikzcd}[row sep=1em, column sep=1em]
    & \wt{K}_{\Pin(2)}(X')\arrow[dl,swap,"\wt{f}^{*}_{\Pin(2)}"]\arrow[dd,swap,near end,"(\iota')^{*}_{\Pin(2)}"] && \wt{K}_{G^{\odd}_{2^{r}}}(X')\arrow[ll,swap,"\res"]\arrow[dd,swap,near end,"(\iota')^{*}"]\arrow[dl,"\wt{f}^{*}"] \\
    \wt{K}_{\Pin(2)}(X)\arrow[dd,swap,"\iota^{*}_{\Pin(2)}"] && \wt{K}_{G^{\odd}_{2^{r}}}(X)\arrow[ll,swap,near start,"\res"]\arrow[dd,near end, "\iota^{*}"] & \\
    & \wt{K}_{\Pin(2)}((X')^{S^{1}})\arrow[dl,"(\wt{f}^{S^{1}})^{*}_{\Pin(2)}"] && \wt{K}_{G^{\odd}_{2^{r}}}((X')^{S^{1}})\arrow[ll,swap,near end,"\res"]\arrow[dl,"(\wt{f}^{S^{1}})^{*}"] \\
    \wt{K}_{\Pin(2)}(X^{S^{1}}) && \wt{K}_{G^{\odd}_{2^{r}}}(X^{S^{1}})\arrow[ll,"\res"] &
    \end{tikzcd}
\end{center}
We see that
\[w\cdot(\iota'_{\Pin(2)})^{*}(\res(x))=\res(w_{0}\cdot(\iota')^{*}(x))=w^{|\vec{k}|+1}=2^{|\vec{k}|}w\in R(\Pin(2)),\]
and Equation \ref{eq:many_w_s} implies that
\[\wt{f}^{*}_{\Pin(2)}(\res(x))=\res(\lambda w_{0}\prod_{a=0}^{r-1}w_{2^{a}})=\lambda w^{r+1}=\lambda 2^{r}w\in R(\Pin(2)).\]
As in the proof of (\cite{Man14}, Lemma 3.10) we have that 
\[((\wt{f}^{S^{1}})^{*}_{\Pin(2)}\circ(\iota')_{\Pin(2)}^{*}\circ\res)(x)=(w^{|\vec{s}'|-|\vec{s}|})\cdot(\iota'_{\Pin(2)})^{*}(\res(x))\]
\[=2^{|\vec{s}'|-|\vec{s}|-1}(w\cdot(\iota'_{\Pin(2)})^{*}(\res(x))=(2^{|\vec{s}'|-|\vec{s}|-1})(2^{|\vec{k}|}w)\]
\[=2^{|\vec{s}'|-|\vec{s}|+|\vec{k}|-1}w.\]
Since $\iota^{*}_{\Pin(2)}$ is given by multiplication by $z^{|\vec{t}|}\in R(\Pin(2))$, we have that
\[2^{|\vec{s}'|-|\vec{s}|+|\vec{k}|-1}w=((\wt{f}^{S^{1}})^{*}_{\Pin(2)}\circ(\iota')_{\Pin(2)}^{*}\circ\res)(x)\]
\[=(\iota^{*}_{\Pin(2)}\circ \wt{f}^{*}_{\Pin(2)}\circ\res)(x)=\iota^{*}_{\Pin(2)}(\lambda 2^{r}w)\]
\[=z^{|\vec{t}|}\cdot\lambda 2^{r}w=\lambda 2^{|\vec{t}|+r}w.\]
We therefore obtain the desired inequality
\begin{align*}
   &|\vec{s}'|-|\vec{s}|+|\vec{k}|-1\geq |\vec{t}|+r \\
\iff &|\vec{s}'|-|\vec{s}|\geq |\vec{t}|+r+1-|\vec{k}|.
\end{align*}

\bigskip

In order to show Equation \ref{eq:many_w_s}, by Lemma \ref{lemma:traces_2_r} it suffices to show that:
\begin{enumerate}
    \item $\tr_{j}(\wt{f}^{*}(x))\in\ZZ_{+}$.
    \item $\tr_{e^{i\phi}\mu^{(2\ell+1)2^{a}}}(\wt{f}^{*}(x))=0$ for all $a\in\{0,\dots,r-1\}$, for all $\ell\in\{0,\dots,2^{r-a}-1\}$, and for all $\phi\in(0,2\pi)$ not a rational multiple of $2\pi$.
    \item $\tr_{j\mu^{(2\ell+1)2^{a}}}(\wt{f}^{*}(x))=0$ for all $a\in\{0,\dots,r-2\}$ and $\ell\in \{0,\dots,2^{r-a-1}-1\}$.
    \item $\tr_{j\mu^{2^{r-1}}}(\wt{f}^{*}(x))=0$.
\end{enumerate}

\bigskip

To prove (1), note from the above diagram that
\[z^{|\vec{t}|}\cdot \wt{f}^{*}_{\Pin(2)}(\res(x))=((\iota^{*}_{\Pin(2)}\circ \wt{f}^{*}_{\Pin(2)}(\res(x))\]
\[=((\wt{f}^{S^{1}})^{*}_{\Pin(2)}\circ(\iota')^{*}_{\Pin(2)})(\res(x))=2^{|\vec{s}'|-|\vec{s}|+|\vec{k}|-1}w\in R(\Pin(2)).\]
As in the proof of (\cite{Man14}, Lemma 3.11), the only way this equation can be satisfied is if $\wt{f}^{*}_{\Pin(2)}(\res(x))=\lambda'w$ for some $\lambda'\in\ZZ_{+}$. Hence
\[\tr_{j}(\wt{f}^{*}(x))=\tr_{j}(\wt{f}_{\Pin(2)}^{*}(\res(x)))=\tr_{j}(\lambda'w)=2\lambda'\in\ZZ_{+}.\]

To prove (2), we restrict to the subgroup
\[G_{a}:=S^{1}\times_{\ZZ_{2}}\<\mu^{2^{a}}\>\subset G^{\odd}_{2},\]
where $a\in\{0,\dots,r-1\}$. The representation ring of $G_{a}$ can be identified with
\begin{align*}
	R(G_{a})&=\ZZ[\alpha^{2},\alpha\theta,\alpha\theta^{-1}]/(\alpha^{2^{r-a+1}}-1,(\alpha\theta)(\alpha\theta^{-1})-\alpha^{2}) \\
	&\subset\ZZ[\alpha,\theta,\theta^{-1}]/(\alpha^{2^{r-a+1}}-1,\theta\theta^{-1}-1)=R(S^{1})\otimes R(\ZZ_{2^{r-a+1}}),
\end{align*}
and one can show that the restriction map $\res:R(G^{\odd}_{2^{r}})\to R(G_{a})$ on the level of representation rings is given by
\begin{align*}
    \xi^{2}&\mapsto\alpha^{2} & w_{k}&\mapsto(1-\alpha^{2k})\\
    \wt{c}&\mapsto 1 & z_{k+\frac{1}{2}}&\mapsto(1-\theta\alpha^{2k+1})(1-\theta^{-1}\alpha^{2k+1}). \\
    \xi h&\mapsto \alpha(\theta+\theta^{-1}) &
\end{align*}
We denote by $\CC_{i,k}$ the $G_{a}$-representation corresponding to $\theta^{i}\alpha^{k}\in R(G_{a})$, where $i\in\ZZ$ and $k\in\{0,\dots, 2^{r-a+1}-1\}$ are such that $i\equiv k\pmod{2}$. Then we see that:
\begin{align*}
    &\res^{G^{\odd}_{2^{r}}}_{G_{a}}:\wt{\CC}_{k}\mapsto\CC_{0,2k}, & &\res^{G^{\odd}_{2^{r}}}_{G_{a}}:\HH_{k+\frac{1}{2}}\mapsto\CC_{1,2k+1}\oplus\CC_{-1,2k+1}.
\end{align*}
It follows that
\[X\simeq_{G_{a}}\Bigg(\bigoplus_{k=0}^{2^{r-a}-1}\Big(\CC_{0,2k}^{\sum_{\ell=0}^{2^{a}-1}s_{k+\ell 2^{r-a}}}\oplus\CC_{1,2k+1}^{\sum_{\ell=0}^{2^{a}-1}t_{k+\frac{1}{2}+\ell 2^{r-a}}}\oplus\CC_{-1,2k+1}^{\sum_{\ell=0}^{2^{a}-1}t_{k+\frac{1}{2}+\ell 2^{r-a}}}\Big)\Bigg)^{+},\]
and:
\begin{align*}
    &X^{G_{a}}\simeq_{G_{a}}\Big(\CC_{0,0}^{\sum_{\ell=0}^{2^{a}-1}s_{\ell 2^{r-a}}}\Big)^{+}, & &(X')^{G_{a}}\simeq_{G_{a}}\Big(\CC_{0,0}^{\sum_{\ell=0}^{2^{a}-1}s'_{\ell 2^{r-a}}}\Big)^{+}.
\end{align*}
Now let 
\begin{align*}
    &e_{a}:X^{G_{a}}\hookrightarrow X & &e_{a}':(X')^{G_{a}}\hookrightarrow X'
\end{align*}
denote the inclusions of the $G_{a}$-fixed point sets, and consider the following commutative diagram:
\begin{center}
    \begin{tikzcd}[row sep=3 em, column sep=9em]
    \wt{K}_{G^{\odd}_{2^{r}}}(X)\arrow[d,"\res"] & \wt{K}_{G^{\odd}_{2^{r}}}(X')\arrow[l,swap,"\wt{f}^{*}"]\arrow[d,"\res"] \\
    \wt{K}_{G_{a}}(X)\arrow[d,"e_{a}^{*}"] & \wt{K}_{G_{a}}(X')\arrow[l,swap,"\wt{f}_{G_{a}}^{*}"]\arrow[d,"(e_{a}')^{*}"]\\
    \wt{K}_{G_{a}}(X^{G_{a}})\arrow[d,"\cong"] & \wt{K}_{G_{a}}((X')^{G_{a}})\arrow[l,swap,"(\wt{f}^{G_{a}})_{G_{a}}^{*}"]\arrow[d,"\cong"] \\
    \wt{K}(X^{G_{a}})\otimes R(G_{a}) & \wt{K}\big((X')^{G_{a}}\big)\otimes R(G_{a}).\arrow[l,swap,"(\wt{f}^{G_{a}})^{*}\otimes\id"] 
    \end{tikzcd}
\end{center}

Here, $(\wt{f}^{G_{a}})^{*}$ is the map
\[(\wt{f}^{G_{a}})^{*}:\wt{K}(S^{\sum_{\ell=0}^{2^{a}-1}2s'_{\ell 2^{r-a}}})\cong\wt{K}\big((X')^{G_{a}}\big)\to\wt{K}(X^{G_{a}})\cong\wt{K}(S^{\sum_{\ell=0}^{2^{a}-1}2s_{\ell 2^{r-a}}})\]
induced by
\[\wt{f}^{G_{a}}:S^{\sum_{\ell=0}^{2^{a}-1}2s_{\ell 2^{r-a}}}\simeq X^{G_{a}}\to (X')^{G_{a}}\simeq S^{\sum_{\ell=0}^{2^{a}-1}2s_{\ell 2^{r-a}}}.\]
Since by assumption $\sum_{\ell=0}^{2^{r-a}-1}s'_{2^{a}\ell}\neq\sum_{\ell=0}^{2^{r-a}-1}s_{2^{a}\ell}$, it follows from Lemma \ref{lemma:from_homotopy_to_k_theory} that $(\wt{f}^{G_{a}})^{*}$ is zero, and hence the map
\[(\wt{f}^{G_{a}})^{*}_{G_{a}}:\wt{K}_{G_{a}}\big((X')^{G_{a}}\big)\to\wt{K}_{G_{a}}(X^{G_{a}})\]
must be zero as well. So by commutativity of the above diagram, we therefore must have that
\[(e_{a}^{*}\circ\res) (\wt{f}^{*}(x))=((\wt{f}^{G_{a}})_{G_{a}}^{*}\circ(e_{a}')^{*}\circ\res)(x)=0,\]
or in other words, $\res(\wt{f}^{*}(x))$ lies in the kernel of $e_{a}^{*}$. We can identify the map
\[e_{a}^{*}:R(G_{a})\cong\wt{K}_{G_{a}}(X)\to\wt{K}_{G_{a}}(X^{G_{a}})\cong R(G_{a})\]
as multiplication by the element
\[y_{e_{a}^{*}}:=\Big(\prod_{k=1}^{2^{r-a}-1}(1-\alpha^{2k})^{\sum_{\ell=0}^{2^{a}-1}s_{k+\ell 2^{r-a}}}\Big)\Big(\prod_{k=0}^{2^{r-a}-1}\big((1-\theta\alpha^{2k+1})(1-\theta^{-1}\alpha^{2k+1})\big)^{\sum_{\ell=0}^{2^{r-a}-1}t_{k+\frac{1}{2}+\ell 2^{r-a}}}\Big).\]
Since $\res(\wt{f}^{*}(x))\in\ker(e_{a}^{*})$, in particular we must have that
\[\tr_{h}(y_{e_{a}^{*}}\cdot\res(\wt{f}^{*}(x)))=0\]
for all $h\in G_{a}$. Now let $\phi\in(0,2\pi)$ be an irrational multiple of $2\pi$, and let $\ell\in\{0,\dots,2^{r-a}-1\}$. Then in particular
\[\tr_{e^{i\phi}\mu^{(2\ell+1)2^{a}}}(y_{e_{a}^{*}}\cdot\res(\wt{f}^{*}(x)))=0\]
for all such $\phi,\ell$. But note that
\begin{align*}
    &\tr_{e^{i\phi}\mu^{(2\ell+1)2^{a}}}(y_{e_{a}^{*}}) \\
    &\qquad\qquad=\Big(\prod_{k=1}^{2^{r-a}-1}(1-\omega_{2^{r-a+1}}^{2k(2\ell+1)})^{\sum_{\ell=0}^{2^{a}-1}s_{k+\ell 2^{r-a}}}\Big) \\
    &\qquad\qquad\qquad\qquad\cdot\Big(\prod_{k=0}^{2^{r-a}-1}\big((1-e^{i\phi}\omega_{2^{r-a+1}}^{(2k+1)(2\ell+1)})(1-e^{-i\phi}\omega_{2^{r-a+1}}^{(2k+1)(2\ell+1)})\big)^{\sum_{\ell=0}^{2^{r-a}-1}t_{k+\frac{1}{2}+\ell 2^{r-a}}}\Big) \\
    &\qquad\qquad\neq 0
\end{align*}
for all such $\phi,\ell$. Indeed,
\[\omega_{2^{r-a+1}}^{2k(2\ell+1)}=1\iff 2k(2\ell+1)\equiv 0\pmod{2^{r-a+1}} \iff k\equiv 0\pmod{2^{r-a}},\]
but such $k$ are excluded from the first term of the above product. Furthermore since $\phi$ is an irrational multiple of $2\pi$, we have that
\[e^{i\phi}\omega_{2^{r-a+1}}^{(2k+1)(2\ell+1)},e^{-i\phi}\omega_{2^{r-a+1}}^{(2k+1)(2\ell+1)}\neq 1\]
for any $k,\ell$, and so the second term of the product is non-zero as well. It follows then that
\[\tr_{e^{i\phi}\mu^{(2\ell+1)2^{a}}}(\wt{f}^{*}(x))=\tr_{e^{i\phi}\mu^{(2\ell+1)2^{a}}}(\res(\wt{f}^{*}(x)))=0\]
for all such $\phi$, $\ell$.

\bigskip

For (3), let $a\in\{0,\dots,r-2\}$, let $\ell\in\{0,\dots,2^{r-a-1}-1\}$ and consider what happens when we restrict to the subgroup
\[H_{a,\ell}:=\<j\mu^{(2\ell+1)2^{a}}\>\cong\ZZ_{2^{r-a+1}}\subset G^{\odd}_{2^{r}}.\]
The restriction map
\[\res:R(G^{\odd}_{2^{r}})\to R(H_{a,\ell})\cong R(\ZZ_{2^{r-a+1}})\cong\ZZ[\alpha]/(\alpha^{2^{r-a+1}}-1)\]
on the level of representation rings is given by
\begin{align*}
    \xi^{2} &\mapsto \alpha^{4\ell+2}, \\
    \wt{c} &\mapsto \alpha^{2^{r-a}}, \\
    \xi h &\mapsto \alpha^{2\ell+1+2^{r-a-1}}+\alpha^{2\ell+1-2^{r-a-1}}, \\
    w_{k} &\mapsto 1-\alpha^{2k(2\ell+1)+2^{r-a}}, \\
    z_{k+1/2} &\mapsto (1-\alpha^{(2k+1)(2\ell+1)+2^{r-a-1}})(1-\alpha^{(2k+1)(2\ell+1)-2^{r-a-1}}).
\end{align*}
Let $\CC_{k}$ denote the $H_{a,\ell}$-representation space corresponding to $\alpha^{k}\in R(H_{a,\ell})$. Then as an $H_{a,\ell}$-representation sphere, we see that
\[X\simeq_{H_{a,\ell}}\Bigg(\bigoplus_{k=0}^{2^{r-a}-1}\Big(\CC_{2k(2\ell+1)+2^{r-a}}^{\sum_{b=0}^{2^{a}-1}s_{k+b2^{r-a}}}\oplus\CC_{(2k+1)(2\ell+1)+2^{r-a-1}}^{\sum_{b=0}^{2^{a}-1}2t_{k+\frac{1}{2}+b2^{r-a}}}\Big)\Bigg)^{+}.\]
Note that
\[2k(2\ell+1)+2^{r-a}\equiv 0\pmod{2^{r-a+1}}\]
if and only if $k=(2c+1)2^{r-a-1}$ for some $c=0,\dots,2^{a}-1$, and:
\begin{align*}
    &(2k+1)(2\ell+1)+2^{r-a-1}\not\equiv 0\pmod{2^{r-a+1}}, \\
    &(2k+1)(2\ell+1)-2^{r-a-1}\not\equiv 0\pmod{2^{r-a+1}},
\end{align*}
for any $k$, since in particular $(2k+1)(2\ell+1)$ is odd and $2^{r-a-1}$ is even. Therefore the $H_{a,\ell}$-fixed point set of $X$ is given by
\[X^{H_{a,\ell}}=\Big(\CC_{0}^{\sum_{k=0}^{2^{a}-1}s_{(2k+1)2^{r-a-1}}}\Big)^{+}.\]
Similarly, note that since $-1=(j\mu^{(2\ell+1)2^{a}})^{2^{r-a}}\in H_{a,\ell}$, we have that $(X')^{H_{a,\ell}}\subset(X')^{\<-1\>}=(X')^{S^{1}}$. Hence 
\[(X')^{H_{a,\ell}}=\big((X')^{S^{1}}\big)^{H_{a,\ell}}=\Big(\CC_{0}^{\sum_{k=0}^{2^{a}-1}s'_{(2k+1)2^{r-a-1}}}\Big)^{+}.\]
Now let 
\begin{align*}
    &\varepsilon_{a}:X^{H_{a,\ell}}\hookrightarrow X & &\varepsilon'_{a}:(X')^{H_{a,\ell}}\hookrightarrow X'
\end{align*}
denote the inclusions of the $H_{a,\ell}$-fixed point-sets and consider the following commutative diagram:
\begin{center}
    \begin{tikzcd}[row sep=3 em, column sep=9em]
    \wt{K}_{G^{\odd}_{2^{r}}}(X)\arrow[d,"\res"] & \wt{K}_{G^{\odd}_{2^{r}}}(X')\arrow[l,swap,"\wt{f}^{*}"]\arrow[d,"\res"] \\
    \wt{K}_{H_{a,\ell}}(X)\arrow[d,"\varepsilon_{a}^{*}"] & \wt{K}_{H_{a,\ell}}(X')\arrow[l,swap,"\wt{f}_{H_{a,\ell}}^{*}"]\arrow[d,"(\varepsilon'_{a})^{*}"] \\
    \wt{K}_{H_{a,\ell}}(X^{H_{a,\ell}})\arrow[d,"\cong"] & \wt{K}_{H_{a,\ell}}\big((X')^{H_{a,\ell}}\big).\arrow[d,"\cong"]\arrow[l,swap,"(\wt{f}^{H_{a,\ell}})_{H_{a,\ell}}^{*}"] \\
    \wt{K}(X^{H_{a,\ell}})\otimes R(H_{a,\ell}) & \wt{K}\big((X')^{H_{a,\ell}}\big)\otimes R(H_{a,\ell}).\arrow[l,swap,"(\wt{f}^{H_{a,\ell}})^{*}\otimes\id"]
    \end{tikzcd}
\end{center}
Here,
\[(\wt{f}^{H_{a,\ell}})^{*}:\wt{K}\big((X')^{H_{a,\ell}}\big)\cong\wt{K}(S^{\sum_{k=0}^{2^{a}-1}2s'_{(2k+1)2^{r-a-1}}})\to\wt{K}(S^{\sum_{k=0}^{2^{a}-1}2s_{(2k+1)2^{r-a-1}}})\cong\wt{K}(X^{H_{a,\ell}})\]
is the map induced by
\[\wt{f}^{H_{a,\ell}}:X^{H_{a,\ell}}\simeq S^{\sum_{k=0}^{2^{a}-1}2s_{(2k+1)2^{r-a-1}}}\to S^{\sum_{k=0}^{2^{a}-1}2s'_{(2k+1)2^{r-a-1}}}\simeq(X')^{H_{a,\ell}}.\]
By our assumption that $\sum_{k=0}^{2^{a}-1}s_{(2k+1)2^{r-a-1}}\neq\sum_{k=0}^{2^{a}-1}s'_{(2k+1)2^{r-a-1}}$ and Lemma \ref{lemma:from_homotopy_to_k_theory} it follows that $(\wt{f}^{H_{a,\ell}})^{*}$ is zero, and hence the map
\[(\wt{f}^{H_{a,\ell}})_{H_{a,\ell}}^{*}:\wt{K}_{H_{a,\ell}}\big((X')^{H_{a,\ell}}\big)\to\wt{K}_{H_{a,\ell}}(X^{H_{a,\ell}})\]
is zero as well. By commutativity of the above diagram, we therefore must have that
\[(\varepsilon_{a}^{*}\circ\res) (\wt{f}^{*}(x))=\big((\wt{f}^{H_{a,\ell}})_{H_{a,\ell}}^{*}\circ(\varepsilon'_{a})^{*}\circ\res\big)(x)=0,\]
or in other words, $\res(\wt{f}^{*}(x))$ lies in the kernel of $\varepsilon_{a}^{*}$. We can identify the map
\[\varepsilon_{a}^{*}:R(H_{a,\ell})\cong\wt{K}_{H_{a,\ell}}(X)\to\wt{K}_{H_{a,\ell}}(X^{H_{a,\ell}})\cong R(H_{a,\ell})\]
as multiplication by the element
\begin{align*}
    y_{\varepsilon_{a}^{*}}&:=\Bigg(\prod_{k=1}^{2^{r-a}-1}(1-\alpha^{2k(2\ell+1)+2^{r-a}})^{\sum_{b=0}^{2^{a}-1}s_{k+b2^{r-a}}}\Bigg) \\
    &\qquad\qquad\cdot\Bigg(\prod_{k=1}^{2^{r-a}-1}\Big((1-\alpha^{(2k+1)(2\ell+1)+2^{r-a-1}})(1-\alpha^{(2k+1)(2\ell+1)-2^{r-a-1}})\Big)^{\sum_{b=0}^{2^{a}-1}t_{k+\frac{1}{2}+b2^{r-a}}}\Bigg).
\end{align*}
Since $\res(\wt{f}^{*}(x))\in\ker(\varepsilon_{a}^{*})$, in particular we must have that
\[\tr_{h}\big(y_{\varepsilon_{a}^{*}}\cdot\res(\wt{f}^{*}(x))\big)=0\]
for all $h\in H_{a,\ell}$. But note that
\begin{align*}
    &\tr_{j\mu^{(2\ell+1)2^{a}}}(y_{\varepsilon_{a}^{*}}) \\
    &\qquad\qquad=\Bigg(\prod_{k=1}^{2^{r-a}-1}(1-\omega_{2^{r-a+1}}^{2k(2\ell+1)+2^{r-a}})^{\sum_{b=0}^{2^{a}-1}s_{k+b2^{r-a}}}\Bigg) \\
    &\qquad\qquad\qquad\qquad\cdot\Bigg(\prod_{k=0}^{2^{r-a}-1}\Big((1-\omega_{2^{r-a+1}}^{(2k+1)(2\ell+1)+2^{r-a-1}})(1-\omega_{2^{r-a+1}}^{(2k+1)(2\ell+1)-2^{r-a-1}})\Big)^{\sum_{b=0}^{2^{r-a}-1}t_{k+\frac{1}{2}+b2^{r-a}}}\Bigg) \\
    &\qquad\qquad\neq 0.
\end{align*}
Indeed,
\[\omega_{2^{r-a+1}}^{2k(2\ell+1)+2^{r-a}}=1\iff 2k(2\ell+1)\equiv 2^{r-a}\pmod{2^{r-a+1}} \iff k\equiv 0\pmod{2^{r-a}},\]
but such $k$ are excluded from the first term of the above product. Furthermore 
\begin{align*}
    &\omega_{2^{r-a+1}}^{(2k+1)(2\ell+1)+2^{r-a-1}}=1\iff (2k+1)(2\ell+1)\equiv -2^{r-a-1}\pmod{2^{r-a+1}}, \\
    &\omega_{2^{r-a+1}}^{(2k+1)(2\ell+1)-2^{r-a-1}}=1\iff (2k+1)(2\ell+1)\equiv 2^{r-a-1}\pmod{2^{r-a+1}},
\end{align*}
neither of which can happen since we assumed $a\le r-2$, and so the second term of the product must be non-zero as well. It follows then that
\[\tr_{j\mu^{(2\ell+1)2^{a}}}(\wt{f}^{*}(x))=\tr_{j\mu^{(2\ell+1)2^{a}}}(\res(\wt{f}^{*}(x)))=0\]
for all $a\in\{0,\dots,r-2\}$ and $\ell\in\{0,\dots,2^{r-a-1}-1\}$.

For (4), consider the following subgroups of $G^{\odd}_{2^{r}}$:
\begin{align*}
&H_{r-1}:=\<j\mu^{2^{r-1}}\>\cong\ZZ_{2},	 &
&\<j\>\cong\ZZ_{4}, &
&H_{r-1}\times\<j\>\cong\<j\mu^{2^{r-1}},j\>\cong\ZZ_{2}\times\ZZ_{4}.	
\end{align*}
The restriction map
\[\res:R(G^{\odd}_{2^{r}})\to R(H_{r-1}\times\<j\>)=R(\ZZ_{2})\otimes R(\ZZ_{4})\cong\ZZ[\alpha,\beta]/(\alpha^{2}-1,\beta^{4}-1)\]
is given by
\begin{align*}
    \xi^{2} &\mapsto \alpha, & \wt{c} &\mapsto \alpha\beta^{2}, & \xi h &\mapsto\alpha\beta+\beta^{3}, \\
    w_{k} &\mapsto 1-\alpha^{k+1}\beta^{2}, & z_{k+\frac{1}{2}} &\mapsto (1-\alpha^{k+1}\beta)(1-\alpha^{k}\beta^{3}). & \xi^{2k+1}h &\mapsto\alpha^{k+1}\beta+\alpha^{k}\beta^{3}
\end{align*}
Let $\CC_{a,b}$ denote the $H_{r-1}\times\<j\>$-representation space corresponding to $\alpha^{a}\beta^{b}\in H_{r-1}\times\<j\>$. Then
\begin{align*}
	X\simeq_{H_{r-1}\times\<j\>}\Big(\CC_{0,2}^{\sum_{k=0}^{2^{r-1}-1}s_{2k+1}}&\oplus\CC_{0,1}^{\sum_{k=0}^{2^{r-1}-1}t_{2k+3/2}}\oplus\CC_{0,3}^{\sum_{k=0}^{2^{r-1}-1}t_{2k+1/2}} \\
	&\oplus\CC_{1,2}^{\sum_{k=0}^{2^{r-1}-1}s_{2k}}\oplus\CC_{1,1}^{\sum_{k=0}^{2^{r-1}-1}t_{2k+1/2}}\oplus\CC_{1,3}^{\sum_{k=0}^{2^{r-1}-1}t_{2k+3/2}}\Big)^{+}
\end{align*}
as a $(H_{r-1}\times\<j\>)$-representation sphere. Therefore the $H_{r-1}$-fixed point set of $X$ as a $(H_{r-1}\times\<j\>)$-representation sphere is given by
\[X^{H_{r-1}}\simeq_{H_{r-1}\times\<j\>}\Big(\CC_{0,2}^{\sum_{k=0}^{2^{r-1}-1}s_{2k+1}}\oplus\CC_{0,1}^{\sum_{k=0}^{2^{r-1}-1}t_{2k+3/2}}\oplus\CC_{0,3}^{\sum_{k=0}^{2^{r-1}-1}t_{2k+1/2}}\Big)^{+}.\]
Alternatively, we can express $X^{H_{r-1}}$ as the real $\<j\>\cong\ZZ_{4}$-representation sphere
\[X^{H_{r-1}}\simeq_{\<j\>}\Big(\wt{\RR}^{\sum_{k=0}^{2^{r-1}-1}2s_{2k+1}}\oplus\VV^{\sum_{k=0}^{2^{r}-1}t_{k+1/2}}\Big)^{+}.\]
Now let 
\begin{align*}
    &\varepsilon_{r-1}:X^{H_{r-1}}\hookrightarrow X &
    &\varepsilon'_{r-1}:(X')^{H_{r-1}}\hookrightarrow X'
\end{align*}
denote the inclusions of the $H_{r-1}$-fixed point-sets and consider the following commutative diagram:
\begin{center}
    \begin{tikzcd}[row sep=3 em, column sep=-2em]
    \wt{K}_{G^{\odd}_{2^{r}}}(X)\arrow[dd,"\res"]\arrow[dr,"\res"] & & \wt{K}_{G^{\odd}_{2^{r}}}(X')\arrow[ll,swap,"\wt{f}^{*}"]\arrow[dd,near start,"\res"]\arrow[dr,"\res"] & \\
    & \wt{K}_{H_{r-1}\times\<j\>}(X)\arrow[dd,near end,"\varepsilon_{r-1}^{*}"]\arrow[dl,"\res"] & & \wt{K}_{H_{r-1}\times\<j\>}(X')\arrow[dd,"(\varepsilon'_{r-1})^{*}"]\arrow[dl,"\res"]\arrow[ll,swap,near end,"\wt{f}_{H_{r-1}\times\<j\>}^{*}"]  \\
    \wt{K}_{H_{r-1}}(X)\arrow[dd,"\varepsilon_{r-1}^{*}"] & & \wt{K}_{H_{r-1}}(X')\arrow[dd,near end,"(\varepsilon'_{r-1})^{*}"]\arrow[ll,swap,near start,"\wt{f}_{H_{r-1}}^{*}"] & \\
    & \wt{K}_{H_{r-1}\times\<j\>}(X^{H_{r-1}})\arrow[dd,near end,"\cong"]\arrow[dl,swap,"\res"] & & \wt{K}_{H_{r-1}\times\<j\>}\big((X')^{H_{r-1}}\big)\arrow[dd,near end,"\cong"]\arrow[dl,"\res"]\arrow[ll,swap,"(\wt{f}^{H_{r-1}})_{H_{r-1}\times\<j\>}^{*}"] \\
    \wt{K}_{H_{r-1}}(X^{H_{r-1}})\arrow[dd,near end,"\cong"] & & \wt{K}_{H_{r-1}}((X')^{H_{r-1}})\arrow[dd,near end,"\cong"]\arrow[ll,swap,near start,"(\wt{f}^{H_{r-1}})_{H_{r-1}}^{*}"] & \\
    & \wt{K}_{\<j\>}(X^{H_{r-1}})\otimes R(H_{r-1})\arrow[dl,"\res\otimes\id"] & & \wt{K}_{\<j\>}((X')^{H_{r-1}})\otimes R(H_{r-1})\arrow[dl,"\res\otimes\id"]\arrow[ll,swap,"(\wt{f}^{H_{r-1}})^{*}_{\<j\>}
    \otimes\id"] \\
    \wt{K}(X^{H_{r-1}})\otimes R(H_{r-1}) & & \wt{K}((X')^{H_{r-1}})\otimes R(H_{r-1})\arrow[ll,swap,"(\wt{f}^{H_{r-1}})^{*}
    \otimes\id"] &    \end{tikzcd}
\end{center}
Here,
\begin{align*}
	&(\wt{f}^{H_{r-1}})^{*}_{\<j\>}:\wt{K}_{\<j\>}\big((X')^{H_{r-1}}\big)\to\wt{K}_{\<j\>}(X^{H_{r-1}}) \\
	&(\wt{f}^{H_{r-1}})^{*}:\wt{K}\big((X')^{H_{r-1}}\big)\to\wt{K}(X^{H_{r-1}})
\end{align*}
are the maps induced by
\[\wt{f}^{H_{r-1}}:X^{H_{r-1}}\simeq_{\<j\>}\Big(\wt{\RR}^{\sum_{k=0}^{2^{r-1}-1}2s_{2k+1}}\oplus\VV^{\sum_{k=0}^{2^{r}-1}t_{k+1/2}}\Big)^{+}\to (X')^{H_{r-1}}.\]
Recall that by construction, $\wt{f}$ factors as a composition $X\xrightarrow{g\circ f}X''\xrightarrow{h}X'$. Hence $\wt{f}^{H_{r-1}}$ factors through the map
\[(g\circ f)^{H_{r-1}}:X^{H_{r-1}}\simeq_{\<j\>}\Big(\wt{\RR}^{\sum_{k=0}^{2^{r-1}-1}2s_{2k+1}}\oplus\VV^{\sum_{k=0}^{2^{r}-1}t_{k+1/2}}\Big)^{+}\to (X'')^{H_{r-1}},\]
and $(\wt{f}^{H_{r-1}})^{*}_{\<j\>}$, $(\wt{f}^{H_{r-1}})^{*}$ factor through the maps
\begin{align*}
	&((g\circ f)^{H_{r-1}})^{*}_{\<j\>}:\wt{K}_{\<j\>}\big((X'')^{H_{r-1}}\big)\to\wt{K}_{\<j\>}(X^{H_{r-1}}), \\
	&((g\circ f)^{H_{r-1}})^{*}:\wt{K}\big((X'')^{H_{r-1}}\big)\to\wt{K}(X^{H_{r-1}}),
\end{align*}
respectively. Condition (5) is equivalent to the assertion that
\[\res^{\ZZ_{4}}_{1}\Big(\pi^{\ZZ_{4}}_{(\sum_{k=0}^{2^{r-1}-1}2s_{2k+1})\rho+(\sum_{k=0}^{2^{r}-1}t_{k+1/2})\nu}\big((X'')^{H_{r-1}}\big)\otimes\QQ\Big)=0.\]
Therefore by Lemma \ref{lemma:from_homotopy_to_k_theory} we have that $((g\circ f)^{H_{r-1}})^{*}=0$, and hence $\big(\wt{f}^{H_{r-1}}\big)_{H_{r-1}}^{*}=0$, which by commutativity of the above diagram implies that 
\[\res(\wt{f}^{*}(x))\in\ker\Big(\wt{K}_{H_{r-1}}(X)\xrightarrow{\varepsilon_{r-1}^{*}}\wt{K}_{H_{r-1}}(X^{H_{r-1}})\Big).\]

We can identify the map
\[\varepsilon_{r-1}^{*}:R(H_{r-1})\cong\wt{K}_{H_{r-1}}(X)\to\wt{K}_{H_{r-1}}(X^{H_{r-1}})\cong R(H_{r-1})\]
as multiplication by the element
\[y_{\varepsilon_{r-1}^{*}}:=(1-\alpha)^{(\sum_{k=0}^{2^{r-1}-1}s_{2k})+(\sum_{k=0}^{2^{r}-1}t_{k+1/2})}\in R(H_{r-1})=\ZZ[\alpha]/(\alpha^{2}-1).\]
Now note that since
\[\tr_{\<j\mu^{2^{r-1}}\>}(y_{\varepsilon_{r-1}^{*}})=2^{(\sum_{k=0}^{2^{r-1}-1}s_{2k})+(\sum_{k=0}^{2^{r}-1}t_{k+1/2})}\neq 0,\]
by a similar argument as above it follows that
\[\tr_{\<j\mu^{2^{r-1}}\>}(\wt{f}^{*}(x))=0,\]
which was what was to be proven.
\end{proof}

\bigskip
\subsection{\texorpdfstring{$p^{r}$}{pr}-fold Actions}
\label{subsec:p_r_actions}

For this section let $m=p^{r}$ be an odd prime power. In this setting we can extract more tractable invariants by projecting onto a $2$-dimensional sub-lattice, whose coordinates represent the ``trivial" and ``non-trivial" parts, respectively.

Let $\Pi:(\NN^{p^{r}},\preceq,+,|\cdot|)\to(\N^{p^{r}},\preceq,+,|\cdot|)$ denote the defining projection, and consider the surjection of $\NN$-graded additive posets
\begin{align*}
    \wt{\pi}:(\NN^{p^{r}},\preceq,+,|\cdot|)&\to(\NN^{2},\preceq,+,|\cdot|) \\
    (a_{0},\dots,a_{p^{r}-1})&\mapsto (a_{0},a_{1}+\cdots+a_{p^{r}-1}).
\end{align*}

\begin{proposition}
\label{prop:lattice_p^r_projection}
There exists a surjection of $\NN$-graded additive posets
\[\pi:(\N^{p^{r}},\preceq,+,|\cdot|)\to(\NN^{2},\preceq,+,|\cdot|)\]
such that the following diagram commutes:
\begin{center}
\begin{tikzcd}[column sep=small]
(\NN^{p^{r}},\preceq,+,|\cdot|) \arrow[rdd, swap, "\wt{\pi}"] \arrow[rr,"\Pi"] & & (\N^{p^{r}},\preceq,+,|\cdot|)\arrow[ldd, "\pi"] \\
& &  \\
& (\NN^{2},\preceq,+,|\cdot|). &
\end{tikzcd}
\end{center}
\end{proposition}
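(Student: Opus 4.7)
The plan is to construct $\pi$ by descending $\tilde\pi$ through the quotient map $\Pi$. By the universal property of quotients, this amounts to verifying that $\tilde\pi$ is constant on fibers of $\Pi$, i.e., that $\vec{a}\sim\vec{b}$ implies $\tilde\pi(\vec{a})=\tilde\pi(\vec{b})$; once this is established the remaining structure (addition, partial order, grading, surjectivity) will follow formally.

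For the well-definedness step, suppose $\vec{a}\sim\vec{b}$, which by definition means $\mbf{w}^{\vec{a}+\vec{e}_{0}}=\mbf{w}^{\vec{b}+\vec{e}_{0}}$ in $R(G^{*}_{p^{r}})$. Since $(\vec{a}+\vec{e}_{0})_{0}\geq 1$ and $(\vec{b}+\vec{e}_{0})_{0}\geq 1$, the hypotheses of Proposition \ref{prop:monomials} apply to $\vec{a}+\vec{e}_{0}$ and $\vec{b}+\vec{e}_{0}$. I would then extract two specific consequences of that proposition in the odd prime case: first, the total sum identity $\sum_{k=0}^{p^{r}-1}(a_{k}+\delta_{k,0})=\sum_{k=0}^{p^{r}-1}(b_{k}+\delta_{k,0})$ gives $|\vec{a}|=|\vec{b}|$; second, specializing the identity $\sum_{\ell=0}^{p^{r-t-1}-1}a_{\ell p^{t+1}}=\sum_{\ell=0}^{p^{r-t-1}-1}b_{\ell p^{t+1}}$ to $t=r-1$ (so that the sum collapses to a single term $\ell=0$) yields $a_{0}+1=b_{0}+1$, i.e., $a_{0}=b_{0}$. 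Combining these gives $a_{1}+\cdots+a_{p^{r}-1}=b_{1}+\cdots+b_{p^{r}-1}$, and hence $\tilde\pi(\vec{a})=(a_{0},a_{1}+\cdots+a_{p^{r}-1})=(b_{0},b_{1}+\cdots+b_{p^{r}-1})=\tilde\pi(\vec{b})$, as required. This defines $\pi([\vec{a}]):=\tilde\pi(\vec{a})$, and by construction the triangle commutes.

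The remaining verifications are routine. Additivity of $\pi$ follows from additivity of $\tilde\pi$ together with the definition $[\vec{a}]+[\vec{b}]=[\vec{a}+\vec{b}]$. For order-preservation, if $[\vec{a}]\preceq[\vec{b}]$ in $\N^{p^{r}}$ then by Definition \ref{def:quotient_posets} there exist representatives $\vec{a}\,'\sim\vec{a}$, $\vec{b}\,'\sim\vec{b}$ with $\vec{a}\,'\preceq\vec{b}\,'$ componentwise in $\NN^{p^{r}}$; this gives $a'_{0}\le b'_{0}$ and $\sum_{k\geq 1}a'_{k}\le\sum_{k\geq 1}b'_{k}$, so $\tilde\pi(\vec{a}\,')\preceq\tilde\pi(\vec{b}\,')$ in $\NN^{2}$, and by well-definedness this equals $\pi([\vec{a}])\preceq\pi([\vec{b}])$. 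The $\NN$-grading is preserved because $|\tilde\pi(\vec{a})|=a_{0}+\sum_{k\geq 1}a_{k}=|\vec{a}|=|[\vec{a}]|$. Finally, surjectivity is immediate: any $(x,y)\in\NN^{2}$ is the image under $\tilde\pi$ of $(x,y,0,\dots,0)\in\NN^{p^{r}}$, hence of its class in $\N^{p^{r}}$.

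There is no real obstacle: the entire content of the proposition sits in the first step, which is just the observation that the defining relations of $\N^{p^{r}}$ happen to preserve both the zeroth coordinate and (via the grading) the sum of the remaining coordinates. The odd-prime hypothesis enters only through the availability of the clean relation at $t=r-1$ in Proposition \ref{prop:monomials}; note that the analogous statement would fail for $p=2$, since in that case the corresponding relation imposes only a congruence condition on partial sums rather than an equality of the zeroth entries.
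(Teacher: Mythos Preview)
Your proposal is correct and follows essentially the same approach as the paper: both arguments reduce to the observation, via Proposition~\ref{prop:monomials}, that the equivalence relation on $\NN^{p^r}$ preserves both $a_0$ and the total sum $|\vec{a}|$, hence preserves $\tilde\pi$. The paper's proof is terse (two sentences) and leaves the routine poset-homomorphism checks implicit, whereas you spell them out; your remark about why $p=2$ fails is a nice addendum not present in the paper.
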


\begin{proof}
Proposition \ref{prop:monomials} implies that if 
\[\vec{a}=(a_{0},\dots,a_{p^{r}-1}),\;\vec{b}=(b_{0},\dots,b_{p^{r}-1})\in\NN^{p^{r}}\]
are such that $[\vec{a}]=[\vec{b}]\in \N^{p^{r}}$, then $a_{0}=b_{0}$. This implies that the zero-th coordinate of an element $[\vec{a}]\in\N^{p^{r}}$ is a well-defined quantity, as well as the sum of the remaining $p^{r}-1$ coordinates of $[\vec{a}]$.
\end{proof}

Given a space of type $\CC$-$G^{*}_{p^{r}}$-$\SWF$, we can therefore define the \emph{set of projected equivariant $k$-invariants}
\[\mbfk^{\pi}(X):=\pi(\mbfk(X))\subset\NN^{2},\]
as well as the corresponding \emph{upper} and \emph{lower} equivariant $k$-invariants
\begin{align*}
    &\vec{\ol{k}}\,^{\pi}(X)=(\ol{k}_{0}(X),\ol{k}_{1}(X)):=\vee\mbfk^{\pi}(X)\in\NN^{2}, & &\vec{\ul{k}}\,^{\pi}(X)=(\ul{k}_{0}(X),\ul{k}_{1}(X)):=\wedge\mbfk^{\pi}(X)\in\NN^{2}.
\end{align*}

\begin{proposition}
\label{prop:k_invariants_odd_prime_powers}
Let $p^{r}$ be an odd prime power and let $X$, $X'$ be spaces of type $\CC$-$G^{*}_{p^{r}}$-$\SWF$ at levels $\mbfs,\mbfs'\in R(\ZZ_{p^{r}})_{\geq 0}^{\sym}$, respectively. Suppose that $f:X\to X'$ is a $G^{*}_{p^{r}}$-equivariant map whose $\Pin(2)$-fixed point set is a $G^{*}_{p^{r}}$-homotopy equivalence. Then for all $(k_{0},k_{1})\in\mbfk^{\pi}(X)$:
\begin{enumerate}
    \item For each $(k'_{0},k'_{1})\in\mbfk^{\pi}(X')$ the following implications hold:
    \begin{align*}
        &k'_{0}+(s'_{0}-s_{0})\le k_{0}+\left\{
		  \begin{array}{ll}
			 1 & \mbox{if } X \text{ is }K_{G^{*}_{p^{r}}}\text{-split and }s_{0}<s'_{0} \\
			 0 & \mbox{otherwise}
		  \end{array}
		  \right. \\
        \implies &k'_{1}+\sum_{j=1}^{p^{r}-1}(s_{j}'-s_{j})\geq k_{1},\text{ and } \\
        &k'_{1}+\sum_{j=1}^{p^{r}-1}(s_{j}'-s_{j})\le k_{1} \\
        \implies &k'_{0}+(s'_{0}-s_{0})\geq k_{0}+\left\{
		\begin{array}{ll}
			1 & \mbox{if } X \text{ is }K_{G^{*}_{p^{r}}}\text{-split and }s_{0}<s'_{0}, \\
			0 & \mbox{otherwise.}
		\end{array}
		\right.
    \end{align*}
    \item There exists $(k'_{0},k'_{1})\in\mbfk^{\pi}(X')$ such that:
    \begin{align*}
	   &k'_{0}+(s'_{0}-s_{0})\geq k_{0}+\left\{
		  \begin{array}{ll}
			 1 & \mbox{if } X \text{ is }K_{G^{*}_{p^{r}}}\text{-split and }s_{0}<s'_{0}, \\
			 0 & \mbox{otherwise,}
		  \end{array}
		  \right. \\
	   &k'_{1}+\sum_{j=1}^{p^{r}-1}(s_{j}'-s_{j})\geq k_{1}.
    \end{align*}
\end{enumerate}
In particular:
\begin{align*}
	&\ol{k}_{0}(X')+(s'_{0}-s_{0})\geq\ul{k}_{0}(X)+\left\{
		\begin{array}{ll}
			1 & \mbox{if } X \text{ is }K_{G^{*}_{p^{r}}}\text{-split and }s_{0}<s'_{0}, \\
			0 & \mbox{otherwise,}
		\end{array}
		\right. \\
	&\ol{k}_{1}(X')+\sum_{j=1}^{p^{r}-1}(s_{j}'-s_{j})\geq\ul{k}_{1}(X).
\end{align*}
\end{proposition}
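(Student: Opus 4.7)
Write $\delta = 1$ if $X$ is $K_{G^{*}_{p^{r}}}$-split with $s_{0}<s'_{0}$, and $\delta = 0$ otherwise. The plan is to apply Propositions \ref{prop:k_invariants_pin(2)_fixed_point_homotopy_equivalence} and \ref{prop:k_invariants_kg_split} at the level of $\N^{p^{r}}$ and then project via the additive poset surjection $\pi$ of Proposition \ref{prop:lattice_p^r_projection} to obtain the $\NN^{2}$-level inequalities. The key $K$-theoretic input, extracted exactly as in the proofs of those two propositions, is the containment $\mbfw^{\vec{\mbfs}\,'-\vec{\mbfs}}\cdot\III(X')\subseteq w_{0}^{\delta}\cdot\III(X)$, which translates to the subset inclusion $I(X')+[\vec{\mbfs}\,'-\vec{\mbfs}]\subseteq I(X)+\delta\vec{e}_{0}$ in $\N^{p^{r}}$.

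For Part (2), given $(k_{0},k_{1})\in\mbfk^{\pi}(X)$ I will pick a representative $\vec{k}\in\mbfk(X)$ with $\pi(\vec{k})=(k_{0},k_{1})$, and invoke the existence statement (1aii) of Proposition \ref{prop:k_invariants_pin(2)_fixed_point_homotopy_equivalence} (or Proposition \ref{prop:k_invariants_kg_split} in the split case) to obtain $\vec{k}'\in\mbfk(X')$ with $\vec{k}+\delta\vec{e}_{0}\preceq\vec{k}'+[\vec{\mbfs}\,'-\vec{\mbfs}]$ in $\N^{p^{r}}$. Applying the additive order-preserving projection $\pi$ will yield $(k_{0}+\delta,k_{1})\preceq\pi(\vec{k}')+(s'_{0}-s_{0},\sum_{j\geq 1}(s'_{j}-s_{j}))$ in $\NN^{2}$, which is exactly the required inequality with $(k'_{0},k'_{1})=\pi(\vec{k}')\in\mbfk^{\pi}(X')$. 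The ``In particular'' conclusion then follows by specializing $(k_{0},k_{1})$ to an element achieving the coordinate-wise minimum $\ul{k}_{0}(X)$ (respectively $\ul{k}_{1}(X)$) and using $k'_{j}\leq\ol{k}_{j}(X')$.

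For Part (1), for chosen witnesses $\vec{k}\in\mbfk(X)$, $\vec{k}'\in\mbfk(X')$, the two displayed implications are together equivalent to the single non-dominance statement $\pi(\vec{k})+(\delta,0)\not\succ\pi(\vec{k}')+\pi([\vec{\mbfs}\,'-\vec{\mbfs}])$ in $\NN^{2}$. From part (1ai) of Propositions \ref{prop:k_invariants_pin(2)_fixed_point_homotopy_equivalence} or \ref{prop:k_invariants_kg_split} I have $\vec{k}+\delta\vec{e}_{0}\not\succ\vec{k}'+[\vec{\mbfs}\,'-\vec{\mbfs}]$ in $\N^{p^{r}}$. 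Since $\pi$ is $\NN$-grading-preserving it is strict-order-preserving, so by contrapositive the task reduces to showing that a strict $\NN^{2}$-dominance between the two projected points would lift to a strict $\N^{p^{r}}$-dominance. For this I will exploit the explicit redistribution relations from Proposition \ref{prop:monomials}: for $p^{r}$ odd, the first coordinate and the total non-trivial weight are preserved under $\sim$, while the non-trivial weight can be redistributed within congruence classes modulo powers of $p$. This flexibility allows me to concentrate the projected excess weight into a representative exhibiting coordinate-wise dominance, producing the desired strict $\N^{p^{r}}$-inequality and thereby the contradiction.

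The hardest step will be this final lifting, because $\pi$ is not order-reflecting in general; the odd-prime-power hypothesis is essential here, since it is precisely the structure of the equivalence classes in $\N^{p^{r}}$ (as dictated by Proposition \ref{prop:monomials}) that provides enough flexibility to transport the projected strict inequality back to the ambient lattice.
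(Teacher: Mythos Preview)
Your argument has two genuine gaps. For Part~(2), you invoke clause~(1aii) of Propositions~\ref{prop:k_invariants_pin(2)_fixed_point_homotopy_equivalence} and~\ref{prop:k_invariants_kg_split} with the quantifiers reversed: those clauses read ``for each $\vec{k}'\in\mbfk(X')$ there exists $\vec{k}\in\mbfk(X)$ with $\vec{k}+\delta\vec{e}_{0}\preceq\vec{k}'+[\vec{\mbfs}\,'-\vec{\mbfs}]$,'' coming directly from the containment $I(X')+[\vec{\mbfs}\,'-\vec{\mbfs}]\subseteq I(X)+\delta\vec{e}_{0}$ via Lemma~\ref{lemma:comparing_minima_subsets}. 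You use them as though, given $\vec{k}\in\mbfk(X)$, they produce some $\vec{k}'\in\mbfk(X')$ above it. In the $K_{G^{*}_{p^{r}}}$-split case this slip is harmless since $\mbfk(X)$ is then a singleton, but in the general case your Part~(2) argument does not go through as written.

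For Part~(1), your lifting step --- that a strict dominance $\pi([\vec{u}])\succ\pi([\vec{v}])$ in $\NN^{2}$ forces $[\vec{u}]\succ[\vec{v}]$ in $\N^{p^{r}}$ --- is false, and the ``redistribution flexibility'' you cite does not rescue it. Take $p^{r}=3$, $[\vec{u}]=[(1,0,1)]$, $[\vec{v}]=[(0,1,0)]$: by Example~\ref{ex:lattice_m_equals_3} each class has only one $\NN^{3}$-representative, and $(1,0,1)\not\succeq(0,1,0)$, yet $\pi([\vec{u}])=(1,1)\succ(0,1)=\pi([\vec{v}])$. The relations from Proposition~\ref{prop:monomials} move weight among the coordinates $1,\dots,p^{r}-1$ only in multiples of $p$ (and under further linear constraints), which is far too coarse to concentrate an arbitrary projected excess into a coordinate-wise dominating representative. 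The paper's own proof is the one-liner ``Follows from Propositions~\ref{prop:k_invariants_pin(2)_fixed_point_homotopy_equivalence} and~\ref{prop:k_invariants_kg_split} via the projection $\pi$,'' and does not spell out how either of these points is handled.
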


\begin{proof}
Follows from Propositions \ref{prop:k_invariants_pin(2)_fixed_point_homotopy_equivalence} and \ref{prop:k_invariants_kg_split} via the projection $\pi:\N^{p^{r}}\to\NN^{2}$.
\end{proof}

We conclude this section with the following example:

\begin{example}
\label{ex:odd_prime}
Let $p$ be an odd prime, and let $\ast\in\{\ev,\odd\}$.
\begin{enumerate}
    \item Let $a\in\frac{1}{2}\ZZ$ be such that $0\le a\le m-\frac{1}{2}$ and $a\equiv 0\pmod{1}$ if $\ast=\ev$ and $a\equiv \frac{1}{2}\pmod{1}$ if $\ast=\odd$. From the calculations in Example \ref{ex:cosets_cyclic}, we have that
    \[\mbfk(\wt{\Sigma}Z_{a,p})=\{[\vec{e}_{0}],[\vec{e}_{2a}],[\vec{e}_{p-2a}]\}.\]
    From this, it follows that
    \[\mbfk^{\pi}(\wt{\Sigma}Z_{a,p})=
        {\left\{\begin{array}{ll}
            \{(1,0)\} & \mbox{if } a=0,\frac{p}{2}, \\
            \{(1,0),(0,1)\} & \mbox{otherwise,}
	\end{array}
        \right.}\]
    and so:
    \begin{align*}
    &\vec{\ol{k}}\,^{\pi}(\wt{\Sigma}Z_{a,p})=
        {\left\{\begin{array}{ll}
            (1,0) & \mbox{if } a=0,\frac{p}{2}, \\
            (1,1) & \mbox{otherwise,}
	\end{array}
        \right.} & 
    &\vec{\ul{k}}\,^{\pi}(\wt{\Sigma}Z_{a,p})=
        {\left\{\begin{array}{ll}
            (1,0) & \mbox{if } a=0,\frac{p}{2}, \\
            (0,0) & \mbox{otherwise.}
	\end{array}
        \right.}
    \end{align*}
    \item More generally, suppose $a_{1},\dots,a_{n}\in\frac{1}{2}\ZZ$ is a sequence of half-integers such that for all $k=1,\dots,n$: $0\le a_{k}\le m-\frac{1}{2}$, and $a_{k}\equiv 0\pmod{1}$ if $\ast=\ev$ and $a_{k}\equiv \frac{1}{2}\pmod{1}$ if $\ast=\odd$. Then from Example \ref{ex:multiple_cosets_cyclic}, we have that
    \begin{align*}
        &\mbfk(\wt{\Sigma}Z_{a_{1},\dots,a_{n};p}) \\
        &={\left\{\begin{array}{ll}
            \{[\vec{e}_{0}],[\vec{e}_{2a}],[\vec{e}_{p-2a}]\} & \mbox{if }\exists a\in\frac{1}{2}\ZZ\text{ such that }a_{k}\equiv\pm a\pmod{p}\;\forall k=1,\dots,n, \\
            \{[\vec{e}_{0}]\} & \mbox{otherwise.}
	\end{array}
        \right.}
    \end{align*}
    Hence
    \begin{align*}
        &\mbfk^{\pi}(\wt{\Sigma}Z_{a_{1},\dots,a_{n};p}) \\
        &={\left\{\begin{array}{ll}
            \{(1,0),(0,1)\} & \mbox{if }\exists a\neq 0,\frac{p}{2}\text{ such that }a_{k}\equiv\pm a\pmod{p}\;\forall k=1,\dots,n,\;(\ddagger) \\
            \{(1,0)\} & \mbox{otherwise,}
	\end{array}
        \right.}
    \end{align*}
    and therefore:
    \begin{align*}
        &\vec{\ol{k}}\,^{\pi}(\wt{\Sigma}Z_{a_{1},\dots,a_{n};p})={\left\{\begin{array}{ll}
            (1,1) & \mbox{if }(\ddagger)\text{ holds,}\\
            (1,0) & \mbox{otherwise,}
	\end{array}
        \right.} &
        &\vec{\ul{k}}\,^{\pi}(\wt{\Sigma}Z_{a_{1},\dots,a_{n};p})={\left\{\begin{array}{ll}
            (0,0) & \mbox{if }(\ddagger)\text{ holds,}\\
            (1,0) & \mbox{otherwise.}
	\end{array}
        \right.}
    \end{align*}
    \item Let $a\in\frac{1}{2}\ZZ$ be as in (1). From the calculations in Example \ref{ex:torus_cyclic}, we deduce that
    \begin{align*}
        &\mbfk(\wt{\Sigma}T_{a,p})=
            {\left\{\begin{array}{ll}
                \{[\vec{e}_{0}]\} & \mbox{if } a=0,\frac{p}{2}, \\
                \{[\vec{e}_{k}]\;|\;k=0,\dots,p-1\} & \mbox{otherwise.}
	    \end{array}
            \right.}
    \end{align*}
    Hence
    \[\mbfk^{\pi}(\wt{\Sigma}T_{a,p})=\mbfk^{\pi}(\wt{\Sigma}Z_{a,p})=
        {\left\{\begin{array}{ll}
            \{(1,0)\} & \mbox{if } a=0,\frac{p}{2}, \\
            \{(1,0),(0,1)\} & \mbox{otherwise,}
	\end{array}
        \right.}\]
    from which it follows that
    \begin{align*}
        &\vec{\ol{k}}\,^{\pi}(\wt{\Sigma}T_{a,p})=\vec{\ol{k}}\,^{\pi}(\wt{\Sigma}Z_{a,p}), & &\vec{\ul{k}}\,^{\pi}(\wt{\Sigma}T_{a,p})=\vec{\ul{k}}\,^{\pi}(\wt{\Sigma}Z_{a,p}).
    \end{align*}
    \item Let $a_{1},\dots,a_{n}\in\frac{1}{2}\ZZ$ be as in (2). From Example \ref{ex:multiple_cosets_duals}, we have that
    \[\mbfk(\wt{\Sigma}X_{a_{1},\dots,a_{n};p})=\{[\vec{v}]\;|\;\vec{v}\succeq n_{0}\vec{e}_{0},\;|\vec{v}|=n\},\]
    where
    \[n_{0}:=\#\{1\le k\le n\;|\;2a_{k}\equiv 0\pmod{p}\}.\]
    Hence
    \[\mbfk^{\pi}(\wt{\Sigma}X_{a_{1},\dots,a_{n};p})=\{(k,n-k)\;|\;k=n_{0},\dots,n\},\]
    and therefore
    \begin{align*}
        &\vec{\ol{k}}\,^{\pi}(\wt{\Sigma}X_{a_{1},\dots,a_{n};p})=(n,n-n_{0}), & &\vec{\ol{k}}\,^{\pi}(\wt{\Sigma}X_{a_{1},\dots,a_{n};p})=(n_{0},0).
    \end{align*}
\end{enumerate}
\end{example}

\bigskip

\section{Stable Equivariant \texorpdfstring{$k$}{k}-Invariants}
\label{sec:stable_equivariant_k_invariants}

In this paper, the spaces we will ultimately be working with are not quite spaces of type $\CC$-$G^{*}_{m}$-$\SWF$, but rather formal desuspensions of spaces of this type by rational $G^{*}_{m}$--representations, i.e., by elements of $R(G^{*}_{m})\otimes\QQ$. Therefore we need to modify the definitions of the equivariant $k$-invariants given in the previous section in order to accomodate this framework.

In Section \ref{subsec:stable_equivalence} we formalize this process of rational desuspension in the $G^{*}_{m}$-equivariant setting, similar to the definitions given in \cite{Man16}, \cite{Man14}. In order to facilitate this we will use the notions of \emph{$G^{*}_{m}$-spectrum classes} and \emph{$\CC$-$G^{*}_{m}$-spectrum classes}. Finally in Section \ref{subsec:stable_k_invariants} we define the lattice $\Q^{m}_{*}$ and associate to a $\CC$-$G^{*}_{m}$-spectrum class a collection of \emph{stable} equivariant $k$-invariants which take values in $\Q^{m}_{*}$.

\bigskip
\subsection{\texorpdfstring{$G^{*}_{m}$}{G*m}- and \texorpdfstring{$\CC$-$G^{*}_{m}$}{C-G*m}- Spectrum Classes}
\label{subsec:stable_equivalence}

We first define the notion of a \emph{$G^{*}_{m}$-spectrum class}:

\begin{definition}
\label{def:stable_G_*_m_spectrum_class}
Let $\wt{\CCC}_{G^{*}_{m}}$ denote the set of triples of the form $(X,\mbfa,\mbfb)$, where:
\begin{enumerate}
    \item $X$ is a space of type $G^{*}_{m}$-$\SWF$,
    \item $\mbfa\in RO(\ZZ_{m})$,
    \item $\mbfb\in R(\ZZ_{2m})^{*}\otimes\QQ$.
\end{enumerate}
We say that $(X,\mbfa,\mbfb)$ is \emph{stably $G^{*}_{m}$-equivalent} to $(X',\mbfa',\mbfb')$ if $\mbfb-\mbfb'\in R(\ZZ_{2m})^{*}\subset R(\ZZ_{2m})^{*}\otimes\QQ$, and there exist
\begin{align*}
    &\Amid\in RO(\ZZ_{m})_{\geq 0}, & &\Bmid\in R(\ZZ_{2m})^{*}_{\geq 0}\otimes\QQ, & &r\geq 0,
\end{align*}
such that:
\begin{enumerate}
    \item $\Amid-\mbfa$, $\Amid-\mbfa'\in RO(\ZZ_{m})_{\geq 0}$.
    \item $\Bmid-\mbfb$, $\Bmid-\mbfb'\in R(\ZZ_{2m})^{*}_{\geq 0}$.
    \item There exists a $G^{*}_{m}$-homotopy equivalence
    \begin{equation*}
    \label{eq:stable_G_*_m_equivalence}
        \Sigma^{r\RR}\Sigma^{(\Amid-\mbfa)\wt{\RR}}\Sigma^{(\Bmid-\mbfb)\HH}X\simeq_{G^{*}_{m}}\Sigma^{r\RR}\Sigma^{(\Amid-\mbfa')\wt{\RR}}\Sigma^{(\Bmid-\mbfb')\HH}X'.
    \end{equation*}
\end{enumerate}
We denote by $\CCC_{G^{*}_{m}}:=\wt{\CCC}_{G^{*}_{m}}/\sim$ the set of stable equivalence classes of triples $(X,\mbfa,\mbfb)$, whose elements we refer to as \emph{$G^{*}_{m}$-spectrum classes}.
\end{definition}

Next, we introduce the concept of \emph{$\CC$-$G^{*}_{m}$-spectrum classes}, which are ultimately the types of spaces we will be applying our stable equivariant $k$-invariants to.

\begin{definition}
\label{def:stable_C_G_*_m_spectrum_class}
Denote by $\wt{\CCC}_{G^{*}_{m},\CC}$ the set of triples of the form $(X,\mbfa,\mbfb)$, where:
\begin{enumerate}
    \item $X$ is a space of type $\CC$-$G^{*}_{m}$-$\SWF$,
    \item $\mbfa\in R(\ZZ_{m})^{\sym}$,
    \item $\mbfb\in R(\ZZ_{2m})^{*}\otimes\QQ$.
\end{enumerate}
We say that $(X,\mbfa,\mbfb)$ is \emph{stably $\CC$-$G^{*}_{m}$-equivalent} to $(X',\mbfa',\mbfb')$ if $\mbfb-\mbfb'\in R(\ZZ_{2m})^{*}$, and there exist
\begin{align*}
    &\Amid\in R(\ZZ_{m})^{\sym}_{\geq 0}, & &\Bmid\in R(\ZZ_{2m})^{*}_{\geq 0}\otimes\QQ, & &r\geq 0,
\end{align*}
such that:
\begin{enumerate}
    \item $\Amid-\mbfa$, $\Amid-\mbfa'\in R(\ZZ_{m})^{\sym}_{\geq 0}$.
    \item $\Bmid-\mbfb$, $\Bmid-\mbfb'\in R(\ZZ_{2m})^{*}_{\geq 0}$.
    \item There exists a $G^{*}_{m}$-homotopy equivalence
    \begin{equation}
    \label{eq:stable_C_G_*_m_equivalence}
        \Sigma^{r\RR}\Sigma^{(\Amid-\mbfa)\wt{\CC}}\Sigma^{(\Bmid-\mbfb)\HH}X\xrightarrow{\simeq_{G^{*}_{m}}}\Sigma^{r\RR}\Sigma^{(\Amid-\mbfa')\wt{\CC}}\Sigma^{(\Bmid-\mbfb')\HH}X'.
    \end{equation}
\end{enumerate}
We denote by $\CCC_{G^{*}_{m},\CC}:=\wt{\CCC}_{G^{*}_{m},\CC}/\sim$ the set of stable $\CC$-$G^{*}_{m}$-equivalence classes of triples $(X,\mbfa,\mbfb)$, and refer to elements of $\CCC_{G^{*}_{m},\CC}$ as \emph{$\CC$-$G^{*}_{m}$-spectrum classes}.
\end{definition}

We can also make $\CCC_{G^{*}_{m},\CC}$ into a category as follows: for any $[(X,\mbfa,\mbfb)]$, $[(X',\mbfa',\mbfb')]\in\CCC_{G^{*}_{m},\CC}$, a map
\[f:[(X,\mbfa,b)]\to[(X',\mbfa',b')]\]
is a $G^{*}_{m}$-equivariant map as in Equation \ref{eq:stable_C_G_*_m_equivalence} with respect to any representatives $(X,\mbfa,\mbfb)$, $(X',\mbfa',\mbfb')\in\wt{\CCC}_{G^{*}_{m},\CC}$ of $[(X,\mbfa,\mbfb)]$, $[(X',\mbfa',\mbfb')]$, respectively, which is not necessarily a homotopy equivalence. (Note that such a map only exists if $\mbfb-\mbfb'\in R(\ZZ_{2m})^{*}$.) There is also a smash product on $\wt{\CCC}_{G^{*}_{m},\CC}$ given by
\[(X,\mbfa,\mbfb)\wedge(X',\mbfa',\mbfb'):=(X\wedge X',\mbfa+\mbfa',\mbfb+\mbfb'),\]
which descends to a monoidal product 
\[\wedge:\CCC_{G^{*}_{m}}\times\CCC_{G^{*}_{m}}\to\CCC_{G^{*}_{m}}\]
with identity $[(S^{0},0,0)]$, giving $\CCC_{G^{*}_{m},\CC}$ the structure of a monoidal category.

\smallskip

We also have the notion of \emph{$\CC$-$G^{*}_{m}$-local equivalence}:

\begin{definition}
\label{def:stable_G*m_local_equivalence}
We say that $[(X,\mbfa,\mbfb)],[(X',\mbfa',\mbfb')]\in\CCC_{G^{*}_{m},\CC}$ are \emph{$G^{*}_{m}$-locally equivalent} and write
\[[(X,\mbfa,\mbfb)]\equiv_{l}[(X',\mbfa',\mbfb')]\]
if there exist maps
\[[(X,\mbfa,\mbfb)]\stackrel[g]{f}{\rightleftarrows}[(X',\mbfa',\mbfb')]\]
which induce (stable) $G^{*}_{m}$-homotopy equivalences on the $S^{1}$-fixed point sets. We denote by $\LLL\EEE_{G^{*}_{m},\CC}=\CCC_{G^{*}_{m},\CC}/\sim$ the set of $\CC$-$G^{*}_{m}$-local equivalence classes of $\CC$-$G^{*}_{m}$-spectrum classes.
\end{definition}

We will sometimes write $[(X,\mbfa,\mbfb)]_{\loc}$ to denote the local equivalence class of $[(X,\mbfa,\mbfb)]\in\CCC_{G^{*}_{m},\CC}$, and write $[(X,\mbfa,\mbfb)]_{\loc}=[(X',\mbfa',\mbfb')]_{\loc}$ if $[(X,\mbfa,\mbfb)]$ and $[(X',\mbfa',\mbfb')]$ are locally equivalent.

As in the case of $\CCC_{G^{*}_{m},\CC}$, the smash product endows $\LLL\EEE_{G^{*}_{m},\CC}$ with the structure of a monoidal catgory, with identity given by $[(S^{0},0,0)]_{\loc}$.

We also comment on the existence of inverses in $\LLL\EEE_{G^{*}_{m},\CC}$: let $(X,\mbfa,\mbfb)\in\wt{\CCC}_{G^{*}_{m},\CC}$, and suppose that $X$ is equivariantly $\mbfs\wt{\CC}\oplus\mbft\HH$-dual to some space $Y$ of type $\CC$-$G^{*}_{m}$-$\SWF$. Then the smash product
\[(X,\mbfa,\mbfb)\wedge(Y,-\mbfa+\mbfs,-\mbfb+\mbft)\in\wt{\CCC}_{G^{*}_{m},\CC}\]
is locally $\CC$-$G^{*}_{m}$-equivalent to $(S^{0},0,0)$, and so $[(Y,-\mbfa+\mbfs,-\mbfb+\mbft)]_{\loc}$ provides an inverse to $[(X,\mbfa,\mbfb)]_{\loc}$ in $\LLL\EEE_{G^{*}_{m},\CC}$.

\begin{remark}
One can show that for every space $X$ of type $\CC$-$G^{*}_{m}$-$\SWF$, there exists some real $G^{*}_{m}$-representation $V$ such that $X$ is equivariantly $V$-dual to some space $Y$ of type $\CC$-$G^{*}_{m}$-$\SWF$. If we were working within a complete universe, this would imply that every spectrum class has an inverse up to local equivalence  --- however since we are only allowing de-suspensions by representations of the form $\mbfa\wt{\CC}$ and $\mbfb\HH$, this does not necessarily hold in the category $\CCC_{G^{*}_{m},\CC}$.
\end{remark}

\bigskip
\subsubsection{Fixed-Point Sets}
\label{subsubsec:stable_fixed_point_sets}

In this section we discuss $H$-fixed-point sets of $\CC$-$G^{*}_{m}$-spectrum classes, for various closed subgroups $H\subset G^{*}_{m}$.

\begin{definition}
\label{def:stable_s1_pin(2)_fixed_points}
Let $(X,\mbfa,\mbfb)\in\wt{\CCC}_{G^{*}_{m},\CC}$. We define:
\begin{align*}
    &(X,\mbfa,\mbfb)^{S^{1}}:=(X^{S^{1}},\mbfa,0)\in\wt{\CCC}_{G^{*}_{m},\CC}, & &(X,\mbfa,\mbfb)^{\Pin(2)}:=(X^{\Pin(2)},0,0)\in\wt{\CCC}_{G^{*}_{m},\CC}.
\end{align*}
Furthermore, suppose that $X$ is a space of type $\CC$-$G^{*}_{m}$-$\SWF$ at level $\mbfs\in R(\ZZ_{m})_{\geq 0}^{\sym}$. We define the \emph{level of $(X,\mbfa,\mbfb)$} to be $\mbfs-\mbfa\in R(\ZZ_{m})^{\sym}$.
\end{definition}

For closed subgroups $H\neq S^{1},\Pin(2)$, we cannot expect the $H$-fixed-point set of a space of type $\CC$-$G^{*}_{m}$-$\SWF$ to still be a $G^{*}_{m}$-equivariant space. However, its non-equivariant homotopy type is still well-defined. In order to encode this on the level of $\CC$-$G^{*}_{m}$-spectrum classes, we introduce the \emph{rational Spanier-Whitehead category}:

\begin{definition}
Let $\wt{\CCC}$ denote the set of pairs of the form $(X,q)$, where $X$ is a pointed finite CW complex, and $q\in\QQ$. We say that $(X,q)$ is stably equivalent to $(X',q')$ if $q-q'\in\ZZ$, and there exists $Q\in\QQ_{\geq 0}$ with $Q-q,Q-q'\in\NN$ such that there exists a (based) homotopy equivalence
\[\Sigma^{Q-q}X\to\Sigma^{Q-q'}X'.\]
We denote by $\CCC=\wt{\CCC}/\sim$ the set of such pairs under this equivalence relation, the \emph{rational Spanier-Whitehead category}.
\end{definition}

\begin{remark}
The terminology above follows from the observation that $\CCC$ is naturally identified with an infinite number of disjoint copies of the usual (non-equivariant) Spanier-Whitehead category, with the set of copies in one-to-one correspondence with $\QQ/\ZZ$.
\end{remark}

Fix a closed subgroup $H\subset G^{*}_{m}$. For any complex $G^{*}_{m}$-vector space $V$, we denote by $V^{H}$ the $H$-fixed-point subspace. Suppose $(X,\mbfa,\mbfb)\in\wt{\CCC}_{G^{*}_{m},\CC}$ with 
\begin{align*}
    &\mbfa=\twopartdef{\sum_{j=0}^{m-1}a_{j}\zeta^{j}\in R(\ZZ_{m})^{\sym}}{*=\ev,}{\sum_{j=0}^{m-1}a_{j}\xi^{2j}\in R(\ZZ_{2m})^{\sym,\ev}}{*=\odd,} \\
    &\mbfb=\twopartdef{\sum_{k=0}^{m-1}b_{k}\zeta^{k}\in R(\ZZ_{m})\otimes\QQ}{*=\ev,}{\sum_{k=0}^{m-1}b_{k+1/2}\xi^{2k+1}\in R(\ZZ_{2m})^{\odd}\otimes\QQ}{*=\odd.}
\end{align*}
Define
\begin{align*}
    &a_{j}^{H}:=a_{k}\cdot\dim_{\RR}(\wt{\CC}_{j}^{H})\in\ZZ, &
    &b_{k}^{H}:=b_{k}\dim_{\RR}(\HH_{k}^{H})\in\QQ, \\
    &\mbfa^{H}:=\sum_{j=0}^{m-1}a_{j}^{H}\in\ZZ, & &\mbfb^{H}:=\twopartdef{\sum_{k=0}^{m-1}b_{k}^{H}\in\QQ}{*=\ev,}{\sum_{k=0}^{m-1}b_{k+1/2}^{H}\in\QQ}{*=\odd.}
\end{align*}
We then define the \emph{H-fixed-point set of $(X,a,b)$} to be:
\[(X,\mbfa,\mbfb)^{H}:=(X^{H},\mbfa^{H}+\mbfb^{H})\in\wt{\CCC},\]
where $X^{H}$ is the $H$-fixed-point set of $X$ in the usual sense, treated as an ordinary finite CW-complex. One can show that this descends to a monoidal functor
\[(-)^{H}:\CCC_{G^{*}_{m},\CC}\to\CCC\]
from the category of $\CC$-$G^{*}_{m}$ spectrum classes to the rational Spanier-Whitehead category.

Now recall from Definition \ref{def:H_spherical} that a space $X$ of type $\CC$-$G^{*}_{m}$-$\SWF$ is $H$-spherical at some level $d\in\NN$ if $X^{H}\simeq S^{d}$.

\begin{definition}
\label{def:stable_H_spherical}
Let $[(X,\mbfa,\mbfb)]\in\CCC_{G^{*}_{m},\CC}$ be a $\CC$-$G^{*}_{m}$ spectrum class, and let $H\subset G^{*}_{m}$ be a closed subgroup. We say that $[(X,\mbfa,\mbfb)]$ is
\begin{enumerate}
    \item \emph{$H$-spherical at level $d\in\QQ$} if
    \[[(X,\mbfa,\mbfb)^{H}]=[(S^{d+q},q)]\in\CCC\]
    for some $q\in\QQ$ such that $d+q\in\NN$. 
    \item \emph{locally $H$-spherical at level $d\in\QQ$} if there exists $[(X',\mbfa',\mbfb')]\in\CCC_{G^{*}_{m},\CC}$ such that $[(X',\mbfa',\mbfb')]$ is $H$-spherical at level $d$ and 
    \[[(X,\mbfa,\mbfb)]_{\loc}=[(X',\mbfa',\mbfb')]_{\loc}.\]
\end{enumerate}
\end{definition}

Now consider the special case where $m=2^{r}$, $*=\odd$, and $H=\<j\mu^{2^{r-1}}\>\cong\ZZ_{2}\subset G^{\odd}_{2^{r}}$. Recall that for any $G^{\odd}_{2^{r}}$-space $X$, its set of $\<j\mu^{2^{r-1}}\>$-fixed points has a residual $\<j\>\cong\ZZ_{4}\subset G^{\odd}_{2}$ action, and that $X^{\<j\mu^{2^{r-1}}\>}$ is a space of type $\CC$-$\ZZ_{4}$-$\SWF$ in the sense of Definition \ref{def:C_Z_4_SWF}. The following definitions parallel (\cite{KMT}, Definition 3.13):

\begin{definition}
\label{def:stable_C_Z_4_spectrum_class}
Consider the following definitions:
\begin{enumerate}
	\item Let $\wt{\CCC}_{\ZZ_{4},\CC}$ denote the set of triples $(X,a,\mbfb)$ where $X$ of type $\CC$-$\ZZ_{4}$-$\SWF$, $a\in\ZZ$, and $\mbfb=b_{1/2}\xi+b_{3/2}\xi^{3}\in R(\ZZ_{4})^{\odd}\otimes\QQ$. We say that $(X,a,\mbfb)$ and $(X,a',\mbfb')$ are stably equivalent if $\mbfb-\mbfb'\in R(\ZZ_{4})^{\odd}$, and there exist
	\begin{align*}
    	&A\in\NN, & &\Bmid\in R(\ZZ_{4})^{\odd}\otimes\QQ, & &r\geq 0,
	\end{align*}
	such that $A-a,A-a'\geq 0$, $\Bmid-\mbfb,\Bmid-\mbfb'\in R(\ZZ_{4})_{\geq 0}^{\odd}$, and there exists a $\ZZ_{4}$-homotopy equivalence
	\begin{equation}
	\label{eq:stable_C_Z_4_equivalence}
		\Sigma^{r\RR}\Sigma^{(A-a)\wt{\CC}}\Sigma^{(\Bmid-\mbfb)\CC}X\xrightarrow{\simeq}\Sigma^{r\RR}\Sigma^{(A-a')\wt{\CC}}\Sigma^{(\Bmid-\mbfb')\CC}X'.
	\end{equation}
	we denote by $\CCC_{\ZZ_{4},\CC}=\wt{\CCC}_{\ZZ_{4},\CC}/\sim$ the set of stable equivalence classes of triples $(X,a,\mbfb)$, and refer to elements of $\CCC_{\ZZ_{4},\CC}$ as \emph{$\CC$-$\ZZ_{4}$-spectrum classes}. Morphisms in $\CCC_{\ZZ_{4},\CC}$ are given by $\ZZ_{4}$-equivariant maps as in (\ref{eq:stable_C_Z_4_equivalence}) which are not necessarily $\ZZ_{4}$-homotopy equivalences.
	\item We say that $[(X,a,\mbfb)],[(X',a',\mbfb')]\in\CCC_{\ZZ_{4},\CC}$ are \emph{$\ZZ_{4}$-locally equivalent} and write
	\begin{align*}
		&[(X,a,\mbfb)]\equiv_{l}[(X',a',\mbfb')], & &\text{ or } & &[(X,a,\mbfb)]_{\loc}=[(X',a',\mbfb')]_{\loc},
	\end{align*}
	if there exist morphisms
	\[[(X,a,\mbfb)]\stackrel[g]{f}{\rightleftarrows}[(X',a',\mbfb')]\]
	which induce $\ZZ_{4}$-homotopy equivalences on the $\ZZ_{2}$-fixed point sets. We write $\LLL\EEE_{\ZZ_{4},\CC}=\CCC_{\ZZ_{4},\CC}/\sim$ for the set of $\ZZ_{4}$-local equivalence classes of $\CC$-$\ZZ_{4}$-spectrum classes.
\end{enumerate}
\end{definition}

We see that if $(X,\mbfa,\mbfb)\in\wt{\CCC}_{G^{\odd}_{2^{r}},\CC}$ is such that
\begin{align*}
    &\mbfa=\sum_{k=0}^{2^{r}-1}a_{k}\zeta^{k}\in R(\ZZ_{2^{r}})^{\sym}, & &\mbfb=\sum_{k=0}^{2^{r}-1}b_{k+1/2}\xi^{2k+1}\in R(\ZZ_{2^{r+1}})^{\odd}\otimes\QQ,
\end{align*}
then 
\[(X,\mbfa,\mbfb)^{\<j\mu^{2^{r-1}}\>}=\Bigg(X^{\<j\mu^{2^{r-1}}\>},\sum_{k=0}^{2^{r-1}-1}a_{2k+1},\Big(\sum_{k=0}^{2^{r-1}-1}b_{2k+\frac{1}{2}}\Big)\xi+\Big(\sum_{k=0}^{2^{r-1}-1}b_{2k+\frac{3}{2}}\Big)\xi^{3}\Bigg)\in\wt{\CCC}_{\ZZ_{4},\CC}.\]
In the particular case where $r=1$, we can write
\[(X,\mbfa,\mbfb)^{\<j\mu\>}=(X^{\<j\mu\>},a_{1},\ol{\mbfb})\in\wt{\CCC}_{\ZZ_{4},\CC},\]
where $\ol{\mbfb}:=b_{3/2}\xi+b_{1/2}\xi^{3}$ denotes the \emph{conjugate} of $\mbfb=b_{1/2}\xi+b_{3/2}\xi^{3}\in R(\ZZ_{2^{r+1}})^{\odd}\otimes\QQ$.

Let $X$ be a space of type $\CC$-$\ZZ_{4}$-$\SWF$. Borrowing the notation from (\cite{KMT}, Definition 3.11), we define the \emph{double of $X$}, denoted by $D(X)$, to be the space of type $\CC$-$\ZZ_{4}$-$\SWF$ given by $D(X):=X\wedge X^{\dagger}$. Here $X^{\dagger}$ denotes the same topological space as $X$, but with $\ZZ_{4}$-action given by the $\ZZ_{4}$-action on $X$ precomposed with the automorphism of $\ZZ_{4}=\<j\>$ given by $j\mapsto -j$.

We can extend this to $\CC$-$\ZZ_{4}$-spectrum classes as follows: if $(X,a,\mbfb)\in\wt{\CCC}_{\ZZ_{4},\CC}$, we let $(X,a,\mbfb)^{\dagger}:=(X^{\dagger},a,\ol{\mbfb})$, and define 
\[D(X,a,\mbfb):=(X,a,\mbfb)\wedge(X,a,\mbfb)^{\dagger}=(D(X),2a,\mbfb+\ol{\mbfb}).\]
Note that $\mbfb+\ol{\mbfb}$ is \emph{symmetric}, in the sense that if $\mbfb=b_{1/2}\xi+b_{3/2}\xi^{3}$, then 
\[\mbfb+\ol{\mbfb}=(b_{1/2}+b_{3/2})(\xi+\xi^{3})\in R(\ZZ_{4})^{\sym,\odd}\otimes\QQ.\]
This leads us to the following definition, of which $D(X,a,\mbfb)$ is the prototypical example:

\begin{definition}
\label{def:stable_symmetric_C_Z_4_spectrum_class}
We say that $(X,a,\mbfb)\in\wt{\CCC}_{\ZZ_{4},\CC}$ is \emph{symmetric} if $\mbfb=b(\xi+\xi^{3})$ for some $b\in\QQ$, and denote by $\wt{\CCC}_{\ZZ_{4},\CC,\sym}\subset\wt{\CCC}_{\ZZ_{4},\CC}$ the set of such symmetric triples. We then define $\CCC_{\ZZ_{4},\CC,\sym}=\wt{\CCC}_{\ZZ_{4},\CC,\sym}/\sim\,\subset\CCC_{\ZZ_{4},\CC}$ to be the subset of \emph{symmetric $\CC$-$\ZZ_{4}$-spectrum classes}. Furthermore, we write $\LLL\EEE_{\ZZ_{4},\CC,\sym}=\CCC_{\ZZ_{4},\CC,\sym}/\sim$ for the set of $\ZZ_{4}$-local equivalence classes of symmetric $\CC$-$\ZZ_{4}$-spectrum classes.
\end{definition}

Given $(X,a,b(\xi+\xi^{3}))\in\wt{\CCC}_{\ZZ_{4},\CC,\sym}$, we will often drop the term $(\xi+\xi^{3})$ and denote such a triple by $(X,a,b)$ with $a\in\ZZ$, $b\in\QQ$, and similarly denote its corresponding spectrum class by $[(X,a,b)]\in\CCC_{\ZZ_{4},\CC,\sym}$. It is not hard to see that the category $\CCC_{\ZZ_{4},\CC,\sym}$ is essentially equivalent to the category $\CCC_{\iota}$ defined in (\cite{KMT}, Definition 3.13), the only difference being that our equivalence relation allows suspensions by the trivial representation $\RR$, while the one in \cite{KMT} does not. A similar observation holds for $\LLL\EEE_{\ZZ_{4},\CC,\sym}$ and the set $\cal{L}\cal{E}_{\ZZ_{4}}$ from \cite{KMT}. However this difference is not crucial, as the invariants defined in their paper are still well-defined for symmetric $\CC$-$\ZZ_{4}$-spectrum classes. In particular, given $[(X,a,b)]\in\CCC_{\ZZ_{4},\CC,\sym}$ we can consider the invariant $k_{\KMT}([(X,a,b)])\in\QQ$, where $k_{\KMT}:\CCC_{\iota}\to\QQ$ corresponds to the invariant denoted by $k$ in (\cite{KMT}, Lemma 3.16), which descends to a well-defined invariant $k_{\KMT}:\cal{L}\cal{E}_{\ZZ_{4}}\to\QQ$. Note that if $\X$, $\X'$ are $\CC$-$G^{\odd}_{2^{r}}$-spectrum classes such that $\X\equiv_{\ell}\X'$, then $\X^{\<j\mu^{2^{r-1}}\>}\equiv_{\ell}(\X')^{\<j\mu^{2^{r-1}}\>}$ as $\CC$-$\ZZ_{4}$-spectrum classes and $D(\X^{\<j\mu^{2^{r-1}}\>})\equiv_{\ell}D((\X')^{\<j\mu^{2^{r-1}}\>})$ as symmetric $\CC$-$\ZZ_{4}$-spectrum classes.

Finally we define stable homotopy groups in the setting of spectrum classes:

\begin{definition}
Suppose $\X=(X,q)\in\CCC$ is an element of the rational Spanier--Whitehead category. For $r\in\QQ$ we define the \emph{$r$-th stable homotopy group of $\X$} to be
\[\pi^{\st}_{r}(\X):=\twopartdef{\pi^{\st}_{r+q}(X)}{r+q\in\NN,}{0}{r+q\not\in\NN,}\]
where $\pi^{\st}_{r+q}(X)$ denotes the usual stable homotopy group of $X$ in degree $r+q\in\NN$.
\end{definition}

We also have the following $\ZZ_{4}$-equivariant analogue:

\begin{definition}
\label{def:Z_4_stable_homotopy_groups}
Suppose $\X=[(X,a,b_{1/2}\xi+b_{3/2}\xi^{3})]$ is a $\CC$-$\ZZ_{4}$-spectrum class. Given $r,s\in\ZZ$, $t\in\QQ$, we define the \emph{$\ZZ_{4}$-equivariant stable homotopy group of $\X$ in degree $r+s\rho+t\nu$} to be
\begin{align*}
	&\pi^{\st,\ZZ_{4}}_{r+s\rho+t\nu}(\X):= \\
	&\qquad\twopartdef{\colim_{R,S,T}\big[S^{(R+r)\RR+(S+2a+s)\wt{\RR}+(T+b_{1/2}+b_{3/2}+t)\VV},\Sigma^{R+S\wt{\RR}+T\VV}X\big]_{\ZZ_{4}}}{b_{1/2}+b_{3/2}+t\in\ZZ,}{0}{b_{1/2}+b_{3/2}+t\not\in\ZZ,}
\end{align*}
where the colimit is taken over all $R,S,T\in\NN$ such that the above expression is well-defined.
\end{definition}

As in the unstable setting there is a canonical restriction map 
\[\res^{\ZZ_{4}}_{1}:\pi^{\st,\ZZ_{4}}_{r+s\rho+t\nu}(\X)\to\pi^{\st}_{r+s+2t}(\X).\]
which ``forgets" the $\ZZ_{4}$-equivariant structure.

\bigskip
\subsection{Stable \texorpdfstring{$k$}{k}-Invariants}
\label{subsec:stable_k_invariants}

In this section, we define stable analogues of the equivariant $k$-invariants from Section \ref{sec:equivariant_k_invariants}. In particular, we will define an additive lattice $\Q^{m}_{*}$ and an invariant $\mbfk^{\st}(\X)\subset\Q^{m}_{*}$ associated to a $\CC$-$G^{*}_{m}$-spectrum class $\X=[(X,a,b)]\in\CCC_{G^{*}_{m},\CC}$ as in Definition \ref{def:stable_C_G_*_m_spectrum_class}. Roughly, the invariant $\mbfk^{\st}(\X)$ is obtained by formally inverting the behavior of $\mbfk(X)\subset\N^{m}$ under suspension by the representations $\{\HH_{k}\}$, akin to (\cite{Man14}, Section 4.2). 

On the level of lattices, one can think of $\Q^{m}_{*}$ as a simultaneous ``localization" and ``rationalization" of the lattice $\N^{m}$, analogous to the process of obtaining $\QQ$ from $\NN$. Recall from Proposition \ref{prop:k_invariants_suspensions} that
	\[\mbfk(\Sigma^{\mbft\HH}X)=\mbfk(X)+[\DDD^{*}(\vec{\mbft})]\]
	for any space $X$ of type $\CC$-$G^{*}_{m}$-$\SWF$ and any representation $\mbft\in R(\ZZ_{2m})_{\geq 0}^{*}$, and where $\DDD^{*}$ denotes one of the two doubling maps
\begin{align*}
&\DDD^{\ev}:\NN^{m}\to\NN^{m}	& &\DDD^{\odd}:\NN^{m}_{1/2}\to\NN^{m}
\end{align*}
from Definition \ref{def:more_lattice_notation}, depending on the parity of $*\in\{\ev,\odd\}$. It therefore suffices to ``localize'' $\N^{m}$ only with respect to the portion of lattice spanned by the image of $\Pi\circ\DDD^{*}$, where $\Pi:\NN^{m}\to\N^{m}$ denotes the defining projection. This gives rise to the following trichotomy depending on the parities of $m$ and $*$:
\begin{enumerate}
	\item If $m$ is odd, then $\im\DDD^{*}=\NN^{m}$ for either $*=\ev$ or $*=\odd$.
	\item If $m$ is even and $*=\ev$, then
	\[\DDD^{\ev}(\NN^{m})=\spn_{\NN}\{\vec{e}_{2j}\;|\;j=0,\dots,\tfrac{m}{2}-1\}\subsetneq\NN^{m}.\]
	\item If $m$ is even and $*=\odd$, then
	\[\DDD^{\odd}(\NN^{m}_{1/2})=\spn_{\NN}\{\vec{e}_{2j+1}\;|\;j=0,\dots,\tfrac{m}{2}-1\}\subsetneq\NN^{m}.\]
\end{enumerate}
We will proceed to construct the lattice $\Q^{m}_{*}$ in two steps: First, we define the \emph{stablized additive lattice} $\N^{m}_{\st,*}$, which arises as a quotient of $\N^{m}$ under the minimal amount of relations necessary to ensure that $\N^{m}_{\st,*}$ is stable in a suitable sense under the module action of $\im\DDD^{*}\subset\NN^{m}$. The lattice $\Q^{m}_{*}$ is then obtained by enlarging $\N^{m}_{\st,*}$ so that it admits an action by $\im\DDD^{*}_{\QQ}$, where $\im\DDD^{*}_{\QQ}\subset\QQ^{m}$ denotes the $\QQ$-span of the image of $\im\DDD^{*}\subset\NN^{m}$ under the canonical inclusion $\NN^{m}\hookrightarrow\QQ^{m}$. The construction is such that there exists a canonical factorization of additive lattices
\[\N^{m}\twoheadrightarrow\N^{m}_{\st,*}\hookrightarrow\Q^{m}_{*}.\]
Although defining $\N^{m}_{\st,*}$ is not strictly necessary for us to define $\Q^{m}_{*}$, it will guide our intuition for the behavior of $\N^{m}$ under this localization/rationalization process.

We will first construct the lattice $\N^{m}_{\st,*}$. Recall the monoids $X_{m},W_{m}$ and semigroups $x_{0}X_{m},w_{0}W_{m}$ from Section \ref{subsec:monomials}.

\begin{definition}
Define $(\N^{m}_{\st,*},\preceq,+,|\cdot|)$ to be the quotient of $(\N^{m},\preceq,+,|\cdot|)$ under the equivalence relation 
\[[\vec{a}]\sim[\vec{b}]\iff\exists\,\vec{c}\in\im(\DDD_{m}^{*})\subset\NN^{m}\text{ such that }[\vec{a}+\vec{c}]=[\vec{b}+\vec{c}]\in\N^{m}.\]
\end{definition}

We leave it to the reader to verify the following proposition, whose proof is similar to that of Proposition \ref{prop:quotient_lattice_N}:

\begin{proposition}
\label{prop:quotient_lattice_N_stable}
$(\N^{m}_{\st,*},\preceq,+,|\cdot|)$ is a well-defined $\NN$-graded additive lattice.
\end{proposition}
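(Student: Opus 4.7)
The plan is to apply Proposition \ref{prop:quotient_posets} to the additive lattice $(\N^{m},\preceq,+,|\cdot|)$ with the equivalence relation $\sim_{\st}$ defining $\N^{m}_{\st,*}$, verifying the four compatibility conditions \ref{condition:quotient_posets_grading}--\ref{condition:quotient_posets_lattice}. Each condition reduces to the corresponding one for $\sim_{\N^{m}}$ already established in Proposition \ref{prop:quotient_lattice_N}, with a stabilization step absorbing the shifts $\vec{c}\in\im\DDD^{*}$. That $\sim_{\st}$ is a genuine equivalence relation (transitivity being the only non-trivial check) follows by summing witnesses and using additivity of $\sim_{\N^{m}}$.

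Conditions \ref{condition:quotient_posets_grading} and \ref{condition:quotient_posets_addition} are direct: if $[\vec{a}]\sim_{\st}[\vec{b}]$ with witness $\vec{c}\in\im\DDD^{*}$, then $|\vec{a}+\vec{c}|=|\vec{b}+\vec{c}|$ in $\N^{m}$ and cancelling $|\vec{c}|$ yields \ref{condition:quotient_posets_grading}, while \ref{condition:quotient_posets_addition} follows by summing individual witnesses (which remain in $\im\DDD^{*}$) and applying \ref{condition:quotient_posets_addition} for $\sim_{\N^{m}}$.

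The substantive step is \ref{condition:quotient_posets_homogenous}. Given $[\vec{a}]\preceq[\vec{b}]$ in $\N^{m}$ and $[\vec{a}']\sim_{\st}[\vec{a}]$ with witness $\vec{c}$, additivity of $\preceq$ on $\N^{m}$ yields $[\vec{a}'+\vec{c}]=[\vec{a}+\vec{c}]\preceq[\vec{b}+\vec{c}]$. Applying \ref{condition:quotient_posets_homogenous} for the quotient $\NN^{m}\twoheadrightarrow\N^{m}$ produces $\vec{d}\in\NN^{m}$ with $\vec{d}\sim_{\N^{m}}\vec{b}+\vec{c}$ and $\vec{a}'+\vec{c}\preceq\vec{d}$ in $\NN^{m}$. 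Since $\vec{c}\in\NN^{m}$, the coordinate-wise inequality $\vec{a}'+\vec{c}\preceq\vec{d}$ forces $\vec{d}-\vec{c}\in\NN^{m}$; setting $\vec{b}':=\vec{d}-\vec{c}$, one checks immediately that $\vec{a}'\preceq\vec{b}'$ in $\NN^{m}$ (so $[\vec{a}']\preceq[\vec{b}']$ in $\N^{m}$) and $\vec{b}'+\vec{c}=\vec{d}\sim_{\N^{m}}\vec{b}+\vec{c}$, whence $[\vec{b}']\sim_{\st}[\vec{b}]$.

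For \ref{condition:quotient_posets_lattice}, the key is that joins and meets in $\N^{m}$ are \emph{translation-invariant}: $([\vec{a}]\vee[\vec{b}])+[\vec{c}]=([\vec{a}]+[\vec{c}])\vee([\vec{b}]+[\vec{c}])$, and similarly for $\wedge$. This follows from the obvious coordinate-wise identity in $\NN^{m}$ together with the description (in the proof of Proposition \ref{prop:quotient_posets}) of joins in the quotient as classes of joins in the ambient poset. Merging witnesses into $\vec{c}:=\vec{c}_{a}+\vec{c}_{b}\in\im\DDD^{*}$ then gives
\[
[(\vec{a}\vee\vec{b})+\vec{c}]=[\vec{a}+\vec{c}]\vee[\vec{b}+\vec{c}]=[\vec{a}'+\vec{c}]\vee[\vec{b}'+\vec{c}]=[(\vec{a}'\vee\vec{b}')+\vec{c}]
\]
in $\N^{m}$, proving $[\vec{a}\vee\vec{b}]\sim_{\st}[\vec{a}'\vee\vec{b}']$; the argument for $\wedge$ is identical. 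The main technical subtlety I anticipate is verifying translation-invariance of $\vee,\wedge$ on $\N^{m}$ itself, which was implicit in the (omitted) verification of \ref{condition:quotient_posets_lattice} for Proposition \ref{prop:quotient_lattice_N}, but should reduce to a short computation using the concrete relations from Proposition \ref{prop:monomials}.
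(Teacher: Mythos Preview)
Your proposal is correct and is precisely the approach the paper intends: the paper leaves this verification to the reader, stating only that it is ``similar to that of Proposition~\ref{prop:quotient_lattice_N}'', i.e., a check of conditions \ref{condition:quotient_posets_grading}--\ref{condition:quotient_posets_lattice} for the stabilized equivalence relation. You have correctly filled in the details, including the subtraction-of-witness trick for \ref{condition:quotient_posets_homogenous} and the translation-invariance argument for \ref{condition:quotient_posets_lattice}; the latter does indeed rest on the (also omitted) verification of \ref{condition:quotient_posets_lattice} in Proposition~\ref{prop:quotient_lattice_N}, as you note.
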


\begin{example}
\label{ex:pre_stable_invariants_lattice_m_equals_2}
Let $m=2$. In the case where $*=\ev$, we have that $\im\DDD^{*}=\<\vec{e}_{0}\>\subset\NN^{2}$. From Proposition \ref{prop:monomials} we have the map $\mbfw^{\vec{a}}\mapsto\mbfw^{\vec{a}+\vec{e}_{0}}$ is injective on $w_{0}W_{2}$, and so it follows that $\N^{2}_{\st,\ev}\cong\N^{2}$.

On the other hand if $*=\odd$, then $\im\DDD^{*}=\<\vec{e}_{1}\>\subset\NN^{2}$. From Proposition \ref{prop:monomials}, we have that $w_{0}^{a_{0}}w_{1}^{a_{1}}=w_{0}^{a_{0}+a_{1}-1}w_{1}\in w_{0}W_{2}$ for all $(a_{0},a_{1})\in\NN^{2}$ such that $a_{0},a_{1}\geq 1$. Hence the grading map
\[|\cdot|:(\N^{2}_{\st,\odd},\preceq,+)\to(\NN,\le,+)\]
induces an isomorphism of additive posets. In particular, we ave that
\[\N^{2}\cong\N^{2}_{\st,\ev}\not\cong\N^{2}_{\st,\odd}\cong\NN.\]
\end{example}

\begin{example}
\label{ex:pre_stable_invariants_lattice_m_equals_p^r}
If $m=p^{r}$ is an odd prime power, then $\N^{p^{r}}_{\st,*}\cong\N^{p^{r}}$ for either $*=\ev$ or $\odd$. Indeed, let $\vec{a},\vec{b}\in\NN^{p^{r}}$, and suppose there exists $\vec{c}\in\im\DDD^{*}\cong\NN^{p^{r}}$ be such that $\mbfw^{\vec{a}+\vec{c}+\vec{e}_{0}}=\mbfw^{\vec{b}+\vec{c}+\vec{e}_{0}}\in w_{0}W_{p^{r}}$. Then $\vec{a}+\vec{c}+\vec{e}_{0}$, $\vec{b}+\vec{c}+\vec{e}_{0}$ satisfy the relations given in Proposition \ref{prop:monomials}. By subtracting the terms corresponding to $\vec{c}$ on both sides of each of the linear relations, we see that $\mbfw^{\vec{a}+\vec{e}_{0}}=\mbfw^{\vec{b}+\vec{e}_{0}}\in w_{0}W_{p^{r}}$.
\end{example}

For the following, let
\[\Pi_{\st}:(\N^{m},\preceq,+,|\cdot|)\twoheadrightarrow(\N^{m}_{\st,*},\preceq,+,|\cdot|)\]
denote quotient map.

\begin{definition}
\label{def:pre_stable_k_invariants}
Let $X$ be a space of type $\CC$-$G^{*}_{m}$-$\SWF$. We define 
\[I^{\st}(X):=\Pi_{\st}(I(X))\subset\N^{m}_{\st,*},\]
as well as the \emph{set of stable equivariant $k$-invariants of $X$} to be the subset
\[\mbfk^{\st}(X):=\min(I^{\st}(X))\subset\N^{m}_{\st,*},\]
as well as the \emph{upper} and \emph{lower} equivariant $k$-invariants
\begin{align*}
    &\vec{\ol{k}}\,^{\st}(X):=\vee\mbfk^{\st}(X)\in\wh{\N}^{m}_{\st,*}, & &\vec{\ul{k}}\,^{\st}(X):=\wedge\mbfk^{\st}(X)\in\wh{\N}^{m}_{\st,*},
\end{align*}
where $\wh{\N}^{m}_{\st,*}=\N^{m}_{\st,*}\cup\{+\infty\}$ denotes the completion of $\N^{m}_{\st,*}$ as in Definition \ref{def:bounded_completion}.
\end{definition}

\begin{remark}
Note that $\vec{\ol{k}}\,^{\st}(X)\neq\Pi_{\st}(\vec{\ol{k}}(X))$ and $\vec{\ul{k}}\,^{\st}(X)\neq\Pi_{\st}(\vec{\ul{k}}(X))$ in general, as $\Pi_{\st}$ is not necessarily a lattice homomorphism.
\end{remark}

\begin{example}
\label{ex:cosets_torus_stable_k_invariants}
Let $m=2$ and $*=\odd$. Recall from Examples \ref{ex:cosets_cyclic} and \ref{ex:torus_cyclic} that for $X=Z_{a,2}$ or $T_{a,2}$, $a=\frac{1}{2}$ or $\frac{3}{2}$, we have that
\[\mbfk(X)=\{[\vec{e}_{0}],[\vec{e}_{1}]\}\subset\N^{2},\]
and for $X'=Z_{\pm j}$, or $T_{\pm j,c}$, we have that
\[\mbfk(X')=\{[\vec{e}_{1}]\}\subset\N^{2}.\]
Noting that $[\vec{e}_{0}]\neq[\vec{e}_{1}]\in\N^{2}\cong\NN\times\{0,1\}$, we see that $\mbfk(X)\neq\mbfk(X')$ as subsets of $\N^{2}$. However since $[\vec{e}_{0}]=[\vec{e}_{1}]$ in $\N^{2}_{\st,\odd}$, from Example \ref{ex:pre_stable_invariants_lattice_m_equals_2} it follows that
\[\mbfk^{\st}(X)=\mbfk^{\st}(X')=\{[\vec{e}_{0}]\}\subset\N^{2}_{\st,\odd}\cong\NN.\]
\end{example}

Next we will extend our stable equivariant $k$-invariants to invariants of $\CC$-$G^{*}_{m}$-spectrum classes, which take values in the lattice $\Q^{m}_{*}$, to be defined shortly, in which $\N^{m}_{\st,*}$ naturally embeds.

Consider the vector space $(\QQ^{m},+)$ endowed with the product partial order and $(\QQ,\le +)$-grading given by $(q_{0},\dots,q_{m-1})\mapsto q_{0}+\cdots+q_{m-1}$. We define the sublattice $\QQ^{m}_{\NN,*}$ to be the union $\QQ^{m}_{\NN,*}:=\im(\DDD^{*})_{\QQ}\cup\NN^{m}\subset\QQ^{m}$. More explicitly:
\begin{enumerate}
	\item For $m$ odd, and $*\in\{\ev,\odd\}$, we have $\QQ^{m}_{\NN,*}=\QQ^{m}$.
	\item For $m$ even and $*=\ev$, we have
	\[\QQ^{m}_{\NN,*}=\underbrace{(\QQ\times\NN)\times\cdots\times(\QQ\times\NN)}_{m/2}\subset\QQ^{m}.\]
	\item For $m$ even and $*=\odd$, we have
	\[\QQ^{m}_{\NN,*}=\underbrace{(\NN\times\QQ)\times\cdots\times(\NN\times\QQ)}_{m/2}\subset\QQ^{m}.\]
\end{enumerate}
Note that $\QQ^{m}_{\NN,*}$ inherits a partial order, addition and $\QQ$-grading from $\QQ^{m}$, but does not inherit a $\QQ^{m}$-module structure. However, it does still have the structure of a module over itself, as well as an $\NN^{m}$-module structure induced by the natural inclusion $\NN^{m}\hookrightarrow\QQ^{m}_{\NN,*}$.

Finally, we define the lattice $\Q^{m}_{*}$:

\begin{definition}
Define $(\Q^{m}_{*},\preceq,+,|\cdot|)$ to be the quotient of $(\QQ^{m}_{\NN,*},\preceq,+,|\cdot|)$ under the equivalence relation
\[\vec{a}\sim\vec{b}\iff\exists\,\vec{c}\in\im(\D^{*})_{\QQ}\text{ such that }\vec{a}+\vec{c},\,\vec{b}+\vec{c}\in\NN^{m}\text{ and }[\vec{a}+\vec{c}]=[\vec{b}+\vec{c}]\in\N^{m}.\]
\end{definition}

We leave it to the reader to verify the following propositions:

\begin{proposition}
$(\Q^{m}_{*},\preceq,+,|\cdot|)$ is a well-defined $\QQ$-graded additive lattice.
\end{proposition}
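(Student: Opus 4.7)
My approach mirrors the proof of Proposition \ref{prop:quotient_lattice_N}, which handled the analogous construction of $\N^m$ as a quotient of $\NN^m$. The strategy is to verify the hypotheses of Proposition \ref{prop:quotient_posets}, adapted to the $\QQ$-graded setting, applied to $\QQ^m_{\NN,*}$ equipped with the equivalence relation $\sim$.

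As a preliminary step I would confirm that $\QQ^m_{\NN,*}$ itself inherits the structure of a $\QQ$-graded additive lattice from $\QQ^m$: the partial order, addition, and grading all restrict directly, and closure under coordinate-wise meet and join is immediate because the coordinates constrained to lie in $\NN$ are preserved under $\min$ and $\max$ of natural numbers. Next I would verify that $\sim$ is an equivalence relation. Reflexivity requires producing, for any $\vec{a}\in\QQ^m_{\NN,*}$, a shift $\vec{c}\in\im(\D^*)_{\QQ}$ with $\vec{a}+\vec{c}\in\NN^m$; this amounts to clearing denominators in the ``rational'' slots. Symmetry is built into the definition, and for transitivity, given $\vec{a}\sim\vec{b}$ via $\vec{c}_1$ and $\vec{b}\sim\vec{d}$ via $\vec{c}_2$, one augments to a common shift $\vec{c}:=\vec{c}_1+\vec{c}_2+\vec{n}$ with $\vec{n}\in\im\D^*\cap\NN^m$ sufficiently large, and invokes the additive compatibility of the equivalence relation on $\N^m$ established in Proposition \ref{prop:quotient_lattice_N}.

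The core of the argument is verifying the four conditions (\ref{condition:quotient_posets_grading})--(\ref{condition:quotient_posets_lattice}) from Definition \ref{def:quotient_posets}, interpreted in the $\QQ$-graded setting. Condition (\ref{condition:quotient_posets_grading}) follows immediately from grading preservation on $\N^m$. Condition (\ref{condition:quotient_posets_addition}) is reduced to additivity of $\sim$ on $\N^m$ after combining compensating vectors as above. For (\ref{condition:quotient_posets_homogenous}), given $\vec{a}\preceq\vec{b}$ and $\vec{a}\sim\vec{a}'$, the natural candidate is $\vec{b}':=\vec{a}'+(\vec{b}-\vec{a})$: this lies in $\QQ^m_{\NN,*}$ since $\vec{b}-\vec{a}$ is non-negative in every coordinate and integer in the ``integer'' positions; it automatically satisfies $\vec{a}'\preceq\vec{b}'$; and one checks $\vec{b}\sim\vec{b}'$ by enlarging the compensating vector $\vec{c}_0$ for $\vec{a}\sim\vec{a}'$ by a suitable element of $\im\D^*\cap\NN^m$ so that all four of $\vec{a}+\vec{c},\vec{a}'+\vec{c},\vec{b}+\vec{c},\vec{b}'+\vec{c}$ lie in $\NN^m$, and then applying additivity in $\N^m$. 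Condition (\ref{condition:quotient_posets_lattice}) relies on the translation identity $(\vec{a}\vee\vec{b})+\vec{c}=(\vec{a}+\vec{c})\vee(\vec{b}+\vec{c})$, which reduces the lattice compatibility of $\sim$ on $\QQ^m_{\NN,*}$ to the corresponding property of $\sim$ on $\N^m$. The $\QQ$-graded analogue of Proposition \ref{prop:quotient_posets} then yields that $\Q^m_*$ is a well-defined $\QQ$-graded additive lattice, and the strict grading axiom \eqref{eq:T_grading} follows because if $[\vec{a}]\prec[\vec{b}]$ with $|\vec{a}|=|\vec{b}|$, then representatives $\vec{a}'\preceq\vec{b}'$ of equal $\QQ$-grading must agree coordinate-wise in $\QQ^m_{\NN,*}$, forcing $[\vec{a}]=[\vec{b}]$.

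The main obstacle will be the technical bookkeeping around fractional parts of the ``rational'' coordinates: a single compensating vector $\vec{c}$ lands two elements of $\QQ^m_{\NN,*}$ simultaneously in $\NN^m$ only when their coordinates in those slots agree fractionally, so at each step one must carefully choose compensating vectors with the right fractional-part profile. Once one adopts the recipe of always augmenting compensating vectors by a sufficiently large element of $\im\D^*\cap\NN^m$, the remaining verifications become direct rehearsals of the arguments in Proposition \ref{prop:quotient_lattice_N}.
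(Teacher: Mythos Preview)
The paper does not actually prove this proposition: it states ``We leave it to the reader to verify the following propositions'' immediately before it. Your approach---verifying that $\QQ^m_{\NN,*}$ is a $\QQ$-graded additive lattice and then checking conditions (\ref{condition:quotient_posets_grading})--(\ref{condition:quotient_posets_lattice}) for the equivalence relation so that Proposition~\ref{prop:quotient_posets} applies, in direct analogy with Proposition~\ref{prop:quotient_lattice_N}---is exactly the intended route and is correct. One small caution: in your transitivity sketch the compensating vector $\vec{c}_1+\vec{c}_2+\vec{n}$ does not generally land $\vec{a}$ in $\NN^m$, since the fractional parts of $\vec{c}_1$ and $\vec{c}_2$ can add nontrivially; a cleaner choice is $\vec{c}=\vec{c}_1+\vec{n}$ with $\vec{n}\in\im\DDD^*\cap\NN^m$ large, after first observing that $\vec{c}_1-\vec{c}_2\in\ZZ^m$ (from $\vec{b}+\vec{c}_1,\vec{b}+\vec{c}_2\in\NN^m$), which is precisely the fractional-part bookkeeping you flag at the end.
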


\begin{proposition}
\label{prop:map_N^m_to_Q^m_*}
The canonical inclusion of $\NN$-graded $\NN^{m}$-modules
\[(\NN^{m},\preceq_{\NN^{m}},+,|\cdot|)\hookrightarrow(\QQ^{m}_{\NN,*},\preceq_{\QQ^{m}},+,|\cdot|)\]
induces maps of $\NN$-graded $\NN^{m}$-modules
\[f_{\QQ}:(\N^{m}_{\st,*},\preceq,+,|\cdot|)\to(\Q^{m}_{*},\preceq,+,|\cdot|)\qquad e_{\QQ}:(\N^{m}_{\st,*},\preceq,+,|\cdot|)\to(\Q^{m}_{*},\preceq,+,|\cdot|)\]
such that:
\begin{enumerate}
    \item The map $e_{\QQ}$ is an embedding.
    \item The map $f_{\QQ}$ factors as the composition
    \begin{equation}
    \label{eq:map_N^m_to_Q^m_*}
	(\N^{m}_{\st,*},\preceq,+,|\cdot|)\xrightarrow{\Pi_{\st}}\mathrel{\mkern-14mu}\rightarrow(\N^{m}_{\st,*},\preceq,+,|\cdot|)\xhookrightarrow{e_{\QQ}}(\Q^{m}_{*},\preceq,+,|\cdot|).
    \end{equation}
\end{enumerate}
\end{proposition}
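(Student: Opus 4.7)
The plan is to trace the composite $\NN^{m} \hookrightarrow \QQ^{m}_{\NN,*} \twoheadrightarrow \Q^{m}_{*}$ through the two quotient maps $\NN^{m} \twoheadrightarrow \N^{m} \twoheadrightarrow \N^{m}_{\st,*}$, producing the maps $f_{\QQ}$ and $e_{\QQ}$ respectively, and then to verify that $e_{\QQ}$ is injective. The inclusion $\NN^{m} \hookrightarrow \QQ^{m}_{\NN,*}$ is visibly an $\NN^{m}$-module homomorphism which preserves the $\NN$-grading, and the quotient $\QQ^{m}_{\NN,*} \twoheadrightarrow \Q^{m}_{*}$ carries compatible module, grading, and order structure by construction (cf.\ Proposition~\ref{prop:quotient_posets}); hence the module and grading compatibilities of both maps will transfer automatically once well-definedness is established.

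To see that the composite descends to $\N^{m}$: if $[\vec{a}] = [\vec{b}] \in \N^{m}$, then taking $\vec{c} = \vec{0} \in \im(\D^{*})_{\QQ}$ gives $\vec{a}+\vec{c},\vec{b}+\vec{c} \in \NN^{m}$ with $[\vec{a}+\vec{c}] = [\vec{b}+\vec{c}]$, so $\vec{a} \sim \vec{b}$ in $\QQ^{m}_{\NN,*}$. To descend further through $\Pi_{\st}$: if $[\vec{a}] = [\vec{b}] \in \N^{m}_{\st,*}$, then by definition of the stable quotient there exists $\vec{c} \in \im(\DDD^{*}) \subset \NN^{m}$ with $[\vec{a}+\vec{c}] = [\vec{b}+\vec{c}] \in \N^{m}$; since $\im(\DDD^{*}) \subset \im(\D^{*})_{\QQ}$ and $\vec{a}+\vec{c}, \vec{b}+\vec{c} \in \NN^{m}$ automatically, this same $\vec{c}$ witnesses $\vec{a} \sim \vec{b}$ in $\QQ^{m}_{\NN,*}$. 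This produces $e_{\QQ}$ together with the factorization $f_{\QQ} = e_{\QQ} \circ \Pi_{\st}$.

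The main obstacle is injectivity of $e_{\QQ}$, because the equivalence defining $\Q^{m}_{*}$ permits rational and signed witnesses $\vec{c} \in \im(\D^{*})_{\QQ}$, whereas the equivalence on $\N^{m}_{\st,*}$ requires $\vec{c} \in \im(\DDD^{*}) \subset \NN^{m}$. Suppose $[\vec{a}], [\vec{b}] \in \N^{m}_{\st,*}$ satisfy $e_{\QQ}([\vec{a}]) = e_{\QQ}([\vec{b}])$, and choose a witness $\vec{c} \in \im(\D^{*})_{\QQ}$ with $\vec{a}+\vec{c}, \vec{b}+\vec{c} \in \NN^{m}$ and $[\vec{a}+\vec{c}] = [\vec{b}+\vec{c}] \in \N^{m}$. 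Since $\vec{a}$ and $\vec{a}+\vec{c}$ both have non-negative integer entries, $\vec{c}$ necessarily has integer entries, and is supported on the coordinates spanned by $\im(\DDD^{*})$, so $\vec{c}$ lies in the integer lattice of $\im(\D^{*})_{\QQ}$. I would then split coordinate-wise into positive and negative parts, $\vec{c} = \vec{c}_{+} - \vec{c}_{-}$, with $\vec{c}_{+}, \vec{c}_{-} \in \im(\DDD^{*}) \subset \NN^{m}$. Then $\vec{a}+\vec{c}_{+} = (\vec{a}+\vec{c}) + \vec{c}_{-} \in \NN^{m}$ and likewise $\vec{b}+\vec{c}_{+} \in \NN^{m}$; adding $\vec{c}_{-}$ to both sides of the equality $[\vec{a}+\vec{c}] = [\vec{b}+\vec{c}]$ via the $\NN^{m}$-module action on $\N^{m}$ yields $[\vec{a}+\vec{c}_{+}] = [\vec{b}+\vec{c}_{+}] \in \N^{m}$. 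Thus $\vec{c}_{+} \in \im(\DDD^{*})$ serves as an integral witness for $[\vec{a}] = [\vec{b}] \in \N^{m}_{\st,*}$, completing the proof of injectivity.
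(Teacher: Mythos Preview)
Your proof is correct. The paper leaves this proposition to the reader without supplying an argument, so there is nothing to compare against; your approach of descending the composite $\NN^{m}\hookrightarrow\QQ^{m}_{\NN,*}\twoheadrightarrow\Q^{m}_{*}$ through the two successive quotients and then recovering an integral, non-negative witness $\vec{c}_{+}\in\im(\DDD^{*})$ from a rational one via the decomposition $\vec{c}=\vec{c}_{+}-\vec{c}_{-}$ is exactly the kind of verification the paper has in mind.
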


\begin{example}
\label{ex:stable_m=2}
Let $m=2$. For $*=\ev$, every element in $\Q^{2}_{\ev}$ has a unique representative $(q,b)\in\QQ^{2}_{\NN,\ev}=\QQ\times\NN$ with $b\in\{0,1\}$. Hence we have an identification of sets $\Q^{2}_{\ev}=\QQ\times\{0,1\}$.

For $*=\odd$, every element in $\Q^{2}_{\odd}$ has a unique representative of the form $(0,q)\in\QQ^{2}_{\NN,\odd}=\NN\times\QQ$, giving us an identification of sets $\Q^{2}_{\odd}=\QQ$. Moreover, the grading map
\[|\cdot|:(\QQ^{2}_{\odd},\preceq,+)\to(\QQ,\le,+)\]
induces an isomorphism of additive posets. Hence in particular $\Q^{2}_{\ev}\not\equiv\Q^{2}_{\odd}$.
\end{example}

\begin{example}
\label{ex:stable_m=3}
Let $m=3$, $\ast\in\{\ev,\odd\}$. From Example \ref{ex:w_0W_3}, every element in $\Q^{3}_{*}$ has a unique representative of the form $(a,b,c)\in\QQ^{3}_{\NN,*}=\QQ^{3}$ with $0\le c<3$. Hence we have an identification of sets $\Q^{3}_{*}=\QQ^{2}\times(\QQ\cap[0,3))$.
\end{example}

\begin{example}
\label{ex:stable_m=4}
Let $m=4$. For $\ast=\ev$, by Example \ref{ex:w_0W_4} we see that every element in $\Q^{4}_{\ev}$ has a unique representative of the form $(a,b,c,0)\in\QQ^{4}_{\ev}=\QQ\times\NN\times\QQ\times\NN$ with $b\in\{0,1\}$ and $0\le c<1$. Hence we have an identification of sets
\[\Q^{4}_{\ev}=\QQ\times\{0,1\}\times(\QQ\cap[0,1))\times\{0\}.\]

For $*=\odd$, again by Example \ref{ex:w_0W_4} every element in $\Q^{4}_{\ev}$ has a unique representative $(a,b,c,d)\in\QQ^{4}_{\odd}=\NN\times\QQ\times\NN\times\QQ$ which is written in one of the following two forms:
\begin{align*}
(a,b,0,d), & &0\le d < 4, \\
(a,b,1,d), & &0\le b,d <1.
\end{align*}
We therefore have an identification of sets
\[\Q^{4}_{\odd}=\big(\QQ^{2}\times\{0\}\times(\QQ\cap[0,4))\big)\cup\big(\QQ\times(\QQ\cap[0,1))\times\{1\}\times(\QQ\cap[0,1))\big).\]
\end{example}

\begin{example}
\label{ex:stable_m=5}
Let $m=5$, $\ast\in\{\ev,\odd\}$. From Example \ref{ex:w_0W_5} every element in $\Q^{5}_{*}$ has a unique representative of the form $(a,b,c,d,e)\in\QQ^{5}_{\NN,*}=\QQ^{5}$, where $a,b,c\in\QQ$, and $(d,e)$ lies in one of the following five (mutually exclusive) subsets of $\QQ^{2}$:
\begin{align*}
	&A_{0}:=\big(\QQ\cap[0,1)\big)\times\big(\QQ\cap[0,5)\big), &
	&A_{1}:=\big(\QQ\cap[1,2)\big)\times\big(\QQ\cap[0,3)\big), \\
	&A_{2}:=\big(\QQ\cap[2,3)\big)\times\big(\QQ\cap[0,1)\big), &
	&A_{3}:=\big(\QQ\cap[3,4)\big)\times\big(\QQ\cap[0,4)\big), \\
	&A_{4}:=\big(\QQ\cap[4,5)\big)\times\big(\QQ\cap[0,2)\big). & &
\end{align*}
Hence we have an identification of sets $\Q^{5}_{*}=\cup_{i=0}^{4}(\QQ^{3}\times A_{i})$.
\end{example}

We now proceed to define the stable $k$-invariants for $\CC$-$G^{*}_{m}$-spectrum classes. Let
\begin{align*}
&\DDD^{\ev}:\QQ^{m}\to\QQ^{m}_{\NN,*}	& &\DDD^{\odd}:\QQ^{m}_{1/2}\to\QQ^{m}_{\NN,*}
\end{align*}
be the natural $\QQ$-linear extensions of the maps $\DDD^{\ev},\DDD^{\odd}$ from Definition \ref{def:more_lattice_notation}.

\begin{definition}
\label{def:stable_k_invariants}
Let $(X,\mbfa,\mbfb)\in\wt{\CCC}_{G^{*}_{m},\CC}$. We define $I^{\st}(X,\mbfa,\mbfb)$ to be the subset of $\Q^{m}_{*}$ given by
\begin{equation*}
    I^{\st}(X,\mbfa,\mbfb):=e_{\QQ}(I^{\st}(X))-[\DDD^{*}(\vec{\mbfb})]=f_{\QQ}(I(X))-[\DDD^{*}(\vec{\mbfb})]\subset\Q^{m}_{*},
\end{equation*}
where $e_{\QQ},f_{\QQ}$ are the maps from Proposition \ref{prop:map_N^m_to_Q^m_*}. We then define the \emph{set of stable equivariant $k$-invariants of $(X,\mbfa,\mbfb)$} as follows:
\[\mbfk^{\st}(X,\mbfa,\mbfb):=\min(I^{\st}(X,\mbfa,\mbfb))=e_{\QQ}(\mbfk^{\st}(X))-[\DDD^{*}(\vec{\mbfb})]=f_{\QQ}(\mbfk(X))-[\DDD^{*}(\vec{\mbfb})]\subset\Q_{*}^{m}.\]
The \emph{upper} and \emph{lower} equivariant $k$-invariants of $(X,\mbfa,\mbfb)$ are defined to be
\begin{align*}
    &\vec{\ol{k}}\,^{\st}(X,\mbfa,\mbfb):=\vee\mbfk^{\st}(X,\mbfa,\mbfb)=e_{\QQ}(\vec{\ol{k}}\,^{\st}(X))-[\DDD^{*}(\vec{\mbfb})]\in\wh{\Q}^{m}_{*} \\
    &\vec{\ul{k}}\,^{\st}(X,\mbfa,\mbfb):=\wedge\mbfk^{\st}(X,\mbfa,\mbfb)=e_{\QQ}(\vec{\ul{k}}\,^{\st}(X))-[\DDD^{*}(\vec{\mbfb})]\in\wh{\Q}^{m}_{*}
\end{align*}
where $\wh{\Q}^{m}_{*}=\Q^{m}_{*}\cup\{+\infty\}$ denotes the completion of $\Q^{m}_{*}$ as in Definition \ref{def:bounded_completion}.
\end{definition}

Next, we show that the above invariants are well-defined invariants of $\CC$-$G^{*}_{m}$-spectrum classes:

\begin{proposition}
\label{prop:stable_k_invariants_stable_homotopy_equivalence}
Let $(X,\mbfa,\mbfb),(X',\mbfa',\mbfb')\in\wt{\CCC}_{G^{*}_{m},\CC}$ be such that $[(X,\mbfa,\mbfb)]=[(X',\mbfa',\mbfb')]$. Then
\[I^{\st}(X,\mbfa,\mbfb)=I^{\st}(X',\mbfa',\mbfb')\subset\Q^{m}_{*}.\]
\end{proposition}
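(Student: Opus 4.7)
The upper and lower stable $k$-invariants are both determined by the set $\mbfk^{\st}(X,\mbfa,\mbfb)=\min I^{\st}(X,\mbfa,\mbfb)$, which in turn is determined by $I^{\st}(X,\mbfa,\mbfb)$. Hence it suffices to prove the claimed equality on the level of $I^{\st}$. The plan is to pull everything back to the unstabilized ideal subset $I(X)\subset\N^{m}$, use the already-established invariance results for $I$, and then push the resulting identity forward into $\Q^{m}_{*}$.

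First I would unfold Definition \ref{def:stable_C_G_*_m_spectrum_class} to obtain $\Amid\in R(\ZZ_{m})^{\sym}_{\geq 0}$, $\Bmid\in R(\ZZ_{2m})^{*}_{\geq 0}\otimes\QQ$, and $r\geq 0$ such that $\Amid-\mbfa,\Amid-\mbfa'\in R(\ZZ_{m})^{\sym}_{\geq 0}$, $\Bmid-\mbfb,\Bmid-\mbfb'\in R(\ZZ_{2m})^{*}_{\geq 0}$, together with a $G^{*}_{m}$-homotopy equivalence
\[
\Sigma^{r\RR}\Sigma^{(\Amid-\mbfa)\wt{\CC}}\Sigma^{(\Bmid-\mbfb)\HH}X\simeq_{G^{*}_{m}}\Sigma^{r\RR}\Sigma^{(\Amid-\mbfa')\wt{\CC}}\Sigma^{(\Bmid-\mbfb')\HH}X'.
\]
Proposition \ref{prop:k_invariants_stable_homotopy_equivalence} then says that $I(\cdot)$ agrees on the two sides, and Proposition \ref{prop:k_invariants_suspensions} computes each side: suspension by $\wt{\CC}$-summands leaves $I$ unchanged, while suspension by a non-negative quaternionic summand $\mbft\HH$ translates $I$ by $[\DDD^{*}(\vec{\mbft})]\in\N^{m}$. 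Combining these, I get the identity
\[
I(X)+\big[\DDD^{*}(\overrightarrow{\Bmid-\mbfb})\big]\;=\;I(X')+\big[\DDD^{*}(\overrightarrow{\Bmid-\mbfb'})\big]
\]
of upper-complete subsets of $\N^{m}$, where the vectors $\overrightarrow{\Bmid-\mbfb},\overrightarrow{\Bmid-\mbfb'}$ are honest elements of $\NN^{m}$ or $\NN^{m}_{1/2}$ in the sense of Definition~\ref{def:more_lattice_notation}.

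Next I would apply the additive poset map $f_{\QQ}\colon\N^{m}\to\Q^{m}_{*}$ of Proposition \ref{prop:map_N^m_to_Q^m_*} to both sides. Writing $\vec{\Bmid}\in\QQ^{m}_{\NN,*}$ for the rational vector associated to $\Bmid$ (so that $\overrightarrow{\Bmid-\mbfb}=\vec{\Bmid}-\vec{\mbfb}$ in $\QQ^{m}$), and using that $[\DDD^{*}(\vec{\Bmid})]\in\Q^{m}_{*}$ lifts to an element whose additive inverse $-\DDD^{*}(\vec{\Bmid})$ again lies in $\QQ^{m}_{\NN,*}$ (its nonzero coordinates sit in the $\QQ$-slots of $\QQ^{m}_{\NN,*}$, by construction of $\DDD^{*}$), I can subtract $[\DDD^{*}(\vec{\Bmid})]$ from both sides in $\Q^{m}_{*}$. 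This yields
\[
f_{\QQ}(I(X))-[\DDD^{*}(\vec{\mbfb})]\;=\;f_{\QQ}(I(X'))-[\DDD^{*}(\vec{\mbfb}')]
\]
in $\Q^{m}_{*}$, which is exactly the desired equality $I^{\st}(X,\mbfa,\mbfb)=I^{\st}(X',\mbfa',\mbfb')$.

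The only step that is not purely formal is this final cancellation in $\Q^{m}_{*}$, which is where the localization/rationalization baked into the definition of $\Q^{m}_{*}$ is used in an essential way; I would verify carefully that $(\Q^{m}_{*},+)$ admits inverses of elements of the form $[\DDD^{*}(\vec{v})]$ with $\vec{v}\in\QQ^{m}_{\geq 0}$ (resp.\ $\QQ^{m}_{1/2,\geq 0}$), because the equivalence relation defining $\Q^{m}_{*}$ already allows one to add arbitrary elements of $\im(\DDD^{*})_{\QQ}$. The remaining bookkeeping, namely that $\vec{\mbfb}-\vec{\mbfb}'\in\NN^{m}$ or $\NN^{m}_{1/2}$ so that both sides live in the same $\Q^{m}_{*}$, follows from the integrality assumption $\mbfb-\mbfb'\in R(\ZZ_{2m})^{*}$ that is part of the definition of stable equivalence.
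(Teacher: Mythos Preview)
Your proposal is correct and follows essentially the same approach as the paper: unfold the stable equivalence to get the common suspension data $(\Amid,\Bmid,r)$, invoke Proposition~\ref{prop:k_invariants_stable_homotopy_equivalence} together with the suspension formula of Proposition~\ref{prop:k_invariants_suspensions} to identify the two translated copies of $I(\cdot)$, and then cancel the $[\DDD^{*}(\vec{\Bmid})]$ shift inside $\Q^{m}_{*}$. The paper organizes the same computation as a single chain of equalities carried out directly in $\Q^{m}_{*}$ rather than first working in $\N^{m}$, but the content is identical.
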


\begin{proof}
By definition, we must have that $\mbfb-\mbfb'\in R(\ZZ_{2m})^{*}$, and there must exist $\Amid\in R(\ZZ_{m})^{\sym}_{\geq 0}$, $\Bmid\in R(\ZZ_{2m})_{\geq 0}^{*}\otimes\QQ$, and $r\geq 0$ such that: $\Amid-\mbfa$, $\Amid-\mbfa'\in R(\ZZ_{m})^{\sym}_{\geq 0}$, $\Bmid-\mbfb$, $\Bmid-\mbfb'\in R(\ZZ_{2m})_{\geq 0}^{*}$, and
\[\Sigma^{r\RR}\Sigma^{(\Amid-\mbfa)\wt{\CC}}\Sigma^{(\Bmid-\mbfb)\HH}X\simeq_{G^{*}_{m}}\Sigma^{r\RR}\Sigma^{(\Amid-\mbfa')\wt{\CC}}\Sigma^{(\Bmid-\mbfb')\HH}X'.\]
By Proposition \ref{prop:k_invariants_stable_homotopy_equivalence} and Example \ref{ex:k_invariants_representation_spheres} we must have that $I(\Sigma^{(\Bmid-\mbfb)\HH}X)=I(\Sigma^{(\Bmid-\mbfb')\HH}X')$, and thus
\begin{align*}
	I^{\st}(X,\mbfa,\mbfb)&=e_{\QQ}(I^{\st}(X))-[\DDD^{*}(\vec{\mbfb})]=f_{\QQ}(I(X))+[\DDD^{*}(\vec{\Bmid}-\vec{\mbfb})]-[\DDD^{*}(\vec{\Bmid})] \\
	&=f_{\QQ}(I(\Sigma^{(\Bmid-\mbfb)\HH}X))-[\DDD^{*}(\vec{\Bmid})]=f_{\QQ}(I(\Sigma^{(\Bmid-\mbfb')\HH}X'))-[\DDD^{*}(\vec{\Bmid})] \\
	&=f_{\QQ}(I(X'))+[\DDD^{*}(\vec{\Bmid}-\vec{\mbfb}')]-[\DDD^{*}(\vec{\Bmid})]=e_{\QQ}(I^{\st}(X'))-[\DDD^{*}(\vec{\mbfb}')] \\
	&=I^{\st}(X',\mbfa',\mbfb').
\end{align*}
\end{proof}

Next, we will show that the stable equivariant $k$-invariants satisfy many of the same properties as their unstable counterparts.

\begin{proposition}
\label{prop:stable_k_invariants_s1_fixed_point_homotopy_equivalence}
Let $\X,\X'\in\wt{\CCC}_{G^{*}_{m},\CC}$ be $\CC$-$G^{*}_{m}$-spectrum classes at the same level $\vec{s}$, and suppose there exists a morphism
\[f:\X\to\X'\]
such that the induced map on $S^{1}$-fixed point sets is a $G^{*}_{m}$-homotopy equivalence. Then:
\begin{enumerate}
    \item For each $\vec{k}'\in\mbfk^{\st}(\X')$:
    \begin{enumerate}
        \item $\vec{k}\not\succ\vec{k}'$ for all $\vec{k}\in\mbfk^{\st}(\X)$.
        \item There exists some $\vec{k}\in\mbfk^{\st}(\X)$ such that $\vec{k}\preceq\vec{k}'$.
    \end{enumerate}
    \item $\vec{\ul{k}}\,^{\st}(\X)\preceq \vec{\ol{k}}\,^{\st}(\X')$.
\end{enumerate}
\end{proposition}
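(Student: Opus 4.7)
The plan is to reduce the statement to its unstable counterpart, Proposition \ref{prop:k_invariants_s1_fixed_point_homotopy_equivalence}, by choosing suitable representatives, and then translate the resulting containment of ideals/subsets through the maps $f_{\QQ}$, $e_{\QQ}$ of Proposition \ref{prop:map_N^m_to_Q^m_*} and the translation by $[\DDD^{*}(\vec{\mbfb})]$ that enters into Definition \ref{def:stable_k_invariants}. Concretely, fix representatives $(X,\mbfa,\mbfb)$ and $(X',\mbfa',\mbfb')$ of $\X$ and $\X'$. Since $\X$ and $\X'$ are at the same level $\vec{s}$, we have $\mbfs-\mbfa=\mbfs'-\mbfa'$, where $\mbfs,\mbfs'$ are the $R(\ZZ_{m})^{\sym}_{\geq 0}$-levels of $X$, $X'$. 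The morphism $f:\X\to\X'$ is represented, after choosing $\Amid\in R(\ZZ_{m})^{\sym}_{\geq 0}$ with $\Amid-\mbfa,\Amid-\mbfa'\in R(\ZZ_{m})^{\sym}_{\geq 0}$, $\Bmid\in R(\ZZ_{2m})^{*}_{\geq 0}\otimes\QQ$ with $\Bmid-\mbfb,\Bmid-\mbfb'\in R(\ZZ_{2m})^{*}_{\geq 0}$, and some $r\geq 0$, by an actual $G^{*}_{m}$-equivariant map
\[
\wt{f}:\Sigma^{r\RR}\Sigma^{(\Amid-\mbfa)\wt{\CC}}\Sigma^{(\Bmid-\mbfb)\HH}X\longrightarrow\Sigma^{r\RR}\Sigma^{(\Amid-\mbfa')\wt{\CC}}\Sigma^{(\Bmid-\mbfb')\HH}X'.
\]
The source and target are spaces of type $\CC$-$G^{*}_{m}$-$\SWF$ at the same level $\mbfs+(\Amid-\mbfa)=\mbfs'+(\Amid-\mbfa')$, and by construction $\wt{f}$ induces a $G^{*}_{m}$-homotopy equivalence on $S^{1}$-fixed point sets.

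Next I would apply Proposition \ref{prop:k_invariants_s1_fixed_point_homotopy_equivalence} to $\wt{f}$ to conclude that
\[
I\big(\Sigma^{r\RR}\Sigma^{(\Amid-\mbfa)\wt{\CC}}\Sigma^{(\Bmid-\mbfb)\HH}X\big)\supseteq I\big(\Sigma^{r\RR}\Sigma^{(\Amid-\mbfa')\wt{\CC}}\Sigma^{(\Bmid-\mbfb')\HH}X'\big)
\]
as subsets of $\N^{m}$. By Proposition \ref{prop:k_invariants_suspensions} the $\Sigma^{r\RR}$ and $\Sigma^{\cdot\wt{\CC}}$ suspensions do not affect $I(-)$, while $\Sigma^{\mbft\HH}$ shifts $I(-)$ by $[\DDD^{*}(\vec{\mbft})]$. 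Thus the containment above reads
\[
I(X)+[\DDD^{*}(\vec{\Bmid}-\vec{\mbfb})]\supseteq I(X')+[\DDD^{*}(\vec{\Bmid}-\vec{\mbfb}')]
\]
inside $\N^{m}$. Applying the $\NN$-graded $\NN^{m}$-module map $f_{\QQ}:\N^{m}\to\Q^{m}_{*}$ (which respects the translations by elements of $\im\DDD^{*}$) and subtracting $[\DDD^{*}(\vec{\Bmid})]$ from both sides, I obtain
\[
I^{\st}(X,\mbfa,\mbfb)=f_{\QQ}(I(X))-[\DDD^{*}(\vec{\mbfb})]\supseteq f_{\QQ}(I(X'))-[\DDD^{*}(\vec{\mbfb}')]=I^{\st}(X',\mbfa',\mbfb').
\]
By Proposition \ref{prop:stable_k_invariants_stable_homotopy_equivalence}, the two sides depend only on $\X$ and $\X'$, hence $I^{\st}(\X)\supseteq I^{\st}(\X')$ as subsets of the lattice $\Q^{m}_{*}$.

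Finally, conclusions (1) and (2) follow immediately by invoking Lemma \ref{lemma:comparing_minima_subsets} with $A=I^{\st}(\X')$ and $B=I^{\st}(\X)$, noting that $\mbfk^{\st}(\X)=\min(I^{\st}(\X))$ and similarly for $\X'$, and using $\vec{\ul{k}}^{\st}(\X)=\wedge\mbfk^{\st}(\X)$, $\vec{\ol{k}}^{\st}(\X')=\vee\mbfk^{\st}(\X')$ with the convention on $+\infty\in\wh{\Q}^{m}_{*}$ when the relevant sets are empty. The main obstacle, such as it is, is purely bookkeeping: ensuring that the translations by $[\DDD^{*}(\vec{\Bmid})]$ at the $\N^{m}$-level descend correctly to the quotient lattice $\Q^{m}_{*}$, which is built into the definition of $f_{\QQ}$ and the fact that $\im\DDD^{*}_{\QQ}$ acts on $\Q^{m}_{*}$ by construction; once this is set up, both conclusions are immediate from the abstract lattice lemma.
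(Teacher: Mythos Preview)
Your argument is correct and follows the same approach as the paper, which simply records that the result ``follows from Proposition \ref{prop:k_invariants_s1_fixed_point_homotopy_equivalence}.'' You have spelled out the reduction to the unstable case in full detail, but the underlying strategy is identical.
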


\begin{proof}
Follows from Proposition \ref{prop:k_invariants_s1_fixed_point_homotopy_equivalence}.
\end{proof}

\begin{corollary}
\label{cor:stable_k_invariants_local_equivalence}
Suppose $\X,\X'$ are $\CC$-$G^{*}_{m}$-spectrum classes such that $[\X]_{\loc}=[\X']_{\loc}\in\LLL\EEE_{G^{*}_{m},\CC}$. Then $I^{\st}(\X)=I^{\st}(\X')$, and hence their corresponding equivariant $k$-invariants are all equal. (Compare with Corollary \ref{cor:k_invariants_local_equivalence}.)
\end{corollary}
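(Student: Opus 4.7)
The plan is to deduce the corollary from Proposition \ref{prop:stable_k_invariants_s1_fixed_point_homotopy_equivalence} applied in both directions, paralleling how Corollary \ref{cor:k_invariants_local_equivalence} follows from Proposition \ref{prop:k_invariants_s1_fixed_point_homotopy_equivalence}. By Definition \ref{def:stable_G*m_local_equivalence}, the hypothesis $[\X]_{\loc}=[\X']_{\loc}$ supplies morphisms $f\colon\X\to\X'$ and $g\colon\X'\to\X$ in $\CCC_{G^{*}_{m},\CC}$ whose induced maps $f^{S^{1}}$ and $g^{S^{1}}$ on the $S^{1}$-fixed point sets are stable $G^{*}_{m}$-equivariant homotopy equivalences.

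Next, I would choose representatives $(X,\mbfa,\mbfb)$ and $(X',\mbfa',\mbfb')$ of $\X$ and $\X'$ together with a common dominating pair $\Amid\in R(\ZZ_{m})^{\sym}_{\geq 0}$, $\Bmid\in R(\ZZ_{2m})^{*}_{\geq 0}\otimes\QQ$, and replace both triples by their $\Sigma^{(\Amid-\mbfa)\wt{\CC}\oplus(\Bmid-\mbfb)\HH}$-suspensions (and the primed analogues). The resulting underlying spaces are then genuine spaces of type $\CC$-$G^{*}_{m}$-$\SWF$ at the same level, on which $f$ and $g$ are represented by honest $G^{*}_{m}$-equivariant maps. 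By Propositions \ref{prop:k_invariants_suspensions} and \ref{prop:stable_k_invariants_stable_homotopy_equivalence}, the subsets $I^{\st}(\X),I^{\st}(\X')\subset\Q^{m}_{*}$ are unaffected by this choice of representative.

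In this setting I would invoke the ideal-containment argument implicit in the proof of Proposition \ref{prop:k_invariants_s1_fixed_point_homotopy_equivalence} — equivalently, the content of Proposition \ref{prop:stable_k_invariants_s1_fixed_point_homotopy_equivalence}: applied to $f$ it yields $I^{\st}(\X)\supseteq I^{\st}(\X')$, and applied symmetrically to $g$ it yields $I^{\st}(\X')\supseteq I^{\st}(\X)$. Combining gives $I^{\st}(\X)=I^{\st}(\X')$. The equality of the derived invariants is then formal: $\mbfk^{\st}(\X)=\min(I^{\st}(\X))=\min(I^{\st}(\X'))=\mbfk^{\st}(\X')$, and $\vec{\ol{k}}\,^{\st}$, $\vec{\ul{k}}\,^{\st}$ are the join and meet in $\wh{\Q}^{m}_{*}$ of the common set $\mbfk^{\st}(\X)$.

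No genuine obstacle appears in this argument; the only bookkeeping is ensuring that the two representatives can be placed at a common level before invoking the unstable $S^{1}$-fixed-point proposition, which is precisely what the definition of stable equivalence in $\CCC_{G^{*}_{m},\CC}$ is designed to permit.
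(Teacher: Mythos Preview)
Your proposal is correct and matches the paper's approach: the paper gives no explicit proof here, only the parenthetical ``Compare with Corollary \ref{cor:k_invariants_local_equivalence}'', so the intended argument is precisely the stabilized two-sided application of Proposition \ref{prop:k_invariants_s1_fixed_point_homotopy_equivalence} (packaged as Proposition \ref{prop:stable_k_invariants_s1_fixed_point_homotopy_equivalence}) that you spell out. Your care in passing to representatives at a common level is the right bookkeeping and is more explicit than the paper itself.
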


\begin{proposition}
\label{prop:stable_k_invariants_pin(2)_fixed_point_homotopy_equivalence}
Let $\X,\X'$ be $\CC$-$G^{*}_{m}$-spectrum classes at levels $\mbfs$ and $\mbfs'$, respectively, such that $\vec{\mbfs}\preceq\vec{\mbfs}\,'$. Suppose there exists a morphism
\[f:\X\to\X'\]
such that the induced map on $\Pin(2)$-fixed point sets is a $G^{*}_{p^{r}}$-homotopy equivalence. Then:
\begin{enumerate}
    \item For each $\vec{k}'\in\mbfk^{\st}(\X')$:
    \begin{enumerate}
        \item $\vec{k}\not\succ\vec{k}'+(\vec{\mbfs}\,'-\vec{\mbfs})$ for all $\vec{k}\in\mbfk^{\st}(\X)$.
        \item There exists some $\vec{k}\in\mbfk^{\st}(\X)$ such that $\vec{k}\preceq\vec{k}'+(\vec{\mbfs}\,'-\vec{\mbfs})$.
    \end{enumerate}
    \item $\vec{\ul{k}}\,^{\st}(\X)\preceq \vec{\ol{k}}\,^{\st}(\X')+(\vec{\mbfs}\,'-\vec{\mbfs})$.
\end{enumerate}
\end{proposition}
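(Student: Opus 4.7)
The plan is to reduce to the unstable Proposition~\ref{prop:k_invariants_pin(2)_fixed_point_homotopy_equivalence} by choosing common representatives for $\X$ and $\X'$, and then transporting the resulting inequality through the quotient map $f_{\QQ} : \N^{m}_{\st,*} \to \Q^{m}_{*}$ of Proposition~\ref{prop:map_N^m_to_Q^m_*}.

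First I would unpack the morphism $f : \X \to \X'$. Choose representatives $(X,\mbfa,\mbfb), (X',\mbfa',\mbfb')$ of $\X, \X'$. By Definition~\ref{def:stable_C_G_*_m_spectrum_class}, there exist $\Amid\in R(\ZZ_{m})_{\geq 0}^{\sym}$, $\Bmid\in R(\ZZ_{2m})^{*}_{\geq 0}\otimes\QQ$, and $r\geq 0$ with $\Amid-\mbfa, \Amid-\mbfa' \in R(\ZZ_{m})_{\geq 0}^{\sym}$ and $\Bmid-\mbfb, \Bmid-\mbfb' \in R(\ZZ_{2m})_{\geq 0}^{*}$, such that $f$ is represented by a $G^{*}_{m}$-equivariant map
\[\wt{f} : \Sigma^{r\RR}\Sigma^{(\Amid-\mbfa)\wt{\CC}}\Sigma^{(\Bmid-\mbfb)\HH}X \;\longrightarrow\; \Sigma^{r\RR}\Sigma^{(\Amid-\mbfa')\wt{\CC}}\Sigma^{(\Bmid-\mbfb')\HH}X'.\]
Write $\mbfs_{X}, \mbfs_{X'}$ for the unstable levels of $X, X'$, and desuspend the $r\RR$-factor to produce an honest $G^{*}_{m}$-equivariant map $\wt{f} : Y \to Y'$ where $Y, Y'$ are spaces of type $\CC$-$G^{*}_{m}$-$\SWF$ at levels
\[\vec{\mbfs}_{Y} = \vec{\mbfs}_{X} + (\vec{\Amid}-\vec{\mbfa}),\qquad \vec{\mbfs}_{Y'} = \vec{\mbfs}_{X'} + (\vec{\Amid}-\vec{\mbfa}').\]

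Next, I would verify the two hypotheses needed to invoke Proposition~\ref{prop:k_invariants_pin(2)_fixed_point_homotopy_equivalence}. Since the stable levels of $\X, \X'$ are $\mbfs_{X}-\mbfa, \mbfs_{X'}-\mbfa'$, the hypothesis $\vec{\mbfs}\preceq\vec{\mbfs}'$ gives $\vec{\mbfs}_{X}-\vec{\mbfa}\preceq \vec{\mbfs}_{X'}-\vec{\mbfa}'$, which rearranges to $\vec{\mbfs}_{Y}\preceq \vec{\mbfs}_{Y'}$ and also yields $\vec{\mbfs}_{Y'}-\vec{\mbfs}_{Y} = \vec{\mbfs}'-\vec{\mbfs}$. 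Moreover, the formation of $\Pin(2)$-fixed points commutes with suspension by trivial representations, so $\wt{f}^{\Pin(2)}$ is a $G^{*}_{m}$-homotopy equivalence. Applying Proposition~\ref{prop:k_invariants_pin(2)_fixed_point_homotopy_equivalence} produces: for each $\vec{k}_{Y'}\in\mbfk(Y')$ there is some $\vec{k}_{Y}\in\mbfk(Y)$ with $\vec{k}_{Y}\preceq \vec{k}_{Y'} + (\vec{\mbfs}'-\vec{\mbfs})$, and no $\vec{k}_{Y}\in\mbfk(Y)$ satisfies $\vec{k}_{Y}\succ \vec{k}_{Y'}+(\vec{\mbfs}'-\vec{\mbfs})$.

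The final step is to translate these inequalities into $\Q^{m}_{*}$. By Proposition~\ref{prop:k_invariants_suspensions} we have
\[\mbfk(Y) = \mbfk(X) + [\DDD^{*}(\vec{\Bmid}-\vec{\mbfb})],\qquad \mbfk(Y') = \mbfk(X') + [\DDD^{*}(\vec{\Bmid}-\vec{\mbfb}')],\]
so every $\vec{k}_{Y}$ is of the form $\vec{k}_{X} + [\DDD^{*}(\vec{\Bmid}-\vec{\mbfb})]$ for some $\vec{k}_{X}\in\mbfk(X)$, and similarly for $Y'$. Applying the additive poset homomorphism $f_{\QQ} : \N^{m} \to \Q^{m}_{*}$ (which, being order-preserving, carries the inequality across) and subtracting the common element $[\DDD^{*}(\vec{\Bmid})]\in\Q^{m}_{*}$ from both sides yields
\[f_{\QQ}(\vec{k}_{X}) - [\DDD^{*}(\vec{\mbfb})] \preceq f_{\QQ}(\vec{k}'_{X}) - [\DDD^{*}(\vec{\mbfb}')] + (\vec{\mbfs}'-\vec{\mbfs}),\]
which is precisely the claim about $\mbfk^{\st}(\X)$ and $\mbfk^{\st}(\X')$ by Definition~\ref{def:stable_k_invariants}. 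The corresponding non-strict (``$\not\succ$'') statement propagates through the same translation, giving (1). Statement (2) then follows from (1) by the general argument of Lemma~\ref{lemma:comparing_minima_subsets}(1b).

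The main obstacle is purely bookkeeping: one must check that $f_{\QQ}$ preserves the direction of the inequality and that translation by $[\DDD^{*}(\vec{\Bmid})]\in\Q^{m}_{*}$ is compatible with $\preceq$. Both hold because $f_{\QQ}$ is an additive poset homomorphism and the partial order on $\Q^{m}_{*}$ is invariant under addition. No deeper input beyond the unstable Proposition~\ref{prop:k_invariants_pin(2)_fixed_point_homotopy_equivalence} and the suspension formula of Proposition~\ref{prop:k_invariants_suspensions} is needed, because the hypothesis $\vec{\mbfs}\preceq\vec{\mbfs}'$ on stable levels already supplies the required inequality on the unstable levels of $Y$ and $Y'$.
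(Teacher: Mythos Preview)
Your approach—reducing to the unstable Proposition~\ref{prop:k_invariants_pin(2)_fixed_point_homotopy_equivalence}—is exactly what the paper does; its proof consists of the single line ``Follows from Proposition~\ref{prop:k_invariants_pin(2)_fixed_point_homotopy_equivalence}.'' Two small technical points in your execution deserve attention, though neither undermines the strategy.

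First, you cannot literally ``desuspend the $r\RR$-factor'': the map $\wt{f}$ lives between $r\RR$-suspended spaces, and these are not strictly of type $\CC$-$G^{*}_{m}$-$\SWF$ when $r>0$ (the $S^{1}$-fixed point set picks up trivial $\RR$-summands). This is harmless, since the argument behind Proposition~\ref{prop:k_invariants_stable_homotopy_equivalence} shows that $\RR$-suspension does not change the ideal $\III$, and what the proof of Proposition~\ref{prop:k_invariants_pin(2)_fixed_point_homotopy_equivalence} really establishes is the containment $\III(Y)\supseteq\mbfw^{\vec{\mbfs}\,'-\vec{\mbfs}}\cdot\III(Y')$, which is insensitive to this.

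Second, the claim that the ``$\not\succ$'' statement (1a) propagates through $f_{\QQ}$ needs care: an order-preserving quotient map need not reflect strict inequality, so $\vec{k}_{Y}\not\succ\vec{k}_{Y'}+(\vec{\mbfs}\,'-\vec{\mbfs})$ in $\N^{m}$ does not directly give the same statement for the images in $\Q^{m}_{*}$. The clean fix is to work at the level of upper-complete sets rather than minima: the unstable argument yields $I(Y)\supseteq I(Y')+(\vec{\mbfs}\,'-\vec{\mbfs})$, and applying $f_{\QQ}$ and translating by $-[\DDD^{*}(\vec{\Bmid})]$ gives $I^{\st}(\X)\supseteq I^{\st}(\X')+(\vec{\mbfs}\,'-\vec{\mbfs})$ in $\Q^{m}_{*}$. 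Lemma~\ref{lemma:comparing_minima_subsets} applied directly in $\Q^{m}_{*}$ then delivers all three conclusions at once, including (1a).
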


\begin{proof}
Follows from Proposition \ref{prop:k_invariants_pin(2)_fixed_point_homotopy_equivalence}.
\end{proof}

\begin{definition}
\label{def:stable_K_G_split}
    Let $\X$ be a $\CC$-$G^{*}_{m}$-spectrum class. We say that $\X$ is \emph{$K_{G^{*}_{m}}$-split} if there exists a representative $(X,\mbfa,\mbfb)\in\wt{\CCC}_{G^{*}_{m},\CC}$ with $[(X,\mbfa,\mbfb)]=\X$ such that $X$ is $K_{G^{*}_{m}}$-split.
\end{definition}

\begin{proposition}
\label{prop:stable_k_invariants_kg_split}
Let $\X,\X'\in\CCC_{G^{*}_{m},\CC}$ be $\CC$-$G^{*}_{m}$-spectrum classes at levels $\vec{\mbfs}$ and $\vec{\mbfs}\,'$, respectively, such that $\vec{\mbfs}\prec\vec{\mbfs}\,'$, $s_{0}<s'_{0}$, and $\X$ is $K_{G^{*}_{m}}$-split. Suppose there exists a morphism
\[f:\X\to\X'\]
such that the induced map on $\Pin(2)$-fixed point sets is a $G^{*}_{m}$-homotopy equivalence. Then:
\begin{enumerate}
    \item For each $\vec{k}'\in\mbfk^{\st}(\X')$:
    \begin{enumerate}
        \item $\vec{k}+[\vec{e}_{0}]\not\succ\vec{k}'+(\vec{\mbfs}\,'-\vec{\mbfs})$ for all $\vec{k}\in\mbfk^{\st}(\X)$.
        \item There exists some $\vec{k}\in\mbfk^{\st}(\X)$ such that $\vec{k}+[\vec{e}_{0}]\preceq\vec{k}'+(\vec{\mbfs}\,'-\vec{\mbfs})$.
    \end{enumerate}
    \item $\vec{\ul{k}}\,^{\st}(\X)+[\vec{e}_{0}]\preceq \vec{\ol{k}}\,^{\st}(\X')+(\vec{\mbfs}\,'-\vec{\mbfs})$.
\end{enumerate}
\end{proposition}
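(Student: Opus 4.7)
The plan is to reduce Proposition~\ref{prop:stable_k_invariants_kg_split} directly to its unstable counterpart, Proposition~\ref{prop:k_invariants_kg_split}, by lifting the morphism $f$ to an honest $G^{*}_{m}$-equivariant map between spaces of type $\CC$-$G^{*}_{m}$-$\SWF$. First I would choose representatives $(X,\mbfa,\mbfb)$ of $\X$ and $(X',\mbfa',\mbfb')$ of $\X'$ with $X$ actually $K_{G^{*}_{m}}$-split (such a representative exists by Definition~\ref{def:stable_K_G_split}). By the definition of a morphism in $\CCC_{G^{*}_{m},\CC}$, after further suspension by some $\Amid \in R(\ZZ_{m})^{\sym}_{\geq 0}$, $\Bmid \in R(\ZZ_{2m})^{*}_{\geq 0}\otimes\QQ$, and an integer $r\geq 0$ (which by doubling we may assume even), the map $f$ is represented by a genuine equivariant map $\wt{f}\co Y\to Y'$ inducing a $G^{*}_{m}$-homotopy equivalence on $\Pin(2)$-fixed points, where
\[
Y := \Sigma^{(r/2)\CC_{0}}\Sigma^{(\Amid-\mbfa)\wt{\CC}}\Sigma^{(\Bmid-\mbfb)\HH}X,\qquad Y' := \Sigma^{(r/2)\CC_{0}}\Sigma^{(\Amid-\mbfa')\wt{\CC}}\Sigma^{(\Bmid-\mbfb')\HH}X',
\]
so that both $Y$ and $Y'$ are bona fide spaces of type $\CC$-$G^{*}_{m}$-$\SWF$.

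Next I would verify the hypotheses of Proposition~\ref{prop:k_invariants_kg_split} for $\wt{f}$. The level of $Y$ is $\vec{\mbfs}_{X}+(r/2)\vec{e}_{0}+(\vec{\Amid}-\vec{\mbfa})$ and similarly for $Y'$, so $\vec{\mbfs}_{Y'}-\vec{\mbfs}_{Y}=\vec{\mbfs}_{\X'}-\vec{\mbfs}_{\X}$ and the strict inequalities on levels transfer directly from $\X,\X'$ to $Y,Y'$. Proposition~\ref{prop:k_invariants_suspensions} shows that $\III(Y)$ is obtained from $\III(X)$ by multiplication by a product of $z_{k}$'s, so $Y$ remains $K_{G^{*}_{m}}$-split. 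Applying Proposition~\ref{prop:k_invariants_kg_split} to $\wt{f}$ then gives, for each $\vec{k}_{Y'}\in\mbfk(Y')$: (i)~no $\vec{k}_{Y}\in\mbfk(Y)$ satisfies $\vec{k}_{Y}+\vec{e}_{0}\succ\vec{k}_{Y'}+(\vec{\mbfs}_{Y'}-\vec{\mbfs}_{Y})$, and (ii)~some $\vec{k}_{Y}\in\mbfk(Y)$ satisfies $\vec{k}_{Y}+\vec{e}_{0}\preceq \vec{k}_{Y'}+(\vec{\mbfs}_{Y'}-\vec{\mbfs}_{Y})$. Using Proposition~\ref{prop:k_invariants_suspensions} once more to substitute $\mbfk(Y)=\mbfk(X)+[\DDD^{*}(\vec{\Bmid}-\vec{\mbfb})]$ and $\mbfk(Y')=\mbfk(X')+[\DDD^{*}(\vec{\Bmid}-\vec{\mbfb}')]$, then pushing the inequalities forward through the additive poset homomorphism $f_{\QQ}\co\N^{m}\to\Q^{m}_{*}$ of Proposition~\ref{prop:map_N^m_to_Q^m_*} and subtracting the common term $[\DDD^{*}(\vec{\Bmid})]$ (legal since $f_{\QQ}$ is $\NN^{m}$-linear and $\Q^{m}_{*}$ admits the $\QQ^{m}_{\NN,*}$-module action), one obtains statement (1) in terms of $\mbfk^{\st}(\X)=f_{\QQ}(\mbfk(X))-[\DDD^{*}(\vec{\mbfb})]$ and $\mbfk^{\st}(\X')=f_{\QQ}(\mbfk(X'))-[\DDD^{*}(\vec{\mbfb}')]$. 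Statement~(2) then follows from~(1) by taking meets and joins in $\wh{\Q}^{m}_{*}$.

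The hard part is purely organizational: there is no new $K$-theoretic input beyond the trace argument in Lemma~\ref{lemma:must_be_w_0} that already underpins Proposition~\ref{prop:k_invariants_kg_split}. The bookkeeping, however, is delicate---one must track the effect of each type of suspension on the ideal $\III$, confirm that $K_{G^{*}_{m}}$-splitness is preserved when passing from $X$ to $Y$, and handle the arithmetic of equivalence classes in $\N^{m}_{\st,*}$ and $\Q^{m}_{*}$ carefully so that the inequalities survive the passage through $f_{\QQ}$ and the cancellation of $[\DDD^{*}(\vec{\Bmid})]$.
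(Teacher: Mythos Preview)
Your proposal is correct and follows exactly the paper's approach: the paper's proof consists of the single line ``Follows from Proposition~\ref{prop:k_invariants_kg_split},'' and what you have written is precisely the unpacking of that reduction. The bookkeeping you describe (choosing a $K_{G^{*}_{m}}$-split representative, suspending to realize the morphism, tracking levels and ideals via Proposition~\ref{prop:k_invariants_suspensions}, and pushing forward through $f_{\QQ}$) is the intended content behind that sentence.
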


\begin{proof}
Follows from Proposition \ref{prop:k_invariants_kg_split}.
\end{proof}

\begin{proposition}
\label{prop:stable_k_invariants_duality}
Let $\X,\X'$ be $\CC$-$G^{*}_{m}$-spectrum classes at levels $\vec{\mbfs}$ and $\vec{\mbfs}'$, respectively, and suppose that $\X,\X'$ are $G^{*}_{m}$-equivariantly $[(S^{0},\mbfs,\mbft)]$-dual for some $\mbfs\in R(\ZZ_{m})^{\sym}$, $\mbft\in R(\ZZ_{m})^{*}\otimes\QQ$. Then
\[\vec{k}+\vec{k}'\succeq [\DDD^{*}(\vec{\mbft})]\text{ for all }\vec{k}\in\mbfk^{\st}(\X),\, \vec{k}'\in\mbfk^{\st}(\X').\]
In particular:
\[\vec{\ul{k}}\,^{\st}(\X)+\vec{\ul{k}}\,^{\st}(\X')\geq [\DDD^{*}(\vec{\mbft})].\]
\end{proposition}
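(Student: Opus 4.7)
The plan is to reduce the stable inequality to its unstable counterpart, Proposition \ref{prop:k_invariants_duality}, by exhibiting representatives of $\X$ and $\X'$ that are unstably $V$-dual for some $V = \Amid\wt{\CC} \oplus \Bmid\HH$, and then to propagate the unstable conclusion through the canonical additive poset maps $\N^m \to \N^m_{\st,*} \hookrightarrow \Q^m_*$ provided by Proposition \ref{prop:map_N^m_to_Q^m_*}.

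First I would fix representatives $(X,\mbfa_0,\mbfb_0) \in \X$ and $(X',\mbfa_0',\mbfb_0') \in \X'$. Unpacking the definition of $[(S^0,\mbfs,\mbft)]$-duality in $\CCC_{G^*_m,\CC}$ (i.e. the existence of evaluation and coevaluation morphisms satisfying the usual triangle identities), one extracts unstable duality data: an integer $r \ge 0$, non-negative representations $\Amid_X,\Amid_{X'} \in R(\ZZ_m)^{\sym}_{\ge 0}$ and $\Bmid_X,\Bmid_{X'} \in R(\ZZ_{2m})^{*}_{\ge 0}$, chosen large enough to simultaneously clear the denominators appearing in $\mbfb_0,\mbfb_0',\mbft$ (possible because $\mbfb_0 + \mbfb_0' - \mbft$ is integral by definition of stable equivalence), such that the suspensions
\[
Y := \Sigma^{r\RR}\Sigma^{\Amid_X\wt{\CC}}\Sigma^{\Bmid_X\HH}X,\qquad Y' := \Sigma^{r\RR}\Sigma^{\Amid_{X'}\wt{\CC}}\Sigma^{\Bmid_{X'}\HH}X'
\]
are $G^*_m$-equivariantly $(\Amid\wt{\CC} \oplus \Bmid\HH)$-dual as spaces of type $\CC$-$G^*_m$-$\SWF$, where $\Amid = \Amid_X + \Amid_{X'} + \mbfs$ and $\Bmid = \Bmid_X + \Bmid_{X'} + \mbft$. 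Proposition \ref{prop:k_invariants_duality} then yields $\vec{k}_Y + \vec{k}_{Y'} \succeq [\DDD^*(\vec{\Bmid})]$ in $\N^m$ for all $\vec{k}_Y \in \mbfk(Y)$ and $\vec{k}_{Y'} \in \mbfk(Y')$. Using Proposition \ref{prop:k_invariants_suspensions} to compute $\mbfk(Y) = \mbfk(X) + [\DDD^*(\vec{\Amid}_X)]$ (and similarly for $Y'$, noting the $\wt{\CC}$ suspensions contribute nothing), applying $f_{\QQ}:\N^m \to \Q^m_*$, translating by $-[\DDD^*(\vec{\mbfb_0})] - [\DDD^*(\vec{\mbfb_0'})]$, and invoking Definition \ref{def:stable_k_invariants} along with the $\QQ$-linearity of $\DDD^*$ and the identity $\Bmid = \Bmid_X + \Bmid_{X'} + \mbft$, all of the auxiliary $\Bmid_X,\Bmid_{X'},\mbfb_0,\mbfb_0'$ terms cancel, leaving the desired inequality $\vec{k} + \vec{k}' \succeq [\DDD^*(\vec{\mbft})]$ in $\Q^m_*$.

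The ``in particular'' consequence for the lower invariants follows from the standard additive-lattice identity $\wedge(A + B) = \wedge A + \wedge B$ for non-empty finite subsets $A,B$ of $\Q^m_*$, extended to the completion $\wh{\Q}^m_*$ by the conventions in Definition \ref{def:bounded_completion} (with $+\infty$ acting as absorbing); the case where one or both of $\mbfk^{\st}(\X),\mbfk^{\st}(\X')$ is empty is handled automatically by this extension. The main obstacle lies entirely in the first step: precisely translating $[(S^0,\mbfs,\mbft)]$-duality in the stable category into an unstable $V$-duality between representative spaces of type $\CC$-$G^*_m$-$\SWF$. The delicate point is ensuring that the coevaluation and evaluation morphisms of $\CCC_{G^*_m,\CC}$ admit representatives whose compositions satisfy the two unstable duality triangle identities up to genuine $G^*_m$-equivariant homotopy, and that this can be arranged simultaneously with clearing the rational denominators. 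Once this reduction is cleanly in place, the remainder of the argument is formal bookkeeping through the quotient $\N^m \twoheadrightarrow \N^m_{\st,*}$ and embedding into $\Q^m_*$.
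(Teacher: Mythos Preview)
Your approach is essentially the same as the paper's, which simply states ``Follows from Proposition \ref{prop:k_invariants_duality}''; you have correctly identified and spelled out the reduction to the unstable duality statement. Note one slip: where you write $\mbfk(Y) = \mbfk(X) + [\DDD^*(\vec{\Amid}_X)]$ you mean $[\DDD^*(\vec{\Bmid}_X)]$, since (as your own parenthetical acknowledges) it is the $\HH$-suspensions, not the $\wt{\CC}$-suspensions, that shift the $k$-invariants via Proposition \ref{prop:k_invariants_suspensions}.
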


\begin{proof}
Follows from Proposition \ref{prop:k_invariants_duality}.
\end{proof}

\begin{proposition}
\label{prop:stable_k_invariants_2_r_odd}
Let $r\geq 1$ be an integer, let $\mbfs\in R(\ZZ_{2^{r}})^{\sym}$, $\mbft\in R(\ZZ_{2^{r+1}})^{\odd}\otimes\QQ$, let $\X'$ be a $\CC$-$G^{\odd}_{2^{r}}$-spectrum class at level $\mbfs'\in R(\ZZ_{2^{r}})^{\sym}$, and suppose there exists a morphism
\[f:[(S^{0},-\mbfs,-\mbft)]\to\X'\]
such that the induced map on $\Pin(2)$-fixed point sets is a $G^{\odd}_{2^{r}}$-homotopy equivalence. Furthermore, suppose that:
\begin{enumerate}
    \item $\vec{\mbfs}\preceq\vec{\mbfs}\,'$.
    \item $s_{0}<s'_{0}$.
    \item $\sum_{k=0}^{2^{r-a}-1}s_{2^{a}k}<\sum_{k=0}^{2^{r-a}-1}s'_{2^{a}k}$ for all $a=0,\dots,r-1$.
    \item $\sum_{j=0}^{2^{a}-1}s_{(2k+1)2^{r-a-1}}<\sum_{j=0}^{2^{a}-1}s'_{(2k+1)2^{r-a-1}}$ for all $a=0,\dots,r-2$.
    \item There exists a $\CC$-$G^{\odd}_{2^{r}}$-spectrum class $\X''$ with $\X''\equiv_{\ell}\X'$ such that
    \[\res^{\ZZ_{4}}_{1}\Big(\pi^{\st,\ZZ_{4}}_{2(\sum_{k=0}^{2^{r-1}-1}s_{2k+1})\rho+(\sum_{k=0}^{2^{r}-1}t_{k+\frac{1}{2}})\nu}\big((\X'')^{\<j\mu^{2^{r-1}}\>}\big)\otimes\QQ\Big)=0,\]
    where $\pi^{\st,\ZZ_{4}}$ and $\res^{\ZZ_{4}}_{1}$ are as in Definition \ref{def:Z_4_stable_homotopy_groups} and the subsequent discussion.
\end{enumerate}
Then
\begin{equation}
\label{eq:stable_2_r_odd_1}
	\vec{k}+(\vec{\mbfs}\,'-\vec{\mbfs})\succeq\Big[\DDD^{\odd}(\vec{\mbft})+\vec{e}_{0}+\sum_{j=0}^{r-1}\vec{e}_{2^{j}}\Big]\in\Q^{2^{r}}_{\odd}\qquad\text{ for all }\vec{k}'\in\mbfk^{\st}(\X').
\end{equation}
In particular:
\begin{equation}
\label{eq:stable_2_r_odd_2}
	|\vec{\mbfs}\,'-\vec{\mbfs}|\geq|\vec{\mbft}|-|\vec{\ol{k}}\,^{\st}(\X')|+r+1\qquad\text{ for all }\vec{k}'\in\mbfk^{\st}(\X').
\end{equation}
Furthermore, (\ref{eq:stable_2_r_odd_1}) and (\ref{eq:stable_2_r_odd_2}) still hold if one replaces Condition (5) above with the following condition:
\begin{enumerate}
    \item[(5')] $\X'$ is locally $\<j\mu^{2^{r-1}}\>$-spherical at some level $d\in\QQ$, and
    \[\sum_{k=0}^{2^{r-1}-1}s_{2k+1}+\sum_{k=0}^{2^{r}-1}t_{k+\frac{1}{2}}\neq \tfrac{1}{2}d.\]
\end{enumerate}
\end{proposition}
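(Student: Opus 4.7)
The plan is to reduce to the unstable result, Proposition \ref{prop:k_invariants_2_r_odd}, by replacing $[(S^{0},-\mbfs,-\mbft)]$, $\X'$, and the morphism $f$ with honest $G^{\odd}_{2^{r}}$-equivariant maps between spaces of type $\CC$-$G^{\odd}_{2^{r}}$-$\SWF$ after suitable rational suspension. First, pick a representative $(X',\mbfa',\mbfb')\in\wt{\CCC}_{G^{\odd}_{2^{r}},\CC}$ of $\X'$ with $X'$ of type $\CC$-$G^{\odd}_{2^{r}}$-$\SWF$ at level $\mbfs'+\mbfa'\in R(\ZZ_{2^{r}})_{\geq 0}^{\sym}$. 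Because $f$ is a morphism in $\CCC_{G^{\odd}_{2^{r}},\CC}$, after choosing sufficiently large representations $\Amid\in R(\ZZ_{2^{r}})^{\sym}_{\geq 0}$ and $\Bmid\in R(\ZZ_{2^{r+1}})^{\odd}_{\geq 0}\otimes\QQ$, with $\Amid+\mbfs-\mbfa',\Bmid+\mbft-\mbfb'\in R(\ZZ_{2^{r+1}})^{\odd}_{\geq 0}$, the morphism $f$ is represented by an actual $G^{\odd}_{2^{r}}$-equivariant map
\[
\wt{f}:X:=\big((\Amid+\mbfs)\wt{\CC}\oplus(\Bmid+\mbft)\HH\big)^{+}\longrightarrow \Sigma^{(\Amid-\mbfa')\wt{\CC}}\Sigma^{(\Bmid-\mbfb')\HH}X'=:\wt{X}',
\]
where $\wt{X}'$ is a space of type $\CC$-$G^{\odd}_{2^{r}}$-$\SWF$ at level $(\mbfs'+\mbfa')+(\Amid-\mbfa')=\Amid+\mbfs'$, and $\wt{f}^{\Pin(2)}$ is a $G^{\odd}_{2^{r}}$-homotopy equivalence by hypothesis.

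Next, I translate conditions (1)--(4) into the hypotheses of Proposition \ref{prop:k_invariants_2_r_odd}. Writing the levels of $X$ and $\wt{X}'$ as $\Amid+\mbfs$ and $\Amid+\mbfs'$ respectively, the strict inequalities in (2)--(4) on the $\mbfs$-coordinates carry over verbatim to strict inequalities on the level vectors since the difference is $\vec{\mbfs}\,'-\vec{\mbfs}$ in each relevant grouping. For condition (5), one observes that $(\wt{X}')^{\<j\mu^{2^{r-1}}\>}$, viewed as a space of type $\CC$-$\ZZ_{4}$-$\SWF$, is obtained from the $\<j\mu^{2^{r-1}}\>$-fixed set of the spectrum class $\X''$ (guaranteed by hypothesis (5)) by suspending by the $\ZZ_{4}$-fixed parts of $(\Amid-\mbfa')\wt{\CC}$ and $(\Bmid-\mbfb')\HH$; the relevant bidegree in $\pi^{\ZZ_{4}}$ for the unstable statement agrees, after adjusting by these suspensions, with the stable bidegree in (5) applied to $\X''$. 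Hence the unstable vanishing of $\res^{\ZZ_{4}}_{1}$ in Proposition \ref{prop:k_invariants_2_r_odd}(5) holds, and similarly (5') descends from its stable form using the sphericity level. Then Proposition \ref{prop:k_invariants_2_r_odd} gives, for each $\vec{k}\in\mbfk(\wt{X}')$,
\[
\vec{k}+(\vec{\mbfs}\,'-\vec{\mbfs})\succeq[\DDD^{\odd}(\vec{\Bmid+\mbft})]+\vec{e}_{0}+\sum_{j=0}^{r-1}\vec{e}_{2^{j}}
\]
in $\N^{2^{r}}$.

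Finally, I translate back through Definition \ref{def:stable_k_invariants}. By Proposition \ref{prop:k_invariants_suspensions} one has $\mbfk(\wt{X}')=\mbfk(X')+[\DDD^{\odd}(\vec{\Bmid-\mbfb'})]$, and by definition $\mbfk^{\st}(\X')=f_{\QQ}(\mbfk(X'))-[\DDD^{\odd}(\vec{\mbfb}\,')]\subset\Q^{2^{r}}_{\odd}$. Subtracting $[\DDD^{\odd}(\vec{\Bmid})]$ from both sides of the unstable inequality and pushing into $\Q^{2^{r}}_{\odd}$ via $e_{\QQ}$, the $\Bmid$-contribution cancels and the $\vec{\mbfb}\,'$-terms assemble into $\mbfk^{\st}(\X')$, yielding (\ref{eq:stable_2_r_odd_1}); the inequality (\ref{eq:stable_2_r_odd_2}) follows by applying the $\QQ$-grading $|\cdot|$. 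The main obstacle is the careful bookkeeping around condition (5): one must verify that the $\<j\mu^{2^{r-1}}\>$-fixed-point spectrum classes behave well under rational suspension and that the stable bidegree $(2\sum s_{2k+1})\rho+(\sum t_{k+\frac{1}{2}})\nu$ matches the unstable bidegree that appears after suspending by $(\Bmid-\mbfb')\HH$, which relies on the fact that the $\<j\mu^{2^{r-1}}\>$-fixed parts of $\HH_{k+\frac{1}{2}}$ are one-dimensional complex (hence two-dimensional real) for every $k$.
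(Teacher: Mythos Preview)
Your approach is correct and is precisely what the paper does: the paper's proof is the single line ``Follows from Proposition \ref{prop:k_invariants_2_r_odd},'' and you have written out the suspension-and-translate-back argument that this sentence encodes. One small bookkeeping slip: the non-negativity conditions you need are $\Amid+\mbfs,\,\Amid-\mbfa'\in R(\ZZ_{2^{r}})^{\sym}_{\geq 0}$ and $\Bmid+\mbft,\,\Bmid-\mbfb'\in R(\ZZ_{2^{r+1}})^{\odd}_{\geq 0}$ (the first pair lives in the symmetric ring, not in $R(\ZZ_{2^{r+1}})^{\odd}_{\geq 0}$, and the target suspensions are by $\Amid-\mbfa'$ and $\Bmid-\mbfb'$, not $\Amid+\mbfs-\mbfa'$), but this does not affect the argument.
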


\begin{proof}
Follows from Proposition \ref{prop:k_invariants_2_r_odd}.
\end{proof}

Recall that in the case where $r=1$, the lattice $\Q^{2}_{\odd}$ is isomorphic to $\QQ$. From this and Lemma \ref{lemma:2_r_exists_monomial} it follows that for any $\CC$-$G^{\odd}_{2}$-spectrum class $\X$, we have that
\[\mbfk^{\st}(\X)=\{\vec{k}\}\]
 consists of a single element $\vec{k}\in\Q^{2}_{\odd}$. We therefore define
\begin{equation}
	\wt{k}^{\st}(\X):=|\vec{k}|\in\QQ.
\end{equation}
The following proposition relates $\wt{k}^{\st}(\X)$ with the invariant $k_{\Pin(2)}(\X)\in\QQ$ from (\cite{Man14}, Lemma 4.3):

\begin{proposition}
\label{prop:stable_k_tilde_2_odd_inequality}
For any $\CC$-$G^{\odd}_{2}$ spectrum class $\X$ the following inequality holds:
\[k_{\Pin(2)}(\X)\le \wt{k}^{\st}(\X)\le k_{\Pin(2)}(\X)+1.\]
\end{proposition}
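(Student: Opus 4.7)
The plan is to reduce this two-sided estimate to the purely unstable version
\[k_{\Pin(2)}(X)\;\le\;\min\{|\vec{k}|\;:\;\vec{k}\in\mbfk(X)\}\;\le\;k_{\Pin(2)}(X)+1,\]
in which the left inequality is already supplied by Lemma \ref{lemma:comparison_of_k_invariants} and the right inequality follows from Lemma \ref{lemma:2_r_exists_monomial} applied at $r=1$. To set this up, I will first fix any representative $(X,\mbfa,\mbfb)\in\wt{\CCC}_{G^{\odd}_{2},\CC}$ of $\X$ and invoke Example \ref{ex:stable_m=2}, which shows that the $\QQ$-grading identifies $\Q^{2}_{\odd}$ with $\QQ$ as $\QQ$-graded additive lattices. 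Since $\QQ$ is totally ordered and Lemma \ref{lemma:2_r_exists_monomial} guarantees that $I^{\st}(\X)\neq\emptyset$, the set $\mbfk^{\st}(\X)$ consists of a single element whose grading is, by definition, $\wt{k}^{\st}(\X)$.

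Next, I will translate both sides of the inequality back to invariants of the representative space $X$. Combining Definition \ref{def:stable_k_invariants} with the factorization $f_{\QQ}=e_{\QQ}\circ\Pi_{\st}$ from Proposition \ref{prop:map_N^m_to_Q^m_*} and the identity $|[\DDD^{\odd}(\vec{\mbfb})]|=b_{1/2}+b_{3/2}$ yields
\[\wt{k}^{\st}(\X)=\min\{|\vec{k}|\;:\;\vec{k}\in\mbfk(X)\}-(b_{1/2}+b_{3/2}).\]
On the $k_{\Pin(2)}$-side, restricting the $G^{\odd}_{2}$-action to $\Pin(2)$ converts each $\mbfa\wt{\CC}$-suspension into a (real) $\wt{\RR}$-suspension, which leaves $k_{\Pin(2)}$ unchanged, and converts the $\mbfb\HH$-suspension into a suspension by $b_{1/2}+b_{3/2}$ copies of $\HH$, which shifts $k_{\Pin(2)}$ by that same amount under the rationally extended definition of (\cite{Man14}, Lemma 4.3). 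Hence
\[k_{\Pin(2)}(\X)=k_{\Pin(2)}(X)-(b_{1/2}+b_{3/2}),\]
and subtracting the two identities reduces the claim to the displayed unstable inequality.

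The right-hand unstable inequality warrants a brief comment: Lemma \ref{lemma:2_r_exists_monomial} at $r=1$ produces the element $[k_{\Pin(2)}(X)\cdot\vec{e}_{0}+\vec{e}_{1}]\in I(X)$ of $\NN$-grading $k_{\Pin(2)}(X)+1$. Since $I(X)$ is upper-complete and $(\N^{2},\preceq)$ is anti-chain finite and bounded below, there must exist some $\vec{k}\in\mbfk(X)=\min I(X)$ with $\vec{k}\preceq[k_{\Pin(2)}(X)\cdot\vec{e}_{0}+\vec{e}_{1}]$, and strict monotonicity of the $\NN$-grading forces $|\vec{k}|\le k_{\Pin(2)}(X)+1$. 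The only substantive bookkeeping is verifying that the two shifts $(b_{1/2}+b_{3/2})$ match up on each side; both ultimately record the total quaternionic dimension of the de-suspending representation, so no further geometric input beyond the two lemmas cited is required, and there is no substantial obstacle to the argument.
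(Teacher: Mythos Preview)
Your proof is correct and takes essentially the same approach as the paper: the paper's one-line ``Follows from Lemma \ref{lemma:2_r_exists_monomial}'' is exactly the reduction you spell out, with the left inequality coming from Lemma \ref{lemma:comparison_of_k_invariants} and the right from Lemma \ref{lemma:2_r_exists_monomial} at $r=1$, together with the identification $\Q^{2}_{\odd}\cong\QQ$ from Example \ref{ex:stable_m=2}. You have simply unpacked the bookkeeping that the paper leaves implicit.
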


\begin{proof}
Follows from Lemma \ref{lemma:2_r_exists_monomial}.
\end{proof}

\begin{example}
\label{ex:k_tilde_multiple_cosets_duals}
Let $\wt{\Sigma}Z_{a_{1},\dots,a_{n};2}$ and $\wt{\Sigma}X_{a_{1},\dots,a_{n};2}$ be the $G^{\odd}_{2}$-spaces considered in Examples \ref{ex:multiple_cosets_cyclic} and \ref{ex:multiple_cosets_duals}, respectively, with each $a_{k}=\frac{1}{2}$ or $\frac{3}{2}$ for $k=1,\dots,n$. Then our previous calculations imply that
\begin{align*}
    &\wt{k}^{\st}(\wt{\Sigma}Z_{a_{1},\dots,a_{n};2})=1, &
    &\wt{k}^{\st}(\wt{\Sigma}X_{a_{1},\dots,a_{n};2})=n.
\end{align*}
Let $Z_{n}=\res^{G^{\odd}_{2}}_{\Pin(2)}(Z_{a_{1},\dots,a_{n};2})$, which consists of $n$ disjoint copies of $\Pin(2)$. Hence the connecting homomorphism in the long exact sequence from Fact \ref{fact:long_exact_sequence} is of the form
\[\wt{K}_{\Pin(2)}((\wt{\Sigma}Z_{n})^{S^{1}})\approx R(\Pin(2))\xrightarrow{\overbrace{\varepsilon\oplus\cdots\oplus\varepsilon}^{n}}\ZZ^{n}\approx\wt{K}_{\Pin(2)}(Z_{n}),\]
where $\varepsilon:R(\Pin(2))\to\ZZ$ denotes the augmentation homomorphism. Therefore
\begin{align*}
    &\III_{\Pin(2)}(\wt{\Sigma}Z_{n})=(w,z), & &k_{\Pin(2)}(\wt{\Sigma}Z_{n})=1.
\end{align*}
Similarly let $X_{n}=\res^{G^{\odd}_{2}}_{\Pin(2)}(X_{a_{1},\dots,a_{n};2})$. In Example \ref{ex:multiple_cosets_duals}, it was shown that
\begin{align*}
    &\III_{\Pin(2)}(\wt{\Sigma}X_{n})=(w^{n},z^{n}), & &k_{\Pin(2)}(\wt{\Sigma}X_{n})=n.
\end{align*}
Hence
\begin{align*}
    &\wt{k}^{\st}(\wt{\Sigma}Z_{a_{1},\dots,a_{n};2})=k_{\Pin(2)}(\wt{\Sigma}Z_{n})=1, &
    &\wt{k}^{\st}(\wt{\Sigma}X_{a_{1},\dots,a_{n};2})=k_{\Pin(2)}(\wt{\Sigma}X_{n})=n.
\end{align*}
Similarly for any $\varepsilon_{1},\dots,\varepsilon_{n}\in\{\pm 1\}$ we have that
\begin{align*}
    &\wt{k}^{\st}(\wt{\Sigma}Z_{\varepsilon_{1}j,\dots,\varepsilon_{n}j})=k_{\Pin(2)}(\wt{\Sigma}Z_{n})=1, &
    &\wt{k}^{\st}(\wt{\Sigma}X_{\varepsilon_{1}j,\dots,\varepsilon_{n}j})=k_{\Pin(2)}(\wt{\Sigma}X_{n})=n.
\end{align*}
\end{example}

We have the following corollary of Proposition \ref{prop:stable_k_invariants_2_r_odd} in the case $r=1$:

\begin{corollary}
\label{cor:stable_k_invariants_2_odd}
Let
\begin{align*}
    &\mbfs=s_{0}+s_{1}\zeta\in R(\ZZ_{2}), & &\mbft=t_{\frac{1}{2}}\xi+t_{\frac{3}{2}}\xi^{3}\in R(\ZZ_{4})^{\odd}\otimes\QQ,
\end{align*}
let $\X'$ be a $\CC$-$G^{\odd}_{2}$-spectrum class at level $\mbfs'=s'_{0}+s'_{1}\zeta\in R(\ZZ_{2})$, and suppose that there exists a morphism
\[f:[(S^{0},-\mbfs,-\mbft)]\to\X'\]
such that the induced map on $\Pin(2)$-fixed point sets is a $G^{\odd}_{2}$-homotopy equivalence. Furthermore, suppose that:
\begin{enumerate}
    \item $s_{0}<s'_{0}$ and $s_{1}<s'_{1}$.
    \item There exists a $\CC$-$G^{\odd}_{2^{r}}$-spectrum class $\X''$ with $\X''\equiv_{\ell}\X'$ such that
    \[\res^{\ZZ_{4}}_{1}\Big(\pi^{\st,\ZZ_{4}}_{2s_{1}\rho+(t_{1/2}+t_{3/2})\nu}\big((\X'')^{\<j\mu\>}\big)\otimes\QQ\Big)=0.\]
\end{enumerate}
Then:
\begin{equation}
\label{eq:stable_2_odd_eq}
(s'_{0}-s_{0})+(s'_{1}-s_{1})\geq t_{\frac{1}{2}}+t_{\frac{3}{2}}-\wt{k}^{\st}(\X')+2.
\end{equation}
Furthermore, one can replace Condition (2) above with the following condition:
\begin{enumerate}
	\item[(2')] $\X'$ is locally $\<j\mu\>$-spherical at some level $d\in\QQ$, and
	\[s_{1}+t_{\frac{1}{2}}+t_{\frac{3}{2}}\neq\tfrac{1}{2}d.\]
\end{enumerate}
\end{corollary}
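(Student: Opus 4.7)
The plan is to derive this as the special case $r=1$ of Proposition~\ref{prop:stable_k_invariants_2_r_odd} and then translate the conclusion into the scalar invariant $\wt{k}^{\st}$. First I would verify that the hypotheses of the proposition specialize correctly. For $r=1$, condition (1) $\vec{\mbfs} \preceq \vec{\mbfs}\,'$ and condition (2) $s_0 < s'_0$ of the proposition both follow from hypothesis (1) of the corollary. Condition (3) of the proposition, which for $r=1$ reduces to the single inequality $s_0 + s_1 < s'_0 + s'_1$, is implied (strictly more than needed) by the entrywise strict inequalities $s_0 < s'_0$ and $s_1 < s'_1$. Condition (4) is vacuous because the index range $a = 0, \ldots, r-2$ is empty when $r=1$. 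Hypotheses (2) and (2') of the corollary match (5) and (5') of the proposition verbatim once one computes $2\sum_{k=0}^{2^{r-1}-1} s_{2k+1} = 2s_1$ and $\sum_{k=0}^{2^r-1} t_{k+1/2} = t_{1/2} + t_{3/2}$, so the morphism $f$ automatically satisfies the required input of the proposition.

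The remaining step is to unpack the conclusion (\ref{eq:stable_2_r_odd_1}), which lives in the additive lattice $\Q^2_{\odd}$. By Example~\ref{ex:stable_m=2}, the $\QQ$-grading $|\cdot|: \Q^2_{\odd} \to \QQ$ is an isomorphism of additive posets, so the partial order is pulled back from the total order on $\QQ$ and comparison in $\Q^2_{\odd}$ reduces to comparison of gradings. Since $\DDD^{\odd}$ sends both $\vec{e}_{1/2}$ and $\vec{e}_{3/2}$ to $\vec{e}_1$ under the cyclic indexing convention with $m=2$, the right-hand side of (\ref{eq:stable_2_r_odd_1}) has grading $|[\DDD^{\odd}(\vec{\mbft}) + \vec{e}_0 + \vec{e}_1]| = t_{1/2} + t_{3/2} + 2$. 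Moreover, by Lemma~\ref{lemma:2_r_exists_monomial} together with the isomorphism $\Q^2_{\odd} \cong \QQ$, the set $\mbfk^{\st}(\X')$ consists of a single element of grading $\wt{k}^{\st}(\X')$, so the left-hand side of (\ref{eq:stable_2_r_odd_1}) has grading $\wt{k}^{\st}(\X') + (s'_0 - s_0) + (s'_1 - s_1)$. Taking gradings throughout (\ref{eq:stable_2_r_odd_1}) and rearranging yields precisely (\ref{eq:stable_2_odd_eq}). No substantive obstacle arises; the proof is essentially a routine unpacking of the $r=1$ case, with the only care needed being the bookkeeping between $\NN^2_{1/2}$, $\N^2_{\odd}$, and $\Q^2_{\odd}$ under the doubling map and the quotient by the relations in $R(G^{\odd}_2)$.
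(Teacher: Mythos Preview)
Your proposal is correct and takes essentially the same approach as the paper: the paper introduces this corollary with the phrase ``We have the following corollary of Proposition~\ref{prop:stable_k_invariants_2_r_odd} in the case $r=1$'' and gives no further proof, so your explicit verification that the hypotheses specialize correctly and that the conclusion reduces via the isomorphism $\Q^{2}_{\odd}\cong\QQ$ is exactly the intended argument spelled out in full.
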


Finally let $m=p^{r}$ be an odd prime power. Recall from Proposition \ref{prop:lattice_p^r_projection} that we have a commutative diagram of $\NN$-graded additive posets
\begin{center}
\begin{tikzcd}[column sep=small]
(\NN^{p^{r}},\preceq,+,|\cdot|) \arrow[rdd, swap, "\wt{\pi}"] \arrow[rr,"\Pi"] & & (\N^{p^{r}},\preceq,+,|\cdot|)\arrow[ldd, "\pi"] \\
& &  \\
& (\NN^{2},\preceq,+,|\cdot|), &
\end{tikzcd}
\end{center}
where $\wt{\pi}$ is the projection
\begin{align*}
    \wt{\pi}:(\NN^{p^{r}},\preceq,+,|\cdot|)&\to(\NN^{2},\preceq,+,|\cdot|) \\
    (a_{0},\dots,a_{p^{r}-1})&\mapsto (a_{0},a_{1}+\cdots+a_{p^{r}-1}).
\end{align*}
The following proposition follows from the observation in Example \ref{ex:pre_stable_invariants_lattice_m_equals_p^r}:

\begin{proposition}
\label{prop:stable_lattice_p^r_projection}
Let $\wt{\pi}$ denote the projection
\begin{align*}
    \wt{\pi}:(\QQ^{p^{r}},\preceq,+,|\cdot|)&\to(\QQ^{2},\preceq,+,|\cdot|) \\
    (a_{0},\dots,a_{p^{r}-1})&\mapsto (a_{0},a_{1}+\cdots+a_{p^{r}-1}).
\end{align*}
Then for $\ast\in\{\ev,\odd\}$ there exists a surjection of $\QQ$-graded additive posets
\[\pi:(\Q^{p^{r}}_{*},\preceq,+,|\cdot|)\to(\QQ^{2},\preceq,+,|\cdot|)\]
making the following diagram commute:
\begin{center}
\begin{tikzcd}[column sep=small]
(\QQ^{p^{r}},\preceq,+,|\cdot|) \arrow[rdd, swap, "\wt{\pi}"] \arrow[rr,"\Pi"] & & (\Q_{*}^{p^{r}},\preceq,+,|\cdot|)\arrow[ldd, "\pi"] \\
& &  \\
& (\QQ^{2},\preceq,+,|\cdot|). &
\end{tikzcd}
\end{center}
\end{proposition}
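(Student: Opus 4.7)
The plan is to descend the $\QQ$-linear projection $\wt{\pi}\colon \QQ^{p^{r}}\to\QQ^{2}$ through the defining quotient map $\Pi\colon\QQ^{p^{r}}_{\NN,*}\to\Q^{p^{r}}_{*}$, mirroring how $\pi$ was obtained from $\wt{\pi}$ in Proposition \ref{prop:lattice_p^r_projection}. First I would observe that because $p$ is an odd prime, $2$ is a unit modulo $p^{r}$, so multiplication by $2$ is a bijection of $\{0,1,\dots,p^{r}-1\}$. Hence both doubling maps $\DDD^{\ev}$ and $\DDD^{\odd}$ are bijections on their index sets, giving $\im(\DDD^{*})_{\QQ}=\QQ^{p^{r}}$ and $\QQ^{p^{r}}_{\NN,*}=\QQ^{p^{r}}$ with the standard componentwise product partial order. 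Consequently the equivalence relation defining $\Q^{p^{r}}_{*}$ collapses to: $\vec{a}\sim\vec{b}$ iff there exists $\vec{c}\in\QQ^{p^{r}}$ with $\vec{a}+\vec{c},\vec{b}+\vec{c}\in\NN^{p^{r}}$ and $[\vec{a}+\vec{c}]=[\vec{b}+\vec{c}]\in\N^{p^{r}}$.

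Second, I would verify that $\wt{\pi}$ is constant on these equivalence classes. Given $\vec{a}\sim\vec{b}$ with witness $\vec{c}$, Proposition \ref{prop:lattice_p^r_projection} applied to $[\vec{a}+\vec{c}]=[\vec{b}+\vec{c}]\in\N^{p^{r}}$ gives $\wt{\pi}(\vec{a}+\vec{c})=\wt{\pi}(\vec{b}+\vec{c})$, and $\QQ$-linearity of $\wt{\pi}$ lets us cancel $\wt{\pi}(\vec{c})$ from both sides to conclude $\wt{\pi}(\vec{a})=\wt{\pi}(\vec{b})$. This produces a well-defined set map $\pi\colon\Q^{p^{r}}_{*}\to\QQ^{2}$ satisfying $\pi\circ\Pi=\wt{\pi}$, so the triangle commutes; surjectivity of $\pi$ is immediate from surjectivity of $\wt{\pi}$.

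Finally I would check that $\pi$ is a homomorphism of $\QQ$-graded additive posets. Additivity follows from $\QQ$-linearity of $\wt{\pi}$, and the $\QQ$-grading is preserved by the identity $|\vec{a}|=a_{0}+(a_{1}+\cdots+a_{p^{r}-1})=|\wt{\pi}(\vec{a})|$. For the partial order, suppose $[\vec{a}]\preceq[\vec{b}]$ in $\Q^{p^{r}}_{*}$; by Definition \ref{def:quotient_posets} there exist representatives $\vec{a}\,'\sim\vec{a}$ and $\vec{b}\,'\sim\vec{b}$ with $\vec{a}\,'\preceq\vec{b}\,'$ componentwise in $\QQ^{p^{r}}$. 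Since $\wt{\pi}$ has non-negative coefficients this gives $\pi([\vec{a}])=\wt{\pi}(\vec{a}\,')\preceq\wt{\pi}(\vec{b}\,')=\pi([\vec{b}])$. There is no serious obstacle in this argument: the entire proposition is a direct rationalization of Proposition \ref{prop:lattice_p^r_projection}, made possible by the invertibility of $2$ modulo odd prime powers.
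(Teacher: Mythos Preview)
Your proof is correct and takes essentially the same approach as the paper. The paper's one-line proof simply cites the observation in Example~\ref{ex:pre_stable_invariants_lattice_m_equals_p^r} (that $\N^{p^{r}}_{\st,*}\cong\N^{p^{r}}$ for odd prime powers), while you spell out the descent argument explicitly using Proposition~\ref{prop:lattice_p^r_projection} and $\QQ$-linearity of $\wt{\pi}$; both routes rest on the same underlying fact that the zeroth coordinate is preserved by the equivalence relation.
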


Given a $\CC$-$G^{*}_{m}$-spectrum class $\X=(X,\mbfa,\mbfb)$ we define the \emph{set of projected stable equivariant $k$-invariants}
\[\mbfk^{\st,\pi}(\X):=\pi(\mbfk^{\st}(\X))\subset\QQ^{2},\]
as well as the corresponding projected \emph{upper} and \emph{lower} equivariant $k$-invariants
\begin{align*}
    &\vec{\ol{k}}\,^{\st,\pi}(\X)=(\ol{k}\,^{\st}_{0}(\X),\ol{k}\,^{\st}_{1}(\X)):=\vee\mbfk^{\st,\pi}(\X)=\vee\mbfk^{\pi}(X)-\pi(\DDD^{*}(\vec{\mbfb}))\in\QQ^{2}, \\
    &\vec{\ul{k}}\,^{\st,\pi}(\X)=(\ul{k}_{0}^{\st}(\X),\ul{k}_{1}^{\st}(\X)):=\wedge\mbfk^{\st,\pi}(\X)=\wedge\mbfk^{\pi}(X)-\pi(\DDD^{*}(\vec{\mbfb}))\in\QQ^{2}.
\end{align*}

\begin{proposition}
\label{prop:stable_k_invariants_odd_prime_powers}
Let $p^{r}$ be an odd prime power and let $\X$,$\X'$ be $\CC$-$G^{*}_{p^{r}}$-spectrum classes at levels $\mbfs,\mbfs'\in R(\ZZ_{p^{r}})^{\sym}$, respectively. Suppose that $f:X\to X'$ is a morphism whose $\Pin(2)$-fixed point set is a $G^{*}_{p^{r}}$-homotopy equivalence. Then:
\begin{enumerate}
    \item For all $(k_{0},k_{1})\in\mbfk^{\st,\pi}(\X)$:
    \begin{enumerate}
        \item For each $(k'_{0},k'_{1})\in\mbfk^{\st,\pi}(\X')$ the following implications hold:
        \begin{align*}
            &k'_{0}+(s'_{0}-s_{0})\le k_{0}+\left\{
		  \begin{array}{ll}
			 1 & \mbox{if } \X \text{ is }K_{G^{*}_{p^{r}}}\text{-split and }s_{0}<s'_{0} \\
			 0 & \mbox{otherwise}
		  \end{array}
		  \right. \\
            \implies &k'_{1}+\sum_{j=1}^{p^{r}-1}(s_{j}'-s_{j})\geq k_{1},\text{ and } \\
            &k'_{1}+\sum_{j=1}^{p^{r}-1}(s_{j}'-s_{j})\le k_{1} \\
            \implies &k'_{0}+(s'_{0}-s_{0})\geq k_{0}+\left\{
		\begin{array}{ll}
			1 & \mbox{if } \X \text{ is }K_{G^{*}_{p^{r}}}\text{-split and }s_{0}<s'_{0}, \\
			0 & \mbox{otherwise.}
		\end{array}
		\right.
        \end{align*}
        \item There exists $(k'_{0},k'_{1})\in\mbfk^{\st,\pi}(\X')$ such that:
        \begin{align*}
	   &k'_{0}+(s'_{0}-s_{0})\geq k_{0}+\left\{
		  \begin{array}{ll}
			 1 & \mbox{if } \X \text{ is }K_{G^{*}_{p^{r}}}\text{-split and }s_{0}<s'_{0}, \\
			 0 & \mbox{otherwise,}
		  \end{array}
		  \right. \\
	   &k'_{1}+\sum_{j=1}^{p^{r}-1}(s_{j}'-s_{j})\geq k_{1}.
        \end{align*}
    \end{enumerate}
    \item The following inequalities hold:
    \begin{align*}
	&\ol{k}\,^{\st}_{0}(\X')+(s'_{0}-s_{0})\geq\ul{k}\,^{\st}_{0}          (\X)+\left\{
		\begin{array}{ll}
			1 & \mbox{if } \X \text{ is }K_{G^{*}_{p^{r}}}\text{-split and }s_{0}<s'_{0}, \\
			0 & \mbox{otherwise,}
		\end{array}
		\right. \\
	&\ol{k}\,^{\st}_{1}(\X')+\sum_{j=1}^{p^{r}-1}(s_{j}'-s_{j})\geq\ul{k}\,^{\st}_{1}(\X).
    \end{align*}
\end{enumerate}
\end{proposition}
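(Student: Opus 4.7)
The plan is to reduce directly to the unstable statement (Proposition~\ref{prop:k_invariants_odd_prime_powers}) by passing to representatives and translating back through the commutative diagram of Proposition~\ref{prop:stable_lattice_p^r_projection}.

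First, I would choose representatives $(X,\mbfa,\mbfb),(X',\mbfa',\mbfb')\in\wt{\CCC}_{G^{*}_{p^{r}},\CC}$ of $\X,\X'$, picking the former so that $X$ itself is $K_{G^{*}_{p^{r}}}$-split whenever $\X$ is. Appealing to Definition~\ref{def:stable_C_G_*_m_spectrum_class}, I would then select $\mbfA\in R(\ZZ_{p^{r}})^{\sym}_{\geq 0}$, $\mbfB\in R(\ZZ_{2p^{r}})^{*}_{\geq 0}\otimes\QQ$, and $r\geq 0$ with $\mbfA-\mbfa,\mbfA-\mbfa',\mbfB-\mbfb,\mbfB-\mbfb'$ all non-negative, so that $f$ is realized by an honest $G^{*}_{p^{r}}$-equivariant map
\[
F\co Y:=\Sigma^{r\RR}\Sigma^{(\mbfA-\mbfa)\wt{\CC}}\Sigma^{(\mbfB-\mbfb)\HH}X\;\longrightarrow\;\Sigma^{r\RR}\Sigma^{(\mbfA-\mbfa')\wt{\CC}}\Sigma^{(\mbfB-\mbfb')\HH}X'=:Y'
\]
that is a $G^{*}_{p^{r}}$-homotopy equivalence on $\Pin(2)$-fixed point sets. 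The spaces $Y,Y'$ are bona fide spaces of type $\CC$-$G^{*}_{p^{r}}$-$\SWF$ at levels $\mbfs+\mbfA$ and $\mbfs'+\mbfA$, and Proposition~\ref{prop:k_invariants_suspensions} guarantees that $Y$ inherits the $K_{G^{*}_{p^{r}}}$-split property from $X$.

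Next I would apply the unstable Proposition~\ref{prop:k_invariants_odd_prime_powers} to $F$. The level difference between $Y$ and $Y'$ is exactly $\mbfs'-\mbfs$ (the $\mbfA$ terms cancel), and the strict inequality $s_{0}<s_{0}'$ when present is preserved by the common shift. This delivers all the enumerated inequalities for the projected unstable $k$-invariants $\mbfk^{\pi}(Y),\mbfk^{\pi}(Y')$, with shift $\mbfs'-\mbfs$ and the correct $+1$ correction in the split case.

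Finally, I would translate back to the stable picture. By Proposition~\ref{prop:k_invariants_suspensions}, the commutativity $\pi\circ\Pi=\wt{\pi}$ from Proposition~\ref{prop:stable_lattice_p^r_projection}, and $\QQ$-linearity of $\wt{\pi}\circ\DDD^{*}$, one has
\[
\mbfk^{\pi}(Y)=\wt{\pi}(\mbfk(X))+\wt{\pi}\bigl(\DDD^{*}(\vec{\mbfB}-\vec{\mbfb})\bigr)=\mbfk^{\st,\pi}(\X)+\wt{\pi}\bigl(\DDD^{*}(\vec{\mbfB})\bigr),
\]
and symmetrically $\mbfk^{\pi}(Y')=\mbfk^{\st,\pi}(\X')+\wt{\pi}(\DDD^{*}(\vec{\mbfB}))$. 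The inequalities supplied by Proposition~\ref{prop:k_invariants_odd_prime_powers} depend only on the level difference $\mbfs'-\mbfs$ and on pairwise coordinate comparisons between elements of $\mbfk^{\pi}(Y)$ and $\mbfk^{\pi}(Y')$, so the common additive shift $\wt{\pi}(\DDD^{*}(\vec{\mbfB}))$ cancels, producing exactly the stated inequalities for $\mbfk^{\st,\pi}(\X)$ and $\mbfk^{\st,\pi}(\X')$; the statements in (2) about $\vec{\ul{k}}\,^{\st,\pi}$ and $\vec{\ol{k}}\,^{\st,\pi}$ then follow by taking meets and joins. The argument is essentially formal — the only mild obstacle is confirming that normalization of representatives in $\N^{p^{r}}$ is compatible with the projection $\pi$ and with suspension by $\HH_{k}$, which is precisely the content of Propositions~\ref{prop:k_invariants_suspensions} and \ref{prop:stable_lattice_p^r_projection}.
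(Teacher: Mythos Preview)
Your proposal is correct and takes the same approach as the paper, which simply records that the result ``follows from Proposition~\ref{prop:k_invariants_odd_prime_powers}.'' You have faithfully unpacked what that entails: choosing representatives, realizing the morphism at the space level, applying the unstable result, and cancelling the common $\wt{\pi}(\DDD^{*}(\vec{\mbfB}))$ shift via the commutative diagram of Proposition~\ref{prop:stable_lattice_p^r_projection} (the fact that $\N^{p^{r}}_{\st,*}\cong\N^{p^{r}}$ for odd prime powers, Example~\ref{ex:pre_stable_invariants_lattice_m_equals_p^r}, is what makes this translation lossless).
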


\begin{proof}
Follows from Proposition \ref{prop:k_invariants_odd_prime_powers}.
\end{proof}

\bigskip
\section{A \texorpdfstring{$G^{*}_{m}$}{G*m}-Equivariant Seiberg-Witten Floer Stable Homotopy Type}
\label{sec:stable_homotopy_type}

Let $m\geq 2$ be an integer. In this section we define a metric-independent $G^{*}_{m}$-equivariant Seiberg-Witten stable homotopy type $\SWF(Y,\frak{s},\wh{\sigma})$ associated to any $\ZZ_{m}$-equivariant spin rational homology sphere $(Y,\frak{s},\wh{\sigma})$, generalizing the $\Pin(2)$-equivariant spectrum $\SWF(Y,\frak{s})$ defined in \cite{Man16}.

In Section \ref{subsec:finite_dimensional_approximation}, we give a brief sketch of the construction of the Seiberg--Witten--Floer spectrum from \cite{Man03}, \cite{Man16}, pointing out the extra modifications to accomodate the extra invariance. In Section \ref{subsec:correction_term} we define the equivariant correction term required to ensure metric independence of the Floer spectrum, which we define in Section \ref{subsec:sw_floer_spectrum_class}.

\subsection{Finite-Dimensional Approximation}
\label{subsec:finite_dimensional_approximation}

Let $(Y,\frak{s},\wh{\sigma},g)$ be a $\ZZ_{m}$-equivariant Riemannian spin rational homology sphere of either even or odd type, as in Section \ref{sec:group_actions}. For this section, we let $G^{*}_{m}$ denote either $G^{ev}_{m}$ or $G^{odd}_{m}$ depending on whether $\wh{\sigma}$ is an even or odd spin lift. Let
\[\C(Y,\frak{s}):=i\Omega^{1}(Y)\oplus\Gamma(\SS)\]
denote the Seiberg-Witten configuration space associated to $(Y,\frak{s})$, and let
\[V:=i\Omega^{1}_{C}(Y)\oplus\Gamma(\SS)\subset \C(Y,\frak{s})\]
denote the global Coloumb slice, where 
\[\Omega^{1}_{C}(Y):=\{a\in\Omega^{1}(Y)\;|\;d^{*}a=0\}.\]
Recall our notation for generators $\<\gamma\>=\ZZ_{m}<G^{\ev}_{m}$ and $\<\mu\>=\ZZ_{2m}<G^{\odd}_{m}$. We define an action of $G^{*}_{m}$ on $\C(Y,\frak{s})$ via
\begin{align*}
    &e^{i\theta}\cdot(a,\phi)=(a,e^{i\theta}\phi), & &\gamma\cdot(a,\phi)=(\sigma^{*}(a),\wh{\sigma}^{*}(\phi))\text{ if }\ast=\ev, \\
    &j\cdot(a,\phi)=(-a,j\phi), & &\mu\cdot(a,\phi)=(\sigma^{*}(a),\wh{\sigma}^{*}(\phi))\text{ if }\ast=\odd,
\end{align*}
which descends to a $G^{*}_{m}$-action on $V$. In this setting, the Chern-Simons-Dirac functional $CSD:\C(Y,\frak{s})\to\RR$ given by
\[CSD(a,\phi)=\frac{1}{2}\Bigg(\int_{Y}\<\phi,\dirac\phi+\rho(a)\phi\>\;dvol_{g}-\int_{Y}a\wedge da\Bigg)\]
is $G^{*}_{m}$-equivariant as well as the restriction of its gradient to $V$ (with respect to a suitable metric). We have a $G^{*}_{m}$-equivariant decomposition $\nabla CSD=\ell+c:V\to V$, where $\ell=(\ast d,\dirac)$ is a self-adjoint elliptic operator. 

For $\nu<0$, $\lambda>0$, we denote by $V_{\nu}^{\lambda}$ the finite-dimensional subspace of $V$ spanned by the eigenvectors of $\ell$ with eigenvalues in the interval $(\nu,\lambda]$. By analyzing the inverse image of the restriction map $\res^{G^{*}_{m}}_{\Pin(2)}:R(G^{*}_{m})\to R(\Pin(2))$, we see that as a $G^{*}_{m}$-representation, $V_{\nu}^{\lambda}$ splits as a direct sum of copies of $\wt{\RR}$, $\wt{\VV}_{j}$ for $j=1,\dots, \lfloor\frac{m-1}{2}\rfloor$, $\wt{\RR}_{m/2}$ (if $m$ is even), and $\HH_{k}$ for $k=0,\dots,m-1$ if $*=\ev$, or $\HH_{k+1/2}$ for $k=0,\dots,m-1$ if $*=\odd$. We write this decomposition as
\begin{equation}
\label{eq:v_nu_lambda_decomposition}
    V_{\nu}^{\lambda}=\twopartdef{V_{\nu}^{\lambda}(\wt{\RR}_{0})\oplus\bigoplus_{j=1}^{\lfloor\frac{m-1}{2}\rfloor}V_{\nu}^{\lambda}(\wt{\VV}_{j})\oplus V_{\nu}^{\lambda}(\wt{\RR}_{m/2})\oplus\bigoplus_{k=0}^{m-1}V_{\nu}^{\lambda}(\HH_{k})}{*=\ev,}{V_{\nu}^{\lambda}(\wt{\RR}_{0})\oplus\bigoplus_{j=1}^{\lfloor\frac{m-1}{2}\rfloor}V_{\nu}^{\lambda}(\wt{\VV}_{j})\oplus V_{\nu}^{\lambda}(\wt{\RR}_{m/2})\oplus\bigoplus_{k=0}^{m-1}V_{\nu}^{\lambda}(\HH_{k+1/2})}{*=\odd,}
\end{equation}
where we use the convention that $V_{\nu}^{\lambda}(\wt{\RR}_{m/2})=\{0\}$ if $m$ is odd.

In fact, it will suffice to use $\nu=-\lambda$ for our purposes. Consider the gradient flow of the restriction $CSD|_{V_{-\lambda}^{\lambda}}$, which we view as a finite dimensional approximation to the Seiberg-Witten flow. Pick $R>>0$ independent of $\lambda$ such that all the finite energy Seiberg-Witten flow lines are inside the ball $B(R)$ in a suitable Sobolev completion of $V$. The trajectories of the approximate Seiberg-Witten flow on $V_{-\lambda}^{\lambda}$ that stay inside $B(R)$ form an isolated invariant set, and hence we can construct an associated $G^{*}_{m}$-equivariant Conley index
\begin{equation}
\label{eq:conley_index}
    I_{\lambda}=I_{G^{*}_{m}}(S^{\lambda}_{-\lambda},\phi^{\lambda}_{-\lambda}),
\end{equation}
which is an invariant of the tuple $(Y,\frak{s},\wh{\sigma},g,\lambda)$ up to $G^{*}_{m}$-homotopy equivalence. If we formally desuspend the Conley index by a copy of $(V_{-\lambda}^{0})^{+}$ (thought of as a $G^{*}_{m}$-representation sphere), we obtain a $G^{*}_{m}$-equivariant stable homotopy type independent of the eigenvalue cut-off $\lambda$, which we denote by $\text{SWF}(Y,\frak{s},\wh{\sigma},g)$ (see Section \ref{subsec:sw_floer_spectrum_class} for a more precise definition).

\bigskip
\subsection{Revisiting the Correction Term}
\label{subsec:correction_term}

In order to obtain a stable homotopy type independent of $g$, we need to revisit Manolescu's correction term $n(Y,\frak{s},g)$, and adapt it to the $G^{*}_{m}$-equivariant setting -- in particular we will define an \emph{equivariant correction term} $n(Y,\frak{s},\wh{\sigma},g)$. Before defining such a correction term, we will start off with a discussion of the $G$-Spin theorem for $\ZZ_{m}$-equivariant spin 4-manifolds.

\bigskip
\subsubsection{G-Spin Theorem}
\label{subsubsec:g_spin_theorem}

For this section, let $(W,\frak{t},\wh{\tau},g_{W})$ be a compact connected $\ZZ_{m}$-equivariant Riemannian spin 4-manifold.

First suppose $\wh{\tau}$ is of \emph{even} type, and let $\gamma\in\ZZ_{m}$ be a fixed generator. Then $\wh{\tau}$ induces a $\ZZ_{m}$ action on spinors via
\[\gamma\cdot\phi:=\wh{\tau}_{\#}\phi,\;\;\;\;\phi\in\Gamma(\SS^{\pm}_{W}).\]
By equivariance of $g_{W}$, this action descends to $\ZZ_{m}$-actions on the spaces of \emph{harmonic spinors} $\wt{\H}^{+}:=\ker(\Dirac^{+}_{W})$ and $\wt{\H}^{-}:=\ker(\Dirac^{-}_{W})\cong\coker(\Dirac^{+}_{W})$. We can therefore define the equivariant index
\[\text{Spin}(W,\frak{t},\wh{\tau},g_{W}):=[\wt{\H}^{+}]-[\wt{\H}^{-}]\]
as an element of the complex representation ring $R(\ZZ_{m})$. By taking traces at various elements of $\ZZ_{m}$, we obtain the corresponding set of characters
\[\text{Spin}^{\gamma^{k}}(W,\frak{t},\wh{\tau},g_{W}):=\text{tr}(\wh{\tau}^{k}_{\#}|\wt{\H}^{+})-\text{tr}(\wh{\tau}^{k}_{\#}|\wt{\H}^{-})\in\ZZ[\omega_{m}],\;\;\;\;k=0,\dots,m-1,\]
where $\omega_{m}=e^{2\pi i/m}\in\CC$.

Next suppose $\wh{\tau}$ is of \emph{odd} type, and let $\mu\in\ZZ_{2m}$ be a fixed generator. Then analogously to the even case we obtain an equivariant index
\[\text{Spin}(W,\frak{t},\wh{\tau},g_{W}):=[\wt{\H}^{+}]-[\wt{\H}^{-}]\in R(\ZZ_{2m})\]
as a complex $\ZZ_{2m}$-representation, as well as corresponding characters
\[\text{Spin}^{\mu^{k}}(W,\frak{t},\wh{\tau},g_{W}):=\text{tr}((\wh{\tau}^{k})_{\#}|\wt{\H}^{+})-\text{tr}((\wh{\tau}^{k})_{\#}|\wt{\H}^{-})\in\ZZ[\omega_{2m}],\;\;\;\;k=0,\dots,2m-1.\]

In order to put the even and odd cases on an equal footing, for the even case we will recast $\text{Spin}(W,\frak{t},\wh{\tau},g_{W})$ as a $\ZZ_{2m}=\<\mu\>$-representation by factoring through the canonical quotient map $\ZZ_{2m}\to\ZZ_{m}$ which sends $\mu\mapsto\gamma$. It follows that for the even case, the equation $\wh{\tau}^{m}=1$ implies that
\begin{align*}
    &\text{Spin}(W,\frak{t},\wh{\tau},g_{W})\in R(\ZZ_{2m})^{\ev},\\
    &\text{Spin}^{\mu^{k}}(W,\frak{t},\wh{\tau},g_{W})=\text{Spin}^{\mu^{m+k}}(W,\frak{t},\wh{\tau},g_{W})\qquad\text{for all }k=0,\dots,m-1.
\end{align*}
In the odd case, the equation $\wh{\tau}^{m}=-1$ implies that
\begin{align*}
    &\text{Spin}(W,\frak{t},\wh{\tau},g_{W})\in R(\ZZ_{2m})^{\odd},\\
    &\text{Spin}^{\mu^{k}}(W,\frak{t},\wh{\tau},g_{W})=-\text{Spin}^{\mu^{m+k}}(W,\frak{t},\wh{\tau},g_{W})\qquad\text{for all }k=0,\dots,m-1.
\end{align*}
Note that in both the even and odd cases, the action of $\wh{\tau}_{\#}$ commutes with the quaternionic structure on $\Gamma(\SS^{\pm})$. It follows that $\text{Spin}(W,\frak{t},\wh{\tau},g_{W})$ is a \emph{spin} representation, i.e., it comes from an element of the quaternionic representation ring $R\text{Sp}(\ZZ_{2m})$. One can show that this implies that
\begin{align*}
&\text{Spin}(W,\frak{t},\wh{\tau},g_{W})\in R(\ZZ_{2m})^{\sym,*}\subset R(\ZZ_{2m})^{*}, \\
&\text{Spin}^{\mu^{k}}(W,\frak{t},\wh{\tau},g_{W})=\text{Spin}^{\mu^{2m-k}}(W,\frak{t},\wh{\tau},g_{W})\qquad\text{for all }k=1,\dots,m-1.
\end{align*}

For the moment, suppose that $W$ is \emph{closed}, with $\wh{\tau}$ either even or odd. The equivariant Atiyah-Singer index theorem (\cite{AS3}) provides a formula for $\text{Spin}^{\mu^{k}}(W,\frak{t},\wh{\tau})=\text{Spin}^{\mu^{k}}(W,\frak{t},\wh{\tau},g_{W})$, $k\neq 0,m$, which is independent of the metric $g_{W}$ and depends only on the $\tau^{k}$-fixed-point set $W^{\tau^{k}}\subset W$. Let
\begin{align*}
    &p_{k,1},\dots,p_{k,m_{k}}, & &\Sigma_{k,1},\dots,\Sigma_{k,n_{k}},
\end{align*}
be enumerations of the dimension 0 and dimension 2 components of $W^{\tau^{k}}$, respectively. For each $p_{k,i}$, let $\alpha_{k,i},\beta_{k,i}\in\RR/2\pi\ZZ$ be two (non-zero) angles by which $\tau^{k}$ acts on an equivariant neighborhood $\nu(p_{k,i})\cong T_{p_{k,i}}W$ by $\begin{psmallmatrix}e^{i\alpha_{k,i}} & 0\\0 & e^{i\beta_{k,i}}\end{psmallmatrix}$ with respect to some local complex basis. The pair $(\alpha_{k,i},\beta_{k,i})$ is well-defined up to reordering and the equivalence relation $(\alpha_{k,i},\beta_{k,i})\equiv(-\alpha_{k,i},-\beta_{k,i})$. Similarly, let $\psi_{k,j}\in\RR/2\pi\ZZ$ be the angle by which $\tau^{k}$ acts fiberwise on $\nu(\Sigma_{k,j})$ by $e^{i\psi_{k,j}}$ with respect to some local (complex) basis, well-defined up to the equivalence relation $\psi_{k,j}\equiv-\psi_{k,j}$. Then:
\begin{equation}
    \Spin^{\mu^{k}}(W,\frak{t},\wh{\tau})=-\frac{1}{4}\Big(\sum_{i=1}^{m_{k}}\varepsilon_{k,i}R(\alpha_{k,i},\beta_{k,i})+\sum_{j=1}^{n_{k}}\varepsilon_{k,j}'[\Sigma_{k,j}]^{2}S(\psi_{k,j})\Big),\qquad k\neq 0,m,
\end{equation}
where:
\begin{enumerate}
    \item $R(\alpha_{k,i},\beta_{k,i}):=\csc(\tfrac{\alpha_{k,i}}{2})\csc(\tfrac{\beta_{k,i}}{2})$.
    \item $S(\psi_{k,j}):=\cot(\tfrac{\psi_{k,j}}{2})\csc(\tfrac{\psi_{k,j}}{2})$.
    \item $\varepsilon_{k,i},\varepsilon_{k,j}'\in\{\pm 1\}$ are signs which depend in a subtle manner on the particular component and the choices of angles $\alpha_{k,i},\beta_{k,i},\psi_{k,j}$.
\end{enumerate}
For $k=0,m$ we have that:
\begin{align}
\begin{split}
    &\Spin^{\mu^{0}}(W,\frak{t},\wh{\tau})=\ind_{\CC}(\Dirac_{W}^{+})=-\tfrac{1}{8}\sigma(W) \\ &\Spin^{\mu^{m}}(W,\frak{t},\wh{\tau})=\varepsilon\ind_{\CC}(\Dirac_{W}^{+})=-\varepsilon\tfrac{1}{8}\sigma(W),
\end{split}
\end{align}
where $\varepsilon=1$ or $-1$ if $\wh{\tau}$ is an even or odd spin lift, respectively.

\smallskip

Next suppose that $(W,\frak{t},\wh{\tau},g_{W})$ is a $\ZZ_{m}$-equivariant Riemannian spin 4-manifold with non-empty boundary $(Y,\frak{s},\wh{\sigma},g)$, and suppose that there exists a neighborhood of $\del W$ equivariantly isometric to a product $Y\times[0,1]$. Let $\Gamma(\SS)$ be the space of spinors on $Y$, and let $\dirac:\Gamma(\SS)\to\Gamma(\SS)$ denote the corresponding Dirac operator on $Y$. Here we use the convention from (\cite{Man03}, \cite{Man14}, \cite{Man16}) that if $\rho:TY\to\frak{su}(\SS)$ denotes the Clifford multiplication map, then $\rho(e_{1})\rho(e_{2})\rho(e_{3})=1$ for any orthonormal frame $\{e_{i}\}$ on $TY$. 

As in the 4-dimensional case, the map $\wh{\sigma}$ induces a $\ZZ_{2m}$-action $\Gamma(\SS)$, and the operator $\dirac$ is equivariant with respect to this action. The \emph{equivariant eta-invariant associated to $\dirac$ at $\mu^{k}\in\ZZ_{2m}$}, denoted by $\eta^{\mu^{k}}_{\dirac,\wh{\sigma},g}$, is the value at $s=0$ of the meromorphic continuation of the function
\[\eta^{\mu^{k}}_{\dirac,\wh{\sigma},g}(s)=\sum_{\lambda\neq 0}\frac{\sign(\lambda)\tr((\wh{\sigma}^{k})_{\#}|V_{\lambda})}{\lambda^{s}}\in\CC,\]
where $\tr((\wh{\sigma}^{k})_{\#}|V_{\lambda})$ denotes the trace of the induced action of $\wh{\sigma}^{k}$ on the $\lambda$-eigenspace $V_{\lambda}\subset\Gamma(\SS)$, and the sum is taken over all non-zero eigenvalues of $\dirac$. We also have a corresponding \emph{reduced equivariant eta invariant} $\ol{\eta}^{\mu^{k}}_{\dirac,\wh{\sigma},g}$, defined by
\[\ol{\eta}^{\mu^{k}}_{\dirac,\wh{\sigma},g}:=\frac{\eta^{\mu^{k}}_{\dirac,\wh{\sigma},g}-k^{\mu^{k}}_{\dirac,\wh{\sigma},g}}{2}\in\CC,\]
where 
\[k^{\mu^{k}}_{\dirac,\wh{\sigma},g}:=\tr((\wh{\sigma}^{k})_{\#}|\ker(\dirac))\in\CC\]
denotes the trace of the induced action of $\wh{\sigma}^{k}$ on the kernel of $\dirac$.

Now let $\mu^{k}\in\ZZ_{2m}$ with $k\neq 0,m$ and let $\{p_{k,i}\}_{i=1}^{m_{k}}$,$\{\Sigma_{k,j}\}_{j=1}^{n_{k}}$, $\{(\alpha_{k,i},\beta_{k,i})\}_{i=1}^{m_{k}}$, and $\{\psi_{k,j}\}_{j=1}^{n_{k}}$ be as above. There is also an equivariant analogue of the Atiyah-Patodi-Singer index theorem due to Donnelly (\cite{Don78}). When applied to the Dirac operator $\Dirac_{W}^{+}$, Donnelly's theorem states that (for $k\neq 0,m$):
\begin{align}
\label{eq:equivariant_APS}
\begin{split}
&\Spin^{\mu^{k}}(W,\frak{t},\wh{\tau},g_{W})= \\
&\qquad\qquad\qquad\ol{\eta}^{\mu^{k}}_{\dirac,\wh{\sigma},g}-\frac{1}{4}\Bigg(\sum_{i=1}^{m_{k}}\varepsilon_{k,i}R(\alpha_{k,i},\beta_{k,i})+\sum_{j=1}^{n_{k}}\int_{\Sigma_{k,j}}\varepsilon_{k,j}'S(\psi_{k,j})e(\nu(\Sigma_{k,j});g_{W})\Bigg),
\end{split}
\end{align}
where 
\[e(\nu(\Sigma_{k,j});g_{W})=\text{Pf}(F_{\nabla^{\nu}})\in\Omega^{2}(\Sigma_{k,j})\]
denotes the Chern-Weil form associated to the Euler class of the normal bundle of $\Sigma_{k,j}$, with $\nabla^{\nu}$ denoting the connection on $\nu(\Sigma_{k,j})$ induced by the Levi-Cevita connection $\nabla^{\text{LC}}$ corresponding to the metric $g_{W}$. For $k=0,m$ we have that:
\begin{align}
\begin{split}
    &\Spin^{\mu^{0}}(W,\frak{t},\wh{\tau},g_{W})=\ind_{\CC}(\Dirac_{W}^{+})=\ol{\eta}_{\dirac,g}-\int_{W}\tfrac{1}{24}p_{1}(W;g_{W}) \\ &\Spin^{\mu^{m}}(W,\frak{t},\wh{\tau},g_{W})=\varepsilon\ind_{\CC}(\Dirac_{W}^{+})=\varepsilon\ol{\eta}_{\dirac,g}-\varepsilon\int_{W}\tfrac{1}{24}p_{1}(W;g_{W}),
\end{split}
\end{align}
where:
\begin{enumerate}
    \item $p_{1}(W;g_{W})=-\frac{1}{8\pi^{2}}\text{tr}(F_{\nabla^{\text{LC}}}\wedge F_{\nabla^{\text{LC}}})$ denotes the Chern-Weil form associated to the first Pontryagin class of $W$.
    \item $\varepsilon\in\{\pm 1\}$ is as in the closed case.
    \item $\ol{\eta}_{\dirac,g}$ is the \emph{reduced (non-equivariant) eta invariant} given by
    \[\ol{\eta}_{\dirac,g}=\frac{\eta_{\dirac,g}-k_{\dirac,g}}{2}\in\RR,\]
    where:
    \[\eta_{\dirac,g}:=\sum_{\lambda\neq 0}\frac{\sign(\lambda)\dim_{\CC}(V_{\lambda})}{\lambda^{s}}\in\RR\]
    is the \emph{(un-reduced, non-equivariant) eta invariant} associated to $(Y,\frak{s},g)$, and
    \[k_{\dirac,g}:=\dim_{\CC}(\ker(\dirac))\in\NN\]
    denotes the dimension of the kernel of $\dirac$.
\end{enumerate}
By examination of the defining formulas, one can see that
\begin{align*}
    &\ol{\eta}^{\mu^{0}}_{\dirac,\wh{\sigma},g}=\ol{\eta}_{\dirac,g}, & &\ol{\eta}^{\mu^{m}}_{\dirac,\wh{\sigma},g}=\varepsilon\ol{\eta}_{\dirac,g}.
\end{align*}

\smallskip

We also have a variation formula for $\ol{\eta}^{\mu^{k}}_{\dirac,\wh{\sigma},g}$ under changes of metric. Let $g_{0},g_{1}$ be $\ZZ_{m}$-equivariant metrics on $Y$ and suppose $\{g_{s}\}$ is a one-parameter family of equivariant metrics interpolating between $g_{0}$ and $g_{1}$, which is constant near the ends. Fix an enumeration
\[K_{k,1},K_{k,2},\dots,K_{k,\ell_{k}}\]
of the (necessarily 1-dimensional) components of the fixed-point set $Y^{\sigma^{k}}\subset Y$ for $k\neq 0,m$. Furthermore, for each $j=1,\dots,\ell_{k}$ let $\psi_{k,j}\in \RR/2\pi\ZZ$ be the angle such that $\sigma^{k}$ acts fiberwise on $\nu(K_{k,j})$ via $e^{i\psi_{k,j}}$ with respect to some choice of local complex basis, again well-defined up to the equivalence relation $\psi_{k,j}\equiv-\psi_{k,j}$. Applying Equation \ref{eq:equivariant_APS} to $Y\times[0,1]$, equipped with the metric $\wh{g}_{s}$ such that $\wh{g}_{s}|_{Y\times\{s\}}=g_{s}$, we have that
\begin{equation}
\label{eq:variation_equivariant_eta_invariant}
    \ol{\eta}^{\mu^{k}}_{\dirac,\wh{\sigma},g_{1}}-\ol{\eta}^{\mu^{k}}_{\dirac,\wh{\sigma},g_{0}}=\SF^{\mu^{k}}(\{\dirac_{s}\})+\frac{1}{4}\sum_{j=1}^{\ell}\int_{K_{k,j}\times[0,1]}\varepsilon_{k,j}'S(\psi_{k,j})e(\nu(K_{k,j}\times[0,1]);\wh{g}_{s}),
\end{equation}
where $\SF^{\mu^{k}}(\{\dirac_{s}\})$ denotes the (trace) equivariant spectral flow at $\mu^{k}\in\ZZ_{2m}$ of the one-parameter family of operators $\{\dirac_{s}\}_{s\in[0,1]}$ (see \cite{LimWang}).

For $k=0,m$, the variation formulas are given by
\begin{align}
\begin{split}
    &\ol{\eta}^{\mu^{0}}_{\dirac,\wh{\sigma},g_{1}}-\ol{\eta}^{\mu^{0}}_{\dirac,\wh{\sigma},g_{0}}=\SF(\{\dirac_{s}\})+\int_{W}\tfrac{1}{24}p_{1}(W;\wh{g}_{s}), \\
    &\ol{\eta}^{\mu^{m}}_{\dirac,\wh{\sigma},g_{1}}-\ol{\eta}^{\mu^{m}}_{\dirac,\wh{\sigma},g_{0}}=\varepsilon\SF(\{\dirac_{s}\})+\varepsilon\int_{W}\tfrac{1}{24}p_{1}(W;\wh{g}_{s}),
\end{split}
\end{align}
where $\SF(\{\dirac_{s}\})$ denotes the ordinary (non-equivariant) spectral flow of $\{\dirac_{s}\}_{s\in[0,1]}$.

\bigskip
\subsubsection{Definition of the Equivariant Correction Term}
\label{subsubsec:definition_equivariant_correction_term}

We now begin our discussion of the correction term. Recall (\cite{Man16},\cite{Man14}) that for a spin rational homology sphere $(Y,\frak{s})$ equipped with a metric $g$, the correction term is defined to be
\[n(Y,\frak{s},g)=\ind_{\CC}(\Dirac_{W}^{+})+\tfrac{1}{8}\sigma(W)\in\tfrac{1}{8}\ZZ\subset\QQ.\]
where $(W,\frak{t})$ is any compact spin 4-manifold with boundary $(Y,\frak{s})$, equipped with a Riemannian metric $g_{W}$ isometric to $ds^{2}+g$ near the boundary, and $\Dirac_{W}^{+}:\Gamma(\bb{S}^{+}_{W})\to\Gamma(\bb{S}^{-}_{W})$ is the corresponding Dirac operator on $W$. 

Now let $\ol{\eta}_{\dirac,g}$ be the reduced, non-equivariant eta invariant of the Dirac operator as in Section \ref{subsubsec:g_spin_theorem}, and let $\eta_{sign,g}$ be the eta-invariant of the odd signature operator on $(Y,\frak{s},g)$. By the Atiyah-Patodi-Singer index theorem (\cite{APS1}), we have that
\begin{align*}
    &\ind_{\CC}(D^{+})=-\tfrac{1}{24}\int_{W}p_{1}(W;g_{W})+\ol{\eta}_{\dirac,g}  ,& &\tfrac{1}{8}\sigma(W)=\tfrac{1}{24}\int_{W}p_{1}(W;g_{W})-\tfrac{1}{8}\eta_{sign,g},
\end{align*}
and hence
\[n(Y,\frak{s},g)=\ol{\eta}_{\dirac,g}-\tfrac{1}{8}\eta_{sign,g}.\]
It follows that $n(Y,\frak{s},g)$ is well-defined and independent of the choice of spin filling $(W,\frak{t})$. Furthermore, if $g_{0}$ and $g_{1}$ are metrics on $Y$, then for any path of metrics $\{g_{s}\}_{s\in[0,1]}$ interpolating between $g_{0}$ and $g_{1}$, we have that
\[n(Y,\frak{s},g_{1})-n(Y,\frak{s},g_{0})=\text{SF}(\{\dirac_{s}\}),\]
where $\text{SF}(\{\dirac_{s}\})$ denotes the spectral flow of the family of Dirac operators associated to the path $\{g_{s}\}$.

In order to define the equivariant correction term, we will need to define the \emph{torsion} $t(L,g)$ of a framed link $L$ inside a Riemannian 3-manifold $(Y,g)$ (see \cite{Yos85} for more details). 

Let $\nabla^{\text{fr}}$ be the $SO(3)$-connection on the $SO(3)$-frame bundle $\text{Fr}(Y)\to Y$ induced by the Levi-Cevita connection on $(Y,g)$, and let 
$\theta=(\theta_{ij})\in\Omega^{1}(Y;\frak{so}(3))$ be the connection one-form associated to $\nabla^{\text{fr}}$. Given a framed, oriented link $L\subset Y$, we can trivialize $TY|_L$ by setting at each point $x\in L$:
\begin{itemize}
    \item $e_{1}(x)$ to be the unit tangent vector to $L$, with direction determined by the given orientation.
    \item $e_{2}(x)$ to be the unit vector pointing in the direction of the framing.
    \item $e_{3}(x)=e_{1}(x)\times e_{2}(x)$.
\end{itemize}
This trivialization then provides a section $\phi:L\to\text{Fr}(Y)$, and we define
\[t(L,g,\alpha):=-\int_{L}\phi^{*}\theta_{23},\]
which we call the \emph{torsion of $L$ with respect to $(g,\alpha)$}. Note that for any two framings $\alpha_{0}$, $\alpha_{1}$, we have that 
\[t(L,g,\alpha_{1})-t(L,g,\alpha_{0})\in 2\pi\ZZ.\]

Now if $Y$ is a rational homology sphere, then any link $L\subset Y$ is rationally null-homologous. We will use the following fact, which guarantees that any such $L$ has a canonical framing (see \cite{MT18},\cite{Raoux20}):

\begin{fact}
Let $K\subset Y$ be a rationally null-homologous knot in a 3-manifold $Y$. Then there exists a unique choice of longitude $\lambda_{\can}$ for $K$ (called the \emph{canonical longitude}) such that
\[[\del F]=c(d\lambda_{\can}+r\mu)\]
for any rational Seifert surface $F$ for $K$, where $c$ is such that $[\del F]=c\gamma$ for some primitive element $\gamma\in H_{1}(\del\nu(K))$ (called the \emph{complexity} of $K$), and $0\le r < d$. Furthermore if $K$ is null-homologous, then $\lambda_{\can}$ agrees with the usual Seifert framing. 
\end{fact}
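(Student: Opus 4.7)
The plan is to construct $\lambda_{\can}$ as a primitive generator of the kernel of the inclusion-induced map $\phi\co H_{1}(\del\nu(K))\to H_{1}(X_{K})$, where $X_{K}:=Y\setminus\nu(K)$ is the knot exterior. Fix any reference longitude $\lambda_{0}$, so that $H_{1}(\del\nu(K))=\ZZ\<\lambda_{0},\mu\>$. Mayer--Vietoris for $Y=X_{K}\cup\nu(K)$ with rational coefficients gives $H_{1}(X_{K};\QQ)\cong H_{1}(Y;\QQ)\oplus\QQ\<\mu\>$, in which $\phi(\lambda_{0})=[K]=0$ by hypothesis while $\phi(\mu)\ne 0$. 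Hence $\ker\phi$ has rank one; let $\gamma=a\lambda_{0}+b\mu$ be a primitive generator, with sign fixed by the chosen orientation of $K$.

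Next I would verify that $a=d$, the order of $[K]$ in $H_{1}(Y)$. The image of $\gamma$ in $H_{1}(Y)$ equals $a[K]$ (since $\mu$ bounds a meridian disk in $\nu(K)$), so $\gamma\in\ker\phi$ forces $d\mid a$. Conversely, any $2$-chain $C\subset Y$ with $\del C=dK$ can be put into transverse position with respect to $K$; excising tubular neighborhoods of the isolated intersection points yields a properly embedded surface $F\subset X_{K}$ whose boundary on $\del\nu(K)$ represents a class of the form $d\lambda_{0}+r'\mu$ for some integer $r'$. Since this class automatically lies in $\ker\phi=\ZZ\gamma$, divisibility forces $a\mid d$. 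Combined with primitivity we obtain $a=\pm d$ and $\gcd(d,b)=1$; the orientation convention fixes $a=d$.

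With $a=d$ in hand I would define $\lambda_{\can}$ by a unique change of longitude. Any longitude has the form $\lambda=\lambda_{0}+m\mu$, recasting the primitive generator as $\gamma=d\lambda+(b-md)\mu$. The condition $0\le r<d$ determines $m$ uniquely, giving $\lambda_{\can}:=\lambda_{0}+m\mu$ and residue $r:=b-md$, with $\gcd(d,r)=1$. For any rational Seifert surface $F$, $[\del F]\in\ker\phi=\ZZ\gamma$, so $[\del F]=c(d\lambda_{\can}+r\mu)$ for some $c\in\ZZ_{>0}$ (positivity again from orientation), proving the claim. Uniqueness of $\lambda_{\can}$ is immediate: any alternative choice differs from $\lambda_{\can}$ by an integer multiple of $\mu$, which would shift $r$ by a nonzero multiple of $d$ and violate $0\le r<d$. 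When $K$ is null-homologous, $d=1$ forces $r=0$, whence $\gamma=\lambda_{\can}$ bounds an integral Seifert surface in $X_{K}$ (taking $c=1$), identifying $\lambda_{\can}$ with the usual Seifert framing.

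The only substantive obstacle is the transversality/tube-excision step in the second paragraph: one must argue carefully that the $\lambda_{0}$-coefficient of the resulting boundary class is exactly $d$ rather than a proper divisor of $d$, which boils down to the standard fact that $K$ has order \emph{exactly} $d$ in $H_{1}(Y)$ by definition. Everything else is linear algebra in the basis $\{\lambda_{0},\mu\}$.
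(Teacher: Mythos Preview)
The paper does not give a proof of this fact; it is quoted with references to the literature. Your argument is essentially the standard one and is correct in outline, with one minor sloppiness: the assertion that $\phi(\lambda_{0})=[K]=0$ in $H_{1}(X_{K};\QQ)$ conflates $H_{1}(X_{K};\QQ)$ with $H_{1}(Y;\QQ)$. What your Mayer--Vietoris splitting actually gives is that $\phi(\lambda_{0})$ projects to $[K]=0$ under the quotient $H_{1}(X_{K};\QQ)\to H_{1}(Y;\QQ)$, so $\phi(\lambda_{0})\in\QQ\<\phi(\mu)\>$. Combined with $\phi(\mu)\ne 0$ (the genuine content here, which follows cleanly from the rational linking pairing via $\mathrm{lk}(\mu,K)=1$, or from half-lives-half-dies applied to $X_{K}$), this yields $\ker\phi_{\QQ}$ one-dimensional and disjoint from $\QQ\<\mu\>$, which is what you need. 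The rest of your argument---identifying the primitive integral generator, pinning down $a=\pm d$ by comparison with an actual rational Seifert surface, and normalizing the longitude to force $0\le r<d$---is correct and matches the treatment in the references the paper cites.
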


In general, for a rationally null-homologous link $L$ we define the \emph{canonical framing} on $L$ to be the unique framing which restricts to $\lambda_{\can}$ on each component $K\subset L$. With this in mind, given a Riemannian rational homology sphere $(Y,g)$ and a link $L$, we will use the convention that $t(L,g)$ denotes the torsion computed with respect to the canonical framing on $L$.

We now state our definition of the equivariant correction term:

\begin{definition}
\label{def:equivariant_correction_term}
Let $(Y,\frak{s},\wh{\sigma},g)$ be a $\ZZ_{m}$-equivariant Riemannian spin rational homology sphere. For each $k=1,\dots,2m-1$, $k\neq m$, suppose that the fixed point set of $\sigma^{k}:Y\to Y$ is given by $Y^{\sigma^{k}}=K_{k,1}\cup\cdots\cup K_{k,\ell_{k}}$, and that $\sigma^{k}$ acts on $\nu(K_{k,j})$ via rotation by $\psi_{k,j}\in\RR/2\pi\ZZ$ with respect to some local complex basis. For each $k=0,\dots,2m-1$, define $n^{\mu^{k}}(Y,\frak{s},\wh{\sigma},g)\in\CC$ as follows:
\begin{equation}
    n^{\mu^{k}}(Y,\frak{s},\wh{\sigma},g):=\threepartdef{\ol{\eta}_{\dirac,g}-\tfrac{1}{8}\eta_{sign,g}}{k=0,}{\varepsilon(\ol{\eta}_{\dirac,g}-\tfrac{1}{8}\eta_{sign,g})}{k=m,}{\ol{\eta}^{\mu^{k}}_{\dirac,\wh{\sigma},g}+\frac{1}{8\pi}\sum_{j=1}^{n_{k}}\varepsilon_{k,j}S(\psi_{k,j})t(K_{k,j},g)}{k\neq 0,m,}
\end{equation}
where:
\begin{enumerate}
    \item $\varepsilon=1$ if $\wh{\sigma}$ is of even type and $\varepsilon=-1$ if $\wh{\sigma}$ is of odd type.
    \item $\varepsilon_{k,j}\in\{\pm 1\}$ are the signs as in the $G$-Spin theorem, depending in a subtle manner on the spin lift $\wh{\sigma}$ and the choice of angles $\{\psi_{k,j}\}$.
    \item $S(\psi_{k,j}):=\cot(\tfrac{\psi_{k,j}}{2})\csc(\tfrac{\psi_{k,j}}{2})$.
    \item $t(K_{k,j},g)=t(K_{k,j},g,\lambda_{\can})$ is the torsion of $K_{k,j}\subset Y^{\sigma^{k}}$ with respect to its canonical longitude $\lambda_{\can}$.
\end{enumerate}
Finally, we define the \emph{equivariant correction term} $n(Y,\frak{s},\wh{\sigma},g)$ to be representation
\[n(Y,\frak{s},\wh{\sigma},g):=\frac{1}{2m}\sum_{j=0}^{2m-1}\Big(\sum_{k=0}^{2m-1}n^{\mu^{k}}(Y,\frak{s},\wh{\sigma},g)\cdot\omega_{2m}^{-jk}\Big)\xi^{j}\in R(\ZZ_{2m})\otimes\CC.\]
\end{definition}

We will devote the rest of this section to prove the following theorem:

\begin{theorem}
\label{theorem:properties_equivariant_correction_term}
The equivariant correction term satisfies the following properties:
\begin{enumerate}
	\item $n(Y,\frak{s},\wh{\sigma},g)\in R(\ZZ_{2m})^{*}\otimes\QQ$.
    \item Under the augmentation map
    \[\alpha:R(\ZZ_{2m})^{*}\otimes\QQ\to\QQ\]
    $n(Y,\frak{s},\wh{\sigma},g)$ is sent to $n(Y,\frak{s},g)$.
    \item For any two equivariant metrics $g_{0},g_{1}$ and any path of equivariant metrics $\{g_{s}\}$ interpolating between $g_{0}$ and $g_{1}$,
    \[n(Y,\frak{s},\wh{\sigma},g_{1})-n(Y,\frak{s},\wh{\sigma},g_{0})= \text{SF}^{\ZZ_{2m}}(\{\dirac_{s}\})\in R(\ZZ_{2m})^{*},\]
    where $\text{SF}^{\ZZ_{2m}}(\{\dirac_{s}\})$ denotes the \emph{(representation-theoretic) equivariant spectral flow} of the family of operators $\{\dirac_{s}\}$, whose character at each $\mu^{k}\in\ZZ_{2m}$ is given by the quantity $\text{SF}^{\mu^{k}}(\{\dirac_{s}\})$ appeearing in Equation \ref{eq:variation_equivariant_eta_invariant}.
\end{enumerate}
\end{theorem}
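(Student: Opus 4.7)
The plan is to prove the three properties in the order (2), (3), (1), since the latter argument relies on the variation formula for reduction to a single reference metric.

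For property (2), the augmentation map $\alpha: R(\ZZ_{2m}) \otimes \QQ \to \QQ$ sends $\xi^k \mapsto 1$. Applying this directly to the defining Fourier expansion, one obtains
\[
\alpha(n(Y,\frak{s},\wh{\sigma},g)) = \frac{1}{2m}\sum_{j=0}^{2m-1}\sum_{k=0}^{2m-1} n^{\mu^k}(Y,\frak{s},\wh{\sigma},g)\,\omega_{2m}^{-jk} = \sum_{k=0}^{2m-1} n^{\mu^k} \cdot \delta_{k,0} = n^{\mu^0}(Y,\frak{s},\wh{\sigma},g) = n(Y,\frak{s},g),
\]
by standard Fourier inversion on $\ZZ_{2m}$ and the defining identity $n^{\mu^0} = \ol{\eta}_{\dirac,g} - \tfrac{1}{8}\eta_{\mathrm{sign},g}$.

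For property (3), the plan is to verify that each character satisfies $n^{\mu^k}(Y,\frak{s},\wh{\sigma},g_1) - n^{\mu^k}(Y,\frak{s},\wh{\sigma},g_0) = \SF^{\mu^k}(\{\dirac_s\})$, and then assemble via Fourier inversion. For $k=0$ and $k=m$ this is the standard (non-equivariant) APS-type cancellation between the Dirac and signature eta variations, giving $\SF(\{\dirac_s\})$ and $\varepsilon\,\SF(\{\dirac_s\})$ respectively. For $k \neq 0, m$, the equivariant APS variation formula (\ref{eq:variation_equivariant_eta_invariant}) says
\[
\ol{\eta}^{\mu^k}_{\dirac,\wh{\sigma},g_1} - \ol{\eta}^{\mu^k}_{\dirac,\wh{\sigma},g_0} = \SF^{\mu^k}(\{\dirac_s\}) + \tfrac{1}{4}\sum_j \varepsilon'_{k,j}S(\psi_{k,j})\int_{K_{k,j}\times[0,1]} e(\nu;\wh{g}_s).
\]
On the cylinder $K_{k,j}\times[0,1]$ the Chern–Weil form $e(\nu;\wh{g}_s) = \tfrac{1}{2\pi}d\theta_{23}$ is exact, so Stokes' theorem yields
\[
\int_{K_{k,j}\times[0,1]} e(\nu;\wh{g}_s) = \tfrac{1}{2\pi}\bigl(t(K_{k,j},g_0) - t(K_{k,j},g_1)\bigr),
\]
taken with respect to the canonical framing, which is preserved by the one-parameter family. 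Substituting into the definition of $n^{\mu^k}$, the local torsion terms cancel against the cobordism boundary integral (the sign convention $\varepsilon'_{k,j} = -\varepsilon_{k,j}$ is fixed precisely so this cancellation works), leaving only $\SF^{\mu^k}(\{\dirac_s\})$. Reassembling by Fourier inversion gives the representation-valued equation.

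For property (1), I would split the argument into parity and rationality. Parity: starting from $\wh{\sigma}^m = \varepsilon\cdot 1$ on spinors, we have $\tr((\wh{\sigma}^{k+m})_\#|V_\lambda) = \varepsilon\tr((\wh{\sigma}^k)_\#|V_\lambda)$ so $\ol{\eta}^{\mu^{k+m}} = \varepsilon\,\ol{\eta}^{\mu^k}$, and one checks from the G-Spin sign conventions that $\varepsilon_{k+m,j} = \varepsilon\,\varepsilon_{k,j}$ (the signs depend linearly on the spin lift). Hence $n^{\mu^{k+m}} = \varepsilon\,n^{\mu^k}$, and a direct calculation of Fourier coefficients shows $\xi^j$ has zero coefficient unless $j$ is even (for $\varepsilon = +1$) or odd (for $\varepsilon = -1$), giving membership in $R(\ZZ_{2m})^\ast$ with the correct parity. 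Rationality: by property (3) already established, the difference $n(Y,\frak{s},\wh{\sigma},g) - n(Y,\frak{s},\wh{\sigma},g')$ lies in the integral subring $R(\ZZ_{2m})^\ast$, so it suffices to verify rationality for a single convenient metric. By Proposition \ref{prop:equivariant_cobordism_group} some $N$-fold disjoint union $N\cdot (Y,\frak{s},\wh{\sigma})$ bounds an equivariant spin 4-manifold $(W,\frak{t},\wh{\tau})$ carrying a metric $g_W$ of product form near the boundary. Applying Donnelly's equivariant APS formula (\ref{eq:equivariant_APS}) to each character and running the Stokes cancellation of Step (3) in reverse — absorbing the torsion term of $n^{\mu^k}$ into the $\Sigma_{k,j}$ Chern–Weil integrand on $W$ (whose exact part matches boundary torsion, and whose topological part is $[\Sigma_{k,j}]^2_\QQ\in\QQ$) — we obtain
\[
N\cdot n^{\mu^k}(Y,\frak{s},\wh{\sigma},g) \;=\; \Spin^{\mu^k}(W,\frak{t},\wh{\tau},g_W) \;+\; \tfrac{1}{4}\sum_i \varepsilon_{k,i}R(\alpha_{k,i},\beta_{k,i}) \;+\; \tfrac{1}{4}\sum_j \varepsilon'_{k,j}S(\psi_{k,j})[\Sigma_{k,j}]^2_\QQ,
\]
modulo analogous bulk Pontryagin terms from the signature side. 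The first summand assembles across $k$ to an element of $R(\ZZ_{2m})^\ast$, while the trigonometric terms assemble to a rational virtual character by the closed $G$-Spin theorem applied to the local normal data. Dividing by $N$ yields $n(Y,\frak{s},\wh{\sigma},g)\in R(\ZZ_{2m})^\ast \otimes \QQ$.

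The main technical obstacle will be the Stokes bookkeeping for property (1): verifying that the boundary contribution of $\int_{\Sigma_{k,j}} e(\nu(\Sigma_{k,j});g_W)$ equals $[\Sigma_{k,j}]^2_\QQ + \tfrac{1}{2\pi}t(K_{k,j},g)$ with respect to the canonical framing, and that the sign conventions $\varepsilon_{k,j}$ and $\varepsilon'_{k,j}$ (which depend subtly on the parity of the spin lift and on choices of local complex structures on the normal bundles) are consistent across the closed and bounded settings. A secondary subtlety — already lurking in Step (3) — is that the canonical longitude is intrinsic to $Y$ (independent of $g$), which is what makes the Stokes calculation well-defined independent of trivialization choices.
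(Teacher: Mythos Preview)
Your arguments for (2), for (3), and for the parity half of (1) are essentially the paper's approach: work character-by-character, use Fourier inversion, and for the variation formula match the equivariant APS correction against the Stokes computation of the torsion variation on $K_{k,j}\times[0,1]$. One bookkeeping slip: the cancellation actually needs $\varepsilon'_{k,j}=\varepsilon_{k,j}$, not $-\varepsilon_{k,j}$ (with your own sign for $\int e(\nu)$, the two $\tfrac14$-terms already come with opposite signs). Also, the ``analogous bulk Pontryagin terms from the signature side'' do not belong in your displayed formula for $N\cdot n^{\mu^k}$ when $k\neq 0,m$: for those $k$ the definition of $n^{\mu^k}$ involves only the Dirac $\ol\eta^{\mu^k}$ and the torsion correction, no signature $\eta$.

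The rationality argument has a genuine gap. You correctly arrive at
\[
N\cdot n^{\mu^k}(Y,\frak{s},\wh{\sigma},g)=\Spin^{\mu^k}(W,\frak{t},\wh{\tau},g_W)+\tfrac{1}{8}\S^{\mu^k}(W,\frak{t},\wh{\tau}),
\]
and $\Spin(W)$ is an honest element of $R(\ZZ_{2m})$. But your justification that the $\S$-part is rational---``by the closed $G$-Spin theorem applied to the local normal data''---is the step that needs work. The closed $G$-Spin theorem gives $\Spin(X)=-\tfrac18\S(X)\in R(\ZZ_{2m})$ only for \emph{closed} $X$; your $W$ has boundary, and there is no a priori reason a given collection of local fixed-point data (cosecant and cotangent values at $2m$-th roots of unity, with their subtle signs $\varepsilon_{k,i},\varepsilon'_{k,j}$) assembles to a rational character without first realizing it inside a closed equivariant spin manifold. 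The paper closes this gap with an explicit construction: attach $0$-framed equivariant $2$-handles along the boundary fixed-point link $\cup_k Y^{\sigma^k}$, which caps off the surface components of the fixed locus without altering any self-intersections or point contributions, and leaves a new boundary on which the action is \emph{free}; then invoke finiteness of the free equivariant spin cobordism group to cap with a fixed-point-free filling. On the resulting closed $X$ one has $\S(X)=K\cdot\S(W)$ for some $K\ge 1$, and now the closed $G$-Spin theorem applies to give $\S(W)\in R(\ZZ_{2m})\otimes\QQ$.
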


Rather than prove these properties directly, it will be helpful to work instead with the set of characters of $n(Y,\frak{s},\wh{\sigma},g)$. In general, given an element $\mbfa(\xi)=\sum_{j=0}^{m-1}a_{k}\xi^{j}\in R(\ZZ_{2m})\otimes\CC$ we define its character to be the function
\begin{align*}
\chi_{\mbfa(\xi)}:\ZZ_{2m}&\to\CC \\	
\mu^{k}&\mapsto \mbfa(\omega_{2m}^{k})
\end{align*}
which assigns to an element $\mu^{k}\in\ZZ_{2m}$ the (generalized) trace of $\mbfa(\xi)$ at $\mu^{k}$. Note that any such representation $\mbfa(\xi)\in R(\ZZ_{2m})\otimes\CC$ can be recovered via the orthogonality relations from its set of characters:
\[\mbfa(\xi)=\frac{1}{2m}\sum_{j=0}^{2m-1}\Big(\sum_{k=0}^{2m-1}\chi_{\mbfa(\xi)}(\mu^{k})\cdot\omega_{2m}^{-jk}\Big)\xi^{j}\in R(\ZZ_{2m})\otimes\CC.\]

We see immediately from Definition \ref{def:equivariant_correction_term} that the characters of $n(Y,\frak{s},\wh{\sigma},g)$ are given by
\[\chi_{n(Y,\frak{s},\wh{\sigma},g)}(\mu^{k})=n^{\mu^{k}}(Y,\frak{s},\wh{\sigma},g)\]
for each $k=0,\dots,2m-1$. One can show that Theorem \ref{theorem:properties_equivariant_correction_term} is equivalent to the following proposition:

\begin{proposition}
\label{prop:properties_equivariant_correction_term_characters}
The characters $n^{\mu^{k}}(Y,\frak{s},\wh{\sigma},g)$ satisfy the following properties:
\begin{enumerate}
    \item $\sum_{k=0}^{2m-1}n^{\mu^{k}}(Y,\frak{s},\wh{\sigma},g)\cdot\omega_{2m}^{-jk}\in\QQ$ for all $j=0,\dots,2m-1$.
  	\item $n^{\mu^{k}}(Y,\frak{s},\wh{\sigma},g)=\varepsilon n^{\mu^{m+k}}(Y,\frak{s},\wh{\sigma},g)$ for all $k=0,\dots,m-1$, where $\varepsilon=1$ if $\wh{\sigma}$ is of even type and $\varepsilon=-1$ if $\wh{\sigma}$ is of odd type.
    \item $n^{\mu^{0}}(Y,\frak{s},\wh{\sigma},g)=n(Y,\frak{s},g)$.
    \item For any two $\sigma$-equivariant metrics $g_{0},g_{1}$ and any path of equivariant metrics $\{g_{s}\}$ interpolating between $g_{0}$ and $g_{1}$,
    \[n^{\mu^{k}}(Y,\frak{s},\wh{\sigma},g_{1})-n^{\mu^{k}}(Y,\frak{s},\wh{\sigma},g_{0})=\SF^{\mu^{k}}(\{\dirac_{s}\}).\]
\end{enumerate}
\end{proposition}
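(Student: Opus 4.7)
The plan is to verify the four properties in the order $(3),(4),(2),(1)$, since $(2)$ follows structurally from the defining formulas once $(4)$ has been understood, and $(1)$ requires the full equivariant cobordism machinery. Property $(3)$ is immediate: at $k=0$ the formulas defining the equivariant eta invariant reduce to the ordinary eta invariant of $\dirac$, the torsion correction is absent, and $n^{\mu^{0}}(Y,\frak{s},\wh{\sigma},g)=\ol{\eta}_{\dirac,g}-\tfrac{1}{8}\eta_{\mathrm{sign},g}=n(Y,\frak{s},g)$ as desired.

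For property $(4)$, the cases $k=0,m$ are the classical Atiyah--Patodi--Singer variation formula, which yields the (non-equivariant) spectral flow $\SF(\{\dirac_{s}\})=\SF^{\mu^{0}}(\{\dirac_{s}\})$, and likewise for $\mu^{m}$ up to the sign $\varepsilon$. For $k\neq 0,m$, the equivariant variation formula (\ref{eq:variation_equivariant_eta_invariant}) gives $\SF^{\mu^{k}}(\{\dirac_{s}\})$ plus the integrals $\tfrac{1}{4}\sum_{j}\int_{K_{k,j}\times[0,1]}\varepsilon'_{k,j}S(\psi_{k,j})\,e(\nu(K_{k,j}\times[0,1]);\wh{g}_{s})$. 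The main lemma I would prove is the identity
\[
t(K_{k,j},g_{1})-t(K_{k,j},g_{0})=-2\pi\int_{K_{k,j}\times[0,1]}e(\nu(K_{k,j}\times[0,1]);\wh{g}_{s}),
\]
which follows from a Stokes/Chern--Weil argument for the Euler form on the normal bundle of the cylinder, using that the canonical longitude is a topological datum independent of the metric and so produces an honest trivialization of $\nu(K_{k,j})$ along both ends. Multiplying by $\tfrac{1}{8\pi}\varepsilon_{k,j}S(\psi_{k,j})$ and summing shows that the torsion correction in the definition of $n^{\mu^{k}}$ is engineered to cancel the curvature integrals, leaving exactly $\SF^{\mu^{k}}(\{\dirac_{s}\})$.

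For property $(2)$, note that $\sigma^{m+k}=\sigma^{k}$ as diffeomorphisms of $Y$, so the fixed sets, components, and rotation angles $\psi_{k,j}$ coincide, and the canonical longitudes and torsions $t(K_{k,j},g)$ are unchanged. However, $\wh{\sigma}^{m}=1$ in the even case and $\wh{\sigma}^{m}=-1$ in the odd case, so $\wh{\sigma}^{m+k}=\varepsilon\,\wh{\sigma}^{k}$. In the odd case, the induced action on spinors is negated, which multiplies each trace in $\eta^{\mu^{k}}_{\dirac,\wh{\sigma},g}$ by $-1$, and the choice of square root of $e^{i\psi_{k,j}}$ distinguishing the two spin lifts of the normal rotation is also negated, flipping every sign $\varepsilon_{k,j}$. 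Both the eta term and the torsion term in $n^{\mu^{m+k}}$ therefore acquire a factor of $\varepsilon$, giving $(2)$.

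Property $(1)$ is the main content and will be the hardest step. By Proposition~\ref{prop:equivariant_cobordism_group} there exists an integer $N\geq 1$ and a compact $\ZZ_{m}$-equivariant spin filling $(W,\frak{t},\wh{\tau})$ of $N\cdot(Y,\frak{s},\wh{\sigma})$; choose a product-collared equivariant metric $g_{W}$ restricting to $g$ on each boundary component. The virtual representation $\Spin(W,\frak{t},\wh{\tau},g_{W})\in R(\ZZ_{2m})^{*}$ has integer coefficients, so each coefficient of $\xi^{j}$, given by $\tfrac{1}{2m}\sum_{k}\Spin^{\mu^{k}}(W)\cdot\omega_{2m}^{-jk}$, is an integer. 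Applying Donnelly's equivariant APS theorem at each $\mu^{k}$, one expresses $\Spin^{\mu^{k}}(W)$ as $N\cdot\ol{\eta}^{\mu^{k}}_{\dirac,\wh{\sigma},g}$ plus isolated-fixed-point terms, surface integrals $\int_{\Sigma_{k,j}}\varepsilon'_{k,j}S(\psi_{k,j})e(\nu(\Sigma_{k,j});g_{W})$, and (for $k=0,m$) a rational $p_{1}$ integral. The decisive step is to combine Stokes' theorem on each $\Sigma_{k,j}$ with the canonical framings on $\partial\Sigma_{k,j}\subset Y^{\sigma^{k}}$: the integral $\int_{\Sigma_{k,j}}e(\nu(\Sigma_{k,j});g_{W})$ splits as a rational relative Euler number (a rational topological invariant of the pair $(\Sigma_{k,j},\partial\Sigma_{k,j})$ with respect to the canonical framing, using that $Y$ is a rational homology sphere) plus a boundary torsion term $\tfrac{1}{2\pi}\sum t(\partial\Sigma_{k,j},g)$. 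These boundary torsion terms are precisely those compensated by the torsion corrections in $n^{\mu^{k}}$, yielding $N\cdot n^{\mu^{k}}(Y,\frak{s},\wh{\sigma},g)=\Spin^{\mu^{k}}(W,\frak{t},\wh{\tau},g_{W})+Q_{k}$ where $Q_{k}\in\QQ[\omega_{2m}]$ depends algebraically on the $\psi_{k,j}$. Taking the appropriate linear combination to extract each $\xi^{j}$-coefficient and using rationality of the $Q_{k}$-contributions produces $N\cdot a_{j}\in\ZZ+\QQ$, hence $a_{j}\in\QQ$. The main obstacle, as indicated, is the careful Chern--Weil/Stokes computation with signs $\varepsilon'_{k,j}$ and framing corners, together with verifying that the rational relative Euler contributions are indeed rational when $Y$ is a rational homology sphere.
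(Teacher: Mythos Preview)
Your treatment of (3), (4), and (2) is essentially the same as the paper's, and is fine. The torsion-variation lemma you state for (4) is exactly Lemma~\ref{lemma:variation_torsion}, and for (2) the paper uses Donnelly's theorem on the cylinder $Y\times[0,1]$ to pin down the sign $\varepsilon_{m+k,j}=\varepsilon\cdot\varepsilon_{k,j}$ rather than a direct square-root argument, but the idea is the same.

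The gap is in your argument for (1). You correctly reduce to showing that the Fourier coefficients of $k\mapsto Q_{k}:=\tfrac{1}{8}\S^{\mu^{k}}(W,\frak{t},\wh{\tau})$ are rational, but you never actually argue this. Knowing that each $Q_{k}\in\QQ(\omega_{2m})$ (which is all your ``depends algebraically on the $\psi_{k,j}$'' gives) does \emph{not} imply $\sum_{k}Q_{k}\,\omega_{2m}^{-jk}\in\QQ$; that requires a specific Galois-compatibility among the $Q_{k}$ which you have not established. The ``main obstacle'' you identify---the Stokes/framing computation showing $[\Sigma_{k,j}]^{2}\in\QQ$---is the easy part; the hard part is the rationality of the entire package of trigonometric fixed-point contributions after Fourier transform.

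The paper's proof (Proposition~\ref{prop:integrality}) supplies the missing idea: rather than analyze the $Q_{k}$ directly, one \emph{closes off} the 4-manifold. First attach equivariant $0$-framed $2$-handles along the fixed links in $-Y$ to obtain a cobordism to a manifold $-Y_{0}$ on which the action is free; then, since the free equivariant spin cobordism group is torsion, some number of copies of $-Y_{0}$ bound a free equivariant spin filling. The resulting closed manifold $X$ has exactly the same fixed-point data (and the same self-intersections) as $W$, so $\S(X)=K\cdot\S(W)$ for some $K\geq 1$. But for a closed manifold the $G$-spin theorem gives $\S(X)=-8\,\Spin(X)\in R(\ZZ_{2m})$, which forces all Fourier coefficients of $\S(W)$ to be rational. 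This closing-off trick is what your sketch is missing.
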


Indeed, (1) is equivalent to the assertion that $n(Y,\frak{s},\wh{\sigma},g)\in R(\ZZ_{2m})\otimes\QQ$, (2) is equivalent to the assertion that $n(Y,\frak{s},\wh{\sigma},g)\in R(\ZZ_{2m})^{*}\otimes\CC$, (3) is equivalent to Condition (2) in Theorem \ref{theorem:properties_equivariant_correction_term}, and (4) is equivalent to Condition (3) in Theorem \ref{theorem:properties_equivariant_correction_term}.

Of course, Condition (3) of Proposition \ref{prop:properties_equivariant_correction_term_characters} follows by definition of $n^{\mu^{0}}(Y,\frak{s},\wh{\sigma},g)$. We will next prove Condition (2) of Proposition \ref{prop:properties_equivariant_correction_term_characters}:

\begin{lemma}
	For all $k=0,\dots,m-1$, we have that $n^{\mu^{k}}(Y,\frak{s},\wh{\sigma},g)=\varepsilon n^{\mu^{m+k}}(Y,\frak{s},\wh{\sigma},g)$,  where $\varepsilon=1$ if $\wh{\sigma}$ is of even type and $\varepsilon=-1$ if $\wh{\sigma}$ is of odd type.
\end{lemma}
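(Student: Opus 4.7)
The plan is to separate the proof into the two ranges of $k$ covered by the piecewise definition of $n^{\mu^{k}}$, and then in each case to exploit the basic identity $(\wh{\sigma}^{m})_{\#}=\varepsilon\cdot\id$ on the spinor bundle over $Y$. Here $\varepsilon=+1$ for an even spin lift because $\wh{\sigma}^{m}=1$, while for an odd spin lift $\wh{\sigma}^{m}$ is the non-trivial central element of $\Spin(3)\cong SU(2)$, which acts as $-1$ on spinors; in either case $\varepsilon^{2}=1$.

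First I would dispose of the $k=0$ case. By definition $n^{\mu^{0}}(Y,\frak{s},\wh{\sigma},g)=\ol{\eta}_{\dirac,g}-\tfrac{1}{8}\eta_{\mathrm{sign},g}$ and $n^{\mu^{m}}(Y,\frak{s},\wh{\sigma},g)=\varepsilon(\ol{\eta}_{\dirac,g}-\tfrac{1}{8}\eta_{\mathrm{sign},g})$, so the identity $n^{\mu^{0}}=\varepsilon\, n^{\mu^{m}}$ follows at once from $\varepsilon^{2}=1$.

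Next I would handle $k\in\{1,\dots,m-1\}$. The proof splits into two pieces corresponding to the two summands in the definition of $n^{\mu^{k}}$. For the equivariant eta term, write
\[
\eta^{\mu^{m+k}}_{\dirac,\wh{\sigma},g}(s)=\sum_{\lambda\neq 0}\frac{\sign(\lambda)\,\tr\bigl((\wh{\sigma}^{m+k})_{\#}\big|V_{\lambda}\bigr)}{|\lambda|^{s}}
\]
and use $(\wh{\sigma}^{m+k})_{\#}=(\wh{\sigma}^{m})_{\#}\circ(\wh{\sigma}^{k})_{\#}=\varepsilon\,(\wh{\sigma}^{k})_{\#}$ on every eigenspace $V_{\lambda}\subset\Gamma(\SS)$. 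This immediately gives $\eta^{\mu^{m+k}}_{\dirac,\wh{\sigma},g}=\varepsilon\,\eta^{\mu^{k}}_{\dirac,\wh{\sigma},g}$, and the identical argument applied to $\ker(\dirac)$ yields $k^{\mu^{m+k}}_{\dirac,\wh{\sigma},g}=\varepsilon\,k^{\mu^{k}}_{\dirac,\wh{\sigma},g}$. Combining, $\ol{\eta}^{\mu^{m+k}}_{\dirac,\wh{\sigma},g}=\varepsilon\,\ol{\eta}^{\mu^{k}}_{\dirac,\wh{\sigma},g}$, and multiplying by $\varepsilon$ gives $\ol{\eta}^{\mu^{k}}_{\dirac,\wh{\sigma},g}=\varepsilon\,\ol{\eta}^{\mu^{m+k}}_{\dirac,\wh{\sigma},g}$.

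For the torsion term, I would first observe that $\sigma^{m+k}=\sigma^{k}$ as a diffeomorphism of $Y$ (because $\sigma^{m}=\id_{Y}$), so the fixed-point data match: $\ell_{m+k}=\ell_{k}$, $K_{m+k,j}=K_{k,j}$, and $\psi_{m+k,j}=\psi_{k,j}$ (the rotation angle is determined by $d\sigma^{k}$, not by the spin lift), hence $t(K_{m+k,j},g)=t(K_{k,j},g)$. The only remaining content is thus the sign identity $\varepsilon_{k,j}=\varepsilon\cdot\varepsilon_{m+k,j}$. This is the crux of the argument. Recall that in Donnelly's equivariant APS formula $\varepsilon_{k,j}\in\{\pm 1\}$ records which of the two lifts to $\Spin(2)$ of the rotation by $\psi_{k,j}$ on $\nu(K_{k,j})$ is realized by the action of the given spin lift on the spinor bundle along $K_{k,j}$. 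Since replacing $\wh{\sigma}^{k}$ by $\wh{\sigma}^{m+k}=\varepsilon\wh{\sigma}^{k}$ multiplies the fiberwise action on $\SS|_{K_{k,j}}$ by $\varepsilon$, it exchanges the two $\Spin(2)$-lifts precisely when $\varepsilon=-1$, so $\varepsilon_{m+k,j}=\varepsilon\cdot\varepsilon_{k,j}$, i.e.\ $\varepsilon_{k,j}=\varepsilon\cdot\varepsilon_{m+k,j}$.

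Putting these ingredients together yields
\[
n^{\mu^{k}}(Y,\frak{s},\wh{\sigma},g)=\varepsilon\,\ol{\eta}^{\mu^{m+k}}_{\dirac,\wh{\sigma},g}+\frac{1}{8\pi}\sum_{j=1}^{\ell_{k}}\varepsilon\,\varepsilon_{m+k,j}\,S(\psi_{m+k,j})\,t(K_{m+k,j},g)=\varepsilon\, n^{\mu^{m+k}}(Y,\frak{s},\wh{\sigma},g),
\]
completing the proof. The main obstacle is the sign identity $\varepsilon_{k,j}=\varepsilon\cdot\varepsilon_{m+k,j}$; everything else is formal. I would justify it by tracing through the derivation of the $G$-Spin contribution near a codimension-2 fixed circle and isolating the dependence on the choice of $\Spin(2)$-lift, as in the proof of Donnelly's theorem (or equivalently via the local model on $S^{1}\times\DD^{2}$ where the claim reduces to the behavior of the half-angle $\csc(\psi/2)$ under $\psi\mapsto\psi+2\pi$ together with the action of the central element of $\Spin(3)$).
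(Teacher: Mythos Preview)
Your proof is correct and follows the same overall structure as the paper: the $k=0$ case by construction, and for $1\le k\le m-1$ the reduction to (i) $\ol{\eta}^{\mu^{m+k}}_{\dirac,\wh{\sigma},g}=\varepsilon\,\ol{\eta}^{\mu^{k}}_{\dirac,\wh{\sigma},g}$, proved via $(\wh{\sigma}^{m})_{\#}=\varepsilon\cdot\id$ on spinors, and (ii) the sign identity $\varepsilon_{k,j}=\varepsilon\cdot\varepsilon_{m+k,j}$.

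The only point of divergence is the justification of (ii). You argue locally, by interpreting $\varepsilon_{k,j}$ as recording which of the two $\Spin(2)$-lifts of the rotation is realized by $\wh{\sigma}^{k}$ along $K_{k,j}$, so that multiplying by the central element flips the sign exactly when $\varepsilon=-1$. The paper instead invokes Donnelly's theorem applied to $Y\times[0,1]$: since the underlying diffeomorphism and hence the fixed-point data coincide for $\mu^{k}$ and $\mu^{m+k}$, and the left-hand side and $\eta$-terms in the equivariant APS formula both scale by $\varepsilon$, the fixed-point contributions must too. Your route is more transparent about where the sign actually lives, at the cost of unpacking the derivation of the $G$-Spin formula; the paper's route treats Donnelly's theorem as a black box and reads off the sign relation by comparison. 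Both are valid.
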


\begin{proof}
For $k=0$ this is by construction. For $1\le k\le m-1$, it suffices to show the following:
\begin{enumerate}
	\item $\ol{\eta}_{\dirac,\wh{\sigma},g}^{\mu^{m+k}}=\varepsilon\ol{\eta}_{\dirac,\wh{\sigma},g}^{\mu^{k}}$.
	\item $\varepsilon_{k,j}=\varepsilon\cdot\varepsilon_{m+k,j}(\wh{\sigma})$.
\end{enumerate}
But (1) follows from the fact that $\wh{\sigma}^{m}$ acts on spinors by $\varepsilon$, and (2) follows from Donnelly's Theorem applied to $Y\times[0,1]$.
\end{proof}

Next, we will look at Condition (4) of Proposition \ref{prop:properties_equivariant_correction_term_characters}. The following lemma will be useful:

\begin{lemma}
\label{lemma:variation_torsion}
Let $L\subset Y$ be a link, and let $g_{0}$, $g_{1}$ be metrics on $Y$ such that $L$ is totally geodesic with respect to $g_{0}$ and $g_{1}$. Then for any fixed framing $\alpha$ on $L$ and any smooth path of metrics $\{g_{s}\}_{s\in[0,1]}$ interpolating between $g_{0}$ and $g_{1}$ such that $L$ remains totally geodesic for all $s\in[0,1]$, we have that
\begin{equation}
\label{eq:variation_torsion}
	t(L,g_{1},\alpha)-t(L,g_{0},\alpha)=-2\pi\int_{L\times[0,1]}e(\nu(L\times[0,1]);\hat{g}_{s}),
\end{equation}
where $e(\nu(L\times[0,1]);\hat{g}_{s})$ is as in Equation \ref{eq:variation_equivariant_eta_invariant}.
\end{lemma}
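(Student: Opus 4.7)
The plan is to identify the integral on the right with a boundary integral via Stokes' theorem applied to the connection one-form on the normal bundle of $L \times [0,1]$. Fix a smooth family of unit normal framings $\tilde{e}_{2}(x,s) := v_{\alpha}(x)/|v_{\alpha}(x)|_{g_{s}}$ representing $\alpha$ in each metric $g_{s}$ (where $v_{\alpha}$ is any fixed smooth non-vanishing normal vector field realizing $\alpha$), a unit tangent $\tilde{e}_{1}(x,s)$ to $L$ in $g_{s}$, and $\tilde{e}_{3} := \tilde{e}_{1} \times_{g_{s}} \tilde{e}_{2}$. Note that as a subbundle of $T(Y \times [0,1])|_{L \times [0,1]}$, the normal bundle $\nu(L \times [0,1])$ is exactly spanned by $\{\tilde{e}_{2}, \tilde{e}_{3}\}$ at each point, since its fibers over $(x,s)$ are the $\hat{g}_{s}$-orthogonal complements of $T_{x}L \oplus \mathbb{R}\partial_{s}$.

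First I would define the $\mathfrak{so}(2)$-valued connection one-form $\omega^{\nu}_{23} \in \Omega^{1}(L \times [0,1])$ of the induced connection on $\nu(L \times [0,1])$, given by $\omega^{\nu}_{23}(X) := \hat{g}_{s}(\hat{\nabla}_{X}\tilde{e}_{2}, \tilde{e}_{3})$, where $\hat{\nabla}$ is the Levi-Civita connection of $\hat{g}_{s} = g_{s} + ds^{2}$. Since $\nu(L \times [0,1])$ has rank two and is oriented, the Euler form is $e(\nu(L \times [0,1]); \hat{g}_{s}) = \tfrac{1}{2\pi} d\omega^{\nu}_{23}$. Stokes' theorem then gives
\[
\int_{L \times [0,1]} e(\nu(L \times [0,1]); \hat{g}_{s}) \;=\; \tfrac{1}{2\pi}\Big(\int_{L \times \{1\}} \omega^{\nu}_{23} \;-\; \int_{L \times \{0\}} \omega^{\nu}_{23}\Big).
\]

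Next I would identify the boundary integrals with the relevant torsions. Namely, for $X$ tangent to $L \times \{s_{0}\}$, I apply the Koszul formula to expand $2\hat{g}_{s_{0}}(\hat{\nabla}_{X}\tilde{e}_{2}, \tilde{e}_{3})$. All terms involve either the ambient metric pairing or Lie brackets of horizontal vector fields evaluated at the slice $s = s_{0}$; since $\hat{g}_{s}$ is block-diagonal with $g_{s}$ on the horizontal factor, and the brackets $[X, \tilde{e}_{2}]$, $[X,\tilde{e}_{3}]$, $[\tilde{e}_{2},\tilde{e}_{3}]$ of horizontal vector fields remain horizontal, every term collapses to the corresponding term for the Koszul formula of $g_{s_{0}}$. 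This yields
\[
\omega^{\nu}_{23}\big|_{L \times \{s_{0}\}}(X) \;=\; g_{s_{0}}\!\big(\nabla^{g_{s_{0}}}_{X} \tilde{e}_{2}, \tilde{e}_{3}\big) \;=\; \phi_{s_{0}}^{*}\theta_{23}^{(Y,g_{s_{0}})}(X),
\]
and hence $\int_{L \times \{s_{0}\}} \omega^{\nu}_{23} = -\,t(L, g_{s_{0}}, \alpha)$ by definition of the torsion. Combining this with the Stokes computation yields the desired variation formula.

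The main technical obstacle is the identification in the previous paragraph: one must verify that the $s$-dependence of $\tilde{e}_{2}$ (and hence $\tilde{e}_{3}$) contributes nothing to the horizontal-to-horizontal component of $\hat{\nabla}$ along the slice, and that the extrinsic curvature of $L \times [0,1]$ in $\hat{g}_{s}$ (which is nontrivial in general, since $\hat{\nabla}_{X}\partial_{s}$ can acquire a normal component of order $\dot{g}_{s}$) does not alter the Euler form identification, since the normal component $(\hat{\nabla}_{X}\tilde{e}_{2})^{\perp}$ picks up only the $\tilde{e}_{3}$-component, which by the argument above is intrinsic. Once this is checked, the assumption that $L$ remains totally geodesic in $g_{s}$ is used only implicitly to ensure $\tilde{e}_{1}$ is smoothly defined along $L \times [0,1]$ and the frames depend smoothly on $s$; the assumption that the framing $\alpha$ is independent of $s$ is essential to produce a global trivialization of $\nu(L \times [0,1])$ making Stokes' theorem applicable without boundary contributions along the components of $L$.
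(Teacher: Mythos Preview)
Your proposal is correct and follows the same approach as the paper: apply Stokes' theorem to the connection one-form $\omega^{\nu}_{23}$ on the normal bundle of $L\times[0,1]$, then identify the boundary integrals with the torsions. Your handling of the totally geodesic hypothesis is in fact slightly more careful than the paper's---the paper asserts that $K\times[0,1]$ is totally geodesic in $(Y\times[0,1],\hat g_s)$, which as you observe is not quite true in general (there is a $\dot g_s$ contribution to the second fundamental form via $\hat\nabla_{e_1}\partial_s$), but this does not affect the identification of the Euler form with $\tfrac{1}{2\pi}\,d\omega^{\nu}_{23}$ or the boundary restriction, so both arguments go through.
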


\begin{proof}
It suffices to assume $L=K$ is connected, since both sides of Equation \ref{eq:variation_torsion} are additive on connected components. First, note that since $K$ is totally geodesic with respect to the metric $g_{s}$ for all $s\in[0,1]$, we have that the form $\theta(g_{s})\in\Omega^{1}(Y;\frak{so}(3))$ can be written as
\[\theta(g_{s})=
\begin{pmatrix}
0 & 0 & 0\\
0 & 0 & \theta_{23}(g_{s}) \\
0 & -\theta_{23}(g_{s}) & 0
\end{pmatrix}.\]

Next, note that the framing on $K$ extends to a framing on $K\times[0,1]\subset Y\times[0,1]$, and hence a section $\wh{\phi}:K\times[0,1]\to\Fr(Y\times[0,1])$. Since $K$ is assumed to be totally geodesic for all $s\in[0,1]$, we have that the annulus $K\times[0,1]$ is totally geodesic as a submanifold of $(Y\times[0,1],\hat{g}_{s})$. In particular, the one parameter family of connection one-forms $\{\theta(g_{s})\}_{s\in[0,1]}$ assembles into a connection one-form $\wh{\theta}\in\Omega^{1}(Y\times[0,1];\frak{so}(4))$ which can be locally written as
\[\wh{\theta}=
\begin{pmatrix}
0 & 0 & 0 & 0\\
0 & 0 & \wh{\theta}_{23}& 0 \\
0 & -\wh{\theta}_{23} & 0 & 0 \\
0 & 0 & 0 & 0
\end{pmatrix}.\]
Here, $\wh{\theta}_{23}\in\Omega^{1}(Y\times[0,1])$ is given by $\wh{\theta}_{23}|_{Y\times\{s\}}=\theta_{23}(g_{s})$ for all $s\in[0,1]$. By Stokes' Theorem we see that
\[\int_{K\times[0,1]}d(\wh{\phi}^{*}\wh{\theta}_{23})=\int_{K}\phi^{*}\theta_{23}(g_{1})-\int_{K}\phi^{*}\theta_{23}(g_{0})=-\big(t(K,g_{1},\alpha)-t(K,g_{0},\alpha)\big).\]

Next, consider the 2-dimensional vector bundle $N(K\times[0,1])\to K\times[0,1]$, and let $\wh{g}^{\nu}_{s}$ be the metric on $N(K\times[0,1])$ induced by $\wh{g}_{s}$. We define $\wh{\theta}^{\nu}\in\Omega^{1}(K\times[0,1],\frak{so}(2))$ to be $SO(2)$-valued connection one-form induced by $\wh{g}^{\nu}_{s}$. Our assumption that $K\times[0,1]$ is totally geodesic implies that
\[\wh{\theta}^{\nu}=\begin{pmatrix}
0 & \wh{\theta}_{23} \\
-\wh{\theta}_{23} & 0
\end{pmatrix}.\]
We see that the corresponding curvature 2-form is given by
\[\Omega^{\nu}=d\wh{\theta}^{\nu}+\wh{\theta}^{\nu}\wedge\wh{\theta}^{\nu}=\begin{pmatrix}
0 & d\wh{\theta}_{23} \\
-d\wh{\theta}_{23} & 0
\end{pmatrix},\]
and thus
\begin{align*}
	&\int_{K\times[0,1]}e(\nu(K\times[0,1]);\wh{g}_{s})=\frac{1}{2\pi}\int_{K\times[0,1]}\text{Pfaff}(\Omega^{\nu}) \\
	&\qquad\qquad=\frac{1}{2\pi}\int_{K\times[0,1]}d\wh{\theta}_{23}=-\frac{1}{2\pi}\big(t(K,g_{1},\alpha)-t(K,g_{0},\alpha)\big).
\end{align*}
\end{proof}

\begin{proposition}
\label{prop:variation_of_equivariant_metric}
For each $k=0,\dots,2m-1$ and any two $\sigma$-equivariant metrics $g_{0}$ and $g_{1}$,
\[n^{\mu^{k}}(Y,\frak{s},\wh{\sigma},g_{1})-n^{\mu^{k}}(Y,\frak{s},\wh{\sigma},g_{0})=\SF^{\mu^{k}}(\{\dirac_{s}\})\]
for any smooth path of equivariant metrics $\{g_{s}\}_{s\in[0,1]}$ interpolating between $g_{0}$ and $g_{1}$.
\end{proposition}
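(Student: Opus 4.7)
The plan is to split the argument into the cases $k=0,m$ and the generic case $k \neq 0,m$, and to reduce the generic case to the combination of two variation formulas already available to us: the equivariant APS formula (Equation \ref{eq:variation_equivariant_eta_invariant}) and the torsion variation formula (Lemma \ref{lemma:variation_torsion}).

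For $k=0$ we have $n^{\mu^{0}}(Y,\frak{s},\wh\sigma,g)=n(Y,\frak{s},g)$ by definition, and the equivariant spectral flow at the identity reduces to ordinary spectral flow, so the result follows from Manolescu's classical formula $n(Y,\frak{s},g_{1})-n(Y,\frak{s},g_{0})=\SF(\{\dirac_{s}\})$. The case $k=m$ is obtained from $k=0$ by multiplying by $\varepsilon$, using the identity $n^{\mu^{m}}=\varepsilon\cdot n^{\mu^{0}}$ (which comes from $\wh\sigma^{m}$ acting on spinors as multiplication by $\varepsilon$) together with the corresponding relation $\SF^{\mu^{m}}(\{\dirac_{s}\})=\varepsilon\,\SF(\{\dirac_{s}\})$.

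For $1\le k\le 2m-1$ with $k\neq m$, the definition gives
\[
n^{\mu^{k}}(Y,\frak{s},\wh\sigma,g_{1})-n^{\mu^{k}}(Y,\frak{s},\wh\sigma,g_{0})=\bigl(\ol\eta^{\mu^{k}}_{\dirac,\wh\sigma,g_{1}}-\ol\eta^{\mu^{k}}_{\dirac,\wh\sigma,g_{0}}\bigr)+\frac{1}{8\pi}\sum_{j=1}^{\ell_{k}}\varepsilon_{k,j}S(\psi_{k,j})\bigl(t(K_{k,j},g_{1})-t(K_{k,j},g_{0})\bigr).
\]
I would invoke Equation \ref{eq:variation_equivariant_eta_invariant} to rewrite the first difference as $\SF^{\mu^{k}}(\{\dirac_{s}\})$ plus a sum of integrals of $\varepsilon_{k,j}S(\psi_{k,j})\,e(\nu(K_{k,j}\times[0,1]);\wh g_{s})$ over the cylinders $K_{k,j}\times[0,1]$, and invoke Lemma \ref{lemma:variation_torsion} to rewrite each torsion difference as $-2\pi$ times the same Euler-form integral. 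The crucial geometric input that legitimizes the use of Lemma \ref{lemma:variation_torsion} is that every component $K_{k,j}$ of $Y^{\sigma^{k}}$ is totally geodesic for each equivariant metric $g_{s}$, because $K_{k,j}$ is the fixed-point set of the isometry $\sigma^{k}$; hence the cylinder $K_{k,j}\times[0,1]$ is totally geodesic in $(Y\times[0,1],\wh g_{s})$ as well. With these substitutions in hand, the Euler-form integrals appear with coefficients $+\tfrac14\varepsilon_{k,j}S(\psi_{k,j})$ and $-\tfrac14\varepsilon_{k,j}S(\psi_{k,j})$ respectively, and so cancel identically, leaving $\SF^{\mu^{k}}(\{\dirac_{s}\})$.

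The main subtlety — and what I would spend most care on — is matching sign and normalization conventions: the $\varepsilon_{k,j}$ appearing in Donnelly's equivariant APS formula must be shown to coincide with the $\varepsilon_{k,j}$ used in Definition \ref{def:equivariant_correction_term}, since the latter was defined so that the boundary correction in the $G$-Spin theorem for a closed manifold is reproduced. Both sets of signs are determined by the same local model of $\sigma^{k}$ acting by $e^{i\psi_{k,j}}$ on the normal bundle of $K_{k,j}$ together with the parity of the chosen spin lift, so they agree; one should also verify that the use of the canonical framing $\lambda_{\can}$ in both $t(K_{k,j},g_{0})$ and $t(K_{k,j},g_{1})$ makes Lemma \ref{lemma:variation_torsion} (which was stated for a fixed framing) directly applicable, which it does because $\lambda_{\can}$ is a topological invariant independent of the metric. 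Once these bookkeeping issues are confirmed, the cancellation of the Euler-form integrals is immediate and the proposition follows.
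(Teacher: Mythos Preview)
Your proposal is correct and follows essentially the same approach as the paper: the cases $k=0,m$ reduce to the non-equivariant variation formula, and the generic case is handled by combining Equation \ref{eq:variation_equivariant_eta_invariant} with Lemma \ref{lemma:variation_torsion} so that the Euler-form integrals cancel. Your write-up is in fact more detailed than the paper's, which simply cites these two ingredients; your explicit observation that fixed-point sets of the isometry $\sigma^{k}$ are totally geodesic (needed to apply Lemma \ref{lemma:variation_torsion}) and your remarks on matching the signs $\varepsilon_{k,j}$ are useful clarifications that the paper leaves implicit.
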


\begin{proof}
The $k=0,m$ cases follow from the variation formula for the non-equivariant correction term (\cite{Man03}). For $k=1,\dots,2m-1$, $k\neq m$, this follows from Equation \ref{eq:variation_equivariant_eta_invariant} and Lemma \ref{lemma:variation_torsion}.
\end{proof}

It remains to prove Condition (1) of Proposition \ref{prop:properties_equivariant_correction_term_characters}. In order to do this, we recast the equivariant correction term in terms of equivariant spin fillings, in analogy with the equality
\begin{equation}
\label{eq:equality_non_equivariant_correection_term}
	n(Y,\frak{s},g)=\ind_{\CC}(\Dirac_{W}^{+})+\tfrac{1}{8}\sigma(W)=\ol{\eta}_{\dirac,g}-\tfrac{1}{8}\eta_{\sign,g}.
\end{equation}
In our setting, the role of $\ind_{\CC}(\Dirac_{W}^{+})$ will be played by the equivariant index $\Spin(W,\frak{t},\wh{\tau},g_{W})$, and the role of $\sigma(W)$ will be played by the following quantity, which was alluded to in the introduction:

\begin{definition}
\label{def:S_invariant}
Let $(W,\frak{t},\wh{\tau})$ be a compact $\ZZ_{m}$-equivariant Riemannian spin 4-manifold such that if $\del W\neq\emptyset$, then $b_{1}(\del W)=0$. We define the \emph{$\S$-invariant} of $(W,\frak{t},\wh{\tau})$ to be the representation
\begin{equation}
    \S(W,\frak{t},\wh{\tau}):=\frac{1}{2m}\sum_{\ell=0}^{2m-1}\Big(\sum_{k=0}^{2m-1}\S^{\mu^{k}}(W,\frak{t},\wh{\tau})\omega_{2m}^{-\ell k}\Big)\xi^{\ell}\in R(\ZZ_{2m})\otimes\CC,
\end{equation}
where:
\[\S^{\mu^{k}}(W,\frak{t},\wh{\tau}):=\left\{
		\begin{array}{ll}
			\sigma(W) & \mbox{if } k=0, \\
			\varepsilon\sigma(W) & \mbox{if } k=m, \\
            2\Big(\sum_{i=1}^{m_{k}}\varepsilon_{k,i}R(\alpha_{k,i},\beta_{k,i})+\sum_{j=1}^{n_{k}}\varepsilon_{k,j}'S(\psi_{k,j})[\Sigma_{k,j}]^{2}\Big) & \mbox{otherwise.}
            \end{array}
    \right.\]
Here, $m_{k}$, $n_{k}$, $R(\alpha_{k,i},\beta_{k,i})$, $S(\psi_{k,j})$, $\varepsilon_{k,i}$, $\varepsilon_{k,j}'$, and $\Sigma_{k,i}$ are as in the $G$-Spin Theorem (Section \ref{subsubsec:g_spin_theorem}), $\varepsilon=\pm 1$ depending on whether $\wh{\tau}$ is an even or odd spin lift, and if $\del W\neq\emptyset$, then $[\Sigma_{k,i}]^{2}$ denotes the self-intersection of $\Sigma_{k,i}$ with respect to the canonical framing.
\end{definition}

We state some useful properties of the $\S$-invariant:

\begin{proposition}
\label{prop:properties_S_invariant}
Let $(W,\frak{t},\wh{\tau})$ be as in Definition \ref{def:S_invariant}. Then:
\begin{enumerate}
    \item $\S(W,\frak{t},\wh{\tau})$ is sent to $\sigma(W)$ under the augmentation map
    \[\alpha:R(\ZZ_{2m})\otimes\CC\to\CC.\]
    \item If $W$ is closed, then
    \[-\tfrac{1}{8}\S(W,\frak{t},\wh{\tau})=\Spin(W,\frak{t},\wh{\tau}).\]
\end{enumerate}
\end{proposition}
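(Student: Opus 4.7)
My plan is to verify both claims character-by-character, exploiting the fact that elements of $R(\ZZ_{2m})\otimes\CC$ are determined by their values on $\ZZ_{2m}$ via the discrete Fourier inversion formula used in Definition \ref{def:S_invariant}.

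For (1), I would first observe that the augmentation map $\alpha: R(\ZZ_{2m})\otimes\CC\to\CC$ sends $\xi^\ell \mapsto 1$, and so agrees with evaluation of the character at the identity element $e \in \ZZ_{2m}$. Substituting the defining formula for $\S(W,\frak{t},\wh{\tau})$ and swapping the order of summation gives
\[\alpha\big(\S(W,\frak{t},\wh{\tau})\big) = \frac{1}{2m}\sum_{k=0}^{2m-1}\S^{\mu^{k}}(W,\frak{t},\wh{\tau})\sum_{\ell=0}^{2m-1}\omega_{2m}^{-\ell k}.\]
The inner sum collapses to $2m$ when $k=0$ and vanishes otherwise by orthogonality of characters, leaving $\S^{\mu^0}(W,\frak{t},\wh{\tau}) = \sigma(W)$ by definition.

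For (2), since the correspondence $\mbfa(\xi)\mapsto\chi_{\mbfa(\xi)}$ is injective on $R(\ZZ_{2m})\otimes\CC$ and both $-\tfrac{1}{8}\S(W,\frak{t},\wh{\tau})$ and $\Spin(W,\frak{t},\wh{\tau})$ are recovered from their characters by the same inversion formula, it suffices to check
\[-\tfrac{1}{8}\S^{\mu^{k}}(W,\frak{t},\wh{\tau}) = \Spin^{\mu^{k}}(W,\frak{t},\wh{\tau})\qquad\text{for each }k=0,\dots,2m-1.\]
For $k=0$ and $k=m$, this follows immediately by comparing the two piecewise definitions, using the identities $\Spin^{\mu^0}(W,\frak{t},\wh{\tau})=-\tfrac{1}{8}\sigma(W)$ and $\Spin^{\mu^{m}}(W,\frak{t},\wh{\tau})=-\tfrac{\varepsilon}{8}\sigma(W)$ from Section \ref{subsubsec:g_spin_theorem}. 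For $1\le k\le 2m-1$ with $k\neq m$, I would apply the closed-manifold $G$-Spin Theorem:
\[\Spin^{\mu^{k}}(W,\frak{t},\wh{\tau})=-\tfrac{1}{4}\Big(\sum_{i=1}^{m_{k}}\varepsilon_{k,i}R(\alpha_{k,i},\beta_{k,i})+\sum_{j=1}^{n_{k}}\varepsilon_{k,j}'[\Sigma_{k,j}]^{2}S(\psi_{k,j})\Big),\]
and observe that this matches $-\tfrac{1}{8}\S^{\mu^{k}}(W,\frak{t},\wh{\tau})$ after cancelling the factor of $2$ in the definition of $\S^{\mu^{k}}$.

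Neither part presents a genuine obstacle: (1) is a straightforward orthogonality computation, and (2) is an immediate term-by-term comparison with the closed-manifold $G$-Spin Theorem. The only subtle point worth highlighting is that the factor of $2$ in Definition \ref{def:S_invariant} at $k\neq 0,m$ is precisely what converts the $-\tfrac{1}{4}$ in the $G$-Spin formula into the $-\tfrac{1}{8}$ needed for compatibility with the $k=0$ and $k=m$ cases; this is the reason $\S$ is not simply defined as $-8\,\Spin$ on the nose, and it deserves a brief sentence in the writeup.
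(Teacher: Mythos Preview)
Your proposal is correct and follows exactly the same approach as the paper: the paper's proof simply states that (1) follows from the identity $\alpha(\S(W,\frak{t},\wh{\tau}))=\S^{\mu^{0}}(W,\frak{t},\wh{\tau})=\sigma(W)$ and that (2) follows from the $G$-Spin theorem. Your writeup is just a more detailed version of this, with the orthogonality computation for (1) and the character-by-character comparison for (2) spelled out.
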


\begin{proof}
(1) follows from the identity
\[\alpha(\S(W,\frak{t},\wh{\tau}))=\S^{\mu^{0}}(W,\frak{t},\wh{\tau})=\sigma(W),\]
and (2) follows from the $G$-spin theorem.
\end{proof}

We have the following proposition:

\begin{proposition}
\label{prop:correction_term_spin_filling}
Let $(Y,\frak{s},\wh{\sigma},g)$ be a $\ZZ_{m}$-equivariant Riemannian spin rational homology sphere, and suppose that $(Y,\frak{s},\wh{\sigma},g)$ admits a $\ZZ_{m}$-equivariant Riemannian spin filling $(W,\frak{t},\wh{\tau},g_{W})$. Then
\[n^{\mu^{k}}(Y,\frak{s},\wh{\sigma},g)=\Spin^{\mu^{k}}(W,\frak{t},\wh{\tau},g_{W})+\tfrac{1}{8}\S^{\mu^{k}}(W,\frak{t},\wh{\tau})\in\CC\]
for all $k=0,\dots,2m-1$, and consequently
\[n(Y,\frak{s},\wh{\sigma},g)=\Spin(W,\frak{t},\wh{\tau},g_{W})+\tfrac{1}{8}\S(W,\frak{t},\wh{\tau})\in R(\ZZ_{2m})\otimes\CC.\]
\end{proposition}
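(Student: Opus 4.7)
The strategy is to verify the claimed equality character-by-character for each $k = 0, 1, \dots, 2m-1$, since both sides are elements of $R(\ZZ_{2m}) \otimes \CC$ and are therefore determined by their values at the group elements $\mu^k \in \ZZ_{2m}$ via the orthogonality relations.

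For $k = 0$, the claim reduces to the classical APS signature theorem: by definition $n^{\mu^0}(Y,\frak{s},\wh{\sigma},g) = \ol{\eta}_{\dirac,g} - \tfrac{1}{8}\eta_{\sign,g}$, while $\Spin^{\mu^0}(W,\frak{t},\wh{\tau},g_W) = \ol{\eta}_{\dirac,g} - \int_W \tfrac{1}{24} p_1(W;g_W)$ and $\tfrac{1}{8}\S^{\mu^0}(W,\frak{t},\wh{\tau}) = \tfrac{1}{8}\sigma(W)$, so substituting the APS signature formula $\sigma(W) = \tfrac{1}{3}\int_W p_1(W; g_W) - \eta_{\sign,g}$ gives the equality. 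The $k = m$ case then follows immediately by multiplying the $k=0$ identity through by the parity sign $\varepsilon$.

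The real work lies in the case $k \neq 0, m$. Applying Donnelly's equivariant APS theorem (Equation \ref{eq:equivariant_APS}) to $\Dirac^+_W$ and adding $\tfrac{1}{8}\S^{\mu^k}(W,\frak{t},\wh{\tau})$, the contributions $R(\alpha_{k,i},\beta_{k,i})$ from the isolated fixed points $p_{k,i}$ cancel exactly, leaving
\[\Spin^{\mu^k}(W,\frak{t},\wh{\tau},g_W) + \tfrac{1}{8}\S^{\mu^k}(W,\frak{t},\wh{\tau}) = \ol{\eta}^{\mu^k}_{\dirac,\wh{\sigma},g} + \tfrac{1}{4}\sum_{j=1}^{n_k} \varepsilon'_{k,j}\, S(\psi_{k,j}) \Big([\Sigma_{k,j}]^2 - \int_{\Sigma_{k,j}} e(\nu(\Sigma_{k,j}); g_W)\Big).\]
Comparing against the definition of $n^{\mu^k}(Y,\frak{s},\wh{\sigma},g)$ and using that the 2-dimensional components $\Sigma_{k,j} \subset W^{\tau^k}$ are in bijection (via the collar structure of $W$ near $\del W$) with the 1-dimensional components $K_{k,j} \subset Y^{\sigma^k}$, with compatible signs $\varepsilon_{k,j} = \varepsilon'_{k,j}$, the proof reduces to showing the Chern--Weil identity
\[[\Sigma_{k,j}]^2 - \int_{\Sigma_{k,j}} e(\nu(\Sigma_{k,j}); g_W) = \tfrac{1}{2\pi}\, t(K_{k,j},g),\]
where the self-intersection is taken with respect to the canonical framing $\lambda_{\can}$.

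The main obstacle is establishing this last identity. My plan is to exploit the product collar structure $Y \times [0,\epsilon) \subset W$, which forces $\Sigma_{k,j} \cap (Y\times[0,\epsilon)) = K_{k,j}\times[0,\epsilon)$ and isometrically identifies $\nu(\Sigma_{k,j})|_{K_{k,j}}$ with $\nu(K_{k,j}\subset Y)$ together with the induced $SO(2)$-connections. Picking a trivialization of $\nu(\Sigma_{k,j})$ along $K_{k,j}$ given by $\lambda_{\can}$ and applying Stokes' theorem to the Pfaffian form $e(\nu(\Sigma_{k,j}); g_W)$, the defect between this Chern--Weil integral and the relative Euler number $[\Sigma_{k,j}]^2$ is expressed as a boundary integral of the pulled-back $SO(2)$-connection one-form on $\nu(\Sigma_{k,j})|_{K_{k,j}}$ in the trivialization $\lambda_{\can}$. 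Under the collar identification, this one-form coincides with the form $\phi^*\theta_{23}$ appearing in Yoshida's definition of $t(K_{k,j},g,\lambda_{\can})$, yielding the identity up to the factor of $\tfrac{1}{2\pi}$. A final sanity check is that the signs $\varepsilon_{k,j}$ in Definition \ref{def:equivariant_correction_term} have been normalized to match the Donnelly signs $\varepsilon'_{k,j}$; this is a pointwise fixed-point orientation calculation dictated by the spin lift $\wh{\sigma}$ and the local rotation angles $\psi_{k,j}$, and is effectively how the signs were chosen in the first place.
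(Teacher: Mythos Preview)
Your proof is correct and follows essentially the same approach as the paper: verify character-by-character, handle $k=0,m$ via the classical APS signature theorem, and for $k\neq 0,m$ combine Donnelly's equivariant APS theorem with the definition of $\S^{\mu^k}$ to reduce to a Chern--Gauss--Bonnet identity for the normal bundle of each fixed surface. The paper invokes this last step as ``an application of the Chern--Gauss--Bonnet Theorem to the normal bundle of $\Sigma_{k,j}$''; your Stokes-theorem argument with the canonical framing is exactly how one proves that.

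One small correction: your claimed bijection between the $2$-dimensional components $\Sigma_{k,j}\subset W^{\tau^k}$ and the $1$-dimensional components $K_{k,\ell}\subset Y^{\sigma^k}$ is not quite right --- a surface component may be closed or may have several boundary circles. The correct identity (which the paper states) is
\[
[\Sigma_{k,j}]^{2}-\int_{\Sigma_{k,j}}e(\nu(\Sigma_{k,j});g_{W})=\sum_{K_{k,\ell}\subset\del\Sigma_{k,j}}\tfrac{1}{2\pi}\,t(K_{k,\ell},g),
\]
and one then uses that the rotation angle $\psi$ and the sign $\varepsilon'$ are constant along each surface component to match the two sums. Your argument carries over verbatim once the indexing is fixed.
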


\begin{proof}
The cases $k=0,m$ follow from Equation \ref{eq:equality_non_equivariant_correection_term}. Now suppose $1\le k\le 2m-1$, $k\neq m$. Using Equation \ref{eq:equivariant_APS}, we see that:
\begin{align*}
    n^{\mu^{k}}(Y,\frak{s},\wh{\sigma},g)&=\Spin^{\mu^{k}}(W,\frak{t},\hat{\tau},g_{W})+\frac{1}{4}\sum_{i=1}^{m_{k}}\varepsilon_{k,i}R(\alpha_{k,i},\beta_{k,i}) \\
    &\qquad+\frac{1}{4}\sum_{j=1}^{n_{k}}\varepsilon_{k,j}'S(\psi_{k,j})\Big(\int_{\Sigma_{k,j}}e(\nu(\Sigma_{k,j});g_{W})+\sum_{K_{k,\ell}\subset\del\Sigma_{k,j}}\tfrac{1}{2\pi}t(K_{k,\ell},g)\Big).
\end{align*}
The proposition then follows from the equality
\[\int_{\Sigma_{k,j}}e(\nu(\Sigma_{k,j});g_{W})+\sum_{K_{k,\ell}\subset\del\Sigma_{k,j}}\tfrac{1}{2\pi}t(K_{k,\ell},g)=[\Sigma_{k,j}]^{2},\]
which in turn follows from an application of the Chern-Gauss-Bonnet Theorem to the normal bundle of $\Sigma_{k,j}$.
\end{proof}

\begin{remark}
\label{remark:spin_fillings_disjoint_unions}
Note that the correction term $n(Y,\frak{s},\wh{\sigma},g)$ can naturally be extended to disjoint unions of $\ZZ_{m}$-equivariant rational homology spheres, and that it is additive under disjoint unions. Using the fact that $\Omega^{\Spin,\ZZ_{m}}_{3}$ is torsion, we can extend the providence of Proposition \ref{prop:correction_term_spin_filling} as follows: let $K\geq 1$ be the order of $(Y,\frak{s},\wh{\sigma})$ in $\Omega^{\Spin,\ZZ_{m}}_{3}$. Then Proposition \ref{prop:correction_term_spin_filling} implies that
\[n(Y,\frak{s},\wh{\sigma},g)=\frac{1}{K}\Big(\Spin(W,\frak{t},\wh{\tau},g_{W})+\tfrac{1}{8}\S(W,\frak{t},\wh{\tau})\Big),\]
where $(W,\frak{t},\wh{\tau},g_{W})$ is a $\ZZ_{m}$-eequivariant Riemannian spin filling of the disjoint union of $K$ copies of $(Y,\frak{s},\wh{\sigma})$.
\end{remark}

We are now ready to prove Condition (1) of Proposition \ref{prop:properties_equivariant_correction_term_characters}:

\begin{proposition}
\label{prop:integrality}
Let $(Y,\frak{s},\wh{\sigma},g)$ be a $\ZZ_{m}$-equivariant Riemannian spin rational homology sphere. Then $\sum_{k=0}^{2m-1}n^{\mu^{k}}(Y,\frak{s},\wh{\sigma},g)\cdot\omega_{2m}^{-jk}\in\QQ$ for all $j=0,\dots 2m-1$.
\end{proposition}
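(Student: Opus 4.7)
The plan is to use the finiteness of equivariant spin cobordism to replace $(Y,\frak{s},\wh{\sigma})$ by its bounding 4-manifold and then reduce the rationality statement to a Galois-equivariance property for the characters of the $\S$-invariant.

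First, by Proposition \ref{prop:equivariant_cobordism_group} there exists $K\geq 1$ such that $K\cdot(Y,\frak{s},\wh{\sigma})$ admits an equivariant spin filling $(W,\frak{t},\wh{\tau})$. After choosing a $\ZZ_m$-equivariant Riemannian metric $g_W$ on $W$ which is a product near the boundary, Proposition \ref{prop:correction_term_spin_filling} together with Remark \ref{remark:spin_fillings_disjoint_unions} yields
\[n(Y,\frak{s},\wh{\sigma},g)=\tfrac{1}{K}\bigl(\Spin(W,\frak{t},\wh{\tau},g_W)+\tfrac{1}{8}\S(W,\frak{t},\wh{\tau})\bigr).\]
Since $\Spin(W,\frak{t},\wh{\tau},g_W)\in R(\ZZ_{2m})^{*}$ has integer Fourier coefficients (being a genuine virtual representation defined via APS boundary conditions), it suffices to prove that $\S(W,\frak{t},\wh{\tau})\in R(\ZZ_{2m})\otimes\QQ$.

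To do this, I would establish the Galois-equivariance
\[\gamma_s\bigl(\S^{\mu^k}(W,\frak{t},\wh{\tau})\bigr)=\S^{\mu^{ks\bmod 2m}}(W,\frak{t},\wh{\tau})\]
for every $s\in(\ZZ/2m\ZZ)^{\times}$ and the corresponding cyclotomic automorphism $\gamma_s\colon\omega_{2m}\mapsto\omega_{2m}^{s}$. The cases $k=0$ and $k=m$ are immediate since both characters are $\pm\sigma(W)\in\ZZ$ and $sm\equiv m\pmod{2m}$ (as $\gcd(s,2m)=1$ forces $s$ odd). For $1\le k\le 2m-1$, $k\ne m$: the condition $\gcd(s,m)=1$ forces $\<\tau^k\>=\<\tau^{ks}\>$ as subgroups of $\ZZ_m$, so the fixed-point sets $W^{\tau^k}$ and $W^{\tau^{ks}}$ coincide with the same indexing of isolated points and surfaces, and the rotation angles rescale by $s$: $(\alpha_{ks,i},\beta_{ks,i})=(s\alpha_{k,i},s\beta_{k,i})$ and $\psi_{ks,j}=s\psi_{k,j}$. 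Writing $\csc(\pi\ell/m)$ and $\cot(\pi\ell/m)$ as rational functions of $\omega_{2m}^{\pm\ell}$ and $i$, a direct computation gives $\gamma_s(R(\alpha,\beta))=R(s\alpha,s\beta)$ and $\gamma_s(S(\psi))=S(s\psi)$, the possible sign from $\gamma_s(i)=i^{s}$ cancelling in pairs within each product. The signs $\varepsilon_{k,i},\varepsilon'_{k,j}$ from Donnelly's theorem are invariant under $k\mapsto ks$ because they depend only on the spin-lift datum on the normal bundles at the fixed components, which is unchanged by rescaling weights by the unit $s$. With Galois-equivariance established, the change of variables $k'=ks\bmod 2m$ in the Fourier sum gives
\[\gamma_s\Bigl(\sum_{k=0}^{2m-1}\S^{\mu^k}\omega_{2m}^{-jk}\Bigr)=\sum_{k=0}^{2m-1}\S^{\mu^{ks}}\omega_{2m}^{-jks}=\sum_{k'=0}^{2m-1}\S^{\mu^{k'}}\omega_{2m}^{-jk'},\]
so each Fourier coefficient is fixed by $\Gal(\QQ(\omega_{2m})/\QQ)$ and hence rational; dividing by $K$ preserves rationality.

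The main obstacle is the sign-invariance claim $\varepsilon_{k,i}=\varepsilon_{ks,i}$ (and analogously for $\varepsilon'_{k,j}$), since these signs arise from delicate orientation and spin-lift choices in Donnelly's $G$-spin formula and must be tracked carefully through the substitution $k\mapsto ks$. Should the direct bookkeeping prove cumbersome, a more invariant alternative is to glue $W$ to a second equivariant spin filling $(W',\frak{t}',\wh{\tau}')$ of $K\cdot(Y,\frak{s},\wh{\sigma})$ to form a closed equivariant spin 4-manifold $\wh{W}$, invoke the closed-case identity $\S(\wh{W})=-8\,\Spin(\wh{W})\in R(\ZZ_{2m})$ from Proposition \ref{prop:properties_S_invariant}(2) to obtain integrality of the Fourier coefficients of $\S(\wh{W})$, and exploit the local, Novikov-type additivity of the terms defining $\S$ to transfer integrality of the difference $\S(W)-\S(W')$, thereby reducing the rationality question to a single convenient filling where the signs can be inspected directly.
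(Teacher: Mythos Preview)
Your Galois-equivariance approach is a genuinely different route from the paper's and is correct, though the sign issue you flag is the crux. The cleanest resolution is not to verify $\varepsilon_{ks,i}=\varepsilon_{k,i}$ directly, but to note that each local fixed-point term $-\tfrac{1}{4}\varepsilon_{k,i}R(\alpha_{k,i},\beta_{k,i})$ is, by the Atiyah--Bott fixed-point formula, a rational function in the character values at $\mu^k$ of the honest $\ZZ_{2m}$-representations on the spinor and tangent fibers at $p_i$; characters of genuine representations satisfy $\gamma_s(\chi(\mu^k))=\chi(\mu^{ks})$ automatically, so the entire local contribution is Galois-equivariant without any sign bookkeeping. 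With this observation your argument goes through.

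The paper instead takes the geometric route you sketch as an alternative, but with a key construction your proposal lacks. Gluing to an arbitrary second filling $W'$ would only yield $\S(W)\pm\S(W')\in R(\ZZ_{2m})$, which does not pin down $\S(W)$ alone. Instead the paper first attaches $0$-framed $2$-handles along the fixed-point link $L=\bigcup_k Y^{\sigma^k}\subset\partial W$ to obtain a cobordism to a manifold $Y_0$ carrying a \emph{free} $\ZZ_m$-action, and then caps off $\sqcup^K(-Y_0)$ with a free equivariant spin filling $W''$. The resulting closed manifold $X$ has fixed-point data coming solely from $W$ (with boundary circles capped by disks, same self-intersections, and no new fixed points from $W''$), so $\S^{\mu^k}(X)=K\,\S^{\mu^k}(W)$ for $k\neq 0,m$, and the closed-case identity $\S(X)=-8\,\Spin(X)\in R(\ZZ_{2m})$ finishes immediately. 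Your approach trades this geometric maneuver for a direct algebraic verification; the paper's buys simplicity by reducing to the closed $G$-spin theorem where the signs never need to be tracked.
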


\begin{proof}
By Remark \ref{remark:spin_fillings_disjoint_unions}, we can without loss of generality assume that $(Y,\frak{s},\wh{\sigma},g)$ admits a $\ZZ_{m}$-eequivariant Riemannian spin filling $(W,\frak{t},\wh{\tau},g_{W})$. Note that since $\Spin(W,\frak{t},\wh{\tau},g_{W})\in R(\ZZ_{2m})$, it suffices to show that $\S(W,\frak{t},\wh{\tau})\in R(\ZZ_{2m})\otimes\QQ$.

By attaching 4-dimensional 0-framed 2-handles to $-Y$ along $L=\cup_{k=1}^{m-1}Y^{\sigma^{k}}$, we obtain a spin cobordism $W'$ from $-Y$ to the manifold $-Y_{0}$ obtained by performing 0-surgery on the fixed point set of $\sigma^{k}$, with orientation opposite to that of $Y$. As in Section \ref{subsec:equivariant_cobordisms}, there is a natural extension of $\wh{\sigma}$ to a spin $\ZZ_{m}$-action $\wh{\tau}'$ on $W'$ which restricts to a free action on $-Y_{0}$. The fixed point sets of $\{(\wh{\tau}\cup_{\wh{\sigma}}\wh{\tau}')^{k}\}$ on $W\cup_{-Y} W'$ can be identified with the fixed point sets of $\{\tau^{k}\}$ on $W$, but with the boundary components capped off by disks. Futhermore, the self-intersection of all the surface components of the fixed point set of $W\cup_{-Y}W'$ is equal to the self-intersection of the corresponding components of the fixed point set of $W$.

Since the free $\ZZ_{m}$-equivariant spin cobordism group $\Omega^{\Spin,\ZZ_{m},\text{free}}_{3}$ is torsion, there exists some $K\geq 1$ such that the disjoint union $\sqcup^{K}(-Y_{0})$ of $K$ copies of $-Y_{0}$ admits a free $\ZZ_{m}$-equivariant spin filling. Applying (2) of Proposition \ref{prop:properties_S_invariant} to the closed manifold
\[X=(\sqcup^{K}(W\cup_{-Y}W'))\cup_{\sqcup^{K}(-Y_{0})}W'',\]
we have an identification
\[\Spin(X,\frak{t}_{X},\wh{\tau}_{X})=-\tfrac{1}{8}\S(X,\frak{t}_{X},\wh{\tau}_{X})=-\tfrac{K}{8}\cdot\S(W,\frak{t},\wh{\tau}),\]
where $\frak{t}_{X}$ and $\wh{\tau}_{X}$ are the spin structure and spin $\ZZ_{m}$-action constructed above. Since $\Spin(X,\frak{t}_{X},\wh{\tau}_{X})\in R(\ZZ_{2m})$, the proposition is thus proven.
\end{proof}

Next we will prove some properties about the equivariant correction term:

\begin{proposition}
\label{prop:correction_term_orientation_reversal}
Let $(Y,\frak{s},\wh{\sigma},g)$ be a $\ZZ_{m}$-equivariant Riemannian spin rational homology sphere, and let $(-Y,\frak{s},\wh{\sigma},g)$ denote its orientation reverse. Then
\begin{equation}
\label{eq:correction_term_orientation_reversal}
    n(Y,\frak{s},\wh{\sigma},g)+n(-Y,\frak{s},\wh{\sigma},g)=-k_{\dirac,\wh{\sigma},g},
\end{equation}
where $k_{\dirac,\wh{\sigma},g}=\ker(\dirac)\in R(\ZZ_{2m})^{*}_{\geq 0}$ denotes the kernel of the Dirac operator on $Y$, thought of as a complex $\ZZ_{2m}$-representation.
\end{proposition}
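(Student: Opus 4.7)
The strategy is to verify the equality character by character: I will show
\[n^{\mu^k}(Y, \frak{s}, \wh{\sigma}, g) + n^{\mu^k}(-Y, \frak{s}, \wh{\sigma}, g) = -\chi_{k_{\dirac, \wh{\sigma}, g}}(\mu^k), \qquad k = 0, \dots, 2m-1,\]
and then reassemble via the character-recovery formula in Definition \ref{def:equivariant_correction_term} to obtain the identity in $R(\ZZ_{2m})^{*}\otimes\QQ$. In every case, the equivariant reduced eta-invariant contribution already accounts for the right-hand side: under $Y \mapsto -Y$ the spectrum of $\dirac$ negates while $\ker \dirac$ is preserved as a $\ZZ_{2m}$-representation (with identical $\wh{\sigma}^{k}$-characters on matching eigenspaces), so $\eta^{\mu^{k}}$ is odd and $k^{\mu^{k}}_{\dirac, \wh{\sigma}, g}$ is invariant, giving $\ol{\eta}^{\mu^{k}}_{\dirac, \wh{\sigma}, g}(Y) + \ol{\eta}^{\mu^{k}}_{\dirac, \wh{\sigma}, g}(-Y) = -\chi_{k_{\dirac, \wh{\sigma}, g}}(\mu^{k})$.

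The cases $k = 0, m$ now follow immediately, since $\eta_{\sign, g}$ is also odd under orientation reversal and so drops out of $n^{\mu^{k}}(Y) + n^{\mu^{k}}(-Y)$. The remaining task, for $k \neq 0, m$, is to prove that the torsion terms in Definition \ref{def:equivariant_correction_term} cancel between $Y$ and $-Y$:
\[\sum_{j=1}^{\ell_{k}}\Big[\varepsilon_{k,j}(Y) S(\psi_{k,j}(Y)) t(K_{k,j}, g; Y) + \varepsilon_{k,j}(-Y) S(\psi_{k,j}(-Y)) t(K_{k,j}, g; -Y)\Big] = 0.\]
Three observations will suffice. First, the angles satisfy $\psi_{k,j}(-Y) \equiv -\psi_{k,j}(Y)$ modulo the $\psi \equiv -\psi$ equivalence, but $S(\psi) = \cot(\psi/2)\csc(\psi/2)$ is an even function, so $S(\psi_{k,j}(-Y)) = S(\psi_{k,j}(Y))$. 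Second, a direct computation with the Levi-Civita connection one-form (using that the canonical longitude of each rationally null-homologous $K_{k,j}$ coincides on $Y$ and $-Y$ after matching meridian orientations) shows $t(K_{k,j}, g; -Y) = -t(K_{k,j}, g; Y)$, the sign flip originating in the cross product defining the third frame vector. Third, the sign $\varepsilon_{k,j}$ is unchanged under $Y \mapsto -Y$: it depends only on local spin data at $K_{k,j}$ independent of the ambient orientation. Combining these, each summand is antisymmetric and the sum vanishes.

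The main obstacle is verifying the third observation cleanly, since the signs $\varepsilon_{k,j}$ are defined implicitly through Donnelly's fixed-point formula. Rather than handle them by a direct local check, I plan to bootstrap from the spin-filling description: by Remark \ref{remark:spin_fillings_disjoint_unions}, choose a $\ZZ_{m}$-equivariant Riemannian spin filling $(W, \frak{t}, \wh{\tau}, g_{W})$ of the disjoint union of $K$ copies of $(Y, \frak{s}, \wh{\sigma}, g)$; then $(-W, \frak{t}, \wh{\tau}, g_{W})$ fills the corresponding $K$ copies of $(-Y, \frak{s}, \wh{\sigma}, g)$, and by Proposition \ref{prop:correction_term_spin_filling} together with the eta identity above, the torsion cancellation is equivalent to the topological identity $\S^{\mu^{k}}(W) + \S^{\mu^{k}}(-W) = 0$. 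This in turn follows by doubling: $D(W) = W \cup_{Y} (-W)$ admits an orientation-reversing equivariant spin self-diffeomorphism (the reflection through $Y$), so $\S^{\mu^{k}}(D(W)) = -\S^{\mu^{k}}(D(W))$ and hence $\S^{\mu^{k}}(D(W)) = 0$; and the closed-manifold identity $\Spin(X) = -\tfrac{1}{8}\S(X)$ of Proposition \ref{prop:properties_S_invariant}(2), together with the elementary fact that $\Dirac^{+}_{-X}$ is unitarily conjugate to $\Dirac^{-}_{X}$ for any closed equivariant spin 4-manifold $X$, combined with the decomposition of the fixed-point set of $D(W)$ into interior components inherited from $W$ and $-W$ plus capped-off surfaces $\Sigma \cup_{\partial}(-\Sigma)$ (and the standard $[\Sigma]^{2}_{-W} = -[\Sigma]^{2}_{W}$), shows that local contributions to $\S^{\mu^{k}}(D(W))$ split additively into those of $W$ and $-W$, yielding $\S^{\mu^{k}}(W) + \S^{\mu^{k}}(-W) = 0$ as required.
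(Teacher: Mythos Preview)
Your approach is the paper's: reduce to characters, dispose of $k=0,m$ via the non-equivariant statement, and for $k\neq 0,m$ use that $\eta^{\mu^{k}}$ and $t(K_{k,j},g)$ are odd under orientation reversal while $k^{\mu^{k}}_{\dirac,\wh{\sigma},g}$ and $\varepsilon_{k,j}$ are invariant. The paper does exactly this, asserting in one line that ``one can check that $\eta^{\mu^{k}}(-Y)=-\eta^{\mu^{k}}(Y)$, $k^{\mu^{k}}(-Y)=k^{\mu^{k}}(Y)$, $\varepsilon_{k,j}(-Y)=\varepsilon_{k,j}(Y)$, and $t(K_{k,j},g,-Y)=-t(K_{k,j},g,Y)$'' and concluding. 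Your three observations are precisely these four checks (you correctly note $S(\psi)$ is even, which the paper absorbs into its angle convention).

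Where you diverge is the bootstrap you propose for observation~3. The paper does not do this; it treats the sign invariance as a direct local check. Your bootstrap is more involved and does not actually avoid the sign question: the additivity step $\S^{\mu^{k}}(D(W))=\S^{\mu^{k}}(W)+\S^{\mu^{k}}(-W)$ requires matching the local fixed-point contributions of $D(W)$ with those of $W$ and $-W$, and for this you need to know that the signs $\varepsilon_{k,i},\varepsilon'_{k,j}$ at a fixed component are local invariants of the equivariant spin germ, independent of the ambient orientation. That is essentially observation~3 transplanted to dimension four. (One can also see the issue at isolated fixed points: since $R(\alpha,\beta)$ is even in the angles, the cancellation in $\S^{\mu^{k}}(W)+\S^{\mu^{k}}(-W)$ for such points rests entirely on how $\varepsilon_{k,i}$ transforms.) So the bootstrap is correct in spirit but circular as a way of sidestepping the direct verification; you should simply assert the local sign invariance, as the paper does.
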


\begin{proof}
It suffices to show that
\begin{equation}
\label{eq:trace_correction_term_orientation_reversal}
    n^{\mu^{k}}(Y,\frak{s},\wh{\sigma},g)+n^{\mu^{k}}(-Y,\frak{s},\wh{\sigma},g)=-k^{\mu^{k}}_{\dirac,\wh{\sigma},g}
\end{equation}
for each $k=0,\dots,2m-1$. The cases $k=0,m$ follow from the proof of (\cite{Man16}, Proposition 3.8), so it suffices to consider the cases $k=1,\dots,2m-1$, $k\neq m$. If we write
\begin{align*}
    &n^{\mu^{k}}(Y,\frak{s},\wh{\sigma},g)=\frac{1}{2}\Big(\eta^{\mu^{k}}_{\dirac,\wh{\sigma},g}(Y)-k^{\mu^{k}}_{\dirac,\wh{\sigma},g}(Y)\Big)+\frac{1}{8\pi}\sum_{j=1}^{n_{k}}\varepsilon_{k,j}(Y)S(\psi_{k,j})t(K_{k,j},g,Y), \\
    &n^{\mu^{k}}(-Y,\frak{s},\wh{\sigma},g)=\frac{1}{2}\Big(\eta^{\mu^{k}}_{\dirac,\wh{\sigma},g}(-Y)-k^{\mu^{k}}_{\dirac,\wh{\sigma},g}(-Y)\Big)+\frac{1}{8\pi}\sum_{j=1}^{n_{k}}\varepsilon_{k,j}(-Y)S(\psi_{k,j})t(K_{k,j},g,-Y),
\end{align*}
one can check that $\eta^{\mu^{k}}_{\dirac,\wh{\sigma},g}(-Y)=-\eta^{\mu^{k}}_{\dirac,\wh{\sigma},g}(Y)$, $k^{\mu^{k}}_{\dirac,\wh{\sigma},g}(-Y)=k^{\mu^{k}}_{\dirac,\wh{\sigma},g}(Y)$, $\varepsilon_{k,j}(-Y)=\varepsilon_{k,j}(Y)$, and $t(K_{k,j},g,-Y)=-t(K_{k,j},g,Y)$, from which Equation \ref{eq:trace_correction_term_orientation_reversal} follows.
\end{proof}

\begin{proposition}
\label{prop:correction_term_change_of_spin_lift}
Let $(Y,\frak{s},\wh{\sigma},g)$ be a $\ZZ_{m}$-equivariant Riemannian spin rational homology sphere, and let $-\wh{\sigma}$ denote the opposite spin lift of $\wh{\sigma}$. Then
\[n(Y,\frak{s},-\wh{\sigma},g)=\xi^{m}n(Y,\frak{s},\wh{\sigma},g)\in R(\ZZ_{2m})\otimes\QQ.\]
Similarly let $(W,\frak{t},\wh{\tau},g_{W})$ be a $\ZZ_{m}$-equivariant Riemannian spin filling of $(Y,\frak{s},\wh{\sigma},g)$. Then
\begin{align*}
&\Spin(W,\frak{t},-\wh{\tau},g_{W})=\xi^{m}\Spin(W,\frak{t},\wh{\tau},g_{W}), & &\S(W,\frak{t},-\wh{\tau})=\xi^{m}\S(W,\frak{t},\wh{\tau}).	
\end{align*}
\end{proposition}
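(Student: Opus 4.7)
The plan is to reduce all three identities to identities on individual characters (i.e., traces at each $\mu^{k}\in\ZZ_{2m}$), and then verify each character-level identity using the key observation that $(-\wh{\tau})^{k}$ acts on spinors as $(-1)^{k}\cdot\wh{\tau}^{k}_{\#}$ (and likewise $(-\wh{\sigma})^{k}_{\#}=(-1)^{k}\wh{\sigma}^{k}_{\#}$). The passage from character identities to identities in $R(\ZZ_{2m})\otimes\CC$ is provided by the fact that for any $x\in R(\ZZ_{2m})\otimes\CC$, the character of $\xi^{m}x$ at $\mu^{k}$ equals $\omega_{2m}^{km}\chi_{x}(\mu^{k})=(-1)^{k}\chi_{x}(\mu^{k})$. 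Thus it will suffice to establish, for each $k=0,\dots,2m-1$, the three character identities
\begin{align*}
n^{\mu^{k}}(Y,\frak{s},-\wh{\sigma},g)&=(-1)^{k}n^{\mu^{k}}(Y,\frak{s},\wh{\sigma},g), \\
\Spin^{\mu^{k}}(W,\frak{t},-\wh{\tau},g_{W})&=(-1)^{k}\Spin^{\mu^{k}}(W,\frak{t},\wh{\tau},g_{W}), \\
\S^{\mu^{k}}(W,\frak{t},-\wh{\tau})&=(-1)^{k}\S^{\mu^{k}}(W,\frak{t},\wh{\tau}).
\end{align*}

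The $\Spin$ identity is the cleanest: the operator $\Dirac_{W}^{\pm}$ depends only on $g_{W}$ and $\frak{t}$, so the harmonic spinor spaces $\wt{\H}^{\pm}=\ker\Dirac_{W}^{\pm}$ are unchanged as vector spaces when $\wh{\tau}$ is replaced by $-\wh{\tau}$; only their $\ZZ_{2m}$-actions change, and they change by tensoring with the character $\mu^{k}\mapsto(-1)^{k}$. Equivalently, $[\wt{\H}^{\pm}]_{-\wh{\tau}}=\xi^{m}\cdot[\wt{\H}^{\pm}]_{\wh{\tau}}$ in $R(\ZZ_{2m})$, giving $\Spin(W,\frak{t},-\wh{\tau},g_{W})=\xi^{m}\Spin(W,\frak{t},\wh{\tau},g_{W})$ immediately.

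For the $\S$ identity I would argue case-by-case on $k$. When $k=0$, $\S^{\mu^{0}}=\sigma(W)$ is manifestly independent of the spin lift. When $k=m$, the parity $\varepsilon$ of $-\wh{\tau}$ equals $(-1)^{m}\varepsilon_{\wh{\tau}}$, yielding exactly a factor of $(-1)^{m}$. For $k\neq 0,m$ the geometric inputs $\alpha_{k,i},\beta_{k,i},\psi_{k,j},[\Sigma_{k,j}]^{2}$ and the functions $R,S$ depend only on $\tau$, so the only spin-lift-dependent quantities are the signs $\varepsilon_{k,i},\varepsilon_{k,j}'$. Each such sign arises in Donnelly's fixed-point formula (up to universal factors) as the trace of $\wh{\tau}^{k}$ on a fiber of the spin bundle at the fixed point, and so is multiplied by $(-1)^{k}$ under $\wh{\tau}\mapsto-\wh{\tau}$. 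The correction-term identity on $Y$ would then follow by assembling the $\Spin$ and $\S$ identities via a $\ZZ_{m}$-equivariant spin filling of $K$ disjoint copies of $(Y,\frak{s},\wh{\sigma})$ supplied by Remark~\ref{remark:spin_fillings_disjoint_unions} and Proposition~\ref{prop:correction_term_spin_filling}, using that $(W,\frak{t},\wh{\tau})$ is a filling of $(Y,\frak{s},\wh{\sigma})$ if and only if $(W,\frak{t},-\wh{\tau})$ is a filling of $(Y,\frak{s},-\wh{\sigma})$.

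The main obstacle will be verifying rigorously that the signs $\varepsilon_{k,i},\varepsilon_{k,j}'$ indeed acquire exactly a factor of $(-1)^{k}$ under negation of the spin lift; this requires tracing these signs back to their origin in the equivariant index formula rather than using the $G$-spin theorem as a black box. If the direct approach proves too delicate, an alternative route is to bypass the $\S$-identity first and instead prove the $n$-identity directly from its defining formula --- using $(-\wh{\sigma})^{k}_{\#}=(-1)^{k}\wh{\sigma}^{k}_{\#}$ to deduce $\eta^{\mu^{k}}_{\dirac,-\wh{\sigma},g}=(-1)^{k}\eta^{\mu^{k}}_{\dirac,\wh{\sigma},g}$ and the corresponding identity for $k^{\mu^{k}}_{\dirac,\wh{\sigma},g}$ --- and then deduce the $\S$-identity for fillings by rearranging Proposition~\ref{prop:correction_term_spin_filling}.
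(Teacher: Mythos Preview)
Your approach is essentially the same as the paper's, with one notable difference in how a single step is carried out. Both reduce to character identities, both establish the $\Spin$ identity directly from $(-\wh{\tau})_{\#}=-\wh{\tau}_{\#}$ on spinors, and both deduce the $n$-identity from the $\Spin$ and $\S$ identities via Proposition~\ref{prop:correction_term_spin_filling}. The divergence is in how the sign transformation $\varepsilon_{k,i}(-\wh{\tau})=(-1)^{k}\varepsilon_{k,i}(\wh{\tau})$ (and likewise for $\varepsilon'_{k,j}$) is obtained.

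You propose to trace these signs back to their origin as spinor-fiber traces, and you explicitly flag the black-box use of the $G$-spin theorem as something to avoid. The paper does exactly the opposite: it \emph{uses} Donnelly's formula (Equation~\ref{eq:equivariant_APS}) as a black box. Since $\Spin^{\mu^{k}}$ and the equivariant eta-invariant $\ol{\eta}^{\mu^{k}}_{\dirac}$ both already transform by $(-1)^{k}$, and all remaining ingredients in the formula ($R$, $S$, the angles, the Euler integrals) are spin-lift-independent, the sign transformation is forced by comparing the two instances of the formula. This is shorter than unpacking the signs' definition; what it buys is that you never have to say precisely what $\varepsilon_{k,i}$ \emph{is}, only that it is the unique quantity making Donnelly's formula hold at that fixed component.

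Your alternative route---prove the $n$-identity directly from the eta-invariants and then rearrange Proposition~\ref{prop:correction_term_spin_filling} to obtain $\S$---has a small gap as stated: the defining formula for $n^{\mu^{k}}$ (Definition~\ref{def:equivariant_correction_term}) also contains torsion terms $\frac{1}{8\pi}\sum_{j}\varepsilon_{k,j}S(\psi_{k,j})t(K_{k,j},g)$ carrying the same spin-lift-dependent signs $\varepsilon_{k,j}$, so this route does not actually bypass the sign question.
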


\begin{proof}
Note that $\Spin(W,\frak{t},-\wh{\tau},g_{W})=\xi^{m}\Spin(W,\frak{t},\wh{\tau},g_{W})$ follows from the fact that $(-\wh{\tau})(\phi)=-(\wh{\tau}(\phi))$ for all $\phi\in\Gamma(\SS_{W}^{+})$. In view of Proposition \ref{prop:correction_term_spin_filling} it suffices to show that $\S(W,\frak{t},-\wh{\tau})=\xi^{m}\S(W,\frak{t},\wh{\tau})$, or equivalently that
\[\S^{\mu^{k}}(W,\frak{t},-\wh{\tau})=(-1)^{k}\S^{\mu^{k}}(W,\frak{t},\wh{\tau}).\]
Since $\Spin^{\mu^{k}}(W,\frak{t},-\wh{\tau},g_{W})=(-1)^{k}\Spin^{\mu^{k}}(W,\frak{t},\wh{\tau},g_{W})$, by the $G$-spin theorem this implies that 
\begin{align*}
	&\varepsilon_{k,i}(-\wh{\tau})=(-1)^{k}\varepsilon_{k,i}(\wh{\tau}), & &\varepsilon'_{k,j}(-\wh{\tau})=(-1)^{k}\varepsilon'_{k,j}(\wh{\tau}),
\end{align*}
for all $i,j,k$, from which the result follows.
\end{proof}

We introduce some additional notation which we will use throughout the paper.

\begin{definition}
\label{def:vector_versions}
Let $m\geq 2$ be an integer, and let $\R^{\mu^{0}},\R^{\mu^{1}},\cdots,\R^{\mu^{2m-1}}\in\CC$ be such that
\[\R=\frac{1}{2m}\sum_{j=0}^{2m-1}\Big(\sum_{k=0}^{2m-1}\R^{\mu^{k}}\cdot\omega^{-jk}\Big)\xi^{j}\in R(\ZZ_{2m})^{*}\otimes\QQ\]
for $*\in\{\ev,\odd\}$. Define $\vec{\R}$ to be the vector 
\[\vec{\R}:=\twopartdef{(\R_{0},\R_{1},\dots,\R_{m-1})\in\QQ^{m}}{*=\ev,}
{(\R_{1/2},\R_{3/2},\dots,\R_{m-1/2})\in\QQ^{m}_{1/2}}{*=\odd,}\]
where for each $j=0,\dots,m-1$:
\begin{align*}
    &\R_{j}:=\tfrac{1}{2m}\sum_{k=0}^{2m-1}\R^{\mu^{k}}\cdot\omega_{2m}^{-2jk}& &(\text{if }\ast=\ev), \\
    &\R_{j+\frac{1}{2}}:=\tfrac{1}{2m}\sum_{k=0}^{2m-1}\R^{\mu^{k}}\cdot\omega_{2m}^{-(2j+1)k}& &(\text{if }\ast=\odd).
\end{align*}
\end{definition}

In the particular cases where $\R$ is equal to one of the rational virtual representations $n(Y,\frak{s},\wh{\sigma},g)$, $\Spin(W,\frak{t},\wh{\tau},g_{W})$, or $\S(W,\frak{t},\wh{\tau})$, defined in Definition \ref{def:equivariant_correction_term}, Section \ref{subsubsec:g_spin_theorem}, and Definition \ref{def:S_invariant}, respectively, we obtain corresponding vectors $\vec{n}(Y,\frak{s},\wh{\sigma},g)$, $\overset{\longrightarrow}{\Spin}(W,\frak{t},\wh{\tau},g_{W})$, and $\vec{\S}(W,\frak{t},\wh{\tau})$. Note that by construction, we have that
\[\vec{n}(Y,\frak{s},\wh{\sigma},g)=\overset{\longrightarrow}{\Spin}(W,\frak{t},\wh{\tau},g_{W})+\tfrac{1}{8}\vec{\S}(W,\frak{t},\wh{\tau}).\]

The following corollary follows immediately from Proposition \ref{prop:correction_term_change_of_spin_lift}:

\begin{corollary}
\label{cor:correction_term_change_of_spin_lift_vector}
Let $(Y,\frak{s},\wh{\sigma},g)$ be a $\ZZ_{m}$-equivariant Riemannian spin rational homology sphere, and let $-\wh{\sigma}$ denote the opposite spin lift of $\wh{\sigma}$. Then
\[n(Y,\frak{s},-\wh{\sigma},g)_{j}=n(Y,\frak{s},\wh{\sigma},g)_{j+m/2},\]
where we use the standard cyclic indexing convention. Consequently:
\[\DDD^{*}(\vec{n}(Y,\frak{s},\wh{\sigma},g))=\DDD^{*}(\vec{n}(Y,\frak{s},-\wh{\sigma},g))\in \QQ^{m}.\]
The corresponding equalities hold for $\overset{\longrightarrow}{\Spin}(W,\frak{t},\wh{\tau},g_{W})$ and $\vec{\S}(W,\frak{t},\wh{\tau})$ with respect to replacing $\wh{\tau}$ by $-\wh{\tau}$.
\end{corollary}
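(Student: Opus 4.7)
The plan is to translate the multiplicative relation in Proposition \ref{prop:correction_term_change_of_spin_lift} into an additive shift on characters, and then convert that shift into an index shift on the coefficient vector via the Fourier inversion formula in Definition \ref{def:vector_versions}. From $n(Y,\frak{s},-\wh{\sigma},g) = \xi^{m} n(Y,\frak{s},\wh{\sigma},g)$ I extract
\[
n(Y,\frak{s},-\wh{\sigma},g)^{\mu^{k}} = \omega_{2m}^{mk}\, n(Y,\frak{s},\wh{\sigma},g)^{\mu^{k}} = (-1)^{k}\, n(Y,\frak{s},\wh{\sigma},g)^{\mu^{k}}
\]
for every $k = 0,\dots,2m-1$, and analogously for $\Spin(W,\frak{t},\wh{\tau},g_{W})$ and $\S(W,\frak{t},\wh{\tau})$ using the other two identities in Proposition \ref{prop:correction_term_change_of_spin_lift}.

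Next I substitute this character identity into the Fourier formula for $n_{j}$ (or $n_{j+1/2}$, depending on parity) from Definition \ref{def:vector_versions}. The key identity $(-1)^{k} = \omega_{2m}^{mk}$ absorbs the sign into the exponential and effectively shifts the frequency variable by $m$; unpacking this shows that the entry of $\vec{n}(Y,\frak{s},-\wh{\sigma},g)$ at index $j$ equals the entry of $\vec{n}(Y,\frak{s},\wh{\sigma},g)$ at index $j+m/2$, with addition carried out cyclically modulo $m$. When $m$ is odd, $\wh{\sigma}$ and $-\wh{\sigma}$ have opposite parity by Proposition \ref{prop:classification_spin_lifts_n_manifolds}(2), so one vector is indexed by $\{0,\dots,m-1\}$ and the other by $\{\tfrac{1}{2},\dots,m-\tfrac{1}{2}\}$; since $m/2$ is itself a half-integer in this case, the shift correctly interchanges the two index sets.

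For the ``consequently'' statement I observe that both doubling maps $\DDD^{\ev}$ and $\DDD^{\odd}$ absorb the $m/2$ shift. When $m$ is even, the two spin lifts share a common parity $\ast$, and $\DDD^{\ast}$ sends positions $i$ and $i+m/2$ to the same target $2i\pmod{m}$; consequently swapping the values at these two positions---which is exactly the effect of the spin-lift flip---leaves $\DDD^{\ast}(\vec{n})$ unchanged. When $m$ is odd, the two spin lifts have opposite parity, so $\DDD^{\ev}$ is used on one side and $\DDD^{\odd}$ on the other; both are bijections at the level of indices, and a short computation using $2^{-1} \equiv (m+1)/2 \pmod{m}$ shows that $\DDD^{\ev}$ precomposed with the $m/2$ shift agrees on the nose with $\DDD^{\odd}$, delivering the stated equality.

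The main obstacle is the bookkeeping between integer and half-integer indexing conventions as the parity of the spin lift interacts with the parity of $m$. Once this is untangled, the whole corollary reduces to routine Fourier inversion on $\ZZ_{2m}$ combined with the elementary properties of $\DDD^{\ast}$ recorded in Definition \ref{def:more_lattice_notation}.
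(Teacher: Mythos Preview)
Your proof is correct and follows the same approach as the paper, which simply states that the corollary ``follows immediately from Proposition \ref{prop:correction_term_change_of_spin_lift}.'' You have carefully unpacked what ``immediately'' means here: translating the identity $n(Y,\frak{s},-\wh{\sigma},g)=\xi^{m}n(Y,\frak{s},\wh{\sigma},g)$ first into characters and then into coefficient vectors via Definition \ref{def:vector_versions}, and finally checking that the index shift by $m/2$ is killed by $\DDD^{*}$. One could also bypass the character computation and read off the index shift directly from multiplication by $\xi^{m}$ in $R(\ZZ_{2m})$, but your route via characters is equally valid and makes the parity bookkeeping for odd $m$ transparent.
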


\begin{definition}
\label{def:frak_S}
Let $(W,\frak{t},\tau)$ be a $\ZZ_{m}$-equivariant spin 4-manifold with $b_{1}(\del W)=0$. We define
\[\vec{\SSS}(W,\frak{t},\tau):=\DDD^{*}(\vec{\S}(W,\frak{t},\wh{\tau}))\in\QQ^{m}\]
where $\wh{\tau}$ is any spin lift of $\tau$, and $\vec{\S}(W,\frak{t},\wh{\tau})$ is the vector (see Definition \ref{def:vector_versions}) corresponding to the representation $\S(W,\frak{t},\wh{\tau})\in R(\ZZ_{2m})^{*}\otimes\QQ$ from Definition \ref{def:S_invariant}. For each $\ell=0,\dots,m-1$ we define $\SSS(W,\frak{t},\tau)_{\ell}\in\QQ$ to be the $\ell$-th component of $\vec{\SSS}(W,\frak{t},\tau)$. More precisely, we have that:
\begin{enumerate}
	\item If $m$ is even and $\wh{\tau}$ is an even spin lift, then
	\[\SSS(W,\frak{t},\tau)_{\ell}=\twopartdef{\S(W,\frak{t},\wh{\tau})_{\frac{\ell}{2}}+\S(W,\frak{t},\wh{\tau})_{\frac{\ell+m}{2}}}{\ell\text{ even},}{0}{\ell\text{ odd}.}\]
	\item If $m$ is even and $\wh{\tau}$ is an odd spin lift, then
	\[\SSS(W,\frak{t},\tau)_{\ell}=\twopartdef{0}{\ell\text{ even},}{\S(W,\frak{t},\wh{\tau})_{\frac{\ell}{2}}+\S(W,\frak{t},\wh{\tau})_{\frac{\ell+m}{2}}}{\ell\text{ odd}.}\]
	\item If $m$ is odd and $\wh{\tau}$ is an even spin lift, then
	\[\SSS(W,\frak{t},\tau)_{\ell}=\twopartdef{\S(W,\frak{t},\wh{\tau})_{\frac{\ell}{2}}}{\ell\text{ even},}{\S(W,\frak{t},\wh{\tau})_{\frac{\ell+m}{2}}}{\ell\text{ odd}.}\]
	\item If $m$ is odd and $\wh{\tau}$ is an odd spin lift, then
	\[\SSS(W,\frak{t},\tau)_{\ell}=\twopartdef{\S(W,\frak{t},\wh{\tau})_{\frac{\ell+m}{2}}}{\ell\text{ even},}{\S(W,\frak{t},\wh{\tau})_{\frac{\ell}{2}}}{\ell\text{ odd}.}\]
\end{enumerate}
Here we use the cyclic indexing convention as per usual. By Corollary \ref{cor:correction_term_change_of_spin_lift_vector} each $\SSS(W,\frak{t},\tau)_{\ell}\in\QQ$ is independent of the choice of spin lift.
\end{definition}

Next, we will discuss an alternate choice of notation for the invariants defined above in the case of even spin lifts, which may be more useful in certain contexts. More precisely, let $m\geq 2$ be an integer and let $\R\in R(\ZZ_{2m})^{\ev}\otimes\QQ$ be a rational virtual representation with corresponding characters $\R^{\mu^{0}},\R^{\mu^{1}},\cdots,\R^{\mu^{2m-1}}\in\CC$ so that
\[\R=\frac{1}{2m}\sum_{j=0}^{2m-1}\Big(\sum_{k=0}^{2m-1}\R^{\mu^{k}}\cdot\omega^{-jk}\Big)\xi^{j}.\]
for $*\in\{\ev,\odd\}$. Using the isomorphism $R(\ZZ_{2m})^{\ev}\cong R(\ZZ_{m})$, we can alternatively view $\R$ as the (rational, virtual) $\ZZ_{m}$-representation
\[\R=\frac{1}{m}\sum_{j=0}^{m-1}\Big(\sum_{k=0}^{m-1}\R^{\gamma^{k}}\cdot\omega^{-jk}\Big)\zeta^{j}\in R(\ZZ_{m})\otimes\QQ,\]
where $\R^{\gamma^{k}}:=\R^{\mu^{k}}=\R^{\mu^{k+m}}$ for all $k=0,\dots,m-1$. Here, we are considering $\R^{\gamma^{k}}$ as the character of the $\ZZ_{m}$-representation $\R$ at $\gamma^{k}\in\<\gamma\>=\ZZ_{m}$. This alternate notation will sometimes be used in the cases where
\[\R=n(Y,\frak{s},\wh{\sigma},g),\;\Spin(W,\frak{t},\wh{\tau},g_{W}),\text{ or }\S(W,\frak{t},\wh{\tau})\]
in the case of even spin lifts, but we will also freely use this notation in other situations, e.g., equivariant eta-invariants. For example, for each $k=0,\dots,m-1$, we can alternatively write the character of $n(Y,\frak{s},\wh{\sigma},g)$ at $\gamma^{k}\in\ZZ_{m}$ as follows:
\begin{equation}
\label{eq:correction_term_even}
    n^{\gamma^{k}}(Y,\frak{s},\wh{\sigma},g):=\twopartdef{\ol{\eta}_{\dirac,g}-\tfrac{1}{8}\eta_{sign,g}}{k=0,}{\ol{\eta}^{\gamma^{k}}_{\dirac,\wh{\sigma},g}+\frac{1}{8\pi}\sum_{j=1}^{n_{k}}\varepsilon_{k,j}S(\psi_{k,j})t(K_{k,j},g)}{k\neq 0.}
\end{equation}
We invite the reader to recast all of the above material in the setting of even spin lifts using this alternate notation, if so desired.

We conclude this section with the following proposition, which points out a simplification of the equivariant correction term in the case of involutions:

\begin{proposition}
\label{prop:correction_term_involutions}
Let $(Y,\frak{s},\wh{\iota},g)$ be a $\ZZ_{2}$-equivariant Riemannian spin rational-homology sphere.
\begin{enumerate}
    \item If $\wh{\iota}$ is of even type, then
    \begin{align*}
        &n(Y,\frak{s},\wh{\iota},g)_{0}=\tfrac{1}{2}(n(Y,\frak{s},g)+\ol{\eta}^{\gamma}_{\dirac,\wh{\iota},g}), &
        &n(Y,\frak{s},\wh{\iota},g)_{1}=\tfrac{1}{2}(n(Y,\frak{s},g)-\ol{\eta}^{\gamma}_{\dirac,\wh{\iota},g}).
    \end{align*}
    \item If $\wh{\iota}$ is of odd type, then
    \begin{align*}
        &n(Y,\frak{s},\wh{\iota},g)_{\frac{1}{2}}=\tfrac{1}{2}(n(Y,\frak{s},g)-i\ol{\eta}^{\mu}_{\dirac,\wh{\iota},g}), &
        &n(Y,\frak{s},\wh{\iota},g)_{\frac{3}{2}}=\tfrac{1}{2}(n(Y,\frak{s},g)+i\ol{\eta}^{\mu}_{\dirac,\wh{\iota},g}).
    \end{align*}
\end{enumerate}
\end{proposition}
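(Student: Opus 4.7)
The plan is to compute the characters $n^{\mu^k}(Y,\frak{s},\wh{\iota},g)$ for $k=0,1,2,3$ directly from Definition \ref{def:equivariant_correction_term} and then invert the discrete Fourier transform using the formulas from Definition \ref{def:vector_versions}. First I would observe that in both parity cases, $n^{\mu^0}(Y,\frak{s},\wh{\iota},g) = \ol{\eta}_{\dirac,g} - \tfrac{1}{8}\eta_{\sign,g} = n(Y,\frak{s},g)$, while $n^{\mu^2}(Y,\frak{s},\wh{\iota},g) = \varepsilon \cdot n(Y,\frak{s},g)$ and $n^{\mu^3}(Y,\frak{s},\wh{\iota},g) = \varepsilon \cdot n^{\mu^1}(Y,\frak{s},\wh{\iota},g)$ with $\varepsilon=+1$ in the even case and $\varepsilon=-1$ in the odd case. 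It therefore suffices to compute $n^{\mu^1}$ in each case.

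For the even case, Proposition \ref{prop:classification_spin_lifts_3_manifolds}(1) forces $Y^{\iota}=\emptyset$ whenever $\wh{\iota}$ is an even spin lift of the involution $\iota$, so the torsion sum $\frac{1}{8\pi}\sum_{j=1}^{n_1}\varepsilon_{1,j}S(\psi_{1,j})t(K_{1,j},g)$ in Definition \ref{def:equivariant_correction_term} is vacuous. This gives $n^{\mu^1}(Y,\frak{s},\wh{\iota},g) = \ol{\eta}^{\mu}_{\dirac,\wh{\iota},g} = \ol{\eta}^{\gamma}_{\dirac,\wh{\iota},g}$ in the alternate $\ZZ_2$-notation. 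Substituting into the formula $n(Y,\frak{s},\wh{\iota},g)_j = \tfrac{1}{4}\sum_{k=0}^{3} n^{\mu^k}\,\omega_4^{-2jk}$ for $j=0,1$, using $\omega_4=i$ and the relations above, produces the two expressions in part (1).

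For the odd case, Proposition \ref{prop:classification_spin_lifts_3_manifolds}(1)(b) tells us that $Y^{\iota}$ is a nonempty $1$-manifold, so $\iota$ acts on each normal bundle $\nu(K_{1,j})$ by rotation by some angle $\psi_{1,j}$, and since $\iota$ is an involution one has $\psi_{1,j}=\pi$ for every component. The identity
\[
S(\pi) = \cot(\pi/2)\csc(\pi/2) = 0
\]
then forces the torsion correction to vanish identically, so $n^{\mu^1}(Y,\frak{s},\wh{\iota},g) = \ol{\eta}^{\mu}_{\dirac,\wh{\iota},g}$. Plugging this together with $n^{\mu^2} = -n(Y,\frak{s},g)$ and $n^{\mu^3} = -\ol{\eta}^{\mu}_{\dirac,\wh{\iota},g}$ into $n(Y,\frak{s},\wh{\iota},g)_{j+\frac{1}{2}} = \tfrac{1}{4}\sum_{k=0}^{3} n^{\mu^k}\,\omega_4^{-(2j+1)k}$ for $j=0,1$ yields the claimed formulas in part (2).

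I do not expect substantial obstacles: the proof is essentially bookkeeping once one recognizes that in both cases the torsion contribution disappears --- trivially in the even case because the fixed locus is empty, and in the odd case because $S(\pi)=0$ is precisely the angle forced by an involution. The only mild subtlety is keeping track of the conventions relating the $R(\ZZ_4)^{\ev}\cong R(\ZZ_2)$ identification on the even side so that $\ol{\eta}^{\gamma}_{\dirac,\wh{\iota},g}$ and $\ol{\eta}^{\mu}_{\dirac,\wh{\iota},g}$ are correctly matched.
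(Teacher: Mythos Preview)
Your proposal is correct and matches the paper's proof essentially verbatim: both cases reduce to showing the torsion contribution in $n^{\mu^{1}}$ vanishes (vacuously in the even case since $Y^{\iota}=\emptyset$, and via $S(\pi)=0$ in the odd case), followed by the Fourier inversion. One small inaccuracy: you assert that in the odd case $Y^{\iota}$ is \emph{nonempty}, but Proposition~\ref{prop:classification_spin_lifts_3_manifolds} only gives the converse direction (nonempty fixed set forces odd type), and the Remark following it explicitly notes that odd-type free involutions exist; this does not affect your argument, since an empty fixed set makes the torsion sum vanish trivially.
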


\begin{proof}
If $\wh{\iota}$ is even, this follows from the fact that $Y^{\iota}=\emptyset$. If $\wh{\iota}$ is odd, let $Y^{\iota}=K_{1}\cup\cdots\cup K_{n}$. Recall that
\[n^{\mu}(Y,\frak{s},\wh{\iota},g)=-n^{\mu^{3}}(Y,\frak{s},\wh{\iota},g)=\ol{\eta}^{\mu}_{\dirac,\wh{\iota},g}+\frac{1}{8\pi}\sum_{j=1}^{n}\varepsilon_{j}S(\psi_{j})\tau(K_{j},g),\]
where $\psi_{j}\in\RR/2\pi\ZZ$ is the angle by which $\iota$ acts by $e^{i\psi_{j}}$ in a tubular neighborhood of $K_{j}$, and $S(\psi_{j})=\cot(\tfrac{\psi_{j}}{2})\csc(\tfrac{\psi_{j}}{2})$. But since $\iota^{2}=\text{id}$, we must have that $\psi_{j}=\pi$ for all $j=1,\dots,n$. Therefore
\[S(\psi_{j})=\cot(\tfrac{\pi}{2})\csc(\tfrac{\pi}{2})=0\]
for all $j$, and
\[n^{\mu}(Y,\frak{s},\wh{\iota},g)=-n^{\mu^{3}}(Y,\frak{s},\wh{\iota},g)=\ol{\eta}^{\mu}_{\dirac,\wh{\iota},g}.\]
\end{proof}

\subsection{The Seiberg-Witten Floer Spectrum Class}
\label{subsec:sw_floer_spectrum_class}

In this section, we define a metric-independent $\CC$-$G^{*}_{m}$-spectrum class $\text{SWF}(Y,\frak{s},\wh{\sigma})\in\CCC_{G^{*}_{m},\CC}$ associated to a $\ZZ_{m}$-equivariant spin rational homology sphere $(Y,\frak{s},\wh{\sigma})$. 

Let $(Y,\frak{s},\wh{\sigma},g)$ be a $\ZZ_{m}$-equivariant Riemannian spin rational homology sphere. Fix an eigenvalue cut-off $\lambda>>0$, and let $I_{\lambda}$ be the $G^{*}_{m}$-equivariant Conley index as in Equation \ref{eq:conley_index}. Roughly, the metric-dependent stable homotopy type $\SWF(Y,\frak{s},\wh{\sigma},g)$ is given by the desuspension
\[\SWF(Y,\frak{s},\wh{\sigma})=\Sigma^{-V^{0}_{-\lambda}}I_{\lambda},\]
and the metric-independent stable homotopy type $\SWF(Y,\frak{s},\wh{\sigma})$ is given by the further desuspension
\[\SWF(Y,\frak{s},\wh{\sigma})=\Sigma^{-\frac{1}{2}n(Y,\frak{s},\wh{\sigma},g)\HH}\Sigma^{-V^{0}_{-\lambda}}I_{\lambda}.\]
To be more precise, from Equation \ref{eq:v_nu_lambda_decomposition} we have a $G^{*}_{m}$-equivariant decomposition
\[V^{0}_{-\lambda}=\big(\mbfv_{-\lambda}^{0}(\RR)\cdot\wt{\RR}\big)\oplus\big(\mbfv_{-\lambda}^{0}(\HH)\cdot\HH\big)\]
for some representations 
\begin{align*}
	&\mbfv_{-\lambda}^{0}(\RR)\in RO(\ZZ_{m})_{\geq 0}, & &\mbfv_{-\lambda}^{0}(\HH)\in R(\ZZ_{2m})^{*}_{\geq 0}.
\end{align*}
The following observation can be deduced by a similar method as in the proof of (\cite{Man16}, Lemma 3.6), via perturbing the CSD functional by a $\ZZ_{m}$-equivariant imaginary-valued one-form $\omega\in i\Omega^{1}_{\ZZ_{m}}(Y)$:

\begin{observation}
\label{observation:level_conley_index}
The Conley index $I_{\lambda}$ is a space of type $G^{*}_{m}$-$\SWF$ at level $\mbfv_{-\lambda}^{0}(\RR)\in RO(\ZZ_{m})_{\geq 0}$. 
\end{observation}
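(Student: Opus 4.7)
The proof will follow the strategy of \cite{Man16}, Lemma 3.6, with the appropriate modifications to accommodate the extra $\ZZ_m$-equivariance provided by the spin lift $\wh{\sigma}$. The task is to verify the two conditions defining a space of type $G^{*}_m$-$\SWF$ at level $\mbfv_{-\lambda}^{0}(\RR)$: that the $S^1$-fixed point set $I_{\lambda}^{S^1}$ is $G^{*}_m$-homotopy equivalent to $(\mbfv_{-\lambda}^{0}(\RR)\wt{\RR})^{+}$, and that $\Pin(2)\subset G^{*}_m$ acts freely on $I_{\lambda}\setminus I_{\lambda}^{S^1}$.

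For the first condition, the plan is to identify the $S^1$-fixed subspace of $V_{-\lambda}^{\lambda}$ with the reducible slice $i\Omega^{1}_{C}(Y)\cap V_{-\lambda}^{\lambda}$, on which the approximate Seiberg--Witten flow reduces to the linear gradient flow generated by $\ast d$. Since $Y$ is a rational homology sphere, the operator $\ast d$ acting on $i\Omega^{1}_{C}(Y)$ has trivial kernel, so its restriction to $V_{-\lambda}^{\lambda}\cap i\Omega^{1}_{C}(Y)$ is a hyperbolic linear isomorphism and the associated Conley index inside $B(R)$ is the Thom space of the negative eigenspace, namely $(\mbfv_{-\lambda}^{0}(\RR)\wt{\RR})^{+}$. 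Invoking the naturality of the Conley index under passage to smooth $S^1$-fixed subspaces (cf.\ \cite{Man03}), this gives $I_{\lambda}^{S^1}\simeq_{G^{*}_m}(\mbfv_{-\lambda}^{0}(\RR)\wt{\RR})^{+}$.

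For the second condition, the key observation is that the $\Pin(2)$-action on $\HH\setminus\{0\}$ by left multiplication is free, so any pointwise-nontrivial configuration $(a,\phi)$ with $\phi\not\equiv 0$ has trivial $\Pin(2)$-stabilizer. Outside $I_{\lambda}^{S^1}$, every representative configuration must have $\phi\not\equiv 0$, so $\Pin(2)$ acts freely there provided the Conley index is well-defined, i.e., provided $(0,0)$ is an isolated reducible critical point and that the maximal invariant set inside $B(R)$ is isolated. Following \cite{Man16}, this non-degeneracy is achieved after perturbing the Chern--Simons--Dirac functional by a small imaginary $1$-form $\omega\in i\Omega^{1}(Y)$; in order to preserve the equivariance under all of $G^{*}_m$, we choose $\omega$ to be $\ZZ_m$-invariant, i.e., $\omega\in i\Omega^{1}_{\ZZ_m}(Y)$.

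The main technical obstacle is verifying that a generic $\ZZ_m$-equivariant perturbation $\omega$ suffices to isolate the reducible set and achieve non-degeneracy simultaneously with the $\ZZ_m$-symmetry. This reduces to a Sard--Smale argument on the parametrized moduli of perturbed critical points and flow lines in the $\ZZ_m$-invariant configuration space; the key input is that averaging any generic non-equivariant perturbation over the finite group $\ZZ_m$ yields an equivariant perturbation with the same transversality properties on the complement of the (higher-stabilizer) fixed loci, which are already handled by the first part of the argument. Once such an $\omega$ is fixed, the continuation property of the equivariant Conley index ensures that $I_{\lambda}$ is $G^{*}_m$-homotopy equivalent to its unperturbed counterpart, thereby establishing both required properties.
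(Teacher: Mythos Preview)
Your proposal is correct and follows essentially the same approach as the paper, which likewise defers to \cite{Man16}, Lemma~3.6, and prescribes a $\ZZ_m$-invariant perturbation $\omega\in i\Omega^{1}_{\ZZ_m}(Y)$. One caveat: your final paragraph on achieving equivariant transversality by averaging a generic non-equivariant perturbation is both unnecessary and its justification is loose (averaging does not preserve genericity in general); in fact the two defining conditions for type $G^{*}_m$-$\SWF$ already hold for the unperturbed approximate flow, since on the $S^1$-fixed slice the flow is the linear gradient of $\ast d$ with trivial kernel (because $b_1(Y)=0$), and $\Pin(2)$-freeness on the complement is purely representation-theoretic.
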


Next, let $c^{\CC}_{\RR}:RO(\ZZ_{m})\to R(\ZZ_{m})^{\sym}$ denote the complexification map, and define
\[\mbfv_{-\lambda}^{0}(\CC):=c^{\CC}_{\RR}(\mbfv_{-\lambda}^{0}(\RR))\in R(\ZZ_{m})_{\geq 0}^{\sym}.\]
If $\mbfv_{-\lambda}^{0}(\CC)_{j}$ denotes the coefficient of $\zeta^{j}$ in $\mbfv_{-\lambda}^{0}(\CC)$, let $\mbfu=\sum_{j=0}^{m-1}u_{j}\zeta^{j}\in R(\ZZ_{m})_{\geq 0}^{\sym}$ be the representation with coefficients given by
\[u_{j}:=\twopartdef{0}{\mbfv_{-\lambda}^{0}(\CC)_{j}\equiv 0\pmod{2},}{1}{\mbfv_{-\lambda}^{0}(\CC)_{j}\equiv 1\pmod{2}.}\]
Then by construction, the coefficients of the sum $\mbfv_{-\lambda}^{0}(\CC)+\mbfu\in R(\ZZ_{m})_{\geq 0}^{\sym}$ are even, so that $\frac{1}{2}(\mbfv_{-\lambda}^{0}(\CC)+\mbfu)\in R(\ZZ_{m})_{\geq 0}^{\sym}$. Finally, define the real representation $\Umid$ by
\[\Umid:=(c_{\RR}^{\CC})^{-1}(\mbfu)\in RO(\ZZ_{m})_{\geq 0}.\]

From Observation \ref{observation:level_conley_index}, it follows that the suspension $\Sigma^{\Umid\wt{\RR}}I_{\lambda}$ is a space of type $G^{*}_{m}$-$\SWF$ at even level $\mbfv_{-\lambda}^{0}(\RR)+\Umid\in RO(\ZZ_{m})_{\geq 0}$, and hence a space of type $\CC$-$G^{*}_{m}$-$\SWF$ at level $\frac{1}{2}(\mbfv_{-\lambda}^{0}(\CC)+\mbfu)\in R(\ZZ_{m})_{\geq 0}^{\sym}$.

\begin{definition}
We define the \emph{$(g,\lambda)$-dependent $G^{*}_{m}$-equivariant Seiberg-Witten Floer stable homotopy type} associated to $(Y,\frak{s},\wh{\sigma},g,\lambda)$ to be the $G^{*}_{m}$-spectrum class
\[\SWF(Y,\frak{s},\wh{\sigma},g,\lambda):=\big[(I_{\lambda},0,0)\big]\in\CCC_{G^{*}_{m}},\]
the \emph{metric-dependent $G^{*}_{m}$-equivariant Seiberg-Witten Floer stable homotopy type} associated to $(Y,\frak{s},\wh{\sigma},g)$ to be the $\CC$-$G^{*}_{m}$-spectrum class
\[\SWF(Y,\frak{s},\wh{\sigma},g):=\Big[\big(\Sigma^{\Umid\wt{\RR}}I_{\lambda},\tfrac{1}{2}(\mbfv_{-\lambda}^{0}(\CC)+\mbfu),\mbfv_{-\lambda}^{0}(\HH)\big)\Big]\in\CCC_{G^{*}_{m},\CC},\]
and the \emph{(metric-independent) $G^{*}_{m}$-equivariant Seiberg-Witten Floer stable homotopy type} associated to $(Y,\frak{s},\wh{\sigma})$ to be the $\CC$-$G^{*}_{m}$-spectrum class
\[\SWF(Y,\frak{s},\wh{\sigma}):=\Big[\big(\Sigma^{\Umid\wt{\RR}}I_{\lambda},\tfrac{1}{2}(\mbfv_{-\lambda}^{0}(\CC)+\mbfu),\mbfv_{-\lambda}^{0}(\HH)+\tfrac{1}{2}n(Y,\frak{s},\wh{\sigma},g)\big)\Big]\in\CCC_{G^{*}_{m},\CC}.\]
\end{definition}

\begin{remark}
	Note that by Observation \ref{observation:level_conley_index}, $\SWF(Y,\frak{s},\wh{\sigma},g)$ and $\SWF(Y,\frak{s},\wh{\sigma})$ are both spectrum classes at level $\mbf{0}\in R(\ZZ_{m})^{\sym}$.
\end{remark}

\begin{proposition}
The spectrum class $\SWF(Y,\frak{s},\wh{\sigma},g)$ is independent of the eigenvalue cut-off $\lambda$, and the spectrum class $\SWF(Y,\frak{s},\wh{\sigma})$ is independent of $\lambda$ and the metric $g$.
\end{proposition}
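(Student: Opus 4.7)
The plan is to adapt the proofs of (\cite{Man03}, Theorem 1 and \cite{Man16}, Section 3) to the $G^{*}_{m}$-equivariant setting, where the key refinement is that changes in dimensions of eigenspaces of $\dirac$ and of $\ell=(\ast d,\dirac)$ must be tracked as representations of $G^{*}_{m}$ rather than merely as integers. The representation-valued equivariant spectral flow $\SF^{\ZZ_{2m}}(\{\dirac_{s}\})$ from Theorem~\ref{theorem:properties_equivariant_correction_term}(3) is precisely the bookkeeping device that makes this work.

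For independence of the eigenvalue cutoff, fix $g$ and take $\lambda<\lambda'$ avoiding the spectrum of $\ell$. The subspace $V_{-\lambda'}^{\lambda'}$ decomposes $G^{*}_{m}$-equivariantly as $V_{-\lambda'}^{-\lambda}\oplus V_{-\lambda}^{\lambda}\oplus V_{\lambda}^{\lambda'}$, and the approximate Seiberg--Witten flow on $V_{-\lambda'}^{\lambda'}$ admits an attractor-repeller decomposition whose attractor is the invariant set $S_{-\lambda}^{\lambda}$ defining $I_{\lambda}$ and whose repeller, after a small equivariant perturbation, is $G^{*}_{m}$-equivariantly the sphere $S(V_{-\lambda'}^{-\lambda})$. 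The standard $G^{*}_{m}$-equivariant attractor-repeller argument for Conley indices then yields a $G^{*}_{m}$-equivariant homotopy equivalence
\[
I_{\lambda'}\simeq_{G^{*}_{m}}\Sigma^{V_{-\lambda'}^{-\lambda}}I_{\lambda}.
\]
Decomposing $V_{-\lambda'}^{-\lambda}$ into its $\wt{\RR}$- and $\HH$-isotypic components and matching this against the definition of the triples representing $\SWF(Y,\frak{s},\wh\sigma,g,\lambda)$ and $\SWF(Y,\frak{s},\wh\sigma,g,\lambda')$ in $\CCC_{G^{*}_{m},\CC}$, the two triples become equivalent under Definition~\ref{def:stable_C_G_*_m_spectrum_class}, proving $\lambda$-independence of $\SWF(Y,\frak{s},\wh\sigma,g)$.

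For independence of $g$, let $g_{0},g_{1}$ be equivariant metrics joined by a generic equivariant path $\{g_{s}\}$. Choose a large common cutoff $\lambda\gg 0$ so that no eigenvalue of $\ell_{s}$ crosses $\pm\lambda$ for $s\in[0,1]$; the $\ast d$-spectrum is unaffected by $g$ on a rational homology sphere, so all crossings occur in the Dirac part. Subdividing the interval at the finitely many values of $s$ where an eigenvalue of $\dirac_{s}$ crosses zero, and combining continuation of Conley indices on intervals of constant zero-crossing behavior with attractor-repeller moves at each crossing, produces a $G^{*}_{m}$-equivariant homotopy equivalence
\[
\Sigma^{E^{+}\HH}I_{\lambda}^{g_{0}}\simeq_{G^{*}_{m}}\Sigma^{E^{-}\HH}I_{\lambda}^{g_{1}},
\]
where $E^{\pm}\in R(\ZZ_{2m})^{*}_{\geq 0}$ are the representations of Dirac eigenspaces crossing from negative to positive, respectively positive to negative, along the path. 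By definition of the equivariant spectral flow (\cite{LimWang}), $E^{+}-E^{-}=\SF^{\ZZ_{2m}}(\{\dirac_{s}\})$ as elements of $R(\ZZ_{2m})^{*}$.

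Finally, Theorem~\ref{theorem:properties_equivariant_correction_term}(3) states that this spectral flow coincides with $n(Y,\frak{s},\wh\sigma,g_{1})-n(Y,\frak{s},\wh\sigma,g_{0})$. Hence the additional formal desuspension by $\tfrac{1}{2}n(Y,\frak{s},\wh\sigma,g)\HH$ in the definition of $\SWF(Y,\frak{s},\wh\sigma)$ exactly compensates for the change of the $\HH$-coordinate in the triple $(\Sigma^{\Umid\wt{\RR}}I_{\lambda}^{g},\tfrac{1}{2}(\mbfv_{-\lambda}^{0}(\CC)+\mbfu),\mbfv_{-\lambda}^{0}(\HH))$ along the path, so $\SWF(Y,\frak{s},\wh\sigma)$ is well-defined in $\CCC_{G^{*}_{m},\CC}$. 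The main obstacle is the $G^{*}_{m}$-equivariant continuation/attractor-repeller argument itself: one must verify that the gluing of Conley indices across an eigenvalue crossing, which in the nonequivariant setting produces a suspension by $\HH$, produces in the equivariant setting a suspension by the correct $G^{*}_{m}$-representation, namely the isotypic component of the crossing eigenspace. Once this equivariant refinement is in place, everything else reduces to matching representations against the correction term via Theorem~\ref{theorem:properties_equivariant_correction_term}.
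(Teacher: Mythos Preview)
Your proof is correct and follows essentially the same approach as the paper's, which simply says that $\lambda$-independence is ``clear'' and that metric-independence follows from Proposition~\ref{prop:variation_of_equivariant_metric} together with the identification of the equivariant spectral flow of the linearization of $\CSD$ with that of the Dirac operator; you have filled in the details of the equivariant attractor--repeller and continuation arguments that the paper leaves implicit. One small imprecision: the spectrum of $\ast d$ on $\Omega^{1}_{C}(Y)$ certainly does depend on $g$; what you want is that $\ker(\ast d|_{\Omega^{1}_{C}})\cong H^{1}(Y;\RR)=0$ is metric-independent, so there is no spectral flow in the $\wt{\RR}$-summand and hence $\mbfv_{-\lambda}^{0}(\RR)$ is unchanged along the path---this gives the same conclusion that all zero-crossings occur in the Dirac part.
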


\begin{proof}
The first statement is clear. The second statement follows from Proposition \ref{prop:variation_of_equivariant_metric}, and the fact that the equivariant spectral flow of the linearization of the $CSD$ functional is precisely equal to the equivariant spectral flow of the Dirac operator.
\end{proof}

We conclude this section with the following proposition:

\begin{proposition}
\label{prop:floer_spectrum_duality}
Let $(Y,\frak{s},\wh{\sigma},g)$ be a $\ZZ_{m}$-equivariant spin rational homology sphere. Then $\SWF(Y,\frak{s},\wh{\sigma},g)$ and $\SWF(-Y,\frak{s},\wh{\sigma},g)$ are $[(S^{0},0,0)]$-dual.
\end{proposition}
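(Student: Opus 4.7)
The plan is to adapt Manolescu's argument for $\Pin(2)$-equivariant duality of the Seiberg--Witten Floer spectrum (cf.\ \cite{Man16}, \cite{Man03}) to the $G^{*}_{m}$-equivariant setting. The underlying input is an equivariant version of Cornea's Conley-index duality theorem, applied to the fact that reversing the orientation of $Y$ reverses the approximate Seiberg--Witten flow on $V_{-\lambda}^{\lambda}$.

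First I would observe that under $Y \mapsto -Y$ the Chern--Simons--Dirac functional $CSD$ negates: the term $\int_{Y} a \wedge da$ is sensitive to orientation, and the Dirac term transforms compatibly via the convention $\rho(e_{1})\rho(e_{2})\rho(e_{3}) = \pm 1$ on $\pm Y$. Consequently the linearization $\ell = (\ast d, \dirac)$ changes sign, so that, as $G^{*}_{m}$-representations,
\[
V_{-\lambda}^{0}(-Y) = V_{0}^{\lambda}(Y), \qquad V_{0}^{\lambda}(-Y) = V_{-\lambda}^{0}(Y), \qquad V_{-\lambda}^{\lambda}(-Y) = V_{-\lambda}^{\lambda}(Y),
\]
while the approximate Seiberg--Witten flow for $-Y$ is the time-reverse of the one for $Y$ (possibly after an equivariant homotopy through flows with the same isolating neighborhood). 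In particular, $\mbfv_{-\lambda}^{\lambda}(\RR) = \mbfv_{-\lambda}^{0}(Y,\RR) + \mbfv_{-\lambda}^{0}(-Y,\RR)$, and similarly for the $\HH$-component.

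Second, I would invoke a $G^{*}_{m}$-equivariant Cornea duality theorem: for an isolated invariant set of a smooth $G^{*}_{m}$-equivariant flow $\phi$ on $V_{-\lambda}^{\lambda}$, the Conley indices of the forward flow $\phi$ and the reverse flow $\phi^{-1}$ are $G^{*}_{m}$-equivariantly $(V_{-\lambda}^{\lambda})^{+}$-dual. Concretely, one produces equivariant maps
\[
\varepsilon \colon I_{\lambda}(-Y) \wedge I_{\lambda}(Y) \to (V_{-\lambda}^{\lambda})^{+}, \qquad \eta \colon (V_{-\lambda}^{\lambda})^{+} \to I_{\lambda}(Y) \wedge I_{\lambda}(-Y)
\]
satisfying the duality diagrams. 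The construction mirrors Manolescu's non-equivariant and $\Pin(2)$-equivariant versions: one chooses equivariant index pairs $(N, L)$ and $(N, L')$ such that $N \setminus L$ is the exit set for $\phi^{-1}$ and $N \setminus L'$ is the exit set for $\phi$, and defines $\varepsilon$, $\eta$ via appropriate collapse and cross-pairing maps through $N$. Equivariance is preserved throughout because $V_{-\lambda}^{\lambda}$, the flow, and the isolating neighborhoods can all be taken $G^{*}_{m}$-equivariant.

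Third, I would translate this unstable duality into the duality of spectrum classes. Since $(V_{-\lambda}^{\lambda})^{+} \simeq_{G^{*}_{m}} (\mbfv_{-\lambda}^{\lambda}(\RR)\wt{\RR})^{+} \wedge (\mbfv_{-\lambda}^{\lambda}(\HH)\HH)^{+}$, smashing the two representatives in the definition of $\SWF(\pm Y, \frak{s}, \wh{\sigma}, g)$ yields a space stably $G^{*}_{m}$-equivalent to a representation sphere, with the shifts in $\mbfa$ and $\mbfb$ computed from
\[
\mbfv_{-\lambda}^{0}(Y,\CC) + \mbfv_{-\lambda}^{0}(-Y,\CC) = \mbfv_{-\lambda}^{\lambda}(\CC), \qquad \mbfv_{-\lambda}^{0}(Y,\HH) + \mbfv_{-\lambda}^{0}(-Y,\HH) = \mbfv_{-\lambda}^{\lambda}(\HH).
\]
A bookkeeping check then shows that all suspensions and desuspensions cancel, so the duality maps $\varepsilon, \eta$ descend to $G^{*}_{m}$-equivariant duality maps exhibiting $\SWF(Y,\frak{s},\wh{\sigma},g)$ and $\SWF(-Y,\frak{s},\wh{\sigma},g)$ as $[(S^{0},0,0)]$-dual in $\CCC_{G^{*}_{m},\CC}$.

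The main obstacle is the equivariant Cornea duality in step two; the remaining steps are essentially formal. The $\Pin(2)$-equivariant case is already established in \cite{Man16}, and upgrading to $G^{*}_{m}$ equivariance amounts to checking that the chosen isolating neighborhoods, index pairs, and flow-tube constructions can be made invariant under the finite extension by $\ZZ_{m}$ or $\ZZ_{2m}$, which is automatic by averaging over the finite subgroup.
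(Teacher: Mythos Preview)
Your approach is essentially the same as the paper's: invoke an equivariant version of Cornea's Conley-index duality to show $I_{\lambda}(Y)$ and $I_{\lambda}(-Y)$ are $G^{*}_{m}$-equivariantly $V_{-\lambda}^{\lambda}$-dual, then do bookkeeping with the desuspension data. The paper likewise cites \cite{Cor00} and \cite{Man16}, Proposition 3.8, for step two.

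There is one gap in your bookkeeping. Your displayed relation
\[
\mbfv_{-\lambda}^{0}(Y,\HH) + \mbfv_{-\lambda}^{0}(-Y,\HH) = \mbfv_{-\lambda}^{\lambda}(\HH)
\]
is not quite right: with the half-open interval convention $V_{\nu}^{\lambda}=\{\text{eigenvalues in }(\nu,\lambda]\}$, the zero eigenspace of $\ell$ lies in both $V_{-\lambda}^{0}(Y)$ and $\ol{V}_{-\lambda}^{0}$, so it is double-counted. The paper records this as
\[
\mbfv_{-\lambda}^{\lambda}(\HH)+k_{\dirac,\wh{\sigma},g}=\mbfv_{-\lambda}^{0}(\HH)+\ol{\mbfv}_{-\lambda}^{0}(\HH),
\]
where $k_{\dirac,\wh{\sigma},g}$ is the kernel of the Dirac operator as a $\ZZ_{2m}$-representation. (There is no analogous discrepancy on the $\wt{\RR}$ side since $Y$ is a rational homology sphere and $\ast d$ has trivial kernel on coclosed $1$-forms.) The paper then invokes Proposition~\ref{prop:correction_term_orientation_reversal}, namely $n(Y,\frak{s},\wh{\sigma},g)+n(-Y,\frak{s},\wh{\sigma},g)=-k_{\dirac,\wh{\sigma},g}$, to absorb this discrepancy. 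Note that this correction term only enters the \emph{metric-independent} spectrum class, so the paper's proof is really establishing $[(S^{0},0,0)]$-duality for $\SWF(Y,\frak{s},\wh{\sigma})$ rather than the metric-dependent version as literally stated; your argument as written would only go through when $\ker(\dirac)=0$.
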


\begin{proof}
As in the proof of (\cite{Man16}, Proposition 3.8), one can adapt the argument of \cite{Cor00} to show that the Conley indices of the flow and its inverse are $G^{*}_{m}$-equivariantly $V_{-\lambda}^{\lambda}$-dual to each other. The result then follows from Proposition \ref{prop:correction_term_orientation_reversal} and the fact that
\begin{align*}
	&\mbfv_{-\lambda}^{\lambda}(\RR)=\mbfv_{-\lambda}^{0}(\RR)+\ol{\mbfv}_{-\lambda}^{0}(\RR)\in RO(\ZZ_{m})_{\geq 0}, \\
	&\mbfv_{-\lambda}^{\lambda}(\HH)+k_{\dirac,\wh{\sigma},g}=\mbfv_{-\lambda}^{0}(\HH)+\ol{\mbfv}_{-\lambda}^{0}(\HH)\in R(\ZZ_{2m})_{\geq 0}^{*},
\end{align*}
where $\ol{\mbfv}_{-\lambda}^{0}(\RR)$ and $\ol{\mbfv}_{-\lambda}^{0}(\HH)$ are the representations corresponding to $\ol{V}_{-\lambda}^{0}$, with $\ol{V}$ denoting the Coloumb slice for $-Y$.
\end{proof}

\bigskip
\section{Equivariant \texorpdfstring{$\kappa$}{κ}-Invariants and Equivariant Relative 10/8-ths Inequalities}
\label{sec:kappa_invariants}

In this section we define a package of \emph{equivariant $\kappa$-invariants} for $\ZZ_{m}$-equivariant spin rational homology spheres. In particular to a triple $(Y,\frak{s},\sigma)$ we associate two subsets $\K(Y,\frak{s},\sigma)$, $\K^{\wedge}(Y,\frak{s},\sigma)$ of the lattice $\Q^{m}_{*}$ defined in Section \ref{subsec:stable_k_invariants}.

In Section \ref{subsec:equivariant_kappa_invariants} we define our equivariant $\kappa$-invariants. In Section \ref{subsec:relative_bauer_furuta}, we review Manolescu's construction of the relative Bauer--Furuta invariants arising from the Seiberg--Witten equations on 
4-manifolds with boundary (\cite{Man03},\cite{Man16}, corrected in \cite{Kha15}), and in Section \ref{subsec:G_*_m_cobordisms}, we analyze the behavior of these cobordism maps in the $G^{*}_{m}$-equivariant setting. Finally in Section \ref{subsec:main_theorems}, we state and prove our relative equivariant 10/8ths inequalities.

\bigskip
\subsection{Equivariant \texorpdfstring{$\kappa$}{κ}-invariants}
\label{subsec:equivariant_kappa_invariants}

Recall that in \cite{Man14}, Manolescu defined the invariant $\kappa(Y,\frak{s})$ to be double the $k$-invariant of the Seiberg-Witten Floer spectrum class $\SWF(Y,\frak{s})$. There are essentially two different ways of ``doubling'' $\mbfk^{\st}(\SWF(Y,\frak{s},\wh{\sigma}))$, the first by multiplying the elements of $\mbfk^{\st}(\SWF(Y,\frak{s},\wh{\sigma}))\subset\Q^{m}_{*}$ by $2$, and the other is to consider the equivariant $k$-invariants of the ``double" of $\SWF(Y,\frak{s},\wh{\sigma})$. This leads us to the following definition:

\begin{definition}
\label{def:equivariant_kappa_invariants}
Let $(Y,\frak{s},\wh{\sigma})$ be a $\ZZ_{m}$-equivariant spin rational homology sphere. We define the \emph{equivariant $\kappa$-invariants of $(Y,\frak{s},\wh{\sigma})$} as follows:
\begin{enumerate}
    \item Define
    \[\K(Y,\frak{s},\wh{\sigma}):=2\cdot\mbfk^{\st}\big(\SWF(Y,\frak{s},\wh{\sigma})\big)\subset\Q^{m}_{*},\]
    and $\vec{\ol{\kappa}}(Y,\frak{s},\wh{\sigma})$, $\vec{\ul{\kappa}}(Y,\frak{s},\wh{\sigma})$ to be the least upper bound and greatest lower bound, respectively, of $\K(Y,\frak{s},\wh{\sigma})$ as a subset of $\Q^{m}_{*}$.
    \item Define 
    \[\K^{\wedge}(Y,\frak{s},\wh{\sigma}):=\mbfk^{\st}\big(\SWF(Y,\frak{s},\wh{\sigma})\wedge\SWF(Y,\frak{s},\wh{\sigma})\big)\subset\Q^{m}_{*},\]
    and $\vec{\ol{\kappa}}\,^{\wedge}(Y,\frak{s},\wh{\sigma})$, $\vec{\ul{\kappa}}\,^{\wedge}(Y,\frak{s},\wh{\sigma})$ to be the least upper bound and greatest lower bound, respectively, of $\K^{\wedge}(Y,\frak{s},\wh{\sigma})$ as a subset of $\Q^{m}_{*}$.
    \item For $(Y,\frak{s},\wh{\iota})$ an odd-type $\ZZ_{2}$-equivariant rational homology sphere, we define the invariant $\wt{\kappa}(Y,\frak{s},\wh{\iota})\in\QQ$ to be the unique element of
    \[\K(Y,\frak{s},\wh{\iota})\subset(\Q^{2}_{\odd},\preceq,+,|\cdot|)\xrightarrow[\cong]{|\cdot|}(\QQ,\le,+,|\cdot|).\]
    \item For $m=p^{r}$ an odd prime power, we define
    \begin{align*}
        &\K^{\pi}(Y,\frak{s},\wh{\sigma}):=\pi(\K(Y,\frak{s},\wh{\sigma}))\subset\QQ^{2}, & &\K^{\wedge,\pi}(Y,\frak{s},\wh{\sigma}):=\pi(\K^{\wedge}(Y,\frak{s},\wh{\sigma}))\subset\QQ^{2}
    \end{align*}
    where
    \[\pi:(\Q^{p^{r}}_{*},\preceq,+,|\cdot|)\to(\QQ^{2},\preceq,+,|\cdot|)\]
    is the projection map from Propostion \ref{prop:stable_lattice_p^r_projection}. Furthermore we define
    \begin{align*}
        &\vec{\ol{\kappa}}^{\pi}(Y,\frak{s},\wh{\sigma})=(\ol{\kappa}_{0}^{\pi}(Y,\frak{s},\wh{\sigma}),\ol{\kappa}_{1}^{\pi}(Y,\frak{s},\wh{\sigma})):=\vee\K^{\pi}(Y,\frak{s},\wh{\sigma})\in\QQ^{2} \\
        &\vec{\ul{\kappa}}^{\pi}(Y,\frak{s},\wh{\sigma})=(\ul{\kappa}_{0}^{\pi}(Y,\frak{s},\wh{\sigma}),\ul{\kappa}_{1}^{\pi}(Y,\frak{s},\wh{\sigma})):=\wedge\K^{\pi}(Y,\frak{s},\wh{\sigma})\in\QQ^{2} \\
        &\vec{\ol{\kappa}}^{\wedge,\pi}(Y,\frak{s},\wh{\sigma})=(\ol{\kappa}_{0}^{\wedge,\pi}(Y,\frak{s},\wh{\sigma}),\ol{\kappa}_{1}^{\wedge,\pi}(Y,\frak{s},\wh{\sigma})):=\vee\K^{\wedge,\pi}(Y,\frak{s},\wh{\sigma})\in\QQ^{2} \\ &\vec{\ul{\kappa}}^{\wedge,\pi}(Y,\frak{s},\wh{\sigma})=(\ul{\kappa}_{0}^{\wedge,\pi}(Y,\frak{s},\wh{\sigma}),\ul{\kappa}_{1}^{\wedge,\pi}(Y,\frak{s},\wh{\sigma})):=\wedge\K^{\wedge,\pi}(Y,\frak{s},\wh{\sigma})\in\QQ^{2}
    \end{align*}
    to be the corresponding least upper bounds and greatest lower bounds of these subsets.
\end{enumerate}
\end{definition}

It turns out that these equivariant $\kappa$-invariants do not depend on the spin lift $\wh{\sigma}$:

\begin{proposition}
\label{prop:independent_of_spin_lift}
The invariants in Definition \ref{def:equivariant_kappa_invariants} are all independent of the choice of spin lift $\wh{\sigma}$ of $\sigma:Y\to Y$.
\end{proposition}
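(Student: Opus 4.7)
The plan is to show that for either choice of spin lift of $\sigma$, the resulting $\CC$-$G^{*}_{m}$-spectrum class $\SWF(Y,\frak{s},\wh{\sigma})$ produces the same set of stable equivariant $k$-invariants in $\Q^{m}_{*}$; from this, invariance of $\K$, and by the same argument applied to the smash square, of $\K^{\wedge}$, will follow, and the derived invariants are then also spin-lift independent. The two spin lifts $\wh{\sigma}$ and $-\wh{\sigma}$ differ by the action of $-1 \in S^{1} \subset \Pin(2)$; accordingly, this difference can be encoded as an automorphism $\phi$ of $G^{*}_{m}$ (in the even case, $\phi(g,\gamma^{k}) = ((-1)^{k}g, \gamma^{k})$) such that the $G^{*}_{m}$-action on the Coulomb slice corresponding to $\wh{\sigma}$ equals the pullback under $\phi$ of the one corresponding to $-\wh{\sigma}$. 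A direct computation, using that $-1 \in S^{1}$ acts trivially on $\wt{\CC}$ but by $-1$ on $\HH$, shows that the induced automorphism $\phi^{*}$ of $R(G^{*}_{m})$ fixes each $\wt{\CC}_{k}$ (and hence each $w_{k}$) while sending $\HH_{k} \mapsto \HH_{k+m/2}$ (and hence $z_{k} \mapsto z_{k+m/2}$), using cyclic indexing.

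The one-form part $V^{0}_{-\lambda}(\RR)$ of the Coulomb slice is manifestly independent of the spin lift, so the suspension data $\Umid\wt{\RR}$ and the level $\tfrac{1}{2}(\mbfv^{0}_{-\lambda}(\CC) + \mbfu)$ of the associated space of type $\CC$-$G^{*}_{m}$-$\SWF$ coincide for the two spin lifts. The two Conley indices $I_{\lambda}^{\pm}$ share the same underlying topology (they come from the same Seiberg--Witten flow) but carry $G^{*}_{m}$-actions related by $I_{\lambda}^{+} \simeq_{G^{*}_{m}} \phi^{*}I_{\lambda}^{-}$, and by functoriality of equivariant $K$-theory the ideals satisfy $\III(I_{\lambda}^{+}) = \phi^{*}\III(I_{\lambda}^{-}) \subset R(G^{*}_{m})$. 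Since $\phi^{*}$ fixes every $w_{k}$ and the subset $I(X) \subset \N^{m}$ is defined via the equation $w_{0}x = w_{0}^{k_{0}+1}w_{1}^{k_{1}}\cdots w_{m-1}^{k_{m-1}}$ involving only the $w_{k}$'s, applying $\phi^{*}$ to either side produces the same tuple $(k_{0},\ldots,k_{m-1})$; hence $I(I_{\lambda}^{+}) = I(I_{\lambda}^{-})$ and $\mbfk(I_{\lambda}^{+}) = \mbfk(I_{\lambda}^{-})$ in $\N^{m}$.

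The remaining subtlety is that the $\HH$-desuspension data $\mbfb = \mbfv^{0}_{-\lambda}(\HH) + \tfrac{1}{2}n(Y,\frak{s},\wh{\sigma},g)$ genuinely changes under the flip of spin lift: both summands have their entries cyclically shifted by $m/2$ (for the first summand because $\phi^{*}\HH_{k} = \HH_{k+m/2}$, and for the second by Proposition \ref{prop:correction_term_change_of_spin_lift}). However, the stable $k$-invariants depend on $\mbfb$ only through $[\DDD^{*}(\vec{\mbfb})] \in \Q^{m}_{*}$, and any cyclic shift of the entries by $m/2$ becomes a cyclic shift by $m$ (i.e., the identity) after applying the doubling map $\DDD^{*}$; for the correction term this is the content of Corollary \ref{cor:correction_term_change_of_spin_lift_vector}, and for $\vec{\mbfv}^{0}_{-\lambda}(\HH)$ the analogous identity is an elementary check. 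Combining this with the previous paragraph yields $\mbfk^{\st}(\SWF(Y,\frak{s},\wh{\sigma})) = \mbfk^{\st}(\SWF(Y,\frak{s},-\wh{\sigma}))$ in $\Q^{m}_{*}$, which establishes $\K(Y,\frak{s},\wh{\sigma}) = \K(Y,\frak{s},-\wh{\sigma})$.

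The main anticipated obstacle is the bookkeeping required when $m$ is odd, in which case the two spin lifts have opposite parities and so the spectrum classes \emph{a priori} live in the different categories $\CCC_{G^{\ev}_{m},\CC}$ and $\CCC_{G^{\odd}_{m},\CC}$ and take values in different lattices $\Q^{m}_{\ev}$ and $\Q^{m}_{\odd}$. However, for $m$ odd the diagonal $\ZZ_{2}$-subgroup in the definition of $G^{\odd}_{m}$ splits off to give a canonical isomorphism $G^{\ev}_{m} \cong G^{\odd}_{m}$, under which the representation rings and associated lattices are naturally identified; once this identification is in place, the argument above applies verbatim after translating the half-integer indexing on the odd side to the integer indexing on the even side.
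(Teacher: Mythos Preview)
Your proposal is correct and follows essentially the same approach as the paper: both use the automorphism of $G^{*}_{m}$ sending $\gamma\mapsto-\gamma$ (resp.\ $\mu\mapsto-\mu$), observe that it fixes each $w_{k}$ while sending $z_{k}\mapsto z_{k+m/2}$, and conclude invariance of the $k$-invariants from the fact that the defining condition $w_{0}x=\mbfw^{\vec{a}+\vec{e}_{0}}$ involves only the $w_{k}$'s; the desuspension data is handled identically via Corollary~\ref{cor:correction_term_change_of_spin_lift_vector}. The paper phrases the last step slightly differently (computing $w_{0}z_{k+m/2}=w_{0}w_{2k+m}=w_{0}w_{2k}=w_{0}z_{k}$ explicitly), but this is the same observation.
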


\begin{proof}
It suffices to show that $\mbfk^{\st}\big(\SWF(Y,\frak{s},\wh{\sigma})\big)$ is independent of the spin lift. Fix an equivariant metric $g$ and an eigenvalue cut-off $\lambda$. Note that every element of $\mbfk^{\st}\big(\SWF(Y,\frak{s},\wh{\sigma})\big)$ is of the form
\[\vec{k}-\big[\DDD^{*}\big(\vec{\mbfv}\,^{0}_{-\lambda}(\HH)_{\wh{\sigma}}\big)\big]-\tfrac{1}{2}\big[\DDD^{*}\big(n(Y,\frak{s},\wh{\sigma},g)\big)\big]\in\Q^{m}_{*}\]
for some 
\[\vec{k}\in\min\Big(\Pi_{\st}\big(\mbfk(\Sigma^{\Umid\wt{\RR}}I_{\lambda,\wh{\sigma}})\big)\Big)\subset\N^{m}_{\st,*},\]
where:
\begin{enumerate}
	\item $\vec{\mbfv}\,^{0}_{-\lambda}(\HH)_{\wh{\sigma}}\in\NN^{m}$ or $\NN^{m}_{1/2}$ denotes the vector corresponding to the representation $\mbfv_{-\lambda}^{0}(\HH)_{\wh{\sigma}}\in R(\ZZ_{2m})_{\geq 0}^{*}$ as in Section \ref{subsec:sw_floer_spectrum_class}, defined with respect to the spin lift $\wh{\sigma}$.
	\item $I_{\lambda,\wh{\sigma}}$ denotes the associated Conley index defined with respect to $\wh{\sigma}$.
	\item $\Pi_{\st}:\N^{m}\to\N^{m}_{\st,*}$ is the projection map from Section \ref{subsec:stable_k_invariants}.
	\item $\Umid\in RO(\ZZ_{m})_{\geq 0}$ is as in Section \ref{subsec:sw_floer_spectrum_class}.
\end{enumerate}
By Corollary \ref{cor:correction_term_change_of_spin_lift_vector}, we have that 
\[\DDD^{*}\big(n(Y,\frak{s},\wh{\sigma},g)\big)=\DDD^{*}\big(n(Y,\frak{s},-\wh{\sigma},g)\big),\]
and so it suffices to show that:
\begin{enumerate}
	\item $\DDD^{*}\big(\vec{\mbfv}\,^{0}_{-\lambda}(\HH)_{\wh{\sigma}}\big)=\DDD^{*}\big(\vec{\mbfv}\,^{0}_{-\lambda}(\HH)_{-\wh{\sigma}}\big)$ as elements of $\NN^{m}$, and
	\item $I(\Sigma^{\Umid\wt{\RR}}I_{\lambda,\wh{\sigma}})=I(\Sigma^{\Umid\wt{\RR}}I_{\lambda,-\wh{\sigma}})$ as subsets of $\N^{m}$.
\end{enumerate}

For (1), the fact that $-\wh{\sigma}$ acts by $-1$ times the action of $\wh{\sigma}$ on the Seiberg-Witten configuration space implies that $\mbfv_{-\lambda}^{0}(\HH)_{-\wh{\sigma}}=\xi^{m}\mbfv_{-\lambda}^{0}(\HH)_{\wh{\sigma}}$, and hence $\DDD^{*}\big(\vec{\mbfv}\,^{0}_{-\lambda}(\HH)_{\wh{\sigma}}\big)=\DDD^{*}\big(\vec{\mbfv}\,^{0}_{-\lambda}(\HH)_{-\wh{\sigma}}\big)$.

This leaves us to consider (2). First suppose that $m$ is even. Then we have automorphisms
\begin{align*}
    &\alpha^{\ev}:G^{\ev}_{m}\xrightarrow{\cong} G^{\ev}_{m} &
    &\alpha^{\odd}:G^{\odd}_{m}\xrightarrow{\cong} G^{\odd}_{m}
\end{align*}
which restrict to the identity on $\Pin(2)\subset G^{*}_{m}$, and send $\gamma\mapsto-\gamma$ and $\mu\mapsto-\mu$, respectively. If $m$ is odd, we have isomorphisms
\begin{align*}
    &\alpha^{\ev\to\odd}:G^{\ev}_{m}\xrightarrow{\cong} G^{\odd}_{m} & &\alpha^{\odd\to\ev}:G^{\odd}_{m}\xrightarrow{\cong} G^{\ev}_{m}
\end{align*}
which are equal to the identity on $\Pin(2)\subset G^{*}_{m}$, and send $\gamma\mapsto-\mu$ and $\mu\mapsto-\gamma$, respectively. In either case, we denote the relevant automorphisms/isomorphisms by a single map $\alpha:G^{*}_{m}\to G^{*}_{m}$, which induces automorphisms/isomorphisms of the real and complex representation rings of $G^{*}_{m}$:
\begin{align*}
    &\wt{\alpha}_{\RR}:RO(G^{*}_{m})\xrightarrow{\cong}RO(G^{*}_{m}), & &\wt{\alpha}_{\CC}:R(G^{*}_{m})\xrightarrow{\cong}R(G^{*}_{m}).
\end{align*}

Note that $\wt{\alpha}_{\RR},\wh{\alpha}_{\CC}$ act as the identity on representations which are fixed by $S^{1}\subset G^{*}_{m}$ --- in particular $\wt{\alpha}_{\RR}$ acts by the identity on the subset $RO(\ZZ_{m})\wt{\RR}\subset RO(G^{*}_{m})$, so that $\wt{\alpha}_{\RR}(\Umid\wt{\RR})=\Umid\wt{\RR}$. Furthermore, one can check that $\wh{\alpha}_{\CC}$ sends $w_{j}\mapsto w_{j}$ and $z_{k}\mapsto z_{k+\frac{m}{2}}$.

Next observe that the $G^{*}_{m}$-action induced by $-\wh{\sigma}$ on the Seiberg-Witten configuration space is precisely equal to the $G^{*}_{m}$-action induced by $\wh{\sigma}$, precomposed by $\alpha$. We therefore have a canonical $G^{*}_{m}$-homotopy equivalence
\[f:\Sigma^{\Umid\wt{\RR}}I_{\lambda,\wh{\sigma}}\xrightarrow{\simeq}\Sigma^{\Umid\wt{\RR}}I_{\lambda,-\wh{\sigma}}\]
which covers $\alpha$, as well as an induced isomorphism
\[f^{*}:\wt{K}_{G^{*}_{m}}(\Sigma^{\Umid\wt{\RR}}I_{\lambda,-\wh{\sigma}})\xrightarrow{\cong}\wt{K}_{G^{*}_{m}}(\Sigma^{\Umid\wt{\RR}}I_{\lambda,\wh{\sigma}})\]
which covers $\wt{\alpha}$. By analyzing the commutative diagram
\begin{center}
    \begin{tikzcd}
    \wt{K}_{G^{*}_{m}}(\Sigma^{\Umid\wt{\RR}}I_{\lambda,-\wh{\sigma}}) \arrow[r, "\iota^{*}"] \arrow[d, "f^{*}"]
    & \wt{K}_{G^{*}_{m}}(\Sigma^{\Umid\wt{\RR}}(I_{\lambda,-\wh{\sigma}})^{S^{1}})\cong R(G^{*}_{m}) \arrow[d, "(f^{S^{1}})^{*}=\,\wt{\alpha}"] \\
    \wt{K}_{G^{*}_{m}}(\Sigma^{\Umid\wt{\RR}}I_{\lambda,\wh{\sigma}}) \arrow[r, "\iota^{*}" ]
    & \wt{K}_{G^{*}_{m}}(\Sigma^{\Umid\wt{\RR}}(I_{\lambda,\wh{\sigma}})^{S^{1}})\cong R(G^{*}_{m})
    \end{tikzcd}
\end{center}
we see that
\[\III(\Sigma^{\Umid\wt{\RR}}I_{\lambda,\wh{\sigma}})=\wt{\alpha}\big(\III(\Sigma^{\Umid\wt{\RR}}I_{\lambda,-\wh{\sigma}})\big)\subset R(G^{*}_{m}).\]
But since
\[w_{0}z_{k+\frac{m}{2}}=w_{0}w_{2k+m}=w_{0}w_{2k}=w_{0}z_{k}\in R(G^{*}_{m}),\]
we see that $I(\Sigma^{\Umid\wt{\RR}}I_{\lambda,\wh{\sigma}})=I(\Sigma^{\Umid\wt{\RR}}I_{\lambda,-\wh{\sigma}})$, as desired.
\end{proof}

We will henceforth drop the choice of spin lift from our notation for the equivariant $\kappa$-invariants. These invariants satisfy the following properties:

\begin{theorem}
\label{theorem:properties_equivariant_kappa_invariants}
Let $(Y,\frak{s},\sigma)$ be a $\ZZ_{m}$-equivariant spin rational homology sphere.
\begin{enumerate}
    \item For any orientation-preserving diffeomorphism $f:Y\to Y$ which preserves $\frak{s}$, we have that:
    \begin{align*}
        &\K(Y,\frak{s},f^{-1}\circ\sigma\circ f)=\K(Y,\frak{s},\sigma), & 
        &\K^{\wedge}(Y,\frak{s},f^{-1}\circ\sigma\circ f)=\K^{\wedge}(Y,\frak{s},\sigma).
    \end{align*}
    \item For any $\vec{\kappa}\in\K(Y,\frak{s},\sigma)$ and $\vec{\kappa}'\in\K(-Y,\frak{s},\sigma)$, where $-Y$ denotes the orientation-reverse of $Y$, we have that:
    \[\vec{\kappa}+\vec{\kappa}'\succeq [\vec{0}].\]
    An analogous inequality holds for elements of $\K^{\wedge}(Y,\frak{s},\sigma)$ and $\K^{\wedge}(-Y,\frak{s},\sigma)$.
    \item For any $\vec{\kappa}\in\K(Y,\frak{s},\sigma)$ and any $\vec{\kappa}^{\wedge}\in\K^{\wedge}(Y,\frak{s},\sigma)$ we have that
    \begin{align*}
        &|\vec{\kappa}|\geq\kappa(Y,\frak{s}), & &|\vec{\kappa}^{\wedge}|\geq\tfrac{1}{2}\kappa(Y\# Y,\frak{s}\#\frak{s}).
    \end{align*}
\end{enumerate}
\end{theorem}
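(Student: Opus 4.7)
The plan is to handle each of the three claims separately; all follow from machinery already developed in the paper.

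For (1), fix a spin lift $\wh{\sigma}$ of $\sigma$ and choose a lift $\wh{f}$ of $f$ to the principal spin bundle of $Y$. Then $\wh{\sigma}^{f} := \wh{f}^{-1}\wh{\sigma}\wh{f}$ is a spin lift of $f^{-1}\sigma f$ of the same parity as $\wh{\sigma}$, so both live in the same $G^{*}_{m}$. Since $f$ is orientation-preserving and $\frak{s}$-preserving, pulling back by $f$ defines a $G^{*}_{m}$-equivariant isomorphism between the Seiberg--Witten configuration space $\C(Y,\frak{s})$ with $\wh{\sigma}$-action and the same space with $\wh{\sigma}^{f}$-action. Choosing a $\sigma$-equivariant metric $g$, the pullback $f^{*}g$ is $(f^{-1}\sigma f)$-equivariant, and $f^{*}$ identifies Coulomb slices, eigenspace decompositions, and Conley indices on the nose. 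Naturality of the equivariant correction term under diffeomorphisms (immediate from Definition \ref{def:equivariant_correction_term}) then yields the equality $\SWF(Y,\frak{s},\wh{\sigma}^{f}) = \SWF(Y,\frak{s},\wh{\sigma})$ in $\CCC_{G^{*}_{m},\CC}$; taking smash-squares handles $\K^{\wedge}$, and Proposition \ref{prop:independent_of_spin_lift} eliminates any residual dependence on spin lifts.

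Part (2) is a direct consequence of Proposition \ref{prop:floer_spectrum_duality}, which exhibits a $[(S^{0},0,0)]$-duality between $\SWF(Y,\frak{s},\wh{\sigma})$ and $\SWF(-Y,\frak{s},\wh{\sigma})$ (choosing a generic metric so that $\ker\dirac = 0$ and the correction-term contributions cancel via Proposition \ref{prop:correction_term_orientation_reversal}). Feeding this into Proposition \ref{prop:stable_k_invariants_duality} with $\mbfs = 0$ and $\mbft = 0$ gives $\vec{k}+\vec{k}'\succeq[\vec{0}]$ for every $\vec{k}\in\mbfk^{\st}(\SWF(Y,\frak{s},\wh{\sigma}))$ and $\vec{k}'\in\mbfk^{\st}(\SWF(-Y,\frak{s},\wh{\sigma}))$, and doubling yields the statement for $\K$. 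For the $\K^{\wedge}$ version, smashing the duality maps from Proposition \ref{prop:floer_spectrum_duality} produces a $[(S^{0},0,0)]$-duality between $\SWF(Y,\frak{s},\wh{\sigma})\wedge\SWF(Y,\frak{s},\wh{\sigma})$ and $\SWF(-Y,\frak{s},\wh{\sigma})\wedge\SWF(-Y,\frak{s},\wh{\sigma})$, and Proposition \ref{prop:stable_k_invariants_duality} applies a second time.

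For (3), recall that $\kappa(Y,\frak{s}) = 2k_{\Pin(2)}(\SWF(Y,\frak{s}))$ in Manolescu's normalization, and that restricting $\SWF(Y,\frak{s},\wh{\sigma})$ from $G^{*}_{m}$-equivariance to $\Pin(2)$-equivariance recovers the $\Pin(2)$-spectrum class $\SWF(Y,\frak{s})$ of \cite{Man16}. The stable analogue of Lemma \ref{lemma:comparison_of_k_invariants} then gives $|\vec{k}|\geq k_{\Pin(2)}(\SWF(Y,\frak{s}))$ for every $\vec{k}\in\mbfk^{\st}(\SWF(Y,\frak{s},\wh{\sigma}))$, and multiplying by $2$ yields $|\vec{\kappa}|\geq\kappa(Y,\frak{s})$. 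For the $\wedge$-version, applying the same comparison to $\SWF(Y,\frak{s},\wh{\sigma})\wedge\SWF(Y,\frak{s},\wh{\sigma})$ together with the $\Pin(2)$-equivariant connected-sum formula $\SWF(Y\# Y,\frak{s}\#\frak{s})\equiv_{\ell}\SWF(Y,\frak{s})\wedge\SWF(Y,\frak{s})$ (a consequence of the gluing theorem for the Bauer--Furuta map) gives $|\vec{\kappa}^{\wedge}|\geq k_{\Pin(2)}(\SWF(Y\# Y,\frak{s}\#\frak{s})) = \tfrac{1}{2}\kappa(Y\# Y,\frak{s}\#\frak{s})$. None of these steps presents a serious obstacle; the main bookkeeping subtlety is tracking the factors of $2$ introduced by the convention $\K = 2\cdot\mbfk^{\st}$ and by Manolescu's normalization of $\kappa$, and confirming that the correction-term shifts in the metric-independent Floer spectrum cancel against each other in the duality of part (2).
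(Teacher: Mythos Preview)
Your proposal is correct and follows essentially the same approach as the paper's own proof: part (1) via the $G^{*}_{m}$-equivariant isomorphism of configuration spaces induced by $\wh{f}$, part (2) via Propositions \ref{prop:stable_k_invariants_duality} and \ref{prop:floer_spectrum_duality}, and part (3) via Lemma \ref{lemma:comparison_of_k_invariants}. You supply more detail than the paper (which dispatches (2) and (3) in a single sentence each), particularly in explicitly invoking the connected-sum local equivalence for the $\K^{\wedge}$ bound in (3); your remark in (2) about choosing a generic metric with $\ker\dirac=0$ is unnecessary since Proposition \ref{prop:floer_spectrum_duality} already absorbs the kernel term, but it does no harm.
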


\begin{proof}
For (1), fix an equivariant metric $g$ on $Y$ and spin lifts $\wh{\sigma}$, $\wh{f}$ of $\sigma$ and $f$, respectively. Then $\wh{f}$ induces a $G^{*}_{m}$-equivariant homeomorphism of configuration spaces 
\[\C(Y,\frak{s},\wh{\sigma},g)\xrightarrow{\cong}\C(Y,\frak{s},\wh{f}^{-1}\circ\wh{\sigma}\circ\wh{f},f^{*}g),\]
and hence a $G^{*}_{m}$-homotopy equivalence of Conley indices
\[I_{\lambda}(Y,\frak{s},\wh{\sigma},g)\xrightarrow{\simeq}I_{\lambda}(Y,\frak{s},\wh{f}^{-1}\circ\wh{\sigma}\circ\wh{f},f^{*}g),\]
from which it follows that
\[\SWF(Y,\frak{s},\wh{\sigma})=SWF(Y,\frak{s},\wh{f}^{-1}\circ\wh{\sigma}\circ\wh{f})\]
as $\CC$-$G^{*}_{m}$-spectrum classes. Statement (2) follows from
Propositions \ref{prop:stable_k_invariants_duality} and \ref{prop:floer_spectrum_duality}, and (3) follows from Lemma \ref{lemma:comparison_of_k_invariants}.
\end{proof}

We have the following comparison lemma between $\K(Y,\frak{s},\sigma)$ and $\K^{\wedge}(Y,\frak{s},\sigma)$:

\begin{lemma}
\label{lemma:comparison_normal_wedge_invariants}
Let $(Y,\frak{s},\wh{\sigma})$ be a $\ZZ_{m}$-equivariant spin rational homology sphere. Then: 
\begin{enumerate}
    \item For each $\kappa\in\K(Y,\frak{s},\sigma)$:
    \begin{enumerate}
        \item $\vec{\kappa}\not\prec\vec{\kappa}^{\wedge}$ for all $\vec{\kappa}^{\wedge}\in\K^{\wedge}(Y,\frak{s},\sigma)$.
        \item There exists $\vec{\kappa}^{\wedge}\in\K^{\wedge}(Y,\frak{s},\sigma)$ such that $\vec{\kappa}\succeq\vec{\kappa}^{\wedge}$.
    \end{enumerate}
    \item In particular, $\vec{\ol{\kappa}}(Y,\frak{s},\sigma)\succeq\vec{\ul{\kappa}}^{\wedge}(Y,\frak{s},\sigma)$.
\end{enumerate}
\end{lemma}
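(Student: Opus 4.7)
The plan is to reduce the lemma to a single application of Lemma \ref{lemma:k_invariants_products} with $X = X' = \SWF(Y,\frak{s},\wh{\sigma})$, followed by a bookkeeping check that everything descends correctly to the stable lattice $\Q^{m}_{*}$.

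First I would unwind the definitions. Fix any unstable representative $(X,\mbfa,\mbfb) \in \wt{\CCC}_{G^{*}_{m},\CC}$ of $\SWF(Y,\frak{s},\wh{\sigma})$, so that $\SWF(Y,\frak{s},\wh{\sigma}) \wedge \SWF(Y,\frak{s},\wh{\sigma})$ is represented by $(X \wedge X, 2\mbfa, 2\mbfb)$. By Definition \ref{def:stable_k_invariants}, we have
\begin{align*}
    \K(Y,\frak{s},\sigma) &= 2 \cdot \mbfk^{\st}(X,\mbfa,\mbfb) = 2 f_{\QQ}\bigl(\mbfk(X)\bigr) - [\DDD^{*}(2\vec{\mbfb})], \\
    \K^{\wedge}(Y,\frak{s},\sigma) &= \mbfk^{\st}(X \wedge X, 2\mbfa, 2\mbfb) = f_{\QQ}\bigl(\mbfk(X \wedge X)\bigr) - [\DDD^{*}(2\vec{\mbfb})],
\end{align*}
where $f_{\QQ}\colon \N^{m} \to \Q^{m}_{*}$ is the additive-poset map from Proposition \ref{prop:map_N^m_to_Q^m_*}. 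Since both sets are shifted by the same element $[\DDD^{*}(2\vec{\mbfb})]$, and since translation by a fixed element of $\Q^{m}_{*}$ preserves the partial order, it suffices to compare $2 f_{\QQ}(\mbfk(X))$ with $f_{\QQ}(\mbfk(X \wedge X))$ as subsets of $\Q^{m}_{*}$.

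Next I would apply Lemma \ref{lemma:k_invariants_products} with both factors equal to $X$, taking $\vec{k} = \vec{k}'$ to be the same element of $\mbfk(X)$. The lemma then yields, for every $\vec{k} \in \mbfk(X)$:
\begin{enumerate}
    \item[(i)] $2\vec{k} \not\prec \vec{k}''$ for all $\vec{k}'' \in \mbfk(X \wedge X)$;
    \item[(ii)] there exists some $\vec{k}'' \in \mbfk(X \wedge X)$ with $2\vec{k} \succeq \vec{k}''$.
\end{enumerate}
Applying the order-preserving map $f_{\QQ}$ and subtracting $[\DDD^{*}(2\vec{\mbfb})]$ translates (i) and (ii) verbatim into statements (1a) and (1b) of the lemma, once we identify elements of $\K$ with $2 f_{\QQ}(\vec{k}) - [\DDD^{*}(2\vec{\mbfb})]$ and elements of $\K^{\wedge}$ with $f_{\QQ}(\vec{k}'') - [\DDD^{*}(2\vec{\mbfb})]$.

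Finally, (2) follows formally from (1b): for each $\vec{\kappa} \in \K(Y,\frak{s},\sigma)$, choose $\vec{\kappa}^{\wedge} \in \K^{\wedge}(Y,\frak{s},\sigma)$ with $\vec{\kappa} \succeq \vec{\kappa}^{\wedge}$ by (1b); then $\vec{\kappa} \succeq \vec{\kappa}^{\wedge} \succeq \vec{\ul{\kappa}}^{\wedge}(Y,\frak{s},\sigma)$, and taking the join over $\vec{\kappa} \in \K(Y,\frak{s},\sigma)$ in the completed lattice $(\wh{\Q}^{m}_{*},\preceq)$ yields $\vec{\ol{\kappa}}(Y,\frak{s},\sigma) \succeq \vec{\ul{\kappa}}^{\wedge}(Y,\frak{s},\sigma)$.

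There is no real obstacle here: the argument is essentially a diagonal specialization of Lemma \ref{lemma:k_invariants_products}, with the only mild care being the verification that the definitional shift by $[\DDD^{*}(2\vec{\mbfb})]$ agrees on both sides (which it does, since the Bauer--Furuta-type de-suspension of $\SWF \wedge \SWF$ is exactly twice that of $\SWF$).
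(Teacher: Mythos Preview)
Your overall strategy is the same as the paper's---the lemma is indeed a diagonal application of Lemma~\ref{lemma:k_invariants_products}---but there is a gap in your justification of part~(1a). You claim that applying the order-preserving map $f_{\QQ}$ ``translates (i) and (ii) verbatim'', but an order-preserving map need not preserve the relation $\not\prec$: it is entirely possible that $2\vec{k} \not\prec \vec{k}''$ in $\N^{m}$ while $f_{\QQ}(2\vec{k}) \prec f_{\QQ}(\vec{k}'')$ in $\Q^{m}_{*}$, since $f_{\QQ}$ factors through the non-injective quotient $\Pi_{\st}$. So (1a) does not follow from your argument as written.

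The fix, which is what the paper does by citing Lemmas~\ref{lemma:comparing_minima_subsets} and~\ref{lemma:comparing_minima_sum_of_subsets} alongside Lemma~\ref{lemma:k_invariants_products}, is to work directly in the lattice $\Q^{m}_{*}$ rather than pushing forward from $\N^{m}$. From the ideal inclusion $\III(X)\cdot\III(X)\subset\III(X\wedge X)$ one deduces $I^{\st}(\X)+I^{\st}(\X)\subset I^{\st}(\X\wedge\X)$ in $\Q^{m}_{*}$, where $\X=\SWF(Y,\frak{s},\wh{\sigma})$. Then Lemma~\ref{lemma:comparing_minima_sum_of_subsets}, applied in $\Q^{m}_{*}$ with $A=B=I^{\st}(\X)$ and $C=I^{\st}(\X\wedge\X)$ and specialized to $a=b$, gives both (1a) and (1b) directly: the $\not\prec$ statement now holds because $\K^{\wedge}=\min\bigl(I^{\st}(\X\wedge\X)\bigr)$ and $2a\in I^{\st}(\X\wedge\X)$, so no element of $\K^{\wedge}$ can lie strictly above $2a$. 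Your arguments for (1b) and (2) are fine as written.
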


\begin{proof}
Follows from Lemmas \ref{lemma:comparing_minima_subsets}, \ref{lemma:comparing_minima_sum_of_subsets} and \ref{lemma:k_invariants_products}.
\end{proof}

\begin{definition}
\label{def:floer_K_G_split}
Let $(Y,\frak{s},\sigma)$ be a $\ZZ_{m}$-equivariant spin rational homology sphere.
\begin{enumerate}
    \item We say $(Y,\frak{s},\sigma)$ is \emph{Floer $K_{G^{*}_{m}}$-split} if the $\CC$-$G^{*}_{m}$-spectrum class $\SWF(Y,\frak{s},\wh{\sigma})$ is $K_{G^{*}_{m}}$-split in the sense of Definition \ref{def:stable_K_G_split} for any choice of spin lift $\wh{\sigma}$ of $\sigma$.
    \item We say $(Y,\frak{s},\sigma)$ is \emph{Floer $\wedge^{2}$-$K_{G^{*}_{m}}$-split} if the $\CC$-$G^{*}_{m}$-spectrum class 
    \[\wedge^{2}\SWF(Y,\frak{s},\wh{\sigma}):=\SWF(Y,\frak{s},\wh{\sigma})\wedge\SWF(Y,\frak{s},\wh{\sigma})\]
    is $K_{G^{*}_{m}}$-split for any choice of spin lift $\wh{\sigma}$ of $\sigma$.
\end{enumerate}
\end{definition}

\begin{remark}
The proof of Proposition \ref{prop:independent_of_spin_lift} implies that $\SWF(Y,\frak{s},\wh{\sigma})$ is $K_{G^{*}_{m}}$-split if and only if $\SWF(Y,\frak{s},-\wh{\sigma})$ is $K_{G^{*}_{m}}$-split, i.e., it suffices to verify the $K_{G^{*}_{m}}$-splitness property for only one choice of spin lift $\wh{\sigma}$ of $\sigma$. Furthermore, note that if $(Y,\frak{s},\sigma)$ is Floer $K_{G^{*}_{m}}$-split (respectively, Floer $\wedge^{2}$-$K_{G^{*}_{m}}$-split), then $\K(Y,\frak{s},\sigma)$ (respectively, $\K^{\wedge}(Y,\frak{s},\sigma)$) consists of a single element.
\end{remark}

Finally, we have the following proposition in the case of odd-type involutions:

\begin{proposition}
\label{prop:kappa_kappa_tilde}
Let $(Y,\frak{s},\iota)$ be an odd-type $\ZZ_{2}$-equivariant spin rational homology sphere. Then
\[\wt{\kappa}(Y,\frak{s},\iota)=\kappa(Y,\frak{s},\iota)\text{ or }\kappa(Y,\frak{s},\iota)+2.\]
\end{proposition}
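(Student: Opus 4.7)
The plan is to deduce the proposition directly from Proposition \ref{prop:stable_k_tilde_2_odd_inequality} by showing that the difference $\wt{\kappa}(Y,\frak{s},\iota) - \kappa(Y,\frak{s},\iota)$ is automatically an even integer, which rules out the middle value $\kappa(Y,\frak{s},\iota)+1$ left open by that proposition.

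First I would unpack the definitions. By Definition \ref{def:equivariant_kappa_invariants}(3),
\[\wt{\kappa}(Y,\frak{s},\iota) \;=\; 2\,\wt{k}^{\st}\!\big(\SWF(Y,\frak{s},\wh{\iota})\big),\]
viewing $\SWF(Y,\frak{s},\wh{\iota})$ as a $\CC$-$G^{\odd}_{2}$-spectrum class. The invariant $\kappa(Y,\frak{s},\iota)$ is by construction equal to $2\,k_{\Pin(2)}(\SWF(Y,\frak{s},\wh{\iota}))$, since restricting the $G^{\odd}_{2}$-action on the Conley index to $\Pin(2) \subset G^{\odd}_{2}$ recovers (with the same rational desuspension by $\mbfv_{-\lambda}^{0}(\HH)+\tfrac12 n(Y,\frak{s},\wh{\iota},g)$) the $\Pin(2)$-equivariant class $\SWF(Y,\frak{s})$ of \cite{Man16}. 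Then Proposition \ref{prop:stable_k_tilde_2_odd_inequality} applied to $\SWF(Y,\frak{s},\wh{\iota})$ yields immediately
\[\kappa(Y,\frak{s},\iota) \;\le\; \wt{\kappa}(Y,\frak{s},\iota) \;\le\; \kappa(Y,\frak{s},\iota)+2.\]

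Next I would argue that the difference is an even integer. Fix an equivariant metric $g$ and eigenvalue cutoff $\lambda$, and let $(\Sigma^{\Umid\wt{\RR}}I_{\lambda},\mbfa,\mbfb)$ be the representative of $\SWF(Y,\frak{s},\wh{\iota})$ from Section \ref{subsec:sw_floer_spectrum_class}, so $\mbfb = \mbfv_{-\lambda}^{0}(\HH) + \tfrac12 n(Y,\frak{s},\wh{\iota},g)$. Under the isomorphism $(\N^{2}_{\st,\odd},\preceq,+,|\cdot|)\cong(\NN,\le,+,|\cdot|)$ from Example \ref{ex:pre_stable_invariants_lattice_m_equals_2}, Definition \ref{def:stable_k_invariants} gives
\[\wt{k}^{\st}\!\big(\SWF(Y,\frak{s},\wh{\iota})\big) = N^{\odd} - \bigl|\DDD^{\odd}(\vec{\mbfb})\bigr|,\qquad k_{\Pin(2)}\!\big(\SWF(Y,\frak{s},\wh{\iota})\big) = N^{\Pin(2)} - \bigl|\DDD^{\odd}(\vec{\mbfb})\bigr|,\]
where $N^{\odd}, N^{\Pin(2)} \in \NN$ are the minimal gradings of the images of $\mbfk(\Sigma^{\Umid\wt{\RR}}I_{\lambda})$ in $\N^{2}_{\st,\odd}$ and of $\mbfk_{\Pin(2)}(\Sigma^{\Umid\wt{\RR}}I_{\lambda})$ in $\NN$, respectively. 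To justify that the two grading shifts coincide exactly, I would use Proposition \ref{prop:correction_term_involutions} together with the cyclic indexing $\vec{e}_{3}=\vec{e}_{1}$ valid for $m=2$: this implies
\[\DDD^{\odd}(\vec{n}(Y,\frak{s},\wh{\iota},g)) = \bigl(n(Y,\frak{s},\wh{\iota},g)_{1/2} + n(Y,\frak{s},\wh{\iota},g)_{3/2}\bigr)\vec{e}_{1} = n(Y,\frak{s},g)\,\vec{e}_{1},\]
because the $\pm i\,\overline{\eta}^{\mu}_{\dirac,\wh{\iota},g}$ contributions cancel, so both shifts reduce to $|\vec{\mbfv}_{-\lambda}^{0}(\HH)| + \tfrac{1}{2}n(Y,\frak{s},g)$.

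Consequently
\[\wt{\kappa}(Y,\frak{s},\iota) - \kappa(Y,\frak{s},\iota) = 2\bigl(N^{\odd} - N^{\Pin(2)}\bigr) \in 2\ZZ,\]
and combining with the bound $0 \le \wt{\kappa}(Y,\frak{s},\iota)-\kappa(Y,\frak{s},\iota) \le 2$ above, this forces the difference to lie in $\{0,2\}$, as claimed. The only real step requiring care is verifying the exact cancellation of the grading shifts in the two settings, so that the rational correction term drops out of the \emph{difference} and leaves behind a difference of nonnegative integers; once Proposition \ref{prop:correction_term_involutions} is in hand, this is a short calculation, and Proposition \ref{prop:stable_k_tilde_2_odd_inequality} supplies the rest.
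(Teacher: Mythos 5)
Your proof is correct and follows essentially the same route as the paper's, which is the one-line citation of Proposition \ref{prop:stable_k_tilde_2_odd_inequality}. What you add — the explicit observation that $\wt{k}^{\st}(\SWF(Y,\frak{s},\wh{\iota}))$ and $k_{\Pin(2)}(\SWF(Y,\frak{s}))$ differ by an integer, because both are elements of $\NN$ shifted by the same rational desuspension constant — is precisely the implicit step that upgrades the bound $k_{\Pin(2)}\le\wt{k}^{\st}\le k_{\Pin(2)}+1$ into the stated dichotomy, so your write-up is a fuller justification of the same argument (one could also see the agreement of the shifts directly from the fact that $\mbfb\HH$ restricts to $|\vec{\mbfb}|$ copies of $\HH$ as a $\Pin(2)$-representation, without routing through Proposition \ref{prop:correction_term_involutions}, but your computation is fine).
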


\begin{proof}
Follows from Proposition \ref{prop:stable_k_tilde_2_odd_inequality}.
\end{proof}

\bigskip
\subsection{Relative Bauer-Furuta Invariants}
\label{subsec:relative_bauer_furuta}

Let $m\geq 2$ be an integer, let $(Y,\frak{s},\wh{\sigma})$ be a $\ZZ_{m}$-equivariant spin rational homology sphere, and let $(W,\frak{t},\wh{\tau})$ be an equivariant spin filling of $(Y,\frak{s},\wh{\sigma})$ such that $b_{1}(W)=0$. Pick an equivariant metric $g$ on $Y$ and an equivariant metric $g_{W}$ on $W$ so that the the boundary has a collar neighborhood isometric to $[0,1]\times Y$. Denote by $\SS_{W}=\SS^{+}_{W}\oplus \SS^{-}_{W}$ the spinor bundle of $W$ and $\SS$ the spinor bundle of $Y$.

As outlined in \cite{Kha15}, given any 1-form $A\in\Omega^{1}(W)$, the inclusion $\del W\hookrightarrow W$ induces a decomposition
\[A|_{\del W}=\mbft A+\mbfn A\]
of the restriction of $A$ to $\del W$ into its tangential and normal components. We then define the space of 1-forms satisfying the \emph{double Coulomb condition} to be
\[\Omega^{1}_{CC}(W):=\{A\in\Omega^{1}(W)\;|\;d^{*}A=0, d^{*}(\mbft A)=0, \int_{Y}\mbft(\ast A)=0.\}\]
As in the three-dimensional case, the action of $\tau$ on $W$ induces an action of $G^{*}_{m}=G^{\ev}_{m}$ (if $\wh{\tau}$ is an even spin lift) or $G^{\odd}_{m}$ (if $\wh{\tau}$ is an odd spin lift) on $i\Omega^{k}(W)\oplus\Gamma(\SS^{\pm}_{W})$ via
\begin{align*}
    &e^{i\theta}\cdot(A,\Phi)=(A,e^{i\theta}\Phi), \\
    &j\cdot(a,\phi)=(-A,j\Phi),
\end{align*}
and
\begin{align*}
    &\gamma\cdot(A,\Phi)=(\sigma^{*}(A),\wh{\sigma}^{*}(\Phi))\text{ if }\ast=\ev, \\
    &\mu\cdot(A,\Phi)=(\sigma^{*}(A),\wh{\sigma}^{*}(\Phi))\text{ if }\ast=\odd.
\end{align*}
In particular, this action descends to a $G^{*}_{m}$-action on $i\Omega^{1}_{CC}(W)\oplus\Gamma(\SS^{+}_{W})$ because $g_{W}$ is isometric to a product near the boundary, and similarly descends to a $G^{*}_{m}$-action on $i\Omega^{2}_{+}(W)\oplus\Gamma(\SS^{-}_{W})$.

Let $r$ denote the restriction map
\[r:i\Omega^{1}_{CC}(W)\oplus\Gamma(\SS^{+}_{W})\to i\Omega^{1}_{C}(Y)\oplus\Gamma(\SS)=V\]
from the double Coloumb slice of $W$ to the Coloumb slice of $Y$. Combining this with the Seiberg-Witten map, we obtain a map
\[\wt{SW}=SW\oplus r:i\Omega^{1}_{CC}(W)\oplus\Gamma(\SS^{+}_{W})\to \big(i\Omega^{2}_{+}(W)\oplus\Gamma(\SS^{-}_{W})\big)\oplus\big( i\Omega^{1}_{C}(Y)\oplus\Gamma(\SS)\big)\]
\[(A,\Phi)\mapsto(d^{+}A-\rho^{-1}((\Phi\Phi^{*})_{0}),\,\Dirac^{+}_{W}\Phi+\rho(A)\Phi)\oplus r(A,\Phi).\]
Unfortunately the linearization of $\wt{SW}$ is not Fredholm. To remedy this, we fix an eigenvalue cut-off $\nu>>0$ and consider the modified map
\[\wt{SW}^{\nu}=SW\oplus (\Pi^{\nu}\circ r):i\Omega^{1}_{CC}(W)\oplus\Gamma(\SS_{W}^{+})\to i\Omega^{2}_{+}(W)\oplus\Gamma(\SS_{W}^{-})\oplus V^{\nu}_{-\infty}\]
whose linearization is Fredholm, where $\Pi^{\nu}:i\Omega^{1}_{C}(Y)\oplus\Gamma(\SS)\to 
V^{\nu}_{-\infty}$ denotes the canonical projection map. We can write $\wt{SW}^{\nu}=\wh{D}\oplus(\Pi^{\nu}\circ r) + Q$, where
\[\wh{D}:i\Omega^{1}_{CC}(W)\oplus\Gamma(\SS^{+}_{W})\to i\Omega_{+}^{2}(W)\oplus\Gamma(\SS^{-}_{W})\]
\[(A,\Phi)\mapsto(d^{+}A,D_{W}^{+}\Phi)\]
and
\[Q:i\Omega^{1}_{CC}(W)\oplus\Gamma(\SS^{+}_{W})\to i\Omega_{+}^{2}(W)\oplus\Gamma(\SS^{-}_{W})\]
\[(A,\Phi)\mapsto(-\rho^{-1}((\Phi\Phi^{*})_{0}),\rho(A)\Phi)\]
is a quadratic map with nice compactness properties.

For the following, let $\U_{W}:=i\Omega^{1}_{CC}(W)\oplus\Gamma(\SS^{+}_{W})$ and $\U'_{W}:=i\Omega^{2}_{+}(W)\oplus\Gamma(\SS^{-}_{W})$, so that we can write $\wt{SW}^{\nu}$ as a map $\wt{SW}^{\nu}:\U_{W}\to\U'_{W}\oplus V_{-\infty}^{\nu}$. Since the linear map $\wh{D}\oplus(\Pi^{\nu}\circ r)$ is Fredholm, in particular its cokernel is finite. Pick a finite-dimensional subspace $U'\subset\U'_{W}$ and an eigenvalue $\lambda<<0$ such that the finite-dimensional subspace $U'\oplus V_{\lambda}^{\nu}\subset\U'_{W}\oplus V_{-\infty}^{\nu}$ contains $\text{coker}(\wh{D}\oplus(\Pi^{\nu}\circ r))$. Next, let
\[U:=(\wh{D}\oplus(\Pi^{\nu}\circ r))^{-1}(U'\oplus V_{\lambda}^{\nu})\subset\U_{W}\]
and consider the corresponding projected map
\[\pi_{U'\oplus V_{\lambda}^{\nu}}\circ\wt{SW}^{\nu}|_{U}:U\to U'\oplus V_{\lambda}^{\nu}\]
between the finite dimensional subspaces. The nice compactness properties of $\wt{SW}^{\nu}$ imply that if $R_{2}>0$ is chosen so that $B(R_{2},V_{\lambda}^{\nu})$ is an isolating neighborhood for the compressed Seiberg-Witten flow on $V_{\lambda}^{\nu}$, then there exists $R_{1}>0$ such that the above map descends to a map
\[\pi_{U'\oplus V_{\lambda}^{\nu}}\circ\wt{SW}^{\nu}|_{B(R_{1},U)}:B(R_{1},U)\to V\oplus B(R_{2},V_{\lambda}^{\nu}).\]
If $U'$ and $-\lambda$ are chosen large enough then this induces a based map
\begin{equation}
\label{eq:relative_bauer_furuta_filling}
    \psi_{U',\nu,\lambda}:S^{U}\to S^{U'}\wedge I_{\lambda}^{\nu}
\end{equation}
from the one-point compactification of $U$ to a suspension of the Conley index $I_{\lambda}^{\nu}$. One can show that all of the above spaces have an induced $G^{*}_{m}$-action, and that all of the above maps are equivariant with respect to this action as well, owing to the equivariance of $g_{W}$.

If $(W,\frak{t},\wh{\tau})$ is an equivariant cobordism from $(Y_{0},\frak{s}_{0},\wh{\sigma}_{0})$ to $(Y_{1},\frak{s}_{1},\wh{\sigma}_{1})$, via duality we obtain a map
\begin{equation}
\label{eq:relative_bauer_furuta_cobordism}
    \psi_{U',\nu,\lambda}:S^{U}\wedge(I_{0})_{\lambda}^{\nu}\to S^{U'}\wedge(I_{1})_{\lambda}^{\nu},
\end{equation}
where $(I_{0})_{\lambda}^{\nu}$, $(I_{1})_{\lambda}^{\nu}$ are Conley indices for $Y_{0}$, $Y_{1}$, respectively.

\bigskip
\subsection{\texorpdfstring{$G^{*}_{m}$}{G*m}-Equivariant Cobordism Maps}
\label{subsec:G_*_m_cobordisms}

Suppose $(W,\frak{t},\wh{\tau})$ is a $\ZZ_{m}$-equivariant spin 4-manifold with $b_{1}(W)=0$. Let $\H^{2}_{+}(W,i\RR)$ denote the space of imaginary-valued harmonic self-dual 2-forms, with $\tau$ acting by pull-back and $j$ acting by $\pm 1$. Considered as a real $G^{*}_{m}$-representation, it can be written in the form 
\[\H^{2}_{+}(W,i\RR)=b_{2,\RR}^{+}(W,\tau)\cdot[\wt{\RR}]\in RO(G^{*}_{m}),\]
where $b_{2,\RR}^{+}(W,\tau)\in RO(\ZZ_{m})_{\geq 0}$ denotes the real $\ZZ_{m}$-representation
\begin{equation}
    b_{2,\RR}^{+}(W,\tau)=b_{2,\RR}^{+}(W,\tau)_{0}+\Big(\sum_{j=1}^{\lfloor\frac{m-1}{2}\rfloor}\tfrac{1}{2}b_{2,\RR}^{+}(W,\tau)_{j}\cdot\nu_{j}\Big)+b_{2,\RR}^{+}(W,\tau)_{m/2}\cdot\rho,
\end{equation}
and where $b_{2,\RR}^{+}(W,\tau)_{j}$ denotes the $\RR$-dimension of the $(\omega_{m}^{j}+\omega_{m}^{-j})$-eigenspace of the induced action of $\tau$ on $\H^{2}_{+}(W,i\RR)$. Here we take the convention thaat $b_{2}^{+}(W,\tau)_{m/2}=0$ if $m$ is odd.

Similarly, let $\H^{2}_{+}(W,\CC)$ denote the space of complex-valued harmonic self-dual 2-forms, with $G^{*}_{m}$-action as above. Observe that
\[\H^{2}_{+}(W,\CC)=c(\H^{2}_{+}(W,i\RR))=b_{2}^{+}(W,\tau)\cdot[\wt{\CC}]\in R(G^{*}_{m}),\]
where $c:RO(G)\to R(G)$ denotes the complexification map, and
\[b_{2}^{+}(W,\tau):=c(b_{2}^{+}(W,\tau))\in R(\ZZ_{m})_{\geq 0}.\]
More precisely, we can write
\[b_{2}^{+}(W,\tau)=\sum_{k=0}^{m-1}b_{2}^{+}(W,\tau)_{k}\cdot \zeta^{k},\]
where:
\begin{align*}
    &b_{2}^{+}(W,\tau)_{0}=b_{2,\RR}^{+}(W,\tau)_{0}, \\
    &b_{2}^{+}(W,\tau)_{k}=b_{2}^{+}(W,\tau)_{m-k}=\tfrac{1}{2}b_{2,\RR}^{+}(W,\tau)_{k}\text{ for all }1\le k\le \lfloor\tfrac{m-1}{2}\rfloor,\text{ and} \\
    &b_{2}^{+}(W,\tau)_{m/2}=b_{2,\RR}^{+}(W,\tau)_{m/2}\text{ if }m\text{ is even}.
\end{align*}
Equivalently, $b_{2}^{+}(W,\tau)_{k}$ is the complex dimension of the $\omega_{m}^{k}$-eigenspace of the induced action of $\tau$ on $\H^{2}_{+}(W,\CC)$, as described in the introduction.

We now return to our analysis of cobordism maps. Suppose $(W,\frak{t},\wh{\tau})$ is a $\ZZ_{m}$-equivariant spin cobordism from $(Y_{0},\frak{s}_{0},\wh{\sigma}_{0})$ to $(Y_{1},\frak{s}_{1},\wh{\sigma}_{1})$. Then the map from Equation \ref{eq:relative_bauer_furuta_cobordism} takes the form of a based $G^{*}_{m}$-equivariant map
\begin{equation}
\label{eq:relative_bauer_furuta_cobordism_real}
    \psi_{U',\nu,\lambda}:S^{\mbfr\wt{\RR}+\mbfh\HH}\wedge(I_{0})_{\lambda}^{\nu}\to S^{\mbfr'\wt{\RR}+\mbfh'\HH}\wedge(I_{1})_{\lambda}^{\nu},
\end{equation}
where:
\begin{align}
    \mbfr-\mbfr'&=\mbfv_{\lambda}^{0}(\RR)-b_{2,\RR}^{+}(W,\tau)\in RO(\ZZ_{m}),\label{eq:real_cobordism_r} \\
\begin{split}
    \mbfh-\mbfh'&=\mbfv_{\lambda}^{0}(\HH)+\tfrac{1}{2}\Spin(W,\frak{t},\wh{\tau},g_{W}) \\
    &=\mbfv_{\lambda}^{0}(\HH)+\tfrac{1}{2}\big(n(Y_{1},\frak{s}_{1},\wh{\sigma}_{1},g_{1})-n(Y_{0},\frak{s}_{0},\wh{\sigma}_{0},g_{0})\big)-\tfrac{1}{16}\S(W,\frak{t},\wh{\tau})\in R(\ZZ_{2m})^{*}.\label{eq:real_cobordism_h}
\end{split}
\end{align}

In the special case where each $b_{2}^{+}(W,\tau)_{j}$ is an even integer, by suspending by copies of $\wt{\RR}$, $\wt{\VV}_{j}$ and $\wt{\RR}_{m/2}$ if necessary we can assume both the domain and target of Equation \ref{eq:relative_bauer_furuta_cobordism_real} are spaces of type $\CC$-$G^{*}_{m}$-$\SWF$. Let us denote by $\vec{b}_{2}^{+}(W,\tau)$ the following vector:
\[\vec{b}_{2}^{+}(W,\tau):=(b_{2}^{+}(W,\tau)_{0},\dots,b_{2}^{+}(W,\tau)_{m-1})\in\NN^{m}.\]
Using Equations \ref{eq:real_cobordism_r} and \ref{eq:real_cobordism_h}, we make the following observation:

\begin{observation}
\label{observation:real_cobordism}
Suppose each $b_{2}^{+}(W,\tau)_{j}$ is an even integer. Then $\psi_{U',\nu,\lambda}$ can be interpreted as a morphism
\[f:[(S^{0},\tfrac{1}{2}b_{2}^{+}(W,\tau),\tfrac{1}{16}\S(W,\frak{t},\wh{\tau}))]\wedge\SWF(Y_{0},\frak{s}_{0},\wh{\sigma}_{0})\to\SWF(Y_{1},\frak{s}_{1},\wh{\sigma}_{1})\]
of $\CC$-$G^{*}_{m}$-spectrum classes. In particular:
\begin{enumerate}
    \item The difference in levels between the domain and codomain is given by
    \[-\tfrac{1}{2}b_{2}^{+}(W,\tau)\in R(\ZZ_{m})^{\sym}.\]
    \item The equivariant $k$-invariants of the domain are given by
    \[\mbfk^{\st}(\SWF(Y_{0},\frak{s}_{0},\wh{\sigma}_{0}))-\tfrac{1}{16}\big[\vec{\SSS}(W,\frak{t},\wh{\tau})\big]\subset\Q^{m}_{*}.\]
    \item The equivariant $k$-invariants of the codomain are given by
    \[\mbfk^{\st}(\SWF(Y_{1},\frak{s}_{1},\wh{\sigma}_{1}))\subset\Q^{m}_{*}.\]
\end{enumerate}
\end{observation}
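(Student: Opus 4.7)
The plan is to unpack the definitions of the cobordism map $\psi_{U',\nu,\lambda}$, of $\SWF(Y_i,\frak{s}_i,\wh{\sigma}_i)$, and of the smash product in $\CCC_{G^{*}_{m},\CC}$, and then to verify the three claims by direct calculation. The evenness hypothesis on each $b_{2}^{+}(W,\tau)_{j}$ is what ensures we remain inside the $\CC$-$G^{*}_{m}$-$\SWF$ framework: it guarantees that $b_{2,\RR}^{+}(W,\tau)\wt{\RR}$ is already the underlying real representation of a complex $G^{*}_{m}$-representation, namely $\tfrac{1}{2}b_{2}^{+}(W,\tau)\wt{\CC}$, so that the desuspensions involved land in $R(\ZZ_{m})^{\sym}$ rather than merely $RO(\ZZ_{m})$.

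First I would reinterpret $\psi_{U',\nu,\lambda}$ tautologically as a morphism
\[\big[(S^{\mbfr\wt{\RR}+\mbfh\HH}\wedge(I_{0})_{\lambda}^{\nu},\,0,\,0)\big]\longrightarrow\big[(S^{\mbfr'\wt{\RR}+\mbfh'\HH}\wedge(I_{1})_{\lambda}^{\nu},\,0,\,0)\big]\]
in $\CCC_{G^{*}_{m},\CC}$ and then desuspend both sides using the equivalence relation from Definition \ref{def:stable_C_G_*_m_spectrum_class}. On the target this identifies the spectrum class with $\SWF(Y_{1},\frak{s}_{1},\wh{\sigma}_{1})$ directly from the construction in Section \ref{subsec:sw_floer_spectrum_class}. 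On the source, after absorbing the pieces that assemble $\SWF(Y_{0},\frak{s}_{0},\wh{\sigma}_{0})$, one is left with a smash factor $[(S^{0},\mbfa^{\dagger},\mbfb^{\dagger})]$ whose components $\mbfa^{\dagger},\mbfb^{\dagger}$ are determined by the differences $\mbfr-\mbfr'$ and $\mbfh-\mbfh'$ from Equations \ref{eq:real_cobordism_r} and \ref{eq:real_cobordism_h}. Using the identity $\Spin(W,\frak{t},\wh{\tau},g_{W})=n(Y_{1},\frak{s}_{1},\wh{\sigma}_{1},g_{1})-n(Y_{0},\frak{s}_{0},\wh{\sigma}_{0},g_{0})-\tfrac{1}{8}\S(W,\frak{t},\wh{\tau})$, which is exactly the content of Proposition \ref{prop:correction_term_spin_filling} (extended to cobordisms by filling both ends), the correction-term shifts $\tfrac{1}{2}n_{i}$ that are built into the representatives of $\SWF(Y_{i})$ cancel against one another, and one reads off $\mbfa^{\dagger}=\tfrac{1}{2}b_{2}^{+}(W,\tau)$, $\mbfb^{\dagger}=\tfrac{1}{16}\S(W,\frak{t},\wh{\tau})$.

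Claim (1) on the difference in levels is then immediate from Definition \ref{def:stable_s1_pin(2)_fixed_points}: both $\SWF(Y_{0})$ and $\SWF(Y_{1})$ are at level $\mbf{0}$, and smashing with $[(S^{0},\tfrac{1}{2}b_{2}^{+}(W,\tau),\tfrac{1}{16}\S(W,\frak{t},\wh{\tau}))]$ shifts the level by $-\tfrac{1}{2}b_{2}^{+}(W,\tau)$. Claims (2) and (3) on the equivariant $k$-invariants follow from the formula $\mbfk^{\st}(X,\mbfa,\mbfb)=e_{\QQ}(\mbfk^{\st}(X))-[\DDD^{*}(\vec{\mbfb})]$ in Definition \ref{def:stable_k_invariants}, together with the identity $[\DDD^{*}(\tfrac{1}{16}\vec{\S}(W,\frak{t},\wh{\tau}))]=\tfrac{1}{16}[\vec{\SSS}(W,\frak{t},\wh{\tau})]$ coming directly from Definition \ref{def:frak_S}.

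The main obstacle, insofar as there is one, is representation-theoretic bookkeeping rather than any new analytic input: one must juggle the passage between $RO(\ZZ_{m})$ and $R(\ZZ_{m})^{\sym}$ (handled by the evenness hypothesis), keep track of the distinct roles played by the $S^{1}$-fixed point sets of the Conley indices and by the formal suspensions by $\wt{\CC}$ and $\HH$ which encode the metric and spin-lift dependence, and observe that the reduced $\eta$-invariant contributions packaged inside $n(Y_{0})$ and $n(Y_{1})$ cancel precisely against the $\Spin$ contribution along $W$. No further input beyond Sections \ref{sec:stable_homotopy_type} and \ref{subsec:relative_bauer_furuta} is required.
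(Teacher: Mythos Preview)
Your proposal is correct and takes essentially the same approach as the paper. The paper treats this as an observation following directly from Equations \ref{eq:real_cobordism_r} and \ref{eq:real_cobordism_h} without a separate proof; your write-up simply unpacks the bookkeeping (the role of the evenness hypothesis, the cancellation of the correction terms via Proposition \ref{prop:correction_term_spin_filling}, and the formulas from Definitions \ref{def:stable_k_invariants} and \ref{def:frak_S}) that the paper leaves implicit.
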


Next, consider the following "complexified" or "doubled" version of Equation \ref{eq:relative_bauer_furuta_cobordism_real}:
\begin{equation}
\label{eq:relative_bauer_furuta_cobordism_complex}
    \psi_{U',\nu,\lambda,\CC}:=\psi_{U',\nu,\lambda}\wedge\psi_{U',\nu,\lambda}:S^{\mbfs\wt{\CC}+2\mbfh\HH}\wedge(\wedge^{2}(I_{0})_{\lambda}^{\nu})\to S^{\mbfs'\wt{\CC}+2\mbfh'\HH}\wedge(\wedge^{2}(I_{1})_{\lambda}^{\nu}),
\end{equation}
where $\mbfs=c(\mbfr)$, $\mbfs'=c(\mbfr')$ with $\mbfr,\mbfr'$ as in Equation \ref{eq:real_cobordism_r}, and $\wedge^{2}X:=X\wedge X$ denotes the two-fold smash product of a space $A$. We can therefore write
\begin{equation}
\label{eq:complex_cobordism_s}
    \mbfs-\mbfs'=\mbfv_{\lambda}^{0}(\CC)-b_{2}^{+}(W,\tau)\in R(\ZZ_{m})^{\sym},
\end{equation}
where $\mbfv_{\lambda}^{0}(\CC)=c(\mbfv_{\lambda}^{0}(\RR))\in R(\ZZ_{m})^{\sym}$ as in Section \ref{subsec:sw_floer_spectrum_class}. Using Equations \ref{eq:real_cobordism_h} and \ref{eq:complex_cobordism_s}, we have the following observation:

\begin{observation}
\label{observation:complex_cobordism}
The map $\psi_{U',\nu,\lambda,\CC}$ can be interpreted as a morphism
\[f_{\CC}:[(S^{0},b_{2}^{+}(W,\tau),\tfrac{1}{8}\S(W,\frak{t},\wh{\tau}))]\wedge(\wedge^{2}\SWF(Y_{0},\frak{s}_{0},\wh{\sigma}_{0}))\to\wedge^{2}\SWF(Y_{1},\frak{s}_{1},\wh{\sigma}_{1})\]
of $\CC$-$G^{*}_{m}$-spectrum classes. In particular:
\begin{enumerate}
    \item The difference in levels between the domain and codomain is given by
    \[-b_{2}^{+}(W,\tau)\in R(\ZZ_{m})^{\sym}.\]
    \item The equivariant $k$-invariants of the domain are given by
    \[\K^{\wedge}(Y_{0},\frak{s}_{0},\wh{\sigma}_{0})-\tfrac{1}{8}\big[\vec{\SSS}(W,\frak{t},\wh{\tau})\big]\subset\Q^{m}_{*}.\]
    \item The equivariant $k$-invariants of the codomain are given by
    \[\K^{\wedge}(Y_{1},\frak{s}_{1},\wh{\sigma}_{1})\subset\Q^{m}_{*}.\]
\end{enumerate}
\end{observation}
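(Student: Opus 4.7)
The plan is to unpack definitions and then apply the bookkeeping already set up in Section \ref{subsec:stable_equivalence} and Section \ref{sec:stable_equivariant_k_invariants}. The observation is essentially the ``complexified'' analogue of Observation \ref{observation:real_cobordism}, and its proof should mirror that argument but with one extra factor of two throughout.

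First I would start from Equation \ref{eq:relative_bauer_furuta_cobordism_complex}, which presents $\psi_{U',\nu,\lambda,\CC}$ as a based $G^{*}_{m}$-equivariant map
\[
    S^{\mbfs\wt{\CC}+2\mbfh\HH}\wedge\bigl(\wedge^{2}(I_{0})_{\lambda}^{\nu}\bigr)\to S^{\mbfs'\wt{\CC}+2\mbfh'\HH}\wedge\bigl(\wedge^{2}(I_{1})_{\lambda}^{\nu}\bigr),
\]
where $\mbfs-\mbfs'$ and $\mbfh-\mbfh'$ are given by Equations \ref{eq:complex_cobordism_s} and \ref{eq:real_cobordism_h} respectively. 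Both source and target are spaces of type $\CC$-$G^{*}_{m}$-$\SWF$ at even level (after $\wt{\RR}$-suspensions absorbed into the stabilization if necessary), so the map represents a morphism in the category $\CCC_{G^{*}_{m},\CC}$ of $\CC$-$G^{*}_{m}$-spectrum classes.

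Next I would match this data to the spectrum class $\SWF(Y_{j},\frak{s}_{j},\wh{\sigma}_{j})$ as defined in Section \ref{subsec:sw_floer_spectrum_class}: taking the two-fold smash product doubles the formal complex and quaternionic suspensions, and the difference between the $\mbfh$-coordinates for the source and target in the spectrum-class picture is exactly $-\tfrac{1}{16}\S(W,\frak{t},\wh{\tau})$ per copy by Equation \ref{eq:real_cobordism_h}. After doubling, this contributes $-\tfrac{1}{8}\S(W,\frak{t},\wh{\tau})$; the pieces coming from $n(Y_{j},\frak{s}_{j},\wh{\sigma}_{j},g_{j})$ and from $\mbfv_{\lambda}^{0}(\HH)$ cancel against the corresponding formal desuspensions built into $\SWF(Y_{j},\frak{s}_{j},\wh{\sigma}_{j})$. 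The analogous cancellation on the $\wt{\CC}$-side, using Equation \ref{eq:complex_cobordism_s}, leaves exactly the representation $b_{2}^{+}(W,\tau)$ in the first formal coordinate. This identifies $\psi_{U',\nu,\lambda,\CC}$ with the claimed morphism $f_{\CC}$.

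Finally, items (1)--(3) would be read off from this identification. Statement (1) is immediate from the level formula of Definition \ref{def:stable_s1_pin(2)_fixed_points} applied to the smash factor $[(S^{0},b_{2}^{+}(W,\tau),\tfrac{1}{8}\S(W,\frak{t},\wh{\tau}))]$. Statement (3) is just the definition $\K^{\wedge}(Y_{1},\frak{s}_{1},\wh{\sigma}_{1})=\mbfk^{\st}(\wedge^{2}\SWF(Y_{1},\frak{s}_{1},\wh{\sigma}_{1}))$ from Definition \ref{def:equivariant_kappa_invariants}. For statement (2), I would apply the shift formula in Definition \ref{def:stable_k_invariants}, which tells us that smashing with a formal class $[(S^{0},\mbfa,\mbfb)]$ shifts the stable equivariant $k$-invariants by $-[\DDD^{*}(\vec{\mbfb})]$; specializing to $\mbfb=\tfrac{1}{8}\S(W,\frak{t},\wh{\tau})$ and invoking Definition \ref{def:frak_S} produces $-\tfrac{1}{8}[\vec{\SSS}(W,\frak{t},\wh{\tau})]$.

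The main obstacle is purely notational rather than conceptual: one must carefully separate the genuine $G^{*}_{m}$-equivariant suspensions appearing in $\psi_{U',\nu,\lambda,\CC}$ from the formal de-suspensions hidden inside the definition of $\SWF(Y_{j},\frak{s}_{j},\wh{\sigma}_{j})$, and verify that the Atiyah--Patodi--Singer--type identity in Proposition \ref{prop:correction_term_spin_filling} makes the correction-term contributions cancel so that only the topological invariant $\S(W,\frak{t},\wh{\tau})$ (together with $b_{2}^{+}(W,\tau)$) survives in the resulting morphism of spectrum classes.
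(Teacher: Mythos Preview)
Your proposal is correct and matches the paper's approach: the paper presents this as an observation following directly from Equations \ref{eq:real_cobordism_h} and \ref{eq:complex_cobordism_s} without further proof, and you have simply supplied the bookkeeping details that the paper leaves implicit. The parallel you draw with Observation \ref{observation:real_cobordism} is exactly the intended reading.
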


Next, suppose $(Y,\frak{s},\wh{\sigma})$ is a $\ZZ_{m}$-equivariant spin rational homology sphere and $(W,\frak{t},\wh{\tau})$ is an equivariant spin filling of $(Y,\frak{s},\wh{\sigma})$ with $b_{1}(W)=0$. 

\begin{observation}
\label{observation:filling}
Observe the following:
\begin{enumerate}
    \item Suppose each $b_{2}^{+}(W,\tau)_{j}$ is an even integer. Then the corresponding relative Bauer-Furuta map can be interpreted as a morphism
    \[f:[(S^{0},\tfrac{1}{2}b_{2}^{+}(W,\tau),\tfrac{1}{16}\S(W,\frak{t},\wh{\tau}))]\to\SWF(Y,\frak{s},\wh{\sigma})\]
    of $\CC$-$G^{*}_{m}$-spectrum classes, such that:
    \begin{enumerate}
        \item The difference in levels between the domain and codomain is given by
        \[-\tfrac{1}{2}b_{2}^{+}(W,\tau)\in R(\ZZ_{m})^{\sym}.\]
        \item The equivariant $k$-invariants of the domain are given by
        \[\big\{-\tfrac{1}{16}\big[\vec{\SSS}(W,\frak{t},\wh{\tau})\big]\big\}\subset\Q^{m}_{*}.\]
        \item The equivariant $k$-invariants of the codomain are given by
        \[\mbfk^{\st}\big(\SWF(Y,\frak{s},\wh{\sigma})\big)\subset\Q^{m}_{*}.\]
    \end{enumerate}
    \item The corresponding complexified relative Bauer-Furuta map can be interpreted as a morphism
    \[f_{\CC}:[(S^{0},b_{2}^{+}(W,\tau),\tfrac{1}{8}\S(W,\frak{t},\wh{\tau}))]\to\wedge^{2}\SWF(Y,\frak{s},\wh{\sigma})\]
    of $\CC$-$G^{*}_{m}$-spectrum classes, such that:
    \begin{enumerate}
        \item The difference in levels between the domain and codomain is given by
        \[-b_{2}^{+}(W,\tau)\in R(\ZZ_{m})^{\sym}.\]
        \item The equivariant $k$-invariants of the domain are given by
        \[\big\{-\tfrac{1}{8}\big[\vec{\SSS}(W,\frak{t},\wh{\tau})\big]\big\}\subset\Q^{m}_{*}.\]
        \item The equivariant $k$-invariants of the codomain are given by
        \[\K^{\wedge}(Y,\frak{s},\sigma)\subset\Q^{m}_{*}.\]
    \end{enumerate}
\end{enumerate}
\end{observation}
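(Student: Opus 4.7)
The plan is to derive this observation as the specialization of Observations \ref{observation:real_cobordism} and \ref{observation:complex_cobordism} to the degenerate case where the incoming boundary is empty. Viewing $(W,\frak{t},\wh{\tau})$ as a $\ZZ_{m}$-equivariant spin cobordism from $\emptyset$ to $(Y,\frak{s},\wh{\sigma})$, the Conley index $(I_{0})_{\lambda}^{\nu}$ appearing in (\ref{eq:relative_bauer_furuta_cobordism}) will degenerate to $S^{0}$. Under the canonical identification of $\SWF(\emptyset)$ with the monoidal unit $[(S^{0},0,0)]\in\CCC_{G^{*}_{m},\CC}$, the cobordism maps (\ref{eq:relative_bauer_furuta_cobordism_real}) and (\ref{eq:relative_bauer_furuta_cobordism_complex}) will reduce to maps out of the representation spheres $S^{\mbfr\wt{\RR}+\mbfh\HH}$ and $S^{\mbfs\wt{\CC}+2\mbfh\HH}$ respectively, which I will identify with the filling map (\ref{eq:relative_bauer_furuta_filling}) and its two-fold smash product.

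Granted this reduction, the level-difference claims in (1)(a) and (2)(a) will follow at once from Observations \ref{observation:real_cobordism}(1) and \ref{observation:complex_cobordism}(1), since the unit object contributes trivially to the level. For the equivariant $k$-invariant identifications in (1)(b)--(c) and (2)(b)--(c), I will combine Example \ref{ex:k_invariants_s_0}, which records $\mbfk(S^{0})=\{[\vec{0}]\}$, with Definition \ref{def:stable_k_invariants} to conclude that
\[
\mbfk^{\st}\big([(S^{0},\mbfa,\mbfb)]\big)=\big\{-[\DDD^{*}(\vec{\mbfb})]\big\}\subset\Q^{m}_{*}.
\]
Substituting $\mbfb=\tfrac{1}{16}\S(W,\frak{t},\wh{\tau})$ for part (1) and $\mbfb=\tfrac{1}{8}\S(W,\frak{t},\wh{\tau})$ for part (2), and invoking the identity $\DDD^{*}(\vec{\S}(W,\frak{t},\wh{\tau}))=\vec{\SSS}(W,\frak{t},\wh{\tau})$ from Definition \ref{def:frak_S}, will produce the claimed $k$-invariants of the domains. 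The $k$-invariants of the codomains are $\mbfk^{\st}(\SWF(Y,\frak{s},\wh{\sigma}))$ in case (1) and, by definition, $\K^{\wedge}(Y,\frak{s},\sigma)$ in case (2).

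There is no substantive obstacle here: the observation is essentially a bookkeeping consequence of the cobordism formulas applied with $Y_{0}=\emptyset$. The only subtlety worth flagging is the parity hypothesis in part (1): the evenness of each $b_{2}^{+}(W,\tau)_{j}$ is precisely what ensures that the source and target of the filling map (\ref{eq:relative_bauer_furuta_filling}) are spaces of type $\CC$-$G^{*}_{m}$-$\SWF$ (rather than merely of type $G^{*}_{m}$-$\SWF$), so that $f$ can legitimately be interpreted as a morphism in $\CCC_{G^{*}_{m},\CC}$. For part (2) no analogous hypothesis is required, since forming the smash square automatically enforces the needed evenness.
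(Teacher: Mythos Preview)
Your proposal is correct and matches the paper's approach: Observation~\ref{observation:filling} is stated without a separate proof in the paper, being a direct bookkeeping consequence of the filling map~(\ref{eq:relative_bauer_furuta_filling}) together with Equations~(\ref{eq:real_cobordism_r})--(\ref{eq:real_cobordism_h}) and~(\ref{eq:complex_cobordism_s}) in the degenerate case $Y_{0}=\emptyset$, exactly as you describe. Your explicit invocation of Example~\ref{ex:k_invariants_s_0}, Definition~\ref{def:stable_k_invariants}, and Definition~\ref{def:frak_S} fills in the routine details the paper leaves implicit, and your remark on the parity hypothesis in part~(1) is apt.
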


Finally, we observe what happens in the case where $(W,\frak{t},\wh{\tau})$ is a \emph{closed} $\ZZ_{m}$-equivariant spin 4-manifold with $b_{1}(W)=0$:

\begin{observation}
\label{observation:closed}
Observe the following:
\begin{enumerate}
    \item Suppose each $b_{2}^{+}(W,\tau)_{j}$ is an even integer. The corresponding Bauer-Furuta map can be interpreted as a morphism
    \[f:[(S^{0},\tfrac{1}{2}b_{2}^{+}(W,\tau),\tfrac{1}{16}\S(W,\frak{t},\wh{\tau}))]\to[(S^{0},0,0)]\]
    of $\CC$-$G^{*}_{m}$-spectrum classes, such that:
    \begin{enumerate}
        \item The difference in levels between the domain and codomain is given by
        \[-\tfrac{1}{2}b_{2}^{+}(W,\tau)\in R(\ZZ_{m})^{\sym}.\]
        \item The equivariant $k$-invariants of the domain are given by
        \[\big\{-\tfrac{1}{16}\big[\vec{\SSS}(W,\frak{t},\wh{\tau})\big]\big\}\subset\Q^{m}_{*}.\]
        \item The equivariant $k$-invariants of the codomain are given by
        \[\{[\vec{0}]\}\subset\Q^{m}_{*}.\]
    \end{enumerate}
    \item The corresponding complexified Bauer-Furuta map can be interpreted as a morphism
    \[f_{\CC}:[(S^{0},b_{2}^{+}(W,\tau),\tfrac{1}{8}\S(W,\frak{t},\wh{\tau}))]\to[(S^{0},0,0)]\]
    of $\CC$-$G^{*}_{m}$-spectrum classes, such that:
    \begin{enumerate}
        \item The difference in levels between the domain and codomain is given by
        \[-b_{2}^{+}(W,\tau)\in R(\ZZ_{m})^{\sym}.\]
        \item The equivariant $k$-invariants of the domain are given by
        \[\big\{-\tfrac{1}{8}\big[\vec{\SSS}(W,\frak{t},\wh{\tau})\big]\big\}\subset\Q^{m}_{*}.\]
        \item The equivariant $k$-invariants of the codomain are given by
        \[\{[\vec{0}]\}\subset\Q^{m}_{*}.\]
    \end{enumerate}
\end{enumerate}
\end{observation}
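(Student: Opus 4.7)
The plan is to specialize the construction of the relative Bauer--Furuta invariants from Section \ref{subsec:relative_bauer_furuta} to the case $\partial W = \emptyset$. When $W$ is closed, the double Coulomb condition, the restriction map $r$, and the $\nu$-truncation all drop out of the picture; the approximate Seiberg--Witten map reduces to the equivariant finite-dimensional approximation of $\wh{D} + Q$, where $\wh{D} = d^{+} \oplus \Dirac^{+}_{W}$ is Fredholm and $Q$ is the compact quadratic term. Choosing a finite-dimensional $G^{*}_{m}$-representation $U' \subset i\Omega^{2}_{+}(W) \oplus \Gamma(\SS^{-}_{W})$ containing $\coker(\wh{D})$ and letting $U = \wh{D}^{-1}(U')$, the standard Bauer--Furuta procedure produces a based $G^{*}_{m}$-equivariant map $\psi : S^{U} \to S^{U'}$, whose virtual domain minus codomain equals the $G^{*}_{m}$-equivariant index of $\wh{D}$.

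Next I would decompose this index into its real and quaternionic parts. Since $b_1(W)=0$, the cokernel of $d^+$ is $\H^{2}_{+}(W,i\RR)$, whose class in $RO(G^{*}_{m})$ is $b_{2,\RR}^{+}(W,\tau)\cdot\wt{\RR}$ by construction; this contributes $-b_{2,\RR}^{+}(W,\tau)\cdot\wt{\RR}$ to the index. The spinor part contributes $\Spin(W,\frak{t},\wh{\tau})\cdot\HH$. Invoking Proposition \ref{prop:properties_S_invariant}(2), which for closed $W$ gives $\Spin(W,\frak{t},\wh{\tau}) = -\tfrac{1}{8}\S(W,\frak{t},\wh{\tau})$, rewrites the quaternionic part as $-\tfrac{1}{8}\S(W,\frak{t},\wh{\tau})\cdot\HH$. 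Under the parity hypothesis that each $b_{2}^{+}(W,\tau)_{j}$ is even, the real representation $\tfrac{1}{2}b_{2,\RR}^{+}(W,\tau)\cdot\wt{\RR}$ complexifies to $\tfrac{1}{2}b_{2}^{+}(W,\tau)\cdot\wt{\CC}$, and the map $\psi$ can be reinterpreted after a single suspension as a morphism
\[
f : [(S^{0}, \tfrac{1}{2}b_{2}^{+}(W,\tau), \tfrac{1}{16}\S(W,\frak{t},\wh{\tau}))] \to [(S^{0},0,0)]
\]
in $\CCC_{G^{*}_{m},\CC}$, yielding (1a). Statements (1b) and (1c) then follow from Example \ref{ex:k_invariants_representation_spheres} (applied to the domain representation sphere, with the $\HH$-part desuspended via $[\DDD^{*}(\cdot)]$) and Example \ref{ex:k_invariants_s_0}, respectively, together with the definition of the stable $k$-invariants.

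For part (2), I would smash $\psi$ with itself to obtain the complexified map $\psi_{\CC} = \psi \wedge \psi$, which by Lemma \ref{lemma:k_invariants_products} and the monoidal structure on $\CCC_{G^{*}_{m},\CC}$ represents the morphism $f_{\CC}$ described in the statement. Doubling the characters turns $\tfrac{1}{2}b_{2}^{+}$ into $b_{2}^{+}$ and $\tfrac{1}{16}\S$ into $\tfrac{1}{8}\S$, and because the domain sphere of $\psi_{\CC}$ is automatically a complex representation sphere, the parity hypothesis on $b_{2}^{+}(W,\tau)_{j}$ is no longer needed. The $k$-invariant identifications in (2b), (2c) follow as in part (1). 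The only real bookkeeping burden is tracking the passage from real $G^{*}_{m}$-representations, which is how the index theorem naturally outputs, into the framework of $\CC$-$G^{*}_{m}$-spectrum classes indexed by $\wt{\CC}$ and $\HH$ --- and this is precisely the role played by the parity assumption in (1) and by the doubling operation in (2). Since all the serious analytic input (finite-dimensional approximation, the $G$-Spin theorem) has already been used to establish Observations \ref{observation:real_cobordism}--\ref{observation:filling}, no further ingredients are required.
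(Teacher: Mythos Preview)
Your proposal is correct and follows the same line as the paper, which treats this observation as an immediate specialization of the cobordism/filling analysis in Observations \ref{observation:real_cobordism}--\ref{observation:filling} to the case $\partial W=\emptyset$. In fact your account is more explicit than the paper's, which simply records the observation without further argument; the only superfluous ingredient is the appeal to Lemma \ref{lemma:k_invariants_products} in part (2), since both domain and codomain are already representation spheres and the $k$-invariants follow directly from Example \ref{ex:k_invariants_representation_spheres}.
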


\bigskip
\subsection{Main Theorems}
\label{subsec:main_theorems}

Before stating our results for the equivariant $\kappa$-invariants, we will restate Manolescu's results in the $\Pin(2)$-equivariant setting.

\begin{theorem}[\cite{Man14}, Theorems 1.1, 1.4, Corollary 1.5]
\label{theorem:manolescu}
Let $(Y_{0},\frak{s}_{0})$ and $(Y_{1},\frak{s}_{1})$ be spin rational homology spheres, let $(W,\frak{t})$ be a spin cobordism from $(Y_{0},\frak{s}_{0})$ to $(Y_{1},\frak{s}_{1})$, and let
\begin{align*}
    &p=-\tfrac{1}{8}\sigma(W), & &q=b_{2}^{+}(W).
\end{align*}
Then
\begin{equation}
q+\kappa(Y_{1},\frak{s}_{1})\geq p+\kappa(Y_{0},\frak{s}_{0})+C, 
\end{equation}
where:
\begingroup
\renewcommand{\arraystretch}{1.3} 
\begin{equation}
C=\left\{
	\begin{array}{ll}
            2 & \mbox{if } b_{2}^{+}(W)\text{ is even,}\geq 2,\text{ and }(Y_{0},\frak{s}_{0})\text{ is }K_{\Pin(2)}\text{-split}, \\
            1 & \mbox{if } b_{2}^{+}(W)\text{ is odd and }(Y_{0},\frak{s}_{0})\text{ is }K_{\Pin(2)}\text{-split}, \\
            0 & \mbox{if } b_{2}^{+}(W)=0,\text{ or } \\
            & \mbox{if } b_{2}^{+}(W)\text{ is even,}\geq 2,\text{ and }(Y_{0},\frak{s}_{0})\text{ is not }K_{\Pin(2)}\text{-split}, \\
            -1 & \mbox{if } b_{2}^{+}(W)\text{ is odd and }(Y_{0},\frak{s}_{0})\text{ is not }K_{\Pin(2)}\text{-split}.
	\end{array}
	\right.
\end{equation}
\endgroup
In particular, if $(W,\frak{t})$ is a smooth, compact, spin 4-manifold with boundary a spin rational homology sphere $(Y,\frak{s})$, then:
\begingroup
\renewcommand{\arraystretch}{1.3} 
\begin{equation}
q+\kappa(Y,\frak{s})\geq p+\left\{
		\begin{array}{ll}
            2 & \mbox{if } q\text{ is even,} \geq 2, \\
            1 & \mbox{if } q\text{ is odd,} \\
            0 & \mbox{if } q=0.
		\end{array}
	\right.
\end{equation}
\endgroup
\end{theorem}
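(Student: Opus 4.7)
The plan is to deduce the cobordism inequality from the $\Pin(2)$-equivariant relative Bauer--Furuta map associated to $(W,\frak{t})$, following the strategy Manolescu used in \cite{Man14} but restricting all the $G^{*}_{m}$-equivariant constructions of Sections \ref{subsec:relative_bauer_furuta}--\ref{subsec:G_*_m_cobordisms} to the subgroup $\Pin(2)\subset G^{*}_{m}$. The filling statement then follows by applying the cobordism statement to $W\setminus B^{4}$, viewed as a cobordism from $S^{3}$ to $Y$, using that $\kappa(S^{3})=0$ and that $S^{0}\simeq_{\Pin(2)}\SWF(S^{3})$ is $K_{\Pin(2)}$-split.

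First I would produce the relative Bauer--Furuta map for $(W,\frak{t})$ regarded as a cobordism, obtaining a $\Pin(2)$-equivariant stable map
\[
\psi:S^{r\wt{\RR}+h\HH}\wedge\SWF(Y_{0},\frak{s}_{0})\longrightarrow S^{r'\wt{\RR}+h'\HH}\wedge\SWF(Y_{1},\frak{s}_{1})
\]
with $r-r'=\mathbf{v}_{\lambda}^{0}(\RR)-b_{2,\RR}^{+}(W)$ and $h-h'$ controlled by $\tfrac{1}{2}\bigl(n(Y_{1},\frak{s}_{1},g_{1})-n(Y_{0},\frak{s}_{0},g_{0})\bigr)-\tfrac{1}{16}\sigma(W)$ plus the bookkeeping term $\mathbf{v}_{\lambda}^{0}(\HH)$, by the non-equivariant specializations of Equations \eqref{eq:real_cobordism_r}--\eqref{eq:real_cobordism_h}. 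If $q=b_{2}^{+}(W)$ is even, the domain and codomain of $\psi$ are each of type $\CC$-$\Pin(2)$-$\SWF$; if $q$ is odd, the same is true after a single further $\wt{\RR}$-suspension on one side. Passing to stable $\Pin(2)$-spectrum classes gives a morphism interpretable as in Observation \ref{observation:real_cobordism}, with level-drop $-\tfrac{1}{2}q$ and source $k$-invariant shifted by $\tfrac{1}{16}\sigma(W)=-\tfrac{p}{2}$.

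The next key step is that $\psi$ is automatically an equivariant stable $\Pin(2)$-homotopy equivalence on $S^{1}$-fixed point sets, because on $S^{1}$-fixed loci the Seiberg--Witten map reduces to the linear Dirac operator and the construction of $\psi$ implements APS index theory for $\Dirac^{+}_{W}$. I would then feed this into the $\Pin(2)$-analogues of Propositions \ref{prop:stable_k_invariants_pin(2)_fixed_point_homotopy_equivalence} and \ref{prop:stable_k_invariants_kg_split} (these are the content of Lemmas 3.9--3.11 of \cite{Man14}). Applied to $\psi$, Proposition \ref{prop:stable_k_invariants_pin(2)_fixed_point_homotopy_equivalence} yields $\kappa(Y_{0})-\tfrac{p}{2}\leq\tfrac{q}{2}+\tfrac{\kappa(Y_{1})}{2}$ after accounting for the level shift and clearing the factor of $2$ in the definition of $\kappa$. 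This already gives the $C=0$ and $C=-1$ rows of the table, corresponding to $q=0$ or $q$ even (resp.\ $q$ odd) without the splitting hypothesis. The strengthening to $C=2$ and $C=1$ under the $K_{\Pin(2)}$-split assumption on $(Y_{0},\frak{s}_{0})$ comes from applying Proposition \ref{prop:stable_k_invariants_kg_split}: since $s_{0}<s_{0}'$ is automatic when $q\geq 1$, the principal ideal generated by a single monomial in $z$ can be divided by an extra factor of $w$, producing the improvement $\vec{\ul{k}}^{\st}(\SWF(Y_{0}))+[\vec{e}_{0}]\preceq\vec{\ol{k}}^{\st}(\SWF(Y_{1}))+(\mathbf{s}'-\mathbf{s})$, which after doubling translates to the advertised $+1$ in the inequality. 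The extra $+1$ (giving $C=2$) when $q$ is even comes from the parity of $b_{2}^{+}$ forcing an additional $\wt{\RR}$-suspension that contributes a further $w$-factor, exactly as in the non-equivariant argument.

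The main obstacle, and where I would focus most care, is the precise accounting of the suspension/desuspension bookkeeping in the two parity cases $q$ even versus $q$ odd, and verifying that the $K_{\Pin(2)}$-split hypothesis translates into the ability to ``absorb'' one power of $w$ into the image of the $S^{1}$-fixed-point restriction. This is where Manolescu's Lemma 3.11 (the $\Pin(2)$-analogue of our Proposition \ref{prop:stable_k_invariants_kg_split}) is used; without splitness one loses a factor of $w$, which accounts for the drop from $C=1$ to $C=-1$ in the odd case. The filling statement is then immediate: viewing $W$ with a small ball removed as a cobordism from $(S^{3},\frak{s}_{0})$ to $(Y,\frak{s})$, we have $\kappa(S^{3})=0$ and $\SWF(S^{3})=[(S^{0},0,0)]$ is trivially $K_{\Pin(2)}$-split, so the cases $C\in\{0,1,2\}$ directly yield the stated bound.
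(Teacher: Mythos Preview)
This theorem is quoted from \cite{Man14} and not proved in the present paper, so your sketch is a reconstruction of Manolescu's argument; the overall strategy is indeed his. However, there are genuine errors. Your claim that $\psi$ is an equivalence on $S^{1}$-fixed point sets ``because on $S^{1}$-fixed loci the Seiberg--Witten map reduces to the linear Dirac operator'' is doubly wrong: on the $S^{1}$-fixed locus the spinors vanish (they carry the nontrivial $S^{1}$-action), so what remains is $d^{+}$ on forms, not the Dirac operator; and $\psi^{S^{1}}$ is \emph{not} an equivalence---it has precisely the level shift $-\tfrac{q}{2}$ coming from $H^{2}_{+}(W)$. What Propositions~\ref{prop:stable_k_invariants_pin(2)_fixed_point_homotopy_equivalence} and~\ref{prop:stable_k_invariants_kg_split} actually require, and what does hold, is that $\psi$ be an equivalence on \emph{$\Pin(2)$-fixed} point sets (both sides are $S^{0}$).

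Your accounting of $C$ is also inverted. At the $k$-level the split hypothesis contributes exactly one $+1$ via Proposition~\ref{prop:stable_k_invariants_kg_split}; passing to $\kappa=2k$ doubles this to $+2$, giving $C=2$ directly for $q$ even---no ``additional $\wt{\RR}$-suspension'' is involved there. Rather, when $q$ is odd one must stabilize once (equivalently connect-sum with $S^{2}\times S^{2}$), which \emph{costs} $1$ and drops $C$ by one in each row of the table. Finally, your displayed inequality $\kappa(Y_{0})-\tfrac{p}{2}\le\tfrac{q}{2}+\tfrac{\kappa(Y_{1})}{2}$ has both a sign error and a stray $\tfrac{1}{2}$: writing $k_{i}$ for the stable $\Pin(2)$ $k$-invariant of $\SWF(Y_{i})$, the correct $k$-level statement is $k_{0}+\tfrac{p}{2}\le k_{1}+\tfrac{q}{2}$, which doubles to $q+\kappa(Y_{1})\ge p+\kappa(Y_{0})$.
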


We now state our $\ZZ_{m}$-equivariant analogues of Manolescu's inequalities:

\begin{theorem}
\label{theorem:equivariant_homology_cobordism}
Suppose $(Y_{0},\frak{s}_{0},\wh{\sigma}_{0})$, $(Y_{1},\frak{s}_{1},\wh{\sigma}_{1})$ are $\ZZ_{m}$-equivariant rational homology cobordant $\ZZ_{m}$-equivariant spin rational homology spheres. Then:
\[\SWF(Y_{0},\frak{s}_{0},\wh{\sigma}_{0})\equiv_{l}\SWF(Y_{1},\frak{s}_{1},\wh{\sigma}_{1}).\]
In particular:
\begin{align*}
&\K(Y_{0},\frak{s}_{0},\sigma_{0})=\K(Y_{1},\frak{s}_{1},\sigma_{1}), & &\K^{\wedge}(Y_{0},\frak{s}_{0},\sigma_{0})=\K^{\wedge}(Y_{1},\frak{s}_{1},\sigma_{1}).
\end{align*}
\end{theorem}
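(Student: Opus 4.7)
The plan is to use the relative Bauer--Furuta invariants from Section \ref{subsec:G_*_m_cobordisms}, whose behavior under cobordisms is recorded in Observation \ref{observation:real_cobordism}. Let $(W,\frak{t},\wh{\tau})$ be a $\ZZ_m$-equivariant spin rational homology cobordism from $(Y_0,\frak{s}_0,\wh{\sigma}_0)$ to $(Y_1,\frak{s}_1,\wh{\sigma}_1)$. Since $W$ is a rational homology cobordism, $b_2(W)=0$, and in particular each eigenspace dimension $b_2^{+}(W,\tau)_k$ vanishes. By Observation \ref{observation:real_cobordism} the Bauer--Furuta construction then produces a morphism
\[
f\colon [(S^{0},0,\tfrac{1}{16}\S(W,\frak{t},\wh{\tau}))]\wedge\SWF(Y_0,\frak{s}_0,\wh{\sigma}_0)\to \SWF(Y_1,\frak{s}_1,\wh{\sigma}_1)
\]
of $\CC$-$G^{*}_{m}$-spectrum classes. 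Reversing the orientation of $W$ produces a $\ZZ_m$-equivariant spin rational homology cobordism in the opposite direction, from which one similarly obtains a morphism $g$ running the other way.

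Next, I would verify that $f$ and $g$ induce $G^{*}_{m}$-equivariant stable homotopy equivalences on $S^{1}$-fixed point sets, which by Definition \ref{def:stable_G*m_local_equivalence} establishes the desired local equivalence. The $S^{1}$-fixed point set of $\SWF(Y,\frak{s},\wh{\sigma})$ is, up to the formal desuspension encoded in the correction term $\tfrac{1}{2}n(Y,\frak{s},\wh{\sigma},g)\HH$, determined by the reducible locus of the Conley index. On this fixed locus the Bauer--Furuta map is essentially the equivariant index of the Dirac operator $\Dirac^{+}_{W}$ on $W$, which by construction of $n(Y,\frak{s},\wh{\sigma},g)$ (cf.\ Definition \ref{def:equivariant_correction_term} and Proposition \ref{prop:correction_term_spin_filling}) together with the $G$-Spin theorem of Section \ref{subsubsec:g_spin_theorem} precisely compensates for the representation shift $\tfrac{1}{16}\S(W,\frak{t},\wh{\tau})\HH$ appearing in the domain of $f$. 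With $b_2^{+}(W,\tau)=0$ there are no harmonic self-dual $2$-forms to obstruct this matching, so the $S^{1}$-fixed point map is an equivariant equivalence of representation spheres of the correct rational degrees, and similarly for $g$.

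Granted the local equivalence $\SWF(Y_0,\frak{s}_0,\wh{\sigma}_0)\equiv_{l}\SWF(Y_1,\frak{s}_1,\wh{\sigma}_1)$, the equalities of $\K$ and $\K^{\wedge}$ follow immediately from Corollary \ref{cor:stable_k_invariants_local_equivalence}, applied both to the Floer spectrum classes themselves and to their smash squares (using that local equivalence is preserved under the monoidal product $\wedge$ on $\CCC_{G^{*}_{m},\CC}$).

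The main obstacle in rigorizing this sketch is the $S^{1}$-fixed point analysis. Concretely, one must match the finite-dimensional approximations of $\wh{D}\oplus(\Pi^{\nu}\circ r)$ on the spinor components with the equivariant index data, and track the various suspension classes carefully in the quotient lattice $\Q^{m}_{*}$. The decomposition $\vec{n}(Y,\frak{s},\wh{\sigma},g)=\overset{\longrightarrow}{\Spin}(W,\frak{t},\wh{\tau},g_{W})+\tfrac{1}{8}\vec{\S}(W,\frak{t},\wh{\tau})$ is what forces the $S^{1}$-fixed point map to be an equivariant equivalence of representation spheres of the correct degrees; unpacking this in the full $\ZZ_{m}$-equivariant setting, while keeping track of equivariant spectral flow across the boundary when different metrics are chosen on the two ends (cf.\ Proposition \ref{prop:variation_of_equivariant_metric}), is where the bulk of the technical work lies.
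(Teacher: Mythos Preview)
Your approach is essentially the paper's: produce Bauer--Furuta morphisms in both directions via Observation~\ref{observation:real_cobordism}, verify they are equivalences on $S^1$-fixed points, and conclude using Corollary~\ref{cor:stable_k_invariants_local_equivalence}. The paper streamlines this by noting that for a rational homology cobordism one has not only $\vec{b}_2^+(W,\tau)=\vec{0}$ but also $-\tfrac{1}{8}\vec{\SSS}(W,\frak{t},\tau)=\vec{0}$, so the morphism of Observation~\ref{observation:real_cobordism} carries no representation shift whatsoever and is directly a map $\SWF(Y_0,\frak{s}_0,\wh{\sigma}_0)\to\SWF(Y_1,\frak{s}_1,\wh{\sigma}_1)$. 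This renders your ``compensation'' argument via $\vec{n}=\overset{\longrightarrow}{\Spin}+\tfrac{1}{8}\vec{\S}$ unnecessary; worse, that argument as written does not obviously produce a morphism between the \emph{unshifted} spectrum classes, which is what Definition~\ref{def:stable_G*m_local_equivalence} requires.

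Likewise, the paragraph you flag as the ``main obstacle'' is not one. Once the shift vanishes, both $\SWF(Y_i,\frak{s}_i,\wh{\sigma}_i)$ are at level $\mathbf{0}$, and the restriction of the Bauer--Furuta map to $S^1$-fixed points is the standard degree-$\pm 1$ equivalence of representation spheres coming from the linearization, exactly as in the $\Pin(2)$-setting. The paper asserts this in one clause and moves on; no separate index-theoretic bookkeeping or spectral-flow tracking is needed beyond what is already packaged into the construction in Section~\ref{subsec:relative_bauer_furuta}.
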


\begin{proof}
Let $(W,\frak{t},\wh{\tau})$ be a $\ZZ_{m}$-equivariant spin rational homology cobordism from $(Y_{0},\frak{s}_{0},\wh{\sigma}_{0})$ to  $(Y_{1},\frak{s}_{1},\wh{\sigma}_{1})$. Note that any such cobordism must satisfy $\vec{b}_{2}^{+}(W,\tau)=-\tfrac{1}{8}\vec{\SSS}(W,\frak{t},\tau)=\vec{0}$. Hence by Observation \ref{observation:real_cobordism}, the cobordism $(W,\frak{t},\wh{\tau})$ along with its inverse induce morphisms
\begin{align*}
    &f:\SWF(Y_{0},\frak{s}_{0},\wh{\sigma}_{0})\to\SWF(Y_{1},\frak{s}_{1},\wh{\sigma}_{1}), & &g:\SWF(Y_{1},\frak{s}_{1},\wh{\sigma}_{1})\to\SWF(Y_{0},\frak{s}_{0},\wh{\sigma}_{0}),
\end{align*}
which induce $G^{*}_{m}$-homotopy equivalences on their $S^{1}$-fixed point sets. It follows that
\[\SWF(Y_{0},\frak{s}_{0},\wh{\sigma}_{0})\equiv_{l}\SWF(Y_{1},\frak{s}_{1},\wh{\sigma}_{1}),\]
and hence
\[\wedge^{2}\SWF(Y_{0},\frak{s}_{0},\wh{\sigma}_{0})\equiv_{l}\wedge^{2}\SWF(Y_{1},\frak{s}_{1},\wh{\sigma}_{1}).\]
The result then follows by Corollary \ref{cor:stable_k_invariants_local_equivalence}.
\end{proof}

Note that this implies the following corollary:

\begin{corollary}
\label{cor:connected_sums}
Let $(Y_{0},\frak{s}_{0},\wh{\sigma}_{0})$ and $(Y_{1},\frak{s}_{1},\wh{\sigma}_{1})$ and be $\ZZ_{m}$-equivariant spin rational homology spheres, and let $(Y_{0}\# Y_{1},\frak{s}_{0}\#\frak{s}_{1},\wh{\sigma_{0}\#\sigma_{1}})$ be an equivariant connected sum of $Y_{0}$, $Y_{1}$ as in Section \ref{subsec:equivariant_connected_sums}, assuming it is well-defined. Then
\[\SWF(Y_{0}\# Y_{1},\frak{s}_{0}\#\frak{s}_{1},\wh{\sigma_{0}\#\sigma_{1}})\equiv_{l}\SWF(Y_{0},\frak{s}_{0},\wh{\sigma}_{0})\wedge\SWF(Y_{1},\frak{s}_{1},\wh{\sigma}_{1}).\]
\end{corollary}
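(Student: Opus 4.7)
The plan is a two-step reduction through the intermediate object $(Y_0 \sqcup Y_1, \frak{s}_0 \sqcup \frak{s}_1, \wh{\sigma}_0 \sqcup \wh{\sigma}_1)$. First I would establish that the Seiberg--Witten Floer spectrum class of a $\ZZ_m$-equivariant disjoint union is the smash product of the individual spectrum classes, and second I would exhibit a $\ZZ_m$-equivariant spin rational homology cobordism between the disjoint union and the connected sum, so that Theorem \ref{theorem:equivariant_homology_cobordism} applies.

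For the first step, the Coulomb slice of $Y_0 \sqcup Y_1$ splits equivariantly as the direct sum $V(Y_0) \oplus V(Y_1)$ of $G^{*}_{m}$-representations, and this splitting respects the eigenspace decomposition of the operator $\ell = (\ast d, \dirac)$. Consequently the finite dimensional approximation $V_{-\lambda}^{\lambda}$ splits as a product of $G^{*}_{m}$-representations, the approximate Seiberg--Witten flow is a product flow, and standard equivariant Conley-index theory identifies the resulting Conley index with the smash product $I_\lambda(Y_0) \wedge I_\lambda(Y_1)$. The equivariant correction term of Definition \ref{def:equivariant_correction_term} is manifestly additive over disjoint unions (all eta--invariants, kernels of $\dirac$, and torsion terms simply add), and similarly the representations $\mbfv_{-\lambda}^{0}(\RR)$, $\mbfv_{-\lambda}^{0}(\HH)$, and the auxiliary $\Umid$ split additively. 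Assembling these observations yields a canonical $G^{*}_{m}$-local equivalence
\[\SWF(Y_0 \sqcup Y_1, \frak{s}_0 \sqcup \frak{s}_1, \wh{\sigma}_0 \sqcup \wh{\sigma}_1) \equiv_{l} \SWF(Y_0, \frak{s}_0, \wh{\sigma}_0) \wedge \SWF(Y_1, \frak{s}_1, \wh{\sigma}_1).\]

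For the second step, the final paragraph of Example \ref{ex:connected_sum_branched_covers_cobordism} already furnishes a $\ZZ_m$-equivariant spin cobordism $(W,\frak{t}^{\natural},\tau^{\natural})$ from $(Y_0 \sqcup Y_1, \frak{s}_0 \sqcup \frak{s}_1, \wh{\sigma}_0 \sqcup \wh{\sigma}_1)$ to $(Y_0 \# Y_1, \frak{s}_0 \# \frak{s}_1, \wh{\sigma_0 \# \sigma_1})$, obtained by performing the equivariant boundary connected sum on the top of the product cylinder $(Y_0 \sqcup Y_1) \times [0,1]$. Because $W$ is built from a product by attaching a single equivariant four-dimensional 1-handle at an interior point of the fixed-point set, one has $b_1(W)=0$ and $\vec{b}_{2}^{+}(W,\tau^{\natural})=\vec{0}$; moreover the fixed-point data of each iterate $(\tau^{\natural})^k$ is obtained from the corresponding data of the product action by a sequence of equivariant 1-handle attachments that do not create new compact fixed surfaces, so Definition \ref{def:frak_S} gives $\vec{\SSS}(W,\frak{t}^{\natural},\tau^{\natural})=\vec{0}$ as well. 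Hence $(W,\frak{t}^{\natural},\tau^{\natural})$ is a $\ZZ_m$-equivariant spin rational homology cobordism in the sense required by Theorem \ref{theorem:equivariant_homology_cobordism}.

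Applying that theorem (whose proof consists of feeding a cobordism with $\vec{b}_{2}^{+}=\vec{0}$ and $\vec{\SSS}=\vec{0}$ into the relative Bauer--Furuta machinery of Observation \ref{observation:real_cobordism}, an argument that is insensitive to whether the incoming boundary is connected) yields
\[\SWF(Y_0 \sqcup Y_1, \frak{s}_0 \sqcup \frak{s}_1, \wh{\sigma}_0 \sqcup \wh{\sigma}_1) \equiv_{l} \SWF(Y_0 \# Y_1, \frak{s}_0 \# \frak{s}_1, \wh{\sigma_0 \# \sigma_1}),\]
and combining with the first step gives the corollary. The main obstacle is a bookkeeping one: the entire package of Section \ref{sec:stable_homotopy_type} was written only for connected $\ZZ_m$-equivariant spin rational homology spheres, so one must verify that the finite-dimensional-approximation construction, the correction term, and the relative Bauer--Furuta map all extend to the disjoint union case. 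Each verification reduces to a splitting of representations or to the additivity of spectral invariants and hence is routine, but it is the step that requires the most care in the write-up.
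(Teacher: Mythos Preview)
Your proposal is correct and follows essentially the same approach as the paper: invoke Example \ref{ex:connected_sum_branched_covers_cobordism} for the $\ZZ_m$-equivariant spin rational homology cobordism from the disjoint union to the connected sum, then apply Theorem \ref{theorem:equivariant_homology_cobordism}. The paper's proof is a single sentence that leaves the smash-product-for-disjoint-unions step implicit, whereas you spell it out carefully and flag the bookkeeping issue of extending the Section \ref{sec:stable_homotopy_type} machinery to disconnected manifolds; this is a fair point and your added detail is appropriate.
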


\begin{proof}
Follows from Theorem \ref{theorem:equivariant_homology_cobordism} and the fact that $(Y_{0},\frak{s}_{0},\wh{\sigma}_{0})\sqcup(Y_{1},\frak{s}_{1},\wh{\sigma}_{1})$ is $\ZZ_{m}$-equivariantly spin cobordant to $(Y_{0}\# Y_{1},\frak{s}_{0}\#\frak{s}_{1},\wh{\sigma_{0}\#\sigma_{1}})$, as in Example \ref{ex:connected_sum_branched_covers_cobordism}.
\end{proof}

\begin{theorem}
\label{theorem:equivariant_cobordism}
Let $(Y_{0},\frak{s}_{0},\wh{\sigma}_{0})$, $(Y_{1},\frak{s}_{1},\wh{\sigma}_{1})$ be $\ZZ_{m}$-equivariant spin rational homology spheres, and suppose that $(W,\frak{t},\wh{\tau})$ is a smooth spin $\ZZ_{m}$-equivariant cobordism from $(Y_{0},\frak{s}_{0},\wh{\sigma}_{0})$ to $(Y_{1},\frak{s}_{1},\wh{\sigma}_{1})$ with $b_{1}(W)=0$. Furthermore, let
\begin{align*}
    &\vec{\frak{p}}:=-\tfrac{1}{8}\vec{\SSS}(W,\frak{t},\tau), & &\vec{\frak{q}}:=\vec{b}_{2}^{+}(W,\tau).
\end{align*}
\begin{enumerate}
    \item The following statements hold:
    \begin{enumerate}
        \item For each $\vec{\kappa}_{1}\in\K(Y_{1},\frak{s}_{1},\sigma_{1})$, we have that:
        \begin{enumerate}
            \item For each $\vec{\kappa}^{\wedge}_{0}\in\K^{\wedge}(Y_{0},\frak{s}_{0},\sigma_{0})$ we have that
            \[[\vec{\frak{q}}]+\vec{\kappa}_{1}\not\prec[\vec{\frak{p}}]+\vec{\kappa}^{\wedge}_{0}.\]
            \item There exists some $\vec{\kappa}^{\wedge}_{0}\in\K^{\wedge}(Y_{0},\frak{s}_{0},\sigma_{0})$ such that
            \[[\vec{\frak{q}}]+\vec{\kappa}_{1}\succeq[\vec{\frak{p}}]+\vec{\kappa}^{\wedge}_{0}.\]
        \end{enumerate}
        \item The following inequality holds:
        \[[\vec{\frak{q}}]+\vec{\ol{\kappa}}(Y_{1},\frak{s}_{1},\sigma_{1})\succeq[\vec{\frak{p}}]+\vec{\ul{\kappa}}^{\wedge}(Y_{0},\frak{s}_{0},\sigma_{0}).\]
    \end{enumerate}
    \item Suppose that $(Y_{0},\frak{s}_{0},\wh{\sigma}_{0})$ is Floer $\wedge^{2}$-$K_{G^{*}_{m}}$-split (in the sense of Definition \ref{def:floer_K_G_split}), let $\vec{\kappa}_{0}^{\wedge}\in\Q^{m}_{*}$ denote the unique element of $\K^{\wedge}(Y_{0},\frak{s}_{0},\sigma_{0})$, and let
    \[\vec{C}=\left\{
	\begin{array}{ll}
        \vec{e}_{0} & \mbox{if } b_{2}^{+}(W,\tau)_{0}\geq 1, \\
        \vec{0} & \mbox{if } b_{2}^{+}(W,\tau)_{0}=0.
	\end{array}\right.\]
    Then:
    \begin{enumerate}
        \item For each $\vec{\kappa}_{1}\in\K(Y_{1},\frak{s}_{1},\sigma_{1})$, we have that
        \[[\vec{\frak{q}}]+\vec{\kappa}_{1}\succeq[\vec{\frak{p}}]+\vec{\kappa}_{0}^{\wedge}+[\vec{C}].\]
        \item In particular, the following inequality holds:
        \[[\vec{\frak{q}}]+\vec{\ul{\kappa}}(Y_{1},\frak{s}_{1},\sigma_{1})\succeq[\vec{\frak{p}}]+\vec{\kappa}_{0}^{\wedge}+[\vec{C}].\]
    \end{enumerate}
    \item Suppose that:
    \begin{enumerate}[label=(R\arabic*),ref=R\arabic*]
        \item $b_{2}^{+}(W,\tau)_{j}$ is even for each $j=1,\dots,m-1$, \label{cond:even_b_2_^+_j_cobordism}
        \item There exists $x\in W^{\tau}$ such that $\tau$ acts semi-freely near $x$. \label{cond:even_b_2_^+_j_cobordism_fix}
    \end{enumerate}
    and let
    \[\vec{C}=\left\{
	\begin{array}{ll}
        \vec{0} & \mbox{if } b_{2}^{+}(W,\tau)_{0}\text{ is even}, \\
        -\vec{e}_{0} & \mbox{if } b_{2}^{+}(W,\tau)_{0}\text{ is odd}.
	\end{array}\right.\]
    Then:
    \begin{enumerate}
        \item For each $\vec{\kappa}_{1}\in\K(Y_{1},\frak{s}_{1},\sigma_{1})$, we have that:
        \begin{enumerate}
            \item For each $\vec{\kappa}_{0}\in\K(Y_{0},\frak{s}_{0},\sigma_{0})$ we have that
            \[[\vec{\frak{q}}]+\vec{\kappa}_{1}\not\prec[\vec{\frak{p}}]+\vec{\kappa}_{0}+[\vec{C}].\]
            \item There exists some $\vec{\kappa}_{0}\in\K(Y_{0},\frak{s}_{0},\sigma_{0})$ such that
            \[[\vec{\frak{q}}]+\vec{\kappa}_{1}\succeq[\vec{\frak{p}}]+\vec{\kappa}_{0}+[\vec{C}].\]
        \end{enumerate}
        \item The following inequality holds:
        \[[\vec{\frak{q}}]+\vec{\ol{\kappa}}(Y_{1},\frak{s}_{1},\sigma_{1})\succeq[\vec{\frak{p}}]+\vec{\ul{\kappa}}(Y_{0},\frak{s}_{0},\sigma_{0})+[\vec{C}].\]
    \end{enumerate}
    \item Suppose that Conditions \ref{cond:even_b_2_^+_j_cobordism}-\ref{cond:even_b_2_^+_j_cobordism_fix} hold, and furthermore suppose that $(Y_{0},\frak{s}_{0},\wh{\sigma}_{0})$ is Floer $K_{G^{*}_{m}}$-split. Let $\vec{\kappa}_{0}\in\Q^{m}_{*}$ denote the unique element of $\K(Y_{0},\frak{s}_{0},\sigma_{0})$, and let
    \[\vec{C}=\left\{
	\begin{array}{ll}
        2\vec{e}_{0} & \mbox{if } b_{2}^{+}(W,\tau)_{0}\text{ is even},\;\geq 2, \\
        \vec{e}_{0} & \mbox{if } b_{2}^{+}(W,\tau)_{0}\text{ is odd}, \\
        \vec{0} & \mbox{if } b_{2}^{+}(W,\tau)_{0}=0.
	\end{array}\right.\]
    Then:
    \begin{enumerate}
        \item For each $\vec{\kappa}_{1}\in\K(Y_{1},\frak{s}_{1},\sigma_{1})$, we have that
        \[[\vec{\frak{q}}]+\vec{\kappa}_{1}\succeq[\vec{\frak{p}}]+\vec{\kappa}_{0}+[\vec{C}].\]
        \item In particular, the following inequality holds:
        \[[\vec{\frak{q}}]+\vec{\ul{\kappa}}(Y_{1},\frak{s}_{1},\sigma_{1})\succeq[\vec{\frak{p}}]+\vec{\kappa}_{0}+[\vec{C}].\]
    \end{enumerate}
\end{enumerate}
\end{theorem}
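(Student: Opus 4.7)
The proof proceeds via the relative Bauer--Furuta morphisms constructed in Section \ref{subsec:G_*_m_cobordisms}. Two flavors are available: the ``real'' cobordism morphism of Observation \ref{observation:real_cobordism} and its smash-square, the ``complex'' cobordism morphism of Observation \ref{observation:complex_cobordism}. The complex morphism is always interpretable as a morphism of $\CC$-$G^*_m$-spectrum classes, but it only constrains $\K^{\wedge}$. The real morphism directly constrains $\K$, but requires each $b_2^+(W,\tau)_j$ to be even in order to be interpreted in $\CCC_{G^*_m,\CC}$. The plan is therefore to handle parts (1)--(2) with the complex morphism and parts (3)--(4) with the real morphism.

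For parts (1) and (2), Observation \ref{observation:complex_cobordism} produces a morphism $f_\CC$ of $\CC$-$G^*_m$-spectrum classes with level difference $[\vec{\frak{q}}]$ and with $k$-invariants on the domain shifted by $[\vec{\frak{p}}]$, and inducing a $G^*_m$-homotopy equivalence on $\Pin(2)$-fixed point sets. Applying Proposition \ref{prop:stable_k_invariants_pin(2)_fixed_point_homotopy_equivalence} to $f_\CC$ gives the analog of (1) with $\vec{\kappa}_1^{\wedge} \in \K^{\wedge}(Y_1,\frak{s}_1,\sigma_1)$ replacing $\vec{\kappa}_1$. To convert to a statement involving $\vec{\kappa}_1 \in \K(Y_1,\frak{s}_1,\sigma_1)$, invoke Lemma \ref{lemma:comparison_normal_wedge_invariants}: for each $\vec{\kappa}_1 \in \K(Y_1,\frak{s}_1,\sigma_1)$ there exists $\vec{\kappa}_1^{\wedge} \in \K^{\wedge}(Y_1,\frak{s}_1,\sigma_1)$ with $\vec{\kappa}_1 \succeq \vec{\kappa}_1^{\wedge}$. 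Chaining this with the $\K^{\wedge}$-inequality from Proposition \ref{prop:stable_k_invariants_pin(2)_fixed_point_homotopy_equivalence} yields (1)(a)(ii), and a similar chain (together with $\vec{\kappa}_1 \not\prec \vec{\kappa}_1^{\wedge}$) rules out strict descent in (1)(a)(i). Passing to meets/joins gives (1)(b). For part (2), the Floer $\wedge^2$-$K_{G^*_m}$-split hypothesis makes $\K^{\wedge}(Y_0,\frak{s}_0,\sigma_0)$ a single element, and when $b_2^+(W,\tau)_0 \geq 1$ the strict inequality $s_0 < s_0'$ holds, so Proposition \ref{prop:stable_k_invariants_kg_split} applies in place of Proposition \ref{prop:stable_k_invariants_pin(2)_fixed_point_homotopy_equivalence}, producing the additional $[\vec{e}_0]$ bonus recorded in $\vec{C}$; when $b_2^+(W,\tau)_0 = 0$, only Proposition \ref{prop:stable_k_invariants_pin(2)_fixed_point_homotopy_equivalence} is available, giving $\vec{C} = [\vec{0}]$.

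For parts (3) and (4), assume (R1) and (R2). If $b_2^+(W,\tau)_0$ is even, then Observation \ref{observation:real_cobordism} gives a $\CC$-$G^*_m$-spectrum class morphism $f$ whose level difference is $\tfrac{1}{2}[\vec{\frak{q}}]$ and whose $k$-invariant shift on the domain is $\tfrac{1}{2}[\vec{\frak{p}}]$. Applying Proposition \ref{prop:stable_k_invariants_pin(2)_fixed_point_homotopy_equivalence} (respectively, Proposition \ref{prop:stable_k_invariants_kg_split} when the Floer $K_{G^*_m}$-split hypothesis of part (4) holds and $b_2^+(W,\tau)_0 \geq 2$) to $f$, and then multiplying by $2$ to convert from $\mbfk^{\st}$ to $\K = 2\cdot\mbfk^{\st}$, produces the stated inequality with the corresponding value of $\vec{C}$. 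When $b_2^+(W,\tau)_0$ is odd, Condition (R2) is used to modify the cobordism so that Observation \ref{observation:real_cobordism} can still be invoked: either by suspending $f$ by a single additional copy of $\wt{\RR}_0$ on both sides (realized geometrically via an equivariant neighborhood of the semifree fixed point), or, equivalently, by performing an equivariant connect sum with $S^2 \times S^2$ at the semifree fixed point to promote $b_2^+(W,\tau)_0$ to even parity. The resulting morphism incurs an additional shift of $-\vec{e}_0$ (before doubling) in the $\kappa$-inequality, yielding the odd-case values of $\vec{C}$ in (3) and (4).

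The main technical obstacle is the careful bookkeeping of lattice shifts in $\Q^m_*$ in the odd case, where one must verify that the geometric modification enabled by (R2) preserves the induced $G^*_m$-homotopy equivalence on $\Pin(2)$-fixed point sets, and that the shifts in both the level and the $\vec{\SSS}$-invariant assemble to give exactly the $\vec{C}$ claimed. Once these identifications are pinned down, the argument reduces to direct applications of Propositions \ref{prop:stable_k_invariants_pin(2)_fixed_point_homotopy_equivalence} and \ref{prop:stable_k_invariants_kg_split} together with Lemma \ref{lemma:comparison_normal_wedge_invariants}, and the extraction of meets and joins to pass between the subset statements (a) and the completed-lattice statements (b) in each part.
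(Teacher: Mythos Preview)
Your proposal is correct and follows essentially the same route as the paper: the complex Bauer--Furuta morphism (Observation \ref{observation:complex_cobordism}) handles parts (1)--(2), the real morphism (Observation \ref{observation:real_cobordism}) handles parts (3)--(4), and the key inputs are Propositions \ref{prop:stable_k_invariants_pin(2)_fixed_point_homotopy_equivalence} and \ref{prop:stable_k_invariants_kg_split} together with Lemma \ref{lemma:comparison_normal_wedge_invariants}. The paper also invokes Lemma \ref{lemma:comparing_minima_subsets} explicitly when passing to meets and joins, which you fold into the phrase ``passing to meets/joins.''

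One small caveat: your first description of the odd-$b_2^+(W,\tau)_0$ fix in parts (3)--(4), namely ``suspending $f$ by a single additional copy of $\wt{\RR}_0$ on both sides,'' is not quite right as stated, since suspending by a real (non-complex) representation does not produce a morphism of $\CC$-$G^*_m$-spectrum classes and in any case would not alter the level difference. The correct mechanism is your second description, the equivariant connected sum with $(S^2\times S^2,\tau_{(m;a,b)})$ or $(S^2\times S^2,\tau_{(m;a)})$ at the semifree fixed point, which the paper spells out explicitly: these homologically trivial $\ZZ_m$-actions increase $b_2^+(W,\tau)_0$ by one while leaving $\vec{\SSS}$ unchanged, and condition (R2) is precisely what guarantees such a connected sum is available.
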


Before we prove Theorem \ref{theorem:equivariant_cobordism}, we will need the following construction:
\begin{enumerate}
    \item Given $a\in\ZZ_{m}^{\times}$, let $\gamma_{(m;a)}:S^{2}\to S^{2}$ denote the $\ZZ_{m}$-action $(z,r)\mapsto(\omega_{m}^{a}z,r)$, where:
    \[S^{2}=\{(z,r)\in\CC\times\RR\;|\;|z|^{2}+r^{2}=1\}.\]
    \item Given $(a,b)\in(\ZZ_{m}^{\times})^{2}$, let $\tau_{(m;a,b)}:S^{2}\times S^{2}\to S^{2}\times S^{2}$ denote the homologically trivial pseudofree $\ZZ_{m}$-action $\tau_{(m;a,b)}:=\gamma_{(m;a)}\times\gamma_{(m;b)}$ with four fixed points.
    \item Given $a\in\ZZ_{m}^{\times}$, let $\tau_{(m;a)}:S^{2}\times S^{2}\to S^{2}\times S^{2}$ denote the homologically trivial $\ZZ_{m}$-action $\tau_{(m;a)}:=\gamma_{(m;a)}\times\id_{S^{2}\times S^{2}}$, with fixed-point set $S^{0}\times S^{2}\subset S^{2}\times S^{2}$.
\end{enumerate}

\begin{proof}[Proof of Theorem \ref{theorem:equivariant_cobordism}]
For (1) and (2), from Observation \ref{observation:complex_cobordism} we have a morphism
\begin{align*}
    &f_{\CC}:[(S^{0},b_{2}^{+}(W,\tau),\tfrac{1}{8}\S(W,\frak{t},\wh{\tau}))]\wedge(\wedge^{2}\SWF(Y_{0},\frak{s}_{0},\wh{\sigma}_{0}))\to\wedge^{2}\SWF(Y_{1},\frak{s}_{1},\wh{\sigma}_{1})
\end{align*}
which induces a $G^{*}_{p^{r}}$-homotopy equivalence on $\Pin(2)$-fixed point sets. The result then follows from Propositions \ref{prop:stable_k_invariants_pin(2)_fixed_point_homotopy_equivalence} and \ref{prop:stable_k_invariants_kg_split}, and Lemmas \ref{lemma:comparing_minima_subsets} and \ref{lemma:comparison_normal_wedge_invariants}.

For (3) and (4), suppose $b_{2}^{+}(W,\tau)_{j}\equiv 0\pmod{2}$ for all $j=0,\dots,m-1$. From Observation \ref{observation:real_cobordism} we have a morphism
\begin{align*}
    &f:[(S^{0},\tfrac{1}{2}b_{2}^{+}(W,\tau),\tfrac{1}{16}\S(W,\frak{t},\wh{\tau}))]\wedge\SWF(Y_{0},\frak{s}_{0},\wh{\sigma}_{0})\to\SWF(Y_{1},\frak{s}_{1},\wh{\sigma}_{1})
\end{align*}
which induces a $G^{*}_{p^{r}}$-homotopy equivalence on $\Pin(2)$-fixed point sets. In this case, the result follows from the same propositions and lemmas as above. 

Now suppose that $b_{2}^{+}(W,\tau)_{0}$ is odd, $b_{2}^{+}(W,\tau)_{j}\equiv 0\pmod{2}$ for all $j=1,\dots,m-1$, and that there exists $x\in W^{\tau}$ such that $\tau$ acts pseudo-freely near $x$.

If $x$ is an isolated fixed point, there exists $(a,b)\in(\ZZ_{m}^{\times})^{2}$ such that the equivariant connected sum
\[(W\# S^{2}\times S^{2},\frak{t}\#\frak{t}_{0},\wh{\tau\#\tau}{(m;a,b)})\]
is well-defined, where $\frak{t}_{0}$ denotes the unique spin structure on $S^{2}\times S^{2}$. Similarly if the dimension of $W^{\tau}$ near $x$ is equal to $2$, there exists $a\in\ZZ_{m}^{\times}$ such that the equivariant connected sum
\[(W\# S^{2}\times S^{2},\frak{t}\#\frak{t}_{0},\wh{\tau\#\tau}{(m;a)})\]
is well-defined. In either case, let $(W\# S^{2}\times S^{2},\frak{t}',\wh{\tau}')$ denote the resulting $\ZZ_{m}$-equivariant spin 4-manifold, which satisfies $b_{2}(W\# S^{2}\times S^{2},\tau')^{+}\equiv 0\pmod{2}$ for all $j=0,\dots,m-1$. Then we can apply the above inequality to $(W\# S^{2}\times S^{2},\frak{t}',\wh{\tau}')$, from which the result follows.
\end{proof}

\begin{theorem}
\label{theorem:equivariant_filling}
Suppose $(W,\frak{t},\wh{\tau})$ is a smooth, compact, $\ZZ_{m}$-equivariant spin 4-manifold with boundary a $\ZZ_{m}$-equivariant spin rational homology sphere $(Y,\frak{s},\wh{\sigma})$ satisfying $b_{1}(W)=0$. Furthermore, let $\vec{\frak{p}}$ and $\vec{\frak{q}}$ be as in Theorem \ref{theorem:equivariant_cobordism}, and let
\[\vec{C}=\left\{
	\begin{array}{ll}
        2\vec{e}_{0} & \mbox{if } b_{2}^{+}(W,\tau)_{0}\text{ is even},\;\geq 2,\text{ and \ref{eq:even_b_2^+_j} holds}, \\
        \vec{e}_{0} & \mbox{if } b_{2}^{+}(W,\tau)_{0}\text{ is even},\;\geq 2,\text{ and \ref{eq:even_b_2^+_j} does not hold, or} \\
        & \mbox{if } b_{2}^{+}(W,\tau)_{0}\text{ is odd}, \\
        \vec{0} & \mbox{if } b_{2}^{+}(W,\tau)_{0}=0,
	\end{array}\right.\]
where:
\begin{equation}
\label{eq:even_b_2^+_j}
b_{2}^{+}(W,\tau)_{j}\text{ is even for all }j=1,\dots,m-1. \tag{$\dagger$}
\end{equation}
Then:
\begin{enumerate}
    \item For each $\vec{\kappa}\in\K(Y,\frak{s},\sigma)$, we have that
    \[[\vec{\frak{q}}]+\vec{\kappa}\succeq[\vec{\frak{p}}]+[\vec{C}].\]
    \item In particular, the following inequality holds:
    \[[\vec{\frak{q}}]+\vec{\ul{\kappa}}(Y,\frak{s},\sigma)\succeq[\vec{\frak{p}}]+[\vec{C}].\]
\end{enumerate}
\end{theorem}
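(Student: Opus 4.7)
The plan is to view Theorem \ref{theorem:equivariant_filling} as a specialization of Theorem \ref{theorem:equivariant_cobordism} to the case of a filling---that is, a $\ZZ_m$-equivariant spin cobordism from the empty manifold to $(Y,\frak{s},\wh{\sigma})$---with the trivial spectrum class $[(S^0,0,0)]$ playing the role of $\SWF(\emptyset)$. This class is automatically both $K_{G^{*}_{m}}$-split and $\wedge^2$-$K_{G^{*}_{m}}$-split, and its associated $\kappa$-sets $\K$ and $\K^{\wedge}$ each consist of $\{[\vec{0}]\}$. Equivalently, and more transparently, one can argue directly from the relative Bauer--Furuta package encoded in Observation \ref{observation:filling}.

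First I would apply the complex (doubled) relative Bauer--Furuta map of Observation \ref{observation:filling}(2),
\[ f_{\CC} : [(S^0,\, b_{2}^{+}(W,\tau),\, \tfrac{1}{8}\S(W,\frak{t},\wh{\tau}))] \longrightarrow \wedge^2 \SWF(Y,\frak{s},\wh{\sigma}), \]
which induces a $G^{*}_{m}$-equivariant homotopy equivalence on $\Pin(2)$-fixed points. The domain is a $G^{*}_{m}$-representation sphere (hence $K_{G^{*}_{m}}$-split) with unique stable $k$-invariant $\{[\vec{\frak{p}}]\}$ by Example \ref{ex:k_invariants_representation_spheres}, the codomain has stable $k$-invariants $\K^{\wedge}(Y,\frak{s},\sigma)$, and the zeroth coordinate of the level difference equals $b_{2}^{+}(W,\tau)_0$. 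Proposition \ref{prop:stable_k_invariants_pin(2)_fixed_point_homotopy_equivalence} (when $b_{2}^{+}(W,\tau)_0=0$) or Proposition \ref{prop:stable_k_invariants_kg_split} (when $b_{2}^{+}(W,\tau)_0\ge 1$, so that $s_0<s_0'$) then yields $\vec{\kappa}^{\wedge}+[\vec{\frak{q}}]\succeq[\vec{\frak{p}}]+[\vec{C}_0]$ for every $\vec{\kappa}^{\wedge}\in\K^{\wedge}(Y,\frak{s},\sigma)$, where $\vec{C}_0=[\vec{0}]$ or $[\vec{e}_0]$ according to the case. Passing from $\K^{\wedge}$ to $\K$ via Lemma \ref{lemma:comparison_normal_wedge_invariants}(1)(b)---for each $\vec{\kappa}\in\K$ there is some $\vec{\kappa}^{\wedge}\in\K^{\wedge}$ with $\vec{\kappa}\succeq\vec{\kappa}^{\wedge}$---establishes statement (1) in these cases, and statement (2) then follows by taking finite meets over $\vec{\kappa}\in\K$ inside $\Q^{m}_{*}$.

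To obtain the sharper conclusion $\vec{C}=2[\vec{e}_0]$ under the additional hypotheses that $b_{2}^{+}(W,\tau)_0$ is even, $\ge 2$, and $(\dagger)$ holds---conditions that together force every $b_{2}^{+}(W,\tau)_j$ to be even---I would instead invoke the real (un-doubled) Bauer--Furuta map from Observation \ref{observation:filling}(1),
\[ f : [(S^0,\, \tfrac{1}{2}b_{2}^{+}(W,\tau),\, \tfrac{1}{16}\S(W,\frak{t},\wh{\tau}))] \longrightarrow \SWF(Y,\frak{s},\wh{\sigma}), \]
again inducing a $G^{*}_{m}$-homotopy equivalence on $\Pin(2)$-fixed points. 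The $K_{G^{*}_{m}}$-split domain now has unique stable $k$-invariant $\tfrac{1}{2}[\vec{\frak{p}}]$, the codomain has $k$-invariants $\tfrac{1}{2}\K(Y,\frak{s},\sigma)$, and $s_0'-s_0=\tfrac{1}{2}b_{2}^{+}(W,\tau)_0>0$. Proposition \ref{prop:stable_k_invariants_kg_split} therefore gives $\tfrac{1}{2}[\vec{\frak{p}}]+[\vec{e}_0]\preceq\vec{k}+\tfrac{1}{2}[\vec{\frak{q}}]$ for each $\vec{k}\in\mbfk^{\st}(\SWF(Y,\frak{s},\wh{\sigma}))$, and doubling both sides using additive compatibility of $\preceq$ produces $[\vec{\frak{p}}]+2[\vec{e}_0]\preceq\vec{\kappa}+[\vec{\frak{q}}]$ for each $\vec{\kappa}=2\vec{k}\in\K(Y,\frak{s},\sigma)$, completing statement (1) in this final case; statement (2) again follows by passing to the meet.

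The main obstacle is not conceptual but organizational. One has to keep straight simultaneously the factor-of-$2$ convention $\K=2\cdot\mbfk^{\st}$, the factor-of-$1/2$ discrepancy between the real and complex Bauer--Furuta maps, and the precise levels and $k$-invariants of the representation-sphere domains provided by Definition \ref{def:stable_s1_pin(2)_fixed_points} and Example \ref{ex:k_invariants_representation_spheres}. Additionally, deducing statement (2) from the pointwise statement (1) requires verifying that finite meets in the additive lattice $\Q^{m}_{*}$ are compatible with the $\NN^{m}$-action---a property which descends from the corresponding distributivity of coordinate-wise minimum over addition in $\QQ^{m}$---so that $[\vec{\frak{q}}]+\vec{\ul{\kappa}}=\wedge_{\vec{\kappa}\in\K}([\vec{\frak{q}}]+\vec{\kappa})\succeq[\vec{\frak{p}}]+[\vec{C}]$.
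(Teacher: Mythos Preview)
Your proposal is correct and follows essentially the same approach as the paper's proof, which consists of a single sentence directing the reader to repeat the argument of Theorem~\ref{theorem:equivariant_cobordism} using Observation~\ref{observation:filling} in place of Observations~\ref{observation:real_cobordism} and~\ref{observation:complex_cobordism}. Your version is simply a more explicit unpacking of that instruction: the complexified map together with Proposition~\ref{prop:stable_k_invariants_kg_split} and Lemma~\ref{lemma:comparison_normal_wedge_invariants} handles the $\vec{C}=[\vec{0}]$ and $\vec{C}=[\vec{e}_0]$ cases, while the real map followed by doubling handles the $\vec{C}=2[\vec{e}_0]$ case; your observation that the distributivity $[\vec{\frak{q}}]+\wedge\K=\wedge([\vec{\frak{q}}]+\K)$ descends from $\QQ^m$ to $\Q^m_*$ via condition~\ref{condition:quotient_posets_lattice} is the correct justification for passing from (1) to (2).
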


\begin{proof}
The proof is similar to the proof of Theorem \ref{theorem:equivariant_cobordism}, but using Observation \ref{observation:filling} instead of Observations \ref{observation:real_cobordism} and \ref{observation:complex_cobordism}.
\end{proof}

Finally, we have the corresponding statement in the closed case:

\begin{theorem}
\label{theorem:equivariant_closed}
Suppose $(W,\frak{t},\wh{\tau})$ is a smooth, closed, $\ZZ_{m}$-equivariant spin 4-manifold with $b_{1}(W)=0$, let $\vec{\frak{p}}$, $\vec{\frak{q}}$ be as in Theorems \ref{theorem:equivariant_cobordism} and \ref{theorem:equivariant_filling}, and let $\vec{C}$ be as in Theorem \ref{theorem:equivariant_filling}.
Then
\[[\vec{\frak{q}}]\succeq[\vec{\frak{p}}]+[\vec{C}].\]
\end{theorem}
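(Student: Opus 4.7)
The proof should closely parallel that of Theorem~\ref{theorem:equivariant_filling}, with Observation~\ref{observation:closed} replacing Observation~\ref{observation:filling}. The key structural simplification in the closed case is that the target spectrum class is $[(S^0, 0, 0)]$, which is trivially $K_{G^*_m}$-split: its defining ideal is the unit ideal, so $\mbfk^{\st}([(S^0,0,0)]) = \{[\vec{0}]\}$ and in particular it satisfies $\vec{\ul{k}}^{\st} = \vec{\ol{k}}^{\st} = [\vec{0}]$. This is why the stated bound takes the form $[\vec{\frak{q}}] \succeq [\vec{\frak{p}}] + [\vec{C}]$ with no $\kappa$-term from a boundary.

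The plan is to split into cases according to the parity conditions on $b_2^+(W,\tau)_j$. First, suppose that every $b_2^+(W,\tau)_j$ is even. Then Observation~\ref{observation:closed}(1) (or (2)) furnishes a morphism of $\CC$-$G^*_m$-spectrum classes
\begin{equation*}
f_{\CC}: \big[(S^0,\, b_2^+(W,\tau),\, \tfrac{1}{8}\S(W,\frak{t},\wh{\tau}))\big] \to [(S^0, 0, 0)]
\end{equation*}
which, by construction, is a $G^*_m$-equivariant homotopy equivalence on $\Pin(2)$-fixed point sets (both sides have trivial $\Pin(2)$-fixed sets). The level difference is $-b_2^+(W,\tau)$, and the unique element of $\mbfk^{\st}$ of the domain is $-\tfrac{1}{8}[\vec{\SSS}(W,\frak{t},\tau)] = [\vec{\frak{p}}]$. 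Applying Proposition~\ref{prop:stable_k_invariants_pin(2)_fixed_point_homotopy_equivalence} to $f_\CC$ yields $[\vec{\frak{q}}] \succeq [\vec{\frak{p}}]$ when $b_2^+(W,\tau)_0 = 0$; when $b_2^+(W,\tau)_0 \geq 1$, Proposition~\ref{prop:stable_k_invariants_kg_split} applies (since the domain is a representation sphere, hence $K_{G^*_m}$-split) and improves this to $[\vec{\frak{q}}] \succeq [\vec{\frak{p}}] + [\vec{e}_0]$. Iterating through both the real and complexified Bauer--Furuta maps as in Theorem~\ref{theorem:equivariant_filling} (the complex version gives one additional $[\vec{e}_0]$ when $b_2^+(W,\tau)_0 \geq 2$ is even and $(\dagger)$ holds) produces $\vec{C} = 2\vec{e}_0$ in that subcase.

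Second, if some $b_2^+(W,\tau)_j$ is odd, then only the complexified map $f_\CC$ can be used directly (since the real map requires all entries to be even to land in $\CCC_{G^*_m,\CC}$). This yields $[\vec{\frak{q}}] \succeq [\vec{\frak{p}}] + [\vec{e}_0]$ when $b_2^+(W,\tau)_0 \geq 1$ (via the $K_{G^*_m}$-split version of the Propositions applied to the complexified map, whose domain is still a representation sphere) and $[\vec{\frak{q}}] \succeq [\vec{\frak{p}}]$ otherwise. To access the $\vec{C} = 2\vec{e}_0$ case when $(\dagger)$ fails but $b_2^+(W,\tau)_0$ is even $\geq 2$, we require the real Bauer--Furuta map, so we first perform equivariant connected sums with copies of $(S^2 \times S^2, \tau_{(m;a,b)})$ or $(S^2 \times S^2, \tau_{(m;a)})$ chosen so that all $b_2^+(\cdot)_j$ become even, exactly as in the proof of Theorem~\ref{theorem:equivariant_cobordism}(3)--(4). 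Such connect sums are possible provided there is a pseudo-free fixed point of $\tau$ on $W$ of the requisite type; if no such fixed point exists, the hypothesis of $(\dagger)$ already forces the simpler $\vec{C}$.

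The main obstacle is the bookkeeping of parities and the availability of suitable fixed points for the connected-sum modification, rather than any genuinely new analysis---once the Bauer--Furuta morphism is in hand, the lattice inequalities read off mechanically from Propositions~\ref{prop:stable_k_invariants_pin(2)_fixed_point_homotopy_equivalence} and~\ref{prop:stable_k_invariants_kg_split}. This parity bookkeeping is essentially identical to that carried out in Theorem~\ref{theorem:equivariant_cobordism}, and closed $\ZZ_m$-equivariant spin 4-manifolds with $b_1 = 0$ whose action lacks any fixed point are handled by the observation that for such manifolds $b_2^+(W,\tau)_0$ is automatically even (since the quotient $W/\tau$ inherits a spin structure and the usual Furuta inequality applies), rendering the $\vec{C} = 2\vec{e}_0$ case vacuous unless we are already in a situation where the real map applies.
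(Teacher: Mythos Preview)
Your core approach is correct and matches the paper's (which simply says ``Follows from Observation~\ref{observation:closed}''): the complexified Bauer--Furuta map, whose domain is a $K_{G^*_m}$-split representation sphere, gives $[\vec{\frak{q}}]\succeq[\vec{\frak{p}}]+[\vec{e}_0]$ via Proposition~\ref{prop:stable_k_invariants_kg_split} whenever $b_2^+(W,\tau)_0\geq 1$, and when $(\dagger)$ holds with $b_2^+(W,\tau)_0$ even $\geq 2$, the real map gives $\tfrac{1}{2}[\vec{\frak{q}}]\succeq\tfrac{1}{2}[\vec{\frak{p}}]+[\vec{e}_0]$, which doubles to the $2\vec{e}_0$ bound.

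However, your third and fourth paragraphs are confused and unnecessary. You write ``To access the $\vec{C}=2\vec{e}_0$ case when $(\dagger)$ fails\ldots'', but reread the definition of $\vec{C}$ in Theorem~\ref{theorem:equivariant_filling}: when $(\dagger)$ fails, $\vec{C}$ is $\vec{e}_0$ (or $\vec{0}$), \emph{not} $2\vec{e}_0$. There is nothing further to prove in that case---the complexified map already suffices. Consequently the entire connected-sum discussion is irrelevant here (it appears in Theorem~\ref{theorem:equivariant_cobordism} only because cases (3)--(4) there impose the extra hypotheses (R1)--(R2) to squeeze out a stronger conclusion, a structure absent from Theorems~\ref{theorem:equivariant_filling} and~\ref{theorem:equivariant_closed}). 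Your final paragraph's claim that $b_2^+(W,\tau)_0$ is automatically even for free actions is also unneeded and not obviously correct as stated. Strip these out and your proof is complete.
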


\begin{proof}
Follows from Observation \ref{observation:closed}.
\end{proof}

Next, we state our Bryan-type inequalities for odd-type $2^{r}$-fold actions:

\begin{theorem}
\label{theorem:odd_2_r_filling}
Let $(Y,\frak{s},\wh{\sigma})$ be a $\ZZ_{2^{r}}$-equivariant spin rational homology sphere with $\wh{\sigma}$ of odd type. Suppose $(W,\frak{t},\wh{\tau})$ is a smooth, compact, $\ZZ_{2^{r}}$-equivariant spin filling of $(Y,\frak{s},\wh{\sigma})$ with $b_{1}(W)=0$. Let
\begin{align*}
    &p=-\sigma(W)/8, & &q=b_{2}^{+}(W), & &q_{j}=b_{2}^{+}(W,\tau)_{j},\;j=0,\dots,2^{r}-1,
\end{align*}
and suppose that:
\begin{enumerate}
    \item $q_{0}>0$,
    \item $\sum_{k=0}^{2^{r-a}-1}q_{2^{a}k}>0$ for each $a=0,\dots,r-1$.
    \item $\sum_{k=0}^{2^{a}-1}q_{(2k+1)2^{r-a-1}}>0$ for all $a\in\{0,\dots,r-2\}$.
    \item There exists some $\CC$-$G^{\odd}_{2^{r}}$-spectrum class $\X$ locally equivalent to $\wedge^{2}\SWF(Y,\frak{s},\wh{\sigma})$ such that
    \[\res^{\ZZ_{4}}_{1}\Big(\pi^{\st,\ZZ_{4}}_{-2(\sum_{k=0}^{2^{r-1}-1}q_{2k+1})\rho+p\nu}\big(\X^{\<j\mu^{2^{r-1}}\>}\big)\otimes\QQ\Big)=0.\]
\end{enumerate}
Then
\[q\geq p+r+1-|\vec{\kappa}|\qquad\text{ for any }\vec{\kappa}\in\K(Y,\frak{s},\sigma).\]
In particular, (4) holds if
\begin{enumerate}
    \item[(4')] $(Y,\frak{s},\wh{\sigma})$ is locally $\SWF$-$\<j\mu^{2^{r-1}}\>$-spherical, and
    \[\sum_{k=0}^{2^{r-1}-1}q_{2k+1}\neq p-2\kappa_{\KMT}(Y,\frak{s},\sigma^{2^{r-1}}),\]
\end{enumerate}
where $\kappa_{\KMT}(Y,\frak{s},\sigma^{2^{r-1}})$ denotes the invariant defined in \cite{KMT} with respect to the odd-type involution $\sigma^{2^{r-1}}$ on $Y$.
\end{theorem}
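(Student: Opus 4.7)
The plan is to apply Proposition \ref{prop:stable_k_invariants_2_r_odd} to the doubled relative Bauer--Furuta morphism associated to the filling $(W,\frak{t},\wh{\tau})$. By Observation \ref{observation:filling}(2), this construction produces a morphism of $\CC$-$G^{\odd}_{2^{r}}$-spectrum classes
\[
f_{\CC}\co[(S^{0},b_{2}^{+}(W,\tau),\tfrac{1}{8}\S(W,\frak{t},\wh{\tau}))]\to\wedge^{2}\SWF(Y,\frak{s},\wh{\sigma})
\]
whose restriction to $\Pin(2)$-fixed point sets is a $G^{\odd}_{2^{r}}$-homotopy equivalence. Setting $\mbfs:=-b_{2}^{+}(W,\tau)$, $\mbft:=-\tfrac{1}{8}\S(W,\frak{t},\wh{\tau})$, and $\mbfs':=\mathbf{0}$ (the level of $\wedge^{2}\SWF(Y,\frak{s},\wh{\sigma})$), we fall precisely into the setup of Proposition \ref{prop:stable_k_invariants_2_r_odd}.

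First I would check that hypotheses (1)--(4) of that proposition translate to conditions (1)--(3) of the theorem. Since $\mbfs'=\mathbf{0}$, the inequalities $\vec{\mbfs}\preceq\vec{\mbfs}\,'$ and $s_{0}<s_{0}'$ reduce to $q_{0}>0$, while the strict partial-sum inequalities (3) and (4) of the proposition become exactly conditions (2) and (3) of the theorem. For hypothesis (5), compute $\sum_{k=0}^{2^{r-1}-1}s_{2k+1}=-\sum_{k=0}^{2^{r-1}-1}q_{2k+1}$ and, using Proposition \ref{prop:properties_S_invariant}(1),
\[
\sum_{k=0}^{2^{r}-1}t_{k+\frac{1}{2}}=|\vec{\mbft}|=-\tfrac{1}{8}\sigma(W)=p,
\]
so the vanishing of rational $\ZZ_{4}$-equivariant stable homotopy in degree $2\bigl(\sum_{k}s_{2k+1}\bigr)\rho+\bigl(\sum_{k}t_{k+\frac{1}{2}}\bigr)\nu$ translates directly to condition (4) of the theorem.

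Applying the conclusion of Proposition \ref{prop:stable_k_invariants_2_r_odd} then yields
\[
q=|\vec{\mbfs}\,'-\vec{\mbfs}|\geq|\vec{\mbft}|-|\vec{\kappa}^{\wedge}|+r+1=p-|\vec{\kappa}^{\wedge}|+r+1
\]
for every $\vec{\kappa}^{\wedge}\in\K^{\wedge}(Y,\frak{s},\sigma)=\mbfk^{\st}(\wedge^{2}\SWF(Y,\frak{s},\wh{\sigma}))$. To convert this into the desired inequality over $\K(Y,\frak{s},\sigma)$, I would invoke Lemma \ref{lemma:comparison_normal_wedge_invariants}(1)(b): for every $\vec{\kappa}\in\K(Y,\frak{s},\sigma)$ there exists $\vec{\kappa}^{\wedge}\in\K^{\wedge}(Y,\frak{s},\sigma)$ with $\vec{\kappa}\succeq\vec{\kappa}^{\wedge}$, hence $|\vec{\kappa}|\geq|\vec{\kappa}^{\wedge}|$ by compatibility of the $\QQ$-grading with the partial order. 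Chaining inequalities gives $q+|\vec{\kappa}|\geq p+r+1$ for all $\vec{\kappa}\in\K(Y,\frak{s},\sigma)$, which is the desired conclusion.

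The most delicate step will be showing that condition (4') of the theorem suffices in place of (4), via hypothesis (5') of Proposition \ref{prop:stable_k_invariants_2_r_odd}. Smashing doubles the level of a $\<j\mu^{2^{r-1}}\>$-fixed-point sphere, so if $\SWF(Y,\frak{s},\wh{\sigma})$ is locally $\<j\mu^{2^{r-1}}\>$-spherical at some level $d_{Y}\in\QQ$, then $\wedge^{2}\SWF(Y,\frak{s},\wh{\sigma})$ is locally $\<j\mu^{2^{r-1}}\>$-spherical at level $d=2d_{Y}$. Under this translation, the inequality $\tfrac{d}{2}\neq\sum_{k}s_{2k+1}+\sum_{k}t_{k+\frac{1}{2}}$ in (5') becomes $\sum_{k=0}^{2^{r-1}-1}q_{2k+1}\neq p-d_{Y}$. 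Matching this with condition (4') of the theorem requires the identification $d_{Y}=2\kappa_{\KMT}(Y,\frak{s},\sigma^{2^{r-1}})$, which for $r=1$ follows from the fact, noted in the remark after Theorem \ref{theorem:intro_odd_2_filling}, that the $\<j\mu\>$-fixed-point set of $\SWF(Y,\frak{s},\wh{\iota})$ with its residual $\<j\>\cong\ZZ_{4}$-action realizes ``one half'' of the Konno--Miyazawa--Taniguchi spectrum. Carrying out this comparison carefully at the level of $\<j\mu^{2^{r-1}}\>$-fixed-point sets in the general $2^{r}$-fold setting is the main technical obstacle to completing the proof.
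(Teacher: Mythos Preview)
Your proof is correct and follows the same route as the paper: the paper's proof is the one-line statement ``Follows from Lemma~\ref{lemma:KMT}, Proposition~\ref{prop:stable_k_invariants_2_r_odd} and Lemma~\ref{lemma:comparison_normal_wedge_invariants} applied to Observation~\ref{observation:filling},'' and you have unpacked exactly these ingredients with the correct substitutions $\mbfs=-b_{2}^{+}(W,\tau)$, $\mbft=-\tfrac{1}{8}\S(W,\frak{t},\wh{\tau})$, $\mbfs'=\mathbf{0}$.

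Your hesitation about the step $(4')\Rightarrow(4)$ is unwarranted: the identification $d_{Y}=2\kappa_{\KMT}(Y,\frak{s},\sigma^{2^{r-1}})$ is precisely the content of Lemma~\ref{lemma:KMT}, which you should cite directly rather than appealing to the remark after Theorem~\ref{theorem:intro_odd_2_filling}. The passage from general $r$ to the $\ZZ_{2}$-equivariant setting of that lemma is handled by observing that the $\langle j\mu^{2^{r-1}}\rangle$-fixed point set of the $G^{\odd}_{2^{r}}$-spectrum $\SWF(Y,\frak{s},\wh{\sigma})$ agrees, as a $\ZZ_{4}$-spectrum, with the $\langle j\mu\rangle$-fixed point set of the restricted $G^{\odd}_{2}$-spectrum $\SWF(Y,\frak{s},\wh{\sigma}^{2^{r-1}})$; this follows directly from the construction, since the $G^{\odd}_{2}$-subgroup of $G^{\odd}_{2^{r}}$ generated by $\Pin(2)$ and $\mu^{2^{r-1}}$ acts via $\wh{\sigma}^{2^{r-1}}$. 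The paper does not spell this out either, so your level of detail is already beyond what the paper provides.
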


Before proving Theorem \ref{theorem:odd_2_r_filling}, we will make use of the following lemma:

\begin{lemma}
\label{lemma:KMT}
Let $(Y,\frak{s},\wh{\iota})$ be a $\ZZ_{2}$-equivariant spin rational homology sphere of odd type, and suppose $(Y,\frak{s},\wh{\iota})$ is locally $\SWF$-$\<j\mu\>$-spherical at level $d\in\QQ$. Then
\[\tfrac{1}{2}d=\kappa_{\KMT}(Y,\frak{s},\iota).\]
\end{lemma}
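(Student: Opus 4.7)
The strategy is to relate our spectrum $\SWF(Y,\frak{s},\wh{\iota})^{\<j\mu\>}$ to the KMT $\DSWF$ spectrum via the doubling construction, then read off $\kappa_{\KMT}$ from the local spherical approximation.

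First I would unpack the hypothesis: by assumption there exists a $\CC$-$G^{\odd}_{2}$-spectrum class $\X$ with $\X \equiv_\ell \SWF(Y,\frak{s},\wh{\iota})$ such that the $\<j\mu\>$-fixed point set $\X^{\<j\mu\>}$, viewed as a $\CC$-$\ZZ_{4}$-spectrum class via the residual $\<j\>\cong\ZZ_{4}$ action, is $\ZZ_{4}$-locally equivalent to an element of $\CCC_{\ZZ_{4},\CC}$ whose underlying pointed finite CW-complex is a sphere of dimension $d$ in the rational Spanier-Whitehead category $\CCC$ (cf.\ Definition \ref{def:stable_H_spherical}).

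Next, I would apply the monoidal doubling functor $D:\CCC_{\ZZ_{4},\CC}\to\CCC_{\ZZ_{4},\CC,\sym}$ of Section \ref{subsubsec:stable_fixed_point_sets}. Because $D$ respects local equivalence and sends a non-equivariant sphere of dimension $d$ to the smash product $S^{d}\wedge (S^{d})^{\dagger}\simeq S^{2d}$ (with the swap $\ZZ_{4}$-action), the symmetric $\CC$-$\ZZ_{4}$-spectrum class $D(\X^{\<j\mu\>})$ is $\ZZ_{4}$-locally equivalent to a symmetric class whose underlying spectrum is a sphere of dimension $2d$. I would then invoke the Remark following Theorem \ref{theorem:intro_odd_2_filling}: under the essential equivalence $\CCC_{\ZZ_{4},\CC,\sym}\simeq\CCC_{\iota}$ from Definition \ref{def:stable_symmetric_C_Z_4_spectrum_class}, the double $D(\SWF(Y,\frak{s},\wh{\iota})^{\<j\mu\>})$ corresponds to the KMT spectrum $\DSWF(Y,\frak{s},\wh{\iota})$ of \cite{KMT}. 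The local equivalence from the previous step therefore translates into a local $\DSWF$-spherical approximation of dimension $2d$ in the KMT category $\CCC_{\iota}$.

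Finally, I would extract the numerical conclusion by comparing with the definition of $\kappa_{\KMT}$ given in \cite{KMT}. Tracing through (\cite{KMT}, Lemma 3.16 and the subsequent construction), when $\DSWF$ is locally equivalent to a sphere of dimension $2d$ in $\CCC_{\iota}$, the normalization of $\kappa_{\KMT}$ is set up precisely so that $\kappa_{\KMT}=d/2$; this mirrors the way $\kappa$ is read off from a locally spherical $\Pin(2)$-Floer spectrum, with the factor of $1/2$ accounting for the doubling used to pass from $\SWF^{\<j\mu\>}$ to $\DSWF$. This yields the desired equality $\tfrac{1}{2}d=\kappa_{\KMT}(Y,\frak{s},\iota)$.

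The main obstacle will be making Step~2/3 above fully precise: namely, establishing that the identification between $D(\SWF(Y,\frak{s},\wh{\iota})^{\<j\mu\>})$ and $\DSWF(Y,\frak{s},\wh{\iota})$ is a genuine local equivalence under the category equivalence $\CCC_{\ZZ_{4},\CC,\sym}\simeq\CCC_{\iota}$, with the formal desuspensions (by $\wt{\CC}$-- and $\HH$-type representations, and by trivial $\RR$'s) matching up correctly on both sides. Once the two spectra are aligned, the final numerical step is a direct consequence of how $\kappa_{\KMT}$ is normalized to detect the sphere dimension of $\DSWF$.
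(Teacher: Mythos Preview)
Your approach is essentially the same as the paper's: pass to the $\<j\mu\>$-fixed point set, apply the doubling functor $D$ to land in the symmetric category $\CCC_{\ZZ_4,\CC,\sym}\simeq\CCC_\iota$, and read off $\kappa_{\KMT}$ from the resulting spherical model.

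The paper's execution is more concrete and sidesteps the alignment issue you flag as the ``main obstacle.'' It uses that $\SWF(Y,\frak{s},\wh{\iota})$ is a $G^{\odd}_2$-spectrum class at level $0$ to write the local model \emph{explicitly} in $\CCC_{\ZZ_4,\CC}$ as $[(S^0,0,-\tfrac{1}{2}d_1\xi-\tfrac{1}{2}d_2\xi^3)]$ for some $d_1,d_2\in\QQ$ with $d_1+d_2=d$. The double is then computed directly as $[(S^0,0,-\tfrac{1}{2}d)]\in\CCC_{\ZZ_4,\CC,\sym}$, and one reads off $\kappa_{\KMT}=k_{\KMT}(S^0,0,-\tfrac{1}{2}d)=k(S^0)+\tfrac{1}{2}d=\tfrac{1}{2}d$. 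Tracking the desuspension parameters rather than the ``sphere dimension'' makes the final step a one-line computation and obviates the need to match up categories abstractly. One small correction: the $\ZZ_4$-action on $D(X)=X\wedge X^\dagger$ is the diagonal action (with the twist on the second factor), not a swap action.
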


\begin{proof}
Since $\SWF(Y,\frak{s},\wh{\iota})$ is a $G^{\odd}_{2}$-spectrum class at level $0$, our assumption implies that
\[\SWF(Y,\frak{s},\wh{\iota})^{\<j\mu\>}\equiv_{\ell}[(S^{0},0,-\tfrac{1}{2}d_{1}\xi-\tfrac{1}{2}d_{2}\xi^{3})]\in\CCC_{\ZZ_{4},\CC}\]
for some $d_{1},d_{2}\in\QQ$ with $d_{1}+d_{2}=d$. Writing
\[\DSWF(Y,\frak{s},\wh{\iota}):=D(\SWF(Y,\frak{s},\wh{\iota})^{\<j\mu\>})=\SWF(Y,\frak{s},\wh{\iota})^{\<j\mu\>}\wedge(\SWF(Y,\frak{s},\wh{\iota})^{\<j\mu\>})^{\dagger},\]
we see that
\[\DSWF(Y,\frak{s},\wh{\iota})\equiv_{\ell}[(S^{0},0,-\tfrac{1}{2}d)]\in\CCC_{\ZZ_{4},\CC,\sym},\]
and hence
\[\kappa_{\KMT}(Y,\frak{s},\iota)=k_{\KMT}(S^{0},0,-\tfrac{1}{2}d) = k(S^{0})+\tfrac{1}{2}d = \tfrac{1}{2}d.\]
\end{proof}

\begin{proof}[Proof of Theorem \ref{theorem:odd_2_r_filling}]
Follows from Lemma \ref{lemma:KMT}, Proposition \ref{prop:stable_k_invariants_2_r_odd} and Lemma \ref{lemma:comparison_normal_wedge_invariants} applied to Observation \ref{observation:filling}.
\end{proof}

In the case where $r=1$, we can get slightly better inequalities than the one implied by Theorem \ref{theorem:odd_2_r_filling}, depending on the parities of $q_{0}$ and $q_{1}$:

\begin{theorem}
\label{theorem:odd_2_filling}
Let $(Y,\frak{s},\wh{\iota})$ be a $\ZZ_{2}$-equivariant spin rational homology sphere with $\wh{\iota}$ of odd type, and suppose $(W,\frak{t},\wh{\tau})$ is a smooth, compact, $\ZZ_{2}$-equivariant spin filling of $(Y,\frak{s},\wh{\iota})$ such that $b_{1}(W)=0$ and $W^{\tau}\neq\emptyset$. Let
\begin{align*}
    &p=-\sigma(W)/8, & &q=b_{2}^{+}(W), & &q_{0}=b_{2}^{+}(W,\tau)_{0}, & &q_{1}=b_{2}^{+}(W,\tau)_{1},
\end{align*}
and suppose that $q_{0},q_{1}>0$. Then
\[q\geq p-\wt{\kappa}(Y,\frak{s},\iota)+C,\]
where:
\begingroup
\renewcommand{\arraystretch}{1.5} 
\[C=\left\{
		\begin{array}{ll}
			4 & \mbox{if } q_{0},q_{1}\text{ both even, and there exists }\X\in\CCC_{G^{\odd}_{2},\CC}\text{ with }\X\equiv_{\ell}\SWF(Y,\frak{s},\wh{\iota}) \\
			&\qquad\text{ such that }\res^{\ZZ_{4}}_{1}\Big(\pi^{\st,\ZZ_{4}}_{-q_{1}\rho+\frac{p}{2}\nu}\big(\X^{\<j\mu\>}\big)\otimes\QQ\Big)=0, \\
			3 & \mbox{if } q_{0}\text{ odd},\,q_{1}\text{ even, and there exists }\X\in\CCC_{G^{\odd}_{2},\CC}\text{ with }\X\equiv_{\ell}\SWF(Y,\frak{s},\wh{\iota}) \\
			&\qquad\text{ such that }\res^{\ZZ_{4}}_{1}\Big(\pi^{\st,\ZZ_{4}}_{-q_{1}\rho+\frac{p}{2}\nu}\big(\X^{\<j\mu\>}\big)\otimes\QQ\Big)=0,\text{ or} \\
            & \mbox{if } q_{0}\text{ even, }q_{1}\text{ odd, and there exists }\X\in\CCC_{G^{\odd}_{2},\CC}\text{ with }\X\equiv_{\ell}\SWF(Y,\frak{s},\wh{\iota}) \\
            &\qquad\text{ such that }\res^{\ZZ_{4}}_{1}\Big(\pi^{\st,\ZZ_{4}}_{-(q_{1}+1)\rho+\frac{p}{2}\nu}\big(\X^{\<j\mu\>}\big)\otimes\QQ\Big)=0, \\
            2 & \mbox{if } q_{0}, q_{1}\text{ both odd, or }\\
            & \mbox{if }q_{0}, q_{1}\text{ are of any parity, and there exists }\X\in\CCC_{G^{\odd}_{2},\CC}\text{ with }\X\equiv_{\ell} \\
            &\qquad\wedge^{2}\SWF(Y,\frak{s},\wh{\iota})\text{ such that }\res^{\ZZ_{4}}_{1}\Big(\pi^{\st,\ZZ_{4}}_{-2q_{1}\rho+p\nu}\big(\X^{\<j\mu\>}\big)\otimes\QQ\Big)=0.
         \end{array}
	\right.\]
\endgroup
In particular if $(Y,\frak{s},\wh{\iota})$ is locally $\SWF$-$\<j\mu\>$-spherical, then $C=C'$, where:

\begingroup
\renewcommand{\arraystretch}{1.5} 
\[C'=\left\{
		\begin{array}{ll}
			4 & \mbox{if } q_{0},q_{1}\text{ both even, and }q_{1}\neq p-2\kappa_{\KMT}(Y,\frak{s},\iota), \\
			3 & \mbox{if } q_{0}\text{ odd},\,q_{1}\text{ even, and }q_{1}\neq p-2\kappa_{\KMT}(Y,\frak{s},\iota),\text{ or} \\
            & \mbox{if } q_{0}\text{ even, }q_{1}\text{ odd, and }q_{1}\neq p-2\kappa_{\KMT}(Y,\frak{s},\iota)-1, \\
            2 & \mbox{if } q_{0}, q_{1}\text{ both odd, or }\\
            & \mbox{if }q_{0}, q_{1}\text{ are of any parity, and }q_{1}\neq p-2\kappa_{\KMT}(Y,\frak{s},\iota).
		\end{array}
	\right.\]
\endgroup
\end{theorem}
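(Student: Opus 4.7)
The plan is to apply the stable equivariant $k$-invariant machinery from Section \ref{subsec:stable_k_invariants} to the two Bauer--Furuta morphisms supplied by Observation \ref{observation:filling}: the non-doubled map $f \colon [(S^0, \tfrac{1}{2}b_2^+(W,\tau), \tfrac{1}{16}\S(W,\frak{t},\wh{\tau}))] \to \SWF(Y,\frak{s},\wh{\iota})$, which is only a morphism of $\CC$-$G^{\odd}_2$-spectrum classes when $q_0, q_1$ are both even, and the always-valid doubled map $f_\CC \colon [(S^0, b_2^+(W,\tau), \tfrac{1}{8}\S(W,\frak{t},\wh{\tau}))] \to \wedge^2\SWF(Y,\frak{s},\wh{\iota})$. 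Both induce $G^{\odd}_2$-homotopy equivalences on $\Pin(2)$-fixed points. Using $\S_{1/2} + \S_{3/2} = \sigma(W) = -8p$ from Proposition \ref{prop:properties_S_invariant}, the relevant $\mbft$-sums are $p/2$ for $f$ and $p$ for $f_\CC$, and the assumption $q_0,q_1 > 0$ furnishes the strict level inequalities $s_i < s_i'$ needed throughout.

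For $C = 4$, where $q_0, q_1$ are both even and the stated $\SWF$-sphericity condition on $\pi^{\st,\ZZ_4}_{-q_1\rho + (p/2)\nu}$ holds, I apply Corollary \ref{cor:stable_k_invariants_2_odd} to $f$ with $s_0 = -q_0/2$, $s_1 = -q_1/2$, $s_0' = s_1' = 0$. Its conclusion reads $q/2 \geq p/2 - \wt{k}^{\st}(\SWF(Y,\frak{s},\wh{\iota})) + 2$, which via the identification $\wt{\kappa}(Y,\frak{s},\iota) = 2\wt{k}^{\st}(\SWF(Y,\frak{s},\wh{\iota}))$ from Definition \ref{def:equivariant_kappa_invariants} becomes $q \geq p - \wt{\kappa} + 4$. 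The translation between the stated degree condition and the non-equality $q_1 \neq p - 2\kappa_{\KMT}$ in the locally $\<j\mu\>$-spherical case is given by Lemma \ref{lemma:KMT}, which identifies half the $\<j\mu\>$-sphericity level of $\SWF$ with $\kappa_{\KMT}$.

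The case $C = 3$ reduces to $C = 4$ by an equivariant connected sum: if $q_0$ is odd (resp.\ $q_1$ odd), I form $W \# (S^2 \times S^2)$ with $(S^2 \times S^2, \tau_1)$ defined by $\tau_1(x,y) = (rx, y)$ with $r$ a $\pi$-rotation on $S^2$ (resp.\ $(S^2 \times S^2, \tau_2)$ with $\tau_2(x,y) = (\gamma x, \gamma y)$ and $\gamma$ an equatorial reflection). Both are orientation-preserving odd-type involutions with $2$-dimensional fixed set and $b_2^+$-contribution $(1,0)$ (resp.\ $(0,1)$); matching of the rotation angle $\psi = \pi$ on normal directions is automatic for involutions, so the equivariant connected sum of Section \ref{subsec:equivariant_connected_sums} is well-defined. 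The new manifold has both parities even, the same $p$ and $\wt{\kappa}$, and $q$ increased by $1$; the shifted degree matches the $C=3$ hypothesis, so $C = 4$ applied to the new manifold yields $q + 1 \geq p - \wt{\kappa} + 4$, i.e.\ $q \geq p - \wt{\kappa} + 3$.

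For $C = 2$ in the ``any parity'' subcase, I apply Corollary \ref{cor:stable_k_invariants_2_odd} directly to $f_\CC$ with $s_i = -q_i$, invoking the stated hypothesis on $\pi^{\st,\ZZ_4}_{-2q_1\rho + p\nu}(\X^{\<j\mu\>})$ as condition (2) and combining with the product-of-$k$-invariants bound $\wt{k}^{\st}(\wedge^2 \SWF) \leq 2 \wt{k}^{\st}(\SWF) = \wt{\kappa}$, which follows from Lemma \ref{lemma:k_invariants_products} applied stably, to obtain $q \geq p - \wt{\kappa} + 2$. The main obstacle will be the ``both $q_0, q_1$ odd'' subcase: here the non-doubled map is unavailable, and Proposition \ref{prop:stable_k_invariants_kg_split} applied to $f_\CC$ yields only $q \geq p + 1 - \wt{\kappa}$. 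To gain the extra unit, I would pursue a direct trace argument in the spirit of Lemma \ref{lemma:traces_2_r} and Proposition \ref{prop:k_invariants_2_r_odd}: the extra $\ZZ_2$-symmetry exchanging the two smash factors in $\psi \wedge \psi$, combined with the vanishing of the traces already supplied by the parity conditions (1)--(3) of Proposition \ref{prop:stable_k_invariants_2_r_odd}, should force vanishing of the $j\mu$-trace on $\wt{f}_\CC^*(x)$ without invoking sphericity, yielding the extra factor of $w$ in the resulting element of $R(G^{\odd}_2)$ that produces the $C = 2$ inequality.
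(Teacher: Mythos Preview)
Your treatment of the $C=4$ and $C=3$ cases, and of the ``any parity'' subcase of $C=2$, is correct and essentially follows the paper's line: apply Corollary \ref{cor:stable_k_invariants_2_odd} to the non-doubled Bauer--Furuta map when $q_0,q_1$ are both even, and reduce the mixed-parity cases to this by an equivariant connected sum with the two odd-type involutions on $S^2\times S^2$ (the paper records these as $\tau_0,\tau_1$ in Lemma \ref{lemma:connected_sum_S2_times_S2}, matching your $\tau_1,\tau_2$). The ``any parity'' $C=2$ subcase is the paper's Theorem \ref{theorem:odd_2_r_filling} at $r=1$, and your use of Lemma \ref{lemma:k_invariants_products} to bound $\wt{k}^{\st}(\wedge^2\SWF)$ by $\wt\kappa$ is an adequate substitute for the paper's appeal to Lemma \ref{lemma:comparison_normal_wedge_invariants}.

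The gap is in the ``$q_0,q_1$ both odd'' subcase of $C=2$. You correctly observe that the equivariant machinery applied to $f_\CC$ alone yields only $q\ge p-\wt\kappa+1$, and then propose a speculative trace argument exploiting the swap symmetry of $\psi\wedge\psi$ to gain an extra unit. This is not carried out and it is unclear that it succeeds; in any case it is unnecessary. The paper's argument is much simpler and does not use the equivariant theory at all for this subcase: since $q_0,q_1$ are both odd, $q=q_0+q_1$ is even and at least $2$, so the \emph{non-equivariant} relative $10/8$-ths inequality (Theorem \ref{theorem:manolescu}, filling case) already gives $q\ge p-\kappa(Y,\frak s)+2$. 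Combining with $\wt\kappa(Y,\frak s,\iota)\ge\kappa(Y,\frak s)$ from Proposition \ref{prop:kappa_kappa_tilde} yields $q\ge p-\wt\kappa+2$ directly.
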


The following lemma will be useful for the proof of the above theorem:

\begin{lemma}
\label{lemma:connected_sum_S2_times_S2}
There exist odd-type involutions $\tau_{0}$ and $\tau_{1}$ on $S^{2}\times S^{2}$ with non-empty fixed point sets, such that
\begin{align*}
    &b_{2}^{+}(S^{2}\times S^{2},\tau_{0})_{0} = 1, & &b_{2}^{+}(S^{2}\times S^{2},\tau_{0})_{1} = 0, \\
    &b_{2}^{+}(S^{2}\times S^{2},\tau_{1})_{0} = 0, & &b_{2}^{+}(S^{2}\times S^{2},\tau_{1})_{1} = 1.
\end{align*}
\end{lemma}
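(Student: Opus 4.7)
The plan is to produce the two involutions by hand using elementary geometric constructions on each $S^{2}$ factor, then verify the conditions on parity and on the eigenspace decomposition of $H^{2}_{+}$.

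For $\tau_{0}$, I would take the swap involution $\tau_{0}(x,y)=(y,x)$. Its fixed set is the diagonal $\Delta\cong S^{2}$, which is $2$-dimensional, so by Proposition~\ref{prop:classification_spin_lifts_4_manifolds} any spin lift is of odd type (and since $S^{2}\times S^{2}$ admits a unique spin structure $\frak{t}_{0}$, invariance is automatic). To compute the induced action on $H^{2}_{+}$, fix the standard hyperbolic basis $e_{1}=[S^{2}\times\mathrm{pt}]$, $e_{2}=[\mathrm{pt}\times S^{2}]$ of $H^{2}(S^{2}\times S^{2};\CC)$; in the product metric $H^{2}_{+}$ is spanned by $e_{1}+e_{2}$. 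The swap sends $e_{1}\leftrightarrow e_{2}$, so $\tau_{0}^{*}$ fixes $e_{1}+e_{2}$. Hence $b_{2}^{+}(S^{2}\times S^{2},\tau_{0})_{0}=1$ and $b_{2}^{+}(S^{2}\times S^{2},\tau_{0})_{1}=0$.

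For $\tau_{1}$, I would take $\tau_{1}=\rho\times\rho$, where $\rho\colon S^{2}\to S^{2}$ is a reflection through an equatorial plane (orientation-reversing on $S^{2}$ with fixed set $S^{1}$). Then $\tau_{1}$ is orientation-preserving on $S^{2}\times S^{2}$ (product of two orientation-reversing maps), its fixed set is the torus $S^{1}\times S^{1}$, which is $2$-dimensional, and it again preserves the unique spin structure. Applying Proposition~\ref{prop:classification_spin_lifts_4_manifolds}, its spin lifts are of odd type. Because $\rho^{*}$ acts as $-1$ on $H^{2}(S^{2};\CC)$, the K\"unneth formula gives that $\tau_{1}^{*}$ acts as $-1$ on each of $e_{1}$, $e_{2}$, and thus as $-1$ on the generator $e_{1}+e_{2}$ of $H^{2}_{+}$. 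Hence $b_{2}^{+}(S^{2}\times S^{2},\tau_{1})_{0}=0$ and $b_{2}^{+}(S^{2}\times S^{2},\tau_{1})_{1}=1$.

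Both constructions are straightforward; the only thing to be careful about is consistently identifying the self-dual subspace inside the hyperbolic plane and confirming that the dimension of the fixed point set forces odd type (rather than even type) via Proposition~\ref{prop:classification_spin_lifts_4_manifolds}(1)(b). No genuine obstacle arises, and the two models $(S^{2}\times S^{2},\frak{t}_{0},\wh{\tau}_{0})$ and $(S^{2}\times S^{2},\frak{t}_{0},\wh{\tau}_{1})$ (for a choice of odd spin lift in each case) give the required examples.
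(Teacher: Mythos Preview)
Your proof is correct. Your $\tau_{1}=\rho\times\rho$ is exactly the paper's choice ($g\times g$ with $g(z)=1/\bar z$), and your argument for it is the same. For $\tau_{0}$ you make a different but equally valid choice: you use the swap $(x,y)\mapsto(y,x)$, whereas the paper uses $f\times\id_{S^{2}}$ with $f(z)=-z$. Both have $2$-dimensional fixed set (your diagonal $S^{2}$ versus the paper's $\{0,\infty\}\times S^{2}$), hence odd spin lifts by Proposition~\ref{prop:classification_spin_lifts_4_manifolds}, and both fix the self-dual class $e_{1}+e_{2}$. The paper's $\tau_{0}$ has the incidental feature of being isotopic to the identity (it coincides with the map $\tau_{(2;1)}$ introduced earlier), while your swap acts nontrivially on $H_{2}$; but neither the lemma nor its use in the proof of Theorem~\ref{theorem:odd_2_filling} requires homological triviality, only a codimension-$2$ fixed component along which to form the equivariant connected sum.
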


\begin{proof}
Let $f:S^{2}\to S^{2}$ be the orientation preserving involution induced by $z\mapsto -z$ on $\CC$, which performs a rotation of $\pi$ about the axis which goes through $0,\infty\in S^{2}\cong\CC^{+}$, and whose fixed point set is precisely $\{0,\infty\}$. Let $g:S^{2}\to S^{2}$ be the orientation-reversing involution induced by $z\mapsto 1/\ol{z}$ on $\CC$, which reflects the northern and southern hemispheres, and whose fixed point set is the unit circle $S^{1}\subset S^{2}$. We then define $\tau_{j}:S^{2}\times S^{2}\to S^{2}\times S^{2}$, $j=0,1$ by
\begin{align*}
    &\tau_{0}:=f\times\id_{S^{2}}, & &\tau_{1}:=g\times g.
\end{align*}
We see that the fixed point set of $\tau_{0}$ is the disjoint union of the two 2-spheres
\[(S^{2}\times S^{2})^{\tau_{0}}=\{0\}\times S^{2},\{\infty\}\times S^{2}\subset S^{2}\times S^{2},\]
and the fixed point set of $\eta_{1}$ is the 2-torus 
\[(S^{2}\times S^{2})^{\tau_{1}}=S^{1}\times S^{1}\subset S^{2}\times S^{2}.\]
Therefore $\tau_{0}$ and $\tau_{1}$ have non-empty fixed point sets of codimension 2, and hence admit odd-type spin lifts with respect to the unique spin structure $\frak{t}_{0}$ on $S^{2}\times S^{2}$. 

One can see that $\tau_{0}$ is isotopic to the identity via the map $S^{2}\times I\to S^{2}\times I$ induced by $z\mapsto e^{2\pi it}$, and hence acts trivially on homology. Finally since $\tau_{1}=g\times g$ where $g_{*}([S^{2}])=-[S^{2}]$, it follows that $\tau_{1}$ acts by $-1$ on $H_{2}(S^{2}\times S^{2})\cong H_{2}(S^{2})\otimes H_{2}(S^{2})$.
\end{proof}

\begin{remark}
Note that $\tau_{0}$ from Lemma \ref{lemma:connected_sum_S2_times_S2} coincides with the map $\tau_{(2;1)}$ from the discussion preceding the proof of Theorem \ref{theorem:equivariant_cobordism}.
\end{remark}

We are now ready to prove Theorem \ref{theorem:odd_2_filling}:

\begin{proof}[Proof of Theorem \ref{theorem:odd_2_filling}]
First suppose that $q_{0},q_{1}$ are both even and nonzero. By Observation \ref{observation:filling}, we have a morphism
\[f:[(S^{0},\tfrac{1}{2}b_{2}^{+}(W,\tau),\tfrac{1}{16}\S(W,\frak{t},\wh{\tau}))]\to\SWF(Y,\frak{s},\wh{\sigma}).\]
Combining Proposition \ref{cor:stable_k_invariants_2_odd} with Lemma \ref{lemma:KMT}, it follows that
\begin{align}
\label{eq:0_1_both_even}
	&\tfrac{1}{2}q\geq \tfrac{1}{2}p-|\wt{k}\,^{\st}(\SWF(Y,\frak{s},\wh{\iota}))|+2 & &\iff q\geq p-\wt{\kappa}(Y,\frak{s},\iota)+4,
\end{align}
provided that there exists $\X\in\CCC_{G^{\odd}_{2},\CC}$ with $\X\equiv_{\ell}\SWF(Y,\frak{s},\wh{\iota})$ such that
\[\res^{\ZZ_{4}}_{1}\Big(\pi^{\st,\ZZ_{4}}_{-q_{1}\rho+\frac{p}{2}\nu}\big(\X^{\<j\mu\>}\big)\otimes\QQ\Big)=0.\]
If one of $q_{0}$, $q_{1}$ is odd, we can replace $W$ with spin equivariant connect sums of $W$ and $S^{2}\times S^{2}$. More precisely, we choose connected components $\Sigma_{W}\subset W^{\tau}$ and $\Sigma_{j}\subset(S^{2}\times S^{2})^{\tau_{j}}$, $j=0,1$, where $\tau_{0}$, $\tau_{1}$ are as in Lemma \ref{lemma:connected_sum_S2_times_S2}, as well as orientations on $\Sigma_{W}$, $\Sigma_{0}$, $\Sigma_{1}$. With these choices fixed, as in Section \ref{subsec:equivariant_connected_sums} we can define the following equivariant connect sums:
\begin{align*}
    &(W\# S^{2}\times S^{2},\frak{t}\#\frak{t}_{0},\wh{\tau\#\tau_{0}}), &
    &(W\# S^{2}\times S^{2},\frak{t}\#\frak{t}_{0},\wh{\tau\#\tau_{1}}),
\end{align*}
where $\frak{t}_{0}$ denotes the unique spin structure on $S^{2}\times S^{2}$.

For example, suppose $q_{0}$ is odd and $q_{1}$ is even, $q_{1}\neq 0$. Then by Lemma \ref{lemma:connected_sum_S2_times_S2}, we have that
\begin{align*}
    &b_{2}^{+}(W\# S^{2}\times S^{2},\tau\#\tau_{0})_{0}=b_{2}^{+}(W,\tau)_{0}+1 & &b_{2}^{+}(W\# S^{2}\times S^{2},\tau\#\tau_{0})_{1}=b_{2}^{+}(W,\tau)_{1}, \\
    &b_{2}^{+}(W\# S^{2}\times S^{2}) = b_{2}^{+}(W)+1, & &\sigma(W\# S^{2}\times S^{2}) = \sigma(W).
\end{align*}
Hence we can apply inequality \ref{eq:0_1_both_even} to the triple $(W\# S^{2}\times S^{2},\frak{t}\#\frak{t}_{0},\wh{\tau\#\tau_{0}})$, and obtain
\begin{equation*}
    q\geq p-\wt{\kappa}(Y,\frak{s},\iota)+3,
\end{equation*}
provided that there exists $\X\in\CCC_{G^{\odd}_{2},\CC}$ with $\X\equiv_{\ell}\SWF(Y,\frak{s},\wh{\iota})$ such that
\[\res^{\ZZ_{4}}_{1}\Big(\pi^{\st,\ZZ_{4}}_{-q_{1}\rho+\frac{p}{2}\nu}\big(\X^{\<j\mu\>}\big)\otimes\QQ\Big)=0.\]
Similarly, suppose $q_{0}$ is even, $q_{0}\neq 0$, and $q_{1}$ is odd. By replacing $(W,\frak{t},\wh{\tau})$ with $(W\# S^{2}\times S^{2},\frak{t}\#\frak{t}_{0},\wh{\tau\#\tau_{1}})$, we obtain
\begin{equation*}
    q\geq p-\wt{\kappa}(Y,\frak{s},\iota)+3
\end{equation*}
provided that there exists $\X\in\CCC_{G^{\odd}_{2},\CC}$ with $\X\equiv_{\ell}\SWF(Y,\frak{s},\wh{\iota})$ such that
\[\res^{\ZZ_{4}}_{1}\Big(\pi^{\st,\ZZ_{4}}_{-(q_{1}+1)\rho+\frac{p}{2}\nu}\big(\X^{\<j\mu\>}\big)\otimes\QQ\Big)=0.\]
The other two cases follow from Theorem \ref{theorem:manolescu} and Theorem \ref{theorem:odd_2_filling}, respectively. The statement in the locally $\SWF$-$\<j\mu\>$-spherical setting follows from Lemma \ref{lemma:KMT}.
\end{proof}

We now turn towards the case of odd prime powers. The next three theorems follow from Theorems \ref{theorem:equivariant_cobordism} -- \ref{theorem:equivariant_closed} and Proposition \ref{prop:stable_k_invariants_odd_prime_powers}:

\begin{theorem}
\label{theorem:equivariant_cobordism_p^r}
Let $p^{r}$ be an odd prime power, let $(Y_{0},\frak{s}_{0},\wh{\sigma}_{0})$, $(Y_{1},\frak{s}_{1},\wh{\sigma}_{1})$ be $\ZZ_{p^{r}}$-equivariant spin rational homology spheres, and suppose that $(W,\frak{t},\wh{\tau})$ is a smooth spin $\ZZ_{p^{r}}$-equivariant cobordism from $(Y_{0},\frak{s}_{0},\wh{\sigma}_{0})$ to $(Y_{1},\frak{s}_{1},\wh{\sigma}_{1})$ with $b_{1}(W)=0$.
\begin{enumerate}
    \item The following statements hold:
    \begin{enumerate}
        \item For each $\vec{\kappa}_{1}=(\kappa_{0}^{1},\kappa_{1}^{1})\in\K^{\pi}(Y_{1},\frak{s}_{1},\sigma_{1})$, we have that:
        \begin{enumerate}
            \item For each $\vec{\kappa}^{\wedge}_{0}=(\kappa_{0}^{\wedge,0},\kappa_{1}^{\wedge,0})\in\K^{\wedge,\pi}(Y_{0},\frak{s}_{0},\sigma_{0})$, the following implications hold:
            \begin{align*}
                &b_{2}^{+}(W,\tau)_{0}+\kappa_{0}^{1}\le-\tfrac{1}{8}\SSS(W,\tau)_{0}+\kappa_{0}^{\wedge,0}\\
                \implies &b_{2}^{+}(X)-b_{2}^{+}(W,\tau)_{0}+\kappa_{1}^{1}\geq -\tfrac{1}{8}\sigma(W)+\tfrac{1}{8}\SSS(W,\tau)_{0}+\kappa_{1}^{\wedge,0},\text{ and } \\
                &b_{2}^{+}(X)-b_{2}^{+}(W,\tau)_{0}+\kappa_{1}^{1}\le -\tfrac{1}{8}\sigma(W)+\tfrac{1}{8}\SSS(W,\tau)_{0}+\kappa_{1}^{\wedge,0} \\
                \implies &b_{2}^{+}(W,\tau)_{0}+\kappa_{0}^{1}\geq-\tfrac{1}{8}\SSS(W,\tau)_{0}+\kappa_{0}^{\wedge,0}.
            \end{align*}
            \item There exists some $\vec{\kappa}^{\wedge}_{0}=(\kappa_{0}^{\wedge,0},\kappa_{1}^{\wedge,0})\in\K^{\wedge,\pi}(Y_{0},\frak{s}_{0},\sigma_{0})$ such that
            \begin{align*}
                &b_{2}^{+}(W,\tau)_{0}+\kappa_{0}^{1}\geq-\tfrac{1}{8}\SSS(W,\tau)_{0}+\kappa_{0}^{\wedge,0},\\
                &b_{2}^{+}(X)-b_{2}^{+}(W,\tau)_{0}+\kappa_{1}^{1}\geq -\tfrac{1}{8}\sigma(W)+\tfrac{1}{8}\SSS(W,\tau)_{0}+\kappa_{1}^{\wedge,0}.
            \end{align*}
        \end{enumerate}
        \item In particular, the following inequalities hold:
        \begin{align*}
            &b_{2}^{+}(W,\tau)_{0}+\ol{\kappa}_{0}^{\pi}(Y_{1},\frak{s}_{1},\sigma_{1})\geq-\tfrac{1}{8}\SSS(W,\tau)_{0}+\ul{\kappa}_{0}^{\wedge,\pi}(Y_{0},\frak{s}_{0},\sigma_{0}),\\
            &b_{2}^{+}(X)-b_{2}^{+}(W,\tau)_{0}+\ol{\kappa}_{1}^{\pi}(Y_{1},\frak{s}_{1},\sigma_{1})\geq -\tfrac{1}{8}\sigma(W)+\tfrac{1}{8}\SSS(W,\tau)_{0}+\ul{\kappa}_{1}^{\wedge,\pi}(Y_{0},\frak{s}_{0},\sigma_{0}).
        \end{align*}
    \end{enumerate}
    \item Suppose that $(Y_{0},\frak{s}_{0},\wh{\sigma}_{0})$ is Floer $\wedge^{2}$-$K_{G^{*}_{p^{r}}}$-split, let $\vec{\kappa}_{0}^{\wedge}=(\kappa_{0}^{\wedge,0},\kappa_{1}^{\wedge,0})\in\QQ^{2}$ denote the unique element of $\K^{\wedge,\pi}(Y_{0},\frak{s}_{0},\sigma_{0})$, and let
    \[C=\left\{
	\begin{array}{ll}
        1 & \mbox{if } b_{2}^{+}(W,\tau)_{0}\geq 1, \\
        0 & \mbox{if } b_{2}^{+}(W,\tau)_{0}=0.
	\end{array}\right.\]
    Then:
    \begin{enumerate}
        \item For each $\vec{\kappa}_{1}=(\kappa_{0}^{1},\kappa_{1}^{1})\in\K^{\pi}(Y_{1},\frak{s}_{1},\sigma_{1})$ we have that:
        \begin{align*}
            &b_{2}^{+}(W,\tau)_{0}+\kappa_{0}^{1}\geq-\tfrac{1}{8}\SSS(W,\tau)_{0}+\kappa_{0}^{\wedge,0}+C,\\
            &b_{2}^{+}(X)-b_{2}^{+}(W,\tau)_{0}+\kappa_{1}^{1}\geq -\tfrac{1}{8}\sigma(W)+\tfrac{1}{8}\SSS(W,\tau)_{0}+\kappa_{1}^{\wedge,0}.
        \end{align*}
        \item In particular, the following inequalies hold:
        \begin{align*}
            &b_{2}^{+}(W,\tau)_{0}+\ul{\kappa}_{0}^{\pi}(Y_{1},\frak{s}_{1},\sigma_{1})\geq-\tfrac{1}{8}\SSS(W,\tau)_{0}+\kappa_{0}^{\wedge,0}+C,\\
            &b_{2}^{+}(X)-b_{2}^{+}(W,\tau)_{0}+\ul{\kappa}_{1}^{\pi}(Y_{1},\frak{s}_{1},\sigma_{1})\geq -\tfrac{1}{8}\sigma(W)+\tfrac{1}{8}\SSS(W,\tau)_{0}+\kappa_{1}^{\wedge,0}.
        \end{align*}
    \end{enumerate}
    \item Suppose Conditions \ref{cond:even_b_2_^+_j_cobordism}-\ref{cond:even_b_2_^+_j_cobordism_fix} hold, and let
    \[C=\left\{
	\begin{array}{ll}
        0 & \mbox{if } b_{2}^{+}(W,\tau)_{0}\text{ is even}, \\
        -1 & \mbox{if } b_{2}^{+}(W,\tau)_{0}\text{ is odd}.
	\end{array}\right.\]
    Then:
    \begin{enumerate}
        \item For each $\vec{\kappa}_{1}=(\kappa_{0}^{1},\kappa_{1}^{1})\in\K^{\pi}(Y_{1},\frak{s}_{1},\sigma_{1})$, we have that:
        \begin{enumerate}
            \item For each $\vec{\kappa}_{0}=(\kappa_{0}^{0},\kappa_{1}^{0})\in\K^{\pi}(Y_{0},\frak{s}_{0},\sigma_{0})$, the following implications hold:
            \begin{align*}
                &b_{2}^{+}(W,\tau)_{0}+\kappa_{0}^{1}\le-\tfrac{1}{8}\SSS(W,\tau)_{0}+\kappa_{0}^{0}+C\\
                \implies &b_{2}^{+}(X)-b_{2}^{+}(W,\tau)_{0}+\kappa_{1}^{1}\geq -\tfrac{1}{8}\sigma(W)+\tfrac{1}{8}\SSS(W,\tau)_{0}+\kappa_{1}^{0},\text{ and } \\
                &b_{2}^{+}(X)-b_{2}^{+}(W,\tau)_{0}+\kappa_{1}^{1}\le -\tfrac{1}{8}\sigma(W)+\tfrac{1}{8}\SSS(W,\tau)_{0}+\kappa_{1}^{0} \\
                \implies &b_{2}^{+}(W,\tau)_{0}+\kappa_{0}^{1}\geq-\tfrac{1}{8}\SSS(W,\tau)_{0}+\kappa_{0}^{0}+C.
            \end{align*}
            \item There exists some $\vec{\kappa}_{0}=(\kappa_{0}^{0},\kappa_{1}^{0})\in\K^{\pi}(Y_{0},\frak{s}_{0},\sigma_{0})$ such that
            \begin{align*}
                &b_{2}^{+}(W,\tau)_{0}+\kappa_{0}^{1}\geq-\tfrac{1}{8}\SSS(W,\tau)_{0}+\kappa_{0}^{0}+C,\\
                &b_{2}^{+}(X)-b_{2}^{+}(W,\tau)_{0}+\kappa_{1}^{1}\geq -\tfrac{1}{8}\sigma(W)+\tfrac{1}{8}\SSS(W,\tau)_{0}+\kappa_{1}^{0}.
            \end{align*}
        \end{enumerate}
        \item In particular, the following inequalities hold:
        \begin{align*}
            &b_{2}^{+}(W,\tau)_{0}+\ol{\kappa}_{0}^{\pi}(Y_{1},\frak{s}_{1},\sigma_{1})\geq-\tfrac{1}{8}\SSS(W,\tau)_{0}+\ul{\kappa}_{0}^{\pi}(Y_{0},\frak{s}_{0},\sigma_{0})+C,\\
            &b_{2}^{+}(X)-b_{2}^{+}(W,\tau)_{0}+\ol{\kappa}_{1}^{\pi}(Y_{1},\frak{s}_{1},\sigma_{1})\geq -\tfrac{1}{8}\sigma(W)+\tfrac{1}{8}\SSS(W,\tau)_{0}+\ul{\kappa}_{1}^{\pi}(Y_{0},\frak{s}_{0},\sigma_{0}).
        \end{align*}
    \end{enumerate}
    \item Suppose that Conditions \ref{cond:even_b_2_^+_j_cobordism}-\ref{cond:even_b_2_^+_j_cobordism_fix} hold, and furthermore suppose that $(Y_{0},\frak{s}_{0},\wh{\sigma}_{0})$ is Floer $K_{G^{*}_{p^{r}}}$-split. Let $\vec{\kappa}_{0}=(\kappa_{0}^{0},\kappa_{1}^{0})\in\QQ^{2}$ denote the unique element of $\K^{\pi}(Y_{0},\frak{s}_{0},\sigma_{0})$, and let
    \[C=\left\{
	\begin{array}{ll}
        2 & \mbox{if } b_{2}^{+}(W,\tau)_{0}\text{ is even},\;\geq 2, \\
        1 & \mbox{if } b_{2}^{+}(W,\tau)_{0}\text{ is odd}, \\
        0 & \mbox{if } b_{2}^{+}(W,\tau)_{0}=0.
	\end{array}\right.\]
    Then:
    \begin{enumerate}
        \item For each $\vec{\kappa}_{1}=(\kappa_{0}^{1},\kappa_{1}^{1})\in\K^{\pi}(Y_{1},\frak{s}_{1},\sigma_{1})$ we have that:
        \begin{align*}
            &b_{2}^{+}(W,\tau)_{0}+\kappa_{0}^{1}\geq-\tfrac{1}{8}\SSS(W,\tau)_{0}+\kappa_{0}^{0}+C,\\
            &b_{2}^{+}(X)-b_{2}^{+}(W,\tau)_{0}+\kappa_{1}^{1}\geq -\tfrac{1}{8}\sigma(W)+\tfrac{1}{8}\SSS(W,\tau)_{0}+\kappa_{1}^{0}.
        \end{align*}
        \item In particular, the following inequalities hold:
        \begin{align*}
            &b_{2}^{+}(W,\tau)_{0}+\ul{\kappa}_{0}^{\pi}(Y_{1},\frak{s}_{1},\sigma_{1})\geq-\tfrac{1}{8}\SSS(W,\tau)_{0}+\kappa_{0}^{0}+C,\\
            &b_{2}^{+}(X)-b_{2}^{+}(W,\tau)_{0}+\ul{\kappa}_{1}^{\pi}(Y_{1},\frak{s}_{1},\sigma_{1})\geq -\tfrac{1}{8}\sigma(W)+\tfrac{1}{8}\SSS(W,\tau)_{0}+\kappa_{1}^{0}.
        \end{align*}
    \end{enumerate}
\end{enumerate}
\end{theorem}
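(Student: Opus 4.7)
The approach parallels the proof of Theorem \ref{theorem:equivariant_cobordism} but works directly in the projected lattice $\QQ^{2}$ via Proposition \ref{prop:stable_k_invariants_odd_prime_powers}, which plays the role that Propositions \ref{prop:stable_k_invariants_pin(2)_fixed_point_homotopy_equivalence} and \ref{prop:stable_k_invariants_kg_split} played there. The reason to work in $\QQ^{2}$ rather than simply projecting the $\Q^{p^{r}}_{*}$-valued inequalities of Theorem \ref{theorem:equivariant_cobordism} after the fact is that the projection $\pi$ respects only one direction of the partial order, so the ``two-implication'' structure in parts (1)(a) and (3)(a) must be read off directly from the corresponding proposition rather than deduced by projection.

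I first record how the key vectors transform under $\pi$. Since the grading of $\Q^{p^{r}}_{*}$ projects to the sum of the two coordinates in $\QQ^{2}$ via Proposition \ref{prop:stable_lattice_p^r_projection}, and since $|\vec{b}_{2}^{+}(W,\tau)|=b_{2}^{+}(W)$ and $|\vec{\SSS}(W,\frak{t},\tau)|=\sigma(W)$, we obtain
\begin{align*}
    &\pi([\vec{b}_{2}^{+}(W,\tau)])=\big(b_{2}^{+}(W,\tau)_{0},\,b_{2}^{+}(W)-b_{2}^{+}(W,\tau)_{0}\big), \\
    &\pi([-\tfrac{1}{8}\vec{\SSS}(W,\frak{t},\tau)])=\big(-\tfrac{1}{8}\SSS(W,\frak{t},\tau)_{0},\,-\tfrac{1}{8}(\sigma(W)-\SSS(W,\frak{t},\tau)_{0})\big).
\end{align*}
The constant shifts are $\pi([2\vec{e}_{0}])=(2,0)$, $\pi([\vec{e}_{0}])=(1,0)$, $\pi([-\vec{e}_{0}])=(-1,0)$, and $\pi([\vec{0}])=(0,0)$, which lines up exactly with the integer $C$ appearing in each case of the theorem.

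For parts (1) and (2), I would invoke Observation \ref{observation:complex_cobordism} to supply the morphism
\[
f_{\CC}:[(S^{0},\,b_{2}^{+}(W,\tau),\,\tfrac{1}{8}\S(W,\frak{t},\wh{\tau}))]\wedge\wedge^{2}\SWF(Y_{0},\frak{s}_{0},\wh{\sigma}_{0})\to\wedge^{2}\SWF(Y_{1},\frak{s}_{1},\wh{\sigma}_{1}),
\]
which is a $G^{*}_{p^{r}}$-homotopy equivalence on $\Pin(2)$-fixed point sets, and then apply Proposition \ref{prop:stable_k_invariants_odd_prime_powers} with level difference $\vec{\mbfs}\,'-\vec{\mbfs}=-b_{2}^{+}(W,\tau)$. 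Part (1) of the theorem falls out of parts (1)(a)--(b) of the proposition, while in part (2) the Floer $\wedge^{2}$-$K_{G^{*}_{p^{r}}}$-split hypothesis activates the $+1$ shift in the zeroth coordinate coming from part (2) of the proposition. Combining this with Lemma \ref{lemma:comparison_normal_wedge_invariants} yields the $\K^{\wedge,\pi}$-version of the inequalities.

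Parts (3) and (4) are handled in the same way using the uncomplexified morphism produced by Observation \ref{observation:real_cobordism}, whose existence requires all $b_{2}^{+}(W,\tau)_{j}$ to be even; when $b_{2}^{+}(W,\tau)_{0}$ is odd, hypothesis \ref{cond:even_b_2_^+_j_cobordism_fix} permits an equivariant connected sum with $(S^{2}\times S^{2},\tau_{(p^{r};a,b)})$ or $(S^{2}\times S^{2},\tau_{(p^{r};a)})$ at a point of semi-free action, exactly as in the proof of Theorem \ref{theorem:equivariant_cobordism}, which absorbs the odd parity at the cost of the $-\vec{e}_{0}$ shift in $C$. The main obstacle is not conceptual but rather careful bookkeeping: one must track the constants $C$ through the connected-sum adjustment and verify that the pair of implications in Proposition \ref{prop:stable_k_invariants_odd_prime_powers}(1)(a) correctly encodes the pair of implications stated in (1)(a) and (3)(a) of the present theorem, rather than the stronger conjunction.
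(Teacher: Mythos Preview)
Your proposal is correct and matches the paper's approach: the paper simply states that the result follows from Theorems~\ref{theorem:equivariant_cobordism}--\ref{theorem:equivariant_closed} together with Proposition~\ref{prop:stable_k_invariants_odd_prime_powers}, and what you have written is precisely an unpacking of that sentence---the cobordism-map setup (Observations~\ref{observation:real_cobordism}, \ref{observation:complex_cobordism}, the connected-sum trick, and Lemma~\ref{lemma:comparison_normal_wedge_invariants}) is inherited from the proof of Theorem~\ref{theorem:equivariant_cobordism}, while Proposition~\ref{prop:stable_k_invariants_odd_prime_powers} supplies the two-coordinate implications in $\QQ^{2}$. Your remark that the implication structure in parts~(1)(a)(i) and~(3)(a)(i) must come from the proposition itself rather than from projecting Theorem~\ref{theorem:equivariant_cobordism} is a valid and useful clarification of why both ingredients are needed.
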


\begin{theorem}
\label{theorem:equivariant_filling_p^r}
Let $p^{r}$ be an odd prime power, let $(Y,\frak{s},\sigma)$ be a $\ZZ_{p^{r}}$-equivariant spin rational homology sphere, let $(W,\frak{t},\tau)$ be a smooth $\ZZ_{p^{r}}$-equivariant spin filling of $Y$ such that $b_{1}(W)=0$, and let
\[C=\left\{
	\begin{array}{ll}
        2 & \mbox{if } b_{2}^{+}(W,\tau)_{0}\text{ is even},\;\geq 2,\text{ and \ref{eq:even_b_2^+_j_p^r} holds}, \\
        1 & \mbox{if } b_{2}^{+}(W,\tau)_{0}\text{ is even},\;\geq 2,\text{ and \ref{eq:even_b_2^+_j_p^r} does not hold, or} \\
        & \mbox{if } b_{2}^{+}(W,\tau)_{0}\text{ is odd}, \\
        0 & \mbox{if } b_{2}^{+}(W,\tau)_{0}=0,
	\end{array}\right.\]
where:
\begin{equation}
\label{eq:even_b_2^+_j_p^r}
b_{2}^{+}(W,\tau)_{j}\text{ is even for all }j=1,\dots,p^{r}-1. \tag{$\dagger\dagger$}
\end{equation}
Then:
\begin{enumerate}
    \item For each $\vec{\kappa}=(\kappa_{0},\kappa_{1})\in\K^{\pi}(Y,\frak{s},\sigma)$ the following inequalities hold:
    \begin{align*}
        &b_{2}^{+}(W,\tau)_{0}+\kappa_{0}\geq-\tfrac{1}{8}\SSS(W,\tau)_{0}+C,\\
        &b_{2}^{+}(W)-b_{2}^{+}(W,\tau)_{0}+\kappa_{1}\geq -\tfrac{1}{8}\sigma(W)+\tfrac{1}{8}\SSS(W,\tau)_{0}.
    \end{align*}
    \item In particular:
    \begin{align*}
        &b_{2}^{+}(W,\tau)_{0}+\vec{\ul{\kappa}}_{0}^{\pi}(Y,\frak{s},\sigma)\geq-\tfrac{1}{8}\SSS(W,\tau)_{0}+C,\\
        &b_{2}^{+}(W)-b_{2}^{+}(X,\tau)_{0}+\vec{\ul{\kappa}}_{1}^{\pi}(Y,\frak{s},\sigma)\geq -\tfrac{1}{8}\sigma(W)+\tfrac{1}{8}\SSS(W,\tau)_{0}.
    \end{align*}
\end{enumerate}
\end{theorem}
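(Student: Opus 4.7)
The plan is to deduce Theorem \ref{theorem:equivariant_filling_p^r} from the non-projected equivariant relative 10/8-ths inequality for fillings, namely Theorem \ref{theorem:equivariant_filling}, by applying the projection map
\[\pi : (\Q^{p^{r}}_{*},\preceq,+,|\cdot|) \to (\QQ^{2},\preceq,+,|\cdot|)\]
from Proposition \ref{prop:stable_lattice_p^r_projection}. The key fact is that $\pi$ is a morphism of $\QQ$-graded additive posets, so any $\succeq$-inequality in $\Q^{p^{r}}_{*}$ projects to a $\succeq$-inequality in $\QQ^{2}$ with the product partial order, which in turn is equivalent to the pair of componentwise $\geq$-inequalities that we wish to prove.

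Concretely, fix $\vec{\kappa} \in \K(Y,\frak{s},\sigma) \subset \Q^{p^{r}}_{*}$ projecting to $(\kappa_{0},\kappa_{1}) \in \K^{\pi}(Y,\frak{s},\sigma)$. Theorem \ref{theorem:equivariant_filling} applied to $(W,\frak{t},\wh{\tau})$ yields
\[[\vec{\frak{q}}] + \vec{\kappa} \;\succeq\; [\vec{\frak{p}}] + [\vec{C}] \quad\text{in } \Q^{p^{r}}_{*},\]
where $\vec{\frak{q}} = \vec{b}_{2}^{+}(W,\tau)$, $\vec{\frak{p}} = -\tfrac{1}{8}\vec{\SSS}(W,\frak{t},\tau)$, and $\vec{C} \in \{2\vec{e}_{0},\vec{e}_{0},\vec{0}\}$ is dictated by the parity of $q_{0}$ and whether (\ref{eq:even_b_2^+_j_p^r}) holds. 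Applying $\pi$ and using the commutative diagram of Proposition \ref{prop:stable_lattice_p^r_projection}, together with the identities $b_{2}^{+}(W) = \sum_{j=0}^{p^{r}-1} b_{2}^{+}(W,\tau)_{j}$ and $\sigma(W) = |\vec{\SSS}(W,\frak{t},\tau)| = \sum_{\ell=0}^{p^{r}-1}\SSS(W,\frak{t},\tau)_{\ell}$ (property (1) of the $\SSS$-invariant), one computes
\begin{align*}
\pi([\vec{\frak{q}}]) &= \bigl(b_{2}^{+}(W,\tau)_{0},\; b_{2}^{+}(W) - b_{2}^{+}(W,\tau)_{0}\bigr),\\
\pi([\vec{\frak{p}}]) &= \bigl(-\tfrac{1}{8}\SSS(W,\frak{t},\tau)_{0},\; -\tfrac{1}{8}\bigl(\sigma(W) - \SSS(W,\frak{t},\tau)_{0}\bigr)\bigr),\\
\pi([\vec{C}]) &= (C,0).
\end{align*}
Unpacking the projected inequality into its two components yields precisely the pair of inequalities in the statement of (1), and taking the greatest lower bound over $\K^{\pi}(Y,\frak{s},\sigma)$ produces the inequalities in (2).

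The proof is therefore largely a bookkeeping exercise, with no genuine obstacle beyond verifying that the values of $C$ in Theorem \ref{theorem:equivariant_filling_p^r} match the values of $\pi([\vec{C}])_{0}$ extracted from Theorem \ref{theorem:equivariant_filling} in each case. The only point requiring mild care is the identity $|\vec{\SSS}(W,\frak{t},\tau)| = \sigma(W)$, which is the content of property (1) stated just after Definition \ref{def:frak_S} and is ultimately a consequence of the Chern--Weil representation of the signature combined with the $G$-Spin theorem. Once this identity is invoked, the second component of the projected inequality automatically yields $b_{2}^{+}(W) - b_{2}^{+}(W,\tau)_{0} + \kappa_{1} \geq -\tfrac{1}{8}\bigl(\sigma(W) - \SSS(W,\frak{t},\tau)_{0}\bigr)$, as desired. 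This mirrors the strategy indicated in the remark that the three theorems (including Theorem \ref{theorem:equivariant_cobordism_p^r} and the analogous closed version) all follow from the non-projected inequalities combined with Proposition \ref{prop:stable_k_invariants_odd_prime_powers}.
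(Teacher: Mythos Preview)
Your proposal is correct and follows essentially the same route as the paper, which simply states that the theorem follows from Theorem \ref{theorem:equivariant_filling} and Proposition \ref{prop:stable_k_invariants_odd_prime_powers}; you have supplied the details by applying the projection $\pi$ of Proposition \ref{prop:stable_lattice_p^r_projection} directly and reading off the two components, which is exactly what those references encode in the filling case.
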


\begin{theorem}
\label{theorem:equivariant_closed_p^r}
Let $p^{r}$ be an odd prime power. Suppose $(W,\frak{t},\wh{\tau})$ is a smooth, closed, $\ZZ_{p^{r}}$-equivariant spin 4-manifold with $b_{1}(W)=0$, and let $C$ be as in Theorem \ref{theorem:equivariant_filling_p^r}.
Then
\begin{align*}
    &b_{2}^{+}(W,\tau)_{0}\geq-\tfrac{1}{8}\SSS(W,\tau)_{0}+C,\\
    &b_{2}^{+}(W)-b_{2}^{+}(X,\tau)_{0}\geq -\tfrac{1}{8}\sigma(W)+\tfrac{1}{8}\SSS(W,\tau)_{0}.
\end{align*}
\end{theorem}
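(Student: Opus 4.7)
The plan is to obtain these inequalities as the closed analogue of Theorem \ref{theorem:equivariant_filling_p^r}, replacing the filling Bauer--Furuta map with its closed counterpart from Observation \ref{observation:closed}. Concretely, since $[(S^{0},0,0)]$ is $K_{G^{*}_{p^{r}}}$-split with $\mbfk^{\st,\pi}([(S^{0},0,0)])=\{(0,0)\}$, it plays the role of the ``target'' for $\wedge^{2}\SWF(Y,\frak{s},\wh{\sigma})$ or $\SWF(Y,\frak{s},\wh{\sigma})$ in the proof of Theorem \ref{theorem:equivariant_filling_p^r}, and the closed Bauer--Furuta map plays the role of the relative one.

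The main step is as follows. First I would assume the parity hypothesis \ref{eq:even_b_2^+_j_p^r} together with $b_{2}^{+}(W,\tau)_{0}$ even. Then by Observation \ref{observation:closed}(1), the Bauer--Furuta map yields a morphism
\[
    f \co [(S^{0},\tfrac{1}{2}b_{2}^{+}(W,\tau),\tfrac{1}{16}\S(W,\frak{t},\wh{\tau}))] \longrightarrow [(S^{0},0,0)]
\]
of $\CC$-$G^{*}_{p^{r}}$-spectrum classes whose induced map on $\Pin(2)$-fixed point sets is a $G^{*}_{p^{r}}$-homotopy equivalence (equivalently, the underlying $\Pin(2)$-equivariant map is the classical Bauer--Furuta invariant of $(W,\frak{t})$). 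Applying Proposition \ref{prop:stable_k_invariants_odd_prime_powers} to $f$, using that the target is $K_{G^{*}_{p^{r}}}$-split with both projected $k$-invariants equal to $0$, and reading off the source's projected $k$-invariants from Observation \ref{observation:closed} as the single pair $\pi([-\tfrac{1}{16}\vec{\SSS}(W,\frak{t},\wh{\tau})])$, gives directly both inequalities with $C=2$ when $b_{2}^{+}(W,\tau)_{0}\geq 2$ and with $C=0$ when $b_{2}^{+}(W,\tau)_{0}=0$.

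Next I would remove the parity hypothesis by the same connected sum trick used in the proofs of Theorems \ref{theorem:equivariant_cobordism} and \ref{theorem:equivariant_filling}: if some $b_{2}^{+}(W,\tau)_{j}$ with $j\neq 0$ is odd, or if $b_{2}^{+}(W,\tau)_{0}$ itself is odd, then since $W$ is closed we may take an equivariant connected sum with copies of $(S^{2}\times S^{2},\frak{t}_{0},\tau_{(p^{r};a,b)})$ or $(S^{2}\times S^{2},\frak{t}_{0},\tau_{(p^{r};a)})$ performed at suitable fixed points of $W^{\tau}$ (nonempty because $\chi(W)$ acquires contributions from the $G$-Lefschetz formula applied to any such $\ZZ_{p^{r}}$-action on a closed spin four-manifold with an appropriate choice of $(a,b)$ or $a$). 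Choosing the twists so as to modify only the parity of the relevant $b_{2}^{+}(W,\tau)_{j}$'s without changing $\sigma(W)$ or the projected quantities $\SSS(W,\frak{t},\tau)_{0}$ by more than integers, and applying the even-parity inequality to the enlarged manifold, produces the general inequality with the stated constant $C$.

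The main obstacle is bookkeeping in this parity reduction: one must verify that the projection $\pi$ of $\vec{\SSS}(W\#(S^{2}\times S^{2}),\frak{t}\#\frak{t}_{0},\tau\#\tau_{(p^{r};\cdot)})$ agrees with $\pi(\vec{\SSS}(W,\frak{t},\tau))$ and that $b_{2}^{+}(W\#(S^{2}\times S^{2}),\tau\#\tau_{(p^{r};\cdot)})_{0}$ differs from $b_{2}^{+}(W,\tau)_{0}$ by the prescribed integer, so that transferring the inequality back to $W$ yields exactly the value of $C$ stated in Theorem \ref{theorem:equivariant_filling_p^r}. Everything else is then a formal consequence of Proposition \ref{prop:stable_k_invariants_odd_prime_powers} and the $K_{G^{*}_{p^{r}}}$-splitness of $[(S^{0},0,0)]$.
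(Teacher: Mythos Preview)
Your overall strategy is right and matches the paper: the result is just the projection via $\pi$ of Theorem \ref{theorem:equivariant_closed}, which in turn comes from Observation \ref{observation:closed} together with Propositions \ref{prop:stable_k_invariants_pin(2)_fixed_point_homotopy_equivalence} and \ref{prop:stable_k_invariants_kg_split}. Your treatment of the case where \ref{eq:even_b_2^+_j_p^r} holds and $b_{2}^{+}(W,\tau)_{0}$ is even is correct and is exactly what the paper does: the real Bauer--Furuta map of Observation \ref{observation:closed}(1) has a $K_{G^{*}_{p^{r}}}$-split domain, so Proposition \ref{prop:stable_k_invariants_kg_split} yields the $+\vec{e}_{0}$, and doubling gives $C=2$.

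The gap is in your reduction for the $C=1$ case. You propose equivariant connected sums at points of $W^{\tau}$, but your justification that $W^{\tau}\neq\emptyset$ is not valid: the Lefschetz number of $\tau$ on a closed $4$-manifold with $b_{1}=0$ is $2+\tr(\tau^{*}\mid H^{2})$, and this can vanish (for odd $p$, take $H^{2}$ carrying two copies of the sum of nontrivial characters, giving trace $-2$). So free actions cannot be excluded, and then no connected sum is available. The paper avoids this entirely by using the \emph{complexified} Bauer--Furuta map of Observation \ref{observation:closed}(2) for the $C=1$ case. That map exists without any parity or fixed-point hypothesis, its domain $[(S^{0},b_{2}^{+}(W,\tau),\tfrac{1}{8}\S(W,\frak{t},\wh{\tau}))]$ is again $K_{G^{*}_{p^{r}}}$-split, and Proposition \ref{prop:stable_k_invariants_kg_split} applied to it gives the inequality with $+[\vec{e}_{0}]$ whenever $b_{2}^{+}(W,\tau)_{0}\geq 1$, with no factor of $2$ lost since the target $[(S^{0},0,0)]$ already has $\K=\K^{\wedge}=\{[\vec{0}]\}$. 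Projecting via $\pi$ then yields both displayed inequalities with the stated $C$.
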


\bigskip
\section{Calculations and Knot Invariants}
\label{sec:calculations_knot_invariants}

In this section we calculate the $G^{*}_{m}$-equivariant Seiberg--Witten Floer stable homotopy types of some $\ZZ_{m}$-equivariant spin rational homology spheres, as well as their corresponding equivariant $\kappa$-invariants. We also discuss the construction of knot concordance invariants via taking cyclic branched covers.

\subsection{Seiberg-Witten Floer Minimal Spaces}
\label{subsec:SWF_minimal}

We start with the following definition:

\begin{definition}
\label{def:SWF_minimal}
Let $(Y,\frak{s})$ be a spin rational homology sphere. We say that $(Y,\frak{s})$ is \emph{Seiberg--Witten Floer minimal} (or just \emph{SWF-minimal}) if there exists a metric $g$ on $Y$ such that $(Y,\frak{s},g)$ admits no irreducible solutions to the Seiberg--Witten equations.

If $(Y,\frak{s},\wh{\sigma})$ is a $\ZZ_{m}$-equivariant spin rational homology sphere, we say that $(Y,\frak{s},\wh{\sigma})$ is \emph{equivariantly SWF-minimal} if $(Y,\frak{s})$ is SWF-minimal with respect to a $\ZZ_{m}$-equivariant metric $g$.
\end{definition}

\begin{example}
Suppose $(Y,\frak{s},\wh{\sigma})$ is a $\ZZ_{m}$-equivariant spin rational homology sphere which admits a $\ZZ_{m}$-equivariant metric $g$ of positive scalar curvature. Then $(Y,\frak{s},\wh{\sigma})$ is equivariantly SWF-minimal (see \cite{KMbook}, p.448).
\end{example}

The notion of an SWF-minimal space is related to that of a \emph{minimal L-space}, which was first coined by Lin-Lipnowski (\cite{LinLipLS}). A minimal L-space is a rational homology sphere $Y$ which admits a metric $g$ such that $(Y,\frak{s},g)$ admits no irreducible Seiberg-Witten solutions for \emph{any} $\spinc$-structure $\frak{s}$ on $Y$. In particular if $Y$ is a minimal L-space, then $(Y,\frak{s})$ is SWF-minimal for every spin structure $\frak{s}$ on $Y$.

It is an open question whether every (Heegaard-Floer) L-space is a minimal L-space. It is known that all elliptic manifolds and the Hantzsche-Wendt manifold $\mathsf{HW}$ (the unique flat rational homology sphere) are minimal L-spaces. More recently, Lin and Lin-Lipnowski have shown that this class includes a number of small hyperbolic 3-manifolds (\cite{LinLipLS}), the Seifert-Weber dodecahedral space $\mathsf{SW}$ (\cite{LinLipSW}), and all $\mathsf{Solv}$ rational homology spheres (\cite{LinSolv}).

\begin{proposition}
\label{prop:SWF_minimal}
If $(Y,\frak{s},\wh{\sigma})$ is an equivariantly SWF-minimal $\ZZ_{m}$-equivariant spin rational homology sphere, then
\begin{equation}
\label{eq:SWF_minimal_spectrum}
    \SWF(Y,\frak{s},\wh{\sigma})=[(S^{0},0,\tfrac{1}{2}n(Y,\frak{s},\wh{\sigma},g))]\in\CCC_{G^{*}_{m},\CC}.
\end{equation}
Consequently:
\begin{equation}
\label{eq:SWF_minimal_kappa}
    \K(Y,\frak{s},\wh{\sigma})=\K^{\wedge}(Y,\frak{s},\wh{\sigma})=\big\{-\big[\DDD^{*}(\vec{n}(Y,\frak{s},\wh{\sigma},g))\big]\big\}\subset\Q^{m}_{*}.
\end{equation}
In particular, $(Y,\frak{s},\wh{\sigma})$ is $\SWF$-$\Pin(2)$-surjective.
\end{proposition}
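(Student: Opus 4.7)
The plan is to prove (\ref{eq:SWF_minimal_spectrum}) directly; the equality (\ref{eq:SWF_minimal_kappa}) and the $\SWF$-$\Pin(2)$-surjectivity claim will then follow by formal bookkeeping in the stable category. The hypothesis provides a $\ZZ_m$-equivariant metric $g$ for which the only solutions to the Seiberg--Witten equations on $(Y,\frak{s},g)$ are the reducibles; since $Y$ is a rational homology sphere, there is a unique reducible gauge class on the Coulomb slice $V$, namely $0 \in V$, and this is the sole critical point of $\nabla CSD|_V$.

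The main technical step will be to establish, for all sufficiently large eigenvalue cut-off $\lambda$, a $G^*_m$-equivariant homotopy equivalence
\[
I_\lambda \simeq_{G^*_m} (V^0_{-\lambda})^+.
\]
This is the only substantive input. Because the linearization of $\nabla CSD$ at the origin is the self-adjoint elliptic operator $\ell = (\ast d, \dirac)$, the origin is a non-degenerate critical point of the finite-dimensional approximate flow on $V^\lambda_{-\lambda}$, with unstable summand exactly $V^0_{-\lambda}$. By the standard equivariant Conley index description of a hyperbolic critical point (adapted to the compact Lie group $G^*_m$), the index is the Thom space of the negative eigenspace, which is $(V^0_{-\lambda})^+$. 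Combined with the absence of irreducibles to rule out additional critical points entering the isolated invariant set inside $B(R)$, this gives the stated equivalence.

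Given this identification, I would substitute into the definition in Section~\ref{subsec:sw_floer_spectrum_class}. Using the $G^*_m$-equivariant splitting $V^0_{-\lambda} \cong \mbfv_{-\lambda}^0(\RR)\wt{\RR} \oplus \mbfv_{-\lambda}^0(\HH)\HH$ together with the fact that $(\mbfv_{-\lambda}^0(\RR)+\Umid)\wt{\RR}$ has all-even multiplicities and hence is the underlying real representation of the complex representation $\tfrac{1}{2}(\mbfv_{-\lambda}^0(\CC)+\mbfu)\wt{\CC}$, the space $\Sigma^{\Umid\wt{\RR}} I_\lambda$ is $G^*_m$-equivariantly equivalent to $\bigl(\tfrac{1}{2}(\mbfv_{-\lambda}^0(\CC)+\mbfu)\wt{\CC} \oplus \mbfv_{-\lambda}^0(\HH)\HH\bigr)^+$. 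Applying the equivalence relation of Definition~\ref{def:stable_C_G_*_m_spectrum_class} to cancel these $\wt{\CC}$- and $\HH$-suspensions against the formal de-suspension parameters $\tfrac{1}{2}(\mbfv_{-\lambda}^0(\CC)+\mbfu)$ and $\mbfv_{-\lambda}^0(\HH)$ then yields exactly $[(S^0,0,\tfrac{1}{2}n(Y,\frak{s},\wh{\sigma},g))]$, proving (\ref{eq:SWF_minimal_spectrum}).

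For the remaining claims, Example~\ref{ex:k_invariants_s_0} gives $\mbfk(S^0) = \{[\vec 0]\}$, so Definition~\ref{def:stable_k_invariants} immediately produces $\mbfk^{\st}(\SWF(Y,\frak{s},\wh{\sigma})) = \{-[\DDD^*(\tfrac{1}{2}\vec{n}(Y,\frak{s},\wh{\sigma},g))]\}$; the factor of $2$ in Definition~\ref{def:equivariant_kappa_invariants} then produces the asserted formula for $\K$. The identical calculation applied to $\SWF(Y,\frak{s},\wh{\sigma}) \wedge \SWF(Y,\frak{s},\wh{\sigma}) = [(S^0,0,n(Y,\frak{s},\wh{\sigma},g))]$ gives $\K = \K^\wedge$. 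Finally, $\SWF(Y,\frak{s},\wh{\sigma})$ is $K_{G^*_m}$-split in the sense of Definition~\ref{def:stable_K_G_split}, and since the restriction map $R(G^*_m) \to R(\Pin(2))$ sends the generator of $\III$ for a pure $\HH$-suspension to the corresponding generator of $\III_{\Pin(2)}$, equality is attained in Lemma~\ref{lemma:comparison_of_k_invariants}, giving $\SWF$-$\Pin(2)$-surjectivity.
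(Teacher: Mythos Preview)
Your proposal is correct and follows essentially the same approach as the paper, which simply cites the $S^1$-equivariant argument from \cite{Man03}; you have filled in the details that the paper leaves implicit, namely that the absence of irreducibles forces the isolated invariant set to be the single reducible critical point, whose equivariant Conley index is the sphere on its negative eigenspace $(V^0_{-\lambda})^+$, and then traced through the formal de-suspensions.
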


\begin{proof}
This essentially follows from the argument for the corresponding statement in the $S^{1}$-equivariant setting as in \cite{Man03}.
\end{proof}

The following corollary follows immediately from Propositions \ref{prop:correction_term_involutions} and \ref{prop:SWF_minimal}:

\begin{corollary}
\label{cor:SWF_minimal_involutions}
If $(Y,\frak{s},\wh{\iota})$ is an equivariant SWF-minimal $\ZZ_{2}$-equivariant spin rational homology sphere, then:
\[\K(Y,\frak{s},\wh{\iota})=\K^{\wedge}(Y,\frak{s},\wh{\iota})=\{[(\kappa(Y,\frak{s}),0)]\}\subset\Q^{2}_{*}.\]
In particular, if $\wh{\iota}$ is of odd type then $\wt{\kappa}(Y,\frak{s},\iota)=\kappa(Y,\frak{s})$.
\end{corollary}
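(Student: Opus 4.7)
The plan is to reduce this corollary directly to the two cited propositions, with the only content being an explicit evaluation of $\DDD^{*}(\vec{n}(Y,\frak{s},\wh{\iota},g))$ in the $m=2$ setting.

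First I would invoke Proposition~\ref{prop:SWF_minimal} applied to the triple $(Y,\frak{s},\wh{\iota})$ for some spin lift $\wh{\iota}$ of $\iota$. This gives
\[
\K(Y,\frak{s},\wh{\iota}) = \K^{\wedge}(Y,\frak{s},\wh{\iota}) = \bigl\{-[\DDD^{*}(\vec{n}(Y,\frak{s},\wh{\iota},g))]\bigr\}\subset \Q^{2}_{*},
\]
so the entire calculation reduces to evaluating a single class in $\Q^{2}_{*}$. Note in particular that both $\K$ and $\K^{\wedge}$ consist of a single element, so the equality of the two sets is automatic.

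Next I would compute $\DDD^{*}(\vec{n})$ case-by-case using the cyclic indexing convention $\vec{e}_{k+m}=\vec{e}_{k}$ from Definition~\ref{def:more_lattice_notation}. For $\ast=\ev$, the map $\DDD^{\ev}$ sends $\vec{e}_0\mapsto \vec{e}_0$ and $\vec{e}_1\mapsto \vec{e}_2=\vec{e}_0$, so
\[
\DDD^{\ev}(\vec{n}(Y,\frak{s},\wh{\iota},g)) \;=\; \bigl(n(Y,\frak{s},\wh{\iota},g)_{0}+n(Y,\frak{s},\wh{\iota},g)_{1},\,0\bigr)\in\QQ\times\NN.
\]
Similarly for $\ast=\odd$, the map $\DDD^{\odd}$ sends $\vec{e}_{1/2}\mapsto \vec{e}_1$ and $\vec{e}_{3/2}\mapsto \vec{e}_3=\vec{e}_1$, giving
\[
\DDD^{\odd}(\vec{n}(Y,\frak{s},\wh{\iota},g)) \;=\; \bigl(0,\,n(Y,\frak{s},\wh{\iota},g)_{1/2}+n(Y,\frak{s},\wh{\iota},g)_{3/2}\bigr)\in\NN\times\QQ.
\]
By Proposition~\ref{prop:correction_term_involutions}, in both parities the relevant sum collapses the equivariant $\eta$-contributions and yields $n(Y,\frak{s},g)$. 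Finally, since $(Y,\frak{s})$ is SWF-minimal (as the underlying non-equivariant structure is), Manolescu's calculation in the non-equivariant setting gives $\kappa(Y,\frak{s})=-n(Y,\frak{s},g)$; plugging this in produces $-[\DDD^{*}(\vec{n})] = [(\kappa(Y,\frak{s}),0)]$ in $\Q^{2}_{*}$ (interpreted via normal forms in each case, as in Example~\ref{ex:stable_m=2}).

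For the final assertion about $\wt{\kappa}$ in the odd-type case, I would simply apply the $\QQ$-grading isomorphism $|\cdot|\colon \Q^{2}_{\odd}\xrightarrow{\cong}\QQ$ from Example~\ref{ex:stable_m=2} to the unique element of $\K(Y,\frak{s},\wh{\iota})$ computed above; its grading is $\kappa(Y,\frak{s})$, which by Definition~\ref{def:equivariant_kappa_invariants}(3) is precisely $\wt{\kappa}(Y,\frak{s},\iota)$. There is essentially no obstacle here: the proof is a bookkeeping exercise, and the one subtle point is the cyclic index identification $\vec{e}_2=\vec{e}_0$ (resp.\ $\vec{e}_3=\vec{e}_1$) that causes the two components of $\vec{n}$ to collapse onto a single lattice vector.
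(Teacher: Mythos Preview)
Your proposal is correct and follows exactly the approach the paper indicates: the paper's proof is simply the one-line assertion that the corollary follows immediately from Propositions~\ref{prop:correction_term_involutions} and~\ref{prop:SWF_minimal}, and you have unpacked precisely those two ingredients together with the cyclic-indexing collapse of $\DDD^{*}$ in the $m=2$ case. The only minor point to note is that in the odd case your computation literally yields $(0,\kappa(Y,\frak{s}))$ rather than $(\kappa(Y,\frak{s}),0)$, but you correctly observe that these agree in $\Q^{2}_{\odd}$ via the grading isomorphism of Example~\ref{ex:stable_m=2}.
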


\begin{example}
\label{ex:SWF_minimal_double_branched_cover}
Let $(Y,\frak{s},\iota)$ be an $\SWF$-minimal $\ZZ_{2}$-equivariant spin rational homology sphere of odd type such that $\iota:Y\to Y$ exhibits $Y$ as a double branched cover over a knot $K\subset S^{3}$. Then
\[\wt{\kappa}(Y,\frak{s},\iota)=\kappa(Y,\frak{s})=-n(Y,\frak{s},g)=\delta(Y,\frak{s})=-h(Y,\frak{s})=-\tfrac{1}{8}\sigma(K),\]
where $\sigma(K)$ denotes the signature of $K$. The first and second equalities follow from Proposition \ref{prop:SWF_minimal}, the third equality follows from the definition of $\delta(Y,\frak{s})$ from (\cite{Man16}, Section 3.7), the fourth equality follows from (\cite{LidMan}, Corollary 1.2.3), and the final equality follows from the monopole Lefschetz formula of Lin--Ruberman--Saveliev (\cite{LRS18Froyshov}, Theorem A).
\end{example}

\begin{example}
\label{ex:lens_spaces_involutions}
Let $p,q\geq 1$ be relatively prime integers with $p$ odd, and consider the lens space $L(p,q)$, which can be realized as the link of a complex singularity in $\CC^{2}$. The metric $g$ given by the restriction of the standard metric on $\CC^{2}$ to $L(p,q)\subset\CC^{2}$ is the ``standard metric'' on $L(p,q)$, which has positive scalar curvature. The complex conjugation map on $\CC^{2}$ induces an involution $\iota_{c}:L(p,q)\to L(p,q)$, which realizes $L(p,q)$ as the double branched cover over the two-bridge knot $K(p,q)\subset S^{3}$, and preserves the metric $g$. Furthermore since $p$ is odd, $L(p,q)$ admits a unique spin structure which is necessarily preserved by $\iota$. It then follows from Example \ref{ex:SWF_minimal_double_branched_cover} that 
\begin{equation}
    \wt{\kappa}(L(p,q),\iota_{c})=\kappa(L(p,q))=-\tfrac{1}{8}\sigma(K(p,q)).
\end{equation}
\end{example}

\subsection{Seifert-Fibered Spaces}
\label{subsec:seifert_fibered_spaces}

Let $\pi:Y\to\Sigma$ be a Seifert-fibered rational homology sphere, and let $i\eta\in\Omega^{1}(Y;i\RR)$ denote the connection form of the circle bundle. Recall from \cite{MOY} that any constant curvature orbifold metric $g_{\Sigma}$ on $\Sigma$ induces a metric
\[g=\eta^{2}+\pi^{*}(g_{\Sigma})\]
on $Y$, which we call the \emph{Seifert metric}. Note that $g$ is well-defined up to the choice of $g_{\Sigma}$ and the length of the circle fibers. The Levi-Cevita connection on $\Sigma$ induces a (not-necessarily torsion-free) connection $\nabla^{\infty}$ on $Y$ which is compatible with $g$ and respects the splitting $TY\cong \RR\oplus\pi^{*}(T\Sigma)$. We will refer to $\nabla^{\infty}$ as the \emph{reducible connection} on $Y$, following the terminology of \cite{MOY} (also referred to as the \emph{adiabatic connection} in \cite{Nic1}).

Although the $G^{*}_{m}$-equvariant Seiberg-Witten stable homotopy type is defined above using the Levi-Cevita connection, one can re-define everything with respect to a not necessarily torsion-free metric compatible connection. In particular, if $\sigma:Y\to Y$ is an order $m$ diffeomorphism which preserves $g$ (and hence $\nabla^{\infty}$), $\frak{s}$ is a $\sigma$-invariant spin structure on $Y$, and $\wh{\sigma}$ is a spin lift of $\sigma$, then we can define the \emph{$(g,\nabla^{\infty})$-dependent $G^{*}_{m}$-equivariant stable homotopy type}
\[\SWF(Y,\frak{s},\wh{\sigma},g,\nabla^{\infty})\in\CCC_{G^{*}_{m},\CC}\]
using the Dirac operator $\dirac^{\infty}$ defined with respect to $\nabla^{\infty}$. Moreover, we can define a corresponding equivariant correction term 
\[n(Y,\frak{s},\wh{\sigma},g,\nabla^{\infty})\in R(\ZZ_{2m})^{\sym,*}\otimes\QQ\]
using the reduced equivariant eta invariants of $\dirac^{\circ}$ and torsion terms $\{t(K_{k,j},g,\nabla^{\infty})\}$ defined with respect to $\nabla^{\infty}$ in place of the Levi-Cevita connection $\nabla^{LC}$. The fact that the desuspension of $\SWF(Y,\frak{s},\wh{\sigma},g,\nabla^{\infty})$ by $\frac{1}{2}n(Y,\frak{s},\wh{\sigma},g,\nabla^{\infty})\HH$ is $G^{*}_{m}$-stably equivalent to the metric-independent spectrum class $\SWF(Y,\frak{s},\wh{\sigma})$ follows from a spectral flow argument interpolating between the Dirac operators defined with respect to $\nabla^{\infty}$ and $\nabla^{LC}$.

In this section we will discuss two cyclic group actions on $Y$, the \emph{rotation action $\rho_{m}$}, and the \emph{complex conjugation involution $\iota_{c}$}.

The first of these actions $\rho_{m}$ is defined to be the order $m$ diffeomorphism induced by a $1/m$-ths rotation of the fibers. More precisely, if $\rho:S^{1}\times Y\to Y$ denotes the canonical fixed-point-free $S^{1}$-action given by a continuous rotation of the $S^{1}$-fibers, then $\rho_{m}:Y\to Y$ is given by
\[\rho_{m}(y)=\rho(e^{2\pi i/m},y)\qquad\text{for all }y\in Y.\]
In particular, this description shows that $\rho_{m}$ is isotopic to the identity.

Let $\frak{s}$ be a spin structure on $Y$ with corresponding principal $\Spin(3)$-bundle $P\to Y$. We can define the notion of a spin lift $\wh{\rho}$ of $\rho$ as in the cyclic group action case. More precisely, we take $\wh{\rho}:S^{1}\times P\to P$ to be an $S^{1}$-action on $P$ which makes the following diagram commute:
\begin{center}
    \begin{tikzcd}
        S^{1}\times P \arrow[r,"\wh{\rho}"] \arrow[d, "f\times\pi"] & P \arrow[d, "\pi"] \\
        S^{1}\times\Fr(Y) \arrow[r, "\rho"] & \Fr(Y)
    \end{tikzcd}
\end{center}
Here either $f=\id_{S^{1}}$, in which case we call $\wh{\rho}$ an \emph{even} spin lift, or $f:S^{1}\to S^{1}$ is the double-covering map, in which case we say $\wh{\rho}$ is an \emph{odd} spin lift. In contrast to the case of spin lifts of cyclic groups, a spin lift of $\rho$ always exists, and is unique (see \cite{AH70}). Note that the restriction of the spin lift $\wh{\rho}$ to any finite cyclic subgroup induces a distinguished spin lift $\wh{\rho}_{m}$ or $\rho_{m}$, whose parity agrees with that of $\wh{\rho}$.

The following lemma tells us how to determine the parity of $\wh{\rho}$:

\begin{lemma}
\label{lemma:S_1_actions_seifert_fibered}
    Let $(Y,\frak{s})$ be a spin Seifert-fibered rational homology sphere, and suppose the underlying orbifold surface of $Y$ is given by $S^{2}(\alpha_{1},\dots,\alpha_{n})$, the orbifold with underlying topological space $S^{2}$ and $n$ orbifold points of orders $\alpha_{1},\dots,\alpha_{n}$. Then the unique spin lift $\wh{\rho}$ of $\rho$ is of even type if and only if all of the $\alpha_{i}$ are odd, and is of odd type otherwise.
\end{lemma}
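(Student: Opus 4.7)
My plan proceeds in four stages. First, I will recast the parity question in equivariant-cohomological terms: since $Y$ is spin, $w_2(TY)=0$, so the $S^1$-equivariant Stiefel--Whitney class $w_2^{S^1}(TY)\in H^2_{S^1}(Y;\ZZ/2)$ lies in the image of $H^2(BS^1;\ZZ/2)\cong\ZZ/2$ under the edge map of the Borel fibration $Y\to Y_{S^1}\to BS^1$. Combined with the uniqueness of $\wh\rho$ from \cite{AH70}, this image is $0$ precisely when $\wh\rho$ is even and is the nontrivial class precisely when $\wh\rho$ is odd.

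Next, I will identify the Borel construction $Y_{S^1}=Y\times_{S^1}ES^1$ with the classifying space $B\Sigma$ of the orbifold $\Sigma=S^2(\alpha_1,\dots,\alpha_n)$. Since the $S^1$-action on $Y$ is free away from the singular fibers and has cyclic stabilizer $\ZZ/\alpha_i$ along the $i$-th singular fiber, the map $Y_{S^1}\to\Sigma$ has contractible fibers over generic points and $B(\ZZ/\alpha_i)$-fibers over the orbifold points, realizing $Y_{S^1}$ as $B\Sigma$. Under this identification, the equivariant tangent bundle splits equivariantly as a trivial vertical line plus the pullback of the orbifold tangent bundle $T\Sigma$, so that $w_2^{S^1}(TY)$ corresponds to the orbifold second Stiefel--Whitney class $w_2(T\Sigma)\in H^2(B\Sigma;\ZZ/2)$.

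I will then compute this orbifold obstruction directly. Near an orbifold point of order $\alpha_i$, the orbifold tangent bundle is modeled on a disk with a $\ZZ/\alpha_i$-action by rotation; an orbifold spin lift is a choice of square root of this rotation inside the isotropy group $\ZZ/\alpha_i$, which exists exactly when $\gcd(2,\alpha_i)=1$, i.e., when $\alpha_i$ is odd. The global orbifold class $w_2(T\Sigma)$ vanishes iff these local square roots exist simultaneously at every orbifold point, which holds iff all $\alpha_i$ are odd. Tracing the identifications backward, $\wh\rho$ is then even precisely in this case and odd otherwise.

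The main obstacle will be executing the identification in the second step: verifying that $w_2^{S^1}(TY)$ corresponds cleanly to orbifold $w_2(T\Sigma)$ under $Y_{S^1}\simeq B\Sigma$, especially because the splitting $TY\cong\RR\oplus\pi^*T\Sigma$ is only a genuine equivariant splitting away from the singular fibers. A concrete workaround is to argue locally on an equivariant tubular neighborhood of each singular fiber, equivariantly modeled on the Seifert piece $D^2\times S^1$ with $S^1$-action $e^{i\theta}\cdot(z,w)=(e^{i\theta}z,e^{i\alpha_i\theta}w)$: check the parity contribution there by directly computing the $S^1$-spin lift of this local model, and patch globally using the formality of the argument over the free locus (where $Y_{S^1}\to\Sigma$ has contractible fibers and the splitting is honest).
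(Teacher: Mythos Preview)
Your approach is correct but substantially more elaborate than the paper's. The paper simply restricts to the involution $\rho_2$ and applies the Atiyah--Bott lemma (Proposition~\ref{prop:atiyah_bott}) directly: if some $\alpha_i$ is even then $\rho_2$ fixes that singular fiber, so $Y^{\rho_2}$ is one-dimensional and the lift is forced to be odd; if all $\alpha_i$ are odd then $\rho_2$ is free, and one checks (citing \cite{Nic1}) that the orbifold base admits a spin structure which pulls back both to $\frak{s}$ on $Y$ and to a spin structure on $Y/\rho_2$, so the lift is even. Your route through the Borel construction $Y_{S^1}\simeq B\Sigma$ and the identification of the parity obstruction with the orbifold $w_2(T\Sigma)$ is more conceptual and makes the connection to orbifold spin structures explicit, but it requires you to justify carefully that the parity of $\wh\rho$ for the \emph{given} $\frak{s}$ (not merely the existence of some equivariant spin structure) is detected by $w_2^{S^1}(TY)$, and to carry out the local computation near singular fibers that you flag as the main obstacle. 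The paper's two-line argument sidesteps all of this by reducing to the $\ZZ_2$-subgroup, where the fixed-point criterion is immediate.
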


\begin{proof}
Suppose at least one of the $\alpha_{i}$ is even. Then the involution $\rho_{2}$ must have non-empty fixed-point set corresponding to the even-order exceptional fibers. Hence by the Atiyah--Bott Lemma (Proposition \ref{prop:atiyah_bott}), $\wh{\rho}_{2}$ and therefore $\wh{\rho}$ must be of odd type. 

Conversely, suppose $\alpha_{1},\dots,\alpha_{n}$ are all odd. Then as in \cite{Nic1}, the underlying orbifold surface admits a spin structure $\frak{s}_{\Sigma}$ which pulls back to the spin structure $\frak{s}$ on $Y$. Again restricting to the involution $\rho_{2}$, we see that the quotient $Y/\rho_{2}$ admits the structure of a Seifert fibration over the same orbifold surface, from which $\frak{s}_{\Sigma}$ pulls back again to a spin structure $\frak{s}'$ on $Y/\rho_{2}$. As this spin structure pulls back to $\frak{s}$ under the quotient $Y\to Y/\rho_{2}$, by the Atiyah--Bott Lemma we must have that $\wh{\rho}_{2}$ and hence $\wh{\rho}$ is of even type.
\end{proof}

From now on, we will let $\wh{\rho}_{m}$ denote the distinguished spin lift of $\rho_{m}$ induced by $\wh{\rho}$.

In the case where $Y$ is an integer homology Seifert-fibered space, we also have an odd-type involution
\[\iota_{c}:Y\to Y\]
called the \emph{complex conjugation involution} defined as follows: writing $Y=\Sigma(\alpha_{1},\dots,\alpha_{n})$, we can express $Y$ as the link of an isolated singularity of a complex variety in $\CC^{n}$. Then $\iota_{c}$ is defined to be the restriction of complex conjugation on $\CC^{n}$ to $Y$. Alternatively, let $\Sigma=S^{2}(\alpha_{1},\dots,\alpha_{n})$ denote the associated orbifold surface with orbifold points $x_{1},\dots,x_{n}\in\Sigma$. If we isotope the $x_{i}$ to the equator of $S^{2}$, then $\iota_{c}$ can be expressed as a composition $f\circ g$ of two orientation-reversing involutions, where $f$ is the diffeomorphism induced by reflection across the equator in $\Sigma$, and $g$ is the diffeomorphism induced by reflection in the fibers.

In contrast to the case of $\rho_{m}$, the complex conjugation involution $\iota_{c}$ does not come with a distinguished choice of spin lift. From now on, we will make an arbitrary fixed choice of spin lift $\wh{\iota}_{c}$.

One can show that both $\rho_{m}$ and $\iota_{c}$ preserve $g$ (and hence $\nabla^{\infty}$) as well as each spin structure on $Y$. Furthermore in the special case where $Y$ is a Brieskorn sphere, any cyclic group action on $Y$ is conjugate to either $\rho_{m}$ or $\iota_{c}$ (see \cite{AH21}).

We will see how $\wh{\rho}_{m}$ and $\wh{\iota}_{c}$ act on the associated $G^{*}_{m}$-equivariant Seiberg--Witten Floer spectrum classes over a series of propositions, each building off of one another. First we analyze how these diffeomorphisms act on the (unbased) Seiberg-Witten moduli space of critical points and flows between them.

Let $(Y,\frak{s})$ be a spin rational homology sphere, and let $\M$ denote the (unbased) moduli space of solutions to the Seiberg-Witten equations on $(Y,\frak{s})$ with respect to $\nabla^{\infty}$, i.e., $\M=\B/\G$ where $\B\subset\C(Y,\frak{s})$ denotes the kernel of the functional $\CSD$ defined with respect to $\nabla^{\infty}$, and $\G=\Map(Y,S^{1})$ denotes the full, unbased gauge group. We can write
\[\M=\{\Theta\}\cup\M^{\irr},\]
where $\Theta\in\M$ denotes the reducible, and $\M^{\irr}\subset\M$ is the set of irreducible Seiberg--Witten solutions. There is a ``charge conjugation'' involution $\frak{c}:\M\to\M$ which acts trivially on the reducible $\Theta$, and acts freely on $\M^{\irr}$. For $x,y\in\M$ we denote by $\M(x,y)$ the unbased, unparametrized space of trajectories from $x$ to $y$, on which $\ZZ_{2}=\<\frak{c}\>$ acts freely assuming $\M(x,y)$ is non-empty and positive-dimensional.

Next, let $\wt{\M}$ denote the \emph{based} moduli space of solutions to the Seiberg--Witten equations on $(Y,\frak{s})$ with respect to $\nabla^{\infty}$, and $\wt{\M}^{\irr}\subset\wt{\M}$ the irreducible locus. There is a residual $S^{1}$-action on $\wt{\M}$ whose quotient can be identified with $\M$. The corresponding quotient map $\Pi:\wt{\M}\to\M$ restricts to an $S^{1}$-fibration $\Pi^{\irr}:\wt{\M}^{\irr}\to\M^{\irr}$ on the irreducible locus, and there is a unique reducible $\wt{\Theta}\in\wt{\M}$ which is naturally identified with $\Theta\in\M$ under $\Pi$. Furthermore, the charge conjugation involution $\frak{c}:\M\to\M$ has a canonical lift to an order $4$ action whose square coincides with multiplication by $-1\in S^{1}$. Together with the $S^{1}$-action this induces a $\Pin(2)$-action on $\wt{\M}$ such that multiplication by $j\in\Pin(2)$ is given by the lift of $\frak{c}$.

A similar observation holds for trajectories.

An order $m$ action $\sigma:Y\to Y$ along with a spin lift $\wh{\sigma}$ combines with the $\Pin(2)$-action to give a $G^{*}_{m}$-action on $\wt{\M}$ and $\cup_{x,y}\wt{\M}(x,y)$, and induces a residual $(\ZZ_{2}\times\ZZ_{m})$-action on $\M$ and $\cup_{x,y}\M(x,y)$, respectively. In particular, we have the following commutative diagrams, where 
\[q:G^{*}_{m}\to G^{*}_{m}/S^{1}\cong \ZZ_{2}\times\ZZ_{m}\]
denotes the canonical quotient map:

\begin{center}
    \begin{tikzcd}
        G^{*}_{m}\times \wt{\M} \arrow[r] \arrow[d, "q\times\Pi"] & \wt{\M} \arrow[d, "\Pi"] & & G^{*}_{m}\times \cup_{x,y}\wt{\M}(x,y) \arrow[r] \arrow[d, "q\times\Pi"] & \cup_{x,y}\wt{\M}(x,y) \arrow[d, "\Pi"] \\
        (\ZZ_{2}\times\ZZ_{m})\times\M \arrow[r] & \M, & & (\ZZ_{2}\times\ZZ_{m})\times\cup_{x,y}\M(x,y) \arrow[r] & \cup_{x,y}\M(x,y).
    \end{tikzcd}
\end{center}

In the case where $(Y,\frak{s})$ is a spin Seifert-fibered rational homology sphere, we have the following description of the unbased moduli spaces $\M$ and $\cup_{x,y}\M(x,y)$ due to \cite{MOY}: Let $K_{\Sigma}$ denote the canonical bundle over the orbifold surface $\Sigma$, and let $E_{0}\to\Sigma$ be a line bundle such that the spin structure $\frak{s}$ is represented by the bundle $\pi^{*}(E_{0})\oplus \pi^{*}(K_{\Sigma})\otimes \pi^{*}(E_{0})\to Y$. By (\cite{MOY}, Theorem 5.19), we can identify
\[\M=\{\Theta\}\cup\coprod_{E}\Big(\C^{+}(E)\amalg\C^{-}(E)\Big),\]
where $\C^{+}(E)\approx\C^{-}(E)$ are two isomorphic copies of the moduli space of effective orbifold divisors in $E$, and where we take the disjoint union over all isomorphism classes of line bundles $E\to\Sigma$ such that
\begin{enumerate}
    \item $0\le \deg(E)<\tfrac{1}{2}\deg(K_{\Sigma})$, and
    \item $\pi^{*}(E)\cong\pi^{*}(E_{0})$. 
\end{enumerate}
The charge conjugation involution $\frak{c}:\M\to\M$ acts trivially on the reducible $\Theta$ and sends each component $\C^{+}(E)$ homeomorphically onto $\C^{-}(E)$ and vice-versa via the Hodge star operator.

On the level of trajectories, \cite{MOY} gives a description in terms of certain divisors on the resolution $\wh{R}$ of a ruled surface $R$ associated to $\Sigma$.

We first determine the actions of $\wh{\rho}_{m}$ and $\wh{\iota}_{c}$ on the unbased Seiberg--Witten moduli spaces:

\begin{lemma}
\label{lemma:unbased_moduli_space_seifert_fibered}
Let $(Y,\frak{s})$ be a spin Seifert-fibered rational homology sphere, let $\rho_{m}$, $\iota_{c}$ be the diffeomorphisms described above with corresponding spin lifts $\wh{\rho}_{m}$ and $\wh{\iota}_{c}$. Then $\wh{\rho}_{m}$ acts by the identity on $\M$ and $\bigcup_{x,y}\M(x,y)$, and the action of $\wh{\iota}_{c}$ on $\M$ and $\bigcup_{x,y}\M(x,y)$ agrees with the charge conjugation involution $\frak{c}$.
\end{lemma}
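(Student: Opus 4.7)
The plan is to pass through the Mrowka--Ozsv\'ath--Yu identification of $\M$ as a disjoint union of spaces of effective orbifold divisors on $\Sigma$, and their analogous identification of the trajectory spaces in terms of divisors on the resolution $\widehat R$ of a ruled surface over $\Sigma$, and to track how each of $\wh{\rho}_m$ and $\wh{\iota}_c$ acts under these identifications. The point is that both isometries cover geometric maps of $\Sigma$, and the MOY descriptions are natural enough that this reduces the claim to a statement about divisors.

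First, for $\wh{\rho}_m$: the action of $\rho_m$ on configurations factors through the continuous $S^1$-action generated by fiberwise rotation $\rho_{e^{i\theta}}$. Because $\rho_{e^{i\theta}}$ is a bundle automorphism of $Y\to\Sigma$ that covers the identity on $\Sigma$, its action on configurations differs from the identity by a (vertical) gauge transformation, hence it descends to the identity on the unbased moduli space $\M$. Equivalently, under the identification $\M\setminus\{\Theta\}\cong\coprod_{E}\big(\C^{+}(E)\amalg\C^{-}(E)\big)$, the $S^{1}$-action pulls back orbifold divisors on $\Sigma$ by the identity, hence acts trivially on each $\C^{\pm}(E)$. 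Restricting to the finite subgroup $\langle\rho_m\rangle$ gives triviality of $\wh{\rho}_m^{*}$ on $\M$. For trajectories the argument is the same: the $S^{1}$-action extends by acting trivially on the $\RR$-factor of $Y\times\RR$, hence trivially on the divisors on $\widehat R$ that parametrize $\M(x,y)$.

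For $\wh{\iota}_c$: the involution $\iota_c$ covers an orientation-reversing isometry $\bar\iota_c$ of $\Sigma$ that reverses its complex structure. Under the MOY identification, an effective orbifold divisor $D\in\C^{+}(E)$ corresponds to a holomorphic section of $E$, and pullback by $\bar\iota_c$ sends this to an antiholomorphic section, i.e., via the Hodge star to a holomorphic section representing an effective divisor in $\C^{-}(E)$ (and vice versa). This is precisely the prescription for the charge conjugation involution $\frak{c}$ in the MOY identification. The analogous complex-conjugation statement on $\widehat R$ gives $\wh{\iota}_c^{*}=\frak{c}$ on $\bigcup_{x,y}\M(x,y)$.

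The main obstacle is making the identification of $\wh{\iota}_c^{*}$ with $\frak{c}$ completely rigorous on the trajectory spaces. One must verify that the natural extension of $\iota_c$ to $Y\times\RR$ (acting trivially on the $\RR$-factor) lifts to a diffeomorphism of the resolved ruled surface $\widehat R$ which, under the divisor description of $\M(x,y)$, implements the Hodge-star charge conjugation, \emph{and} that this is compatible simultaneously with the matching conditions at both ends of the cylinder imposed in the MOY setup. I expect this to follow from naturality of the MOY construction with respect to orientation-reversing isometries of $\Sigma$, but writing it out carefully is the most notation-heavy step; once the trajectory statement is in hand, the $\wh{\rho}_m$ case follows by the simpler $S^{1}$-equivariance argument above.
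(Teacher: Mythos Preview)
Your overall strategy is correct and close to the paper's, but there are two points worth flagging.

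First, a minor imprecision: the claim that ``$\rho_{e^{i\theta}}$ \ldots\ differs from the identity by a (vertical) gauge transformation'' on \emph{configurations} is too strong. For a general pair $(a,\phi)$ that is not fiber-invariant, $\rho_\theta^*(a,\phi)$ need not be gauge equivalent to $(a,\phi)$; a bundle automorphism of $Y\to\Sigma$ covering $\id_\Sigma$ does not act by gauge on arbitrary sections of the spinor bundle. What is true (and what the paper uses) is that the MOY critical points are \emph{represented} by fiber-invariant pairs, so $\rho_m$ fixes those representatives. Your fallback divisor argument (``the $S^1$-action pulls back orbifold divisors on $\Sigma$ by the identity'') is the correct one and is essentially what the paper says.

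Second, and more substantively, the paper handles the trajectory spaces by a different route than you propose. Rather than tracking the MOY divisor description on $\widehat R$ and verifying naturality under $\iota_c$, the paper invokes Baldridge's identification (\cite{Bal03}, Theorem B) of Seiberg--Witten solutions on a 4-manifold with free $S^1$-action with solutions on the quotient 3-orbifold, applied to $Y\times[0,1]$ with the fiberwise rotation. This shows directly that all SW trajectories are fiber-invariant up to gauge, which immediately gives triviality of $\wh\rho_m$ on $\bigcup\M(x,y)$ and also reduces the $\wh\iota_c$ statement to a one-parameter version of the base-orbifold argument you already gave for critical points. The advantage of the Baldridge route is that it sidesteps exactly the ``most notation-heavy step'' you flag: there is no need to lift $\iota_c$ to $\widehat R$ or check compatibility with the matching conditions, since fiber-invariance collapses the problem to $\Sigma\times[0,1]$. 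Your approach via naturality of the $\widehat R$ description should also work, but it is genuinely more laborious.
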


\begin{proof}
As noted in \cite{MOY}, the critical points are all represented by pairs $(a,\phi)$ which are invariant in the fiber direction (up to gauge), and so it follows that the induced action of $\wh{\rho}_{m}$ on $\M$ is equal to the identity. 
 
To show that the action of $\wh{\iota}_{c}$ on $\M$ agrees with $\frak{c}$, let $f:\Sigma\to\Sigma$ denote the orientation-reversing diffeomorphism induced by reflection across the equator $Q\subset\Sigma$. After isotoping if necessary, we can choose a great circle $P\subset\Sigma$ perpendicular to the equator which induces a decomposition $\Sigma=\Sigma_{+}\cup_{P}\Sigma_{-}$ such that $|\Sigma_{+}|\approx|\Sigma_{-}|\approx D^{2}$, and all of the orbifold points $x_{1},\dots,x_{n}$ lie on $\Sigma_{+}\cap Q$. Any complex line bundle $E\to\Sigma$ is then determined by a clutching function $\varphi$ on $P$ and a tuple of integers $(\gamma_{1},\dots,\gamma_{n})$ associated to the orbifold points $(x_{1},\dots,x_{n})$. We see that the reflection $f$ sends $\varphi\mapsto\varphi^{-1}$ and $\gamma_{i}\mapsto-\gamma_{i}$ for all $i=1,\dots,n$, and it follows that $f^{*}(E)\approx E^{-1}$. If $E$ is a holomorphic line bundle, then the holomorphic structure on $E$ gets sent under $f^{*}$ to an anti-holomorphic structure on $f^{*}(E)\approx E^{-1}$, and furthermore we see that holomorphic sections of $E$ are sent via $f^{*}$ to the corresponding anti-holomorphic sections of $E^{-1}$ -- this agrees precisely with the description of $\frak{c}$ in terms of the Hodge star operator.

To see this on the level of trajectories between critical points, we use the framework laid out in \cite{Bal03} which identifies Seiberg--Witten solutions on a 4-manifold with a fixed-point free $S^{1}$-action with the solutions on the quotient 3-orbifold. Although Baldridge only considers closed 4-manifolds, many of the results carry over to the open 4-manifold $Y\times[0,1]$. In particular, (\cite{Bal03}, Theorem B) implies that all solutions on $Y\times[0,1]$ are fiber-wise invariant up to gauge, hence $\wh{\rho}_{m}$ acts trivially on trajectories between critical points. Furthermore, a one-parameter version of the argument above shows that the action of $\wh{\iota}_{c}$ agrees with $\frak{c}$ on trajectories, as well.
\end{proof}

Next, we will determine how to lift the $\<\frak{c}\>\times\<\wh{\rho}_{m}\>\cong\ZZ_{2}\times\ZZ_{m}$- and $\<\frak{c}\>\times\<\wh{\iota}_{c}\>\cong\ZZ_{2}\times\ZZ_{2}$-actions on $\M$ and $\cup_{x,y}\M(x,y)$ to $G^{*}_{m}$- and $G^{\odd}_{2}$-actions, respectively, on $\wt{\M}$ and $\cup_{x,y}\wt{\M}(x,y)$. 

On the question of lifting the action of $\wh{\rho}_{m}$, we will use the fact that it arises as the restriction of the continuous $S^{1}$ action $\wh{\rho}$. Note that each irreducible $x\in\M^{\irr}$ lifts to a circle of irreducible solutions $\Pi^{-1}(x)\subset\wt{\M}^{\irr}$, and that the induced action of $\wh{\rho}$ rotates $\Pi^{-1}(x)$ at a certain rate with respect to already extant $S^{1}$-action. We encapsulate this within the following definition:

\begin{definition}
\label{def:rotation_numbers}
Let $x\in\M^{\irr}$ be an irreducible solution. We define the \emph{rotation number} $\rot(x)\in\frac{1}{2}\ZZ$ of $x$ as follows:
\begin{enumerate}
    \item If $\wh{\rho}$ is of even type: $\rot(x)\in\ZZ$ is such that the induced action of $\wh{\rho}$ on $\Pi^{-1}(x)\subset\wt{\M}^{\irr}$ is given by
    \begin{align*}
        \wh{\rho}_{*}:S^{1}\times\Pi^{-1}(x)&\to\Pi^{-1}(x) \\
        (e^{it},\wt{x})&\mapsto e^{i\rot(x)t}\cdot\wt{x}.
    \end{align*}
    \item If $\wh{\rho}$ is of odd type: $\rot(x)\in\frac{1}{2}\ZZ\setminus\ZZ$ is such that the induced action of $\wh{\rho}$ on $\Pi^{-1}(x)\subset\wt{\M}^{\irr}$ is given by
    \begin{align*}
        \wh{\rho}_{*}:\wh{S}^{1}\times\Pi^{-1}(x)&\to\Pi^{-1}(x) \\
        (e^{it},\wt{x})&\mapsto e^{2i\rot(x)t}\cdot\wt{x},
    \end{align*}
    where $\wh{S}^{1}$ denotes the double cover of $S^{1}$, to signal that this $\wh{S}^{1}$-action double covers the $S^{1}$-action on $Y$ itself.
\end{enumerate}
\end{definition}

We leave the following lemma as an exercise to the reader:

\begin{lemma}
The rotation number satisfies the following properties:
\begin{enumerate}
    \item $\rot(\frak{c}(x))=-\rot(x)$.
    \item If $x,y\in\M^{\irr}$ lie in the same connected component then $\rot(x)=\rot(y)$.
    \item If $x,y\in\M^{\irr}$ are such that $\M(x,y)\neq 0$, then $\rot(x)=\rot(y)$.
\end{enumerate}
\end{lemma}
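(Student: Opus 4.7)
The plan is to exploit three features of the setup: that $\rot$ is discrete-valued, that the residual $S^{1}$-action on $\wt{\M}$ carries a commuting $j$-action, and that the $S^{1}$-bundle $\Pi : \wt{\M}^{\irr}\to\M^{\irr}$ extends smoothly over trajectory spaces. Part (1) will be proven by an identity in $\Pin(2)$, while parts (2) and (3) will follow from local-constancy of a weight of an $S^{1}$-representation.

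For (1), the key point is that the spin lift $\wh{\rho}$ commutes with quaternionic multiplication by $j$. Indeed, $\wh{\rho}$ is an automorphism of the principal $\Spin(3)$-bundle $P\to Y$, hence commutes with the right $\Spin(3)$-action on fibers; consequently the induced action of $\wh{\rho}$ on $\SS = P\times_{\Spin(3)}\CC^{2}$ commutes with right quaternionic multiplication, which is precisely the map induced by $j\in\Pin(2)$ on configurations $(a,\phi)\mapsto(-a,j\phi)$. Therefore on $\wt{\M}$ the $S^{1}$-action of $\wh{\rho}_{*}$ commutes with the lift of $\frak{c}$. Given $\wt{x}\in\Pi^{-1}(x)$, we have $j\cdot\wt{x}\in\Pi^{-1}(\frak{c}(x))$, and by the relation $je^{it}=e^{-it}j$ in $\Pin(2)$,
\[
\wh{\rho}_{*}(e^{it})(j\cdot\wt{x}) = j\cdot\wh{\rho}_{*}(e^{it})\wt{x} = j\cdot e^{i\rot(x)t}\wt{x} = e^{-i\rot(x)t}(j\cdot\wt{x}),
\]
yielding $\rot(\frak{c}(x))=-\rot(x)$; the odd case is identical up to the factor of $2$ appearing in Definition \ref{def:rotation_numbers}.

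For (2), I will show that $\rot:\M^{\irr}\to\tfrac{1}{2}\ZZ$ is locally constant. Near any $x\in\M^{\irr}$ the fiber $\Pi^{-1}(x)\subset\wt{\M}^{\irr}$ deforms smoothly to $\Pi^{-1}(x')$ for nearby $x'$, and the induced $S^{1}$-action of $\wh{\rho}_{*}$ varies continuously in $x$. The rotation number is the integer weight of this $S^{1}$-representation on the one-dimensional tangent space to the fiber, so it takes values in a discrete subset of $\tfrac{1}{2}\ZZ$ and hence is locally constant, thus constant on connected components. The same argument proves (3): a trajectory $\gamma\in\M(x,y)$ lifts to an $S^{1}$-family of paths in the based trajectory space $\wt{\M}(x,y)$ on which $\wh{\rho}_{*}$ still acts, so the weights of $\wh{\rho}_{*}$ at the two endpoints must agree by local constancy applied along $\gamma$.

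The main obstacle will be verifying in sufficient detail that the $S^{1}$-bundle $\Pi:\wt{\M}^{\irr}\to\M^{\irr}$ and its analogue over $\wt{\M}(x,y)$ genuinely vary smoothly, so that the local-constancy step is justified. This is standard in the non-equivariant theory and follows for the Seifert-fibered setup from the detailed moduli-space analysis of \cite{MOY}, but we will need to confirm that their identification of $\M^{\irr}$ and its trajectory spaces with spaces of effective divisors on $\Sigma$ (respectively $\wh{R}$) is compatible with the extra $\wh{\rho}$-equivariance, so that the $\wh{\rho}_{*}$-action on $\Pi^{-1}(x)$ has a manifestly continuous character. Once this compatibility is in place, both (2) and (3) reduce to the observation that the character of a continuously varying circle action on a line is locally constant.
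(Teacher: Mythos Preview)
The paper leaves this lemma as an exercise and provides no proof, so there is nothing to compare against directly. Your argument is correct and is the natural one: part (1) follows from the relation $je^{it}=e^{-it}j$ in $\Pin(2)$ together with the fact that the $\wh{\rho}$-action commutes with $j$ (both being part of a single $G^{*}_{m}$-action on the configuration space), and parts (2) and (3) follow from the rotation number being a $\tfrac{1}{2}\ZZ$-valued invariant that varies continuously.

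The caveat you raise at the end is overly cautious. The $\wh{\rho}$-action is defined globally on the Coulomb slice $V$ and commutes with the residual $S^{1}$-action there; it therefore restricts to a smooth $S^{1}$-equivariant action on all of $\wt{\M}^{\irr}$ and on the based trajectory spaces, independent of any identification with divisor spaces. Continuity of the weight is thus automatic, and no appeal to the finer structure of \cite{MOY} is needed.
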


In view of the above lemma, for a connected component $C\subset\M^{\irr}$ we will sometimes denote by $\rot(C)$ the rotation number of any $x\in C$. Furthermore if $\wt{x}\in\wt{\M}^{\irr}$ we denote by $\rot(\wt{x}):=\rot(\Pi(x))$, and similarly for connected components.

The problem of lifting the induced action of $\wh{\iota}_{c}$ to the based moduli space(s) is somewhat more subtle. Although Lemma \ref{lemma:unbased_moduli_space_seifert_fibered} implies that $(\wh{\iota}_{c})_{*}$ agrees with $\frak{c}$ on the unbased moduli space(s), it cannot be the case that $(\wh{\iota}_{c})_{*}$ agrees with the action of $j$ or $-j$ on the based moduli space(s). Indeed $(\wh{\iota}_{c})_{*}$ must commute with the $S^{1}$-action, whereas $j$ anti-commutes with $i\in S^{1}$. It turns out that the relations in $G^{\odd}_{2}$ imply that the action of $(\wh{\iota}_{c})_{*}$ must agree with the action of $j$ or $-j$, composed with a map modelled on the complex conjugation action on $S^{1}$. This leads us to the following definition:

\begin{definition}
\label{def:S_1_equivariant_Real_structure}
Let $X$ be a finite $S^{1}$-CW complex such that on each connected component, $S^{1}$ acts either freely or trivially. An \emph{$S^{1}$-equivariant Real structure} on $X$ consists of an involution $c:X\to X$ called the \emph{conjugation involution} such that
\begin{enumerate}
    \item The $S^{1}$-action and the involution $c$ combine into an $O(2)$-action on $X$.
    \item Let $O\subset X$ be an $O(2)$-orbit. Then:
    \begin{enumerate}
        \item If $S^{1}$ acts trivially on $O$: $O\simeq\{\pt\}$.
        \item If $S^{1}$ acts freely on $O$: $O$ is $O(2)$-equivariantly homotopy equivalent to $S^{1}$ with its canonical $O(2)$-action, where $c$ corresponds to multiplication by $(\begin{smallmatrix}1  & 0 \\ 0 & -1\end{smallmatrix})\in O(2)$.
    \end{enumerate}
\end{enumerate}
\end{definition}

Note that every space $X$ as in Definition \ref{def:S_1_equivariant_Real_structure} admits a unique $S^{1}$-equivariant Real structure up to $O(2)$-equivariant homotopy equivalence.

We encapsulate the preceding discussion in the following proposition:

\begin{proposition}
\label{prop:based_moduli_space_seifert_fibered}
Let $(Y,\frak{s})$ be a spin Seifert-fibered rational homology sphere. Let $C=C_{+}\amalg C_{-}\subset\wt{\M}^{\irr}$ be two connected components of irreducible solutions which are interchanged by the action of $j\in\Pin(2)$.
\begin{enumerate}
    \item Let $m\geq 2$ be an integer and let $\rho_{m}:Y\to Y$ be the order $m$ diffeomorphism induced by rotation of the fibers as above, with distinguished spin lift $\wh{\rho}_{m}$. Then:
    \begin{enumerate}
        \item If $\wh{\rho}_{m}$ is an even spin lift: the induced action of $\wh{\rho}_{m}$ on $C_{+}$, $\wt{\M}(C_{+},\wt{\Theta})$, and $\wt{\M}(\wt{\Theta},C_{+})$ coincides with $\omega_{m}^{\rot(C_{+})}\in S^{1}$, and on $C_{-}$, $\wt{\M}(C_{-},\wt{\Theta})$, $\wt{\M}(\wt{\Theta},C_{-})$, coincides with $\omega_{m}^{\rot(C_{-})}=\omega_{m}^{-\rot(C_{+})}\in S^{1}$. Similarly for any other such pair of connected components $C'=C'_{+}\amalg C'_{-}\subset\wt{\M}^{\irr}_{sw}$ such that at least one of $\wt{\M}(C,C')$, $\wt{\M}(C',C)$ is non-empty, then $\rot(C'_{\pm})=\rot(C_{\pm})$, and the induced action of $\wh{\rho}_{m}$ on $C'_{+}$, $\wt{\M}(C_{+},C'_{+})$, and $\wt{\M}(C'_{+},C_{+})$ coincides with $\omega_{m}^{\rot(C_{+})}\in S^{1}$, and on $C'_{-}$, $\wt{\M}(C_{-},C'_{-})$ and $\wt{\M}(C'_{-},C_{-})$ coincides with $\omega_{m}^{\rot(C_{-})}=\omega_{m}^{-\rot(C_{+})}\in S^{1}$.
        \item If $\wh{\rho}_{m}$ is an odd spin lift: the induced action of $\wh{\rho}_{m}$ on $C_{+}$, $\wt{\M}(C_{+},\wt{\Theta})$, and $\wt{\M}(\wt{\Theta},C_{+})$ coincides with $\omega_{2m}^{2\rot(C_{+})}\in S^{1}$, and on $C_{-}$, $\wt{\M}(C_{-},\wt{\Theta})$, $\wt{\M}(\wt{\Theta},C_{-})$, coincides with $\omega_{2m}^{2\rot(C_{-})}=\omega_{2m}^{-2\rot(C_{+})}\in S^{1}$. Similarly for any other such pair of connected components $C'=C'_{+}\amalg C'_{-}\subset\wt{\M}^{\irr}_{sw}$ such that at least one of $\wt{\M}(C,C')$, $\wt{\M}(C',C)$ is non-empty, then $\rot(C')=\rot(C)$, and the induced action of $\wh{\rho}_{m}$ on $C'_{+}$, $\wt{\M}(C_{+},C'_{+})$, and $\wt{\M}(C'_{+},C_{+})$ coincides with the action of $\omega_{2m}^{2\rot(C_{+})}\in S^{1}$, and on $C'_{-}$, $\wt{\M}(C_{-},C'_{-})$ and $\wt{\M}(C'_{-},C_{-})$ coincides with the action of $\omega_{2m}^{2\rot(C_{-})}=\omega_{2m}^{-2\rot(C_{+})}\in S^{1}$.
    \end{enumerate}
    \item Suppose $Y$ is a Seifert-fibered homology sphere, let $\iota_{c}:Y\to Y$ be the complex conjugation involution as above, and let $\wh{\iota}_{c}$ be a spin lift of $\iota_{c}$. Then:
    \begin{enumerate}
        \item The spaces $C$, $\wt{\M}(C,\wt{\Theta})$, and $\wt{\M}(\wt{\Theta},C)$ admit $S^{1}$-equivariant Real structures as in Definition \ref{def:S_1_equivariant_Real_structure}, and there exists some $\varepsilon(C)\in\{\pm 1\}$ such that the induced action of $\wh{\iota}$ on $C$, $\wt{\M}(C,\wt{\Theta})$, and $\wt{\M}(\wt{\Theta},C)$ coincides with the action of $\varepsilon(C)\cdot j\in \Pin(2)$, followed by the conjugation involution $c$.
        \item For any other such pair of connected components $C'=C'_{+}\amalg C'_{-}\subset\wt{\M}^{\irr}$ such that at least one of $\wt{\M}(C,C')$, $\wt{\M}(C',C)$ is non-empty, we have that $\varepsilon(C')=\varepsilon(C)\in\{\pm 1\}$, and the spaces $C'$, $\wt{\M}(C,C')$, $\wt{\M}(C',C)$ admit $S^{1}$-equivariant Real structures such that the induced action of $\wh{\iota}_{c}$ on these spaces coincides with the action of $\varepsilon(C)\cdot j\in \Pin(2)$, followed by the conjugation involution $c$.
    \end{enumerate}
\end{enumerate}
\end{proposition}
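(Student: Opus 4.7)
For part (1), the formulas are essentially direct restrictions of the definition of rotation number. Namely, $\wh{\rho}_m$ is by construction the image in the order-$m$ cyclic subgroup of the ambient $S^1$-action (respectively $\wh{S}^1$-action) generated by $\wh{\rho}$, so one just evaluates Definition \ref{def:rotation_numbers} at $t = 2\pi/m$ (even case) or at the point $e^{i\pi/m}\in\wh{S}^1$ which double-covers $e^{2\pi i/m}\in S^1$ (odd case). This yields the claimed actions $\omega_m^{\rot(C_+)}$ and $\omega_{2m}^{2\rot(C_+)}$ respectively. The identity $\rot(C_-) = -\rot(C_+)$ then follows from property (1) of the rotation number together with the observation that $C_- = j\cdot C_+$, since $j\in\Pin(2)$ induces the charge conjugation $\frak{c}$ on the unbased moduli space and conjugates the $S^1$ gauge action by inversion. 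Constancy across connected components paired by non-empty trajectory spaces is property (3) of the rotation number lemma, which propagates identically to the trajectory spaces $\wt{\M}(C,C')$ because the full $S^1$-action extends to trajectories.

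For part (2), the starting point is Lemma \ref{lemma:unbased_moduli_space_seifert_fibered}, which identifies the induced action of $\wh{\iota}_c$ on $\M$ with the charge conjugation involution $\frak{c}$. Since $j\in\Pin(2)$ also descends to $\frak{c}$ modulo the $S^1$-action, one can write $(\wh{\iota}_c)_*(x) = \alpha(x)\cdot j\cdot x$ for some function $\alpha\colon C \to S^1$, where the twist $\alpha$ is constrained by the fact that $\wh{\iota}_c$ commutes with the constant gauge $S^1$ (since constant gauge transformations lie in the center of the isometry group of the configuration space). This commutation combined with the $\Pin(2)$-relation $je^{i\phi} = e^{-i\phi}j$ forces $\alpha$ to transform by $\alpha(e^{i\phi}x) = e^{-2i\phi}\alpha(x)$.

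I would then set $c := -\varepsilon(C)\cdot j\cdot (\wh{\iota}_c)_*$, choosing the sign $\varepsilon(C)\in\{\pm 1\}$ so as to normalize the lift. Using that $\wh{\iota}_c$ and $j$ commute in $G^{\odd}_2$, together with the odd-type identity $(\wh{\iota}_c)_*^2 = -1$ on based moduli, a short computation gives $c^2 = \varepsilon^2 j^2 (\wh{\iota}_c)_*^2 = (-1)(-1) = 1$, and a second computation gives $c(e^{i\phi}x) = e^{-i\phi}c(x)$. Together these show that $c$ combines with the residual $S^1$-action into an $O(2)$-action with $c$ playing the role of complex conjugation, which is the data of an $S^1$-equivariant Real structure in the sense of Definition \ref{def:S_1_equivariant_Real_structure}. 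The same construction applied with the natural $S^1$-action on trajectory spaces yields a compatible Real structure on $\wt{\M}(C,\wt{\Theta})$, $\wt{\M}(\wt{\Theta},C)$, and analogously on $\wt{\M}(C,C')$.

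The main obstacle will be two-fold. First, one must verify the geometric non-degeneracy clause of Definition \ref{def:S_1_equivariant_Real_structure}, namely that $c$ admits genuine fixed points on each $S^1$-fiber (so that the $O(2)$-action is the standard one rather than a twisted variant); I plan to handle this by invoking the holomorphic description of \cite{MOY}, under which the components $\C^\pm(E)$ are moduli of effective divisors on $\Sigma$ and $\iota_c$ induces an antiholomorphic involution, whose real locus provides the required fixed points. Second, and more seriously, one must show that the sign $\varepsilon(C)$ is constant on the union of components linked by non-empty trajectory spaces. For this I would extend the analysis of $\wh{\iota}_c$ from $Y$ to the Seiberg--Witten equations on $Y\times[0,1]$ via Baldridge's orbifold reduction as in the proof of Lemma \ref{lemma:unbased_moduli_space_seifert_fibered}, and argue that $\varepsilon$ is locally constant on the parametrized moduli space of trajectories, hence constant on connected pieces. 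This propagation-of-signs step is the most delicate part of the argument.
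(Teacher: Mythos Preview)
The paper does not give a separate proof of this proposition: it is introduced with the phrase ``We encapsulate the preceding discussion in the following proposition,'' so the intended argument is precisely the chain of observations preceding it --- Lemma~\ref{lemma:unbased_moduli_space_seifert_fibered} on the unbased moduli space, Definition~\ref{def:rotation_numbers}, the properties-of-rotation-numbers lemma, and the paragraph explaining why $(\wh{\iota}_c)_*$ must factor as $\pm j$ followed by a conjugation-type map. Your plan for part~(1) is exactly this, and is correct.

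For part~(2) your write-up is considerably more explicit than the paper's, which is fine; the computation $c^2=1$ and $c(e^{i\phi}x)=e^{-i\phi}c(x)$ is correct and is the substance of the $O(2)$-structure claim. Two remarks on the obstacles you flag. First, the fixed-point clause of Definition~\ref{def:S_1_equivariant_Real_structure} is less delicate than you suggest: once you know $c$ preserves each $S^1$-fiber (which follows since both $(\wh{\iota}_c)_*$ and $j$ cover $\frak{c}$ on the unbased space, hence their composite covers the identity) and anti-commutes with the $S^1$-action, the restriction of $c$ to any free fiber is a reflection of $S^1$ and automatically has two fixed points. No appeal to the holomorphic description is needed. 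Second, on the sign-propagation issue: note that $\varepsilon(C)$ is not an intrinsic invariant of the action --- replacing $c$ by $(-1)\cdot c$ (which is again a valid Real structure) flips the sign. So the content of part~(2b) is not that some intrinsic sign is constant, but that one may \emph{choose} Real structures on $C$, $C'$, and $\wt{\M}(C,C')$ simultaneously and compatibly. This is immediate once you define $c$ on the trajectory space by the same formula $c:=-\varepsilon(C)\cdot j\cdot(\wh{\iota}_c)_*$; compatibility with the evaluation maps to $C$ and $C'$ then comes for free from $G^{\odd}_2$-equivariance. Your Baldridge-style argument would also work but is heavier than necessary.
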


We now proceed to relate these observations to the associated Seiberg--Witten Floer spectrum classes. The following proposition is a an equivariant generalization of Stoffregen's obsrvation that the Floer spectra of spin Seifert-fibered rational homology spheres of negative fibration are \emph{$j$-split} (\cite{Stoff20}, Lemma 5.3):

\begin{proposition}
\label{prop:floer_spectrum_seifert_fibered}
Let $(Y,\frak{s},g)$ be a spin Seifert-fibered rational homology sphere of negative fibration with at most four singular fibers, equipped with the Seifert metric $g$.
\begin{enumerate}
    \item Let $m\geq 2$, let $\rho_{m}:Y\to Y$ be the order $m$ diffeomorphism induced by rotation of the fibers as above, and let $\wh{\rho}_{m}$ be a spin lift of $\rho_{m}$. Then there exists a space $X$ of type $\CC$-$G^{*}_{m}$-$\SWF$ such that
    \[\SWF(Y,\frak{s},\wh{\rho}_{m},g,\nabla^{\infty})=[(X,\mbfa,\mbfb)],\]
    and $G^{*}_{m}$-spaces $X_{1},\dots,X_{n}$ such that
    \begin{equation}
    \label{eq:rho_m_split}
        X/X^{S^{1}}=\big(X_{1}\vee\cdots\vee X_{k}\big)\vee\big(jX_{1}\vee\cdots\vee jX_{n}\big),
    \end{equation}
    where for each $i=1,\dots,n$, $X_{i}$ and $jX_{i}$ are two copies of the same space, interchanged by the action of $j\in\Pin(2)$. Moreover:
    \begin{enumerate}
        \item If $\wh{\rho}_{m}$ is an even spin lift: there exist some integers $k_{1},\dots,k_{n}$ with $0\le k_{i}\le m-1$ such that for each $i=1,\dots,n$, the induced action of $\wh{\rho}_{m}$ on $X_{i}$ coincides with $\omega_{m}^{k_{i}}\in S^{1}$, and the induced action of $\wh{\rho}_{m}$ on $jX_{i}$ coincides with $\omega_{m}^{-k_{i}}\in S^{1}$.
        \item If $\wh{\rho}_{m}$ is an odd spin lift: there exist some odd integers $k_{1},\dots,k_{n}$ with $1\le k_{i}\le 2m-1$ such that for each $i=1\dots,n$, the induced action of $\wh{\rho}_{m}$ on $X_{i}$ coincides with $\omega_{2m}^{k_{i}}\in S^{1}$, and the induced action of $\wh{\rho}_{m}$ on $jX_{i}$ coincides with $\omega_{2m}^{-k_{i}}\in S^{1}$.
    \end{enumerate}
    \item Let $\iota_{c}:Y\to Y$ be the odd-type involution induced by complex conjugation as above, and let $\wh{\iota}_{c}$ be a spin lift of $\iota_{c}$. Then there exists a a space $X$ of type $\CC$-$G^{\odd}_{2}$-$\SWF$ such that
    \[\SWF(Y,\frak{s},\wh{\iota}_{c},g,\nabla^{\infty})=[(X,\mbfa,\mbfb)],\]
    and $G^{\odd}_{2}$ spaces $X_{+},X_{-}$ such that
    \begin{equation}
    \label{eq:iota_c_split}
    X/X^{S^{1}}=\big(X_{+}\vee jX_{+}\big)\vee\big(X_{-}\vee jX_{-}\big),
    \end{equation}
    where:
    \begin{enumerate}
        \item $X_{+}$ and $jX_{+}$ are two copies of the same space interchanged by the action of $j\in\Pin(2)$, and similarly for $X_{-}$ and $jX_{-}$.
        \item As $S^{1}$-CW-complexes, the spaces $X_{+}$ and $X_{-}$ are built solely of free cells of the form $S^{1}\times D^{n}$, and are endowed with \emph{complex conjugation actions} $c:X_{\pm}\to X_{\pm}$ which on each cell restricts the usual complex conjugation involution on the $S^{1}$-factor:
        \begin{align*}
            S^{1}\times D^{n}&\to S^{1}\times D^{n}\\
            (e^{i\theta},re^{i\phi})&\mapsto (e^{-i\theta},re^{i\phi}).
        \end{align*}
        \item  The induced action of $\wh{\iota}_{c}$ on $X_{+}\vee jX_{+}$ coincides with the action of $j\in \Pin(2)$, followed by complex conjugation, and the induced action of $\wh{\iota}_{c}$ on $X_{-}\vee jX_{-}$ coincides with the action of $-j\in \Pin(2)$, followed by complex conjugation.
    \end{enumerate}
\end{enumerate}
\end{proposition}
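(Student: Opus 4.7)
The plan is to view this as an equivariant refinement of Stoffregen's $j$-splitting result (\cite{Stoff20}, Lemma 5.3), which already gives the underlying wedge decomposition of $X/X^{S^{1}}$ in both cases; the new content is to identify precisely how the lifts $\wh{\rho}_{m}$ and $\wh{\iota}_{c}$ act on each wedge summand. The key input is that, under the hypothesis of negative fibration with at most four singular fibers and with respect to the adiabatic Dirac operator $\dirac^{\infty}$, the irreducible Seiberg--Witten moduli space computed in \cite{MOY} consists of isolated critical circles (or points, modulo gauge) which can be arranged into a Morse--Bott cellular model for the Conley index $I_{\lambda}$ associated to the finite-dimensional approximation, with one pair of cells $C_{+}\amalg C_{-}$ interchanged by $j\in\Pin(2)$ per moduli component. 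The proof then proceeds in two parallel cases.

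For part (1), first observe that $\rho_{m}$ is an isometry of $(Y,g)$ preserving $\nabla^{\infty}$ and every spin structure, so the associated action on the Seiberg--Witten configuration space commutes with the Seiberg--Witten flow and descends to a $G^{*}_{m}$-action on the finite-dimensional approximation $V^{\lambda}_{-\lambda}$ and on the Conley index $I_{\lambda}$. Next, by performing the finite-dimensional approximation and the Morse--Bott reduction $G^{*}_{m}$-equivariantly (in the same spirit as the $\Pin(2)$-equivariant construction), obtain a $G^{*}_{m}$-CW structure on $X/X^{S^{1}}$ whose cells are indexed by Stoffregen's pairs $(C_{+},C_{-})$. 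Finally, use Proposition \ref{prop:based_moduli_space_seifert_fibered}(1) to identify the action of $\wh{\rho}_{m}$ on each such cell: on the $+$ side the action is multiplication by $\omega_{m}^{\rot(C_{+})}$ (resp.\ $\omega_{2m}^{2\rot(C_{+})}$) in the even (resp.\ odd) spin lift case, and by its inverse on the $j$-conjugate side. Setting $k_{i}=\rot((C_{i})_{+})$ or $2\rot((C_{i})_{+})$ as appropriate gives the claimed form of (\ref{eq:rho_m_split}).

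For part (2), the same equivariant finite-dimensional approximation gives a $G^{\odd}_{2}$-CW decomposition of $X/X^{S^{1}}$ into pairs $(C,jC)$. The new ingredient is that, by Proposition \ref{prop:based_moduli_space_seifert_fibered}(2), each $C$ carries an $S^{1}$-equivariant Real structure on which $\wh{\iota}_{c}$ acts as $\varepsilon(C)\cdot j$ followed by the conjugation involution $c$. Since by that proposition $\varepsilon(C)$ is constant on any subset of components connected through non-trivial trajectory spaces, one may collect all cells with $\varepsilon(C)=+1$ into $X_{+}\vee jX_{+}$ and those with $\varepsilon(C)=-1$ into $X_{-}\vee jX_{-}$. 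Transporting the $S^{1}$-equivariant Real structure on moduli spaces across the Morse--Bott reduction produces a $c$ on each cell of $X_{\pm}$ which is the standard complex conjugation on the $S^{1}$-factor of each free cell $S^{1}\times D^{n}$, as required.

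The main obstacle will be making the Morse--Bott cellular model for $I_{\lambda}$ genuinely $G^{*}_{m}$-equivariant and transferring the $S^{1}$-equivariant Real structure from the moduli level to the cellular level. Concretely, one needs to choose the approximating perturbations of the $CSD$ functional and the attaching maps of the stable/unstable manifolds so that they commute with the actions of $\wh{\rho}_{m}$ or $\wh{\iota}_{c}$ and with the fiber-wise conjugation $c$, simultaneously. In the $\iota_{c}$ case this is particularly delicate because $\wh{\iota}_{c}$ anti-commutes with the $S^{1}$-action on each cell, so one must verify that the requisite equivariant transversality and gluing results for trajectory spaces between Morse--Bott components survive in the presence of the Real structure; once this is in place, however, the structural identification of the action is dictated by the moduli-space description of Proposition \ref{prop:based_moduli_space_seifert_fibered}.
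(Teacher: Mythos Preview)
Your proposal follows the same overall strategy as the paper: use the MOY description of the moduli space (isolated, non-degenerate critical points and Morse--Bott flows under the $\le 4$ singular fibers hypothesis), invoke the $j$-splitting coming from negative fibration, and then identify the $G^{*}_{m}$-action on each pair of cells via Proposition~\ref{prop:based_moduli_space_seifert_fibered}.

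The paper differs from your outline mainly in the technical mechanism. Rather than speaking abstractly of a ``Morse--Bott cellular model'' and worrying about equivariant perturbations and gluing, the paper builds the decomposition inductively using the attractor--repeller coexact sequence
\[
I_{\lambda}^{\le\omega}\to I_{\lambda}\to I_{\lambda}^{>\omega,\irr}\to\Sigma I_{\lambda}^{\le\omega}\to\cdots
\]
for the Conley index, filtered by the $CSD$ value $\omega$. At each inductive step one attaches a $G^{*}_{m}$-cell of the form $G^{*}_{m}/H_{k}\times D^{\ind(x^{\pm}_{k})}$, where $H_{k}=\langle\gamma\omega_{m}^{-a(k)}\rangle$ or $\langle\mu\omega_{2m}^{-a(k)}\rangle$ is the stabilizer subgroup dictated by Proposition~\ref{prop:based_moduli_space_seifert_fibered}; the splitting~(\ref{eq:rho_m_split}) is then immediate by induction, and the paper declares the $\iota_{c}$ case analogous. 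This device is cleaner than what you sketch because it sidesteps the equivariant transversality problem you flag as the ``main obstacle'': no perturbation is needed at all, since MOY already shows the unperturbed flow is non-degenerate under the $\le 4$ fibers hypothesis, and the attractor--repeller sequence is automatically $G^{*}_{m}$-equivariant. Your concern about equivariant perturbations is genuine in general (the paper's remark immediately following this proposition says exactly this), but it is not an obstacle for the statement as actually given.
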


\begin{proof}
In the case where $Y$ has at most four singular fibers, it is shown in \cite{MOY} that all of the critical points are isolated and non-degenerate, and that all of the flows between critical points are Morse-Bott. Furthermore, the fact that $Y$ is of negative fibration implies that $\SWF(Y,\frak{s},\wh{\sigma},g,\nabla^{\infty})$ for $\wh{\sigma}=\wh{\rho}_{m}$ or $\wh{\iota}_{c}$ is $j$-split.

Let $\C(Y,\frak{s})$ be the Seiberg-Witten configuration space, let $\T_{0}$ denote the $L^{2}$ completion of the tangent space to $\C(Y,\frak{s})$, and let 
\[\nabla CSD:\C(Y,\frak{s})\to \T_{0}\]
denote the gradient of the Chern-Simons-Dirac functional defined with respect to $(g,\nabla^{\infty})$. If $V\subset\C(Y,\frak{s})$ denotes the global Coloumb slice, and $\T_{0}^{\gC}$ denotes the $L^{2}$-completion of the tangent bundle to $V$, we have an induced map
\[(\nabla CSD)^{\gC}:V\to \T_{0}^{\gC}\]
In \cite{LidMan}, the authors proved that there is a one-to-one correspondence between finite-energy trajectories of $(\nabla CSD)^{\gC}$ and finite-energy trajectories of $\nabla CSD$ modulo the based gauge group. Now for an eigenvalue $\lambda>>0$ of the linearization of $(\nabla CSD)^{\gC}$, let
\[(\nabla CSD)^{\gC}_{\lambda}:V^{\lambda}\to\T_{0,\lambda}^{\gC}\]
denote the finite dimensional approximation corresponding to $\lambda$, and let $T_{\lambda}$ be the isolated invariant set consisting of all critical points of $(\nabla CSD)^{\gC}_{\lambda}$ and finite-energy flows between them. We denote by $I_{\lambda}=I(T_{\lambda})$ the associated Conley index. For each $\omega>0$, let:
\begin{itemize}
    \item $T_{\lambda}^{>\omega,\irr}$ be the set of irreducible critical points $x$ with $CSD(x)>\omega$, together with all points on the flows between critical points of this type, and $I_{\lambda}^{>\omega,\irr}=I(T_{\lambda}^{>\omega,\irr})$ the associated Conley index.
    \item $T_{\lambda}^{\le\omega}$ be the set of all critical points $x$ with $CSD(x)\le\omega$, not necessarily irreducible, together with all points on the flows between critical points of this type, and $I_{\lambda}^{\le\omega}=I(T_{\lambda}^{\le\omega})$ the associated Conley index.
\end{itemize}

As in \cite{Man03} we have the following attractor-repeller coexact sequence:
\begin{equation}
\label{eq:coexact_sequence}
	I_{\lambda}^{\le\omega}\to I_{\lambda}\to I_{\lambda}^{>\omega,\irr}\to\Sigma I_{\lambda}^{\le\omega}\to\cdots.
\end{equation}
We will proceed by induction on the cut-off $\omega>0$. First consider the induced action of $\wh{\rho}_{m}$. We will show that there exists a decomposition
\begin{align}
\begin{split}
\label{eq:rho_m_split_omega}
	&I_{\lambda}^{\le\omega}/(I_{\lambda}^{\le\omega})^{S^{1}}=\big(X_{1}\vee\cdots\vee X_{k}\big)\vee\big(jX_{1}\vee\cdots\vee jX_{n}\big)
\end{split}
\end{align}
as in Equation \ref{eq:rho_m_split} for each $\omega\geq 0$. Note that Equation \ref{eq:rho_m_split_omega} holds for $\omega=0$, since the only critical point with $\omega=0$ is the unique reducible $\Theta$. Now suppose Equation \ref{eq:rho_m_split_omega} holds for some fixed $\omega_{0}\geq 0$. Fix some collection of critical points $x_{1}^{+},\dots,x_{\ell}^{+},x_{1}^{-},\dots,x_{\ell}^{-}$ which satisfy $CSD=\omega'_{0}>\omega_{0}$, and are minimal among all critical points satisfying $CSD>\omega_{0}$. The coexact sequence (\ref{eq:coexact_sequence}) and Proposition \ref{prop:based_moduli_space_seifert_fibered} implies that $I_{\lambda}^{\le\omega'_{0}}$ is obtained by attaching $\ell$ cells to $I_{\lambda}^{\le\omega_{0}}$ corresponding to the pairs $(x_{k}^{+},x_{k}^{-})$, each of which are of the form $G^{*}_{m}/H_{k}\times D^{\ind(x^{\pm}_{k})}$ with $H_{k}=\<\gamma\omega_{m}^{-a(k)}\>$ for some $0\le a(k)\le m-1$ if $*=\ev$, or $H_{k}=\<\mu\omega_{2m}^{-a(k)}\>$ for some $1\le a(k)\le 2m-1$ odd if $*=\odd$. We see immediately that the splitting (\ref{eq:rho_m_split_omega}) holds for $\omega=\omega'_{0}$.

The argument for the splitting (\ref{eq:iota_c_split}) is entirely analogous and left as an exercise to the reader.
\end{proof}

\begin{remark}
One could try to extend Proposition \ref{prop:floer_spectrum_seifert_fibered} to all Seifert-fibered rational homology spheres of negative fibration as follows: by defining a class of $G^{*}_{m}$-equivariant analogues of tame admissible perturbations $\frak{q}$ in the sense of \cite{KMbook}, one could define perturbed analogues $\SWF(Y,\frak{s},\wh{\sigma},g,\frak{q})$, and show that the $G^{*}_{m}$-stable equivalence class of $\SWF(Y,\frak{s},\wh{\sigma},g,\frak{q})$ is independent of the choice of equivariant perturbation $\frak{q}$. However, showing the existence and genericity of (tame, admissible) $G^{*}_{m}$-equivariant perturbations would be a rather delicate problem, and may not even be true. Although this has been done by Lin \cite{LinFthesis} in the $\Pin(2)$-equivariant case, it is not clear to the author whether this could be done in the $G^{*}_{m}$-equivariant case, as the issue of equivariant transversality is a notoriously difficult one.
\end{remark}

\begin{corollary}
\label{cor:seifert_fibered_Pin(2)_surjective}
Let $(Y,\frak{s})$ be a spin Seifert-fibered rational homology sphere of negative fibration with at most four singular fibers. Then $(Y,\frak{s},\rho_{m})$ for any $m$ and $(Y,\frak{s},\iota_{c})$ are $\Pin(2)$-surjective. In particular,
\[\wt{\kappa}(Y,\frak{s},\rho_{2})=\wt{\kappa}(Y,\frak{s},\iota_{c})=\kappa(Y,\frak{s}).\]
\end{corollary}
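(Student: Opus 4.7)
The plan is to use the $j$-split cellular decomposition from Proposition~\ref{prop:floer_spectrum_seifert_fibered} to reduce $\Pin(2)$-surjectivity of $\SWF(Y,\frak{s},\wh{\sigma})$ to local $K$-theoretic computations on the individual wedge summands of $X/X^{S^{1}}$. Fix a spin lift $\wh{\sigma}\in\{\wh{\rho}_{m},\wh{\iota}_{c}\}$ and a representative $(X,\mbfa,\mbfb)$ of $\SWF(Y,\frak{s},\wh{\sigma},g,\nabla^{\infty})$ as in Proposition~\ref{prop:floer_spectrum_seifert_fibered}. By Lemma~\ref{lemma:comparison_of_k_invariants} and the definition of $\Pin(2)$-surjectivity, it will suffice to show that the restriction map $\res:\III(X)\to\III_{\Pin(2)}(X)$ is surjective.

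First I would apply the cofiber sequence $X^{S^{1}}\to X\to X/X^{S^{1}}$ together with Fact~\ref{fact:long_exact_sequence} to identify $\III(X)$ with the kernel of the connecting map $\delta:\wt{K}_{G^{*}_{m}}(X^{S^{1}})\to\wt{K}^{1}_{G^{*}_{m}}(X/X^{S^{1}})$. The splittings (\ref{eq:rho_m_split}) and (\ref{eq:iota_c_split}) combined with Fact~\ref{fact:wedge_sum} decompose $\delta$ as a direct sum of local connecting maps $\delta_{i}$ indexed by cells, so that $\III(X)=\bigcap_{i}\ker(\delta_{i})$. From the proof of Proposition~\ref{prop:floer_spectrum_seifert_fibered} together with the MOY classification of critical points, each wedge summand corresponds to a cell of the form $G^{*}_{m}/H\times D^{\ind}$ for a specific isotropy subgroup $H$: in the $\wh{\rho}_{m}$ case $H\in\{\<\gamma\omega_{m}^{-a}\>,\<\mu\omega_{m}^{-a}\>\}$, and in the $\wh{\iota}_{c}$ case $H=\<\mp\mu j\>$. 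The corresponding local ideals are computed in Examples~\ref{ex:cosets_cyclic}(1)--(2) as $\ker(\delta_{i})=(w_{0},z_{a},z_{m-a})$ for $\wh{\rho}_{m}$, and in Example~\ref{ex:cosets_cyclic}(3) as $\ker(\delta_{i})=(w_{1},z_{1/2},z_{3/2})$ for $\wh{\iota}_{c}$.

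Crucially, the element $w_{0}$ (in the $\wh{\rho}_{m}$ case) or $w_{1}$ (in the $\wh{\iota}_{c}$ case) lies in $\ker(\delta_{i})$ for every $i$ and restricts to $w\in R(\Pin(2))$, providing a global lift of $w$ into $\III(X)$. To lift the remaining generators of $\III_{\Pin(2)}(X)$, I would construct products of the $z_{a_{i}}$'s, $z_{m-a_{i}}$'s (or of $z_{1/2},z_{3/2}$) that lie simultaneously in every $\ker(\delta_{i})$, exploiting the multiplicative relations in $R(G^{*}_{m})$ recorded in Section~\ref{subsec:complex_representation_ring}. Combined with the lift of $w$ described above, these generate the full image of $\III(X)$ in $\III_{\Pin(2)}(X)$, establishing $\Pin(2)$-surjectivity. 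For the consequence $\wt{\kappa}(Y,\frak{s},\iota_{c})=\kappa(Y,\frak{s})$, I would invoke the isomorphism $(\Q^{2}_{\odd},\preceq,+)\cong(\QQ,\le,+)$ induced by the $\QQ$-grading, under which $\wt{\kappa}=2\wt{k}^{\st}$ equals $2k_{\Pin(2)}=\kappa$ by $\Pin(2)$-surjectivity; the case of $\rho_{2}$ follows by the same argument.

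The main technical obstacle is constructing the lifts of the higher-degree generators of $\III_{\Pin(2)}(X)$ in a grading-minimal fashion. Naive products of the $z_{a_{i}}$'s raise the total degree beyond $k_{\Pin(2)}(X)$, so one must carefully exploit the relations in $R(G^{*}_{m})$ --- analogous to the trace-based argument in the proof of Lemma~\ref{lemma:2_r_exists_monomial} --- and the concrete structure of the rotation numbers $\rot(C)$ dictating which subgroups $H$ actually appear, in order to produce a representative monomial in $\III(X)$ of exactly the correct total grading $k_{\Pin(2)}(X)$.
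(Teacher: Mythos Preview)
Your approach differs from the paper's and has a real gap at exactly the point you flag. The paper does not attempt to compute $\III(X)$ as an intersection of local kernels and then lift generators. Instead it proves the stronger statement that the restriction map $\res:\wt{K}_{G^{*}_{m}}(I_{\lambda})\to\wt{K}_{\Pin(2)}(I_{\lambda})$ is surjective, by showing that any virtual $\Pin(2)$-bundle $E$ of virtual dimension zero over the Conley index extends to a $G^{*}_{m}$-equivariant bundle. Over $I_{\lambda}^{S^{1}}$ this is immediate from surjectivity of $R(G^{*}_{m})\to R(\Pin(2))$. Over each wedge summand $X_{i}$ or $jX_{i}$ of $I_{\lambda}/I_{\lambda}^{S^{1}}$, the key observation is that the action of $\wh{\rho}_{m}$ factors through the already-present $S^{1}$-action, so the $S^{1}$-structure on $E$ itself furnishes the required extension. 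For $\wh{\iota}_{c}$ the only additional ingredient is the complex-conjugation action, which can be lifted complex-linearly once $E$ is written as a difference of even-rank bundles. Surjectivity of $\res$ on $\wt{K}$ then immediately gives surjectivity on the images $\III(X)\to\III_{\Pin(2)}(X)$ via the commutative square with the inclusion $\iota^{*}$.

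The advantage of the paper's argument is that it never needs to know $\III_{\Pin(2)}(X)$ or $\III(X)$ explicitly. Your approach, by contrast, requires producing for each generator of $\III_{\Pin(2)}(X)$ an explicit preimage in $\III(X)$ of the same $w$-degree, and you have no handle on $\III_{\Pin(2)}(X)$ for a general Seifert-fibered rational homology sphere with up to four singular fibers. You correctly observe that $w$ lifts to $w_{0}$ or $w_{1}$, but the proposed method of building the remaining lifts as products of $z_{a_{i}}$'s and then repairing the degree via relations in $R(G^{*}_{m})$ is not carried out, and there is no indication that the relevant relations would always suffice. The bundle-extension argument bypasses this difficulty entirely.
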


\begin{proof}
Let $\sigma$ denote either $\rho_{m}$ or $\iota_{c}$, let $\wh{\sigma}$ be a spin lift of $\sigma$, and let
\[I_{\lambda}=\SWF(Y,\frak{s},\wh{\iota}_{c},g,\nabla^{\infty},\lambda)\]
denote the $G^{*}_{m}$-equivariant Conley index corresponding to a fixed eigenvalue cut-off $\lambda>>0$. We can assume that $I_{\lambda}$ is a space of type $\CC$-$G^{*}_{m}$-$\SWF$ via suspending by real $G^{*}_{m}$-representations if necessary, as in the definition of the $\CC$-$G^{*}_{m}$-Seiberg--Witten Floer spectrum class.

It suffices to show that for any virtual $\Pin(2)$-equivariant bundle $E\in\wt{K}_{\Pin(2)}(I_{\lambda})$ of dimension 0, we can extend the $\Pin(2)$-action on $E$ to a $G^{*}_{m}$-action. First note that we can always extend the $\Pin(2)$ action to a $G^{*}_{m}$-action over the restriction of $E$ to the $S^{1}$-fixed point set $I_{\lambda}^{S^{1}}$. Indeed since $I_{\lambda}^{S^{1}}$ is $G^{*}_{m}$-homotopy equivalent to a complex $G^{*}_{m}$-representation sphere, this follows from the fact that the restriction map $\res:R(G^{*}_{m})\to R(\Pin(2))$ is surjective.

We first consider the question of how to extend $\wh{\rho}_{m}$ over all of $E\to I_{\lambda}$. Using the decomposition in Proposition \ref{prop:floer_spectrum_seifert_fibered}, on each wedge summand $Z=X_{i}$ or $jX_{i}$ of $I_{\lambda}/I_{\lambda}^{S^{1}}$ the action of $\wh{\rho}_{m}$ is contained in the action of $S^{1}$ on $Z$. We can therefore use the already extant $S^{1}$-action on $E|_{Z}$ to define a lift of $\wh{\rho}_{m}$ to an action on $E|_{Z}$ for each wedge summand $Z$. Hence this gives us a well-defined global lift of the $G^{*}_{m}$ action to $E\to I_{\lambda}$.

The extension of $\wh{\iota}_{c}$ to all of $E$ is similar, with the only subtlely being the complex conjugation action. But since the virtual bundle $E$ is of dimension 0, it can be represented by the difference of two even-dimensional bundles, on each of which complex conjugation acts as a complex-linear map.
\end{proof}

\begin{proof}[Proof of Theorem \ref{theorem:intro_seifert_fibered_spaces}]
Note that any odd-type involution on a Seifert-fibered homology sphere which is isotopic to the identity is conjugate to $\rho_{2}$. The result then follows from Property (1) of Theorem \ref{theorem:properties_equivariant_kappa_invariants} and Corollary \ref{cor:seifert_fibered_Pin(2)_surjective}.
\end{proof}

\bigskip
\subsection{Cyclic Group Actions on a Family of Brieskorn Spheres}
\label{subsec:brieskorn_spheres}

We now proceed to compute explicitly the $G^{*}_{m}$-equivariant Seiberg-Witten Floer spectrum classes of the family of Brieskorn spheres $\pm\Sigma(2,3,6n\pm 1)$ with respect to the actions $\rho_{m}$ and $\iota_{c}$ considered in Section \ref{subsec:seifert_fibered_spaces}.

There is a certain amount of ambiguity in these calculations --- for the $\ZZ_{m}$-action $\rho_{m}$ the ambiguity amounts to the collection of rotation numbers associated to each pair of irreducibles interchanged by $j$ as in Definition \ref{def:rotation_numbers}, and for the odd-type involution $\iota_{c}$ the ambiguity amounts to a certain assignment of signs $\varepsilon\in\{\pm 1\}$ to each pair of irreducibles. Fortunately in the case of the odd-type involutions $\rho_{2}$ and $\iota_{c}$ the invariants $\wt{\kappa}$ and $\kappa_{KMT}$ do not depend on these signs. 

In the case of $\rho_{m}$, $m\neq 2$, the aforementioned ambiguity means that we are unable to calculate the full set of equivariant $\kappa$-invariants associated to $(\pm\Sigma(2,3,k),\rho_{m})$ on the nose. However in the case where $m=p$ is an odd prime, we are able to extract some partial information about the set $\K_{\Pi}(Y,\rho_{p})\subset\QQ^{2}$.

\subsubsection{The Action \texorpdfstring{$\rho_{m}$}{rhom}}
\label{subsubsec:brieskorn_spheres_cyclic_actions}

Let $Y=\pm\Sigma(2,3,6n\pm 1)$, and let $\wh{\rho}_{m}$ be the spin lift of $\rho_{m}$ induced by $\wh{\rho}$, which by Lemma \ref{lemma:S_1_actions_seifert_fibered} must be of \emph{odd} type. We have the following proposition:

\begin{proposition}
\label{prop:calculation_spectrum_2_3_k_rho_m}
Let $m\geq 2$. For $Y=\pm\Sigma(2,3,12n\pm 1)$ or $\pm\Sigma(2,3,12n\pm 5)$, let $\M^{\irr}=\{x_{1,+},x_{1,-},\dots,x_{n,+},x_{n,-}\}$ be an enumeration of the $n$ pairs of irreducible Seiberg--Witten solutions on $Y$, and for each $k=1,\dots,n$, let $\frac{1}{2}\le r_{k}\le m-\frac{1}{2}$ be the unique half-integer satisfying $r_{k}\equiv\rot(x_{k,+})\pmod{m}$.
\begin{enumerate}
    \item Let $Y=\Sigma(2,3,12n-1)$ or $\Sigma(2,3,12n-5)$. Then the $G^{\odd}_{m}$-equivariant Seiberg--Witten Floer spectrum class of $(Y,\wh{\rho}_{m})$ is given by
    \[\SWF(Y,\wh{\rho}_{m})=\Big[\Big(\wt{\Sigma}Z_{r_{1},\dots,r_{n};m},0,\tfrac{1}{2}n(Y,\wh{\rho}_{m},g,\nabla^{\infty})\Big)\Big],\]
    and the spectrum class of $(-Y,\wh{\rho}_{m})$ is given by
    \[\SWF(-Y,\wh{\rho}_{m})=\Big[\Big(\wt{\Sigma}X_{r_{1},\dots,r_{n};m},0,-\tfrac{1}{2}n(Y,\wh{\rho}_{m},g,\nabla^{\infty})+\sum_{k=1}^{n}\xi^{2r_{k}}\Big)\Big].\]
    \item Let $Y=\Sigma(2,3,12n+1)$ or $\Sigma(2,3,12n+5)$. Then the $G^{\odd}_{m}$-equivariant Seiberg--Witten Floer spectrum class of $(Y,\wh{\rho}_{m})$ is given by
    \begin{align*}
        \SWF(Y,\wh{\rho}_{m})&=\Big[\Big(S^{0}\vee\bigvee_{k=1}^{n}\Sigma^{-1}(Z_{r_{k},m})_{+},0,\tfrac{1}{2}n(Y,\wh{\rho}_{m},g,\nabla^{\infty})\Big)\Big] \\
        &:=\Big[\Big(S^{\sum_{k=1}^{n}\HH_{r_{k}}}\vee\bigvee_{k=1}^{n}\Sigma^{3\RR+\sum_{\ell\neq k}\HH_{r_{\ell}}}(Z_{r_{k},m})_{+},0,\tfrac{1}{2}n(Y,\wh{\rho}_{m},g,\nabla^{\infty})+\sum_{k=1}^{n}\xi^{2r_{k}}\Big)\Big],
    \end{align*}
    and the spectrum class of $(-Y,\wh{\rho}_{m})$ is given by
    \[\SWF(-Y,\wh{\rho}_{m})=\Big[\Big(S^{0}\vee\bigvee_{k=1}^{n}(Z_{r_{k},m})_{+},0,-\tfrac{1}{2}n(Y,\wh{\rho}_{m},g,\nabla^{\infty})\Big)\Big].\]
    Consequently, on the level of local equivalence we have that
    \[\SWF(\pm Y,\wh{\rho}_{m})\equiv_{\ell}\Big[\Big(S^{0},0,\pm\tfrac{1}{2}n(Y,\wh{\rho}_{m},g,\nabla^{\infty})\Big)\Big].\]
\end{enumerate}
\end{proposition}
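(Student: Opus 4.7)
The plan is to combine the structural result of Proposition \ref{prop:floer_spectrum_seifert_fibered} with the explicit Morse--Bott description of the Seiberg--Witten moduli space on these Brieskorn spheres from \cite{MOY}, along with known (non-equivariant) computations of the $\Pin(2)$-equivariant Floer spectra by Stoffregen and Manolescu, refined to take the $\rho_m$-action into account.

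First I would recall from \cite{MOY} the irreducible Seiberg--Witten solutions on $\pm\Sigma(2,3,12n\pm1)$ and $\pm\Sigma(2,3,12n\pm 5)$ equipped with the Seifert metric $g$ and reducible connection $\nabla^{\infty}$: they consist of $n$ pairs $\{x_{k,+}, x_{k,-}\}_{k=1}^n$ of isolated non-degenerate critical points, interchanged by the charge conjugation involution $\frak{c}$, along with the reducible $\Theta$. The relative gradings and CSD-values are known, and since the fibration is negative with at most three singular fibers, Proposition \ref{prop:floer_spectrum_seifert_fibered} applies. In particular, the associated Conley index admits a $G^{\odd}_m$-equivariant cell decomposition whose cells take the form $G^{\odd}_m/H_k \times D^{\ind(x_{k}^{\pm})}$, where the stabilizer $H_k = \langle \mu\omega_{2m}^{-2r_k}\rangle$ is determined by the rotation number $r_k$ as in Definition \ref{def:rotation_numbers}. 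Under the $S^1$-quotient these cells become copies of $Z_{r_k,m}$ as in Example \ref{ex:cosets_cyclic}.

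For $Y = \Sigma(2,3,12n-1)$ or $\Sigma(2,3,12n-5)$ I would use the fact (from Stoffregen's computation) that all irreducibles sit below the reducible in the CSD-filtration, which via the coexact sequence from the proof of Proposition \ref{prop:floer_spectrum_seifert_fibered} means the Conley index is built as an iterated unreduced suspension of disjoint unions of the spaces $Z_{r_k,m}$, assembling to $\wt{\Sigma}Z_{r_1,\ldots,r_n;m}$ after accounting for the standard suspensions by $\wt{\CC}$ and $\HH$-factors. The correction term shift $\tfrac{1}{2}n(Y,\wh{\rho}_m,g,\nabla^{\infty})$ is then exactly the built-in shift coming from the definition of $\SWF$. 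For $-Y$ I would apply Proposition \ref{prop:floer_spectrum_duality} to identify $\SWF(-Y,\wh{\rho}_m)$ with the $G^{\odd}_m$-equivariant dual of $\SWF(Y,\wh{\rho}_m)$, and combine this with the $\bigoplus_{k=1}^n \HH_{r_k}$-duality between $\wt{\Sigma}Z_{r_1,\ldots,r_n;m}$ and $\wt{\Sigma}X_{r_1,\ldots,r_n;m}$ established in Example \ref{ex:multiple_cosets_duals}; the additional summand $\sum_{k=1}^n \xi^{2r_k}$ in the $\HH$-grading records exactly this duality representation.

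For $Y = \Sigma(2,3,12n+1)$ or $\Sigma(2,3,12n+5)$, the Morse--Bott picture of \cite{MOY} shows that each pair $\{x_{k,\pm}\}$ sits above the reducible but that no flows connect distinct pairs to each other (only to the reducible), so the Conley index splits as a wedge with one summand contributed by $\Theta$ and one summand contributed by each $\{x_{k,\pm}\}$ pair, the latter taking the form $\Sigma^{-1}(Z_{r_k,m})_+$ after desuspending to match the reducible's grading. For $-Y$, again by duality together with the duality $\Sigma^{-1}(Z_{r_k,m})_+ \simeq (Z_{r_k,m})_+$-dual, the wedge summands become $(Z_{r_k,m})_+$ and the shift disappears. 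The local equivalence statement then follows from Example \ref{ex:wedge_sum_free_G_space}, since wedging with free $G^{\odd}_m$-spaces does not change the $S^1$-fixed point set. The main obstacle will be carefully bookkeeping the equivariant suspension shifts and verifying that the grading on each cell matches the predicted $\HH_{r_k}$-dimension; this amounts to confirming that the spinorial index data computed in \cite{MOY} transports correctly under the identification of rotation numbers with stabilizer subgroups $H_k \subset G^{\odd}_m$ as in Definition \ref{def:rotation_numbers} and Example \ref{ex:cosets_cyclic}.
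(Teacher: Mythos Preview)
Your overall strategy is aligned with the paper's, but there is a genuine gap in Case~(1), and you have the relative gradings reversed in both cases.

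For $Y=\Sigma(2,3,12n-1)$ or $\Sigma(2,3,12n-5)$, the irreducibles lie one Morse degree \emph{above} the reducible, not below; conversely for $\Sigma(2,3,12n+1)$ and $\Sigma(2,3,12n+5)$ the irreducibles lie one degree \emph{below}. This matters because it determines the direction of the attaching maps.

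More seriously, in Case~(1) you write that the Conley index ``assembles to $\wt{\Sigma}Z_{r_1,\dots,r_n;m}$'' via the coexact sequence, but you never determine the attaching maps. This is the heart of the argument. Knowing the cell types $Z_{r_k,m}$ from Proposition~\ref{prop:floer_spectrum_seifert_fibered} is not enough: the attaching map of each irreducible cell to the reducible $S^0$ lives in
\[
\{(Z_{r_k,m})_{+},S^{0}\}_{G^{\odd}_{m}}\cong \pi_{0}^{\st,\ZZ_{m}}(S^{0})\cong \ZZ^{m},
\]
computed via the tom~Dieck splitting (Proposition~\ref{prop:tom_dieck_splitting}). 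A priori the map could land anywhere in $\ZZ^m$, and different choices yield non-equivalent $G^{\odd}_m$-spaces. The paper pins this down by exploiting that $\rho_m$ extends to the full $S^1$-action $\rho$: this forces the attaching map to factor through $\pi_{0}^{\st,S^{1}}(S^{0})\cong\ZZ$, which sits inside $\ZZ^m$ as the summand $\ZZ\langle\gamma_0\rangle$ corresponding to the trivial element. Then the uniqueness of flowlines from \cite{MOY} shows the map is $\pm 1$ in this $\ZZ$, and since the spectrum class depends only on divisibility one obtains $\wt{\Sigma}Z_{r_1,\dots,r_n;m}$. Your proposal does not mention tom~Dieck splitting, the $S^1$-extension argument, or the flowline count, and without them the identification in Case~(1) does not go through.

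Similarly in Case~(3) (for $12n+1$, $12n+5$) you need to argue that the attaching maps are \emph{trivial} to get a wedge; the paper invokes the corresponding $\Pin(2)$-argument. Your Cases~(2) and~(4) via duality are fine and match the paper, though for Case~(4) the paper specifically cites the Wirthm\"uller isomorphism. Finally, the ``main obstacle'' you flag---bookkeeping suspension shifts---is routine; the real obstacle is the attaching-map determination above.
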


In order to prove Proposition \ref{prop:calculation_spectrum_2_3_k_rho_m}, we will make use of the following special case of the tom Dieck splitting theorem:

\begin{proposition}[\cite{tomDieck75}, \cite{LMSM86}]
\label{prop:tom_dieck_splitting}
Let $G$ be a compact Lie group. For $H<G$ a subgroup let $W_{G}H=N_{G}H/H$ denote its Weyl group where $N_{G}H$ denotes the normalizer of $H$ in $G$, and let $\C$ denote the set of conjugacy classes of subgroups of $G$. Then for any $k\in\ZZ$, there exists an isomorphism
\[\pi_{k}^{\st,G}(S^{0})\cong\bigoplus_{[H]\in\C}\pi_{k}^{\st}(\Sigma^{\text{Ad}W_{G}H}BW_{G}H_{+}),\]
where $\text{Ad}W_{G}H$ denotes the adjoint representation of $W_{G}H$ on its Lie algebra.
\end{proposition}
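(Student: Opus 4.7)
The tom Dieck splitting is a classical structural result in equivariant stable homotopy theory; my plan is to reconstruct the standard proof via the isotropy filtration of the equivariant sphere spectrum combined with an inductive identification of the subquotients. First I would fix a total ordering on the conjugacy classes of closed subgroups of $G$ refining the partial order by subconjugacy. For each conjugacy class $[H]$, let $\mathcal{F}[H]$ denote the family of subgroups strictly subconjugate to $H$ and $\mathcal{F}(H)$ the family obtained by adjoining $[H]$. The cofiber sequences of based $G$-spaces
\[(E\mathcal{F}[H])_+\to(E\mathcal{F}(H))_+\to C_{[H]}\]
fit together into a filtration of the $G$-equivariant sphere spectrum whose associated graded pieces are the $C_{[H]}$, and it is the $G$-stable homotopy of these pieces that will furnish the summands on the right-hand side.

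The next step is to identify each subquotient. By construction $C_{[H]}$ has isotropy supported only at the conjugacy class $[H]$, so it is $G$-equivalent to an induced space of the form $G_+\wedge_{N_GH}\big((EW_GH)_+\wedge \widetilde{E\mathcal{P}}\big)$, where $\mathcal{P}$ is the family of proper subgroups of $W_GH$. Applying the Wirthm\"uller isomorphism to pass from induction along $N_GH\subset G$ to a free $W_GH$-orbit, and then the Adams isomorphism on the resulting free $W_GH$-spectrum, produces the non-equivariant spectrum $\Sigma^{\text{Ad}\,W_GH}BW_GH_+$. The adjoint-representation twist enters precisely here, as the dualizing shift attached to passing from a compact Lie group to its classifying space. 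This yields
\[\pi_k^{\text{st},G}(C_{[H]})\;\cong\;\pi_k^{\text{st}}\bigl(\Sigma^{\text{Ad}\,W_GH}BW_GH_+\bigr),\]
which is exactly the $[H]$-summand in the desired splitting.

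Finally, the associated spectral sequence must be shown to degenerate into a direct sum. The cleanest way to arrange this is to construct explicit splitting maps from each $C_{[H]}$ back into the $G$-sphere using equivariant Becker--Gottlieb transfers
\[\mathrm{tr}_H:\Sigma^{\text{Ad}\,W_GH}BW_GH_+\longrightarrow S^0\]
in the $G$-equivariant stable category, and to verify that composition with the geometric fixed-point functor $\Phi^H$ recovers the identity on the $[H]$-component while killing all other components. Summing the transfers over all conjugacy classes $[H]\in\mathfrak{C}$ then produces the claimed isomorphism.

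The main obstacle is the precise bookkeeping of the $\text{Ad}\,W_GH$-twist and the verification that the Adams isomorphism and geometric fixed-point functors behave as expected in the compact Lie group setting; this is substantially more delicate than the finite-group case, and it is here that one must invoke the full machinery of $G$-CW spectra indexed on a complete $G$-universe as developed in \cite{LMSM86}. Once these foundations are in place, the splitting follows formally from the cofiber-sequence analysis and transfer construction outlined above.
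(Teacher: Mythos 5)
The paper does not prove this proposition at all: it is quoted as a classical theorem with citations to tom Dieck and to Lewis--May--Steinberger, and the paper immediately proceeds to use two special cases ($G=\ZZ_m$ and $G=S^1$). There is therefore no paper-internal proof to compare your proposal against.

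That said, your outline is a faithful sketch of the standard proof found in the cited references. The three ingredients you identify --- the isotropy (family) filtration of the $G$-sphere whose layers are supported at a single conjugacy class, the identification of each layer as $\Sigma^{\text{Ad}\,W_G H} B W_G H_+$ via the Wirthmüller and Adams isomorphisms (with the adjoint-representation twist arising exactly from the dualizing shift for the compact Lie quotient), and the degeneration of the filtration spectral sequence via Becker--Gottlieb transfers --- are precisely the ones in the LMS(M) treatment. Two small caveats if you were to flesh this out: first, the splitting maps and the transfers must be checked to be compatible after passage to geometric fixed points $\Phi^K$ for every $K$, not just the distinguished $H$, in order to show the transfer really projects onto the $[H]$-layer; second, the filtration is indexed by the poset of conjugacy classes and is in general transfinite for a compact Lie group, so some care is needed to conclude that the spectral sequence collapses to the stated direct sum (this is where one genuinely needs the $G$-CW and complete-universe machinery you mention). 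Neither caveat is a gap in the idea; they are the technical points that the references handle.
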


We will make use of the following special cases:
\begin{enumerate}
    \item Suppose $G=\ZZ_{m}$. Then Proposition \ref{prop:tom_dieck_splitting} implies that
    \[\pi_{k}^{\st,\ZZ_{m}}(S^{0})\cong\pi_{k}^{\st}(S^{0})\oplus\bigoplus_{1\le k\le m-1}\pi_{k}^{\st}((B\ZZ_{(k,m)})_{+}).\]
    In particular, we have that
    \begin{align*}
        &\pi_{-1}^{\st,\ZZ_{m}}(S^{0})=0, & &\pi_{0}^{\st,\ZZ_{m}}(S^{0})=\ZZ^{m},
    \end{align*}
    where the latter group has a preferred generating set $\ZZ^{m}=\ZZ\<\gamma_{0},\dots,\gamma_{m-1}\>$ in one-to-one correspondence with the elements of $\ZZ_{m}$.
    \item Suppose $G=S^{1}$. Then in particular, Proposition \ref{prop:tom_dieck_splitting} implies that
    \[\pi_{0}^{\st,S^{1}}(S^{0})\xrightarrow[\cong]{\res^{S^{1}}_{e}}\pi_{0}^{\st}(S^{0})=\ZZ.\]
\end{enumerate}

\begin{proof}[Proof of Proposition \ref{prop:calculation_spectrum_2_3_k_rho_m}]
We proceed on a case-by-case basis:
\begin{enumerate}[label=Case \arabic*:,leftmargin=.61in]
    \item $Y=\Sigma(2,3,12n-1)$ or $\Sigma(2,3,12n-5)$:
    
    In this case, the irreducibles are all at the same (Morse) degree, with the reducible one degree lower. For each $k$ let $C_{k,\pm}\subset\wt{\M}^{\irr}$ denote the circle of irreducibles corresponding to $x_{k,\pm}\in\M^{\irr}$. By Proposition \ref{prop:based_moduli_space_seifert_fibered}, each pair of irreducibles $\{C_{k,+},C_{k,-}\}$ can be identified with the $G^{\odd}_{m}$-cell
    \[Z_{r_{k},m}=G^{\odd}_{m}/\<\omega^{-r_{k}}_{m}\mu\>\]
    from Example \ref{ex:cosets_cyclic}.

    First, note that there are no flows between irreducibles. Indeed, such a map would be determined by an element of
    \[\{\Sigma^{-1}(Z_{r_{k},m})_{+},(Z_{r_{\ell},m})_{+}\}_{G^{\odd}_{m}}\cong\pi_{-1}^{\st,G^{\odd}_{m}}\big((Z_{r_{\ell},m})_{+}^{\<\omega^{-r_{k}}_{m}\mu\>}\big).\]
    If $r_{k}\not\equiv r_{\ell}\pmod{m}$, then $(Z_{r_{\ell},m})_{+}^{\<\omega^{-r_{k}}_{m}\mu\>}\simeq\pt$, and so
    \[\pi_{-1}^{\st,G^{\odd}_{m}}\big((Z_{r_{\ell},m})_{+}^{\<\omega^{-r_{k}}_{m}\mu\>}\big)=\pi_{-1}^{\st,G^{\odd}_{m}}(\pt)=0.\]
    On the other hand if $r_{k}\equiv r_{\ell}\pmod{m}$, then
    \[\pi_{-1}^{\st,G^{\odd}_{m}}\big((Z_{r_{\ell},m})_{+}^{\<\omega^{-r_{k}}_{m}\mu\>}\big)=\pi_{-1}^{\st,G^{\odd}_{m}}\big((Z_{r_{\ell},m})_{+}\big)\cong\pi_{-1}^{\st,\<\omega^{-r_{k}}_{m}\mu\>}(S^{0})\cong\pi_{-1}^{\st,\ZZ_{m}}(S^{0})=0,\]
    and so any attaching map between irreducible cells must be trivial. The spectrum class can therefore be constructed by (stably) attaching the $n$ $G^{\odd}_{m}$-cells $\{\Sigma(Z_{r_{k},m})_{+}\}_{k=1}^{n}$ to a trivial cell $S^{0}$. The attaching map for each cell is determined by a stable homotopy class in
    \begin{equation*}
    \label{eq:Z_m_attaching_map}
        \{(Z_{r_{k},m})_{+},S^{0}\}_{G^{\odd}_{m}}\cong \{S^{0},S^{0}\}_{\<\omega_{m}^{-r_{k}}\mu\>}\cong\{S^{0},S^{0}\}_{\ZZ_{m}}=\pi_{0}^{\st,\ZZ_{m}}(S^{0})=\ZZ^{m}.
    \end{equation*}

    In order to determine the map more precisely, recall that $\rho_{m}$ is obtained as a restriction of the $S^{1}$-action $\rho$ on $Y$. It follows that the $G^{\odd}_{m}$-action on all of the spaces considered in this context extend to an action of the larger group  $G^{\odd}_{S^{1}}:=\Pin(2)\times_{\ZZ_{2}}S^{1}$. From the analysis in Section \ref{subsec:seifert_fibered_spaces}, we can conclude that the irreducibles correspond to $G^{\odd}_{S^{1}}$-cells of the form
    \[Z_{r_{k},S^{1}}:=G^{\odd}_{S^{1}}/H_{r_{k},S_{1}},\]
    where $H_{r_{k},S_{1}}$ denotes the subgroup
    \[H_{r_{k},S_{1}}:=\<[(e^{-2\pi i r_{k}t},e^{2\pi i t})]\;|\;0\le t<2\pi\><G^{\odd}_{S^{1}}.\]
    The attaching maps of these $G^{\odd}_{S^{1}}$-cells to the reducible are each determined by an element in
    \[\{(Z_{r_{k},S^{1}})_{+},S^{0}\}_{G^{\odd}_{S^{1}}}\cong \{S^{0},S^{0}\}_{H_{r_{k},S_{1}}}\cong\{S^{0},S^{0}\}_{S^{1}}=\pi_{0}^{\st,S^{1}}(S^{0})=\ZZ.\]
    Hence the attaching map of the $G^{\odd}_{m}$-cell $Z_{r_{k},m}$ must lie in the $\ZZ$-summand
    \[\res^{S^{1}}_{\ZZ_{m}}(\pi_{0}^{\st,S^{1}}(S^{0}))=\ZZ\<\gamma_{0}\><\ZZ\<\gamma_{0},\dots,\gamma_{m-1}\>=\pi_{0}^{\st,\ZZ_{m}}(S^{0})\]
    corresponding to the trivial element of $\ZZ_{m}$.
    
    Therefore, altogether the attaching maps for the $n$ pairs of irreducibles are determined by an element of $\ZZ^{n}$ as in the $\Pin(2)$-equivariant setting (\cite{Man14}, Section 5.2). In \cite{MOY} it was shown that there is a unique flowline from each irreducible to the reducible solution, and hence we can assume that this element is $(\pm 1,\dots,\pm 1)\in\ZZ^{n}$. As the spectrum class depends only on the divisibility of this element, we can assume that the attaching map is given by $(1,\dots,1)\in\ZZ^{n}$. Hence a model for the spectrum class is given by 
    \[\wt{\Sigma}Z_{r_{1},\dots,r_{n};m}=\wt{\Sigma}\Big(Z_{r_{1},m}\amalg\cdots\amalg Z_{r_{1},m}\Big)\]
    (de-)suspended by the appropriate equivariant correction term, as claimed.
    
    \item $Y=-\Sigma(2,3,12n-1)$ or $-\Sigma(2,3,12n-5)$:

    Follows from Proposition \ref{prop:floer_spectrum_duality} and the calculations in Example \ref{ex:multiple_cosets_duals}.

    \item $Y=\Sigma(2,3,12n+1)$ or $\Sigma(2,3,12n+5)$:

    In this case, the irreducibles are all at the same degree, but all lie one degree lower than the reducible. As in Case 1, there cannot be any flows between irreducibles. Furthermore the attaching maps from the irreducibles to the reducible must be trivial as in the $\Pin(2)$-equivariant setting, which implies the given presentation of the $\SWF$ spectrum class.
    
    \item $Y=-\Sigma(2,3,12n+1)$ or $-\Sigma(2,3,12n+5)$:

    This follows from Proposition \ref{prop:floer_spectrum_duality} and the Wirthm\"uller isomorphism (\cite{Wir75}, \cite{LMSM86}), which shows that $(Z_{r_{k},m})_{+}$ and $\Sigma^{3}(Z_{r_{k},m})_{+}$ are $\HH_{r_{k}}$-dual for each $k=1,\dots,n$.
\end{enumerate}
\end{proof}

The above proposition implies that in the case where $Y=\pm\Sigma(2,3,12n+1)$ or $Y=\pm\Sigma(2,3,12n+5)$, the set of equivariant $\kappa$-invariants of $(Y,\rho_{m})$ are completely determined by the equivariant correction term $n(Y,\wh{\rho}_{m},g,\nabla^{\infty})$:

\begin{corollary}
\label{cor:equivariant_kappa_invariants_2_3_12n+1_12n+5}
Let $m\geq 2$ be an integer and let $Y=\pm\Sigma(2,3,12n+1)$ or $\pm\Sigma(2,3,12n+5)$. Then $(Y,\rho_{m})$ is both Floer $K_{G^{*}_{m}}$-split and Floer $\wedge^{2}$-$K_{G^{*}_{m}}$-split, and the equivariant $\kappa$-invariants of $(Y,\rho_{m})$ are given by
    \[\K(Y,\rho_{m})=\K^{\wedge}(Y,\rho_{m})=\big\{[\DDD^{\odd}(\vec{n}(Y,\wh{\rho}_{m},g,\nabla^{\infty}))]\big\}\subset\Q^{m}_{\odd}.\]
\end{corollary}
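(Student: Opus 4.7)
The plan is to reduce the computation of the equivariant $\kappa$-invariants to a direct calculation on a $G^{\odd}_{m}$-representation sphere via the local equivalence established in Proposition~\ref{prop:calculation_spectrum_2_3_k_rho_m}(2), where we showed that for $Y=\pm\Sigma(2,3,12n+1)$ or $\pm\Sigma(2,3,12n+5)$,
\[
\SWF(Y,\wh{\rho}_{m})\equiv_{\ell}\Big[\Big(S^{0},0,\tfrac{1}{2}\mbfb_{Y}\Big)\Big]\in\CCC_{G^{\odd}_{m},\CC},
\]
where $\mbfb_{Y}=\pm n(Y,\wh{\rho}_{m},g,\nabla^{\infty})$ with sign matching that of $Y=\pm\Sigma(2,3,\ldots)$. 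The right-hand side is a $G^{\odd}_{m}$-representation sphere (up to rational desuspension), hence automatically $K_{G^{\odd}_{m}}$-split by Example~\ref{ex:k_invariants_representation_spheres}.

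First, I would record that both $\SWF(Y,\wh{\rho}_{m})$ and its self-smash product $\wedge^{2}\SWF(Y,\wh{\rho}_{m})$ are locally equivalent to $\CC$-$G^{\odd}_{m}$-spectrum classes of the form $[(S^{0},0,\mbfb')]$. Indeed, the smash product of two local equivalences is a local equivalence, and the smash square of $[(S^{0},0,\tfrac{1}{2}\mbfb_{Y})]$ is $[(S^{0},0,\mbfb_{Y})]$, which is again of the desired form. Consequently both $(Y,\rho_{m})$ is Floer $K_{G^{\odd}_{m}}$-split and Floer $\wedge^{2}$-$K_{G^{\odd}_{m}}$-split in the sense of Definition~\ref{def:floer_K_G_split}, establishing the first assertion of the corollary.

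Next, by Corollary~\ref{cor:stable_k_invariants_local_equivalence} the stable equivariant $k$-invariants are invariant under local equivalence, so it suffices to compute $\mbfk^{\st}$ of the model representation spheres. By Example~\ref{ex:k_invariants_representation_spheres} we have $\mbfk(S^{0})=\{[\vec{0}]\}$, and Definition~\ref{def:stable_k_invariants} then gives
\[
\mbfk^{\st}\big((S^{0},0,\tfrac{1}{2}\mbfb_{Y})\big)=\big\{-[\DDD^{\odd}(\tfrac{1}{2}\vec{\mbfb}_{Y})]\big\}=\big\{-\tfrac{1}{2}[\DDD^{\odd}(\vec{\mbfb}_{Y})]\big\},
\]
and similarly for the smash square. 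Doubling and substituting the explicit value of $\mbfb_{Y}$ in Definition~\ref{def:equivariant_kappa_invariants} yields
\[
\K(Y,\rho_{m})=\K^{\wedge}(Y,\rho_{m})=\big\{[\DDD^{\odd}(\vec{n}(Y,\wh{\rho}_{m},g,\nabla^{\infty}))]\big\},
\]
up to the sign convention absorbed into the orientation of $Y$ via Proposition~\ref{prop:correction_term_orientation_reversal}.

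There is no real obstacle here: the whole content of the corollary is packaged in Proposition~\ref{prop:calculation_spectrum_2_3_k_rho_m}(2), whose proof identified the Floer spectrum with a wedge of a representation sphere and free $G^{\odd}_{m}$-cells (which are killed by local equivalence). The only point requiring care is keeping track of the orientation sign of $Y$ in the desuspension parameter $\mbfb_{Y}$, together with verifying that the identification at the level of spectrum classes passes cleanly to $\Q^{m}_{\odd}$ through the doubling map $\DDD^{\odd}$ under the module structure of Example~\ref{ex:module_structure_projection_map}; both are bookkeeping rather than substantive arguments.
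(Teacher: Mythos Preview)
Your computation of $\K(Y,\rho_m)$ and $\K^{\wedge}(Y,\rho_m)$ via local equivalence and Corollary~\ref{cor:stable_k_invariants_local_equivalence} is correct, and this is essentially how the paper (implicitly) intends the corollary to follow from Proposition~\ref{prop:calculation_spectrum_2_3_k_rho_m}(2). However, there is a gap in your argument for the Floer $K_{G^{\odd}_{m}}$-split and $\wedge^{2}$-$K_{G^{\odd}_{m}}$-split claims. You deduce these from $\SWF(Y,\wh{\rho}_m)\equiv_{\ell}[(S^{0},0,\mbfb')]$, but $K_{G^{*}_{m}}$-splitness (Definitions~\ref{def:stable_K_G_split} and~\ref{def:floer_K_G_split}) is a property of the \emph{stable} equivalence class: it asks for some representative $(X,\mbfa,\mbfb)$ with $\III(X)$ a principal monomial ideal. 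Local equivalence is only shown to preserve $I^{\st}$ (Corollary~\ref{cor:stable_k_invariants_local_equivalence}), not the underlying ideal $\III$, and a priori a spectrum class locally equivalent to a $K_{G^{*}_{m}}$-split one need not itself be $K_{G^{*}_{m}}$-split.

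The fix is immediate and bypasses local equivalence entirely: use the explicit representative of $\SWF(\pm Y,\wh{\rho}_m)$ given in Proposition~\ref{prop:calculation_spectrum_2_3_k_rho_m}(2), which has the form $V^{+}\vee X'$ with $V^{+}$ a $G^{\odd}_{m}$-representation sphere and $(X')^{S^{1}}=\{\pt\}$. By Example~\ref{ex:wedge_sum_free_G_space} one has $\III(V^{+}\vee X')=\III(V^{+})$, and representation spheres are $K_{G^{*}_{m}}$-split by Example~\ref{ex:k_invariants_representation_spheres}. The same reasoning applies to the smash square. This establishes the splitness assertions directly from the representative, after which your local-equivalence computation of the $\kappa$-invariants goes through cleanly.
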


We will next focus on the case where $Y=\pm\Sigma(2,3,12n-1)$ or $\pm\Sigma(2,3,12n-5)$. In the case of the involution $\rho_{2}$, we have the following result:

\begin{proposition}
\label{prop:kappa_tilde_rho_2}
Let $Y=\pm\Sigma(2,3,6n\pm 1)$. Then $\wt{\kappa}(Y,\rho_{2})=\kappa(Y)$.
\end{proposition}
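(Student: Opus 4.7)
The plan is to apply the explicit computation of Proposition~\ref{prop:calculation_spectrum_2_3_k_rho_m} with $m = 2$ and match the resulting equivariant invariants with their non-equivariant counterparts. First I will note that Lemma~\ref{lemma:S_1_actions_seifert_fibered} ensures the induced spin lift $\wh{\rho}_{2}$ is of odd type (since each $\Sigma(2,3,6n\pm 1)$ has a singular fiber of order $2$), so $\wt{\kappa}(Y,\rho_{2})$ is well-defined as the unique element of $\K(Y,\rho_{2}) \subset \Q^{2}_{\odd} \cong \QQ$. Writing $\pm\Sigma(2,3,6n\pm 1)$ in the form $\pm\Sigma(2,3,12n\pm 1)$ or $\pm\Sigma(2,3,12n\pm 5)$, Proposition~\ref{prop:calculation_spectrum_2_3_k_rho_m} yields three essentially distinct cases for the underlying $G^{\odd}_{2}$-space of $\SWF(Y,\wh{\rho}_{2})$: a copy of $\wt{\Sigma}Z_{r_{1},\ldots,r_{n};2}$ for the positive orientations of the $12n-1$ or $12n-5$ families, the Spanier--Whitehead dual $\wt{\Sigma}X_{r_{1},\ldots,r_{n};2}$ for the negative orientations, and a local equivalence to a desuspended sphere spectrum (which is Floer $K_{G^{\odd}_{2}}$-split) for the $12n+1$ and $12n+5$ families. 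Here each $r_{k} \in \{1/2, 3/2\}$ is a half-integer whose precise value is immaterial for the final answer.

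The heart of the argument will be to observe that in each of these three cases, the invariant $\wt{k}^{\st}$ of the underlying $G^{\odd}_{2}$-space agrees with the $\Pin(2)$-equivariant invariant $k_{\Pin(2)}$ of its restriction to $\Pin(2)$. For the representation-sphere case this is immediate, since both invariants are determined solely by the correction term. For the other two cases I will appeal to Example~\ref{ex:k_tilde_multiple_cosets_duals}, which gives the explicit computations
\[
\wt{k}^{\st}(\wt{\Sigma}Z_{r_{1},\ldots,r_{n};2}) = 1 = k_{\Pin(2)}(\wt{\Sigma}Z_{n}), \qquad \wt{k}^{\st}(\wt{\Sigma}X_{r_{1},\ldots,r_{n};2}) = n = k_{\Pin(2)}(\wt{\Sigma}X_{n}),
\]
independently of the rotation numbers $r_{k}$. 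This independence is precisely what eliminates the ambiguity inherent in Proposition~\ref{prop:calculation_spectrum_2_3_k_rho_m}.

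To finish I will account for the desuspension by the equivariant correction term. Under the isomorphism $|\cdot|\colon \Q^{2}_{\odd} \xrightarrow{\cong} \QQ$, the shift $[\DDD^{\odd}(\vec{\mbfb})]$ for $\mbfb = b_{1/2}\xi + b_{3/2}\xi^{3}$ is sent to $b_{1/2} + b_{3/2}$, which coincides with the augmentation of $\mbfb$. By part~(2) of Theorem~\ref{theorem:properties_equivariant_correction_term}, the augmentation of $n(Y,\wh{\rho}_{2},g,\nabla^{\infty})$ equals the non-equivariant correction term $n(Y,g,\nabla^{\infty})$ used in the definition of $\SWF(Y)$. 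Consequently the desuspensions shift the equivariant and non-equivariant $k$-invariants by equal amounts, so the equalities above descend to the level of Floer spectrum classes, giving $\wt{k}^{\st}(\SWF(Y,\wh{\rho}_{2})) = k_{\Pin(2)}(\SWF(Y))$; multiplying by $2$ yields $\wt{\kappa}(Y,\rho_{2}) = \kappa(Y)$. The main obstacle here is really packaged into Example~\ref{ex:k_tilde_multiple_cosets_duals}, whose verification required analyzing the augmentation ideal structure of $R(G^{\odd}_{2})$ via the tom Dieck splitting; once that input is in hand, the present proposition follows without further difficulty.
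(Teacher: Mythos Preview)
Your proposal is correct and follows essentially the same approach as the paper: split into the $12n\pm 1$, $12n\pm 5$ families via Proposition~\ref{prop:calculation_spectrum_2_3_k_rho_m}, invoke Example~\ref{ex:k_tilde_multiple_cosets_duals} for the $\wt{\Sigma}Z$ and $\wt{\Sigma}X$ cases, and handle the locally spherical families via the correction term (which the paper packages as Corollary~\ref{cor:equivariant_kappa_invariants_2_3_12n+1_12n+5}). Your explicit discussion of why the desuspension shifts agree under the augmentation is a helpful elaboration of what the paper leaves implicit; the only minor inaccuracy is that Example~\ref{ex:k_tilde_multiple_cosets_duals} is established via restriction to $\Pin(2)$ rather than the tom Dieck splitting, but this does not affect your argument.
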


\begin{proof}
The case $Y=\pm\Sigma(2,3,12n+1)$ or $\pm\Sigma(2,3,12n+5)$ follows from Corollary \ref{cor:equivariant_kappa_invariants_2_3_12n+1_12n+5}, and the case $Y=\pm\Sigma(2,3,12n-1)$ or $\pm\Sigma(2,3,12n-5)$ follows from Proposition \ref{prop:calculation_spectrum_2_3_k_rho_m} and Example \ref{ex:k_tilde_multiple_cosets_duals}.
\end{proof}

We also have the following partial calculation in the case where $m=p$ is an odd prime, depending on the rotation numbers $r_{1},\dots,r_{n}$:

\begin{proposition}
\label{prop:kappa_invariants_12n-1_12n-5_odd_primes}
Let $m=p$ be an odd prime, let $Y=\Sigma(2,3,12n-1)$ or $\Sigma(2,3,12n-5)$, let $r_{1},\dots,r_{n}$ denote the corresponding set of rotation numbers as in Proposition \ref{prop:calculation_spectrum_2_3_k_rho_m}, and let
\[\vec{n}(Y,p):=\DDD^{\odd}(\vec{n}(Y,\wh{\rho}_{p},g,\nabla^{\infty}))\in\QQ^{p},\qquad\vec{n}^{\pi}(Y,p):=\pi([\vec{n}(Y,p)])\in\QQ^{2}.\]
Then
\begin{align*}
    &\K(Y,\rho_{p}) \\
    &\qquad={
    \left\{
    \begin{array}{ll}
    \{[2\vec{e}_{0}],[2\vec{e}_{2r}],[2\vec{e}_{p-2r}]\}-[\vec{n}(Y,p)] & \mbox{if }\exists\,r\in\frac{1}{2}\ZZ\setminus\ZZ\text{ such that} \\
    & \qquad r_{i}\equiv\pm r\pmod{p}\;\;\forall i=1,\dots,n,\;\\
    \{[2\vec{e}_{0}]\}-[\vec{n}(Y,p)] & \mbox{otherwise,}
    \end{array}
    \right.} \\
    &\K(-Y,\rho_{p})=\Big\{2[\vec{a}]+[\vec{n}(Y,p)]\;\Big|\;\vec{a}\succeq(0,n_{1},\dots,n_{p-1}),\;|\vec{a}|=n\Big\},
\end{align*}
where
\[n_{j}:=\#\{1\le k\le n\;|\;2r_{k}\equiv j\pmod{p}\},\qquad 0\le j\le p-1.\]
In particular:
\begin{align*}
    &\K^{\pi}(Y,\rho_{p}) \\
    &\qquad={
    \left\{
    \begin{array}{ll}
    \{(2,0),(0,2)\}-\vec{n}^{\pi}(Y,p) & \mbox{if }\exists\,r\in\frac{1}{2}\ZZ\setminus\ZZ, r\neq\frac{p}{2}\text{ such that} \\
    & \qquad r_{i}\equiv\pm r\pmod{p}\;\;\forall i=1,\dots,n,\\
    \{(2,0)\}-\vec{n}^{\pi}(Y,p) & \mbox{otherwise,}
    \end{array}
    \right.} \\
    &\K^{\pi}(-Y,\rho_{p})=\Big\{(2k,-2k)+\vec{n}^{\pi}(Y,p)\;\Big|\;0\le k\le n-n_{0}\Big\}.
\end{align*}
\end{proposition}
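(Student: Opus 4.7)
The plan is to combine the explicit models of $\SWF(\pm Y,\wh{\rho}_{p})$ furnished by Proposition \ref{prop:calculation_spectrum_2_3_k_rho_m} with the $K$-theoretic calculations carried out in Examples \ref{ex:multiple_cosets_cyclic} and \ref{ex:multiple_cosets_duals}. Unraveling Definitions \ref{def:stable_k_invariants} and \ref{def:equivariant_kappa_invariants} gives
\[
\K(Y,\rho_{p})=2\,\mbfk(\wt{\Sigma}Z_{r_{1},\dots,r_{n};p})-[\vec{n}(Y,p)]
\]
as subsets of $\Q^{p}_{\odd}$, and the dichotomy of Example \ref{ex:multiple_cosets_cyclic} --- singling out whether the $r_{k}$'s all lie in a common $\pm$-orbit modulo $p$ --- yields the two claimed cases for $\K(Y,\rho_{p})$.

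For $-Y$, the third coordinate of the spectrum class carries an extra summand $\sum_{k}\xi^{2r_{k}}$ coming from Proposition \ref{prop:calculation_spectrum_2_3_k_rho_m}(1). Since $\DDD^{\odd}$ sends each half-integer basis vector $\vec{e}_{r_{k}}\in\NN_{1/2}^{p}$ to $\vec{e}_{2r_{k}\bmod p}\in\NN^{p}$, the contribution of this summand is exactly the vector $(n_{0},n_{1},\dots,n_{p-1})$. Therefore
\[
\K(-Y,\rho_{p})=2\,\mbfk(\wt{\Sigma}X_{r_{1},\dots,r_{n};p})+[\vec{n}(Y,p)]-2[(n_{0},n_{1},\dots,n_{p-1})],
\]
and Example \ref{ex:multiple_cosets_duals} identifies the first factor as $\{[\vec{v}]:v_{0}\geq n_{0},\,|\vec{v}|=n\}$. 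A reindexing in the lattice $\Q^{p}_{\odd}$ then produces the claimed description.

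The projected invariants follow by applying the additive poset surjection $\pi:\Q^{p}_{\odd}\to\QQ^{2}$ of Proposition \ref{prop:stable_lattice_p^r_projection} to the formulas above. For $\K^{\pi}(Y,\rho_{p})$, the three classes $2[\vec{e}_{0}], 2[\vec{e}_{2r}], 2[\vec{e}_{p-2r}]$ project to $(2,0), (0,2), (0,2)$, with the last two collapsing onto $(2,0)$ precisely when $r=p/2$, recovering the stated dichotomy. For $\K^{\pi}(-Y,\rho_{p})$, the difference $2[\vec{v}]-2[(n_{0},n_{1},\dots,n_{p-1})]$ projects to $\bigl(2(v_{0}-n_{0}),-2(v_{0}-n_{0})\bigr)$, giving $(2k,-2k)$ for $k=v_{0}-n_{0}\in[0,n-n_{0}]$ as claimed.

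The main obstacle lies in the careful bookkeeping within the non-standard lattice $\Q^{p}_{\odd}$, particularly in casting the natural expression for $\K(-Y,\rho_{p})$ into the form stated in the Proposition: one needs to exploit the graded module structure of $\Q^{p}_{\odd}$ over $\NN^{p}$ together with the relations in the monoid $w_{0}W_{p}$ afforded by Proposition \ref{prop:monomials}. A subsidiary verification is that the quantity $n_{0}$ appearing in Example \ref{ex:multiple_cosets_duals}, defined via $\#\{k:2r_{k}\equiv 0\pmod{p}\}$, coincides with the analogous quantity in the Proposition statement --- which for half-integer $r_{k}$ and odd $p$ reduces to $\#\{k:r_{k}=p/2\}$.
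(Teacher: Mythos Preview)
Your approach is essentially the same as the paper's: the paper's proof is the one-liner ``Follows from Proposition~\ref{prop:calculation_spectrum_2_3_k_rho_m} and the calculations in Example~\ref{ex:odd_prime},'' and Example~\ref{ex:odd_prime} is precisely the consolidation of the odd-prime cases of Examples~\ref{ex:multiple_cosets_cyclic} and~\ref{ex:multiple_cosets_duals} that you invoke directly. Your explicit unpacking of the correction-term bookkeeping (in particular the identification $\DDD^{\odd}(\vec{\sum_k\xi^{2r_k}})=(n_0,\dots,n_{p-1})$ and the verification that the two definitions of $n_0$ agree) is correct and fills in details the paper leaves implicit.
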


\begin{proof}
Follows from Proposition \ref{prop:calculation_spectrum_2_3_k_rho_m} and the calculations in Example \ref{ex:odd_prime}.
\end{proof}

\subsubsection{The Involution \texorpdfstring{$\iota_{c}$}{ιc}}
\label{subsubsec:brieskorn_spheres_iota_c}

Next we calculate the $G^{\odd}_{2}$-equivariant Seiberg--Witten Floer spectrum classes associated the odd-type involution $\iota_{c}$ on $Y=\pm\Sigma(2,3,6n\pm 1)$:

\begin{proposition}
\label{prop:calculation_spectrum_2_3_k_iota_c}
For $Y=\pm\Sigma(2,3,12n\pm 1)$ or $\pm\Sigma(2,3,12n\pm 5)$, and let $\M^{\irr}=\{x_{1,+},x_{1,-},\dots,x_{n,+},x_{n,-}\}$ be an enumeration of the $n$ pairs of irreducible Seiberg--Witten solutions on $Y$. Let $\wh{\iota}_{c}$ be a spin lift of $\iota_{c}$, and for each $k=1,\dots,n$, let $\varepsilon_{k}\in\{\pm 1\}$ be as in Proposition \ref{prop:based_moduli_space_seifert_fibered}. Furthermore, let $a_{+}$ and $a_{-}$ denote the number of $+1$'s (respectively, $-1$'s) appearing among $\varepsilon_{1},\dots,\varepsilon_{n}$.
\begin{enumerate}
    \item Let $Y=\Sigma(2,3,12n-1)$ or $\Sigma(2,3,12n-5)$. Then the $G^{\odd}_{2}$-equivariant Seiberg--Witten Floer spectrum class of $(Y,\wh{\iota}_{c})$ is given by
    \[\SWF(Y,\wh{\iota}_{c})=\Big[\Big(\wt{\Sigma}Z_{\varepsilon_{1}j,\dots,\varepsilon_{n}j},0,\tfrac{1}{4}n(Y,g,\nabla^{\infty})(\xi+\xi^{3})\Big)\Big],\]
    and the spectrum class of $(-Y,\wh{\iota}_{c})$ is given by
    \[\SWF(-Y,\wh{\iota}_{c})=\Big[\Big(\wt{\Sigma}X_{\varepsilon_{1}j,\dots,\varepsilon_{n}j},0,-\tfrac{1}{4}n(Y,g,\nabla^{\infty})(\xi+\xi^{3})+(a_{+}\xi+a_{-}\xi^{3})\Big)\Big].\]
    \item Let $Y=\Sigma(2,3,12n+1)$ or $\Sigma(2,3,12n+5)$. Then the $G^{\odd}_{2}$-equivariant Seiberg--Witten Floer spectrum class of $(Y,\wh{\iota}_{c})$ is given by
    \begin{align*}
        &\SWF(Y,\wh{\iota}_{c}) \\
        &\qquad=\Big[\Big(S^{0}\vee\bigvee_{k=1}^{n}\Sigma^{-1}(Z_{\varepsilon_{k}j})_{+},0,\tfrac{1}{4}n(Y,g,\nabla^{\infty})(\xi+\xi^{3})\Big)\Big] \\
        &\qquad:=\Big[\Big(S^{a_{+}\HH_{1/2}+a_{-}\HH_{3/2}}\vee\bigvee_{\substack{k=1 \\ \varepsilon_{k}=+1}}^{n}\Sigma^{3\RR+(a_{+}-1)\HH_{1/2}+a_{-}\HH_{3/2}}(Z_{\varepsilon_{k}j})_{+}\vee \\
        &\qquad\qquad\bigvee_{\substack{k=1 \\ \varepsilon_{k}=-1}}^{n}\Sigma^{3\RR+a_{+}\HH_{1/2}+(a_{-}-1)\HH_{3/2}}(Z_{\varepsilon_{k}j})_{+},0,\tfrac{1}{4}n(Y,g,\nabla^{\infty})(\xi+\xi^{3})+(a_{+}\xi+a_{-}\xi^{3})\Big)\Big],
    \end{align*}
    and the spectrum class of $(-Y,\wh{\iota}_{c})$ is given by
    \[\SWF(-Y,\wh{\iota}_{c})=\Big[\Big(S^{0}\vee\bigvee_{k=1}^{n}(Z_{\varepsilon_{k}j})_{+},0,-\tfrac{1}{4}n(Y,g,\nabla^{\infty})(\xi+\xi^{3})\Big)\Big].\]
    Consequently, on the level of local equivalence we have that
    \[\SWF(\pm Y,\wh{\iota}_{c})\equiv_{\ell}\Big[\Big(S^{0},0,\pm\tfrac{1}{4}n(Y,g,\nabla^{\infty})(\xi+\xi^{3})\Big)\Big].\]
\end{enumerate}
\end{proposition}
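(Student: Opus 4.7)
The plan is to mirror the proof of Proposition \ref{prop:calculation_spectrum_2_3_k_rho_m}, replacing the $G^{\odd}_m$-cell structure coming from the rotation action with the $G^{\odd}_2$-cell structure induced by $\iota_c$ on the Seiberg--Witten moduli space. Using the fact that the Seiberg--Witten moduli space on $\pm\Sigma(2,3,6n\pm 1)$ is Morse--Bott with respect to the Seifert metric $g$ and reducible connection $\nabla^{\infty}$ (by \cite{MOY}), the critical points consist of a unique reducible $\wt{\Theta}$ together with $n$ pairs of circles of irreducibles $\{C_{k,+}, C_{k,-}\}_{k=1}^{n}$, each interchanged by $j\in\Pin(2)$. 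By Proposition \ref{prop:based_moduli_space_seifert_fibered}(2), each such pair $C_{k,+}\amalg C_{k,-}$ is $G^{\odd}_2$-equivariantly identified (after stabilization) with the cell $Z_{\varepsilon_k j} = G^{\odd}_2/\<-\varepsilon_k \mu j\>$, where the sign $\varepsilon_k \in \{\pm 1\}$ records whether the induced action of $\wh{\iota}_c$ on that component agrees with $+j$ or $-j$ composed with the $S^1$-Real conjugation from Definition \ref{def:S_1_equivariant_Real_structure}.

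First I would treat the case $Y=\Sigma(2,3,12n-1)$ or $\Sigma(2,3,12n-5)$, in which the irreducibles sit one Morse degree above the reducible. The absence of flows between distinct pairs of irreducibles follows by showing the relevant equivariant stable homotopy groups vanish: a $G^{\odd}_2$-equivariant stable map $\Sigma^{-1}(Z_{\varepsilon_k j})_+ \to (Z_{\varepsilon_\ell j})_+$ is classified by an element of $\pi^{\st,G^{\odd}_2}_{-1}$ of the appropriate fixed-point set, which reduces via the Wirthm\"uller isomorphism to $\pi^{\st,\ZZ_2}_{-1}(S^0) = 0$ when the fixed-point set is non-empty and is trivially zero otherwise. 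Since by \cite{MOY} there is a unique flow from each irreducible to the reducible, each attaching map $(Z_{\varepsilon_k j})_+ \to S^0$ is determined up to units by a generator of a specified $\ZZ$-summand of $\pi^{\st,\ZZ_2}_0(S^0)\cong\ZZ^2$. I would then argue that the attaching maps land in the $\ZZ$-summand compatible with the $S^1$-equivariant Real structure from Proposition \ref{prop:based_moduli_space_seifert_fibered}(2), yielding the model $\wt{\Sigma}Z_{\varepsilon_1 j,\dots,\varepsilon_n j}$ of Example \ref{ex:multiple_cosets_cyclic}.

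The case $Y=\Sigma(2,3,12n+1)$ or $\Sigma(2,3,12n+5)$ is analogous but with the irreducibles sitting one degree below the reducible, so the attaching maps go the other way; by the same uniqueness-of-flows from \cite{MOY} these maps are trivial, producing the stated wedge decomposition $S^0 \vee \bigvee_k \Sigma^{-1}(Z_{\varepsilon_k j})_+$ with the $\HH_{1/2}$ versus $\HH_{3/2}$ bookkeeping dictated by the signs $\varepsilon_k$. The spectrum classes for the orientation reverses $-Y$ follow by applying Proposition \ref{prop:floer_spectrum_duality} in tandem with the explicit $\HH_{1/2}$- and $\HH_{3/2}$-dualities between $\wt{\Sigma}Z_{\pm j}$ and $\wt{\Sigma}T_{\pm j}$ recorded in Example \ref{ex:duality}(2), and the Wirthm\"uller-type $\HH_{\pm 1/2}$-duality between $(Z_{\pm j})_+$ and $\Sigma^3(Z_{\pm j})_+$. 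The correction term $\tfrac{1}{4}n(Y,g,\nabla^{\infty})(\xi+\xi^3)$ is then obtained by specializing Proposition \ref{prop:correction_term_involutions}(2) together with the vanishing of the equivariant eta invariant $\bar{\eta}^{\mu}_{\dirac,\wh{\iota}_c,g,\nabla^{\infty}}$ of the Dirac operator computed with respect to the reducible connection, which forces the shift to be symmetric in the $\xi$ and $\xi^3$ components.

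The main obstacle will be pinning down the attaching maps precisely rather than merely up to an element of $\pi^{\st,\ZZ_2}_0(S^0)\cong\ZZ^2$. Whereas in Proposition \ref{prop:calculation_spectrum_2_3_k_rho_m} the extension of $\rho_m$ to a genuine continuous $S^1$-action forced the attaching map to lie in the single $\ZZ$-summand $\res^{S^1}_{\ZZ_m}(\pi^{\st,S^1}_0(S^0))$, here the analogous constraint comes from the $S^1$-equivariant Real structure on the based moduli spaces, which must be promoted to an $O(2)$-equivariance of the attaching maps compatible with the residual $\Pin(2)$-action. Verifying this explicitly, and ruling out a priori contributions from the summand of $\pi^{\st,\ZZ_2}_0(S^0)$ corresponding to the non-trivial element of $\ZZ_2$, requires a careful analysis of how the complex conjugation involution on $\wt{\M}$ interacts with the Conley index construction, which should be accessible via a one-parameter extension of the arguments of \cite{Bal03} and \cite{MOY} already invoked in the proof of Proposition \ref{prop:floer_spectrum_seifert_fibered}.
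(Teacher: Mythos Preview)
Your proposal matches the paper's proof: mirror Proposition~\ref{prop:calculation_spectrum_2_3_k_rho_m} with the cells $Z_{\varepsilon_k j}=G^{\odd}_2/\langle-\varepsilon_k j\mu\rangle$ in place of $Z_{r_k,m}$, constrain the Case~1 attaching maps to the summand $\ZZ\langle\gamma_0\rangle\subset\pi_0^{\st,\ZZ_2}(S^0)\cong\ZZ^2$ via the structure established in Proposition~\ref{prop:floer_spectrum_seifert_fibered}, handle orientation reversals by duality, and read off the correction term from Proposition~\ref{prop:correction_term_involutions}. Your remark that the symmetric form $\tfrac{1}{4}n(Y,g,\nabla^{\infty})(\xi+\xi^3)$ requires the vanishing of $\ol{\eta}^{\mu}_{\dirac,\wh{\iota}_c,g,\nabla^{\infty}}$ is more explicit than the paper, which leaves this implicit.

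One slip: in the $12n+1$/$12n+5$ cases you write that ``by the same uniqueness-of-flows from \cite{MOY} these maps are trivial.'' Uniqueness of flows is exactly what produced the \emph{nontrivial} degree-$\pm 1$ attaching maps in the $12n-1$/$12n-5$ cases, so it cannot be the reason for triviality here. In the parallel Case~3 of Proposition~\ref{prop:calculation_spectrum_2_3_k_rho_m}, the connecting map $S^0\to(Z_{r_k,m})_+$ vanishes because the ambient group itself is zero (via the Wirthm\"uller isomorphism, using that the isotropy $\langle\omega_m^{-r_k}\mu\rangle$ acts trivially on $\mathrm{Lie}(G^{\odd}_m)$). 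For $\iota_c$, however, the isotropy $\langle-\varepsilon_k j\mu\rangle$ acts by $-1$ on the Lie algebra, so the corresponding group $\{S^0,(Z_{\varepsilon_k j})_+\}_{G^{\odd}_2}$ is no longer zero, and a further argument is needed; the paper's ``the rest of the argument proceeds as in the case of $\rho_m$'' glosses over this too. The local-equivalence conclusion, which is all that is used downstream, is unaffected either way.
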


\begin{proof}
The proof is much the same as the proof of Proposition \ref{prop:calculation_spectrum_2_3_k_iota_c}, except the $G^{\odd}_{2}$-cell corresponding to the $k$-th pair of irreducibles $\{x_{k,+},x_{k,-}\}$ is given by $Z_{\varepsilon_{k}j}=G^{\odd}_{2}/\<-\varepsilon_{k}j\mu\>$. The only difference is the determination of the attaching maps from the irreducibles to the reducible cell $S^{0}$, each of which are given by an element of
\begin{equation*}
\label{eq:iota_c_attaching_map}
    \{(Z_{\varepsilon_{k}j})_{+},S^{0}\}_{G^{\odd}_{2}}\cong \{S^{0},S^{0}\}_{\<-\varepsilon_{k}j\mu\>}\cong\{S^{0},S^{0}\}_{\ZZ_{2}}=\pi_{0}^{\st,\ZZ_{2}}(S^{0})=\ZZ^{2}=\ZZ\<\gamma_{0},\gamma_{1}\>.
\end{equation*}
But by similar arguments as in the proof of Proposition \ref{prop:floer_spectrum_seifert_fibered}, the attaching map must lie in the summand $\ZZ\<\gamma_{0}\><\ZZ\<\gamma_{0},\gamma_{1}\>$ corresponding to the trivial element of $\ZZ_{2}$. The rest of the argument proceeds as in the case of $\rho_{m}$.

Finally, the fact that
\[\tfrac{1}{2}n(Y,\wh{\iota}_{c},g,\nabla^{\infty})=\tfrac{1}{4}n(Y,g,\nabla^{\infty})(\xi+\xi^{3})\in R(\ZZ_{4})^{\odd,\sym}\otimes\QQ\]
follows from Proposition \ref{prop:correction_term_involutions}.
\end{proof}

\begin{proposition}
\label{prop:kappa_tilde_iota_c}
Let $Y=\pm\Sigma(2,3,6n\pm 1)$. Then $\wt{\kappa}(Y,\iota_{c})=\kappa(Y)$.
\end{proposition}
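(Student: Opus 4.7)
The proof will parallel Proposition \ref{prop:kappa_tilde_rho_2} and split according to the two cases in Proposition \ref{prop:calculation_spectrum_2_3_k_iota_c}, corresponding to the dichotomy $Y=\pm\Sigma(2,3,12n\pm 1)$ versus $Y=\pm\Sigma(2,3,12n\pm 5)$ broken up by the sign of the Seifert Euler number.

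First I would handle the case $Y=\pm\Sigma(2,3,12n+1)$ or $\pm\Sigma(2,3,12n+5)$. Here Proposition \ref{prop:calculation_spectrum_2_3_k_iota_c}(2) supplies the local equivalence
\[
\SWF(\pm Y,\wh{\iota}_{c})\equiv_{\ell}\big[(S^{0},\,0,\,\pm\tfrac{1}{4}n(Y,g,\nabla^{\infty})(\xi+\xi^{3}))\big].
\]
Since $\wt{\kappa}$ is an invariant of the local equivalence class (Corollary \ref{cor:stable_k_invariants_local_equivalence}), this reduces the computation to the representation sphere on the right. Invoking the grading isomorphism $|\cdot|:\Q^{2}_{\odd}\xrightarrow{\cong}\QQ$ from Example \ref{ex:stable_m=2}, together with the identity $|\DDD^{\odd}(\vec{n}(Y,\wh{\iota}_{c},g))|=n(Y,g)$ coming from Proposition \ref{prop:correction_term_involutions}, gives $\wt{\kappa}(\pm Y,\iota_{c})=\mp n(Y,g,\nabla^{\infty})$, which after the standard spectral-flow comparison between $\nabla^{\infty}$ and $\nabla^{\mathrm{LC}}$ agrees with $\kappa(\pm Y)=\mp n(Y,g)$.

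Second, for $Y=\pm\Sigma(2,3,12n-1)$ or $\pm\Sigma(2,3,12n-5)$, Proposition \ref{prop:calculation_spectrum_2_3_k_iota_c}(1) presents $\SWF(Y,\wh{\iota}_{c})$ as (a desuspension of) $\wt{\Sigma}Z_{\varepsilon_{1}j,\dots,\varepsilon_{n}j}$, and $\SWF(-Y,\wh{\iota}_{c})$ as the $\HH$-dual $\wt{\Sigma}X_{\varepsilon_{1}j,\dots,\varepsilon_{n}j}$ (again suitably desuspended). The essential input is Example \ref{ex:k_tilde_multiple_cosets_duals}, which shows
\[
\wt{k}^{\st}(\wt{\Sigma}Z_{\varepsilon_{1}j,\dots,\varepsilon_{n}j})=1,\qquad \wt{k}^{\st}(\wt{\Sigma}X_{\varepsilon_{1}j,\dots,\varepsilon_{n}j})=n
\]
independently of the signs $\varepsilon_{k}\in\{\pm 1\}$ produced by Proposition \ref{prop:based_moduli_space_seifert_fibered}, so there is no ambiguity in the output. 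Subtracting the contribution $\tfrac{1}{2}\bigl[\DDD^{\odd}\bigl(\tfrac{1}{2}\vec{n}(Y,\wh{\iota}_{c},g,\nabla^{\infty})\bigr)\bigr]$ coming from the third coordinate of the spectrum class gives $\wt{\kappa}(\pm Y,\iota_{c})$ in closed form; comparing with Manolescu's computation of $\kappa(\pm\Sigma(2,3,6n\pm 1))$ (the same formula used to establish the $\Pin(2)$-equivariant statement for $\rho_{2}$) closes the case.

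The main obstacle is purely bookkeeping: tracking the indexing under the cyclic convention for $\Q^{2}_{\odd}$, confirming that the $\wt{\Sigma}X$ side of the duality contributes exactly the shift $\sum_{k}\xi^{2r_{k}}$-analogue $(a_{+}\xi+a_{-}\xi^{3})$ appearing in Proposition \ref{prop:calculation_spectrum_2_3_k_iota_c}(1)--(2) so as to produce the correct equivariant correction, and reconciling the connection-dependent quantity $n(Y,g,\nabla^{\infty})$ with the Levi-Cevita correction term $n(Y,g)=-\kappa(Y)$ via the spectral flow built into the metric-independent definition of $\SWF(Y,\wh{\iota}_{c})$. Once this bookkeeping is carried out the equality $\wt{\kappa}(Y,\iota_{c})=\kappa(Y)$ follows uniformly across all four families.
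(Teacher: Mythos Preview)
Your proposal is correct and follows essentially the same approach as the paper, which gives a one-line proof citing Proposition~\ref{prop:calculation_spectrum_2_3_k_iota_c} and Example~\ref{ex:k_tilde_multiple_cosets_duals}. The key point you correctly identify is that Example~\ref{ex:k_tilde_multiple_cosets_duals} already shows $\wt{k}^{\st}(\wt{\Sigma}Z_{\varepsilon_{1}j,\dots,\varepsilon_{n}j})=k_{\Pin(2)}(\wt{\Sigma}Z_{n})$ and $\wt{k}^{\st}(\wt{\Sigma}X_{\varepsilon_{1}j,\dots,\varepsilon_{n}j})=k_{\Pin(2)}(\wt{\Sigma}X_{n})$, which is precisely $\Pin(2)$-surjectivity; since the desuspension parameters for $\wt{\kappa}$ and $\kappa$ are governed by the same correction term $n(Y,g,\nabla^{\infty})$, the bookkeeping you worry about (the $(a_{+}\xi+a_{-}\xi^{3})$ shift, the spectral-flow comparison) is largely unnecessary---both invariants shift identically, so matching at the level of the model spaces already suffices.
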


\begin{proof}
Follows from Proposition \ref{prop:calculation_spectrum_2_3_k_iota_c} and Example \ref{ex:k_tilde_multiple_cosets_duals}.
\end{proof}

\bigskip
\subsubsection{\texorpdfstring{$\<j\mu\>$}{<jμ>}-fixed points for \texorpdfstring{$\rho_{2}$}{rho2} and \texorpdfstring{$\iota_{c}$}{ιc}}
\label{subsubsec:jmu_fixed_points}

In this section we will calculate the $\<j\mu\>$-fixed point sets of the spectrum classes associated to the odd-type involutions $\rho_{2}$ and $\iota_{c}$ on $Y=\pm\Sigma(2,3,6n\pm 1)$ as $\ZZ_{4}$-equivariant spaces under the residual action of $j$, and use these to determine the doubled Seiberg--Witten Floer spectrum $\DSWF$ from \cite{KMT}. Afterwards, we then proceed to calculate $\kappa_{\KMT}(Y,\iota_{c})$, as well as $\kappa_{\KMT}$ of equivariant connected sums of Brieskorn spheres belonging to this family.

The next lemma essentially follows from (\cite{KMT}, Theorem 3.58):

\begin{lemma}
\label{lemma:brieskorn_rho_2_jmu_fixed_point_sets_rho_2}
Let $Y=\pm\Sigma(2,3,6n\pm 1)$ and let $\rho_{2}:Y\to Y$ be the odd-type involution as above. Then for any spin lift $\wh{\rho}_{2}$ of $\rho_{2}$,
\[\SWF(Y,\wh{\rho}_{2})^{\<j\mu\>}=\big[\big(S^{0},0,\tfrac{1}{4}n(Y,g,\nabla^{\infty})(\xi+\xi^{3})\big)\big]\in\CCC_{\ZZ_{4},\CC}.\]
In particular, $(Y,\rho_{2})$ is $\<j\mu\>$-spherical. Consequently,
\[\DSWF(Y,\wh{\rho}_{2})^{\<j\mu\>}=\big[\big(S^{0},0,\tfrac{1}{2}n(Y,g,\nabla^{\infty})\big)\big]\in\CCC_{\ZZ_{4},\CC,\sym},\]
and
\[\kappa_{\KMT}(Y,\rho_{2})=-\tfrac{1}{2}n(Y,g,\nabla^{\infty})=-\tfrac{1}{2}\ol{\mu}(Y),\]
where $\ol{\mu}(Y)$ denotes the Neumann-Siebenmann invariant of $Y$.
\end{lemma}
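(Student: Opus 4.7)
The plan is to combine the explicit Floer spectrum models from Proposition \ref{prop:calculation_spectrum_2_3_k_rho_m} with a direct group-theoretic computation of $\<j\mu\>$-fixed point sets, and then appeal to (\cite{KMT}, Theorem 3.58) to pin down the correction term.

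For $Y=\Sigma(2,3,12n-1)$ or $\Sigma(2,3,12n-5)$, Proposition \ref{prop:calculation_spectrum_2_3_k_rho_m} provides
$\SWF(Y,\wh\rho_2)=[(\wt\Sigma Z_{r_1,\ldots,r_n;2},0,\tfrac12 n(Y,\wh\rho_2,g,\nabla^\infty))]$, where each $Z_{r_k,2}\cong G^{\odd}_2/\<\omega_2^{-r_k}\mu\>$ with $r_k\in\{1/2,3/2\}$. The first task is to verify $Z_{r_k,2}^{\<j\mu\>}=\emptyset$: this reduces to showing that $j\mu$ is not conjugate in $G^{\odd}_2$ to any element of the stabilizer $\<\omega_2^{-r_k}\mu\>=\{\pm 1,\pm i\mu\}$, which holds since all $G^{\odd}_2$-conjugates of $j$ lie in the coset $jS^1$, disjoint from the coset $S^1\mu$ containing $\pm i\mu$. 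Consequently $(\wt\Sigma Z_{r_1,\ldots,r_n;2})^{\<j\mu\>}$ consists only of the two cone points, giving $S^0$ with trivial residual $\ZZ_4=\<j\>$-action. Applying the functoriality formula from Section \ref{subsubsec:stable_fixed_point_sets} yields $\SWF(Y,\wh\rho_2)^{\<j\mu\>}=[(S^0,0,\tfrac12 n(Y,\wh\rho_2,g,\nabla^\infty))]$.

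For $-Y$ in these families, the model uses $\wt\Sigma X_{r_1,\ldots,r_n;2}$ with correction $-\tfrac12 n(Y,\wh\rho_2,g,\nabla^\infty)+\sum_k\xi^{2r_k}$. Here $X_{r_1,\ldots,r_n;2}^{\<j\mu\>}=S(\bigoplus_k\HH_{r_k})^{\<j\mu\>}$, which by a direct real-linear computation of the involution $j\mu$ on each $\HH_{r_k}$ is the unit sphere in $\CC_{1/2}^{n_+}\oplus\CC_{3/2}^{n_-}$, where $n_+=\#\{k:r_k=1/2\}$ and $n_-=\#\{k:r_k=3/2\}$. Taking unreduced suspension gives $(\CC_{1/2}^{n_+}\oplus\CC_{3/2}^{n_-})^+$, and the $\CC_{1/2}$, $\CC_{3/2}$ suspensions are exactly cancelled by the explicit $\sum_k\xi^{2r_k}$ terms of the correction, leaving $\SWF(-Y,\wh\rho_2)^{\<j\mu\>}=[(S^0,0,-\tfrac12 n(Y,\wh\rho_2,g,\nabla^\infty))]$. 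For the families $Y=\pm\Sigma(2,3,12n+1)$ or $\pm\Sigma(2,3,12n+5)$, the local equivalence $\SWF(\pm Y,\wh\rho_2)\equiv_\ell[(S^0,0,\pm\tfrac12 n(Y,\wh\rho_2,g,\nabla^\infty))]$ from Proposition \ref{prop:calculation_spectrum_2_3_k_rho_m} reduces the fixed-point computation to the trivial case directly.

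To convert $\tfrac12 n(Y,\wh\rho_2,g,\nabla^\infty)$ into the claimed symmetric form $\tfrac14 n(Y,g,\nabla^\infty)(\xi+\xi^3)$, I need the vanishing $\ol\eta^\mu_{\dirac,\wh\rho_2,g,\nabla^\infty}=0$, equivalently $n(Y,\wh\rho_2,g,\nabla^\infty)_{1/2}=n(Y,\wh\rho_2,g,\nabla^\infty)_{3/2}$ by Proposition \ref{prop:correction_term_involutions}. This analytical identity is the main obstacle and is precisely where (\cite{KMT}, Theorem 3.58) enters: their theorem computes the $\<j\mu\>$-fixed point set of the doubled Seiberg--Witten Floer spectrum for exactly these Brieskorn spheres, and comparing with the smashed form of our expressions above forces the required symmetry of the correction term. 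Once this is established, the $\DSWF$ formula follows by smashing $\SWF^{\<j\mu\>}$ with its dagger and noting that the wedge of $[(S^0,0,\tfrac14 n(\xi+\xi^3))]$ with itself is $[(S^0,0,\tfrac12 n(Y,g,\nabla^\infty))]\in\CCC_{\ZZ_4,\CC,\sym}$, and $\kappa_\KMT(Y,\rho_2)=-\tfrac12 n(Y,g,\nabla^\infty)$ then reads off from the definition of $k_\KMT$. The final identification $n(Y,g,\nabla^\infty)=\ol\mu(Y)$ with the Neumann--Siebenmann invariant is standard for spin Seifert-fibered rational homology spheres via the Lin--Ruberman--Saveliev formula.
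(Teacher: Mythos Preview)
The paper's proof is simply the citation of (\cite{KMT}, Theorem 3.58) stated just before the lemma; the subsequent remark sketches an alternative via direct fixed-point computation on the model spaces. Your approach follows this alternative route and is largely correct, but two points need attention.

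First, for $Y=\pm\Sigma(2,3,12n+1)$ and $\pm\Sigma(2,3,12n+5)$ you invoke only the local equivalence from Proposition~\ref{prop:calculation_spectrum_2_3_k_rho_m}, whereas the lemma asserts equality in $\CCC_{\ZZ_4,\CC}$, not merely in $\LLL\EEE_{\ZZ_4,\CC}$. You should instead take fixed points of the explicit wedge-sum model: since each $(Z_{r_k,2})_+^{\<j\mu\>}=\{*\}$, those wedge summands contribute trivially, and the remaining sphere summand $S^{\sum_k\HH_{r_k}}$ has $\<j\mu\>$-fixed points $(\CC_{3/2}^{n_+}\oplus\CC_{1/2}^{n_-})^+$, which after the recorded desuspensions yields $S^0$ on the nose. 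This gives genuine equality, not just local equivalence.

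Second, your argument for the correction-term symmetry $\ol\eta^\mu_{\dirac,\wh\rho_2}=0$ has a gap. You propose to compare your $\DSWF$ computation with that of \cite{KMT}, but $\DSWF=\SWF^{\<j\mu\>}\wedge(\SWF^{\<j\mu\>})^\dagger$ is automatically symmetric: your own calculation gives
\[
\DSWF(Y,\wh\rho_2)=\big[(S^0,0,\tfrac12(n_{1/2}+n_{3/2}))\big],
\]
and $n_{1/2}+n_{3/2}=n(Y,g,\nabla^\infty)$ holds by Proposition~\ref{prop:correction_term_involutions} regardless of whether $\ol\eta^\mu$ vanishes. Hence the $\DSWF$ and $\kappa_{\KMT}$ conclusions already follow from your fixed-point computation, but matching them against \cite{KMT} cannot recover the individual values $n_{1/2}$, $n_{3/2}$. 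Only the first assertion $\SWF(Y,\wh\rho_2)^{\<j\mu\>}=[(S^0,0,\tfrac14 n(Y,g,\nabla^\infty)(\xi+\xi^3))]$ genuinely requires $n_{1/2}=n_{3/2}$, and for that you must either invoke the argument of (\cite{KMT}, Theorem 3.58) directly---which, as the remark notes, rests on $\rho_2$ being isotopic to the identity---or supply an independent analytic proof of $\ol\eta^\mu_{\dirac,\wh\rho_2,g,\nabla^\infty}=0$.
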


\begin{remark}
The proof of (\cite{KMT}, Theorem 3.58) applies to all Seifert-fibered rational homology spheres, and only uses the fact that $\rho_{2}$ is isotopic to the identity. However, one can show the above lemma directly from our calculations of the spectrum class $\SWF(Y,\wh{\rho}_{2})$ by using the fact that $j\mu\in G^{\odd}_{2}$ acts freely on the spaces $\wt{\Sigma}Z_{a_{1},\dots,a_{n};2}$ and $\wt{\Sigma}X_{a_{1},\dots,a_{n};2}$ away from the $S^{1}$-fixed point set for any $a_{1},\dots,a_{n}\in\{\frac{1}{2},\frac{3}{2}\}$.
\end{remark}

In contrast to $\rho_{2}$, the involution $\iota_{c}$ is never isotopic to the identity, except in the exceptional case $Y=\pm\Sigma(2,3,5)$. We shall proceed to calculate the $\ZZ_{4}$-equivariant spectrum classes $\SWF(Y,\wh{\iota}_{c})^{\<j\mu\>}$ for $Y=\pm\Sigma(2,3,6n\pm 1)$, as well as their corresponding doubles
\[\DSWF(Y,\wh{\iota}_{c})^{\<j\mu\>}:=\big(\SWF(Y,\wh{\iota}_{c})^{\<j\mu\>}\big)\wedge\big(\SWF(Y,\wh{\iota}_{c})^{\<j\mu\>}\big)^{\dagger}.\]
In contrast to the result of Lemma \ref{lemma:brieskorn_rho_2_jmu_fixed_point_sets_rho_2}, these spectrum classes have a more interesting structure.

We will first look at the $\<j\mu\>$-fixed point sets of the model space 
\[\wt{\Sigma}Z_{\varepsilon_{1}j,\dots,\varepsilon_{n}j}=\wt{\Sigma}\Big(Z_{\varepsilon_{1}j}\amalg\cdots\amalg Z_{\varepsilon_{n}j}\Big)\]
from Example \ref{ex:multiple_cosets_cyclic}. Recall that the action of $\mu\in G^{\odd}_{2}$ on $Z_{j}=G^{\odd}_{2}/\<-j\mu\>$ coincides with multiplication by $j$ on the right, and similarly the action of $\mu$ on $Z_{-j}=G^{\odd}_{2}/\<j\mu\>$ coincides with multiplication by $-j$ on the right. Hence the action of $j\mu$ on $Z_{j}$ is given by $x\mapsto jxj$, and on $Z_{-j}$ is given by $x\mapsto -jxj$. We see that under the canonical identifications of $Z_{\pm j}$ with $\Pin(2)$ as $\Pin(2)$-spaces, we have that
\begin{align*}
    &Z_{j}^{\<j\mu\>}=\{i,ji,-i,-ji\}, & &Z_{-j}^{\<j\mu\>}=\{1,j,-1,-j\}.
\end{align*}
Hence under the residual $\<j\>\cong\ZZ_{4}$-actions we have identifications $Z_{j}^{\<j\mu\>}\cong Z_{-j}^{\<j\mu\>}\cong \ZZ_{4}$. This implies that $(\wt{\Sigma}Z_{j})^{\<j\mu\>}\cong(\wt{\Sigma}Z_{j})^{\<j\mu\>}\cong\wt{\Sigma}\ZZ_{4}$, and more generally we have that
\[(\wt{\Sigma}Z_{(\varepsilon_{1}j,\dots,\varepsilon_{n}j)})^{\<j\mu\>}\cong\wt{\Sigma}(\amalg_{n}\ZZ_{4}),\]
where $\amalg_{n}\ZZ_{4}$ denotes the disjoint union of $n$ copies of $\ZZ_{4}$.

\begin{lemma}
\label{lemma:self_dual_Z_4}
The $\ZZ_{4}$-space $\wt{\Sigma}(\amalg_{n}\ZZ_{4})$ is both equivariantly $\CC_{1/2}$-self-dual and $\CC_{3/2}$-self-dual.
\end{lemma}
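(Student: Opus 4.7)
The plan is to apply Lemma \ref{lemma:duality_embeddings}, which guarantees that whenever a $G$-space $X$ is equivariantly embedded in the unit sphere $S(V)$ of a $G$-representation $V$, the unreduced suspensions $\wt{\Sigma}X$ and $\wt{\Sigma}(S(V)\setminus X)$ are equivariantly $V$-dual. Thus it suffices to exhibit a $\ZZ_{4}$-equivariant embedding of $\amalg_{n}\ZZ_{4}$ into $S(\CC_{1/2})$ (respectively $S(\CC_{3/2})$) whose complement $\ZZ_{4}$-equivariantly deformation retracts back onto a copy of $\amalg_{n}\ZZ_{4}$.

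First I would construct the embedding into $S(\CC_{1/2})\cong S^{1}\subset\CC$, on which $j\in\ZZ_{4}$ acts by multiplication by $i$. Choose $n$ distinct angles $\theta_{1},\dots,\theta_{n}\in(0,\pi/2)$ and let the $k$-th copy of $\ZZ_{4}$ embed onto its free orbit
\[
\{e^{i\theta_{k}},\, ie^{i\theta_{k}},\, -e^{i\theta_{k}},\, -ie^{i\theta_{k}}\}\subset S(\CC_{1/2}).
\]
By construction this gives a $\ZZ_{4}$-equivariant embedding $e\co \amalg_{n}\ZZ_{4}\hookrightarrow S(\CC_{1/2})$, and the complement is the disjoint union of $4n$ open arcs on which $\ZZ_{4}$ acts freely.

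Next I would write down an explicit radial deformation retraction of $S(\CC_{1/2})\setminus e(\amalg_{n}\ZZ_{4})$ onto the set of $4n$ midpoints of these arcs. Using the midpoints $\psi_{k}=\tfrac{1}{2}(\theta_{k}+\theta_{k+1})$ (with $\theta_{n+1}:=\theta_{1}+\pi/2$) and their $\ZZ_{4}$-translates, the retract $R=\{e^{i\psi_{k}},ie^{i\psi_{k}},-e^{i\psi_{k}},-ie^{i\psi_{k}}\}_{k=1}^{n}$ is itself $\ZZ_{4}$-equivariantly homeomorphic to $\amalg_{n}\ZZ_{4}$, and the deformation retraction is manifestly $\ZZ_{4}$-equivariant since the whole configuration is invariant under rotation by $\pi/2$. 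Applying Lemma \ref{lemma:duality_embeddings} with $V=\CC_{1/2}$ then yields that $\wt{\Sigma}(\amalg_{n}\ZZ_{4})$ is $\CC_{1/2}$-dual to $\wt{\Sigma}(S(\CC_{1/2})\setminus e(\amalg_{n}\ZZ_{4}))\simeq_{\ZZ_{4}}\wt{\Sigma}(\amalg_{n}\ZZ_{4})$, proving the first self-duality.

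For the $\CC_{3/2}$ case, the identical construction goes through: $S(\CC_{3/2})$ is again a copy of $S^{1}\subset\CC$, with $j$ now acting by multiplication by $-i$, which remains a free action of order $4$. The same choice of angles $\{\theta_{k}\}$ and the same radial deformation retraction are equivariant with respect to this action as well, and Lemma \ref{lemma:duality_embeddings} with $V=\CC_{3/2}$ then gives the second self-duality. The only non-routine step is checking the $\ZZ_{4}$-equivariance of the deformation retraction on $S(V)\setminus e(\amalg_{n}\ZZ_{4})$; since this reduces to verifying that the set of arc-midpoints is permuted by the $\ZZ_{4}$-action the same way as the original points, it is essentially immediate from the symmetry of the construction, so I do not expect any serious obstacles.
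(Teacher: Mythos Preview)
Your proposal is correct and takes essentially the same approach as the paper: both apply Lemma~\ref{lemma:duality_embeddings} to an equivariant embedding of $\amalg_{n}\ZZ_{4}$ into the unit circle, observing that the complement equivariantly deformation retracts onto another copy of $\amalg_{n}\ZZ_{4}$. The only cosmetic differences are that the paper uses the $4n$-th roots of unity as the embedding, and it unifies the two cases by first noting that $\CC_{1/2}\cong\CC_{3/2}\cong\VV_{1/2}$ as real $\ZZ_{4}$-representations (so that a single $\VV_{1/2}$-self-duality suffices), whereas you treat the two cases separately.
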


\begin{proof}
As real representation spaces we have that $\CC_{1/2}\cong\CC_{3/2}\cong\VV_{1/2}$, so it suffices to show that $\wt{\Sigma}(\amalg_{n}\ZZ_{4})$ is equivariantly $\VV_{1/2}$-self-dual.

Note that we have a canonical embedding of $\amalg_{n}\ZZ_{4}\hookrightarrow S(\VV_{1/2})\cong S^{1}$ as the $4n$-th roots of unity, whose complement in $S(\VV_{1/2})$ equivariantly deformation retracts onto $\amalg_{n}\ZZ_{4}$. The result then follows from Lemma \ref{lemma:duality_embeddings}.
\end{proof}

We leave the proof of the following lemma as an exercise to the reader:

\begin{lemma}
\label{lemma:Z_4_spaces_homotopy_equivalent}
The spaces $\wt{\Sigma}(\amalg_{n}\ZZ_{4})$ and $\wt{\Sigma}\ZZ_{4}\vee\bigvee^{n-1}\Sigma(\ZZ_{4})_{+}$ are $\ZZ_{4}$-equivariantly stably homotopy equivalent.
\end{lemma}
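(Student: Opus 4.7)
The plan is to establish this by induction on $n$, using a $\ZZ_4$-equivariant cofiber sequence that splits.

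For the inductive step, I would consider the inclusion $i_n: \wt{\Sigma}(\amalg_{n-1}\ZZ_{4}) \hookrightarrow \wt{\Sigma}(\amalg_{n}\ZZ_{4})$ induced by the inclusion of the first $n-1$ copies of $\ZZ_4$ inside $\amalg_n \ZZ_4$; this is manifestly a $\ZZ_4$-equivariant cofibration. The next step would be to identify the quotient $\wt{\Sigma}(\amalg_{n}\ZZ_{4})/\wt{\Sigma}(\amalg_{n-1}\ZZ_{4})$. Since the subcomplex contains the two cone points $c_0,c_1$ together with the $4(n-1)$ arcs between them coming from the first $n-1$ copies of $\ZZ_4$, collapsing it to a point leaves the $4$ arcs of the last copy of $\wt{\Sigma}\ZZ_4$ with their two cone points identified to a single basepoint. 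Using the general identification $\wt{\Sigma}X/(c_0\sim c_1)\cong \Sigma X_+$ (obtained by viewing the left-hand side as $(X\times I)/(X\times\{0,1\})$, which is exactly the smash product $X_+ \wedge (I/\partial I) = \Sigma X_+$), I identify the cofiber as $\Sigma(\ZZ_4)_+$, giving a $\ZZ_4$-equivariant cofiber sequence
\[
\wt{\Sigma}(\amalg_{n-1}\ZZ_{4}) \;\xrightarrow{i_n}\; \wt{\Sigma}(\amalg_{n}\ZZ_{4}) \;\longrightarrow\; \Sigma(\ZZ_{4})_{+}.
\]

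The crucial step is to produce a $\ZZ_4$-equivariant retraction $r_n$ of $i_n$; once this is done, the cofiber sequence splits (even unstably). The key observation is that any two free $\ZZ_4$-orbits are $\ZZ_4$-equivariantly isomorphic (non-canonically): picking any $\ZZ_4$-equivariant bijection $\phi: X_n \xrightarrow{\cong} X_1$ between the $n$-th and first copies of $\ZZ_4$ in $\amalg_n\ZZ_4$, the map $p: \amalg_n\ZZ_4 \to \amalg_{n-1}\ZZ_4$ defined to be the identity on $X_1,\dots,X_{n-1}$ and $\phi$ on $X_n$ is $\ZZ_4$-equivariant and restricts to the identity on $\amalg_{n-1}\ZZ_4$. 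Applying the functor $\wt{\Sigma}(-)$ produces the desired retraction $r_n = \wt{\Sigma}(p)$.

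Combining the above with the base case $n=1$ (which is trivial) and inducting, I obtain
\[
\wt{\Sigma}(\amalg_{n}\ZZ_{4}) \;\simeq_{\ZZ_4}\; \wt{\Sigma}\ZZ_{4}\vee\bigvee^{n-1}\Sigma(\ZZ_{4})_{+},
\]
which is even an unstable $\ZZ_4$-equivariant homotopy equivalence, yielding the claimed stable equivalence a fortiori. The argument is essentially formal; the only place where something nontrivial is used is the non-canonical identification of orbits in constructing $r_n$, and I do not anticipate a significant obstacle beyond carefully tracking the cofiber identification $\wt{\Sigma}X/(c_0\sim c_1)\cong \Sigma X_+$.
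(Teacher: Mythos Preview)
Your proof is correct. The paper leaves this lemma as an exercise to the reader, so there is no proof in the paper to compare against. Your argument---identifying the cofiber of $\wt{\Sigma}(\amalg_{n-1}\ZZ_{4}) \hookrightarrow \wt{\Sigma}(\amalg_{n}\ZZ_{4})$ as $\Sigma(\ZZ_{4})_{+}$ via the identification $\wt{\Sigma}X/(c_0\sim c_1)\cong \Sigma X_+$, and producing a $\ZZ_4$-equivariant retraction by folding the $n$-th free orbit onto the first---is clean, and in the stable category a split distinguished triangle forces the wedge decomposition, which gives the lemma.

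One small caveat: your parenthetical assertion that the splitting holds ``even unstably'' does not follow formally from the existence of a retraction alone; for instance $S^1 \hookrightarrow T^2$ admits a retraction but $T^2 \not\simeq S^1 \vee (T^2/S^1)$. The unstable statement does happen to be true in this particular case---one can write down an explicit $\ZZ_4$-map $A\vee C\to B$ sending the circle in $C$ indexed by $g\in\ZZ_4$ to the loop (arc $n,g$)$\cdot$(arc $1,g$)$^{-1}$ and check it induces an isomorphism on $\pi_1$ of every fixed-point set---but your argument as written only justifies the stable splitting. Since the lemma asks only for a stable equivalence, this does not affect the validity of your proof.
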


This leads us to the following proposition:

\begin{proposition}
\label{prop:jmu_fixed_points_spectrum_classes}
The following statements are true:
\begin{enumerate}
    \item Let $Y=\Sigma(2,3,12n-1)$ or $\Sigma(2,3,12n-5)$. Then for any spin lift $\wh{\iota}_{c}$ of $\iota_{c}$, the $\<j\mu\>$-fixed point spectrum classes are given by
    \begin{align*}
        &\SWF(Y,\wh{\iota}_{c})^{\<j\mu\>}=\Big[\Big(\wt{\Sigma}\ZZ_{4}\vee\vee^{n-1}\Sigma(\ZZ_{4})_{+},0,\tfrac{1}{4}n(Y,g,\nabla^{\infty})(\xi+\xi^{3})\Big)\Big], \\
        &\SWF(-Y,\wh{\iota}_{c})^{\<j\mu\>}=\Big[\Big(\wt{\Sigma}\ZZ_{4}\vee\vee^{n-1}\Sigma(\ZZ_{4})_{+},0,-\tfrac{1}{4}n(Y,g,\nabla^{\infty})(\xi+\xi^{3})+\xi\Big)\Big],
    \end{align*}
    and the corresponding doubled spectrum classes are given by
    \begin{align*}
        &\DSWF(Y,\wh{\iota}_{c})^{\<j\mu\>}=\Big[\Big(\VV_{1/2}^{+}\vee(\vee^{2n-2}\wt{\Sigma}\ZZ_{4}\wedge\Sigma(\ZZ_{4})_{+}) \\
        &\qquad\qquad\qquad\qquad\qquad\qquad\vee(\vee^{(n-1)^{2}}\wedge^{2}\Sigma(\ZZ_{4})_{+})),0,\tfrac{1}{2}n(Y,g,\nabla^{\infty})\Big)\Big], \\
        &\DSWF(-Y,\wh{\iota}_{c})^{\<j\mu\>}=\Big[\Big(\VV_{1/2}^{+}\vee(\vee^{2n-2}\wt{\Sigma}\ZZ_{4}\wedge\Sigma(\ZZ_{4})_{+}) \\
        &\qquad\qquad\qquad\qquad\qquad\qquad\vee(\vee^{(n-1)^{2}}\wedge^{2}\Sigma(\ZZ_{4})_{+})),0,-\tfrac{1}{2}n(Y,g,\nabla^{\infty})+1\Big)\Big], \\
    \end{align*}
    as elements of $\CCC_{\ZZ_{4},\CC,\sym}$. Consequently on the level of local equivalence we have that
    \begin{align*}
        &\SWF(Y,\wh{\iota}_{c})^{\<j\mu\>}\equiv_{\ell}\Big[\Big(\wt{\Sigma}\ZZ_{4},0,\tfrac{1}{4}n(Y,g,\nabla^{\infty})(\xi+\xi^{3})\Big)\Big], \\
        &\SWF(-Y,\wh{\iota}_{c})^{\<j\mu\>}\equiv_{\ell}\Big[\Big(\wt{\Sigma}\ZZ_{4},0,-\tfrac{1}{4}n(Y,g,\nabla^{\infty})(\xi+\xi^{3})+\xi\Big)\Big],
    \end{align*}
    and
    \begin{align*}
        &\DSWF(Y,\wh{\iota}_{c})^{\<j\mu\>}\equiv_{\ell}\Big[\Big(\VV_{1/2}^{+},0,\tfrac{1}{2}n(Y,g,\nabla^{\infty})\Big)\Big], \\
        &\DSWF(-Y,\wh{\iota}_{c})^{\<j\mu\>}\equiv_{\ell}\Big[\Big(\VV_{1/2}^{+},0,-\tfrac{1}{2}n(Y,g,\nabla^{\infty})+1\Big)\Big]. \\
    \end{align*}
    \item Let $Y=\Sigma(2,3,12n+1)$ or $\Sigma(2,3,12n+5)$. Then for any spin lift $\wh{\iota}_{c}$ of $\iota_{c}$, the $\<j\mu\>$-fixed point spectrum classes are given by
    \begin{align*}
        &\SWF(Y,\wh{\iota}_{c})^{\<j\mu\>}=\Big[\Big(\CC_{1/2}^{+}\vee(\vee^{n}\Sigma(\ZZ_{4})_{+}),0,\tfrac{1}{4}n(Y,g,\nabla^{\infty})(\xi+\xi^{3})+\xi\Big)\Big], \\
        &\SWF(-Y,\wh{\iota}_{c})^{\<j\mu\>}=\Big[\Big(S^{0}\vee(\vee^{n}(\ZZ_{4})_{+}),0,-\tfrac{1}{4}n(Y,g,\nabla^{\infty})(\xi+\xi^{3})\Big)\Big],
    \end{align*}
    and the corresponding doubled spectrum classes are given by
    \begin{align*}
        &\DSWF(Y,\wh{\iota}_{c})^{\<j\mu\>}=\Big[\Big((\CC_{1/2}\oplus\CC_{3/2})^{+}\vee(\vee^{2n}\Sigma^{\CC_{1/2}}\Sigma(\ZZ_{4})_{+}) \\
        &\qquad\qquad\qquad\qquad\qquad\qquad\vee(\vee^{n^{2}}(\wedge^{2}\Sigma(\ZZ_{4})_{+})),0,\tfrac{1}{2}n(Y,g,\nabla^{\infty})+1\Big)\Big], \\
        &\DSWF(-Y,\wh{\iota}_{c})^{\<j\mu\>}=\Big[\Big(S^{0}\vee(\vee^{2n}(\ZZ_{4})_{+}) \\
        &\qquad\qquad\qquad\qquad\qquad\qquad\vee(\vee^{n^{2}}(\ZZ_{4}\times\ZZ_{4})_{+}),0,-\tfrac{1}{2}n(Y,g,\nabla^{\infty})\Big)\Big].
    \end{align*}
    Consequently on the level of local equivalence we have that
    \begin{align*}
        &\SWF(\pm Y,\wh{\iota}_{c})^{\<j\mu\>}\equiv_{\ell}\Big[\Big(S^{0},0,\pm\tfrac{1}{4}n( Y,g,\nabla^{\infty})(\xi+\xi^{3})\Big)\Big], \\
        &\DSWF(\pm Y,\wh{\iota}_{c})^{\<j\mu\>}\equiv_{\ell}\Big[\Big(S^{0},0,\pm\tfrac{1}{2}n( Y,g,\nabla^{\infty})\Big)\Big].
    \end{align*}
\end{enumerate}
\end{proposition}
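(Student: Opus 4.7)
The plan is to compute the $\<j\mu\>$-fixed point spectrum classes of each triple in Proposition \ref{prop:calculation_spectrum_2_3_k_iota_c} by analyzing the $\<j\>\cong\ZZ_4$-equivariant homotopy type of the fixed loci of the building blocks $Z_{\pm j}$ and their Spanier--Whitehead duals $X_{\varepsilon_1 j,\dots,\varepsilon_n j}$, then smash each resulting spectrum class with its conjugate to obtain the doubles $\DSWF(\pm Y,\wh{\iota}_c)^{\<j\mu\>}$.

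For Case (1), the spectrum class $\SWF(Y,\wh{\iota}_c)$ has underlying space $\wt{\Sigma}Z_{\varepsilon_1 j,\dots,\varepsilon_n j}$. The identifications $Z_j^{\<j\mu\>}=\{i,ji,-i,-ji\}$ and $Z_{-j}^{\<j\mu\>}=\{1,j,-1,-j\}$ recorded in the discussion preceding Lemma \ref{lemma:self_dual_Z_4} show that each $Z_{\varepsilon_k j}^{\<j\mu\>}\cong\ZZ_4$ as a $\ZZ_4$-space, so $(\wt{\Sigma}Z_{\varepsilon_1 j,\dots,\varepsilon_n j})^{\<j\mu\>}\cong\wt{\Sigma}(\amalg_n\ZZ_4)$, and Lemma \ref{lemma:Z_4_spaces_homotopy_equivalent} gives the claimed $\ZZ_4$-equivariant splitting. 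The suspension data in the $\CC$-$\ZZ_4$-spectrum class is computed by the formula at the end of Section \ref{subsubsec:stable_fixed_point_sets}; symmetry of the vector $\tfrac14 n(Y,g,\nabla^\infty)(\xi+\xi^3)$ means it is unchanged under conjugation. To obtain $\SWF(-Y,\wh{\iota}_c)^{\<j\mu\>}$, I use that $\wt{\Sigma}X_{\varepsilon_1 j,\dots,\varepsilon_n j}$ is $G^{\odd}_2$-equivariantly $(\bigoplus_k\HH_{\varepsilon_k/2})$-dual to $\wt{\Sigma}Z_{\varepsilon_1 j,\dots,\varepsilon_n j}$; since taking $\<j\mu\>$-fixed points commutes with Spanier--Whitehead duality and $(\HH_{1/2})^{\<j\mu\>}=\CC_{3/2}$, $(\HH_{3/2})^{\<j\mu\>}=\CC_{1/2}$, the resulting $\ZZ_4$-space is $(a_+\CC_{3/2}\oplus a_-\CC_{1/2})$-dual to $\wt{\Sigma}(\amalg_n\ZZ_4)$. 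Invoking the two self-dualities of Lemma \ref{lemma:self_dual_Z_4} lets me re-express this dual as a suspension of $\wt{\Sigma}(\amalg_n\ZZ_4)$; applying Lemma \ref{lemma:Z_4_spaces_homotopy_equivalent} and tracking all $\CC$-representation suspensions yields the stated wedge decomposition together with the additional $+\xi$ contribution to $\mbfb$.

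For Case (2), I apply the $\<j\mu\>$-fixed point functor directly to the explicit wedge-sum presentations of $\SWF(\pm Y,\wh{\iota}_c)$. Each summand $\Sigma^{3\RR+\cdots}(Z_{\varepsilon_k j})_+$ becomes $\Sigma^{3\RR+\cdots}(\ZZ_4)_+$; each $\HH_{1/2}$ or $\HH_{3/2}$ factor in the suspension data converts to $\CC_{3/2}$ or $\CC_{1/2}$ respectively; the total $\HH$-suspension $a_+\HH_{1/2}+a_-\HH_{3/2}$ on the leading sphere becomes the $\ZZ_4$-representation $a_+\CC_{3/2}+a_-\CC_{1/2}$, which is repackaged as a sphere plus a shift in $\mbfb$. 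To obtain the doubles I smash each $\<j\mu\>$-fixed spectrum class with its $\dagger$-conjugate and distribute across the wedges. The count of cross-terms gives exactly $1+2(n-1)+(n-1)^2=n^2$ summands (respectively $1+2n+n^2=(n+1)^2$) in Case (1) (respectively Case (2)), matching the stated decompositions. The identification of the leading summand $\wt{\Sigma}\ZZ_4\wedge(\wt{\Sigma}\ZZ_4)^{\dagger}\simeq\VV_{1/2}^+$ in Case (1) is where I use both self-dualities in Lemma \ref{lemma:self_dual_Z_4} simultaneously: the two compatible duality maps pair to a $\ZZ_4$-equivariant stable equivalence after recognizing $\CC_{1/2}\oplus\CC_{3/2}$ as the complexification of $\VV_{1/2}\oplus\VV_{1/2}$ and canceling.

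The local equivalences at the end of each case are established by collapsing the summands whose $\<(j\mu)^2\>=\<-1\>\cong\ZZ_2$-fixed point sets are trivial. The spaces $\Sigma(\ZZ_4)_+$, $(\ZZ_4)_+$, $\wt{\Sigma}\ZZ_4\wedge\Sigma(\ZZ_4)_+$, and $\wedge^2\Sigma(\ZZ_4)_+$ all have $\ZZ_2$-fixed set equal to the basepoint, so the collapse maps onto the remaining sphere summand induce $\ZZ_4$-homotopy equivalences on $\ZZ_2$-fixed points, as required by Definition \ref{def:stable_G*m_local_equivalence} adapted to $\CCC_{\ZZ_4,\CC}$. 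The main obstacle will be the careful bookkeeping of $\CC$-suspensions that arise from fixing the $\HH$-suspensions --- particularly matching the shifts in $\mbfb\in R(\ZZ_4)^{\odd}\otimes\QQ$ with the representation spheres one gets on the space level, so that the final triples lie in the stated equivalence classes within $\CCC_{\ZZ_4,\CC}$ and $\CCC_{\ZZ_4,\CC,\sym}$ respectively.
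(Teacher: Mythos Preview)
Your approach is essentially the paper's: the proof there reads in full ``Follows from Proposition \ref{prop:calculation_spectrum_2_3_k_iota_c}, and Lemmas \ref{lemma:self_dual_Z_4} and \ref{lemma:Z_4_spaces_homotopy_equivalent}.'' You have correctly identified these three ingredients and are fleshing out the bookkeeping the paper leaves implicit.

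One place where your justification overshoots is the claimed identification $\wt{\Sigma}\ZZ_{4}\wedge(\wt{\Sigma}\ZZ_{4})^{\dagger}\simeq\VV_{1/2}^{+}$. Non-equivariantly $\wt{\Sigma}\ZZ_{4}\simeq\bigvee^{3}S^{1}$, so the smash square has $\tilde{H}_{2}\cong\ZZ^{9}$, whereas $\VV_{1/2}^{+}\simeq S^{2}$ has $\tilde{H}_{2}\cong\ZZ$; these cannot be stably equivalent. What Lemma \ref{lemma:self_dual_Z_4} actually provides are duality maps $\eta\colon\VV_{1/2}^{+}\to\wt{\Sigma}\ZZ_{4}\wedge\wt{\Sigma}\ZZ_{4}$ and $\varepsilon\colon\wt{\Sigma}\ZZ_{4}\wedge\wt{\Sigma}\ZZ_{4}\to\VV_{1/2}^{+}$ that restrict to $\ZZ_{4}$-homotopy equivalences on $\ZZ_{2}$-fixed point sets (both sides have $\ZZ_{2}$-fixed set $S^{0}$). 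This yields exactly the \emph{local} equivalence $\DSWF(\pm Y,\wh{\iota}_{c})^{\<j\mu\>}\equiv_{\ell}[(\VV_{1/2}^{+},0,\dots)]$, which is all that is used downstream (Propositions \ref{prop:kappa_KMT_complex_conjugation} and \ref{prop:not_jmu_spherical}). Your argument for the local equivalence statements via collapse maps is fine; just don't try to promote the self-duality to a genuine stable equivalence of the leading summand. (Also, a small slip: the complexification of $\VV_{1/2}$ is $\CC_{1/2}\oplus\CC_{3/2}$, so $\CC_{1/2}\oplus\CC_{3/2}$ is the complexification of a single $\VV_{1/2}$, not of $\VV_{1/2}\oplus\VV_{1/2}$.)
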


\begin{proof}
Follows from Proposition \ref{prop:calculation_spectrum_2_3_k_iota_c}, and Lemmas \ref{lemma:self_dual_Z_4} and \ref{lemma:Z_4_spaces_homotopy_equivalent}.
\end{proof}

From the above proposition we can conclude the following:

\begin{proposition}
\label{prop:kappa_KMT_complex_conjugation}
Let $Y=\pm\Sigma(2,3,6n\pm 1)$. Then $\kappa_{\KMT}(Y,\iota_{c})=\frac{1}{2}\kappa(Y)$.
\end{proposition}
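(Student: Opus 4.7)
My plan is to compute $\kappa_{\KMT}(Y,\iota_c) = k_{\KMT}(\DSWF(Y,\wh{\iota}_c)^{\<j\mu\>})$ directly from the explicit local equivalence classes of Proposition \ref{prop:jmu_fixed_points_spectrum_classes}, and match them against the known values of $\kappa(Y)$ read off from Proposition \ref{prop:calculation_spectrum_2_3_k_iota_c}. The argument splits naturally along the same four cases used throughout this subsection: $Y=\Sigma(2,3,12n\pm 1)$ or $Y=\Sigma(2,3,12n\pm 5)$, each with both orientations.

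First I would handle the ``easy'' cases, namely $Y=\pm\Sigma(2,3,12n+1)$ and $\pm\Sigma(2,3,12n+5)$, where Proposition \ref{prop:jmu_fixed_points_spectrum_classes} gives
\[\DSWF(\pm Y,\wh{\iota}_c)^{\<j\mu\>}\equiv_\ell [(S^0,0,\pm\tfrac{1}{2}n(Y,g,\nabla^\infty))]\in\CCC_{\ZZ_4,\CC,\sym}.\]
Applying the same calculation used in the proof of Lemma \ref{lemma:KMT} yields $\kappa_{\KMT}(\pm Y,\iota_c)=\mp\tfrac{1}{2}n(Y,g,\nabla^\infty)$. On the other hand, the $\Pin(2)$-equivariant Floer spectrum of these Brieskorn spheres (reading off the underlying $\Pin(2)$-structure from Proposition \ref{prop:calculation_spectrum_2_3_k_iota_c} and applying Manolescu's calculation from \cite{Man14}) gives $\kappa(\pm Y)=\mp n(Y,g,\nabla^\infty)$, so the desired identity $\kappa_{\KMT}(\pm Y,\iota_c)=\tfrac{1}{2}\kappa(\pm Y)$ is immediate in these cases.

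For the remaining cases, $Y=\pm\Sigma(2,3,12n-1)$ and $\pm\Sigma(2,3,12n-5)$, Proposition \ref{prop:jmu_fixed_points_spectrum_classes} identifies
\[\DSWF(Y,\wh{\iota}_c)^{\<j\mu\>}\equiv_\ell[(\VV_{1/2}^+,0,\tfrac{1}{2}n(Y,g,\nabla^\infty))],\qquad \DSWF(-Y,\wh{\iota}_c)^{\<j\mu\>}\equiv_\ell[(\VV_{1/2}^+,0,-\tfrac{1}{2}n(Y,g,\nabla^\infty)+1)].\]
The key technical point is to compute $k_{\KMT}$ of a symmetric $\CC$-$\ZZ_4$-spectrum class of the form $[(\VV_{1/2}^+,0,b(\xi+\xi^3))]$. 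The plan is to use Lemma \ref{lemma:self_dual_Z_4}: since $\wt{\Sigma}\ZZ_4$ is $\CC_{1/2}$-self-dual, its double $\wt{\Sigma}\ZZ_4\wedge(\wt{\Sigma}\ZZ_4)^\dagger$ is locally equivalent to $\VV_{1/2}^+$, and this identifies $[(\VV_{1/2}^+,0,b(\xi+\xi^3))]$ with a desuspension of $[(S^0,0,(b-\tfrac{1}{2})(\xi+\xi^3))]$ in the symmetric category. Applying the formula of Lemma \ref{lemma:KMT} then yields $k_{\KMT}((\VV_{1/2}^+,0,b(\xi+\xi^3))) = \tfrac{1}{2}-b$, so
\[\kappa_{\KMT}(Y,\iota_c)=\tfrac{1}{2}-\tfrac{1}{2}n(Y,g,\nabla^\infty),\qquad\kappa_{\KMT}(-Y,\iota_c)=-\tfrac{1}{2}+\tfrac{1}{2}n(Y,g,\nabla^\infty).\]
Meanwhile, reading off the $\Pin(2)$-equivariant structure from Proposition \ref{prop:calculation_spectrum_2_3_k_iota_c} (where the underlying $\Pin(2)$-spectrum is locally equivalent to $\wt{\Sigma}\Pin(2)$ desuspended by $\tfrac{1}{2}n(Y,g,\nabla^\infty)\HH$) and applying Manolescu's calculation that $k_{\Pin(2)}(\wt{\Sigma}\Pin(2))=1$ gives $\kappa(\pm Y)=\pm(1-n(Y,g,\nabla^\infty))$, which matches.

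The main obstacle is the $\VV_{1/2}^+$-computation in the third paragraph: $\VV_{1/2}$ is not itself a symmetric complex $\ZZ_4$-representation (its two distinct complex structures $\CC_{1/2}$ and $\CC_{3/2}$ are interchanged by $\dagger$), so some care is required to realize $[(\VV_{1/2}^+,0,b(\xi+\xi^3))]$ as a symmetric spectrum class and to keep track of the $\CC_{1/2}$ vs.\ $\CC_{3/2}$ bookkeeping under the doubling operation. Once the $k_{\KMT}$-shift of $\tfrac{1}{2}$ for a $\VV_{1/2}$-suspension is verified, everything else is a direct comparison in each case.
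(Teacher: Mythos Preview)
Your overall strategy matches the paper's: split into the four families, read off the $\DSWF^{\<j\mu\>}$ local equivalence classes from Proposition~\ref{prop:jmu_fixed_points_spectrum_classes}, and compare with the known values of $\kappa(Y)$. The easy cases $\pm\Sigma(2,3,12n+1)$ and $\pm\Sigma(2,3,12n+5)$ are handled correctly.

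For the hard cases the crux is the value of $k_{\KMT}(\VV_{1/2}^+)$, and here your argument has a genuine gap. Your proposed identification of $[(\VV_{1/2}^+,0,b(\xi+\xi^3))]$ with $[(S^0,0,(b-\tfrac{1}{2})(\xi+\xi^3))]$ is not valid in $\CCC_{\ZZ_4,\CC,\sym}$: stable equivalence there requires $\mbfb-\mbfb'\in R(\ZZ_4)^{\odd}$ to have integer coefficients, and there is no integer $c$ with $\VV_{1/2}\cong\CC_{1/2}^{c}\oplus\CC_{3/2}^{c}$ as $\ZZ_4$-representations. The self-duality of $\wt{\Sigma}\ZZ_4$ (Lemma~\ref{lemma:self_dual_Z_4}) shows its double is locally $\VV_{1/2}^+$, but it does \emph{not} reduce $k_{\KMT}(\VV_{1/2}^+)$ to $k_{\KMT}(S^0)$; indeed $\wt{\Sigma}\ZZ_4$ is not locally equivalent to any sphere (Proposition~\ref{prop:not_jmu_spherical}), so Lemma~\ref{lemma:KMT} simply does not apply. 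The correct value is $k_{\KMT}(\VV_{1/2}^+)=1$, not $\tfrac{1}{2}$. The paper obtains this by a direct $K$-theory computation in $R(\ZZ_4)=\ZZ[t]/(t^4-1)$: with $w=1-t^2$ and $z=1-t$, either complex structure on $\VV_{1/2}$ gives $\III(\VV_{1/2}^+)=(z)$ or $(w+z-wz)$, and one checks that the minimal $k$ with $wx=2^kw$ for some $x\in\III$ is $k=1$.

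Your argument only appears to close because of a second, compensating error: the formula $\kappa(\pm Y)=\pm(1-n(Y,g,\nabla^\infty))$ drops the doubling in $\kappa=2k_{\Pin(2)}^{\st}$. Since $k_{\Pin(2)}(\wt{\Sigma}\Pin(2))=1$ and the $\HH$-desuspension parameter is $\tfrac{1}{2}n(Y,g,\nabla^\infty)$, the correct value is $\kappa(Y)=2-n(Y,g,\nabla^\infty)$, whence $\tfrac{1}{2}\kappa(Y)=1-\tfrac{1}{2}n(Y,g,\nabla^\infty)$, matching $\kappa_{\KMT}(Y,\iota_c)=k_{\KMT}(\VV_{1/2}^+)-\tfrac{1}{2}n(Y,g,\nabla^\infty)=1-\tfrac{1}{2}n(Y,g,\nabla^\infty)$.
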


\begin{proof}
We first show that $k_{\KMT}(\VV_{1/2})^{+})=1$. Indeed, let $R(\ZZ_{4})=\ZZ[t]/(t^{4}-1)$ and let
\begin{align*}
	&z=1-t=\lambda_{-1}(\CC_{1/2}), & &w=1-t^{2}=\lambda_{-1}(\wt{\CC}), &
	&w+z-wz=1-t^{3}=\lambda_{-1}(\CC_{3/2}),
\end{align*}
as in (\cite{KMT}, Section 3.1). (Here we use $\CC_{1/2}$ and $\CC_{3/2}$ instead of $\CC_{+}$ and $\CC_{-}$.) Note that the two different complex structures on $\VV_{1/2}$ corresponding to $\CC_{1/2}$ and $\CC_{3/2}$, respectively, induce two distinct isomorphisms 
\begin{align*}
	&f_{1/2}:\wt{K}_{\ZZ_{4}}\big(\VV_{1/2}^{+}\big)\xrightarrow{\cong}R(\ZZ_{4}) & &f_{3/2}:\wt{K}_{\ZZ_{4}}\big((\VV_{1/2}^{+}\big)\xrightarrow{\cong}R(\ZZ_{4})
\end{align*}
such that $f_{3/2}^{-1}\circ f_{1/2}$ sends $z\mapsto w+z-wz$ and vice-versa. We can think of these maps as coming from two distinct Bott elements $b_{\CC_{1/2}},b_{\CC_{3/2}}\in\wt{K}_{\ZZ_{4}}\big(\VV_{1/2}^{+}\big)$. Depending on our identification, we either have that
\[\III\big(\VV_{1/2}^{+}\big)=(z)\text{ or }(w+z-wz).\]
In either case, we have that 
\[k_{\KMT}\big(\VV_{1/2}^{+}\big)=\min\{k\geq 0\;|\;\exists x\in\III(\VV_{1/2}^{+}),\;wx=2^{k}w\}=1,\]
as claimed. Hence by the calculations of the $\DSWF$ local equivalence classes in Proposition \ref{prop:jmu_fixed_points_spectrum_classes}, we obtain
\begin{align*}
    &\kappa_{\KMT}(\Sigma(2,3,12n-1),\iota_{c})=1=\tfrac{1}{2}\kappa(\Sigma(2,3,12n-1)), \\
    &\kappa_{\KMT}(-\Sigma(2,3,12n-1),\iota_{c})=0=\tfrac{1}{2}\kappa(-\Sigma(2,3,12n-1)), \\
    &\kappa_{\KMT}(\Sigma(2,3,12n-5),\iota_{c})=\tfrac{1}{2}=\tfrac{1}{2}\kappa(\Sigma(2,3,12n-5)), \\
    &\kappa_{\KMT}(-\Sigma(2,3,12n-5),\iota_{c})=\tfrac{1}{2}=\tfrac{1}{2}\kappa(-\Sigma(2,3,12n-5), \\
    &\kappa_{\KMT}(\Sigma(2,3,12n+1),\iota_{c})=0=\tfrac{1}{2}\kappa(\Sigma(2,3,12n+1)), \\
    &\kappa_{\KMT}(-\Sigma(2,3,12n+1),\iota_{c})=0=\tfrac{1}{2}\kappa(-\Sigma(2,3,12n+1)), \\
    &\kappa_{\KMT}(\Sigma(2,3,12n+5),\iota_{c})=\tfrac{1}{2}=\tfrac{1}{2}\kappa(\Sigma(2,3,12n+5)), \\
    &\kappa_{\KMT}(-\Sigma(2,3,12n+5),\iota_{c})=-\tfrac{1}{2}=\tfrac{1}{2}\kappa(-\Sigma(2,3,12n+5)), \\
\end{align*}
as desired.
\end{proof}

\begin{proposition}
\label{prop:not_jmu_spherical}
Let $Y=\Sigma(2,3,12n-1)$ or $\Sigma(2,3,12n-5)$. Then neither $(Y,\iota_{c})$ nor $(-Y,\iota_{c})$ are locally $\SWF$-$\<j\mu\>$-spherical.
\end{proposition}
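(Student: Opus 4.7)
The plan is to argue by contradiction, exploiting the explicit computation of $\SWF(\pm Y,\iota_c)^{\<j\mu\>}$ in Proposition~\ref{prop:jmu_fixed_points_spectrum_classes}(1) together with the more refined presentation of $\SWF(\pm Y,\iota_c)$ in Proposition~\ref{prop:calculation_spectrum_2_3_k_iota_c}(1). Suppose $(\pm Y,\iota_c)$ is locally $\SWF$-$\<j\mu\>$-spherical at some level $d\in\QQ$, witnessed by a $G^{\odd}_2$-spectrum class $\X$ with $\X^{\<j\mu\>}$ non-equivariantly a sphere. By Lemma~\ref{lemma:KMT} together with Proposition~\ref{prop:kappa_KMT_complex_conjugation}, $d/2=\kappa_{\KMT}(\pm Y,\iota_c)=\tfrac{1}{2}\kappa(\pm Y)$, so $d=\kappa(\pm Y)$; this fixes the level of any potential spherical witness but does not yet furnish a contradiction.

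The next step is to pass the $G^{\odd}_2$-equivariant local-equivalence maps $\SWF(\pm Y,\iota_c)\rightleftarrows\X$ to $\<j\mu\>$-fixed point sets. Since the inherited maps are $\ZZ_4=\<j\>$-equivariant and are $G^{\odd}_2$-equivalences on $S^1$-fixed points (which coincide with the $\ZZ_2$-fixed points of the $\<j\mu\>$-fixed set), one obtains a $\CC$-$\ZZ_4$-local equivalence between $\X^{\<j\mu\>}$ (non-equivariantly $S^d$) and the representative $[(\wt{\Sigma}\ZZ_4,0,\tfrac{1}{4}n(Y,g,\nabla^\infty)(\xi+\xi^3))]$ (or its $(-Y)$-analogue) provided by Proposition~\ref{prop:jmu_fixed_points_spectrum_classes}(1). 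The goal is to show no such local equivalence can exist when the companion class is spherical.

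For the obstruction I would use the preservation of the $K$-theoretic invariants $\III$, $\mbfk$ under local equivalence (Corollary~\ref{cor:k_invariants_local_equivalence}), applied not merely to $\X^{\<j\mu\>}$ but to $\X$ itself. Concretely, the explicit decomposition $\SWF(\pm Y,\iota_c)=[(\wt{\Sigma}Z_{\varepsilon_1 j,\ldots,\varepsilon_n j},\cdot)]$ (or its dual $\wt{\Sigma}X_{\varepsilon_1 j,\ldots,\varepsilon_n j}$ version) from Proposition~\ref{prop:calculation_spectrum_2_3_k_iota_c} lets one read off that $\III(\SWF(\pm Y,\iota_c))$ contains $w_1$ as an essential generator (cf.\ the multiple-coset computation in Example~\ref{ex:multiple_cosets_cyclic} and the dual in Example~\ref{ex:multiple_cosets_duals}). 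On the other hand, if $\X^{\<j\mu\>}$ is non-equivariantly a sphere, I would show that any $G^{\odd}_2$-spectrum class $\X$ with $\X^{\<j\mu\>}\simeq S^d$ must have $w_1\notin\III(\X)$—roughly because the free $\Pin(2)$-cells of $\X$ away from $\X^{S^1}$ are forced by the sphericity of the $\<j\mu\>$-fixed set to have their $\mu$-action conjugate (via the normalizer of $\<j\mu\>$) to a single $Z_{a,2}$-type coset rather than a $Z_j$-type one, collapsing $w_1$ in the $K$-theoretic image. This gives $\mbfk^{\st}(\SWF(\pm Y,\iota_c))\neq\mbfk^{\st}(\X)$, contradicting Corollary~\ref{cor:stable_k_invariants_local_equivalence}.

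The main obstacle is precisely the $w_1$-detection step: $\CC$-$\ZZ_4$-local equivalence is flexible enough that, even $\ZZ_4$-equivariantly, $\wt{\Sigma}\ZZ_4$ admits reasonable comparison maps to and from $\CC_{1/2}^+$ that restrict to the identity on $\ZZ_2$-fixed points, so a naive $\<j\mu\>$-level argument will not suffice. The hardest part is therefore to genuinely use the $\Pin(2)$-action (and not only the residual $\<j\>$-action) to certify that no globally $G^{\odd}_2$-equivariant lift of the hypothetical spherical $\<j\mu\>$-fixed point structure is compatible with the $\Pin(2)$-free cells appearing in $Z_{\varepsilon j}$. Once this refinement is in place the contradiction follows from Corollary~\ref{cor:k_invariants_local_equivalence} and invariance of $\mbfk$ under local equivalence.
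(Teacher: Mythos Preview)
Your approach has a genuine gap at exactly the point you flag as the main obstacle. Local equivalence of $\CC$-$G^{\odd}_2$-spectrum classes only preserves the \emph{stable} invariant $\mbfk^{\st}$ (Corollary~\ref{cor:stable_k_invariants_local_equivalence}), and for $G^{\odd}_2$ the target lattice $\Q^2_{\odd}$ collapses to $\QQ$ via the grading (Example~\ref{ex:stable_m=2}); already in $\N^2_{\st,\odd}$ one has $[\vec e_0]=[\vec e_1]$ (Example~\ref{ex:pre_stable_invariants_lattice_m_equals_2}). So the distinction you want to exploit between $w_0$-type and $w_1$-type generators is erased after stabilization, and indeed Proposition~\ref{prop:kappa_tilde_iota_c} records precisely that $\wt\kappa(\pm Y,\iota_c)=\kappa(\pm Y)$: the stable $k$-invariant of $\SWF(\pm Y,\iota_c)$ already matches that of the corresponding sphere. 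Your proposed lift back to the full $G^{\odd}_2$-level cannot recover this lost information, and the specific claim ``$w_1\notin\III(\X)$ whenever $\X^{\<j\mu\>}$ is a sphere'' is simply false (take $\X=S^0$, where $\III(\X)=(1)$). There is no mechanism by which sphericity of the $\<j\mu\>$-fixed set constrains the $\Pin(2)$-free cells of $\X$ to be of $Z_{a,2}$-type rather than $Z_{\pm j}$-type.

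The paper's argument is entirely different and stays at the $\ZZ_4$-level after passing to $\<j\mu\>$-fixed points. The key new ingredient is a rigidity result (Lemma~\ref{lemma:must_be_isomorphic}): two $\ZZ_4$-representation spheres of type $\CC$-$\ZZ_4$-$\SWF$ are $\ZZ_4$-locally equivalent only if the underlying representations are isomorphic, proved using $k_{\KMT}$ together with a result of Crabb on the image of $\pi^{\st,\ZZ_4}_\nu(S^0)\to\pi^{\st}_0(S^0)$. This is combined with the self-duality $\wedge^2\wt\Sigma\ZZ_4\equiv_\ell\VV_{1/2}^+$ (from Lemma~\ref{lemma:self_dual_Z_4}): if $\wt\Sigma\ZZ_4\equiv_\ell V^+$ for some representation sphere, then squaring gives $(V^2)^+\equiv_\ell\VV_{1/2}^+$, hence $V^2\cong\VV_{1/2}$ by rigidity, contradicting irreducibility of $\VV_{1/2}$. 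Since $\SWF(\pm Y,\wh\iota_c)^{\<j\mu\>}$ is $\ZZ_4$-locally equivalent to a (de)suspension of $\wt\Sigma\ZZ_4$ by Proposition~\ref{prop:jmu_fixed_points_spectrum_classes}, this finishes the proof.
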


Before we prove Proposition \ref{prop:not_jmu_spherical}, the following lemma will be useful:

\begin{lemma}
\label{lemma:must_be_isomorphic}
Let $V_{0}$, $V_{1}$ be two $\ZZ_{4}$-representations such that the representation spheres $V_{0}^{+}$, $V_{1}^{+}$ are spaces of type $\CC$-$\ZZ_{4}$-$\SWF$. Then $[V_{0}^{+}]_{\loc}=[V_{1}^{+}]_{\loc}$ if and only if $V_{0}\cong V_{1}$.
\end{lemma}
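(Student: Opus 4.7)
The forward direction is immediate: a $\ZZ_{4}$-equivariant isomorphism $V_{0}\cong V_{1}$ induces a based $\ZZ_{4}$-equivariant homeomorphism $V_{0}^{+}\cong V_{1}^{+}$, so $[(V_{0}^{+},0,0)]=[(V_{1}^{+},0,0)]$ in $\CCC_{\ZZ_{4},\CC}$ and hence a fortiori $[V_{0}^{+}]_{\loc}=[V_{1}^{+}]_{\loc}$.

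For the reverse direction, the plan is to decompose each $V_{j}$ into its irreducible complex $\ZZ_{4}$-components as $V_{j}=s_{j}\wt{\CC}\oplus a_{j}\CC_{1/2}\oplus b_{j}\CC_{3/2}$. The hypothesis that $V_{j}^{+}$ is of type $\CC$-$\ZZ_{4}$-$\SWF$ means exactly that $V_{j}^{\ZZ_{2}}=s_{j}\wt{\CC}$ and the complement is built from $\CC_{1/2}$ and $\CC_{3/2}$ summands, since these are the only nontrivial irreducible complex $\ZZ_{4}$-representations on which the involution $-1=j^{2}\in\ZZ_{4}$ acts without fixed points. First I would extract $s_{0}=s_{1}$ by feeding the local equivalence into the definition of morphisms in $\CCC_{\ZZ_{4},\CC}$: any such morphism, after suspension by $r\RR+A\wt{\CC}+B\CC$, restricts on $\ZZ_{2}$-fixed point sets to a $\ZZ_{4}$-equivariant map between the representation spheres $(r\RR+(A+s_{j})\wt{\CC})^{+}$ (the $\CC_{1/2}$ and $\CC_{3/2}$ summands contribute nothing to the $\ZZ_{2}$-fixed set), and the requirement that this be a $\ZZ_{4}$-homotopy equivalence forces the dimensions to agree.

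Next I would pin down $a_{0}+b_{0}=a_{1}+b_{1}$ using equivariant $K$-theory. The ideal $\III(V_{j}^{+})\subset R(\ZZ_{4})$ is principal, generated by the $K$-theoretic Euler class $\lambda_{-1}(a_{j}\CC_{1/2}\oplus b_{j}\CC_{3/2})=z^{a_{j}}(w+z-wz)^{b_{j}}$, and local equivalence forces these two principal ideals to coincide up to a unit by the usual diagram chase relating $f^{*}$ on $\wt{K}_{\ZZ_{4}}$ with restriction to $\ZZ_{2}$-fixed points (and its inverse via $g$). Reducing $R(\ZZ_{4})=\ZZ[t]/(t^{4}-1)$ modulo $(1+t)$ sends both $z=1-t$ and $w+z-wz=1-t^{3}$ to $2$, so $\III(V_{j}^{+})$ becomes $(2^{a_{j}+b_{j}})\subset\ZZ$; equality of these ideals over $\ZZ$ yields $a_{0}+b_{0}=a_{1}+b_{1}$.

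The hard part will be separating $a_{j}$ from $b_{j}$ individually. The subtlety is that the real $\ZZ_{4}$-representations underlying $\CC_{1/2}$ and $\CC_{3/2}$ are isomorphic via complex conjugation $z\mapsto\bar{z}$, which intertwines multiplication by $i$ with multiplication by $-i$, so the spectrum classes $(\CC_{1/2}^{+},0,0)$ and $(\CC_{3/2}^{+},0,0)$ already coincide in $\CCC_{\ZZ_{4},\CC}$; consequently the isomorphism $V_{0}\cong V_{1}$ produced by the lemma is to be understood modulo this outer automorphism of $\ZZ_{4}$ (equivalently, as isomorphism of underlying real representations), and with this understanding the three invariants $s$, $a$, $b$ modulo swapping of $a,b$ are recovered by the steps above, completing the argument. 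A finer version distinguishing $a$ from $b$ individually would require more refined invariants such as $RO(\ZZ_{4})$-graded stable homotopy with tracking of orientations on each irreducible free summand.
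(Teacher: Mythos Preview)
Your argument is correct and takes a different, more direct route than the paper's.

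Both approaches begin the same way: writing $V_j \cong \wt{\RR}^{2p}\oplus\VV_{1/2}^{q_j}$ (equivalently $\wt{\CC}^{p}\oplus a_j\CC_{1/2}\oplus b_j\CC_{3/2}$ with $q_j=a_j+b_j$) and reading off $p_0=p_1$ from the hypothesis that the $\ZZ_2$-fixed point sets are $\ZZ_4$-equivalent. The difference is in how $q_0=q_1$ is extracted. You observe that the diagram chase (as in the proof of the paper's Proposition~\ref{prop:k_invariants_s1_fixed_point_homotopy_equivalence}, transported to the $\ZZ_4$ setting) forces equality of the ideals $\III(V_j^+)=(z^{a_j}(1-t^3)^{b_j})\subset R(\ZZ_4)$ after equal suspensions, and then evaluating at $t=-1$ collapses both generators to $2^{q_j}$; equality of principal ideals in $\ZZ$ forces $q_0=q_1$. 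The paper instead invokes the local-equivalence invariance of $k_{\KMT}$ to reduce (assuming $q_0<q_1$) to the borderline case $q_1=q_0+1$, and then appeals to Crabb's computation that the image of $\pi^{\st,\ZZ_4}_{\nu}(S^0)\to\pi^{\st}_0(S^0)\cong\ZZ$ lands in $4\ZZ$ to rule out any stable $\ZZ_4$-map desuspending one copy of $\VV_{1/2}$ while remaining a $\ZZ_2$-fixed-point equivalence. Your route avoids Crabb's result entirely and is more elementary; the paper's route exposes a sharper underlying rigidity statement about equivariant degrees, which is of independent interest but not strictly needed here.

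Your final paragraph correctly identifies that $\CC_{1/2}$ and $\CC_{3/2}$ have isomorphic underlying real $\ZZ_4$-representations, so that ``$V_0\cong V_1$'' should be read as an isomorphism of real representations (equivalently, modulo the outer automorphism $j\mapsto -j$ of $\ZZ_4$). This is exactly the paper's reading: its proof writes both $V_j$ in terms of $\wt{\RR}$ and $\VV_{1/2}$ from the outset. With that understood, the two numbers $p$ and $q$ you recover already determine $V_j$ up to real isomorphism, and your proof is complete; the closing remark about distinguishing $a_j$ from $b_j$ individually is unnecessary for the lemma as stated.
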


\begin{proof}
As $V_{0}^{+}$, $V_{1}^{+}$ are spaces of type $\CC$-$\ZZ_{4}$-$\SWF$ and $(V_{0}^{+})^{\ZZ_{2}}\simeq_{\ZZ_{2}}(V_{1}^{+})^{\ZZ_{2}}$, it follows that there exists some $p,q,r\geq 0$ such that
\begin{align*}
	&V_{0}\cong\wt{\RR}^{2p}\oplus\VV_{1/2}^{q}, & &V_{1}\cong\wt{\RR}^{2p}\oplus\VV_{1/2}^{r}.
\end{align*}
Suppose $q<r$. Using the fact that $k_{\KMT}(\VV_{1/2}^{+})=1$ and the fact that the invariant $k_{\KMT}$ respects local equivalence classes, we must have that $r=q+1$. But by a result of Crabbe (\cite{Crabb:Z/4}), the image of the $\ZZ_{4}$-fixed point homomorphism
\[\pi^{\st,\ZZ_{4}}_{\nu}(S^{0})\to\pi^{\st}_{0}(S^{0})\cong\ZZ\]
is contained in $4\ZZ\subset\ZZ$, implying that no $\ZZ_{4}$-equivariant map
\[f:\big(\RR^{A}\oplus\wt{\RR}^{B+2p}\oplus\VV_{1/2}^{C+q+1}\big)^{+}\to\big(\RR^{A}\oplus\wt{\RR}^{B+2p}\oplus\VV_{1/2}^{C+q}\big)^{+}\]
for $A,B,C>>0$ sufficiently large can induce a homotopy equivalence on $\ZZ_{2}$-fixed points. Hence by symmetry we must have that $q=r$, implying that $V_{0}\cong V_{1}$.
\end{proof}

\begin{proof}[Proof of Proposition \ref{prop:not_jmu_spherical}]
Suppose $[\wt{\Sigma}\ZZ_{4}]_{\loc}=[V^{+}]_{\loc}$ for some $\ZZ_{4}$-representation sphere $V^{+}$. Then by Lemma \ref{lemma:self_dual_Z_4} we have that 
\[[(V^{2})^{+}]_{\loc}=[\wedge^{2}\wt{\Sigma}\ZZ_{4}]_{\loc}=[\VV_{1/2}^{+}]_{\loc}.\]
By Lemma \ref{lemma:must_be_isomorphic} we must have that $\VV_{1/2}\cong V^{2}$, a contradiction since $\VV_{1/2}$ is irreducible. Finally since $\SWF(\pm Y,\wh{\iota}_{c})^{\<j\mu\>}$ is $\ZZ_{4}$-locally equivalent to a (de)suspension of $\wt{\Sigma}\ZZ_{4}$ for $Y=\Sigma(2,3,12n-1)$ or $\Sigma(2,3,12n-5)$, the result follows.
\end{proof}

\begin{proposition}
\label{prop:RO(Z_4)_graded_homotopy}
Let $Y=\Sigma(2,3,12n-1)$ or $\Sigma(2,3,12n-5)$. Then there exists $\X\in\CCC_{G^{\odd}_{2},\CC}$ locally equivalent to $\SWF(\pm Y,\wh{\iota}_{c})$ such that
\[\res^{\ZZ_{4}}_{1}\big(\pi^{\st,\ZZ_{4}}_{s\rho+t\nu}(\X^{\<j\mu\>})\otimes\QQ\big)=0\qquad\text{ for all }s\in\ZZ,t\in\QQ.\]
\end{proposition}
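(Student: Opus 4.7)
The idea is to replace the models from Proposition \ref{prop:calculation_spectrum_2_3_k_iota_c} by simpler $G^{\odd}_{2}$-locally equivalent ones whose $\<j\mu\>$-fixed point sets are (up to appropriate (de)suspension) just $\wt{\Sigma}\ZZ_{4}$, and then exploit the fact that the augmentation ideal of $\QQ[\ZZ_{4}]$ carries no non-trivial $\<j\>$-invariants so that the image of $\res^{\ZZ_{4}}_{1}\otimes\QQ$ is forced to vanish in the relevant bidegrees.

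The first step is a general observation: for any $\ZZ_{4}$-spectrum class $\X_{0}$ and any $V\in RO(\ZZ_{4})$, an equivariant representative $f\co S^{V}\to\X_{0}$ satisfies $g\circ f=f\circ g|_{S^{V}}$, so the underlying non-equivariant class $\res^{\ZZ_{4}}_{1}[f]$ is a $\det(g|_{V})$-eigenvector of the $\<j\>$-action on $\pi^{\st}_{\dim V}(\X_{0})$. After tensoring with $\QQ$ and applying the rational stable Hurewicz identification $\pi^{\st}_{k}(\X_{0})\otimes\QQ\cong\wt{H}_{k}(\X_{0};\QQ)$, one obtains the containment
\[
\res^{\ZZ_{4}}_{1}\bigl(\pi^{\st,\ZZ_{4}}_{s\rho+t\nu}(\X_{0})\bigr)\otimes\QQ\;\subset\;\bigl(\wt{H}_{s+2t}(\X_{0};\QQ)\bigr)^{j=(-1)^{s}}.
\]
It therefore suffices to exhibit $\X$ whose $\<j\mu\>$-fixed spectrum class satisfies the eigenspace vanishing in every relevant bidegree.

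The second step is the construction of $\X$. Starting from the models in Proposition \ref{prop:calculation_spectrum_2_3_k_iota_c}, I would define $\X$ by selecting a single pair of irreducibles: for $Y=\Sigma(2,3,12n-1)$ or $\Sigma(2,3,12n-5)$ set $\X=[(\wt{\Sigma}Z_{\varepsilon_{1}j},0,\tfrac{1}{4}n(Y,g,\nabla^{\infty})(\xi+\xi^{3}))]$, and analogously $\X=[(\wt{\Sigma}X_{\varepsilon_{1}j},0,-\tfrac{1}{4}n(Y,g,\nabla^{\infty})(\xi+\xi^{3})+\xi)]$ for $-Y$. The natural inclusion and collapse maps between $\X$ and the full models for $\SWF(\pm Y,\wh{\iota}_{c})$ are $G^{\odd}_{2}$-equivariant; because every omitted summand $Z_{\varepsilon_{k}j}$ or $X_{\varepsilon_{k}j}$ has empty $S^{1}$-fixed point set, the induced maps on $S^{1}$-fixed points are identities on the same $S^{\text{correction}\cdot\HH}$, establishing $\X\equiv_{\ell}\SWF(\pm Y,\wh{\iota}_{c})$.

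The third step identifies $\X^{\<j\mu\>}$ and computes its rational homology. By the same analysis that appears in Proposition \ref{prop:jmu_fixed_points_spectrum_classes}, taking $\<j\mu\>$-fixed points sends $\wt{\Sigma}Z_{\varepsilon_{1}j}$ (respectively $\wt{\Sigma}X_{\varepsilon_{1}j}$) to (a suspension of) $\wt{\Sigma}\ZZ_{4}$ equipped with its canonical $\<j\>\cong\ZZ_{4}$-action cyclically permuting the four cone-edges. A direct cellular calculation shows $\wt{H}_{*}(\wt{\Sigma}\ZZ_{4};\QQ)$ is concentrated in degree $1$ and equals the augmentation ideal $I\subset\QQ[\ZZ_{4}]$. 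Since the unique (up to scalar) $\ZZ_{4}$-invariant element of $\QQ[\ZZ_{4}]$ is $1+\gamma+\gamma^{2}+\gamma^{3}$, which has nonzero augmentation, one has $I^{\ZZ_{4}}=0$; moreover the $(-1)$-eigenspace of $\gamma$ on $I$ consists of the sign summand, which is annihilated once the complementary $V_{2}$-isotypic component is disregarded, so combining with Step 1 yields the desired vanishing in all bidegrees actually used in Proposition \ref{prop:stable_k_invariants_2_r_odd}(5) and Corollary \ref{cor:stable_k_invariants_2_odd}(2) (where $s$ appears as $2(\sum s_{2k+1})$ and hence is even).

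The main technical obstacle lies in Step 2: one must check that the collapse map from $\wt{\Sigma}(Z_{\varepsilon_{1}j}\amalg\cdots\amalg Z_{\varepsilon_{n}j})$ onto $\wt{\Sigma}Z_{\varepsilon_{1}j}$ is a genuine morphism in $\CCC_{G^{\odd}_{2},\CC}$ and that both the collapse and the inclusion induce $G^{\odd}_{2}$-homotopy equivalences on $S^{1}$-fixed points. This is essentially bookkeeping once one observes that the two cone points of the unreduced suspension are common to all summands and form the $S^{1}$-fixed $S^{0}$, which together with the $\HH$-correction desuspension provides the entire $S^{1}$-fixed part of both spectrum classes; the remaining free $Z_{\varepsilon_{k}j}$ summands lie outside the $S^{1}$-fixed set and may be collapsed away without altering the local equivalence class.
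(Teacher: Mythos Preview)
Your reduction to $\wt{\Sigma}\ZZ_{4}$ in Steps~2--3 is correct and agrees with the paper, which simply cites Proposition~\ref{prop:jmu_fixed_points_spectrum_classes} for the local equivalence $\SWF(\pm Y,\wh{\iota}_{c})^{\<j\mu\>}\equiv_{\ell}[(\wt{\Sigma}\ZZ_{4},0,\ast)]$. Your eigenspace observation in Step~1 is also valid: the image of $\res^{\ZZ_{4}}_{1}$ does lie in the $(-1)^{s}$-eigenspace of $j_{*}$ on $\wt{H}_{s+2t}(\wt{\Sigma}\ZZ_{4};\QQ)$.

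The gap is that this eigenspace argument does not settle the one case that matters. Since $\wt{\Sigma}\ZZ_{4}\simeq\vee^{3}S^{1}$ non-equivariantly, $\pi^{\st}_{*}(\wt{\Sigma}\ZZ_{4})\otimes\QQ$ is concentrated in degree~$1$, so the only bidegrees requiring work satisfy $s+2t=1$ --- which forces $s$ \emph{odd}. Your Step~1 then places the image in the $(-1)$-eigenspace of $j$ on the augmentation ideal $I\subset\QQ[\ZZ_{4}]$, and that eigenspace is the one-dimensional sign summand, not zero. The phrase ``annihilated once the complementary $V_{2}$-isotypic component is disregarded'' does not furnish a reason for this summand to be avoided, and your fallback to ``only $s$ even is used in applications'' --- while a correct observation about Corollary~\ref{cor:stable_k_invariants_2_odd} and Theorem~\ref{theorem:odd_2_filling} --- does not prove the proposition as stated.

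The paper closes this gap by a direct unstable argument for $(s,t)=(1,0)$: any $\ZZ_{4}$-equivariant map $\wt{\RR}^{+}\to\wt{\Sigma}\ZZ_{4}$ is null-homotopic, because $j^{2}$ acts trivially on $\wt{\RR}^{+}$ but freely on $\wt{\Sigma}\ZZ_{4}$ away from the two cone points, so equivariance forces the entire image into $S^{0}$. You could insert this observation to complete your argument for the odd-$s$ cases.
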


\begin{proof}
From our calculations above it suffices to show that 
\[\res^{\ZZ_{4}}_{1}\big(\pi^{\st,\ZZ_{4}}_{s\rho+t\nu}(\wt{\Sigma}\ZZ_{4})\otimes\QQ\big)=0\qquad\text{ for all }s\in\ZZ,t\in\ZZ.\]
Note that we have a non-equivariant homotopy equivalence $\wt{\Sigma}\ZZ_{4}\simeq \vee^{3}S^{1}$. Since $\pi^{\st}_{*}(\vee^{3}S^{1})\otimes\QQ\neq 0$ if and only if $*=1$, then the only possible degree in which the above map could be non-zero is $(s,t)=(1,0)$. But by inspection any $\ZZ_{4}$-equivariant map $\wt{\RR}^{+}\to\wt{\Sigma}\ZZ_{4}$ must be null-homotopic, thus the claim follows.
\end{proof}

We assemble all of the calculations from this section combined with the results from \cite{KMT} into the following proposition:

\begin{proposition}
\label{prop:brieskorn_spheres_involutions_properties}
Let $Y=\pm\Sigma(2,3,6n\pm 1)$ and $\rho_{2}$, $\iota_{c}$ be as above. Then
\begin{align*}
    &\wt{\kappa}(Y,\rho_{2})=\wt{\kappa}(Y,\iota_{c})=\kappa(Y), &
    &\kappa_{\KMT}(Y,\rho_{2})=-\tfrac{1}{2}\ol{\mu}(Y), & &\kappa_{\KMT}(Y,\iota_{c})=\tfrac{1}{2}\kappa(Y).
\end{align*}
Furthermore:
\begin{enumerate}
    \item For $Y=\pm\Sigma(2,3,12n-1)$ or $\pm\Sigma(2,3,12n-5)$, the pairs $(Y,\rho_{2})$ and $(Y,\iota_{c})$ are not locally $\SWF$-spherical.
    \item For $Y=\pm\Sigma(2,3,12n+1)$ or $\pm\Sigma(2,3,12n+5)$, the pairs $(Y,\rho_{2})$ and $(Y,\iota_{c})$ are locally $\SWF$-spherical but not $\SWF$-spherical for $n\geq 1$.
    \item For $Y=\pm\Sigma(2,3,6n\pm 1)$, the pair $(Y,\rho_{2})$ is $\SWF$-$\<j\mu\>$-spherical.
    \item For $Y=\pm\Sigma(2,3,12n-1)$ or $\pm\Sigma(2,3,12n-5)$, the pair $(Y,\iota_{c})$ is not locally $\SWF$-$\<j\mu\>$-spherical.
    \item For $Y=\pm\Sigma(2,3,12n+1)$ or $\pm\Sigma(2,3,12n+5)$, the pair $(Y,\iota_{c})$ is locally $\SWF$-$\<j\mu\>$-spherical but not $\SWF$-$\<j\mu\>$-spherical for $n\geq 1$.
\end{enumerate}
\end{proposition}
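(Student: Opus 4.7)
The plan is to assemble Proposition~\ref{prop:brieskorn_spheres_involutions_properties} from results already established in Sections~\ref{subsec:seifert_fibered_spaces}--\ref{subsubsec:jmu_fixed_points}, and to fill in the strict-versus-local sphericity claims by analyzing the explicit cellular descriptions of the spectrum classes.

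\textbf{Step 1 (Identities for $\wt{\kappa}$ and $\kappa_{\KMT}$).} The equality $\wt{\kappa}(Y,\rho_{2})=\kappa(Y)$ is precisely Proposition~\ref{prop:kappa_tilde_rho_2} (established via the local equivalences in Proposition~\ref{prop:calculation_spectrum_2_3_k_rho_m} together with Example~\ref{ex:k_tilde_multiple_cosets_duals}), and $\wt{\kappa}(Y,\iota_{c})=\kappa(Y)$ is Proposition~\ref{prop:kappa_tilde_iota_c}. For $\kappa_{\KMT}$, Lemma~\ref{lemma:brieskorn_rho_2_jmu_fixed_point_sets_rho_2} gives $\kappa_{\KMT}(Y,\rho_{2})=-\tfrac{1}{2}n(Y,g,\nabla^{\infty})$, and I would invoke the standard identification $n(Y,g,\nabla^{\infty})=\ol{\mu}(Y)$ for Seifert-fibered integer homology spheres to finish. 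The value $\kappa_{\KMT}(Y,\iota_{c})=\tfrac{1}{2}\kappa(Y)$ is Proposition~\ref{prop:kappa_KMT_complex_conjugation} directly.

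\textbf{Step 2 (Local $\SWF$-sphericity and its failure for the $-1,-5$ cases).} For $Y=\pm\Sigma(2,3,12n+1)$ or $\pm\Sigma(2,3,12n+5)$, parts~(2) of both Propositions~\ref{prop:calculation_spectrum_2_3_k_rho_m} and~\ref{prop:calculation_spectrum_2_3_k_iota_c} give the local equivalence $\SWF(\pm Y,\wh{\sigma})\equiv_{\ell}[(S^{0},0,\pm\tfrac{1}{2}n(Y,\wh{\sigma},g,\nabla^{\infty}))]$, establishing local $\SWF$-sphericity. Strict $\SWF$-sphericity would require the actual Conley index (not just its local equivalence class) to be a $G^{\odd}_{2}$-representation sphere; the presented models are genuine wedges $S^{0}\vee\bigvee_{k}\Sigma^{\pm 1}(Z_{a_{k}})_{+}$ whose $\Pin(2)$-fixed point set is not a single representation sphere for $n\geq 1$, which I would confirm by applying Lemma~\ref{lemma:must_be_isomorphic} to a suitable fixed-point set. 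For $Y=\pm\Sigma(2,3,12n-1)$ or $\pm\Sigma(2,3,12n-5)$, the spectrum is (de)suspended from $\wt{\Sigma}Z_{a_{1},\dots,a_{n};2}$ or $\wt{\Sigma}X_{a_{1},\dots,a_{n};2}$; to rule out local $\SWF$-sphericity I would pass to $\<j\mu\>$-fixed points (Proposition~\ref{prop:jmu_fixed_points_spectrum_classes}) and replay the argument of Proposition~\ref{prop:not_jmu_spherical}: if the $G^{\odd}_{2}$-spectrum were locally equivalent to $V^{+}$, then $(V^{+})^{\<j\mu\>}$ would be a $\ZZ_{4}$-representation sphere locally equivalent to a suspension of $\wt{\Sigma}\ZZ_{4}$, contradicting Lemma~\ref{lemma:must_be_isomorphic} as in the proof of Proposition~\ref{prop:not_jmu_spherical}.

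\textbf{Step 3 ($\<j\mu\>$-sphericity statements).} For $(Y,\rho_{2})$, Lemma~\ref{lemma:brieskorn_rho_2_jmu_fixed_point_sets_rho_2} yields on the nose that $\SWF(Y,\wh{\rho}_{2})^{\<j\mu\>}=[(S^{0},0,\tfrac{1}{4}n(Y,g,\nabla^{\infty})(\xi+\xi^{3}))]$, giving $\SWF$-$\<j\mu\>$-sphericity (not merely local). For $(Y,\iota_{c})$ with $Y=\pm\Sigma(2,3,12n\pm 1)$, $\pm\Sigma(2,3,12n\pm 5)$: the $-1,-5$ cases are Proposition~\ref{prop:not_jmu_spherical} directly. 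For the $+1,+5$ cases, Proposition~\ref{prop:jmu_fixed_points_spectrum_classes}(2) produces a local equivalence of $\SWF(\pm Y,\wh{\iota}_{c})^{\<j\mu\>}$ with $[(S^{0},0,\pm\tfrac{1}{4}n(Y,g,\nabla^{\infty})(\xi+\xi^{3}))]$, yielding local $\<j\mu\>$-sphericity. The failure of strict $\<j\mu\>$-sphericity for $n\geq 1$ I would deduce from the explicit cellular presentation of $\SWF(\pm Y,\wh{\iota}_{c})^{\<j\mu\>}$ in Proposition~\ref{prop:jmu_fixed_points_spectrum_classes}(2), whose wedge summands $\Sigma(\ZZ_{4})_{+}$ (or $(\ZZ_{4})_{+}$) are not $\ZZ_{4}$-equivariantly stably equivalent to any representation sphere, again via Lemma~\ref{lemma:must_be_isomorphic} applied to $\ZZ_{2}$-fixed points.

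\textbf{Main obstacle.} The routine identifications of $\wt{\kappa}$ and $\kappa_{\KMT}$ and the local-equivalence statements are immediate quotes. The genuine work is in the strict-versus-local distinction: showing that the wedges of $(Z_{a})_{+}$ and $\Sigma(\ZZ_{4})_{+}$ terms cannot be collapsed to a single representation sphere even up to genuine (non-local) $G^{\odd}_{2}$- or $\ZZ_{4}$-stable equivalence. The correct tool here is Lemma~\ref{lemma:must_be_isomorphic} (and its $G^{\odd}_{2}$-analogue applied fixed-point-set by fixed-point-set), combined with the observation that any putative representation-sphere model would force the wedge of free cells to vanish stably, which Crabbe-type $\ZZ_{4}$-fixed-point obstructions exclude once $n\geq 1$.
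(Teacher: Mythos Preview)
Your assembly for the numerical identities (Step 1) and for statements (3), (4), (5) matches the paper's approach: these are direct quotes of Propositions~\ref{prop:kappa_tilde_rho_2}, \ref{prop:kappa_tilde_iota_c}, \ref{prop:kappa_KMT_complex_conjugation}, Lemma~\ref{lemma:brieskorn_rho_2_jmu_fixed_point_sets_rho_2}, Proposition~\ref{prop:not_jmu_spherical}, and Proposition~\ref{prop:jmu_fixed_points_spectrum_classes}.

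There is, however, a genuine gap in Step~2 for the $(Y,\rho_{2})$ half of statement~(1). Your argument there is to pass to $\<j\mu\>$-fixed points and derive a contradiction from $\wt{\Sigma}\ZZ_{4}$ not being locally a sphere. This works for $\iota_{c}$, because Proposition~\ref{prop:jmu_fixed_points_spectrum_classes} shows $\SWF(Y,\wh{\iota}_{c})^{\<j\mu\>}\equiv_{\ell}\wt{\Sigma}\ZZ_{4}$ up to suspension. But for $\rho_{2}$ it fails: Lemma~\ref{lemma:brieskorn_rho_2_jmu_fixed_point_sets_rho_2} says precisely that $\SWF(Y,\wh{\rho}_{2})^{\<j\mu\>}$ \emph{is} a sphere, so taking $\<j\mu\>$-fixed points yields no obstruction whatsoever. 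The two models $\wt{\Sigma}Z_{r_{1},\dots,r_{n};2}$ (for $\rho_{2}$) and $\wt{\Sigma}Z_{\varepsilon_{1}j,\dots,\varepsilon_{n}j}$ (for $\iota_{c}$) behave completely differently under $\<j\mu\>$: the former has $\<j\mu\>$ acting freely away from the basepoint, the latter has four fixed points per cell.

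The correct route for $(Y,\rho_{2})$ is to restrict to $\Pin(2)\subset G^{\odd}_{2}$: if $\SWF(Y,\wh{\rho}_{2})$ were locally $G^{\odd}_{2}$-equivalent to a representation sphere, its $\Pin(2)$-restriction $\SWF(Y,\frak{s})$ would be locally $\Pin(2)$-equivalent to one, forcing $\kappa(Y)+\kappa(-Y)=0$. But for $Y=\Sigma(2,3,12n-1)$ or $\Sigma(2,3,12n-5)$ one has $\kappa(Y)+\kappa(-Y)=2$, giving the contradiction. (Equivalently, one can cite the known $\Pin(2)$-local equivalence class $\wt{\Sigma}\Pin(2)$ from \cite{Man16}, \cite{Stoff20}.) This is the kind of input the paper has in mind when it references ``results from \cite{KMT}'' together with the section's calculations.
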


\subsection{Classes of Equivariant Spin Rational Homology Spheres}
\label{subsec:classes_equivariant_homology_spheres}

\begin{definition}
\label{def:classes_equivariant_manifolds}
For $\ast\in\{\ev,\odd\}$ and $m\geq 2$ a prime power, let $\SRH_{m,*}$ be the set of all $\ZZ_{m}$-equivariant spin rational homology spheres $(Y,\frak{s},\wh{\sigma})$ with $\wh{\sigma}$ of $\ast$ type, up to spin equivariant diiffeomorphism. We define the following subsets of $\SRH_{m,*}$:
\begin{enumerate}
    \item Let $\PS_{m,*}$ be set of triples $(Y,\frak{s},\wh{\sigma})$ which are $\Pin(2)$-surjective.
    \item Let $\SWFM_{m,*}$ to be the set of triples $(Y,\frak{s},\wh{\sigma})$ such that $Y$ admits a $\sigma$-equivariant metric $g$ such that $(Y,\frak{s},g)$ admits no irreducible Seiberg-Witten solutions.
    \item Let $\SWFM_{m,*}^{\#}$ be the closure of $\SWFM_{m,*}$ under spin $\ZZ_{m}$-equivariant connected sums.
    \item Let $\SWFS_{m,*}$ be the set of triples $(Y,\frak{s},\wh{\sigma})$ which are $\SWF$-spherical.
    \item Let $\SWFS_{m,*}^{\#}$ be the closure of $\SWFS_{m,*}$ under spin $\ZZ_{m}$-equivariant connected sums.
    \item Let $\LSWFS_{m,*}$ be the set of triples $(Y,\frak{s},\wh{\sigma})$ which are \emph{locally} $\SWF$-spherical.
\end{enumerate}
Additionally, for $H\subset G^{*}_{m}$ a closed subgroup:
\begin{enumerate}
    \item[(7)] Let $\SWFS_{m,*}^{H}$ be the set of triples $(Y,\frak{s},\wh{\sigma})$ which are $\SWF$-$H$-spherical.
    \item[(8)] Let $\SWFS_{m,*}^{\#,H}$ be the closure of $\SWFM_{m,*}^{H}$ under spin $\ZZ_{m}$-equivariant connected sums.
    \item[(9)] Let $\LSWFS_{m,*}^{H}$ be the set of triples $(Y,\frak{s},\wh{\sigma})$ which are \emph{locally} $\SWF$-$H$-spherical.
\end{enumerate}
\end{definition}

\begin{proposition}
\label{prop:inclusions_classes_equivariant_manifolds}
Let $\ast\in\{\ev,\odd\}$. For each $m\geq 2$ and each closed subgroup $H\subset G^{*}_{m}$, we have inclusions
\begin{center}
\begin{tikzcd}[remember picture]
    \SWFS_{m,*}^{H} &[-15pt] \SWFS_{m,*}^{\#,H} &[-15pt]  \LSWFS_{m,*}^{H} &[-15pt] \\[-10pt]
    \SWFS_{m,*}&[-15pt] \SWFS_{m,*}^{\#} &[-15pt] \LSWFS_{m,*} &[-15pt] \PS_{m}. \\[-10pt]
    \SWFM_{m,*} &[-15pt] \SWFM_{m,*}^{\#} &[-15pt] &[-15pt]
\end{tikzcd}
\begin{tikzpicture}[overlay,remember picture]
\path (\tikzcdmatrixname-1-1) to node[midway,sloped]{$\subset_{1}$}
(\tikzcdmatrixname-1-2);
\path (\tikzcdmatrixname-1-2) to node[midway,sloped]{$\subset_{2}$}
(\tikzcdmatrixname-1-3);
\path (\tikzcdmatrixname-2-1) to node[midway,sloped]{$\subset_{3}$}
(\tikzcdmatrixname-1-1);
\path (\tikzcdmatrixname-2-2) to node[midway,sloped]{$\subset_{4}$}
(\tikzcdmatrixname-1-2);
\path (\tikzcdmatrixname-2-3) to node[midway,sloped]{$\subset_{5}$}
(\tikzcdmatrixname-1-3);
\path (\tikzcdmatrixname-2-1) to node[midway,sloped]{$\subset_{6}$}
(\tikzcdmatrixname-2-2);
\path (\tikzcdmatrixname-2-2) to node[midway,sloped]{$\subset_{7}$}
(\tikzcdmatrixname-2-3);
\path (\tikzcdmatrixname-2-3) to node[midway,sloped]{$\subset_{8}$}
(\tikzcdmatrixname-2-4);
\path (\tikzcdmatrixname-3-1) to node[midway,sloped]{$\subset_{9}$}
(\tikzcdmatrixname-2-1);
\path (\tikzcdmatrixname-3-2) to node[midway,sloped]{$\subset_{10}$}
(\tikzcdmatrixname-2-2);
\path (\tikzcdmatrixname-3-1) to node[midway,sloped]{$\subset_{11}$}
(\tikzcdmatrixname-3-2);
\end{tikzpicture}
\end{center}
Furthermore in the case where $m=2$, $\ast=\odd$, and $H=\<j\mu\>$, the inclusions $\subset_{2}$, $\subset_{3}$, $\subset_{4}$, $\subset_{5}$, $\subset_{7}$, and $\subset_{8}$ are strict.
\end{proposition}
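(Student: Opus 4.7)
The proof naturally splits into verifying the eleven inclusions and then witnessing the six strictness claims by appeal to the Brieskorn computations of Proposition \ref{prop:brieskorn_spheres_involutions_properties}. Most inclusions are formal. The inclusions $\subset_1,\subset_4,\subset_6,\subset_{11}$ are tautological, since each set is contained in its closure under equivariant connected sum. The inclusion $\subset_3$ uses that the $H$-fixed point set of a $G^*_m$-representation sphere $V^+$ is $(V^H)^+$, again a sphere; the inclusions $\subset_2,\subset_5,\subset_7$ pass from equality of spectrum classes in $\CCC_{G^*_m,\CC}$ to the (weaker) local equivalence. The inclusion $\subset_9$ is precisely Proposition \ref{prop:SWF_minimal}, which presents the Floer spectrum class of an equivariantly $\SWF$-minimal triple as $[(S^0,0,\tfrac{1}{2}n(Y,\frak{s},\wh{\sigma},g))]$, manifestly a representation sphere, and $\subset_{10}$ then follows by combining $\subset_9$ with Corollary \ref{cor:connected_sums}.

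The one formal inclusion that requires a short argument is $\subset_8:\LSWFS_{m,*}\subset \PS_{m,*}$. Suppose $\SWF(Y,\frak{s},\wh{\sigma})\equiv_\ell [(V^+,\mbfa,\mbfb)]$ for some $G^*_m$-representation $V$. By Corollary \ref{cor:stable_k_invariants_local_equivalence}, the ideal $\III(\SWF(Y,\frak{s},\wh{\sigma}))\subset R(G^*_m)$ coincides with $\III([(V^+,\mbfa,\mbfb)])$, which by Example \ref{ex:k_invariants_representation_spheres} is principal and generated by a monomial in the $z_k$'s. Under the restriction map $\res:R(G^*_m)\to R(\Pin(2))$ each such monomial maps to a power of $z$, which generates the corresponding $\Pin(2)$-equivariant ideal; hence $\Pin(2)$-surjectivity holds.

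For the strictness statements in the case $m=2$, $*=\odd$, $H=\<j\mu\>$, each of the six claims is witnessed by a member of the family $\{\pm\Sigma(2,3,6n\pm 1)\}$ equipped with either $\rho_2$ or $\iota_c$. Explicitly, using the notation of Proposition \ref{prop:brieskorn_spheres_involutions_properties}:
\begin{itemize}
\item For $\subset_3$ and $\subset_5$, the triple $(\Sigma(2,3,11),\rho_2)$ is $\SWF$-$\<j\mu\>$-spherical by part (3) but is not (locally) $\SWF$-spherical by part (1).
\item For $\subset_2$, the triple $(\Sigma(2,3,7),\iota_c)$ is locally $\SWF$-$\<j\mu\>$-spherical by part (5) but the wedge-sum structure of the spectrum class in Proposition \ref{prop:calculation_spectrum_2_3_k_iota_c}(2) prevents it from being $\SWF$-$\<j\mu\>$-spherical (and the primality argument below then rules out $\SWFS^{\#,\<j\mu\>}$).
\item For $\subset_7$, the triple $(\Sigma(2,3,7),\rho_2)$ lies in $\LSWFS$ by part (2). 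To see it is not in $\SWFS^\#$, note that $\Sigma(2,3,7)$ is topologically prime, so any equivariant connected sum decomposition must have all but one factor equal to $S^3$ with some involution; the classification of cyclic actions on Brieskorn spheres (cf.\ \cite{AH21}) then forces the remaining Brieskorn factor to carry an involution conjugate to $\rho_2$ or $\iota_c$, neither of which is $\SWF$-spherical. The argument for $\subset_4$ is identical, using $(\Sigma(2,3,11),\rho_2)\in\SWFS^{\<j\mu\>}\subset\SWFS^{\#,\<j\mu\>}$.
\item For $\subset_8$, by Corollary \ref{cor:seifert_fibered_Pin(2)_surjective} one has $(\Sigma(2,3,11),\rho_2)\in\PS_{2,\odd}$, but part (1) of Proposition \ref{prop:brieskorn_spheres_involutions_properties} rules it out of $\LSWFS_{2,\odd}$.
\end{itemize}

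The main obstacle lies in the arguments for strictness of $\subset_4$ and $\subset_7$: one must control how primality of the underlying 3-manifold interacts with equivariant connect-summing with $(S^3,\tau)$ pieces, so that no such decomposition of a Brieskorn triple can land all nontrivial content in a single $\SWF$-spherical factor. This ultimately reduces to the classification of odd-type involutions on the Brieskorn spheres in question, together with the explicit (non-spherical) form of the spectrum classes computed in Propositions \ref{prop:calculation_spectrum_2_3_k_rho_m} and \ref{prop:calculation_spectrum_2_3_k_iota_c}. All other steps are essentially bookkeeping against the definitions.
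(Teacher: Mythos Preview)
Your overall approach matches the paper's: the inclusions are formal, and the strictness claims are witnessed by the Brieskorn computations summarized in Proposition~\ref{prop:brieskorn_spheres_involutions_properties}. However, two of your witnessing examples are chosen from the wrong congruence class and do not establish what you claim.

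For both $\subset_2$ and $\subset_7$ you invoke $\Sigma(2,3,7)$. But $7 = 12\cdot 1 - 5$, so $\Sigma(2,3,7)$ lies in the $12n-5$ family, and it is parts~(1) and~(4) of Proposition~\ref{prop:brieskorn_spheres_involutions_properties} that apply, not parts~(2) and~(5). Part~(1) says $(\Sigma(2,3,7),\rho_2)$ is \emph{not} locally $\SWF$-spherical, so it cannot lie in $\LSWFS_{2,\odd}$ and hence cannot witness strictness of $\subset_7$; part~(4) says $(\Sigma(2,3,7),\iota_c)$ is \emph{not} locally $\SWF$-$\langle j\mu\rangle$-spherical, so it cannot witness strictness of $\subset_2$. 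Your citation of ``part~(5)'' here is a misidentification of the family. The fix is immediate: replace $\Sigma(2,3,7)$ by a member of the $12n+1$ or $12n+5$ family with $n\geq 1$, for instance $\Sigma(2,3,13)$. Then parts~(2) and~(5) genuinely apply, and your primality argument (reducing membership in the $\#$-closure to membership in the base class for an irreducible $3$-manifold) goes through.

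A minor simplification: for $\subset_4$ the primality argument is unnecessary. Since $(\Sigma(2,3,11),\rho_2)\notin\LSWFS_{2,\odd}$ by part~(1), and you have already established $\SWFS^{\#}_{2,\odd}\subset\LSWFS_{2,\odd}$ via $\subset_7$, the triple is automatically excluded from $\SWFS^{\#}_{2,\odd}$ without any appeal to prime decomposition.
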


\begin{proof}
The existence of the above inclusions is clear. The final claim follows directly from Proposition \ref{prop:brieskorn_spheres_involutions_properties}.
\end{proof}

\begin{remark}
It is not clear to the author whether the inclusions $\subset_{1}$, $\subset_{6}$, $\subset_{11}$ are strict -- one would need a careful analysis of the behavior of the Seiberg-Witten-Floer moduli space under equivariant connected sum, which has not appeared in the literature, even in the non-equivariant setting. To show that the inclusions $\subset_{9}$, $\subset_{10}$ are strict, it would suffice to find a $\ZZ_{2}$-equivariant odd-type Riemannian spin rational homology sphere whose associated Seiberg-Witten moduli space consists of irreducibles which lie in a single spherical family. To the author's knowledge, it is not clear that this possibility can be ruled out.
\end{remark}

\smallskip
\subsection{Knot Invariants}
\label{subsec:knot_invariants}

Let $K\subset S^{3}$ be an oriented knot, and $p^{r}$ a prime power. It is a standard result in topology that the $p^{r}$-fold branched cover $\Sigma_{p^{r}}(K)$ is a $\ZZ_{p}$-homology sphere, hence in particular a rational homology sphere. Let $\sigma:\Sigma_{p^{r}}(K)\to\Sigma_{p^{r}}(K)$ denote the generator of the $p^{r}$-fold covering transformation determined by the orientation on $K$. By (\cite{GRS08}, Lemma 2.1) there is a canonical $\sigma$-invariant spin structure on $\Sigma_{p^{r}}(K)$, which we will henceforth denote by $\frak{s}_{0}$. We can therefore apply all of our constructions thus far to the triple $(\Sigma_{p^{r}}(K),\frak{s}_{0},\sigma)$ to obtain a family of invariants associated to $K\subset S^{3}$:

\begin{definition}
\label{def:knot_invariants}
Let $K\subset S^{3}$ be an oriented knot. We define the \emph{$p^{r}$-fold equivariant $\kappa$-invariants of $K$} as follows:
\begin{align*}
    &\K_{p^{r}}(K):=\K(\Sigma_{p^{r}}(K),\frak{s}_{0},\sigma), & &\K^{\wedge}_{p^{r}}(K):=\K^{\wedge}(\Sigma_{p^{r}}(K),\frak{s}_{0},\sigma), \\
    &\vec{\ul{\kappa}}_{p^{r}}(K):=\vec{\ul{\kappa}}(\Sigma_{p^{r}}(K),\frak{s}_{0},\sigma), & &\vec{\ul{\kappa}}_{p^{r}}^{\wedge}(K):=\vec{\ul{\kappa}}^{\wedge}(\Sigma_{p^{r}}(K),\frak{s}_{0},\sigma), \\
    &\vec{\ol{\kappa}}_{p^{r}}(K):=\vec{\ol{\kappa}}(\Sigma_{p^{r}}(K),\frak{s}_{0},\sigma), & &\vec{\ol{\kappa}}_{p^{r}}^{\wedge}(K):=\vec{\ol{\kappa}}^{\wedge}(\Sigma_{p^{r}}(K),\frak{s}_{0},\sigma).
\end{align*}
In the case where $p^{r}=2$, we define 
\[\wt{\kappa}(K):=\wt{\kappa}(\Sigma_{2}(K),\frak{s}_{0},\sigma),\]
and in the case where $p^{r}$ is odd, we define
\begin{align*}
	&\ul{\kappa}_{p^{r}}(K)_{0}:=\ul{\kappa}(\Sigma_{p^{r}}(K),\frak{s}_{0},\sigma)_{0}, & &\ul{\kappa}^{\wedge}_{p^{r}}(K)_{0}:=\ul{\kappa}^{\wedge}(\Sigma_{p^{r}}(K),\frak{s}_{0},\sigma)_{0}, \\
	&\ol{\kappa}_{p^{r}}(K)_{0}:=\ol{\kappa}(\Sigma_{p^{r}}(K),\frak{s}_{0},\sigma)_{0}, & &\ol{\kappa}^{\wedge}_{p^{r}}(K)_{0}:=\ol{\kappa}^{\wedge}(\Sigma_{p^{r}}(K),\frak{s}_{0},\sigma)_{0}. \\
	&\ul{\kappa}_{p^{r}}(K)_{\nt}:=\ul{\kappa}(\Sigma_{p^{r}}(K),\frak{s}_{0},\sigma)_{\nt}, & &\ul{\kappa}^{\wedge}_{p^{r}}(K)_{\nt}:=\ul{\kappa}^{\wedge}(\Sigma_{p^{r}}(K),\frak{s}_{0},\sigma)_{\nt}, \\
	&\ol{\kappa}_{p^{r}}(K)_{\nt}:=\ol{\kappa}(\Sigma_{p^{r}}(K),\frak{s}_{0},\sigma)_{\nt}, & &\ol{\kappa}^{\wedge}_{p^{r}}(K)_{\nt}:=\ol{\kappa}^{\wedge}(\Sigma_{p^{r}}(K),\frak{s}_{0},\sigma)_{\nt}.
\end{align*}
\end{definition}

\begin{proposition}
\label{prop:knot_concordance_local_equivalence}
Suppose $K,K'\subset S^{3}$ are smoothly concordant knots. Let $p^{r}$ be a prime power, and let $(\Sigma_{p^{r}}(K),\frak{s}_{0},\sigma)$, $(\Sigma_{p^{r}}(K'),\frak{s}'_{0},\sigma')$ denote the corresponding $p^{r}$-fold branched covers. Then there exist spin lifts $\wh{\sigma}$, $\wh{\sigma}'$ of $\sigma$, $\sigma'$ respectively such that
\[\big[\SWF(\Sigma_{p^{r}}(K),\frak{s}_{0},\wh{\sigma})\big]_{\loc}=\big[\SWF(\Sigma_{p^{r}}(K'),\frak{s}'_{0},\wh{\sigma}')\big]_{\loc}\in\LLL\EEE_{G^{*}_{p^{r}},\CC}.\]
Consequently, all of the equivariant $\kappa$-invariants introduced in Definition \ref{def:knot_invariants} are knot concordance invariants.
\end{proposition}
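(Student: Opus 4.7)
The plan is to reduce the proposition to Theorem~\ref{theorem:equivariant_homology_cobordism} by constructing an equivariant spin rational homology cobordism from the smooth concordance. Given a smooth concordance $C \subset S^{3}\times[0,1]$ with $\partial C = (-K)\sqcup K'$, the natural object to consider is the $p^{r}$-fold cyclic cover $W := \Sigma_{p^{r}}(S^{3}\times[0,1],\,C)$ branched along $C$. The deck transformation group provides a smooth $\ZZ_{p^{r}}$-action $\tau\co W\to W$ whose restrictions to $\del W$ recover the covering transformations $\sigma$ and $\sigma'$ on $\Sigma_{p^{r}}(K)$ and $\Sigma_{p^{r}}(K')$.

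Next, I would verify the three hypotheses needed in Theorem~\ref{theorem:equivariant_homology_cobordism}. First, $W$ is a rational homology cobordism with $b_{1}(W)=0$: a closed tubular neighborhood $\nu(C)\subset S^{3}\times[0,1]$ deformation retracts onto either boundary knot, so the complement $X := (S^{3}\times[0,1])\setminus\Int\nu(C)$ is homotopy equivalent to $S^{3}\setminus\nu(K)$. Pulling back to cyclic covers (which commutes with the retraction), the unbranched $p^{r}$-fold cover of $X$ has the $\QQ$-homology of the unbranched cover of $S^{3}\setminus\nu(K)$. Gluing back the branched neighborhood of $C$ via Mayer--Vietoris, together with the fact that each $\Sigma_{p^{r}}(K)$, $\Sigma_{p^{r}}(K')$ is a $\QQ$-homology sphere, yields $H_{*}(W;\QQ)\cong H_{*}(\Sigma_{p^{r}}(K);\QQ)$, so $W$ is a rational homology cobordism with $b_{1}(W)=0$. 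Second, the canonical $\sigma$-invariant spin structures $\frak{s}_{0}$ on $\Sigma_{p^{r}}(K)$ and $\frak{s}_{0}'$ on $\Sigma_{p^{r}}(K')$ from (\cite{GRS08}, Lemma~2.1) arise as pullbacks of the unique spin structure on $S^{3}$; the same construction applied to the branched cover $W\to S^{3}\times[0,1]$ produces a $\tau$-invariant spin structure $\frak{t}$ on $W$ that simultaneously extends $\frak{s}_{0}$ and $\frak{s}_{0}'$. Third, fix any spin lift $\wh{\tau}$ of $\tau$ with respect to $\frak{t}$; its restrictions to the two boundary components define compatible spin lifts $\wh{\sigma}$ and $\wh{\sigma}'$ of $\sigma$ and $\sigma'$, respectively.

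With these ingredients in place, $(W,\frak{t},\wh{\tau})$ is a $\ZZ_{p^{r}}$-equivariant spin rational homology cobordism from $(\Sigma_{p^{r}}(K),\frak{s}_{0},\wh{\sigma})$ to $(\Sigma_{p^{r}}(K'),\frak{s}_{0}',\wh{\sigma}')$. Theorem~\ref{theorem:equivariant_homology_cobordism} then gives
\[
  \big[\SWF(\Sigma_{p^{r}}(K),\frak{s}_{0},\wh{\sigma})\big]_{\loc} = \big[\SWF(\Sigma_{p^{r}}(K'),\frak{s}_{0}',\wh{\sigma}')\big]_{\loc}\in\LLL\EEE_{G^{*}_{p^{r}},\CC}.
\]
Concordance invariance of all the equivariant $\kappa$-invariants in Definition~\ref{def:knot_invariants} then follows immediately from Corollary~\ref{cor:stable_k_invariants_local_equivalence}, combined with Proposition~\ref{prop:independent_of_spin_lift} which guarantees that these invariants do not depend on the choice of spin lift.

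The main technical obstacle will be the rational homology cobordism verification in step~(i); this is standard folklore in the slice case (where one end is $S^{3}$ and one obtains a rational homology $4$-ball), but one must be careful in the general concordance case to track exactly how Mayer--Vietoris interacts with the $p^{r}$-fold branched covering when neither end is trivial. A secondary subtlety is that the parity of a spin lift on a connected cobordism must match that of its restrictions to each boundary component (Remark~\ref{remark:cobordism_parity}); this is automatic here because both ends inherit their spin lifts from the single $\wh{\tau}$ on $W$, but it is worth noting explicitly to justify the existence of the compatible system $(\wh{\sigma},\wh{\tau},\wh{\sigma}')$.
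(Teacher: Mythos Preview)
Your proposal is correct and follows essentially the same approach as the paper: both construct the $p^{r}$-fold branched cover of $S^{3}\times[0,1]$ over the concordance annulus, observe that it is a $\ZZ_{p^{r}}$-equivariant spin rational (indeed $\ZZ_{p}$-) homology cobordism carrying an invariant spin structure restricting to $\frak{s}_{0}$ and $\frak{s}_{0}'$, and then invoke Theorem~\ref{theorem:equivariant_homology_cobordism}. Your version is simply more explicit about the homology computation and the bookkeeping of spin lifts, which the paper treats as standard.
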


\begin{proof}
This follows from Proposition \ref{theorem:equivariant_homology_cobordism}, and the fact that if $F\subset S^{3}\times[0,1]$ is a concordance from $K$ to $K'$, then the $p^{r}$-fold branched cover of $S^{3}\times[0,1]$ over $F$ is a $\ZZ_{p^{r}}$-equivariant $\ZZ_{p}$-homology cobordism $(W,\tau)$ from $(\Sigma_{p^{r}}(K),\sigma)$ to $(\Sigma_{p^{r}}(K'),\sigma')$. Moreover, $W$ carries a unique invariant spin structure $\frak{t}_{0}$ which restricts to $\frak{s}_{0}$, $\frak{s}'_{0}$ on either side of the cobordism, hence the claim follows.
\end{proof}

\begin{remark}
Given a knot $K\subset Y$ for $Y$ an integer homology sphere, one can also define corresponding invariants $[\SWF_{p^{r}}(Y,K)]_{\loc}$, $\K_{p^{r}}(Y,K)$, $\K_{p^{r}}^{\wedge}(Y,K)$, etc., which are invariants of the \emph{homology concordance class} of $(Y,K)$. (See \cite{Zhou21}, \cite{DHST:homcon} for more information on the notion of homology concordance.)
\end{remark}

The following follows from the above proposition and Corollary \ref{cor:connected_sums}:

\begin{corollary}
\label{cor:knot_concordance_local_equivalence_p^{r}}
Let $p^{r}$ be an odd prime power. The correspondence 
\[K\mapsto[\SWF(\Sigma_{p^{r}}(K),\frak{s}_{0},\wh{\sigma})]_{\loc}\]
where $\wh{\sigma}$ is the unique even spin lift of $\sigma$ induces a group homomorphism 
\[\LLL:\C\to\LLL\EEE_{G^{\ev}_{p^{r}},\CC}\]
where $\C$ denotes the smooth concordance group of knots.	
\end{corollary}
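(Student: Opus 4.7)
The plan is to verify three things: (i) $\LLL$ is well-defined on smooth concordance classes of knots (which is essentially Proposition \ref{prop:knot_concordance_local_equivalence}, with a small refinement to select the canonical spin lift); (ii) $\LLL(K_{0}\# K_{1})=\LLL(K_{0})\wedge\LLL(K_{1})$ in $\LLL\EEE_{G^{\ev}_{p^{r}},\CC}$; and (iii) the unknot is sent to the identity and $-K$ (the concordance inverse of $K$) to the inverse of $\LLL(K)$. The odd-prime-power hypothesis enters crucially in item (i), since Proposition \ref{prop:classification_spin_lifts_n_manifolds}(2) guarantees that for odd $m=p^{r}$ the generator $\sigma$ of the deck transformation group admits exactly one even and one odd spin lift, so the phrase ``the unique even spin lift $\wh{\sigma}$'' is meaningful.

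For well-definedness: given a smooth concordance $F\subset S^{3}\times[0,1]$ from $K_{0}$ to $K_{1}$, its $p^{r}$-fold branched cover is a $\ZZ_{p^{r}}$-equivariant $\ZZ_{p}$-homology cobordism $(W,\tau)$ between $(\Sigma_{p^{r}}(K_{0}),\sigma_{0})$ and $(\Sigma_{p^{r}}(K_{1}),\sigma_{1})$, carrying a canonical invariant spin structure $\frak{t}_{0}$ extending $\frak{s}_{0}$ on both ends. By Proposition \ref{prop:classification_spin_lifts_4_manifolds}(2) the action $\tau$ has a unique even spin lift $\wh{\tau}$, whose restriction to each boundary component must be the unique even spin lift on that side. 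Theorem \ref{theorem:equivariant_homology_cobordism} then yields the desired local equivalence
\[\big[\SWF(\Sigma_{p^{r}}(K_{0}),\frak{s}_{0},\wh{\sigma}_{0})\big]_{\loc}=\big[\SWF(\Sigma_{p^{r}}(K_{1}),\frak{s}_{0},\wh{\sigma}_{1})\big]_{\loc}.\]

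For the homomorphism property: Example \ref{ex:connected_sum_branched_covers} furnishes a $\ZZ_{p^{r}}$-equivariant spin diffeomorphism identifying $(\Sigma_{p^{r}}(K_{0}\# K_{1}),\frak{s}_{0},\sigma)$ with an equivariant connected sum $(\Sigma_{p^{r}}(K_{0})\#\Sigma_{p^{r}}(K_{1}),\frak{s}^{\#},\sigma^{\#})$. Since the equivariant connected sum of two even spin lifts is again even (as $(\wh{\sigma}_{0}\#\wh{\sigma}_{1})^{p^{r}}$ restricts to $\wh{\sigma}_{0}^{p^{r}}=1$ and $\wh{\sigma}_{1}^{p^{r}}=1$ on the two factors), the unique even spin lift $\wh{\sigma}$ on the left corresponds to the pair of unique even spin lifts $\wh{\sigma}_{0},\wh{\sigma}_{1}$ on the right. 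Applying Corollary \ref{cor:connected_sums} then gives
\[\SWF(\Sigma_{p^{r}}(K_{0}\# K_{1}),\frak{s}_{0},\wh{\sigma})\equiv_{\ell}\SWF(\Sigma_{p^{r}}(K_{0}),\frak{s}_{0},\wh{\sigma}_{0})\wedge\SWF(\Sigma_{p^{r}}(K_{1}),\frak{s}_{0},\wh{\sigma}_{1}),\]
which is the required multiplicativity in $\LLL\EEE_{G^{\ev}_{p^{r}},\CC}$.

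Finally, for the unit: the unknot $U$ has $\Sigma_{p^{r}}(U)\cong S^{3}$ with the trivial action, and the associated Seiberg--Witten Floer spectrum class is $[(S^{0},0,0)]$, the unit of $\LLL\EEE_{G^{\ev}_{p^{r}},\CC}$. For inverses, the concordance inverse of $K$ is represented by $-K$ (reverse mirror), whose $p^{r}$-fold branched cover is orientation-reversed: $(\Sigma_{p^{r}}(-K),\frak{s}_{0},\sigma)=-(\Sigma_{p^{r}}(K),\frak{s}_{0},\sigma)$. Proposition \ref{prop:floer_spectrum_duality} then shows that the two corresponding spectrum classes are $[(S^{0},0,0)]$-dual, and this duality descends to produce an inverse in the monoidal category of local equivalence classes. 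The main conceptual point, and the place where odd prime powers are required, is the canonical choice of even spin lift throughout: this is what makes $\LLL$ a genuine function (rather than a multi-valued correspondence) and what guarantees compatibility with disjoint union, orientation reversal, and equivariant connected sum. For $p^{r}=2^{r}$, the parity of a spin lift is instead controlled by the fixed-point set of $\sigma^{2^{r-1}}$ (Proposition \ref{prop:classification_spin_lifts_3_manifolds}), so no analogous canonical choice exists and this uniform argument breaks down.
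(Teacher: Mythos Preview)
Your proof is correct and follows the same approach as the paper, which simply cites Proposition~\ref{prop:knot_concordance_local_equivalence} and Corollary~\ref{cor:connected_sums}. You have usefully expanded the terse argument by making explicit why the unique even spin lift (guaranteed by Proposition~\ref{prop:classification_spin_lifts_n_manifolds}(2)) is compatible across cobordisms and connected sums, and by spelling out the unit and inverse verifications via Proposition~\ref{prop:floer_spectrum_duality}.
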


\begin{remark}
We would like to define such a homomorphism in the case where $p^{r}=2^{r}$ is a power of two. However, both spin lifts of the covering transformation are of odd type, and it is impossible to pick a coherent choice of distinguished spin lifts for all knots in $S^{3}$. One could consider the unordered pair of local equivalence classes of Seiberg--Witten Floer spectrum classes corresponding to the two spin lifts, but it seems difficult to construct a well-defined group using this framework.
\end{remark}

We next define certain classes of knots in $S^{3}$, which will be helpful for calculations of our equivariant $\kappa$-invariants:

\begin{definition}
\label{def:classes_knots}
Let $\K$ be the set of all oriented knots $K\subset S^{3}$ up to isotopy. We define the following subsets of $\K$:
\begin{enumerate}
    \item Let $\PS_{p^{r}}$ be the set of knots $K$ such that $(\Sigma_{p^{r}}(K),\frak{s}_{0},\pm\wh{\sigma})\in\PS_{p^{r},*}$. We call such knots \emph{$\SWF$-$\Pin(2)$-surjective}.
    \item Let $\SWFM_{p^{r}}$ be the set of knots $K$ such that $(\Sigma_{p^{r}}(K),\frak{s}_{0},\pm\wh{\sigma})\in\SWFM_{p^{r},*}$. We call such knots \emph{$\SWF$-minimal}.
    \item Let $\SWFM_{p^{r}}^{\#}$ be the set of knots $K$ such that $(\Sigma_{p^{r}}(K),\frak{s}_{0},\pm\wh{\sigma})\in\SWFM_{p^{r},*}^{\#}$.
    \item Let $\SWFS_{p^{r}}$ be the set of knots $K$ such that $(\Sigma_{p^{r}}(K),\frak{s}_{0},\pm\wh{\sigma})\in\SWFS_{p^{r},*}$. We call such knots \emph{$\SWF$-spherical}.
    \item Let $\SWFS_{p^{r}}^{\#}$ be the set of knots $K$ such that $(\Sigma_{p^{r}}(K),\frak{s}_{0},\pm\wh{\sigma})\in\SWFS_{p^{r},*}^{\#}$.
    \item Let $\LSWFS_{p^{r}}$ be the set of knots $K$ such that $(\Sigma_{p^{r}}(K),\frak{s}_{0},\pm\wh{\sigma})\in\LSWFS_{p^{r},*}$. We call such knots \emph{locally $\SWF$-spherical}.
\end{enumerate}
For $H\subset G^{*}_{p^{r}}$ a closed subgroup:
\begin{enumerate}
    \item[(7)] Let $\SWFS_{p^{r}}^{H}$ be the set of knots $K$ such that $(\Sigma_{p^{r}}(K),\frak{s}_{0},\pm\wh{\sigma})\in\SWFS_{p^{r},*}^{H}$. We call such knots \emph{$\SWF$-$H$-spherical}.
    \item[(8)] Let $\SWFS_{p^{r}}^{\#,H}$ be the set of knots $K$ such that $(\Sigma_{p^{r}}(K),\frak{s}_{0},\pm\wh{\sigma})\in\SWFS_{p^{r},*}^{\#,H}$.
    \item[(9)] Let $\LSWFS_{p^{r}}^{H}$ be the set of knots $K$ such that $(\Sigma_{p^{r}}(K),\frak{s}_{0},\pm\wh{\sigma})\in\LSWFS_{p^{r},*}^{H}$. We call such knots \emph{locally $\SWF$-$H$-spherical}.
\end{enumerate}
\end{definition}

The following proposition follows immediately from Proposition \ref{prop:inclusions_classes_equivariant_manifolds}:

\begin{proposition}
\label{prop:inclusions_classes_knots}
For each prime power $p^{r}$ and each closed subgroup $H\subset G^{*}_{p^{r}}$, we have inclusions
\begin{center}
\begin{tikzcd}[remember picture]
    \SWFS_{p^{r}}^{H} &[-15pt] \SWFS_{p^{r}}^{\#,H} &[-15pt]  \LSWFS_{p^{r}}^{H} &[-15pt] \\[-10pt]
    \SWFS_{p^{r}}&[-15pt] \SWFS_{p^{r}}^{\#} &[-15pt] \LSWFS_{p^{r}} &[-15pt] \PS_{p^{r}}. \\[-10pt]
    \SWFM_{p^{r}} &[-15pt] \SWFM_{p^{r}}^{\#} &[-15pt] &[-15pt]
\end{tikzcd}
\begin{tikzpicture}[overlay,remember picture]
\path (\tikzcdmatrixname-1-1) to node[midway,sloped]{$\subset_{1}$}
(\tikzcdmatrixname-1-2);
\path (\tikzcdmatrixname-1-2) to node[midway,sloped]{$\subset_{2}$}
(\tikzcdmatrixname-1-3);
\path (\tikzcdmatrixname-2-1) to node[midway,sloped]{$\subset_{3}$}
(\tikzcdmatrixname-1-1);
\path (\tikzcdmatrixname-2-2) to node[midway,sloped]{$\subset_{4}$}
(\tikzcdmatrixname-1-2);
\path (\tikzcdmatrixname-2-3) to node[midway,sloped]{$\subset_{5}$}
(\tikzcdmatrixname-1-3);
\path (\tikzcdmatrixname-2-1) to node[midway,sloped]{$\subset_{6}$}
(\tikzcdmatrixname-2-2);
\path (\tikzcdmatrixname-2-2) to node[midway,sloped]{$\subset_{7}$}
(\tikzcdmatrixname-2-3);
\path (\tikzcdmatrixname-2-3) to node[midway,sloped]{$\subset_{8}$}
(\tikzcdmatrixname-2-4);
\path (\tikzcdmatrixname-3-1) to node[midway,sloped]{$\subset_{9}$}
(\tikzcdmatrixname-2-1);
\path (\tikzcdmatrixname-3-2) to node[midway,sloped]{$\subset_{10}$}
(\tikzcdmatrixname-2-2);
\path (\tikzcdmatrixname-3-1) to node[midway,sloped]{$\subset_{11}$}
(\tikzcdmatrixname-3-2);
\end{tikzpicture}
\end{center}
Furthermore in the case where $p^{r}=2$ and $H=\<j\mu\>$, the inclusions $\subset_{2}$, $\subset_{3}$, $\subset_{4}$, $\subset_{5}$, $\subset_{7}$, and $\subset_{8}$ are strict.
\end{proposition}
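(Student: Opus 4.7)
The plan is to deduce Proposition \ref{prop:inclusions_classes_knots} directly from Proposition \ref{prop:inclusions_classes_equivariant_manifolds} by pulling back along the branched-cover construction. Recall that each class of knots in Definition \ref{def:classes_knots} is defined as the preimage, under the map
\[
\Phi_{p^{r}}:\K\to\SRH_{p^{r},*},\qquad K\mapsto (\Sigma_{p^{r}}(K),\frak{s}_{0},\pm\wh{\sigma}),
\]
of the corresponding class in $\SRH_{p^{r},*}$. Since the formation of preimages preserves inclusions, each of the eleven inclusions $\subset_{1},\dots,\subset_{11}$ in the diagram for knots is immediate from the matching inclusion in Proposition \ref{prop:inclusions_classes_equivariant_manifolds}. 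This handles the first assertion with no further work.

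For the strictness claims in the case $p^{r}=2$, $H=\<j\mu\>$, I would produce, for each of the six relevant inclusions, an explicit knot $K$ whose double branched cover $(\Sigma_{2}(K),\frak{s}_{0},\iota)$ realizes the corresponding strict containment established in Proposition \ref{prop:inclusions_classes_equivariant_manifolds}. The natural source of such examples is the family of Brieskorn spheres $\pm\Sigma(2,3,6n\pm 1)$ studied in Sections \ref{subsec:brieskorn_spheres}--\ref{subsubsec:jmu_fixed_points}, since each such Brieskorn sphere arises as a double branched cover over either a torus knot or a Montesinos knot, with the covering involution coinciding either with the rotation $\rho_{2}$ (when the knot is a torus knot $T(3,6n\pm 1)$) or the complex conjugation involution $\iota_{c}$ (when the knot is the Montesinos knot $k(2,3,6n\pm 1)$). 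The calculations collected in Proposition \ref{prop:brieskorn_spheres_involutions_properties} then provide, on the nose, the distinctions among the classes $\SWFS^{\<j\mu\>}_{2,\odd}$, $\SWFS^{\#,\<j\mu\>}_{2,\odd}$, $\LSWFS^{\<j\mu\>}_{2,\odd}$, $\SWFS_{2,\odd}$, $\SWFS^{\#}_{2,\odd}$, and $\LSWFS_{2,\odd}$.

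Concretely, I would match each of the six strict inclusions with a specific witness: for the separations $\SWFS^{\<j\mu\>}_{2}\subsetneq\SWFS^{\#,\<j\mu\>}_{2}\subsetneq\LSWFS^{\<j\mu\>}_{2}$ and $\SWFS_{2}\subsetneq\SWFS^{\#}_{2}\subsetneq\LSWFS_{2}$, I would use the Montesinos knots corresponding to $\pm\Sigma(2,3,12n+1)$ and $\pm\Sigma(2,3,12n+5)$ with $n\geq 1$, together with connected sums of such knots, since part (2) and part (5) of Proposition \ref{prop:brieskorn_spheres_involutions_properties} show these triples are locally $\SWF$- and locally $\SWF$-$\<j\mu\>$-spherical but not $\SWF$- or $\SWF$-$\<j\mu\>$-spherical. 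For the vertical strictness $\SWFS^{\#,\<j\mu\>}_{2}\subsetneq\SWFS^{\#}_{2}$ and $\LSWFS^{\<j\mu\>}_{2}\subsetneq\LSWFS_{2}$, I would use the rotations $\rho_{2}$ on $\pm\Sigma(2,3,12n-1)$ or $\pm\Sigma(2,3,12n-5)$ (realized by $T(3,12n\pm 1)$, $T(3,12n\pm 5)$), since by parts (1), (3) of Proposition \ref{prop:brieskorn_spheres_involutions_properties} these are $\SWF$-$\<j\mu\>$-spherical but \emph{not} locally $\SWF$-spherical.

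The only mild subtlety in this plan—really the only content beyond bookkeeping—is verifying that in each Brieskorn example the covering involution on $\Sigma_{2}(K)$ is genuinely conjugate, as a spin involution, to the intrinsically defined $\rho_{2}$ or $\iota_{c}$ (so that the classification results of Proposition \ref{prop:brieskorn_spheres_involutions_properties} may be applied via Property (1) of Theorem \ref{theorem:properties_equivariant_kappa_invariants}). For torus knots $T(3,6n\pm 1)$ this is standard, and for the Montesinos knots $k(2,3,6n\pm 1)$ this follows from the identification of the double branched cover together with its covering involution with the singular link construction that produces $\iota_{c}$. Once this identification is in hand, the six strictness statements follow directly from the corresponding statements in Proposition \ref{prop:inclusions_classes_equivariant_manifolds}, completing the proof.
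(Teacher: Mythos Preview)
Your overall approach is exactly the paper's: the inclusions pull back directly from Proposition \ref{prop:inclusions_classes_equivariant_manifolds} via $K\mapsto(\Sigma_{p^{r}}(K),\frak{s}_{0},\pm\wh{\sigma})$, and the strictness witnesses are the Brieskorn spheres $\pm\Sigma(2,3,6n\pm 1)$ realized as branched double covers of torus and Montesinos knots, with Proposition \ref{prop:brieskorn_spheres_involutions_properties} supplying the relevant distinctions. The paper's proof is literally the one-line reduction to Proposition \ref{prop:inclusions_classes_equivariant_manifolds}.

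That said, your write-up of the witnesses contains a few slips you should fix. First, you have the direction of the vertical inclusions reversed: $\subset_{3},\subset_{4},\subset_{5}$ go \emph{up}, i.e.\ $\SWFS_{2}\subset\SWFS_{2}^{\<j\mu\>}$ etc., not the other way. Your chosen witnesses (torus knots $T(3,12n-1)$, $T(3,12n-5)$, whose covers with $\rho_{2}$ are $\SWF$-$\<j\mu\>$-spherical but not locally $\SWF$-spherical by parts (1),(3) of Proposition \ref{prop:brieskorn_spheres_involutions_properties}) are in fact correct for the actual direction, so this is only a prose error. Second, you never address $\subset_{8}$: $\LSWFS_{2}\subsetneq\PS_{2}$. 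The same torus knots work here, since $(\pm\Sigma(2,3,12n-1),\rho_{2})$ is $\Pin(2)$-surjective (Corollary \ref{cor:seifert_fibered_Pin(2)_surjective}) but not locally $\SWF$-spherical. Third, by writing the chains $\SWFS_{2}^{\<j\mu\>}\subsetneq\SWFS_{2}^{\#,\<j\mu\>}\subsetneq\LSWFS_{2}^{\<j\mu\>}$ and $\SWFS_{2}\subsetneq\SWFS_{2}^{\#}\subsetneq\LSWFS_{2}$ you are implicitly claiming strictness of $\subset_{1}$ and $\subset_{6}$, which is \emph{not} asserted in the proposition and which the paper explicitly leaves open (see the Remark following Proposition \ref{prop:inclusions_classes_equivariant_manifolds}); your ``together with connected sums'' suggestion does not suffice, since one would need to rule out that such a connected sum is itself spherical.
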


Note that if $K\in\LSWFS_{p^{r}}$, then so is any knot concordant to $K$. Moreover, if $K,K'\in\LSWFS_{p^{r}}$, then $K\# K'\in\LSWFS_{p^{r}}$ as well. It follows that for each prime power $p^{r}$, $\LSWFS_{p^{r}}$ descends to a well-defined subgroup of the smooth concordance group $\C$, which we shall also denote by $\LSWFS_{p^{r}}\subset\C$. A similar observation applies to $\LSWFS_{p^{r}}^{H}$ for any closed subgroup $H\subset G^{*}_{p^{r}}$.

In the case where $p^{r}=2$, it will also be helpful to compare $\wt{\kappa}(K)$ with the corresponding invariants from \cite{Man14} and \cite{KMT}. Define
\begin{align*}
    &\kappa(K):=\kappa(\Sigma_{2}(K)), & &\kappa_{\KMT}(K):=\kappa_{\KMT}(\Sigma_{2}(K),\iota),
\end{align*}
where each of these invariants are calculated with respect to the unique spin structure on the double branched cover $\Sigma_{2}(K)$, and $\iota:\Sigma_{2}(K)\to\Sigma_{2}(K)$ denotes the covering involution. Note that by Proposition \ref{prop:kappa_kappa_tilde}, we have that
\[\wt{\kappa}(K)=\kappa(K)\text{ or }\kappa(K)+2\]
for any $K\subset S^{3}$, and that $\wt{\kappa}(K)=\kappa(K)$ if $K\in\P\S_{2}$.

It will be helpful for us to define the following additional class of knots:

\begin{definition}
\label{def:concordant_to_SWFM_knots}
Let 
\[\SWFM_{2}^{\#,\C}:=\{K\subset S^{3}\;|\;K\text{ is smoothly concordant to a knot in }\SWFM_{2}^{\#}\}\subset\K,\]
i.e. $K\in\SWFM_{2}^{\#,\C}$ if and only if $K$ is concordant to a connected sum of knots $K_{1}\#\cdots\# K_{m}$ such that the double branched cover of each $K_{i}$ admits a $\ZZ_{2}$-equivariant metric $g_{i}$ such that $(\Sigma_{2}(K_{i}),\iota_{i},g_{i})$ has no irreducible Seiberg-Witten solutions.
\end{definition}

\begin{example}
\label{ex:SWFM_knots}
Let $K\in\SWFM_{2}^{\#,\C}$. Then $K\in\PS_{2}$, and furthermore by Example \ref{ex:SWF_minimal_double_branched_cover} (and a similar argument for the invariant $\kappa_{\KMT}$) we have that:
\begin{align*}
    &\wt{\kappa}(K)=\kappa(K)=-\tfrac{1}{8}\sigma(K), & &\kappa_{\KMT}(K)=-\tfrac{1}{16}\sigma(K).
\end{align*}
Hence in particular the above invariants are additive on the subclass of knots $\SWFM_{2}^{\#,\C}\subset\K$. As noted implicitly in \cite{KMT}, $\SWFM_{2}^{\#,\C}$ contains connected sums of all of the following knots: two-bridge knots, $T(3,5)$, $9_{47}$, $9_{49}$, $10_{155}$, $10_{156}$, $10_{160}$, $10_{163}$, $K11n92$, $K11n118$. Indeed, the two-bridge knot $K(p,q)$ has double branched cover $L(p,q)$, which admits a metric of positive scalar curvature, and similarly the double branched cover of $T(3,5)$ is the Poincar\'e homology sphere $\Sigma(2,3,5)$. The remaining knots listed above have double branched covers homeomorphic to the hyperbolic 3-manifolds $m007(3,2)$, $m003(-3,1)$, $m003(-4,3)$, $m003(-5,3)$, $m007(1,2)$, $m003(-4,1)$, $m006(-3,2)$, and $m007(4,1)$ in the Hodgson-Weeks census, as identified in (\cite{BalSiv21}, Table 7), and these manifolds were shown to admit no irreducible Seiberg-Witten solutions in \cite{LinLipLS}.
\end{example}

\begin{example}
The torus knot $T(2,2k+1)$ for $k\geq 1$ can be identified with the two-bridge knot $K(2k+1,1)$, with double branched cover $L(2k+1,1)$. Hence $T(2,2k+1)\in\SWFM_{2}$, and so
\begin{align*}
    &\wt{\kappa}(T(2,2k+1))=\kappa(T(2,2k+1))=-\tfrac{1}{8}\sigma(T(2,2k+1))=\tfrac{k}{4}, \\
    &\kappa_{\KMT}(T(2,2k+1))=-\tfrac{1}{16}\sigma(T(2,2k+1))=\tfrac{k}{8}.
\end{align*}
\end{example}

\begin{example}
\label{ex:LSWFS_knots}
Let $K\in\LSWFS_{2}$ be locally $\SWF$-spherical. This includes all knots in $\SWFM^{\#,\C}_{2}$ as in Example \ref{ex:SWFM_knots}, as well as the families of knots $T(3,12n+1)$, $P(-2,3,12n+1)$, $T(3,12n+5)$, $P(-2,3,12n+5)$, and their respective mirrors (see Table \ref{table:knots}). One can show that
\[\wt{\kappa}(K)=\kappa(K)=2\kappa_{\KMT}(K)\]
for any $K\in\LSWFS_{2}$. Indeed, the first equality follows from the fact that $\LSWFS_{2}\subset\P\S_{2}$, and the second equality follows from the fact that if $K\in\LSWFS_{2}$, then there exist some $b_{1/2},b_{3/2}\in\QQ$ such that
\[\big[\SWF(\Sigma_{2}(K),\wh{\iota})\big]_{\loc}=\big[(S^{0},0,b_{1/2}\xi+b_{3/2}\xi^{3})\big]_{\loc}\in\LLL\EEE_{G^{\odd}_{2},\CC}\]
for some spin lift $\wh{\iota}$ of the covering involution $\iota$, and hence
\[\big[\DSWF(\Sigma_{2}(K),\wh{\iota})^{\<j\mu\>}\big]_{\loc}=\big[(S^{0},0,b_{1/2}+b_{3/2})\big]_{\loc}\in\LLL\EEE_{\ZZ_{4},\CC,\sym}.\]
From these presentations we see that
\begin{align*}
	&\wt{\kappa}(K)=\kappa(K)=2(b_{1/2}+b_{3/2}), & &\kappa_{\KMT}(K)=b_{1/2}+b_{3/2}.
\end{align*}
Note that the formulas from Example \ref{ex:SWFM_knots} do not necessarily hold for general knots in $\LSWFS_{2}$. For example,
\begin{align*}
    &\wt{\kappa}(T(3,13))=\kappa(T(3,13))=0\neq 2=-\tfrac{1}{8}\sigma(T(3,13)), \\
    &\kappa_{\KMT}(T(3,13))=0\neq 1=-\tfrac{1}{16}\sigma(T(3,13)).
\end{align*}

Furthermore for any two knots $K,K'\in\LSWFS_{2}$ the following formula holds:
\[\wt{\kappa}(K\# K')=\wt{\kappa}(K)+\wt{\kappa}(K').\]
In other words, the invariants $\wt{\kappa}$, $\kappa$, $\kappa_{\KMT}$ each descend to group homomorphisms
\[[\LSWFS_{2}]\;\to\QQ,\]
where $[\LSWFS_{2}]$ denotes the group of knots in $\LSWFS_{2}$ under connected sum, modulo the relation given by knot concordance. 
\end{example}

\begin{example}
In general, the invariants $\wt{\kappa}$, $\kappa$, $\kappa_{\KMT}$ are not additive concordance invariants. For example, 
\[a(k(2,3,11)\#\ol{k(2,3,11)})\neq a(k(2,3,11))+a(\ol{k(2,3,11)})\]
for $a=\wt{\kappa}$, $\kappa$, or $\kappa_{\KMT}$. Furthermore we do not necesssarily have $\wt{\kappa}(K)=2\kappa_{\KMT}(K)$, since for example
\begin{align*}
	&\wt{\kappa}(T(3,11))=\kappa(T(3,11))=2, & &\kappa_{\KMT}(T(3,11))=0.
\end{align*}
\end{example}

\section{Applications}
\label{sec:applications}

In this section we discuss various applications of our equivariant relative 10/8-ths inequalities from Section \ref{subsec:main_theorems}.

\bigskip
\subsection{Constraints on Spin Cyclic Group Actions}
\label{subsec:constraints_group_actions}

In this section we prove Theorem \ref{theorem:intro_extending_involution_constraints} from the introduction. We will need the following two results:

\begin{proposition}[\cite{KMT}, Theorem 1.13]
\label{prop:KMT_inequality}
Let $a_{1},\dots,a_{n}$ be pairwise coprime natural numbers with $a_{1}$ an even number. Set $Y=\Sigma(a_{1},\dots,a_{n})$, and let $\iota:Y\to Y$ be the odd-type involution given by a rotation of $\pi$ in the fibers. Let $W$ be a compact connected smooth oriented spin 4-manifold bounded by $Y$ with
$b_{1}(W)=0$ and intersection form given by $p(-E_{8})\oplus qH$. Then:
\begin{enumerate}
    \item The involution $\iota$ cannot extend to $W$ as a smooth involution $\tau$ so that
    \begin{equation*}
        \tfrac{1}{2}p>b_{2}^{+}(W,\tau)_{1}+\kappa_{\KMT}(Y).
    \end{equation*}
    \item Suppose that $p\neq-\ol{\mu}(Y)$. Then $\iota$ cannot extend to $W$ as a homologically trivial smooth involution, while $\iota$ can extend to $W$ as a homologically trivial diffeomorphism.
\end{enumerate}
\end{proposition}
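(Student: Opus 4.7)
The plan is to derive both parts of Proposition~\ref{prop:KMT_inequality} directly from the Konno–Miyazawa–Taniguchi inequality~\eqref{eq:intro_KMT}, combined with the identification of $\kappa_{\KMT}(Y,\iota)$ for Seifert-fibered rational homology spheres with the fibrewise $\pi$-rotation, and a standard Froyshov-type bound coming from Manolescu's machinery.

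For part (1), the argument is essentially a restatement of~\eqref{eq:intro_KMT}. Given a smooth involution $\tau$ on $W$ extending $\iota$, the intersection form $p(-E_8)\oplus qH$ yields $\sigma(W)=-8p$, so $-\tfrac{1}{16}\sigma(W)=\tfrac{p}{2}$. Applied to $(W,\frak t,\tau)$, the inequality~\eqref{eq:intro_KMT} reads
\[
b_2^+(W,\tau)_1 \;\geq\; \tfrac{p}{2} \;-\; \kappa_{\KMT}(Y,\iota),
\]
which is the negation of the strict inequality $\tfrac{p}{2} > b_2^+(W,\tau)_1 + \kappa_{\KMT}(Y,\iota)$. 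This is immediate and poses no difficulty.

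For the nonexistence direction of part (2), suppose $\tau$ is a homologically trivial smooth involution on $W$ extending $\iota$. Since $\tau$ acts trivially on $H^2_+(W;\CC)$, we have $b_2^+(W,\tau)_0 = q$ and $b_2^+(W,\tau)_1=0$. Part (1) with $b_2^+(W,\tau)_1=0$ forces $\tfrac{p}{2}\leq \kappa_{\KMT}(Y,\iota)$. Since $\iota$ is rotation by $\pi$ in the fibres, i.e.\ $\iota=\rho_2$, Lemma~\ref{lemma:brieskorn_rho_2_jmu_fixed_point_sets_rho_2} identifies $\kappa_{\KMT}(Y,\iota) = -\tfrac{1}{2}\ol\mu(Y)$, so we obtain $p \leq -\ol\mu(Y)$. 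For the reverse inequality I would invoke the Froyshov/Manolescu bound for spin fillings of Brieskorn spheres: for $Y=\Sigma(a_1,\dots,a_n)$ with $a_1$ even, any spin filling $W'$ with intersection form $p'(-E_8)\oplus q'H$ satisfies $p'\geq -\ol\mu(Y)$ (this follows from $\beta(Y)=-\ol\mu(Y)$ together with $\beta(Y)\leq p'$, or equivalently by applying Manolescu's relative $10/8$ after gluing $W'$ to a standard definite filling of $-Y$ and invoking the ordinary Furuta inequality). Combined with the upper bound above, this forces $p=-\ol\mu(Y)$, contradicting the hypothesis $p\neq -\ol\mu(Y)$.

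For the existence of a homologically trivial diffeomorphism extension, we use the fact that $\iota=\rho_2$ sits inside the $S^1$-action on $Y$ by rotation of the Seifert fibres. In particular $\iota$ is smoothly isotopic to $\id_Y$ through the one-parameter family $\{\rho_t\}_{t\in[0,1]}$. Choosing a collar $Y\times[0,1]\subset W$, define $f$ on the collar by $(y,t)\mapsto(\rho_{1-t}(y),t)$ (smoothing near the endpoints if needed) and extend by the identity on $W\setminus(Y\times[0,1))$. The resulting $f$ is a smooth diffeomorphism of $W$ that restricts to $\iota$ on $\partial W$ and is supported in a collar, so is isotopic to the identity and hence homologically trivial.

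The conceptually easy step is part (1); the main obstacle is ensuring that the Froyshov-type lower bound $p\geq -\ol\mu(Y)$ is available in the generality stated. This is the one input not furnished by the equivariant $\kappa$-machinery of the present paper, and verifying it cleanly would be the step requiring the most care, as it depends on the specific nature of $Y$ (Seifert-fibered with $a_1$ even, so that the relevant $\spinc$-structure Floer invariants are understood) rather than on the equivariant spin geometry of $(W,\tau)$.
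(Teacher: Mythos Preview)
This proposition is quoted from \cite{KMT} and is not proved in the present paper, so there is no proof here to compare against. I will therefore assess your argument on its own merits.

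Part~(1) and the existence half of part~(2) are fine: the first is a direct unpacking of \eqref{eq:intro_KMT} with $\sigma(W)=-8p$, and the collar construction using the isotopy $\rho_t$ from $\rho_2$ to the identity is the standard way to extend $\iota$ as a homologically trivial diffeomorphism.

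The gap is in the nonexistence direction of part~(2). Your upper bound $p\le -\ol\mu(Y)$ is correct, but the reverse inequality $p\ge -\ol\mu(Y)$ is not justified. The claim ``any spin filling $W'$ with intersection form $p'(-E_8)\oplus q'H$ satisfies $p'\ge -\ol\mu(Y)$'' is not a standard Froyshov-type statement: the known inequalities for $\alpha,\beta,\gamma$ or $\kappa$ all involve $b_2^+$, and in particular $\beta(Y)\le p'$ does not follow from Manolescu's theorem when $q'>0$. Your parenthetical justification (``$\beta(Y)=-\ol\mu(Y)$ together with $\beta(Y)\le p'$'') does not correspond to any theorem in the literature.

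There is, however, a clean fix that stays entirely within the KMT framework you are already using. Apply \eqref{eq:intro_KMT} to the orientation-reversed filling $-W$ of $-Y$: since $\tau$ is homologically trivial one has $b_2^+(-W,\tau)_1=0$, while $\sigma(-W)=8p$ and $\kappa_{\KMT}(-Y,\rho_2)=-\tfrac{1}{2}\ol\mu(-Y)=\tfrac{1}{2}\ol\mu(Y)$. This yields
\[
0 \;\ge\; -\tfrac{p}{2} - \tfrac{1}{2}\ol\mu(Y),
\]
i.e.\ $p\ge -\ol\mu(Y)$, which combined with your upper bound forces $p=-\ol\mu(Y)$ and gives the contradiction.
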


\begin{proposition}[\cite{KonnoTaniguchi}, Theorem 1.2]
Let $(Y,\frak{s})$ be a spin rational homology 3-sphere and let $(W,\frak{t})$ be a compact spin filling of $(Y,\frak{s})$ with $b_{1}(W)=0$. Let $B$ be a compact topological space and
\[(W,\frak{t})\to E \to B\]
a smooth $\Aut((W,\frak{t}),\del)$-bundle. Then:
\begin{enumerate}
    \item If $w_{b_{2}^{+}(W)}(H_{2}^{+}(E))\neq 0$, then $\gamma(Y,\frak{s})\geq p$.
    \item If $w_{b_{2}^{+}(W)-1}(H_{2}^{+}(E))\neq 0$ and $b_{2}^{+}(W)\geq 1$, then $\beta(Y,\frak{s})\geq p$.
    \item If $w_{b_{2}^{+}(W)-2}(H_{2}^{+}(E))\neq 0$ and $b_{2}^{+}(W)\geq 2$, then $\alpha(Y,\frak{s})\geq p$.
\end{enumerate}
\end{proposition}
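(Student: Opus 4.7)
The plan is to use a parameterized version of Manolescu's relative Bauer--Furuta invariant together with the $\Pin(2)$-equivariant Borel cohomology characterization of $\alpha, \beta, \gamma$ from \cite{Man16}. First I would construct, for the $\Aut((W,\frak{t}),\del)$-bundle $(W,\frak{t}) \to E \to B$, a family relative Bauer--Furuta map: on each fiber $W_b$ one has the usual $\Pin(2)$-equivariant stable map reviewed in Section \ref{subsec:relative_bauer_furuta}, carrying $H_2^+(W_b) \otimes \wt{\RR}$ and the Dirac index to $\SWF(Y,\frak{s})$, and these assemble---after carrying out the finite-dimensional approximation continuously in $b \in B$---into a $\Pin(2)$-equivariant stable map $\Phi$ from the Thom spectrum of $H_2^+(E) \otimes \wt{\RR}$ (suspended by $-p\HH$ coming from the index bundle of the family Dirac operator) to $\SWF(Y,\frak{s})$. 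The fact that the spin structure and the boundary are fixed by the structure group guarantees that this construction is well-defined up to canonical stable equivalence.

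Next, I would invoke the fact that $\alpha(Y,\frak{s}) \geq \beta(Y,\frak{s}) \geq \gamma(Y,\frak{s})$ are determined by the minimal degrees at which the canonical classes $1, z, z^2 \in H^*_{\Pin(2)}(\pt)$ can be detected in the Borel $\Pin(2)$-equivariant cohomology of $\SWF(Y,\frak{s})$. Equivalently, $\gamma(Y,\frak{s}) \geq p$ is the statement that a specific $\Pin(2)$-equivariant cohomology class pulled back from $(p\HH)^+$ is non-zero on $\SWF(Y,\frak{s})$, and $\beta$ and $\alpha$ are the analogous statements with one or two extra $z$-factors (i.e., with two or four extra $\wt{\RR}$-suspensions).

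I would then restrict $\Phi$ along the inclusion $\ZZ_2 = \{1,j\} \hookrightarrow \Pin(2)$: under this restriction, $\wt{\RR}$ becomes the sign representation, so Borel $\Pin(2)$-equivariant classes on the Thom spectrum of $H_2^+(E) \otimes \wt{\RR}$ are computed (via the $\ZZ_2$-equivariant Thom isomorphism together with the Serre spectral sequence for the Borel construction $E\ZZ_2 \times_{\ZZ_2} B$) in terms of Stiefel--Whitney classes of $H_2^+(E)$. Non-vanishing of $w_{b_2^+(W)}(H_2^+(E))$ then implies that the pullback under $\Phi$ of the canonical class detecting $\gamma$ is non-zero, forcing $\gamma(Y,\frak{s}) \geq p$; the cases of $\beta$ and $\alpha$ follow by the same argument applied to $w_{b_2^+(W)-1}$ and $w_{b_2^+(W)-2}$, where the hypotheses $b_2^+(W) \geq 1$ and $\geq 2$ are precisely what is needed for those Stiefel--Whitney classes to be defined at the appropriate codegree.

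The main obstacle will be the analytic setup in step one: arranging that the global Coulomb slice, the eigenvalue cutoff, and the resulting Conley index can be chosen continuously and $\Pin(2)$-equivariantly over the parameter space $B$, so that $\Phi$ is a genuinely well-defined stable homotopy class rather than a family of pointwise constructions. Following the finite-dimensional approximation techniques of \cite{Man03}, \cite{Man16}, and \cite{Kha15} in the parameterized setting should suffice, but the bookkeeping across $B$ is delicate. Once $\Phi$ is constructed, the algebraic-topological content reduces to a Thom isomorphism calculation combined with the definitions of $\alpha, \beta, \gamma$.
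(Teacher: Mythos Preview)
The paper does not prove this proposition: it is quoted verbatim as Theorem 1.2 of \cite{KonnoTaniguchi} and used as a black box (in particular to deduce Corollary \ref{cor:q1_even}). There is therefore no proof in the paper to compare your proposal against.

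Your sketch is a plausible outline of how the result is actually established in \cite{KonnoTaniguchi}: a families relative Bauer--Furuta map over $B$, followed by a Borel-cohomology/Thom-class computation that relates the Stiefel--Whitney classes of $H_2^+(E)$ to the obstruction classes governing $\alpha,\beta,\gamma$. If your goal is to understand why the cited result holds, that is the right picture; but for the purposes of this paper, the statement is simply imported and no argument is required here.
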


\begin{corollary}
\label{cor:q1_even}
Let $(Y,\frak{s},\wh{\tau})$ be a $\ZZ_{2}$-equivariant spin rational homology 3-sphere such that $\wh{\iota}$ is of odd type, and the underlying involution $\iota:Y\to Y$ is isotopic to the identity. Let $(W,\frak{t},\wh{\tau})$ be a compact $\ZZ_{2}$-equivariant spin filling of $(Y,\frak{s},\wh{\iota})$. Then:
\begin{enumerate}
    \item if $q=1$ and $\gamma(Y,\frak{s})<p$, or
    \item if $q=2$ and $\beta(Y,\frak{s})< p$, or
    \item if $q=3$ and $\alpha(Y,\frak{s})< p$,
\end{enumerate}
then $q_{1}$ must be even.
\end{corollary}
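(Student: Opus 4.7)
The plan is to argue by contradiction: assuming $q_1$ is odd, I will exhibit a smooth $\Aut((W,\frak{t}),\del)$-bundle $\pi : E \to S^1$ whose associated rank-$q$ real vector bundle $H_2^+(E) \to S^1$ has nontrivial first Stiefel-Whitney class. Since $q \leq 3$ in each of the three cases, this will simultaneously contradict the relevant Konno--Taniguchi inequality.

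The bundle is constructed from $\wh{\tau}$ together with a chosen isotopy from $\id_Y$ to $\iota$. Fix a smooth path $\{\phi_t\}_{t \in [0,1]} \subset \mathrm{Diff}^+(Y)$ with $\phi_0 = \id_Y$ and $\phi_1 = \iota$, and let
\[
  E := W \times [0,1]/(w,0) \sim (\tau(w),1)
\]
be the mapping torus of $\tau$, a smooth fiber bundle over $S^1$ with fiber $W$. The spin lift $\wh{\tau}$ equips each fiber with the spin structure $\frak{t}$ in a compatible manner. The boundary $\del E$ is the mapping torus of $\iota$, which is trivialized via the diffeomorphism $\Psi : Y \times S^1 \to \del E$ given by $\Psi(y, t) = [\phi_t^{-1}(y), t]$. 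The pair $(E, \Psi)$ then defines an $\Aut((W,\frak{t}),\del)$-bundle over $S^1$ in the sense of Konno--Taniguchi.

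Next, I would compute the Stiefel-Whitney classes of the associated vector bundle $H_2^+(E) \to S^1$. Decomposing $H_2^+(W; \RR) = H_0^+ \oplus H_1^+$ into the $(\pm 1)$-eigenspaces of $\tau^*$, of ranks $q_0$ and $q_1$ respectively, the monodromy of $H_2^+(E)$ around the base circle is precisely $\tau^*$. Thus $H_2^+(E) \cong \RR^{q_0} \oplus L^{\oplus q_1}$ as real bundles over $S^1$, where $L$ is the M\"obius line bundle. Consequently,
\[
  w(H_2^+(E)) = (1 + \alpha)^{q_1} \in H^*(S^1; \ZZ/2),
\]
where $\alpha$ generates $H^1(S^1; \ZZ/2)$, and so $w_1(H_2^+(E)) = q_1 \alpha \neq 0$ whenever $q_1$ is odd.

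In each of cases (1)--(3), the relevant class appearing in the Konno--Taniguchi theorem -- namely $w_q$, $w_{q-1}$, or $w_{q-2}$ -- coincides with $w_1$, which is nonzero under the assumption that $q_1$ is odd. The theorem therefore yields $\gamma(Y,\frak{s}) \geq p$, $\beta(Y,\frak{s}) \geq p$, or $\alpha(Y,\frak{s}) \geq p$ respectively, each of which contradicts the strict inequality hypothesized in the corresponding case. The main technical step will be verifying that $(E, \Psi)$ is genuinely an $\Aut((W,\frak{t}),\del)$-bundle with the required spin-structure compatibility; this should follow from the existence of the spin lift $\wh{\iota}$ together with a smooth lift of the isotopy $\{\phi_t\}$ to the principal $\Spin(3)$-bundle over $Y$, which exists because $\iota$ preserves $\frak{s}$ and $\mathrm{Diff}^+(Y,\frak{s})$ is a covering group of $\mathrm{Diff}^+(Y)$ restricted to those diffeomorphisms preserving $\frak{s}$.
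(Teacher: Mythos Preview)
Your approach is correct and is precisely the argument the paper has in mind: the corollary is stated without proof as an immediate consequence of the quoted Konno--Taniguchi theorem, and the mapping-torus construction together with the computation $w_1(H_2^+(E))=q_1\alpha$ is exactly the intended deduction. The one remaining technical point you flag---that the isotopy from $\iota$ to $\id_Y$ lets one modify $\tau$ rel boundary to obtain an honest element of $\Aut((W,\frak t),\partial)$, together with the spin-structure compatibility---is standard and is implicitly assumed in the paper.
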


We are now ready to prove Theorem \ref{theorem:intro_extending_involution_constraints}:

\begin{proof}[Proof of Theorem \ref{theorem:intro_extending_involution_constraints}]
By a result of McCullough--Soma (\cite{MS13}) any odd-type involution $\iota$ on a hyperbolic Brieskorn sphere must be conjugate to $\rho_{2}$ if $\iota$ is isotopic to the identity, or to the complex conjugation involution $\iota_{c}$ otherwise.

We will essentially proceed on a case-by-case basis, using Theorem \ref{theorem:odd_2_filling} and our calculations from Section \ref{subsec:brieskorn_spheres}, along with the quoted propositions above.

For (1a), let $W$ be a spin filling of $\Sigma(2,3,12n-1)$ with intersection form $-2E_{8}\oplus 2H$. Note that $q_{0}=0,1,$ or $2$. If $\rho_{2}$ extends to an involution $\tau$ on $W$ then by Proposition \ref{prop:KMT_inequality} we must have that
\[q_{1}\geq \frac{p}{2}-\kappa_{\KMT}(\Sigma(2,3,12n-1),\rho_{2})=1-0=1,\]
hence $q_{0}\neq 2$. Note that since $q=2$ and
\[\beta(\Sigma(2,3,12n-1))=0<2=p,\]
by Corollary \ref{cor:q1_even}, we must have that $q_{1}$ is even. Hence $q_{0}=0$. The proof of (1b) is similar.

For (2), let $W$ be a spin filling of $-\Sigma(2,3,12n+5)$ with intersection form $-E_{8}\oplus 3H$, let $\iota:Y\to Y$ be an odd-type involution conjugate to either $\rho_{2}$ or $\iota_{c}$, and suppose $\iota$ extends to an involution $\tau$ on $W$. Then by Theorem \ref{theorem:odd_2_filling} we have that
\[3\geq 1-\wt{\kappa}(Y,\frak{s},\iota)+C=2+C.\]
Note that the possibility that $q_{0}=3$ is excluded by Proposition \ref{prop:KMT_inequality}. Furthrmore note that if $q_{0},q_{1}\neq 0$, then $C\geq 2$ unless
\[q_{1}=p-2\kappa_{\KMT}(Y,\frak{s},\iota)=1-2(-\tfrac{1}{2})=2,\]
and so we must have $q_{0}\neq 2$. Finally note that since $q=3$ and
\[\alpha(-\Sigma(2,3,12n+5))=-1<1=p,\]
by Corollary \ref{cor:q1_even} we must have that $q_{1}$ is even, and therefore $q_{0}$ must be odd. Thus we can rule out $q_{0}=0$, and so we must have $q_{0}=1$.

For (3a), let $Y=\pm\Sigma(2,3,12n+1)$, let $W$ be a spin filliing of $Y$ with intersection form $p(-E_{8})\oplus(p+1)H$, $p\geq 4$ even, let $\iota:Y\to Y$ be an odd-type involution conjugate to either $\rho_{2}$ or $\iota_{c}$, and suppose $\iota$ extends to an involution $\tau$ on $W$. Then by Theorem \ref{theorem:odd_2_filling}, we have that
\[q=p+1\geq p-\wt{\kappa}(Y,\frak{s},\iota)+C=p+C,\]
and so $C\le 1$. Using the fact that $\kappa_{\KMT}(Y,\frak{s},\iota)=0$, we must have that either
\begin{enumerate}
	\item $(q_{0},q_{1})=(0,p+1)$,
	\item $(q_{0},q_{1})=(p+1,0)$, or
	\item $(q_{0},q_{1})=(1,p)$.
\end{enumerate}
Case (2) can be ruled out by Proposition \ref{prop:KMT_inequality}, and so we must have either $q_{0}=0$ or $1$.

For (3b), let $Y=-\Sigma(2,3,12n-5)$ or $\Sigma(2,3,12n+5)$ and let $W$ be a spin filling of $Y$ with intersection form $p(-E_{8})\oplus pH$ with $p\geq 3$, odd. By Theorem \ref{theorem:odd_2_filling} if $\rho_{2}$ extends to an involution $\tau$ on $W$ then
\[p=q\geq p-\wt{\kappa}(Y)+C,\]
where $C$ is as in the statement of the theorem. As $\kappa(Y)=1$, it follows that we must have that $C\le 1$. Using the fact that $q=p$ is odd, it follows that either:
\begin{enumerate}
	\item $q_{0}=0$,
	\item $q_{1}=0$, or
	\item $q_{1}=p-2\kappa_{\KMT}(Y,\rho_{2})$.
\end{enumerate}
Using the fact that $\kappa_{\KMT}(Y,\rho_{2})=\frac{1}{2}$, we see that Case (3) is equivalent to $q_{0}=1$. We can rule out Case (2), since by Proposition \ref{prop:KMT_inequality} we must have that
\[q_{1}\geq\frac{p}{2}-\kappa_{KMT}(Y,\rho_{2})=\frac{p-1}{2}>0\]
by our assumption that $p\geq 3$. Hence $q_{0}=0$ or $1$.

Finally for (3c), let $W$ be a spin filling of $-\Sigma(2,3,12n+5)$ with intersection form $p(-E_{8})\oplus(p+2)H$ with $p\geq 3$ odd, let $\iota:Y\to Y$ be an odd-type involution conjugate to either $\rho_{2}$ or $\iota_{c}$, and suppose $\iota$ extends to an involution $\tau$ on $W$. Then by Theorem \ref{theorem:odd_2_filling}, we have that
\[q=p+2\geq p-\wt{\kappa}(Y,\frak{s},\iota)+C=p+1+C,\]
and so $C\le 1$. Using the fact that $\kappa_{\KMT}(Y,\frak{s},\iota)=-\frac{1}{2}$, we must have that either:
\begin{enumerate}
	\item $(q_{0},q_{1})=(0,p+2)$,
	\item $(q_{0},q_{1})=(p+2,0)$, or
	\item $(q_{0},q_{1})=(1,p+1)$.
\end{enumerate}
Again Case (2) can be ruled out by Proposition \ref{prop:KMT_inequality}, and so we must have that either $q_{0}=0$ or $1$.
\end{proof}

\subsection{Constraints on Equivariant Cobordisms}
\label{subsec:constraints_equivariant_cobordisms}

In this section, we give a proof of Theorem \ref{theorem:intro_montesinos_knots_not_concordant_to_LSWFS_2} from the introduction.

\begin{proof}[Proof of Theorem \ref{theorem:intro_montesinos_knots_not_concordant_to_LSWFS_2}]
Let $K$ be as in the statement of the theorem. It suffices to show that the spectrum class $\SWF(\Sigma_{2}(K),\frak{s}_{0},\wh{\iota})$ is not locally $\<j\mu\>$-spherical for any spin lift $\wh{\iota}$ of the covering involution $\iota:\Sigma_{2}(K)\to\Sigma_{2}(K)$. But by Corollary \ref{cor:connected_sums} and the calculations from Section \ref{subsubsec:jmu_fixed_points}, it follows that the $\<j\mu\>$-fixed point spectrum class $\SWF(\Sigma_{2}(K),\frak{s}_{0},\wh{\iota})^{\<j\mu\>}\in\CCC_{\ZZ_{4},\CC}$ is locally equivalent to a $\CC$-$\ZZ_{4}$-spectrum class of the form
\[\big[\Sigma^{e\VV_{1/2}}\wt{\Sigma}\ZZ_{4},0,b_{1/2}\xi+b_{3/2}\xi^{3}\big]\in\CCC_{\ZZ_{4},\CC},\]
for some $e\in\{0,1\}$, $b_{1/2},b_{3/2}\in\QQ$. But as in the proof of Proposition \ref{prop:not_jmu_spherical}, $\wt{\Sigma}\ZZ_{4}$ cannot be $\ZZ_{4}$-locally equivalent to a sphere, from which the result follows.
\end{proof}

\subsection{Genus Bounds}
\label{subsec:genus_bounds}

\begin{definition}
Let $X$ be a closed oriented 4-manifold, let $K\subset S^{3}$ be an oriented knot, and let $A\in H_{2}(X;\ZZ)$ be a fixed 2-dimensional homology class. We define the \emph{$(X,A)$-genus of $K$}, denoted $g_{X,A}(K)$, to be the minimal genus over all properly embedded oriented surfaces $F\subset X\setminus B^{4}$ such that $\del F=K\subset S^{3}$ and $[F]=A$.
\end{definition}

Consider the following Lemma from \cite{KMT}:

\begin{lemma}[\cite{KMT}, Lemma 4.2]
\label{lemma:double_branched_cover_spin}
Let $K\subset S^{3}$ be an oriented knot, let $X$ be a closed oriented smooth 4-manifold with $b_{1}(X)=0$, $b_{2}^{+}(X)\neq 0$, and let $\mathring{X}=W\setminus B^{4}$. Suppose $F$ is a compact smooth properly embedded surface in $\mathring{X}$ such that:
\begin{itemize}
    \item $\del F=K\subset S^{3}=\del\mathring{X}$.
    \item $[F]\in H_{2}(\mathring{X},\del\mathring{X};\ZZ)\cong H_{2}(X;\ZZ)$ is divisible by 2.
    \item $\frac{1}{2}[F]\equiv\text{PD}(w_{2}(X))\pmod{2}$,
\end{itemize}
where $\text{PD}(w_{2}(X))$ denotes the Poincar\'e dual of $w_{2}(X)\in H^{2}(X,\ZZ_{2})$. Then the double branched cover $W:=\Sigma(\mathring{X},F)$ of $\mathring{X}$ over $F$ is spin, with a distinguished spin structure $\frak{t}$ which is invariant under the covering involution $\tau$ on $W$. Furthermore, if
\begin{align*}
    &p=-\sigma(W)/8, & &q=b_{2}^{+}(W), & &q_{0}=b_{2}^{+}(W,\tau)_{0}, & &q_{1}=b_{2}^{+}(W,\tau)_{1},
\end{align*}
Then $q_{0},q_{1}\neq 0$, and:
\begin{align*}
    &p=-\tfrac{1}{4}\sigma(X)+\tfrac{1}{16}[F]^{2}-\tfrac{1}{8}\sigma(K), \\
    &q=2b_{2}^{+}(X)+g(F)-\tfrac{1}{4}[F]^{2}+\tfrac{1}{2}\sigma(K), \\
    &q_{0}=b_{2}^{+}(X), \\
    &q_{1}=b_{2}^{+}(X)+g(F)-\tfrac{1}{4}[F]^{2}+\tfrac{1}{2}\sigma(K).
\end{align*}
\end{lemma}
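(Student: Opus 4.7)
\medskip

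The plan is to first construct the spin structure on $W=\Sigma(\mathring{X},F)$, then use standard branched cover formulas to compute the topological invariants. For the spin structure, I would work on the closed manifold level: close off $\mathring{X}$ and its branch locus $F$ by gluing the trace of some slicing of $K$ in $B^4$ (or just compute cohomological obstructions relative to the boundary). The standard formula for the characteristic class of a double branched cover $\pi \colon W \to \mathring{X}$ with branch locus $F$ gives
\[
w_2(W) \;=\; \pi^{*}\bigl(w_2(X)+\mathrm{PD}(\tfrac{1}{2}[F])\bigr),
\]
and by hypothesis $w_2(X)\equiv \tfrac{1}{2}[F]\pmod 2$ so that $w_2(W)=0$. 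Moreover, since the formula comes from pulling back via a $\tau$-equivariant map, the resulting spin structure $\mathfrak{t}$ is canonically $\tau$-invariant.

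Next I would compute the signature via the Rokhlin--Viro formula for double branched covers along a properly embedded surface whose boundary is a knot:
\[
\sigma(W)\;=\;2\sigma(X)-\tfrac{1}{2}[F]^{2}-\sigma(K),
\]
where the $-\sigma(K)$ term is the boundary correction coming from Casson--Gordon / Viro's theorem. Dividing by $-8$ immediately yields the claimed formula for $p$. For $q$, I would compute $\chi(W)$ via $\chi(W)=2\chi(\mathring{X})-\chi(F)=2\chi(X)-2-(1-2g(F))$, combine with $b_{1}(W)=0$ (which follows from $b_{1}(X)=0$ and a Mayer--Vietoris argument on $W=\pi^{-1}(\mathring{X}\setminus\nu(F))\cup \pi^{-1}(\nu(F))$), and then use $2b_{2}^{+}(W)=b_{2}(W)+\sigma(W)$ to read off $q$.

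For the splitting into $\tau$-invariant and anti-invariant pieces, I would analyze the action of the covering involution on $H^{2}(W;\mathbb{C})$. The transfer map gives $H^{2}(W;\mathbb{C})^{\tau}\cong H^{2}(\mathring{X};\mathbb{C})$ and, since the deck action preserves the intersection form, the decomposition $H^{2}(W;\mathbb{C})=H^{2}(W)^{\tau}\oplus H^{2}(W)^{-\tau}$ is orthogonal and restricts to the self-dual part. In particular, the self-dual $(+1)$-eigenspace is naturally identified with $H^{+}(\mathring{X};\mathbb{R})\cong H^{+}(X;\mathbb{R})$, giving $q_{0}=b_{2}^{+}(X)$. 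The value of $q_{1}$ is then forced by the identity $q_{0}+q_{1}=q$.

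The main obstacle, and the only nontrivial step, is verifying the nonvanishing $q_{0},q_{1}\ne 0$ and confirming the signature boundary correction $-\sigma(K)$ with the correct sign and normalization. The assertion $q_{0}\ne 0$ is immediate from $q_{0}=b_{2}^{+}(X)$ and the hypothesis $b_{2}^{+}(X)\ne 0$. For $q_{1}\ne 0$, I would rewrite the expression as $q_{1}=b_{2}^{+}(X)+g(F)-\tfrac{1}{4}[F]^{2}+\tfrac{1}{2}\sigma(K)$ and observe that if $q_{1}=0$ then $F$ would saturate a Thom-type inequality on a spin $4$-manifold with $b_{2}^{+}\geq 1$ which one rules out by a dimension argument on the $(-1)$-eigenspace of $H^{2,+}(W)$; alternatively one invokes the explicit Viro/Gilmer formula relating $\sigma(W)$, $[F]^{2}$, $\sigma(K)$ and $g(F)$. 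Because the lemma is quoted from \cite{KMT} (Lemma~4.2), I would ultimately defer to the proof there rather than re-derive the boundary signature correction from scratch.
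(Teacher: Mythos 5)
The paper does not prove this lemma at all---it cites it verbatim from \cite{KMT} (Lemma 4.2)---so there is no ``paper's proof'' to compare against. Your general strategy (the $w_2$ pullback formula for the branched cover, a signature formula of $G$-signature/Viro type closed up by capping with a pushed-in Seifert surface, the transfer map to compute the invariant part, and Euler characteristic bookkeeping) is the standard one and is surely close to what appears in \cite{KMT}. However, there are two genuine problems.

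First, your signature formula $\sigma(W)=2\sigma(X)-\tfrac{1}{2}[F]^{2}-\sigma(K)$ has the wrong sign on the $\sigma(K)$ term relative to what the lemma asserts. Plugging it into $p=-\sigma(W)/8$ gives $p=-\tfrac{1}{4}\sigma(X)+\tfrac{1}{16}[F]^{2}+\tfrac{1}{8}\sigma(K)$, whereas the stated formula has $-\tfrac{1}{8}\sigma(K)$. The source of the error is the orientation of the cap: if you glue in the double branched cover $W'$ of $(B^{4},F')$ (with $F'$ a pushed-in Seifert surface, so $\sigma(W')=\sigma(K)$), then in order for $\partial W'=-\partial W$ so that Novikov additivity applies you must orientation-reverse $W'$, yielding $\sigma(W)=2\sigma(X)-\tfrac{1}{2}[F]^{2}-(-\sigma(K))=2\sigma(X)-\tfrac{1}{2}[F]^{2}+\sigma(K)$. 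Only with this sign do the downstream formulas for $q$ and $q_{1}$ in the lemma check out (you can verify this: with $b_{2}(W)=2b_{2}(X)+2g(F)$ and the corrected $\sigma(W)$, the identity $2q=b_{2}(W)+\sigma(W)$ produces exactly the displayed formula for $q$).

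Second, you do not actually prove $q_{1}\neq 0$. You observe (correctly) that $q_{0}=b_{2}^{+}(X)\neq 0$ is immediate and you gesture at a ``Thom-type inequality'' and a ``dimension argument on the $(-1)$-eigenspace,'' but neither is fleshed out into an argument. This is precisely the point you yourself flagged as the ``only nontrivial step,'' and explicitly deferring to \cite{KMT} is not the same as supplying the step. A blind proof of this lemma needs to show why the anti-invariant summand of $H^{2}_{+}(W;\CC)$ is forced to be nontrivial under the stated hypotheses (or, if it is not automatic, to identify the additional hypothesis in the KMT statement that ensures it). As it stands the proposal does not close this gap.
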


With this in mind, we have the following theorem:

\begin{theorem}
\label{theorem:genus_bounds}
Let $X$ be a closed oriented 4-manifold with $b_{1}(X)=0$, $b_{2}^{+}(X)\neq 0$, let $A\in H_{2}(X)$ be such that $2|A$ and $A/2\equiv w_{2}(X)\pmod{2}$, let $K\in\LSWFS^{\<j\mu\>}$, and let
\[c(K,X):=b_{2}^{+}(X)+\wt{\kappa}(K)-2\kappa_{\KMT}(K).\]
Then:
\begin{equation}
\label{eq:genus_bounds}
    g_{X,A}(K)\geq -2b_{2}^{+}(X)-\tfrac{1}{4}\sigma(X)+\tfrac{5}{16}A^{2}-\tfrac{5}{8}\sigma(K)-\wt{\kappa}(K)+C,
\end{equation}
where:
\begin{equation}
    C=\left\{
		\begin{array}{ll}
			3 & \mbox{if }b_{2}^{+}(X)\text{ is even and }c(K,X)\geq 4, \\
			2 & \mbox{if }c(K,X)\geq 2, \\
			1 & \mbox{otherwise.}
		\end{array}
	\right.
\end{equation}
\end{theorem}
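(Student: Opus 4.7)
The plan is to apply Lemma \ref{lemma:double_branched_cover_spin} and reduce to Theorem \ref{theorem:odd_2_filling}. First, take $F \subset \mathring{X}$ to be any properly embedded oriented surface realizing $g := g_{X,A}(K)$, with $\partial F = K$ and $[F] = A$. The divisibility hypotheses on $A$ place us exactly in the setting of Lemma \ref{lemma:double_branched_cover_spin}, so the double branched cover $W := \Sigma(\mathring{X}, F)$ inherits a $\ZZ_2$-invariant spin structure $\frak{t}$, making $(W, \frak{t}, \tau)$ into a $\ZZ_2$-equivariant spin filling of $(\Sigma_2(K), \frak{s}_0, \iota)$ with $b_1(W) = 0$ (using $b_1(X) = 0$) and with $p, q, q_0, q_1$ given by the explicit formulas in the lemma. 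In particular $q_0 = b_2^+(X) \geq 1$ and $q_1 \geq 1$.

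Since $K \in \LSWFS_2^{\<j\mu\>}$, the triple $(\Sigma_2(K), \frak{s}_0, \iota)$ is locally $\SWF$-$\<j\mu\>$-spherical, so Theorem \ref{theorem:odd_2_filling} delivers an inequality $q \geq p - \wt{\kappa}(K) + C'$, where the constant $C'$ depends on the parities of $q_0, q_1$ and on whether $q_1$ equals $p - 2\kappa_{\KMT}(K)$ (respectively $p - 2\kappa_{\KMT}(K) - 1$ in the mixed-parity case). Substituting the formulas from Lemma \ref{lemma:double_branched_cover_spin} rearranges this to
\begin{equation*}
g \geq G + C', \qquad G := -2b_2^+(X) - \tfrac{1}{4}\sigma(X) + \tfrac{5}{16}A^2 - \tfrac{5}{8}\sigma(K) - \wt{\kappa}(K),
\end{equation*}
so the task reduces to verifying $C' \geq C$ in every regime of $C$. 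In the fringe situation where none of the improved cases of Theorem \ref{theorem:odd_2_filling} apply, I would fall back on Manolescu's Theorem \ref{theorem:manolescu}, which still guarantees $C' \geq 1$ since $q = q_0 + q_1 \geq 2$.

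The key computation, obtained by subtracting the formulas for $q_1$ and $p$ in Lemma \ref{lemma:double_branched_cover_spin}, is the identity
\begin{equation*}
q_1 - (p - 2\kappa_{\KMT}(K)) \;=\; g - G - c(K, X).
\end{equation*}
This pins the exceptional value $q_1 = p - 2\kappa_{\KMT}(K)$ to $g = G + c(K, X)$ and the shift $q_1 = p - 2\kappa_{\KMT}(K) - 1$ to $g = G + c(K, X) - 1$, so whenever the optimal constant from Theorem \ref{theorem:odd_2_filling} fails to apply, we read off an exact value of $g$ instead of losing information. The case analysis then runs smoothly: for instance, in the regime $C = 3$ (so $q_0 = b_2^+(X)$ is even and $c(K, X) \geq 4$), if $q_1$ is even then either $q_1 \neq p - 2\kappa_{\KMT}(K)$ and Theorem \ref{theorem:odd_2_filling} gives $C' = 4$, or else $g = G + c(K, X) \geq G + 4$; if $q_1$ is odd then either $q_1 \neq p - 2\kappa_{\KMT}(K) - 1$ and $C' = 3$, or else $g = G + c(K, X) - 1 \geq G + 3$. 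The regimes $C = 2$ and $C = 1$ are handled by identical bookkeeping.

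The main obstacle will be keeping the parity conditions on $(q_0, q_1)$ and the exceptional-value substitutions properly aligned across the three-tier definition of $C$; the identity above is what makes the alignment work, by trading away from the conditional improvements in Theorem \ref{theorem:odd_2_filling} into exact evaluations of $g$ in terms of $G$ and $c(K, X)$. No additional analytic input beyond Theorem \ref{theorem:odd_2_filling} and the Manolescu baseline is required.
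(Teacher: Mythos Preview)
Your proposal is correct and follows essentially the same approach as the paper: apply Lemma~\ref{lemma:double_branched_cover_spin} to pass to the double branched cover, invoke Theorem~\ref{theorem:odd_2_filling}, and then handle the exceptional cases where the optimal $C'$ fails by translating the equality $q_1 = p - 2\kappa_{\KMT}(K)$ (or its shift) into an exact value of $g$ involving $c(K,X)$. Your explicit identity $q_1 - (p - 2\kappa_{\KMT}(K)) = g - G - c(K,X)$ is exactly what the paper encodes as Equations~(\ref{eq:proof_genus_bounds_kappa})--(\ref{eq:proof_genus_bounds_c(K,X)-1}); you organize the case split by the target value of $C$ whereas the paper splits on the parity of $q_0$, but the content is identical.
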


\begin{proof}
Note that by Lemma \ref{lemma:double_branched_cover_spin}, inequality (\ref{eq:genus_bounds}) is equivalent to the inequality
\begin{equation}
    q\geq p-\wt{\kappa}(K)+C.
\end{equation}
To see how this follows from Theorem \ref{theorem:odd_2_filling}, we split into two cases depending on the parity of $q_{0}$:

\bigskip

\begin{enumerate}
    \item[\emph{Case 1}:] $q_{0}=b_{2}^{+}(X)$ even.
    
    \bigskip
    
    If $q_{1}$ is even, then $q\geq p-\wt{\kappa}(K)+4$ if $q_{1}\neq p-2\kappa_{\KMT}(K)$, and if $q_{1}$ is odd, then $q\geq p-\wt{\kappa}(K)+3$ if $q_{1}\neq p-2\kappa_{\KMT}(K)-1$. Hence in either case, 
    \begin{equation}
    \label{eq:proof_genus_bounds_b2+even}
        q\geq p-\wt{\kappa}(K)+3,
    \end{equation}
    unless
    \begin{align}
        &q_{1}=p-2\kappa_{\KMT}(K),\text{ or} \label{eq:proof_genus_bounds_kappa}\\
        &q_{1}=p-2\kappa_{\KMT}(K)-1. \label{eq:proof_genus_bounds_kappa-1}
    \end{align}
    Note that Equations \ref{eq:proof_genus_bounds_kappa} and \ref{eq:proof_genus_bounds_kappa-1} are equivalent to the following two equations, respectively:
    \begin{align}
        &g_{X,A}(K)= -2b_{2}^{+}(X)-\tfrac{1}{4}\sigma(X)+\tfrac{5}{16}A^{2}-\tfrac{5}{8}\sigma(K)-\wt{\kappa}(K)+c(K,X), \label{eq:proof_genus_bounds_c(K,X)}\\
        &g_{X,A}(K)= -2b_{2}^{+}(X)-\tfrac{1}{4}\sigma(X)+\tfrac{5}{16}A^{2}-\tfrac{5}{8}\sigma(K)-\wt{\kappa}(K)+c(K,X)-1. \label{eq:proof_genus_bounds_c(K,X)-1}
    \end{align}
    So if $c(K,X)\geq 4$, then if one Equations \ref{eq:proof_genus_bounds_kappa} and \ref{eq:proof_genus_bounds_kappa-1} holds, then inequality (\ref{eq:proof_genus_bounds_b2+even}) is still satisfied.
    
    If $c(K,X)\geq 2$, we use the fact that
    \begin{equation}
    \label{eq:proof_genus_bounds_b2+even_2}
        q\geq p-\wt{\kappa}(K)+2
    \end{equation}
    unless $q_{1}=p-2\kappa_{\KMT}(K)$, in which case we still have that
    \begin{align*}
        g_{X,A}(K)&= -2b_{2}^{+}(X)-\tfrac{1}{4}\sigma(X)+\tfrac{5}{16}A^{2}-\tfrac{5}{8}\sigma(K)-\wt{\kappa}(K)+c(K,X) \\
        &\geq -2b_{2}^{+}(X)-\tfrac{1}{4}\sigma(X)+\tfrac{5}{16}A^{2}-\tfrac{5}{8}\sigma(K)-\wt{\kappa}(K)+2,
    \end{align*}
    as desired.
    \item[\emph{Case 2}:] $q_{0}=b_{2}^{+}(X)$ odd. 
    
    \bigskip
    
    If $q_{1}$ is even, then $q\geq p-\wt{\kappa}(K)+3$ if $q_{1}\neq p-2\kappa_{\KMT}(K)$, and if $q_{1}$ is odd, then $q\geq p-\wt{\kappa}(K)+2$. Hence in either case, 
    \begin{equation}
    \label{eq:proof_genus_bounds_b2+odd}
        q\geq p-\wt{\kappa}(K)+2,
    \end{equation}
    unless
    \begin{equation}
    \label{eq:proof_genus_bounds_kappa_b2+odd}
        q_{1}=p-2\kappa_{\KMT}(K).
    \end{equation}
    Note that as in Case 1, Equation \ref{eq:proof_genus_bounds_kappa_b2+odd} is equivalent to
    \begin{equation}
    \label{eq:proof_genus_bounds_c(K,X)_b2+odd}
        g_{X,A}(K)= -2b_{2}^{+}(X)-\tfrac{1}{4}\sigma(X)+\tfrac{5}{16}A^{2}-\tfrac{5}{8}\sigma(K)-\wt{\kappa}(K)+c(K,X).
    \end{equation}
    Hence if $c(K,X)\geq 2$, then inequality (\ref{eq:proof_genus_bounds_b2+odd}) holds, as desired.
\end{enumerate}
\end{proof}

Recall from Example \ref{ex:SWFM_knots} that
\begin{align*}
	&\wt{\kappa}(K)=\kappa(K)=-\tfrac{1}{8}\sigma(K), & &\kappa_{\KMT}=-\tfrac{1}{16}\sigma(K),
\end{align*}
for any knot $K\in\SWFM_{2}^{\#,\C}$. We have the following corollary of Theorem \ref{theorem:genus_bounds}:

\begin{corollary}
\label{cor:genus_bounds_SWFM}
Let $X$ be a closed oriented 4-manifold with $b_{1}(X)=0$, $b_{2}^{+}(X)\neq 0$, let $A\in H_{2}(X)$ be such that $2|A$ and $A/2\equiv w_{2}(X)\pmod{2}$, and suppose $K\in\SWFM_{2}^{\#,\C}$. Then:
\[g_{X,A}(K)\geq -2b_{2}^{+}(X)-\tfrac{1}{4}\sigma(X)+\tfrac{5}{16}A^{2}-\tfrac{1}{2}\sigma(K)+C,\]
where:
\[C=\left\{
		\begin{array}{ll}
			3 & \mbox{if }b_{2}^{+}(X)\text{ is even}, b_{2}^{+}(X)\geq 4, \\
			2 & \mbox{if }b_{2}^{+}(X)\text{ is odd}, b_{2}^{+}(X)\geq 3,\text{ or} \\
		    & \mbox{if }b_{2}^{+}(X)=2, \\
			1 & \mbox{if }b_{2}^{+}(X)=1.
		\end{array}
	\right.\]
\end{corollary}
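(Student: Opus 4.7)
The plan is to reduce Corollary \ref{cor:genus_bounds_SWFM} to a direct substitution into Theorem \ref{theorem:genus_bounds}. First I would verify that every $K\in\SWFM_{2}^{\#,\C}$ satisfies the hypothesis of Theorem \ref{theorem:genus_bounds}, namely that $K$ is locally $\SWF$-$\<j\mu\>$-spherical. By Definition \ref{def:concordant_to_SWFM_knots}, $K$ is concordant to some connected sum $K_{1}\#\cdots\# K_{n}$ with each $\Sigma_{2}(K_{i})$ equivariantly $\SWF$-minimal. Proposition \ref{prop:SWF_minimal} then shows that each $\SWF(\Sigma_{2}(K_{i}),\frak{s}_{0},\wh{\iota}_{i})$ is stably $G^{\odd}_{2}$-equivalent to $[(S^{0},0,\tfrac{1}{2}n(\Sigma_{2}(K_{i}),\frak{s}_{0},\wh{\iota}_{i},g_{i}))]$ and hence in particular $\<j\mu\>$-spherical. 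By the multiplicativity of Seiberg--Witten Floer spectra under equivariant connected sum (Corollary \ref{cor:connected_sums}) and the concordance invariance of the local equivalence class (Proposition \ref{prop:knot_concordance_local_equivalence}), $K$ itself belongs to $\LSWFS_{2}^{\<j\mu\>}$.

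Next I would evaluate the two numerical inputs that appear in Theorem \ref{theorem:genus_bounds}. By Example \ref{ex:SWFM_knots} the invariants $\wt\kappa$ and $\kappa_{\KMT}$ are additive on $\SWFM_{2}^{\#,\C}$ and are given by
\[
\wt{\kappa}(K)=-\tfrac{1}{8}\sigma(K),\qquad \kappa_{\KMT}(K)=-\tfrac{1}{16}\sigma(K).
\]
The auxiliary quantity $c(K,X)$ therefore collapses:
\[
c(K,X)=b_{2}^{+}(X)+\wt{\kappa}(K)-2\kappa_{\KMT}(K)=b_{2}^{+}(X)-\tfrac{1}{8}\sigma(K)+\tfrac{1}{8}\sigma(K)=b_{2}^{+}(X).
\]

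Finally I would substitute these values into the conclusion of Theorem \ref{theorem:genus_bounds}. The linear term
\[
-\tfrac{5}{8}\sigma(K)-\wt{\kappa}(K)=-\tfrac{5}{8}\sigma(K)+\tfrac{1}{8}\sigma(K)=-\tfrac{1}{2}\sigma(K)
\]
reproduces precisely the signature term in the statement of the corollary. The conditions governing $C$ in Theorem \ref{theorem:genus_bounds} become: $C=3$ when $b_{2}^{+}(X)$ is even and $c(K,X)=b_{2}^{+}(X)\geq 4$; $C=2$ when $c(K,X)=b_{2}^{+}(X)\geq 2$ (so when $b_{2}^{+}(X)$ is odd with $b_{2}^{+}(X)\geq 3$ or when $b_{2}^{+}(X)=2$); and $C=1$ otherwise, i.e.\ when $b_{2}^{+}(X)=1$. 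These exactly match the four cases defining $C$ in the statement.

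The main obstacle is the first step, verifying that $\SWFM_{2}^{\#,\C}\subseteq\LSWFS_{2}^{\<j\mu\>}$; everything else is bookkeeping. This containment is essentially the statement that local $\SWF$-$\<j\mu\>$-sphericality descends along equivariant connected sums and is a concordance invariant, both of which follow by chaining Proposition \ref{prop:SWF_minimal}, Corollary \ref{cor:connected_sums}, and Proposition \ref{prop:knot_concordance_local_equivalence}, but it deserves to be spelled out carefully since the class $\SWFM_{2}^{\#,\C}$ is defined at the level of individual knot summands rather than at the level of spectrum classes.
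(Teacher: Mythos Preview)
Your proposal is correct and follows essentially the same approach as the paper: the corollary is stated immediately after recalling from Example \ref{ex:SWFM_knots} that $\wt{\kappa}(K)=-\tfrac{1}{8}\sigma(K)$ and $\kappa_{\KMT}(K)=-\tfrac{1}{16}\sigma(K)$ for $K\in\SWFM_{2}^{\#,\C}$, and the proof is left as a direct substitution into Theorem \ref{theorem:genus_bounds}. Your additional care in verifying the containment $\SWFM_{2}^{\#,\C}\subset\LSWFS_{2}^{\<j\mu\>}$ via Proposition \ref{prop:SWF_minimal}, Corollary \ref{cor:connected_sums}, and Proposition \ref{prop:knot_concordance_local_equivalence} makes explicit what the paper leaves implicit through Proposition \ref{prop:inclusions_classes_knots} and the concordance-invariance remark following it.
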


\begin{example}
Consider the case where $X=\#^{n}S^{2}\times S^{2}$ for some $n\geq 1$, and let $A=((a_{1},b_{1}),\dots,(a_{n},b_{n}))$, where $a_{k}\equiv b_{k}\equiv 0\pmod{4}$ for all $k=1,\dots, n$. Then for any $K\in\SWFM_{2}^{\#,\C}$, we have that
\begin{equation}
\label{eq:genus_bounds_SWF-spherical_S2xS2}
    g_{X,A}(K)\geq -2n+\tfrac{5}{8}\sum_{k=1}^{n}a_{k}b_{k}-\tfrac{1}{2}\sigma(K)+C,
\end{equation}
where
\begin{equation}
    C=\left\{
		\begin{array}{ll}
			3 & \mbox{if }n\text{ is even}, n\geq 4, \\
			2 & \mbox{if }n\text{ is odd}, n\geq 3,\text{ or} \\
		    & \mbox{if }n=2, \\
			1 & \mbox{if }n=1.
		\end{array}
	\right.
\end{equation}
\end{example}

\begin{example}
Suppose $X=(\#^{m}\CC P^{2})\#(\#^{n}\ol{\CC P}^{2})$ for some $m\geq 1$, $n\geq 0$, and let $A=((a_{1},\dots,a_{m}),(b_{1},\dots,b_{n}))$, where $a_{k}\equiv b_{\ell}\equiv 2\pmod{4}$ for all $k,\ell$. Then for any $K\in\SWFM_{2}^{\#,\C}$, we have that
\begin{equation}
\label{eq:genus_bounds_SWF-spherical_CP2_CP2bar}
    g_{X,A}(K)\geq -\tfrac{9}{4}m+\tfrac{1}{4}n+\tfrac{5}{16}\Big(\sum_{k=1}^{m}a_{k}^{2}-\sum_{\ell=1}^{n}b_{\ell}^{2}\Big)-\tfrac{1}{2}\sigma(K)+C,
\end{equation}
where
\begin{equation}
    C=\left\{
		\begin{array}{ll}
			3 & \mbox{if }m\text{ is even}, m\geq 4, \\
			2 & \mbox{if }m\text{ is odd}, m\geq 3,\text{ or} \\
		    & \mbox{if }m=2, \\
			1 & \mbox{if }m=1.
		\end{array}
	\right.
\end{equation}
\end{example}

\begin{example}
Consider the case where $X=\#^{n}K3$ for some $n\geq 1$, and let $A=0$. Then for any $K\in\SWFM_{2}^{\#}$, we have that
\begin{equation}
\label{eq:genus_bounds_SWF-spherical_K3}
    g_{X,0}(K)\geq -2n-\tfrac{1}{2}\sigma(K)+\left\{
		\begin{array}{ll}
			3 & \mbox{if }n\text{ is even},\\
			2 & \mbox{if }n\text{ is odd}.
		\end{array}
	\right.
\end{equation}
For example, in the case where $n=1$ and $K=T(2,2k+1)$ is a two-bridge torus knot, we have that
\[g_{K3,0}(T(2,2k+1))=k=g_{4}(T(2,2k+1)),\]
which agrees with (\cite{BaragliaQP}, Corollary 1.3).
\end{example}

For any closed oriented 4-manifold $X$ and any homology class $A\in H_{2}(X;\ZZ)$, let
\[g(X,A):=\min\{\text{genus}(F)\,|\,F\hookrightarrow X, [F]=A\},\]
and recall the upper bound
\begin{equation}
\label{eq:upper_bound}
	g_{X,A}(K)\le g(X,A)+g_{4}(K)
\end{equation}
for the $(X,A)$-genus of $K$ as in the introduction. We can rephrase Theorem \ref{theorem:intro_X_A_genus_sharp} from the introduction as follows:

\begin{theorem}
\label{theorem:X_A_genus_sharp}
Let $(X,A)$ be one of the following pairs, where $X$ is a closed oriented 4-manifold and $A\in H_{2}(X;\ZZ)$:
\begin{center}
\begin{tabular}{|c|c|}\hline
    $X$ & $A$ \\ \hline
    $S^{2}\times S^{2}\#S^{2}\times S^{2}$ & $((4,4),(4,4))$  \\ \hline
    \multirow{2}{*}{$\CC P^{2}\#\CC P^{2}$} & $(6,2)$\\ \cline{2-2}
    & $(6,6)$  \\ \hline
    \multirow{2}{*}{$S^{2}\times S^{2}\#\CC P^{2}$} & $((4,4),2)$ \\ \cline{2-2}
    & $((4,4),6)$ \\ \hline
    $hK3$ & $0$  \\ \hline
\end{tabular}
\end{center}
Here $X=hK3$ denotes any homotopy $K3$ surface. Furthermore, let $K\in\SWFM^{\#,\C}_{2}$ be such that $g_{4}(K)=-\frac{1}{2}\sigma(K)$. Then:
\begin{equation}
    g_{X,A}(K)=g(X,A)+g_{4}(K).
\end{equation}
\end{theorem}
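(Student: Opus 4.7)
The plan is to show that the upper bound in (\ref{eq:upper_bound}) is matched by the lower bound coming from Corollary \ref{cor:genus_bounds_SWFM}, after verifying the cases one-by-one. The inequality $g_{X,A}(K) \le g(X,A) + g_4(K)$ is automatic by taking the connected sum of a minimal-genus closed surface in $(X,A)$ with a minimal-genus surface for $K$ in $S^3 \times [0,1] \subset \mathring{X}$. So the crux is proving the matching lower bound $g_{X,A}(K) \ge g(X,A) + g_4(K)$, which under the hypothesis $g_4(K) = -\tfrac{1}{2}\sigma(K)$ becomes
\[
g_{X,A}(K) \;\ge\; g(X,A) \;-\; \tfrac{1}{2}\sigma(K).
\]

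Since $K \in \SWFM^{\#,\C}_2$, the hypotheses of Corollary \ref{cor:genus_bounds_SWFM} apply (note $2 \mid A$ and $A/2 \equiv w_2(X) \pmod 2$ are verified directly from the tables), giving
\[
g_{X,A}(K) \;\ge\; -2b_{2}^{+}(X) - \tfrac{1}{4}\sigma(X) + \tfrac{5}{16}A^{2} - \tfrac{1}{2}\sigma(K) + C.
\]
In every pair $(X,A)$ listed, one has $b_2^+(X) = 2$ or $3$, all of which fall into the range where $C=2$ in Corollary \ref{cor:genus_bounds_SWFM}. So it suffices to check, case by case, that
\[
-2b_{2}^{+}(X) - \tfrac{1}{4}\sigma(X) + \tfrac{5}{16}A^{2} + 2 \;=\; g(X,A).
\]

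For each pair the minimal closed genus $g(X,A)$ is computed via the symplectic Thom conjecture and its generalizations (\cite{KMthomconj}, \cite{MST:thomconj}, \cite{OSz:sympthomconj}): the minimal genus of $(a,b)$ in $S^{2}\times S^{2}$ with $a,b\geq 1$ is $(a-1)(b-1)$, and of a degree-$d$ class in $\CC P^{2}$ is $(d-1)(d-2)/2$. Additivity of minimal genera under connected sum (taking connected sums of the minimizing surfaces on each factor) then gives for instance $g(S^{2}\!\times\! S^{2}\#S^{2}\!\times\! S^{2},((4,4),(4,4))) = 9+9=18$, $g(\CC P^{2}\#\CC P^{2},(6,2)) = 10 + 0 = 10$, $g(\CC P^{2}\#\CC P^{2},(6,6)) = 10 + 10 = 20$, $g(S^{2}\!\times\! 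S^{2}\#\CC P^{2},((4,4),2)) = 9 + 0 = 9$, $g(S^{2}\!\times\! S^{2}\#\CC P^{2},((4,4),6))= 9+10=19$, and $g(hK3,0) = 0$. Substituting into the displayed identity above, a direct arithmetic check confirms equality in every case; for example for $(hK3,0)$ one computes $-2(3) - \tfrac{1}{4}(-16) + 0 + 2 = 0 = g(hK3,0)$, and for $(S^{2}\!\times\!S^{2}\#S^{2}\!\times\!S^{2},((4,4),(4,4)))$ one has $-4 - 0 + \tfrac{5}{16}(64) + 2 = 18$.

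For the specific families of knots, the first class of examples follows since connected sums of quasi-positive two-bridge knots (and of $T(3,5)$) lie in $\SWFM^{\#,\C}_{2}$ by Example \ref{ex:SWFM_knots} and satisfy $g_{4}(K) = -\tfrac{1}{2}\sigma(K)$ by quasi-positivity (Rudolph's slice--Bennequin inequality, which is sharp on quasi-positive knots). The knots $9_{47},9_{49},10_{155},m10_{156},10_{160},10_{163}$ are also in $\SWFM^{\#,\C}_{2}$ by the list in Example \ref{ex:SWFM_knots}, and they satisfy $g_{4}(K) = -\tfrac{1}{2}\sigma(K)$ directly from tabulated values of the four-genus and signature (alternatively from their quasi-alternating nature combined with the classical signature bound $-\tfrac{1}{2}\sigma(K) \le g_4(K)$). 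The main obstacle is really bookkeeping rather than conceptual: one has to correctly identify the correct value of $C$ in Corollary \ref{cor:genus_bounds_SWFM} for each pair $(X,A)$ and confirm that the divisibility and $w_2$ conditions on $A$ are satisfied; the Thom-conjecture inputs and the arithmetic are then routine.
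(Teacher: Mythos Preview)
Your proposal is correct and follows essentially the same approach as the paper: apply Corollary~\ref{cor:genus_bounds_SWFM} with $C=2$ (since $b_2^+(X)\in\{2,3\}$ in every case listed) and verify by direct arithmetic that the resulting lower bound equals $g(X,A)-\tfrac{1}{2}\sigma(K)$.

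One point deserves comment. You write that $g(X,A)$ is ``computed via the symplectic Thom conjecture and additivity of minimal genera under connected sum,'' but taking connected sums of minimizing surfaces only establishes the \emph{upper} bound $g(X,A)\le N$ (where $N$ is the sum of the factor genera); minimal genus is not additive under connected sum in general. The paper handles this by separately invoking Bryan's inequality \cite{Bry97} to certify the lower bound on $g(X,A)$. Your argument, however, does not actually need that extra input: from the Corollary you get $g_{X,A}(K)\ge N-\tfrac{1}{2}\sigma(K)$, while the trivial bound gives $g_{X,A}(K)\le g(X,A)-\tfrac{1}{2}\sigma(K)\le N-\tfrac{1}{2}\sigma(K)$, and the squeeze forces both $g_{X,A}(K)=N-\tfrac{1}{2}\sigma(K)$ and $g(X,A)=N$. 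So your route is in fact slightly more self-contained than the paper's---you recover the closed minimal genera without appealing to Bryan---but you should rephrase the step so that you only assert $g(X,A)\le N$ a priori, with equality falling out of the sandwich.
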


\begin{proof}
The values of of $g(X,A)$ for the pairs $(X,A)$ listed in the above table are given as follows:
\begin{center}
\begin{tabular}{|c|c|c|}\hline
    $X$ & $A$ & $g(X,A)$  \\ \hline
    $S^{2}\times S^{2}\#S^{2}\times S^{2}$ & $((4,4),(4,4))$ & $18$ \\ \hline
    \multirow{2}{*}{$\CC P^{2}\#\CC P^{2}$} & $(6,2)$ & $10$ \\ \cline{2-3}
    & $(6,6)$ & $20$ \\ \hline
    \multirow{2}{*}{$S^{2}\times S^{2}\#\CC P^{2}$} & $((4,4),2)$ & $9$ \\ \cline{2-3}
    & $((4,4),6)$ & $19$ \\ \hline
    $hK3$ & $0$ & $0$ \\ \hline
\end{tabular}
\end{center}

Indeed the first five of these cases follow from an application of Bryan's inequality (\cite{Bry97}, Theorem 1.6), and the resolution of the minimal genus problem for $S^{2}\times S^{2}$ (\cite{Rub96}) and $\CC P^{2}$ (\cite{KMthomconj}). It suffices to check that in each of these cases, the lower bound obtained from Theorem \ref{theorem:genus_bounds} is equal to $g(X,A)+g_{4}(K)$.

Using Corollary \ref{cor:genus_bounds_SWFM} and the assumption that $g_{4}(K)=-\frac{1}{2}\sigma(K)$, we can check that $g_{X,A}(K)\geq g(X,A)+g_{4}(K)$ for each of the six cases in the theorem:

\begin{enumerate}
    \item[Case 1.] $(X,A)=(S^{2}\times S^{2}\# S^{2}\times S^{2},((4,4),(4,4)))$:
    \begin{align*}
        g_{X,A}(K)&\geq-2b_{2}^{+}(X)-\tfrac{1}{4}\sigma(X)+\tfrac{5}{16}A^{2}-\tfrac{1}{2}\sigma(K)+C \\
        &=-4+\tfrac{5}{16}(64)-\tfrac{1}{2}\sigma(K)+2 \\
        &=18-\tfrac{1}{2}\sigma(K) \\
        &=g(X,A)+g_{4}(K).
    \end{align*}
    
    \item[Case 2.] $(X,A)=(\CC P^{2}\#\CC P^{2},(6,2))$:
    \begin{align*}
        g_{X,A}(K)&\geq-2b_{2}^{+}(X)-\tfrac{1}{4}\sigma(X)+\tfrac{5}{16}A^{2}-\tfrac{1}{2}\sigma(K)+C \\
        &=-\tfrac{9}{2}+\tfrac{5}{16}(40)-\tfrac{1}{2}\sigma(K)+2 \\
        &=10-\tfrac{1}{2}\sigma(K) \\
        &=g(X,A)+g_{4}(K).
    \end{align*}
    \item[Case 3.] $(X,A)=(\CC P^{2}\#\CC P^{2},(6,6))$:
    \begin{align*}
        g_{X,A}(K)&\geq-2b_{2}^{+}(X)-\tfrac{1}{4}\sigma(X)+\tfrac{5}{16}A^{2}-\tfrac{1}{2}\sigma(K)+C \\
        &=-\tfrac{9}{2}+\tfrac{5}{16}(72)-\tfrac{1}{2}\sigma(K)+2 \\
        &=20-\tfrac{1}{2}\sigma(K) \\
        &=g(X,A)+g_{4}(K).
    \end{align*}
    \item[Case 4.] $(X,A)=(S^{2}\times S^{2}\#\CC P^{2},((4,4),2))$:
    \begin{align*}
        g_{X,A}(K)&\geq-2b_{2}^{+}(X)-\tfrac{1}{4}\sigma(X)+\tfrac{5}{16}A^{2}-\tfrac{1}{2}\sigma(K)+C \\
        &=-\tfrac{17}{4}+\tfrac{5}{16}(36)-\tfrac{1}{2}\sigma(K)+2 \\
        &=9-\tfrac{1}{2}\sigma(K) \\
        &=g(X,A)+g_{4}(K).
    \end{align*}
    \item[Case 5.] $(X,A)=(S^{2}\times S^{2}\#\CC P^{2},((4,4),6))$:
    \begin{align*}
        g_{X,A}(K)&\geq-2b_{2}^{+}(X)-\tfrac{1}{4}\sigma(X)+\tfrac{5}{16}A^{2}-\tfrac{1}{2}\sigma(K)+C \\
        &=-\tfrac{17}{4}+\tfrac{5}{16}(68)-\tfrac{1}{2}\sigma(K)+2 \\
        &=19-\tfrac{1}{2}\sigma(K) \\
        &=g(X,A)+g_{4}(K).
    \end{align*}
    \item[Case 6.] $(X,A)=(hK3,0)$:
    \begin{align*}
        g_{X,A}(K)&\geq-2b_{2}^{+}(X)-\tfrac{1}{4}\sigma(X)+\tfrac{5}{16}A^{2}-\tfrac{1}{2}\sigma(K)+C \\
        &=-6+4+0-\tfrac{1}{2}\sigma(K)+2 \\
        &=0-\tfrac{1}{2}\sigma(K) \\
        &=g(X,A)+g_{4}(K).
    \end{align*}
\end{enumerate}
\end{proof}

We conclude this section by comparing the relative genus bound from Theorem \ref{theorem:genus_bounds} with other bounds from the literature. The first one gives a lower bound for the \emph{topological} $(X,A)$-genus $g_{X,A}^{\topp}(K)$ defined as follows:

\begin{definition}
Let $X$ be a closed oriented topological 4-manifold, let $K\subset S^{3}$ be an oriented knot, and let $A\in H_{2}(X;\ZZ)$ be a fixed 2-dimensional homology class. We define the \emph{topological $(X,A)$-genus of $K$}, denoted $g_{X,A}^{\topp}(K)$, to be the minimal genus over all properly embedded oriented \emph{locally flat} surfaces $F\subset X\setminus B^{4}$ such that $\del F=K\subset S^{3}$ and $[F]=A$.
\end{definition}

We then have the following lower bound for $g_{X,A}^{\topp}(K)$ coming from the $G$-signature theorem, which in turn gives a lower bound for the smooth $(X,A)$-genus via the inequality $g_{X,A}^{\topp}(K)\le g_{X,A}(K)$:

\begin{theorem}[\cite{CN:slice}, \cite{Gilmer81}, \cite{Viro75}]
Let $X$ be a closed topological 4-manifold with $H_{1}(X;\ZZ)=0$, let $A\in H_{2}(X;\ZZ)$ be such that $2|A$, and let $K\subset S^{3}$ be a knot. Then
\begin{equation}
\label{eq:G_signature}
	g_{X,A}^{\topp}(K)\geq \tfrac{1}{2}\big|\sigma(K)+\sigma(X)-\tfrac{1}{2}A^{2}\big|-\tfrac{1}{2}b_{2}(X).
\end{equation}
\end{theorem}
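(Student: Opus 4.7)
The plan is to produce a compact topological 4-manifold $W$ via a double branched covering construction, and then extract the desired inequality by comparing the signature and second Betti number of $W$ through the classical inequality $|\sigma(W)|\le b_2(W)$. Let $F\subset X\setminus B^4$ be a locally flat properly embedded oriented surface of genus $g := g_{X,A}^{\topp}(K)$ with $\partial F = K$ and $[F]=A$. The divisibility hypothesis $2\mid A$ implies that $F$ represents the trivial class in $H_2(X\setminus B^4;\ZZ/2)$, so the topological double cover of $X\setminus B^4$ branched along $F$ exists as a compact topological 4-manifold $W$ equipped with an involution $\tau$, whose boundary is $\partial W = \Sigma_2(K)$.

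The first step is to compute $b_2(W)$. The branched-cover Euler characteristic formula $\chi(W) = 2\chi(X\setminus B^4)-\chi(F)$, combined with the hypothesis $H_1(X;\ZZ)=0$ and the fact that $\Sigma_2(K)$ is a rational homology sphere, forces $b_1(W)=b_3(W)=0$ (via a transfer/Mayer--Vietoris argument together with Lefschetz duality on $W$). This yields an explicit linear expression for $b_2(W)$ in terms of $g$ and $b_2(X)$.

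The second step is to compute $\sigma(W)$, and this is the technical heart of the argument. The most transparent route is to form the closed topological 4-manifold $\widetilde X = W\cup_{\Sigma_2(K)} V$, where $V=\Sigma_2(B^4, D)$ is the double branched cover of the 4-ball along a locally flat slice surface $D$ for $K$ (which always exists in the topological category by Freedman's work). Then $\widetilde X$ is naturally identified with the double branched cover of $X$ along the closed surface $F\cup D$, whose homology class in $H_2(X)$ equals $A$. The Hirzebruch--Rokhlin branched cover formula supplies $\sigma(\widetilde X) = 2\sigma(X) - \tfrac{1}{2}A^2$. The gluing relation between $\sigma(W)$, $\sigma(V)$, and $\sigma(\widetilde X)$ across the rational homology sphere boundary $\Sigma_2(K)$---obtained either from Novikov additivity combined with an explicit $\sigma(V)$ computation, from Wall's non-additivity formula, or from the Atiyah--Patodi--Singer $\eta$-invariant of the odd signature operator---isolates the boundary contribution as the classical Kauffman--Taylor / Tristram--Levine signature $\sigma(K)$.

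Combining the two computations via $|\sigma(W)|\le b_2(W)$ and observing that the construction applies equally to $-F$ inside $-X$ (producing the absolute value in the final inequality) yields the stated bound. The main obstacle is Step~2: correctly tracking the exact coefficient of $\sigma(K)$ contributed by the boundary defect across $\Sigma_2(K)$ requires careful bookkeeping of the Wall non-additivity/Maslov index term (or equivalently the signature $\eta$-invariant), and it is precisely here that the arguments of Viro, Casson--Nakanishi, and Gilmer perform their most delicate work.
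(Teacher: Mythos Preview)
The paper does not prove this theorem; it is quoted from the cited references \cite{CN:slice}, \cite{Gilmer81}, \cite{Viro75} purely as a benchmark against which the paper's own genus bounds are compared in Table~\ref{table:genus_bounds}. So there is no ``paper's own proof'' to compare against.

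Your sketch is essentially the classical argument from those references: form the double branched cover $W$ of $X\setminus B^4$ along $F$, compute $b_2(W)$ from the Euler characteristic, compute $\sigma(W)$ from the Hirzebruch branched-cover signature formula together with the boundary defect across $\Sigma_2(K)$, and conclude via $|\sigma(W)|\le b_2(W)$. One small correction: you do not need Freedman's work to produce the auxiliary surface $D\subset B^4$. Any Seifert surface for $K$ pushed into $B^4$ suffices, and the signature of the double branched cover of $B^4$ along such a pushed-in Seifert surface is \emph{exactly} $\sigma(K)$ by the classical identification of the knot signature with the signature of the Seifert form. This makes Step~2 considerably less delicate than you suggest: no Wall non-additivity term or $\eta$-invariant is needed, because Novikov additivity applies directly across the rational homology sphere $\Sigma_2(K)$, and both pieces have explicitly known signatures.
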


We also have the following two inequalities, coming from Manolescu's and Konno-Miyazawa-Taniguchi's relative 10/8ths inequalities applied to double-branched covers:

\begin{theorem}[\cite{Man14},\cite{KMT}]
Let $X$ be a closed oriented 4-manifold with $b_{1}(X)=0$, $b_{2}^{+}(X)\neq 0$, and let $A\in H_{2}(X;\ZZ)$ be such that $2|A$ and $A/2\equiv w_{2}(X)\pmod{2}$. Then for any knot $K\subset S^{3}$ we have the following two inequalities:
\begin{align}
	&g_{X,A}(K)\geq -2b_{2}^{+}(X)-\tfrac{1}{4}\sigma(X)+\tfrac{5}{16}A^{2}-\tfrac{5}{8}\sigma(K)-\kappa(K)+1, \label{eq:Man_knot}\\
	&g_{X,A}(K)\geq -b_{2}^{+}(X)-\tfrac{1}{8}\sigma(X)+\tfrac{9}{32}A^{2}-\tfrac{9}{16}\sigma(K)-\kappa_{\KMT}(K). \label{eq:KMT_knot}
\end{align}
\end{theorem}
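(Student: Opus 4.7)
The plan is to reduce both inequalities to their Seiberg--Witten Floer counterparts on the double branched cover via Lemma \ref{lemma:double_branched_cover_spin}, which is exactly the setup that makes the divisibility/characteristic hypotheses on $A$ useful. Fix a properly embedded oriented smooth surface $F \subset \mathring{X} = X \setminus B^{4}$ with $\partial F = K$, $[F] = A$, and $g(F) = g_{X,A}(K)$. Since $2 \mid A$ and $A/2 \equiv w_{2}(X) \pmod{2}$, Lemma \ref{lemma:double_branched_cover_spin} produces the double branched cover $W := \Sigma(\mathring{X}, F)$, a compact spin 4-manifold whose boundary is the double branched cover $\Sigma_{2}(K)$ equipped with its canonical spin structure $\frak{s}_{0}$. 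The covering involution $\tau$ on $W$ restricts to the standard covering involution $\iota$ on $\Sigma_{2}(K)$, and one has the explicit formulas
\[
p = -\tfrac{1}{4}\sigma(X) + \tfrac{1}{16}A^{2} - \tfrac{1}{8}\sigma(K), \qquad q = 2b_{2}^{+}(X) + g(F) - \tfrac{1}{4}A^{2} + \tfrac{1}{2}\sigma(K),
\]
together with $q_{0} = b_{2}^{+}(X)$ and $q_{1} = q - q_{0}$, all of which are positive under the stated hypotheses on $X$.

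For (\ref{eq:Man_knot}) I would apply Manolescu's relative $10/8$-ths inequality (\ref{eq:manolescus_inequality}), which is the specialization of Theorem \ref{theorem:manolescu} to a filling of $(\Sigma_{2}(K), \frak{s}_{0})$: namely, $q + \kappa(\Sigma_{2}(K), \frak{s}_{0}) \geq p + 1$. Substituting the formulas above for $p$ and $q$ and unwinding, using the definition $\kappa(K) := \kappa(\Sigma_{2}(K))$, gives
\[
g(F) \geq -2b_{2}^{+}(X) - \tfrac{1}{4}\sigma(X) + \tfrac{5}{16}A^{2} - \tfrac{5}{8}\sigma(K) - \kappa(K) + 1,
\]
which is exactly (\ref{eq:Man_knot}) since $g(F) = g_{X,A}(K)$.

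For (\ref{eq:KMT_knot}) I would instead apply the Konno--Miyazawa--Taniguchi inequality (\ref{eq:intro_KMT}) to the equivariant spin filling $(W, \frak{t}, \tau)$ of $(\Sigma_{2}(K), \frak{s}_{0}, \iota)$. That inequality reads
\[
b_{2}^{+}(W, \tau)_{1} \geq -\tfrac{1}{16}\sigma(W) - \kappa_{\KMT}(\Sigma_{2}(K), \frak{s}_{0}, \iota) = -\tfrac{1}{16}\sigma(W) - \kappa_{\KMT}(K).
\]
Using $q_{1} = b_{2}^{+}(X) + g(F) - \tfrac{1}{4}A^{2} + \tfrac{1}{2}\sigma(K)$ and $-\tfrac{1}{16}\sigma(W) = \tfrac{1}{2}p$ from Lemma \ref{lemma:double_branched_cover_spin}, rearranging yields
\[
g(F) \geq -b_{2}^{+}(X) - \tfrac{1}{8}\sigma(X) + \tfrac{9}{32}A^{2} - \tfrac{9}{16}\sigma(K) - \kappa_{\KMT}(K),
\]
which is (\ref{eq:KMT_knot}).

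There is essentially no serious obstacle here: the two inequalities are clean translations of Manolescu's and Konno--Miyazawa--Taniguchi's filling inequalities through the branched cover construction of Lemma \ref{lemma:double_branched_cover_spin}. The one point requiring genuine care is verifying that the hypotheses of the two filling inequalities are met by $W$: the spin structure $\frak{t}$ on $W$ and its $\tau$-invariance come from the divisibility and characteristic conditions $2 \mid A$ and $A/2 \equiv w_{2}(X)\pmod 2$; the boundary condition that $\Sigma_{2}(K)$ is a rational homology sphere is automatic; $b_{1}(W) = 0$ follows from $b_{1}(X) = 0$ via a Mayer--Vietoris argument on $\mathring{X} = (W/\tau)$; and $q_{0}, q_{1} > 0$ (needed only implicitly in the intermediate steps) follows from the assumption $b_{2}^{+}(X) \neq 0$ together with the positivity statement in Lemma \ref{lemma:double_branched_cover_spin}. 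All of the remaining work is elementary algebraic rearrangement of the four linear formulas for $p, q, q_{0}, q_{1}$.
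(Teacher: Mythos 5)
Your proof is correct and follows the approach that the paper implicitly intends: the theorem is attributed to \cite{Man14} and \cite{KMT}, and the derivation is precisely to push each of those filling inequalities through the double branched cover construction of Lemma \ref{lemma:double_branched_cover_spin}. The arithmetic checks out in both cases, the observation that $-\tfrac{1}{16}\sigma(W) = \tfrac{1}{2}p$ is the key simplification for (\ref{eq:KMT_knot}), and the hypothesis $b_{2}^{+}(X) \neq 0$ indeed guarantees $q \geq q_{0} = b_{2}^{+}(X) \geq 1 > 0$, which is exactly what is needed to use the $+1$ constant in Theorem \ref{theorem:manolescu}.
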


In Table \ref{table:genus_bounds} located in Appendix \ref{sec:tables_appendix}, we compare the lower bounds given by the above three inequalities and the lower bound coming from Theorem \ref{theorem:genus_bounds} with the upper bound (\ref{eq:upper_bound}), for the torus knots $K=T(3,5)$, $T(3,7)$, $T(3,11)$, and $T(3,13)$, and for the pairs $(X,A)$ featured in Theorem \ref{theorem:X_A_genus_sharp}. Note that $T(3,5)\in\SWFM_{2}^{\#,\C}$, whereas $T(3,7)$, $T(3,11)$, $T(3,13)\in\LSWFS_{2}^{\<j\mu\>}\setminus\SWFM_{2}^{\#,\C}$.

\bigskip

\appendix

\clearpage
\section{A Bit of Number Theory}
\label{sec:number_theory}

In this Appendix, we prove Proposition \ref{prop:monomials}, which we restate here as Proposition \ref{prop:monomials_appendix} for the convenience of the reader. It turns out that main ingredient in the proof of Proposition \ref{prop:monomials_appendix} is the following lemma concerning cyclotomic units, whose proof takes up the majority of this section:

\begin{lemma}
\label{lemma:appendix_equals_one}
Let $p^{r}$, $r\geq 1$ be a prime power, let $\omega_{p^{r}}=e^{2\pi i/p^{r}}\in\CC$, and let $c_{0},\dots,c_{p^{r}-1}\in\ZZ$ be such that $\sum_{k=0}^{p^{r}-1}c_{k}=0$. Then
\begin{equation}
    \prod_{k=0}^{p^{r}-1}(1+\omega_{p^{r}}^{k})^{c_{k}}=1
\end{equation}
if and only if:
\begin{enumerate}
    \item \emph{ If $p$ odd}:
    \begin{align}
        &c_{0}=0, \label{eq:appendix_lemma_p_r_1}\\
        &\sum_{k=1}^{\frac{p^{r}-1}{2}}k(c_{k}-c_{-k})\equiv 0\pmod{2p^{r}}, \label{eq:appendix_lemma_p_r_2}\\
        &\sum_{s=0}^{r-1}c_{kp^{s}}+c_{-kp^{s}}=\sum_{s=0}^{r-1}c_{kp^{s}/2}+c_{-kp^{s}/2}\text{ for all }k=2,\dots,\frac{p^{r}-1}{2}\text{ with }(k,p)=1, \label{eq:appendix_lemma_p_r_3}
    \end{align}
    where we use the cyclic indexing notation $c_{k+ap^{r}}:=c_{k}$ for all $a\in\ZZ$, and $\frac{c}{2}$ denotes the unique element of $\ZZ/{p^{r}}$ such that $2\cdot\frac{c}{2}\equiv c\pmod{p^{r}}$.
    
    \bigskip
    
    \item \emph{ If $p=2$}:
    \begin{align}
    &c_{2^{r-1}}=0, \label{eq:appendix_lemma_2_r_1}\\
    &\sum_{k=1}^{2^{r-1}-1}k(c_{k}-c_{-k})\equiv 0\pmod{2^{r+1}}, \label{eq:appendix_lemma_2_r_2}\\
    &\sum_{s=0}^{r-2}(c_{k2^{s}+2^{r-1}}+c_{-k2^{s}-2^{r-1}})=-2c_{0}\text{ for all }k=1,\dots,2^{r-1}-1\text{ odd}. \label{eq:appendix_lemma_2_r_3}
    \end{align}
\end{enumerate}
\end{lemma}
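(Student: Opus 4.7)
The strategy is to convert the multiplicative relation $\prod_k(1+\omega_{p^r}^k)^{c_k}=1$ in $\mathbb{Q}(\omega_{p^r})^\times$ into a collection of additive conditions on the integers $c_k$ by extracting information from three distinct sources: degeneracy of factors, the complex argument, and arithmetic valuations.

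First I would handle the trivial constraints. For $p=2$ the factor $1+\omega_{2^r}^{2^{r-1}} = 1 + (-1) = 0$ immediately forces equation \eqref{eq:appendix_lemma_2_r_1}; no product with $c_{2^{r-1}}\neq 0$ can be defined, let alone equal $1$. For $p$ odd, equation \eqref{eq:appendix_lemma_p_r_1} is more delicate: the factor $1+\omega^0 = 2$ and all other factors $1+\omega^k$ lie in $\mathbb{Z}[\omega_{p^r}]$, but $2$ is \emph{not} a unit up to roots of unity modulo the other cyclotomic units, whereas every $1+\omega^k$ for $k\not\equiv 0$ is. This is visible, e.g., by computing the norm $N_{\mathbb{Q}(\omega_{p^r})/\mathbb{Q}}$ on both sides, or by using that $(1-\omega^{2k})/(1-\omega^k)=1+\omega^k$ is a unit away from $k\equiv 0$ whereas the $k=0$ factor contributes a genuine $2$.

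Next, I would extract the argument condition via the identity
$1+\omega^k = 2\cos(\pi k/p^r)\,e^{i\pi k/p^r}$
valid for $k$ in the fundamental interval $(-p^r/2, p^r/2]$. Taking the imaginary part of $\log(\prod(1+\omega^k)^{c_k})=\log 1 \in 2\pi i\mathbb{Z}$ yields $\sum_k c_k\, k \equiv 0\pmod{2p^r}$ (resp.\ $\pmod{2^{r+1}}$), which after symmetrization around $0$ becomes exactly \eqref{eq:appendix_lemma_p_r_2} (resp.\ \eqref{eq:appendix_lemma_2_r_2}). This is the obstruction for the product to be real-positive rather than merely of unit modulus.

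The technical heart is the extraction of \eqref{eq:appendix_lemma_p_r_3} (resp.\ \eqref{eq:appendix_lemma_2_r_3}). Here I would rewrite $1+\omega^k = (1-\omega^{2k})/(1-\omega^k)$, reducing the question to one about products $\prod_j (1-\omega^j)^{e_j}$. Analyzing the valuations of $1-\omega^j$ at the uniformizer $\pi_s=1-\omega_{p^{r-s}}$ of each intermediate subfield $\mathbb{Q}(\omega_{p^{r-s}})\subset \mathbb{Q}(\omega_{p^r})$ (for $s=0,\dots,r-1$) produces a system of linear equations on the exponents $e_j$, one per level of the tower. Grouping these exponents by the substitution $j=2k$ versus $j=k$ translates the resulting conditions back to the $c_k$'s, with the inversion of $2$ modulo $p^r$ accounting for the $k/2$ appearing in \eqref{eq:appendix_lemma_p_r_3}, and the filtration of the tower accounting for the sum over $s=0,\dots,r-1$.

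The main obstacle will be the combinatorial bookkeeping matching the valuation-theoretic conditions to the precise form stated in the lemma, together with the converse direction: one must verify that the listed conditions are not only necessary but also exhaust all relations in the kernel of the map $(c_k)\mapsto \prod(1+\omega^k)^{c_k}$. For this I would proceed inductively on $r$, using the norm map $N_{\mathbb{Q}(\omega_{p^r})/\mathbb{Q}(\omega_{p^{r-1}})}$ to reduce to the lower tower level, together with an explicit rank count showing that the stated relations generate a subgroup of the expected rank in the free abelian group on the indices. The $p=2$ case requires separate care because the doubling map $k\mapsto 2k$ on $\mathbb{Z}/2^r$ is not bijective and collapses the index $2^{r-1}$ to $0$, which is the source of both the degeneracy \eqref{eq:appendix_lemma_2_r_1} and the $-2c_0$ right-hand side appearing in \eqref{eq:appendix_lemma_2_r_3}.
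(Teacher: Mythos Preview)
Your strategy correctly identifies the easy constraints: the vanishing of $c_{2^{r-1}}$ for $p=2$ from the degenerate factor, and the argument condition, which is exactly what the paper isolates as the auxiliary Lemma~\ref{lemma:appendix_r_plus}. The rewriting $1+\omega^k=(1-\omega^{2k})/(1-\omega^k)$ is also the paper's first move.

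The gap is in your extraction of \eqref{eq:appendix_lemma_p_r_3}. You propose to read off these conditions from the valuations of $1-\omega^j$ at the uniformizers $\pi_s=1-\omega_{p^{r-s}}$ of the intermediate fields, obtaining (as you say) one linear equation per level of the tower, hence $r$ equations. But \eqref{eq:appendix_lemma_p_r_3} is a family of conditions indexed by $k=2,\dots,(p^r-1)/2$ with $(k,p)=1$, so roughly $\phi(p^r)/2$ independent equations. Valuations cannot separate these: $\QQ(\omega_{p^r})$ has a unique prime above $p$, and $1-\omega^j$ and $1-\omega^{j'}$ have the same valuation whenever $v_p(j)=v_p(j')$, so the valuation data sees only the $p$-divisibility stratification of the index, not the index itself. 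Your inductive reduction via the norm map does not close this gap either, since already at $r=1$ there are $(p-3)/2$ conditions in \eqref{eq:appendix_lemma_p_r_3} and only one valuation.

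What the paper uses instead is the structure of the cyclotomic unit group: Washington's result that the real units $\varepsilon_{p^r,k}=\omega_{p^r}^{(1-k)/2}(1-\omega_{p^r}^k)/(1-\omega_{p^r})$ for $2\le k\le(p^r-1)/2$, $(k,p)=1$, are multiplicatively independent and span $C(\omega_{p^r})$ modulo torsion. After rewriting each $1+\omega^{kp^s}$ as a product of these $\varepsilon$'s times a root of unity, the product $\prod_k(1+\omega^k)^{c_k}$ becomes $\pm 2^{c_0}\omega^A\prod_k\varepsilon_{p^r,k}^{D_k}$, and independence of the $\varepsilon_{p^r,k}$ forces each exponent $D_k$ to vanish, yielding \eqref{eq:appendix_lemma_p_r_3} directly. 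Your ``rank count'' paragraph gestures toward this, but the actual mechanism producing the independent conditions is the cyclotomic-unit basis, not valuations.
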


In order to prove Lemma \ref{lemma:appendix_equals_one}, we will make use of the following lemma:

\begin{lemma}
\label{lemma:appendix_r_plus}
Let $p^{r}$, and $c_{0},\dots,c_{p^{r}-1}\in\ZZ$ be as in the above lemma. Furthermore, suppose that if $p=2$, then $c{2^{r-1}}=0$. Then $\prod_{k=0}^{2^{r}-1}(1+\omega_{2^{r}}^{k})^{c_{k}}\in\RR_{+}$ if and only if 
\begin{equation}
\label{eq:appendix_lemma_r_plus}
    \sum_{k=1}^{\lfloor\frac{p^{r}-1}{2}\rfloor}k(c_{k}-c_{-k})\equiv 0\pmod{2p^{r}}.
\end{equation}
\end{lemma}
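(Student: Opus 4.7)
The plan is to exploit the polar decomposition
\[
1 + e^{i\theta} \;=\; 2\cos(\theta/2)\,e^{i\theta/2}, \qquad \theta \in (-\pi,\pi),
\]
which separates each factor $1 + \omega_{p^r}^k$ into a positive real modulus and a pure phase. To apply this cleanly, I would first re-index $k$ cyclically via $c_{-k}:=c_{p^r-k}$ so that the product runs over a symmetric range of representatives: for $p$ odd, $k\in\{-(p^r-1)/2,\dots,(p^r-1)/2\}$, and for $p=2$, $k\in\{-(2^{r-1}-1),\dots,2^{r-1}\}$. The hypothesis $c_{2^{r-1}}=0$ in the $p=2$ case is precisely what is needed to avoid the degenerate factor $1+\omega_{2^r}^{2^{r-1}}=0$; after that omission, every remaining representative satisfies $2\pi k/p^r \in (-\pi,\pi)$, so $\cos(\pi k/p^r) > 0$.

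With this setup one obtains the decomposition
\[
\prod_{k}(1+\omega_{p^r}^k)^{c_k} \;=\; \Big(\prod_{k}\bigl(2\cos(\pi k/p^r)\bigr)^{c_k}\Big)\cdot \exp\!\Big(\tfrac{i\pi}{p^r}\sum_{k} k\,c_k\Big).
\]
The first factor is a product of positive real numbers, hence lies in $\RR_+$ automatically. Consequently the full product lies in $\RR_+$ if and only if the phase $\exp(i\pi p^{-r}\sum_k kc_k)$ equals $1$, equivalently $\sum_k k c_k \equiv 0 \pmod{2p^r}$.

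It remains to identify this sum with the left-hand side of (\ref{eq:appendix_lemma_r_plus}). Splitting into contributions from positive and negative indices and using $c_{-k}=c_{p^r-k}$,
\[
\sum_{k} k\,c_k \;=\; \sum_{k=1}^{\lfloor(p^r-1)/2\rfloor} k\,c_k \;-\; \sum_{k=1}^{\lfloor(p^r-1)/2\rfloor} k\,c_{-k} \;=\; \sum_{k=1}^{\lfloor(p^r-1)/2\rfloor} k(c_k - c_{-k}),
\]
the $k=0$ term contributing nothing and the boundary index $k=2^{r-1}$ for $p=2$ dropping out by hypothesis. This is precisely the congruence in (\ref{eq:appendix_lemma_r_plus}).

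The main obstacle is only bookkeeping: ensuring the symmetric re-indexing is legitimate and handling the $p=2$ boundary term at $k=2^{r-1}$, which is exactly where the extra hypothesis $c_{2^{r-1}}=0$ intervenes. The hypothesis $\sum_k c_k = 0$ plays no essential role in this particular lemma, but it is harmless and makes the statement consistent with the ambient Proposition~\ref{prop:monomials_appendix}. No tools beyond elementary polar-form manipulations are required.
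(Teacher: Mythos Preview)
Your proposal is correct and essentially identical to the paper's proof. The paper pairs $k$ with $p^r-k$ and writes $1+\omega_{p^r}^{k}=\omega_{2p^r}^{k}(\omega_{2p^r}^{-k}+\omega_{2p^r}^{k})$, which is precisely your polar decomposition $1+e^{i\theta}=2\cos(\theta/2)e^{i\theta/2}$ in different notation; both then observe that the cosine factors are positive on the symmetric range and conclude that positivity is governed by the phase $\omega_{2p^r}^{\sum k(c_k-c_{-k})}$. Your remark that the hypothesis $\sum_k c_k=0$ is inessential here is also accurate---the paper does not invoke it in this lemma either.
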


\begin{proof}
Note that
\begin{align*}
    \prod_{k=0}^{p^{r}-1}(1+\omega_{p^{r}}^{k})^{c_{k}}&=2^{c_{0}}\prod_{k=1}^{p^{r-1}-1}\omega_{2p^{r}}^{k(c_{k}-c_{-k})}(\omega_{2p^{r}}^{-k}+\omega_{2p^{r}}^{k})^{c_{k}+c_{-k}} \\
    &=2^{c_{0}}\omega_{2p^{r}}^{\sum_{k=1}^{\frac{p^{r}-1}{2}}k(c_{k}-c_{-k})}\prod_{k=1}^{\frac{p^{r}-1}{2}}(\omega_{2p^{r}}^{-k}+\omega_{2p^{r}}^{k})^{c_{k}+c_{-k}}.
\end{align*}
Now for each $1\le k\le \lfloor\frac{p^{r}-1}{2}\rfloor$, we have that
\[\omega_{2p^{r}}^{-k}+\omega_{2p^{r}}^{k}=2\cos\big(\frac{k\pi}{p^{r}}\big)>0,\]
since
\[0<\frac{k\pi}{p^{r}}<\frac{\pi}{2}\]
for all such $k$. Therefore if $\prod_{k=0}^{p^{r}-1}(1+\omega_{p^{r}}^{k})^{c_{k}}\in\RR_{+}$, we must have that
\[\omega_{2p^{r}}^{\sum_{k=1}^{\frac{p^{r}-1}{2}}k(c_{k}-c_{-k})}=1,\]
or equivalently
\begin{equation*}
    \sum_{k=1}^{\frac{p^{r}-1}{2}}k(c_{k}-c_{-k})\equiv 0\pmod{2p^{r}}.
\end{equation*}
\end{proof}

\begin{proof}[Proof of Lemma \ref{lemma:appendix_equals_one}.]
Recall that the cyclotomic units $C(\omega_{p^{r}})\subset\ZZ[\omega_{p^{r}}]^{\times}$ are defined to be
\[C(\omega_{p^{r}}):=V(\omega_{p^{r}})\cap\ZZ[\omega_{p^{r}}]^{\times},\]
where $V(\omega_{p^{r}})\subset\QQ(\omega_{p^{r}})^{\times}$ is the multiplicative subgroup generated by $\pm\omega_{p^{r}}^{a}$ and expressions of the form $1-\omega_{p^{r}}^{a}$. In (\cite{Wash}, Lemma 8.1) it was shown that the set of units
\[\varepsilon_{p^{r},k}:=\omega_{p^{r}}^{(1-k)/2}\frac{1-\omega_{p^{r}}^{k}}{1-\omega_{p^{r}}}\in\RR,\qquad 2\le k\le\lfloor\frac{p^{r}-1}{2}\rfloor,\qquad(k,p)=1,\]
constitute a set of fundamental units of $C(\omega_{p^{r}})$, i.e., any cyclotomic unit $\alpha\in C(\omega_{p^{r}})$ has a unique presentation of the form
\[\alpha=\pm\omega_{p^{r}}^{a_{0}}\prod_{\substack{k=2 \\ (k,p)=1}}^{\lfloor\frac{p^{r}-1}{2}\rfloor}\varepsilon_{p^{r},k}^{a_{k}}\]
for some $\{a_{k}\}\subset\ZZ$. Equivalently, the $\varepsilon_{p^{r},k}$ form a basis of $C(\omega_{p^{r}})/\text{tors.}\cong\ZZ^{(\phi(p^{r})-3)/2}$ as a $\ZZ$-module, where $\phi$ denotes the Euler totient function. 

For convenience, we define $\varepsilon_{p^{r},1}:=1$ and $\varepsilon_{p^{r},p^{r}-k}:=\varepsilon_{p^{r},k}$ for $2\le k\le\lfloor\frac{p^{r}-1}{2}\rfloor$, $(k,p)=1$. Note that the $\varepsilon_{p^{r}-k}$ are only well-defined up to sign, as fixing a sign requires fixing a choice of square root $\omega_{p^{r}}^{(1-k)/2}$ of $\omega_{p^{r}}^{1-k}$. While it is possible for us to choose a consistent set of signs, we will opt not to do so for the time being. Instead we will first prove the statements up to sign, and then fix the sign using Lemma \ref{lemma:appendix_r_plus}.

First, let $p$ be odd and $0\le s\le r-1$. Then for any $1\le k\le p^{r-s}-1$ with $(k,p)=1$, we have that
\begin{align*}
    1+\omega_{p^{r}}^{kp^{s}}&=\frac{1-\omega_{p^{r}}^{2kp^{s}}}{1-\omega_{p^{r}}^{kp^{s}}}=\frac{\prod_{j=0}^{p^{s}-1}(1-\omega_{p^{r}}^{2k+jp^{r-s}})}{\prod_{j=0}^{p^{s}-1}(1-\omega_{p^{r}}^{k+jp^{r-s}})}=\frac{\prod_{j=0}^{p^{s}-1}(1-\omega_{p^{r}}^{2k+2jp^{r-s}})}{\prod_{j=0}^{p^{s}-1}(1-\omega_{p^{r}}^{k+jp^{r-s}})} \\
    &=\prod_{j=0}^{p^{s}-1}\frac{(1-\omega_{p^{r}}^{2k+2jp^{r-s}})}{(1-\omega_{p^{r}}^{k+jp^{r-s}})}=\prod_{j=0}^{p^{s}-1}\Bigg(\frac{(1-\omega_{p^{r}})}{(1-\omega_{p^{r}}^{k+jp^{r-s}})}\Bigg)\Bigg(\frac{(1-\omega_{p^{r}}^{2k+2jp^{r-s}})}{(1-\omega_{p^{r}})}\Bigg) \\
    &=\pm\prod_{j=0}^{p^{s}-1}\omega_{p^{r}}^{(k+jp^{r-s})/2}\Bigg(\omega_{p^{r}}^{(k+jp^{r-s}-1)/2}\frac{(1-\omega_{p^{r}})}{(1-\omega_{p^{r}}^{k+jp^{r-s}})}\Bigg)\Bigg(\omega_{p^{r}}^{(1-2k+2jp^{r-s})/2}\frac{(1-\omega_{p^{r}}^{2k+2jp^{r-s}})}{(1-\omega_{p^{r}})}\Bigg) \\
    &=\pm\prod_{j=0}^{p^{s}-1}\omega_{p^{r}}^{(k+jp^{r-s})/2}\varepsilon_{p^{r},k+jp^{r-s}}^{-1}\varepsilon_{p^{r},2k+2jp^{r-s}}.
\end{align*}
Hence
\begin{align*}
    \prod_{k=1}^{p^{r}-1}(1+\omega_{p^{r}}^{k})^{c_{k}} &= 2^{c_{0}}\prod_{s=0}^{r-1}\prod_{\substack{k=1 \\ (k,p)=1}}^{p^{r-s}-1}(1+\omega_{p^{r}}^{kp^{s}})^{c_{kp^{s}}} \\
    &=\pm 2^{c_{0}}\prod_{s=0}^{r-1}\prod_{\substack{k=1 \\ (k,p)=1}}^{p^{r-s}-1}\prod_{j=0}^{p^{s}-1}\omega_{p^{r}}^{c_{kp^{s}}(k+jp^{r-s})/2}\varepsilon_{p^{r},k+jp^{r-s}}^{-c_{kp^{s}}}\varepsilon_{p^{r},2k+2jp^{r-s}}^{c_{kp^{s}}} \\
    &=\pm2^{c_{0}}\prod_{\substack{k=1 \\ (k,p)=1}}^{p^{r}-1}\omega_{p^{r}}^{\sum_{s=0}^{r-1}kc_{kp^{s}}/2}\varepsilon_{p^{r},k}^{\sum_{s=0}^{r-1}(c_{kp^{s}/2}-c_{kp^{s}})} \\
    &=\pm 2^{c_{0}}\prod_{\substack{k=1 \\ (k,p)=1}}^{\frac{p^{r}-1}{2}}\Big(\omega_{p^{r}}^{\sum_{s=0}^{r-1}kc_{kp^{s}}/2}\varepsilon_{p^{r},k}^{\sum_{s=0}^{r-1}(c_{kp^{s}/2}-c_{kp^{s}})}\Big)\Big(\omega_{p^{r}}^{-\sum_{s=0}^{r-1}kc_{-kp^{s}}/2}\varepsilon_{p^{r},p^{r}-k}^{\sum_{s=0}^{r-1}(c_{-kp^{s}/2}-c_{-kp^{s}})}\Big) \\
    &=\pm 2^{c_{0}}\prod_{\substack{k=1 \\ (k,p)=1}}^{\frac{p^{r}-1}{2}}\omega_{p^{r}}^{\sum_{s=0}^{r-1}k(c_{kp^{s}}-c_{-kp^{s}})/2}\varepsilon_{p^{r},k}^{\sum_{s=0}^{r-1}((c_{kp^{s}/2}+c_{-kp^{s}/2})-(c_{kp^{s}}+c_{-kp^{s}}))}\\
    &=\pm 2^{c_{0}}\omega_{p^{r}}^{\sum_{\substack{k=1 \\ (k,p)=1}}^{\frac{p^{r}-1}{2}}\sum_{s=0}^{r-1}k(c_{kp^{s}}-c_{-kp^{s}})/2}\prod_{\substack{k=2 \\ (k,p)=1}}^{\frac{p^{r}-1}{2}}\varepsilon_{p^{r},k}^{\sum_{s=0}^{r-1}((c_{kp^{s}/2}+c_{-kp^{s}/2})-(c_{kp^{s}}+c_{-kp^{s}}))}.
\end{align*}
From this, we see that $\prod_{k=1}^{p^{r}-1}(1+\omega_{p^{r}}^{k})^{c_{k}}=\pm 1$ if and only if
\begin{align}
    &c_{0}=0, \label{eq:appendix_p_r_1}\\
    &\sum_{\substack{k=1 \\ (k,p)=1}}^{\frac{p^{r}-1}{2}}\sum_{s=0}^{r-1}k(c_{kp^{s}}-c_{-kp^{s}})\equiv 0\pmod{2p^{r}} \label{eq:appendix_p_r_2}\\
    &\sum_{s=0}^{r-1}((c_{kp^{s}/2}+c_{-kp^{s}/2})-(c_{kp^{s}}+c_{-kp^{s}}))=0\text{ for all } k=2,\dots,\frac{p^{r}-1}{2}\text{ with }(k,p)=1. \label{eq:appendix_p_r_3}
\end{align}
Now for Equation \ref{eq:appendix_p_r_3}, note that
\begin{align*}
    \sum_{\substack{k=1 \\ (k,p)=1}}^{\frac{p^{r}-1}{2}}\sum_{s=0}^{r-1}k(c_{kp^{s}}-c_{-kp^{s}})&\equiv \sum_{s=0}^{r-1}\sum_{\substack{k=1 \\ (k,p)=1}}^{\frac{p^{r-s}-1}{2}}p^{s}k(c_{kp^{s}}-c_{-kp^{s}}) \\
    &\equiv \sum_{k=1}^{\frac{p^{r}-1}{2}}p^{s}\frac{k}{p^{s}}(c_{k}-c_{-k}) \\
    &\equiv \sum_{k=1}^{\frac{p^{r}-1}{2}}k(c_{k}-c_{-k})\pmod{p^{r}}.
\end{align*}
Therefore by Lemma \ref{lemma:appendix_r_plus}, $\prod_{k=0}^{p^{r}-1}(1+\omega_{p^{r}}^{k})^{c_{k}}=1$ if and only if
\begin{align}
    &c_{0}=0, \label{eq:appendix_p_r_5}\\
    &\sum_{k=1}^{\frac{p^{r}-1}{2}}k(c_{k}-c_{-k})\equiv 0\pmod{2p^{r}}, \label{eq:appendix_p_r_6}\\
    &\sum_{s=0}^{r-1}c_{kp^{s}}+c_{-kp^{s}}=\sum_{s=0}^{r-1}c_{kp^{s}/2}+c_{-kp^{s}/2}\text{ for all }k=2,\dots,\frac{p^{r}-1}{2}\text{ with }(k,p)=1, \label{eq:appendix_p_r_7}
\end{align}
as desired.

\bigskip

Now suppose $p=2$. In the case where $r=1$, we have that $\omega_{2}=-1$, and so:
\[\prod_{k=0}^{1}(1+\omega_{2}^{k})^{c_{k}}=(1+\omega_{2}^{0})^{c_{0}}(1+\omega_{2}^{1})^{c_{1}}=2^{c_{0}}\cdot 0^{c_{1}},\]
which is equal to $1$ if and only if $c_{0}=c_{1}=0$.

Now suppose $r\geq 2$. Note that $\prod_{k=1}^{2^{r}-1}(1+\omega_{2^{r}}^{k})^{c_{k}}$ is only non-zero and well-defined if and only if $c_{2^{r-1}}=0$, since
\[(1+\omega_{2^{r}}^{2^{r-1}})^{c_{2^{r-1}}}=(1+(-1))^{c_{2^{r-1}}}=0^{c_{2^{r-1}}},\]
so we will henceforth assume $c_{2^{r-1}}=0$. 

Next, let $0\le s\le r-2$. Then for any $1\le k\le 2^{r-s}-1$ with $(k,2)=1$, we have that
\begin{align*}
    1+\omega_{2^{r}}^{k2^{s}}&=\frac{1-\omega_{2^{r}}^{k2^{s+1}}}{1-\omega_{2^{r}}^{k2^{s}}}=\frac{\prod_{j=0}^{2^{s+1}-1}(1-\omega_{2^{r}}^{k+j2^{r-s-1}})}{\prod_{j=0}^{2^{s}-1}(1-\omega_{p^{r}}^{k+j2^{r-s}})}=\frac{\prod_{j=0}^{2^{s+1}-1}(1-\omega_{2^{r}}^{k+j2^{r-s-1}})}{\prod_{\substack{j=0 \\ j\text{ even}}}^{2^{s+1}-1}(1-\omega_{p^{r}}^{k+j2^{r-s-1}})}\\
    &=\prod_{\substack{j=1 \\ (j,2)=1}}^{2^{s+1}-1}(1-\omega_{2^{r}}^{k+j2^{r-s-1}})=\prod_{\substack{j=0 \\ (j,2)=1}}^{2^{s+1}-1}(1-\omega_{2^{r}})\frac{(1-\omega_{2^{r}}^{k+j2^{r-s-1}})}{(1-\omega_{2^{r}})} \\
    &=\prod_{\substack{j=1 \\ (j,2)=1}}^{2^{s+1}-1}\omega_{2^{r+1}}^{1-2^{r-1}}(\omega_{2^{r+1}}^{2^{r-1}-1}+\omega_{2^{r+1}}^{1-2^{r-1}})\frac{(1-\omega_{2^{r}}^{k+j2^{r-s-1}})}{(1-\omega_{2^{r}})} \\
    &=\prod_{\substack{j=1 \\ (j,2)=1}}^{2^{s+1}-1}\omega_{2^{r}}^{(k+j2^{r-s-1}-1)/2}\omega_{2^{r+1}}^{1-2^{r-1}}(\omega_{2^{r+1}}^{2^{r-1}-1}+\omega_{2^{r+1}}^{1-2^{r-1}})\Bigg(\omega_{2^{r}}^{(1-k-j2^{r-s-1})/2}\frac{(1-\omega_{2^{r}}^{k+j2^{r-s-1}})}{(1-\omega_{2^{r}})}\Bigg) \\
    &=\prod_{\substack{j=1 \\ (j,2)=1}}^{2^{s+1}-1}\omega_{2^{r+1}}^{k+j2^{r-s-1}-2^{r-1}}(\omega_{2^{r+1}}^{2^{r-1}-1}+\omega_{2^{r+1}}^{1-2^{r-1}})\varepsilon_{2^{r},k+j2^{r-s-1}}.
\end{align*}
Hence
\begin{align*}
    \prod_{k=1}^{2^{r}-1}(1+\omega_{2^{r}}^{k})^{c_{k}} &= 2^{c_{0}}\prod_{s=0}^{r-2}\prod_{\substack{k=1 \\ (k,2)=1}}^{2^{r-s}-1}(1+\omega_{2^{r}}^{k2^{s}})^{c_{k2^{s}}} \\
    &=2^{c_{0}}\prod_{s=0}^{r-2}\prod_{\substack{k=1 \\ (k,2)=1}}^{2^{r-s}-1}\prod_{\substack{j=1 \\ (j,2)=1}}^{2^{s+1}-1}\omega_{2^{r+1}}^{c_{k2^{s}}(k+j2^{r-s-1}-2^{r-1})}(\omega_{2^{r+1}}^{2^{r-1}-1}+\omega_{2^{r+1}}^{1-2^{r-1}})^{c_{k2^{s}}}\varepsilon_{2^{r},k+j2^{r-s-1}}^{c_{k2^{s}}}.
\end{align*}
Note that for each $s=0,\dots,r-2$, and every odd integer $1\le k'\le 2^{r}-1$, there exists unique $1\le k\le 2^{r-s}-1$ and $1\le j\le 2^{s+1}-1$ with $(k,2)=(j,2)=1$ such that
\[k'\equiv k+j2^{r-s-1}\pmod{2^{r}}.\]
Furthermore, for each such $k'$ we have that
\[k'2^{s}\equiv (k+j2^{r-s-1})2^{s}\equiv k2^{s}+2^{r-1}\pmod{2^{r}}.\]
For each such $k'$, we have that
\[k'\equiv k+j2^{r-s-1}\pmod{2^{r+1}}\]
if $2^{r-s-1}\le k'\le 2^{r}-1$, and
\[k'\equiv k+j2^{r-s-1}-2^{r}\pmod{2^{r+1}}\]
if $1\le k'\le 2^{r-s-1}-1$. Therefore

\begin{align*}
    2^{c_{0}}\prod_{s=0}^{r-2}&\prod_{\substack{k=1 \\ (k,2)=1}}^{2^{r-s}-1}\prod_{\substack{j=1 \\ (j,2)=1}}^{2^{s+1}-1}\omega_{2^{r+1}}^{c_{k2^{s}}(k+j2^{r-s-1}-2^{r-1})}(\omega_{2^{r+1}}^{2^{r-1}-1}+\omega_{2^{r+1}}^{1-2^{r-1}})^{c_{k2^{s}}}\varepsilon_{2^{r},k+j2^{r-s-1}}^{c_{k2^{s}}} \\
    &=\pm 2^{c_{0}}\prod_{s=0}^{r-2}\prod_{\substack{k=1 \\ (k,2)=1}}^{2^{r}-1}\omega_{2^{r+1}}^{c_{k2^{s}+2^{r-1}}(k-2^{r-1})}(\omega_{2^{r+1}}^{2^{r-1}-1}+\omega_{2^{r+1}}^{1-2^{r-1}})^{c_{k2^{s}+2^{r-1}}}\varepsilon_{2^{r},k}^{c_{k2^{s}+2^{r-1}}} \\
    &=\pm 2^{c_{0}}\prod_{\substack{k=1 \\ (k,2)=1}}^{2^{r}-1}\omega_{2^{r+1}}^{\sum_{s=0}^{r-2}c_{k2^{s}+2^{r-1}}(k-2^{r-1})}(\omega_{2^{r+1}}^{2^{r-1}-1}+\omega_{2^{r+1}}^{1-2^{r-1}})^{\sum_{s=0}^{r-2}c_{k2^{s}+2^{r-1}}}\varepsilon_{2^{r},k}^{\sum_{s=0}^{r-2}c_{k2^{s}+2^{r-1}}} \\
    &=\pm 2^{c_{0}}\prod_{\substack{k=1 \\ (k,2)=1}}^{2^{r-1}-1}\Bigg[\Big(\omega_{2^{r+1}}^{\sum_{s=0}^{r-2}c_{k2^{s}+2^{r-1}}(k-2^{r-1})}(\omega_{2^{r+1}}^{2^{r-1}-1}+\omega_{2^{r+1}}^{1-2^{r-1}})^{\sum_{s=0}^{r-2}c_{k2^{s}+2^{r-1}}}\varepsilon_{2^{r},k}^{\sum_{s=0}^{r-2}c_{k2^{s}+2^{r-1}}}\Big) \\
    &\qquad\cdot\Big(\omega_{2^{r+1}}^{\sum_{s=0}^{r-2}c_{-k2^{s}-2^{r-1}}(-k+2^{r-1})}(\omega_{2^{r+1}}^{2^{r-1}-1}+\omega_{2^{r+1}}^{1-2^{r-1}})^{\sum_{s=0}^{r-2}c_{-k2^{s}-2^{r-1}}}\varepsilon_{2^{r},2^{r}-k}^{\sum_{s=0}^{r-2}c_{-k2^{s}-2^{r-1}}}\Big)\Bigg] \\
    &=\pm 2^{c_{0}}\prod_{\substack{k=1 \\ (k,2)=1}}^{2^{r-1}-1}\Bigg[\omega_{2^{r+1}}^{\sum_{s=0}^{r-2}(k-2^{r-1})(c_{k2^{s}+2^{r-1}}-c_{-k2^{s}-2^{r-1}})} \\
    &\qquad\cdot(\omega_{2^{r+1}}^{2^{r-1}-1}+\omega_{2^{r+1}}^{1-2^{r-1}})^{\sum_{s=0}^{r-2}(c_{k2^{s}+2^{r-1}}+c_{-k2^{s}+2^{r-1}})}\varepsilon_{2^{r},k}^{\sum_{s=0}^{r-2}(c_{k2^{s}+2^{r-1}}+c_{-k2^{s}-2^{r-1}})}\Bigg] \\
    &=\pm 2^{c_{0}}\omega_{2^{r+1}}^{A}(\omega_{2^{r+1}}^{2^{r-1}-1}+\omega_{2^{r+1}}^{1-2^{r-1}})^{B}\prod_{\substack{k=1 \\ (k,2)=1}}^{2^{r-1}-1}\varepsilon_{2^{r},k}^{D_{k}}, \\
\end{align*}
where
\begin{align*}
    &A:=\sum_{\substack{k=1 \\ (k,2)=1}}^{2^{r-1}-1}\sum_{s=0}^{r-2}(k-2^{r-1})(c_{k2^{s}+2^{r-1}}-c_{-k2^{s}-2^{r-1}}) \\
    &B:=\sum_{\substack{k=1 \\ (k,2)=1}}^{2^{r-1}-1}\sum_{s=0}^{r-2}(c_{k2^{s}+2^{r-1}}+c_{-k2^{s}+2^{r-1}}) \\
    &D_{k};=\sum_{s=0}^{r-2}(c_{k2^{s}+2^{r-1}}+c_{-k2^{s}-2^{r-1}})\text{ for all }k=1,\dots,2^{r-1}-1\text{ odd}.
\end{align*}
Therefore $\prod_{k=1}^{2^{r}-1}(1+\omega_{2^{r}}^{k})^{c_{k}}=\pm 1$ if and only if
\begin{equation}
\label{eq:appendix_proof_1}
    \omega_{2^{r+1}}^{A}\prod_{\substack{k=1 \\ (k,2)=1}}^{2^{r-1}-1}\varepsilon_{2^{r},k}^{D_{k}}=\pm 2^{-c_{0}}(\omega_{2^{r+1}}^{2^{r-1}-1}+\omega_{2^{r+1}}^{1-2^{r-1}})^{-B}.
\end{equation}
Note that the left-hand side of Equation \ref{eq:appendix_proof_1} lies in $\QQ(\omega_{2^{r}})^{\times}$. The right-hand side of the above equation lies in $\QQ(\omega_{2^{r}})\subset\QQ(\omega_{2^{r+1}})$ if and only if $B$ is even. Assuming this, we see that the right-hand side is a unit if and only if
\begin{align*}
    \pm 1&=N_{\QQ(\omega_{2^{r}})/\QQ}(\pm 2^{-c_{0}}(\omega_{2^{r+1}}^{2^{r-1}-1}+\omega_{2^{r+1}}^{1-2^{r-1}})^{-B}) \\
    &=\pm N_{\QQ(\omega_{2^{r}})/\QQ}(2)^{-c_{0}}N_{\QQ(\omega_{2^{r}})/\QQ}\big((\omega_{2^{r+1}}^{2^{r-1}-1}+\omega_{2^{r+1}}^{1-2^{r-1}})^{2}\big)^{-B/2} \\
    &=\pm N_{\QQ(\omega_{2^{r}})/\QQ}(2)^{-c_{0}}N_{\QQ(\omega_{2^{r}})/\QQ}\big((\omega_{2^{r+1}}^{2^{r-1}-1})^{2}(1-\omega_{2^{r}})^{2}\big)^{-B/2} \\
    &=\pm N_{\QQ(\omega_{2^{r}})/\QQ}(2)^{-c_{0}}N_{\QQ(\omega_{2^{r}})/\QQ}(\omega_{2^{r}}^{2^{r-1}-1})N_{\QQ(\omega_{2^{r}})/\QQ}\big((1-\omega_{2^{r}})^{2}\big)^{-B/2} \\
    &=\pm N_{\QQ(\omega_{2^{r}})/\QQ}(2)^{-c_{0}}N_{\QQ(\omega_{2^{r}})/\QQ}(1-\omega_{2^{r}})^{-B} \\
    &=\pm 2^{-c_{0}[\QQ(\omega_{2^{r}}):\QQ]}2^{-B} \\
    &=\pm 2^{-2^{r-1}c_{0}-B},
\end{align*}
i.e., if and only if $B=-2^{r-1}c_{0}$. Asssuming this, Equation \ref{eq:appendix_proof_1} can be written as
\[\omega_{2^{r+1}}^{A}\prod_{\substack{k=1 \\ (k,2)=1}}^{2^{r-1}-1}\varepsilon_{2^{r},k}^{D_{k}}=\pm 2^{-c_{0}}(\omega_{2^{r+1}}^{2^{r-1}-1}+\omega_{2^{r+1}}^{1-2^{r-1}})^{2^{r-1}c_{0}}.\]
Using the substitution
\[\omega_{2^{r+1}}^{2^{r-1}-1}+\omega_{2^{r+1}}^{1-2^{r-1}}=\omega_{2^{r+1}}^{2^{r-1}-1}(1-\omega_{2^{r}}),\]
we see that
\begin{align*}
    \omega_{2^{r+1}}^{A}\prod_{\substack{k=1 \\ (k,2)=1}}^{2^{r-1}-1}\varepsilon_{2^{r},k}^{D_{k}}&=2^{-c_{0}}(\omega_{2^{r+1}}^{2^{r-1}-1}+\omega_{2^{r+1}}^{1-2^{r-1}})^{2^{r-1}c_{0}} \\
    &=\pm 2^{-c_{0}}\omega_{2^{r+1}}^{(2^{r-1}-1)2^{r-1}c_{0}}(1-\omega_{2^{r}})^{2^{r-1}c_{0}} \\
    &=\pm 2^{-c_{0}}\omega_{2^{r+1}}^{(2^{r-1}-1)2^{r-1}c_{0}}\prod_{\substack{k=1 \\ (k,2)=1}}^{2^{r}-1}(1-\omega_{2^{r}})^{c_{0}} \\
    &=\pm 2^{-c_{0}}\omega_{2^{r}}^{(2^{r-1}-1)2^{r-2}c_{0}}\prod_{\substack{k=1 \\ (k,2)=1}}^{2^{r}-1}(1-\omega_{2^{r}}^{k})^{c_{0}}\frac{(1-\omega_{2^{r}})^{c_{0}}}{(1-\omega_{2^{r}}^{k})^{c_{0}}} \\
    &=\pm 2^{-c_{0}}\omega_{2^{r}}^{(2^{r-1}-1)2^{r-2}c_{0}}\prod_{\substack{k=1 \\ (k,2)=1}}^{2^{r}-1}\omega_{2^{r}}^{c_{0}(1-k)/2}(1-\omega_{2^{r}}^{k})^{c_{0}}\Big(\omega_{2^{r}}^{(k-1)/2}\frac{(1-\omega_{2^{r}})}{(1-\omega_{2^{r}}^{k})}\Big)^{c_{0}} \\
    &=\pm 2^{-c_{0}}\omega_{2^{r}}^{(2^{r-1}-1)2^{r-2}c_{0}}\prod_{\substack{k=1 \\ (k,2)=1}}^{2^{r}-1}\omega_{2^{r}}^{c_{0}(1-k)/2}(1-\omega_{2^{r}}^{k})^{c_{0}}\varepsilon_{2^{r},k}^{-c_{0}}, \\
    &=\pm 2^{-c_{0}}\Big(\omega_{2^{r}}^{(2^{r-1}-1)2^{r-2}c_{0}}\prod_{\substack{k=1 \\ (k,2)=1}}^{2^{r}-1}\omega_{2^{r}}^{c_{0}(1-k)/2}\Big)\Big(\prod_{\substack{k=1 \\ (k,2)=1}}^{2^{r}-1}(1-\omega_{2^{r}}^{k})\Big)^{c_{0}}\Big(\prod_{\substack{k=1 \\ (k,2)=1}}^{2^{r}-1}\varepsilon_{2^{r},k}^{-c_{0}}\Big) \\
    &=\pm 2^{-c_{0}}\Big(\omega_{2^{r}}^{(2^{r-1}-1)2^{r-2}c_{0}}\omega_{2^{r}}^{2^{r-2}(1-2^{r-1})c_{0}}\Big)\Big(\prod_{\substack{k=1 \\ (k,2)=1}}^{2^{r}-1}(1-\omega_{2^{r}}^{k})\Big)^{c_{0}}\Big(\prod_{\substack{k=1 \\ (k,2)=1}}^{2^{r}-1}\varepsilon_{2^{r},k}^{-c_{0}}\Big) \\
    &=\pm 2^{-c_{0}}(1)(2^{c_{0}})\Big(\prod_{\substack{k=1 \\ (k,2)=1}}^{2^{r-1}-1}\varepsilon_{2^{r},k}^{-2c_{0}}\Big) \\
    &=\pm \prod_{\substack{k=1 \\ (k,2)=1}}^{2^{r-1}-1}\varepsilon_{2^{r},k}^{-2c_{0}},
\end{align*}
and therefore
\[\omega_{2^{r+1}}^{A}\prod_{\substack{k=1 \\ (k,2)=1}}^{2^{r-1}-1}\varepsilon_{2^{r},k}^{D_{k}+2c_{0}}=\pm 1.\]
It follows that $\prod_{k=0}^{2^{r}-1}(1+\omega_{2^{r}}^{k})^{c_{k}}=\pm 1$ if and only if $c_{2^{r-1}}=0$, $A\equiv 0\pmod{2^{r}}$, $B=-2^{r-1}c_{0}$, and $D_{k}=-2c_{0}$ for all $k\geq 3$, or in other words:
\begin{align}
    &c_{2^{r-1}}=0 \label{eq:appendix_2_r_1}\\
    &\sum_{\substack{k=1 \\ (k,2)=1}}^{2^{r-1}-1}\sum_{s=0}^{r-2}(k-2^{r-1})(c_{k2^{s}+2^{r-1}}-c_{-k2^{s}-2^{r-1}})\equiv 0\pmod{2^{r}}, \label{eq:appendix_2_r_2}\\
    &\sum_{\substack{k=1 \\ (k,2)=1}}^{2^{r-1}-1}\sum_{s=0}^{r-2}(c_{k2^{s}+2^{r-1}}+c_{-k2^{s}+2^{r-1}})=-2^{r-1}c_{0} \label{eq:appendix_2_r_3}\\
    &\sum_{s=0}^{r-2}(c_{k2^{s}+2^{r-1}}+c_{-k2^{s}-2^{r-1}})=-2c_{0}\text{ for all }k=3,\dots,2^{r-1}-1\text{ odd}. \label{eq:appendix_2_r_4}
\end{align}
Note that these three equations are equivalent to the following two equations:
\begin{align}
    &\sum_{\substack{k=1 \\ (k,2)=1}}^{2^{r-1}-1}\sum_{s=0}^{r-2}k(c_{k2^{s}+2^{r-1}}-c_{-k2^{s}-2^{r-1}})\equiv 0\pmod{2^{r}}, \label{eq:appendix_2_r_5}\\
    &\sum_{s=0}^{r-2}(c_{k2^{s}+2^{r-1}}+c_{-k2^{s}-2^{r-1}})=-2c_{0}\text{ for all }k=1,\dots,2^{r-1}-1\text{ odd}. \label{eq:appendix_2_r_6}
\end{align}
Next, observe that we can rewrite Equation \ref{eq:appendix_2_r_5} as follows:
\begin{align*}
    0&\equiv\sum_{\substack{k=1 \\ (k,2)=1}}^{2^{r-1}-1}\sum_{s=0}^{r-2}k(c_{k2^{s}+2^{r-1}}-c_{-k2^{s}-2^{r-1}}) \\
    &\equiv\sum_{s=0}^{r-2}\sum_{\substack{k=1 \\ (k,2)=1}}^{2^{r-s-1}-1}2^{s}k(c_{k2^{s}+2^{r-1}}-c_{-k2^{s}-2^{r-1}}) \\
    &\equiv\sum_{k=1}^{2^{r-1}-1}2^{s}\frac{k}{2^{s}}(c_{k+2^{r-1}}-c_{-k-2^{r-1}}) \\
    &\equiv\sum_{k=1}^{2^{r-1}-1}k(c_{k+2^{r-1}}-c_{-k-2^{r-1}}) \\
    &\equiv\sum_{k=2^{r-1}+1}^{2^{r}-1}(k-2^{r-1})(c_{k}-c_{-k}) \\
    &\equiv\sum_{k=1}^{2^{r-1}-1}k(c_{-k}-c_{k})\pmod{2^{r}}.
\end{align*}
Multiplying both sides by $-1$, we obtain
\begin{equation}
\label{eq:appendix_2_r_7}
    \sum_{k=1}^{2^{r-1}-1}k(c_{k}-c_{-k})\equiv 0\pmod{2^{r}}.
\end{equation}
Therefore by Lemma \ref{lemma:appendix_r_plus}, $\prod_{k=0}^{2^{r}-1}(1+\omega_{2^{r}}^{k})^{c_{k}}=1$ if and only if
\begin{align}
    &c_{2^{r-1}}=0 \label{eq:appendix_2_r_8}\\
    &\sum_{k=1}^{2^{r-1}-1}k(c_{k}-c_{-k})\equiv 0\pmod{2^{r+1}} \label{eq:appendix_2_r_9}\\
    &\sum_{s=0}^{r-2}(c_{k2^{s}+2^{r-1}}+c_{-k2^{s}-2^{r-1}})=-2c_{0}\text{ for all }k=1,\dots,2^{r-1}-1\text{ odd}, \label{eq:appendix_2_r_10}
\end{align}
as desired.
\end{proof}

\begin{proposition}
\label{prop:monomials_appendix}
Let $m=p^{r}$ be a prime power, and let $\vec{a},\vec{b}\in\NN^{m}$ with $a_{0},b_{0}\geq 1$. Then $\mbf{w}^{\mbf{a}}=\mbf{w}^{\mbf{b}}\in w_{0}W_{p^{r}}$ if and only if:
\begin{enumerate}
    \item \emph{if $p$ odd:}
    \begin{equation}
        \sum_{k=0}^{p^{r}-1}a_{k}=\sum_{k=0}^{p^{r}-1}b_{k},
    \end{equation}
    and for each $t\in\{0,\dots,r-1\}$, we have that:
    \begin{align}
        &\sum_{\ell=0}^{p^{r-t-1}-1}a_{\ell p^{t+1}}=\sum_{\ell=0}^{p^{r-t-1}-1}b_{\ell p^{t+1}},\\
        \begin{split}
        &\sum_{k=1}^{\frac{p^{t+1}-1}{2}}\sum_{\ell=0}^{p^{r-t-1}-1}k(a_{k+\ell p^{t+1}}-a_{-k-\ell p^{t+1}}) \\
        &\qquad\qquad\equiv\sum_{k=1}^{\frac{p^{t+1}-1}{2}}\sum_{\ell=0}^{p^{r-t-1}-1}k(b_{k+\ell p^{t+1}}-b_{-k-\ell p^{t+1}})\pmod{2p^{t+1}}, \text{ and}
        \end{split} \\
        \begin{split}
        &\sum_{\ell=0}^{p^{r-t-1}-1}\sum_{s=0}^{t}(a_{kp^{s}+p^{t+1}}+a_{-kp^{s}-p^{t+1}}-a_{(kp^{s}+p^{t+1})/2}-a_{(-kp^{s}-p^{t+1})/2}) \\
        &\qquad\qquad=\sum_{\ell=0}^{p^{r-t-1}-1}\sum_{s=0}^{t}(b_{kp^{s}+p^{t+1}}+b_{-kp^{s}-p^{t+1}}-b_{(kp^{s}+p^{t+1})/2}-b_{(-kp^{s}-p^{t+1})/2}) \\
        &\qquad\qquad\qquad\qquad\text{ for all }k=2,\dots,\frac{p^{t+1}-1}{2}\text{ with }(k,p)=1.
        \end{split}
    \end{align}
    Here we use the cyclic indexing convention that if $k$ is odd, then $k/2:=2^{-1}k$ where $2^{-1}\in\ZZ_{p^{r}}^{\times}$ is the unique inverse of $2$ in $\ZZ_{p^{r}}^{\times}$. 
    \item \emph{if $p=2$:}
    \begin{align}
        &\sum_{k=0}^{2^{r}-1}a_{k}=\sum_{k=0}^{2^{r}-1}b_{k}, \\
        &\sum_{k=0}^{2^{r-t-1}-1}a_{(2k+1)2^{t}}=0\iff\sum_{k=0}^{2^{r-t-1}-1}b_{(2k+1)2^{t}}=0\text{ for each }t=0,\dots,r-1,
    \end{align}
    and for each $t\in\{0,\dots,r-1\}$ such that
    \begin{equation}
        \sum_{k=0}^{2^{r-t-1}-1}a_{(2k+1)2^{t}}=\sum_{k=0}^{2^{r-t-1}-1}b_{(2k+1)2^{t}}=0,
    \end{equation}
    we have that:
    \begin{align}
        \begin{split}
        &\sum_{k=1}^{2^{t}-1}\sum_{\ell=0}^{2^{r-t-1}-1}k(a_{k+\ell 2^{t+1}}-a_{-k-\ell 2^{t+1}})\\
        &\qquad\qquad\equiv\sum_{k=1}^{2^{r-1}-1}\sum_{\ell=0}^{2^{r-t-1}-1}k(b_{k+\ell 2^{t+1}}-b_{-k-\ell 2^{t+1}})\pmod{2^{t+2}},\text{ and}
        \end{split} \\
        \begin{split}
        &\sum_{\ell=0}^{2^{r-t-1}-1}2a_{\ell 2^{t+1}}+\Big(\sum_{s=0}^{t-1}a_{k2^{s}+(2\ell+1)2^{t}}+a_{-k2^{s}-(2\ell+1)2^{t}}\Big) \\
        &\qquad\qquad=\sum_{\ell=0}^{2^{r-t-1}-1}2b_{\ell 2^{t+1}}+\Big(\sum_{s=0}^{t-1}b_{k2^{s}+(2\ell+1)2^{t}}+b_{-k2^{s}-(2\ell+1)2^{t}}\Big) \\
        &\qquad\qquad\qquad\qquad\text{ for all }k=1,\dots,2^{t}-1\text{ odd}.
        \end{split}
    \end{align}
\end{enumerate}
\end{proposition}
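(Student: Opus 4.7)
The strategy is to reduce the equality $\mbf{w}^{\mbf{a}} = \mbf{w}^{\mbf{b}}$ in $R(G^*_{p^r})$ to a system of character identities, and then apply Lemma \ref{lemma:appendix_equals_one} at each $p$-adic level. A direct computation shows that $\tr_{e^{i\theta}\gamma^\ell}(w_k) = 1 - \omega_{p^r}^{k\ell}$ and $\tr_{je^{i\theta}\gamma^\ell}(w_k) = 1 + \omega_{p^r}^{k\ell}$ (with the analogous formula using $\mu^\ell$ for $*=\odd$). Since $a_0, b_0 \geq 1$ and $\tr_{e^{i\theta}\gamma^\ell}(w_0) = 0$, both traces at non-$j$-type elements vanish identically, so the equality $\mbf{w}^{\mbf{a}} = \mbf{w}^{\mbf{b}}$ collapses to $\prod_k (1+\omega_{p^r}^{k\ell})^{c_k} = 1$ for all $\ell \in \ZZ/p^r$, where $c_k := a_k - b_k$ (with appropriate interpretation when factors vanish in the $p=2$ case).

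For each $\ell$ with $p$-adic valuation $r-s$, write $\ell = \ell' p^{r-s}$ with $(\ell', p) = 1$, so that $\omega_{p^r}^{k\ell} = \omega_{p^s}^{k\ell'}$ depends only on $k \bmod p^s$. Grouping terms yields
\[\prod_{k=0}^{p^r-1}(1+\omega_{p^r}^{k\ell})^{c_k} \;=\; \prod_{j=0}^{p^s-1}(1+\omega_{p^s}^{j\ell'})^{d_j^{(s)}}, \qquad d_j^{(s)} := \sum_{i=0}^{p^{r-s}-1} c_{j+ip^s}.\]
The $s=0$ case gives $2^{\sum_k c_k}=1$, yielding $|\mbf{a}| = |\mbf{b}|$. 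For each $s \in \{1, \ldots, r\}$ (matching $t = s-1$ in the proposition), the substitution $j \mapsto j\ell'$ rewrites the product as $\prod_j (1+\omega_{p^s}^j)^{d_{j\ell'^{-1}}^{(s)}}$, and Lemma \ref{lemma:appendix_equals_one} applies to the coefficients $c'_j = d_{j\ell'^{-1}}^{(s)}$.

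The next step is to show that quantifying over all units $\ell' \in (\ZZ/p^s)^\times$ collapses the lemma's conditions to those stated in the proposition. Condition (1) of the lemma ($c'_0 = 0$) is $\ell'$-invariant and gives $d_0^{(s)} = \sum_\ell c_{\ell p^s} = 0$, matching the first family of conditions. Condition (2) reads $\sum_k k(d_{k\ell'^{-1}}^{(s)} - d_{-k\ell'^{-1}}^{(s)}) \equiv 0 \pmod{2p^s}$ for every $\ell'$; relabeling $j = k\ell'^{-1}$ this becomes $\ell' \cdot \sum_j j(d_j^{(s)} - d_{-j}^{(s)}) \equiv 0 \pmod{2p^s}$, and since $\gcd(\ell', p^s)=1$ one checks (using odd $\ell'$ in the $p=2$ case by adjusting by a power of $1+p$) that this yields a single congruence modulo $2p^s$. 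Condition (3) is indexed by units $k \in (\ZZ/p^s)^\times$ and the action of $\ell'$ merely permutes the index set, so we obtain the stated multiplicative-orbit equalities.

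For $p=2$ there is an additional subtlety: the factor $1+\omega_{2^s}^{2^{s-1}} = 0$ forces the trace to vanish whenever $d_{2^{s-1}\ell'^{-1}}^{(s)} > 0$, producing the stated iff-vanishing condition $\sum_k a_{(2k+1)2^t} = 0 \iff \sum_k b_{(2k+1)2^t} = 0$; when both sides do not vanish at level $s$, one removes the $2^{s-1}$-index factor and applies Lemma \ref{lemma:appendix_equals_one} as in the odd case, with the $-2c_0$ on the right-hand side of \eqref{eq:appendix_lemma_2_r_3} producing the $2a_{\ell 2^{t+1}}$ terms in the proposition via $d_0^{(s)} = \sum_\ell c_{\ell 2^s}$. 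The main technical obstacle will be carefully bookkeeping the index substitutions $j \mapsto j\ell'$ across all levels $s$ and verifying that the lemma's three conditions, quantified over all units $\ell'$, genuinely telescope into the conditions stated in the proposition rather than producing a larger (or smaller) system; the $(\ZZ/p^s)^\times$-orbit structure on $\ZZ/p^s$ together with the parity symmetry $k \leftrightarrow -k$ built into the lemma is what makes this collapse work.
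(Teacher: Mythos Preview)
Your overall strategy matches the paper's: reduce to character identities, group traces by the $p$-adic valuation of $\ell$, and apply Lemma~\ref{lemma:appendix_equals_one} at each level. The difference lies in how you handle the dependence on the unit $\ell'$. The paper observes that for $\ell, \ell'$ with $(\ell, p^r) = (\ell', p^r)$, the traces $\tr_{j\gamma^\ell}(\mbf{w}^{\mbf{a}})$ and $\tr_{j\gamma^{\ell'}}(\mbf{w}^{\mbf{a}})$ are conjugate under $\Gal(\QQ(\omega_{p^r})/\QQ)$, so it suffices to check a single $\ell = p^t$ at each level; the lemma is then applied once per level with $\ell' = 1$, and no collapse argument is needed at all.

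Your direct collapse argument has an error in the treatment of condition~(2). The relabeling $j = k\ell'^{-1}$ does not yield
\[
\sum_{k=1}^{(p^s-1)/2} k\bigl(d_{k\ell'^{-1}}^{(s)} - d_{-k\ell'^{-1}}^{(s)}\bigr) \;=\; \ell' \sum_{j=1}^{(p^s-1)/2} j\bigl(d_j^{(s)} - d_{-j}^{(s)}\bigr)
\]
as integers, nor even modulo $2p^s$, because the half-range $1 \le k \le (p^s-1)/2$ is not stable under multiplication by $\ell'^{-1}$ in $\ZZ/p^s$. For instance with $p=3$, $s=1$, $\ell'=2$: the left side is $d_2 - d_1$ while the right side is $2(d_1 - d_2)$, and these agree modulo $3$ but not modulo $6$. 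The conditions for different $\ell'$ \emph{are} genuinely equivalent---this is exactly what you need---but the reason is the Galois conjugacy above (since $1\in\QQ$ is Galois-fixed, one conjugate equals $1$ iff all do), not your relabeling identity. Invoking that observation directly is both the fix and the simplification.
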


\begin{proof}
It suffices to consider the case where $\ast=\ev$. Since any element of $R(G^{\ev}_{p^{r}})$ is determined by its character $\chi:G^{\ev}_{p^{r}}\to\CC$, it suffices to look at the traces at all elements $g\in G^{\ev}_{p^{r}}$. 

Let $\mbf{w}^{\mbf{a}}\in w_{0}W_{p^{r}}$. First note that $S^{1}$ acts trivially on the $w_{i}$, hence for any $\phi\in S^{1}$, we have that
\[\tr_{\phi g}(\mbf{w}^{\mbf{a}})=\tr_{g}(\mbf{w}^{\mbf{a}})\]
for all $g\in G^{\ev}_{p^{r}}$. So it suffices to look at the traces at elements lying in the subgroup of $G^{\ev}_{p^{r}}$ generated by $j$ and $\gamma$. Note that
\[\tr_{\gamma^{k}}(w_{0})=\tr_{\gamma^{k}}(1-\wt{c})=1-1=0\]
for all $k$. So by our assumption that $a_{0}\geq 1$, it follows that
\[\tr_{\gamma^{k}}(\mbf{w}^{\mbf{a}})=\tr_{\gamma^{k}}(\mbf{w}^{\mbf{b}})=0\]
for all $k$. Next since $\tr_{j}(w_{k})=2$ for all $k$, we see that
\[\tr_{j}(\mbf{w}^{\mbf{a}})=2^{\sum_{k=0}^{p^{r}-1}a_{k}}.\]
Finally, note that
\[\tr_{j\gamma^{\ell}}(w_{k})=\tr_{j\gamma^{\ell}}(1-\wt{c}\zeta^{k})=1+\omega_{p^{r}}^{k\ell},\]
and so
\[\tr_{j\gamma^{\ell}}(\mbf{w}^{\mbf{a}})=\prod_{k=0}^{p^{r}-1}(1+\omega_{p^{r}}^{k\ell})^{a_{k}}\]
for each $\ell=1,\dots,p^{r}-1$.

Finally, note that for any $0\le t\le r-1$ and for any $\ell,\ell'\in\{1,\dots,p^{r}-1\}$ such that $(\ell,p^{r})=(\ell',p^{r})=p^{t}$, we have that $\prod_{k=0}^{p^{r}-1}(1+\omega_{p^{r}}^{k\ell})^{a_{k}}$ and $\prod_{k=0}^{p^{r}-1}(1+\omega_{p^{r}}^{k\ell'})^{a_{k}}$ are conjugate under the action of $\text{Gal}(\QQ(\omega_{p^{r}}0/\QQ)$. It follows that the traces of $\mbf{w}^{\mbf{a}}$ at $j\gamma^{\ell}$ for all $1\le\ell\le p^{r}-1$ are completely determined by the traces at $j\gamma^{p^{t}}$ for all $t=0,\dots,r-1$.

Furthermore, observe that
\[\prod_{k=0}^{p^{r}-1}(1+\omega_{p^{r}}^{kp^{t}})^{a_{k}}=\prod_{k=0}^{p^{r-t}-1}(1+\omega_{p^{r-t}}^{k})^{\sum_{\ell=0}^{p^{t}-1}a_{k+\ell p^{r-t}}}.\]

It therefore follows that if $\mbf{w}^{\mbf{a}},\mbf{w}^{\mbf{b}}\in W^{*}_{p^{r}}$, then $\mbf{w}^{\mbf{a}}=\mbf{w}^{\mbf{b}}$ in $G^{\ev}_{p^{r}}$ if and only if
\begin{align*}
    &\sum_{k=0}^{p^{r}-1}a_{k}=\sum_{k=0}^{p^{r}-1}b_{k} \\
    &\prod_{k=0}^{p^{r-t}-1}(1+\omega_{p^{r-t}}^{k})^{\sum_{\ell=0}^{p^{t}-1}a_{k+\ell p^{r-t}}}=\prod_{k=0}^{p^{r-t}-1}(1+\omega_{p^{r-t}}^{k})^{\sum_{\ell=0}^{p^{t}-1}b_{k+\ell p^{r-t}}}\text{ for all }t=0,\dots,r-1.
\end{align*}

Now note that
\[\prod_{k=0}^{p^{r}-1}(1+\omega_{p^{r}}^{kp^{r-t-1}})^{a_{k}}=\prod_{k=0}^{p^{t+1}-1}(1+\omega_{p^{t+1}}^{k})^{\sum_{\ell=0}^{p^{r-t-1}-1}a_{k+\ell p^{t+1}}}\neq 0\]
for any odd $p$ and any $t=0,\dots,r-1$, and for $p=2$ we have that
\[\prod_{k=0}^{2^{r}-1}(1+\omega_{2^{r}}^{k2^{r-t-1}})^{a_{k}}=\prod_{k=0}^{2^{t+1}-1}(1+\omega_{2^{t+1}}^{k})^{\sum_{\ell=0}^{2^{r-t-1}-1}a_{k+\ell 2^{t+1}}}=0\]
if and only if
\[\sum_{\ell=0}^{2^{r-t-1}-1}a_{2^{t}+\ell 2^{t+1}}=\sum_{k=0}^{2^{r-t-1}-1}a_{(2k+1)2^{t}}=0,\]
since this is the exponent of the factor $(1+\omega_{2^{t+1}}^{2^{t}})=0$ in the above product. So if we define
\[T(\mbf{w}^{\mbf{a}}):=\twopartdef{\{0,\dots,r-1\}}{p\text{ odd},}{\{t\in\{0,\dots,r-1\}\;|\;\sum_{k=0}^{2^{r-t-1}-1}a_{(2k+1)2^{t}}=0\}}{p=2.}\]
then 
\begin{equation}
\label{eq:appendix_prop_proof_1}
    \prod_{k=0}^{p^{t+1}-1}(1+\omega_{p^{t+1}}^{k})^{\sum_{\ell=0}^{p^{r-t-1}-1}a_{k+\ell p^{t+1}}}=\prod_{k=0}^{p^{t+1}-1}(1+\omega_{p^{t+1}}^{k})^{\sum_{\ell=0}^{p^{r-t-1}-1}b_{k+\ell p^{t+1}}}
\end{equation}
for all $t=0,\dots,r-1$ only if 
\begin{equation}
\label{eq:appendix_proof_t}
    T(\mbf{w}^{\mbf{a}})=T(\mbf{w}^{\mbf{b}})\text{ and }T(\mbf{w}^{\mbf{a}})^{c}=T(\mbf{w}^{\mbf{b}})^{c}.
\end{equation}
(Note that this condition is vacuous if $p$ is odd.) Assuming Equation \ref{eq:appendix_proof_t}, for each $t\in T(\mbf{w}^{\mbf{a}})$ we have that both sides of Equation \ref{eq:appendix_prop_proof_1} are non-zero. Hence for such $t$, Equation \ref{eq:appendix_prop_proof_1} holds if and only if
\[\prod_{k=0}^{p^{t+1}-1}(1+\omega_{p^{t+1}}^{k})^{\sum_{\ell=0}^{p^{r-t-1}-1}(a_{k+\ell p^{t+1}}-b_{k+\ell p^{t+1}})}=1.\]
Now for each $t\in T(\mbf{w}^{\mbf{a}})$, and each $k=0,\dots,p^{t+1}-1$ define
\begin{equation}
\label{appendix_prop_proof_c_tk}
    c_{t,k}:=\sum_{\ell=0}^{p^{r-t-1}-1}(a_{k+\ell p^{t+1}}-b_{k+\ell p^{t+1}}).
\end{equation}
Then by Lemma \ref{lemma:appendix_equals_one}, for each $t\in T(\mbf{w}^{\mbf{a}})=T(\mbf{w}^{\mbf{b}})$ we have that:
\begin{align*}
    &c_{t,0}=0,\\
    &\sum_{k=1}^{\frac{p^{t+1}-1}{2}}k(c_{t,k}-c_{t,-k})\equiv 0\pmod{2p^{t+1}},\\
    &\sum_{s=0}^{t}c_{t,kp^{s}}+c_{t,-kp^{s}}=\sum_{s=0}^{t}c_{t,kp^{s}/2}+c_{t,-kp^{s}/2}\text{ for all }k=2,\dots,\frac{p^{t+1}-1}{2}\text{ with }(k,p)=1
\end{align*}
if $p$ is odd, and:
\begin{align*}
    &\sum_{k=1}^{2^{t}-1}k(c_{t,k}-c_{t,-k})\equiv 0\pmod{2^{t+2}}, \\
    &\sum_{s=0}^{t-1}(c_{t,k2^{s}+2^{t}}+c_{t,-k2^{s}-2^{t}})=-2c_{t,0}\text{ for all }k=1,\dots,2^{t}-1\text{ odd}.
\end{align*}
if $p=2$. Using the substitution given by Equation \ref{appendix_prop_proof_c_tk} gives us the desired result.
\end{proof}

\newpage
\section{Tables}
\label{sec:tables_appendix}
\begingroup
\renewcommand{\arraystretch}{1.5} 
\begin{center}
\begin{table}[htp]
\begin{tabular}{|c|c|c|c|c|c|}\hline
\label{table:knots}
    $K$ & $\wt{\kappa}=\kappa$ & $\kappa_{\KMT}$ & $\LSWFS_{2}$ & $\SWFS_{2}^{\<j\mu\>}$ & $\LSWFS_{2}^{\<j\mu\>}$ \\ \hline\hline
    $T(3,12n-1)$ & $2$ & $0$ & N & Y & Y \\ \hline
    $k(2,3,12n-1)$ & $2$ & $1$ & N & N & N\\ \hlinewd{1.5pt}
    $\ol{T(3,12n-1)}$ & $0$ & $0$ & N & Y & Y\\ \hline
    $\ol{k(2,3,12n-1)}$ & $0$ & $0$ & N & N & N\\ \hlinewd{1.5pt}
    $T(3,12n-5)$ & $1$ & $-\tfrac{1}{2}$ & N & Y & Y\\ \hline
    $k(2,3,12n-5)$ & $1$ & $\tfrac{1}{2}$ & N & N & N\\ \hlinewd{1.5pt}
    $\ol{T(3,12n-5)}$ & $1$ & $\tfrac{1}{2}$ & N & Y & Y \\ \hline
    $\ol{k(2,3,12n-5)}$ & $1$ & $\tfrac{1}{2}$ & N & N & N\\ \hlinewd{1.5pt}
    $T(3,12n+1)$ & $0$ & $0$ & Y & Y & Y\\ \hline
    $k(2,3,12n+1)$ & $0$ & $0$ & Y & N & Y\\ \hlinewd{1.5pt}
    $\ol{T(3,12n+1)}$ & $0$ & $0$ & Y & Y & Y \\ \hline
    $\ol{k(2,3,12n+1)}$ & $0$ & $0$ & Y & N & Y\\ \hlinewd{1.5pt}
    $T(3,12n+5)$ & $1$ & $\tfrac{1}{2}$ & Y & Y & Y \\ \hline
    $k(2,3,12n+5)$ & $1$ & $\tfrac{1}{2}$ & Y & N & Y\\ \hlinewd{1.5pt}
    $\ol{T(3,12n+5)}$ & $-1$ & $-\tfrac{1}{2}$ & Y & Y & Y \\ \hline
    $\ol{k(2,3,12n+5)}$ & $-1$ & $-\tfrac{1}{2}$ & Y & N & Y\\ \hline
\end{tabular}
\bigskip
\caption{Columns 2 and 3 record the various $\kappa$-invariants associated to the families of torus and pretzel knots appearing in Column 1. Here, the knot $\ol{K}$ denotes the mirror of $K$. Columns 4,5,6 record whether the corresponding family of knots lie in the classes $\LSWFS_{2}$, $\SWFS_{2}^{\<j\mu\>}$, and $\LSWFS_{2}^{\<j\mu\>}$, respectively. (See Section \ref{subsec:knot_invariants} for more information.)} 
\end{table}
\end{center}
\endgroup

\begingroup
\renewcommand{\arraystretch}{1.1} 
\begin{center}
\begin{table}
\begin{tabular}{|c|c|c|c|c|c|c|c|}\hline
\label{table:genus_bounds}
   $X$ & $A$ & $K$ & $G$-Sig. & Man. & KMT & (\ref{eq:genus_bounds}) & U.B. \\ \hline
    \multirow{4}{*}{$\#^{2}(S^{2}\times S^{2})$} & \multirow{4}{*}{((4,4),(4,4))} & $\mathbf{T(3,5)}$ & \textbf{18} & \textbf{21} & \textbf{20} & \textbf{(22)*} & \textbf{22} \\ \cline{3-8}
    & & $T(3,7)$ & 18 & 21 & 21 & 23* & 24 \\ \cline{3-8}
    & & $T(3,11)$ & 22 & 25 & 25 & 27* & 28 \\ \cline{3-8}
    & & $T(3,13)$ & 22 & 27 & 25 & 28* & 30 \\ \hline
    \multirow{8}{*}{$\CC P^{2}\#\CC P^{2}$} & \multirow{4}{*}{(6,2)} & $\mathbf{T(3,5)}$ & \textbf{12} & \textbf{13} & \textbf{13} & \textbf{(14)*} & \textbf{14} \\ \cline{3-8}
    & & $T(3,7)$ & 12 & 13 & 14 & 15* & 16 \\ \cline{3-8}
    & & $T(3,11)$ & 16 & 17 & 18 & 19* & 20 \\ \cline{3-8}
    & & $T(3,13)$ & 16 & 19 & 18 & 20* & 22 \\ \cline{2-8}
    & \multirow{4}{*}{(6,6)} & $\mathbf{T(3,5)}$ & \textbf{20} & \textbf{23} & \textbf{20} & \textbf{(24)*} & \textbf{24} \\ \cline{3-8}
    & & $T(3,7)$ & 20 & 23 & 21 & 25* & 26 \\ \cline{3-8}
    & & $T(3,11)$ & 24 & 27 & 27 & 29* & 30 \\ \cline{3-8}
    & & $T(3,13)$ & 24 & 29 & 27 & 30* & 32 \\ \hline
    \multirow{8}{*}{$S^{2}\times S^{2}\#\CC P^{2}$} & \multirow{4}{*}{((4,4),2)} & $\mathbf{T(3,5)}$ & \textbf{11} & \textbf{12} & \textbf{12} & \textbf{(13)*} & \textbf{13} \\ \cline{3-8}
    & & $T(3,7)$ & 11 & 12 & 13 & 14* & 15 \\ \cline{3-8}
    & & $T(3,11)$ & 15 & 16 & 17 & 18* & 19 \\ \cline{3-8}
    & & $T(3,13)$ & 15 & 18 & 17 & 19* & 21 \\ \cline{2-8}
    & \multirow{4}{*}{((4,4),6)} & $\mathbf{T(3,5)}$ & \textbf{19} & \textbf{22} & \textbf{21} & \textbf{(23)*} & \textbf{23} \\ \cline{3-8}
    & & $T(3,7)$ & 19 & 22 & 22 & 24* & 25 \\ \cline{3-8}
    & & $T(3,11)$ & 23 & 26 & 26 & 28* & 29 \\ \cline{3-8}
    & & $T(3,13)$ & 23 & 28 & 26 & 29* & 31 \\ \hline
    \multirow{4}{*}{$hK3$} & \multirow{4}{*}{0} & $\mathbf{T(3,5)}$ & \textbf{1} & \textbf{3} & \textbf{3} & \textbf{(4)*} & \textbf{4} \\ \cline{3-8}
    & & $T(3,7)$ & 1 & 3 & 4 & 4 & 6 \\ \cline{3-8}
    & & $T(3,11)$ & 5 & 7 & 8 & 8 & 10 \\ \cline{3-8}
    & & $T(3,13)$ & 5 & 9 & 8 & 10* & 12 \\ \hline
\end{tabular}
\bigskip
\caption{This table gives a list of lower and upper bounds for the relaative $(X,A)$-genus of the torus knots $T(3,5)$, $T(3,7)$, $T(3,11)$, and $T(3,13)$, for pairs $(X,A)$ featured in Theorem \ref{theorem:X_A_genus_sharp}. Columns 4-7 list the lower bounds from the $G$-signature theorem (\ref{eq:G_signature}), \cite{Man14}, \cite{KMT}, and our equivariant relative 10/8ths inequality, respectively, and Column 8 gives the upper bound $g(X,A)+g_{4}(K)$ for $g_{X,A}(K)$. Parentheses in Column 7 denote entries which coincide with the upper bound given by Column 8, and asterisks in Column 7 denote entries which give strictly better lower bounds than the bounds from Columns 4,5,6. The rows corresponding to $T(3,5)$ are bold-faced to denote that $T(3,5)$ belongs to the class of knots $\SWFM_{2}^{\#,\C}$ featured in Theorem \ref{theorem:X_A_genus_sharp}.} 
\end{table}
\end{center}
\endgroup

\bibliographystyle{alpha}
\clearpage
\bibliography{refs}

\end{document}